\documentclass[12pt]{article}
\usepackage[]{amsmath,amssymb}
\usepackage{amscd}
\usepackage{latexsym}
\usepackage{cite}
\usepackage{amsthm}

\newtheorem{definition}{Definition}[section]
\newtheorem{theorem}[definition]{Theorem}
\newtheorem{lemma}[definition]{Lemma}
\newtheorem{corollary}[definition]{Corollary}
\newtheorem{remark}[definition]{Remark}
\newtheorem{example}[definition]{Example}

\newtheorem{note}[definition]{Note}
\newtheorem{assumption}[definition]{Assumption}
\newtheorem{proposition}[definition]{Proposition}

\typeout{Substyle for letter-sized documents. Released 24 July 1992}


\setlength{\topmargin}{-1in}
\setlength{\headheight}{1.5cm}
\setlength{\headsep}{0.3cm}
\setlength{\textheight}{9in}
\setlength{\oddsidemargin}{0cm}
\setlength{\evensidemargin}{0cm}
\setlength{\textwidth}{6.5in}

\def\F{\mathbb F}

\begin{document}
\title{\bf  
Lowering-Raising triples 
and $U_q(\mathfrak{sl}_2)$
}
\author{
Paul Terwilliger}
\date{}

\maketitle
\begin{abstract}
We introduce the notion of a lowering-raising (or LR) 
triple of linear transformations
on a nonzero finite-dimensional vector space. 
We show how to normalize an LR triple,
and classify up to isomorphism the normalized LR triples.
We  describe the LR triples
using various maps,
such as the
reflectors, 
the inverters, the 
 unipotent maps, and the rotators.
We relate the  LR triples
to the equitable presentation of 
the quantum algebra $U_q(\mathfrak{sl}_2)$ and Lie algebra
 $\mathfrak{sl}_2$.

\bigskip
\noindent
{\bf Keywords}. 
Lowering map, raising map, 
quantum group,
quantum algebra,
Lie algebra.
\hfil\break
\noindent {\bf 2010 Mathematics Subject Classification}. 
Primary: 17B37. Secondary: 15A21.
 \end{abstract}

\section{Introduction}

\noindent For the quantum algebra
$U_q(\mathfrak{sl}_2)$, the equitable presentation 
was introduced in
\cite{equit} and further investigated in
\cite{fduq},
\cite{ba}.
For the Lie algebra $\mathfrak{sl}_2$,
the equitable presentation
was introduced in
\cite{ht}
and comprehensively studied in
\cite{bt}. 
These equitable presentations have been
related to
Leonard pairs
\cite{alnajjar},
\cite{alnajjar2},
tridiagonal pairs
\cite{bockting},
Leonard triples
\cite{gao},
\cite{huang},
the universal Askey-Wilson algebra
\cite{uawe},
the tetrahedron algebra
\cite{hartwig},
\cite{ht},
\cite{3ptsl2},
the $q$-tetrahedron algebra
\cite{irt},
\cite{qtet},
and distance-regular graphs
\cite{boyd}.
See also
\cite{alnajjar3},
\cite{neubauer},
\cite{uqsl2hat},
\cite{tersym}.
\\

\noindent 
From the equitable point of view,
consider a finite-dimensional irreducible 
module for 
 $U_q(\mathfrak{sl}_2)$ or
 $\mathfrak{sl}_2$.
In 
\cite[Lemma~7.3]{fduq} and
\cite[Section~8]{bt}
we encounter three nilpotent linear transformations
of the module,
with each transformation acting as a lowering map and raising map in
multiple ways.
In order to describe this situation more precisely,
we now introduce the notion of
a lowering-raising (or LR) 
triple of linear transformations.
\medskip

\noindent 
An LR triple is described as follows
(formal definitions begin in Section 2).
Fix an integer $d\geq 0$.
Let $\mathbb F$ denote a field, and let $V$ denote a vector
space over $\mathbb F$ with dimension $d+1$. By a decomposition of
$V$ we mean a sequence  $\lbrace V_i\rbrace_{i=0}^d$
of one-dimensional subspaces whose direct sum is $V$.
Let  $\lbrace V_i\rbrace_{i=0}^d$
denote a decomposition of $V$. A linear transformation
$A \in {\rm End}(V)$ is said to lower
$\lbrace V_i\rbrace_{i=0}^d$ whenever
$AV_i = V_{i-1} $ for $1 \leq i \leq d$ and
$AV_0 = 0$.
The map $A$ is said to raise
$\lbrace V_i\rbrace_{i=0}^d$ whenever
$AV_i = V_{i+1} $ for $0 \leq i \leq d-1$ and
$AV_d = 0$.
An ordered pair of elements $A,B$ in
${\rm End}(V)$ is called lowering-raising (or LR) whenever
there exists  a decomposition of $V$ that is lowered by
$A$ and raised by $B$.
A 3-tuple of elements $A,B,C$ in
${\rm End}(V)$ is called an LR triple whenever
any two of $A,B,C$ form an LR pair on $V$. 
The LR triple $A,B,C$ is said to be over $\mathbb F$ and
have diameter $d$. 
\medskip

\noindent In this paper we obtain three main results,
which are summarized as follows:
(i) we show how to normalize an LR triple, and
 classify up to isomorphism the
normalized LR triples; (ii) 
we describe the LR triples using
various maps, such as
the reflectors, the inverters, 
the unipotent maps, and the rotators;
(iii) we relate the
LR triples to the equitable presentations of
$U_q(\mathfrak{sl}_2)$ and
$\mathfrak{sl}_2$.
\medskip

\noindent We now describe our results in more detail.
We set the stage with some general remarks; the assertions
therein will be established in the main body of the paper.
Let the integer $d$ and the vector space 
$V$ be as above, and assume for the moment that $d=0$. Then
$A,B,C \in {\rm End}(V)$ form an LR triple if and only if
each of $A,B,C$ is zero; this LR triple is called
trivial.
Until further notice, assume that $d\geq 0$ and
let $A,B,C$ denote an LR triple on $V$.
As we describe this LR triple, we will use the following 
notation. Observe that any permutation of 
$A,B,C$ is an LR triple on $V$.
For any object $f$ that we associate with $A,B,C$
let $f'$ (resp. $f''$) denote the corresponding object for
the LR triple $B,C,A$ (resp. $C,A,B$).
Since $A,B$ is an LR pair on $V$, there exists a
decomposition $\lbrace V_i\rbrace_{i=0}^d$ of $V$ that 
is lowered by $A$ and raised by $B$. This decomposition is
uniquely determined by $A,B$ and called the
$(A,B)$-decomposition of $V$. For $0 \leq i \leq d$ we have
$
A^{d-i}V=
V_0+V_1+ \cdots + V_i$
and
$
B^{d-i}V=
V_d+V_{d-1}+ \cdots + V_{d-i}$.
\medskip

\noindent We now introduce the parameter array of $A,B,C$.
For $1 \leq i \leq d$ we have
$AV_i=V_{i-1}$ and $BV_{i-1}=V_i$. Therefore, $V_i$ is invariant
under $BA$ and the corresponding eigenvalue is a nonzero
scalar in $\mathbb F$.
Denote this eigenvalue by $\varphi_i$. For notational convenience
define $\varphi_0=0$ and $\varphi_{d+1}=0$.
We call the sequence
\begin{eqnarray*}
(\lbrace \varphi_i \rbrace_{i=1}^d;
\lbrace \varphi'_i \rbrace_{i=1}^d;
\lbrace \varphi''_i \rbrace_{i=1}^d)
\end{eqnarray*}
the parameter array of $A,B,C$.
\medskip

\noindent We now introduce the idempotent data of $A,B,C$.
For $0 \leq i \leq d$ define
$E_i \in {\rm End}(V)$ such that $(E_i-I)V_i=0$ and
$E_iV_j=0$ for $0 \leq j \leq d$, $j \not=i$. Thus $E_i$
is the projection from $V$ onto $V_i$. Note that
$V_i = E_iV$.
We have
\begin{eqnarray*}
E_i = \frac{A^{d-i}B^d A^i}{\varphi_1 \cdots \varphi_d},
\qquad \qquad 
E_i = \frac{B^{i}A^d B^{d-i}}{\varphi_1 \cdots \varphi_d}.
\end{eqnarray*}
We call the sequence
\begin{eqnarray*}
(
\lbrace E_i\rbrace_{i=0}^d;
\lbrace E'_i\rbrace_{i=0}^d;
\lbrace E''_i\rbrace_{i=0}^d)
\end{eqnarray*}
the idempotent data of $A,B,C$.
\medskip

\noindent We now introduce the Toeplitz data of $A,B,C$.
A basis $\lbrace v_i \rbrace_{i=0}^d$ of $V$
is called an $(A,B)$-basis whenever
$v_i \in V_i$ for $0 \leq i \leq d$ and
$Av_i=v_{i-1}$ for $1 \leq i \leq d$.
Let $\lbrace u_i \rbrace_{i=0}^d$ denote a $(C,B)$-basis of $V$
and let $\lbrace v_i \rbrace_{i=0}^d$ denote a $(C,A)$-basis
of $V$ such that
$u_0=v_0$.
Let $T$
denote the
transition matrix from
$\lbrace u_i \rbrace_{i=0}^d$ to
$\lbrace v_i \rbrace_{i=0}^d$. Then $T$  has the
 form
\begin{eqnarray*}
T =	\left(
	\begin{array}{ c c cc c c }
	\alpha_0 & \alpha_1   &  \cdot  & \cdot&  \cdot  & \alpha_d  \\
	 & \alpha_0  & \alpha_1  & \cdot& \cdot &  \cdot     \\
	 &   & \alpha_0   & \cdot & \cdot& \cdot  \\
	   &   &  & \cdot  & \cdot & \cdot \\
	     &  & &  & \cdot & \alpha_1 \\
	      {\bf 0}  &&  & &   & \alpha_0  \\
	      \end{array}
	      \right),
\end{eqnarray*}
where $\alpha_i \in \mathbb F$ for
$0 \leq i \leq d$ and $\alpha_0=1$.
A matrix of the above form is said to be upper triangular
and Toeplitz, with parameters
$\lbrace \alpha_i\rbrace_{i=0}^d$.
The matrix $T^{-1}$ is upper triangular and Toeplitz; let
$\lbrace \beta_i\rbrace_{i=0}^d$ denote its parameters.
We call the sequence
\begin{eqnarray*}
(
\lbrace \alpha_i \rbrace_{i=0}^d,
\lbrace \beta_i \rbrace_{i=0}^d;
\lbrace \alpha'_i \rbrace_{i=0}^d,
\lbrace \beta'_i \rbrace_{i=0}^d;
\lbrace \alpha''_i \rbrace_{i=0}^d,
\lbrace \beta''_i \rbrace_{i=0}^d 
)
\end{eqnarray*}
the Toeplitz data of $A,B,C$.
\medskip

\noindent We now introduce the trace data of $A,B,C$. 
For $0 \leq i \leq d$ let
$a_i$ denote the trace of $CE_i$.
We have $\sum_{i=0}^d a_i=0$.
If $A,B,C$ is trivial then
$a_0=0$. If $A,B,C$ is nontrivial then
$a_i = \alpha'_1(\varphi''_{d-i+1}-\varphi''_{d-i})$
and 
$a_i = \alpha''_1(\varphi'_{d-i+1}-\varphi'_{d-i})$
for $0 \leq i \leq d$.
We call the sequence
\begin{eqnarray*}
(\lbrace a_i \rbrace_{i=0}^d;
\lbrace a'_i \rbrace_{i=0}^d;
\lbrace a''_i \rbrace_{i=0}^d
)
\end{eqnarray*}
 the trace data of $A,B,C$.
\medskip

\noindent    
With respect to an $(A,B)$-basis of $V$,
the matrices representing $A,B,C$ are
\begin{eqnarray*}
&&A:\; 
\left(
\begin{array}{ c c cc c c }
 0 & 1   &   &&   & \bf 0  \\
  & 0  &  1  &&  &      \\ 
   &   &  0   & \cdot &&  \\
     &   &  & \cdot  & \cdot & \\
       &  & &  & \cdot & 1 \\
        {\bf 0}  &&  & &   &  0  \\
	\end{array}
	\right),
	\qquad \qquad
	B:\;
	\left(
	\begin{array}{ c c cc c c }
	0 &   &   &&   & \bf 0  \\
	\varphi_1 & 0  &   &&  &      \\
	 &  \varphi_2 & 0   & &&  \\
	   &   & \cdot & \cdot  & & \\
	     &  & & \cdot & \cdot & \\
	      {\bf 0}  &&  & &  \varphi_d & 0  \\
	      \end{array}
	      \right),
\\
&&C:\; 
\left(
\begin{array}{ c c c c c c }
 a_0 & \varphi'_d/\varphi_1   &   &&   & \bf 0  \\
 \varphi''_d & a_1 &  \varphi'_{d-1}/\varphi_2  &&  &      \\ 
   & \varphi''_{d-1}  & a_2  
   & \cdot &&  \\
     &   & \cdot & \cdot  & \cdot & \\
       &  & & \cdot & \cdot & \varphi'_1/\varphi_d \\
        {\bf 0}  &&  & &  \varphi''_1 &  a_d \\
	\end{array}
	\right).
\end{eqnarray*}
Assume for the moment that $A,B,C$ is nontrivial.
Then $A,B,C$ is determined up to isomorphism 
by its parameter array and any one of
\begin{eqnarray*}
a_0, a'_0, a''_0;
\quad \qquad 
a_d, a'_d, a''_d;
\quad \qquad 
\alpha_1, \alpha'_1, \alpha''_1;
\quad \qquad 
\beta_1, \beta'_1, \beta''_1.
\end{eqnarray*}
We often put the emphasis on $\alpha_1$, and call
this the first Toeplitz number of $A,B,C$.
In Propositions
\ref{prop:AlphaRecursion}--\ref{prop:AlphaRecursion3}
we obtain some recursions that 
give the Toeplitz data of $A,B,C$  in terms
of 
its parameter array and first Toeplitz number.
\medskip

\noindent We now introduce the bipartite condition.
The LR triple $A,B,C$ is said to be bipartite whenever
$a_i = a'_i = a''_i = 0$ for $0 \leq i \leq d$.
Assume for the moment that $A,B,C$ is not bipartite.
Then $A,B,C$ is nontrivial, and each of
\begin{eqnarray*}
\alpha_1, \qquad
\alpha'_1, \qquad
\alpha''_1, \qquad
\beta_1, \qquad
\beta'_1, \qquad
\beta''_1
\end{eqnarray*}
is nonzero.
Until further notice assume that $A,B,C$ is bipartite.
Then $d=2m$ is even.
Moreover for $0 \leq i \leq d$, each of
\begin{eqnarray*}
\alpha_i, \qquad
\alpha'_i, \qquad
\alpha''_i, \qquad
\beta_i, \qquad
\beta'_i, \qquad
\beta''_i
\end{eqnarray*}
is zero if $i$ is odd and nonzero if $i$ is even.
There exists a direct sum $V=V_{\rm out} + V_{\rm in}$
such that $V_{\rm out}$ is equal to  each of
\begin{eqnarray*}
\sum_{j=0}^m E_{2j} V,
\qquad \quad 
 \sum_{j=0}^m E'_{2j} V,
\qquad \quad 
\sum_{j=0}^m E''_{2j} V
\end{eqnarray*}
and $V_{\rm in}$ is equal to each of
\begin{eqnarray*}
\sum_{j=0}^{m-1} E_{2j+1} V,
\qquad \quad 
\sum_{j=0}^{m-1} E'_{2j+1} V,
\qquad \quad 
\sum_{j=0}^{m-1} E''_{2j+1} V.
\end{eqnarray*}
The dimensions of $V_{\rm out}$  and
 $V_{\rm in}$ are $m+1$ and $m$, respectively. 
We have
\begin{eqnarray*}
&& AV_{\rm out} = V_{\rm in},
 \qquad \qquad 
 BV_{\rm out} = V_{\rm in},
 \qquad \qquad 
 CV_{\rm out} = V_{\rm in},
\\
&&
 AV_{\rm in} \subseteq V_{\rm out},
 \qquad \qquad 
 BV_{\rm in} \subseteq V_{\rm out},
 \qquad \qquad 
 CV_{\rm in} \subseteq V_{\rm out}.
\end{eqnarray*}
Define
\begin{eqnarray}
\label{eq:6listIntro}
A_{\rm out}, 
\qquad 
A_{\rm in},
\qquad
B_{\rm out}, 
\qquad 
B_{\rm in},
\qquad 
C_{\rm out},
\qquad 
C_{\rm in}
\end{eqnarray}
 in ${\rm End}(V)$ as follows.
The map $A_{\rm out}$ acts on
 $V_{\rm out}$ as $A$, and on
 $V_{\rm in}$ as zero.
The map $A_{\rm in}$ acts on
 $V_{\rm in}$ as $A$, and on
 $V_{\rm out}$ as zero.
The other maps in
(\ref{eq:6listIntro}) are similarly defined.
By construction
\begin{equation*}
A = A_{\rm out}+ 
 A_{\rm in}, 
 \qquad \qquad
B = B_{\rm out}+ 
 B_{\rm in}, 
 \qquad \qquad
C = C_{\rm out}+ 
 C_{\rm in}.
\end{equation*}
We are done assuming that $A,B,C$ is bipartite.
\medskip

\noindent We now introduce the equitable condition.
The LR triple $A,B,C$ is said to be equitable whenever
$\alpha_i = \alpha'_i=\alpha''_i$ for $0 \leq i \leq d$.
In this case
$\beta_i = \beta'_i=\beta''_i$ for $0 \leq i \leq d$.
Assume for the moment that $A,B,C$ is trivial.
Then $A,B,C$ is equitable.
Next assume that $A,B,C$ is nonbipartite.
Then $A,B,C$ is equitable if and only if
$\alpha_1 = \alpha'_1=\alpha''_1$.
In this case
$\varphi_i = \varphi'_i = \varphi''_i$ for $1 \leq i \leq d$,
and
$a_i = a'_i = a''_i = \alpha_1(\varphi_{d-i+1}-\varphi_{d-i})$
for $0 \leq i \leq d$.
Next assume that $A,B,C$ is bipartite and nontrivial.
Then $A,B,C$ is equitable if and only if
$\alpha_2 = \alpha'_2=\alpha''_2$.
In this case
$
\varphi_{i-1}\varphi_i = 
\varphi'_{i-1}\varphi'_i = 
\varphi''_{i-1}\varphi''_i$
for $2 \leq i \leq d$.
We are done with our general remarks.
\medskip

\noindent 
Concerning the normalization of LR triples,
we now define what
it means for $A,B,C$ to be normalized.
Assume for the moment that $A,B,C$  is trivial. Then
$A,B,C$ is normalized.
Next assume that $A,B,C$ is nonbipartite. Then
$A,B,C$ is normalized whenever $\alpha_1=\alpha'_1=\alpha''_1=1$.
Next assume that $A,B,C$ is bipartite and nontrivial. Then 
$A,B,C$ is normalized whenever $\alpha_2= \alpha'_2=\alpha''_2=1$.
If $A,B,C$ is normalized then $A,B,C$ is equitable.
We now explain how to normalize $A,B,C$.
Assume for the moment that $A,B,C$ is trivial. Then there is nothing to do.
Next assume that $A,B,C$ is nonbipartite.
Then there exists a unique sequence
$\alpha, \beta,
\gamma$ of nonzero scalars in $\mathbb F$ such that $\alpha A, \beta B, \gamma C$
is normalized. Next assume that $A,B,C$ is bipartite and nontrivial.
Then there exists a unique 
sequence $\alpha, \beta, \gamma$ of nonzero scalars in $\mathbb F$
such that 
\begin{eqnarray*}
 \alpha A_{\rm out}
+A_{\rm in},
\qquad
\beta  B_{\rm out}+
B_{\rm in},
\qquad
 \gamma C_{\rm out}+
C_{\rm in}
\end{eqnarray*}
is normalized.
\medskip

\noindent We now describe our 
classification up to isomorphism of the normalized LR triples
over $\mathbb F$. Up to isomorphism there exists a unique normalized 
LR triple over $\mathbb F$ with diameter $d=0$, and this LR
triple is trivial.
Up to isomorphism there exists a unique normalized LR triple over
$\mathbb F$ with diameter $d=1$, and this is given in
Lemma
\ref{lem:1or2}. For $d\geq 2$, we display nine families of
normalized 
  LR triples over
$\mathbb F$ that have diameter $d$, denoted
\begin{eqnarray*}
&&
{\rm NBWeyl}^+_d(\mathbb F;j,q),
\qquad \quad
{\rm NBWeyl}^-_d(\mathbb F;j,q),
\qquad \quad
{\rm NBWeyl}^-_d(\mathbb F;t),
\\
&&
{\rm NBG}_d(\mathbb F;q),
\qquad \quad
{\rm NBG}_d(\mathbb F;1),
\\
&&
{\rm NBNG}_d(\mathbb F;t),
\\
&&
{\rm B}_d(\mathbb F;t,\rho_0,\rho'_0,\rho''_0),
\qquad \quad 
{\rm B}_d(\mathbb F;1,\rho_0,\rho'_0,\rho''_0),
\qquad \quad 
{\rm B}_2(\mathbb F;\rho_0,\rho'_0,\rho''_0).
\end{eqnarray*}
We show that each 
normalized LR triple over $\mathbb F$ with diameter $d$ is isomorphic to
exactly one of these examples.
\medskip

\noindent We now describe the LR triples using various maps.
Let $A,B,C$ denote an LR triple on $V$.
We show that there exists a unique antiautomorphism $\dagger$ of 
${\rm End}(V)$ that sends $A\leftrightarrow B$.
We call $\dagger$ the $(A,B)$-reflector.
Assume for the moment that $A,B,C$ is equitable and nonbipartite.
We show that $\dagger$ fixes $C$.
Next assume that $A,B,C$ is equitable, bipartite, and nontrivial.
We show that $\dagger$ sends
$A_{\rm out}\leftrightarrow B_{\rm in}$ and
$B_{\rm out}\leftrightarrow A_{\rm in}$. We also show that
$\dagger$ sends
each of $C_{\rm out},C_{\rm in}$ 
to a scalar multiple of the other.
Define
\begin{eqnarray*}
\Psi = \sum_{i=0}^d \frac{\varphi_1 \varphi_2 \cdots \varphi_i}{\varphi_d
\varphi_{d-1}\cdots \varphi_{d-i+1}} E_i.
\end{eqnarray*}
We call $\Psi$ the $(A,B)$-inverter.
We show that the following three LR pairs are mutually isomorphic:
\begin{eqnarray*}
A, \Psi^{-1}B\Psi
\qquad \qquad
B,A
\qquad \qquad
\Psi A \Psi^{-1}, B.
\end{eqnarray*}
Define
\begin{eqnarray*}
{\mathbb A} = \sum_{i=0}^d E_{d-i} E''_{i},
\qquad \qquad 
{\mathbb B} = \sum_{i=0}^d E'_{d-i} E_{i},
\qquad \qquad 
{\mathbb C} = \sum_{i=0}^d E''_{d-i} E'_i.
\end{eqnarray*}
We call
$\mathbb A,
\mathbb B,
\mathbb C$
the unipotent maps for $A,B,C$. 
We show that
\begin{eqnarray*}
\mathbb A = \sum_{i=0}^d \alpha'_i A^i,
\qquad \qquad 
\mathbb B = \sum_{i=0}^d \alpha''_i B^i,
\qquad \qquad 
\mathbb C = \sum_{i=0}^d \alpha_i C^i
\end{eqnarray*}
and
\begin{eqnarray*}
\mathbb A^{-1} = \sum_{i=0}^d \beta'_i A^i,
\qquad \qquad 
\mathbb B^{-1} = \sum_{i=0}^d \beta''_i B^i,
\qquad \qquad 
\mathbb C^{-1} = \sum_{i=0}^d \beta_i C^i.
\end{eqnarray*}
By a rotator for $A,B,C$ we mean
an element
$R \in {\rm End}(V)$ such that
for $0 \leq i \leq d$,
\begin{eqnarray*}
E_i R  =  R E'_i, \qquad \qquad
E'_i R = R E''_i, \qquad \qquad
E''_i R  = R E_i.
\end{eqnarray*}
Let $\mathcal R$ denote the  set of rotators for
$A,B,C$. Note that $\mathcal R$ is a subspace of
the $\mathbb F$-vector space ${\rm End}(V)$.
 We obtain the following basis
for 
$\mathcal R$. Assume for the moment that
$A,B,C$ is trivial. Then $\mathcal R ={\rm End}(V)$ has a basis consisting
of the identity element.
Next assume that $A,B,C$
is nonbipartite. Then $\mathcal R$ has a basis
$\Omega$ such that
\begin{eqnarray*}
&&
\Omega = \mathbb B 
\Biggl(\sum_{i=0}^d \frac{\varphi_1 \cdots \varphi_i}
{\varphi_d \cdots \varphi_{d-i+1}} E_i\Biggr) \mathbb A
= 
\mathbb C 
\Biggl(\sum_{i=0}^d \frac{\varphi_1 \cdots \varphi_i}
{\varphi_d \cdots \varphi_{d-i+1}} E'_i\Biggr) \mathbb B 
\\
&& \qquad \qquad \qquad 
=
\mathbb A 
\Biggl(\sum_{i=0}^d \frac{\varphi_1 \cdots \varphi_i}
{\varphi_d \cdots \varphi_{d-i+1}} E''_i\Biggr) \mathbb C.
\end{eqnarray*}
Next assume that $A,B,C$ is bipartite and nontrivial.
Then $\mathcal R$ has a basis $\Omega_{\rm out}$,
 $\Omega_{\rm in}$ such that
\begin{eqnarray*}
&&
\Omega_{\rm out} = 
\mathbb B 
\Biggl(\sum_{j=0}^{d/2} \frac{\varphi_1 \varphi_2\cdots \varphi_{2j}}
{\varphi_d \varphi_{d-1}\cdots \varphi_{d-2j+1}} E_{2j}\Biggr) \mathbb A
= 
\mathbb C 
\Biggl(\sum_{j=0}^{d/2} \frac{\varphi_1 \varphi_2 \cdots \varphi_{2j}}
{\varphi_d \varphi_{d-1}\cdots \varphi_{d-2j+1}} E'_{2j}\Biggr) \mathbb B 
\\
&& \qquad \qquad \qquad 
=
\mathbb A 
\Biggl(\sum_{j=0}^{d/2} \frac{\varphi_1 \varphi_2 \cdots \varphi_{2j}}
{\varphi_d \varphi_{d-1}\cdots \varphi_{d-2j+1}} E''_{2j}\Biggr) \mathbb C
\end{eqnarray*}
and
\begin{eqnarray*}
&&\Omega_{\rm in} = 
\mathbb B 
\Biggl(\sum_{j=0}^{d/2-1} \frac{\varphi_2 \varphi_3\cdots \varphi_{2j+1}}
{\varphi_{d-1} \varphi_{d-2}\cdots \varphi_{d-2j}} E_{2j+1}\Biggr) \mathbb A
= 
\mathbb C 
\Biggl(\sum_{j=0}^{d/2-1} \frac{\varphi_2 \varphi_3 \cdots \varphi_{2j+1}}
{\varphi_{d-1} \varphi_{d-2}\cdots \varphi_{d-2j}} E'_{2j+1}\Biggr) \mathbb B 
\nonumber
\\
&& \qquad \qquad \qquad 
=
\mathbb A 
\Biggl(\sum_{j=0}^{d/2-1} \frac{\varphi_2 \varphi_3\cdots \varphi_{2j+1}}
{\varphi_{d-1} \varphi_{d-2} \cdots \varphi_{d-2j}} E''_{2j+1}\Biggr)
\mathbb C.
\end{eqnarray*}
\noindent We now briefly relate the LR triples to 
the equitable presentations of
$U_q(\mathfrak{sl}_2)$ and
$\mathfrak{sl}_2$.
Adjusting the equitable presentation of
$U_q(\mathfrak{sl}_2)$ in two ways, 
we obtain an
algebra
$U^R_q(\mathfrak{sl}_2)$ called the
reduced 
$U_q(\mathfrak{sl}_2)$ algebra,
and an
algebra
$U^E_q(\mathfrak{sl}_2)$ called the extended
$U_q(\mathfrak{sl}_2)$ algebra.
Let $A,B,C$ denote an LR triple on $V$.
After imposing some minor restrictions on
its parameter array, we use  $A,B,C$ 
to construct a module on $V$
for 
$U_q(\mathfrak{sl}_2)$ or
$U^R_q(\mathfrak{sl}_2)$ or
$U^E_q(\mathfrak{sl}_2)$ or
$\mathfrak{sl}_2$. Each construction involves
the equitable presentation.
\medskip

\noindent This paper is organized as follows.
In Section 2 we review some basic concepts
and explain our notation.
In Sections 3--10 
we develop a theory of LR pairs that will be applied to
LR triples later in the paper. 
In Section 11 we classify a type of finite sequence
said to be constrained, for use in our LR triple classification
later in the paper.
Section 12 is about upper triangular Toeplitz matrices.
In Section 13 we introduce the LR triples,
and discuss their parameter array, idempotent data,
Toeplitz data, and trace data. In Sections 14, 15
we obtain some equations relating the parameter
array, Toeplitz data, and trace data. We also
introduce the LR triples of Weyl and $q$-Weyl type.
Sections 16--18 are about the bipartite, equitable, and normalized
LR triples, respectively. In Sections 19, 20 we compare the
structure of a bipartite and nonbipartite LR triple, using
the notions of an idempotent centralizer and double lowering space.
Sections 21--23 are about the unipotent maps, rotators, and
reflectors, respectively.
In Sections 24--30 we classify up to isomorphism the normalized 
LR triples. Section 31 is about the Toeplitz data,
and how the unipotent maps are related to the
exponential function and quantum exponential function.
In Section 32 we display some relations that are satisfied
by an LR triple. In Section 33 we relate the LR triples to the
equitable presentations of 
$U_q(\mathfrak{sl}_2)$ and
$\mathfrak{sl}_2$. Section 34 contains three characterizations
of an LR triple. Sections 35, 36 are appendices that contain
some matrix representations
of an LR triple.

\bigskip

 \section{Preliminaries}
\noindent We now begin our formal argument.
In this section we review some basic concepts
and explain our notation.
We will be discussing algebras and Lie algebras. An algebra
without the Lie prefix is 
meant to be associative
and have a 1. A subalgebra has the same 1 as the parent algebra.
Recall the ring of integers 
$\mathbb Z = \lbrace 0,\pm 1,\pm 2,\ldots\rbrace$.
Throughout the paper we fix an integer $d\geq 0$.
For a sequence $\lbrace u_i\rbrace_{i=0}^d$, 
we call $u_i$ the {\it $i$-component} or {\it $i$-coordinate} of
 the sequence.
By the
{\it inversion} of 
 $\lbrace u_i\rbrace_{i=0}^d$
we mean the sequence
$\lbrace u_{d-i}\rbrace_{i=0}^d$.
Let $\mathbb F$ denote a field.
Let $V$ denote a vector space over $\mathbb F$ with dimension
$d+1$. 
Let 
${\rm End}(V)$ denote the $\mathbb F$-algebra
consisting of the $\mathbb F$-linear maps from $V$ to $V$.
Let ${\rm Mat}_{d+1}(\mathbb F)$ denote the $\mathbb F$-algebra consisting of
the $d+1$ by $d+1$ matrices that have all entries in $\mathbb F$.
We index the rows and columns by $0,1,\ldots, d$.
Let $\lbrace v_i\rbrace_{i=0}^d$
denote a basis for $V$. For $A \in {\rm End}(V)$
and $M\in 
{\rm Mat}_{d+1}(\mathbb F)$, we say that {\it $M$ represents $A$
with respect to 
 $\lbrace v_i\rbrace_{i=0}^d$} whenever 
 $Av_j = \sum_{i=0}^d M_{ij}v_i$ for $0 \leq j \leq d$.
Suppose we are given two bases for $V$,
denoted
$\lbrace u_i\rbrace_{i=0}^d$
and
$\lbrace v_i\rbrace_{i=0}^d$. By the {\it transition matrix
from
$\lbrace u_i\rbrace_{i=0}^d$
to $\lbrace v_i\rbrace_{i=0}^d$} we mean
the matrix $S \in  
{\rm Mat}_{d+1}(\mathbb F)$ such that
$v_j = \sum_{i=0}^d S_{ij}u_i$ for $0 \leq j \leq d$.
Let $S$ denote the transition matrix from
$\lbrace u_i\rbrace_{i=0}^d$
to $\lbrace v_i\rbrace_{i=0}^d$.
Then $S^{-1}$ exists and equals the
transition matrix from
$\lbrace v_i\rbrace_{i=0}^d$
to $\lbrace u_i\rbrace_{i=0}^d$.
Let $\lbrace w_i\rbrace_{i=0}^d$ denote a basis for $V$
and let $H$ denote the transition matrix from
$\lbrace v_i\rbrace_{i=0}^d$
to $\lbrace w_i\rbrace_{i=0}^d$.
Then $S H$ is the transition matrix from 
$\lbrace u_i\rbrace_{i=0}^d$
to $\lbrace w_i\rbrace_{i=0}^d$.
Let
 $A \in {\rm End}(V)$ and let
$M \in {\rm Mat}_{d+1}(\mathbb F)$ represent
$A$ with respect to 
$\lbrace u_i\rbrace_{i=0}^d$. Then
$S^{-1}MS$ represents $A$ with respect to
$\lbrace v_i\rbrace_{i=0}^d$.
Define a matrix ${\bf Z} \in
{\rm Mat}_{d+1}(\mathbb F)$ with $(i,j)$-entry $\delta_{i+j,d}$
for $0 \leq i,j\leq d$. For example if $d=3$,
\begin{eqnarray*}
{\bf Z}=
\left(
\begin{array}{ c c cc}
 0 & 0   & 0  &1  \\
 0  & 0  &  1  &0    \\ 
 0  &  1  &  0   & 0\\
  1    &  0 & 0 & 0 \\
	\end{array}
	\right).
\end{eqnarray*}
\noindent Note that ${\bf Z}^2=I$. Let
$\lbrace v_i\rbrace_{i=0}^d$ denote a basis for $V$
and consider the inverted basis
$\lbrace v_{d-i}\rbrace_{i=0}^d$. Then ${\bf Z}$
is the transition matrix from
$\lbrace v_i\rbrace_{i=0}^d$ to
$\lbrace v_{d-i}\rbrace_{i=0}^d$.
\medskip

\noindent 
By a {\it  decomposition of $V$} we mean a sequence
$\lbrace V_i\rbrace_{i=0}^d$ of one dimensional subspaces of
$V$ such that $V= \sum_{i=0}^d V_i$ (direct sum).
Given a decomposition $\lbrace V_i\rbrace_{i=0}^d$ of $V$,
for  notational
convenience define $V_{-1}=0$ and
$V_{d+1}=0$.
Let $\lbrace v_i \rbrace_{i=0}^d$ denote a basis for $V$.
For $0 \leq i \leq d$ let $V_i$ denote the
span of $v_i$. Then the sequence
 $\lbrace V_i \rbrace_{i=0}^d$ is a decomposition of $V$,
said to be  
 {\it induced} by the basis
 $\lbrace v_i \rbrace_{i=0}^d$.
Let 
 $\lbrace u_i \rbrace_{i=0}^d$ and 
 $\lbrace v_i \rbrace_{i=0}^d$ denote bases for 
$V$. Then the following are equivalent:
(i)
 the transition matrix from $\lbrace u_i \rbrace_{i=0}^d$ to 
 $\lbrace v_i \rbrace_{i=0}^d$ is diagonal;
 (ii) $\lbrace u_i \rbrace_{i=0}^d$ and
 $\lbrace v_i \rbrace_{i=0}^d$ induce the same decomposition of $V$.
\medskip

\noindent 
Let $\lbrace V_i\rbrace_{i=0}^d$ denote a decomposition of $V$.
For $0 \leq i \leq d$ define $E_i
 \in {\rm End}(V)$ such that
$(E_i - I)V_i = 0$ and
$E_iV_j=0$ for $0 \leq j \leq d$, $j\not=i$.
We call $E_i$ the $i$th {\it primitive idempotent}
for 
 $\lbrace V_i \rbrace_{i=0}^d$.
We have
(i) $E_i E_j = \delta_{i,j}E_i$ $(0 \leq i,j\leq d)$;
(ii) $I = \sum_{i=0}^d E_i$;
(iii) $V_i = E_iV$ $(0 \leq i \leq d)$;
(iv) ${\rm rank}(E_i) = 1 ={\rm tr}(E_i)$ $(0 \leq i \leq d)$,
where tr means trace.
We call $\lbrace E_i \rbrace_{i=0}^d$  the
{\it idempotent sequence} for 
 $\lbrace V_i \rbrace_{i=0}^d$.
Note that
$\lbrace E_{d-i}\rbrace_{i=0}^d$ is the idempotent
sequence for the decomposition
$\lbrace V_{d-i}\rbrace_{i=0}^d$.
\medskip

\noindent Let $\lbrace v_i \rbrace_{i=0}^d$ denote
a basis for $V$.
Let
$\lbrace V_i \rbrace_{i=0}^d$ denote the induced decomposition
of $V$, with idempotent sequence
$\lbrace E_i \rbrace_{i=0}^d$.
For $0 \leq r \leq d$ consider the matrix in
${\rm Mat}_{d+1}(\F)$ that represents $E_r$ with
respect to
$\lbrace v_i \rbrace_{i=0}^d$.
This matrix has $(r,r)$-entry 1 and all other entries 0.

\begin{lemma}
\label{lem:EiMeaning}
Let 
 $A \in {\rm End}(V)$.
Let
$\lbrace V_i \rbrace_{i=0}^d$ denote a decomposition of
$V$ with idempotent sequence
$\lbrace E_i \rbrace_{i=0}^d$.
Consider a basis for $V$ that induces
$\lbrace V_i \rbrace_{i=0}^d$.
 Let $M \in
{\rm Mat}_{d+1}(\F)$ represent 
$A$ with respect to  this basis.
Then for $0 \leq r,s\leq d$ the 
entry $M_{r,s}=0$ if and only if
$E_r A E_s = 0$.
\end{lemma}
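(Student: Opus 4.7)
The plan is to compute the matrix representing $E_r A E_s$ with respect to the chosen basis and read off when it vanishes. Let $\lbrace v_i \rbrace_{i=0}^d$ denote the basis that induces $\lbrace V_i \rbrace_{i=0}^d$, so $V_i$ is the span of $v_i$ for $0 \leq i \leq d$.

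First I would recall from the paragraph preceding the lemma that the matrix representing $E_r$ with respect to $\lbrace v_i \rbrace_{i=0}^d$ is the matrix unit with a $1$ in position $(r,r)$ and zeros elsewhere; call this matrix $\mathcal{E}_r$. Since the map $\mathrm{End}(V) \to \mathrm{Mat}_{d+1}(\mathbb{F})$ sending an operator to its matrix with respect to a fixed basis is an $\mathbb{F}$-algebra isomorphism, the matrix representing $E_r A E_s$ is $\mathcal{E}_r M \mathcal{E}_s$.

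Next I would carry out the trivial matrix computation: $\mathcal{E}_r M \mathcal{E}_s$ has $(i,j)$-entry equal to $M_{r,s}$ when $(i,j)=(r,s)$ and $0$ otherwise. Therefore $E_r A E_s = 0$ if and only if $\mathcal{E}_r M \mathcal{E}_s = 0$ if and only if $M_{r,s} = 0$, giving both directions of the lemma.

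There is no real obstacle here; the only thing to be careful about is making sure the isomorphism between $\mathrm{End}(V)$ and $\mathrm{Mat}_{d+1}(\mathbb{F})$ sends composition to matrix multiplication with the conventions used in the paper (namely $Av_j = \sum_i M_{ij} v_i$), which it does. Alternatively, one can argue directly: $E_s v_j = \delta_{s,j} v_s$, so $AE_s v_j = \delta_{s,j} \sum_i M_{i,s} v_i$, and then $E_r A E_s v_j = \delta_{s,j} M_{r,s} v_r$, which vanishes for all $j$ exactly when $M_{r,s}=0$.
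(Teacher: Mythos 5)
Your proof is correct and takes essentially the same approach as the paper, which simply says to represent $A$, $E_r$, $E_s$ by matrices with respect to the given basis and multiply; you have just written out the details of that computation.
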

\begin{proof} Represent
$A, E_r,E_s$ by matrices
with respect to the given basis.
\end{proof}

\noindent
By a {\it flag on $V$} we mean a sequence $\lbrace U_i \rbrace_{i=0}^d$
of subspaces of $V$ such that 
$U_i$ has dimension $i+1$ for
$0 \leq i \leq d$ and
$U_{i-1} \subseteq U_i$ for
$1 \leq i \leq d$.
For a flag 
$\lbrace U_i \rbrace_{i=0}^d$ on $V$ we have 
$U_d=V$.
Let 
$\lbrace V_i\rbrace_{i=0}^d$ denote a decomposition of $V$.
For $0 \leq i \leq d$ define $U_i = V_0 + \cdots + V_i$.
Then the sequence 
$\lbrace U_i \rbrace_{i=0}^d$ is a flag on $V$. 
This  flag is said to be {\it induced} by the decomposition
$\lbrace V_i\rbrace_{i=0}^d$.
Let $\lbrace u_i\rbrace_{i=0}^d$ denote a basis of $V$.
This basis induces a decomposition of $V$, which 
in turn
induces a flag on $V$. This flag is said to be {\it induced}
by the basis 
$\lbrace u_i\rbrace_{i=0}^d$.
Let $\lbrace u_i\rbrace_{i=0}^d$ and 
 $\lbrace v_i\rbrace_{i=0}^d$ denote bases of $V$.
Then the following are equivalant: 
(i) the transition matrix from
$\lbrace u_i\rbrace_{i=0}^d$ to
 $\lbrace v_i\rbrace_{i=0}^d$ is upper triangular;
(ii) 
$\lbrace u_i\rbrace_{i=0}^d$ and
 $\lbrace v_i\rbrace_{i=0}^d$ induce the same flag on $V$.
\medskip

\noindent 
Suppose we are given two flags on $V$, denoted
$\lbrace U_i \rbrace_{i=0}^d$ and
$\lbrace U'_i \rbrace_{i=0}^d$.
These flags are called {\it opposite} whenever
$U_i \cap U'_j = 0$ if $i+j<d$ $(0 \leq i,j\leq d)$.
The following are equivalent: (i) 
$\lbrace U_i \rbrace_{i=0}^d$ and
$\lbrace U'_i \rbrace_{i=0}^d$ are opposite;
(ii) there exists a decomposition $\lbrace V_i \rbrace_{i=0}^d$
of $V$
that induces 
$\lbrace U_i \rbrace_{i=0}^d$ and whose inversion
induces 
$\lbrace U'_i \rbrace_{i=0}^d$. In this case
$V_i = U_i \cap U'_{d-i}$ for $0 \leq i \leq d$.
\medskip

\noindent
Let
$\lbrace V_i\rbrace_{i=0}^d$ denote a decomposition of $V$.
For $A \in {\rm End}(V)$, we say that $A$ {\it lowers}
$\lbrace V_i\rbrace_{i=0}^d$ whenever
$A V_i = V_{i-1} $ for $1 \leq i \leq d$ and
$AV_0 = 0$.

\begin{lemma}
\label{lem:LowerRaise}
Let $\lbrace V_i\rbrace_{i=0}^d$ denote
a decomposition of $V$, with idempotent
sequence
 $\lbrace E_i\rbrace_{i=0}^d$.
For $A \in 
{\rm End}(V)$ the following are equivalent:
\begin{enumerate}
\item[\rm (i)] $A$ lowers
$\lbrace V_i\rbrace_{i=0}^d$;
\item[\rm (ii)] $E_i A E_j =
\begin{cases}
\not=0 &  {\mbox{\rm if $j-i=1$}}; \\
0 & {\mbox{\rm if $j-i\not=1$}}
\end{cases}
\qquad \qquad (0 \leq i,j \leq d)$.
\end{enumerate}
\end{lemma}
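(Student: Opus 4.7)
The plan is to translate condition~(i) into a statement about matrix entries of $A$ in a basis compatible with the decomposition, and then invoke Lemma~\ref{lem:EiMeaning} to convert that matrix condition into condition~(ii).

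First I would choose a basis $\{v_i\}_{i=0}^d$ of $V$ with $v_i\in V_i$ for each $i$; such a basis exists because each $V_i$ is one-dimensional, and by definition it induces the decomposition $\{V_i\}_{i=0}^d$. Let $M\in\mathrm{Mat}_{d+1}(\mathbb{F})$ represent $A$ with respect to this basis, so that $Av_j=\sum_{i=0}^d M_{ij}v_i$ for $0\le j\le d$. Since $V_j=\mathbb{F}v_j$ and $V_{j-1}=\mathbb{F}v_{j-1}$, the two conditions $AV_0=0$ and $AV_j=V_{j-1}$ for $1\le j\le d$ translate exactly into: $Av_0=0$, and for $1\le j\le d$, $Av_j$ is a nonzero scalar multiple of $v_{j-1}$. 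Reading this off the expansion of $Av_j$ (and using the fact that $\{v_i\}_{i=0}^d$ is a basis, so coefficients are uniquely determined) shows that (i) is equivalent to the matrix condition that $M_{ij}\ne 0$ precisely when $j-i=1$, and $M_{ij}=0$ otherwise.

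Finally I would apply Lemma~\ref{lem:EiMeaning} to the chosen basis, which asserts that $M_{r,s}=0$ if and only if $E_rAE_s=0$. Taking contrapositives on one side and direct implication on the other immediately converts the matrix description just derived into the two-case statement~(ii), yielding the equivalence.

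There is no real obstacle here; the lemma is essentially a dictionary entry translating between the decomposition-level description of a lowering map, its matrix in an adapted basis, and its idempotent-conjugate description. The only care needed is to verify that $j=0$ fits the uniform formulation in~(ii) (there is no $i\ge 0$ with $j-i=1$, matching $Av_0=0$) and that $j-i=1$ must be read with $0\le i,j\le d$, so that $i$ is automatically in range whenever $j\ge 1$.
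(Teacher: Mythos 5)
Your proposal is correct and follows exactly the route the paper intends: its proof of this lemma is the one-line instruction ``Use Lemma~\ref{lem:EiMeaning},'' and your argument is just that reduction written out in full, with the correct translation of the lowering condition into the superdiagonal pattern of the representing matrix.
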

\begin{proof} Use
Lemma \ref{lem:EiMeaning}.
\end{proof}

\noindent 
Let
$\lbrace V_i\rbrace_{i=0}^d$ denote a decomposition of $V$
and let
$A \in {\rm End}(V)$.
Assume that 
$\lbrace V_i\rbrace_{i=0}^d$ is lowered by $A$.
Then $V_i = A^{d-i}V_d$ for $0 \leq i \leq d$.
Moreover $A^{d+1}=0$. 
For $0 \leq i \leq d$ the subspace
$V_0 + \cdots + V_i$ is the kernel of $A^{i+1}$ and
equal to $A^{d-i}V$.
In particular, $V_0$ is the kernel of $A$ and
equal to $A^dV$.
The sequences
$\lbrace {\rm ker}\,A^{i+1} \rbrace_{i=0}^d$ and
$\lbrace A^{d-i}V\rbrace_{i=0}^d$  both
equal  the
 flag on $V$  induced by
$\lbrace V_i\rbrace_{i=0}^d$.
We say that $A$ {\it raises}
$\lbrace V_i\rbrace_{i=0}^d$ whenever
$A V_i = V_{i+1} $ for $0 \leq i \leq d-1$ and
$AV_d = 0$. Note that $A$ raises
$\lbrace V_i\rbrace_{i=0}^d$ if and only if $A$
lowers the inverted decomposition
$\lbrace V_{d-i}\rbrace_{i=0}^d$.

\begin{lemma}
\label{lem:RaiseLower}
Let $\lbrace V_i\rbrace_{i=0}^d$ denote
a decomposition of $V$, with idempotent
sequence
 $\lbrace E_i\rbrace_{i=0}^d$.
For $A \in 
{\rm End}(V)$ the following are equivalent:
\begin{enumerate}
\item[\rm (i)] $A$ raises
$\lbrace V_i\rbrace_{i=0}^d$;
\item[\rm (ii)] $E_i A E_j =
\begin{cases}
\not=0 &  {\mbox{\rm if $i-j=1$}}; \\
0 & {\mbox{\rm if $i-j\not=1$}}
\end{cases}
\qquad \qquad (0 \leq i,j \leq d)$.
\end{enumerate}
\end{lemma}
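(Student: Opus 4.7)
The plan is to reduce this to Lemma~\ref{lem:LowerRaise} via the inversion duality noted just before the statement: $A$ raises $\lbrace V_i\rbrace_{i=0}^d$ if and only if $A$ lowers the inverted decomposition $\lbrace V_{d-i}\rbrace_{i=0}^d$. As recorded earlier in the section, the idempotent sequence of $\lbrace V_{d-i}\rbrace_{i=0}^d$ is $\lbrace E_{d-i}\rbrace_{i=0}^d$. So I would first invoke Lemma~\ref{lem:LowerRaise} with the inverted decomposition and its idempotents $F_i := E_{d-i}$, obtaining the equivalence: $A$ lowers $\lbrace V_{d-i}\rbrace_{i=0}^d$ if and only if $F_i A F_j \neq 0$ precisely when $j-i=1$, that is, $E_{d-i} A E_{d-j} \neq 0$ precisely when $j-i=1$.

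Then I would reindex by setting $r := d-i$ and $s := d-j$, which are again indices in $\lbrace 0,1,\dots,d\rbrace$. Under this substitution the condition $j-i=1$ becomes $r-s=1$, so the equivalence above rewrites as: $A$ raises $\lbrace V_i\rbrace_{i=0}^d$ if and only if $E_r A E_s \neq 0$ precisely when $r-s=1$. This is exactly (i)$\Leftrightarrow$(ii) after renaming $r,s$ back to $i,j$.

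Alternatively one can give a direct proof in parallel with Lemma~\ref{lem:LowerRaise}: pick a basis of $V$ inducing $\lbrace V_i\rbrace_{i=0}^d$, let $M \in {\rm Mat}_{d+1}(\mathbb F)$ represent $A$ with respect to this basis, and apply Lemma~\ref{lem:EiMeaning}. The condition that $A$ raises $\lbrace V_i\rbrace_{i=0}^d$ translates to $M$ being subdiagonal with nonzero entries exactly on the first subdiagonal, i.e.\ $M_{r,s}\neq 0$ iff $r-s=1$, which by Lemma~\ref{lem:EiMeaning} is equivalent to the stated condition on the $E_r A E_s$.

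There is no real obstacle; the only thing to take care of is the bookkeeping of indices under the inversion $i\mapsto d-i$. I would present the inversion-duality approach since it is shortest and exploits material already in place, and the proof can be written in one line: \emph{Apply Lemma~\ref{lem:LowerRaise} to the inverted decomposition $\lbrace V_{d-i}\rbrace_{i=0}^d$ and reindex.}
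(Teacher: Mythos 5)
Your proposal is correct and matches the paper's proof, which is exactly one line: apply Lemma~\ref{lem:LowerRaise} to the decomposition $\lbrace V_{d-i}\rbrace_{i=0}^d$. The index bookkeeping you carry out ($r=d-i$, $s=d-j$, so $j-i=1$ becomes $r-s=1$) is the right check, and the alternative direct argument via Lemma~\ref{lem:EiMeaning} is also valid but unnecessary.
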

\begin{proof} Apply
Lemma
\ref{lem:LowerRaise}
to the decomposition
$\lbrace V_{d-i}\rbrace_{i=0}^d$.
\end{proof}

\begin{definition}
\label{def:Nil}
\rm
An element $A \in 
	      {\rm End}(V)$ will be called {\it Nil} whenever
	      $A^{d+1}=0$ and $A^d \not=0$.
\end{definition}

\begin{lemma} 
\label{lem:NilRec}
For 
$A \in 
 {\rm End}(V)$ the following are equivalent:
\begin{enumerate}
\item[\rm (i)] $A$ is Nil;
\item[\rm (ii)] there exists a decomposition of $V$ that
is lowered by $A$;
\item[\rm (iii)] there exists a decomposition of $V$ that
is raised by $A$;
\item[\rm (iv)]  for $0 \leq i \leq d$ the kernel of
$A^{i+1}$ is $A^{d-i}V$;
\item[\rm (v)]  the kernel of
$A$ is $A^dV$;
\item[\rm (vi)] the sequence
$\lbrace {\rm ker}\,A^{i+1} \rbrace_{i=0}^d$ is a flag on $V$.
\end{enumerate}
\end{lemma}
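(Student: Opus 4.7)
The plan is to prove the equivalence via the chain
\[
(i)\Rightarrow(ii)\Leftrightarrow(iii),\qquad (ii)\Rightarrow(iv),(v),(vi),\qquad (iv)\Rightarrow(v),\qquad (v)\Rightarrow(i),\qquad (vi)\Rightarrow(i),
\]
which closes all loops. The bulk of this is already recorded in the paragraph immediately preceding the lemma: there it is noted that if $A$ lowers a decomposition $\lbrace V_i\rbrace_{i=0}^d$ then $V_0+\cdots+V_i$ equals both $\ker A^{i+1}$ and $A^{d-i}V$, yielding (iv) at once; taking $i=0$ gives (v); and the same paragraph identifies $\lbrace \ker A^{i+1}\rbrace_{i=0}^d$ with the flag induced by $\lbrace V_i\rbrace_{i=0}^d$, giving (vi). The equivalence $(ii)\Leftrightarrow(iii)$ is the remark there that $A$ raises $\lbrace V_i\rbrace_{i=0}^d$ if and only if $A$ lowers the inverted decomposition $\lbrace V_{d-i}\rbrace_{i=0}^d$.

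The only substantive step is $(i)\Rightarrow(ii)$. My plan is to pick $v\in V$ with $A^d v\ne 0$, which exists because $A^d\ne 0$, and show that $v,Av,\ldots,A^d v$ are linearly independent. Given a relation $\sum_{i=0}^d c_i A^i v=0$, apply $A^d$ and use $A^{d+1}=0$ to kill every term except $c_0 A^d v$, forcing $c_0=0$; then apply $A^{d-1}$ to conclude $c_1=0$, and iterate. These $d+1$ independent vectors form a basis of $V$, and setting $V_i=\F\, A^{d-i}v$ for $0\le i\le d$ yields a decomposition with $AV_i=\F\, A^{d-i+1}v=V_{i-1}$ for $1\le i\le d$ and $AV_0=\F\, A^{d+1}v=0$, which is (ii).

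To close the chain, I still need $(v)\Rightarrow(i)$ and $(vi)\Rightarrow(i)$. For (v): if $d=0$ then $\ker A=V$ forces $A=0$, which is Nil; if $d\ge 1$ and $A^d=0$ then $\ker A=A^dV=0$, so $A$ is injective hence bijective on the finite-dimensional $V$, contradicting $A^d=0$. Hence $A^d\ne 0$, and $A^{d+1}V=A(A^dV)=A(\ker A)=0$. For (vi): the flag property gives $\dim\ker A^{d+1}=d+1=\dim V$ so $A^{d+1}=0$, while $\dim\ker A^d=d<d+1$ gives $A^d\ne 0$. The main obstacle is nothing beyond the linear-independence argument inside $(i)\Rightarrow(ii)$; every other piece is a direct reading of the preamble or a short rank-nullity computation.
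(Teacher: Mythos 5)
Your proposal is correct and follows essentially the same route as the paper: the substantive step is $(i)\Rightarrow(ii)$ via a vector $v$ with $A^dv\neq 0$ whose iterates form a basis, the implications $(ii)\Rightarrow(iv),(v),(vi)$ and $(ii)\Leftrightarrow(iii)$ are read off from the preamble, and $(v)\Rightarrow(i)$, $(vi)\Rightarrow(i)$ are the same short computations the paper gives. Your linear-independence argument (applying $A^d, A^{d-1},\ldots$ to a dependence relation) is a cosmetic variant of the paper's observation that $A^{d-i}v\in\ker A^{i+1}\setminus\ker A^i$; both are fine.
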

\begin{proof} 
${\rm (i)}\Rightarrow {\rm (ii)}$ 
By assumption there 
exists $v \in V$ such that $A^dv\not=0$.
By assumption $A^{d+1}v=0$.
Define $v_i=A^{d-i}v$ for $0 \leq i \leq d$.
Then $Av_i= v_{i-1}$ for
$1 \leq i \leq d$ and $Av_0=0$.
By these comments, for $0 \leq i \leq d$ the vector
$v_i$ is in the kernel of $A^{i+1}$ and
not in the kernel of $A^i$. Therefore
 $\lbrace v_i \rbrace_{i=0}^d$
are linearly independent, and hence form a basis
for $V$. By construction the induced decomposition
 of $V$ is lowered by $A$.
\\
\noindent 
${\rm (ii)}\Leftrightarrow {\rm (iii)}$
A decomposition of $V$ is raised by $A$ if and only if
its inversion is lowered by $A$.
\\
\noindent 
${\rm (ii)}\Rightarrow {\rm (iv)}$
By the comments above Lemma
\ref{lem:RaiseLower}.
\\
\noindent 
${\rm (iv)}\Rightarrow {\rm (v)}$ Clear.
\\
\noindent 
${\rm (v)}\Rightarrow {\rm (i)}$ 
Observe that $A^{d+1}V=A(A^dV)=0$, so
$A^{d+1}=0$.
The map 
$A$ is not invertible, so $A$ has nonzero kernel.
This kernel is $A^dV$, so
 $A^dV\not=0$. Therefore $A^d\not=0$.
So $A$ is Nil by Definition
\ref{def:Nil}.
\\
\noindent 
${\rm (ii)}\Rightarrow {\rm (vi)}$
By the comments above Lemma
\ref{lem:RaiseLower}.
\\
\noindent 
${\rm (vi)}\Rightarrow {\rm (i)}$  
For $0 \leq i \leq d$ let $U_i$ denote the
kernel of $A^{i+1}$. 
By assumption $\lbrace U_i\rbrace_{i=0}^d$ is a flag on $V$.
We have $U_d=V$,
so $A^{d+1}=0$. We have $U_{d-1}\not=V$, so $A^d\not=0$.
Therefore $A$ is Nil by Definition
\ref{def:Nil}.
\end{proof}

\noindent We emphasize a point from
 Lemma
\ref{lem:NilRec}. 
For a Nil element $A \in 
	      {\rm End}(V)$ 
the sequence $\lbrace A^{d-i}V\rbrace_{i=0}^d$ is a flag on 
$V$.

\section{LR pairs}

\noindent In this paper, our main topic 
is the notion of an LR triple.
As a warmup,
we first consider the
notion of an LR pair.

\medskip

\noindent Throughout this section $V$ denotes a vector
space over $\mathbb F$ with dimension $d+1$.

\begin{definition}
\label{def:lr}
\rm
An ordered pair $A,B$ of elements in ${\rm End}(V)$
is called {\it lowering-raising} (or {\it  LR})
whenever there exists a decomposition
of $V$ that
is lowered by $A$ and raised by $B$.
We refer to such a pair as an 
 {\it LR pair on $V$}.
This LR pair is said to be {\it over $\mathbb F$}.
We call $V$ the {\it underlying vector space}.
We call $d$ the {\it diameter} of the pair.
\end{definition}

\begin{lemma} 
Let $A,B$ denote an LR pair on $V$. Then 
$B,A$ is an LR pair on $V$. 
\end{lemma}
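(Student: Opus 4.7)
The plan is to exploit the symmetry between lowering and raising under inversion of a decomposition, which is already recorded in the paragraph preceding Lemma \ref{lem:RaiseLower}: a map raises a decomposition if and only if it lowers the inverted decomposition.

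By Definition \ref{def:lr}, since $A,B$ is an LR pair on $V$, there exists a decomposition $\lbrace V_i\rbrace_{i=0}^d$ of $V$ that is lowered by $A$ and raised by $B$. I would then consider the inverted decomposition $\lbrace W_i\rbrace_{i=0}^d$, where $W_i = V_{d-i}$ for $0 \le i \le d$. The verification is immediate: $A$ lowers $\lbrace V_i\rbrace_{i=0}^d$, so by the observation recalled above $A$ raises $\lbrace W_i\rbrace_{i=0}^d$; similarly $B$ raises $\lbrace V_i\rbrace_{i=0}^d$, so $B$ lowers $\lbrace W_i\rbrace_{i=0}^d$. Thus $\lbrace W_i\rbrace_{i=0}^d$ is a decomposition of $V$ lowered by $B$ and raised by $A$, which is exactly what is required to show that $B,A$ is an LR pair on $V$.

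There is essentially no obstacle here; the lemma is a direct consequence of the inversion symmetry of decompositions, and the two-line argument above suffices. If one prefers to avoid invoking the preceding remark and instead argue from scratch, one simply checks the defining conditions: $BW_i = BV_{d-i} = V_{d-i+1} = W_{i-1}$ for $1 \le i \le d$ and $BW_0 = BV_d = 0$, so $B$ lowers $\lbrace W_i\rbrace_{i=0}^d$; and $AW_i = AV_{d-i} = V_{d-i-1} = W_{i+1}$ for $0 \le i \le d-1$ and $AW_d = AV_0 = 0$, so $A$ raises $\lbrace W_i\rbrace_{i=0}^d$.
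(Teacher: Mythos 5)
Your proof is correct and is exactly the intended argument; the paper states this lemma without proof precisely because the inversion symmetry (a map raises a decomposition if and only if it lowers its inversion) makes it immediate. Nothing further is needed.
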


\begin{lemma} 
\label{lem:ABdecIndNil}
Let $A,B$ denote an LR pair on $V$. Then 
each of $A,B$ is  Nil.
\end{lemma}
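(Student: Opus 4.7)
The plan is to invoke Lemma \ref{lem:NilRec} directly. By Definition \ref{def:lr}, since $A,B$ is an LR pair on $V$, there exists a decomposition $\{V_i\}_{i=0}^d$ of $V$ that is lowered by $A$ and raised by $B$. So $A$ satisfies condition (ii) of Lemma \ref{lem:NilRec}, while $B$ satisfies condition (iii) of the same lemma.

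Applying the implication $\text{(ii)} \Rightarrow \text{(i)}$ of Lemma \ref{lem:NilRec} to $A$ shows that $A$ is Nil. Applying the implication $\text{(iii)} \Rightarrow \text{(i)}$ of Lemma \ref{lem:NilRec} to $B$ shows that $B$ is Nil. This gives the result.

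There is essentially no obstacle here: the work has already been done in Lemma \ref{lem:NilRec}, which bundles together the equivalence between being Nil and admitting a decomposition that is lowered (or raised). The present lemma is simply the observation that the existence of a single such decomposition for an LR pair feeds both maps into that equivalence simultaneously.
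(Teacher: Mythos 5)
Your proof is correct and is exactly the argument the paper intends (the paper omits the proof as immediate): Definition \ref{def:lr} supplies a decomposition lowered by $A$ and raised by $B$, and Lemma \ref{lem:NilRec} (ii)$\Rightarrow$(i) and (iii)$\Rightarrow$(i) then give that $A$ and $B$ are Nil.
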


\noindent We mention a very special case.

\begin{example} 
\label{def:triv}
\rm Assume that $d=0$.
Then $A,B \in {\rm End}(V)$ form
an LR pair if and only if $A=0$ and $B=0$.
This LR pair will be called {\it trivial}.
\end{example}

\noindent 
Let $A,B$ denote an LR pair on $V$. By Definition
\ref{def:lr},
there exists a decomposition $\lbrace V_i\rbrace_{i=0}^d$
of $V$ that is lowered by $A$ and raised by $B$.
We have
$V_i = A^{d-i}V_d = B^i V_0$
for
 $0 \leq i \leq d$. 
Moreover 
$V_0=A^dV$ and $V_d=B^dV$.
Therefore $V_i = A^{d-i}B^dV= B^iA^dV$
for $0 \leq i \leq d$.
The decomposition
$\lbrace V_i\rbrace_{i=0}^d$ is uniquely determined by
$A,B$;  we call 
$\lbrace V_i\rbrace_{i=0}^d$ the {\it $(A,B)$-decomposition of
$V$}. Its inversion
$\lbrace V_{d-i}\rbrace_{i=0}^d$ is the
$(B,A)$-decomposition of $V$.
\medskip

\begin{definition}
\label{def:ABE}
\rm Let $A,B$ denote an LR pair on $V$.
By the {\it idempotent sequence} 
for $A,B$ we mean
the idempotent sequence for the $(A,B)$-decomposition of $V$.
\end{definition}

\noindent We have some comments.

\begin{lemma} 
\label{lem:Ebackward}
Let $A,B$ denote an LR pair on $V$, with
idempotent sequence $\lbrace E_i \rbrace_{i=0}^d$.
Then the LR pair $B,A$ has idempotent sequence
$\lbrace E_{d-i} \rbrace_{i=0}^d$.
\end{lemma}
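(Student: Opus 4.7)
The plan is to simply chain together two facts already established in the excerpt. Recall from the paragraph immediately preceding Definition \ref{def:ABE} that if $\lbrace V_i\rbrace_{i=0}^d$ is the $(A,B)$-decomposition of $V$, then its inversion $\lbrace V_{d-i}\rbrace_{i=0}^d$ is precisely the $(B,A)$-decomposition of $V$. Recall also from the preliminaries (the remark immediately after the definition of idempotent sequence) that the idempotent sequence for the inverted decomposition $\lbrace V_{d-i}\rbrace_{i=0}^d$ is $\lbrace E_{d-i}\rbrace_{i=0}^d$, where $\lbrace E_i\rbrace_{i=0}^d$ is the idempotent sequence for $\lbrace V_i\rbrace_{i=0}^d$.

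Given these two observations, the proof is essentially a one-line substitution. First I would invoke Definition \ref{def:ABE} to identify the idempotent sequence of the LR pair $B,A$ as the idempotent sequence of the $(B,A)$-decomposition. Then I would apply the first recalled fact to rewrite that decomposition as $\lbrace V_{d-i}\rbrace_{i=0}^d$, and finally apply the second recalled fact to conclude that the associated idempotent sequence is $\lbrace E_{d-i}\rbrace_{i=0}^d$, as required.

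There is no real obstacle here: the lemma is a bookkeeping statement, and both ingredients are already in place. The only thing worth double-checking is that the indexing convention for "inversion" used in the preliminaries ($\lbrace u_{d-i}\rbrace_{i=0}^d$) matches the one implicit in the description of the $(B,A)$-decomposition, which it does by inspection.
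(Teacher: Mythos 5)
Your proof is correct and is exactly the intended argument; the paper in fact omits the proof of this lemma entirely, treating it as immediate from the two facts you cite (that the $(B,A)$-decomposition is the inversion of the $(A,B)$-decomposition, and that inverting a decomposition inverts its idempotent sequence). Nothing further is needed.
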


\begin{lemma}
\label{lem:ABdecInd}
Let $A,B$ denote an LR pair on $V$.
The $(A,B)$-decomposition of $V$ induces the flag 
$\lbrace A^{d-i}V\rbrace_{i=0}^d$.
The $(B,A)$-decomposition of $V$ induces the flag 
$\lbrace B^{d-i}V\rbrace_{i=0}^d$.
The flags 
$\lbrace A^{d-i}V\rbrace_{i=0}^d$ and
 $\lbrace B^{d-i}V\rbrace_{i=0}^d$ are 
opposite.
 \end{lemma}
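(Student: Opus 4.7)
The plan is to derive all three assertions directly from machinery already set up in Section~2, rather than doing any new computation.

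First I would establish the flag statements. Let $\{V_i\}_{i=0}^d$ denote the $(A,B)$-decomposition of $V$, so by definition $A$ lowers $\{V_i\}_{i=0}^d$. According to the remarks immediately preceding Lemma~\ref{lem:RaiseLower}, whenever $A$ lowers a decomposition $\{V_i\}_{i=0}^d$ we have $V_0+V_1+\cdots+V_i = A^{d-i}V$ for $0\le i\le d$, and the flag induced by $\{V_i\}_{i=0}^d$ has $i$-th term $V_0+\cdots+V_i$. Combining these two facts shows that the induced flag is precisely $\{A^{d-i}V\}_{i=0}^d$, giving the first assertion. The second assertion follows by applying the first assertion to the LR pair $B,A$, whose $(B,A)$-decomposition is the inversion $\{V_{d-i}\}_{i=0}^d$ of the $(A,B)$-decomposition (as noted in the text preceding Definition~\ref{def:ABE}).

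For the opposition claim I would invoke the characterization of opposite flags given in Section~2: two flags $\{U_i\}_{i=0}^d$ and $\{U'_i\}_{i=0}^d$ on $V$ are opposite if and only if there exists a decomposition $\{V_i\}_{i=0}^d$ of $V$ that induces $\{U_i\}_{i=0}^d$ and whose inversion $\{V_{d-i}\}_{i=0}^d$ induces $\{U'_i\}_{i=0}^d$. The $(A,B)$-decomposition $\{V_i\}_{i=0}^d$ supplies exactly such a witness: by the paragraph above it induces the flag $\{A^{d-i}V\}_{i=0}^d$, and its inversion is the $(B,A)$-decomposition, which induces $\{B^{d-i}V\}_{i=0}^d$. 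Hence the two flags are opposite.

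Since every ingredient is either part of the definition of an LR pair or a quoted fact from the preliminaries, I do not anticipate any genuine obstacle; the only thing to be careful about is to cite the correct characterization of opposite flags and to match the indexing convention (the $i$-th flag term corresponds to $V_0+\cdots+V_i$, which equals $A^{d-i}V$, not $A^iV$).
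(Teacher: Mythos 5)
Your proof is correct and follows exactly the route the paper intends: the paper states this lemma without proof because it is an immediate consequence of the Section~2 facts you cite (the identity $V_0+\cdots+V_i=A^{d-i}V$ for a decomposition lowered by $A$, the definition of the induced flag, and the characterization of opposite flags via a decomposition and its inversion). Your indexing is also handled correctly, so nothing further is needed.
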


\begin{lemma}
\label{lem:alphaBeta}
Let $A,B$ denote an LR pair on $V$.
For nonzero $\alpha, \beta \in \mathbb F$
the pair $\alpha A, \beta B$ is an LR pair
on $V$. The 
$(\alpha A, \beta B)$-decomposition of
$V$ is equal to the
$(A,B)$-decomposition of $V$.
Moreover the
idempotent sequence for
$\alpha A, \beta B$ is equal to
the idempotent sequence for
$A, B$.
\end{lemma}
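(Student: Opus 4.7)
The plan is to exploit the fact that the defining conditions for lowering and raising only refer to the subspaces $V_i$ as sets, and a one-dimensional subspace is closed under multiplication by any nonzero scalar. Concretely, let $\{V_i\}_{i=0}^d$ denote the $(A,B)$-decomposition of $V$; by Definition \ref{def:lr} it is lowered by $A$ and raised by $B$.

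First I would check that $\alpha A$ lowers $\{V_i\}_{i=0}^d$. For $1 \le i \le d$ we have $(\alpha A)V_i = \alpha (AV_i) = \alpha V_{i-1}$, and since $V_{i-1}$ is a one-dimensional subspace and $\alpha \neq 0$, the set $\alpha V_{i-1}$ equals $V_{i-1}$; also $(\alpha A)V_0 = \alpha \cdot 0 = 0$. An identical argument shows $\beta B$ raises $\{V_i\}_{i=0}^d$. Thus $\{V_i\}_{i=0}^d$ is a decomposition lowered by $\alpha A$ and raised by $\beta B$, so by Definition \ref{def:lr} the pair $\alpha A, \beta B$ is an LR pair on $V$, and by the uniqueness of the $(\alpha A,\beta B)$-decomposition discussed just before Definition \ref{def:ABE}, it coincides with $\{V_i\}_{i=0}^d$.

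Finally, per Definition \ref{def:ABE} the idempotent sequence of an LR pair is defined as the idempotent sequence of its underlying decomposition, which in turn depends only on the subspaces $V_i$ (via the projection conditions $(E_i-I)V_i = 0$ and $E_iV_j = 0$ for $j \neq i$). Since the $(\alpha A,\beta B)$-decomposition equals the $(A,B)$-decomposition, the two idempotent sequences coincide.

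No step here looks like a serious obstacle; the only subtle point is remembering that $V_{i-1}$ is a subspace (not a chosen vector), so scaling by a nonzero $\alpha$ preserves it setwise. The whole argument is essentially a direct unpacking of definitions.
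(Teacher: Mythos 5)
Your proof is correct and is exactly the direct unpacking of definitions that the paper intends (the paper states this lemma without proof, treating it as immediate). The key observation — that $\alpha V_{i-1}=V_{i-1}$ for a subspace and nonzero $\alpha$ — together with the uniqueness of the $(A,B)$-decomposition and the fact that the idempotent sequence depends only on the decomposition, settles all three assertions.
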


\begin{lemma}
\label{lem:ABAaction}
Let $A,B$ denote an LR pair on $V$.
For $0 \leq r,s\leq d$, consider the action of
the map $A^rB^dA^s$  on the $(A,B)$-decomposition of $V$.
The map sends the $s$-component onto the $(d-r)$-component.
The map sends all other components to zero.
\end{lemma}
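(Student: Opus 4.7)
The plan is to compute the action of $A^rB^dA^s$ componentwise on the $(A,B)$-decomposition $\lbrace V_i \rbrace_{i=0}^d$. Since $A$ lowers and $B$ raises this decomposition, the three factors $A^s$, $B^d$, $A^r$ each act as a simple shift (or annihilation) on every component. Composing these shifts should collapse almost everything to zero, surviving only when the initial index is exactly $s$.

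Concretely, first I would verify the following three facts by induction on the exponent, directly from the defining properties $AV_i=V_{i-1}$, $AV_0=0$, $BV_i=V_{i+1}$, $BV_d=0$:
\begin{enumerate}
\item[\rm (a)] $A^s V_j = V_{j-s}$ if $s \leq j$, and $A^s V_j = 0$ if $s > j$, for $0 \leq j \leq d$.
\item[\rm (b)] $B^d V_k = V_d$ if $k=0$, and $B^d V_k = 0$ if $1 \leq k \leq d$.
\item[\rm (c)] $A^r V_d = V_{d-r}$ for $0 \leq r \leq d$.
\end{enumerate}
Item (a) follows because $A$ lowers the decomposition and $AV_0=0$, so $A^{j+1}V_j=0$. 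Item (b) follows because $B$ raises and $BV_d=0$, so $B^{d-k+1}V_k=0$ once $k\geq 1$, while $B^dV_0=V_d$. Item (c) is immediate from (a) applied with $j=d$.

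Now fix $0\leq j\leq d$ and chase $V_j$ through $A^rB^dA^s$. By (a), $A^sV_j$ equals $V_{j-s}$ when $s\leq j$ and equals $0$ otherwise. In the surviving case, apply (b): $B^dV_{j-s}$ equals $V_d$ exactly when $j-s=0$, and is zero when $j-s\geq 1$. So the only value of $j$ for which $A^rB^dA^sV_j$ is nonzero is $j=s$, and for that value (c) gives $A^rB^dA^sV_s = A^rV_d = V_{d-r}$. Thus $A^rB^dA^s$ sends $V_s$ onto $V_{d-r}$ and sends $V_j$ to $0$ for $j\neq s$, which is the claim.

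There is no real obstacle here; it is a routine chase on the components, with the only subtlety being to remember that $A^s$ and $B^d$ annihilate a component as soon as the exponent exceeds the available "room" in the chain. Alternatively, one could phrase the proof in matrix language using Lemma~\ref{lem:EiMeaning}, Lemma~\ref{lem:LowerRaise}, and Lemma~\ref{lem:RaiseLower}, where $A^rB^dA^s$ is represented, with respect to an $(A,B)$-basis, by a matrix with a single nonzero entry in position $(d-r,s)$; but the direct computation above is shorter.
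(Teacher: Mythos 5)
Your argument is correct and complete: facts (a)--(c) follow directly from the definitions of lowering and raising, and the component chase shows $A^rB^dA^sV_j$ is nonzero only for $j=s$, where it equals $V_{d-r}$. The paper states this lemma without proof, treating it as routine, and your computation is exactly the intended (and only natural) argument, so there is nothing to compare or correct.
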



\begin{lemma}
\label{lem:ABAbasis}
Let $A,B$ denote an LR pair on $V$. Then the following
 is a basis for the $\mathbb F$-vector space 
${\rm End}(V)$:
\begin{eqnarray}
\label{eq:ABAbasis}
A^r B^d A^s \qquad 0 \leq r,s\leq d.
\end{eqnarray}
\end{lemma}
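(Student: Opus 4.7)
The plan is to establish linear independence of the $(d+1)^2$ elements in \eqref{eq:ABAbasis}, which suffices since $\dim_{\mathbb F}{\rm End}(V) = (d+1)^2$.

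First I would invoke Lemma \ref{lem:ABAaction}: for each pair $(r,s)$ with $0 \leq r,s \leq d$, the map $A^rB^dA^s$ sends the $s$-component $V_s$ of the $(A,B)$-decomposition $\lbrace V_i\rbrace_{i=0}^d$ onto the $(d-r)$-component $V_{d-r}$, and annihilates all other components. Now pick any basis $\lbrace v_i\rbrace_{i=0}^d$ of $V$ that induces $\lbrace V_i\rbrace_{i=0}^d$, i.e.\ with $v_i$ spanning $V_i$. With respect to this basis, the matrix $M^{(r,s)} \in {\rm Mat}_{d+1}(\mathbb F)$ representing $A^rB^dA^s$ has a single nonzero entry, located at position $(d-r,s)$; call this entry $c_{r,s}$, and note that $c_{r,s} \neq 0$ because the map sends $V_s$ \emph{onto} $V_{d-r}$ (not merely into it).

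The key observation is that as $(r,s)$ ranges over $\lbrace 0,1,\ldots,d\rbrace^2$, the index $(d-r,s)$ ranges bijectively over all entry positions of a $(d+1)\times(d+1)$ matrix. Thus the matrices $\lbrace M^{(r,s)}\rbrace_{0\leq r,s\leq d}$ are nonzero scalar multiples of the $(d+1)^2$ distinct standard matrix units, and are therefore linearly independent in ${\rm Mat}_{d+1}(\mathbb F)$. Since the representation map ${\rm End}(V)\to {\rm Mat}_{d+1}(\mathbb F)$ determined by $\lbrace v_i\rbrace_{i=0}^d$ is an $\mathbb F$-algebra isomorphism, the elements $A^rB^dA^s$ are linearly independent in ${\rm End}(V)$.

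Finally, a dimension count: the set contains $(d+1)^2 = \dim_{\mathbb F}{\rm End}(V)$ linearly independent elements, hence forms a basis. The main (and really only) obstacle is making sure that the scalars $c_{r,s}$ are genuinely nonzero, but this is immediate from Lemma \ref{lem:ABAaction} together with the fact that each $V_i$ is one-dimensional, so ``onto $V_{d-r}$'' forces a nonzero coefficient. No case distinction based on the trivial case $d=0$ is needed, though one can check it separately: then the only element is $B^0 = I$, which spans the one-dimensional ${\rm End}(V)$.
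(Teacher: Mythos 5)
Your proposal is correct and follows the same route as the paper: the paper also deduces linear independence of the $(d+1)^2$ elements directly from Lemma \ref{lem:ABAaction} and concludes by the dimension count. You have merely spelled out the matrix-unit argument that the paper leaves implicit.
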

\begin{proof} 
The dimension of ${\rm End}(V)$ is $(d+1)^2$.
The list 
(\ref{eq:ABAbasis}) contains
$(d+1)^2$ elements, and these are linearly independent
by Lemma
\ref{lem:ABAaction}. The result follows.
\end{proof}

\begin{corollary}
\label{cor:ABgen} 
 Let $A,B$ denote an LR pair on $V$. Then
the $\mathbb F$-algebra 
${\rm End}(V)$ is generated by $A,B$.
\end{corollary}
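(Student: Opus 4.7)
The plan is to deduce the corollary directly from Lemma \ref{lem:ABAbasis}. Let $\mathcal{S}$ denote the subalgebra of $\mathrm{End}(V)$ generated by $A$ and $B$; we aim to show $\mathcal{S} = \mathrm{End}(V)$.

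First, I would observe that for every $0 \leq r, s \leq d$, the element $A^r B^d A^s$ is a product of elements of $\{A, B\}$ and therefore lies in $\mathcal{S}$. Hence $\mathcal{S}$ contains the $(d+1)^2$ elements listed in (\ref{eq:ABAbasis}). By Lemma \ref{lem:ABAbasis}, these elements form an $\mathbb{F}$-basis of $\mathrm{End}(V)$, so their $\mathbb{F}$-linear span is all of $\mathrm{End}(V)$. Since $\mathcal{S}$ is in particular an $\mathbb{F}$-subspace of $\mathrm{End}(V)$ containing this spanning set, we conclude $\mathcal{S} = \mathrm{End}(V)$.

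There is essentially no obstacle here: Lemma \ref{lem:ABAbasis} does all the heavy lifting by exhibiting a spanning (indeed basis) set for $\mathrm{End}(V)$ consisting of monomials in $A$ and $B$. The only thing to verify is the trivial point that these monomials belong to the subalgebra generated by $A, B$, which is immediate from the definition of a subalgebra (closed under products, containing $A$ and $B$).
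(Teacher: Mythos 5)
Your proof is correct and is precisely the argument the paper intends: the paper's own proof is the one-line "By Lemma \ref{lem:ABAbasis}," and you have simply spelled out the (immediate) details that the monomials $A^rB^dA^s$ lie in the subalgebra generated by $A,B$ and span ${\rm End}(V)$. No issues.
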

\begin{proof} By
 Lemma
\ref{lem:ABAbasis}.
\end{proof}

 \begin{lemma}
\label{lem:paPre} 
 Let $A,B$ denote an LR pair on $V$. Let
 $\lbrace V_i\rbrace_{i=0}^d$ denote the $(A,B)$-decomposition
 of 
 $V$. Then the following {\rm (i)--(iv)} hold.
 \begin{enumerate}
 \item[\rm (i)]
 For $0 \leq i \leq d$ the subspace
 $V_i$ is invariant under $AB$ and $BA$.
 \item[\rm (ii)]
  The map $BA$ is zero on $V_0$.
 \item[\rm (iii)]
 The map $AB$ is zero on $V_d$.
 \item[\rm (iv)]
For $1 \leq i \leq d$, the eigenvalue of
 $AB$ on $V_{i-1}$ is nonzero and equal to the eigenvalue
 of $BA$ on $V_i$.
 \end{enumerate}
\end{lemma}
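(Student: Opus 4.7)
The plan is to deduce everything directly from the defining property of the $(A,B)$-decomposition, namely that $A$ lowers and $B$ raises $\lbrace V_i\rbrace_{i=0}^d$, together with the fact that each $V_i$ is one-dimensional (with the convention $V_{-1}=V_{d+1}=0$).

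For part (i), I would compute $BA V_i$ and $AB V_i$ directly for each $i$. Using $AV_i = V_{i-1}$ (with $V_{-1}=0$) and $BV_i = V_{i+1}$ (with $V_{d+1}=0$), a two-line calculation shows $BA V_i \subseteq V_i$ and $AB V_i \subseteq V_i$, with equality in interior cases. Parts (ii) and (iii) then fall out as the boundary instances of these computations: $BA V_0 = B(AV_0) = B\cdot 0 = 0$, and $AB V_d = A(BV_d) = A\cdot 0 = 0$.

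For part (iv), fix $1\le i\le d$. Since $V_i$ is one-dimensional and invariant under $BA$, the map $BA$ acts on $V_i$ as a scalar, call it $\varphi_i$. The computation $BA V_i = B V_{i-1} = V_i$ shows this scalar is nonzero. Similarly, $AB$ acts on $V_{i-1}$ as a scalar, say $\psi_{i-1}$, and the computation $AB V_{i-1} = A V_i = V_{i-1}$ shows $\psi_{i-1}\ne 0$. The core assertion is $\varphi_i = \psi_{i-1}$. To prove this, pick any nonzero $v \in V_{i-1}$; then $Bv$ is a nonzero vector in $V_i$. On one hand $BAB v = BA(Bv) = \varphi_i Bv$, and on the other $BAB v = B(ABv) = B(\psi_{i-1} v) = \psi_{i-1} Bv$. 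Comparing and using $Bv\ne 0$ yields $\varphi_i = \psi_{i-1}$.

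I do not anticipate any serious obstacle; the whole lemma is a direct unwinding of the definitions, and the one nontrivial point — the equality of eigenvalues in (iv) — is captured by the associativity trick $B(AB) = (BA)B$ applied to a vector in $V_{i-1}$.
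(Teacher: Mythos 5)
Your proposal is correct and follows essentially the same route as the paper: parts (i)--(iii) are immediate from the lowering/raising property, and for (iv) the paper picks nonzero $u\in V_{i-1}$, $v\in V_i$ with $Av=ru$, $Bu=sv$ and identifies both eigenvalues as $rs$, which is the same computation your associativity trick $B(AB)=(BA)B$ packages slightly differently. No gaps.
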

\begin{proof} (i)--(iii) The decomposition $\lbrace V_i\rbrace_{i=0}^d$
is lowered by $A$ and raised by $B$.
\\
\noindent (iv) Pick $0 \not= u \in V_{i-1}$  and
$0 \not=v \in V_i$. There exist nonzero $r,s \in \mathbb F$
such that $Av=r u$ and $Bu=sv$. The scalar $rs$
is the eigenvalue of $AB$ on $V_{i-1}$, and
the eigenvalue of $BA$ on $V_i$.
\end{proof}

 \begin{definition}
\label{def:pa} 
 \rm  Let $A,B$ denote an LR pair on $V$. Let
 $\lbrace V_i\rbrace_{i=0}^d$ denote the $(A,B)$-decomposition
 of 
 $V$. For $1 \leq i \leq d$ let $\varphi_i$
 denote the eigenvalue referred to in Lemma
\ref{lem:paPre}(iv). 
 Thus $0 \not=\varphi_i\in \mathbb F$.
The sequence $\lbrace \varphi_i\rbrace_{i=1}^d$ is called the
 {\it parameter sequence} for $A,B$.
 For notational convenience define $\varphi_0=0$ and $\varphi_{d+1}=0$.
 \end{definition}

\begin{lemma} 
\label{lem:BAvAB}
Let
$A,B$ denote an LR pair on $V$, with parameter sequence
$\lbrace \varphi_i\rbrace_{i=1}^d$. Then the LR pair
 $B,A$ has parameter sequence
$\lbrace \varphi_{d-i+1}\rbrace_{i=1}^d$.
\end{lemma}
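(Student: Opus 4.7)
The plan is to reduce everything to the already-established relationship between the $(A,B)$-decomposition and the $(B,A)$-decomposition, and then to the symmetric description of the eigenvalues given in Lemma \ref{lem:paPre}(iv).

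First I would recall that, as observed just after Definition \ref{def:lr}, the $(B,A)$-decomposition of $V$ is the inversion $\{V_{d-i}\}_{i=0}^d$ of the $(A,B)$-decomposition $\{V_i\}_{i=0}^d$. So if we write $W_i = V_{d-i}$ for the components of the $(B,A)$-decomposition and let $\{\varphi'_i\}_{i=1}^d$ denote the parameter sequence of the LR pair $B,A$, then by Definition \ref{def:pa} the scalar $\varphi'_i$ is the eigenvalue of $AB$ (the product of the raising map times the lowering map of $B,A$) on $W_i = V_{d-i}$, for $1 \leq i \leq d$.

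Next I would invoke Lemma \ref{lem:paPre}(iv) applied to the original LR pair $A,B$: for $1 \leq j \leq d$, the eigenvalue of $AB$ on $V_{j-1}$ equals the eigenvalue of $BA$ on $V_j$, which is $\varphi_j$. Taking $j = d-i+1$, so that $j-1 = d-i$, this gives $\varphi'_i = \varphi_{d-i+1}$ for $1 \leq i \leq d$, as desired.

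I do not expect any real obstacle here; the statement is essentially a bookkeeping consequence of the symmetry built into Lemma \ref{lem:paPre}(iv) together with the fact that reversing the roles of $A$ and $B$ reverses the decomposition. The only thing to be careful about is lining up the indexing conventions: confirming that $AB$, not $BA$, is the relevant product for the reversed pair (since in $B,A$ the map $A$ now plays the raising role and $B$ the lowering role), and that the index shift $j \mapsto d-i+1$ matches the claim.
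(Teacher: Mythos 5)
Your proof is correct and is exactly the argument the paper intends (its proof simply cites Lemma \ref{lem:paPre} and Definition \ref{def:pa}): identify the $(B,A)$-decomposition as the inversion of the $(A,B)$-decomposition, note that $\varphi'_i$ is the eigenvalue of $AB$ on $V_{d-i}$, and apply Lemma \ref{lem:paPre}(iv) with $j=d-i+1$. The indexing bookkeeping is handled correctly.
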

\begin{proof} Use Lemma
\ref{lem:paPre}  and Definition
\ref{def:pa}.
\end{proof}

\noindent Here is an example of an LR pair.

\begin{example} 
\label{ex:LR}
\rm
Let $\lbrace \varphi_i\rbrace_{i=1}^d$ denote a sequence of
nonzero scalars in $\mathbb F$.
Let $\lbrace v_i\rbrace_{i=0}^d$ denote a basis for $V$.
Define $A \in {\rm End}(V)$ such that
$Av_i = \varphi_i v_{i-1}$ for $1 \leq i \leq d$ and
$Av_0=0$. Define
 $B \in {\rm End}(V)$ such that
 $Bv_{i} =   v_{i+1}$ for $0 \leq i \leq d-1$ and
 $Bv_d=0$. 
Then the pair $A,B$ is an LR pair on $V$, with
parameter sequence
$\lbrace \varphi_i \rbrace_{i=1}^d$.
The $(A,B)$-decomposition of $V$ is induced by the
basis $\lbrace v_i \rbrace_{i=0}^d$.
\end{example}

\noindent Let $A,B$ denote an LR pair on $V$,
with idempotent sequence $\lbrace E_i\rbrace_{i=0}^d$.
Our next goal is to obtain each $E_i$ in terms of
$A,B$.

\begin{lemma}
\label{lem:AiBione}
Let $A,B$ denote an LR pair on $V$, with
parameter sequence $\lbrace \varphi_i\rbrace_{i=1}^d $
and idempotent sequence $\lbrace E_i\rbrace_{i=0}^d$.
Then
\begin{eqnarray}
&&
\label{eq:AiBione}
AB = \sum_{j=0}^{d-1} E_j \varphi_{j+1},
\qquad \qquad 
\label{eq:BiAione}
BA = \sum_{j=1}^d E_j \varphi_{j}.
\end{eqnarray}
\end{lemma}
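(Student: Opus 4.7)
The plan is to derive both identities as immediate consequences of Lemma~\ref{lem:paPre}, which already records the eigenvalue behavior of $AB$ and $BA$ on the $(A,B)$-decomposition $\lbrace V_j\rbrace_{j=0}^d$. By Lemma~\ref{lem:paPre}(i), each $V_j$ is invariant under both $AB$ and $BA$, so both operators are simultaneously diagonalized with respect to the decomposition, and each is completely determined by the scalar by which it acts on each $V_j$. Since $E_j$ is by definition the projection of $V$ onto $V_j$ along $\sum_{i\ne j} V_i$ (and $I = \sum_{j=0}^d E_j$), any $X \in {\rm End}(V)$ that acts on each $V_j$ as a scalar $\lambda_j$ satisfies $X = \sum_{j=0}^d \lambda_j E_j$.

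For $BA$, I combine Lemma~\ref{lem:paPre}(ii),(iv) with Definition~\ref{def:pa}: the operator $BA$ is zero on $V_0$ and acts as $\varphi_j I$ on $V_j$ for $1 \le j \le d$. Substituting $\lambda_0 = 0$ and $\lambda_j = \varphi_j$ for $j\ge 1$ yields $BA = \sum_{j=1}^d \varphi_j E_j$. For $AB$, Lemma~\ref{lem:paPre}(iii),(iv) say that $AB$ is zero on $V_d$ and, after the reindexing $j = i-1$ of the statement that the eigenvalue of $AB$ on $V_{i-1}$ equals $\varphi_i$, acts as $\varphi_{j+1} I$ on $V_j$ for $0 \le j \le d-1$. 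The same expansion gives $AB = \sum_{j=0}^{d-1}\varphi_{j+1} E_j$.

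There is essentially no obstacle here: the nontrivial content—existence of a common one-dimensional invariant decomposition and identification of the eigenvalues $\varphi_j$—has already been packaged into Lemma~\ref{lem:paPre}. The only remaining ingredient is the general observation that an operator acting as a scalar on each summand of a direct-sum decomposition equals the corresponding scalar combination of the projectors, which is immediate from the defining properties of the idempotent sequence. Thus this lemma is essentially a translation of Lemma~\ref{lem:paPre} into idempotent form.
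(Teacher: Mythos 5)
Your proof is correct and follows essentially the same route as the paper, which simply verifies both identities by comparing the actions of the two sides on each component of the $(A,B)$-decomposition, using Definition \ref{def:pa} (equivalently, Lemma \ref{lem:paPre}) for the eigenvalues and the defining property of the idempotents $E_j$. No gaps.
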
 
\begin{proof} Use
Definitions
\ref{def:ABE},
\ref{def:pa}.
\end{proof}

\noindent The following result is a generalization of
 Lemma
\ref{lem:AiBione}.

\begin{lemma}
\label{lem:AiBi}
Let $A,B$ denote an LR pair on $V$, with
parameter sequence $\lbrace \varphi_i\rbrace_{i=1}^d $
and idempotent sequence $\lbrace E_i\rbrace_{i=0}^d$.
Then for $0 \leq r \leq d$, 
\begin{eqnarray}
&&
\label{eq:AiBi}
A^rB^r = \sum_{j=0}^{d-r} E_j \varphi_{j+1}\varphi_{j+2} \cdots \varphi_{j+r},
\\
&&
\label{eq:BiAi}
B^rA^r = \sum_{j=r}^{d} E_j \varphi_{j}\varphi_{j-1} \cdots \varphi_{j-r+1}.
\end{eqnarray}
\end{lemma}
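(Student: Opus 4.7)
I plan to prove the lemma by induction on $r$, using Lemma \ref{lem:AiBione} as the base case and as the key tool for the inductive step. The second equation will follow from the first by the swap symmetry $A \leftrightarrow B$ (Lemmas \ref{lem:Ebackward} and \ref{lem:BAvAB}).

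For the base case, when $r=0$ we interpret the empty product as $1$, so the right-hand side of (\ref{eq:AiBi}) becomes $\sum_{j=0}^d E_j = I = A^0 B^0$, and similarly for (\ref{eq:BiAi}). The case $r=1$ is exactly Lemma \ref{lem:AiBione}.

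For the inductive step on (\ref{eq:AiBi}), assume the formula holds for some $r$ with $0 \leq r \leq d-1$. I would write $A^{r+1}B^{r+1} = A(A^rB^r)B$ and substitute the inductive hypothesis to get $A^{r+1}B^{r+1} = \sum_{j=0}^{d-r} \varphi_{j+1} \cdots \varphi_{j+r}\, A E_j B$. The key computation is $A E_j B = AB E_{j-1}$, which holds because $B$ raises the $(A,B)$-decomposition, giving $E_j B = E_j B E_{j-1} = B E_{j-1}$ on the image (or directly from Lemma \ref{lem:RaiseLower} applied to $B$). Combining this with Lemma \ref{lem:AiBione} yields $A E_j B = \Bigl(\sum_{k=0}^{d-1} E_k \varphi_{k+1}\Bigr) E_{j-1} = E_{j-1}\varphi_j$. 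Substituting back and re-indexing $i = j-1$ (noting the $j=0$ term drops out since $\varphi_0 = 0$) gives $A^{r+1}B^{r+1} = \sum_{i=0}^{d-r-1} \varphi_{i+1}\varphi_{i+2}\cdots \varphi_{i+r+1} E_i$, which is the desired formula for $r+1$.

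For (\ref{eq:BiAi}), I would apply the already-proven formula (\ref{eq:AiBi}) to the LR pair $B,A$. By Lemma \ref{lem:Ebackward} this pair has idempotent sequence $\{E_{d-i}\}_{i=0}^d$, and by Lemma \ref{lem:BAvAB} it has parameter sequence $\varphi'_i = \varphi_{d-i+1}$. Thus $B^r A^r = \sum_{j=0}^{d-r} E_{d-j}\, \varphi'_{j+1}\cdots \varphi'_{j+r} = \sum_{j=0}^{d-r} E_{d-j}\, \varphi_{d-j}\varphi_{d-j-1}\cdots \varphi_{d-j-r+1}$. Substituting $i=d-j$ transforms this into $\sum_{i=r}^d E_i \varphi_i \varphi_{i-1}\cdots \varphi_{i-r+1}$, as claimed. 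The only real obstacle is the index bookkeeping and the handling of the boundary cases $\varphi_0=0$, $\varphi_{d+1}=0$; otherwise the argument is a direct consequence of Lemma \ref{lem:AiBione} together with how the idempotents commute with $A$ and $B$.
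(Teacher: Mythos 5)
Your proof is correct, but it takes a different route from the paper's. The paper verifies \eqref{eq:AiBi} in one line by checking that both sides agree on each component $V_i$ of the $(A,B)$-decomposition: on $V_i$ the operator $A^rB^r$ raises $r$ times and then lowers $r$ times, picking up exactly the eigenvalue product $\varphi_{i+1}\cdots\varphi_{i+r}$ (and vanishing when $i>d-r$), which is what the right-hand side does. You instead derive the identity purely algebraically by induction on $r$ from Lemma \ref{lem:AiBione}, using the intertwining relation $E_jB=BE_{j-1}$ and the idempotent orthogonality to collapse $AE_jB$ to $E_{j-1}\varphi_j$. Both arguments are sound; the paper's is shorter and more geometric, while yours makes the dependence on the $r=1$ case explicit and avoids any direct appeal to how the operators act on vectors. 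Your treatment of \eqref{eq:BiAi} via the swapped pair $B,A$ with Lemmas \ref{lem:Ebackward} and \ref{lem:BAvAB} is clean and matches the index bookkeeping correctly; the only cosmetic caveat is that $E_{-1}$ is not literally defined, so in the boundary term $j=0$ you should say the term vanishes because $E_0B=0$ (as $B$ raises the decomposition) rather than writing $E_{-1}\varphi_0$ — though the convention $\varphi_0=0$ makes the formal expression harmless.
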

\begin{proof} 
To verify 
(\ref{eq:AiBi}), note that for $0 \leq i \leq d$, the two sides agree on
component $i$ of the $(A,B)$-decomposition of $V$.
Line
(\ref{eq:BiAi}) is similarly verified.
\end{proof}

\begin{lemma}
\label{lem:Eform}
Let $A,B$ denote an LR pair on $V$,
with idempotent sequence
$\lbrace E_i\rbrace_{i=0}^d$. Then 
for $0 \leq i \leq d$,
\begin{eqnarray}
\label{eq:TwoE}
E_i = \frac{A^{d-i}B^d A^i}{\varphi_1 \cdots \varphi_d},
\qquad \qquad 
E_i = \frac{B^{i}A^d B^{d-i}}{\varphi_1 \cdots \varphi_d},
\end{eqnarray}
where $\lbrace \varphi_j \rbrace_{j=1}^d$ is the 
parameter sequence for $A,B$.
\end{lemma}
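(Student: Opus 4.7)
The plan is to apply Lemma \ref{lem:ABAaction} to locate both maps on the $(A,B)$-decomposition $\lbrace V_i \rbrace_{i=0}^d$ and then pin down the single nonzero scalar by evaluating on an explicit vector in $V_i$.

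Fix $i$ with $0 \leq i \leq d$. By Lemma \ref{lem:ABAaction}, the map $A^{d-i}B^d A^i$ kills every component $V_j$ with $j \not= i$ and sends $V_i$ into $V_i$. Since $E_i$ has the same qualitative action, there exists $c \in \mathbb F$ with $A^{d-i}B^d A^i = c\,E_i$, and it remains only to compute $c$. To do so I will choose an $(A,B)$-basis $\lbrace v_j \rbrace_{j=0}^d$ of $V$, that is, a basis with $v_j \in V_j$, $A v_j = v_{j-1}$ for $1 \leq j \leq d$, and $A v_0 = 0$. Such a basis is obtained by picking $0 \not= v_d \in V_d$ and setting $v_j = A^{d-j}v_d$, exactly as in the proof of Lemma \ref{lem:NilRec}. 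Since $B$ raises $\lbrace V_j \rbrace_{j=0}^d$ we have $B v_{j-1} \in V_j$, and since $BA v_j = \varphi_j v_j$ by Definition \ref{def:pa}, it follows that $B v_{j-1} = \varphi_j v_j$ for $1 \leq j \leq d$ and $B v_d = 0$.

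Now a direct computation gives
\begin{eqnarray*}
A^i v_i = v_0, \qquad B^d v_0 = \varphi_1 \varphi_2 \cdots \varphi_d\, v_d, \qquad A^{d-i} v_d = v_i,
\end{eqnarray*}
so that $A^{d-i}B^d A^i\, v_i = \varphi_1 \varphi_2 \cdots \varphi_d\, v_i$. Therefore $c = \varphi_1 \cdots \varphi_d$, which establishes the first formula in (\ref{eq:TwoE}).

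For the second formula I plan to appeal to the first one applied to the LR pair $B,A$. By Lemma \ref{lem:Ebackward} the idempotent sequence of $B,A$ is $\lbrace E_{d-i} \rbrace_{i=0}^d$, and by Lemma \ref{lem:BAvAB} its parameter sequence is $\lbrace \varphi_{d-j+1} \rbrace_{j=1}^d$, whose product is again $\varphi_1 \cdots \varphi_d$. The first formula applied to $B,A$ at index $d-i$ reads $E_i = B^{i}A^d B^{d-i}/(\varphi_1 \cdots \varphi_d)$, which is the second formula in (\ref{eq:TwoE}). There is no genuine obstacle here; the only thing to watch is that when we choose the basis so that $Av_j = v_{j-1}$, all parameters $\varphi_j$ land on the action of $B$, so that the telescoping product $\varphi_1 \varphi_2 \cdots \varphi_d$ comes out correctly in both computations.
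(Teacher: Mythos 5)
Your proof is correct. It does differ in mechanism from the paper's: the paper writes $A^{d-i}B^dA^i = A^{d-i}B^{d-i}\cdot B^iA^i$ and expands each factor via Lemma \ref{lem:AiBi} as a linear combination of the $E_j$, so that the orthogonality $E_rE_s=\delta_{r,s}E_r$ collapses the product to $\varphi_1\cdots\varphi_d\,E_i$ in one line; you instead invoke Lemma \ref{lem:ABAaction} to identify $A^{d-i}B^dA^i$ as a scalar multiple of $E_i$ a priori, and then extract the scalar by chasing an explicit $(A,B)$-basis vector. Both arguments are short and rest on the same structural facts, so neither buys much over the other, though your "identify up to scalar, then evaluate" scheme avoids expanding products of idempotent sums, and your derivation of the second formula from the first by passing to the LR pair $B,A$ (using Lemmas \ref{lem:Ebackward} and \ref{lem:BAvAB}) is a clean symmetry argument where the paper simply repeats the computation with $A^d=A^iA^{d-i}$. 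All the individual steps check out: $Bv_{j-1}=\varphi_j v_j$ does follow from $BAv_j=\varphi_j v_j$, the telescoping product is right, and the index bookkeeping in the $B,A$ reduction (idempotent $d-i$ of $B,A$ equals $E_i$, and the reversed parameter sequence has the same product) is correct.
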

\begin{proof} 
To obtain the formula on the left in
(\ref{eq:TwoE}),
in the equation
$A^{d-i}B^d A^i=A^{d-i}B^{d-i}B^iA^i$, evaluate
the right-hand side using
Lemma
\ref{lem:AiBi} and simplify the result using
$E_r E_s = \delta_{r,s}E_r$ $(0 \leq r,s\leq d)$.
The formula on the right in
(\ref{eq:TwoE}) is similarly obtained.
\end{proof}

\begin{lemma}
\label{lem:zeroprod}
Let $A,B$ denote an LR pair on $V$, with
idempotent sequence 
$\lbrace E_i \rbrace_{i=0}^d$.
Then for $0 \leq i<j\leq d$ the following are zero:
\begin{eqnarray*}
A^j E_i,
\qquad \qquad 
E_j A^{d-i},
\qquad \qquad 
E_i B^j,
\qquad \qquad 
B^{d-i} E_j.
\end{eqnarray*}
\end{lemma}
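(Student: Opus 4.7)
The plan is to work throughout with the $(A,B)$-decomposition $\lbrace V_i\rbrace_{i=0}^d$ of $V$, exploiting two facts that are already established: $E_iV=V_i$ and $E_iV_k=0$ for $k\neq i$; and $A$ lowers $\lbrace V_i\rbrace_{i=0}^d$, so in particular $A^jV_i\subseteq V_{i-j}$ with the convention $V_r=0$ for $r<0$, and $A^{d-i}V=V_0+V_1+\cdots+V_i$ (noted in the paragraph preceding Lemma \ref{lem:RaiseLower}, using that $A^{d-i}V$ coincides with the kernel of $A^{i+1}$, which in turn equals the flag member $V_0+\cdots+V_i$).

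For the first identity, I would observe that for any $v\in V$, $E_iv\in V_i$, hence $A^jE_iv\in V_{i-j}=0$ since $i<j$. Thus $A^jE_i=0$. For the second identity, I would use $A^{d-i}V=\sum_{k=0}^{i}V_k$ and note that $E_j$ annihilates $V_k$ whenever $k\neq j$; since $j>i\ge k$ for all $k$ in the sum, $E_jA^{d-i}V=\sum_{k=0}^{i}E_jV_k=0$, so $E_jA^{d-i}=0$.

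For the remaining two identities I would exploit the symmetry between $A,B$: apply the two identities just proved to the LR pair $B,A$ (which is an LR pair by the lemma stated just after Definition \ref{def:lr}), whose idempotent sequence is $\lbrace E_{d-i}\rbrace_{i=0}^d$ by Lemma \ref{lem:Ebackward}. This yields $B^jE_{d-i}=0$ and $E_{d-j}B^{d-i}=0$ for all $0\le i<j\le d$. Reindexing via $i'=d-j$, $j'=d-i$ (which preserves the strict inequality $i'<j'$) gives $B^{d-i'}E_{j'}=0$ and $E_{i'}B^{j'}=0$ for $0\le i'<j'\le d$, which are the two desired identities.

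There is essentially no obstacle here; the only thing to be careful about is the bookkeeping of indices, particularly verifying that the reindexing in the last step indeed covers all pairs $0\le i<j\le d$ in both the $B$-identities.
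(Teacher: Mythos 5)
Your proof is correct, but it follows a different route from the paper's. The paper proves all four identities in one line by substituting the product formulas of Lemma \ref{lem:Eform}, namely $E_i = A^{d-i}B^dA^i/(\varphi_1\cdots\varphi_d) = B^iA^dB^{d-i}/(\varphi_1\cdots\varphi_d)$, into each expression and observing that the resulting power of $A$ or $B$ has exponent at least $d+1$ (for instance $A^jE_i \propto A^{j+d-i}B^dA^i$ with $j+d-i>d$), whence everything vanishes by nilpotency. You instead argue directly from the action of $A$ on the $(A,B)$-decomposition and on the flag $A^{d-i}V=V_0+\cdots+V_i$, and then obtain the $B$-identities by passing to the LR pair $B,A$ via Lemma \ref{lem:Ebackward} and reindexing $(i,j)\mapsto(d-j,d-i)$; your bookkeeping there is right, since that map is an involution on the set of pairs with $0\le i<j\le d$. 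The paper's argument is shorter and purely algebraic once the formulas \eqref{eq:TwoE} are available, while yours is more self-contained at the level of decompositions and makes the symmetry between the $A$- and $B$-identities explicit rather than hiding it inside the two alternative formulas for $E_i$. Either is acceptable.
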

\begin{proof}
By 
(\ref{eq:TwoE}) together with $A^{d+1}=0$ and $B^{d+1}=0$.
\end{proof}

\begin{lemma} 
\label{lem:varphiTrace}
Let $A,B$ denote an LR pair on $V$, with
parameter sequence 
$\lbrace \varphi_i \rbrace_{i=1}^d$  and
idempotent sequence 
$\lbrace E_i \rbrace_{i=0}^d$.
Then for $0 \leq i \leq d$,
\begin{eqnarray}
{\rm tr}(AB E_i)=\varphi_{i+1},
\qquad \qquad 
{\rm tr}(BAE_i)=\varphi_{i}.
\label{eq:traceE}
\end{eqnarray}
\end{lemma}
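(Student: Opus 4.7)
The plan is to compute $ABE_i$ and $BAE_i$ explicitly using Lemma~\ref{lem:AiBione}, then invoke the fact that each $E_i$ is a rank one idempotent with trace $1$.

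First, recall from Lemma~\ref{lem:AiBione} that
\begin{eqnarray*}
AB = \sum_{j=0}^{d-1} E_j \varphi_{j+1}, \qquad\qquad BA = \sum_{j=1}^{d} E_j \varphi_j.
\end{eqnarray*}
Multiplying on the right by $E_i$ and using the orthogonality relations $E_j E_i = \delta_{j,i} E_i$ for $0 \le i,j \le d$, one obtains $AB\,E_i = \varphi_{i+1} E_i$ and $BA\,E_i = \varphi_i E_i$, where the conventions $\varphi_0 = 0$ and $\varphi_{d+1}=0$ from Definition~\ref{def:pa} absorb the boundary indices.

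Finally, the comments following the definition of the idempotent sequence give ${\rm tr}(E_i)=1$ for $0 \le i \le d$. Taking traces of the two identities above yields
\begin{eqnarray*}
{\rm tr}(AB\,E_i) = \varphi_{i+1} \, {\rm tr}(E_i) = \varphi_{i+1}, \qquad\qquad {\rm tr}(BA\,E_i) = \varphi_i \, {\rm tr}(E_i) = \varphi_i,
\end{eqnarray*}
which is the desired conclusion.

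There is no real obstacle here: the statement is essentially a bookkeeping consequence of Lemma~\ref{lem:AiBione} together with the basic properties of the primitive idempotents. The only point to be careful about is the boundary behavior at $i=0$ and $i=d$, which is handled precisely by the conventions $\varphi_0=\varphi_{d+1}=0$.
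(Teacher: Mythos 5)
Your proposal is correct and follows exactly the paper's own argument: multiply the identities of Lemma~\ref{lem:AiBione} on the right by $E_i$, use $E_jE_i=\delta_{j,i}E_i$ to get $ABE_i=\varphi_{i+1}E_i$ and $BAE_i=\varphi_iE_i$, then take traces using ${\rm tr}(E_i)=1$. The remark about the boundary indices being absorbed by the conventions $\varphi_0=\varphi_{d+1}=0$ is a nice explicit touch but does not change the argument.
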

\begin{proof}  
In the equation on the left in
(\ref{eq:AiBione}),
  multiply each side on the right by $E_i$
to get $ABE_i = \varphi_{i+1} E_i$. In this equation, take
the trace of each side, and recall that $E_i$ has trace 1.
This gives the
equation on the left in
(\ref{eq:traceE}). The other equation
in (\ref{eq:traceE})
is similarly verified.
\end{proof}

\noindent Let $A,B$ denote an LR pair on $V$. We now
describe a set of bases for $V$, called
$(A,B)$-bases.

\begin{definition}
\label{def:ABbasis}
\rm
Let $A,B$ denote an LR pair on $V$. 
 Let $\lbrace V_i \rbrace_{i=0}^d$ denote
the 
$(A,B)$-decomposition of $V$.
A basis 
$\lbrace v_i \rbrace_{i=0}^d$ for $V$
is called an {\it $(A,B)$-basis} whenever:
\begin{enumerate}
\item[\rm (i)]
 $ v_i \in V_i$ for $0 \leq i \leq d$;
\item[\rm (ii)] 
 $Av_i = v_{i-1}$ for $1 \leq i \leq d$.
\end{enumerate}
\end{definition}

\begin{lemma}
\label{lem:BAbasisU}
Let $A,B$ denote an LR pair on $V$. 
Let $\lbrace v_i \rbrace_{i=0}^d$ denote an $(A,B)$-basis
for $V$, and let $\lbrace v'_i\rbrace_{i=0}^d$ denote
any vectors in $V$.
Then the following are equivalent:
\begin{enumerate}
\item[\rm (i)] 
$\lbrace v'_i\rbrace_{i=0}^d$ is an $(A,B)$-basis for $V$;
\item[\rm (ii)] there exists $0 \not=\zeta \in \mathbb F$
such that $v'_i = \zeta v_i $ for $0 \leq i \leq d$.
\end{enumerate}
\end{lemma}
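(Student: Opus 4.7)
The plan is to verify both directions directly from Definition \ref{def:ABbasis}, using the fact that each component $V_i$ of the $(A,B)$-decomposition is one-dimensional.

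The implication (ii)$\Rightarrow$(i) is immediate. If $v'_i = \zeta v_i$ with $\zeta \neq 0$, then $v'_i \in V_i$ since $v_i \in V_i$ and $V_i$ is a subspace. Moreover, $\{v'_i\}_{i=0}^d$ is a basis of $V$ because it is obtained from a basis by the nonzero scalar $\zeta$. Finally, for $1 \leq i \leq d$ we have $Av'_i = \zeta A v_i = \zeta v_{i-1} = v'_{i-1}$, verifying condition (ii) of Definition \ref{def:ABbasis}.

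For the converse (i)$\Rightarrow$(ii), assume $\{v'_i\}_{i=0}^d$ is an $(A,B)$-basis. Then $v'_i \in V_i$ for $0 \leq i \leq d$. Since $V_i$ is one-dimensional with basis $v_i$, there exist scalars $\zeta_i \in \mathbb F$ with $v'_i = \zeta_i v_i$. Each $\zeta_i$ is nonzero, since $v'_i$ belongs to a basis of $V$ and is therefore nonzero. It remains to show $\zeta_0 = \zeta_1 = \cdots = \zeta_d$. For $1 \leq i \leq d$, apply the relation $Av'_i = v'_{i-1}$ to obtain $\zeta_i v_{i-1} = \zeta_i A v_i = A v'_i = v'_{i-1} = \zeta_{i-1} v_{i-1}$. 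Since $v_{i-1} \neq 0$, this forces $\zeta_i = \zeta_{i-1}$. By induction the common value $\zeta := \zeta_0 = \cdots = \zeta_d$ gives the desired scalar.

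There is no real obstacle here; the argument is a direct unpacking of the definition. The only point that deserves care is noting that the one-dimensionality of each $V_i$ (from the definition of decomposition) is what pins down $v'_i$ as a scalar multiple of $v_i$, and that the lowering relation $Av_i = v_{i-1}$ is precisely what forces these scalars to be equal across all indices.
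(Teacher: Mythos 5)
Your proof is correct and is exactly the intended argument: the paper's proof consists of the single instruction ``Use Definition \ref{def:ABbasis},'' and your write-up is the straightforward unpacking of that definition (one-dimensionality of each $V_i$ pins down $v'_i = \zeta_i v_i$, and the lowering relation forces all $\zeta_i$ equal). Nothing to add.
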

\begin{proof} Use Definition
\ref{def:ABbasis}.
\end{proof}

\begin{lemma}
\label{lem:ABmatrix}
Let $A,B$ denote an LR pair on $V$, with
parameter sequence $\lbrace \varphi_i\rbrace_{i=1}^d$.
Let $\lbrace v_i \rbrace_{i=0}^d$ denote a basis for $V$.
Then the following are equivalent:
\begin{enumerate}
\item[\rm (i)] 
$\lbrace v_i \rbrace_{i=0}^d$ is an
$(A,B)$-basis for $V$;
\item[\rm (ii)]
with respect to 
$\lbrace v_i \rbrace_{i=0}^d$ the
matrices representing
$A$ and $B$ are
\begin{eqnarray}
\label{eq:ABrep}
A:\; 
\left(
\begin{array}{ c c cc c c }
 0 & 1   &   &&   & \bf 0  \\
  & 0  &  1  &&  &      \\ 
   &   &  0   & \cdot &&  \\
     &   &  & \cdot  & \cdot & \\
       &  & &  & \cdot & 1 \\
        {\bf 0}  &&  & &   &  0  \\
	\end{array}
	\right),
	\qquad \qquad
	B:\;
	\left(
	\begin{array}{ c c cc c c }
	0 &   &   &&   & \bf 0  \\
	\varphi_1 & 0  &   &&  &      \\
	 &  \varphi_2 & 0   & &&  \\
	   &   & \cdot & \cdot  & & \\
	     &  & & \cdot & \cdot & \\
	      {\bf 0}  &&  & &  \varphi_d & 0  \\
	      \end{array}
	      \right).
	      \end{eqnarray}
\end{enumerate}
\end{lemma}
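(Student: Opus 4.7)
The plan is to verify the two implications by reading the matrix entries column by column and invoking the uniqueness of the $(A,B)$-decomposition.

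For (i)$\Rightarrow$(ii), assume $\{v_i\}_{i=0}^d$ is an $(A,B)$-basis. The shape of the matrix of $A$ is immediate: condition (ii) of Definition \ref{def:ABbasis} gives $Av_i = v_{i-1}$ for $1 \leq i \leq d$, and since $v_0 \in V_0$ with $V_{-1}=0$ and $A$ lowers $\{V_i\}$, we have $Av_0=0$. For $B$, I would use that $B$ raises the $(A,B)$-decomposition, so $Bv_i \in V_{i+1}$, meaning $Bv_i = c_i v_{i+1}$ for some $c_i \in \mathbb{F}$ (with the convention $v_{d+1}=0$, so $Bv_d=0$). To pin down $c_i$, apply Definition \ref{def:pa}: $BA$ acts on $V_i$ as $\varphi_i$, so $BAv_i = \varphi_i v_i$, while $BAv_i = Bv_{i-1} = c_{i-1}v_i$, forcing $c_{i-1}=\varphi_i$. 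This yields exactly the matrix of $B$ displayed in \eqref{eq:ABrep}.

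For (ii)$\Rightarrow$(i), assume the matrices of $A,B$ relative to $\{v_i\}_{i=0}^d$ are as in \eqref{eq:ABrep}. Reading off columns gives $Av_i = v_{i-1}$ for $1 \leq i \leq d$ with $Av_0=0$, and $Bv_i = \varphi_{i+1}v_{i+1}$ for $0 \leq i \leq d-1$ with $Bv_d=0$. Let $V'_i = \mathbb{F}v_i$ be the decomposition of $V$ induced by the basis. Then $A$ lowers $\{V'_i\}_{i=0}^d$, and since each $\varphi_{i+1}$ is nonzero, $B$ raises $\{V'_i\}_{i=0}^d$. By the uniqueness of the $(A,B)$-decomposition (noted just before Definition \ref{def:ABE}), $V'_i = V_i$ for $0 \leq i \leq d$, so $v_i \in V_i$. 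Combined with $Av_i = v_{i-1}$, this is exactly Definition \ref{def:ABbasis}.

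There is no real obstacle here; the argument is essentially bookkeeping. The only non-trivial ingredient is the uniqueness of the $(A,B)$-decomposition used in the converse direction, together with the fact that the parameters $\varphi_i$ are nonzero (which is what allows $B$ to actually raise $\{V'_i\}$ rather than merely preserve the flag).
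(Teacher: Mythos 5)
Your proof is correct and follows the same route as the paper: the forward direction is exactly an unwinding of Definitions \ref{def:pa} and \ref{def:ABbasis}, and the converse uses the induced decomposition together with the uniqueness of the $(A,B)$-decomposition, just as the paper does. Your explicit remarks about needing $\varphi_i\neq 0$ for $B$ to raise the induced decomposition fill in the details the paper leaves implicit.
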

\begin{proof}
${\rm (i)}\Rightarrow {\rm (ii)}$ 
Use Definitions
\ref{def:pa},
\ref{def:ABbasis}.
\\
${\rm (ii)}\Rightarrow {\rm (i)}$ 
Let $\lbrace V_i \rbrace_{i=0}^d$ denote the
decomposition of $V$ induced by 
$\lbrace v_i \rbrace_{i=0}^d$.
By (\ref{eq:ABrep}),  
 $\lbrace V_i \rbrace_{i=0}^d$ is lowered by
 $A$ and raised by $B$.
Therefore 
 $\lbrace V_i \rbrace_{i=0}^d$ is the $(A,B)$-decomposition
 of $V$.
Now by Definition
\ref{def:ABbasis},
$\lbrace v_i \rbrace_{i=0}^d$ is an
$(A,B)$-basis for $V$.
\end{proof}

\begin{lemma}
\label{lem:5Char}
Let $A,B$ denote an LR pair on $V$,
with parameter sequence $\lbrace \varphi_i \rbrace_{i=1}^d$.
Let $\lbrace v_i\rbrace_{i=0}^d$ denote any vectors in $V$.
Then the following are equivalent:
\begin{enumerate}
\item[\rm (i)] 
$\lbrace v_i\rbrace_{i=0}^d$ is an $(A,B)$-basis for $V$;
\item[\rm (ii)] $0 \not= v_0 \in A^dV$ and $Bv_i = \varphi_{i+1}v_{i+1}$ for
$0 \leq i \leq d-1$;
\item[\rm (iii)] there exists $0 \not= \eta \in A^dV$ such that
$v_i = (\varphi_1 \varphi_2 \cdots \varphi_i)^{-1}B^i \eta$ for
$0 \leq i \leq d$;
\item[\rm (iv)] $0 \not=v_d \in B^dV$ and $Av_i = v_{i-1}$ for
$1 \leq i \leq d$;
\item[\rm (v)] there exists $0 \not= \xi \in B^dV$ such that
$v_i = A^{d-i} \xi$ for
$0 \leq i \leq d$.
\end{enumerate}
\end{lemma}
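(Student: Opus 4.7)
The plan is to establish two chains of implications, namely (i)$\Rightarrow$(iv)$\Rightarrow$(v)$\Rightarrow$(i) and (i)$\Rightarrow$(ii)$\Rightarrow$(iii)$\Rightarrow$(i). The intuition is that an $(A,B)$-basis is determined up to a common nonzero scalar (by Lemma~\ref{lem:BAbasisU}), so it suffices to check that each of the parametrizations in (iii) and (v) produces exactly such a basis, while (ii) and (iv) are recursive reformulations of the defining property.

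\medskip

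\noindent For the first chain, I would argue as follows. Assume (i). Then $v_d \in V_d = B^dV$ with $v_d\not=0$, and $Av_i=v_{i-1}$ by Definition~\ref{def:ABbasis}, yielding (iv). Assuming (iv), set $\xi=v_d$; induction on $d-i$ using $Av_i=v_{i-1}$ gives $v_i=A^{d-i}\xi$, so (v) holds. Assuming (v), let $\lbrace V_i\rbrace_{i=0}^d$ denote the $(A,B)$-decomposition of $V$. Since $V_d=B^dV$ contains $\xi\not=0$ and $V_d$ is one-dimensional, $\xi$ spans $V_d$. Applying $A^{d-i}$ and using $V_i=A^{d-i}V_d$ (noted just after Definition~\ref{def:pa}, or via Lemma~\ref{lem:NilRec}), we see that $v_i=A^{d-i}\xi$ spans $V_i$; in particular $v_i\neq 0$ and $v_i\in V_i$. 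Finally $Av_i=A^{d-i+1}\xi=v_{i-1}$ for $1\leq i\leq d$, giving (i).

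\medskip

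\noindent For the second chain, assume (i). Then $v_0\in V_0=A^dV$ and $v_0\neq 0$. Since $B$ raises the $(A,B)$-decomposition, $Bv_i \in V_{i+1}$, and $v_{i+1}$ spans $V_{i+1}$, so $Bv_i=c_iv_{i+1}$ for some $c_i\in\mathbb F$. Applying $A$ and using $AB=\sum_{j=0}^{d-1}\varphi_{j+1}E_j$ from Lemma~\ref{lem:AiBione}, we get $\varphi_{i+1}v_i=ABv_i=c_iAv_{i+1}=c_iv_i$, forcing $c_i=\varphi_{i+1}$; this is (ii). Assuming (ii), set $\eta=v_0$ and iterate $Bv_{i-1}=\varphi_iv_i$, giving $v_i=(\varphi_1\cdots\varphi_i)^{-1}B^i\eta$, which is (iii). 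Assuming (iii), we have $\eta\in V_0$ nonzero, and since $V_i=B^iV_0$ is one-dimensional, each $v_i$ spans $V_i$ and is nonzero. To recover $Av_i=v_{i-1}$, write $Av_i=c'v_{i-1}$ for some $c'\in\mathbb F$ (since $Av_i\in V_{i-1}$), and apply $B$: on one hand $BAv_i=\varphi_iv_i$ by Lemma~\ref{lem:AiBione}, and on the other $BAv_i=c'Bv_{i-1}=c'\varphi_iv_i$, hence $c'=1$. This is (i).

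\medskip

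\noindent I do not expect any real obstacle here; the only point that requires a small argument is verifying in (iii)$\Rightarrow$(i) and (v)$\Rightarrow$(i) that the prescribed vectors are nonzero and lie in the correct component, which follows cleanly from the one-dimensionality of each $V_i$ together with the identifications $V_i=A^{d-i}V_d=B^iV_0$ recorded after Definition~\ref{def:lr}. The identities $AB=\sum_j\varphi_{j+1}E_j$ and $BA=\sum_j\varphi_jE_j$ from Lemma~\ref{lem:AiBione} supply the scalar constants exactly where needed.
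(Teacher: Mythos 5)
Your proof is correct. Each implication in the two chains (i)$\Rightarrow$(iv)$\Rightarrow$(v)$\Rightarrow$(i) and (i)$\Rightarrow$(ii)$\Rightarrow$(iii)$\Rightarrow$(i) checks out: the memberships and nonvanishing follow from the one-dimensionality of the components together with $V_i=A^{d-i}V_d=B^iV_0$, $V_0=A^dV$, $V_d=B^dV$, and the scalars are pinned down correctly by $AB=\sum_j\varphi_{j+1}E_j$ and $BA=\sum_j\varphi_jE_j$ from Lemma~\ref{lem:AiBione}. The route differs from the paper's, which disposes of the lemma in one line by citing Lemma~\ref{lem:ABmatrix}: there, conditions (ii)--(v) are read off from the bidiagonal matrix forms of $A$ and $B$ relative to an $(A,B)$-basis (the subdiagonal entries $\varphi_i$ of $B$ encode $Bv_{i-1}=\varphi_iv_{i-1+1}$, and the superdiagonal $1$'s of $A$ encode $Av_i=v_{i-1}$). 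The two arguments have essentially the same mathematical content once the matrix statement is unwound; yours is more self-contained and makes explicit where each hypothesis is used, while the paper's is more compact and reuses machinery already in place. Either is acceptable.
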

\begin{proof} Use
Lemma \ref{lem:ABmatrix}.
\end{proof}

\noindent Let $A,B$ denote an LR pair on $V$.
By an {\it inverted $(A,B)$-basis for $V$}
we mean the inversion of an $(A,B)$-basis for $V$.

\begin{lemma}
\label{lem:invABbasis}
Let $A,B$ denote an LR pair on $V$. 
Let $\lbrace V_i\rbrace_{i=0}^d$ denote the
$(A,B)$-decomposition of $V$.
A basis $\lbrace v_i \rbrace_{i=0}^d$ for $V$ is
an inverted $(A,B)$-basis if and only if both
\begin{enumerate}
\item[\rm (i)] $v_i \in V_{d-i}$ for $0 \leq i \leq d$;
\item[\rm (ii)] $Av_i = v_{i+1}$ for $0 \leq i \leq d-1$.
\end{enumerate}
\end{lemma}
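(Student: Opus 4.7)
The plan is to derive this lemma directly from the definition of an inverted $(A,B)$-basis together with Definition \ref{def:ABbasis}, by making the substitution $u_i = v_{d-i}$ (or equivalently $v_i = u_{d-i}$) and tracking how the two defining conditions transform.

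First I would handle the forward direction. Suppose $\lbrace v_i\rbrace_{i=0}^d$ is an inverted $(A,B)$-basis. By definition, this means there is an $(A,B)$-basis $\lbrace u_i\rbrace_{i=0}^d$ with $v_i = u_{d-i}$ for $0 \leq i \leq d$. Then Definition \ref{def:ABbasis}(i) gives $u_{d-i} \in V_{d-i}$, so $v_i \in V_{d-i}$, establishing (i). Definition \ref{def:ABbasis}(ii) gives $Au_j = u_{j-1}$ for $1 \leq j \leq d$; setting $j = d-i$ yields $Av_i = Au_{d-i} = u_{d-i-1} = v_{i+1}$ for $0 \leq i \leq d-1$, establishing (ii).

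For the converse, suppose $\lbrace v_i\rbrace_{i=0}^d$ is a basis for $V$ satisfying (i) and (ii). Define $u_i = v_{d-i}$ for $0 \leq i \leq d$. Since $\lbrace v_i\rbrace_{i=0}^d$ is a basis, so is $\lbrace u_i\rbrace_{i=0}^d$. From (i), $u_i = v_{d-i} \in V_{d-(d-i)} = V_i$, which is Definition \ref{def:ABbasis}(i). From (ii), for $1 \leq i \leq d$ we have $0 \leq d-i \leq d-1$, so $Au_i = Av_{d-i} = v_{d-i+1} = u_{i-1}$, which is Definition \ref{def:ABbasis}(ii). Hence $\lbrace u_i\rbrace_{i=0}^d$ is an $(A,B)$-basis, and $\lbrace v_i\rbrace_{i=0}^d = \lbrace u_{d-i}\rbrace_{i=0}^d$ is its inversion, hence an inverted $(A,B)$-basis.

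There is no real obstacle here; the argument is a purely bookkeeping reindexing, and both directions are symmetric. The only thing to be careful about is confirming that $\lbrace u_i\rbrace_{i=0}^d$ is indeed a basis in the converse (immediate from $\lbrace v_i\rbrace_{i=0}^d$ being a basis) and that the index range $1 \leq i \leq d$ in Definition \ref{def:ABbasis}(ii) corresponds correctly to $0 \leq i \leq d-1$ in condition (ii) of the lemma under the substitution.
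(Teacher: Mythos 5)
Your proof is correct and is exactly the argument the paper intends; the paper's own proof is the one-line "By Definition \ref{def:ABbasis} and the meaning of inversion," and your write-up simply carries out that reindexing explicitly in both directions. No gaps.
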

\begin{proof} By Definition
\ref{def:ABbasis}
and the meaning of inversion.
\end{proof}

\begin{lemma}
\label{lem:invABmatrix}
Let $A,B$ denote an LR pair on $V$, with
parameter sequence $\lbrace \varphi_i\rbrace_{i=1}^d$.
Let $\lbrace v_i \rbrace_{i=0}^d$ denote a basis
for $V$. Then the following are equivalent:
\begin{enumerate}
\item[\rm (i)]
$\lbrace v_i \rbrace_{i=0}^d$ is an
inverted $(A,B)$-basis for $V$;
\item[\rm (ii)] with respect to
$\lbrace v_i \rbrace_{i=0}^d$ 
 the matrices representing $A$ and $B$ are
\begin{eqnarray}
\label{eq:invABrep}
	A:\;
	\left(
	\begin{array}{ c c cc c c }
	0 &   &   &&   & \bf 0  \\
	1 & 0  &   &&  &      \\
	 &  1 & 0   & &&  \\
	   &   & \cdot & \cdot  & & \\
	     &  & & \cdot & \cdot & \\
	      {\bf 0}  &&  & &  1 & 0  \\
	      \end{array}
	      \right),
	      \qquad \qquad
B:\; 
\left(
\begin{array}{ c c cc c c }
 0 & \varphi_d   &   &&   & \bf 0  \\
  & 0  &  \varphi_{d-1}  &&  &      \\ 
   &   &  0   & \cdot &&  \\
     &   &  & \cdot  & \cdot & \\
       &  & &  & \cdot & \varphi_1 \\
        {\bf 0}  &&  & &   &  0  \\
	\end{array}
	\right).
	      \end{eqnarray}
\end{enumerate}
\end{lemma}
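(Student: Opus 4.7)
The plan is to reduce this to Lemma \ref{lem:ABmatrix} by exploiting the definition of an inverted $(A,B)$-basis as the reversal of an $(A,B)$-basis. Concretely, given a basis $\lbrace v_i\rbrace_{i=0}^d$, I will set $w_i = v_{d-i}$ and translate statements about $\lbrace v_i\rbrace_{i=0}^d$ into statements about $\lbrace w_i\rbrace_{i=0}^d$ by reindexing. Each of the two matrix displays in (\ref{eq:invABrep}) is just the $(A,B)$-matrix display (\ref{eq:ABrep}) conjugated by the permutation matrix $\mathbf Z$ (sending coordinate $i$ to $d-i$), and this encodes exactly the substitution $i \mapsto d-i$.

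For the direction (i) $\Rightarrow$ (ii), assume $\lbrace v_i\rbrace_{i=0}^d$ is an inverted $(A,B)$-basis, so $\lbrace w_i\rbrace_{i=0}^d$ with $w_i = v_{d-i}$ is an $(A,B)$-basis. From Definition \ref{def:ABbasis} and Lemma \ref{lem:5Char}(ii) applied to $\lbrace w_i\rbrace_{i=0}^d$, I get $Aw_i = w_{i-1}$ for $1 \le i \le d$, $Aw_0 = 0$, $Bw_i = \varphi_{i+1} w_{i+1}$ for $0 \le i \le d-1$, and $Bw_d = 0$. Substituting $w_i = v_{d-i}$ and reindexing with $j = d-i$ converts these to $Av_j = v_{j+1}$ for $0 \le j \le d-1$, $Av_d = 0$, $Bv_j = \varphi_{d-j+1} v_{j-1}$ for $1 \le j \le d$, and $Bv_0 = 0$. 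These four relations are exactly the matrices (\ref{eq:invABrep}).

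For the direction (ii) $\Rightarrow$ (i), I read off the same four relations directly from (\ref{eq:invABrep}). Setting $w_i = v_{d-i}$ and reindexing yields $Aw_i = w_{i-1}$ for $1 \le i \le d$, $Aw_0 = 0$, $Bw_i = \varphi_{i+1} w_{i+1}$ for $0 \le i \le d-1$, $Bw_d = 0$, so the matrices representing $A$ and $B$ with respect to $\lbrace w_i\rbrace_{i=0}^d$ are precisely those in (\ref{eq:ABrep}). By Lemma \ref{lem:ABmatrix}, $\lbrace w_i\rbrace_{i=0}^d$ is an $(A,B)$-basis, hence $\lbrace v_i\rbrace_{i=0}^d$ is an inverted $(A,B)$-basis.

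No step here is deep; the only thing to be careful about is the index shift in the subscripts of the $\varphi_i$, which flip to $\varphi_{d-i+1}$ under the reversal, consistent with Lemma \ref{lem:BAvAB}. I would present the proof as a single short paragraph that just says ``apply Lemma \ref{lem:ABmatrix} to the reversed basis $v_{d-i}$, and use the definition of inversion.''
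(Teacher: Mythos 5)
Your proposal is correct and is exactly the paper's argument: the paper's proof reads ``By Lemma \ref{lem:ABmatrix} and the meaning of inversion,'' which is precisely your reduction to the reversed basis $w_i = v_{d-i}$. Your index bookkeeping (the flip $\varphi_{i}\mapsto\varphi_{d-i+1}$ on the superdiagonal) checks out, so the only difference is that you have written out the details the paper leaves implicit.
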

\begin{proof} By Lemma
\ref{lem:ABmatrix} and
the meaning of inversion.
\end{proof}

\begin{lemma}
\label{lem:5CharInv}
Let $A,B$ denote an LR pair on $V$,
with parameter sequence $\lbrace \varphi_i \rbrace_{i=1}^d$.
Let $\lbrace v_i\rbrace_{i=0}^d$ denote any vectors in $V$.
Then the following are equivalent:
\begin{enumerate}
\item[\rm (i)] 
$\lbrace v_i\rbrace_{i=0}^d$ is an inverted $(A,B)$-basis for $V$;
\item[\rm (ii)] $0 \not=v_0 \in B^dV$ and $Av_i = v_{i+1}$ for
$0 \leq i \leq d-1$;
\item[\rm (iii)] there exists $0 \not= \xi \in B^dV$ such that
$v_i = A^i \xi$ for
$0 \leq i \leq d$;
\item[\rm (iv)] $0 \not=v_d \in A^dV$ and 
$Bv_i = \varphi_{d-i+1}v_{i-1}$
for
$1 \leq i \leq d$;
\item[\rm (v)] there exists $0 \not= \eta \in A^dV$ such that
$v_i = (\varphi_1 \varphi_{2} \cdots \varphi_{d-i})^{-1}B^{d-i} \eta$ for
$0 \leq i \leq d$.
\end{enumerate}
\end{lemma}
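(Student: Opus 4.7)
The plan is to reduce this lemma to Lemma \ref{lem:5Char} by exploiting the very definition of an inverted $(A,B)$-basis. Set $w_i = v_{d-i}$ for $0 \leq i \leq d$. By the definition preceding Lemma \ref{lem:invABbasis}, the sequence $\{v_i\}_{i=0}^d$ is an inverted $(A,B)$-basis precisely when $\{w_i\}_{i=0}^d$ is an $(A,B)$-basis. Thus condition (i) of the present lemma is identical to condition (i) of Lemma \ref{lem:5Char} applied to $\{w_i\}_{i=0}^d$, and it suffices to show that each of (ii)--(v) here translates, under $v_i = w_{d-i}$, to one of the conditions (ii)--(v) there.

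The main task is therefore a systematic bookkeeping: I expect that (ii) here matches (iv) there, (iii) here matches (v) there, (iv) here matches (ii) there, and (v) here matches (iii) there. To illustrate, for (iv) $\Leftrightarrow$ Lemma \ref{lem:5Char}(ii): substituting $v_i = w_{d-i}$ into $Bv_i = \varphi_{d-i+1}v_{i-1}$ and putting $j = d-i$ yields $Bw_j = \varphi_{j+1}w_{j+1}$ for $0 \leq j \leq d-1$, and the endpoint condition $0 \neq v_d \in A^d V$ becomes $0 \neq w_0 \in A^d V$, which is exactly Lemma \ref{lem:5Char}(ii). Similarly (ii) becomes $0 \neq w_d \in B^d V$ and $Aw_j = w_{j-1}$ for $1 \leq j \leq d$, matching (iv) there; (iii) becomes $w_j = A^{d-j}\xi$ for $\xi \in B^d V$, matching (v) there; and (v) becomes $w_j = (\varphi_1 \cdots \varphi_j)^{-1} B^j \eta$ for $\eta \in A^d V$, matching (iii) there.

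The step requiring the most care is matching the parameter-sequence subscript in (iv): the raising relation $Bu_i = \varphi_{i+1}u_{i+1}$ valid for an $(A,B)$-basis $\{u_i\}$ must be reindexed correctly to account for the appearance of $\varphi_{d-i+1}$ in place of $\varphi_{i+1}$. Once the four equivalences between conditions are in place, the complete chain (i) $\Leftrightarrow$ (ii) $\Leftrightarrow$ (iii) $\Leftrightarrow$ (iv) $\Leftrightarrow$ (v) follows immediately from the corresponding chain in Lemma \ref{lem:5Char}. The proof is then a short citation of that lemma together with the reindexing identities just described.
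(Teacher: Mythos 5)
Your proposal is correct and is exactly the paper's argument: the paper proves this lemma "by Lemma \ref{lem:5Char} and the meaning of inversion," and your reindexing $v_i = w_{d-i}$ with the stated matching of conditions (ii)$\leftrightarrow$(iv), (iii)$\leftrightarrow$(v), (iv)$\leftrightarrow$(ii), (v)$\leftrightarrow$(iii) is precisely the bookkeeping that citation leaves implicit. All four translations check out, including the subscript shift producing $\varphi_{d-i+1}$.
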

\begin{proof} By
Lemma
\ref{lem:5Char}
and the meaning of inversion.
\end{proof}

\noindent Let $A,B$ denote an LR pair on $V$.
We now consider a
$(B,A)$-basis for $V$.

\begin{lemma}
\label{lem:BAbasis}
Let $A,B$ denote an LR pair on $V$. 
Let $\lbrace V_i\rbrace_{i=0}^d$ denote the
$(A,B)$-decomposition of $V$.
A basis $\lbrace v_i \rbrace_{i=0}^d$ for $V$ is
a $(B,A)$-basis if and only if both
\begin{enumerate}
\item[\rm (i)] $v_i \in V_{d-i}$ for $0 \leq i \leq d$;
\item[\rm (ii)] $Bv_i = v_{i-1}$ for $1 \leq i \leq d$.
\end{enumerate}
\end{lemma}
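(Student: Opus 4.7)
The plan is to reduce the claim to Definition \ref{def:ABbasis} applied with the roles of $A$ and $B$ interchanged, then translate the resulting conditions back in terms of the $(A,B)$-decomposition $\lbrace V_i\rbrace_{i=0}^d$ that is fixed by the hypothesis of the lemma. The single substantive observation needed is that the $(B,A)$-decomposition of $V$ is nothing other than the inversion $\lbrace V_{d-i}\rbrace_{i=0}^d$ of the $(A,B)$-decomposition; this was recorded in the discussion immediately after Definition \ref{def:lr}, where it was noted that if $\lbrace V_i\rbrace_{i=0}^d$ is lowered by $A$ and raised by $B$, then its inversion is lowered by $B$ and raised by $A$, and that these decompositions are uniquely determined by the corresponding LR pairs.

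With that identification in hand, Definition \ref{def:ABbasis} (applied to the LR pair $B,A$) says that a basis $\lbrace v_i\rbrace_{i=0}^d$ of $V$ is a $(B,A)$-basis if and only if (a) $v_i$ lies in the $i$th component of the $(B,A)$-decomposition, and (b) $Bv_i = v_{i-1}$ for $1 \leq i \leq d$. Using the inversion remark, (a) becomes $v_i \in V_{d-i}$ for $0 \leq i \leq d$, which is condition (i) of the lemma, while (b) is verbatim condition (ii). The forward and backward implications are then both immediate.

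There is no real obstacle here; the entire content of the proof is the bookkeeping that the $(B,A)$-decomposition equals the inversion of the $(A,B)$-decomposition, a fact already noted and justified earlier in the section. Accordingly the proof can be presented in one or two lines, citing Definition \ref{def:ABbasis} and the inversion identity.
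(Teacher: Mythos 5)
Your proposal is correct and matches the paper's own proof, which is simply ``Apply Definition \ref{def:ABbasis} to the LR pair $B,A$''; the identification of the $(B,A)$-decomposition with the inversion $\lbrace V_{d-i}\rbrace_{i=0}^d$ is exactly the bookkeeping the paper relies on, having recorded it just before Definition \ref{def:ABE}. No changes needed.
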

\begin{proof} Apply Definition
\ref{def:ABbasis}
to the LR pair $B,A$.
\end{proof}

\begin{lemma}
\label{lem:BAbasisMat}
Let $A,B$ denote an LR pair on $V$, with
parameter sequence $\lbrace \varphi_i\rbrace_{i=1}^d$.
Let $\lbrace v_i \rbrace_{i=0}^d$ denote a basis for $V$.
Then the following are equivalent:
\begin{enumerate}
\item[\rm (i)]
$\lbrace v_i \rbrace_{i=0}^d$ is a $(B,A)$-basis for $V$;
\item[\rm (ii)]
with respect to
$\lbrace v_i \rbrace_{i=0}^d$ the matrices representing
 $A$ and $B$ are
\begin{eqnarray}
\label{eq:BArep}
A:\;
	\left(
	\begin{array}{ c c cc c c }
	0 &   &   &&   & \bf 0  \\
	\varphi_d & 0  &   &&  &      \\
	 &  \varphi_{d-1} & 0   & &&  \\
	   &   & \cdot & \cdot  & & \\
	     &  & & \cdot & \cdot & \\
	      {\bf 0}  &&  & &  \varphi_1 & 0  \\
	      \end{array}
	      \right),
	      \qquad \qquad
B:\; 
\left(
\begin{array}{ c c cc c c }
 0 & 1 &   &&   & \bf 0  \\
  & 0  &  1 &&  &      \\ 
   &   &  0   & \cdot &&  \\
     &   &  & \cdot  & \cdot & \\
       &  & &  & \cdot & 1 \\
        {\bf 0}  &&  & &   &  0  \\
	\end{array}
	\right).
\end{eqnarray}
\end{enumerate}
\end{lemma}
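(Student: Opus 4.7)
The plan is to deduce this lemma from Lemma \ref{lem:ABmatrix} applied to the LR pair $B,A$ in place of $A,B$. Since $B,A$ is itself an LR pair on $V$ (by the symmetry noted at the start of this section), Lemma \ref{lem:ABmatrix} applies to it verbatim: a basis $\lbrace v_i\rbrace_{i=0}^d$ is a $(B,A)$-basis if and only if, with respect to this basis, the first map of the pair (namely $B$) is represented by the matrix with $1$'s on the superdiagonal and $0$'s elsewhere, while the second map of the pair (namely $A$) is represented by the lower-triangular matrix whose $(i,i-1)$-entry, for $1 \le i \le d$, equals the $i$-th component of the parameter sequence of $B,A$.

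The next step is to identify the parameter sequence of $B,A$. By Lemma \ref{lem:BAvAB}, this sequence is $\lbrace \varphi_{d-i+1}\rbrace_{i=1}^d$. Substituting this into the matrix description just obtained, the subdiagonal of the representation of $A$ reads, from top to bottom, $\varphi_d, \varphi_{d-1}, \ldots, \varphi_1$, and the representation of $B$ is the upper shift matrix with $1$'s on the superdiagonal. These are precisely the matrices displayed in (\ref{eq:BArep}). Conversely, if the matrices representing $A$ and $B$ in the basis $\lbrace v_i\rbrace_{i=0}^d$ are as in (\ref{eq:BArep}), then these are exactly the matrices required by Lemma \ref{lem:ABmatrix} for the LR pair $B,A$ (with parameter sequence $\lbrace \varphi_{d-i+1}\rbrace_{i=1}^d$), so $\lbrace v_i\rbrace_{i=0}^d$ is a $(B,A)$-basis.

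There is no real obstacle here; the whole content of the argument is a careful bookkeeping of which role each of $A,B$ plays when the pair is reversed, and a verification that the reindexed parameter sequence $\varphi_{d-i+1}$ produces exactly the subdiagonal pattern shown in (\ref{eq:BArep}). The potential pitfall is a harmless off-by-one confusion in reading the subdiagonal entries; writing out the $(i,i-1)$-entries for $i=1,\ldots,d$ and comparing them to (\ref{eq:BArep}) resolves this immediately.
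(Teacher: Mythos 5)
Your proposal is correct and is essentially the paper's own proof: the paper disposes of this lemma by the single line ``Apply Lemma \ref{lem:ABmatrix} to the LR pair $B,A$,'' and your argument is exactly that application, with the parameter sequence of $B,A$ identified via Lemma \ref{lem:BAvAB} and the resulting matrices checked against (\ref{eq:BArep}). The extra bookkeeping you supply is accurate and harmless.
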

\begin{proof} Apply
Lemma
\ref{lem:ABmatrix} to the LR pair $B,A$.
\end{proof}

\begin{lemma}
\label{lem:5CharBA}
Let $A,B$ denote an LR pair on $V$,
with parameter sequence $\lbrace \varphi_i \rbrace_{i=1}^d$.
Let $\lbrace v_i\rbrace_{i=0}^d$ denote any vectors in $V$.
Then the following are equivalent:
\begin{enumerate}
\item[\rm (i)] 
$\lbrace v_i\rbrace_{i=0}^d$ is a $(B,A)$-basis for $V$;
\item[\rm (ii)] $0 \not=v_0 \in B^dV$ and $Av_i = \varphi_{d-i}
 v_{i+1}$ for
$0 \leq i \leq d-1$;
\item[\rm (iii)] there exists $0 \not= \xi \in B^dV$ such that
$v_i = (\varphi_d \varphi_{d-1} \cdots \varphi_{d-i+1})^{-1} A^i
\xi$ for
$0 \leq i \leq d$;
\item[\rm (iv)] $0 \not=v_d \in A^dV$ and $Bv_i = 
v_{i-1}$ for
$1 \leq i \leq d$;
\item[\rm (v)] there exists $0 \not= \eta \in A^dV$ such that
$v_i = B^{d-i} \eta$ for
$0 \leq i \leq d$.
\end{enumerate}
\end{lemma}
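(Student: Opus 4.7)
The plan is to deduce this lemma from Lemma \ref{lem:5Char} applied to the swapped LR pair $B,A$ on $V$, just as Lemma \ref{lem:BAbasis} and Lemma \ref{lem:BAbasisMat} were obtained by reinterpreting the corresponding statements about $A,B$.

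First, I would note that $B,A$ is itself an LR pair on $V$, and by Lemma \ref{lem:BAvAB} its parameter sequence $\{\tilde{\varphi}_i\}_{i=1}^d$ is given by $\tilde{\varphi}_i = \varphi_{d-i+1}$. Moreover, by Definition \ref{def:ABbasis} applied to $B,A$, the notion of $(B,A)$-basis as stated in Lemma \ref{lem:BAbasis} coincides with the notion of an ``$(B,A)$-basis'' in the sense that Lemma \ref{lem:5Char} would produce when its roles of ``$A$'' and ``$B$'' are filled by $B$ and $A$ respectively. Thus Lemma \ref{lem:5Char} may be applied verbatim to $B,A$ with parameter sequence $\tilde{\varphi}$.

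Next I would translate each of the resulting conditions (i)--(v) back into the original data. The substitution $A \leftrightarrow B$ interchanges $A^d V$ with $B^d V$, sends the equation $B v_i = \varphi_{i+1} v_{i+1}$ of Lemma \ref{lem:5Char}(ii) to $A v_i = \tilde{\varphi}_{i+1} v_{i+1} = \varphi_{d-i} v_{i+1}$, matching (ii) here, and converts the product $\tilde{\varphi}_1 \tilde{\varphi}_2 \cdots \tilde{\varphi}_i$ appearing in Lemma \ref{lem:5Char}(iii) into
\begin{eqnarray*}
\tilde{\varphi}_1 \tilde{\varphi}_2 \cdots \tilde{\varphi}_i
= \varphi_d \varphi_{d-1} \cdots \varphi_{d-i+1},
\end{eqnarray*}
yielding exactly condition (iii) here. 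Conditions (iv) and (v) of Lemma \ref{lem:5Char} are likewise mapped directly onto (iv) and (v) of the present statement under the swap, with no index adjustment beyond relabeling the auxiliary vector.

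The proof is essentially bookkeeping, so there is no real obstacle; the only thing to watch is that the reversal $i \mapsto d-i+1$ in the indices of $\varphi$ does not introduce an off-by-one error in the product of (iii). One can double-check this cheaply by verifying both sides on the distinguished $(B,A)$-basis produced by Lemma \ref{lem:BAbasisMat}, where the matrix form (\ref{eq:BArep}) gives $A v_i = \varphi_{d-i} v_{i+1}$ directly.
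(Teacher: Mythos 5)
Your proposal is correct and is exactly the paper's proof: the paper disposes of this lemma by applying Lemma \ref{lem:5Char} to the LR pair $B,A$, and your index bookkeeping (via Lemma \ref{lem:BAvAB}, with $\tilde{\varphi}_1\cdots\tilde{\varphi}_i = \varphi_d\varphi_{d-1}\cdots\varphi_{d-i+1}$) checks out.
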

\begin{proof} Apply Lemma
\ref{lem:5Char} to the LR pair $B,A$.
\end{proof}

\noindent Let $A,B$ denote an LR pair on $V$.
We now consider an inverted $(B,A)$-basis for $V$.

\begin{lemma}
Let $A,B$ denote an LR pair on $V$. 
Let $\lbrace V_i\rbrace_{i=0}^d$ denote the
$(A,B)$-decomposition of $V$.
A basis $\lbrace v_i \rbrace_{i=0}^d$ for $V$ is
an inverted $(B,A)$-basis if and only if both
\begin{enumerate}
\item[\rm (i)] $v_i \in V_{i}$ for $0 \leq i \leq d$;
\item[\rm (ii)] $Bv_i = v_{i+1}$ for $0 \leq i \leq d-1$.
\end{enumerate}
\end{lemma}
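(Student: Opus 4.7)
The plan is to reduce the statement directly to Lemma \ref{lem:BAbasis} via the definition of inversion, in exactly the same pattern used for Lemma \ref{lem:invABbasis} (which was obtained from Definition \ref{def:ABbasis} by inversion). By definition, an inverted $(B,A)$-basis is the inversion of a $(B,A)$-basis, so $\{v_i\}_{i=0}^d$ is an inverted $(B,A)$-basis if and only if the sequence $u_i := v_{d-i}$ (for $0\le i \le d$) is a $(B,A)$-basis for $V$. Thus the work is to translate the two characterizing conditions of Lemma \ref{lem:BAbasis} through the substitution $i \leftrightarrow d-i$.

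First I would set up the forward direction. Assume $\{v_i\}_{i=0}^d$ is the inversion of some $(B,A)$-basis $\{u_i\}_{i=0}^d$, so $v_i = u_{d-i}$. By Lemma \ref{lem:BAbasis}(i) applied to $\{u_i\}_{i=0}^d$, we have $u_i \in V_{d-i}$, hence $v_i = u_{d-i} \in V_{d-(d-i)} = V_i$, establishing condition (i). By Lemma \ref{lem:BAbasis}(ii), $Bu_j = u_{j-1}$ for $1 \le j \le d$; setting $j = d-i$ (so $0 \le i \le d-1$) gives $Bv_i = Bu_{d-i} = u_{d-i-1} = v_{i+1}$, establishing condition (ii).

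Next I would handle the reverse direction symmetrically. Assume $\{v_i\}_{i=0}^d$ satisfies (i) and (ii), and define $u_i := v_{d-i}$. Condition (i) gives $u_i = v_{d-i} \in V_{d-i}$, matching Lemma \ref{lem:BAbasis}(i). For $1 \le i \le d$, setting $j = d-i$ (so $0 \le j \le d-1$) in condition (ii) yields $Bu_i = Bv_{d-i} = v_{d-i+1} = u_{i-1}$, matching Lemma \ref{lem:BAbasis}(ii). Before invoking Lemma \ref{lem:BAbasis}, I should briefly note that $\{v_i\}_{i=0}^d$ is in fact a basis: condition (i) forces each $v_i$ to be a nonzero vector in the one-dimensional space $V_i$ (nonzero because otherwise condition (ii) would propagate zeros forward and a basis vector would vanish, contradicting that we were handed a basis; alternatively, once we know $\{u_i\}_{i=0}^d$ satisfies the conditions of Lemma \ref{lem:BAbasis}, that lemma provides that it is a basis). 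Hence $\{u_i\}_{i=0}^d$ is a $(B,A)$-basis, and by construction $\{v_i\}_{i=0}^d = \{u_{d-i}\}_{i=0}^d$ is its inversion, i.e., an inverted $(B,A)$-basis.

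There is no real obstacle here; the only mild subtlety is ensuring the basis hypothesis is handled cleanly on the reverse direction (since the lemma statement assumes $\{v_i\}_{i=0}^d$ is a basis up front, this should be invoked at the outset rather than derived). The entire argument is mechanical once one writes out Lemma \ref{lem:BAbasis} and applies the substitution $i \mapsto d-i$.
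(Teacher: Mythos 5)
Your proof is correct and amounts to essentially the same argument as the paper's: the paper proves this in one line by applying Lemma \ref{lem:invABbasis} to the LR pair $B,A$, whereas you apply the inversion $i\mapsto d-i$ to Lemma \ref{lem:BAbasis}; since ``swap the roles of $A,B$'' and ``invert the basis'' commute, the two reductions are interchangeable. Your index bookkeeping in both directions is accurate, and you are right that the basis hypothesis is already given in the statement, so no further verification is needed there.
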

\begin{proof} Apply Lemma
\ref{lem:invABbasis}
to the LR pair $B,A$.
\end{proof}

\begin{lemma}
\label{lem:BAmatrixInv}
Let $A,B$ denote an LR pair on $V$, with
parameter sequence $\lbrace \varphi_i\rbrace_{i=1}^d$.
Let $\lbrace v_i \rbrace_{i=0}^d$ denote a basis for 
$V$. Then the following are equivalent:
\begin{enumerate}
\item[\rm (i)] 
$\lbrace v_i \rbrace_{i=0}^d$ is an inverted 
 $(B,A)$-basis for $V$;
\item[\rm (ii)] 
with respect to
 $\lbrace v_i \rbrace_{i=0}^d$ 
 the matrices representing  $A$ and $B$ are
\begin{eqnarray}
\label{eq:invBArep}
A:\; 
\left(
\begin{array}{ c c cc c c }
 0 & \varphi_1   &   &&   & \bf 0  \\
  & 0  &  \varphi_2  &&  &      \\ 
   &   &  0   & \cdot &&  \\
     &   &  & \cdot  & \cdot & \\
       &  & &  & \cdot & \varphi_d \\
        {\bf 0}  &&  & &   &  0  \\
	\end{array}
	\right),
	\qquad \qquad
	B:\;
	\left(
	\begin{array}{ c c cc c c }
	0 &   &   &&   & \bf 0  \\
	1 & 0  &   &&  &      \\
	 &  1 & 0   & &&  \\
	   &   & \cdot & \cdot  & & \\
	     &  & & \cdot & \cdot & \\
	      {\bf 0}  &&  & &  1 & 0  \\
	      \end{array}
	      \right).
	      \end{eqnarray}
\end{enumerate}
\end{lemma}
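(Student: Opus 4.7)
The plan is to deduce this lemma by applying Lemma~\ref{lem:invABmatrix} to the LR pair $B,A$ in place of $A,B$, exactly in the spirit of how Lemmas~\ref{lem:BAbasis}, \ref{lem:BAbasisMat}, \ref{lem:5CharBA} were obtained. This is purely a bookkeeping exercise: the notion of an inverted $(B,A)$-basis for the pair $A,B$ coincides by definition with the notion of an inverted $(A,B)$-basis for the pair $B,A$, so Lemma~\ref{lem:invABmatrix} applies directly, provided we correctly track how the parameter sequence changes when $A$ and $B$ are interchanged.

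First I would invoke Lemma~\ref{lem:BAvAB} to record that the parameter sequence of the LR pair $B,A$ is the inverted sequence $\{\psi_i\}_{i=1}^d$ with $\psi_i=\varphi_{d-i+1}$. Then I would apply Lemma~\ref{lem:invABmatrix} to the pair $B,A$: its conclusion is that a basis $\{v_i\}_{i=0}^d$ is an inverted $(B,A)$-basis if and only if with respect to this basis the matrix of $B$ is the standard subdiagonal matrix with subdiagonal entries $1,1,\ldots,1$ (playing the role of $A$ in Lemma~\ref{lem:invABmatrix}) and the matrix of $A$ is superdiagonal with entries $\psi_d,\psi_{d-1},\ldots,\psi_1$ from top to bottom (playing the role of $B$).

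The only thing to check is that the substitution $\psi_i = \varphi_{d-i+1}$ turns the sequence $\psi_d,\psi_{d-1},\ldots,\psi_1$ into the sequence $\varphi_1,\varphi_2,\ldots,\varphi_d$ that appears on the superdiagonal of $A$ in (\ref{eq:invBArep}). Indeed $\psi_{d-k+1}=\varphi_k$, so reading $\psi_d,\psi_{d-1},\ldots,\psi_1$ from left to right gives $\varphi_1,\varphi_2,\ldots,\varphi_d$, matching (\ref{eq:invBArep}) exactly. This reduces the lemma to Lemma~\ref{lem:invABmatrix}, and no further argument is needed.

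There is essentially no obstacle here; the only thing that could go wrong is an indexing slip in reversing the parameter sequence, so the one point to handle carefully is the verification that $\psi_{d-k+1}=\varphi_k$, which is immediate from Lemma~\ref{lem:BAvAB}.
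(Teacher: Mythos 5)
Your proposal is correct and matches the paper's proof, which is exactly "Apply Lemma \ref{lem:invABmatrix} to the LR pair $B,A$." The indexing check via Lemma \ref{lem:BAvAB} is carried out correctly.
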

\begin{proof} Apply Lemma
\ref{lem:invABmatrix}
to the LR pair $B,A$.
\end{proof}

\begin{lemma}
\label{lem:5CharBAinv}
Let $A,B$ denote an LR pair on $V$,
with parameter sequence $\lbrace \varphi_i \rbrace_{i=1}^d$.
Let $\lbrace v_i\rbrace_{i=0}^d$ denote any vectors in $V$.
Then the following are equivalent:
\begin{enumerate}
\item[\rm (i)] 
$\lbrace v_i\rbrace_{i=0}^d$ is an inverted $(B,A)$-basis for $V$;
\item[\rm (ii)] $0 \not=v_0 \in A^dV$ and $Bv_i = v_{i+1}$
for $0 \leq i \leq d-1$;
\item[\rm (iii)] there exists $0 \not= \eta \in A^dV$ such that
$v_i = B^{i} \eta$ for
$0 \leq i \leq d$;
\item[\rm (iv)] $0 \not=v_d \in B^dV$ and $Av_i = 
\varphi_i v_{i-1}$ for
$1 \leq i \leq d$;
\item[\rm (v)] there exists $0 \not= \xi \in B^dV$ such that
$v_i = (\varphi_d \varphi_{d-1} \cdots \varphi_{i+1})^{-1}
A^{d-i} \xi$ for
$0 \leq i \leq d$.
\end{enumerate}
\end{lemma}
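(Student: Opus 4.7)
The plan is to deduce this lemma as an immediate corollary of Lemma \ref{lem:5CharInv} by applying it with the roles of $A$ and $B$ interchanged. The notion of an inverted $(B,A)$-basis for the pair $A,B$ is, by definition, the same as the notion of an inverted $(A,B)$-basis for the pair $B,A$, so each of the five characterizations in Lemma \ref{lem:5CharInv} translates into one of the five characterizations here once we carefully track the parameter sequence.

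First I would invoke Lemma \ref{lem:BAvAB}, which says the parameter sequence for the LR pair $B,A$ is $\lbrace \varphi_{d-i+1}\rbrace_{i=1}^d$. Write $\varphi'_i = \varphi_{d-i+1}$ for this sequence. Now I would apply Lemma \ref{lem:5CharInv} verbatim to the pair $B,A$, so that in each of its five conditions, $A$ and $B$ are swapped and every $\varphi_i$ is replaced by $\varphi'_i$. Conditions (i)--(iii) then read: $\lbrace v_i\rbrace_{i=0}^d$ is an inverted $(B,A)$-basis; $0 \neq v_0 \in A^dV$ and $Bv_i = v_{i+1}$ for $0 \leq i \leq d-1$; and $v_i = B^i\eta$ for some $0 \neq \eta \in A^dV$. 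These are exactly (i)--(iii) of the present lemma.

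It remains to verify the index translation in (iv) and (v). In (iv) the relation becomes $Av_i = \varphi'_{d-i+1}v_{i-1}$, and $\varphi'_{d-i+1} = \varphi_{d-(d-i+1)+1} = \varphi_i$, which reproduces condition (iv) here. In (v) the scaling factor becomes $(\varphi'_1 \varphi'_2 \cdots \varphi'_{d-i})^{-1}$, and the product telescopes as $\varphi'_1 \varphi'_2 \cdots \varphi'_{d-i} = \varphi_d \varphi_{d-1} \cdots \varphi_{i+1}$, matching the factor in (v) here (with $\xi$ playing the role of the swapped $\eta$).

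I do not expect any genuine obstacle: the argument is a pure symmetry/relabeling, and the only care required is the bookkeeping for the reversed parameter sequence $\varphi'_i = \varphi_{d-i+1}$ and the resulting product identity $\prod_{j=1}^{d-i} \varphi'_j = \prod_{j=i+1}^{d} \varphi_j$. Once Lemma \ref{lem:5CharInv} has been established, no new computations are needed.
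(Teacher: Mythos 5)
Your proposal is correct and is exactly the paper's argument: the paper's proof reads simply ``Apply Lemma \ref{lem:5CharInv} to the LR pair $B,A$,'' and your index bookkeeping (via Lemma \ref{lem:BAvAB} and the telescoping product $\varphi'_1\cdots\varphi'_{d-i}=\varphi_d\cdots\varphi_{i+1}$) correctly fills in the details the paper leaves implicit.
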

\begin{proof} Apply Lemma
\ref{lem:5CharInv}
to the LR pair $B,A$.
\end{proof}

\noindent Let $A,B$ denote an LR pair on $V$. Earlier we used
$A,B$
to obtain four bases for $V$.
We now consider some transitions between these bases.

\begin{lemma}
\label{lem:sometrans}
Let $A,B$ denote an LR pair on $V$, with
parameter sequence $\lbrace \varphi_i\rbrace_{i=1}^d$.
\begin{enumerate}
\item[\rm (i)]
Let $\lbrace v_i \rbrace_{i=0}^d$ denote
an $(A,B)$-basis for $V$. Then 
the sequence
$\lbrace \varphi_1 \varphi_2\cdots \varphi_i v_i\rbrace_{i=0}^d$
is an inverted $(B,A)$-basis for $V$.
\item[\rm (ii)]
Let $\lbrace v_i \rbrace_{i=0}^d$ denote
an inverted $(B,A)$-basis for $V$. Then 
$\lbrace (\varphi_1 \varphi_2\cdots \varphi_i )^{-1}v_i\rbrace_{i=0}^d$
is an $(A,B)$-basis for $V$.
\item[\rm (iii)]
Let $\lbrace v_i \rbrace_{i=0}^d$ denote
a $(B,A)$-basis for $V$. Then 
the sequence
$\lbrace (\varphi_1 \varphi_{2}\cdots \varphi_{d-i})^{-1}
v_i\rbrace_{i=0}^d$
is an inverted  $(A,B)$-basis for $V$.
\item[\rm (iv)]
Let $\lbrace v_i \rbrace_{i=0}^d$ denote
an inverted $(A,B)$-basis for $V$. Then 
$\lbrace \varphi_1 \varphi_{2}\cdots \varphi_{d-i} 
v_i\rbrace_{i=0}^d$
is a $(B,A)$-basis for $V$.
\end{enumerate}
\end{lemma}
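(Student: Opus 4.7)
The plan is to verify each of the four statements directly, using the characterizations of the four basis types already established, together with the relations between $A$, $B$, and $\{\varphi_i\}_{i=1}^d$ provided by Lemmas \ref{lem:5Char}, \ref{lem:5CharInv}, \ref{lem:5CharBA}, and \ref{lem:5CharBAinv}. Each of the four scaling maps is bijective on bases, so parts (ii) and (iv) follow formally from (i) and (iii) respectively; the substance of the lemma is in (i) and (iii), and these are entirely computational.

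For part (i), let $\{v_i\}_{i=0}^d$ be an $(A,B)$-basis. From Definition \ref{def:ABbasis} I have $v_i \in V_i$, and from Lemma \ref{lem:5Char}(ii) I have $Bv_i = \varphi_{i+1}v_{i+1}$ for $0\le i\le d-1$. Setting $w_i := \varphi_1\varphi_2\cdots\varphi_i\, v_i$, the $w_i$ still satisfy $w_i\in V_i$, and
\begin{eqnarray*}
Bw_i \;=\; \varphi_1\cdots\varphi_i\, Bv_i \;=\; \varphi_1\cdots\varphi_i\varphi_{i+1}\, v_{i+1} \;=\; w_{i+1}
\end{eqnarray*}
for $0\le i\le d-1$. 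By Lemma \ref{lem:invABbasis} applied to the LR pair $B,A$ (which is the unnamed lemma preceding Lemma \ref{lem:BAmatrixInv}), these two conditions characterize an inverted $(B,A)$-basis, so $\{w_i\}_{i=0}^d$ is as claimed. Part (ii) follows since the scaling in (ii) is inverse to the scaling in (i); alternatively one checks $v_i\in V_i$ and $Av_i = v_{i-1}$ directly using $Bv_{i-1}=v_i$ and Definition \ref{def:pa}, recovering the conditions of Definition \ref{def:ABbasis}.

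For part (iii), let $\{v_i\}_{i=0}^d$ be a $(B,A)$-basis. Then $v_i \in V_{d-i}$ by Lemma \ref{lem:BAbasis}, and Lemma \ref{lem:5CharBA}(ii) gives $Av_i = \varphi_{d-i} v_{i+1}$ for $0\le i\le d-1$. Setting $w_i := (\varphi_1\varphi_2\cdots\varphi_{d-i})^{-1} v_i$, I have $w_i\in V_{d-i}$, and
\begin{eqnarray*}
Aw_i \;=\; (\varphi_1\cdots\varphi_{d-i})^{-1}\varphi_{d-i}\, v_{i+1} \;=\; (\varphi_1\cdots\varphi_{d-i-1})^{-1}v_{i+1} \;=\; w_{i+1}
\end{eqnarray*}
for $0\le i\le d-1$. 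By Lemma \ref{lem:invABbasis} these two conditions characterize an inverted $(A,B)$-basis, giving the claim. Part (iv) follows as the inverse of (iii).

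There is no real obstacle here; the main thing to watch is the bookkeeping of indices in the scaling factors, especially in (iii)--(iv) where the product $\varphi_1\varphi_2\cdots\varphi_{d-i}$ depends on $i$ in an inverted manner. A clean way to avoid error is to write out which component $V_j$ each vector lies in before applying $A$ or $B$, then use the relevant clause of Lemma \ref{lem:5Char} or \ref{lem:5CharBA} to read off the scalar.
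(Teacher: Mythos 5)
Your proof is correct and follows essentially the paper's route: the paper simply compares the closed-form parametrizations in Lemma \ref{lem:5Char}(iii) with Lemma \ref{lem:5CharBAinv}(iii) (and Lemma \ref{lem:5CharInv}(v) with Lemma \ref{lem:5CharBA}(v)), whereas you verify the equivalent membership-and-action clauses of the same characterization lemmas; the computations are the same. The only small point worth noting is that deducing (ii) and (iv) "by inverting the scaling" tacitly uses that the maps in (i) and (iii) are surjective onto the target sets of bases, but your alternative direct check via Lemma \ref{lem:5CharBAinv}(iv) disposes of that cleanly.
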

\begin{proof} (i), (ii) Compare
Lemma \ref{lem:5Char}(iii) and Lemma
\ref{lem:5CharBAinv}(iii). \\
\noindent (iii), (iv) Compare
Lemma \ref{lem:5CharInv}(v) and Lemma
\ref{lem:5CharBA}(v).
\end{proof}

\noindent We now discuss isomorphisms for
LR pairs.

\begin{definition}
\label{def:isoLRP}
\rm
Let $A,B$ denote an LR pair on $V$.
Let $V'$ denote a vector space over $\mathbb F$ with
dimension $d+1$, and let $A',B'$ denote an LR pair on
$V'$. By an {\it isomorphism of LR pairs from $A,B$ to
$A',B'$} we mean an $\mathbb F$-linear bijection $\sigma :V \to V'$
such that $\sigma A = A' \sigma $ and
$\sigma B = B' \sigma$. The LR pairs
$A,B$ and $A',B'$ are called {\it isomorphic} whenever
there exists an isomorphism of LR pairs from
$A,B$ to $A',B'$.
\end{definition}

\noindent We now classify the LR pairs up to isomorphism.

\begin{proposition}
\label{prop:LRpairClass}
Consider the map which sends an LR pair to its parameter sequence.
This map induces a bijection between the following two sets:
\begin{enumerate}
\item[\rm (i)] the isomorphism classes of LR pairs over $\mathbb F$
that have diameter $d$;
\item[\rm (ii)] the sequences $\lbrace \varphi_i \rbrace_{i=1}^d$
of nonzero scalars in $\mathbb F$.
\end{enumerate}
\end{proposition}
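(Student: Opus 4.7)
The plan is to verify three things: that the map is well defined on isomorphism classes, that it is surjective, and that it is injective. Each step reduces to a computation already set up by the earlier lemmas in this section, so the work is mostly organizational.

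First I would argue well-definedness. Suppose $\sigma: V \to V'$ is an isomorphism of LR pairs from $A,B$ to $A',B'$. Let $\{V_i\}_{i=0}^d$ be the $(A,B)$-decomposition of $V$. Since $\sigma A = A'\sigma$ and $\sigma B = B'\sigma$, the decomposition $\{\sigma(V_i)\}_{i=0}^d$ is lowered by $A'$ and raised by $B'$; by the uniqueness noted after Definition \ref{def:lr} this is the $(A',B')$-decomposition of $V'$. Applying $\sigma$ to the eigenvector equation $BAv = \varphi_i v$ on $V_i$ gives $B'A'(\sigma v) = \varphi_i (\sigma v)$ on $\sigma(V_i)$, so the parameter sequences of $A,B$ and $A',B'$ agree componentwise.

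Surjectivity is already handled by Example \ref{ex:LR}: given any sequence $\{\varphi_i\}_{i=1}^d$ of nonzero scalars in $\mathbb F$, that construction produces an LR pair on a $(d{+}1)$-dimensional space over $\mathbb F$ whose parameter sequence is the prescribed one.

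For injectivity, suppose $A,B$ on $V$ and $A',B'$ on $V'$ are LR pairs over $\mathbb F$ of diameter $d$ with the same parameter sequence $\{\varphi_i\}_{i=1}^d$. Choose an $(A,B)$-basis $\{v_i\}_{i=0}^d$ of $V$ and an $(A',B')$-basis $\{v'_i\}_{i=0}^d$ of $V'$ (these exist by Lemma \ref{lem:5Char}). By Lemma \ref{lem:ABmatrix}, the matrix of $A$ with respect to $\{v_i\}$ equals the matrix of $A'$ with respect to $\{v'_i\}$, and similarly for $B$ versus $B'$, since both pairs of matrices are given by the common data $\{\varphi_i\}_{i=1}^d$ via the display (\ref{eq:ABrep}). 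Define the $\mathbb F$-linear bijection $\sigma: V \to V'$ by $\sigma(v_i) = v'_i$ for $0 \leq i \leq d$. Matching matrix entries forces $\sigma A = A'\sigma$ and $\sigma B = B'\sigma$, so $\sigma$ is an isomorphism of LR pairs in the sense of Definition \ref{def:isoLRP}.

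No step looks like a genuine obstacle: well-definedness and injectivity both rely on the uniqueness of the $(A,B)$-decomposition and the explicit matrix normal form in Lemma \ref{lem:ABmatrix}, while surjectivity is immediate from Example \ref{ex:LR}. The only point requiring a little care is ensuring that the $(A,B)$-basis exists for every LR pair, which is given by Lemma \ref{lem:5Char} once one picks any nonzero vector in $B^dV$.
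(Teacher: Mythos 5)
Your proof is correct and follows essentially the same route as the paper, which simply cites Example \ref{ex:LR} for surjectivity and Lemma \ref{lem:BAmatrixInv} for injectivity via a matrix normal form; you use the $(A,B)$-basis normal form of Lemma \ref{lem:ABmatrix} instead of the inverted $(B,A)$-basis one, which is an immaterial difference.
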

\begin{proof} By Example
\ref{ex:LR} and Lemma
\ref{lem:BAmatrixInv}.
\end{proof}

\noindent We have some comments about Definition
\ref{def:isoLRP}.

\begin{lemma}
\label{lem:isoMove}
Referring to Definition
\ref{def:isoLRP}, let $\lbrace E_i \rbrace_{i=0}^d$
and
 $\lbrace E'_i \rbrace_{i=0}^d$
denote the idempotent sequences for $A,B$ and $A',B'$ respectively.
Let $\sigma$ denote an 
isomorphism of LR pairs from $A,B$ to $A',B'$.
Then 
 $\sigma E_i = E'_i \sigma $ for $0 \leq i \leq d$.
\end{lemma}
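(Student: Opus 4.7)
The strategy is to show that $\sigma$ carries the $(A,B)$-decomposition of $V$ onto the $(A',B')$-decomposition of $V'$, and then to invoke the defining property of the primitive idempotents.

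First, let $\lbrace V_i\rbrace_{i=0}^d$ denote the $(A,B)$-decomposition of $V$, so that $A$ lowers and $B$ raises this decomposition. Consider the sequence $\lbrace \sigma V_i\rbrace_{i=0}^d$. Since $\sigma$ is an $\mathbb F$-linear bijection, $\lbrace \sigma V_i\rbrace_{i=0}^d$ is a decomposition of $V'$. Using $\sigma A = A'\sigma$ together with $AV_i = V_{i-1}$ for $1 \leq i \leq d$ and $AV_0=0$, we get $A'(\sigma V_i) = \sigma V_{i-1}$ for $1 \leq i \leq d$ and $A'(\sigma V_0)=0$; thus $A'$ lowers $\lbrace \sigma V_i\rbrace_{i=0}^d$. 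Analogously, $\sigma B = B'\sigma$ gives that $B'$ raises $\lbrace \sigma V_i\rbrace_{i=0}^d$. By the uniqueness of the $(A',B')$-decomposition of $V'$ noted after Definition \ref{def:lr}, we conclude $\sigma V_i = V'_i$ for $0 \leq i \leq d$.

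Second, I translate the defining conditions of $E_i$ across $\sigma$. By the definition of $E_i$ we have $(E_i - I)V_i = 0$ and $E_i V_j = 0$ for $j \neq i$. Consider the element $\sigma E_i \sigma^{-1}\in {\rm End}(V')$. For a vector $v' \in V'_i = \sigma V_i$, writing $v' = \sigma v$ with $v \in V_i$, we compute $(\sigma E_i \sigma^{-1})v' = \sigma E_i v = \sigma v = v'$, so $(\sigma E_i \sigma^{-1} - I)V'_i = 0$. For $j \neq i$ and $v' \in V'_j = \sigma V_j$, a similar computation yields $(\sigma E_i \sigma^{-1})v' = 0$. These are precisely the conditions characterizing the $i$-th primitive idempotent for the decomposition $\lbrace V'_i\rbrace_{i=0}^d$, so $\sigma E_i \sigma^{-1} = E'_i$, i.e.\ $\sigma E_i = E'_i \sigma$.

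There is no real obstacle here; the only mildly delicate point is verifying that $\sigma$ does send the $(A,B)$-decomposition to the $(A',B')$-decomposition, which is settled by the uniqueness statement recalled after Definition \ref{def:lr}. As an alternative, one could simply apply $\sigma$ to the closed form $E_i = A^{d-i}B^d A^i/(\varphi_1\cdots\varphi_d)$ of Lemma \ref{lem:Eform}; the first step above, combined with Definition \ref{def:pa}, ensures that $\varphi_i = \varphi'_i$ for $1 \leq i \leq d$, and then the intertwining $\sigma A = A'\sigma$, $\sigma B = B'\sigma$ immediately gives $\sigma E_i = E'_i \sigma$.
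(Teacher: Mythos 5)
Your proof is correct. The paper's own proof is the one-liner you relegate to your final sentence: apply $\sigma(\cdot)\sigma^{-1}$ to the closed form $E_i = A^{d-i}B^dA^i/(\varphi_1\cdots\varphi_d)$ from Lemma \ref{lem:Eform} and use the intertwining relations (together with the fact that isomorphic LR pairs share a parameter sequence, so the denominators match). Your primary argument takes a genuinely different, more structural route: you first show that $\sigma$ maps the $(A,B)$-decomposition onto the $(A',B')$-decomposition using the uniqueness of the decomposition lowered by $A'$ and raised by $B'$, and then verify that $\sigma E_i\sigma^{-1}$ satisfies the defining conditions of the $i$-th primitive idempotent for $\lbrace V'_i\rbrace_{i=0}^d$. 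What this buys you is independence from the explicit formula for $E_i$ (and from the equality of parameter sequences, which your route derives rather than assumes); the cost is a slightly longer argument. Both are valid, and since you also supply the paper's shortcut, nothing is missing.
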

\begin{proof} Use Lemma
\ref{lem:Eform}.
\end{proof}

\begin{lemma}
\label{lem:isoFix}
Let $A,B$ denote an LR pair on $V$.
For nonzero $\sigma \in {\rm End}(V)$ the following are equivalent:
\begin{enumerate}
\item[\rm (i)] $\sigma$ is an isomorphism of LR pairs from
$A,B$ to $A,B$;
\item[\rm (ii)] $\sigma$ commutes with $A$ and $B$;
\item[\rm (iii)] $\sigma$  commutes with everything in
${\rm End}(V)$;
\item[\rm (iv)] there exists $0 \not=\zeta \in \mathbb F$
such that $\sigma = \zeta I$.
\end{enumerate}
\end{lemma}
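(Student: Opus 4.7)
The plan is to prove the cycle $(\text{iv}) \Rightarrow (\text{i}) \Rightarrow (\text{ii}) \Rightarrow (\text{iii}) \Rightarrow (\text{iv})$, since (iv) is the strongest statement and (ii) the weakest on its face.

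First, $(\text{iv}) \Rightarrow (\text{i})$ is immediate: a nonzero scalar multiple of $I$ is an $\mathbb F$-linear bijection that commutes with $A$ and $B$, so it satisfies Definition \ref{def:isoLRP} with $(A',B')=(A,B)$. The implication $(\text{i}) \Rightarrow (\text{ii})$ is just the unpacking of Definition \ref{def:isoLRP}, which asserts $\sigma A = A\sigma$ and $\sigma B = B\sigma$.

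The key implication is $(\text{ii}) \Rightarrow (\text{iii})$, and here I would invoke Corollary \ref{cor:ABgen}, which says that $A,B$ generate the $\mathbb F$-algebra ${\rm End}(V)$. If $\sigma$ commutes with both generators, then by induction on word length it commutes with every monomial in $A$ and $B$, hence with every $\mathbb F$-linear combination of such monomials, hence with every element of ${\rm End}(V)$.

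Finally, for $(\text{iii}) \Rightarrow (\text{iv})$, I would use the standard fact that the center of ${\rm End}(V)$ consists of the scalar operators $\zeta I$ with $\zeta \in \mathbb F$. (One quick way: pick a basis $\lbrace v_i\rbrace_{i=0}^d$ of $V$ and let $E_{ij}$ be the matrix units. From $\sigma E_{ii} = E_{ii}\sigma$ we see $\sigma$ is diagonal, and from $\sigma E_{ij} = E_{ij}\sigma$ for $i\neq j$ we see the diagonal entries are all equal.) Thus $\sigma = \zeta I$ for some $\zeta \in \mathbb F$; since $\sigma \neq 0$ by hypothesis, $\zeta \neq 0$.

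There is no real obstacle here; the only subtlety is remembering that (ii) alone does not force $\sigma$ to be invertible, but that is handled automatically because (ii) implies (iv), which makes $\sigma$ a nonzero scalar and hence bijective. The hypothesis $\sigma \neq 0$ is needed exactly to rule out $\zeta = 0$ in the final step.
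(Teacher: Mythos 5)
Your proof is correct and follows essentially the same route as the paper: the same cyclic chain of implications, with Corollary \ref{cor:ABgen} supplying $(\text{ii})\Rightarrow(\text{iii})$ and the standard fact that the center of ${\rm End}(V)$ is $\mathbb F I$ supplying $(\text{iii})\Rightarrow(\text{iv})$. The paper merely starts the cycle at (i) instead of (iv), which is logically identical.
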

\begin{proof}
${\rm (i)}\Rightarrow {\rm (ii)}$  By Definition
\ref{def:isoLRP}.
\\
\noindent 
${\rm (ii)}\Rightarrow {\rm (iii)}$ By
Corollary
\ref{cor:ABgen}.
\\
\noindent
${\rm (iii)}\Rightarrow {\rm (iv)}$ By
linear algebra.
\\
\noindent
${\rm (iv)}\Rightarrow {\rm (i)}$ By
 Definition
\ref{def:isoLRP}.
\end{proof}

\noindent We have some comments about Lemma
\ref{lem:alphaBeta}.

\begin{lemma}
\label{lem:alphaBetacom}
Let $A,B$ denote an LR pair on $V$,
with parameter sequence $\lbrace \varphi_i \rbrace_{i=1}^d$. 
For nonzero $\alpha, \beta \in \mathbb F$ the LR pair
$\alpha A,\beta B$ has parameter sequence $\lbrace \alpha \beta \varphi_i
\rbrace_{i=1}^d$.
\end{lemma}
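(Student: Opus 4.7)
The plan is to read off the parameter sequence for $\alpha A,\beta B$ directly from its definition, using the fact (already noted in Lemma \ref{lem:alphaBeta}) that scaling $A$ and $B$ by nonzero scalars does not change the underlying $(A,B)$-decomposition of $V$.

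More explicitly, let $\lbrace V_i\rbrace_{i=0}^d$ denote the $(A,B)$-decomposition of $V$. By Lemma \ref{lem:alphaBeta}, this same sequence is the $(\alpha A,\beta B)$-decomposition of $V$. By Definition \ref{def:pa}, the $i$-component $\varphi_i$ of the parameter sequence for $A,B$ is the eigenvalue of $BA$ acting on $V_i$ for $1\leq i \leq d$. I would then observe that $(\beta B)(\alpha A) = \alpha\beta\, BA$, so on $V_i$ this operator acts as multiplication by $\alpha\beta\varphi_i$. Applying Definition \ref{def:pa} to the LR pair $\alpha A,\beta B$ yields that its parameter sequence has $i$-component $\alpha\beta\varphi_i$, which is exactly the claim.

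There is no genuine obstacle here; the proof is essentially a one-line computation once Lemma \ref{lem:alphaBeta} is invoked to identify the decomposition used to compute the parameter sequence. The only point requiring a modicum of care is noting that the roles of $A$ and $B$ in Definition \ref{def:pa} enter symmetrically through the product $BA$, so the scalar that emerges is the product $\alpha\beta$ rather than, say, $\alpha$ or $\beta$ alone.
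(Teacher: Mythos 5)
Your proof is correct and is essentially the paper's own argument (the paper just says ``Use Definition \ref{def:pa}''), with the identification of the decompositions via Lemma \ref{lem:alphaBeta} and the computation $(\beta B)(\alpha A)=\alpha\beta\,BA$ made explicit.
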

\begin{proof} Use
Definition \ref{def:pa}.
\end{proof}

\begin{lemma}
Let $A,B$ denote a nontrivial LR pair over $\mathbb F$.
For nonzero $\alpha, \beta \in \mathbb F$ the following
are equivalent:
\begin{enumerate}
\item[\rm (i)] the LR pairs $A,B$ and $\alpha A, \beta B$
are isomorphic;
\item[\rm (ii)] $\alpha \beta = 1$.
\end{enumerate}
\end{lemma}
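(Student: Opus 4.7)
The plan is to reduce the statement to Lemma \ref{lem:alphaBetacom} and Proposition \ref{prop:LRpairClass}, which together assert that the parameter sequence is a complete isomorphism invariant of an LR pair and that scaling by $(\alpha,\beta)$ rescales every $\varphi_i$ by $\alpha\beta$.

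First I would unpack what ``nontrivial'' gives us. By Example \ref{def:triv}, nontriviality of $A,B$ forces $d\geq 1$, so there is at least one entry $\varphi_1$ in the parameter sequence $\lbrace \varphi_i\rbrace_{i=1}^d$ of $A,B$, and by Definition \ref{def:pa} every such $\varphi_i$ is a nonzero scalar.

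For (ii)$\Rightarrow$(i), assume $\alpha\beta = 1$. By Lemma \ref{lem:alphaBetacom}, the parameter sequence of $\alpha A,\beta B$ is $\lbrace \alpha\beta\varphi_i\rbrace_{i=1}^d = \lbrace \varphi_i\rbrace_{i=1}^d$, which agrees with the parameter sequence of $A,B$. By Proposition \ref{prop:LRpairClass}, LR pairs over $\mathbb F$ of diameter $d$ with the same parameter sequence are isomorphic, so $A,B$ and $\alpha A,\beta B$ are isomorphic.

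For (i)$\Rightarrow$(ii), assume the LR pairs $A,B$ and $\alpha A,\beta B$ are isomorphic. By Proposition \ref{prop:LRpairClass}, their parameter sequences must coincide; combining this with Lemma \ref{lem:alphaBetacom} gives $\varphi_i = \alpha\beta\varphi_i$ for $1\leq i\leq d$. Since $d\geq 1$ and $\varphi_1\neq 0$, we may cancel to conclude $\alpha\beta = 1$. No genuine obstacle appears, since both directions are immediate once Proposition \ref{prop:LRpairClass} is invoked; the only thing to keep track of is the use of nontriviality to guarantee a nonzero $\varphi_1$ available for cancellation in the reverse direction.
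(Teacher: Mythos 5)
Your proof is correct and follows exactly the paper's route: the paper's own proof is simply "Use Proposition \ref{prop:LRpairClass} and Lemma \ref{lem:alphaBetacom}," and you have supplied the same argument with the details (including the role of nontriviality in providing a nonzero $\varphi_1$ to cancel) spelled out.
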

\begin{proof} Use
Proposition
\ref{prop:LRpairClass} and
Lemma
\ref{lem:alphaBetacom}.
\end{proof}

\begin{lemma} Let $A,B$ denote an LR pair on $V$.
Pick nonzero $\alpha, \beta \in \mathbb F$.
\begin{enumerate}
\item[\rm (i)] Let $\lbrace v_i \rbrace_{i=0}^d$
denote an $(A,B)$-basis for $V$. Then 
the sequence $\lbrace \alpha^{-i} v_i \rbrace_{i=0}^d$
is an $(\alpha A, \beta B)$-basis for $V$.
\item[\rm (ii)] Let $\lbrace v_i \rbrace_{i=0}^d$
denote an inverted $(A,B)$-basis for $V$. Then 
the sequence $\lbrace \alpha^{i} v_i \rbrace_{i=0}^d$
is an inverted $(\alpha A, \beta B)$-basis for $V$.
\item[\rm (iii)] Let $\lbrace v_i \rbrace_{i=0}^d$
denote a $(B,A)$-basis for $V$. Then 
the sequence $\lbrace \beta^{-i} v_i \rbrace_{i=0}^d$
is a $(\beta B,\alpha A)$-basis for $V$.
\item[\rm (iv)] Let $\lbrace v_i \rbrace_{i=0}^d$
denote an inverted $(B,A)$-basis for $V$. Then 
the sequence $\lbrace \beta^{i} v_i \rbrace_{i=0}^d$
is an inverted $(\beta B, \alpha A)$-basis for $V$.
\end{enumerate}
\end{lemma}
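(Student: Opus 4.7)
The plan is to verify part (i) directly from Definition \ref{def:ABbasis}, and then reduce parts (ii)--(iv) to part (i) via the meaning of inversion and the symmetry between $A,B$ and $B,A$.

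For (i), let $\{v_i\}_{i=0}^d$ be an $(A,B)$-basis and set $w_i = \alpha^{-i}v_i$. First, by Lemma \ref{lem:alphaBeta} the $(\alpha A,\beta B)$-decomposition of $V$ equals the $(A,B)$-decomposition $\{V_i\}_{i=0}^d$, so the inclusion $v_i \in V_i$ gives $w_i \in V_i$. Second, for $1 \le i \le d$,
\[
(\alpha A) w_i \;=\; \alpha \cdot \alpha^{-i} A v_i \;=\; \alpha^{-(i-1)} v_{i-1} \;=\; w_{i-1},
\]
so both conditions of Definition \ref{def:ABbasis} hold for the LR pair $\alpha A,\beta B$. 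Note that $\beta$ does not intervene in the scaling of the basis at all, since the $(A,B)$-basis condition only refers to $A$ and to the common decomposition.

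For (ii), write the inverted $(A,B)$-basis $\{v_i\}_{i=0}^d$ as the inversion of some $(A,B)$-basis $\{u_i\}_{i=0}^d$, so that $v_i = u_{d-i}$. By (i) the sequence $\{\alpha^{-i}u_i\}_{i=0}^d$ is an $(\alpha A,\beta B)$-basis. Inverting yields that $\{\alpha^{-(d-i)}u_{d-i}\}_{i=0}^d = \{\alpha^{i-d}v_i\}_{i=0}^d$ is an inverted $(\alpha A,\beta B)$-basis. Scaling by the nonzero constant $\alpha^d$ and invoking Lemma \ref{lem:BAbasisU} on the associated $(\alpha A,\beta B)$-basis (which scaling preserves), the sequence $\{\alpha^{i}v_i\}_{i=0}^d$ is also an inverted $(\alpha A,\beta B)$-basis, as required.

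For (iii) and (iv), simply apply (i) and (ii) respectively to the LR pair $B,A$ with the roles of $\alpha,\beta$ interchanged; a $(B,A)$-basis for $V$ is by definition an $(A,B)$-basis in the reversed notation, and similarly for inverted $(B,A)$-bases, so the conclusions translate immediately. No step is a serious obstacle; the only piece requiring care is the bookkeeping in (ii), where the scalar factor $\alpha^d$ produced by inversion must be absorbed through Lemma \ref{lem:BAbasisU}. Everything else follows directly from the definitions and Lemma \ref{lem:alphaBeta}.
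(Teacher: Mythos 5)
Your proof is correct and follows essentially the same route as the paper: part (i) is verified directly from Lemma \ref{lem:alphaBeta} and Definition \ref{def:ABbasis}, and the remaining parts are obtained by symmetry and inversion, which is what the paper's ``similarly obtained'' amounts to. The only stylistic remark is that in (ii) you could avoid the $\alpha^d$ bookkeeping entirely by checking the conditions of Lemma \ref{lem:invABbasis} directly for the pair $\alpha A,\beta B$, but your detour through Lemma \ref{lem:BAbasisU} is sound.
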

\begin{proof} To obtain part (i) use
Lemma
\ref{lem:alphaBeta}
and
Definition
\ref{def:ABbasis}. Parts (ii)--(iv) are similarly
obtained.
\end{proof}

\begin{definition}
\label{def:FlagLR}
\rm
Let 
$\lbrace U_i\rbrace_{i=0}^d$ denote a flag on $V$.
An element $A \in 
	      {\rm End}(V)$ 
is said to {\it lower} 
$\lbrace U_i\rbrace_{i=0}^d$ whenever
$AU_i = U_{i-1}$ for $1 \leq i \leq d$
and $AU_0=0$. The map $A$ is said to
{\it raise} 
$\lbrace U_i\rbrace_{i=0}^d$ whenever
$U_i + AU_i = U_{i+1}$ for $0 \leq i \leq d-1$.
\end{definition}

\begin{lemma}
\label{lem:Ainduce}
Let $\lbrace V_i\rbrace_{i=0}^d$ denote
a decomposition of $V$ that is lowered by
$A \in {\rm End}(V)$.
\begin{enumerate}
\item[\rm (i)] The flag induced by 
$\lbrace V_i\rbrace_{i=0}^d$ is lowered by $A$.
\item[\rm (ii)] The flag induced by 
$\lbrace V_{d-i}\rbrace_{i=0}^d$ is raised by $A$.
\end{enumerate}
\end{lemma}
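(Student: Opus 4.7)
The plan is to unwind the definition of \emph{induced flag} as the sequence of partial sums of the decomposition, and then verify each assertion by applying $A$ term by term, using the lowering hypothesis $AV_j = V_{j-1}$ (for $1 \leq j \leq d$) and $AV_0 = 0$. Everything will reduce to bookkeeping with finite sums of subspaces, so no substantive obstacle is expected.

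For part (i), I set $U_i = V_0 + V_1 + \cdots + V_i$ for $0 \leq i \leq d$; this is by definition the flag induced by $\{V_i\}_{i=0}^d$. Applying $A$ to each summand gives $AU_i = \sum_{j=0}^i A V_j = \sum_{j=1}^i V_{j-1} = U_{i-1}$ for $1 \leq i \leq d$, while $AU_0 = AV_0 = 0$. This is exactly the condition in Definition \ref{def:FlagLR} for $A$ to lower $\{U_i\}_{i=0}^d$.

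For part (ii), the flag induced by the inverted decomposition $\{V_{d-i}\}_{i=0}^d$ is $W_i = V_d + V_{d-1} + \cdots + V_{d-i}$. For $0 \leq i \leq d-1$ I apply $A$ term by term: since each index $d-j$ with $0 \leq j \leq i$ satisfies $d-j \geq 1$, we get $AV_{d-j} = V_{d-j-1}$, and hence $AW_i = V_{d-1} + V_{d-2} + \cdots + V_{d-i-1}$. Adding $W_i$ absorbs all terms already present and contributes the single new summand $V_{d-i-1}$, producing $W_i + AW_i = V_d + V_{d-1} + \cdots + V_{d-i-1} = W_{i+1}$. This is exactly the raising condition from Definition \ref{def:FlagLR}. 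The only subtlety is the range of indices in (ii) — one needs $d-i \geq 1$ to guarantee $AV_{d-i} = V_{d-i-1}$ rather than $0$, which is precisely ensured by the restriction $0 \leq i \leq d-1$.
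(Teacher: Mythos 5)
Your proof is correct and follows essentially the same route as the paper's: write out the induced flags as partial sums, apply $A$ summand by summand using the lowering hypothesis, and check the lowering/raising conditions of Definition \ref{def:FlagLR} directly. The index bookkeeping in part (ii), including the point that $d-i\geq 1$ is what guarantees $AV_{d-i}=V_{d-i-1}$, matches the paper's computation.
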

\begin{proof} (i) 
Let $\lbrace U_i\rbrace_{i=0}^d$ denote the flag on $V$ that
is induced by
$\lbrace V_i\rbrace_{i=0}^d$.
Then $U_i= V_0 + \cdots + V_i$ for
$0 \leq i \leq d$. By assumption
$AV_i = V_{i-1} $ for $1 \leq i \leq d$ and
$AV_0=0$. Therefore 
$AU_i = U_{i-1}$ for
$1 \leq i \leq d$ and $AU_0=0$.
In other words, the flag
$\lbrace U_i\rbrace_{i=0}^d$ is lowered by $A$.
\\
\noindent (ii) Let 
 $\lbrace U_i\rbrace_{i=0}^d$ denote the flag on $V$ that
is induced by
 $\lbrace V_{d-i}\rbrace_{i=0}^d$.
Then $U_i = V_{d-i}+ \cdots + V_d$ for
$0 \leq i \leq d$. For $0 \leq i \leq d-1$,
$AU_i = 
 V_{d-i-1}+ \cdots + V_{d-1}$.
By these comments $U_i + AU_i = U_{i+1}$.
Therefore 
 $\lbrace U_i\rbrace_{i=0}^d$ is raised by $A$.
\end{proof}

\begin{lemma} 
\label{lem:LRFlag}
Let $A,B$ denote an LR pair on $V$.
\begin{enumerate}
\item[\rm (i)] The flag 
$\lbrace A^{d-i}V\rbrace_{i=0}^d$ is lowered by $A$
and raised by $B$.
\item[\rm (ii)] The flag 
$\lbrace B^{d-i}V\rbrace_{i=0}^d$ is raised by $A$
and lowered by $B$.
\end{enumerate}
\end{lemma}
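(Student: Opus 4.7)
The plan is to reduce both parts directly to Lemma \ref{lem:ABdecInd} (which identifies the flags $\lbrace A^{d-i}V\rbrace_{i=0}^d$ and $\lbrace B^{d-i}V\rbrace_{i=0}^d$ as the flags induced by the $(A,B)$- and $(B,A)$-decompositions, respectively) together with Lemma \ref{lem:Ainduce} (which transports the lowering/raising property from a decomposition to its induced flag, or to the inverted one).

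For part (i), let $\lbrace V_i \rbrace_{i=0}^d$ denote the $(A,B)$-decomposition of $V$. By Lemma \ref{lem:ABdecInd}, the flag induced by $\lbrace V_i \rbrace_{i=0}^d$ is $\lbrace A^{d-i}V\rbrace_{i=0}^d$. Since $\lbrace V_i \rbrace_{i=0}^d$ is lowered by $A$, Lemma \ref{lem:Ainduce}(i) immediately gives that $\lbrace A^{d-i}V\rbrace_{i=0}^d$ is lowered by $A$. For the raising statement, observe that $B$ raises $\lbrace V_i \rbrace_{i=0}^d$, equivalently $B$ lowers the inverted decomposition $\lbrace V_{d-i} \rbrace_{i=0}^d$. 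Applying Lemma \ref{lem:Ainduce}(ii) with the map $B$ and the decomposition $\lbrace V_{d-i} \rbrace_{i=0}^d$, we conclude that the flag induced by the inversion of $\lbrace V_{d-i} \rbrace_{i=0}^d$, which is precisely the flag induced by $\lbrace V_i \rbrace_{i=0}^d$, is raised by $B$. That flag is $\lbrace A^{d-i}V\rbrace_{i=0}^d$, so we are done.

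For part (ii), simply apply (i) to the LR pair $B,A$, noting that $B,A$ is an LR pair on $V$ with $(B,A)$-decomposition $\lbrace V_{d-i} \rbrace_{i=0}^d$; the flag associated to $B,A$ via Lemma \ref{lem:ABdecInd} is $\lbrace B^{d-i}V\rbrace_{i=0}^d$, which is therefore lowered by $B$ and raised by $A$.

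There is no real obstacle here; the only subtlety is keeping track of which decomposition induces which flag, and recognizing that ``raised by $B$'' on $\lbrace V_i\rbrace_{i=0}^d$ is the same as ``lowered by $B$'' on the inverted decomposition, so that Lemma \ref{lem:Ainduce}(ii) can be invoked in the form stated.
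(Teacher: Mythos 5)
Your proposal is correct and follows essentially the same route as the paper: both identify the flags via Lemma \ref{lem:ABdecInd} and then transport the lowering/raising properties of the $(A,B)$- and $(B,A)$-decompositions to the induced flags using Lemma \ref{lem:Ainduce}. Deducing part (ii) by applying part (i) to the pair $B,A$ is just a cosmetic variant of the paper's symmetric argument.
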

\begin{proof}
The $(B,A)$-decomposition of $V$ is the inversion of
the 
 $(A,B)$-decomposition of $V$.
The 
 $(A,B)$-decomposition of $V$ is lowered by $A$.
The 
 $(B,A)$-decomposition of $V$ is lowered by $B$.
By Lemma
\ref{lem:ABdecInd},
the 
 $(A,B)$-decomposition of $V$ induces the flag
$\lbrace A^{d-i}V\rbrace_{i=0}^d$, and
the 
 $(B,A)$-decomposition of $V$ induces the flag
$\lbrace B^{d-i}V\rbrace_{i=0}^d$.
The result follows in view of Lemma
\ref{lem:Ainduce}.
\end{proof}

\begin{lemma}
\label{lem:oppF}
Let $\lbrace U_i \rbrace_{i=0}^d$ and
 $\lbrace U'_i \rbrace_{i=0}^d$ denote flags
 on $V$. Then the following are equivalent:
 \begin{enumerate}
\item[\rm (i)] 
 $\lbrace U_i \rbrace_{i=0}^d$ and
 $\lbrace U'_i \rbrace_{i=0}^d$ are opposite;
\item[\rm (ii)] there exists
	      $A \in {\rm End}(V)$ 
that lowers 
 $\lbrace U_i \rbrace_{i=0}^d$ and
raises $\lbrace U'_i \rbrace_{i=0}^d$.
\end{enumerate}
\noindent
Assume that {\rm (i), (ii)} hold, and
define $V_i = U_i \cap U'_{d-i}$ for $0 \leq i \leq d$.
Then the decomposition $\lbrace V_i \rbrace_{i=0}^d$
is lowered by $A$.
\end{lemma}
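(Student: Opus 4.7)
For ${\rm (i)}\Rightarrow {\rm (ii)}$, I would invoke the equivalence stated just after the definition of opposite flags in Section 2: opposite flags give a decomposition $\lbrace V_i\rbrace_{i=0}^d$ with $V_i = U_i \cap U'_{d-i}$ that induces $\lbrace U_i\rbrace_{i=0}^d$ and whose inversion $\lbrace V_{d-i}\rbrace_{i=0}^d$ induces $\lbrace U'_i\rbrace_{i=0}^d$. Then pick any basis of $V$ inducing $\lbrace V_i\rbrace_{i=0}^d$ and use it to define an $A \in {\rm End}(V)$ that lowers $\lbrace V_i\rbrace_{i=0}^d$. By Lemma \ref{lem:Ainduce}(i), $A$ lowers the induced flag $\lbrace U_i\rbrace_{i=0}^d$; by Lemma \ref{lem:Ainduce}(ii), $A$ raises the flag induced by $\lbrace V_{d-i}\rbrace_{i=0}^d$, which is $\lbrace U'_i\rbrace_{i=0}^d$.

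For ${\rm (ii)}\Rightarrow {\rm (i)}$ together with the final assertion, I would proceed as follows. First, from $AU_i = U_{i-1}$ and $U_d = V$, iterate to get $A^j V = U_{d-j}$ for $0 \leq j \leq d$, whence $A^{d+1}=0$ and (by rank-nullity) $\ker A^{i+1}=U_i$; in particular $A$ is Nil in the sense of Definition \ref{def:Nil}. Second, fix any nonzero $v \in U'_0$. Because $A$ raises $\lbrace U'_j\rbrace_{j=0}^d$, one has $AU'_j \subseteq U'_{j+1}$, so $A^j v \in U'_j$, and a short induction using $U'_{j+1}=U'_j + AU'_j$ yields $U'_j = \langle v, Av, \ldots, A^j v\rangle$. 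Matching dimensions forces $\lbrace A^j v\rbrace_{j=0}^d$ to be a basis for $V$.

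I would then define $V_i$ to be the span of $A^{d-i}v$ for $0 \leq i \leq d$. By construction $\lbrace V_i\rbrace_{i=0}^d$ is a decomposition of $V$ lowered by $A$. From $A^{d+1}v=0$ one gets $A^{d-i}v \in \ker A^{i+1}=U_i$, and the previous paragraph gives $A^{d-i}v\in U'_{d-i}$, so $V_i \subseteq U_i \cap U'_{d-i}$. A dimension count using the basis $\lbrace A^j v\rbrace$ shows $V_0 + \cdots + V_i = U_i$ and $V_i + \cdots + V_d = U'_{d-i}$. Intersecting these inside the direct-sum decomposition $V = V_0 \oplus \cdots \oplus V_d$ yields $V_i = U_i \cap U'_{d-i}$, which is the final claim of the lemma. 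More generally, for $i+j<d$ one gets $U_i \cap U'_j = (V_0+\cdots+V_i)\cap(V_{d-j}+\cdots+V_d) = 0$, which is the oppositeness condition.

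The only real obstacle is organizing the dimension counts in the right order and noticing that the basis $\lbrace A^j v\rbrace_{j=0}^d$ built from any nonzero $v\in U'_0$ simultaneously realizes both flags; once that is in hand, identifying $V_i$ with $U_i \cap U'_{d-i}$ and verifying oppositeness reduce to direct-sum bookkeeping.
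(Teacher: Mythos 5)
Your argument is correct and follows essentially the same route as the paper: for ${\rm (i)}\Rightarrow{\rm (ii)}$ both use the decomposition $V_i=U_i\cap U'_{d-i}$ together with Lemma \ref{lem:Ainduce}, and for ${\rm (ii)}\Rightarrow{\rm (i)}$ your spans $\langle A^{d-i}v\rangle$ with $0\ne v\in U'_0$ are exactly the paper's subspaces $W_i=A^{d-i}U'_0$. The only cosmetic difference is that you identify $U_i$ with $\ker A^{i+1}$ directly, where the paper instead appeals to the uniqueness of the flag lowered by $A$.
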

\begin{proof} 
${\rm (i)}\Rightarrow {\rm (ii)}$ 
Define
 $V_i = U_i \cap U'_{d-i}$ for $0 \leq i \leq d$.
Then $\lbrace V_i \rbrace_{i=0}^d$ is a decomposition of $V$
that induces 
$\lbrace U_i \rbrace_{i=0}^d$ and whose
inversion induces 
$\lbrace U'_i \rbrace_{i=0}^d$.
	      Let $A \in {\rm End}(V)$ 
lower 
 $\lbrace V_i \rbrace_{i=0}^d$.
By Lemma
\ref{lem:Ainduce}, $A$ lowers
 $\lbrace U_i \rbrace_{i=0}^d$
and raises
 $\lbrace U'_i \rbrace_{i=0}^d$.
\\
${\rm (ii)}\Rightarrow {\rm (i)}$ 
We display a decomposition
$\lbrace W_i \rbrace_{i=0}^d$ of
$V$ that induces
 $\lbrace U_i \rbrace_{i=0}^d$ and
 whose inversion
 induces 
 $\lbrace U'_i \rbrace_{i=0}^d$.
Define $W_i = A^{d-i} U'_0$ for $0 \leq i \leq d$.
We show that
 $\lbrace W_i \rbrace_{i=0}^d$ is a decomposition
 of $V$.
Note that $U'_0$
has dimension one, so $W_i$ has dimension at most one
for $0 \leq i \leq d$.
Using the assumption that $A$ raises 
 $\lbrace U'_i \rbrace_{i=0}^d$, we obtain
 $U'_j = W_d +W_{d-1} + \cdots + W_{d-j}$ for
 $0 \leq j \leq d$. Setting $j=d$ we find
$V = \sum_{i=0}^d W_i$. The dimension of $V$ is $d+1$.
Therefore the sum
$V = \sum_{i=0}^d W_i$
is direct, and $W_i$ has dimension
one for $0 \leq i \leq d$. In other words
 $\lbrace W_i \rbrace_{i=0}^d$ is a decomposition of $V$.
By construction, the inverted decomposition
$\lbrace W_{d-i} \rbrace_{i=0}^d$ induces
 $\lbrace U'_i \rbrace_{i=0}^d$.
By assumption $A$ lowers the flag
$\lbrace U_{i} \rbrace_{i=0}^d$.
By Definition 
\ref{def:FlagLR},
$\lbrace U_{i} \rbrace_{i=0}^d$
is the unique flag on $V$ that
is lowered by $A$.
By the definition of
$\lbrace W_i \rbrace_{i=0}^d$ we
find $AW_i = W_{i-1}$
for $1 \leq i \leq d$.
Also $AW_0=A^{d+1}V=0$ since $A$ is Nil. 
Consequently $A$ lowers
$\lbrace W_i \rbrace_{i=0}^d$.
By Lemma
\ref{lem:Ainduce},
$A$ lowers the flag induced by
$\lbrace W_{i} \rbrace_{i=0}^d$.
By these comments
$\lbrace W_{i} \rbrace_{i=0}^d$ induces
  $\lbrace U_i \rbrace_{i=0}^d$.
We have shown that the decomposition 
 $\lbrace W_{i} \rbrace_{i=0}^d$ induces
 $\lbrace U_i \rbrace_{i=0}^d$ and the
 inverted decomposition 
$\lbrace W_{d-i} \rbrace_{i=0}^d$ 
induces $\lbrace U'_i \rbrace_{i=0}^d$.
Therefore
 $\lbrace U_i \rbrace_{i=0}^d$ and
 $\lbrace U'_i \rbrace_{i=0}^d$
are opposite.
\\
\noindent 
Assume that (i), (ii) hold. Recall
from the proof of
${\rm (ii)}\Rightarrow {\rm (i)}$ that
the decomposition
 $\lbrace W_{i} \rbrace_{i=0}^d$ induces
 $\lbrace U_i \rbrace_{i=0}^d$ and 
the inverted decomposition
$\lbrace W_{d-i} \rbrace_{i=0}^d$
induces $\lbrace U'_i \rbrace_{i=0}^d$.
Therefore 
$W_i = U_{i} \cap U'_{d-i} = V_i$ for $0 \leq i \leq d$.
Recall also from the proof of
${\rm (ii)}\Rightarrow {\rm (i)}$ that
 $A$ lowers $\lbrace W_i \rbrace_{i=0}^d$.
So $A$ lowers $\lbrace V_i \rbrace_{i=0}^d$.
\end{proof}

\begin{proposition}
\label{prop:LRchar}
Let $A,B \in 
	      {\rm End}(V)$. Then $A,B$ is an LR pair on $V$ if and only if
	     the following {\rm (i)--(iii)} hold:
\begin{enumerate}
\item[\rm (i)] $A$ and $B$ are Nil;
\item[\rm (ii)] 
the flag $\lbrace A^{d-i}V\rbrace_{i=0}^d$ is raised by $B$;
\item[\rm (iii)] 
the flag $\lbrace B^{d-i}V\rbrace_{i=0}^d$ is raised by $A$.
\end{enumerate}
\end{proposition}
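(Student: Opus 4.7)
The plan is to verify the forward direction by citation and concentrate effort on the reverse direction, which I would reduce to two applications of Lemma \ref{lem:oppF}.

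Forward direction. Suppose $A,B$ is an LR pair on $V$. Then (i) is immediate from Lemma \ref{lem:ABdecIndNil}, and (ii), (iii) are exactly the content of Lemma \ref{lem:LRFlag}.

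Reverse direction. Assume (i)--(iii). Set $U_i = A^{d-i}V$ and $U'_i = B^{d-i}V$ for $0 \leq i \leq d$. By (i) and Lemma \ref{lem:NilRec}, both $\{U_i\}_{i=0}^d$ and $\{U'_i\}_{i=0}^d$ are flags on $V$. From the explicit formulas $AU_i = A^{d-i+1}V = U_{i-1}$ for $1 \leq i \leq d$ and $AU_0 = A^{d+1}V = 0$ (again using that $A$ is Nil), we see that $A$ lowers $\{U_i\}_{i=0}^d$; by the symmetric argument $B$ lowers $\{U'_i\}_{i=0}^d$. By hypothesis (ii), $B$ raises $\{U_i\}_{i=0}^d$; by (iii), $A$ raises $\{U'_i\}_{i=0}^d$.

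Now invoke Lemma \ref{lem:oppF} with the flags $\{U_i\}_{i=0}^d$, $\{U'_i\}_{i=0}^d$ and the map $A$, which lowers the first and raises the second. This shows that the two flags are opposite, and that the decomposition $V_i = U_i \cap U'_{d-i}$ is lowered by $A$. Applying Lemma \ref{lem:oppF} a second time with the roles of the two flags swapped and with the map $B$ (which lowers $\{U'_i\}_{i=0}^d$ and raises $\{U_i\}_{i=0}^d$), the decomposition $W_i = U'_i \cap U_{d-i}$ is lowered by $B$. But $W_i = V_{d-i}$, so the inverted decomposition $\{V_{d-i}\}_{i=0}^d$ is lowered by $B$, which means $\{V_i\}_{i=0}^d$ is raised by $B$. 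Thus $\{V_i\}_{i=0}^d$ is a decomposition of $V$ lowered by $A$ and raised by $B$, and $A,B$ is an LR pair on $V$ by Definition \ref{def:lr}.

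The only point requiring care is the bookkeeping that the two applications of Lemma \ref{lem:oppF} produce consistent decompositions; this is handled automatically because the explicit intersection formula is preserved under simultaneously swapping the two flags and inverting the index. No nontrivial computation is needed beyond the two citations of Lemma \ref{lem:oppF}.
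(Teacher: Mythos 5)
Your proof is correct and follows essentially the same route as the paper: the forward direction cites the same two lemmas, and the reverse direction uses Lemma \ref{lem:oppF} on the same decomposition $V_i = A^{d-i}V \cap B^iV$, with your explicit second application of that lemma (and the bookkeeping $W_i = V_{d-i}$) being exactly the paper's ``interchanging the roles of $A,B$'' step spelled out.
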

\begin{proof}
First assume that $A,B$ is an LR pair on $V$. Then
condition (i) holds by Lemma
\ref{lem:ABdecIndNil}, and conditions (ii), (iii) hold
by Lemma
\ref{lem:LRFlag}. Conversely, assume that 
the conditions (i)--(iii) hold. 
To show that $A,B$ is an LR pair on $V$,
we display a decomposition $\lbrace V_i\rbrace_{i=0}^d$
of $V$ that is lowered by $A$ and raised by $B$.
By construction and since $A$ is Nil,
the flag 
$\lbrace A^{d-i}V\rbrace_{i=0}^d$ is lowered by
$A$.
By assumption the flag 
$\lbrace B^{d-i}V\rbrace_{i=0}^d$ is raised by $A$.
Now by Lemma
\ref{lem:oppF} the flags
$\lbrace A^{d-i}V\rbrace_{i=0}^d$ and
$\lbrace B^{d-i}V\rbrace_{i=0}^d$ are opposite.
Define $V_i = A^{d-i}V \cap B^iV$ for
$0 \leq i \leq d$. By 
 Lemma
\ref{lem:oppF} the decomposition 
$\lbrace V_i\rbrace_{i=0}^d$ is lowered by $A$.
Interchanging the roles of $A,B$ in the argument so far,
we see that
$\lbrace V_i\rbrace_{i=0}^d$ is raised by $B$.
We have shown that $A,B$ is an LR pair on $V$.
\end{proof}

\noindent We define some matrices for later use.

\begin{definition}
\label{def:Dmat}
\rm
Let $A,B$ denote an LR pair on $V$, with
parameter sequence $\lbrace \varphi_i \rbrace_{i=1}^d$.
Define a diagonal matrix $D
 \in {\rm Mat}_{d+1}(\mathbb F)$ with $(i,i)$-entry
 $\varphi_1 \varphi_2 \cdots \varphi_i$
 for $0 \leq i \leq d$.
\end{definition}

\begin{lemma}
\label{lem:Dmeaning}
Let $A,B$ denote an LR pair on $V$, with
parameter sequence $\lbrace \varphi_i \rbrace_{i=1}^d$.
For $M \in 
 {\rm Mat}_{d+1}(\mathbb F)$ the following are equivalent:
\begin{enumerate}
\item[\rm (i)]
$M$ is the transition matrix from an $(A,B)$-basis of $V$
to an inverted $(B,A)$-basis of $V$;
\item[\rm (ii)] there exists $0 \not=\zeta \in \mathbb F$
such that $M=\zeta D$.
\end{enumerate}
\end{lemma}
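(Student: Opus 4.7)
The plan is to prove the equivalence by combining Lemma~\ref{lem:sometrans}(i), which converts an $(A,B)$-basis into an inverted $(B,A)$-basis via the scaling factors $\varphi_1\varphi_2\cdots\varphi_i$, with a uniqueness-up-to-scalar statement for inverted $(B,A)$-bases. The latter is the analogue of Lemma~\ref{lem:BAbasisU} obtained by applying that lemma to the LR pair $B,A$ and inverting, or equivalently by reading off Lemma~\ref{lem:5CharBAinv}(iii): every inverted $(B,A)$-basis has the form $\lbrace B^i \eta\rbrace_{i=0}^d$ for some $0\neq\eta\in A^dV$, and two such bases coincide up to a common nonzero scalar in $\mathbb F$ exactly when the corresponding vectors $\eta$ do.

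For the direction (i)$\Rightarrow$(ii), I would take an $(A,B)$-basis $\lbrace v_i\rbrace_{i=0}^d$ of $V$ and an inverted $(B,A)$-basis $\lbrace w_i\rbrace_{i=0}^d$ of $V$ whose transition matrix is $M$, so that $w_j=\sum_{i=0}^d M_{ij}v_i$. By Lemma~\ref{lem:sometrans}(i), the sequence $\lbrace \varphi_1\varphi_2\cdots\varphi_i v_i\rbrace_{i=0}^d$ is itself an inverted $(B,A)$-basis. By the uniqueness-up-to-scalar statement above, there exists $0\neq\zeta\in\mathbb F$ with $w_i=\zeta\varphi_1\varphi_2\cdots\varphi_i v_i$ for $0\le i\le d$. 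Comparing with $w_j=\sum_i M_{ij}v_i$ and using that $\lbrace v_i\rbrace_{i=0}^d$ is a basis gives $M_{ij}=\delta_{ij}\zeta\varphi_1\cdots\varphi_i$, which is the entrywise description of $\zeta D$ from Definition~\ref{def:Dmat}.

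For (ii)$\Rightarrow$(i), I would start with any $(A,B)$-basis $\lbrace v_i\rbrace_{i=0}^d$ of $V$, which exists because $A,B$ is an LR pair. Set $w_i=\zeta\varphi_1\varphi_2\cdots\varphi_i v_i$ for $0\le i\le d$. Lemma~\ref{lem:sometrans}(i) gives that $\lbrace \varphi_1\cdots\varphi_i v_i\rbrace_{i=0}^d$ is an inverted $(B,A)$-basis, and the uniqueness-up-to-scalar statement then yields that its $\zeta$-scaling $\lbrace w_i\rbrace_{i=0}^d$ is again an inverted $(B,A)$-basis. By construction, $w_j=\zeta\varphi_1\cdots\varphi_j v_j$, so the transition matrix from $\lbrace v_i\rbrace_{i=0}^d$ to $\lbrace w_i\rbrace_{i=0}^d$ is precisely $\zeta D$.

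There is no real obstacle here; the content is entirely bookkeeping with scalars. The only point to be careful about is citing (or noting) the analogue of Lemma~\ref{lem:BAbasisU} for inverted $(B,A)$-bases, which is immediate from applying Lemma~\ref{lem:BAbasisU} to the LR pair $B,A$ and then inverting, so this should not require any new argument.
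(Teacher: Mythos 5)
Your proof is correct and follows essentially the same route as the paper, whose entire proof is the citation of Lemma~\ref{lem:sometrans}(i); you have simply spelled out the bookkeeping (in particular the uniqueness-up-to-scalar of inverted $(B,A)$-bases via Lemma~\ref{lem:BAbasisU}) that the paper leaves implicit.
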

\begin{proof} 
Use Lemma
\ref{lem:sometrans}(i).
\end{proof}

\begin{definition}
\label{def:tauMat}
Let $\tau$ denote the matrix in
 ${\rm Mat}_{d+1}(\mathbb F)$ that has $(i-1,i)$-entry 1 for
 $1 \leq i \leq d$, and
 all other entries $0$. Thus
\begin{eqnarray*}
\tau =	\left(
	\begin{array}{ c c cc c c }
	0 &  1  &    & &   & {\bf 0}  \\
	 &  0  & 1  & &  &      \\
	 &   &  0   & \cdot & &   \\
	   &   &  &  \cdot & \cdot &  \\
	     &  & &  &     \cdot   & 1 \\
	      {\bf 0}  &&  & &   & 0  \\
	      \end{array}
	      \right),
\qquad \qquad
{\bf Z}\tau {\bf Z} = 
  \left(
	\begin{array}{ c c cc c c }
	0 &    &    & &   & {\bf 0}  \\
	 1 &  0  &   & &  &      \\
	 & 1  &  0   &  & &   \\
	   &   & \cdot  &  \cdot &  \\
	     &  & &  \cdot &     \cdot   &  \\
	      {\bf 0}  &&  & & 1  & 0  \\
	      \end{array}
	      \right).
\end{eqnarray*}
\end{definition}

\noindent Let $A,B$ denote an LR pair on $V$, with
parameter sequence $\lbrace \varphi_i \rbrace_{i=1}^d$.
In lines
(\ref{eq:ABrep}),
(\ref{eq:invABrep}),
(\ref{eq:BArep}),
(\ref{eq:invBArep})
we encountered some matrices that had
$\lbrace \varphi_i \rbrace_{i=1}^d$ among the entries.
We now express these
matrices in terms of $\bf Z$, $D$, $\tau$.

\begin{lemma} Referring to Definitions
\ref{def:Dmat}
and
\ref{def:tauMat},
\begin{eqnarray*}
&&
D^{-1}\tau D =	\left(
	\begin{array}{ c c cc c c }
	0 &  \varphi_1  &    & &   & {\bf 0}  \\
	 &  0  & \varphi_2  & &  &      \\
	 &   &  0   & \cdot & &   \\
	   &   &  &  \cdot & \cdot &  \\
	     &  & &  &     \cdot   & \varphi_d \\
	      {\bf 0}  &&  & &   & 0  \\
	      \end{array}
	      \right),
\quad 
{\bf Z}D{\bf Z} \tau {\bf Z} D^{-1} {\bf Z} =	\left(
	\begin{array}{ c c cc c c }
	0 &  \varphi_d  &    & &   & {\bf 0}  \\
	 &  0  & \varphi_{d-1}  & &  &      \\
	 &   &  0   & \cdot & &   \\
	   &   &  &  \cdot & \cdot &  \\
	     &  & &  &     \cdot   & \varphi_1\\
	      {\bf 0}  &&  & &   & 0  \\
	      \end{array}
	      \right),
\\
&&
{\bf Z}D^{-1}\tau D {\bf Z} = 
  \left(
	\begin{array}{ c c cc c c }
	0 &    &    & &   & {\bf 0}  \\
	 \varphi_d &  0  &   & &  &      \\
	 & \varphi_{d-1}  &  0   &  & &   \\
	   &   & \cdot  &  \cdot &  \\
	     &  & &  \cdot &     \cdot   &  \\
	      {\bf 0}  &&  & & \varphi_1  & 0  \\
	      \end{array}
	      \right),
\quad 
D{\bf Z}\tau {\bf Z} D^{-1} = 
  \left(
	\begin{array}{ c c cc c c }
	0 &    &    & &   & {\bf 0}  \\
	 \varphi_1 &  0  &   & &  &      \\
	 & \varphi_{2}  &  0   &  & &   \\
	   &   & \cdot  &  \cdot &  \\
	     &  & &  \cdot &     \cdot   &  \\
	      {\bf 0}  &&  & & \varphi_d  & 0  \\
	      \end{array}
	      \right).
\end{eqnarray*}
\end{lemma}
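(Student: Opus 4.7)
The plan is to verify each of the four identities by direct entrywise computation using two elementary facts. First, for any diagonal matrix $\Delta\in{\rm Mat}_{d+1}(\mathbb F)$ with diagonal entries $\delta_0,\ldots,\delta_d$ and any matrix $M$, the product $\Delta M\Delta^{-1}$ has $(i,j)$-entry $\delta_i\delta_j^{-1}M_{ij}$. Second, since ${\bf Z}$ has $(i,j)$-entry $\delta_{i+j,d}$ and ${\bf Z}^2=I$, we have $({\bf Z} M{\bf Z})_{ij}=M_{d-i,\,d-j}$ for every matrix $M$. Thus conjugation by $D$ rescales the nonzero entries of $\tau$ or of ${\bf Z}\tau{\bf Z}$ along their single supporting diagonal, while conjugation by ${\bf Z}$ reverses the row/column indexing; each of the four target matrices is obtained from $\tau$ (or from ${\bf Z}\tau{\bf Z}$) by one application of each principle.

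First I would establish the formula for $D^{-1}\tau D$. Since $\tau$ is supported on the super-diagonal with $(i,i+1)$-entry $1$, the rescaling principle gives $(D^{-1}\tau D)_{i,i+1}=D_{ii}^{-1}D_{i+1,i+1}=\varphi_{i+1}$, with all other entries zero; this yields the first displayed matrix. For the fourth identity, $D\,{\bf Z}\tau{\bf Z}\,D^{-1}$, Definition \ref{def:tauMat} shows that ${\bf Z}\tau{\bf Z}$ is the sub-diagonal shift, so the rescaling principle gives $(D\,{\bf Z}\tau{\bf Z}\,D^{-1})_{j+1,j}=D_{j+1,j+1}D_{jj}^{-1}=\varphi_{j+1}$, producing the fourth displayed matrix.

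The remaining two identities then follow by conjugating the first and fourth on both sides with ${\bf Z}$. For the third, ${\bf Z}(D^{-1}\tau D){\bf Z}$ has $(i,i-1)$-entry $(D^{-1}\tau D)_{d-i,\,d-i+1}=\varphi_{d-i+1}$, producing the sub-diagonal with entries $\varphi_d,\varphi_{d-1},\ldots,\varphi_1$. For the second, associativity gives ${\bf Z}(D\,{\bf Z}\tau{\bf Z}\,D^{-1}){\bf Z}={\bf Z} D{\bf Z}\,\tau\,{\bf Z} D^{-1}{\bf Z}$, and this matrix has $(i,i+1)$-entry $(D\,{\bf Z}\tau{\bf Z}\,D^{-1})_{d-i,\,d-i-1}=\varphi_{d-i}$, giving the second displayed matrix. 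There is no substantive obstacle: each identity reduces to a one-line index chase, and the only care required is in tracking the direction of the index reversal induced by ${\bf Z}$ and noting that the two outer copies of ${\bf Z}$ in the second and third identities may be interpreted as conjugation.
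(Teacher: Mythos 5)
Your proof is correct and takes essentially the same approach as the paper, whose entire proof is ``Matrix multiplication''; you have simply carried out that computation explicitly, and the index chases for all four identities check out.
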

\begin{proof} Matrix multiplication.
\end{proof}

\noindent In Section 12 we will
consider some powers of the matrix $\tau$
from Definition
\ref{def:tauMat}. We now compute the
entries of these powers.

\begin{lemma}
\label{lem:tauPower}
Referring to
Definition
\ref{def:tauMat},
for $0 \leq r \leq d$ the matrix
$\tau^r$ has $(i,j)$-entry
\begin{eqnarray*}
(\tau^r)_{i,j} = 
\begin{cases}
1 &  {\mbox{\rm if $j-i=r$}}; \\
0 & {\mbox{\rm if $j-i\not=r$}}
\end{cases}
\qquad \qquad (0 \leq i,j \leq d).
\end{eqnarray*}
Moreover $\tau^{d+1}=0$.
\end{lemma}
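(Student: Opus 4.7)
The plan is to establish both assertions by induction on $r$, using only the definition of $\tau$ from Definition \ref{def:tauMat} and standard matrix multiplication. The shape of the claim makes the inductive structure essentially forced: multiplying by $\tau$ on the left should shift the unique diagonal of 1's up by one position, so after $r$ steps the diagonal sits at offset $r$.

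First I would verify the base case $r=0$. Here $\tau^0 = I$ has $(i,j)$-entry $\delta_{i,j}$, which is $1$ precisely when $j-i=0$, matching the formula.

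For the inductive step, assume the formula holds for some $r$ with $0 \le r < d$. Then
\begin{eqnarray*}
(\tau^{r+1})_{i,j} = \sum_{k=0}^{d} \tau_{i,k}(\tau^r)_{k,j}.
\end{eqnarray*}
By Definition \ref{def:tauMat} the factor $\tau_{i,k}$ is nonzero only when $k=i+1$ (and only for $0 \le i \le d-1$), in which case it equals $1$. For $i=d$ every term vanishes, consistent with the claim that the bottom row of $\tau^{r+1}$ is zero (since $j-d \le 0 < r+1$). For $0 \le i \le d-1$ we therefore have $(\tau^{r+1})_{i,j} = (\tau^r)_{i+1,j}$, which by the induction hypothesis equals $1$ exactly when $j-(i+1) = r$, i.e.\ $j-i = r+1$, and $0$ otherwise. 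This is the desired formula at level $r+1$.

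For the final assertion $\tau^{d+1}=0$, observe that for all $0 \le i,j \le d$ we have $j-i \le d < d+1$, so the formula (applied with $r=d+1$, or more properly obtained by one further application of the inductive step past $r=d$) yields $(\tau^{d+1})_{i,j}=0$ for every $(i,j)$. No step is a serious obstacle; the only mild care needed is the boundary row $i=d$ in the induction, which is handled precisely by the observation that $\tau$ has a zero bottom row.
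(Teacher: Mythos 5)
Your proof is correct and is essentially the argument the paper intends: the paper's proof is simply the phrase ``Matrix multiplication,'' and your induction on $r$ is the standard way to carry that computation out, including the correct handling of the bottom row $i=d$ and the final step giving $\tau^{d+1}=0$.
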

\begin{proof}
Matrix multiplication.
\end{proof}

\noindent Let $A,B$ denote an LR pair on $V$, with
parameter sequence $\lbrace \varphi_i \rbrace_{i=1}^d$.
As we proceed, we will encounter the case in which 
the $\lbrace \varphi_i \rbrace_{i=1}^d$ satisfy a linear recurrence.
We now consider this case.

\begin{lemma}
\label{lem:ABrelations}
Let $A,B$ denote an LR pair on $V$, with
parameter sequence $\lbrace \varphi_i \rbrace_{i=1}^d$.
Pick an integer $r$ $(1 \leq r \leq d+1)$.
Let $x$ and 
 $\lbrace y_i \rbrace_{i=0}^{r}$
 denote scalars in 
$\mathbb F$.
Then the following are equivalent:
\begin{enumerate}
\item[\rm (i)] $ x A^{r-1} =\sum_{i=0}^{r} y_i A^i B A^{r-i}$;
\item[\rm (ii)] $ x B^{r-1} = \sum_{i=0}^{r} y_i B^{r-i} A B^i$;
\item[\rm (iii)] 
$x = y_0 \varphi_{i} + 
y_1 \varphi_{i+1} + 
\cdots + y_{r}\varphi_{i+r}$ for $0 \leq i \leq d-r+1$.
\end{enumerate}
\end{lemma}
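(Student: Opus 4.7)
The plan is to verify each equivalence by evaluating both sides of (i) and (ii) on an appropriate basis of $V$ supplied by the LR pair structure, thereby reducing the operator identities to scalar identities on the coefficients. By Lemma \ref{lem:ABAbasis}, identities in ${\rm End}(V)$ are determined by their action on any basis of $V$, so testing against one basis suffices.

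For (i)$\Leftrightarrow$(iii), I would fix an $(A,B)$-basis $\{v_j\}_{j=0}^d$ of $V$. By Lemma \ref{lem:ABmatrix} we have $Av_j = v_{j-1}$ (with $Av_0 = 0$) and $Bv_j = \varphi_{j+1}v_{j+1}$ (with $Bv_d = 0$). A short induction then yields $A^{r-1}v_j = v_{j-r+1}$ and $A^i B A^{r-i} v_j = \varphi_{j-r+i+1}\,v_{j-r+1}$ whenever $j \ge r-1$, while both sides of (i) annihilate $v_j$ for $j < r-1$. Matching coefficients of $v_{j-r+1}$ and setting $k = j-r+1$, the identity in (i) is equivalent to $x = \sum_{i=0}^r y_i \varphi_{k+i}$ for $0 \le k \le d-r+1$, which is (iii).

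For (ii)$\Leftrightarrow$(iii), I would carry out the analogous calculation using a $(B,A)$-basis $\{u_j\}_{j=0}^d$. By Lemma \ref{lem:BAbasisMat} we have $Bu_j = u_{j-1}$ (with $Bu_0 = 0$) and $Au_j = \varphi_{d-j}u_{j+1}$ (with $Au_d = 0$). A direct computation then yields $B^{r-1}u_j = u_{j-r+1}$ and $B^{r-i} A B^i u_j = \varphi_{d-j+i}\,u_{j-r+1}$ for $j \ge r-1$, with both sides vanishing otherwise. Setting $k = d-j$ converts the resulting scalar identity into (iii). Alternatively, one may deduce (ii)$\Leftrightarrow$(iii) from (i)$\Leftrightarrow$(iii) applied to the LR pair $B,A$, whose parameter sequence is $\{\varphi_{d-i+1}\}_{i=1}^d$ by Lemma \ref{lem:BAvAB}, after a reindexing $i \mapsto r-i$ of the coefficients $y_i$.

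The argument is essentially bookkeeping once the basis actions are in hand; no deep obstacle arises. The only point requiring care is the boundary behavior: one must verify that for $j < r-1$ both sides of (i) and (ii) vanish on the corresponding basis vector, so that no hidden scalar conditions arise outside the advertised range $0 \le k \le d-r+1$ in (iii).
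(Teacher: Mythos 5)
Your proof is correct and is essentially the paper's own argument, which simply says to represent $A$ and $B$ by the matrices in (\ref{eq:ABrep}) and compare entries; your explicit basis computations (using an $(A,B)$-basis for (i) and a $(B,A)$-basis for (ii), with the conventions $\varphi_0=0$, $\varphi_{d+1}=0$ handling the boundary terms) carry this out in full. The only blemish is the citation of Lemma \ref{lem:ABAbasis} for the fact that an operator identity can be checked on a basis of $V$ — that lemma gives a basis of ${\rm End}(V)$, not this (trivial) fact — but nothing in the argument depends on it.
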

\begin{proof}
Represent $A$ and $B$ by matrices as in
(\ref{eq:ABrep}).
\end{proof}

\section{LR pairs of Weyl and $q$-Weyl type}

In this section we investigate two families
of LR pairs, said to have Weyl type and $q$-Weyl type.
We begin with an example that illustrates
Lemma
\ref{lem:ABrelations}. 
 Throughout this section $V$ denotes a vector
space over $\mathbb F$ with dimension $d+1$.

\begin{example}
\label{ex:Weyl}
\rm
Let $A,B$ denote an LR pair on $V$, with
parameter sequence $\lbrace \varphi_i \rbrace_{i=1}^d$.
 By Lemma
\ref{lem:ABrelations},
\begin{eqnarray}
AB-BA = I     
\label{eq:Weyl1}
\end{eqnarray}
if and only if 
\begin{eqnarray}
\varphi_{i+1}-\varphi_i = 1
\qquad \qquad  (0 \leq i \leq d).
\label{eq:Weyl2}
\end{eqnarray}
\end{example}

\begin{note}\rm
The equation 
(\ref{eq:Weyl1}) is called the {\it Weyl relation}.
\end{note}

\begin{definition}
\label{def:Weyl}
\rm
The LR pair $A,B$ in Example
\ref{ex:Weyl} is said to have
{\it Weyl type} whenever it satisfies
the equivalent conditions 
{\rm (\ref{eq:Weyl1})}, 
{\rm (\ref{eq:Weyl2})}.
\end{definition}

\begin{note}\rm
Referring to Example
\ref{ex:Weyl},
assume that $A,B$ has Weyl type.
Then the LR pair $B,-A$ has Weyl type.
\end{note}

\begin{lemma}
\label{lem:pPrime}
Referring to Example
\ref{ex:Weyl},
assume that $A,B$ has Weyl type.
Then {\rm (i), (ii)} hold below.
\begin{enumerate}
\item[\rm (i)]
  $\varphi_i = i$ for $1 \leq i \leq d$. 
\item[\rm (ii)] 
The integer $p=d+1$ is prime and
${\rm Char}(\mathbb F)=p$.
\end{enumerate}
\end{lemma}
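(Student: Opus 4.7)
The proof is essentially a direct unpacking of the two conventions $\varphi_0=0$, $\varphi_{d+1}=0$ combined with the recurrence (\ref{eq:Weyl2}) and the nonvanishing of $\varphi_1,\ldots,\varphi_d$.

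First I would establish (i) by a one-line induction. Since $\varphi_0 = 0$ and $\varphi_{i+1} = \varphi_i + 1$ for $0 \leq i \leq d$, we get $\varphi_i = i \cdot 1_{\mathbb F}$ for $0 \leq i \leq d+1$, viewing $i$ as an element of $\mathbb F$ via the canonical map $\mathbb Z \to \mathbb F$. In particular $\varphi_i = i$ for $1 \leq i \leq d$, proving (i).

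Next I would extract the characteristic information from the two boundary conditions. The convention $\varphi_{d+1} = 0$ combined with the formula $\varphi_{d+1} = d+1$ in $\mathbb F$ forces $d+1 = 0$ in $\mathbb F$, so $\mathrm{Char}(\mathbb F)$ divides $d+1$; in particular $\mathrm{Char}(\mathbb F)$ is some prime $p$ with $p \mid d+1$. On the other hand, by Definition \ref{def:pa} each $\varphi_i$ is nonzero for $1 \leq i \leq d$, and by (i) this says $i \neq 0$ in $\mathbb F$ for $1 \leq i \leq d$. Hence $p$ does not divide any integer in $\{1,2,\ldots,d\}$, which gives $p \geq d+1$.

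Combining the two divisibility statements, $p = d+1$, which shows both that $d+1$ equals $\mathrm{Char}(\mathbb F)$ and that $d+1$ is prime (being a prime by definition of characteristic), establishing (ii). There is no genuine obstacle here; the only mild point to articulate clearly is the interplay between the two boundary values $\varphi_0=0$ and $\varphi_{d+1}=0$, where the first serves as the base case of the induction and the second forces the characteristic to be finite and equal to $d+1$.
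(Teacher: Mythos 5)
Your proof is correct and follows essentially the same route as the paper: derive $\varphi_i = i$ for $0 \leq i \leq d+1$ from $\varphi_0=0$ and the recurrence, then use $\varphi_{d+1}=0$ to force $d+1=0$ in $\mathbb F$ and the nonvanishing of $\varphi_1,\ldots,\varphi_d$ to force $i\neq 0$ in $\mathbb F$ for $1\leq i\leq d$. The only difference is that you spell out the final divisibility argument ($p \mid d+1$ and $p>d$ give $p=d+1$) that the paper leaves implicit in "the results follow."
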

\begin{proof}
By (\ref{eq:Weyl2}) and $\varphi_0=0$
we obtain $\varphi_i = i$
for $1\leq i\leq d+1$. We have $\varphi_{d+1}=0$,
so $d+1=0$ in the field $\mathbb F$.
For $1 \leq i \leq d$ we have
$\varphi_i\not=0$, so
$i \not=0$ in 
 $\mathbb F$. The results follow.
 \end{proof}

\begin{lemma}
\label{ex:Weylback}
Assume that $p=d+1$ is prime and 
${\rm Char}(\mathbb F) = p$.
Define
$\varphi_i = i$ for $1 \leq i \leq d$.
Then $\lbrace \varphi_i \rbrace_{i=1}^d$ are nonzero;
let $A,B$ denote the LR pair over $\mathbb F$
that has parameter sequence
$\lbrace \varphi_i \rbrace_{i=1}^d$.
Then $A,B$ has Weyl type.
\end{lemma}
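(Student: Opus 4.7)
The plan is to verify that the sequence $\lbrace \varphi_i\rbrace_{i=1}^d$ with $\varphi_i=i$ satisfies the two hypotheses needed to invoke the existence/classification machinery, namely nonzeroness of each $\varphi_i$ and the recurrence (\ref{eq:Weyl2}), after which Definition \ref{def:Weyl} finishes the argument.

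First I would verify that $\varphi_i\neq 0$ in $\mathbb F$ for $1\leq i\leq d$. Since $\mathrm{Char}(\mathbb F)=p=d+1$, an integer $i$ with $1\leq i\leq d$ is a nonzero residue mod $p$, and hence $\varphi_i=i\neq 0$ in $\mathbb F$. By Proposition \ref{prop:LRpairClass} (applied to the sequence $\lbrace i\rbrace_{i=1}^d$ of nonzero scalars), there exists an LR pair $A,B$ over $\mathbb F$, unique up to isomorphism, with parameter sequence $\lbrace \varphi_i\rbrace_{i=1}^d$.

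Next I would verify the recurrence $\varphi_{i+1}-\varphi_i=1$ for all $0\leq i\leq d$, recalling the convention $\varphi_0=\varphi_{d+1}=0$ from Definition \ref{def:pa}. For interior indices $1\leq i\leq d-1$ this reads $(i+1)-i=1$, which is immediate. At $i=0$ we have $\varphi_1-\varphi_0=1-0=1$. The boundary case $i=d$ is the only one that uses the characteristic assumption: $\varphi_{d+1}-\varphi_d=0-d=-d$, and since $d=p-1$ with $\mathrm{Char}(\mathbb F)=p$, we get $-d=-(p-1)=1$ in $\mathbb F$. So (\ref{eq:Weyl2}) holds for $0\leq i\leq d$.

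Finally, Example \ref{ex:Weyl} tells us that for this LR pair, condition (\ref{eq:Weyl2}) is equivalent to the Weyl relation $AB-BA=I$, so by Definition \ref{def:Weyl} the pair $A,B$ has Weyl type. The only subtle point in the whole argument is the boundary check at $i=d$, which forces the use of the characteristic hypothesis; everything else is either a reference to Proposition \ref{prop:LRpairClass} or a direct numerical computation, so I do not anticipate any real obstacle.
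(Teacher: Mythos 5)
Your proof is correct and matches the paper's approach: the paper's entire proof is ``One checks that condition (\ref{eq:Weyl2}) is satisfied,'' and you have simply written out that check, correctly identifying the boundary case $i=d$ (where $-d=1$ in $\mathbb F$) as the one place the characteristic hypothesis is used. The appeal to Proposition \ref{prop:LRpairClass} for existence and to Definition \ref{def:Weyl} to conclude is exactly what the lemma statement implicitly relies on.
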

\begin{proof}
One checks
that condition
(\ref{eq:Weyl2})
is satisfied.
\end{proof}

\begin{lemma}
\label{lem:WeylChar}
Assume that $p=d+1$ is prime and
${\rm Char}(\mathbb F)=p$.
 Then for
$A,B \in {\rm End}(V)$ the following are equivalent:
\begin{enumerate}
\item[\rm (i)] neither of $A,B$ is invertible and $AB-BA=I$;
\item[\rm (ii)] $A,B$ is an LR pair on $V$ that has Weyl type.
\end{enumerate}
\end{lemma}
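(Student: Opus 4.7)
The plan is to prove (ii)$\Rightarrow$(i) first, which is the easy direction. If $A,B$ is an LR pair of Weyl type, then $AB-BA=I$ holds directly from Definition \ref{def:Weyl}. Also, by Lemma \ref{lem:ABdecIndNil} both $A$ and $B$ are Nil, so in particular $A^{d+1}=0$ and $B^{d+1}=0$, making neither invertible.

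For the harder direction (i)$\Rightarrow$(ii), the strategy is to build an explicit $(A,B)$-decomposition starting from a nonzero element of $\ker A$. Since $A$ is not invertible, pick $0\neq v\in \ker A$. By induction on $i$ using $AB-BA=I$, I would show $AB^i v = i B^{i-1}v$ for $i\geq 1$, and more generally
\begin{eqnarray*}
A^j B^i v = \frac{i!}{(i-j)!}\, B^{i-j}v \qquad (0\leq j\leq i).
\end{eqnarray*}
Since $\mathrm{Char}(\mathbb F)=p=d+1$, the scalar $i!/(i-j)!$ is nonzero for $0\leq j\leq i\leq d$. A triangular argument (apply $A^d, A^{d-1},\ldots$ successively to a hypothetical linear dependence) then shows that $\lbrace v, Bv, B^2v, \ldots, B^d v\rbrace$ is linearly independent, hence a basis of $V$. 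The same relation gives $\ker A=\mathbb F v$, because any $u=\sum c_i B^i v$ with $Au=0$ forces $c_i\cdot i=0$ for $1\leq i\leq d$, hence $c_i=0$ by the primality of $p$.

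The crux is to show $B^{d+1}v=0$. Using $[A,B^{d+1}]=(d+1)B^d=0$, we get $A(B^{d+1}v)=B^{d+1}Av=0$, so $B^{d+1}v=\lambda v$ for some $\lambda\in\mathbb F$. If $\lambda\neq 0$, then $B(\lambda^{-1}B^d v)=v$ and $B^{d+1}(B^i v)=\lambda B^i v$ for all $i$, which shows $\mathrm{Im}\,B=V$ and contradicts the non-invertibility of $B$. Hence $\lambda=0$ and $B^{d+1}v=0$; combined with $B^d v\neq 0$, the map $B$ is Nil. This step—turning non-invertibility of $B$ into the vanishing of $B^{d+1}v$—is where both hypotheses in (i) are essentially used, and it is the main obstacle.

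To finish, I would set $V_i=\mathbb F\, B^i v$ for $0\leq i\leq d$. Then $\lbrace V_i\rbrace_{i=0}^d$ is a decomposition of $V$; the computation $A(B^i v)=iB^{i-1}v$ shows $A$ lowers it, and $B(B^i v)=B^{i+1}v$ together with $B^{d+1}v=0$ shows $B$ raises it. Thus $A,B$ is an LR pair. Finally, $BAB^i v = B(iB^{i-1}v) = iB^i v$, so by Definition \ref{def:pa} the parameter sequence is $\varphi_i=i$ for $1\leq i\leq d$, which satisfies (\ref{eq:Weyl2}). Therefore $A,B$ has Weyl type, completing the proof.
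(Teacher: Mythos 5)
Your proposal is correct, and its skeleton coincides with the paper's: pick $0\neq v\in\ker A$, set $v_i=B^iv$, use $AB-BA=I$ and induction to get $Av_i=iv_{i-1}$ with $i\neq 0$ in $\mathbb F$, deduce that $\lbrace v_i\rbrace_{i=0}^d$ is a basis whose induced decomposition is lowered by $A$ and raised by $B$, and read off $\varphi_i=i$. The one place where you diverge is the step you rightly call the crux, namely $B^{d+1}v=0$. The paper gets this by observing that the hypothesis (i) is symmetric under $A,B\mapsto B,-A$ and rerunning the first half of the argument on that pair, which yields that $B$ is Nil and hence $B^{d+1}=0$ identically. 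You instead note that $[A,B^{d+1}]=(d+1)B^d=0$ in characteristic $p=d+1$, so $B^{d+1}v\in\ker A=\mathbb F v$, and rule out $B^{d+1}v=\lambda v$ with $\lambda\neq 0$ because it would force $B$ to be surjective, hence invertible. Both arguments are valid and use both hypotheses of (i) in the same essential way; the paper's is shorter to state given that the first half of the argument is already written, while yours is more self-contained and makes explicit that only $B^{d+1}v=0$ (not $B^{d+1}=0$) is needed to close the loop, after which $B^{d+1}=0$ follows anyway from $B$ raising the decomposition.
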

\begin{proof}
${\rm (i)}\Rightarrow {\rm (ii)}$ 
Since $A$ is not invertible, 
there exists $0 \not=\eta \in V$
such that $A\eta=0$.
Define $v_i = B^i\eta$
for $0 \leq i \leq d$.
By construction, $Av_0=0$ and
$Bv_{i-1} = v_i$ for $1 \leq i \leq d$.
Using $AB-BA=I$ and induction on $i$, we obtain
$Av_i=i v_{i-1}$ for $1 \leq i \leq d$.
Note that $i\not=0$ in $\mathbb F$ for $1 \leq i \leq d$.
By these comments, for $0 \leq i \leq d$
the vector $v_i$ is in the kernel of $A^{i+1}$
and not in the kernel of $A^i$.
Therefore $\lbrace v_i \rbrace_{i=0}^d$ are
linearly independent and hence form a basis for $V$.
By construction 
 the induced decomposition
 $\lbrace V_i \rbrace_{i=0}^d$ 
is lowered by $A$. Therefore $A$ is Nil.
Replacing $A,B$ by $B,-A$ in the above argument,
we see that $B$ is Nil. Now
$Bv_d=B^{d+1}\eta=0$.
Now by construction $B$ raises the decomposition 
 $\lbrace V_i \rbrace_{i=0}^d$.
We have shown that the decomposition
 $\lbrace V_i \rbrace_{i=0}^d$ is lowered by $A$ and
 raised by $B$. Therefore $A,B$ is an LR pair on $V$.
This LR pair has Weyl type by Definition
\ref{def:Weyl} and since $AB-BA=I$.
\\
${\rm (ii)}\Rightarrow {\rm (i)}$ The elements
$A,B$ are not invertible, since they are Nil by Lemma
\ref{lem:ABdecIndNil}.
By 
Definition
\ref{def:Weyl}
we have $AB-BA=I$.
\end{proof}

\noindent Later in the paper we will use
the following curious fact about LR pairs
of Weyl type.

\begin{lemma}
\label{lem:curious}
Assume $d\geq 2$.
Let $A,B$ denote an LR pair on $V$ that has Weyl type.
Define $C=-A-B$. Then the pairs $B,C$ and $C,A$ are
LR pairs on $V$ that have Weyl type.
\end{lemma}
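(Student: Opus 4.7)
The plan is to apply Lemma \ref{lem:WeylChar} to each of the ordered pairs $B,C$ and $C,A$. Since $A,B$ has Weyl type, Lemma \ref{lem:pPrime} guarantees that $p = d+1$ is prime and that $\mathbb F$ has characteristic $p$, which is the standing hypothesis of Lemma \ref{lem:WeylChar}. Thus it suffices to check two things: the Weyl commutation relations $BC - CB = I$ and $CA - AC = I$, and the non-invertibility of each of $A,B,C$. The relations are immediate expansions: $BC - CB = -BA + AB = I$, and $CA - AC = -BA + AB = I$. Moreover $A$ and $B$ are Nil by Lemma \ref{lem:ABdecIndNil}, hence not invertible. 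The substantive step is therefore to exhibit a nonzero vector in the kernel of $C = -A - B$.

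To do this I would use the basis introduced in the proof of Lemma \ref{lem:WeylChar}: choose $0 \neq \eta \in V$ with $A\eta = 0$ and set $v_i = B^i \eta$ for $0 \leq i \leq d$, so that $\lbrace v_i \rbrace_{i=0}^d$ is a basis for $V$ with $Bv_i = v_{i+1}$ (and $v_{d+1} = 0$ since $B$ is Nil) and $Av_i = iv_{i-1}$ by induction from $AB = BA + I$. Looking for $w = \sum_i \lambda_i v_i$ with $(A+B)w = 0$, one compares coefficients of $v_j$ for $0 \leq j \leq d$ and obtains the recursion $(j+1)\lambda_{j+1} + \lambda_{j-1} = 0$, with the convention $\lambda_{-1} = 0$; for $j = d$ the term $(d+1)\lambda_{d+1}$ vanishes since $d+1 = 0$ in $\mathbb F$, leaving the terminal constraint $\lambda_{d-1} = 0$. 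Setting $\lambda_0 = 1$ and solving gives $\lambda_{2k+1} = 0$ and $\lambda_{2k} = (-1)^k/(2^k k!)$; each divisor $2, 4, \ldots, d$ is nonzero in $\mathbb F$ because it lies in $\lbrace 1, \ldots, p-1 \rbrace$.

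The only delicate point---and the only place where the hypothesis $d \geq 2$ enters---is the terminal condition $\lambda_{d-1} = 0$: the hypothesis $d \geq 2$ forces $p \geq 3$, so the prime $p$ is odd and $d = p-1$ is even, whence $d-1$ is odd and $\lambda_{d-1} = 0$ holds automatically. The resulting nonzero $w$ satisfies $(A+B)w = 0$, so $Cw = 0$ and $C$ is not invertible. Applying Lemma \ref{lem:WeylChar} to each of $B,C$ and $C,A$ then yields both desired conclusions. (As a sanity check, the hypothesis $d \geq 2$ is necessary: when $d=1$ one has $p=2$ and the matrix of $C = A+B$ is the standard $2 \times 2$ swap, which is invertible in characteristic $2$.)
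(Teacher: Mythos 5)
Your proposal is correct and follows essentially the same route as the paper: after reducing to Lemma \ref{lem:WeylChar} via the commutator identities, both arguments establish the non-invertibility of $C$ by exhibiting the same explicit null vector with coordinates $(-1)^k/(2^k k!)$ in even positions and $0$ in odd positions, relative to the inverted $(B,A)$-basis $\lbrace B^i\eta\rbrace_{i=0}^d$ (the paper phrases this as a matrix computation with the tridiagonal matrix (\ref{eq:matrixEIG}), you phrase it as a coefficient recursion, but it is the same calculation). Your remarks on where $d\geq 2$ enters and on the failure at $d=1$ match the paper's use of the parity of $d$.
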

\begin{proof}
By Definition \ref {def:Weyl},
$AB-BA=I$. 
By Lemma
\ref{lem:pPrime}(ii),
$p=d+1$ is prime and
${\rm Char}(\mathbb F)=p$.
We show that $B,C$ is an LR pair on $V$
that has Weyl type. To do this we apply
Lemma
\ref{lem:WeylChar} to the pair $B,C$.
Using $AB-BA=I$ and the definition of $C$, we find
$BC-CB=I$. The map $B$ is not invertible
since $B$ is Nil. We show that $C$ is not invertible.
By Lemma
\ref{lem:BAmatrixInv},
with respect to an inverted $(B,A)$-basis for
$V$ the element $A+B$ is represented by
\begin{equation}
\label{eq:matrixEIG}
\left(
\begin{array}{ c c cc c c }
 0 & 1  &   &&   & \bf 0  \\
  1& 0  &   2&&  &      \\ 
   & 1  &  0   & \cdot &&  \\
     &   & \cdot & \cdot  & \cdot & \\
       &  & & \cdot & \cdot & d\\
        {\bf 0}  &&  & & 1 &  0  \\
	\end{array}
	\right).
\end{equation}
By our assumption $d\geq 2$,
the prime $p=d+1$ is odd. Therefore  $d$ is even.
Define
\begin{eqnarray*}
z_{2i}= \frac{(-1)^i}{ 2^{i}i!}  \qquad \qquad 
(0 \leq i \leq d/2)
\end{eqnarray*}
and 
$z_{2i+1}= 0$ for $0 \leq i < d/2$.
The matrix (\ref{eq:matrixEIG}) times the
column vector
$(z_0, z_1, \ldots, z_d)^t$ is zero.
Therefore 
the matrix (\ref{eq:matrixEIG}) is not invertible.
Therefore $A+B$ is not invertible, so $C$
is not invertible.
Applying Lemma
\ref{lem:WeylChar} to the pair $B,C$
we find that $B,C$ is an LR pair
on $V$ that has Weyl type. One similarly shows
that $C,A$ is an LR pair on $V$ that has Weyl type.
\end{proof}

\noindent Here is another example of an LR pair.
\begin{example}
\label{ex:qWeyl}
\rm
Let $A,B$ denote an LR pair on $V$, with
parameter sequence $\lbrace \varphi_i \rbrace_{i=1}^d$.
Pick a nonzero $q \in \mathbb F$ such that
$q^2 \not=1$. By Lemma
\ref{lem:ABrelations},
\begin{eqnarray}
\frac{qAB-q^{-1}BA}{q-q^{-1}}= I    
\label{eq:qWeyl1}
\end{eqnarray}
if and only if 
\begin{eqnarray}
\frac{q\varphi_{i+1}-q^{-1}\varphi_i}{q-q^{-1}}= 1
\qquad \qquad (0 \leq i \leq d).
\label{eq:qWeyl2}
\end{eqnarray}
\end{example}

\begin{note}\rm
The equation 
(\ref{eq:qWeyl1}) is called the {\it $q$-Weyl relation}.
\end{note}

\begin{definition}
\label{def:qWeyl}
\rm
The LR pair $A,B$ in
Example \ref{ex:qWeyl}
is said to have {\it $q$-Weyl type}
whenever it satisfies the equivalent conditions
(\ref{eq:qWeyl1}),
(\ref{eq:qWeyl2}).
\end{definition}

\begin{note}
\label{note:minusq}
\rm 
Referring to Example
\ref{ex:qWeyl},
assume that $A,B$ has
$q$-Weyl type. Then $A,B$ has $(-q)$-Weyl type.
Moreover the LR pair $B,A$ has $(q^{-1})$-Weyl type.
\end{note}

\begin{lemma}
\label{lem:stand}
Referring to Example
\ref{ex:qWeyl},
assume that $A,B$ has
$q$-Weyl type. 
 Then {\rm (i)--(v)} hold below.
\begin{enumerate}
\item[\rm (i)] $d\geq 1$.
\item[\rm (ii)]
 $\varphi_i = 1-q^{-2i}$ for $1 \leq i \leq d$. 
\item[\rm (iii)]
Assume that
${\rm Char}(\mathbb F)\not=2$ and $d$ is odd.
Then $q$ is
a primitive $(2d+2)$-root of unity.
\item[\rm (iv)]
Assume that
${\rm Char}(\mathbb F)\not=2$ and $d$ is even.
Then 
$q$ becomes a primitive $(2d+2)$-root of unity, after replacing
$q$ by $-q$ if necessary.
\item[\rm (v)]
Assume that
${\rm Char}(\mathbb F)=2$.
Then $d$ is even. Moreover $q$ is a primitive $(d+1)$-root of
unity.
\end{enumerate}
\end{lemma}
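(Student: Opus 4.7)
The plan is to work throughout with the equivalent recursion (\ref{eq:qWeyl2}), combined with the boundary conventions $\varphi_0 = \varphi_{d+1} = 0$ and the nonvanishing of $\varphi_i$ for $1 \leq i \leq d$. For (i), I would specialize (\ref{eq:qWeyl2}) at $i=0$: if $d=0$ then $\varphi_0 = \varphi_1 = \varphi_{d+1} = 0$, so the left side vanishes while the right side equals $1$, a contradiction forcing $d \geq 1$. For (ii), I would rewrite (\ref{eq:qWeyl2}) as the first-order affine recursion $\varphi_{i+1} = q^{-2}\varphi_i + (1 - q^{-2})$ and solve it starting from $\varphi_0 = 0$, obtaining $\varphi_i = 1 - q^{-2i}$ for $0 \leq i \leq d+1$ by a one-line induction.

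The pivot for (iii)--(v) is the observation that $1 - q^{-2i} \neq 0$ iff $q^{2i} \neq 1$, so the nonvanishing of $\varphi_1, \ldots, \varphi_d$ together with $\varphi_{d+1} = 0$ asserts exactly that $q^2$ has multiplicative order $d+1$ in $\mathbb F^\times$. Letting $n$ denote the order of $q$, the order of $q^2$ equals $n/\gcd(n, 2) = d+1$, so $n \in \{d+1, 2(d+1)\}$, with $n = d+1$ requiring $n$ odd. For (iii), char $\neq 2$ with $d$ odd makes $d+1$ even, ruling out $n = d+1$; hence $n = 2d+2$. For (iv), char $\neq 2$ with $d$ even allows $n = d+1$ (odd), in which case $\gcd(2, d+1) = 1$ implies $-q$ has order $2(d+1)$, and Note \ref{note:minusq} lets us legitimately replace $q$ by $-q$ while preserving the hypothesis.

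For (v), in characteristic $2$ the sign-flip trick degenerates since $-q = q$. Instead I would use $x^2 - 1 = (x-1)^2$ to upgrade $q^{2(d+1)} = 1$ to $q^{d+1} = 1$. If $d$ were odd then $d+1 = 2m$ with $1 \leq m \leq d$, and $q^{2m} = q^{d+1} = 1$ would contradict $\varphi_m \neq 0$; hence $d$ is even and $d+1$ is odd. Then $n \mid d+1$ forces $n$ odd, so $n \mid 2i \Leftrightarrow n \mid i$; combined with $n \nmid i$ for $1 \leq i \leq d$ and $n \leq d+1$, this pins down $n = d+1$. The main obstacle is precisely this characteristic-$2$ analysis in (v), where the sign-flip is unavailable and one must separately rule out odd $d$ using the characteristic-specific identity $x^2 - 1 = (x-1)^2$; the rest of the argument is essentially bookkeeping about orders of elements in $\mathbb F^\times$.
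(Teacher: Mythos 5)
Your proposal is correct and follows essentially the same route as the paper: solve the recursion (\ref{eq:qWeyl2}) with $\varphi_0=0$ to get $\varphi_i=1-q^{-2i}$, then use $\varphi_{d+1}=0$ and $\varphi_i\neq 0$ ($1\leq i\leq d$) to conclude that $q^2$ has order exactly $d+1$, from which (i), (iii)--(v) follow. The paper compresses the final order-of-element bookkeeping into ``The results follow,'' and your write-up simply makes that bookkeeping (including the characteristic-$2$ case via $x^2-1=(x-1)^2$ and the sign-flip via Note \ref{note:minusq}) explicit.
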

\begin{proof} 
By (\ref{eq:qWeyl2}) and $\varphi_0=0$
along with  induction on $i$,
we obtain $\varphi_i = 1-q^{-2i}$ for
$1 \leq i \leq d+1$. 
We have $\varphi_{d+1}=0$,
so 
$q^{2d+2}=1$.
For $1 \leq i \leq d$ we have
$\varphi_i\not=0$, so
$q^{2i}\not=1$.
The results follow.
\end{proof}

\begin{definition}\rm
\label{def:qstand}
For $q \in \mathbb F$ the ordered pair $d,q$ will be
called {\it standard} whenever the following
{\rm (i)--(iii)} hold.
\begin{enumerate}
\item[\rm (i)]
  $d\geq 1$.
\item[\rm (ii)] Assume ${\rm Char}(\mathbb F)\not=2$. Then  $q$ is
a primitive $(2d+2)$-root of unity.
\item[\rm (iii)] Assume ${\rm Char}(\mathbb F)=2$.  Then  $d$ is even,
and $q$ is a primitive $(d+1)$-root of unity.
\end{enumerate}
\end{definition}

\begin{note}\rm
Referring to Definition
\ref{def:qstand}, assume that $d,q$ is standard.
Then $q$ is nonzero and $q^2\not=1$.
\end{note}

\begin{lemma}
\label{lem:Mq}
Referring to Example
\ref{ex:qWeyl},
assume that $A,B$ has $q$-Weyl type.
Then $d,q$ is standard or 
$d,-q$ is standard.
\end{lemma}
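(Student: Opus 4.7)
The plan is to derive the conclusion directly from the case analysis already carried out in Lemma~\ref{lem:stand}, matching its conclusions against the three clauses of Definition~\ref{def:qstand}. First I would record that Lemma~\ref{lem:stand}(i) gives $d\geq 1$, so clause (i) of Definition~\ref{def:qstand} is automatically satisfied for both the pair $d,q$ and the pair $d,-q$; the only remaining content is clause (ii) or (iii) of that definition, depending on ${\rm Char}(\mathbb F)$.

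Next I would split into cases. Suppose ${\rm Char}(\mathbb F)\neq 2$. If $d$ is odd, Lemma~\ref{lem:stand}(iii) says that $q$ is a primitive $(2d+2)$-root of unity, which is exactly clause (ii) of Definition~\ref{def:qstand} for $d,q$; hence $d,q$ is standard. If $d$ is even, Lemma~\ref{lem:stand}(iv) says that $q$ is a primitive $(2d+2)$-root of unity after possibly replacing $q$ by $-q$; in other words, at least one of $q$, $-q$ is such a root, so at least one of $d,q$ or $d,-q$ satisfies clause (ii) of Definition~\ref{def:qstand} and is therefore standard.

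Finally, suppose ${\rm Char}(\mathbb F)=2$. Then Lemma~\ref{lem:stand}(v) directly yields that $d$ is even and $q$ is a primitive $(d+1)$-root of unity, which is clause (iii) of Definition~\ref{def:qstand}; hence $d,q$ is standard. (In this case $-q=q$, so the alternative is automatic.)

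There is no serious obstacle here: the statement is essentially a repackaging of Lemma~\ref{lem:stand} in the terminology introduced by Definition~\ref{def:qstand}. The only mildly subtle point is keeping track of the ``$q$ or $-q$'' alternative in the even-diameter, odd-characteristic case, where Lemma~\ref{lem:stand}(iv) stops short of deciding which of the two is the primitive root, forcing the disjunction in the statement of the lemma.
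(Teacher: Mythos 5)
Your proof is correct and follows exactly the route the paper takes: the paper's proof is simply ``Use Lemma \ref{lem:stand} and Definition \ref{def:qstand},'' and your case analysis is the natural unpacking of that one-line argument.
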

\begin{proof}
Use Lemma
\ref{lem:stand}
and 
Definition
\ref{def:qstand}.
\end{proof}

\noindent For the rest of this section, the
following assumption is in effect.

\begin{assumption}
\label{def:sqRoot}
\rm
Fix $q \in \mathbb F$
and assume that $d,q$ is standard.
  We fix a square root $q^{1/2}$ in
the algebraic closure  $\overline {\mathbb F}$.
\end{assumption}

\begin{lemma}
\label{ex:exceptional}
With reference to Assumption
\ref{def:sqRoot}, define
$\varphi_i = 1-q^{-2i}$ for $1 \leq i \leq d$.
Then $\lbrace \varphi_i \rbrace_{i=1}^d$ are nonzero;
let $A,B$ denote the LR pair over $\mathbb F$
that has parameter sequence
$\lbrace \varphi_i \rbrace_{i=1}^d$.
Then $A,B$ has $q$-Weyl type.
\end{lemma}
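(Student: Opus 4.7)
The plan is to verify in sequence: (i) the proposed $\varphi_i$ are nonzero so that the LR pair actually exists, and (ii) the pair so constructed satisfies the $q$-Weyl recursion~(\ref{eq:qWeyl2}).

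For the first step I would check that $\varphi_i = 1 - q^{-2i} \neq 0$ for $1 \leq i \leq d$ by a case split on the characteristic, using the standardness of $d,q$ from Definition~\ref{def:qstand}. If ${\rm Char}(\mathbb F)\neq 2$, then $q$ is a primitive $(2d+2)$-root of unity, so $q^{2i}=1$ would force $(2d+2)\mid 2i$, that is $(d+1)\mid i$, which is impossible in the range $1\leq i\leq d$. If ${\rm Char}(\mathbb F)=2$, then $d$ is even so $d+1$ is odd, and $q$ is a primitive $(d+1)$-root of unity; since $\gcd(d+1,2)=1$, the condition $q^{2i}=1$ again reduces to $(d+1)\mid i$, again impossible. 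Hence $q^{-2i}\neq 1$ and $\varphi_i\neq 0$ as claimed. With these $\varphi_i$ nonzero, Example~\ref{ex:LR} (or equivalently Proposition~\ref{prop:LRpairClass}) produces the LR pair $A,B$ over $\mathbb F$ with this parameter sequence.

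To complete the proof I would verify the recursion~(\ref{eq:qWeyl2}), namely
\begin{equation*}
q\varphi_{i+1} - q^{-1}\varphi_i = q - q^{-1} \qquad (0 \leq i \leq d),
\end{equation*}
under the conventions $\varphi_0 = \varphi_{d+1}=0$. For $1\leq i\leq d-1$ direct substitution gives $q(1-q^{-2i-2}) - q^{-1}(1-q^{-2i}) = q - q^{-2i-1} - q^{-1} + q^{-2i-1} = q - q^{-1}$. The case $i=0$ is immediate from $q\varphi_1 = q(1-q^{-2})= q-q^{-1}$. The crucial boundary case is $i=d$, where $\varphi_{d+1}=0$ reduces the identity to $-q^{-1}(1-q^{-2d}) = q - q^{-1}$, equivalently $q^{2d+2}=1$.

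The one step carrying all the content is that final boundary identity $q^{2d+2}=1$, and this is precisely what standardness of $d,q$ delivers: directly when ${\rm Char}(\mathbb F)\neq 2$, and as $q^{2d+2}=(q^{d+1})^2=1$ when ${\rm Char}(\mathbb F)=2$. In other words, the entire nontrivial content of the lemma is the match between the period of the parameter sequence and the order of $q$, which is built into Definition~\ref{def:qstand}; everything else is formal bookkeeping with Example~\ref{ex:LR} and Definition~\ref{def:qWeyl}.
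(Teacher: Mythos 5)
Your proof is correct and follows the same route as the paper, which simply states that condition (\ref{eq:qWeyl2}) "is readily checked"; you have written out that check in full, including the nonvanishing of the $\varphi_i$ and the boundary cases $i=0$ and $i=d$, all of which are handled correctly via the standardness of $d,q$.
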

\begin{proof} Condition
(\ref{eq:qWeyl2})
 is readily checked.
\end{proof}

\begin{lemma}
\label{lem:qWeylABextend}
With reference to
Assumption 
\ref{def:sqRoot},
 for $A,B \in 
	      {\rm End}(V)$ 
the following are equivalent:
\begin{enumerate}
\item[\rm (i)] neither of $A,B$ is  invertible and
\begin{eqnarray}
\frac{qAB-q^{-1}BA}{q-q^{-1}}= I;
\label{eq:ABqWeyl}
\end{eqnarray}
\item[\rm (ii)] $A,B$ is an LR pair on $V$ that has
$q$-Weyl type.
\end{enumerate}
\end{lemma}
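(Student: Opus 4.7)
The plan is to mimic the proof of Lemma \ref{lem:WeylChar} as closely as possible, with a $q$-analog of the identity $Av_i = iv_{i-1}$ playing the key role. The direction (ii) $\Rightarrow$ (i) is immediate: by Lemma \ref{lem:ABdecIndNil}, both $A$ and $B$ are Nil and hence not invertible, while (\ref{eq:ABqWeyl}) is just a restatement of Definition \ref{def:qWeyl}.

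For (i) $\Rightarrow$ (ii), I first rearrange the $q$-Weyl relation to $AB = q^{-2}BA + (1-q^{-2})I$. A straightforward induction on $i$, multiplying by $B$ on the right and re-applying the relation, yields the $q$-commutation identity
\[
AB^i \;=\; q^{-2i}B^iA + (1-q^{-2i})B^{i-1} \qquad (i \geq 1).
\]
Since $A$ is not invertible, pick $0\neq \eta \in \ker A$ and set $v_i = B^i\eta$ for $0\leq i\leq d$. Applying the identity to $\eta$ gives $Av_i = (1-q^{-2i})v_{i-1}$, and the scalar $1-q^{-2i}$ is nonzero for $1\leq i\leq d$ since $d,q$ is standard (Assumption \ref{def:sqRoot}). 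Therefore each $v_i$ lies in $\ker A^{i+1}$ but not in $\ker A^i$, so $\lbrace v_i\rbrace_{i=0}^d$ is linearly independent, forms a basis of $V$, and induces a decomposition $\lbrace V_i\rbrace_{i=0}^d$ that is lowered by $A$. In particular $A$ is Nil.

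To finish I need $Bv_d = B^{d+1}\eta = 0$, i.e., that $B$ is Nil. For this I would invoke the symmetry of the hypothesis: rewriting (\ref{eq:ABqWeyl}) as $q^{-1}BA - qAB = (q^{-1}-q)I$ shows that $B,A$ satisfies the $q^{-1}$-Weyl relation. Since $d,q^{-1}$ is again standard (inversion preserves the primitive-root conditions in Definition \ref{def:qstand}) and neither of $B,A$ is invertible, applying the argument of the previous paragraph to the triple $(B,A,q^{-1})$ gives that $B$ is Nil, hence $B^{d+1}=0$. Combined with $Bv_i = v_{i+1}$ for $0\leq i\leq d-1$, the decomposition $\lbrace V_i\rbrace_{i=0}^d$ is raised by $B$, so $A,B$ is an LR pair on $V$. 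Its parameter sequence is $\varphi_i = 1-q^{-2i}$, which satisfies (\ref{eq:qWeyl2}), so the LR pair has $q$-Weyl type by Definition \ref{def:qWeyl}.

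The main obstacle is the step showing that $B$ is Nil: the inductive computation only controls how $A$ acts on the $v_i$ and never forces $v_{d+1} = B^{d+1}\eta$ to vanish on its own. The symmetry $q\leftrightarrow q^{-1}$, which preserves standardness and interchanges the roles of $A$ and $B$ in the $q$-Weyl relation, is the clean way around this; an alternative would be to observe that $v_{d+1}$ lies in $\ker A = \mathbb F v_0$ (since $\varphi_{d+1}=1-q^{-2(d+1)}=0$ by standardness), express $B$ in the basis $\lbrace v_i\rbrace_{i=0}^d$ as a companion-type matrix whose determinant is the single off-corner entry, and conclude from $\det B = 0$ that this entry vanishes.
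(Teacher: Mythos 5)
Your proof is correct and follows essentially the same route as the paper's: build $v_i=B^i\eta$ from a kernel vector of $A$, use the $q$-commutation identity to show $Av_i=(1-q^{-2i})v_{i-1}$ so that $A$ is Nil and $\lbrace v_i\rbrace$ is a basis, and then apply the same argument to $(B,A,q^{-1})$ — exactly the paper's "replace $A,B,q$ by $B,A,q^{-1}$" step — to get $B^{d+1}\eta=0$. The only difference is that you spell out the induction identity and note an alternative determinant argument, neither of which changes the substance.
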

\begin{proof}
The proof is similar to the proof of
Lemma
\ref{lem:WeylChar}. For the sake of completeness
we give the details.
\\
\noindent 
${\rm (i)}\Rightarrow {\rm (ii)}$ 
For $1 \leq i \leq d$ define 
$\varphi_i = 1-q^{-2i}$
and note that $\varphi_i \not=0$.
Since $A$ is not invertible, there exists $0 \not=\eta \in V$
such that $A\eta=0$.
Define $v_i = B^i\eta$
for $0 \leq i \leq d$.
By construction, $Av_0=0$ and
$Bv_{i-1} = v_i$ for $1 \leq i \leq d$.
Using (\ref{eq:ABqWeyl})
and induction on $i$, we obtain
$Av_i=\varphi_i v_{i-1}$ for $1 \leq i \leq d$.
By these comments, for $0 \leq i \leq d$
the vector $v_i$ is in the kernel of
$A^{i+1}$ and not in the kernel of
$A^i$. Therefore
$\lbrace v_i \rbrace_{i=0}^d$  are linearly
independent and hence form a basis for $V$.
By construction the induced decomposition
 $\lbrace V_i \rbrace_{i=0}^d$ 
is lowered by $A$. Therefore $A$ is Nil.
Replacing $A,B,q$ by $B,A,q^{-1}$ in the above argument,
we see that $B$ is Nil. Now
$Bv_d=B^{d+1}\eta=0$.
Now by construction $B$ raises the decomposition 
 $\lbrace V_i \rbrace_{i=0}^d$.
We have shown that the decomposition
 $\lbrace V_i \rbrace_{i=0}^d$ is lowered by $A$ and
 raised by $B$. Therefore $A,B$ is an LR pair on $V$.
This LR pair has $q$-Weyl type by Definition
\ref{def:qWeyl} and 
(\ref{eq:ABqWeyl}).
\\
${\rm (ii)}\Rightarrow {\rm (i)}$ The elements
$A,B$ are not invertible since they are Nil
by Lemma
\ref{lem:ABdecIndNil}.
By Definition
\ref{def:qWeyl}
the pair $A,B$ satisfies
(\ref{eq:ABqWeyl}).
\end{proof}

\noindent 
With reference to Assumption
\ref{def:sqRoot},
let $A,B$ denote an LR pair on $V$ that has $q$-Weyl type.
Later in the paper we will need the eigenvalues of
$qA+q^{-1}B$. Our next goal is to compute these eigenvalues.

\begin{lemma} 
\label{lem:bMatrixPre}
Pick a nonzero $b \in \mathbb F $ such that
$b^i \not=1$ for $1 \leq i \leq d$. 
For the
tridiagonal matrix 
\begin{eqnarray}
\left(
\begin{array}{ c c cc c c }
 0 & b^d-1   &   &&   & \bf 0  \\
 b-1 & 0  & b^d-b  &&  &      \\ 
   & b^2-1  &  0   & \cdot &&  \\
     &   & \cdot & \cdot  & \cdot & \\
       &  & & \cdot & \cdot & b^d-b^{d-1} \\
        {\bf 0}  &&  & & b^d-1  &  0  \\
	\end{array}
	\right)
\label{eq:Deltamat}
\end{eqnarray}
the roots of the characteristic polynomial are
\begin{eqnarray}
\label{eq:evListPre}
 b^j - b^{d-j} \qquad \qquad j=0,1,\ldots, d.
\end{eqnarray}
For $0 \leq j \leq d$ we give a (column) eigenvector for
the matrix {\rm (\ref{eq:Deltamat})} 
and 
eigenvalue $b^j-b^{d-j}$. This eigenvector has $i$th coordinate
\begin{eqnarray*}
 {}_3\phi_2 \Biggl(
\genfrac{}{}{0pt}{}
 {b^{-i}, \;b^{j-d},\;-b^{-j}}
 {0, \;\;b^{-d}}
 \;\Bigg\vert \; b,\;b\Biggr)
 \end{eqnarray*}
for $0 \leq i \leq d$.
We follow the standard notation for basic hypergeometric series 
{\rm \cite[p.~4]{GR}}.
\end{lemma}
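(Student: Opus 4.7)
The plan is to reduce $Mv = \lambda v$ to a three-term recurrence, take the displayed ${}_3\phi_2$ as a candidate eigenvector, and verify the recurrence by a terminating $q$-series computation. Row-by-row, the eigenvalue equation for the matrix in (\ref{eq:Deltamat}) reads
\begin{equation*}
(b^i - 1)\,v_{i-1} + (b^d - b^i)\,v_{i+1} = \lambda\,v_i \qquad (0 \leq i \leq d),
\end{equation*}
with the convention $v_{-1} = v_{d+1} = 0$. The hypothesis $b^i \neq 1$ for $1 \leq i \leq d$ makes every off-diagonal entry nonzero, so the matrix is irreducible tridiagonal and has $d+1$ eigenvalues counted with multiplicity. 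A convenient feature is that the coefficient $b^d - b^d$ of $v_{d+1}$ in the row-$d$ equation already vanishes, so verifying the interior relations plus the row-$0$ equation will be enough.

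For each $j \in \{0, 1, \dots, d\}$ set $\lambda_j = b^j - b^{d-j}$ and let $v^{(j)}_i$ be the displayed ${}_3\phi_2$. Since $(b^{-i}; b)_k = 0$ for $k > i$, the series terminates at $k = i$ and $v^{(j)}_i$ is well defined, with $v^{(j)}_0 = 1$. The heart of the proof is to verify the recurrence above with $\lambda = \lambda_j$. I would expand $v^{(j)}_{i-1}$, $v^{(j)}_i$, and $v^{(j)}_{i+1}$ as sums over $k$, apply the standard one-step identities for the $q$-Pochhammer symbols $(b^{-(i\pm 1)}; b)_k$ to rewrite every summand with a common factor $(b^{-i}; b)_k$ times $i$-independent data, and then match the coefficient of $b^k$ on both sides. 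After cancellation the recurrence collapses to an elementary polynomial identity in $b^i$, $b^j$, $b^k$ that factors as $b^j - b^{d-j}$ times an $i$-dependent scalar. The row-$0$ boundary equation is the same computation in the degenerate case where $v^{(j)}_{-1}$ is taken to be zero.

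The main obstacle is the ${}_3\phi_2$ bookkeeping in that coefficient match. A cleaner route, which I would adopt in a polished write-up, is to recognize the displayed series as (up to normalization) a dual $q$-Krawtchouk polynomial in the $q$-Askey scheme under the identifications $q = b$, $N = d$; its three-term recurrence and orthogonality relations are tabulated in standard references such as Koekoek--Lesky--Swarttouw. Invoking that recurrence and matching normalizations simultaneously supplies both the eigenvalues $b^j - b^{d-j}$ and the explicit eigenvector formula, bypassing the direct series manipulation entirely.
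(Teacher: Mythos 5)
The paper itself does not prove this lemma; it simply cites \cite[Example 5.9]{terPA}, so your direct verification is a genuinely different (and more self-contained) route. The core of your plan is sound: the eigenvalue equation does reduce to the single recurrence $(b^i-1)v_{i-1}+(b^d-b^i)v_{i+1}=\lambda v_i$ with both boundary coefficients vanishing automatically, the displayed series terminates at $k=i$, and what you must verify is exactly the three-term recurrence in the degree parameter $i$ of a dual $q$-Krawtchouk polynomial under the identification $q=b$, $N=d$, $n=i$, $x=d-j$, $c=-1$ (note the tabulated eigenvalue is then $b^{j-d}-b^{-j}=b^{-d}(b^j-b^{d-j})$, so the whole recurrence must be rescaled by $b^d$ to match the matrix; check that normalization carefully, and note that KLS-type identities are polynomial identities in $b$, hence valid over an arbitrary field $\mathbb F$).

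There is, however, a genuine gap in the characteristic-polynomial half of the claim. Your argument exhibits one eigenvector per $j$ and hence shows each $b^j-b^{d-j}$ \emph{is} a root; it yields the full factorization of the characteristic polynomial only when the $d+1$ values are pairwise distinct. They need not be: a short computation shows $b^j-b^{d-j}=b^k-b^{d-k}$ ($j<k$) exactly when $b^{j+k-d}=-1$, which is compatible with the hypothesis $b^i\neq 1$ $(1\le i\le d)$ --- e.g.\ $d=3$, $b^2=-1$ over $\mathbb C$ gives $\theta_0=\theta_1$ and $\theta_2=\theta_3$, and in characteristic $2$ one always has $\theta_j=\theta_{d-j}$. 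In such cases the matrix is irreducible tridiagonal, so each eigenvalue has geometric multiplicity $1$, the matrix is not diagonalizable, and your eigenvectors determine only the \emph{set} of roots, not their multiplicities. To close this you should either restrict to the generic case and deduce the general one by treating $\det(\lambda I-M)-\prod_{j=0}^d\bigl(\lambda-b^j+b^{d-j}\bigr)$ as a polynomial identity in $\mathbb Z[b,\lambda]$ (established for infinitely many complex $b$ and then specialized), or compute the characteristic polynomial directly from its own three-term recurrence.
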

\begin{proof} See
\cite[Example 5.9]{terPA}.
\end{proof}

\begin{definition}
\label{def:thetaI}
\rm
With reference to Assumption
\ref{def:sqRoot},
define
\begin{equation}
\theta_j = 
q^{j+1/2} + q^{-j-1/2}
 \qquad \qquad 
(0 \leq j \leq d).
\label{eq:thetaCalc}
\end{equation}
\end{definition}

\begin{lemma}
With reference to Assumption
\ref{def:sqRoot} and
Definition
\ref{def:thetaI}, the following {\rm (i)--(iv)} hold.
\begin{enumerate}
\item[\rm (i)] $\theta_j = -\theta_{d-j}$ for
$0 \leq j \leq d$.
\item[\rm (ii)] Assume that $d=2m$ is even. Then $\theta_m =0$.
\item[\rm (iii)] Assume that ${\rm Char}(\mathbb F)\not=2$.
Then $\lbrace \theta_j \rbrace_{j=0}^d$ are mutually
distinct.
\item[\rm (iv)] Assume that ${\rm Char}(\mathbb F)=2$, 
so that $d=2 m$ is even.
Then $\lbrace \theta_j \rbrace_{j=0}^m$ are mutually
distinct.
\end{enumerate}
\end{lemma}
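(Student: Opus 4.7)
The plan is to reduce everything to two simple identities. In characteristic not $2$, the assumption that $q$ is a primitive $(2d+2)$-root of unity forces $q^{d+1}=-1$ (since $q^{2(d+1)}=1$ but $q^{d+1}\ne 1$). In characteristic $2$, the assumption gives $q^{d+1}=1$ directly. Both cases can be written uniformly as $q^{d+1}=\varepsilon$, where $\varepsilon$ is the sign $-1$ or $1$; and in characteristic $2$ we have $-1=1$, so in fact the two cases are compatible.

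For (i), I will simply compute $\theta_{d-j}=q^{d-j+1/2}+q^{-d+j-1/2}=q^{d+1}q^{-j-1/2}+q^{-(d+1)}q^{j+1/2}$, and substitute $q^{d+1}=-1$ (or $q^{d+1}=1=-1$ in char $2$) to obtain $\theta_{d-j}=-\theta_j$. Part (ii) then follows from (i) by taking $j=m$ when $\mathrm{Char}(\mathbb{F})\ne 2$; in characteristic $2$, since $q$ is a primitive $(2m+1)$-root of unity with $2m+1$ odd, $(q^{m+1/2})^2=q^{2m+1}=1$ forces $q^{-m-1/2}=q^{m+1/2}$, and hence $\theta_m=2q^{m+1/2}=0$.

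For the injectivity statements (iii) and (iv), the key manipulation is the factorization
\begin{equation*}
\theta_i-\theta_j=(q^{i+1/2}-q^{j+1/2})\bigl(1-q^{-i-j-1}\bigr).
\end{equation*}
So $\theta_i=\theta_j$ forces $q^{i-j}=1$ or $q^{i+j+1}=1$. For (iii), with $0\le i,j\le d$ we have $|i-j|\le d<2d+2$ and $1\le i+j+1\le 2d+1<2d+2$; since $q$ is a primitive $(2d+2)$-root of unity, the first option forces $i=j$ and the second is impossible. For (iv), with $0\le i,j\le m$ and $q$ a primitive $(2m+1)$-root of unity, $|i-j|\le m<2m+1$ forces $i=j$ in the first option, while $1\le i+j+1\le 2m+1$ makes the second option require $i+j+1=2m+1$, i.e.\ $i=j=m$. (In characteristic $2$ the factorization needs a brief independent check: if $\theta_i=\theta_j$ set $u=q^{i+1/2}$, $v=q^{j+1/2}$ and rearrange $u+v=u^{-1}+v^{-1}=(u+v)/(uv)$ to conclude $u=v$ or $uv=1$, yielding the same two alternatives.)

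The only mild subtlety will be the bookkeeping with $q^{1/2}$ in characteristic $2$; since $\gcd(2,2m+1)=1$, we can actually take $q^{1/2}=q^{m+1}$ inside $\mathbb{F}$, which justifies treating $q^{i+1/2}$ as a power of $q$ with odd doubled exponent and makes the primitivity arguments rigorous. No obstacle beyond this; the whole lemma is a straightforward consequence of the standardness of the pair $d,q$.
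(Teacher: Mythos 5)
Your proof is correct, and it takes the same route the paper intends — the paper's own proof is just the one-line instruction to combine Definition \ref{def:thetaI} with the restrictions on $q$ from Definition \ref{def:qstand}, and your argument (the identity $q^{d+1}=-1$, resp.\ $q^{d+1}=1$ in characteristic $2$, plus the factorization $\theta_i-\theta_j=(q^{i+1/2}-q^{j+1/2})(1-q^{-i-j-1})$) is exactly the routine verification being left to the reader. All the details check out, including the characteristic-$2$ bookkeeping.
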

\begin{proof} 
Use Definition
\ref{def:thetaI} and
the restrictions on $q$ given in
Definition
\ref{def:qstand}.
\end{proof}

\begin{lemma}
\label{lem:qAqiB}
With reference to Assumption
\ref{def:sqRoot},
let $A,B$ denote an LR pair on $V$ that has $q$-Weyl type.
Then for $qA+q^{-1}B$
the roots of the characteristic polynomial
are $\lbrace \theta_j \rbrace_{j=0}^d$.
\end{lemma}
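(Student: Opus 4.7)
The plan is to reduce the claim to Lemma \ref{lem:bMatrixPre} applied with $b=q$. First, by Lemma \ref{lem:stand}(ii) the parameter sequence of $A,B$ is $\varphi_i = 1-q^{-2i}$ for $1 \leq i \leq d$, and the standardness of $d,q$ (Definition \ref{def:qstand}) gives $q^{2d+2}=1$ together with $q^{d+1}=-1$ (in characteristic not $2$ this uses primitivity, while in characteristic $2$ it is automatic since $-1=1$).

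Next, I would invoke Lemma \ref{lem:BAbasisMat} to represent $A$ and $B$ via a $(B,A)$-basis. In that basis $qA+q^{-1}B$ is a tridiagonal matrix $T$ with zero main diagonal, superdiagonal entries all equal to $q^{-1}$, and subdiagonal entry at position $(i,i-1)$ equal to $q\varphi_{d-i+1}$. Using $q^{2d+2}=1$ one simplifies $\varphi_{d-i+1}=1-q^{-2(d-i+1)}=1-q^{2i}$, so the product of the two off-diagonal entries of $T$ at index $i$ is $1-q^{2i}$.

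Now compare with the matrix $M$ of Lemma \ref{lem:bMatrixPre} at $b=q$: its off-diagonal product at index $i$ equals $(q^d-q^{i-1})(q^i-1)$, which collapses to $q^{-1}(1-q^{2i})$ after substituting $q^d=-q^{-1}$ from $q^{d+1}=-1$. Hence $q^{1/2}M$ (a matrix over $\overline{\mathbb F}$) has zero diagonal and off-diagonal products $1-q^{2i}$, matching those of $T$. Since $q^i-1$, $q^i+1$, and the off-diagonal entries of both matrices are all nonzero for $1\leq i\leq d$ (by the primitivity constraints on $q$), both $T$ and $q^{1/2}M$ are irreducible tridiagonal, and two such matrices sharing the same main diagonal and the same products of off-diagonal entries are diagonally conjugate over $\overline{\mathbb F}$; consequently they have the same characteristic polynomial. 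By Lemma \ref{lem:bMatrixPre} the roots of the characteristic polynomial of $M$ are $\{q^j-q^{d-j}\}_{j=0}^d$, so those of $q^{1/2}M$, hence of $T$, are $\{q^{1/2}(q^j-q^{d-j})\}_{j=0}^d$. Finally, applying $q^{d-j}=q^{d+1}q^{-j-1}=-q^{-j-1}$ gives $q^{1/2}(q^j-q^{d-j})=q^{1/2}(q^j+q^{-j-1})=q^{j+1/2}+q^{-j-1/2}=\theta_j$ in view of Definition \ref{def:thetaI}.

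The main obstacle is the step in which equality of off-diagonal products of two irreducible tridiagonal matrices is converted into equality of characteristic polynomials; this is handled by solving the two recursions $d_i/d_{i-1}=-q^{1/2}(1+q^i)$ for the diagonal conjugator explicitly and checking the two resulting expressions for $d_i/d_{i-1}$ (one from the super and one from the sub condition) agree, which they do because the off-diagonal products agree. Everything else is bookkeeping with the identity $q^{d+1}=-1$, and the proof applies uniformly in all characteristics covered by Definition \ref{def:qstand}.
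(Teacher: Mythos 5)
Your proof is correct and follows essentially the same route as the paper: both reduce the claim to Lemma \ref{lem:bMatrixPre} by exhibiting a diagonal similarity (over $\overline{\mathbb F}$) between the tridiagonal matrix representing $qA+q^{-1}B$ in a $(B,A)$-type basis and a scalar multiple of the matrix (\ref{eq:Deltamat}). The paper works in an inverted $(B,A)$-basis with $b=q^{-1}$ and writes the conjugating diagonal matrix explicitly, while you work in the $(B,A)$-basis with $b=q$ and invoke the equal-off-diagonal-products criterion; this is only a cosmetic difference.
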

\begin{proof} 
Let $H$
 denote the matrix 
 in 
${\rm Mat}_{d+1}(\mathbb F)$ 
that represents $qA+q^{-1}B$
with respect to an inverted $(B,A)$-basis for $V$.
The entries of $H$ are obtained using
Lemma
\ref{lem:BAmatrixInv}.
Let $\Delta$ denote the matrix
(\ref{eq:Deltamat}),
with $b=q^{-1}$.
For $1 \leq i \leq d$ define
$n_i = q^{1/2}(q^{-i}-1)$ and note that $n_i \not=0$.
Define
a diagonal matrix $N \in 
{\rm Mat}_{d+1}(\overline {\mathbb F})$ with $(i,i)$-entry
$n_1 n_2 \cdots n_i$ for $0 \leq i \leq d$. Note
that $N$ is invertible. 
By matrix multiplication
$H = q^{-1/2} N^{-1} \Delta N$.
One checks that
\begin{eqnarray*}
\theta_j = q^{-1/2} (b^{j}-b^{d-j})  \qquad \qquad (0 \leq j \leq d).
\end{eqnarray*}
By these comments and Lemma
\ref{lem:bMatrixPre}, for the matrix $H$ the
roots of the characteristic polynomial are
$\lbrace \theta_j \rbrace_{j=0}^d$.
The result follows.
\end{proof}

\section{The dual space $V^*$}

Recall our vector space $V$ over $\mathbb F$ with dimension
$d+1$. Let $V^*$ denote the vector space over $\mathbb F$
consisting of the $\mathbb F$-linear maps from $V$ to $\mathbb F$.
We call $V^*$ the {\it dual space} for $V$.
The vector spaces $V$ and $V^*$ have the same dimension $d+1$.
There exists a bilinear form $(\,,\,): V \times V^* \to \mathbb F$
such that $(u,f)= f(u)$ for all $u \in V$ and
$f \in V^*$. This bilinear form is nondegenerate in the sense of
\cite[Section~11]{3ptsl2}.
We view $(V^*)^*=V$. Nonempty subsets $X \subseteq V$ and
$Y \subseteq V^*$ are called {\it orthogonal}
whenever $(x,y)=0$ for all $x \in X$
and $y \in Y$. For a subspace $U$ of $V$ (resp. $V^*$) let
$U^\perp$ denote the set of vectors in $V^*$ (resp. $V$)
that are orthogonal to everything in $U$. The subspace $U^\perp$
is called the {\it orthogonal complement of $U$}. 
Note that
${\rm dim}(U) + 
{\rm dim}(U^\perp) = d+1$.

\medskip

\noindent 
A basis $\lbrace v_i\rbrace_{i=0}^d$ of
$V$ and a basis
$\lbrace v'_i\rbrace_{i=0}^d$ of
$V^*$ are called {\it dual}
whenever $(v_i,v'_j)=\delta_{i,j}$ 
for $0 \leq i,j\leq d$.
Each basis of $V$ (resp. $V^*$) is dual to
a unique basis of $V^*$ (resp. $V$). 
Let $\lbrace u_i\rbrace_{i=0}^d$ (resp.
$\lbrace v_i\rbrace_{i=0}^d$)
denote a basis of 
$V$, and let  
$\lbrace u'_i\rbrace_{i=0}^d$ (resp.
$\lbrace v'_i\rbrace_{i=0}^d$) denote the dual basis
of $V^*$. Then 
the following matrices are
transpose: (i) the transition matrix from
$\lbrace u_i\rbrace_{i=0}^d$ to 
$\lbrace v_i\rbrace_{i=0}^d$;
(ii) the transition matrix from
$\lbrace v'_i\rbrace_{i=0}^d$ to 
$\lbrace u'_i\rbrace_{i=0}^d$.
\medskip

\noindent A decomposition $\lbrace V_i\rbrace_{i=0}^d$ of
$V$ and a decomposition
$\lbrace V'_i\rbrace_{i=0}^d$ of
$V^*$ are called {\it dual}
whenever $V_i, V'_j$ are orthogonal
for all $i,j$ $(0 \leq i,j\leq d)$ such that $i\not=j$.
Let $\lbrace v_i \rbrace_{i=0}^d$ denote a basis of
$V$ and let
 $\lbrace v'_i \rbrace_{i=0}^d$ denote the dual basis of $V^*$.
 Then the following are dual:
(i) the decomposition of $V$ induced by
 $\lbrace v_i \rbrace_{i=0}^d$;
(ii) the decomposition of $V^*$ induced by
 $\lbrace v'_i \rbrace_{i=0}^d$.
Each decomposition of $V$ (resp. $V^*$) is dual to
a unique decomposition of $V^*$ (resp. $V$). 

\medskip

\noindent 
A flag $\lbrace U_i\rbrace_{i=0}^d$ on
$V$ and a flag
$\lbrace U'_i\rbrace_{i=0}^d$ on
$V^*$ are called {\it dual}
whenever $U_i, U'_j$ are orthogonal 
for all $i,j$ $(0 \leq i,j\leq d)$ such that $i+j=d-1$.
In this case
$U_i, U'_j$ are orthogonal complements
for all $i,j$ $(0 \leq i,j\leq d)$ such that $i+j=d-1$.
Each flag on $V$ (resp. $V^*$) is dual to
a unique flag on $V^*$ (resp. $V$). 

\begin{lemma} 
\label{lem:flagdual}
Let $\lbrace V_i\rbrace_{i=0}^d$ denote a decomposition
of $V$ and let
$\lbrace V'_i\rbrace_{i=0}^d$ denote the dual decomposition
of $V^*$. Then the following are dual:
\begin{enumerate}
\item[\rm (i)] the flag on $V$ induced by 
$\lbrace V_i\rbrace_{i=0}^d$;
\item[\rm (ii)] the flag on $V^*$ induced by 
$\lbrace V'_{d-i}\rbrace_{i=0}^d$.
\end{enumerate}
\end{lemma}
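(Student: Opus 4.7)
The plan is to compute both flags explicitly from the definitions and then verify the orthogonality condition that characterizes dual flags. First, I would let $\{U_i\}_{i=0}^d$ denote the flag on $V$ induced by $\{V_i\}_{i=0}^d$, so that by the definition given in Section~2,
\begin{equation*}
U_i = V_0 + V_1 + \cdots + V_i \qquad (0 \leq i \leq d).
\end{equation*}
Similarly, applying the same definition to the decomposition $\{V'_{d-i}\}_{i=0}^d$ of $V^*$, I would let $\{U'_i\}_{i=0}^d$ denote the induced flag on $V^*$, so that
\begin{equation*}
U'_i = V'_d + V'_{d-1} + \cdots + V'_{d-i} \qquad (0 \leq i \leq d).
\end{equation*}

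Next I would verify the duality condition. Fix $i,j$ with $0 \leq i,j \leq d$ and $i+j=d-1$. Then $U_i$ is spanned by $V_0,\ldots,V_i$ while $U'_j$ is spanned by $V'_d,\ldots,V'_{d-j}=V'_{i+1}$. For any $0\leq r\leq i$ and $i+1\leq s\leq d$ we have $r\neq s$, so by the defining property of the dual decomposition $\{V_i\}_{i=0}^d$ and $\{V'_i\}_{i=0}^d$, the subspaces $V_r$ and $V'_s$ are orthogonal. Bilinearity of $(\,,\,)$ then gives $(u,u')=0$ for all $u\in U_i$ and $u'\in U'_j$, so $U_i$ and $U'_j$ are orthogonal, as required by the definition of dual flags.

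The only remaining thing to mention (essentially a bookkeeping check) is that $\{U_i\}_{i=0}^d$ and $\{U'_i\}_{i=0}^d$ really are flags, which follows immediately from the general fact stated in Section~2 that every decomposition induces a flag. No single step here should be hard; the orthogonality reduces to the componentwise definition and no subtle dimension-count is needed beyond the compatible equality $\dim U_i + \dim U'_j = (i+1)+(j+1) = d+1$, confirming that the orthogonality we have verified is in fact orthogonal-complement duality.
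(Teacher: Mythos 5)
Your proof is correct and follows essentially the same route as the paper's: write out both induced flags explicitly, and for $i+j=d-1$ observe that $U'_j=V'_{i+1}+\cdots+V'_d$ is orthogonal to $U_i=V_0+\cdots+V_i$ because the dual decompositions are componentwise orthogonal off the diagonal. The extra remarks on bilinearity and the dimension count are fine but not needed beyond what the paper already records.
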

\begin{proof}
Let $\lbrace U_i \rbrace_{i=0}^d$ and
$\lbrace U'_i \rbrace_{i=0}^d$ denote the flags
from (i) and (ii), respectively.
For $0 \leq i \leq d$ we have
$U_i = 
V_0+\cdots + V_i$ and
$U'_i = 
 V'_{d-i} + \cdots + V'_{d}$.
For $0 \leq i,j\leq d$ such that $i+j=d-1$, the
subspace
 $U'_j = 
 V'_{i+1} + \cdots + V'_{d}$ is orthogonal to 
$U_i$. The result follows.
\end{proof}

\noindent For $\mathbb F$-algebras $\mathcal A$ and
$\mathcal A'$, a map $\sigma: \mathcal A \to \mathcal A'$
is called an {\it $\mathbb F$-algebra antiisomorphism}
whenever $\sigma$ is an isomorphism of $\mathbb F$-vector
spaces and $(ab)^\sigma= b^\sigma a^\sigma $ for all
$a,b\in \mathcal A$. 
By an {\it antiautomorphism} of $\mathcal A$
we mean an $\mathbb F$-algebra antiisomorphism
 $\sigma:\mathcal A\to \mathcal A$.
For
	       $X \in {\rm End}(V)$ there exists 
	      a unique element of
	      ${\rm End}(V^*)$, denoted
$\tilde X$, such that $(Xu,v)=(u,\tilde X v)$
for all $u \in V$ and $v \in V^*$. The map $\tilde X$
is called the {\it adjoint of $X$}. The adjoint map
	      ${\rm End}(V)\to 
	      {\rm End}(V^*)$, $X \mapsto \tilde X$ 
	      is an $\mathbb F$-algebra antiisomorphism.
Let $\lbrace v_i\rbrace_{i=0}^d$ denote a basis of
$V$ and let  
$\lbrace v'_i\rbrace_{i=0}^d$ denote the dual basis
of $V^*$. Then for $X \in
{\rm End}(V)$ the following matrices are
transpose: (i) the matrix representing $X$ with
respect to 
$\lbrace v_i\rbrace_{i=0}^d$;
(ii) the matrix representing $\tilde X$ with
respect to 
$\lbrace v'_i\rbrace_{i=0}^d$.
\medskip

\noindent
Let $\lbrace V_i\rbrace_{i=0}^d$ denote a decomposition
of
$V$ and let  
$\lbrace V'_i\rbrace_{i=0}^d$ denote the dual decomposition 
of $V^*$. 
Let
$\lbrace E_i\rbrace_{i=0}^d$ denote the idempotent sequence
for 
 $\lbrace V_i\rbrace_{i=0}^d$.
Then 
$\lbrace \tilde E_i\rbrace_{i=0}^d$ is the idempotent sequence
for 
 $\lbrace V'_i\rbrace_{i=0}^d$.

\begin{lemma} 
\label{lem:Adual}
Let 
$\lbrace V_i\rbrace_{i=0}^d$ denote a decomposition
of $V$ and let
$\lbrace V'_i\rbrace_{i=0}^d$ denote the dual decomposition
of $V^*$. Then for $A \in {\rm End}(V)$,
\begin{enumerate}
\item[\rm (i)] $A$ lowers
$\lbrace V_i\rbrace_{i=0}^d$ if and only if
$\tilde A$ raises
$\lbrace V'_i\rbrace_{i=0}^d$;
\item[\rm (ii)] $A$ raises
$\lbrace V_i\rbrace_{i=0}^d$ if and only if
$\tilde A$ lowers
$\lbrace V'_i\rbrace_{i=0}^d$.
\end{enumerate}
\end{lemma}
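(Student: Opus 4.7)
The plan is to reduce both statements to the idempotent criteria of Lemma \ref{lem:LowerRaise} and Lemma \ref{lem:RaiseLower}, and then exploit that the adjoint map $X \mapsto \tilde{X}$ is an $\mathbb{F}$-algebra antiisomorphism which carries the idempotent sequence for $\{V_i\}_{i=0}^d$ to the idempotent sequence for $\{V'_i\}_{i=0}^d$ (as noted just before the lemma).

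Let $\{E_i\}_{i=0}^d$ be the idempotent sequence for $\{V_i\}_{i=0}^d$, so that $\{\tilde{E}_i\}_{i=0}^d$ is the idempotent sequence for $\{V'_i\}_{i=0}^d$. First I would record the key computation: because the adjoint is an antiisomorphism,
\begin{equation*}
\tilde{E}_i\,\tilde{A}\,\tilde{E}_j \;=\; \widetilde{E_j A E_i} \qquad (0 \leq i,j \leq d).
\end{equation*}
Since the adjoint map is a bijection, $\tilde{E}_i\tilde{A}\tilde{E}_j = 0$ if and only if $E_j A E_i = 0$.

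For part (i), by Lemma \ref{lem:LowerRaise} the condition that $A$ lowers $\{V_i\}_{i=0}^d$ is equivalent to
\begin{equation*}
E_r A E_s = 0 \text{ unless } s-r=1, \quad\text{and } E_r A E_s \neq 0 \text{ when } s-r=1.
\end{equation*}
Setting $r=j$ and $s=i$, and invoking the boxed computation above, this is equivalent to $\tilde{E}_i\tilde{A}\tilde{E}_j = 0$ unless $i-j=1$, and nonzero when $i-j=1$. By Lemma \ref{lem:RaiseLower} applied to $\tilde{A}$ and the decomposition $\{V'_i\}_{i=0}^d$, this is exactly the condition that $\tilde{A}$ raises $\{V'_i\}_{i=0}^d$. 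Part (ii) is then immediate either by the same argument with the roles of Lemma \ref{lem:LowerRaise} and Lemma \ref{lem:RaiseLower} interchanged, or by applying (i) to $\tilde{A} \in \mathrm{End}(V^*)$ and using the identification $(V^*)^* = V$ with $\tilde{\tilde{A}} = A$.

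There is no real obstacle here; the only thing to verify carefully is the compatibility between the idempotent sequence of $\{V'_i\}_{i=0}^d$ and the adjoints of the $E_i$, which was already noted in the text, together with the index switch $(r,s) \leftrightarrow (j,i)$ caused by the antiisomorphism property $\widetilde{XY}=\tilde{Y}\tilde{X}$.
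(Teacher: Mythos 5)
Your proof is correct and follows essentially the same route as the paper: both reduce the lowering/raising conditions to the idempotent criteria of Lemmas \ref{lem:LowerRaise} and \ref{lem:RaiseLower}, and then use that the adjoint antiisomorphism sends $E_rAE_s$ to $\tilde E_s\tilde A\tilde E_r$, which swaps the two criteria. The index bookkeeping and the disposal of part (ii) match the paper's argument as well.
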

\begin{proof}
(i) We invoke
Lemmas
\ref{lem:LowerRaise},
\ref{lem:RaiseLower}.
Let $\lbrace E_i\rbrace_{i=0}^d$ denote
the idempotent sequence for
$\lbrace V_i\rbrace_{i=0}^d$.
Then
$\lbrace \tilde E_i\rbrace_{i=0}^d$ is the
idempotent sequence
for 
$\lbrace V'_i\rbrace_{i=0}^d$.
Recall that the adjoint map is an antiisomorphism.
So for $0 \leq i,j\leq d$,
$E_iAE_j=0$ if and only if
$\tilde E_j \tilde A \tilde E_i=0$.
Consequently
Lemma
\ref{lem:LowerRaise}(ii) holds for
 $\lbrace E_i\rbrace_{i=0}^d$ and $A$, if and only if
Lemma
\ref{lem:RaiseLower}(ii)
holds for  
 $\lbrace \tilde E_i\rbrace_{i=0}^d$ and $\tilde A$.
The result now follows in view of Lemmas
\ref{lem:LowerRaise},
\ref{lem:RaiseLower}.
\\
\noindent (ii) Similar to the proof of (i) above.
\end{proof}

\begin{lemma}
\label{lem:Nildual}
For $A \in {\rm End}(V)$, $A$ is Nil if and only if $\tilde A$
is Nil. In this case the following flags are dual:
\begin{eqnarray}
\lbrace A^{d-i}V\rbrace_{i=0}^d, \qquad \qquad
\lbrace {\tilde A}^{d-i}V^*\rbrace_{i=0}^d.
\label{eq:twoflags}
\end{eqnarray}
\end{lemma}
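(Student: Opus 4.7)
The plan is to exploit the fact that the adjoint map ${\rm End}(V)\to {\rm End}(V^*)$, $X\mapsto \tilde X$ is an $\mathbb F$-algebra antiisomorphism. Since $\widetilde{AB} = \tilde B \tilde A$, a simple induction gives $\widetilde{A^n} = \tilde A^n$ for all $n\geq 0$, and moreover the adjoint map sends $0$ to $0$ and preserves non-vanishing. Consequently $A^{d+1}=0$ if and only if $\tilde A^{d+1}=0$, and $A^d\neq 0$ if and only if $\tilde A^d\neq 0$. By Definition \ref{def:Nil}, this yields the first assertion that $A$ is Nil if and only if $\tilde A$ is Nil.

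Assume now that $A$ (equivalently $\tilde A$) is Nil. By Lemma \ref{lem:NilRec} and the emphasized remark following it, the sequence $\lbrace A^{d-i}V\rbrace_{i=0}^d$ is a flag on $V$, and the sequence $\lbrace \tilde A^{d-i}V^*\rbrace_{i=0}^d$ is a flag on $V^*$. Thus both sequences in (\ref{eq:twoflags}) are flags, and it remains only to verify the duality condition from the definition preceding Lemma \ref{lem:flagdual}: orthogonality of the $i$-component of the first flag with the $j$-component of the second whenever $i+j=d-1$.

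For this, pick $u \in V$ and $v \in V^*$ and indices $i,j$ with $i+j=d-1$. Using the defining property of the adjoint together with $\widetilde{A^{d-i}}=\tilde A^{d-i}$, we compute
\begin{eqnarray*}
(A^{d-i}u,\, \tilde A^{d-j}v)
= (u,\, \widetilde{A^{d-i}}\,\tilde A^{d-j}v)
= (u,\, \tilde A^{2d-i-j}v)
= (u,\, \tilde A^{d+1}v)
= 0,
\end{eqnarray*}
since $\tilde A^{d+1}=\widetilde{A^{d+1}}=0$ by the Nil condition. Hence $A^{d-i}V$ and $\tilde A^{d-j}V^*$ are orthogonal whenever $i+j=d-1$, and the two flags in (\ref{eq:twoflags}) are dual, completing the proof.

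There is essentially no obstacle here: the entire argument is a short application of the antiisomorphism property of the adjoint together with the definition of Nil and Lemma \ref{lem:NilRec}. The only place to be careful is keeping the direction of multiplication straight under the antiisomorphism, but because we only need powers of a single operator $A$, the antiisomorphism acts as a homomorphism on the subalgebra generated by $A$, so no subtlety arises.
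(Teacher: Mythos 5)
Your proof is correct and follows essentially the same route as the paper: both arguments rest on the adjoint being an antiisomorphism (so that powers are preserved and the Nil condition transfers), followed by the one-line orthogonality computation that collapses to an application of $A^{d+1}=0$. The only cosmetic difference is that you move the operator from the $V$-slot to the $V^*$-slot of the pairing, whereas the paper moves it in the other direction; the computations are mirror images of each other.
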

\begin{proof} The adjoint map is an antiisomorphism.
So for $0 \leq i \leq d+1$,
$A^i = 0$ if and only if $\tilde A^i= 0$.
Therefore,
$A$ is Nil if and only if $\tilde A$
is Nil. In this case, 
the flags 
(\ref{eq:twoflags}) are dual since
for $0 \leq i,j\leq d$ such that $i+j=d-1$,
\begin{eqnarray*}
(A^{d-i}V,\tilde A^{d-j}V^*) = 
(A^{d-j} A^{d-i}V,V^*) = (A^{d+1}V,V^*) = (0,V^*) =  0.
\end{eqnarray*}
\end{proof}

\begin{lemma}
\label{lem:LRDdual}
Let $A,B$ denote an LR pair on $V$.  Then the following
{\rm (i)--(iii)} hold.
\begin{enumerate}
\item[\rm (i)] 
The pair $\tilde A, \tilde B$ is an LR pair on $V^*$.
\item[\rm (ii)]
The $(A,B)$-decomposition of $V$
is dual to the 
$(\tilde B,\tilde A)$-decomposition of $V^*$.
\item[\rm (iii)]
The $(B,A)$-decomposition of $V$
is dual to the 
$(\tilde A,\tilde B)$-decomposition of $V^*$.
\end{enumerate}
\end{lemma}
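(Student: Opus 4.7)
The plan is to extract all three statements from a single computation relating a decomposition of $V$ to its dual decomposition of $V^*$, via Lemma~\ref{lem:Adual}.

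First I would fix the $(A,B)$-decomposition $\lbrace V_i\rbrace_{i=0}^d$ of $V$, and let $\lbrace V'_i\rbrace_{i=0}^d$ be its dual decomposition of $V^*$ (which exists and is unique by the discussion preceding Lemma~\ref{lem:flagdual}). By definition, $\lbrace V_i\rbrace_{i=0}^d$ is lowered by $A$ and raised by $B$. Applying Lemma~\ref{lem:Adual}(i) to $A$, we find that $\tilde A$ raises $\lbrace V'_i\rbrace_{i=0}^d$; applying Lemma~\ref{lem:Adual}(ii) to $B$, we find that $\tilde B$ lowers $\lbrace V'_i\rbrace_{i=0}^d$. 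Hence $\lbrace V'_i\rbrace_{i=0}^d$ is a decomposition of $V^*$ that is lowered by $\tilde B$ and raised by $\tilde A$. By Definition~\ref{def:lr} this already shows that $\tilde B,\tilde A$ is an LR pair on $V^*$; equivalently (applying the lemma that asserts LR pairs may be reversed) $\tilde A,\tilde B$ is an LR pair on $V^*$, proving (i).

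Next, since $\lbrace V'_i\rbrace_{i=0}^d$ is the decomposition of $V^*$ lowered by $\tilde B$ and raised by $\tilde A$, the uniqueness clause in the definition of the $(\tilde B,\tilde A)$-decomposition (stated just after Lemma~\ref{lem:ABdecIndNil} and Example~\ref{def:triv}) identifies it as the $(\tilde B,\tilde A)$-decomposition of $V^*$. Since $\lbrace V'_i\rbrace_{i=0}^d$ was defined as the dual of the $(A,B)$-decomposition of $V$, this gives (ii).

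Finally, (iii) follows by applying (ii) to the LR pair $B,A$ on $V$ in place of $A,B$: the $(B,A)$-decomposition of $V$ is dual to the $(\tilde A,\tilde B)$-decomposition of $V^*$. No step presents a serious obstacle; the only thing to be careful about is not to confuse the order of the pair, since the reversal $A,B \leftrightarrow B,A$ corresponds under the adjoint map to a reversal $\tilde B,\tilde A \leftrightarrow \tilde A,\tilde B$, and this is exactly what gives the switch of letters between parts (ii) and (iii).
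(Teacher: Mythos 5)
Your proof is correct and follows essentially the same route as the paper: dualize the $(A,B)$-decomposition, apply Lemma~\ref{lem:Adual} to see that the dual is lowered by $\tilde B$ and raised by $\tilde A$, and read off all three claims. The paper phrases the conclusion via the inverted decomposition $\lbrace V'_{d-i}\rbrace_{i=0}^d$ being the $(\tilde A,\tilde B)$-decomposition, while you identify $\lbrace V'_i\rbrace_{i=0}^d$ directly as the $(\tilde B,\tilde A)$-decomposition; these are the same argument.
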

\begin{proof} 
Let $\lbrace V_i \rbrace_{i=0}^d$ denote
the $(A,B)$-decomposition of $V$.
Let $\lbrace V'_i \rbrace_{i=0}^d$ denote
the dual decomposition of $V^*$.
By construction
 $\lbrace V_i \rbrace_{i=0}^d$ is lowered by $A$ and
 raised by $B$.
By Lemma
\ref{lem:Adual},
 $\lbrace V'_i \rbrace_{i=0}^d$ is raised by $\tilde A$ and
lowered by $\tilde B$.
Therefore,
the decomposition
$\lbrace V'_{d-i} \rbrace_{i=0}^d$ is lowered by $\tilde A$ and
raised by $\tilde B$.
The results follow.
\end{proof}

\begin{lemma}
\label{lem:LRdual}
Let $A,B$ denote an LR pair on $V$, with idempotent sequence
$\lbrace E_i\rbrace_{i=0}^d$. Then
the LR pair $\tilde A, \tilde B$ has idempotent sequence
$\lbrace \tilde E_{d-i}\rbrace_{i=0}^d$.
\end{lemma}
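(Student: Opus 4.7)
The plan is to string together three facts already in the paper: (a) Lemma \ref{lem:LRDdual}(iii), which identifies the $(\tilde A,\tilde B)$-decomposition of $V^*$ as the dual of the $(B,A)$-decomposition of $V$; (b) Lemma \ref{lem:Ebackward}, which gives the idempotent sequence of the LR pair $B,A$ in terms of that of $A,B$; and (c) the observation stated just before Lemma \ref{lem:Adual}, namely that the adjoint map sends the idempotent sequence of a decomposition of $V$ to the idempotent sequence of the dual decomposition of $V^*$.

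First I would set $\lbrace V_i\rbrace_{i=0}^d$ to be the $(A,B)$-decomposition of $V$, so that by Definition \ref{def:ABE} its idempotent sequence is $\lbrace E_i\rbrace_{i=0}^d$. The $(B,A)$-decomposition is then the inversion $\lbrace V_{d-i}\rbrace_{i=0}^d$, whose idempotent sequence is $\lbrace E_{d-i}\rbrace_{i=0}^d$, either by Lemma \ref{lem:Ebackward} directly or by the remark that the idempotent sequence of an inverted decomposition is the inverted sequence.

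Next I would invoke Lemma \ref{lem:LRDdual}(iii): the $(B,A)$-decomposition of $V$ is dual to the $(\tilde A,\tilde B)$-decomposition of $V^*$. By the general fact recalled above, the idempotent sequence of the latter is obtained from the idempotent sequence of the former by applying the adjoint map componentwise. Combining this with the previous step gives that the idempotent sequence of the $(\tilde A,\tilde B)$-decomposition of $V^*$ is $\lbrace \widetilde{E_{d-i}}\rbrace_{i=0}^d$, and invoking Definition \ref{def:ABE} once more for the LR pair $\tilde A,\tilde B$ on $V^*$ yields exactly the claim.

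There is no real obstacle; the only thing to be careful about is the bookkeeping of the two inversions (one coming from passing from $A,B$ to $B,A$, the other implicit in the duality statement of Lemma \ref{lem:LRDdual}), which is why Lemma \ref{lem:LRDdual}(iii) (rather than (ii)) is the version to use so that the two inversions combine to give the final index $d-i$ rather than $i$.
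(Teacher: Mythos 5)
Your proof is correct and is essentially the paper's own argument: the paper's proof also combines Lemma \ref{lem:LRDdual}(iii) with the statement preceding Lemma \ref{lem:Adual} (that the adjoint map carries the idempotent sequence of a decomposition to that of its dual), with Lemma \ref{lem:Ebackward} handling the inversion. The bookkeeping of the two inversions is exactly as you describe.
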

\begin{proof}
By the 
assertion above
Lemma
\ref{lem:Adual}, along with
   Lemma
\ref{lem:LRDdual}(iii).
\end{proof}

\begin{lemma}
\label{lem:LRdualPar}
Let $A,B$ denote an LR pair on $V$, with parameter sequence
$\lbrace \varphi_i\rbrace_{i=1}^d$. Then
the LR pair $\tilde A, \tilde B$ has parameter sequence
$\lbrace \varphi_{d-i+1}\rbrace_{i=1}^d$.
\end{lemma}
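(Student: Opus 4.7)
My plan is to compute the parameter sequence of $\tilde A, \tilde B$ directly from Definition~\ref{def:pa}, exploiting the duality already established in Lemma~\ref{lem:LRDdual} between the $(B,A)$-decomposition of $V$ and the $(\tilde A, \tilde B)$-decomposition of $V^*$, together with the fact that adjunction reverses products.

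Concretely, let $\{V_i\}_{i=0}^d$ be the $(A,B)$-decomposition of $V$, so that $\{V_{d-i}\}_{i=0}^d$ is the $(B,A)$-decomposition. Write $\{V'_i\}_{i=0}^d$ for the $(\tilde A, \tilde B)$-decomposition of $V^*$, which exists by Lemma~\ref{lem:LRDdual}(i). By Lemma~\ref{lem:LRDdual}(iii) this decomposition is dual to $\{V_{d-i}\}_{i=0}^d$, so for each $i$ the one-dimensional subspace $V'_i$ pairs nondegenerately with $V_{d-i}$ and is orthogonal to $V_k$ for $k\neq d-i$. Let $\varphi'_i$ denote the parameter sequence of $\tilde A, \tilde B$; by Definition~\ref{def:pa} it is the eigenvalue of $\tilde B\tilde A$ acting on $V'_i$, which exists because $V'_i$ is invariant under $\tilde B\tilde A$ by Lemma~\ref{lem:paPre}(i).

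Now fix $1\le i\le d$, choose $0\neq v\in V_{d-i}$ and $v'\in V'_i$ with $(v,v')=1$, and use that the adjoint map is an $\mathbb{F}$-algebra antiisomorphism, giving $\tilde B\tilde A=\widetilde{AB}$. Then
\begin{eqnarray*}
\varphi'_i = (v,\varphi'_i v') = (v,\tilde B\tilde A\,v') = (v,\widetilde{AB}\,v') = (AB\,v,v').
\end{eqnarray*}
By Lemma~\ref{lem:paPre}(iv), $AB$ acts on $V_{d-i}$ as multiplication by $\varphi_{d-i+1}$, so $(AB\,v,v')=\varphi_{d-i+1}(v,v')=\varphi_{d-i+1}$. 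Thus $\varphi'_i=\varphi_{d-i+1}$, as claimed.

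The only real obstacle is keeping the indices straight: one must be careful that the $(B,A)$-decomposition is the \emph{inversion} of the $(A,B)$-decomposition, so that $V'_i$ sits opposite $V_{d-i}$ rather than $V_i$; this is what introduces the reversal $\varphi_i\mapsto\varphi_{d-i+1}$. As a sanity check, one could alternatively pick an $(A,B)$-basis $\{v_i\}$ of $V$ and apply Lemma~\ref{lem:ABmatrix}; the matrices representing $\tilde A,\tilde B$ in the dual basis $\{v'_i\}$ are the transposes of \eqref{eq:ABrep}, which match the matrices in \eqref{eq:invABrep} for the pair $\tilde A,\tilde B$, forcing $\varphi'_{d-i+1}=\varphi_i$.
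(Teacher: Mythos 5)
Your proof is correct, but it follows a different route from the paper's. The paper's argument is trace-based: writing $\tilde \varphi_i = {\rm tr}(\tilde B \tilde A \tilde E_{d-i})$, it uses the fact that adjoints preserve trace, the cyclic property ${\rm tr}(XY)={\rm tr}(YX)$, Lemma~\ref{lem:varphiTrace}, and Lemma~\ref{lem:LRdual} (the idempotent sequence of $\tilde A,\tilde B$ is $\lbrace \tilde E_{d-i}\rbrace_{i=0}^d$) to conclude $\tilde\varphi_i = {\rm tr}(ABE_{d-i})=\varphi_{d-i+1}$. You instead evaluate the bilinear pairing directly on a dual pair of vectors $v\in V_{d-i}$, $v'\in V'_i$, using Lemma~\ref{lem:LRDdual}(iii) and the identity $(ABv,v')=(v,\tilde B\tilde A v')$. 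The two arguments are close cousins -- both reduce the index reversal to the duality statement relating the $(B,A)$-decomposition of $V$ to the $(\tilde A,\tilde B)$-decomposition of $V^*$ -- but yours works one level lower (vectors and the form) and avoids the trace machinery and the idempotent formulas entirely, while the paper's version slots more uniformly into its later trace computations for LR triples. All steps in your main argument check out: the nondegeneracy of the pairing between $V_{d-i}$ and $V'_i$ follows from duality of the decompositions, and $AB$ does act on $V_{d-i}$ as $\varphi_{d-i+1}$ by Lemma~\ref{lem:paPre}(iv) and Definition~\ref{def:pa}. Your matrix-transpose sanity check is also essentially sound, though note that the paper itself runs that computation in the opposite direction (Lemma~\ref{lem:LRBdual} is proved \emph{using} the present lemma), so as a standalone proof it would need a little extra care to extract the parameter sequence of $\tilde A,\tilde B$ from the transposed matrices without circularity; as a consistency check it is fine. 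One cosmetic remark: your notation $\varphi'_i$ for the dual parameter sequence collides with the paper's reserved meaning of primes for the LR triple $B,C,A$, so the paper's $\tilde\varphi_i$ would be safer.
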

\begin{proof}
An element in ${\rm End}(V)$ has the same trace as its
adjoint. 
Let $\lbrace \tilde \varphi_i \rbrace_{i=1}^d$ denote
the parameter sequence for
 $\tilde A, \tilde B$. For $1 \leq i \leq d$
 we show that $\tilde \varphi_i = \varphi_{d-i+1}$.
By Lemmas
\ref{lem:varphiTrace},
\ref{lem:LRdual} and since ${\rm tr}(XY)= {\rm tr}(YX)$,
\begin{eqnarray*}
\tilde \varphi_i = {\rm tr}(\tilde B \tilde A \tilde E_{d-i})
= 
 {\rm tr}(E_{d-i} A B)
= 
 {\rm tr}(AB E_{d-i})
=
\varphi_{d-i+1}.
\end{eqnarray*}
The result follows.
\end{proof}

\begin{lemma}
\label{lem:LRBdual}
Let $A,B$ denote an LR pair on $V$.
Then the following {\rm (i)--(iv)} hold.
\begin{enumerate}
\item[\rm (i)]
For an $(A,B)$-basis of $V$,
its dual is an
inverted $(\tilde A,\tilde B)$-basis of $V^*$.
\item[\rm (ii)]
For an inverted $(A,B)$-basis of $V$,
its dual is 
a $(\tilde A,\tilde B)$-basis of $V^*$.
\item[\rm (iii)]
For a $(B,A)$-basis of $V$,
its dual is an
inverted $(\tilde B,\tilde A)$-basis of $V^*$.
\item[\rm (iv)]
For an inverted $(B,A)$-basis of $V$,
its dual is a
$(\tilde B,\tilde A)$-basis of $V^*$.
\end{enumerate}
\end{lemma}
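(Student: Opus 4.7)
The plan is to prove all four parts by reducing each to a matrix computation via the characterizations of the four basis types given in Lemmas \ref{lem:ABmatrix}, \ref{lem:invABmatrix}, \ref{lem:BAbasisMat}, and \ref{lem:BAmatrixInv}, together with the transpose/adjoint dictionary: if $\{v_i\}_{i=0}^d$ is a basis of $V$ with dual basis $\{v'_i\}_{i=0}^d$ of $V^*$, then for any $X \in {\rm End}(V)$ the matrix representing $\tilde X$ with respect to $\{v'_i\}_{i=0}^d$ is the transpose of the matrix representing $X$ with respect to $\{v_i\}_{i=0}^d$. The parameter-sequence bookkeeping is handled by Lemma \ref{lem:LRdualPar}, which gives $\tilde A,\tilde B$ the parameter sequence $\{\varphi_{d-i+1}\}_{i=1}^d$, and by Lemma \ref{lem:BAvAB} applied afterward, $\tilde B,\tilde A$ then has parameter sequence $\{\varphi_i\}_{i=1}^d$.

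To carry out (i): assume $\{v_i\}_{i=0}^d$ is an $(A,B)$-basis of $V$, so by Lemma \ref{lem:ABmatrix} the matrices of $A,B$ in this basis are the pair shown in (\ref{eq:ABrep}). Taking transposes yields matrices with $1$'s on the subdiagonal (for $\tilde A$) and with entries $\varphi_1,\varphi_2,\ldots,\varphi_d$ on the superdiagonal (for $\tilde B$). Substituting $\varphi_i = \tilde\varphi_{d-i+1}$ rewrites these as precisely the matrix pair in (\ref{eq:invABrep}) for the LR pair $\tilde A,\tilde B$; then Lemma \ref{lem:invABmatrix} concludes that the dual basis is an inverted $(\tilde A,\tilde B)$-basis of $V^*$.

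Parts (ii), (iii), (iv) follow by the same three-step template. For (ii), start with (\ref{eq:invABrep}), transpose, and recognize (\ref{eq:ABrep}) for $\tilde A,\tilde B$ via Lemma \ref{lem:ABmatrix}. For (iii), start with (\ref{eq:BArep}), transpose, and recognize (\ref{eq:invABrep}) for the pair $\tilde B,\tilde A$ (whose parameter sequence is $\{\varphi_i\}_{i=1}^d$), via Lemma \ref{lem:invABmatrix}. For (iv), start with (\ref{eq:invBArep}), transpose, and recognize (\ref{eq:ABrep}) for $\tilde B,\tilde A$, via Lemma \ref{lem:ABmatrix}.

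The main obstacle is purely bookkeeping: in each part one must verify that transposing the given matrix pair and then relabeling parameters by $\tilde\varphi_i = \varphi_{d-i+1}$ (for $\tilde A,\tilde B$) or by the identity relabeling (for $\tilde B,\tilde A$) produces exactly the matrix form required by the target characterization lemma. There is no conceptual difficulty beyond careful tracking of which parameter sequence accompanies which ordered pair of adjoints, so the proof reduces to four short applications of the transpose rule combined with the cited matrix-form lemmas.
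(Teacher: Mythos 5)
Your proposal is correct and follows essentially the same route as the paper: represent $A,B$ in the given basis via the appropriate matrix-form lemma, transpose to obtain the matrices of $\tilde A,\tilde B$ in the dual basis, and identify the result using Lemma \ref{lem:LRdualPar} together with the target characterization lemma. The paper writes out only part (i) and declares (ii)--(iv) similar, which is exactly the template you describe.
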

\begin{proof} (i) Let
$\lbrace v_i \rbrace_{i=0}^d$ denote an $(A,B)$-basis of $V$.
Let $\lbrace v'_i \rbrace_{i=0}^d$ denote the dual
basis of $V^*$.
With respect to 
$\lbrace v_i \rbrace_{i=0}^d$ the matrices representing
$A,B$ are given in
(\ref{eq:ABrep}). For these matrices
the transpose 
represents 
$\tilde A, \tilde B$ with respect to
$\lbrace v'_i \rbrace_{i=0}^d$.
Applying Lemma
\ref{lem:invABmatrix} to the LR pair $\tilde A,\tilde B$ and
using Lemma
\ref{lem:LRdualPar},
we
see that 
$\lbrace v'_i \rbrace_{i=0}^d$ is an inverted 
$(\tilde A, \tilde B)$-basis of $V^*$.
\\
\noindent (ii)--(iv) Similar to the proof of (i) above.
\end{proof}

\begin{lemma}
\label{lem:LRDiso}
A given LR pair $A,B$ on $V$ is
 isomorphic to
the LR pair $\tilde B,\tilde A$ on $V^*$.
\end{lemma}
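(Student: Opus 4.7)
The plan is to invoke the classification of LR pairs up to isomorphism (Proposition \ref{prop:LRpairClass}), which reduces the problem to matching parameter sequences. Since both $A,B$ and $\tilde B, \tilde A$ are LR pairs over $\mathbb F$ with diameter $d$ (the first by hypothesis, the second by applying Lemma \ref{lem:LRDdual}(i) to $A,B$ and then swapping the order, which preserves the LR property), it suffices to verify that they have the same parameter sequence.

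Let $\lbrace \varphi_i\rbrace_{i=1}^d$ denote the parameter sequence of $A,B$. By Lemma \ref{lem:LRdualPar}, the LR pair $\tilde A, \tilde B$ has parameter sequence $\lbrace \varphi_{d-i+1}\rbrace_{i=1}^d$. Next apply Lemma \ref{lem:BAvAB} to $\tilde A, \tilde B$: swapping to $\tilde B, \tilde A$ inverts the parameter sequence, so the $i$-component becomes $\varphi_{d-(d-i+1)+1} = \varphi_i$. Therefore $\tilde B, \tilde A$ has parameter sequence $\lbrace \varphi_i\rbrace_{i=1}^d$, matching that of $A,B$.

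With the parameter sequences shown equal, Proposition \ref{prop:LRpairClass} immediately gives the isomorphism. There is no real obstacle here; the only subtlety is keeping track of how the two operations (adjoint via Lemma \ref{lem:LRdualPar} and swap via Lemma \ref{lem:BAvAB}) each reverse the parameter sequence, so their composite restores the original order. Alternatively, one could construct the isomorphism explicitly: take an $(A,B)$-basis $\lbrace v_i\rbrace_{i=0}^d$ of $V$, and by Lemma \ref{lem:LRBdual}(i) its dual basis $\lbrace v'_i\rbrace_{i=0}^d$ is an inverted $(\tilde A, \tilde B)$-basis of $V^*$, hence (applying Lemma \ref{lem:sometrans} to the LR pair $\tilde A, \tilde B$ whose parameter sequence is the reversal of $\lbrace \varphi_i\rbrace_{i=1}^d$) a suitable rescaling yields a $(\tilde B, \tilde A)$-basis with the parameter sequence $\lbrace \varphi_i\rbrace_{i=1}^d$; then the $\mathbb F$-linear map $\sigma:V\to V^*$ sending $v_i$ to this rescaled vector intertwines the two LR pairs. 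Either route works, but the first is cleaner.
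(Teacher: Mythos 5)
Your proof is correct and is essentially the paper's argument: both reduce to comparing parameter sequences via Proposition \ref{prop:LRpairClass}, using Lemma \ref{lem:BAvAB} and Lemma \ref{lem:LRdualPar}; the paper merely composes the two reversals in the other order (first $A,B\to B,A$, then adjoints), which makes no difference.
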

\begin{proof} Let $\lbrace \varphi_i\rbrace_{i=1}^d$ denote
the parameter sequence of $A,B$.
By Lemma
\ref{lem:BAvAB}
the LR pair 
$B,A$ has parameter sequence
$\lbrace \varphi_{d-i+1}\rbrace_{i=1}^d$.
Now by
Lemma
\ref{lem:LRdualPar} 
the LR pair $\tilde B,\tilde A$
has parameter sequence
$\lbrace \varphi_{i}\rbrace_{i=1}^d$.
The LR pairs
$A,B$ and
$\tilde B,\tilde A$ have the same parameter sequence,
so they are isomorphic by
Proposition
\ref{prop:LRpairClass}.
\end{proof}

\section{The reflector $\dagger$}

\noindent Throughout this section the following notation
is in effect.
Let $V$ denote a vector space over $\mathbb F$ with
dimension $d+1$. Let $A,B$ denote an LR pair
on $V$. We discuss a certain antiautomorphism
$\dagger$ of ${\rm End}(V)$ called the reflector for
$A,B$.

\begin{proposition}
\label{prop:antiaut}
There exists a unique antiautomorphism
$\dagger$ of ${\rm End}(V)$ such that
$A^\dagger = B$ and
$B^\dagger=A$. Moreover
$(X^{\dagger})^{\dagger} = X$ for all
$X \in {\rm End}(V)$.
\end{proposition}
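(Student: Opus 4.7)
The plan is to construct $\dagger$ directly from the duality result established in Lemma \ref{lem:LRDiso}, and then to deduce both uniqueness and the involutive property from the generation result in Corollary \ref{cor:ABgen}.

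For the construction, I would invoke Lemma \ref{lem:LRDiso} to obtain an $\mathbb F$-linear bijection $\sigma: V \to V^*$ realizing the isomorphism of LR pairs from $A,B$ on $V$ to $\tilde B, \tilde A$ on $V^*$; thus $\sigma A = \tilde B \sigma$ and $\sigma B = \tilde A \sigma$, or equivalently $\tilde A = \sigma B \sigma^{-1}$ and $\tilde B = \sigma A \sigma^{-1}$. I would then define
\begin{equation*}
X^\dagger := \sigma^{-1} \tilde X \sigma \qquad (X \in {\rm End}(V)).
\end{equation*}
The adjoint map ${\rm End}(V) \to {\rm End}(V^*)$ is an $\mathbb F$-algebra antiisomorphism (as recorded in Section 5), and conjugation by $\sigma$ is an $\mathbb F$-algebra isomorphism ${\rm End}(V^*) \to {\rm End}(V)$, so their composition $\dagger$ is an antiautomorphism of ${\rm End}(V)$. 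A direct substitution gives $A^\dagger = \sigma^{-1}\tilde A \sigma = \sigma^{-1}(\sigma B \sigma^{-1})\sigma = B$ and similarly $B^\dagger = A$.

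For the involutive property, I would observe that $\dagger \circ \dagger$ is a composition of two antiautomorphisms, hence an $\mathbb F$-algebra automorphism of ${\rm End}(V)$. Since $A \mapsto B \mapsto A$ and $B \mapsto A \mapsto B$ under $\dagger\dagger$, this automorphism fixes both $A$ and $B$. By Corollary \ref{cor:ABgen} the $\mathbb F$-algebra ${\rm End}(V)$ is generated by $A,B$, so $\dagger \circ \dagger$ fixes every element, i.e.\ is the identity.

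For uniqueness, suppose $\ddagger$ is another antiautomorphism of ${\rm End}(V)$ satisfying $A^\ddagger = B$ and $B^\ddagger = A$. Then $\dagger \circ \ddagger$ is again a composition of two antiautomorphisms, hence an automorphism, and it fixes both $A$ and $B$. By Corollary \ref{cor:ABgen} it is the identity, so $\ddagger = \dagger^{-1}$; combined with the involutive property just established, $\ddagger = \dagger$. There is no serious obstacle here — the work was done upstream, in Lemma \ref{lem:LRDiso} (supplying $\sigma$) and Corollary \ref{cor:ABgen} (making $A,B$ enough to pin down any algebra map).
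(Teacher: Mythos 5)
Your proposal is correct and follows essentially the same route as the paper: existence via the isomorphism $\sigma$ from Lemma \ref{lem:LRDiso} composed with the adjoint map, and uniqueness together with the involutive property via Corollary \ref{cor:ABgen}. The only (immaterial) difference is the order of the last two steps — the paper proves uniqueness first and deduces $\dagger=\dagger^{-1}$ from it, while you prove $\dagger\circ\dagger=\mathrm{id}$ directly and then feed it into the uniqueness argument; both are valid.
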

\begin{proof} We first show that $\dagger$  exists.
By Lemma
\ref{lem:LRDiso} there exists an isomorphism
 $\sigma$ of LR pairs
from
$A,B$ to  $\tilde B,\tilde A$.
Thus 
$\sigma:V\to V^*$ is an $\mathbb F$-linear bijection
such that $\sigma A = \tilde B \sigma$
and $\sigma B = \tilde A \sigma$.
By construction the map
${\rm End}(V)\to {\rm End}(V^*)$, $X\mapsto \sigma X\sigma^{-1}$
is an $\mathbb F$-algebra isomorphism that sends
$A\mapsto \tilde B$ and
$B\mapsto \tilde A$. 
Recall that the adjoint map 
${\rm End}(V)\to {\rm End}(V^*)$, $X\mapsto \tilde X$
is an $\mathbb F$-algebra antiisomorphism.
By these comments
the composition
\begin{equation*}
\dagger:\quad \begin{CD} 
{\rm End}(V)  @>>  {\rm adj} >  
 {\rm End}(V^*)  @>> X\mapsto \sigma^{-1}X\sigma > {\rm End}(V) 
                  \end{CD}
\end{equation*}
is an antiautomorphism of ${\rm End}(V)$ such that
$A^\dagger=B$ and $B^\dagger=A$. We have shown that
$\dagger$ exists. We now show that $\dagger$ is
unique. Let $\mu$ denote an antiautomorphism
of ${\rm End}(V)$ such that $A^\mu=B$
and $B^\mu=A$. We show that $\dagger = \mu$.
The composition
\begin{equation}
\label{eq:compdt}
 \begin{CD} 
{\rm End}(V)  @>>  \dagger >  
 {\rm End}(V)  @>> \mu^{-1} > {\rm End}(V) 
                  \end{CD}
\end{equation}
 is an $\mathbb F$-algebra isomorphism
that fixes each of $A,B$. By this and
Corollary
\ref{cor:ABgen}, 
 the
map
(\ref{eq:compdt}) fixes everything in
${\rm End}(V)$ and is therefore the identity map.
Consequently $\dagger = \mu$.
We have shown that $\dagger$ is unique.
To obtain the last assertion of the lemma,
note that $\dagger^{-1}$ is an antiautomorphism
of ${\rm End}(V)$ that sends $A\leftrightarrow B$.
Therefore $\dagger = \dagger^{-1}$ by the uniqueness
of $\dagger$. Consequently $(X^\dagger)^{\dagger}=X$
for all $X \in {\rm End}(V)$.
\end{proof}

\begin{definition}\rm
\label{def:REFdag}
By the {\it reflector} for $A,B$ (or the
{\it $(A,B)$-reflector}) we mean
the  antiautomorphism $\dagger$
 from
Proposition
\ref{prop:antiaut}.
\end{definition}

\begin{lemma}
\label{lem:trivDag}
Assume that $A,B$ is trivial. Then the
$(A,B)$-reflector fixes everything in 
${\rm End}(V)$.
\end{lemma}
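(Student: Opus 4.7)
The plan is to exploit the very restrictive nature of the trivial case: by Example~\ref{def:triv}, an LR pair $A,B$ is trivial precisely when $d=0$, in which case $V$ has dimension $1$ and $A=B=0$. So the entire statement reduces to a structural fact about antiautomorphisms of a one-dimensional endomorphism algebra.

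First I would observe that when $d=0$, every $\mathbb{F}$-linear map $V\to V$ is a scalar multiple of the identity, so ${\rm End}(V)=\mathbb{F}\,I$ as $\mathbb{F}$-algebras. Next, by Definition~\ref{def:REFdag} and Proposition~\ref{prop:antiaut}, the reflector $\dagger$ is an $\mathbb{F}$-algebra antiautomorphism of ${\rm End}(V)$; in particular it is $\mathbb{F}$-linear and sends the multiplicative identity to itself, so $I^{\dagger}=I$. Then for any $X\in{\rm End}(V)$ we may write $X=\zeta I$ for some $\zeta\in\mathbb{F}$, and compute
\begin{equation*}
X^{\dagger}=(\zeta I)^{\dagger}=\zeta\,I^{\dagger}=\zeta I=X,
\end{equation*}
which gives the claim.

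There is no real obstacle here; the only thing worth verifying carefully is that the trivial case genuinely forces $d=0$ (hence $\dim V=1$), which is exactly the content of Example~\ref{def:triv}. Everything else is immediate from the fact that an antiautomorphism of a commutative one-dimensional $\mathbb{F}$-algebra is forced to be the identity.
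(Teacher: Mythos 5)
Your proof is correct and is essentially identical to the paper's: both reduce to the observation that for $d=0$ the identity $I$ spans ${\rm End}(V)$, and that $\dagger$ is $\mathbb{F}$-linear and fixes $I$. No further comment is needed.
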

\begin{proof} 
By assumption $d=0$,
so the identity $I$ is a basis
for the $\mathbb F$-vector space
${\rm End}(V)$.   
The 
$(A,B)$-reflector is $\mathbb F$-linear
and fixes $I$. The result follows.
\end{proof}

\begin{lemma}
\label{lem:daggerE}
Let  $\lbrace E_i \rbrace_{i=0}^d$ denote
the idempotent sequence for $A,B$.
Then the $(A,B)$-reflector
fixes $E_i$ for $0 \leq i \leq d$.
\end{lemma}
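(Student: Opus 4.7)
The plan is to apply the antiautomorphism $\dagger$ directly to the two formulas for $E_i$ provided by Lemma \ref{lem:Eform}, and use the fact that the two formulas are equal to each other.

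First I would recall that by Lemma \ref{lem:Eform} we have the pair of identities
\begin{eqnarray*}
E_i = \frac{A^{d-i}B^d A^i}{\varphi_1 \cdots \varphi_d},
\qquad \qquad
E_i = \frac{B^{i}A^d B^{d-i}}{\varphi_1 \cdots \varphi_d},
\end{eqnarray*}
where $\lbrace \varphi_j\rbrace_{j=1}^d$ is the parameter sequence for $A,B$. These are the only ingredients needed.

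Next, I apply the $(A,B)$-reflector $\dagger$ to the first formula. Since $\dagger$ is an antiautomorphism of ${\rm End}(V)$ (Proposition \ref{prop:antiaut}, Definition \ref{def:REFdag}) sending $A\mapsto B$ and $B\mapsto A$, it reverses the order of products and swaps powers of $A$ with powers of $B$. Thus
\begin{eqnarray*}
E_i^\dagger = \frac{(A^{d-i}B^d A^i)^\dagger}{\varphi_1 \cdots \varphi_d}
            = \frac{(A^i)^\dagger (B^d)^\dagger (A^{d-i})^\dagger}{\varphi_1 \cdots \varphi_d}
            = \frac{B^i A^d B^{d-i}}{\varphi_1 \cdots \varphi_d}.
\end{eqnarray*}
By the second formula of Lemma \ref{lem:Eform}, the right-hand side equals $E_i$, so $E_i^\dagger = E_i$, as desired.

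There is essentially no obstacle; the proof is a one-line application of the two expressions for $E_i$ together with the defining properties of $\dagger$. The only thing worth noting is that the scalar $\varphi_1 \cdots \varphi_d$ is unchanged under $\dagger$ since $\dagger$ is $\mathbb F$-linear, which is why the two expressions match up cleanly after applying $\dagger$.
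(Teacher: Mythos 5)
Your proof is correct and is exactly the paper's argument: apply $\dagger$ to the first formula of Lemma \ref{lem:Eform}, use that $\dagger$ is an antiautomorphism swapping $A\leftrightarrow B$, and recognize the result as the second formula. Nothing to add.
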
 
\begin{proof}
Referring to
(\ref{eq:TwoE}),
for the equation on the left
apply
 $\dagger$ to each side and evaluate the result
using the equation on the right.
\end{proof}

\begin{lemma} The $(A,B)$-reflector is the same as the
$(B,A)$-reflector.
\end{lemma}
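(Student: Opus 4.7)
The plan is to deduce this immediately from the uniqueness clause of Proposition \ref{prop:antiaut}. Recall that the $(A,B)$-reflector is defined (Definition \ref{def:REFdag}) to be the unique antiautomorphism $\dagger$ of ${\rm End}(V)$ satisfying $A^{\dagger}=B$ and $B^{\dagger}=A$. Applying Proposition \ref{prop:antiaut} instead to the LR pair $B,A$ (which is indeed an LR pair by the symmetry of Definition \ref{def:lr}), the $(B,A)$-reflector is the unique antiautomorphism $\ddagger$ of ${\rm End}(V)$ satisfying $B^{\ddagger}=A$ and $A^{\ddagger}=B$.

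The two defining conditions are the same pair of equations, merely listed in the opposite order. Hence both $\dagger$ and $\ddagger$ are antiautomorphisms of ${\rm End}(V)$ that swap $A$ and $B$, and the uniqueness assertion in Proposition \ref{prop:antiaut} forces $\dagger=\ddagger$. There is really no obstacle here; the only thing to be careful about is that Proposition \ref{prop:antiaut} is stated for a generic LR pair and its conclusion depends only on the unordered pair $\{A,B\}$, so it applies verbatim to the reordered pair $B,A$.
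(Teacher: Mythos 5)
Your proof is correct and matches the paper's own argument, which likewise cites only Proposition \ref{prop:antiaut} and Definition \ref{def:REFdag}: both reflectors are antiautomorphisms swapping $A$ and $B$, so uniqueness forces them to coincide.
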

\begin{proof} By Proposition
\ref{prop:antiaut} and Definition
\ref{def:REFdag}.
\end{proof}

\begin{lemma} Let $\tilde \dagger$ denote the
reflector for the LR pair $\tilde A, \tilde B$.
Then the following diagram commutes:

\begin{equation*}
\begin{CD}
{\rm End}(V) @>{\rm adj}> >
                {\rm End}(V^*) 
           \\ 
          @V\dagger VV                     @VV\tilde \dagger V \\
               {\rm End}(V) @>>{\rm adj}> 
              {\rm End}(V^*) 
                   \end{CD}
\end{equation*}
\end{lemma}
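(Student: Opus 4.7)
The plan is to show that the two compositions around the square agree by appealing to the universal property of the reflector combined with the fact that $A, B$ generate ${\rm End}(V)$ (Corollary~\ref{cor:ABgen}).

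Define two maps ${\rm End}(V)\to {\rm End}(V^*)$ by
\begin{equation*}
\psi(X) = \widetilde{X^\dagger}, \qquad \phi(X) = (\tilde X)^{\tilde\dagger}
 \qquad (X \in {\rm End}(V)).
\end{equation*}
Each is a composition of two $\mathbb F$-algebra antiisomorphisms (the adjoint map together with a reflector), hence each of $\psi, \phi$ is an $\mathbb F$-algebra \emph{isomorphism} from ${\rm End}(V)$ to ${\rm End}(V^*)$. For instance,
\begin{equation*}
\psi(XY) = \widetilde{(XY)^\dagger} = \widetilde{Y^\dagger X^\dagger}
= \widetilde{X^\dagger}\,\widetilde{Y^\dagger} = \psi(X)\psi(Y),
\end{equation*}
and similarly for $\phi$.

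Next I would check that $\psi$ and $\phi$ agree on the two generators $A, B$. Using $A^\dagger = B$, $B^\dagger = A$ (Proposition~\ref{prop:antiaut} together with Definition~\ref{def:REFdag}), one computes $\psi(A) = \tilde B$ and $\psi(B) = \tilde A$. On the other side, $\tilde\dagger$ is the reflector for the LR pair $\tilde A, \tilde B$ on $V^*$, so $(\tilde A)^{\tilde\dagger} = \tilde B$ and $(\tilde B)^{\tilde\dagger} = \tilde A$, giving $\phi(A) = \tilde B$ and $\phi(B) = \tilde A$.

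Finally, $\psi$ and $\phi$ are $\mathbb F$-algebra homomorphisms that coincide on $A$ and $B$; since $A, B$ generate the $\mathbb F$-algebra ${\rm End}(V)$ by Corollary~\ref{cor:ABgen}, it follows that $\psi = \phi$ on all of ${\rm End}(V)$. This is precisely the commutativity of the diagram. No real obstacle is anticipated here: the essential point is the generation statement in Corollary~\ref{cor:ABgen}, which reduces the problem to a one-line verification on $A$ and $B$.
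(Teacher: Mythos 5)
Your proof is correct and is essentially the same as the paper's: the paper also chases the generators $A,B$ around the diagram using $A^\dagger=B$, $B^\dagger=A$, $(\tilde A)^{\tilde\dagger}=\tilde B$, $(\tilde B)^{\tilde\dagger}=\tilde A$, and then invokes Corollary~\ref{cor:ABgen} to conclude that the two composite algebra homomorphisms agree on all of ${\rm End}(V)$. You have simply written out the details that the paper leaves implicit.
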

\begin{proof} 
Recall from Corollary \ref{cor:ABgen} that ${\rm End}(V)$ is
generated by $A,B$.
Chase $A$ and $B$ around the diagram,
using Proposition
\ref{prop:antiaut} and Definition
\ref{def:REFdag}. The result follows.
\end{proof}

\section{The inverter $\Psi$}

\noindent Throughout this section the following notation
is in effect.
Let $V$ denote a vector space over $\mathbb F$
with dimension $d+1$. Let $A,B$ denote an
LR pair on $V$, with parameter sequence
$\lbrace \varphi_i \rbrace_{i=1}^d$ and
idempotent sequence $\lbrace E_i \rbrace_{i=0}^d$.
 We
discuss a map 
$\Psi \in
 {\rm End}(V)$ called the inverter for $A,B$. 
The name is motivated by Proposition
\ref{prop:invmap}
below. 

\begin{definition}
\label{def:REF}
\rm Define
\begin{eqnarray}
\Psi = \sum_{i=0}^d \frac{\varphi_1 \varphi_2 \cdots \varphi_i}{\varphi_d
\varphi_{d-1}\cdots \varphi_{d-i+1}} E_i.
\label{eq:REF}
\end{eqnarray}
We call $\Psi$ the {\it inverter} for $A,B$ or
the {\it $(A,B)$-inverter}.
\end{definition}

\begin{lemma}
\label{lem:InvTriv}
Assume that $A,B$ is trivial. Then $\Psi = I$.
\end{lemma}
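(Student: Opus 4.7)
The plan is to unwind the definitions. By Example \ref{def:triv}, the triviality of $A,B$ forces $d=0$, so $V$ is one-dimensional with $A=B=0$. The sum defining $\Psi$ in \eqref{eq:REF} therefore collapses to the single term corresponding to $i=0$, namely
\begin{equation*}
\Psi = \frac{(\text{empty product})}{(\text{empty product})} \, E_0 = E_0,
\end{equation*}
where both the numerator and denominator are empty products and hence equal $1$.

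It remains to identify $E_0$ with $I$. Since $d=0$, the only decomposition of $V$ is $\lbrace V_0 \rbrace$ with $V_0 = V$, and by the definition of the idempotent sequence (from the paragraph preceding Lemma \ref{lem:EiMeaning}) the primitive idempotent $E_0$ satisfies $(E_0 - I)V_0 = 0$, i.e., $E_0$ acts as the identity on $V$. Thus $E_0 = I$, giving $\Psi = I$.

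There is no substantive obstacle here; the only thing to be careful about is the empty-product convention in the coefficient when $i = 0$ and the fact that for $d=0$ the idempotent sequence consists of the single element $I$.
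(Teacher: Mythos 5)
Your proof is correct and follows the same route as the paper's one-line argument: set $d=0$ in Definition \ref{def:REF} so the sum reduces to $E_0$ (with empty-product coefficient $1$), and observe $E_0=I$. You have simply spelled out the empty-product convention and the identification $E_0=I$ in more detail.
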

\begin{proof} In Definition 
\ref{def:REF} set $d=0$ and note that $E_0=I$.
\end{proof}

\begin{lemma}
\label{lem:refInv}
The map $\Psi$ is invertible, and 
\begin{eqnarray}
\Psi^{-1} = \sum_{i=0}^d \frac{\varphi_d \varphi_{d-1} \cdots 
\varphi_{d-i+1}}{\varphi_1
\varphi_{2}\cdots \varphi_{i}} E_i.
\label{eq:REFinv}
\end{eqnarray}
\end{lemma}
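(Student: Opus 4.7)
The plan is to exploit the fact that the primitive idempotents $\lbrace E_i \rbrace_{i=0}^d$ form an orthogonal system: $E_iE_j = \delta_{i,j}E_i$ for $0\leq i,j\leq d$ and $\sum_{i=0}^d E_i = I$. Consequently, any linear combination $M = \sum_{i=0}^d c_i E_i$ with all $c_i \in \mathbb F$ nonzero is invertible, with inverse $M^{-1}=\sum_{i=0}^d c_i^{-1}E_i$; indeed, multiplying the two sums and collapsing by orthogonality yields $\sum_{i=0}^d E_i = I$.

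First I would observe that the coefficients
\[
c_i = \frac{\varphi_1 \varphi_2 \cdots \varphi_i}{\varphi_d \varphi_{d-1}\cdots \varphi_{d-i+1}}\qquad (0 \leq i \leq d)
\]
appearing in (\ref{eq:REF}) are all nonzero. For $i=0$ both the numerator and denominator are empty products and equal $1$, so $c_0=1$. For $1 \leq i \leq d$, each $\varphi_j$ with $1 \leq j \leq d$ is nonzero by Definition \ref{def:pa}, so $c_i$ is a ratio of nonzero scalars in $\mathbb F$, hence nonzero. The reciprocal coefficient is
\[
c_i^{-1} = \frac{\varphi_d \varphi_{d-1} \cdots \varphi_{d-i+1}}{\varphi_1 \varphi_{2}\cdots \varphi_{i}},
\]
which matches the coefficient appearing on the right-hand side of (\ref{eq:REFinv}).

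Finally I would verify directly that $\Psi \cdot \bigl(\sum_{i=0}^d c_i^{-1}E_i\bigr) = I$ by expanding the product and applying $E_iE_j=\delta_{i,j}E_i$ together with $I = \sum_{i=0}^d E_i$. This completes both the invertibility of $\Psi$ and the explicit formula (\ref{eq:REFinv}). There is no real obstacle here; the only thing to be careful of is the empty-product convention that makes the $i=0$ summand equal to the identity component $E_0$ on both sides, which is handled once at the outset.
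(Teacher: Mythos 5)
Your proposal is correct and follows essentially the same route as the paper, which also proves the lemma by invoking $E_iE_j=\delta_{i,j}E_i$ and $I=\sum_{i=0}^d E_i$ to invert the diagonal-type sum termwise. Your explicit check that each coefficient is nonzero (using that the $\varphi_j$ are nonzero by Definition \ref{def:pa}) is a fine, if routine, addition.
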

\begin{proof} Use the fact that
 $E_i E_j = \delta_{i,j}E_i$ for $0 \leq i,j\leq d$ 
and $I = \sum_{i=0}^d E_i$.
\end{proof}

\begin{lemma} 
\label{lem:coefDup}
Referring to the sum
{\rm (\ref{eq:REF})}, for $0 \leq i \leq d$
the coefficients of $E_i$ and $E_{d-i}$ are the same;
in other words
\begin{eqnarray}
\label{eq:CoefSame}
\frac{\varphi_1 \varphi_2 \cdots \varphi_i}
{\varphi_d\varphi_{d-1}\cdots \varphi_{d-i+1}} = 
\frac{\varphi_1 \varphi_2 \cdots \varphi_{d-i}}
{\varphi_d\varphi_{d-1}\cdots \varphi_{i+1}}.
\end{eqnarray}
\end{lemma}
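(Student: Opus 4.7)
The plan is to prove this identity by a direct algebraic manipulation: clear denominators and observe that both resulting products equal the full product $\varphi_1 \varphi_2 \cdots \varphi_d$.

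More precisely, I would multiply both sides of (\ref{eq:CoefSame}) by the common quantity $(\varphi_d \varphi_{d-1} \cdots \varphi_{d-i+1})(\varphi_d \varphi_{d-1} \cdots \varphi_{i+1})$, after which the identity to be verified becomes
\begin{eqnarray*}
(\varphi_1 \varphi_2 \cdots \varphi_i)(\varphi_{i+1}\varphi_{i+2}\cdots \varphi_d)
=
(\varphi_1 \varphi_2 \cdots \varphi_{d-i})(\varphi_{d-i+1}\varphi_{d-i+2}\cdots \varphi_d).
\end{eqnarray*}
Both sides are simply $\varphi_1 \varphi_2 \cdots \varphi_d$, the product being split at index $i$ on the left and at index $d-i$ on the right. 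Since each $\varphi_j$ is a nonzero scalar in $\mathbb F$ (by Definition \ref{def:pa}), the denominators in (\ref{eq:CoefSame}) are nonzero, so this rearrangement is valid and the identity follows.

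There is no real obstacle here; the only thing to be careful about is the edge cases $i=0$ and $i=d$, where one interprets an empty product as $1$. In both cases the two sides of (\ref{eq:CoefSame}) collapse to the same expression (namely $1$ when $i=0$, and its reciprocal when $i=d$), so these cases are handled automatically.
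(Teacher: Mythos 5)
Your proof is correct and matches the paper's approach; the paper simply states that (\ref{eq:CoefSame}) is "readily checked," and your cross-multiplication argument, with both sides reducing to $\varphi_1\varphi_2\cdots\varphi_d$, is exactly that routine verification. The remark about nonzero $\varphi_j$ and the empty-product edge cases is appropriate and requires no further justification.
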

\begin{proof} Line 
(\ref{eq:CoefSame}) is readily checked.
\end{proof}

\begin{lemma}
\label{lem:refCom}
For $0 \leq i \leq d$,
\begin{eqnarray}
\Psi E_i = E_i \Psi = \frac{\varphi_1 \varphi_2 \cdots \varphi_i}
{\varphi_d\varphi_{d-1}\cdots \varphi_{d-i+1}} E_i.
\label{eq:Psicom}
\end{eqnarray}
\end{lemma}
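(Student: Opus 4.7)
The plan is to simply expand $\Psi E_i$ (and $E_i \Psi$) using the definition of $\Psi$ in equation (\ref{eq:REF}) and the orthogonality of the primitive idempotents. Since $\lbrace E_j\rbrace_{j=0}^d$ is an idempotent sequence, we have $E_j E_i = \delta_{j,i} E_i$ for $0 \leq i,j \leq d$. Substituting the formula for $\Psi$ and pulling $E_i$ inside the sum on the right, all terms with $j \neq i$ vanish and only the $j = i$ term survives, yielding the stated scalar multiple of $E_i$. The computation for $E_i \Psi$ is identical, using $E_i E_j = \delta_{i,j} E_i$ from the left. This also shows directly that $\Psi E_i = E_i \Psi$, so $\Psi$ commutes with each primitive idempotent.

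There is no substantive obstacle here; this is a one-line verification whose only purpose is to record the eigenvalue of $\Psi$ on each component of the $(A,B)$-decomposition of $V$. I would write the proof as:

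\begin{proof}
By Definition \ref{def:REF},
\begin{equation*}
\Psi E_i = \sum_{j=0}^d \frac{\varphi_1 \varphi_2 \cdots \varphi_j}{\varphi_d \varphi_{d-1} \cdots \varphi_{d-j+1}} E_j E_i.
\end{equation*}
Since $E_j E_i = \delta_{j,i} E_i$, only the $j=i$ summand is nonzero, and it equals the right-hand side of (\ref{eq:Psicom}). An identical argument using $E_i E_j = \delta_{i,j} E_i$ gives the same value for $E_i \Psi$.
\end{proof}

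Nothing beyond the orthogonality relations among the $E_j$ is needed, so I would not appeal to Lemma \ref{lem:coefDup} or to any structural feature of the LR pair here; those enter only when one later wants to compare $\Psi$ with the $(B,A)$-inverter or rewrite the eigenvalues symmetrically. The content of the lemma is exactly that $\Psi$ is the diagonal operator on the $(A,B)$-decomposition whose $i$th eigenvalue is the ratio displayed in (\ref{eq:Psicom}).
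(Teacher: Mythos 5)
Your proof is correct and matches the paper's, which simply says ``Use Definition \ref{def:REF}''; expanding $\Psi$ and applying $E_jE_i=\delta_{j,i}E_i$ is exactly the intended one-line verification.
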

\begin{proof} Use Definition
\ref{def:REF}.
\end{proof}

\begin{corollary}
\label{cor:PsiAct}
For $0 \leq i \leq d$ the following hold on $E_iV$:
\begin{eqnarray}
\label{eq:PsionEiV}
\Psi = 
 \frac{\varphi_1 \varphi_2 \cdots \varphi_i}
{\varphi_d\varphi_{d-1}\cdots \varphi_{d-i+1}} I,
\qquad \qquad
\Psi^{-1} = 
 \frac{\varphi_d \varphi_{d-1} \cdots \varphi_{d-i+1}}
{\varphi_1\varphi_2\cdots \varphi_{i}} I.
\end{eqnarray}
\end{corollary}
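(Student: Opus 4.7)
The plan is to derive this as an immediate consequence of Lemma \ref{lem:refCom} together with the defining property of the primitive idempotents. Recall that $E_i$ acts as the identity on $V_i = E_iV$; explicitly, for any $v \in E_iV$ we have $v = E_iw$ for some $w \in V$, and then $E_iv = E_i^2 w = E_iw = v$.

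First I would handle the formula for $\Psi$. Fix $v \in E_iV$. By Lemma \ref{lem:refCom},
\begin{equation*}
\Psi v \;=\; \Psi E_i v \;=\; \frac{\varphi_1 \varphi_2 \cdots \varphi_i}{\varphi_d \varphi_{d-1}\cdots \varphi_{d-i+1}}\, E_i v \;=\; \frac{\varphi_1 \varphi_2 \cdots \varphi_i}{\varphi_d \varphi_{d-1}\cdots \varphi_{d-i+1}}\, v,
\end{equation*}
which is the asserted action on $E_iV$. For the second formula I would repeat the argument using the explicit expansion of $\Psi^{-1}$ given in Lemma \ref{lem:refInv}: since $E_i E_j = \delta_{i,j} E_i$, multiplying the expression for $\Psi^{-1}$ on the right by $E_i$ yields $\Psi^{-1} E_i = \frac{\varphi_d \varphi_{d-1}\cdots \varphi_{d-i+1}}{\varphi_1 \varphi_2 \cdots \varphi_i} E_i$, and applying this to any $v \in E_iV$ as above gives the desired scalar action. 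Alternatively one could observe that the scalar in the second formula is the reciprocal of the one in the first, so the second formula follows from the first by inverting on each one-dimensional subspace $E_iV$.

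There is essentially no obstacle here; the whole point is that $\Psi$ and $\Psi^{-1}$ are diagonal in the decomposition $\lbrace V_i \rbrace_{i=0}^d$, with the eigenvalues already computed in Lemma \ref{lem:refCom} and Lemma \ref{lem:refInv}. The only thing to be careful about is writing down the correct scalar and keeping the indexing on the $\varphi_j$ straight.
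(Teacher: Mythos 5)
Your proposal is correct and follows essentially the same route as the paper, whose proof is simply an appeal to Lemma \ref{lem:refCom} (with the formula for $\Psi^{-1}$ read off from Lemma \ref{lem:refInv} in the same way). Nothing further is needed.
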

\begin{proof} Use
Lemma
\ref{lem:refCom}.
\end{proof}

\begin{lemma}
\label{lem:decompstab}
The map $\Psi$ fixes the $(A,B)$-decomposition of
$V$ and the $(B,A)$-decomposition of $V$.
\end{lemma}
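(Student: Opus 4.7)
The plan is to invoke Corollary \ref{cor:PsiAct} directly. Let $\lbrace V_i \rbrace_{i=0}^d$ denote the $(A,B)$-decomposition of $V$, so that $V_i = E_i V$ for $0 \leq i \leq d$. By the definition of the idempotent sequence (Definition \ref{def:ABE}) together with the statement of Corollary \ref{cor:PsiAct}, the map $\Psi$ acts on $V_i = E_i V$ as the scalar
\begin{equation*}
\frac{\varphi_1 \varphi_2 \cdots \varphi_i}{\varphi_d \varphi_{d-1} \cdots \varphi_{d-i+1}}.
\end{equation*}
This scalar is nonzero since each $\varphi_j$ $(1 \leq j \leq d)$ is nonzero by Definition \ref{def:pa}. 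Hence $\Psi V_i = V_i$ for $0 \leq i \leq d$, which is exactly what it means for $\Psi$ to fix the $(A,B)$-decomposition.

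For the $(B,A)$-decomposition, I would observe that by the discussion following Definition \ref{def:ABE} and Lemma \ref{lem:Ebackward}, the $(B,A)$-decomposition of $V$ is the inversion $\lbrace V_{d-i} \rbrace_{i=0}^d$ of the $(A,B)$-decomposition. Since $\Psi$ fixes each $V_i$, it also fixes each $V_{d-i}$, so $\Psi$ fixes the $(B,A)$-decomposition as well.

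There is essentially no obstacle here: the lemma is a direct unpacking of Corollary \ref{cor:PsiAct} combined with the observation that ``fixing a decomposition'' means fixing each of its one-dimensional components, and that the $(B,A)$-decomposition is merely the reindexing $i \mapsto d-i$ of the $(A,B)$-decomposition. The only point worth mentioning explicitly is the nonvanishing of the scalar, which follows immediately from $\varphi_j \neq 0$.
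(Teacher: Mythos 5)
Your proof is correct and follows essentially the same route as the paper: both rely on Corollary \ref{cor:PsiAct} to see that $\Psi$ acts on each $E_iV$ as a nonzero scalar, hence fixes each component, and both identify the $(B,A)$-decomposition as the inversion of the $(A,B)$-decomposition. Your explicit remark that the scalar is nonzero (so that $\Psi V_i = V_i$ rather than merely $\Psi V_i \subseteq V_i$) is a small but welcome addition that the paper leaves implicit.
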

\begin{proof}
The sequence $\lbrace E_iV\rbrace_{i=0}^d$ is the $(A,B)$-decomposition of
$V$.
The sequence $\lbrace E_{d-i}V\rbrace_{i=0}^d$ is the $(B,A)$-decomposition of
$V$.
 By Corollary
\ref{cor:PsiAct}, $\Psi E_iV=E_iV$ for
$0 \leq i \leq d$. The result follows.
\end{proof}

\begin{lemma}
\label{lem:flagstab}
The map $\Psi$ fixes each of the following flags:
\begin{eqnarray*}
\lbrace A^{d-i}V\rbrace_{i=0}^d,
\qquad \qquad 
\lbrace B^{d-i}V\rbrace_{i=0}^d.
\end{eqnarray*}
\end{lemma}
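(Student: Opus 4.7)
The plan is to reduce the flag statement directly to the decomposition statement already established in Lemma \ref{lem:decompstab}. First, I would recall from Lemma \ref{lem:ABdecInd} that the flag $\lbrace A^{d-i}V\rbrace_{i=0}^d$ is the flag induced by the $(A,B)$-decomposition $\lbrace V_i\rbrace_{i=0}^d$ of $V$, and the flag $\lbrace B^{d-i}V\rbrace_{i=0}^d$ is the flag induced by the $(B,A)$-decomposition $\lbrace V_{d-i}\rbrace_{i=0}^d$ of $V$. Concretely, the $i$th subspace $A^{d-i}V$ of the first flag equals $V_0+V_1+\cdots+V_i$, and similarly $B^{d-i}V = V_d+V_{d-1}+\cdots+V_{d-i}$.

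Next, I would invoke Lemma \ref{lem:decompstab}: $\Psi$ fixes the $(A,B)$-decomposition, meaning $\Psi V_i = V_i$ for $0\leq i \leq d$ (and likewise for the $(B,A)$-decomposition, which has the same components in reverse order). This is the content of Corollary \ref{cor:PsiAct}, which shows that $\Psi$ acts on each $V_i=E_iV$ as a nonzero scalar.

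The key step is then a one-line observation: if a linear map fixes each component of a decomposition, it fixes every subspace that is a direct sum of those components, and in particular it fixes every subspace of the induced flag. Thus $\Psi(A^{d-i}V) = \Psi(V_0+\cdots+V_i) = V_0+\cdots+V_i = A^{d-i}V$ for $0 \leq i \leq d$, and similarly $\Psi(B^{d-i}V) = B^{d-i}V$. I do not expect any genuine obstacle here; the proof is essentially a two-line deduction from Lemmas \ref{lem:ABdecInd} and \ref{lem:decompstab}, and could legitimately be dispatched with a single-sentence proof citing both.
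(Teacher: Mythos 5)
Your proof is correct and follows the same route as the paper: identify each flag as the one induced by the $(A,B)$- or $(B,A)$-decomposition (Lemma \ref{lem:ABdecInd}) and then apply Lemma \ref{lem:decompstab}, which says $\Psi$ fixes each component of those decompositions. The explicit final step you spell out (a map fixing each component fixes every partial sum) is exactly what the paper leaves implicit.
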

\begin{proof} 
The flag on the left (resp. right) is induced by the
 $(A,B)$-decomposition (resp. 
$(B,A)$-decomposition) of $V$.
The result follows in view of
Lemma
\ref{lem:decompstab}.
\end{proof}

\begin{lemma}
The map $\Psi$ commutes with $AB$ and $BA$.
\end{lemma}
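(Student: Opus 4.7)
The plan is to reduce both assertions to the fact that $\Psi$ commutes with each primitive idempotent $E_i$, which is already recorded in Lemma \ref{lem:refCom}.

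First I would invoke Lemma \ref{lem:AiBione}, which expresses
\begin{eqnarray*}
AB = \sum_{j=0}^{d-1} \varphi_{j+1} E_j, \qquad\qquad BA = \sum_{j=1}^d \varphi_j E_j.
\end{eqnarray*}
Thus both $AB$ and $BA$ lie in the $\mathbb{F}$-span of $\lbrace E_j \rbrace_{j=0}^d$.

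Next, by Lemma \ref{lem:refCom}, the inverter $\Psi$ commutes with every $E_j$ (in fact $\Psi E_j = E_j \Psi$ is a scalar multiple of $E_j$). Since commutation is preserved under $\mathbb{F}$-linear combinations, $\Psi$ commutes with $AB$ and with $BA$.

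There is essentially no obstacle here; the work was already done in setting up the spectral decompositions of $AB$, $BA$, and $\Psi$ in terms of the common idempotent basis $\lbrace E_i \rbrace_{i=0}^d$. The only thing to verify is that the cited formulas really do give all three maps as polynomials in the $E_i$, and this is immediate from Lemmas \ref{lem:AiBione} and \ref{lem:refCom} together with the orthogonality relations $E_i E_j = \delta_{i,j} E_i$.
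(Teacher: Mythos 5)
Your proof is correct and follows essentially the same route as the paper's: both arguments reduce the claim to the spectral decompositions of $AB$, $BA$, and $\Psi$ in terms of the idempotents $E_i$ via Lemma \ref{lem:AiBione} and Definition \ref{def:REF}. Nothing further is needed.
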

\begin{proof}
By Lemma
\ref{lem:AiBione},
$E_i$ commutes with $AB$ and $BA$ for $0 \leq i \leq d$.
The result follows in view of Definition
\ref{def:REF}.
\end{proof}

\begin{lemma}
\label{lem:psiFix}
The map $\Psi$ is fixed by the $(A,B)$-reflector
$\dagger$ from Proposition
\ref{prop:antiaut}
and Definition
\ref{def:REFdag}.
\end{lemma}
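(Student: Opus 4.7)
The plan is very short: apply the $(A,B)$-reflector $\dagger$ to the formula
\begin{eqnarray*}
\Psi = \sum_{i=0}^d \frac{\varphi_1 \varphi_2 \cdots \varphi_i}{\varphi_d \varphi_{d-1}\cdots \varphi_{d-i+1}}\, E_i
\end{eqnarray*}
term by term and invoke Lemma \ref{lem:daggerE}.

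In more detail, I would first note that $\dagger$ is in particular $\mathbb F$-linear (being an $\mathbb F$-algebra antiisomorphism, cf.\ the definition preceding Proposition \ref{prop:antiaut}). Therefore
\begin{eqnarray*}
\Psi^{\dagger} = \sum_{i=0}^d \frac{\varphi_1 \varphi_2 \cdots \varphi_i}{\varphi_d \varphi_{d-1}\cdots \varphi_{d-i+1}}\, E_i^{\dagger}.
\end{eqnarray*}
By Lemma \ref{lem:daggerE}, $E_i^{\dagger} = E_i$ for $0 \le i \le d$. Substituting this back into the displayed sum, the right-hand side collapses to the defining expression for $\Psi$, giving $\Psi^{\dagger} = \Psi$.

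There is no real obstacle here; the content of the lemma is packaged entirely into Lemma \ref{lem:daggerE}, and what remains is the trivial observation that each scalar coefficient in (\ref{eq:REF}) lies in $\mathbb F$ and is therefore unaffected by $\dagger$. (In the degenerate case $d=0$ one has $\Psi = I$ by Lemma \ref{lem:InvTriv}, and $I^{\dagger}=I$ since $\dagger$ is an algebra antiautomorphism, which is consistent with Lemma \ref{lem:trivDag}.)
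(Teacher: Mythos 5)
Your proof is correct and is exactly the argument the paper gives: apply $\dagger$ to the sum (\ref{eq:REF}) defining $\Psi$, use $\mathbb F$-linearity of $\dagger$, and invoke Lemma \ref{lem:daggerE} to fix each $E_i$. Nothing further is needed.
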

\begin{proof}
By Lemma
\ref{lem:daggerE}
and 
Definition
\ref{def:REF}.
\end{proof}

\begin{lemma} 
\label{lem:ABBA}
The following maps are inverse:
\begin{enumerate}
\item[\rm (i)] the inverter for the LR pair $A,B$;
\item[\rm (ii)] the inverter for the LR pair $B,A$.
\end{enumerate}
\end{lemma}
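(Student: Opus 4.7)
The plan is to write down the inverter $\Psi'$ for the pair $B,A$ explicitly, reindex its defining sum, and then match it against the formula for $\Psi^{-1}$ given in Lemma \ref{lem:refInv}.

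First I would invoke Lemma \ref{lem:BAvAB} and Lemma \ref{lem:Ebackward}: the parameter sequence for $B,A$ is $\{\varphi_{d-i+1}\}_{i=1}^d$ and its idempotent sequence is $\{E_{d-i}\}_{i=0}^d$. Plugging these into Definition \ref{def:REF} yields
\begin{eqnarray*}
\Psi' = \sum_{i=0}^d \frac{\varphi_d \varphi_{d-1}\cdots \varphi_{d-i+1}}{\varphi_1 \varphi_2 \cdots \varphi_i}\, E_{d-i},
\end{eqnarray*}
where in the numerator the product of the first $i$ entries of the sequence $\varphi_d,\varphi_{d-1},\ldots,\varphi_1$ is $\varphi_d \cdots \varphi_{d-i+1}$, and in the denominator the corresponding ``reversed tail'' product becomes $\varphi_1\cdots\varphi_i$.

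Next I would substitute $j = d-i$ to reindex:
\begin{eqnarray*}
\Psi' = \sum_{j=0}^d \frac{\varphi_d \varphi_{d-1}\cdots \varphi_{j+1}}{\varphi_1 \varphi_2 \cdots \varphi_{d-j}}\, E_j.
\end{eqnarray*}
Now I would apply the identity of Lemma \ref{lem:coefDup} (which says that the coefficients of $E_i$ and $E_{d-i}$ in $\Psi$ coincide, and hence, by taking reciprocals, the coefficients of $E_i$ and $E_{d-i}$ in $\Psi^{-1}$ coincide as well). This converts the coefficient of $E_j$ above into
\begin{eqnarray*}
\frac{\varphi_d \varphi_{d-1}\cdots \varphi_{d-j+1}}{\varphi_1 \varphi_2 \cdots \varphi_j},
\end{eqnarray*}
and comparing with formula (\ref{eq:REFinv}) of Lemma \ref{lem:refInv} gives $\Psi' = \Psi^{-1}$.

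There is no real obstacle here; the content of the lemma is purely bookkeeping, and the only subtlety is correctly tracking how the parameter and idempotent sequences for $B,A$ relate to those for $A,B$, together with the symmetry identity of Lemma \ref{lem:coefDup}. Both preparatory lemmas are already in place, so the argument should fit cleanly in a few lines.
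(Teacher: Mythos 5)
Your proposal is correct and follows exactly the paper's route: the paper's proof simply cites Lemmas \ref{lem:Ebackward}, \ref{lem:BAvAB}, and \ref{lem:coefDup}, and your argument is a careful write-out of that same computation. The reindexing and the appeal to the reciprocal form of the identity in Lemma \ref{lem:coefDup} check out.
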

\begin{proof} Use Lemmas
\ref{lem:Ebackward},
\ref{lem:BAvAB} along with
Lemma
\ref{lem:coefDup}.
\end{proof}

\begin{lemma} For nonzero $\alpha, \beta$ in $\mathbb F$
the following maps are the same:
\begin{enumerate}
\item[\rm (i)] the inverter for the LR pair $A,B$;
\item[\rm (ii)] the inverter for the LR pair $\alpha A, \beta B$.
\end{enumerate}
\end{lemma}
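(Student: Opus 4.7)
The plan is straightforward: apply Definition \ref{def:REF} directly, using the two transformation rules for LR pairs under scaling that were established earlier. By Lemma \ref{lem:alphaBeta}, the idempotent sequence for $\alpha A, \beta B$ coincides with the idempotent sequence $\lbrace E_i\rbrace_{i=0}^d$ for $A,B$. By Lemma \ref{lem:alphaBetacom}, the parameter sequence for $\alpha A, \beta B$ is $\lbrace \alpha\beta\varphi_i\rbrace_{i=1}^d$.

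Plugging these into the formula (\ref{eq:REF}) applied to the pair $\alpha A, \beta B$, the inverter for $\alpha A, \beta B$ equals
\begin{eqnarray*}
\sum_{i=0}^d \frac{(\alpha\beta\varphi_1)(\alpha\beta\varphi_2) \cdots (\alpha\beta\varphi_i)}{(\alpha\beta\varphi_d)(\alpha\beta\varphi_{d-1})\cdots (\alpha\beta\varphi_{d-i+1})} E_i.
\end{eqnarray*}
For each $i$, the numerator and denominator each contain exactly $i$ factors of $\alpha\beta$, which therefore cancel. What remains is precisely the inverter for $A,B$ as given in Definition \ref{def:REF}.

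There is no real obstacle here; the key observation is just the matching count of $\alpha\beta$ factors in numerator and denominator, which is immediate from the fact that both products run over $i$ indices.
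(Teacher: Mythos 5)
Your proof is correct and follows essentially the same route as the paper, which simply cites Lemmas \ref{lem:alphaBeta} and \ref{lem:alphaBetacom}; you have just made explicit the cancellation of the $i$ factors of $\alpha\beta$ in numerator and denominator of each coefficient in (\ref{eq:REF}).
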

\begin{proof} Use Lemmas
\ref{lem:alphaBeta},
\ref{lem:alphaBetacom}.
\end{proof}

\begin{lemma}
\label{lem:reflectAdj}
The following maps are inverse:
\begin{enumerate}
\item[\rm (i)] the inverter for the LR pair $\tilde A, \tilde B$;
\item[\rm (ii)] the adjoint of the inverter for the LR pair $A,B$.
\end{enumerate}
\end{lemma}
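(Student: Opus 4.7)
\medskip

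\noindent\textbf{Proof proposal.} Let $\lbrace \varphi_i\rbrace_{i=1}^d$ denote the parameter sequence for $A,B$ and $\lbrace E_i\rbrace_{i=0}^d$ its idempotent sequence. Let $\Psi$ denote the $(A,B)$-inverter, and let $\tilde\Psi$ denote the $(\tilde A,\tilde B)$-inverter. The plan is to write both $\tilde\Psi$ and the adjoint of $\Psi$ as explicit scalar combinations of the same family of idempotents in ${\rm End}(V^*)$, and then verify the product is the identity by a one-line cancellation.

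First I would compute the adjoint of $\Psi$. Since the adjoint map ${\rm End}(V)\to {\rm End}(V^*)$ is $\mathbb F$-linear, applying it to the defining formula \eqref{eq:REF} gives
\begin{eqnarray*}
\widetilde{\Psi} \;=\; \sum_{i=0}^d \frac{\varphi_1\varphi_2\cdots \varphi_i}{\varphi_d\varphi_{d-1}\cdots \varphi_{d-i+1}}\,\tilde E_i.
\end{eqnarray*}
Next I would compute $\tilde \Psi$. By Lemma \ref{lem:LRdualPar} the LR pair $\tilde A,\tilde B$ has parameter sequence $\lbrace \varphi_{d-i+1}\rbrace_{i=1}^d$, and by Lemma \ref{lem:LRdual} it has idempotent sequence $\lbrace \tilde E_{d-i}\rbrace_{i=0}^d$. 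Substituting these into Definition \ref{def:REF} and re-indexing $j=d-i$ yields
\begin{eqnarray*}
\tilde\Psi \;=\; \sum_{i=0}^d \frac{\varphi_d\varphi_{d-1}\cdots \varphi_{d-i+1}}{\varphi_1\varphi_2\cdots \varphi_i}\,\tilde E_{d-i}
\;=\;\sum_{j=0}^d \frac{\varphi_d\varphi_{d-1}\cdots \varphi_{j+1}}{\varphi_1\varphi_2\cdots \varphi_{d-j}}\,\tilde E_{j}.
\end{eqnarray*}

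Now I would multiply. Using $\tilde E_i \tilde E_j=\delta_{i,j}\tilde E_i$ and $\sum_i \tilde E_i = I$, the product $\tilde\Psi\,\widetilde{\Psi}$ equals
\begin{eqnarray*}
\sum_{i=0}^d \Biggl(\frac{\varphi_d\cdots \varphi_{i+1}}{\varphi_1\cdots \varphi_{d-i}}\Biggr)\Biggl(\frac{\varphi_1\cdots \varphi_i}{\varphi_d\cdots \varphi_{d-i+1}}\Biggr)\tilde E_i.
\end{eqnarray*}
For each $i$, the numerator equals $\varphi_1\varphi_2\cdots \varphi_d$ and the denominator equals $\varphi_1\varphi_2\cdots \varphi_d$ as well (merely rearranged), so each coefficient is $1$ and the product collapses to $\sum_{i=0}^d \tilde E_i = I$. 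A symmetric argument gives $\widetilde{\Psi}\,\tilde\Psi = I$, establishing the claim.

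The only potential obstacle is the bookkeeping in identifying the idempotent sequence and parameter sequence of $\tilde A,\tilde B$ correctly; once those are pulled from Lemmas \ref{lem:LRdual} and \ref{lem:LRdualPar}, the argument is a direct cancellation.
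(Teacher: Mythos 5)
Your proof is correct and follows essentially the same route as the paper, which simply cites Lemmas \ref{lem:LRdual}, \ref{lem:LRdualPar}, \ref{lem:refInv}, and \ref{lem:coefDup}; you have merely carried out explicitly the cancellation that those lemmas package. The identification of the dual pair's parameter and idempotent sequences, the re-indexing, and the coefficient computation are all accurate.
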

\begin{proof}
Use Lemmas
\ref{lem:LRdual},
\ref{lem:LRdualPar}
along with Lemmas
\ref{lem:refInv},
\ref{lem:coefDup}.
\end{proof}

\noindent We turn our attention to the maps $\Psi A \Psi^{-1}$
and $\Psi^{-1}B\Psi$. We first consider how these
maps act on $E_iV$ for $0 \leq i \leq d$.

\begin{lemma}
\label{lem:PsiABact}
The following {\rm (i), (ii)} hold.
\begin{enumerate}
\item[\rm (i)]
$\Psi A \Psi^{-1}$ is zero on $E_0V$. Moreover 
 for $1 \leq i \leq d$ and on $E_iV$,
\begin{eqnarray}
\Psi A \Psi^{-1} = \frac{\varphi_{d-i+1}}{\varphi_i} A.
\label{eq:A1}
\end{eqnarray}
\item[\rm (ii)]
$\Psi^{-1} B \Psi$ is zero on $E_dV$.
Moreover 
 for $0 \leq i \leq d-1$ and on $E_iV$,
\begin{eqnarray}
\Psi^{-1} B \Psi = \frac{\varphi_{d-i}}{\varphi_{i+1}} B.
\label{eq:B1}
\end{eqnarray}
\end{enumerate}
\end{lemma}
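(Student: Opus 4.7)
The plan is to prove both parts by a direct calculation that exploits the fact that, on each component $E_iV$ of the $(A,B)$-decomposition, $\Psi$ and $\Psi^{-1}$ act as explicit scalars given by Corollary \ref{cor:PsiAct}. Write $c_i = \frac{\varphi_1\varphi_2\cdots\varphi_i}{\varphi_d\varphi_{d-1}\cdots\varphi_{d-i+1}}$, so that $\Psi$ acts on $E_iV$ as $c_i I$ and $\Psi^{-1}$ as $c_i^{-1} I$ for $0 \leq i \leq d$. The key additional fact is that $A$ lowers the $(A,B)$-decomposition, so $A$ maps $E_iV$ into $E_{i-1}V$ for $1 \leq i \leq d$ and $AE_0V = 0$; dually, $B$ raises it, so $B$ maps $E_iV$ into $E_{i+1}V$ for $0 \leq i \leq d-1$ and $BE_dV = 0$.

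For part (i), I would argue as follows. Fix a vector $v \in E_iV$. If $i=0$, then $\Psi^{-1}v \in E_0V$ (since $\Psi^{-1}$ fixes $E_0V$), hence $A\Psi^{-1}v = 0$ and therefore $\Psi A \Psi^{-1}v = 0$. This shows $\Psi A \Psi^{-1}$ is zero on $E_0V$. If $1\leq i\leq d$, then $\Psi^{-1}v = c_i^{-1} v$, next $A\Psi^{-1}v = c_i^{-1} Av$ with $Av \in E_{i-1}V$, and finally $\Psi$ acts on $E_{i-1}V$ as $c_{i-1} I$, giving $\Psi A\Psi^{-1}v = c_{i-1}c_i^{-1} Av$. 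A one-line simplification yields
\[
\frac{c_{i-1}}{c_i} = \frac{\varphi_1\cdots\varphi_{i-1}}{\varphi_d\cdots\varphi_{d-i+2}} \cdot \frac{\varphi_d\cdots\varphi_{d-i+1}}{\varphi_1\cdots\varphi_i} = \frac{\varphi_{d-i+1}}{\varphi_i},
\]
which is exactly (\ref{eq:A1}).

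For part (ii), the argument is formally dual. If $v \in E_dV$, then $\Psi v \in E_dV$ and $B\Psi v = 0$, so $\Psi^{-1}B\Psi$ is zero on $E_dV$. If $0 \leq i \leq d-1$ and $v \in E_iV$, then $\Psi v = c_i v$, $B\Psi v = c_i Bv$ with $Bv \in E_{i+1}V$, and $\Psi^{-1}$ acts on $E_{i+1}V$ as $c_{i+1}^{-1} I$, so $\Psi^{-1}B\Psi v = c_i c_{i+1}^{-1} Bv$. The analogous simplification
\[
\frac{c_i}{c_{i+1}} = \frac{\varphi_1\cdots\varphi_i}{\varphi_d\cdots\varphi_{d-i+1}} \cdot \frac{\varphi_d\cdots\varphi_{d-i}}{\varphi_1\cdots\varphi_{i+1}} = \frac{\varphi_{d-i}}{\varphi_{i+1}}
\]
yields (\ref{eq:B1}). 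There is no substantive obstacle: everything reduces to the scalar action on each $E_iV$ recorded in Corollary \ref{cor:PsiAct}, combined with the lowering/raising property of $A$ and $B$. The only care needed is bookkeeping the index shift (from $i$ to $i-1$ in part (i), and from $i$ to $i+1$ in part (ii)) when telescoping the ratio of $c$-values.
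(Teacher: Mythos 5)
Your proof is correct and follows exactly the route the paper takes: the paper's own (terser) proof likewise invokes Corollary \ref{cor:PsiAct} for the scalar action of $\Psi^{\pm 1}$ on each $E_iV$ together with the fact that the decomposition $\lbrace E_iV\rbrace_{i=0}^d$ is lowered by $A$ and raised by $B$. Your telescoping computations of $c_{i-1}/c_i$ and $c_i/c_{i+1}$ and your handling of the boundary cases $i=0$ and $i=d$ are all accurate.
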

\begin{proof} The decomposition
$\lbrace E_iV\rbrace_{i=0}^d$ is lowered by $A$ and
raised by $B$. The results follow from this
and Corollary
\ref{cor:PsiAct}.
\end{proof}

\begin{corollary} The $(A,B)$-decomposition of
$V$ is lowered by $\Psi A \Psi^{-1}$ and
raised by $\Psi^{-1}B\Psi$.
\end{corollary}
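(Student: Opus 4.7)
The plan is to read off the corollary directly from Lemma~\ref{lem:PsiABact}, using only that the scalar factors appearing there are nonzero and that $A$ lowers (resp.\ $B$ raises) the $(A,B)$-decomposition $\{E_iV\}_{i=0}^d$.

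First I would handle $\Psi A \Psi^{-1}$. By Lemma~\ref{lem:PsiABact}(i), this map is zero on $E_0V$, and for $1\leq i \leq d$ it equals $\tfrac{\varphi_{d-i+1}}{\varphi_i}A$ on $E_iV$. Since $\varphi_j \neq 0$ for $1\leq j\leq d$ (Definition~\ref{def:pa}) and $A$ lowers the $(A,B)$-decomposition, meaning $AE_iV = E_{i-1}V$ for $1\leq i\leq d$ and $AE_0V = 0$, I conclude that $\Psi A\Psi^{-1}$ sends $E_iV$ onto $E_{i-1}V$ for $1\leq i\leq d$ and annihilates $E_0V$. By the definition of ``lower'' (above Lemma~\ref{lem:LowerRaise}), this is exactly the assertion that $\Psi A \Psi^{-1}$ lowers the $(A,B)$-decomposition.

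Next I would handle $\Psi^{-1}B\Psi$ by the same method, using Lemma~\ref{lem:PsiABact}(ii): it is zero on $E_dV$, and for $0\leq i\leq d-1$ it equals $\tfrac{\varphi_{d-i}}{\varphi_{i+1}}B$ on $E_iV$. Again the scalars are nonzero, and since $B$ raises the $(A,B)$-decomposition ($BE_iV = E_{i+1}V$ for $0\leq i\leq d-1$ and $BE_dV=0$), the same holds for $\Psi^{-1}B\Psi$.

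There is essentially no obstacle here; the entire content of the corollary is packaged in Lemma~\ref{lem:PsiABact}, and the only observation needed beyond that is the nonvanishing of the coefficients $\varphi_{d-i+1}/\varphi_i$ and $\varphi_{d-i}/\varphi_{i+1}$, which is immediate from Definition~\ref{def:pa}.
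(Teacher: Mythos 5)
Your proposal is correct and matches the paper's own proof, which likewise cites Lemma~\ref{lem:PsiABact} together with the identification of the $(A,B)$-decomposition with $\lbrace E_iV\rbrace_{i=0}^d$; you have merely spelled out the nonvanishing of the scalars $\varphi_{d-i+1}/\varphi_i$ and $\varphi_{d-i}/\varphi_{i+1}$, which the paper leaves implicit.
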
 
\begin{proof}
By Lemma
\ref{lem:PsiABact},
and since
the $(A,B)$-decomposition of $V$
is equal to $\lbrace E_iV\rbrace_{i=0}^d$.
\end{proof}

\begin{lemma}
\label{lem:PsiTable}
In the table below, we give the matrices
that represent
 $\Psi A \Psi^{-1} $ and
$\Psi^{-1}B \Psi$ with respect to various 
bases for $V$.

     \bigskip
{\small
\centerline{
\begin{tabular}[t]{c|cc}
 {\rm type of basis} & {\rm matrix rep. of $\Psi A \Psi^{-1}$}
&
 {\rm matrix rep. of $\Psi^{-1} B \Psi$}
 \\
 \hline
 $(A,B)$ &
$
\left(
\begin{array}{ c c cc c c }
 0 & \varphi_d/\varphi_1   &   &&   & \bf 0  \\
  & 0  &  \varphi_{d-1}/\varphi_2  &&  &      \\ 
   &   &  0   & \cdot &&  \\
     &   &  & \cdot  & \cdot & \\
       &  & &  & \cdot & \varphi_1/\varphi_d \\
        {\bf 0}  &&  & &   &  0  \\
	\end{array}
	\right)
$
 &
$
	\left(
	\begin{array}{ c c cc c c }
	0 &   &   &&   & \bf 0  \\
	\varphi_d & 0  &   &&  &      \\
	 &  \varphi_{d-1} & 0   & &&  \\
	   &   & \cdot & \cdot  & & \\
	     &  & & \cdot & \cdot & \\
	      {\bf 0}  &&  & &  \varphi_1 & 0  \\
	      \end{array}
	      \right)
$
\\
\\
{\rm inv. $(A,B)$}
&	
$	
\left(
	\begin{array}{ c c cc c c }
	0 &   &   &&   & \bf 0  \\
	\varphi_1/\varphi_d & 0  &   &&  &      \\
	 &  \varphi_2/\varphi_{d-1} & 0   & &&  \\
	   &   & \cdot & \cdot  & & \\
	     &  & & \cdot & \cdot & \\
	      {\bf 0}  &&  & &  \varphi_d/ \varphi_1 & 0  \\
	      \end{array}
	      \right)
$
&
$
\left(
\begin{array}{ c c cc c c }
 0 & \varphi_1   &   &&   & \bf 0  \\
  & 0  &  \varphi_{2}  &&  &      \\ 
   &   &  0   & \cdot &&  \\
     &   &  & \cdot  & \cdot & \\
       &  & &  & \cdot & \varphi_d \\
        {\bf 0}  &&  & &   &  0  \\
	\end{array}
	\right)
$
\\
\\
{\rm  $(B,A)$}
&
$
\left(
	\begin{array}{ c c cc c c }
	0 &   &   &&   & \bf 0  \\
	\varphi_1 & 0  &   &&  &      \\
	 &  \varphi_{2} & 0   & &&  \\
	   &   & \cdot & \cdot  & & \\
	     &  & & \cdot & \cdot & \\
	      {\bf 0}  &&  & &  \varphi_d & 0  \\
	      \end{array}
	      \right)
$&$
\left(
\begin{array}{ c c cc c c }
 0 & \varphi_1/\varphi_d &   &&   & \bf 0  \\
  & 0  &  \varphi_2/\varphi_{d-1} &&  &      \\ 
   &   &  0   & \cdot &&  \\
     &   &  & \cdot  & \cdot & \\
       &  & &  & \cdot & \varphi_d/\varphi_1 \\
        {\bf 0}  &&  & &   &  0  \\
	\end{array}
	\right)
$
\\
\\
{\rm inv. $(B,A)$}
&
$
\left(
\begin{array}{ c c cc c c }
0 & \varphi_d   &   &&   & \bf 0  \\
 & 0  &  \varphi_{d-1}  &&  &      \\ 
  &   &  0   & \cdot &&  \\
    &   &  & \cdot  & \cdot & \\
      &  & &  & \cdot & \varphi_1 \\
       {\bf 0}  &&  & &   &  0  \\
       \end{array}
       \right)
$
&
$
       \left(
       \begin{array}{ c c cc c c }
       0 &   &   &&   & \bf 0  \\
       \varphi_d/\varphi_1 & 0  &   &&  &      \\
	&  \varphi_{d-1}/\varphi_2 & 0   & &&  \\
	  &   & \cdot & \cdot  & & \\
	    &  & & \cdot & \cdot & \\
	     {\bf 0}  &&  & &  \varphi_1/\varphi_d & 0  \\
	     \end{array}
	     \right)
$
   \end{tabular}}
     \bigskip
}
\end{lemma}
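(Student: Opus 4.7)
The plan is a direct calculation: for each of the eight cells of the table, compute the action of $\Psi A \Psi^{-1}$ or $\Psi^{-1}B\Psi$ on the relevant basis vectors, using the explicit formulas of Lemma \ref{lem:PsiABact} together with the known action of $A$ and $B$ on each basis type. Specifically, Lemma \ref{lem:PsiABact} tells us that on each component $E_jV$ of the $(A,B)$-decomposition,
\[
\Psi A \Psi^{-1}\big|_{E_jV} = \tfrac{\varphi_{d-j+1}}{\varphi_j}\, A\big|_{E_jV},
\qquad
\Psi^{-1}B\Psi\big|_{E_jV} = \tfrac{\varphi_{d-j}}{\varphi_{j+1}}\, B\big|_{E_jV},
\]
with the convention $\varphi_0 = \varphi_{d+1}=0$ automatically handling the boundary cases where one of these maps vanishes.

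Next, for each of the four basis types I consult the corresponding $5$-characterization lemma to record (i) the component $E_jV$ containing each $v_i$ and (ii) how $A,B$ act on $v_i$. Namely: Lemma \ref{lem:5Char} gives $v_i\in E_iV$ with $Av_i=v_{i-1}$ and $Bv_i=\varphi_{i+1}v_{i+1}$ for an $(A,B)$-basis; Lemma \ref{lem:5CharInv} gives $v_i\in E_{d-i}V$ with $Av_i=v_{i+1}$ and $Bv_i=\varphi_{d-i+1}v_{i-1}$ for an inverted $(A,B)$-basis; Lemma \ref{lem:5CharBA} gives $v_i\in E_{d-i}V$ with $Bv_i=v_{i-1}$ and $Av_i=\varphi_{d-i}v_{i+1}$ for a $(B,A)$-basis; Lemma \ref{lem:5CharBAinv} gives $v_i\in E_iV$ with $Bv_i=v_{i+1}$ and $Av_i=\varphi_iv_{i-1}$ for an inverted $(B,A)$-basis.

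With these tables in hand, each cell is obtained by a two-step substitution: first apply the appropriate scalar $\varphi_{d-j+1}/\varphi_j$ or $\varphi_{d-j}/\varphi_{j+1}$ with $j$ equal to the index of the component containing $v_i$, then apply $A$ or $B$ to $v_i$. For instance, with an $(A,B)$-basis and the map $\Psi A \Psi^{-1}$, the fact that $v_i\in E_iV$ yields
\[
\Psi A \Psi^{-1} v_i \;=\; \tfrac{\varphi_{d-i+1}}{\varphi_i}\, A v_i \;=\; \tfrac{\varphi_{d-i+1}}{\varphi_i}\, v_{i-1}\qquad (1\le i\le d),
\]
which is precisely the first matrix displayed in the table; and for $\Psi^{-1}B\Psi$ on the same basis,
\[
\Psi^{-1}B\Psi\, v_i \;=\; \tfrac{\varphi_{d-i}}{\varphi_{i+1}}\, Bv_i \;=\; \varphi_{d-i}\, v_{i+1}\qquad (0\le i\le d-1),
\]
giving the second matrix. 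The remaining six cells are obtained by the same mechanical substitution.

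There is no genuine obstacle in this proof — the work is purely bookkeeping. The only points requiring mild care are matching the index shifts (the scalar $\varphi_{d-j+1}/\varphi_j$ must be evaluated at the component index $j$, which equals $d-i$ rather than $i$ for inverted $(A,B)$-bases and $(B,A)$-bases) and the boundary cases $i=0$ and $i=d$, both of which are absorbed automatically by $\varphi_0=\varphi_{d+1}=0$.
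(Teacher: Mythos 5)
Your proof is correct and follows essentially the same route as the paper: the paper combines Lemma \ref{lem:PsiABact} with the matrix-representation Lemmas \ref{lem:ABmatrix}, \ref{lem:invABmatrix}, \ref{lem:BAbasisMat}, \ref{lem:BAmatrixInv}, while you combine it with the equivalent basis-characterization Lemmas \ref{lem:5Char}--\ref{lem:5CharBAinv}, which carry the same information. Your index bookkeeping (including the shift $j=d-i$ for the inverted $(A,B)$ and $(B,A)$ cases and the boundary conventions $\varphi_0=\varphi_{d+1}=0$) checks out against all eight entries of the table.
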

\begin{proof} Use Lemmas
\ref{lem:ABmatrix},
\ref{lem:invABmatrix},
\ref{lem:BAbasisMat},
\ref{lem:BAmatrixInv}
along with Lemma
\ref{lem:PsiABact}.
\end{proof}

\noindent Recall the reflector antiautomorphism $\dagger$ from
Proposition
\ref{prop:antiaut}
and
Definition
\ref{def:REFdag}.
\begin{lemma}
\label{lem:newLRP}
 The following {\rm (i)--(vii)} hold:
\begin{enumerate}
\item[\rm (i)]
the ordered pair
$A,\Psi^{-1}B\Psi$ is an LR pair on $V$;
\item[\rm (ii)]
the 
$(A,\Psi^{-1}B\Psi)$-decomposition of $V$ is equal to the
$(A,B)$-decomposition of $V$;
\item[\rm (iii)]
  the idempotent
sequence of
$A,\Psi^{-1}B\Psi$ is equal to the idempotent sequence of
$A,B$;
\item[\rm (iv)] an
$(A,\Psi^{-1}B\Psi)$-basis of $V$ is the same thing
as an $(A,B)$-basis of $V$;
\item[\rm (v)] 
the LR pair $A,\Psi^{-1}B\Psi$ has parameter sequence
$\lbrace\varphi_{d-i+1} \rbrace_{i=1}^d$;
\item[\rm (vi)] 
for the LR pair $A,\Psi^{-1}B\Psi $ the reflector is
the composition
\begin{equation*}
 \begin{CD} 
{\rm End}(V)  @>>  {\dagger} >  
 {\rm End}(V)  @>> X\mapsto \Psi^{-1}X\Psi > {\rm End}(V); 
                  \end{CD}
\end{equation*}
\item[\rm (vii)] 
for the LR pair $A,\Psi^{-1}B\Psi$ the inverter is
$\Psi^{-1}$.
\end{enumerate}
\end{lemma}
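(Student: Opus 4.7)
The plan is to handle the seven parts in the order given, since each builds on the previous, with everything lifted back to the $(A,B)$-decomposition $\{E_iV\}_{i=0}^d$ via the action formulas in Lemma~\ref{lem:PsiABact}.

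First I would establish (i)--(iv) together. By Lemma~\ref{lem:PsiABact}(ii), $\Psi^{-1}B\Psi$ agrees with $(\varphi_{d-i}/\varphi_{i+1})B$ on $E_iV$ for $0 \leq i \leq d-1$ and vanishes on $E_dV$. Since $B$ raises $\{E_iV\}_{i=0}^d$ and each scalar $\varphi_{d-i}/\varphi_{i+1}$ is nonzero, $\Psi^{-1}B\Psi$ also raises this decomposition. Combined with the fact that $A$ lowers it, this shows $A, \Psi^{-1}B\Psi$ is an LR pair whose decomposition is exactly $\{E_iV\}_{i=0}^d$, proving (i) and (ii). Then (iii) is immediate from (ii) together with Definition~\ref{def:ABE}, and (iv) follows from (ii) and Definition~\ref{def:ABbasis} since the action of $A$ on the components is unchanged.

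Next, for (v), I would compute the $i$-th parameter of $A,\Psi^{-1}B\Psi$ directly from Definition~\ref{def:pa}: it is the eigenvalue of $(\Psi^{-1}B\Psi)A$ on $V_i = E_iV$. Since $AV_i = V_{i-1}$ and Lemma~\ref{lem:PsiABact}(ii) gives $\Psi^{-1}B\Psi = (\varphi_{d-i+1}/\varphi_i)B$ on $V_{i-1}$, we get $(\Psi^{-1}B\Psi)A = (\varphi_{d-i+1}/\varphi_i)BA$ on $V_i$, with eigenvalue $(\varphi_{d-i+1}/\varphi_i)\varphi_i = \varphi_{d-i+1}$, as claimed.

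For (vi), I would use the uniqueness clause of Proposition~\ref{prop:antiaut} applied to the pair $A,\Psi^{-1}B\Psi$. Let $\mu(X) = \Psi^{-1}X^\dagger\Psi$ denote the stated composition; this is an antiautomorphism of ${\rm End}(V)$, being the composition of the antiautomorphism $\dagger$ with the inner automorphism $X \mapsto \Psi^{-1}X\Psi$. Clearly $\mu(A) = \Psi^{-1}B\Psi$. The key input is $\Psi^\dagger = \Psi$ from Lemma~\ref{lem:psiFix}: this yields $(\Psi^{-1}B\Psi)^\dagger = (\Psi^{-1})^\dagger B^\dagger \Psi^\dagger = \Psi^{-1}\cdot$\dots wait, since $\dagger$ reverses order, $(\Psi^{-1}B\Psi)^\dagger = \Psi^\dagger B^\dagger (\Psi^{-1})^\dagger = \Psi A \Psi^{-1}$, so $\mu(\Psi^{-1}B\Psi) = \Psi^{-1}\Psi A \Psi^{-1}\Psi = A$. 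Thus $\mu$ swaps the two generators and so equals the reflector of $A,\Psi^{-1}B\Psi$ by uniqueness. Finally for (vii), the inverter formula \eqref{eq:REF} applied to the new pair uses the idempotents $E'_i = E_i$ from (iii) and the parameters $\varphi'_i = \varphi_{d-i+1}$ from (v), which produces exactly $\sum_{i=0}^d \frac{\varphi_d \cdots \varphi_{d-i+1}}{\varphi_1 \cdots \varphi_i}E_i$; this matches $\Psi^{-1}$ by Lemma~\ref{lem:refInv}. No single step is truly an obstacle; the only place that requires care is part (vi), where one must invoke $\Psi^\dagger = \Psi$ to verify that $\mu$ actually swaps $A$ and $\Psi^{-1}B\Psi$ rather than merely sending one to the other.
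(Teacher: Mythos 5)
Your proposal is correct and follows essentially the same route as the paper: (i)--(iv) from the fact that the $(A,B)$-decomposition is lowered by $A$ and raised by $\Psi^{-1}B\Psi$, (v) from the action formula in Lemma \ref{lem:PsiABact}, (vi) from the uniqueness in Proposition \ref{prop:antiaut} together with $\Psi^\dagger=\Psi$, and (vii) from (iii), (v) and \eqref{eq:REFinv}. The mid-sentence slip in (vi) is self-corrected to the right order-reversed computation $(\Psi^{-1}B\Psi)^\dagger=\Psi A\Psi^{-1}$, so nothing is missing.
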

\begin{proof} (i), (ii) 
The $(A,B)$-decomposition of $V$ is lowered by
$A$ and raised by
$\Psi^{-1}B\Psi$.
\\
\noindent (iii) By 
(ii) above and Definition
\ref{def:ABE}.
\\
\noindent (iv) 
 By (ii) above and
 Definition
\ref{def:ABbasis}.
\\
\noindent (v) Consider the matrices that
represent
$A$ and $\Psi^{-1}B \Psi$ with respect to an
 $(A,B)$-basis of $V$. For $A$ this matrix is
given in
Lemma \ref{lem:ABmatrix}.
For $\Psi^{-1} B\Psi $ this matrix is given in
Lemma
\ref{lem:PsiTable}.
\\
\noindent (vi)
Use Lemma
\ref{lem:psiFix}.
\\
\noindent (vii) By  (iii), (v) above and
line (\ref{eq:REFinv}).
\end{proof}

\begin{lemma}
\label{lem:five2}
 The following {\rm (i)--(vii)} hold:
\begin{enumerate}
\item[\rm (i)]
the ordered pair
$\Psi A\Psi^{-1},B$ is an LR pair on $V$;
\item[\rm (ii)]
the 
$(\Psi A\Psi^{-1},B)$-decomposition of $V$ is equal to the
$(A,B)$-decomposition of $V$;
\item[\rm (iii)]
  the idempotent
sequence of
$\Psi A\Psi^{-1}, B$ is equal to the idempotent sequence of
$A,B$;
\item[\rm (iv)] a
$(B,\Psi A\Psi^{-1})$-basis of $V$ is the same thing
as a $(B,A)$-basis of $V$;
\item[\rm (v)] 
the LR pair $\Psi A\Psi^{-1},B$ has parameter sequence
$\lbrace\varphi_{d-i+1} \rbrace_{i=1}^d$;
\item[\rm (vi)]
for the LR pair $\Psi A\Psi^{-1},B$ the reflector is the
composition
\begin{equation*}
 \begin{CD} 
{\rm End}(V)  @>>  {\dagger} >  
 {\rm End}(V)  @>> X\mapsto \Psi X \Psi^{-1} > {\rm End}(V); 
                  \end{CD}
\end{equation*}
\item[\rm (vii)] 
for the LR pair $\Psi A\Psi^{-1},B$ the inverter is
$\Psi^{-1}$.
\end{enumerate}
\end{lemma}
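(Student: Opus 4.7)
The statement is the mirror image of Lemma \ref{lem:newLRP} under reversal of the ordered pair. The plan is to apply Lemma \ref{lem:newLRP} to the reversed LR pair $B,A$, whose inverter is $\Psi^{-1}$ by Lemma \ref{lem:ABBA}, and then to translate the conclusions back to $\Psi A\Psi^{-1}, B$ via the standard reversal identities, namely Lemmas \ref{lem:Ebackward} and \ref{lem:BAvAB} together with the fact that the $(X,Y)$-decomposition is the inversion of the $(Y,X)$-decomposition.

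First I would apply Lemma \ref{lem:newLRP} with $A, B, \Psi$ replaced by $B, A, \Psi^{-1}$. In one stroke this gives that $B, \Psi A \Psi^{-1}$ is an LR pair on $V$, that the $(B,\Psi A \Psi^{-1})$-decomposition equals the $(B,A)$-decomposition, that its idempotent sequence equals that of $B,A$, that its bases are the $(B,A)$-bases, and that its parameter sequence is the double reversal of $\{\varphi_i\}_{i=1}^d$, hence $\{\varphi_i\}_{i=1}^d$ itself. Parts (i)--(v) of the lemma then drop out by a second reversal: (i) is immediate; in (ii), the $(\Psi A\Psi^{-1}, B)$-decomposition is by definition the inversion of the $(B, \Psi A\Psi^{-1})$-decomposition, which is the inversion of the $(B,A)$-decomposition, hence the $(A,B)$-decomposition; (iii) follows from (ii) via Definition \ref{def:ABE}; (iv) is a direct restatement, since it involves a $(B,\cdot)$-basis rather than a $(\cdot, B)$-basis; and (v) follows from Lemma \ref{lem:BAvAB} applied to the pair $B, \Psi A \Psi^{-1}$.

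For (vi), the cleanest route is to invoke the uniqueness clause of Proposition \ref{prop:antiaut}. I would define $\sigma(X) = \Psi X^\dagger \Psi^{-1}$, observe that it is an antiautomorphism of ${\rm End}(V)$ as the composition of the antiautomorphism $\dagger$ with the automorphism given by conjugation by $\Psi$, and verify, using $\Psi^\dagger = \Psi$ (Lemma \ref{lem:psiFix}) together with $A^\dagger = B$, that $\sigma(\Psi A \Psi^{-1}) = B$ and $\sigma(B) = \Psi A \Psi^{-1}$; uniqueness then forces $\sigma$ to coincide with the $(\Psi A\Psi^{-1}, B)$-reflector. For (vii), substituting the idempotent sequence $\{E_i\}_{i=0}^d$ and the parameter sequence $\{\varphi_{d-i+1}\}_{i=1}^d$ from (iii), (v) into the defining formula (\ref{eq:REF}) for the inverter of the new LR pair produces precisely the expression for $\Psi^{-1}$ recorded in Lemma \ref{lem:refInv}.

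I do not anticipate a genuine obstacle, since this is the mirror of Lemma \ref{lem:newLRP} and is essentially a bookkeeping exercise. The only point requiring care is to keep the two reversals straight: first passing from $A,B$ to $B,A$ before invoking Lemma \ref{lem:newLRP}, and then passing from $B,\Psi A\Psi^{-1}$ back to $\Psi A\Psi^{-1},B$ afterward, so that the several reversal operations on decompositions, idempotent sequences, and parameter sequences compose to give the statements exactly in the form asserted.
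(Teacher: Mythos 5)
Your proposal is correct. It does, however, take a different route from the paper: the paper's proof is literally ``similar to the proof of Lemma \ref{lem:newLRP},'' meaning one reruns the direct argument mutatis mutandis --- using Lemma \ref{lem:PsiABact}(i) to see that the $(A,B)$-decomposition is lowered by $\Psi A\Psi^{-1}$ and raised by $B$, reading off the parameter sequence from the matrix table in Lemma \ref{lem:PsiTable}, and so on --- whereas you deduce the statement formally from Lemma \ref{lem:newLRP} itself, applied to the reversed pair $B,A$ with inverter $\Psi^{-1}$ (Lemma \ref{lem:ABBA}), and then undo the reversal via Lemmas \ref{lem:Ebackward} and \ref{lem:BAvAB}. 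Your reduction buys economy: nothing about the action of $\Psi A\Psi^{-1}$ on the decomposition needs to be re-verified, and parts (i)--(v) and (vii) fall out by pure bookkeeping; the cost is exactly the bookkeeping you flag, namely tracking how the several inversions of decompositions, idempotent sequences, and parameter sequences compose (your double-reversal computation for (v) is right). For (vi) you in effect give a third, self-contained argument via the uniqueness clause of Proposition \ref{prop:antiaut}; the verification that $X\mapsto \Psi X^{\dagger}\Psi^{-1}$ is an antiautomorphism swapping $\Psi A\Psi^{-1}$ and $B$ (using $\Psi^{\dagger}=\Psi$, hence $(\Psi^{-1})^{\dagger}=\Psi^{-1}$) is correct and arguably cleaner than chasing (vi) of Lemma \ref{lem:newLRP} through the reversal.
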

\begin{proof} Similar to the proof of
Lemma
\ref{lem:newLRP}.
\end{proof}

\begin{proposition}
\label{prop:invmap}
The following three LR pairs are mutually isomorphic:
\begin{eqnarray}
A, \Psi^{-1}B\Psi
\qquad \qquad
B,A
\qquad \qquad
\Psi A \Psi^{-1}, B.
\label{eq:threeLRP}
\end{eqnarray}
\end{proposition}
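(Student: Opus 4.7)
The plan is to invoke the classification of LR pairs up to isomorphism (Proposition \ref{prop:LRpairClass}) by verifying that all three LR pairs in \eqref{eq:threeLRP} share the same parameter sequence. Since Proposition \ref{prop:LRpairClass} establishes that the parameter sequence is a complete isomorphism invariant for LR pairs of diameter $d$ over $\mathbb F$, it suffices to show that each of the three pairs has parameter sequence $\lbrace \varphi_{d-i+1} \rbrace_{i=1}^d$, the inversion of $\lbrace \varphi_i \rbrace_{i=1}^d$.

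First I would note that the pair $B,A$ has parameter sequence $\lbrace \varphi_{d-i+1} \rbrace_{i=1}^d$; this is exactly the content of Lemma \ref{lem:BAvAB}. Next, the pair $A, \Psi^{-1}B\Psi$ has parameter sequence $\lbrace \varphi_{d-i+1} \rbrace_{i=1}^d$ by Lemma \ref{lem:newLRP}(v), and the pair $\Psi A \Psi^{-1}, B$ has parameter sequence $\lbrace \varphi_{d-i+1} \rbrace_{i=1}^d$ by Lemma \ref{lem:five2}(v). All three pairs are genuine LR pairs on $V$: the latter two by parts (i) of Lemmas \ref{lem:newLRP} and \ref{lem:five2} respectively, and $B,A$ is an LR pair since any permutation of the roles in Definition \ref{def:lr} yields an LR pair.

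With these three facts in hand, the three LR pairs are over the same field $\mathbb F$, have the same diameter $d$, and have the same parameter sequence, so Proposition \ref{prop:LRpairClass} immediately yields that they lie in a common isomorphism class, hence are mutually isomorphic.

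There is no serious obstacle here: all the hard work has been done in the preceding lemmas, which compute the effect of conjugation by the inverter $\Psi$ on $A$ and $B$ and identify the resulting parameter sequences. The present proposition is essentially a three-way application of the classification theorem.
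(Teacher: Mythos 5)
Your proposal is correct and follows exactly the paper's own argument: the three pairs share the parameter sequence $\lbrace \varphi_{d-i+1}\rbrace_{i=1}^d$ by Lemmas \ref{lem:BAvAB}, \ref{lem:newLRP}(v), \ref{lem:five2}(v), and Proposition \ref{prop:LRpairClass} then gives the mutual isomorphism. No issues.
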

\begin{proof} The LR pairs
(\ref{eq:threeLRP}) have the same parameter sequence
by 
Lemmas
\ref{lem:BAvAB},
\ref{lem:newLRP}(v),
\ref{lem:five2}(v).
The result follows in view of Proposition
\ref{prop:LRpairClass}.
\end{proof}

\begin{lemma}
For  $\sigma \in {\rm End}(V)$ the
following are equivalent:
\begin{enumerate}
\item[\rm (i)] $\sigma$ is an isomorphism of LR pairs
from $A,\Psi^{-1}B\Psi$ to $B,A$;
\item[\rm (ii)] $\sigma$ is an isomorphism of LR pairs
from $B,A$ to $\Psi A \Psi^{-1},B$;
\item[\rm (iii)] $\sigma$ sends each $(A,B)$-basis of $V$
to a $(B,A)$-basis of $V$.
\end{enumerate}
\end{lemma}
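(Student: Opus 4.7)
\medskip
\noindent
My plan is to prove the two equivalences (i) $\Leftrightarrow$ (iii) and (ii) $\Leftrightarrow$ (iii). Unpacking Definition \ref{def:isoLRP}, condition (i) asserts $\sigma A = B\sigma$ and $\sigma \Psi^{-1}B\Psi = A\sigma$, while (ii) asserts $\sigma A = B\sigma$ and $\sigma B = \Psi A\Psi^{-1}\sigma$. Throughout, I fix an $(A,B)$-basis $\lbrace v_i\rbrace_{i=0}^d$ of $V$ and write $u_i = \sigma v_i$.

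\medskip
\noindent
For (i) $\Leftrightarrow$ (iii), the main ingredient is Lemma \ref{lem:newLRP}(iv), which identifies $(A, \Psi^{-1}B\Psi)$-bases of $V$ with $(A,B)$-bases of $V$. By Lemma \ref{lem:newLRP}(v) together with Lemma \ref{lem:BAvAB}, the LR pairs $A, \Psi^{-1}B\Psi$ and $B, A$ share the same parameter sequence $\lbrace \varphi_{d-i+1}\rbrace_{i=1}^d$. Since the intertwining relations of Definition \ref{def:isoLRP} can be verified basis vector by basis vector whenever the source and target parameter sequences agree, an $\mathbb F$-linear bijection from $V$ to $V$ is an isomorphism from $A, \Psi^{-1}B\Psi$ to $B, A$ exactly when it sends $(A, \Psi^{-1}B\Psi)$-bases to $(B,A)$-bases. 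Via Lemma \ref{lem:newLRP}(iv), this is precisely condition (iii).

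\medskip
\noindent
For (ii) $\Leftrightarrow$ (iii) I proceed directly. For (iii) $\Rightarrow$ (ii), the relation $Bu_i = u_{i-1}$ coming from the $(B,A)$-basis structure of $\lbrace u_i\rbrace$ delivers $\sigma A = B\sigma$. For $\sigma B = \Psi A\Psi^{-1}\sigma$, I evaluate both sides on $v_i$: the left side equals $\varphi_{i+1}u_{i+1}$ by the definition of the parameter sequence, and the right side equals $\Psi A\Psi^{-1}u_i$, which I compute using Corollary \ref{cor:PsiAct} to unwind the $\Psi^{\pm 1}$ scalars on $V_{d-i}$ and $V_{d-i-1}$ together with $Au_i = \varphi_{d-i}u_{i+1}$ from Lemma \ref{lem:5CharBA}(ii); the rational factors telescope down to $\varphi_{i+1}u_{i+1}$. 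For (ii) $\Rightarrow$ (iii), the relation $\sigma A = B\sigma$ forces $\sigma V_0 = \sigma\ker A = \ker B = V_d$; the second relation then gives $\sigma V_{i+1} = \sigma BV_i = \Psi A\Psi^{-1}\sigma V_i$, and since $\Psi A\Psi^{-1}$ lowers the $(A,B)$-decomposition by Lemma \ref{lem:PsiABact}(i), induction on $i$ yields $\sigma V_i = V_{d-i}$. Combined with $Bu_i = u_{i-1}$, this shows $\lbrace u_i\rbrace$ is a $(B,A)$-basis.

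\medskip
\noindent
The main obstacle is the $\Psi$-scalar bookkeeping in the (iii) $\Rightarrow$ (ii) direction, but Corollary \ref{cor:PsiAct} gives the scalars explicitly and they telescope cleanly, so no real difficulty arises.
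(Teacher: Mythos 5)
Your proof is correct. The equivalence (i) $\Leftrightarrow$ (iii) is handled exactly as in the paper: both arguments rest on Lemma \ref{lem:newLRP}(iv) to identify $(A,\Psi^{-1}B\Psi)$-bases with $(A,B)$-bases, and on the coincidence of the matrices representing the two pairs with respect to their distinguished bases (your "parameter sequences agree, so verify the intertwining relations basis vector by basis vector" is precisely the paper's matrix-representation argument in words). For (ii) $\Leftrightarrow$ (iii) you diverge from the paper: the paper reads condition (ii) as "$\sigma$ is an isomorphism from $A,B$ to $B,\Psi A\Psi^{-1}$" and then invokes Lemma \ref{lem:five2}(iv) (a $(B,\Psi A\Psi^{-1})$-basis is the same thing as a $(B,A)$-basis) together with the table in Lemma \ref{lem:PsiTable}, whereas you verify the two intertwining relations directly — re-deriving the $(B,A)$-row entry of Lemma \ref{lem:PsiTable} by telescoping the $\Psi^{\pm1}$ scalars from Corollary \ref{cor:PsiAct} for one relation, and running an induction on the components $V_i$ using $\sigma(\ker A)=\ker B$ and the fact that $\Psi A\Psi^{-1}$ lowers the $(A,B)$-decomposition for the converse. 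Both routes work; the paper's is shorter because the bookkeeping is already packaged in Lemmas \ref{lem:five2} and \ref{lem:PsiTable}, while yours is self-contained at the cost of redoing that computation. The only cosmetic omission is the boundary case $i=d$ in your telescoping step, where both sides of $\sigma Bv_d=\Psi A\Psi^{-1}u_d$ vanish (by $Bv_d=0$ and Lemma \ref{lem:PsiABact}(i)); worth a half-sentence but not a gap.
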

\begin{proof}
${\rm (i)}\Rightarrow {\rm (iii)}$
The map $\sigma $ sends each $(A,\Psi^{-1}B\Psi)$-basis
of $V$ to a $(B,A)$-basis of $V$. Also by Lemma
\ref{lem:newLRP}(iv),
an 
 $(A,\Psi^{-1}B\Psi)$-basis of $V$ is the same thing
as an $(A,B)$-basis of $V$.
\\
\noindent
${\rm (iii)}\Rightarrow {\rm (i)}$
The matrix that represents $A$ (resp. $\Psi^{-1}B\Psi$)
with respect to an $(A,B)$-basis of $V$ is equal to
the matrix that represents $B$ (resp. $A$) with
respect to a $(B,A)$-basis of $V$. 
\\
${\rm (ii)}\Rightarrow {\rm (iii)}$
The map $\sigma $ is an isomorphism of LR pairs
from
$A,B$ to $B,\Psi A \Psi^{-1}$.
So $\sigma $ sends each $(A,B)$-basis
of $V$ to a $(B,\Psi A \Psi^{-1})$-basis of $V$. Also by Lemma
\ref{lem:five2}(iv), a 
 $(B,\Psi A \Psi^{-1})$-basis of $V$ is the same thing
as a $(B,A)$-basis of $V$.
\\
\noindent 
${\rm (iii)}\Rightarrow {\rm (ii)}$
The matrix that represents $B$ (resp. $A$)
with respect to an $(A,B)$-basis of $V$ is equal to
the matrix that represents
$\Psi A\Psi^{-1}$ 
 (resp. $B$) with
respect to a $(B,A)$-basis of $V$. 
\end{proof}

\begin{lemma}
For  $\sigma \in {\rm End}(V)$ the
following are equivalent:
\begin{enumerate}
\item[\rm (i)] $\sigma$ is an isomorphism of LR pairs
from $A,\Psi^{-1}B\Psi$ to
 $\Psi A \Psi^{-1},B$;
\item[\rm (ii)] there exists $0 \not=\zeta \in \mathbb F$
such that  $\sigma=\zeta \Psi$.
\end{enumerate}
\end{lemma}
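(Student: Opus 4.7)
The plan is to reduce the claim to the rigidity statement in Lemma \ref{lem:isoFix}, after observing that $\Psi$ itself provides an explicit isomorphism of LR pairs from $A,\Psi^{-1}B\Psi$ to $\Psi A \Psi^{-1},B$.

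The direction (ii) $\Rightarrow$ (i) I would dispatch by direct verification. Since $\Psi$ is invertible by Lemma \ref{lem:refInv}, the map $\sigma = \zeta \Psi$ is an $\mathbb F$-linear bijection whenever $\zeta \not= 0$. Clearing the inner $\Psi$'s gives $\sigma A = \zeta \Psi A = (\Psi A \Psi^{-1})(\zeta \Psi) = (\Psi A \Psi^{-1})\sigma$ and $\sigma(\Psi^{-1}B\Psi) = \zeta B \Psi = B \sigma$, so $\sigma$ satisfies Definition \ref{def:isoLRP}.

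The substance is in (i) $\Rightarrow$ (ii). The first step is the observation that $\Psi$ itself qualifies as an isomorphism from $A,\Psi^{-1}B\Psi$ to $\Psi A \Psi^{-1},B$: it is invertible by Lemma \ref{lem:refInv}, and the intertwining relations $\Psi A = (\Psi A \Psi^{-1})\Psi$ and $\Psi(\Psi^{-1}B\Psi) = B\Psi$ are tautological. Given an arbitrary isomorphism $\sigma$ as in (i), I set $\tau = \Psi^{-1}\sigma$, which is a nonzero element of ${\rm End}(V)$ since $\sigma$ is a bijection and $\Psi$ is invertible. A short manipulation of $\sigma A = (\Psi A \Psi^{-1})\sigma$ and $\sigma(\Psi^{-1}B\Psi) = B\sigma$, using only that $\Psi \Psi^{-1} = \Psi^{-1}\Psi = I$, yields $\tau A = A\tau$ and $\tau(\Psi^{-1}B\Psi) = (\Psi^{-1}B\Psi)\tau$.

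At this point I invoke Lemma \ref{lem:newLRP}(i), which asserts that $A,\Psi^{-1}B\Psi$ is an LR pair on $V$. Applying Lemma \ref{lem:isoFix} (parts (ii)$\Rightarrow$(iv)) to this LR pair with $\tau$ in the role of the commuting endomorphism, I conclude $\tau = \zeta I$ for some nonzero $\zeta \in \mathbb F$, and multiplying on the left by $\Psi$ gives $\sigma = \zeta \Psi$. I do not foresee any real obstacle: the entire argument hinges on spotting that $\Psi$ itself sits in the source-to-target isomorphism set, after which the desired uniqueness reduces cleanly to the already-established rigidity of LR pairs in Lemma \ref{lem:isoFix}.
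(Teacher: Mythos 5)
Your proof is correct and follows essentially the same route as the paper: for (i)$\Rightarrow$(ii) you form $\Psi^{-1}\sigma$, check it commutes with $A$ and $\Psi^{-1}B\Psi$, and invoke Lemma \ref{lem:isoFix} on the LR pair $A,\Psi^{-1}B\Psi$; for (ii)$\Rightarrow$(i) you verify directly that $\Psi$ intertwines the two pairs. No gaps.
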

\begin{proof}
${\rm (i)}\Rightarrow {\rm (ii)}$
Using 
Definition
\ref{def:isoLRP},
we find that 
$ \Psi^{-1}\sigma $ commutes with
each of $A,\Psi^{-1}B\Psi$.
Now by Lemma
\ref{lem:isoFix} (applied to the LR pair
$A,\Psi^{-1}B\Psi$)
there exists $0 \not=\zeta \in \mathbb F$
such that 
$\Psi^{-1}\sigma = \zeta I$.
Therefore $\sigma=\zeta \Psi$.
\\
${\rm (ii)}\Rightarrow {\rm (i)}$
It suffices to show that $\Psi$ is an isomorphism
 of LR pairs
from $A,\Psi^{-1}B\Psi$ to
 $\Psi A \Psi^{-1},B$.
Since $\Psi$ is invertible the
map $\Psi:V\to V$ is an $\mathbb F$-linear
bijection. Observe that
$\Psi A = (\Psi A \Psi^{-1})\Psi$
and 
$\Psi (\Psi^{-1} B \Psi) = B \Psi$.
Now by 
Definition
\ref{def:isoLRP}, $\Psi$ is an isomorphism
 of LR pairs
from $A,\Psi^{-1}B\Psi$ to
 $\Psi A \Psi^{-1},B$.
\end{proof}

\begin{lemma}
The LR pairs
{\rm (\ref{eq:threeLRP})}
all have the same inverter.
\end{lemma}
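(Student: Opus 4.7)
The plan is to identify the inverter of each of the three LR pairs in (\ref{eq:threeLRP}) separately, and observe that in every case the answer is $\Psi^{-1}$.

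First I would handle the pair $A,\Psi^{-1}B\Psi$: by Lemma \ref{lem:newLRP}(vii), its inverter is $\Psi^{-1}$. Similarly, for the pair $\Psi A\Psi^{-1},B$, Lemma \ref{lem:five2}(vii) tells us the inverter is $\Psi^{-1}$. So two of the three are already settled for free.

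The remaining case is the pair $B,A$. Here I would appeal to Lemma \ref{lem:ABBA}, which says that the inverter of $B,A$ is the inverse of the inverter of $A,B$. Since the inverter of $A,B$ is $\Psi$ by Definition \ref{def:REF}, the inverter of $B,A$ is $\Psi^{-1}$.

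No step here is genuinely hard; the whole statement is a direct bookkeeping consequence of results already proved. If anything, the only subtlety to keep straight is that passing from $A,B$ to $B,A$ \emph{inverts} the inverter (Lemma \ref{lem:ABBA}), whereas the twisted pairs in Lemmas \ref{lem:newLRP} and \ref{lem:five2} come out with inverter $\Psi^{-1}$ rather than $\Psi$; once one notices that these two effects agree, the three LR pairs share the common inverter $\Psi^{-1}$, and the lemma follows.
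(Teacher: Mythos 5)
Your proof is correct and matches the paper's own argument exactly: the paper likewise cites Lemma \ref{lem:ABBA} for the pair $B,A$ and Lemmas \ref{lem:newLRP}(vii) and \ref{lem:five2}(vii) for the other two pairs, concluding that the common inverter is $\Psi^{-1}$.
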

\begin{proof} By Lemmas
\ref{lem:ABBA},
\ref{lem:newLRP}(vii),
\ref{lem:five2}(vii).
\end{proof}

\section{The outer and inner part}

\noindent Throughout this section the following
assumptions are in effect. We assume
that $d=2m$ is even. Let $V$ denote
a vector space over $\mathbb F$ with dimension
$d+1$.  Let $A,B$ denote an
LR pair on $V$, 
 with parameter sequence $\lbrace \varphi_i\rbrace_{i=1}^d$,
idempotent sequence $\lbrace E_i \rbrace_{i=0}^d$, and
inverter $\Psi$.
Note that $\lbrace E_iV\rbrace_{i=0}^d$ is the
$(A,B)$-decomposition of $V$, which is lowered by $A$
and raised by $B$.

\begin{definition}
\label{def:VoutVin}
\rm Define
\begin{eqnarray*}
V_{\rm out} = \sum_{j=0}^{m} E_{2j}V,
\qquad \qquad 
V_{\rm in} = \sum_{j=0}^{m-1} E_{2j+1}V.
\end{eqnarray*}
\end{definition}

\begin{lemma}
\label{lem:LRPV0V1}
We have
\begin{eqnarray}
\label{eq:LRPV0V1}
 V= V_{\rm out}+V_{\rm in} \qquad \quad  {\mbox{\rm (direct sum).}} 
\end{eqnarray}
Moreover
\begin{eqnarray*}
 {\rm dim}(V_{\rm out}) = m+1,
\qquad \qquad 
 {\rm dim}(V_{\rm in}) = m.
\end{eqnarray*}
\end{lemma}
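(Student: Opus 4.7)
The plan is essentially to read off the conclusion from the fact that $\lbrace E_i V\rbrace_{i=0}^d$ is the $(A,B)$-decomposition of $V$, i.e.\ a decomposition in the sense of Section 2, so in particular $V = \sum_{i=0}^d E_i V$ is a direct sum of one-dimensional subspaces.

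First I would record the partition of the index set: since $d=2m$, the even indices $0,2,\ldots,2m$ give $m+1$ values and the odd indices $1,3,\ldots,2m-1$ give $m$ values, and together they exhaust $\lbrace 0,1,\ldots, d\rbrace$. Consequently, by Definition \ref{def:VoutVin},
\begin{eqnarray*}
V_{\rm out}+V_{\rm in} = \sum_{j=0}^{m} E_{2j}V + \sum_{j=0}^{m-1} E_{2j+1}V = \sum_{i=0}^d E_i V = V.
\end{eqnarray*}
Because the sum $\sum_{i=0}^d E_i V$ is direct, any regrouping of summands remains direct; this gives both that the sum $V_{\rm out}+V_{\rm in}$ is direct and that each of $V_{\rm out}$, $V_{\rm in}$ is itself a direct sum of the corresponding $E_i V$.

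Finally, since $\dim E_i V = 1$ for $0 \le i \le d$ (recall ${\rm rank}(E_i)=1$ from Section 2), the dimension count is immediate: $\dim V_{\rm out} = m+1$ and $\dim V_{\rm in} = m$, establishing (\ref{eq:LRPV0V1}) and the dimension formulas.

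There is no real obstacle here; the statement is essentially a bookkeeping consequence of the fact that a decomposition of $V$ into one-dimensional subspaces can be grouped according to any partition of its index set. I would expect the proof to occupy only a couple of lines in the paper.
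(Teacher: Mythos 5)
Your proof is correct and matches the paper's approach exactly: the paper's one-line proof ("Since $\lbrace E_iV\rbrace_{i=0}^d$ is a decomposition of $V$") is precisely the bookkeeping argument you spell out — regrouping the direct sum of one-dimensional subspaces $E_iV$ by parity of the index.
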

\begin{proof} 
Since $\lbrace E_iV\rbrace_{i=0}^d$ is a decomposition of $V$.
\end{proof}

\begin{lemma}
\label{lem:LRPtrivialT}
 We have $V_{\rm out}\not=0$ and
 $V_{\rm in}\not=V$.
Moreover the following are equivalent:
{\rm (i)} 
$A,B$ is trivial;
{\rm (ii)} 
 $V_{\rm out}=V$; {\rm (iii)}
 $V_{\rm in}=0$.
\end{lemma}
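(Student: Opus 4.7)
The plan is to read everything off from the dimension formulas
$\dim(V_{\rm out}) = m+1$ and $\dim(V_{\rm in})=m$ supplied by
Lemma~\ref{lem:LRPV0V1}, together with the characterization of the trivial
LR pair in Example~\ref{def:triv}.

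First, since $m\geq 0$ we have $\dim(V_{\rm out})=m+1\geq 1$, so
$V_{\rm out}\not=0$. Also $\dim(V)=d+1=2m+1>m=\dim(V_{\rm in})$, so
$V_{\rm in}\not=V$.

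For the equivalence, I will show that each of the conditions (i)--(iii) is
equivalent to $m=0$, and therefore the three conditions are mutually
equivalent. By Example~\ref{def:triv}, $A,B$ is trivial if and only if $d=0$,
that is, $m=0$; this handles (i). Next, (ii) asserts
$\dim(V_{\rm out})=\dim(V)$, i.e.\ $m+1=2m+1$, which holds if and only if
$m=0$. Finally, (iii) asserts $\dim(V_{\rm in})=0$, i.e.\ $m=0$. Thus
(i), (ii), (iii) are each equivalent to $m=0$, and hence mutually
equivalent.

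There is no real obstacle here; the statement is essentially a bookkeeping
consequence of the dimension count in Lemma~\ref{lem:LRPV0V1}. The only
point to note is that the direct sum decomposition
$V=V_{\rm out}+V_{\rm in}$ of Lemma~\ref{lem:LRPV0V1} means we do not need
to argue separately that $V_{\rm out}=V$ is equivalent to $V_{\rm in}=0$
beyond matching their dimensions against $\dim(V)=2m+1$.
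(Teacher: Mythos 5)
Your proof is correct and follows exactly the route the paper indicates (the paper's proof simply cites Example~\ref{def:triv} and Lemma~\ref{lem:LRPV0V1}); you have just written out the dimension bookkeeping explicitly. Nothing to add.
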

\begin{proof} 
Use Example
\ref{def:triv}
and
Lemma
\ref{lem:LRPV0V1}.
\end{proof}

\begin{lemma}
\label{lem:EoutEin}
The following {\rm (i)--(iii)} hold:
\begin{enumerate}
\item[\rm (i)] for even $i$ $(0 \leq i \leq d)$,
the map $E_i$ leaves
$V_{\rm out}$ invariant, 
and is zero on $V_{\rm in}$;
\item[\rm (ii)] for odd $i$ $(0 \leq i \leq d)$,
the map $E_i$ leaves
$V_{\rm in}$ invariant, 
and is zero on $V_{\rm out}$;
\item[\rm (iii)] each of
$V_{\rm out}$, 
$V_{\rm in}$ is invariant under $\Psi$.
\end{enumerate}
\end{lemma}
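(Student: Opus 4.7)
The plan is to exploit two elementary facts about the idempotent sequence: the orthogonality relations $E_iE_j=\delta_{i,j}E_i$ together with the one-dimensionality $E_iV=V_i$, and the fact that the inverter $\Psi$ is itself a linear combination of the $E_i$'s, as recorded in Definition \ref{def:REF}. Everything in the lemma reduces to bookkeeping with these relations, split according to the parity of indices as prescribed by Definition \ref{def:VoutVin}.

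For parts (i) and (ii), I would work directly from the definitions. Fix an even index $i=2k$ with $0\le k\le m$. A general vector in $V_{\rm out}$ has the form $v=\sum_{j=0}^m w_j$ with $w_j\in E_{2j}V$, and a general vector in $V_{\rm in}$ has the form $u=\sum_{j=0}^{m-1}u_j$ with $u_j\in E_{2j+1}V$. Since $E_{2k}$ acts as the identity on $E_{2k}V$ and as zero on $E_\ell V$ for $\ell\neq 2k$, the orthogonality relations yield $E_{2k}v=w_k\in E_{2k}V\subseteq V_{\rm out}$ and $E_{2k}u=0$. The parallel calculation for an odd index $i=2k+1$ gives $E_{2k+1}u\in E_{2k+1}V\subseteq V_{\rm in}$ and $E_{2k+1}v=0$. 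This proves (i) and (ii) simultaneously.

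For part (iii), I would combine (i), (ii) with the expansion
\begin{eqnarray*}
\Psi=\sum_{i=0}^d\frac{\varphi_1\varphi_2\cdots\varphi_i}{\varphi_d\varphi_{d-1}\cdots\varphi_{d-i+1}}\,E_i
\end{eqnarray*}
from Definition \ref{def:REF}. Restricted to $V_{\rm out}$, part (ii) kills every odd-indexed summand, so $\Psi|_{V_{\rm out}}$ agrees with a linear combination of the even-indexed $E_{2j}$'s; by part (i) each such $E_{2j}$ leaves $V_{\rm out}$ invariant, hence so does $\Psi$. A symmetric argument using the roles of (i) and (ii) interchanged shows that $\Psi$ leaves $V_{\rm in}$ invariant.

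No step looks like a genuine obstacle here; the proof is almost entirely mechanical once one recognizes that the only nontrivial input beyond the idempotent orthogonality is the spectral form of $\Psi$. The only minor care required is to notice that the sums defining $V_{\rm out}$ and $V_{\rm in}$ are in fact direct (as components of the $(A,B)$-decomposition), so that decomposing a vector into its idempotent pieces is unambiguous — but this is immediate from the fact that $\{E_iV\}_{i=0}^d$ is a decomposition of $V$.
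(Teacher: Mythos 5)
Your proof is correct and follows the same route as the paper: parts (i) and (ii) come straight from Definition \ref{def:VoutVin} together with the idempotent orthogonality, and part (iii) follows by expanding $\Psi$ via Definition \ref{def:REF} and applying (i), (ii) termwise. The paper's proof is just a terser version of exactly this argument.
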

\begin{proof} (i), (ii) Use Definition
\ref{def:VoutVin}.
\\
\noindent (iii) By (i), (ii) above and
Definition
\ref{def:REF}.
\end{proof}

\begin{lemma}
\label{lem:ABaction}
Referring to Definition
\ref{def:VoutVin},
\begin{eqnarray*}
AV_{\rm out} = V_{\rm in},
  \qquad 
 AV_{\rm in} \subseteq V_{\rm out},
\qquad  
BV_{\rm out} =V_{\rm in},
  \qquad 
 BV_{\rm in} \subseteq V_{\rm out}.
\end{eqnarray*}
Moreover
\begin{eqnarray*}
 A^2V_{\rm out} \subseteq V_{\rm out},
 \qquad 
 A^2V_{\rm in} \subseteq V_{\rm in},
\qquad 
 B^2V_{\rm out} \subseteq V_{\rm out},
 \qquad 
 B^2V_{\rm in} \subseteq V_{\rm in}.
\end{eqnarray*}
\end{lemma}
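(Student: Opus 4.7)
The plan is to deduce everything directly from the definition of $V_{\rm out}$ and $V_{\rm in}$ as sums of even-indexed resp.\ odd-indexed components of the $(A,B)$-decomposition $\{V_i = E_iV\}_{i=0}^d$, using that this decomposition is lowered by $A$ and raised by $B$, together with the boundary conditions $AV_0 = 0$ and $BV_d = 0$ (where $d = 2m$).

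First I would verify $AV_{\rm out} = V_{\rm in}$. Since $A$ lowers the $(A,B)$-decomposition, we have $AV_{2j} = V_{2j-1}$ for $1 \leq j \leq m$ and $AV_0 = 0$. Therefore
\[
AV_{\rm out} = A\Bigl(\sum_{j=0}^m V_{2j}\Bigr) = \sum_{j=1}^m V_{2j-1} = \sum_{j=0}^{m-1} V_{2j+1} = V_{\rm in}.
\]
Similarly $AV_{2j+1} = V_{2j}$ for $0 \leq j \leq m-1$, so $AV_{\rm in} = \sum_{j=0}^{m-1} V_{2j} \subseteq V_{\rm out}$. For $B$, since $B$ raises the decomposition we have $BV_{2j} = V_{2j+1}$ for $0 \leq j \leq m-1$ and $BV_{2m} = BV_d = 0$, giving $BV_{\rm out} = \sum_{j=0}^{m-1} V_{2j+1} = V_{\rm in}$; likewise $BV_{2j+1} = V_{2j+2}$ for $0 \leq j \leq m-1$ yields $BV_{\rm in} = \sum_{j=1}^{m} V_{2j} \subseteq V_{\rm out}$.

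The four squared inclusions then follow by composing the four inclusions just established. For example,
\[
A^2 V_{\rm out} = A(AV_{\rm out}) = AV_{\rm in} \subseteq V_{\rm out}, \qquad A^2 V_{\rm in} = A(AV_{\rm in}) \subseteq AV_{\rm out} = V_{\rm in},
\]
and the two statements for $B^2$ are verified in exactly the same way.

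There is no real obstacle; the entire argument is a direct bookkeeping check on the parities of the indices occurring in Definition~\ref{def:VoutVin}, using only the defining property of the $(A,B)$-decomposition. The only point worth noting is that the assumption $d = 2m$ even is used implicitly to ensure that $V_{2m} \subseteq V_{\rm out}$ (so that the boundary case $BV_d = 0$ contributes to $BV_{\rm out}$, not $BV_{\rm in}$), and similarly that $V_0 \subseteq V_{\rm out}$ for the boundary case $AV_0 = 0$.
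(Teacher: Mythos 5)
Your proof is correct and follows exactly the argument the paper intends; the paper's own proof is just the one-line remark ``By Definition \ref{def:VoutVin} and the construction,'' and your parity bookkeeping with the lowering/raising action of $A$ and $B$ on the $(A,B)$-decomposition, together with the boundary cases $AV_0=0$ and $BV_d=0$, is precisely the omitted detail.
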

\begin{proof} 
By Definition
\ref{def:VoutVin} and the construction.
\end{proof}

\begin{definition}
\label{def:OUTERINNER}
\rm
Referring to Definition
\ref{def:VoutVin},
the subspace
$V_{\rm out}$
(resp. $V_{\rm in}$) will be called  the {\it outer part}
(resp. {\it inner part}) of $V$ with
respect to $A,B$.
\end{definition}

\begin{lemma}
\label{lem:InOutswap}
The outer part of $V$ with respect to $A,B$
coincides with the
outer part of $V$ with respect to $B,A$.
Moreover, the inner part of $V$ with respect to $A,B$
coincides with the
inner part of $V$ with respect to $B,A$.
\end{lemma}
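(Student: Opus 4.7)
The plan is to reduce this to a straightforward index-chase, using the fact that the idempotent sequence for $B,A$ is the inversion of the idempotent sequence for $A,B$, together with the fact that $d=2m$ is even.

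First I would invoke Lemma \ref{lem:Ebackward}, which says that if $\lbrace E_i \rbrace_{i=0}^d$ is the idempotent sequence for $A,B$, then $\lbrace E_{d-i} \rbrace_{i=0}^d$ is the idempotent sequence for $B,A$. Write $V'_{\rm out}$ and $V'_{\rm in}$ for the outer and inner parts of $V$ with respect to $B,A$. Applying Definition \ref{def:VoutVin} to the LR pair $B,A$ then gives
\begin{eqnarray*}
V'_{\rm out} = \sum_{j=0}^{m} E_{d-2j}V,
\qquad \qquad
V'_{\rm in} = \sum_{j=0}^{m-1} E_{d-2j-1}V.
\end{eqnarray*}

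Next I would observe that since $d=2m$, the map $j \mapsto m-j$ is a bijection from $\lbrace 0,1,\ldots,m\rbrace$ to itself, under which $d-2j = 2(m-j)$. Hence $\lbrace d-2j : 0\leq j \leq m\rbrace = \lbrace 2j : 0\leq j \leq m\rbrace$, and similarly $\lbrace d-2j-1 : 0 \leq j \leq m-1\rbrace = \lbrace 2j+1 : 0 \leq j \leq m-1\rbrace$ via the bijection $j \mapsto m-1-j$. Substituting these reindexings into the displays above yields $V'_{\rm out} = \sum_{j=0}^{m} E_{2j}V = V_{\rm out}$ and $V'_{\rm in} = \sum_{j=0}^{m-1} E_{2j+1}V = V_{\rm in}$, as desired.

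There is no real obstacle here; the content is just that ``even index parity'' is preserved under the involution $i \mapsto d-i$ precisely because $d$ is even, which is guaranteed by the standing assumption of this section.
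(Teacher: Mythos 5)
Your proof is correct and follows exactly the paper's own route: invoke Lemma \ref{lem:Ebackward} to get the idempotent sequence for $B,A$, apply Definition \ref{def:VoutVin} to that pair, and use the evenness of $d$ to see that the involution $i\mapsto d-i$ preserves parity. The paper's proof is a one-line citation of the same ingredients; yours just makes the reindexing explicit.
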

\begin{proof}
By 
Lemma \ref{lem:Ebackward}
and
Definition
\ref{def:VoutVin}, along with the assumption that $d$ is even.
\end{proof}

\begin{lemma}
\label{lem:ANZ}
Assume that $A,B$ is nontrivial. Then $A$ and $B$ are  nonzero on
both
 $V_{\rm out}$ and 
 $V_{\rm in}$. 
\end{lemma}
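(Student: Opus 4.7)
The plan is to exploit how $A$ and $B$ move mass between the components of the $(A,B)$-decomposition, combined with the fact that nontriviality forces the diameter to be positive. Since $d$ is assumed even and the trivial case corresponds to $d=0$ by Example~\ref{def:triv}, nontriviality here gives $d\geq 2$, equivalently $m\geq 1$. In particular, $V_{\rm in}\not=0$ by Lemma~\ref{lem:LRPtrivialT}, and each of the components $E_0V,E_1V,\ldots,E_dV$ is one-dimensional and nonzero.

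For the outer part, I would simply cite Lemma~\ref{lem:ABaction}, which gives $AV_{\rm out}=V_{\rm in}$ and $BV_{\rm out}=V_{\rm in}$; since $V_{\rm in}\not=0$, both $A$ and $B$ are nonzero on $V_{\rm out}$. For the inner part the argument is almost as short, but one needs to pick a witness subspace inside $V_{\rm in}$. Since $m\geq 1$, the subspace $E_1V$ lies in $V_{\rm in}$; because $A$ lowers the $(A,B)$-decomposition we have $AE_1V=E_0V\not=0$, so $A$ is nonzero on $V_{\rm in}$. Likewise $E_{d-1}V\subseteq V_{\rm in}$ (as $d-1=2m-1$ is odd), and because $B$ raises the decomposition we have $BE_{d-1}V=E_dV\not=0$, so $B$ is nonzero on $V_{\rm in}$.

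There is essentially no obstacle here; the only thing to be careful about is confirming that nontrivial together with $d$ even really forces $m\geq 1$, so that the witnesses $E_1V$ and $E_{d-1}V$ actually sit inside $V_{\rm in}$. Once that is observed, everything reduces to the defining property that $A$ lowers and $B$ raises the $(A,B)$-decomposition, together with Lemma~\ref{lem:ABaction} for the outer-part half of the statement.
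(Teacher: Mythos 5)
Your proposal is correct and is essentially the paper's argument spelled out in full: the paper's proof reads simply ``By Definition \ref{def:VoutVin} and the construction,'' i.e.\ it relies on exactly the facts you use, namely that $A$ lowers and $B$ raises the decomposition $\lbrace E_iV\rbrace_{i=0}^d$, that nontriviality forces $m\geq 1$ so $V_{\rm in}\not=0$, and that odd-indexed components such as $E_1V$ and $E_{d-1}V$ lie in $V_{\rm in}$ and are mapped onto nonzero even-indexed components. No gaps; your witnesses for the inner part are the right way to upgrade the inclusion $AV_{\rm in}\subseteq V_{\rm out}$ to a nonvanishing statement.
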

\begin{proof} By Definition
\ref{def:VoutVin}
and the construction.
\end{proof}

\begin{definition}
\label{def:Ainout}
\rm
Using the LR pair $A,B$ we define
\begin{eqnarray}
A_{\rm out}, 
\qquad 
A_{\rm in},
\qquad
B_{\rm out}, 
\qquad 
B_{\rm in}
\label{eq:LRP6list}
\end{eqnarray}
 in ${\rm End}(V)$ as follows.
The map $A_{\rm out}$ (resp. $B_{\rm out}$) acts on
 $V_{\rm out}$ as $A$ (resp. $B$), and on
 $V_{\rm in}$ as zero.
The map $A_{\rm in}$ (resp. $B_{\rm in}$) acts on
 $V_{\rm in}$ as $A$ (resp. $B$), and on
 $V_{\rm out}$ as zero.
By construction
\begin{equation*}
A = A_{\rm out}+ 
 A_{\rm in}, 
 \qquad \qquad
B = B_{\rm out}+ 
 B_{\rm in}.
\end{equation*}
\end{definition}


\begin{lemma}
\label{lem:AiAoLI}
Assume that $A,B$ is nontrivial.
  Then
\begin{enumerate}
\item[\rm (i)] the maps
 $A_{\rm out}, A_{\rm in}$
are linearly independent over $\mathbb F$;
\item[\rm (ii)] the maps
 $B_{\rm out}, B_{\rm in}$
are linearly independent over $\mathbb F$.
\end{enumerate}
\end{lemma}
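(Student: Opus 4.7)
The plan is to prove both parts by a direct linear-independence argument using Definition \ref{def:Ainout} together with Lemma \ref{lem:ANZ}. Since the two parts are symmetric in $A$ and $B$, I will focus on part (i) and note that part (ii) follows by the same reasoning.

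Suppose that $\alpha, \beta \in \mathbb F$ satisfy $\alpha A_{\rm out} + \beta A_{\rm in} = 0$. The goal is to show $\alpha = \beta = 0$. By Definition \ref{def:Ainout}, for every vector $v \in V_{\rm out}$ we have $A_{\rm out} v = A v$ and $A_{\rm in} v = 0$; applying the relation to such $v$ yields $\alpha A v = 0$. Since $A, B$ is nontrivial, Lemma \ref{lem:ANZ} supplies some $v \in V_{\rm out}$ with $A v \neq 0$, forcing $\alpha = 0$. Symmetrically, testing against vectors in $V_{\rm in}$ (using Definition \ref{def:Ainout} again to get $A_{\rm out} v = 0$ and $A_{\rm in} v = A v$) and invoking Lemma \ref{lem:ANZ} to find $v \in V_{\rm in}$ with $A v \neq 0$ yields $\beta = 0$.

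Part (ii) is handled in exactly the same way, swapping $A$ for $B$ throughout; the second assertion of Lemma \ref{lem:ANZ} guarantees that $B$ is nonzero on each of $V_{\rm out}$ and $V_{\rm in}$. I anticipate no obstacle — the work was done in establishing that $A$ (resp.\ $B$) fails to vanish on either the outer or inner part, and the present lemma is simply the linear-algebraic shadow of that fact under the decomposition $V = V_{\rm out} + V_{\rm in}$ of Lemma \ref{lem:LRPV0V1}.
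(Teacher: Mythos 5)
Your proof is correct and follows essentially the same route as the paper: test the relation $\alpha A_{\rm out}+\beta A_{\rm in}=0$ against $V_{\rm out}$ and $V_{\rm in}$ separately, then invoke Lemma \ref{lem:ANZ} to conclude $\alpha=\beta=0$, with part (ii) handled symmetrically. No issues.
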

\begin{proof} (i)
Suppose we are given $r,s \in
\mathbb F$
such that
 $rA_{\rm out}+ sA_{\rm in}=0$. In this equation
 apply each side to
 $V_{\rm out}$,
to find $rA=0$ on $V_{\rm out}$. 
By Lemma
\ref{lem:ANZ} 
$A\not=0$ on $V_{\rm out}$. Therefore $r=0$.
One similarly shows that $s=0$.
\\
\noindent (ii) Similar to the proof of (i) above.
\end{proof}

\begin{definition}
\label{def:PoutPin}
 Define
\begin{eqnarray*}
\Psi_{\rm out} = \sum_{j=0}^{m}
\frac{\varphi_1 \varphi_2 \cdots \varphi_{2j}}
{\varphi_d\varphi_{d-1}\cdots \varphi_{d-2j+1}} E_{2j},
\qquad \qquad 
\Psi_{\rm in} = \sum_{j=0}^{m-1}
\frac{\varphi_2 \varphi_3 \cdots \varphi_{2j+1}}
{\varphi_{d-1}\varphi_{d-2}\cdots \varphi_{d-2j}} E_{2j+1}.
\end{eqnarray*}
\end{definition}

\begin{lemma}
The following {\rm (i)--(iv)} hold:
\begin{enumerate}
\item[\rm (i)]
the subspace $V_{\rm out}$ is invariant
under
$\Psi_{\rm out}$;
\item[\rm (ii)]
$\Psi_{\rm out}$ is zero on
$V_{\rm in}$;
\item[\rm (iii)]
the subspace $V_{\rm in}$ is invariant
under
$\Psi_{\rm in}$;
\item[\rm (iv)]
$\Psi_{\rm in}$ is zero on
$V_{\rm out}$.
\end{enumerate}
\end{lemma}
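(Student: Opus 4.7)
The plan is that the entire statement is an immediate consequence of Lemma \ref{lem:EoutEin} together with the definitions. Observe that $\Psi_{\rm out}$ (respectively $\Psi_{\rm in}$) is an $\mathbb F$-linear combination of the primitive idempotents $E_{2j}$ with even index (respectively $E_{2j+1}$ with odd index), as seen directly from Definition \ref{def:PoutPin}. Since each of the four claims is about linear behavior on $V_{\rm out}$ or $V_{\rm in}$, and since the operators in question are built from these idempotents by $\mathbb F$-linear combination, it suffices to transfer the stated properties of each $E_i$ term by term.

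More precisely, for (i) and (ii) I would write
\begin{eqnarray*}
\Psi_{\rm out} = \sum_{j=0}^{m}
\frac{\varphi_1 \varphi_2 \cdots \varphi_{2j}}
{\varphi_d\varphi_{d-1}\cdots \varphi_{d-2j+1}}\, E_{2j}
\end{eqnarray*}
and apply Lemma \ref{lem:EoutEin}(i) to each summand: every $E_{2j}$ leaves $V_{\rm out}$ invariant, so the sum does as well, giving (i); and every $E_{2j}$ is zero on $V_{\rm in}$, so the sum is zero on $V_{\rm in}$, giving (ii). For (iii) and (iv) the argument is parallel, using
\begin{eqnarray*}
\Psi_{\rm in} = \sum_{j=0}^{m-1}
\frac{\varphi_2 \varphi_3 \cdots \varphi_{2j+1}}
{\varphi_{d-1}\varphi_{d-2}\cdots \varphi_{d-2j}}\, E_{2j+1}
\end{eqnarray*}
and applying Lemma \ref{lem:EoutEin}(ii) summand by summand: each $E_{2j+1}$ leaves $V_{\rm in}$ invariant and vanishes on $V_{\rm out}$, so the same is true of $\Psi_{\rm in}$.

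There is no real obstacle here; the whole point is that the decomposition $V = V_{\rm out} + V_{\rm in}$ of Lemma \ref{lem:LRPV0V1} is aligned with the idempotent decomposition according to parity, and both $\Psi_{\rm out}$ and $\Psi_{\rm in}$ were engineered in Definition \ref{def:PoutPin} to respect this parity splitting. The proof thus reduces to a one-line citation of Lemma \ref{lem:EoutEin} in each of the four cases.
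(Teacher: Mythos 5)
Your proof is correct and matches the paper's argument exactly: the paper also cites Lemma \ref{lem:EoutEin}(i),(ii) together with Definition \ref{def:PoutPin}, and your term-by-term transfer of the invariance and vanishing properties from each $E_{2j}$ or $E_{2j+1}$ to the linear combination is precisely the intended (and only) content of that citation.
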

\begin{proof} By Lemma
\ref{lem:EoutEin}(i),(ii) and
Definition
\ref{def:PoutPin}.
\end{proof}

\noindent The following two propositions are obtained by routine computation.

\begin{proposition}
\label{lem:LRPA2B2C2Out}
 The elements $A^2, B^2$ act on
$V_{\rm out}$ as an LR pair.
For this LR pair,
\begin{enumerate}
\item[\rm (i)]
the diameter is $m$;
\item[\rm (ii)]
the parameter sequence  is
$\lbrace \varphi_{2j-1}\varphi_{2j}\rbrace_{j=1}^m$;
\item[\rm (iii)] the idempotent sequence is
given by the 
 actions of
$
\lbrace E_{2j}\rbrace_{j=0}^m$ on
$V_{\rm out}$;
\item[\rm (iv)] the inverter is
equal to the action of
$\Psi_{\rm out}$
 on $V_{\rm out}$.
\end{enumerate}
\end{proposition}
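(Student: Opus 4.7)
My plan is to verify each of the four claims by restricting the $(A,B)$-decomposition to $V_{\rm out}$ and exploiting the fact that $A$ and $B$ shift indices by one. The natural candidate for the $(A^2|_{V_{\rm out}},B^2|_{V_{\rm out}})$-decomposition of $V_{\rm out}$ is the sequence $\{E_{2j}V\}_{j=0}^{m}$, and the whole proof amounts to checking that this sequence behaves as required.

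First I would establish (i). By Definition \ref{def:VoutVin} the sum $V_{\rm out}=\sum_{j=0}^m E_{2j}V$ is direct, and each summand is one-dimensional, so $\{E_{2j}V\}_{j=0}^m$ is a decomposition of $V_{\rm out}$. Since $A$ lowers $\{E_iV\}_{i=0}^d$ and $B$ raises it, iterating once gives $A^2 E_{2j}V=E_{2j-2}V$ for $1\le j\le m$ and $A^2 E_0V=0$, and similarly $B^2 E_{2j}V=E_{2j+2}V$ for $0\le j\le m-1$ and $B^2 E_{2m}V=0$. By Lemma \ref{lem:ABaction} the subspace $V_{\rm out}$ is invariant under $A^2$ and $B^2$, so the restrictions lie in ${\rm End}(V_{\rm out})$, and by Definition \ref{def:lr} they form an LR pair on $V_{\rm out}$ with diameter $m$. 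The $(A^2|_{V_{\rm out}},B^2|_{V_{\rm out}})$-decomposition of $V_{\rm out}$ is precisely $\{E_{2j}V\}_{j=0}^m$.

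For (ii), I would compute the eigenvalue of $B^2A^2$ on $E_{2j}V$ for $1\le j\le m$. The cleanest route is to invoke the identity
\[
B^2A^2=\sum_{i=2}^{d}E_i\,\varphi_i\varphi_{i-1}
\]
from Lemma \ref{lem:AiBi}; evaluating at $E_{2j}V$ yields the scalar $\varphi_{2j}\varphi_{2j-1}$, which by Definition \ref{def:pa} is the $j$th entry of the parameter sequence of $A^2|_{V_{\rm out}},B^2|_{V_{\rm out}}$. Statement (iii) is then immediate: by Definition \ref{def:ABE} the idempotent sequence is the idempotent sequence of the $(A^2|_{V_{\rm out}},B^2|_{V_{\rm out}})$-decomposition, and since $\{E_{2j}V\}_{j=0}^m$ is the decomposition just identified, its primitive idempotents are exactly the actions of $E_{2j}$ on $V_{\rm out}$.

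For (iv), I would apply Definition \ref{def:REF} to the LR pair $A^2|_{V_{\rm out}},B^2|_{V_{\rm out}}$, using the diameter $m$ and parameter sequence $\{\varphi_{2j-1}\varphi_{2j}\}_{j=1}^m$ from (i), (ii). The coefficient of the $j$th idempotent is
\[
\frac{(\varphi_1\varphi_2)(\varphi_3\varphi_4)\cdots(\varphi_{2j-1}\varphi_{2j})}{(\varphi_{2m-1}\varphi_{2m})(\varphi_{2m-3}\varphi_{2m-2})\cdots(\varphi_{2(m-j)+1}\varphi_{2(m-j)+2})}
=\frac{\varphi_1\varphi_2\cdots\varphi_{2j}}{\varphi_d\varphi_{d-1}\cdots\varphi_{d-2j+1}},
\]
since $d=2m$. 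Comparing with Definition \ref{def:PoutPin} shows that this inverter agrees with the action of $\Psi_{\rm out}$ on $V_{\rm out}$. There is no serious obstacle; the only subtle point is the bookkeeping of indices in (iv) to match $\Psi_{\rm out}$ on the nose, which the identity $d=2m$ handles cleanly.
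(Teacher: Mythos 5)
Your proof is correct; the paper gives no argument here beyond the remark that the proposition is ``obtained by routine computation,'' and your verification is exactly that routine computation carried out in the natural way (identifying $\lbrace E_{2j}V\rbrace_{j=0}^m$ as the relevant decomposition, reading off the parameters from Lemma \ref{lem:AiBi}, and matching the inverter coefficients with Definition \ref{def:PoutPin}). The index bookkeeping in (iv) checks out.
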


\begin{proposition}
\label{lem:LRPA2B2C2In}
Assume that $A,B$ is
 nontrivial.
Then 
 $A^2, B^2$ act on
$V_{\rm in}$ as an LR pair. For this LR pair,
\begin{enumerate}
\item[\rm (i)]
the diameter is $m-1$;
\item[\rm (ii)]
the
 parameter sequence is
$\lbrace \varphi_{2j}\varphi_{2j+1}\rbrace_{j=1}^{m-1}$;
\item[\rm (iii)]
 the idempotent sequence is given by the
actions of 
$\lbrace E_{2j+1}\rbrace_{j=0}^{m-1}$ on $V_{\rm in}$;
\item[\rm (iv)]
the inverter is equal to the action of
$\Psi_{\rm in}$
on $V_{\rm in}$.
\end{enumerate}
\end{proposition}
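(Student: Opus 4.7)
The plan is to mirror the proof of Proposition \ref{lem:LRPA2B2C2Out} but with the odd-indexed components of the $(A,B)$-decomposition playing the starring role. Nontriviality plus $d = 2m$ forces $d \geq 2$, so $m \geq 1$ and $V_{\rm in}$ has positive dimension $m$; moreover Lemma \ref{lem:ABaction} guarantees that $A^2$ and $B^2$ both leave $V_{\rm in}$ invariant, so each restricts to an element of ${\rm End}(V_{\rm in})$.

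First I would exhibit the candidate decomposition $W_j := E_{2j+1}V$ for $0 \leq j \leq m-1$ of $V_{\rm in}$ into one-dimensional subspaces. Using $A E_i V = E_{i-1} V$ and $B E_i V = E_{i+1}V$ (with $E_{-1}V = E_{d+1}V = 0$), an immediate calculation gives $A^2 W_j = W_{j-1}$ for $1 \leq j \leq m-1$, $A^2 W_0 = A E_0 V = 0$, $B^2 W_j = W_{j+1}$ for $0 \leq j \leq m-2$, and $B^2 W_{m-1} = B E_d V = 0$. Thus $A^2|_{V_{\rm in}}$ lowers $\{W_j\}_{j=0}^{m-1}$ and $B^2|_{V_{\rm in}}$ raises it, establishing (i) with diameter $m-1$. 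Part (iii) then follows immediately: by Lemma \ref{lem:EoutEin}(ii), each $E_{2j+1}$ vanishes on $V_{\rm out}$, and on $V_{\rm in}$ it acts as the identity on $W_j$ and as zero on $W_k$ for $k \neq j$, so its restriction to $V_{\rm in}$ is the primitive idempotent associated with $W_j$.

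For (ii) I would compute the eigenvalue of $B^2 A^2$ on each $W_j$ with $1 \leq j \leq m-1$. By Lemma \ref{lem:AiBi} with $r=2$, $B^2 A^2 = \sum_{i=2}^{d} \varphi_i \varphi_{i-1} E_i$, so on $W_j = E_{2j+1} V$ the scalar is $\varphi_{2j+1}\varphi_{2j} = \varphi_{2j}\varphi_{2j+1}$, which is the claimed parameter sequence. For (iv) I would apply Definition \ref{def:REF} to the LR pair on $V_{\rm in}$ constructed in (i), feeding in the parameters from (ii) and the idempotents from (iii). The coefficient of $E_{2j+1}$ (restricted to $V_{\rm in}$) in the resulting inverter telescopes to
\begin{eqnarray*}
\prod_{k=1}^{j}\frac{\varphi_{2k}\varphi_{2k+1}}{\varphi_{d-2k}\varphi_{d-2k+1}}
=
\frac{\varphi_2\varphi_3\cdots\varphi_{2j+1}}{\varphi_{d-1}\varphi_{d-2}\cdots\varphi_{d-2j}},
\end{eqnarray*}
matching $\Psi_{\rm in}$ as in Definition \ref{def:PoutPin}.

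The whole argument is essentially bookkeeping; the one spot that demands care is (iv), where one must track how the index shift by $1$ (odd components rather than even) converts the symmetric-looking outer denominator $\varphi_d\varphi_{d-1}\cdots \varphi_{d-2j+1}$ into the shifted range $\varphi_{d-1}\varphi_{d-2}\cdots \varphi_{d-2j}$. Everything else reduces to the explicit action of $A$ and $B$ on the $(A,B)$-decomposition and to Lemma \ref{lem:AiBi}.
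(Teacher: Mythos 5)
Your proof is correct, and it is exactly the "routine computation" the paper has in mind (the paper omits the proof of this proposition, declaring it routine): exhibit $\{E_{2j+1}V\}_{j=0}^{m-1}$ as a decomposition of $V_{\rm in}$ lowered by $A^2$ and raised by $B^2$, then read off the parameters from Lemma \ref{lem:AiBi} and the inverter from Definition \ref{def:REF}. The index bookkeeping in (iv) checks out: $\prod_{k=1}^{j}\varphi_{d-2k}\varphi_{d-2k+1}=\varphi_{d-1}\varphi_{d-2}\cdots\varphi_{d-2j}$ as claimed.
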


\begin{lemma}
\label{lem:psiFixOutIn}
The maps $\Psi_{\rm out}$, $\Psi_{\rm in}$ are fixed by the $(A,B)$-reflector
$\dagger$ from Proposition
\ref{prop:antiaut}
and Definition
\ref{def:REFdag}.
\end{lemma}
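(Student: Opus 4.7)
The plan is to imitate the proof of Lemma \ref{lem:psiFix} essentially verbatim. The key observation is that by Definition \ref{def:PoutPin}, both $\Psi_{\rm out}$ and $\Psi_{\rm in}$ are $\mathbb{F}$-linear combinations of the primitive idempotents $E_0, E_1, \ldots, E_d$ in the idempotent sequence for $A,B$. Specifically, $\Psi_{\rm out}$ is a combination of the even-indexed $E_i$ and $\Psi_{\rm in}$ of the odd-indexed ones, with coefficients in $\mathbb{F}$ built from the parameter sequence.

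First I would invoke Lemma \ref{lem:daggerE}, which tells us that the $(A,B)$-reflector $\dagger$ fixes $E_i$ for $0 \le i \le d$. Next I would recall from Proposition \ref{prop:antiaut} that $\dagger$ is an antiautomorphism of the $\mathbb{F}$-algebra ${\rm End}(V)$, and in particular is $\mathbb{F}$-linear. Applying $\dagger$ term-by-term to the defining sums
\begin{equation*}
\Psi_{\rm out} = \sum_{j=0}^{m} \frac{\varphi_1 \varphi_2 \cdots \varphi_{2j}}{\varphi_d\varphi_{d-1}\cdots \varphi_{d-2j+1}} E_{2j}, \qquad \Psi_{\rm in} = \sum_{j=0}^{m-1} \frac{\varphi_2 \varphi_3 \cdots \varphi_{2j+1}}{\varphi_{d-1}\varphi_{d-2}\cdots \varphi_{d-2j}} E_{2j+1}
\end{equation*}
then yields $\Psi_{\rm out}^\dagger = \Psi_{\rm out}$ and $\Psi_{\rm in}^\dagger = \Psi_{\rm in}$ at once.

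There is no real obstacle here; the result is an immediate corollary of Lemma \ref{lem:daggerE} together with the $\mathbb{F}$-linearity of $\dagger$, exactly as in the earlier proof that $\Psi$ itself is $\dagger$-invariant. The only thing worth noting is that the scalar coefficients are genuine elements of $\mathbb{F}$ (all $\varphi_i$ are nonzero for $1 \le i \le d$ by Definition \ref{def:pa}), so the sums make sense and the linearity argument applies cleanly.
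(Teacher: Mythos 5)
Your proof is correct and is exactly the paper's argument: the paper's proof of this lemma is simply ``By Lemma \ref{lem:daggerE} and Definition \ref{def:PoutPin},'' which is precisely your combination of the $\dagger$-invariance of each $E_i$ with the $\mathbb{F}$-linearity of $\dagger$ applied to the defining sums.
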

\begin{proof}
By Lemma
\ref{lem:daggerE}
and 
Definition
\ref{def:PoutPin}.
\end{proof}

\begin{lemma} Assume  that $A,B$ is nontrivial. Then
\begin{eqnarray*}
\Psi = \Psi_{\rm out} + \frac{\varphi_1}{\varphi_{d}} \Psi_{\rm in}.
\end{eqnarray*}
\end{lemma}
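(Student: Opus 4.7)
The plan is to prove the identity by a direct calculation: split the defining sum for $\Psi$ into its even-indexed and odd-indexed parts, recognize the even part as $\Psi_{\rm out}$ verbatim, and manipulate the odd part to recognize it as $\frac{\varphi_1}{\varphi_d}\Psi_{\rm in}$.

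First, starting from Definition~\ref{def:REF} and using the hypothesis $d = 2m$, I would write
\begin{equation*}
\Psi \;=\; \sum_{j=0}^{m} \frac{\varphi_1 \varphi_2 \cdots \varphi_{2j}}{\varphi_d \varphi_{d-1}\cdots \varphi_{d-2j+1}} E_{2j} \;+\; \sum_{j=0}^{m-1} \frac{\varphi_1 \varphi_2 \cdots \varphi_{2j+1}}{\varphi_d \varphi_{d-1}\cdots \varphi_{d-2j}} E_{2j+1}.
\end{equation*}
By Definition~\ref{def:PoutPin}, the first sum on the right is exactly $\Psi_{\rm out}$. It remains to identify the second sum with $\frac{\varphi_1}{\varphi_d}\Psi_{\rm in}$.

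Next, in the second sum I would pull out the factor $\varphi_1$ from the numerator and the factor $\varphi_d$ from the denominator. This gives
\begin{equation*}
\sum_{j=0}^{m-1} \frac{\varphi_1 \varphi_2 \cdots \varphi_{2j+1}}{\varphi_d \varphi_{d-1}\cdots \varphi_{d-2j}} E_{2j+1} \;=\; \frac{\varphi_1}{\varphi_d}\sum_{j=0}^{m-1} \frac{\varphi_2\varphi_3 \cdots \varphi_{2j+1}}{\varphi_{d-1}\varphi_{d-2}\cdots \varphi_{d-2j}} E_{2j+1},
\end{equation*}
and by Definition~\ref{def:PoutPin} the sum on the right is precisely $\Psi_{\rm in}$. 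Adding the two pieces yields $\Psi = \Psi_{\rm out} + \frac{\varphi_1}{\varphi_d}\Psi_{\rm in}$, as required.

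Since $A,B$ is nontrivial, we have $d \geq 2$ (because $d$ is even and positive), so $\varphi_d$ is well-defined and nonzero by Definition~\ref{def:pa}, and the factor $\varphi_1/\varphi_d$ is meaningful; this is the only role played by the nontriviality hypothesis. There is no real obstacle here — the statement is a bookkeeping identity among coefficients of the primitive idempotents $E_i$, and the computation reduces to matching the factor $\varphi_1/\varphi_d$ that arises from extending the product ranges in the odd case.
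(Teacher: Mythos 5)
Your proof is correct and follows the same route as the paper, which simply compares Definitions \ref{def:REF} and \ref{def:PoutPin}; splitting the sum by parity of the index and extracting the factor $\varphi_1/\varphi_d$ from the odd terms is exactly the intended computation. Your remark on the role of the nontriviality hypothesis is also accurate.
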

\begin{proof} Compare Definitions
\ref{def:REF},
\ref{def:PoutPin}.
\end{proof}

\begin{definition}\rm
We call $\Psi_{\rm out}$ (resp.
$\Psi_{\rm in}$)
the {\it outer inverter} (resp. {\it inner inverter})
for the LR pair $A,B$.
\end{definition}

\section{The projector $J$}

\noindent Throughout this section the following
assumptions are in effect. We assume
that $d=2m$ is even. Let $V$ denote
a vector space over $\mathbb F$ with dimension
$d+1$.  Let $A,B$ denote an
LR pair on $V$, 
 with parameter sequence $\lbrace \varphi_i\rbrace_{i=1}^d$,
idempotent sequence $\lbrace E_i \rbrace_{i=0}^d$, and
inverter $\Psi$. Recall the subspaces 
$V_{\rm out}$ and $V_{\rm in}$ from
Definition
\ref{def:VoutVin}.

\begin{definition}
\label{def:J}
\rm
Define $J \in {\rm End}(V)$ such that
$(J-I)V_{\rm out} = 0$ and $J V_{\rm in} = 0$.
Referring to
(\ref{eq:LRPV0V1}), the map $J$ (resp. $I-J$) acts as the projection
from $V$ onto $V_{\rm out}$ (resp. $V_{\rm in}$).
We call $J$ (resp. $I-J$) the {\it outer projector}
(resp. {\it inner projector}) for the LR pair $A,B$.
By the {\it projector} for $A,B$ we mean the outer projector.
\end{definition}

\begin{lemma}
\label{lem:Jtriv}
The map $J\not=0$.
If $A,B$ is trivial then
$J=I$. If $A,B$ is nontrivial
then $J, I$ are linearly independent over $\mathbb F$.
\end{lemma}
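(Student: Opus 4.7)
The plan is to unwind the three claims directly from Definition~\ref{def:J} (which presents $J$ as the projection onto $V_{\rm out}$ along $V_{\rm in}$) together with the dichotomy in Lemma~\ref{lem:LRPtrivialT}. No subtle argument is needed; the content is a routine case analysis.

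First, for the assertion $J \neq 0$, I would invoke Lemma~\ref{lem:LRPtrivialT}, which guarantees $V_{\rm out} \neq 0$. Picking any nonzero $v \in V_{\rm out}$, Definition~\ref{def:J} gives $Jv = v \neq 0$, so $J \neq 0$.

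Next, for the trivial case, I would apply Lemma~\ref{lem:LRPtrivialT} to get $V_{\rm out} = V$ (equivalently $V_{\rm in} = 0$). Then the defining conditions $(J-I)V_{\rm out} = 0$ and $JV_{\rm in}=0$ collapse to $(J-I)V = 0$, so $J = I$.

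Finally, for the nontrivial case, Lemma~\ref{lem:LRPtrivialT} gives $V_{\rm in} \neq 0$, so we may pick $0 \neq w \in V_{\rm in}$. Suppose $\alpha J + \beta I = 0$ for some $\alpha, \beta \in \mathbb F$; applying both sides to $w$ yields $\beta w = 0$, hence $\beta = 0$. Then $\alpha J = 0$, and since $J \neq 0$ we conclude $\alpha = 0$. Thus $J$ and $I$ are linearly independent over $\mathbb F$. The only potential pitfall is ensuring both $V_{\rm out}$ and $V_{\rm in}$ are nonzero in the nontrivial case, but both facts are already packaged in Lemma~\ref{lem:LRPtrivialT}, so there is no real obstacle.
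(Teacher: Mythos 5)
Your proof is correct and follows essentially the same route as the paper, which simply cites the direct sum $V=V_{\rm out}+V_{\rm in}$ and Lemma~\ref{lem:LRPtrivialT}; you have just written out the routine case analysis that the paper leaves implicit. No gaps.
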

\begin{proof} 
Use (\ref{eq:LRPV0V1}) and
Lemma
\ref{lem:LRPtrivialT}.
\end{proof}

\begin{lemma}
\label{lem:Jfacts}
The following {\rm (i)--(v)} hold:
\begin{enumerate}
\item[\rm (i)] $J= \sum_{j=0}^{d/2} E_{2j}$;
\item[\rm (ii)] $J^2=J$;
\item[\rm (iii)] for even $i$ $(0 \leq i \leq d)$,
 $E_iJ=JE_i=E_i$;
\item[\rm (iv)] for odd $i$ $(0 \leq i \leq d)$,
$E_iJ=JE_i=0$;
\item[\rm (v)] $V_{\rm out} = JV$  and $V_{\rm in} = (I-J)V$.
\end{enumerate}
\end{lemma}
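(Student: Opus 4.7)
The plan is to prove (i) first, since the remaining parts follow quickly from it together with the orthogonality $E_r E_s = \delta_{r,s} E_r$ of the idempotent sequence. Set $K = \sum_{j=0}^{d/2} E_{2j}$. Using the fact that $\{E_i V\}_{i=0}^d$ is the $(A,B)$-decomposition of $V$, together with Definition~\ref{def:VoutVin}, I would verify that $K$ acts as the identity on each $E_{2i} V$ (even component) and as zero on each $E_{2i+1} V$ (odd component). Comparing with the defining conditions on $J$ in Definition~\ref{def:J}, this yields $K = J$, giving (i).

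With (i) in hand, part (ii) is immediate: multiplying $J$ by itself and invoking $E_{2i} E_{2j} = \delta_{i,j} E_{2i}$ collapses the double sum back to $J$. For parts (iii) and (iv), for any $i$ with $0 \leq i \leq d$ I would compute $E_i J = E_i \sum_{j=0}^{d/2} E_{2j}$: only the term with $2j = i$ can survive, which produces $E_i$ when $i$ is even and $0$ when $i$ is odd. The products $J E_i$ are handled symmetrically.

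Part (v) follows directly from the defining conditions $(J-I) V_{\rm out} = 0$ and $J V_{\rm in} = 0$ combined with the direct sum $V = V_{\rm out} + V_{\rm in}$ of Lemma~\ref{lem:LRPV0V1}: one gets $JV \subseteq V_{\rm out}$ from $J V_{\rm in} = 0$ and $V_{\rm out} = J V_{\rm out} \subseteq JV$ from $(J - I) V_{\rm out} = 0$, so $JV = V_{\rm out}$, and analogously $(I - J) V = V_{\rm in}$.

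There is no genuine obstacle in this lemma; it is a routine bookkeeping exercise once (i) is established. The only small care needed is in (i), where one must be explicit about which component of the decomposition a given vector lies in before applying the idempotent orthogonality. Everything else is mechanical.
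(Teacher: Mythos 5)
Your proposal is correct and follows essentially the same route as the paper: establish (i) by checking both sides agree on each component $E_iV$ of the decomposition, deduce (ii)--(iv) from (i) together with $E_rE_s=\delta_{r,s}E_r$, and read off (v) from Definition~\ref{def:J}. No issues.
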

\begin{proof} (i) For the given equation the
two sides agree on $E_iV$ for $0 \leq i \leq d$.
\\
\noindent (ii)--(iv) Use (i) above and $E_rE_s = \delta_{r,s}E_r $
for $0 \leq r,s\leq d$.
\\
\noindent (v) By Definition
\ref{def:J}.
\end{proof}

\begin{lemma} 
\label{lem:rankandtrace}
For the map $J$ (resp. $I-J$) the rank and trace are equal to
$m+1$ (resp. $m$).
\end{lemma}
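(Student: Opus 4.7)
The plan is to exploit the fact that $J$ is (by construction in Definition \ref{def:J}) a projection, so that its rank and trace coincide and can each be read off directly from its description.

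First I would compute the rank. By Lemma \ref{lem:Jfacts}(v) we have $JV = V_{\rm out}$, so ${\rm rank}(J) = \dim(V_{\rm out}) = m+1$ by Lemma \ref{lem:LRPV0V1}. For $I-J$, Lemma \ref{lem:Jfacts}(ii) gives $(I-J)^2 = I-J$, so $I-J$ is an idempotent with image $(I-J)V = V_{\rm in}$ (again by Lemma \ref{lem:Jfacts}(v)), hence ${\rm rank}(I-J) = \dim(V_{\rm in}) = m$.

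Next I would compute the trace. Using the decomposition $J = \sum_{j=0}^{d/2} E_{2j}$ from Lemma \ref{lem:Jfacts}(i) together with the linearity of trace and the fact (noted in the preliminaries) that each primitive idempotent satisfies ${\rm tr}(E_i) = 1$, I obtain
\begin{equation*}
{\rm tr}(J) \;=\; \sum_{j=0}^{m} {\rm tr}(E_{2j}) \;=\; m+1.
\end{equation*}
Then ${\rm tr}(I-J) = {\rm tr}(I) - {\rm tr}(J) = (d+1) - (m+1) = m$, since $\dim V = d+1 = 2m+1$.

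There is no real obstacle here; everything is bookkeeping once Lemma \ref{lem:Jfacts} is in hand. The only point that requires a moment of care is the consistency check that rank and trace agree, which is automatic for the idempotents $J$ and $I-J$ but is nevertheless reassuring to verify via the two independent routes above.
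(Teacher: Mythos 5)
Your proof is correct and follows essentially the same route as the paper, which simply cites Lemma \ref{lem:LRPV0V1}, Definition \ref{def:J}, and linear algebra (i.e.\ that a projection's rank and trace both equal the dimension of its image). Your explicit trace computation via $J=\sum_{j=0}^{m}E_{2j}$ and ${\rm tr}(E_i)=1$ is a slightly more concrete way of carrying out the ``linear algebra'' step, but it is the same argument in substance.
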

\begin{proof} By 
Lemma
\ref{lem:LRPV0V1},
Definition
\ref{def:J},
and linear algebra.
\end{proof}

\begin{lemma}
\label{lem:JRef}
The map $J$ is fixed by the $(A,B)$-reflector $\dagger$
from Proposition
\ref{prop:antiaut}
and Definition
\ref{def:REFdag}.
\end{lemma}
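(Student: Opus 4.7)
The plan is to use the explicit expression for $J$ as a sum of idempotents, combined with the fact that the reflector $\dagger$ fixes each idempotent in the sequence $\lbrace E_i\rbrace_{i=0}^d$.

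First I would recall from Lemma \ref{lem:Jfacts}(i) that
\begin{eqnarray*}
J = \sum_{j=0}^{d/2} E_{2j}.
\end{eqnarray*}
Next I would invoke Lemma \ref{lem:daggerE}, which asserts that $E_i^\dagger = E_i$ for $0 \leq i \leq d$; in particular $E_{2j}^\dagger = E_{2j}$ for $0 \leq j \leq d/2$.

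Finally, since the reflector $\dagger$ is by definition an antiautomorphism of ${\rm End}(V)$, it is $\mathbb F$-linear, so
\begin{eqnarray*}
J^\dagger = \Biggl(\sum_{j=0}^{d/2} E_{2j}\Biggr)^\dagger = \sum_{j=0}^{d/2} E_{2j}^\dagger = \sum_{j=0}^{d/2} E_{2j} = J,
\end{eqnarray*}
which is the desired conclusion. There is no real obstacle here: the lemma is a one-line consequence of Lemmas \ref{lem:daggerE} and \ref{lem:Jfacts}(i), together with the linearity of the antiautomorphism $\dagger$.
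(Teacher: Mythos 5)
Your proof is correct and is exactly the argument the paper gives: it cites Lemma \ref{lem:Jfacts}(i) for $J=\sum_{j=0}^{d/2}E_{2j}$ and Lemma \ref{lem:daggerE} for $E_i^\dagger=E_i$, with linearity of $\dagger$ doing the rest. No issues.
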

\begin{proof} By
Lemmas
\ref{lem:daggerE},
\ref{lem:Jfacts}(i).
\end{proof}

\begin{lemma}
The following maps are the same:
\begin{enumerate}
\item[\rm (i)] the projector for the LR pair $A,B$;
\item[\rm (ii)] the projector for the LR pair $B,A$.
\end{enumerate}
\end{lemma}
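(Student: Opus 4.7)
The plan is to reduce this statement directly to Lemma \ref{lem:InOutswap}. Recall that by Definition \ref{def:J}, the projector $J$ for an LR pair is determined by exactly two pieces of data: its fixed space $V_{\rm out}$ and its kernel $V_{\rm in}$, with $V = V_{\rm out} + V_{\rm in}$ the direct sum decomposition of Lemma \ref{lem:LRPV0V1}. So to identify the projector for $A,B$ with the projector for $B,A$, it suffices to show that these two LR pairs share the same outer and inner parts.

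First I would invoke Lemma \ref{lem:InOutswap}, which asserts precisely that the outer part of $V$ with respect to $A,B$ equals the outer part of $V$ with respect to $B,A$, and likewise for the inner parts. That lemma in turn rests on Lemma \ref{lem:Ebackward} (which gives that the idempotent sequence for $B,A$ is $\lbrace E_{d-i}\rbrace_{i=0}^d$) together with the standing assumption that $d = 2m$ is even, so that the parity of the index $i$ is preserved under $i \mapsto d-i$.

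Let $J$ denote the projector for $A,B$ and $J'$ the projector for $B,A$. By Definition \ref{def:J} applied to each pair, and by Lemma \ref{lem:InOutswap}, both $J$ and $J'$ act as the identity on the common subspace $V_{\rm out}$ and as zero on the common subspace $V_{\rm in}$. Since these two subspaces direct-sum to $V$, an element of ${\rm End}(V)$ is determined by its action on each summand, and therefore $J = J'$.

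The only potential subtlety is the parity preservation under inversion of the idempotent sequence, which is automatic because $d$ is even; no calculation is needed. So there is no real obstacle, and the proof is essentially a one-line application of Lemma \ref{lem:InOutswap} combined with the defining property of the projector.
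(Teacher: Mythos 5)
Your proof is correct and follows exactly the paper's route: the paper's own proof is the one-line citation of Lemma \ref{lem:InOutswap} together with Definition \ref{def:J}, which is precisely the reduction you carry out. The added remarks about parity preservation under $i\mapsto d-i$ are accurate but not needed beyond what Lemma \ref{lem:InOutswap} already supplies.
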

\begin{proof}
By Lemma
\ref{lem:InOutswap} and Definition
\ref{def:J}.
\end{proof}

\begin{lemma}
For nonzero  $\alpha, \beta \in \mathbb F$
the following maps are the same:
\begin{enumerate}
\item[\rm (i)] the projector for the LR pair $A,B$;
\item[\rm (ii)] the projector for the LR pair $\alpha A,\beta B$.
\end{enumerate}
\end{lemma}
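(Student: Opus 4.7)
The plan is to reduce everything to the idempotent sequence, since the outer and inner parts are built entirely from that data and do not involve $A,B$ in any other way.

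First I would invoke Lemma \ref{lem:alphaBeta}, which tells us that the $(\alpha A,\beta B)$-decomposition of $V$ coincides with the $(A,B)$-decomposition of $V$, and consequently the idempotent sequence $\{E_i\}_{i=0}^d$ for $\alpha A,\beta B$ is identical to that of $A,B$. Next, I would unwind Definition \ref{def:VoutVin}: since $V_{\rm out}=\sum_{j=0}^m E_{2j}V$ and $V_{\rm in}=\sum_{j=0}^{m-1} E_{2j+1}V$ are expressed purely in terms of the $E_i$, the outer and inner parts of $V$ with respect to $\alpha A,\beta B$ coincide with those for $A,B$.

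Finally, the projector $J$ is defined by Definition \ref{def:J} entirely in terms of $V_{\rm out}$ and $V_{\rm in}$, namely $(J-I)V_{\rm out}=0$ and $JV_{\rm in}=0$. Since these two subspaces agree for the two LR pairs, the defining conditions determine the same element of ${\rm End}(V)$, so the projectors coincide.

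There is no real obstacle here; the proof is a one-line chase through the definitions, and the only thing to check is that no step along the way secretly uses $A$ or $B$ themselves rather than the decomposition they induce. The cleanest write-up is simply to cite Lemma \ref{lem:alphaBeta} and Definitions \ref{def:VoutVin}, \ref{def:J} in sequence.
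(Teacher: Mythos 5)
Your proof is correct and follows essentially the same route as the paper: the paper also reduces to Lemma \ref{lem:alphaBeta} (invariance of the idempotent sequence) and then uses the formula $J=\sum_{j}E_{2j}$ from Lemma \ref{lem:Jfacts}(i), which is just a compressed form of your chase through Definitions \ref{def:VoutVin} and \ref{def:J}.
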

\begin{proof}
By Lemma
\ref{lem:alphaBeta}
and Lemma
\ref{lem:Jfacts}(i).
\end{proof}

\begin{lemma}
The following maps are the same:
\begin{enumerate}
\item[\rm (i)] the projector for the LR pair $\tilde A,\tilde B$;
\item[\rm (ii)] the adjoint of the projector for  $A,B$.
\end{enumerate}
\end{lemma}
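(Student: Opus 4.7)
The plan is to reduce both maps to a common expression in the idempotents $\tilde E_i$ of $V^*$, using Lemma \ref{lem:Jfacts}(i) to write each projector as a sum of even-indexed idempotents, and then using Lemma \ref{lem:LRdual} to describe the idempotent sequence for the LR pair $\tilde A, \tilde B$.

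First, by Lemma \ref{lem:Jfacts}(i) applied to $A,B$, the projector for $A,B$ equals $J = \sum_{j=0}^{m} E_{2j}$, where $d = 2m$. Since the adjoint map $\mathrm{End}(V) \to \mathrm{End}(V^*)$ is $\mathbb F$-linear, the adjoint of $J$ is
\begin{eqnarray*}
\tilde J = \sum_{j=0}^{m} \tilde E_{2j}.
\end{eqnarray*}

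Next, I would identify the projector for the LR pair $\tilde A, \tilde B$. By Lemma \ref{lem:LRdual}, the idempotent sequence for $\tilde A, \tilde B$ is $\lbrace \tilde E_{d-i}\rbrace_{i=0}^d$. Applying Lemma \ref{lem:Jfacts}(i) to the LR pair $\tilde A, \tilde B$, the projector for $\tilde A, \tilde B$ is the sum of its even-indexed idempotents, namely $\sum_{j=0}^{m} \tilde E_{d-2j}$. Since $d = 2m$ is even, the reindexing $j \mapsto m - j$ shows that as $j$ ranges over $\{0,1,\ldots,m\}$, the index $d - 2j = 2m - 2j$ ranges over $\{0, 2, \ldots, 2m\}$ as well. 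Therefore
\begin{eqnarray*}
\sum_{j=0}^{m} \tilde E_{d-2j} = \sum_{j=0}^{m} \tilde E_{2j} = \tilde J,
\end{eqnarray*}
which establishes the claim.

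There is no real obstacle here; the argument is purely bookkeeping, and the only point requiring any care is the reindexing step, which works cleanly because $d$ is even so the set of even indices is preserved under the reversal $i \mapsto d-i$.
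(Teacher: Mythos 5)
Your proof is correct and follows the same route as the paper, which cites exactly Lemma \ref{lem:LRdual} and Lemma \ref{lem:Jfacts}(i); you have simply written out the linearity of the adjoint and the reindexing $j \mapsto m-j$ that the paper leaves implicit. No issues.
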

\begin{proof}
By 
Lemma \ref{lem:LRdual} and
Lemma
\ref{lem:Jfacts}(i).
\end{proof}

\begin{lemma}
\label{lem:INOutFacts}
Referring to Definition 
\ref{def:Ainout}
the following {\rm (i)--(iii)} hold:
\begin{enumerate}
\item[\rm (i)] $A_{\rm out} = AJ=(I-J)A$ and 
 $B_{\rm out} = BJ=(I-J)B$;
\item[\rm (ii)] $A_{\rm in} = JA = A(I-J)$ 
and $B_{\rm in} = JB = B(I-J)$;
\item[\rm (iii)] $A = AJ+JA$ 
and $B = BJ+JB$.
\end{enumerate}
\end{lemma}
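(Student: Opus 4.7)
The plan is to verify each identity by checking equality of both sides on $V_{\rm out}$ and on $V_{\rm in}$ separately, using the direct sum decomposition $V = V_{\rm out} + V_{\rm in}$ from Lemma \ref{lem:LRPV0V1}. The three needed facts are: (a) $J$ acts as the identity on $V_{\rm out}$ and as zero on $V_{\rm in}$ (Definition \ref{def:J}), so $I-J$ acts as zero on $V_{\rm out}$ and as the identity on $V_{\rm in}$; (b) by Lemma \ref{lem:ABaction}, $AV_{\rm out} \subseteq V_{\rm in}$, $AV_{\rm in} \subseteq V_{\rm out}$, $BV_{\rm out} \subseteq V_{\rm in}$, $BV_{\rm in} \subseteq V_{\rm out}$; (c) by Definition \ref{def:Ainout}, $A_{\rm out}$ agrees with $A$ on $V_{\rm out}$ and is zero on $V_{\rm in}$, with the analogous statements for $A_{\rm in}$, $B_{\rm out}$, $B_{\rm in}$.

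For part (i), I would verify $A_{\rm out} = AJ$ by noting that for $v \in V_{\rm out}$, $AJv = Av = A_{\rm out}v$, while for $v \in V_{\rm in}$, $AJv = A \cdot 0 = 0 = A_{\rm out}v$. For the identity $A_{\rm out} = (I-J)A$, the key is (b): for $v \in V_{\rm out}$, $Av \in V_{\rm in}$ so $(I-J)Av = Av = A_{\rm out}v$; for $v \in V_{\rm in}$, $Av \in V_{\rm out}$ so $(I-J)Av = 0 = A_{\rm out}v$. The same two-case check, with $A$ replaced by $B$, yields $B_{\rm out} = BJ = (I-J)B$.

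For part (ii), the same method works: on $V_{\rm in}$ we have $JAv = Av$ (since $Av \in V_{\rm out}$) and $A(I-J)v = Av$, both matching $A_{\rm in}v$; on $V_{\rm out}$ we have $JAv = 0$ (since $Av \in V_{\rm in}$) and $A(I-J)v = A \cdot 0 = 0$, both matching $A_{\rm in}v = 0$. Likewise for $B$.

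Part (iii) is immediate from Definition \ref{def:Ainout}, which gives $A = A_{\rm out} + A_{\rm in}$; substituting $A_{\rm out} = AJ$ and $A_{\rm in} = JA$ from parts (i), (ii) yields $A = AJ + JA$, and similarly $B = BJ + JB$. There is no real obstacle here; the whole argument is a mechanical consequence of the definitions once Lemma \ref{lem:ABaction} is invoked to control how $A$ and $B$ swap $V_{\rm out}$ with $V_{\rm in}$.
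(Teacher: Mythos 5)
Your proof is correct and follows essentially the same route as the paper, which simply notes that for each identity the two sides agree on $V_{\rm out}$ and on $V_{\rm in}$; you have just written out that check explicitly, using Lemma \ref{lem:ABaction} to track where $A$ and $B$ send each summand.
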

\begin{proof} (i), (ii) For each given equation the
two sides agree on $V_{\rm out}$ and $V_{\rm in}$.
\\
\noindent (iii) By (i) above.
\end{proof}

\begin{lemma} $J$ commutes with each of $A^2, B^2, AB, BA$.
\end{lemma}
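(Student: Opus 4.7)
The plan is to reduce the claim to the general observation that if an element $X \in {\rm End}(V)$ leaves both summands $V_{\rm out}$ and $V_{\rm in}$ of the direct sum $V = V_{\rm out} + V_{\rm in}$ invariant, then $X$ commutes with the projector $J$ onto $V_{\rm out}$. This observation is immediate: for $v \in V_{\rm out}$ we have $JXv = Xv = XJv$ since $Xv \in V_{\rm out}$ and $Jv=v$, while for $v \in V_{\rm in}$ we have $JXv = 0 = XJv$ since $Xv \in V_{\rm in}$ and $Jv=0$; then extend by linearity using $V = V_{\rm out}+V_{\rm in}$.

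So the task is to verify that each of $A^2, B^2, AB, BA$ leaves $V_{\rm out}$ and $V_{\rm in}$ invariant. For $A^2$ and $B^2$ this is precisely the content of the second line of Lemma \ref{lem:ABaction}. For $AB$ and $BA$, I would invoke Lemma \ref{lem:paPre}(i), which says that each component $V_i$ of the $(A,B)$-decomposition is invariant under $AB$ and $BA$; then since
\begin{eqnarray*}
V_{\rm out} = \sum_{j=0}^{m} V_{2j}, \qquad \qquad V_{\rm in} = \sum_{j=0}^{m-1} V_{2j+1}
\end{eqnarray*}
by Definition \ref{def:VoutVin}, both $V_{\rm out}$ and $V_{\rm in}$ are invariant under $AB$ and $BA$.

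An alternative (and perhaps slicker) route for the $AB, BA$ case is to combine Lemma \ref{lem:AiBione}, which expresses $AB$ and $BA$ as $\mathbb F$-linear combinations of the idempotents $\lbrace E_i\rbrace_{i=0}^d$, with Lemma \ref{lem:Jfacts}(i), which writes $J = \sum_{j=0}^{d/2} E_{2j}$; since the $E_i$ are mutually commuting idempotents, $J$ commutes with $AB$ and $BA$ on the nose. There is no real obstacle here — the only mild point is to note explicitly that the direct sum $V = V_{\rm out}+V_{\rm in}$ from Lemma \ref{lem:LRPV0V1} is what makes the ``preserves both summands implies commutes with $J$'' observation work, and this is essentially linear algebra.
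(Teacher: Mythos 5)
Your proof is correct, and all the lemmas you cite say what you claim they say; the reduction ``preserves both summands of $V=V_{\rm out}+V_{\rm in}$ $\Rightarrow$ commutes with $J$'' is sound, Lemma \ref{lem:ABaction} covers $A^2,B^2$, and Lemma \ref{lem:paPre}(i) together with Definition \ref{def:VoutVin} covers $AB,BA$ (as does your alternative via Lemma \ref{lem:AiBione} and Lemma \ref{lem:Jfacts}(i)). The paper's own proof goes a different way: it invokes only Lemma \ref{lem:INOutFacts}(iii), i.e.\ the identities $A=AJ+JA$ and $B=BJ+JB$, from which everything follows by a two-line algebraic manipulation --- e.g.\ $A^2J=A(AJ)=A(A-JA)=A^2-AJA$ and $JA^2=(JA)A=(A-AJ)A=A^2-AJA$, and likewise $ABJ=AB-AJB=JAB$. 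What the paper's route buys is brevity and independence from the spectral description of $V_{\rm out}$, $V_{\rm in}$; what your route buys is conceptual transparency (commuting with $J$ is literally invariance of the two summands) and, via the idempotent expressions for $AB$, $BA$, a proof that generalizes immediately to any operator that is a polynomial in the $E_i$. Either argument is acceptable.
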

\begin{proof} Use Lemma
\ref{lem:INOutFacts}(iii).
\end{proof}

\begin{lemma}
Referring to Definition
\ref{def:PoutPin}
the following {\rm (i), (ii)} hold.
\begin{enumerate}
\item[\rm (i)]
$ \Psi_{\rm out} = 
J \Psi = \Psi J$.
\item[\rm (ii)] For $A,B$ nontrivial,
\begin{eqnarray*}
 \varphi_1/\varphi_d \Psi_{\rm in} = (I-J) \Psi = \Psi (I-J).
\end{eqnarray*}
\end{enumerate}
\end{lemma}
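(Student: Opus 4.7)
The plan is to prove both parts by direct expansion, using the explicit formulas for $\Psi$, $\Psi_{\rm out}$, $\Psi_{\rm in}$, and $J$ given in Definition \ref{def:REF}, Definition \ref{def:PoutPin}, and Lemma \ref{lem:Jfacts}(i). The only algebraic fact we will need is the idempotent orthogonality $E_r E_s = \delta_{r,s} E_r$ for $0 \leq r, s \leq d$, together with the fact that the $E_i$ commute with themselves (so $J$ and $\Psi$ commute, which is why $J\Psi = \Psi J$).

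For part (i), I would write
\begin{eqnarray*}
J \Psi = \Biggl(\sum_{j=0}^{m} E_{2j}\Biggr)\Biggl(\sum_{i=0}^d \frac{\varphi_1 \cdots \varphi_i}{\varphi_d \cdots \varphi_{d-i+1}} E_i\Biggr)
\end{eqnarray*}
and collapse the double sum via $E_{2j}E_i = \delta_{2j,i} E_{2j}$, leaving exactly the sum that defines $\Psi_{\rm out}$. The computation for $\Psi J$ is identical because each $E_i$ commutes with every $E_{2j}$.

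For part (ii), the same telescoping gives
\begin{eqnarray*}
(I-J)\Psi = \sum_{j=0}^{m-1} \frac{\varphi_1 \varphi_2 \cdots \varphi_{2j+1}}{\varphi_d \varphi_{d-1} \cdots \varphi_{d-2j}} E_{2j+1}.
\end{eqnarray*}
The key identity to verify is the scalar equality
\begin{eqnarray*}
\frac{\varphi_1 \varphi_2 \cdots \varphi_{2j+1}}{\varphi_d \varphi_{d-1} \cdots \varphi_{d-2j}} = \frac{\varphi_1}{\varphi_d} \cdot \frac{\varphi_2 \varphi_3 \cdots \varphi_{2j+1}}{\varphi_{d-1} \varphi_{d-2} \cdots \varphi_{d-2j}},
\end{eqnarray*}
which is immediate by factoring out $\varphi_1$ from the numerator and $\varphi_d$ from the denominator. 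This factorization requires $\varphi_d \neq 0$, which is guaranteed by the nontriviality assumption (forcing $d \geq 2$ since $d$ is even and $d \neq 0$). The result then matches $(\varphi_1/\varphi_d)\Psi_{\rm in}$ term by term, and commutativity again gives $(I-J)\Psi = \Psi(I-J)$.

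There is no real obstacle here; the proof is essentially bookkeeping. The only thing worth being careful about is the domain of validity of the factor $\varphi_1/\varphi_d$ in part (ii), which is the reason the nontriviality hypothesis appears there and not in part (i).
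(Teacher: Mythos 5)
Your proof is correct and is exactly the computation the paper intends by its one-line proof ("Use Definitions \ref{def:REF}, \ref{def:PoutPin}, \ref{def:J}"): expand the sums, apply $E_rE_s=\delta_{r,s}E_r$, and match coefficients. The remark about why nontriviality is needed in (ii) — namely that $d\geq 2$ guarantees $\varphi_d$ is a genuine nonzero parameter rather than the conventional $\varphi_0=0$ — is also accurate.
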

\begin{proof}
Use Definitions
\ref{def:REF},
\ref{def:PoutPin},
\ref{def:J}.
\end{proof}

\begin{lemma}
\label{lem:sigmaJ}
Let $V'$ denote a vector space over $\mathbb F$
with dimension $d+1$, and let $A',B'$ denote
an LR pair on $V'$. Let $J'$ denote the
projector for $A',B'$.
Let $\sigma $ denote an isomorphism of LR pairs from
$A,B$ to $A',B'$. Then $\sigma J = J' \sigma$.
\end{lemma}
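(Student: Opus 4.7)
The plan is to reduce the statement to the already-established fact that isomorphisms of LR pairs intertwine the idempotent sequences, namely Lemma \ref{lem:isoMove}.

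First I would recall from Lemma \ref{lem:Jfacts}(i) that the projector admits the explicit expression
\begin{equation*}
J = \sum_{j=0}^{d/2} E_{2j},
\end{equation*}
where $\lbrace E_i \rbrace_{i=0}^d$ is the idempotent sequence for $A,B$. Applying the same lemma to the LR pair $A',B'$ on $V'$ gives
\begin{equation*}
J' = \sum_{j=0}^{d/2} E'_{2j},
\end{equation*}
where $\lbrace E'_i \rbrace_{i=0}^d$ is the idempotent sequence for $A',B'$.

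Next I would invoke Lemma \ref{lem:isoMove}, which asserts that $\sigma E_i = E'_i \sigma$ for $0 \leq i \leq d$. In particular this holds for each even index, so summing over $j = 0, 1, \ldots, d/2$ yields
\begin{equation*}
\sigma J = \sigma \sum_{j=0}^{d/2} E_{2j} = \sum_{j=0}^{d/2} \sigma E_{2j} = \sum_{j=0}^{d/2} E'_{2j} \sigma = J' \sigma,
\end{equation*}
which is the desired identity.

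There is no significant obstacle here; this is essentially a bookkeeping argument once the formula $J = \sum_{j} E_{2j}$ and the intertwining property of $\sigma$ with the idempotents are in hand. The only minor subtlety is that the formula from Lemma \ref{lem:Jfacts}(i) requires $d$ to be even, but this is already part of the standing hypothesis in this section (as stated at the beginning of Section 17 where bipartite LR pairs and the projector $J$ are introduced).
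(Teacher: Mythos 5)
Your proof is correct and is exactly the paper's argument: the paper proves this lemma by citing Lemma \ref{lem:isoMove} together with Lemma \ref{lem:Jfacts}(i), which is precisely the computation you spell out. (The only nit is that the standing hypothesis $d=2m$ even comes from the section on the projector $J$ for LR pairs, not the later section on bipartite LR triples, but this does not affect the argument.)
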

\begin{proof} By Lemma
\ref{lem:isoMove}
and 
Lemma
\ref{lem:Jfacts}(i).
\end{proof}

\section{Similarity and bisimilarity}

In this section we describe two equivalence relations
for LR pairs, called similarity and bisimilarity.
Let $V$ denote a vector space over $\mathbb F$
with dimension $d+1$.

\begin{definition}\rm
Let $A,B$ and $A',B'$ denote LR pairs on $V$.
These LR pairs will be called {\it associates} whenever
there exist nonzero $\alpha, \beta $  in $ \mathbb F$
such that  $A'=\alpha A$ and $B'=\beta B$.
Associativity is
an equivalence relation.
\end{definition}

\begin{lemma}
\label{lem:twoview}
Let $A,B$ denote an LR pair on $V$.
Let $V'$ denote a vector space over $\mathbb F$
with dimension $d+1$, and let
$A',B'$ denote an LR pair on $V'$.
Let $\sigma :V\to V'$ denote an $\mathbb F$-linear bijection.
Then for nonzero $\alpha, \beta $ in $\mathbb F$ the following
{\rm (i)--(iii)} are equivalent:
\begin{enumerate}
\item[\rm (i)] $\sigma$ is an isomorphism of LR pairs from
$\alpha A,\beta B$ to $A',B'$;
\item[\rm (ii)] $\sigma$ is an isomorphism of LR pairs from
$A,B$ to $A'/\alpha,B'/\beta$;
\item[\rm (iii)]  $\alpha \sigma A  = A' \sigma$ and
 $\beta \sigma B  = B' \sigma$.
\end{enumerate}
\end{lemma}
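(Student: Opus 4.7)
The plan is to observe that this is essentially an unwinding of Definition~\ref{def:isoLRP}, combined with the $\mathbb F$-linearity of $\sigma$ and the fact (from Lemma~\ref{lem:alphaBeta}) that scaling the two maps of an LR pair by nonzero scalars produces another LR pair on the same vector space. So the two modified pairs $\alpha A, \beta B$ on $V$ and $A'/\alpha, B'/\beta$ on $V'$ mentioned in (i) and (ii) are genuine LR pairs, and the statement (iii) only makes sense once we know $\alpha, \beta$ are nonzero.

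First I would establish the equivalence (i)$\Leftrightarrow$(iii). By Definition~\ref{def:isoLRP}, condition (i) is the assertion that $\sigma\,(\alpha A) = A'\sigma$ and $\sigma\,(\beta B) = B'\sigma$. Since $\sigma$ is $\mathbb F$-linear, $\sigma\,(\alpha A) = \alpha\,\sigma A$ and $\sigma\,(\beta B) = \beta\,\sigma B$, so these equations are literally the two equations of (iii). Next I would establish (ii)$\Leftrightarrow$(iii) in the same manner: condition (ii) says $\sigma A = (A'/\alpha)\,\sigma$ and $\sigma B = (B'/\beta)\,\sigma$, which (since $\alpha, \beta$ are nonzero) is the same as (iii) after multiplying by $\alpha$ and $\beta$ respectively.

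There is no real obstacle here; the content of the lemma is purely notational, recording the fact that when comparing two LR pairs under an $\mathbb F$-linear bijection $\sigma$, any scalar adjustments $\alpha, \beta$ can be pushed from one side of $\sigma$ to the other. The only thing to be careful about is the hypothesis that $\alpha, \beta$ are nonzero, which is needed both so that $\alpha A,\beta B$ forms an LR pair on $V$ (via Lemma~\ref{lem:alphaBeta}) and so that $A'/\alpha, B'/\beta$ makes sense as an LR pair on $V'$, and thus so that conditions (i) and (ii) refer to well-defined LR pairs to which Definition~\ref{def:isoLRP} can be applied.
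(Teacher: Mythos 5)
Your proof is correct and matches the paper's own argument, which likewise just invokes Definition~\ref{def:isoLRP} to get (i)$\Leftrightarrow$(iii) and (ii)$\Leftrightarrow$(iii); you have simply spelled out the unwinding in more detail.
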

\begin{proof} By 
Definition
\ref{def:isoLRP},  assertions (i), (iii) are equivalent
and
assertions (ii), (iii) are equivalent.
\end{proof}

\begin{lemma}
\label{lem:assocIso}
Let $A,B$ and $A',B'$ denote LR pairs over $\mathbb F$.
Then the following are equivalent:
\begin{enumerate}
\item[\rm (i)] there exists an LR pair over $\mathbb F$
that is associate to $A,B$ and isomorphic to $A',B'$;
\item[\rm (ii)] there exists an LR pair over $\mathbb F$
that is isomorphic to $A,B$ and associate to $A',B'$.
\end{enumerate}
\end{lemma}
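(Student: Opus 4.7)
The plan is to apply Lemma \ref{lem:twoview} directly, which is precisely designed to convert a scaling on one side of an LR-pair isomorphism into a reciprocal scaling on the other side.

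First I unpack the two conditions. The associate relation is defined only between LR pairs on a common underlying vector space, so condition (i) is equivalent to the existence of nonzero $\alpha, \beta \in \mathbb F$ together with an $\mathbb F$-linear bijection $\sigma \colon V \to V'$ (where $V, V'$ are the underlying spaces of $A,B$ and $A',B'$, respectively) such that $\sigma$ is an isomorphism of LR pairs from $\alpha A, \beta B$ to $A', B'$. Likewise, condition (ii) is equivalent to the existence of nonzero $\gamma, \delta \in \mathbb F$ and an $\mathbb F$-linear bijection $\tau \colon V \to V'$ such that $\tau$ is an isomorphism of LR pairs from $A, B$ to $\gamma A', \delta B'$.

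Next I invoke Lemma \ref{lem:twoview}. Applying the equivalence (i) $\Leftrightarrow$ (ii) of that lemma, but with the pair $A',B'$ there replaced by $\gamma A', \delta B'$ and with the scalar parameters taken to be $1/\gamma, 1/\delta$, the assertion that $\tau$ is an isomorphism from $A, B$ to $\gamma A', \delta B'$ is equivalent to the assertion that $\tau$ is an isomorphism from $(1/\gamma) A, (1/\delta) B$ to $A', B'$. The first statement is the witness for condition (ii); the second is the witness for condition (i) with $\alpha = 1/\gamma$, $\beta = 1/\delta$, $\sigma = \tau$. Hence (ii) implies (i). Running exactly the same argument with $\alpha, \beta$ in place of $\gamma, \delta$ provides the converse, and we are done.

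No real obstacle arises; the entire content is careful bookkeeping of scalars and of the underlying vector spaces, which is already packaged into Lemma \ref{lem:twoview}. The only minor subtlety is keeping track that the ``intermediate'' LR pair in condition (i) lives on $V$ and that in condition (ii) lives on $V'$, so that the notion of associate is applicable on each side before the lemma is applied.
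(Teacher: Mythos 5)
Your proof is correct and follows essentially the same route as the paper: both reduce the statement to Lemma \ref{lem:twoview}, which converts a scaling of $A,B$ on one side of an isomorphism into the reciprocal scaling of $A',B'$ on the other side. One small slip in the write-up: the equivalence you want comes from instantiating Lemma \ref{lem:twoview} with the target pair $A',B'$ itself and scalars $1/\gamma,1/\delta$ (so that $A'/(1/\gamma)=\gamma A'$), not from replacing $A',B'$ by $\gamma A',\delta B'$, which would instead yield the pair $\gamma^2 A',\delta^2 B'$; the equivalence you actually state and use is nonetheless the correct one.
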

\begin{proof} Pick nonzero $\alpha, \beta $ in $\mathbb F$.
By
 Lemma
\ref{lem:twoview}(i),(ii) the LR pair $\alpha A, \beta B$
satisfies condition (i) in the present lemma if and only
if the LR pair $A'/\alpha, B'/\beta$ satisfies condition
(ii) in the present lemma. The result follows. 
\end{proof}

\begin{definition}
\label{def:SIM}
\rm
Let $A,B$ and $A',B'$ denote LR pairs over $\mathbb F$.
These LR pairs will be called {\it similar} whenever
they satisfy the equivalent conditions {\rm (i), (ii)}
in Lemma
\ref{lem:assocIso}.
Similarity is
an equivalence relation.
\end{definition}

\begin{lemma}
Let $A,B$ (resp. $A',B'$) denote an LR pair over $\mathbb F$,
with parameter sequence 
$\lbrace \varphi_i \rbrace_{i=1}^d$ (resp.
$\lbrace \varphi'_i \rbrace_{i=1}^d$).
Then the following are equivalent:
\begin{enumerate}
\item[\rm (i)] the LR pairs
$A,B$ and $A',B'$ are similar;
\item[\rm (ii)] the ratio
$\varphi'_i /\varphi_i$ is independent of $i$ for
$1 \leq i \leq d$.
\end{enumerate}
\end{lemma}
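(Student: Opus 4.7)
The plan is to reduce the statement to the classification Proposition \ref{prop:LRpairClass} via the behavior of parameter sequences under the scaling $A\mapsto \alpha A$, $B\mapsto \beta B$ recorded in Lemma \ref{lem:alphaBetacom}. Recall that Proposition \ref{prop:LRpairClass} asserts that two LR pairs over $\mathbb F$ of the same diameter are isomorphic if and only if they have identical parameter sequences, while Lemma \ref{lem:alphaBetacom} says that the LR pair $\alpha A,\beta B$ has parameter sequence $\{\alpha\beta\varphi_i\}_{i=1}^d$.

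First I would unwind Definition \ref{def:SIM}: the pairs $A,B$ and $A',B'$ are similar precisely when there exist nonzero $\alpha,\beta\in\mathbb F$ such that $\alpha A,\beta B$ is isomorphic to $A',B'$. Combining with Proposition \ref{prop:LRpairClass} and Lemma \ref{lem:alphaBetacom}, this is equivalent to the existence of nonzero $\alpha,\beta\in\mathbb F$ with $\alpha\beta\varphi_i=\varphi'_i$ for all $1\le i\le d$, which in turn says $\varphi'_i/\varphi_i=\alpha\beta$ is independent of $i$. This gives the implication $(\mathrm{i})\Rightarrow(\mathrm{ii})$.

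For $(\mathrm{ii})\Rightarrow(\mathrm{i})$, suppose $\varphi'_i/\varphi_i=c$ for some nonzero $c\in\mathbb F$ independent of $i$ (nonzero because the $\varphi_i,\varphi'_i$ are nonzero). Choose, say, $\alpha=c$ and $\beta=1$. Then by Lemma \ref{lem:alphaBetacom} the LR pair $\alpha A,\beta B$ has parameter sequence $\{c\varphi_i\}_{i=1}^d=\{\varphi'_i\}_{i=1}^d$, so by Proposition \ref{prop:LRpairClass} the pairs $\alpha A,\beta B$ and $A',B'$ are isomorphic. Hence $A,B$ and $A',B'$ are similar by Definition \ref{def:SIM}.

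There is no real obstacle here; the argument is a bookkeeping step that strings together the classification theorem and the scaling lemma. The only minor point to be careful about is to note that $d$ must be the same for the two LR pairs (otherwise neither condition makes sense), and that the common ratio $c$ is automatically nonzero since it is a ratio of nonzero field elements, so choosing $(\alpha,\beta)=(c,1)$ is legitimate.
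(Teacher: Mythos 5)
Your proposal is correct and follows essentially the same route as the paper: both directions reduce to Proposition \ref{prop:LRpairClass} via the scaling formula of Lemma \ref{lem:alphaBetacom}, and your choice $(\alpha,\beta)=(c,1)$ in the converse direction is exactly the paper's choice of the pair $\alpha A, B$.
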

\begin{proof}
${\rm (i)}\Rightarrow {\rm (ii)}$  By
Lemma
\ref{lem:assocIso}
and Definition
\ref{def:SIM}, there exist nonzero $\alpha, \beta$
in $\mathbb F$ such that
$\alpha A, \beta B$ is isomorphic to
$A',B'$.
Recall from Lemma
\ref{lem:alphaBetacom} that
$\alpha A, \beta B$ has parameter sequence
$\lbrace \alpha \beta \varphi_i \rbrace_{i=1}^d$.
Now by Proposition
\ref{prop:LRpairClass},
$\varphi'_i = \alpha \beta \varphi_i$ for
$1 \leq i \leq d$. Therefore $\varphi'_i /\varphi_i$
is independent of $i$ for $1 \leq i \leq d$.
\\
\noindent 
${\rm (ii)}\Rightarrow {\rm (i)}$
Let $\alpha$ denote the common value of $\varphi'_i /\varphi_i$
for $1 \leq i \leq d$.
  By Lemma
\ref{lem:alphaBetacom} and the construction,
 the LR pairs 
$\alpha A,B$ and $A',B'$ have the same parameter sequence
$\lbrace \varphi'_i\rbrace_{i=1}^d$.
Therefore they are isomorphic by Proposition
\ref{prop:LRpairClass}. 
Now 
the LR pairs $A,B$ and $A',B'$ are similar
by Lemma
\ref{lem:assocIso}
and Definition
\ref{def:SIM}.
\end{proof}

\noindent We now describe the bisimilarity relation.
For the rest of this section, assume that $d=2m$ is even.
Until further notice let $A,B$ denote an LR pair on
$V$, with parameter sequence $\lbrace \varphi_i\rbrace_{i=1}^d$
and idempotent sequence $\lbrace E_i \rbrace_{i=0}^d$.
Recall that $\lbrace E_iV\rbrace_{i=0}^d$ is the
$(A,B)$-decomposition of $V$, which is lowered by $A$
and raised by $B$.

\begin{lemma} 
\label{lem:iso2view}
Let $V'$ denote a vector space over $\mathbb F$ with dimension $d+1$,
and let $A',B'$ denote an LR pair on $V'$. Let
$\sigma :V\to V'$ denote an $\mathbb F$-linear bijection.
Then the following are equivalent:
\begin{enumerate}
\item[\rm (i)] $\sigma $ is an isomorphism of LR pairs from
$A,B$ to $A',B'$;
\item[\rm (ii)] all of
\begin{eqnarray}
&& \sigma A_{\rm out} = A'_{\rm out} \sigma,
\qquad \qquad
\sigma A_{\rm in} = A'_{\rm in} \sigma,
\label{eq:Aform}
\\
&&
\sigma B_{\rm out} = B'_{\rm out} \sigma,
\qquad \qquad  
\sigma B_{\rm in} = B'_{\rm in} \sigma.
\label{eq:Bform}
\end{eqnarray}
\end{enumerate}
\end{lemma}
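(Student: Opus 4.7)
My plan is to prove the two implications separately, leveraging the decomposition $A = A_{\rm out}+A_{\rm in}$, $B = B_{\rm out}+B_{\rm in}$ (and their primed analogues) together with Lemma \ref{lem:sigmaJ}, which asserts that $\sigma J = J'\sigma$ whenever $\sigma$ is an isomorphism of LR pairs.

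For (ii) $\Rightarrow$ (i), the approach is straightforward: simply add the two equations in (\ref{eq:Aform}) to get $\sigma(A_{\rm out}+A_{\rm in}) = (A'_{\rm out}+A'_{\rm in})\sigma$, i.e., $\sigma A = A' \sigma$, and likewise add the two equations in (\ref{eq:Bform}) to obtain $\sigma B = B' \sigma$. By Definition \ref{def:isoLRP}, together with the hypothesis that $\sigma$ is an $\mathbb F$-linear bijection, this makes $\sigma$ an isomorphism of LR pairs from $A,B$ to $A',B'$.

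For (i) $\Rightarrow$ (ii), the key input is Lemma \ref{lem:sigmaJ}, which gives $\sigma J = J' \sigma$. Combining this with Lemma \ref{lem:INOutFacts}, which expresses the outer/inner components in terms of $J$ and $I-J$, the computation is immediate:
\begin{eqnarray*}
\sigma A_{\rm out} = \sigma A J = A' \sigma J = A' J' \sigma = A'_{\rm out} \sigma,
\end{eqnarray*}
and analogously
\begin{eqnarray*}
\sigma A_{\rm in} = \sigma A (I-J) = A' \sigma (I-J) = A'(I-J') \sigma = A'_{\rm in} \sigma,
\end{eqnarray*}
with the $B$-equations obtained by the same argument using $B_{\rm out}=BJ$ and $B_{\rm in}=B(I-J)$ from Lemma \ref{lem:INOutFacts}.

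There is no real obstacle here; the content of the lemma is essentially a bookkeeping consequence of the fact that an isomorphism of LR pairs must intertwine the projectors, which is already packaged in Lemma \ref{lem:sigmaJ}. The only mild subtlety is remembering to use both forms $A_{\rm out}=AJ=(I-J)A$ (and similarly for the others) from Lemma \ref{lem:INOutFacts} if one prefers a symmetric presentation, but either choice suffices for the calculation above.
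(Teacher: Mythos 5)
Your proof is correct and follows the same route as the paper: (i) $\Rightarrow$ (ii) via Lemma \ref{lem:sigmaJ} combined with the expressions $A_{\rm out}=AJ$, $A_{\rm in}=A(I-J)$ from Lemma \ref{lem:INOutFacts}, and (ii) $\Rightarrow$ (i) by adding the equations and invoking $A=A_{\rm out}+A_{\rm in}$, $B=B_{\rm out}+B_{\rm in}$. No issues.
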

\begin{proof}
${\rm (i)}\Rightarrow {\rm (ii)}$ 
Use Lemma 
\ref{lem:INOutFacts}(i),(ii)
and Lemma
\ref{lem:sigmaJ}.
\\
\noindent
${\rm (ii)}\Rightarrow {\rm (i)}$ 
Add the two equations in (\ref{eq:Aform}) and use
$A=A_{\rm out} +A_{\rm in}$,
$A'=A'_{\rm out} +A'_{\rm in}$ to obtain
$\sigma A = A' \sigma$.
Similarly we obtain
$\sigma B = B' \sigma$.
Now by Definition
\ref{def:isoLRP} the map $\sigma $ is an isomorphism of LR pairs from
$A,B$ to $A',B'$.
\end{proof}

\begin{lemma}
\label{lem:LRPInOut}
Let
$\alpha_{\rm out}, 
\alpha_{\rm in},
\beta_{\rm out},
\beta_{\rm in} 
$
denote nonzero scalars in $\mathbb F$.
 Then the ordered pair
\begin{eqnarray}
\label{eq:LRPNewLRT}
\alpha_{\rm out}A_{\rm out}
+\alpha_{\rm in}A_{\rm in},
\qquad
\beta_{\rm out} B_{\rm out}+
\beta_{\rm in}B_{\rm in}
\end{eqnarray}
is an LR pair on $V$, with idempotent sequence
$\lbrace E_i \rbrace_{i=0}^d$. 
The outer part of $V$ with respect to
{\rm (\ref{eq:LRPNewLRT})}  coincides with the
outer part of $V$ with respect to $A,B$.
The inner part of $V$ with respect to
{\rm (\ref{eq:LRPNewLRT})}  coincides with the
inner part of $V$ with respect to $A,B$.
The projector for the LR pair
{\rm (\ref{eq:LRPNewLRT})}
coincides with the projector for $A,B$.
The LR pair 
{\rm (\ref{eq:LRPNewLRT})}
has parameter
sequence  $\lbrace f_i \varphi_i\rbrace_{i=1}^d$, 
where
\begin{eqnarray*}
&&
f_i = \begin{cases}
\alpha_{\rm out}\beta_{\rm in}
&  {\mbox{\rm if $i$ is even}}; \\
\alpha_{\rm in}\beta_{\rm out}
& {\mbox{\rm if $i$ is odd}}
\end{cases}
\qquad \qquad (1 \leq i \leq d).
\end{eqnarray*}
\end{lemma}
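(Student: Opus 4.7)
The plan is to show that $A' := \alpha_{\rm out}A_{\rm out} + \alpha_{\rm in}A_{\rm in}$ lowers the $(A,B)$-decomposition $\{V_i\}_{i=0}^d$ of $V$, and that $B' := \beta_{\rm out}B_{\rm out} + \beta_{\rm in}B_{\rm in}$ raises it; everything else will follow. Recall that $V_i = E_i V$, and that for even $i$ we have $V_i \subseteq V_{\rm out}$ while for odd $i$ we have $V_i \subseteq V_{\rm in}$. The map $A$ lowers $\{V_i\}_{i=0}^d$, so $AV_i = V_{i-1}$ for $1 \leq i \leq d$ and $AV_0 = 0$. Using Definition \ref{def:Ainout}, on $V_i$ with $i$ even the map $A_{\rm out}$ acts as $A$ while $A_{\rm in}$ acts as zero; on $V_i$ with $i$ odd the roles are reversed. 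Therefore on $V_i$ the operator $A'$ acts as $\alpha_{\rm out} A$ (if $i$ is even) or $\alpha_{\rm in} A$ (if $i$ is odd), and in either case sends $V_i$ onto $V_{i-1}$ for $1 \leq i \leq d$ and kills $V_0$. Symmetrically, since $B$ raises $\{V_i\}_{i=0}^d$, on $V_i$ the operator $B'$ acts as $\beta_{\rm out} B$ (if $i$ is even) or $\beta_{\rm in} B$ (if $i$ is odd), and sends $V_i$ onto $V_{i+1}$ for $0 \leq i \leq d-1$ while killing $V_d$.

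This shows at once that $\{V_i\}_{i=0}^d$ is lowered by $A'$ and raised by $B'$, so by Definition \ref{def:lr} the pair (\ref{eq:LRPNewLRT}) is an LR pair on $V$, and the $(A',B')$-decomposition of $V$ is exactly $\{V_i\}_{i=0}^d$ by uniqueness. Then by Definition \ref{def:ABE} the idempotent sequence for (\ref{eq:LRPNewLRT}) equals $\{E_i\}_{i=0}^d$. The outer and inner parts depend only on the idempotent sequence (Definition \ref{def:VoutVin}), so they coincide with the outer and inner parts for $A,B$; by Lemma \ref{lem:Jfacts}(i) (or directly from Definition \ref{def:J}), the projector coincides as well.

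Finally, for the parameter sequence, I will compute the eigenvalue of $B'A'$ on $V_i$ for $1 \leq i \leq d$ using the action described in the first paragraph. If $i$ is even then $i-1$ is odd, so
\begin{eqnarray*}
B'A' \big|_{V_i} \;=\; (\beta_{\rm in} B)\,(\alpha_{\rm out} A)\big|_{V_i} \;=\; \alpha_{\rm out}\beta_{\rm in}\,\varphi_i\, I\big|_{V_i},
\end{eqnarray*}
using Definition \ref{def:pa}. If $i$ is odd then $i-1$ is even, and the same reasoning gives $B'A'|_{V_i} = \alpha_{\rm in}\beta_{\rm out}\,\varphi_i\, I|_{V_i}$. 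Comparing with Definition \ref{def:pa} applied to the LR pair (\ref{eq:LRPNewLRT}) yields the stated formula for its parameter sequence.

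The proof really is just careful bookkeeping; the only point to keep an eye on is the parity swap between $A'$ (which takes us from $V_i$ to $V_{i-1}$, changing parity) and $B'$ (which brings us back), so that the two scalar factors appearing in each $f_i$ are drawn from \emph{opposite} subscripts, producing the cross products $\alpha_{\rm out}\beta_{\rm in}$ and $\alpha_{\rm in}\beta_{\rm out}$ rather than $\alpha_{\rm out}\beta_{\rm out}$ and $\alpha_{\rm in}\beta_{\rm in}$.
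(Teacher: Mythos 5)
Your proof is correct, and it is exactly the detailed version of what the paper intends: the paper's own proof is just the phrase ``By construction,'' and your argument spells out that construction — tracking the parity of each $V_i$ to see that $A'$ and $B'$ act on $V_i$ as the appropriate nonzero scalar times $A$ and $B$, hence lower/raise the same decomposition, and then reading off the parameter sequence from the eigenvalue of $B'A'$ on $V_i$ with the parity swap producing the cross products $\alpha_{\rm out}\beta_{\rm in}$ and $\alpha_{\rm in}\beta_{\rm out}$. No gaps.
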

\begin{proof} By construction.
\end{proof}

\begin{lemma}
\label{lem:ApBp}
Let $A',B'$ denote an LR pair on $V$. 
Let
$\alpha_{\rm out}, 
\alpha_{\rm in},
\beta_{\rm out},
\beta_{\rm in} 
$
denote nonzero scalars in $\mathbb F$.
Then the following are equivalent:
\begin{enumerate}
\item[\rm (i)] both
\begin{eqnarray*}
A' = \alpha_{\rm out}A_{\rm out}
+\alpha_{\rm in}A_{\rm in}, \qquad \qquad
B' = 
\beta_{\rm out} B_{\rm out}+
\beta_{\rm in}B_{\rm in};
\end{eqnarray*}
\item[\rm (ii)] all of 
\begin{eqnarray*}
&&
A'_{\rm out} =  
\alpha_{\rm out}A_{\rm out},
\qquad \qquad 
A'_{\rm in} =  
\alpha_{\rm in}A_{\rm in},
\\
&&
B'_{\rm out} =  
\beta_{\rm out}B_{\rm out},
\qquad \qquad 
B'_{\rm in} =  
\beta_{\rm in}B_{\rm in}.
\end{eqnarray*}
\end{enumerate}
\end{lemma}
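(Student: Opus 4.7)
My plan is to split the biconditional into its two directions and treat each separately. The central tool is Lemma \ref{lem:LRPInOut}, which asserts that forming the pair $\alpha_{\rm out}A_{\rm out}+\alpha_{\rm in}A_{\rm in},\;\beta_{\rm out}B_{\rm out}+\beta_{\rm in}B_{\rm in}$ yields an LR pair on $V$ whose outer part, inner part, and projector all coincide with those of $A,B$. Once that is in hand, everything reduces to bookkeeping with Definition \ref{def:Ainout}.

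For the implication ${\rm (ii)} \Rightarrow {\rm (i)}$: I first apply Definition \ref{def:Ainout} to the LR pair $A',B'$ to write $A' = A'_{\rm out}+A'_{\rm in}$ and $B' = B'_{\rm out}+B'_{\rm in}$. Substituting the four identities from (ii) directly yields the two equations in (i).

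For ${\rm (i)}\Rightarrow {\rm (ii)}$: I first observe that condition (i) says precisely that $A',B'$ coincides with the LR pair produced by Lemma \ref{lem:LRPInOut} from $A,B$. Hence that lemma tells me the outer and inner parts of $V$ with respect to $A',B'$ equal $V_{\rm out}$ and $V_{\rm in}$, respectively. Next, to verify $A'_{\rm out}=\alpha_{\rm out}A_{\rm out}$, I check agreement on the two summands of $V=V_{\rm out}+V_{\rm in}$: on $V_{\rm out}$, the map $A'_{\rm out}$ acts as $A' = \alpha_{\rm out}A_{\rm out}+\alpha_{\rm in}A_{\rm in}$, and since $A_{\rm in}$ vanishes on $V_{\rm out}$ by Definition \ref{def:Ainout}, this equals $\alpha_{\rm out}A_{\rm out}$; on $V_{\rm in}$ both sides are zero. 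The equalities $A'_{\rm in}=\alpha_{\rm in}A_{\rm in}$, $B'_{\rm out}=\beta_{\rm out}B_{\rm out}$, $B'_{\rm in}=\beta_{\rm in}B_{\rm in}$ are obtained by identical calculations, swapping the roles of $V_{\rm out}, V_{\rm in}$ and $A,B$ as appropriate.

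I do not anticipate any real obstacle. The only delicate point to flag is that in the equation $A'_{\rm out}=\alpha_{\rm out}A_{\rm out}$ the left-hand side is defined with respect to the outer/inner parts for $A',B'$, while the right-hand side uses those for $A,B$, so the preliminary appeal to Lemma \ref{lem:LRPInOut} guaranteeing equality of these subspaces is what makes the two sides comparable in the first place.
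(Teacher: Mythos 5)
Your proof is correct and follows essentially the same route as the paper, which likewise proves (ii)$\Rightarrow$(i) via the decomposition $A'=A'_{\rm out}+A'_{\rm in}$ from Definition \ref{def:Ainout} and proves (i)$\Rightarrow$(ii) by combining Definition \ref{def:Ainout} with Lemma \ref{lem:LRPInOut}. Your explicit remark that Lemma \ref{lem:LRPInOut} is what identifies the outer/inner parts for $A',B'$ with those for $A,B$ is exactly the point the paper's terse proof leaves implicit.
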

\begin{proof}
${\rm (i)}\Rightarrow {\rm (ii)}$  Use
Definition
\ref{def:Ainout}
and
 Lemma
\ref{lem:LRPInOut}.
\\
${\rm (ii)}\Rightarrow {\rm (i)}$ 
By 
Definition \ref{def:Ainout} we have
$A' = A'_{\rm out} + A'_{\rm in}$ and
$B' = B'_{\rm out} + B'_{\rm in}$.
\end{proof}

\noindent Referring to Lemmas
\ref{lem:LRPInOut},
\ref{lem:ApBp}, we now consider the case
in which $\alpha_{\rm in}=1$
and $\beta_{\rm in}=1$.

\begin{definition}\rm
Let $A',B'$ denote an LR pair on $V$.
The LR pairs $A,B$ and $A',B'$ will be called 
{\it biassociates} whenever there exist nonzero
$\alpha, \beta$ in $\mathbb F$
such that
\begin{eqnarray*}
A' = \alpha A_{\rm out}
+A_{\rm in},
\qquad \qquad
B' = \beta  B_{\rm out}+
B_{\rm in}.
\end{eqnarray*}
Biassociativity is an equivalence relation.
\end{definition}

\begin{lemma}
\label{lem:twoviewBisim}
Let $V'$ denote a vector space over $\mathbb F$
with dimension $d+1$, and let
$A',B'$ denote an LR pair on $V'$.
Let $\sigma :V\to V'$ denote an $\mathbb F$-linear bijection.
Then for nonzero $\alpha, \beta $ in $\mathbb F$ the following
{\rm (i)--(iii)} are equivalent:
\begin{enumerate}
\item[\rm (i)] $\sigma$ is an isomorphism of LR pairs from
$\alpha A_{\rm out}+A_{\rm in},
\beta B_{\rm out}+B_{\rm in}$ to $A',B'$;
\item[\rm (ii)] $\sigma$ is an isomorphism of LR pairs from
$A,B$ to 
$\alpha^{-1} A'_{\rm out} + A'_{\rm in},
\beta^{-1} B'_{\rm out} + B'_{\rm in}$;
\item[\rm (iii)]  all of
\begin{eqnarray*}
&&\alpha \sigma A_{\rm out}  = A'_{\rm out} \sigma,
\qquad \qquad 
\sigma A_{\rm in}  = A'_{\rm in} \sigma,
\\
&&
\beta \sigma B_{\rm out}  = B'_{\rm out} \sigma,
\qquad \qquad 
\sigma B_{\rm in}  = B'_{\rm in} \sigma.
\end{eqnarray*}
\end{enumerate}
\end{lemma}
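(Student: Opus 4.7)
The plan is to reduce both (i) and (ii) to (iii) by invoking Lemma~\ref{lem:iso2view} (which translates isomorphism of LR pairs into four out/in intertwining relations) together with Lemma~\ref{lem:ApBp} (which identifies the out/in parts of a rescaled LR pair). The main obstacle is purely bookkeeping: keeping track of which of $\alpha,\beta$ and $\alpha^{-1},\beta^{-1}$ attaches to which map, and being sure the scalar moves across $\sigma$ correctly.

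First I would handle condition (i). By Lemma~\ref{lem:LRPInOut} (taking $\alpha_{\rm out}=\alpha$, $\alpha_{\rm in}=1$, $\beta_{\rm out}=\beta$, $\beta_{\rm in}=1$), the pair
\[
\alpha A_{\rm out}+A_{\rm in},\qquad \beta B_{\rm out}+B_{\rm in}
\]
is an LR pair on $V$, and the outer/inner parts of $V$ for this pair coincide with those for $A,B$. Lemma~\ref{lem:ApBp} then yields that its out/in components are exactly $\alpha A_{\rm out}$, $A_{\rm in}$, $\beta B_{\rm out}$, $B_{\rm in}$. Applying Lemma~\ref{lem:iso2view} to this new LR pair and to $A',B'$, condition (i) translates to $\sigma(\alpha A_{\rm out})=A'_{\rm out}\sigma$, $\sigma A_{\rm in}=A'_{\rm in}\sigma$, $\sigma(\beta B_{\rm out})=B'_{\rm out}\sigma$, and $\sigma B_{\rm in}=B'_{\rm in}\sigma$, which is precisely (iii).

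Next I would run the symmetric argument for (ii). Apply Lemma~\ref{lem:LRPInOut} on $V'$ with $(\alpha_{\rm out},\alpha_{\rm in},\beta_{\rm out},\beta_{\rm in})=(\alpha^{-1},1,\beta^{-1},1)$ to conclude that $\alpha^{-1}A'_{\rm out}+A'_{\rm in},\beta^{-1}B'_{\rm out}+B'_{\rm in}$ is an LR pair on $V'$, and then use Lemma~\ref{lem:ApBp} to identify its out/in components as $\alpha^{-1}A'_{\rm out}$, $A'_{\rm in}$, $\beta^{-1}B'_{\rm out}$, $B'_{\rm in}$. Lemma~\ref{lem:iso2view} now rewrites (ii) as $\sigma A_{\rm out}=\alpha^{-1}A'_{\rm out}\sigma$, $\sigma A_{\rm in}=A'_{\rm in}\sigma$, $\sigma B_{\rm out}=\beta^{-1}B'_{\rm out}\sigma$, $\sigma B_{\rm in}=B'_{\rm in}\sigma$. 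Multiplying the first and third of these by $\alpha$ and $\beta$ respectively gives exactly the four equations of (iii). This shows $\mathrm{(i)}\Leftrightarrow\mathrm{(iii)}\Leftrightarrow\mathrm{(ii)}$ and completes the proof.
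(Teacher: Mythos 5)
Your proof is correct and follows essentially the same route as the paper, which likewise deduces the equivalences (i)$\Leftrightarrow$(iii) and (ii)$\Leftrightarrow$(iii) from Lemmas \ref{lem:iso2view} and \ref{lem:ApBp}. Your write-up simply makes explicit the identification of the out/in components of the rescaled pairs via Lemma \ref{lem:LRPInOut}, which the paper leaves implicit.
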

\begin{proof} 
By Lemmas
\ref{lem:iso2view},
\ref{lem:ApBp} the assertions (i), (iii) are
equivalent, and the assertions
(ii), (iii) are equivalent.
\end{proof}

\begin{lemma}
\label{lem:LRPBiassocIso}
Let $A,B$ and $A',B'$ denote LR pairs over $\mathbb F$
that have diameter $d$.
Then the following are equivalent:
\begin{enumerate}
\item[\rm (i)] there exists an LR pair  over $\mathbb F$
that is biassociate to $A,B$ and isomorphic to $A',B'$;
\item[\rm (ii)] there exists an LR pair over $\mathbb F$
that is isomorphic to $A,B$ and biassociate to $A',B'$.
\end{enumerate}
\end{lemma}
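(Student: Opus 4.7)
The plan is to mirror the proof of Lemma \ref{lem:assocIso}, swapping its use of Lemma \ref{lem:twoview} for the biassociate version, Lemma \ref{lem:twoviewBisim}. Let $V$ (resp.\ $V'$) denote the underlying vector space of $A,B$ (resp.\ $A',B'$).

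First I would unfold the definition of biassociate. Condition (i) asserts the existence of nonzero $\alpha,\beta \in \mathbb F$ together with an $\mathbb F$-linear bijection $\sigma:V\to V'$ that is an isomorphism of LR pairs from
\[
\alpha A_{\rm out}+A_{\rm in},\qquad \beta B_{\rm out}+B_{\rm in}
\]
to $A',B'$. Likewise, condition (ii) asserts the existence of nonzero $\alpha',\beta' \in \mathbb F$ together with an $\mathbb F$-linear bijection $\sigma':V\to V'$ that is an isomorphism of LR pairs from $A,B$ to
\[
\alpha' A'_{\rm out}+A'_{\rm in},\qquad \beta' B'_{\rm out}+B'_{\rm in}.
\]

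Then I would invoke the equivalence of parts (i) and (ii) of Lemma \ref{lem:twoviewBisim}. Applied to any choice of nonzero $\alpha,\beta \in \mathbb F$, it states that an $\mathbb F$-linear bijection $\sigma:V\to V'$ is an isomorphism from $\alpha A_{\rm out}+A_{\rm in},\beta B_{\rm out}+B_{\rm in}$ to $A',B'$ if and only if the same $\sigma$ is an isomorphism from $A,B$ to $\alpha^{-1}A'_{\rm out}+A'_{\rm in},\beta^{-1}B'_{\rm out}+B'_{\rm in}$. Thus, setting $\alpha'=\alpha^{-1}$ and $\beta'=\beta^{-1}$ (both nonzero), the data witnessing (i) is literally the same data witnessing (ii), and conversely. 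This proves (i) $\Leftrightarrow$ (ii).

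There is no real obstacle: everything reduces to correctly identifying the data (the scalars and the bijection $\sigma$) and applying Lemma \ref{lem:twoviewBisim} in both directions. The only point requiring a line of care is the bookkeeping that $\alpha,\beta$ range over nonzero scalars precisely when $\alpha^{-1},\beta^{-1}$ do, so the two existential statements match up exactly.
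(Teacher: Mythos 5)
Your proposal is correct and follows the paper's own argument: both unfold the definition of biassociate and apply the equivalence of parts (i) and (ii) of Lemma \ref{lem:twoviewBisim}, matching the witnessing scalars $\alpha,\beta$ for condition (i) with $\alpha^{-1},\beta^{-1}$ for condition (ii). No gaps.
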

\begin{proof} Pick nonzero $\alpha, \beta $ in $\mathbb F$.
By Lemma
\ref{lem:twoviewBisim}(i),(ii) the LR pair
$\alpha A_{\rm out} + A_{\rm in}, 
\beta B_{\rm out} + B_{\rm in}$ satisfies condition (i)
in the present lemma if and only if the LR pair
$\alpha^{-1} A'_{\rm out} + A'_{\rm in}, 
\beta^{-1} B'_{\rm out} + B'_{\rm in}$ satisfies condition (ii)
in the present lemma.
The result follows.
\end{proof}

\begin{definition}
\label{def:BIAS}
\rm
Let $A,B$ and $A',B'$ denote LR pairs over $\mathbb F$
that have diameter $d$.
Then $A,B$ and $A',B'$ will be called 
{\it bisimilar} whenever the equivalent conditions {\rm (i), (ii)}
hold in Lemma
\ref{lem:LRPBiassocIso}.
\end{definition}

\begin{lemma}
Let $A,B$ (resp. $A',B'$) denote
an LR pair over $\mathbb F$, with
parameter sequence $\lbrace \varphi_i\rbrace_{i=1}^d$
(resp.
$\lbrace \varphi'_i\rbrace_{i=1}^d$).
Then the following are equivalent:
\begin{enumerate}
\item[\rm (i)] the LR pairs
$A,B$ and $A',B'$ are bisimilar;
\item[\rm (ii)] the ratio $\varphi'_i / \varphi_i$
is independent of $i$ for $i$ even $(1 \leq i \leq d)$,
and 
 independent of $i$ for $i$ odd $(1 \leq i \leq d)$.
\end{enumerate}
\end{lemma}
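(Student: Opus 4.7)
My plan is to derive the equivalence directly from Lemma~\ref{lem:LRPInOut} (which tells us how the parameter sequence behaves under the operation $A \mapsto \alpha_{\rm out}A_{\rm out}+\alpha_{\rm in}A_{\rm in}$, etc.) together with the classification Proposition~\ref{prop:LRpairClass} (which says LR pairs up to isomorphism are determined by their parameter sequence) and Definition~\ref{def:BIAS} of bisimilarity via biassociativity. The whole argument is a parity-tracking computation; there is no real obstacle once these tools are lined up.

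For (i)$\Rightarrow$(ii), assume $A,B$ and $A',B'$ are bisimilar. By Lemma~\ref{lem:LRPBiassocIso} and Definition~\ref{def:BIAS}, there is an LR pair biassociate to $A,B$ and isomorphic to $A',B'$; that is, there exist nonzero $\alpha,\beta \in \mathbb F$ such that
\begin{eqnarray*}
\alpha A_{\rm out}+A_{\rm in}, \qquad \beta B_{\rm out}+B_{\rm in}
\end{eqnarray*}
is isomorphic to $A',B'$. By Lemma~\ref{lem:LRPInOut} (with $\alpha_{\rm out}=\alpha$, $\alpha_{\rm in}=1$, $\beta_{\rm out}=\beta$, $\beta_{\rm in}=1$), this biassociate pair has parameter sequence $\lbrace f_i\varphi_i\rbrace_{i=1}^d$ with $f_i=\alpha$ for $i$ even and $f_i=\beta$ for $i$ odd. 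By Proposition~\ref{prop:LRpairClass}, isomorphic LR pairs share a parameter sequence, so $\varphi'_i = f_i \varphi_i$ for $1\le i\le d$. Therefore $\varphi'_i/\varphi_i$ equals $\alpha$ for all even $i$ and equals $\beta$ for all odd $i$, proving (ii).

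For (ii)$\Rightarrow$(i), let $\alpha$ denote the common value of $\varphi'_i/\varphi_i$ on even indices and $\beta$ the common value on odd indices; both are nonzero since the $\varphi_i,\varphi'_i$ are nonzero. Form the LR pair
\begin{eqnarray*}
\alpha A_{\rm out}+A_{\rm in}, \qquad \beta B_{\rm out}+B_{\rm in},
\end{eqnarray*}
which is biassociate to $A,B$ by construction. By Lemma~\ref{lem:LRPInOut} its parameter sequence is $\lbrace f_i\varphi_i\rbrace_{i=1}^d$ with $f_i=\alpha$ for $i$ even and $f_i=\beta$ for $i$ odd; by the choice of $\alpha,\beta$ this equals $\lbrace \varphi'_i\rbrace_{i=1}^d$, the parameter sequence of $A',B'$. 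Proposition~\ref{prop:LRpairClass} then gives that this biassociate pair is isomorphic to $A',B'$, so by Lemma~\ref{lem:LRPBiassocIso} and Definition~\ref{def:BIAS} we conclude that $A,B$ and $A',B'$ are bisimilar.
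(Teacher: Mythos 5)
Your proof is correct and follows the same route as the paper: both directions use Lemma \ref{lem:LRPBiassocIso} with Definition \ref{def:BIAS} to pass between bisimilarity and biassociativity-plus-isomorphism, Lemma \ref{lem:LRPInOut} to compute the parameter sequence of $\alpha A_{\rm out}+A_{\rm in},\ \beta B_{\rm out}+B_{\rm in}$, and Proposition \ref{prop:LRpairClass} to convert agreement of parameter sequences into isomorphism. No gaps.
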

\begin{proof}
${\rm (i)}\Rightarrow {\rm (ii)}$  
By Lemma
\ref{lem:LRPBiassocIso}
and Definition
\ref{def:BIAS},
there exist nonzero $\alpha, \beta $ in $\mathbb F$
such that
$\alpha A_{\rm out}+A_{\rm in},
\beta B_{\rm out}+B_{\rm in}$ is isomorphic to $A',B'$.
By Lemma
\ref{lem:LRPInOut}
the LR pair
$\alpha A_{\rm out}+A_{\rm in},
\beta B_{\rm out}+B_{\rm in}$ has parameter sequence
$\lbrace f_i \varphi_i\rbrace_{i=1}^d$, where
\begin{eqnarray*}
f_i =
 \begin{cases}
\alpha  &  {\mbox{\rm if $i$ is even}}; \\
\beta & {\mbox{\rm if $i$ is odd}}
\end{cases}
\qquad \qquad (1 \leq i \leq d).
\end{eqnarray*}
By Proposition \ref{prop:LRpairClass}
$\varphi'_i = f_i \varphi_i$ for
$1 \leq i \leq d$.
By these comments
 $\varphi'_i / \varphi_i$
is independent of $i$ for $i$ even $(1 \leq i \leq d)$,
and 
 independent of $i$ for $i$ odd $(1 \leq i \leq d)$.
\\
\noindent
${\rm (ii)}\Rightarrow {\rm (i)}$  
By assumption there exist nonzero $\alpha, \beta$ in
$\mathbb F$ such that
\begin{eqnarray*}
\varphi'_i/\varphi_i =
 \begin{cases}
\alpha  &  {\mbox{\rm if $i$ is even}}; \\
\beta & {\mbox{\rm if $i$ is odd}}
\end{cases}
\qquad \qquad (1 \leq i \leq d).
\end{eqnarray*}
By Lemma
\ref{lem:LRPInOut} and the construction,
the
 LR pairs
$\alpha A_{\rm out} + A_{\rm in},
\beta B_{\rm out} + B_{\rm in} $ and
$A',B'$ have the same
 parameter sequence $\lbrace \varphi'_i\rbrace_{i=1}^d$.
Therefore
they are isomorphic by
Proposition \ref{prop:LRpairClass}.
Now $A,B$ and
$A',B'$ are bisimilar in view of
Lemma
\ref{lem:LRPBiassocIso}
and Definition
\ref{def:BIAS}.
\end{proof}

\section{Constrained sequences}

 In this section we consider a type
of finite sequence, said to be constrained.
We classify the constrained sequences. This classification
will be used
later in the paper.

\medskip

\noindent Throughout this section, $n$ denotes a nonnegative
integer and
 $\lbrace \rho_i \rbrace_{i=0}^n$  denotes a sequence
of scalars taken from $\mathbb F$.

\begin{definition}
\label{def:constrain}
\rm
The sequence $\lbrace \rho_i \rbrace_{i=0}^n$ 
 is said to be {\it constrained} whenever
\begin{enumerate}
\item[\rm (i)] $\rho_i \rho_{n-i}=1$ for $0 \leq i \leq n$;
\item[\rm (ii)] there exist $a,b,c \in \mathbb F$ that
are not all zero and
 $a\rho_{i-1} + b\rho_i + c \rho_{i+1} = 0
$ for $1 \leq i \leq n-1$.
\end{enumerate}
\end{definition}

\noindent Shortly we will classify the constrained sequences.
We will use an inductive  argument  based on the 
 following observation.

\begin{lemma} Assume that $n\geq 2$ and 
the sequence $\lbrace \rho_i \rbrace_{i=0}^n$  is constrained.
Then the sequence $\rho_1, \rho_2, \ldots, \rho_{n-1}$
is constrained.
\end{lemma}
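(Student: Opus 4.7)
The plan is to verify the two clauses of Definition \ref{def:constrain} directly for the shortened sequence, by a simple re-indexing. Write $\sigma_i = \rho_{i+1}$ for $0 \leq i \leq n-2$; the claim is that $\lbrace \sigma_i \rbrace_{i=0}^{n-2}$ is constrained, so that the integer playing the role of ``$n$'' in Definition \ref{def:constrain} is $n-2$.

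For clause (i), given $0 \leq i \leq n-2$ I would compute
\begin{eqnarray*}
\sigma_i \sigma_{(n-2)-i} = \rho_{i+1}\rho_{n-1-i} = \rho_{i+1}\rho_{n-(i+1)} = 1,
\end{eqnarray*}
where the last equality is Definition \ref{def:constrain}(i) applied to the original sequence at index $j = i+1$ (which lies in $\lbrace 0,1,\ldots,n\rbrace$). So (i) transfers automatically.

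For clause (ii), I would reuse the very same triple $a,b,c \in \mathbb F$, not all zero, supplied by the hypothesis that $\lbrace \rho_i\rbrace_{i=0}^n$ is constrained. For $1 \leq i \leq n-3$ the required recurrence reads
\begin{eqnarray*}
a\sigma_{i-1} + b\sigma_i + c\sigma_{i+1} = a\rho_i + b\rho_{i+1} + c\rho_{i+2} = 0,
\end{eqnarray*}
and this is just the original recurrence evaluated at $j = i+1 \in [2,n-2] \subset [1,n-1]$, so it holds by Definition \ref{def:constrain}(ii) for $\lbrace \rho_i\rbrace_{i=0}^n$. In the degenerate cases $n = 2$ and $n = 3$ the index range $1 \leq i \leq n-3$ is empty and (ii) holds vacuously for any nonzero triple.

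There is essentially no obstacle: the argument is a pure re-indexing, and the only subtlety is the boundary behaviour at small $n$, which is handled by the vacuous satisfaction of the recurrence.
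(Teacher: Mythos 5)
Your proof is correct and is exactly the intended argument: the paper states this lemma as an unproved "observation," and the direct re-indexing verification of both clauses of Definition \ref{def:constrain} (with the vacuous case at small $n$) is all that is needed.
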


\noindent The sequence $\lbrace \rho_i \rbrace_{i=0}^n$ is called
{\it geometric} whenever $\rho_i \not=0$ for
$0 \leq i \leq n$ and $\rho_i/\rho_{i-1}$ is independent of
$i$ for $1 \leq i \leq n$.
The following are equivalent:
(i) $\lbrace \rho_i \rbrace_{i=0}^n$ is geometric;
(ii) there exist nonzero $r,\xi \in \mathbb F$
such that $\rho_i = \xi r^i$ for $0 \leq i \leq n$.
In this case $\xi = \rho_0$ and
$r= \rho_i /\rho_{i-1}$ for $1 \leq i \leq n$.
\medskip

\noindent 
We now classify the constrained sequences.
The case of $n$ even and $n$ odd will
be treated separately.

\begin{proposition}
\label{prop:nevenPre}
Assume that $n$ is even. Then for
the sequence
 $\lbrace \rho_i \rbrace_{i=0}^n$
 the following {\rm (i)--(iii)} are equivalent:
\begin{enumerate}
\item[\rm (i)] 
$\lbrace \rho_i \rbrace_{i=0}^n$ is constrained;
\item[\rm (ii)]
$\lbrace \rho_i \rbrace_{i=0}^n$ is geometric and $\rho_{n/2}
\in \lbrace 1,-1\rbrace$;
\item[\rm (iii)] there exist
$0 \not=r \in \mathbb F$
and $\varepsilon \in \lbrace 1,-1\rbrace$
 such that
$\rho_i = \varepsilon r^{i-n/2}$ for $0 \leq i \leq n$.
\end{enumerate}
Assume that {\rm (i)--(iii)} hold.
Then $r=\rho_i /\rho_{i-1}$ for $1 \leq i \leq n$,
 and $\varepsilon = \rho_{n/2}$.
\end{proposition}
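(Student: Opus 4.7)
The equivalence (ii)$\Leftrightarrow$(iii) just unfolds the definition of \emph{geometric}: a geometric sequence with ratio $r$ and middle value $\rho_{n/2}=\varepsilon$ has the form $\rho_i = \varepsilon r^{i-n/2}$, and any such formula yields a geometric sequence whose middle term is $\varepsilon$; the final clause of the proposition ($r = \rho_i/\rho_{i-1}$ for $1 \leq i \leq n$ and $\varepsilon = \rho_{n/2}$) is then read off. For (iii)$\Rightarrow$(i), I observe $\rho_i\rho_{n-i} = \varepsilon^2 r^{(i-n/2)+(n-i-n/2)} = 1$, and the choice $(a,b,c) = (-r,1,0)$ in Definition \ref{def:constrain}(ii) is nonzero and satisfies $-r\rho_{i-1}+\rho_i = 0$ for a geometric sequence of ratio $r$.

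The substantive direction is (i)$\Rightarrow$(ii),(iii). Set aside the case $n=0$, where Definition \ref{def:constrain}(ii) is vacuous and $\rho_0^2=1$ immediately gives (iii) with $\varepsilon=\rho_0$ and any $r$. For $n\geq 2$ the hypothesis $\rho_i\rho_{n-i}=1$ forces each $\rho_i\neq 0$, so I can introduce the ratios $\tau_i = \rho_{i+1}/\rho_i \in \mathbb F^{\times}$ for $0\leq i \leq n-1$. Dividing $\rho_{i+1}\rho_{n-i-1}=1$ by $\rho_i\rho_{n-i}=1$ yields the \emph{palindromic identity} $\tau_i = \tau_{n-1-i}$; applied at the middle this gives the crucial relation $\tau_{n/2-1}=\tau_{n/2}$, which uses evenness of $n$. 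Meanwhile, dividing the recurrence $a\rho_{i-1}+b\rho_i+c\rho_{i+1}=0$ by $\rho_{i-1}$ yields
\begin{equation*}
a + b\tau_{i-1} + c\tau_{i-1}\tau_i = 0 \qquad (1 \leq i \leq n-1).
\end{equation*}
My target is to show all $\tau_i$ equal a common value $r\in \mathbb F^{\times}$; then $\rho_i$ is geometric with ratio $r$, and $\rho_{n/2}^2=\rho_{n/2}\rho_{n-n/2}=1$ forces $\rho_{n/2}\in\{1,-1\}$, giving (iii) with $\varepsilon=\rho_{n/2}$.

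To prove that $\tau$ is constant I split on $c$. If $c=0$, the recurrence is $a\rho_{i-1}+b\rho_i=0$; here $b\neq 0$ (else $b=0$ forces $a\neq 0$ and $\rho_{i-1}=0$ for $1\leq i\leq n-1$, contradicting the nonvanishing of $\rho$), so $\tau_i=-a/b$ for $0\leq i \leq n-2$, and the palindromic identity $\tau_{n-1}=\tau_0$ pins down the last ratio. If $c\neq 0$, the recurrence becomes $\tau_i = f(\tau_{i-1})$ with $f(x)=-(a+bx)/(cx)$. When $a\neq 0$, $f$ is an invertible M\"obius transformation on $\overline{\mathbb F}\cup\{\infty\}$, and combining $\tau_i=f^i(\tau_0)$ with the palindromic identity yields $\tau_0 = f^{n-1-2i}(\tau_0)$ for $0\leq i\leq n-1$; taking $i=(n-2)/2$ (an integer since $n$ is even, in range since $n\geq 2$) gives $n-1-2i=1$, so $\tau_0$ is a fixed point of $f$ and all $\tau_i$ coincide. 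The subcase $a=0$, $c\neq 0$ degenerates: $f$ is the constant map $x\mapsto -b/c$, so $\tau_i=-b/c$ for $i\geq 1$, and again the palindrome fixes $\tau_0$. The main obstacle is keeping these subcases organized while exploiting the single key input from evenness of $n$, namely that the pivotal index $i=(n-2)/2$ is an integer and lies in the range where the recurrence applies.
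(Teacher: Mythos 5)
Your proof is correct, but it takes a genuinely different route from the paper's. The paper proves (i)$\Rightarrow$(iii) by induction on $n$: the inner sequence $\rho_1,\ldots,\rho_{n-1}$ is constrained with the same linear constraint, so induction determines $\rho_i$ for $1\le i\le n-1$, and the endpoint $\rho_0$ is then recovered from a $3\times 3$ determinant identity — the matrix with rows $(\rho_0,\rho_1,\rho_{n-2})$, $(\rho_1,\rho_2,\rho_{n-1})$, $(\rho_2,\rho_3,\rho_n)$ is singular because $(a,b,c)$ annihilates its rows, and its determinant is computed to be $-r^2(\rho_0-\varepsilon r^{-n/2})^2/\rho_0$, forcing $\rho_0=\varepsilon r^{-n/2}$. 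You instead pass to the ratio sequence $\tau_i=\rho_{i+1}/\rho_i$, observe that condition (i) of Definition \ref{def:constrain} makes this sequence palindromic ($\tau_i=\tau_{n-1-i}$) while condition (ii) makes it an orbit of the M\"obius map $f(x)=-(a+bx)/(cx)$ (or of an affine/constant map in the degenerate subcases), and then the middle identity $\tau_{n/2-1}=\tau_{n/2}$ exhibits a fixed point that injectivity of $f$ propagates to the whole orbit. This avoids both the induction and the determinant computation, and it isolates exactly where evenness enters (the pivotal index $n/2-1$ is an integer in range); the cost is a three-way case split on $(a,c)$, in which you should note explicitly that in the subcase $a=0$, $c\neq 0$ the conclusion $\tau_i=-b/c$ together with $\tau_i\neq 0$ forces $b\neq 0$ (and similarly $b\neq 0$ when $c=0$), though both points are immediate. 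The paper's inductive scheme has the separate virtue of running in parallel with the odd case in Proposition \ref{prop:noddPre}.
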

\begin{proof}
${\rm (i)}\Rightarrow {\rm (iii)}$  Our proof is by induction on
$n$. First assume that $n=0$. Then $\rho^2_0=1$.
Condition 
 (iii) holds with
$\varepsilon = \rho_0$ and arbitrary  $0 \not=r \in \mathbb F$.
Next assume that $n=2$. Then
$\rho_0\rho_2=1$ and $\rho^2_1=1$.
Condition (iii) holds with
$\varepsilon=\rho_1$ and
$r=\rho_1/\rho_0$.
Next assume that $n\geq 4$.
By Definition 
\ref{def:constrain}(ii), there exist $a,b,c\in \mathbb F$
that are not all zero and $a\rho_{i-1}+b\rho_i+c\rho_{i+1}=0$
for $1 \leq i \leq n-1$.
Define $m=n-2$ and $\rho'_i=\rho_{i+1}$ for $0 \leq i \leq m$.
By construction $\rho'_i \rho'_{m-i}=1$ for $0 \leq i \leq m$.
Moreover $a \rho'_{i-1} + b\rho'_i + c \rho'_{i+1} = 0$
for $1 \leq i \leq m-1$. By induction there exist
 $0 \not=r \in \mathbb F$ and
$\varepsilon \in \lbrace 1,-1\rbrace$ 
such that $\rho'_i = \varepsilon r^{i-m/2}$
for $0 \leq i \leq m$.
Now
\begin{eqnarray}
\label{eq:part}
\rho_i = \varepsilon r^{i-n/2} \qquad \qquad (1 \leq i \leq n-1).
\end{eqnarray}
We show that
$\rho_0 = \varepsilon r^{-n/2}$ and
$\rho_n = \varepsilon r^{n/2}$.
Since $\rho_0 \rho_n = 1$, it suffices to show that
$\rho_0 = \varepsilon r^{-n/2}$.
We claim that
\begin{equation}
\label{eq:matcheck}
{\rm det} \left(
\begin{array}{ccc}
\rho_0 & \rho_1 & \rho_{n-2} 
\\
\rho_1& \rho_2 & \rho_{n-1} 
\\
\rho_2 & \rho_3 & \rho_n 
\end{array}
\right)
= \frac{-r^2(\rho_0 -\varepsilon r^{-n/2})^2}{\rho_0}. 
\end{equation}
To verify
(\ref{eq:matcheck}),
evaluate the determinant using
(\ref{eq:part}) 
and $\rho_0 \rho_n = 1$,
and simplify the result.
The claim is proven.
For the matrix
in (\ref{eq:matcheck}), $a({\rm top} \;{\rm row})+
b({\rm middle}\; {\rm row})+c({\rm bottom}\; {\rm row})=0$.
The matrix
is singular, so its determinant is zero.
 Therefore $\rho_0 = \varepsilon r^{-n/2}$ as desired.
\\
${\rm (iii)}\Rightarrow {\rm (i)}$
By construction $\rho_i \rho_{n-i}=1$
for $0 \leq i \leq n$.
Define $a=r$, $b=-1$, $c=0$.
Then $a\rho_{i-1} + b\rho_i + c \rho_{i+1}=0$
for $1 \leq i \leq n-1$.
\\
${\rm (ii)}\Leftrightarrow {\rm (iii)}$
Routine.
\end{proof}

\begin{proposition}
\label{prop:noddPre}
Assume that $n$ is odd.
 Then for the sequence $\lbrace \rho_i \rbrace_{i=0}^n$ 
the following {\rm (i)--(iii)} are equivalent:
\begin{enumerate}
\item[\rm (i)]
  $\lbrace \rho_i \rbrace_{i=0}^n$ is constrained; 
\item[\rm (ii)]
the sequences $\rho_0, \rho_2, \ldots, \rho_{n-1}$
and $\rho^{-1}_n,\rho^{-1}_{n-2},
\ldots, \rho^{-1}_1$ are equal and geometric;
\item[\rm (iii)]
 there exist nonzero 
$s,\xi \in \mathbb F$ such that
\begin{eqnarray}
\label{eq:gammaSol}
\rho_i = \begin{cases}
\xi s^{i/2}  &  {\mbox{\rm if $i$ is even}}; \\
\xi^{-1}s^{(i-n)/2} & {\mbox{\rm if $i$ is odd}}
\end{cases}
\qquad \qquad (0 \leq i \leq n).
\end{eqnarray}
\end{enumerate}
Assume that {\rm (i)--(iii)} hold.
Then $s=\rho_i /\rho_{i-2}$ for $2 \leq i \leq n$ and $\xi = \rho_0$.
\end{proposition}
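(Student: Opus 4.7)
The plan is to mirror the strategy used for even $n$ in Proposition~\ref{prop:nevenPre}, but with an inductive step that is more delicate because for odd $n$ the even- and odd-indexed subsequences interact through the pairing.

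For the easy implications: (iii)$\Rightarrow$(i) follows by choosing $(a,b,c)=(s,0,-1)$, so that the required recursion reads $\rho_{i+1}=s\rho_{i-1}$; this is immediate from \eqref{eq:gammaSol} since consecutive same-parity entries are in ratio $s$, while the pairing $\rho_i\rho_{n-i}=1$ holds because $i$ and $n-i$ have opposite parity for $n$ odd. For (ii)$\Leftrightarrow$(iii), note that equality of the two listed subsequences is exactly the pairing condition $\rho_{2k}\rho_{n-2k}=1$, and geometricity with common ratio $s$ yields $\rho_{2k}=\rho_0 s^k$ and $\rho_{n-2k}=\rho_0^{-1}s^{-k}$; reindexing produces \eqref{eq:gammaSol} with $\xi=\rho_0$.

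I plan to prove the substantive direction (i)$\Rightarrow$(iii) by induction on odd $n\ge 1$. The base cases $n=1$ (take $\xi=\rho_0$ and any nonzero $s$) and $n=3$ (take $\xi=\rho_0$ and $s=(\rho_0\rho_1)^{-1}$, then verify directly) are immediate from pairing alone. For the inductive step with $n\ge 5$, the shifted subsequence $\rho'_i=\rho_{i+1}$ for $0\le i\le n-2$ is still paired and is constrained by the same triple $(a,b,c)$ (the recursion indices shift to $1\le j\le n-3$). By the inductive hypothesis there exist nonzero $\eta,s$ with $\rho_j=\eta s^{(j-1)/2}$ for $j$ odd and $\rho_j=\eta^{-1}s^{(j-n+1)/2}$ for $j$ even, in the range $1\le j\le n-1$.

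It remains to identify $\rho_0$ (which then forces $\rho_n=\rho_0^{-1}$). My plan is to substitute the inductive formulas into the recurrences at $i=2$ and $i=3$, obtaining $\eta^2(a+cs)=-bs^{(3-n)/2}$ and $s^{(3-n)/2}(a+cs)=-b\eta^2 s$; eliminating $(a+cs)$ yields $b(\eta^4 s^{n-2}-1)=0$. If $b=0$ then $a+cs=0$ with $a,c\ne 0$, and the recurrence at $i=1$ gives $\rho_0=\rho_2/s=\eta^{-1}s^{(1-n)/2}$ directly. If instead $\eta^4 s^{n-2}=1$, I will invoke the $3\times 3$ determinant trick from Proposition~\ref{prop:nevenPre}: the matrix with rows $(\rho_0,\rho_1,\rho_{n-2})$, $(\rho_1,\rho_2,\rho_{n-1})$, $(\rho_2,\rho_3,\rho_n)$ is singular by the recurrences at $i=1,2,n-1$, and expanding with $\rho_n=\rho_0^{-1}$ produces the quadratic $s\rho_0^2-(\eta^{-1}s^{(3-n)/2}+\eta^3 s^{(n-1)/2})\rho_0+\eta^2=0$ with roots $\eta^{-1}s^{(1-n)/2}$ and $\eta^3 s^{(n-3)/2}$; the identity $\eta^4 s^{n-2}=1$ makes these coincide, again at $\eta^{-1}s^{(1-n)/2}$. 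Thus in both sub-cases $\rho_0=\eta^{-1}s^{(1-n)/2}$, and (iii) holds with $\xi=\rho_0$ and $s$ inherited from the induction. The main obstacle is the clean bookkeeping of this case split, ensuring that both branches produce the same explicit value of $\rho_0$; once that is done, the final assertions $s=\rho_i/\rho_{i-2}$ and $\xi=\rho_0$ fall out immediately from \eqref{eq:gammaSol}.
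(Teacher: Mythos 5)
Your proof is correct, and its skeleton coincides with the paper's: the same easy implications via $(a,b,c)=(s,0,-1)$, the same induction on odd $n$ with base cases $n=1,3$, and the same shifted subsequence $\rho'_i=\rho_{i+1}$ feeding the inductive hypothesis. The only genuine divergence is in the crux of the inductive step, where one must show $\rho_0=\xi$. The paper does this with a single closed-form identity: it exhibits a specific linear combination of two $3\times 3$ determinants (each singular because $(a,b,c)$ annihilates the rows) that evaluates to a nonzero multiple of $(\rho_0-\xi)^2$, forcing $\rho_0=\xi$ in one stroke. You instead extract the dichotomy $b(\eta^4 s^{n-2}-1)=0$ from the recurrences at $i=2,3$ and treat the two branches separately — the branch $b=0$ by the $i=1$ recurrence directly, the branch $\eta^4 s^{n-2}=1$ by a single determinant whose expansion is a quadratic in $\rho_0$ with a double root at $\xi$. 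Both routes are valid; the paper's buys a uniform, case-free verification (at the cost of having to guess the right combination of determinants), while yours makes visible why the argument works — the obstruction to determining $\rho_0$ from one determinant alone is precisely the degenerate situation $\eta^4 s^{n-2}=1$ (the geometric case of Lemma~\ref{lem:ConGeo}), and there the quadratic degenerates to a double root so no ambiguity survives. I verified the computations in both branches (including that the quadratic $s\rho_0^2-(\eta^{-1}s^{(3-n)/2}+\eta^3 s^{(n-1)/2})\rho_0+\eta^2=0$ has the stated roots and that they merge under $\eta^4 s^{n-2}=1$); everything checks out.
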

\begin{proof}
${\rm (i)}\Rightarrow {\rm (iii)}$  Our proof is by induction on
$n$. First assume that $n=1$. Then (iii) holds with
$\xi = \rho_0$ and arbitrary  $0 \not=s \in \mathbb F$.
Next assume that $n=3$. Then (iii) holds with 
$\xi=\rho_0$ and
$s=\rho_2/\rho_0$.
 Next assume that $n\geq 5$.
By Definition
\ref{def:constrain}(ii),
there exist $a,b,c \in \mathbb F$ that
are not all zero and
 $a\rho_{i-1} + b\rho_i + c \rho_{i+1} = 0
$ for $1 \leq i \leq n-1$.
Define $m=n-2$ and $\rho'_i=\rho_{i+1}$ for $0 \leq i \leq m$.
By construction $\rho'_i \rho'_{m-i}=1$ for $0 \leq i \leq m$.
Moreover $a \rho'_{i-1} + b\rho'_i + c \rho'_{i+1} = 0$
for $1 \leq i \leq m-1$. By induction there exist
nonzero $s,x \in \mathbb F$ such that
\begin{eqnarray*}
\rho'_i = \begin{cases}
x s^{i/2}  &  {\mbox{\rm if $i$ is even}}; \\
x^{-1}s^{(i-m)/2} & {\mbox{\rm if $i$ is odd}}
\end{cases}
\qquad \qquad (0 \leq i \leq m).
\end{eqnarray*}
\noindent Define $\xi = x^{-1}s^{-(1+m)/2}$. Then
\begin{eqnarray}
\label{eq:middle}
\rho_i = \begin{cases}
\xi s^{i/2}  &  {\mbox{\rm if $i$ is even}}; \\
\xi^{-1}s^{(i-n)/2} & {\mbox{\rm if $i$ is odd}}
\end{cases}
\qquad \qquad (1 \leq i \leq n-1).
\end{eqnarray}
We show  that
$\rho_0= \xi$ and $\rho_n = \xi^{-1}$. 
Since $\rho_0 \rho_n = 1$, it suffices to show that
$\rho_0 = \xi$.
We claim that
\begin{equation}
\label{eq:mat2}
{\rm det} \left(
\begin{array}{ccc}
\rho_0 & \rho_1 & \rho_2 
\\
\rho_1& \rho_2 & \rho_3 
\\
\rho_2 & \rho_3 & \rho_4 
\end{array}
\right)
+
\xi^2 s^2 {\rm det} \left(
\begin{array}{ccc}
\rho_0 & \rho_1 & \rho_{n-2} 
\\
\rho_1& \rho_2 & \rho_{n-1} 
\\
\rho_2 & \rho_3 & \rho_n 
\end{array}
\right)
= \frac{-(\rho_0 -\xi)^2}{ s^{n-3}\xi^{2}\rho_0}. 
\end{equation}
\noindent To verify
(\ref{eq:mat2}),
evaluate the two determinants
using
(\ref{eq:middle}) and $\rho_0 \rho_n = 1$,
and simplify the result.
The claim is proven.
For each matrix in
(\ref{eq:mat2}),
 $a({\rm top} \;{\rm row})+
b({\rm middle}\; {\rm row})+c({\rm bottom}\; {\rm row})=0$.
Each matrix
is singular, so its determinant is zero.
 Therefore $\rho_0 = \xi$.
\\
${\rm (iii)}\Rightarrow {\rm (i)}$
By construction $\rho_i \rho_{n-i}=1$
for $0 \leq i \leq n$.
Define $a=s$, $b=0$, $c=-1$.
Then $a\rho_{i-1} + b\rho_i + c \rho_{i+1}=0$
for $1 \leq i \leq n-1$.
\\
${\rm (ii)}\Leftrightarrow {\rm (iii)}$
Routine.
\end{proof}

\noindent Assume for the moment that $n$ is odd, and
refer to Proposition
\ref{prop:noddPre}. It could happen that
$\lbrace \rho_i\rbrace_{i=0}^n$ is geometric,
and the
 equivalent conditions
(i)--(iii) hold. 
We now investigate this case.

\begin{lemma}
\label{lem:ConGeo}
Assume that $n$ is odd. Then for the sequence
$\lbrace \rho_i \rbrace_{i=0}^n$ the following
{\rm (i)--(iv)} are equivalent:
\begin{enumerate}
\item[\rm (i)] 
$\lbrace \rho_i \rbrace_{i=0}^n$ is geometric and constrained;
\item[\rm (ii)] 
$\lbrace \rho_i \rbrace_{i=0}^n$ is geometric and 
$\rho_{(n-1)/2}$, $\rho_{(n+1)/2}$ are inverses;
\item[\rm (iii)] there exists $0 \not= t\in \mathbb F$
such that $\rho_i = t^{2i-n}$ for $0 \leq i \leq n$;
\item[\rm (iv)] there exist $s,\xi \in \mathbb F$
that satisfy $\xi^4 s^n=1$ and
Proposition
\ref{prop:noddPre}{\rm (iii)}.
\end{enumerate}
Assume that {\rm (i)--(iv)} hold. Then
 $\xi=\rho_0 = t^{-n}$ and
\begin{eqnarray}
\label{eq:4eq}
s=t^4, 
\qquad 
\rho_{(n-1)/2}=t^{-1},
\qquad 
\rho_{(n+1)/2}=t,
\qquad 
 t^2 = \rho_i/\rho_{i-1}
\quad (1 \leq i \leq n).
\end{eqnarray}
\end{lemma}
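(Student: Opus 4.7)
The plan is to establish (i)--(iii) via the cycle (iii)$\Rightarrow$(i)$\Rightarrow$(ii)$\Rightarrow$(iii), and to handle (iii)$\Leftrightarrow$(iv) separately. The implication (iii)$\Rightarrow$(i) is routine: if $\rho_i = t^{2i-n}$ then $\rho_i \rho_{n-i} = t^{0} = 1$, the sequence is geometric with ratio $t^2$, and it satisfies the nontrivial linear relation $t^2 \rho_{i-1} - \rho_i = 0$ demanded by Definition \ref{def:constrain}(ii). The implication (i)$\Rightarrow$(ii) is immediate: the constrained condition $\rho_i \rho_{n-i} = 1$ specialized to $i=(n-1)/2$ yields $\rho_{(n-1)/2}\rho_{(n+1)/2} = 1$.

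The main step is (ii)$\Rightarrow$(iii). Write $\rho_i = \xi r^i$ with $\xi, r$ nonzero; the hypothesis becomes $\xi^2 r^n = 1$. Since $n$ is odd, $(n+1)/2$ is an integer, so the scalar $t := \rho_{(n+1)/2} = \xi r^{(n+1)/2}$ lies in $\mathbb F$. Compute
\begin{eqnarray*}
t^2 = \xi^2 r^{n+1} = (\xi^2 r^n) \cdot r = r,
\end{eqnarray*}
so $r$ has $t$ as a square root inside $\mathbb F$. Raising $\xi^2 r^n = 1$ to the integer power $(n+1)/2$ gives $\xi^{n+1} r^{n(n+1)/2} = 1$, which rearranges to $\xi = \xi^{-n} r^{-n(n+1)/2} = t^{-n}$. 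Therefore $\rho_i = \xi r^i = t^{-n} t^{2i} = t^{2i-n}$, establishing (iii). The only delicate point is the extraction of a square root of $r$ inside $\mathbb F$, and it is precisely the parity of $n$ combined with the constraint $\xi^2 r^n = 1$ that makes this possible through the explicit witness $t = \rho_{(n+1)/2}$.

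For (iii)$\Rightarrow$(iv), set $\xi := t^{-n}$ and $s := t^4$; then $\xi^4 s^n = t^{-4n} t^{4n} = 1$, and one checks (\ref{eq:gammaSol}) by parity: for $i$ even, $\xi s^{i/2} = t^{-n} t^{2i} = t^{2i-n} = \rho_i$, while for $i$ odd, $\xi^{-1} s^{(i-n)/2} = t^{n} t^{2i-2n} = t^{2i-n} = \rho_i$. Conversely, assume (iv). From (\ref{eq:gammaSol}) a direct computation gives
\begin{eqnarray*}
\frac{\rho_1}{\rho_0} = \xi^{-2} s^{(1-n)/2}, \qquad \frac{\rho_2}{\rho_1} = \xi^{2} s^{(n+1)/2},
\end{eqnarray*}
whose ratio is $\xi^4 s^n = 1$. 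Thus $\rho_1/\rho_0 = \rho_2/\rho_1$; since (\ref{eq:gammaSol}) already yields $\rho_{i+2}/\rho_i = s$ for all applicable $i$, the two two-term subsequences (even- and odd-indexed) have the same common ratio, forcing $\lbrace \rho_i \rbrace_{i=0}^n$ to be geometric. By Proposition \ref{prop:noddPre} the sequence is also constrained, so (i) holds, and (iii) follows from the cycle already established.

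Finally, the concluding identities in (\ref{eq:4eq}) and $\xi = \rho_0 = t^{-n}$ are read off directly from these substitutions: $\rho_0 = t^{-n}$ gives $\xi = \rho_0 = t^{-n}$; $\rho_{(n+1)/2} = t^{2(n+1)/2 - n} = t$ and $\rho_{(n-1)/2} = t^{-1}$; $\rho_i/\rho_{i-1} = t^{2i-n}/t^{2i-n-2} = t^2$; and $s = t^4$ is the defining choice in (iii)$\Rightarrow$(iv).
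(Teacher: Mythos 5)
Your proposal is correct and follows essentially the same route as the paper: the same witness $t=\rho_{(n+1)/2}$ with common ratio $t^2$ for (ii)$\Rightarrow$(iii), the same choices $s=t^4$, $\xi=t^{-n}$ for (iii)$\Rightarrow$(iv), and the same use of $\xi^4 s^n=1$ in (\ref{eq:gammaSol}) to force the even- and odd-indexed ratios to coincide; the only difference is a slightly different ordering of the implications, which is immaterial.
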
 
\begin{proof}
${\rm (i)}\Rightarrow {\rm (ii)}$ 
By Definition
\ref{def:constrain}(i).
\\
\noindent 
${\rm (ii)}\Rightarrow {\rm (iii)}$ 
By assumption there exists $0 \not=t \in \mathbb F$
such that
$\rho_{(n-1)/2}=t^{-1}$ and
$\rho_{(n+1)/2}=t$.
Since $\lbrace \rho_i\rbrace_{i=0}^n$ is geometric,
the ratio $\rho_i/\rho_{i-1}$ is independent of
$i$ for $1 \leq i \leq n$. Taking
$i=(n+1)/2$ we find that this ratio is
 $t^2$. By these comments  $\rho_i = t^{2i-n}$ for
$0 \leq i \leq n$.
\\
\noindent 
${\rm (iii)}\Rightarrow {\rm (iv)}$ 
The values
$s=t^4$,
 $\xi =t^{-n}$ 
 meet the requirement.
\\
\noindent 
${\rm (iv)}\Rightarrow {\rm (i)}$ 
Evaluating 
(\ref{eq:gammaSol})
using $\xi^4 s^n=1$ we find that
$\lbrace \rho_i \rbrace_{i=0}^n$ is geometric.
The sequence 
$\lbrace \rho_i \rbrace_{i=0}^n$ is constrained by
Proposition
\ref{prop:noddPre}(i),(iii).
\\
\noindent 
Assume that (i)--(iv) hold. Then the equations
 $\xi=\rho_0 = t^{-n}$ and
(\ref{eq:4eq})
 are readily checked.
\end{proof}

\begin{lemma}
\label{lem:NGd4}
Assume that $\lbrace \rho_i \rbrace_{i=0}^n$ is constrained but
not geometric. Then $n$ is odd and at least 3.
\end{lemma}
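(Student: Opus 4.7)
The plan is to prove both assertions by contrapositive, using the classification results already established in Propositions \ref{prop:nevenPre} and \ref{prop:noddPre}. Since neither assertion is deep, the proof should be a short case analysis rather than a computation.

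First I would show that $n$ must be odd. Suppose instead that $n$ is even. Then by the equivalence ${\rm (i)}\Leftrightarrow {\rm (ii)}$ of Proposition \ref{prop:nevenPre}, a constrained sequence $\lbrace \rho_i \rbrace_{i=0}^n$ is automatically geometric (with the additional constraint $\rho_{n/2} \in \lbrace 1, -1\rbrace$, which we do not need here). This contradicts the hypothesis that $\lbrace \rho_i \rbrace_{i=0}^n$ is not geometric. Hence $n$ is odd.

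Next I would show that $n \geq 3$ by ruling out $n = 1$. If $n = 1$, then Definition \ref{def:constrain}(i) forces $\rho_0 \rho_1 = 1$, so both $\rho_0$ and $\rho_1$ are nonzero. The only ratio $\rho_1/\rho_0$ is then automatically ``independent of $i$'', so the two-term sequence $\rho_0, \rho_1$ is geometric by the definition recalled just above Proposition \ref{prop:nevenPre}. This again contradicts the hypothesis. Combined with $n$ odd, this forces $n \geq 3$.

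There is no real obstacle here; both steps are immediate consequences of previously stated classifications, and the argument is essentially just unpacking the definition of ``geometric'' in the degenerate case $n = 1$. One could alternatively invoke Proposition \ref{prop:noddPre} directly to note that when $n = 1$, the constrained condition reduces to the geometric condition, but the definition-level argument above is cleaner.
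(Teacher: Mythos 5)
Your proposal is correct and follows essentially the same route as the paper: oddness of $n$ via Proposition \ref{prop:nevenPre}(i),(ii), and exclusion of $n=1$ by noting that $\rho_0\rho_1=1$ forces the two-term sequence to be geometric. No gaps.
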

\begin{proof} The integer $n$ is odd by Proposition 
\ref{prop:nevenPre}(i),(ii). If $n=1$
then $\rho_0\rho_1=1$, and the sequence
$\rho_0, \rho_1$ is geometric, for a contradiction.
Therefore $n\geq 3$.
\end{proof}

\noindent We have some comments about the scalars
$a,b,c$ from Definition
\ref{def:constrain}(ii).

\begin{definition}
\label{def:lincon}
\rm
Assume that the sequence $\lbrace \rho_i \rbrace_{i=0}^n$
is constrained.
By a {\it linear constraint} for this sequence
we mean a vector $(a,b,c) \in \mathbb F^3$ such
that $a\rho_{i-1}+b\rho_i + c\rho_{i+1}=0$
for $1 \leq i \leq n-1$. 
\end{definition}

\noindent Let $\lambda $ denote an indeterminate,
and let $\mathbb F \lbrack \lambda \rbrack$ denote
the $\mathbb F$-algebra consisting of the polynomials
in $\lambda$ that have all coefficients in $\mathbb F$.
Let $(a,b,c)$ denote a vector in $\mathbb F^3$.
Define  a polynomial $\psi \in \mathbb F\lbrack \lambda \rbrack$
by
\begin{eqnarray*}
\psi = a + b\lambda + c\lambda^2.
\end{eqnarray*}

\begin{proposition} 
\label{prop:nodd}
Assume that $n\geq 2$ and
 $\lbrace \rho_i\rbrace_{i=0}^n$ is constrained.
Then for the above vector $(a,b,c)$ 
the following 
{\rm (i)--(iii)} hold. 
\begin{enumerate}
\item[\rm (i)] 
Assume that $n$ is even.
Then $(a,b,c)$ is a linear constraint for
 $\lbrace \rho_i\rbrace_{i=0}^n$ 
 if and only if 
$\psi(r)=0$, where $r$ is from 
 Proposition 
\ref{prop:nevenPre}.
\item[\rm (ii)] 
Assume that $n$ is odd and
 $\lbrace \rho_i\rbrace_{i=0}^n$ is not geometric.
 Then $(a,b,c)$ is a linear constraint for
 $\lbrace \rho_i\rbrace_{i=0}^n$ 
 if and only if 
$\psi = c(\lambda^2-s)$,
where $s$ is from Proposition
\ref{prop:noddPre}.
\item[\rm (iii)] 
Assume that $n$ is odd and
 $\lbrace \rho_i\rbrace_{i=0}^n$ is geometric.
Then $(a,b,c)$ is a linear constraint for
 $\lbrace \rho_i\rbrace_{i=0}^n$ 
 if and only if 
$\psi(t^2)=0$, where $t$ is from 
Lemma 
\ref{lem:ConGeo}.
\end{enumerate}
\end{proposition}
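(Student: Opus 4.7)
The plan is to treat the three cases separately, substituting the explicit formula for $\rho_i$ provided by the earlier propositions into the equation $a\rho_{i-1}+b\rho_i+c\rho_{i+1}=0$ and seeing what condition on $(a,b,c)$ results.

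For part (i), with $n$ even, Proposition \ref{prop:nevenPre} gives $\rho_i=\varepsilon r^{i-n/2}$. I would substitute this into the linear constraint to obtain
\begin{eqnarray*}
a\rho_{i-1}+b\rho_i+c\rho_{i+1}=\varepsilon r^{i-1-n/2}(a+br+cr^2)=\varepsilon r^{i-1-n/2}\psi(r),
\end{eqnarray*}
valid for all $1\leq i\leq n-1$ (so the indexing range is nonempty because $n\geq 2$). Since the prefactor is nonzero, the constraint is equivalent to $\psi(r)=0$. Part (iii) is essentially identical: $\rho_i=t^{2i-n}$ gives the single scalar condition $\psi(t^2)=0$, again because the common prefactor $t^{2i-2-n}$ is nonzero.

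The genuine work is in part (ii). By Lemma \ref{lem:NGd4} we have $n\geq 3$ odd, so the range $1\leq i\leq n-1$ contains both an even index (say $i=2$) and an odd index (say $i=1$). Using the formula from Proposition \ref{prop:noddPre}(iii), I would substitute separately for $i$ even and $i$ odd. After dividing each equation by the common exponential factor, the two shapes collapse to the pair
\begin{eqnarray*}
a+cs+b\xi^{2}s^{(n+1)/2}=0,\qquad a+cs+b\xi^{-2}s^{-(n-1)/2}=0.
\end{eqnarray*}
Subtracting gives $b\xi^{-2}s^{-(n-1)/2}(\xi^{4}s^{n}-1)=0$. The crucial ingredient is now Lemma \ref{lem:ConGeo}: because $\lbrace \rho_i\rbrace_{i=0}^n$ is constrained but not geometric, the quantity $\xi^{4}s^{n}$ is not $1$, so $b=0$, and either equation then yields $a=-cs$. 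Hence $\psi=c(\lambda^{2}-s)$. Conversely, one checks directly that any $\psi$ of this shape satisfies both reduced equations and therefore the full linear constraint. The main obstacle is isolating the even-$i$ and odd-$i$ equations cleanly and recognizing that their discrepancy is precisely the nongeometric criterion $\xi^{4}s^{n}\neq 1$; once that is done, all three parts follow by direct substitution.
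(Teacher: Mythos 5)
Your proposal is correct and follows essentially the same route as the paper's proof: substitute the explicit forms of $\rho_i$ from Propositions \ref{prop:nevenPre} and \ref{prop:noddPre} (and Lemma \ref{lem:ConGeo}), reduce the constraint to one scalar equation in cases (i) and (iii), and in case (ii) extract the even-index and odd-index equations and use $\xi^4 s^n \neq 1$ (via Lemma \ref{lem:ConGeo}) to force $b=0$ and $a=-cs$. The only cosmetic difference is that the paper packages the two equations via $k=\xi^2 s^{(n+1)/2}$ and the inequality $k\neq k^{-1}s$, whereas you subtract them directly; the content is identical.
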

\begin{proof} (i)
By Proposition
\ref{prop:nevenPre} we have
 $\rho_i = \varepsilon r^{i-n/2}$
for $0 \leq i \leq n$, where
$r = \rho_i /\rho_{i-1}$ for $1 \leq i \leq n$
and $\varepsilon = \rho_{n/2}$.
 For $1 \leq i \leq n-1$,
$a \rho_{i-1} + b\rho_i + c\rho_{i+1} = 0$
if and only if $a+br+cr^2=0$  if and only if
$\psi(r)=0$.
So
$(a,b,c)$ is a linear constraint for
 $\lbrace \rho_i \rbrace_{i=0}^n$
if and only if
$a \rho_{i-1} + b\rho_i + c\rho_{i+1} = 0$
for $1 \leq i \leq n-1$ if and only if
$\psi(r)=0$.
\\
\noindent (ii) The sequence $\lbrace \rho_i \rbrace_{i=0}^n$
is constrained, so by Proposition 
\ref{prop:noddPre} it has the form
(\ref{eq:gammaSol}), where $s=\rho_i/\rho_{i-2}$ for
$2 \leq i \leq n$ and $\xi = \rho_0$.
By assumption
 $\lbrace \rho_i \rbrace_{i=0}^n$ is not geometric,
so $\xi^4 s^n \not=1$ in view of Lemma
\ref{lem:ConGeo}(i),(iv). Define $k = \xi^2 s^{(n+1)/2}$
and note that $k^2=\xi^4 s^{n+1}$.
Therefore $k^2 \not=s$ so
$k \not=k^{-1}s$.
Pick an integer $i$ $(1 \leq i \leq n-1)$.
First assume that $i$ is even.
By
(\ref{eq:gammaSol}),
$a\rho_{i-1}+b\rho_i + c\rho_{i+1}=0$
if and only if $a+bk+ c s=0$.
Next assume that $i$ is odd.
By
(\ref{eq:gammaSol}),
 $a\rho_{i-1}+b\rho_i + c\rho_{i+1}=0$
if and only if $a+bk^{-1}s+ c s=0$.
We now argue that
$(a,b,c)$ is a linear constraint for
$\lbrace \rho_i \rbrace_{i=0}^n$ if and only if
$a\rho_{i-1}+b\rho_i + c\rho_{i+1}=0$ for $1\leq i \leq n-1$
if and only if both 
$a+bk+ c s=0$,
$a+b k^{-1}s+ c s=0$
if and only if $b=0=a+cs$ if and only if $\psi = c(\lambda^2-s)$.
\\
\noindent (iii) Similar to the proof of (i) above.
\end{proof}

\begin{definition}
\label{def:LC}
\rm
Assume that the sequence $\lbrace \rho_i \rbrace_{i=0}^n$ is
constrained. Let LC denote the set of 
 linear constraints for
 $\lbrace \rho_i \rbrace_{i=0}^n$. By
Definition
\ref{def:lincon},
LC is a subspace of the $\mathbb F$-vector
space $\mathbb F^3$.
By Definition
\ref{def:constrain}(ii),
LC is nonzero.
We call LC the {\it linear constraint space} for
 $\lbrace \rho_i \rbrace_{i=0}^n$.
\end{definition}

\begin{proposition}
\label{prop:LCbasis}
Assume that $n\geq 2$ and 
$\lbrace \rho_i \rbrace_{i=0}^n$ is constrained.
Let LC denote the corresponding linear constraint space.
\begin{enumerate}
\item[\rm (i)] Assume that $n$ is even. Then LC has dimension 2.
The vectors $(r,-1,0)$ and $(r^2,0,-1)$ form a basis of LC,
where $r$ is from Proposition
\ref{prop:nevenPre}.
\item[\rm (ii)] Assume that $n$ is odd and
$\lbrace \rho_i \rbrace_{i=0}^n$ is not geometric.
Then LC has dimension 1.
The vector $(s,0,-1)$ forms a basis of LC,
where 
$s$ is from
Proposition
\ref{prop:noddPre}.
\item[\rm (iii)] Assume that $n$ is odd and
$\lbrace \rho_i \rbrace_{i=0}^n$ is geometric.
Then LC has dimension 2.
The vectors $(t^2,-1,0)$ and $(t^4,0,-1)$ form a basis of LC,
where $t$ is from
Lemma 
\ref{lem:ConGeo}.
\end{enumerate}
\end{proposition}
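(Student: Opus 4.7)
The plan is to derive each of the three claims directly from Proposition \ref{prop:nodd}, which already translates membership in LC into a polynomial condition on $\psi = a + b\lambda + c\lambda^2$. In each case I will identify LC as the kernel of an explicit linear map $\mathbb{F}^3 \to \mathbb{F}^k$, compute its dimension by rank--nullity, and then verify that the vectors listed lie in LC and are linearly independent.

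For part (i), assume $n$ is even. By Proposition \ref{prop:nodd}(i), $(a,b,c) \in \text{LC}$ if and only if $\psi(r) = 0$, i.e.\ $a + br + cr^2 = 0$. This is a single nontrivial linear equation on $\mathbb{F}^3$ (its coefficient vector $(1,r,r^2)$ is nonzero), so LC is the kernel of a rank-one map and has dimension $2$. The vectors $(r,-1,0)$ and $(r^2,0,-1)$ give $\psi = r - \lambda$ and $\psi = r^2 - \lambda^2$, respectively; both vanish at $\lambda = r$, so both lie in LC. They are linearly independent because their last two coordinates $(-1,0)$ and $(0,-1)$ are, so they form a basis. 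Part (iii) is handled identically with $r$ replaced by $t^2$, using Proposition \ref{prop:nodd}(iii) in place of (i).

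For part (ii), assume $n$ is odd and $\lbrace \rho_i\rbrace_{i=0}^n$ is not geometric. By Proposition \ref{prop:nodd}(ii), $(a,b,c) \in \text{LC}$ if and only if $\psi = c(\lambda^2 - s)$, i.e.\ $b = 0$ and $a + cs = 0$. These are two independent linear equations on $\mathbb{F}^3$, so LC has dimension $1$. The vector $(s,0,-1)$ satisfies both equations, is nonzero, and hence forms a basis of LC.

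The argument is essentially bookkeeping once Proposition \ref{prop:nodd} is in hand; the one small point requiring care is the linear-independence check in cases (i) and (iii), and the verification that the rank of the constraint system is as claimed (rank one in the first and third cases, rank two in the second). No substantive obstacle is anticipated.
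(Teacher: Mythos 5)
Your proposal is correct and takes essentially the same route as the paper, which disposes of this proposition in one line as "a reformulation of Proposition \ref{prop:nodd}"; your write-up simply makes the rank--nullity bookkeeping and the membership/independence checks explicit.
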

\begin{proof} This is a reformulation of
Proposition
\ref{prop:nodd}.
\end{proof}

\section{Toeplitz matrices and
nilpotent linear transformations}

\noindent We will be discussing 
an upper triangular
matrix of a certain type, said to be Toeplitz.

\begin{definition}
\label{def:top}
\rm
(See \cite[Section~8.12]{dym}.)
Let $\lbrace \alpha_i \rbrace_{i=0}^d$ denote
scalars in 
$\mathbb F$.
Let $T$ denote an upper triangular matrix in 
 ${\rm Mat}_{d+1}(\mathbb F)$. Then $T$ is said to
 {\it Toeplitz, with parameters
$\lbrace \alpha_i \rbrace_{i=0}^d$} whenever
$T$ has $(i,j)$-entry
$\alpha_{j-i}$ for $0 \leq i\leq j\leq d$.
In this case 
\begin{eqnarray*}
T =	\left(
	\begin{array}{ c c cc c c }
	\alpha_0 & \alpha_1   &  \cdot  & \cdot&  \cdot  & \alpha_d  \\
	 & \alpha_0  & \alpha_1  & \cdot& \cdot &  \cdot     \\
	 &   & \alpha_0   & \cdot & \cdot& \cdot  \\
	   &   &  & \cdot  & \cdot & \cdot \\
	     &  & &  & \cdot & \alpha_1 \\
	      {\bf 0}  &&  & &   & \alpha_0  \\
	      \end{array}
	      \right).
\end{eqnarray*}
\end{definition}

\noindent We have some comments.

\begin{note} \rm
The matrix $\tau$ from Definition
\ref{def:tauMat} is Toeplitz, with parameters
\begin{eqnarray*}
\alpha_i = \begin{cases}
1  &  {\mbox{\rm if $i=1$}}; \\
0  & {\mbox{\rm if $i\not=1$}}
\end{cases}
\qquad \qquad (0 \leq i \leq d).
\end{eqnarray*}
\end{note}

\begin{lemma}
\label{lem:ToeChar}
Let 
$T$ denote an upper triangular matrix in
 ${\rm Mat}_{d+1}(\mathbb F)$.  Then $T$ is Toeplitz
 if and only if $T$ commutes with $\tau$. In this case
$T= \sum_{i=0}^d \alpha_i \tau^i$, where $\lbrace \alpha_i \rbrace_{i=0}^d$
are the parameters for $T$.
\end{lemma}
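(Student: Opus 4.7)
The plan is to prove both implications by direct computation of the matrix products $T\tau$ and $\tau T$, and then to express a Toeplitz matrix as a linear combination of powers of $\tau$ via Lemma \ref{lem:tauPower}.

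First I would compute entries of $T\tau$ and $\tau T$. Using the fact that $\tau$ has $(i-1,i)$-entry $1$ for $1 \leq i \leq d$ and zero elsewhere, one finds $(T\tau)_{i,j} = T_{i,j-1}$ when $j \geq 1$ and zero when $j=0$, while $(\tau T)_{i,j} = T_{i+1,j}$ when $i \leq d-1$ and zero when $i=d$. Hence $T\tau = \tau T$ is equivalent to the conditions $T_{i,j-1} = T_{i+1,j}$ for $0 \leq i \leq d-1$ and $1 \leq j \leq d$, together with $T_{i,0}=0$ for $1\leq i \leq d$ and $T_{d,j}=0$ for $0\leq j \leq d-1$. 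The first family of equations says that the entries of $T$ depend only on the difference $j-i$, i.e.\ $T$ is constant along each diagonal. Under the standing hypothesis that $T$ is upper triangular, the two boundary families of equations are automatically satisfied.

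Combining constancy along diagonals with upper triangularity yields exactly the Toeplitz form of Definition \ref{def:top}, proving the ``commutes with $\tau$ implies Toeplitz'' direction. For the converse, suppose $T$ is upper triangular Toeplitz with parameters $\{\alpha_i\}_{i=0}^d$, so $T_{i,j}=\alpha_{j-i}$ for $0\leq i\leq j \leq d$ and $T_{i,j}=0$ otherwise. Then in each case the entries $T_{i,j-1}$ and $T_{i+1,j}$ either both equal $\alpha_{j-i-1}$ (when $j>i$) or are both zero, so $T\tau = \tau T$.

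Finally, for the identity $T = \sum_{i=0}^d \alpha_i \tau^i$, Lemma \ref{lem:tauPower} says that $\tau^r$ has $(i,j)$-entry $1$ precisely when $j-i=r$ and zero otherwise, for $0 \leq r \leq d$. Therefore $\sum_{r=0}^d \alpha_r \tau^r$ has $(i,j)$-entry $\alpha_{j-i}$ when $0 \leq j-i \leq d$ and zero otherwise, matching the Toeplitz matrix of Definition \ref{def:top} entry by entry. There is no substantive obstacle: the entire argument is a short matrix computation paired with the explicit formula for $\tau^r$ already recorded.
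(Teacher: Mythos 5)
Your proof is correct and follows the same route the paper intends: the paper's entire proof is ``Use Lemma \ref{lem:tauPower},'' and your entrywise computation of $T\tau$ and $\tau T$ together with the expansion $T=\sum_{r=0}^d\alpha_r\tau^r$ is exactly the omitted detail. Nothing to change.
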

\begin{proof} Use Lemma
\ref{lem:tauPower}.
\end{proof}

\begin{lemma}
\label{lem:ToepTrans}
Let $T$ denote an upper triangular Toeplitz matrix
 in ${\rm Mat}_{d+1}(\mathbb F)$.  Then $T^t={\bf Z}T{\bf Z}$.
\end{lemma}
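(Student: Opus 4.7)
The plan is to prove this by a direct entry-by-entry computation, which is short but requires careful bookkeeping with the index reversal induced by ${\bf Z}$.

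First I would recall that ${\bf Z}$ has $(i,j)$-entry $\delta_{i+j,d}$ and satisfies ${\bf Z}^2 = I$, so for any matrix $M \in {\rm Mat}_{d+1}(\mathbb F)$ one computes
\begin{equation*}
({\bf Z} M {\bf Z})_{i,j} = \sum_{k,\ell} {\bf Z}_{i,k} M_{k,\ell} {\bf Z}_{\ell,j} = M_{d-i,\,d-j} \qquad (0 \leq i,j \leq d).
\end{equation*}
Now let $T$ be upper triangular Toeplitz with parameters $\lbrace \alpha_i\rbrace_{i=0}^d$, so $T_{i,j} = \alpha_{j-i}$ when $0 \leq i \leq j \leq d$ and $T_{i,j}=0$ otherwise. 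Then $({\bf Z}T{\bf Z})_{i,j} = T_{d-i,d-j}$, which is nonzero precisely when $d-i \leq d-j$, i.e.\ when $j \leq i$, and in that case equals $\alpha_{(d-j)-(d-i)} = \alpha_{i-j}$. On the other hand $(T^t)_{i,j} = T_{j,i} = \alpha_{i-j}$ when $j \leq i$ and $0$ otherwise. The two matrices agree in every entry, yielding ${\bf Z}T{\bf Z} = T^t$.

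A slicker alternative, which I would mention as a remark, is to invoke Lemma \ref{lem:ToeChar} to write $T = \sum_{i=0}^d \alpha_i \tau^i$. From Definition \ref{def:tauMat} one reads off that ${\bf Z}\tau{\bf Z} = \tau^t$, and since ${\bf Z}^2=I$ we obtain
\begin{equation*}
{\bf Z} T {\bf Z} = \sum_{i=0}^d \alpha_i ({\bf Z}\tau{\bf Z})^i = \sum_{i=0}^d \alpha_i (\tau^t)^i = \Bigl(\sum_{i=0}^d \alpha_i \tau^i\Bigr)^t = T^t.
\end{equation*}
There is no real obstacle here: the only point requiring any care is keeping track of when the relevant indices fall in the upper-triangular range, which the reversal $i \mapsto d-i$, $j \mapsto d-j$ swaps to the lower-triangular range. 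Everything else is bookkeeping.
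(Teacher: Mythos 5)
Your proof is correct and follows the same route as the paper, which simply expands ${\bf Z}T{\bf Z}$ by matrix multiplication; your entry-by-entry computation $({\bf Z}T{\bf Z})_{i,j}=T_{d-i,d-j}=\alpha_{i-j}=(T^t)_{i,j}$ is exactly that expansion made explicit. The alternative via $T=\sum_i\alpha_i\tau^i$ is a fine remark but not needed.
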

\begin{proof} Expand ${\bf Z}T{\bf Z}$ by matrix multiplication.
\end{proof}

\noindent
Referring to Definition
\ref{def:top}, assume that $T$ is Toeplitz with
parameters $\lbrace \alpha_i \rbrace_{i=0}^d$. 
Note  that $T$ is invertible if and only if $\alpha_0 \not=0$.
Assume this is the case. Then $T^{-1}$ is upper triangular and
Toeplitz:
\begin{eqnarray*}
T^{-1} =	\left(
	\begin{array}{ c c cc c c }
	\beta_0 & \beta_1   &  \cdot  & \cdot&  \cdot  & \beta_d  \\
	 & \beta_0  & \beta_1  & \cdot& \cdot &  \cdot     \\
	 &   & \beta_0   & \cdot & \cdot& \cdot  \\
	   &   &  & \cdot  & \cdot & \cdot \\
	     &  & &  & \cdot & \beta_1 \\
	      {\bf 0}  &&  & &   & \beta_0  \\
	      \end{array}
	      \right),
\end{eqnarray*}
with parameters $\lbrace \beta_i \rbrace_{i=0}^d$ that are obtained
from 
$\lbrace \alpha_i \rbrace_{i=0}^d$ by
recursively solving
$\alpha_0 \beta_0 = 1$ and
\begin{equation}
\label{eq:recursion}
\alpha_0 \beta_j + \alpha_1 \beta_{j-1} + 
\cdots + \alpha_j \beta_0 = 0 \qquad \qquad (1 \leq j \leq d).
\end{equation}
 We have
\begin{eqnarray*}
\beta_0 &=& \alpha^{-1}_0,
\\
\beta_1 &=& -\alpha_1 \alpha^{-2}_0\;\quad \qquad  \qquad \qquad \mbox{\rm (if $d\geq 1$),}
\\
\beta_2 &=& \frac{ \alpha^2_1-\alpha_0 \alpha_2}{\alpha^3_0}  \;\;\quad \qquad \qquad 
  \mbox{\rm (if $d\geq 2$),}
\\
\beta_3 &=& \frac{2\alpha_0 \alpha_1 \alpha_2 -\alpha^3_1 - \alpha^2_0\alpha_3}{\alpha^4_0} 
  \qquad \mbox{\rm (if $d\geq 3$),}
\\
\beta_4 &=& \frac{\alpha^4_1 +2\alpha^2_0 \alpha_1 \alpha_3 + \alpha^2_0 
\alpha^2_2-3\alpha_0 \alpha^2_1 \alpha_2 - \alpha^3_0 \alpha_4 
}{\alpha^5_0} 
  \qquad \mbox{\rm (if $d\geq 4$).}
\end{eqnarray*}
For $\alpha_0=1$ this becomes
\begin{eqnarray*}
\beta_0 &=& 1,
\\
\beta_1 &=& -\alpha_1, \;\qquad \qquad \quad \qquad  \qquad \qquad \mbox{\rm (if $d\geq 1$),}
\\
\beta_2 &=& \alpha^2_1- \alpha_2 \;\;\qquad \qquad \quad \qquad \qquad 
  \mbox{\rm (if $d\geq 2$),}
\\
\beta_3 &=& 2 \alpha_1 \alpha_2 -\alpha^3_1 - \alpha_3
  \qquad \qquad \qquad \mbox{\rm (if $d\geq 3$),}
\\
\beta_4 &=& \alpha^4_1 +2\alpha_1 \alpha_3 + 
\alpha^2_2-3\alpha^2_1 \alpha_2 - \alpha_4 
  \qquad \mbox{\rm (if $d\geq 4$).}
\end{eqnarray*}

\noindent 
Recall our vector space $V$ over $\mathbb F$
with
dimension $d+1$. Recall the Nil elements in
	      ${\rm End}(V)$ from Definition
\ref{def:Nil}.
We now give a variation on
Lemma
\ref{lem:NilRec}(i),(ii) in terms of vectors.

\begin{lemma} 
\label{lem:reform1}
For 
$A \in 
 {\rm End}(V)$ the following are equivalent:
\begin{enumerate}
\item[\rm (i)] $A$ is Nil;
\item[\rm (ii)] there exists a basis $\lbrace v_i \rbrace_{i=0}^d$
of $V$ such that $Av_0=0$ and $Av_i = v_{i-1}$ for $1 \leq i \leq d$.
\end{enumerate}
\end{lemma}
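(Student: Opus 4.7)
The plan is to deduce this lemma directly from Lemma \ref{lem:NilRec}, since (ii) simply repackages ``there exists a decomposition of $V$ that is lowered by $A$'' (condition (ii) of Lemma \ref{lem:NilRec}) at the level of a basis rather than a decomposition. So the strategy is to translate between the basis $\lbrace v_i\rbrace_{i=0}^d$ and the decomposition $\lbrace V_i\rbrace_{i=0}^d$ it induces.

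For the direction (i) $\Rightarrow$ (ii), I would start with $A$ Nil and invoke Lemma \ref{lem:NilRec}(i)$\Rightarrow$(ii) to obtain a decomposition $\lbrace V_i\rbrace_{i=0}^d$ of $V$ lowered by $A$. Then I would pick any nonzero $v_d \in V_d$ and define $v_i = A^{d-i}v_d$ for $0 \le i \le d-1$. Since $AV_i = V_{i-1}$ for $1 \le i \le d$, induction shows $v_i$ spans $V_i$. The relation $Av_i = v_{i-1}$ for $1 \le i \le d$ is immediate from the definition, and $Av_0 = A^{d+1}v_d = 0$ since $A^{d+1}=0$. Because the $v_i$ lie in distinct one-dimensional summands of a direct sum decomposition of $V$, they form a basis.

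For (ii) $\Rightarrow$ (i), given the basis $\lbrace v_i\rbrace_{i=0}^d$, let $V_i$ denote the span of $v_i$ for $0 \le i \le d$. Then $\lbrace V_i\rbrace_{i=0}^d$ is the decomposition of $V$ induced by this basis, and the hypothesis $Av_0=0$, $Av_i=v_{i-1}$ for $1 \le i \le d$ immediately gives $AV_0 = 0$ and $AV_i = V_{i-1}$ for $1 \le i \le d$. Thus $\lbrace V_i\rbrace_{i=0}^d$ is lowered by $A$, and Lemma \ref{lem:NilRec}(ii)$\Rightarrow$(i) yields that $A$ is Nil.

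There is no real obstacle here; the lemma is essentially a change of language from ``decomposition lowered by $A$'' to ``basis with the shift action of $A$,'' and both translations are the standard correspondence between a decomposition into one-dimensional subspaces and a basis of $V$.
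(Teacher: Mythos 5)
Your proof is correct and takes the same route as the paper, which simply cites Lemma \ref{lem:NilRec}(i),(ii); you have just made explicit the standard translation between a basis with the shift action and a decomposition lowered by $A$. Nothing is missing.
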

\begin{proof} By
Lemma
\ref{lem:NilRec}(i),(ii).
\end{proof}

\begin{lemma}
\label{lem:whatDec}
Assume $A \in 
	      {\rm End}(V)$ is Nil. 
For subspaces $\lbrace V_i \rbrace_{i=0}^d$ of $V$ the
following are equivalent:
\begin{enumerate}
\item[\rm (i)] 
$\lbrace V_i \rbrace_{i=0}^d$ is a decomposition of $V$ that
is lowered by $A$;
\item[\rm (ii)] the sum $V=AV+V_d$ is direct, and
$V_i = A^{d-i}V_d$ for $0 \leq i \leq d$.
\end{enumerate}
\end{lemma}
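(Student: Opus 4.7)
My plan is to prove the two directions separately, using the structural facts about Nil maps collected in Lemma~\ref{lem:NilRec} and the remarks just above Lemma~\ref{lem:RaiseLower}.

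\smallskip
\noindent \textbf{(i) $\Rightarrow$ (ii).} Assume $\lbrace V_i\rbrace_{i=0}^d$ is a decomposition of $V$ lowered by $A$. By the comments just above Lemma~\ref{lem:RaiseLower}, $V_i = A^{d-i}V_d$ for $0\le i\le d$, so the second part of (ii) is immediate. For directness of $V=AV+V_d$, note that the flag induced by $\lbrace V_i\rbrace_{i=0}^d$ is $\lbrace A^{d-i}V\rbrace_{i=0}^d$ (Lemma~\ref{lem:NilRec}(iv)), so in particular $AV = V_0+V_1+\cdots +V_{d-1}$; since $V=\sum_{i=0}^d V_i$ (direct), the sum $V=AV+V_d$ is direct.

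\smallskip
\noindent \textbf{(ii) $\Rightarrow$ (i).} Assume $V=AV+V_d$ (direct) and $V_i=A^{d-i}V_d$. Because $A$ is Nil, Lemma~\ref{lem:NilRec}(v) gives $\ker A = A^dV$, so $\dim AV = \dim V - \dim \ker A = (d+1)-\dim A^dV$. One shows $\dim A^dV=1$ as follows: by directness, $\dim V_d = (d+1)-\dim AV$, and I will verify $\dim V_d=1$ directly by exhibiting a basis. Pick any nonzero $v\in V_d$; I claim $A^dv\ne 0$. Indeed, since $A^d\ne 0$ there is $w\in V$ with $A^dw\ne 0$; writing $w=Au+\lambda v$ with $u\in V$ and $\lambda\in\mathbb F$ (by the directness assumption applied to $w\in V=AV+V_d$, after choosing $v$ to span $V_d$), one gets $A^dw = A^{d+1}u+\lambda A^dv = \lambda A^dv$, forcing $\lambda A^dv\ne 0$. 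Consequently the vectors $v,Av,A^2v,\ldots,A^dv$ are linearly independent (any relation would, after applying a suitable power of $A$, contradict $A^dv\ne 0$) and hence form a basis of $V$.

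\smallskip
\noindent Once this basis is in hand, the subspace $V_i=A^{d-i}V_d$ is spanned by $A^{d-i}v$, so $\dim V_i=1$ for all $i$, and $V=\sum_{i=0}^d V_i$ is a direct sum (these are distinct coordinate lines of the basis above). Finally, $AV_i = A\cdot A^{d-i}V_d = A^{d-i+1}V_d = V_{i-1}$ for $1\le i\le d$, and $AV_0 = A\cdot A^dV_d = A^{d+1}V_d = 0$ using $A^{d+1}=0$. Thus $\lbrace V_i\rbrace_{i=0}^d$ is a decomposition of $V$ that is lowered by $A$.

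\smallskip
\noindent The only subtlety is the step establishing $A^dv\ne 0$ for nonzero $v\in V_d$; this is where the Nil hypothesis is genuinely used, and it relies on the directness $V=AV\oplus V_d$ to write an arbitrary vector in the required form. Everything else is routine bookkeeping with the formulas $V_i=A^{d-i}V_d$.
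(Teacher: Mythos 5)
Your direction (i) $\Rightarrow$ (ii) is fine and matches the paper's. The problem is in (ii) $\Rightarrow$ (i), where your argument for the key claim ``$A^dv\ne 0$ for any nonzero $v\in V_d$'' is circular. To write an arbitrary $w\in V$ as $w=Au+\lambda v$ with $\lambda\in\mathbb F$, you need every vector of $V_d$ to be a scalar multiple of $v$, i.e.\ $\dim V_d=1$ --- you even say ``after choosing $v$ to span $V_d$.'' But $\dim V_d=1$ is exactly what you announced you would ``verify directly by exhibiting a basis,'' and that basis is $v,Av,\dots,A^dv$, whose construction rests on the claim $A^dv\ne 0$. So the chain runs: $\dim V_d=1$ follows from the basis, the basis follows from $A^dv\ne0$, and $A^dv\ne0$ follows from $\dim V_d=1$. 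If $\dim V_d\ge 2$ were possible, the decomposition $w=Au+\lambda v$ would simply not exist, and nothing in your argument rules that out beforehand. (There is also the smaller point that you must know $V_d\ne 0$ before you can pick $v$ at all.)

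The gap is easily repaired using only the Nil hypothesis, independently of the subspaces $V_i$: as emphasized right after Lemma~\ref{lem:NilRec}, the sequence $\lbrace A^{d-i}V\rbrace_{i=0}^d$ is a flag on $V$, so $\dim AV=d$, and directness of $V=AV+V_d$ then forces $\dim V_d=1$ (in particular $V_d\ne 0$). With that in hand your computation goes through. Alternatively, Lemma~\ref{lem:NilRec}(iv) with $i=d-1$ gives $\ker A^d=AV$, so $A^dv\ne0$ for nonzero $v\in V_d$ is immediate from $V_d\cap AV=0$, with no need for the expansion $w=Au+\lambda v$. For comparison, the paper sidesteps the issue entirely by applying $A^{d-i}$ to the direct sum $V=AV+V_d$ to obtain $A^{d-i}V=A^{d-i+1}V+V_i$ and reading off from the flag that each $V_i$ is a one-dimensional complement; this assembles the decomposition without ever singling out a vector.
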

\begin{proof} 
${\rm (i)}\Rightarrow {\rm (ii)}$ 
The sum 
$V=AV+V_d$ 
is direct since $AV=V_0+ \cdots + V_{d-1}$. The
remaining assertion is clear.
\\
${\rm (ii)}\Rightarrow {\rm (i)}$ 
Define 
$U_i = A^{d-i}V$ for $0 \leq i \leq d$. The
sequence $\lbrace U_i\rbrace_{i=0}^d$ is a flag on $V$.
By construction, the sum $U_i = U_{i-1}+V_i$ is
direct for $1 \leq i \leq d$. Therefore
$\lbrace V_i \rbrace_{i=0}^d$ is 
a decomposition of $V$.
By construction $AV_i= V_{i-1}$ for $1 \leq i \leq d$.
Also $AV_0=0$ since $A$ is Nil. By these comments
$\lbrace V_i \rbrace_{i=0}^d$ is lowered by $A$.
\end{proof}

\begin{lemma}
\label{lem:twoView}
Assume $A \in 
	      {\rm End}(V)$ is Nil.
	      For
vectors $\lbrace v_i\rbrace_{i=0}^d$ in $V$ the following
are equivalent:
\begin{enumerate}
\item[\rm (i)] 
$\lbrace v_i\rbrace_{i=0}^d$ is a basis of $V$
such that 
$Av_0=0$ and $Av_i=v_{i-1}$ for $1 \leq i \leq d$;
\item[\rm (ii)] $v_d \not\in  AV$
and $v_i = A^{d-i}v_d$ for $0 \leq i \leq d$.
\end{enumerate}
\end{lemma}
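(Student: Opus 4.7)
The plan is to prove the two implications directly, reducing to the already-established decomposition machinery (Lemma \ref{lem:whatDec}) and the characterization of Nil elements (Lemma \ref{lem:NilRec}).

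For (i) $\Rightarrow$ (ii), I would first iterate the relation $Av_i = v_{i-1}$: applying induction on $d-i$ starting from $i=d$ yields $v_i = A^{d-i}v_d$ for $0 \le i \le d$. Next I need $v_d \notin AV$. Since $\lbrace v_i\rbrace_{i=0}^d$ is a basis and $A$ sends $v_0 \mapsto 0$, $v_i \mapsto v_{i-1}$, the image $AV$ is spanned by $\lbrace v_0,\ldots,v_{d-1}\rbrace$; so $v_d$, being part of the basis, cannot lie in $AV$.

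For (ii) $\Rightarrow$ (i), let $V_d = \mathbb{F} v_d$. Since $A$ is Nil, Lemma \ref{lem:NilRec}(iv) identifies $AV$ as the kernel of $A^d$, so $AV$ has codimension $1$ in $V$; combined with $v_d \notin AV$ this gives the direct sum decomposition $V = AV + V_d$. Setting $V_i = A^{d-i}V_d$, Lemma \ref{lem:whatDec}((ii)$\Rightarrow$(i)) asserts that $\lbrace V_i\rbrace_{i=0}^d$ is a decomposition of $V$ lowered by $A$. In particular each $V_i$ is one-dimensional. Since $V_i$ contains the vector $A^{d-i}v_d = v_i$, and $V_i$ is one-dimensional, $v_i$ must be nonzero and span $V_i$. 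Hence $\lbrace v_i\rbrace_{i=0}^d$ is a basis of $V$. Finally, $Av_0 = A^{d+1}v_d = 0$ because $A$ is Nil, and $Av_i = A^{d-i+1}v_d = v_{i-1}$ for $1 \le i \le d$.

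Neither direction presents a genuine obstacle: the main ingredient is simply extracting basis vectors from the decomposition provided by Lemma \ref{lem:whatDec}, and all nontriviality (dimension of $AV$, nonvanishing of $v_d \notin AV$ under iteration of $A$, etc.) is already packaged into Lemma \ref{lem:NilRec}. The only point requiring slight care is the deduction $v_d \notin AV$ in the forward direction, which I handle by explicitly identifying $AV$ as the span of $\lbrace v_0,\ldots,v_{d-1}\rbrace$.
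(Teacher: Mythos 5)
Your proof is correct and follows essentially the same route as the paper, which simply declares the lemma a reformulation of Lemma \ref{lem:whatDec}; you have spelled out that reformulation explicitly, invoking Lemma \ref{lem:whatDec} for (ii)$\Rightarrow$(i) and handling (i)$\Rightarrow$(ii) by the direct observation that $AV$ is spanned by $v_0,\ldots,v_{d-1}$.
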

\begin{proof} This is a reformulation of
Lemma
\ref{lem:whatDec}.
\end{proof}

\noindent Assume $A 
\in 
	      {\rm End}(V)$ is Nil.
By Lemma
\ref{lem:twoView},
for $v \in V \backslash AV$ the sequences
$\lbrace A^iv\rbrace_{i=0}^d$
and
$\lbrace A^{d-i}v\rbrace_{i=0}^d$
are bases for $V$. Relative
to these bases
the matrices representing $A$ are, respectively,
\begin{eqnarray*}
	A:\;
	\left(
	\begin{array}{ c c cc c c }
	0 &   &   &&   & \bf 0  \\
	1 & 0  &   &&  &      \\
	 &  1 & 0   & &&  \\
	   &   & \cdot & \cdot  & & \\
	     &  & & \cdot & \cdot & \\
	      {\bf 0}  &&  & &  1 & 0  \\
	      \end{array}
	      \right),
\qquad \qquad 
A:\; 
\left(
\begin{array}{ c c cc c c }
 0 & 1   &   &&   & \bf 0  \\
  & 0  &  1  &&  &      \\ 
   &   &  0   & \cdot &&  \\
     &   &  & \cdot  & \cdot & \\
       &  & &  & \cdot & 1 \\
        {\bf 0}  &&  & &   &  0  \\
	\end{array}
	\right).
	      \end{eqnarray*}

\medskip

\noindent  For  $u,v \in V \backslash AV$, we now compute
the transition matrix from the basis
 $\lbrace A^{d-i}u\rbrace_{i=0}^d$ to the basis
$\lbrace A^{d-i} v\rbrace_{i=0}^d$.
There exist scalars
$\lbrace \alpha_i \rbrace_{i=0}^d$ in $\mathbb F$
such that $\alpha_0 \not=0$
and $v = \sum_{i=0}^d \alpha_i A^i u$.
In this equation, for $0 \leq j \leq d$
apply $A^j$ to each side
and adjust the result to obtain
$A^jv = \sum_{i=j}^d \alpha_{i-j} A^{i} u$.
This yields
\begin{eqnarray}
A^{d-j}v = \sum_{i=0}^j \alpha_{j-i} A^{d-i} u
\qquad \qquad 0 \leq j \leq d.
\label{eq:Toep}
\end{eqnarray}
By
(\ref{eq:Toep}),
the transition matrix from
the basis
 $\lbrace A^{d-i}u\rbrace_{i=0}^d$ to the basis
 $\lbrace A^{d-i}v\rbrace_{i=0}^d$ is
 upper triangular, and 
 Toeplitz with parameters ${\lbrace \alpha_i \rbrace}_{i=0}^d$.
Define $\Phi = \sum_{i=0}^d \alpha_i A^i$. 
By construction $A\Phi=\Phi A$ and $ \Phi u=v$. Therefore
$\Phi  $ sends $A^{d-i}u \mapsto A^{d-i}v$ for
$0 \leq i \leq d$.

\begin{proposition}
\label{prop:Toe}
Let $\lbrace u_i \rbrace_{i=0}^d$ and
$\lbrace v_i \rbrace_{i=0}^d$ denote bases for
$V$. Then the following are equivalent:
\begin{enumerate}
\item[\rm (i)] 
there exists $A \in 
	      {\rm End}(V)$ such that
	      $Au_i = u_{i-1}$ $(1 \leq i \leq d)$,
	      $Au_0 = 0$,
	      $Av_i= v_{i-1}$ $(1 \leq i \leq d)$,
	      $Av_0 = 0$;
\item[\rm (ii)] 
the transition matrix from
 $\lbrace u_i \rbrace_{i=0}^d$ to
$\lbrace v_i \rbrace_{i=0}^d$ is upper
triangular and Toeplitz.
\end{enumerate}
\end{proposition}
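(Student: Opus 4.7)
The plan is to use Lemma \ref{lem:twoView}, which characterizes bases $\lbrace v_i\rbrace_{i=0}^d$ satisfying the lowering condition in terms of a single vector $v_d \notin AV$, together with the explicit computation carried out in the discussion immediately preceding the proposition.

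For the implication ${\rm (i)}\Rightarrow {\rm (ii)}$, assume $A \in {\rm End}(V)$ satisfies the four conditions. Applying Lemma \ref{lem:reform1} to the basis $\lbrace u_i\rbrace_{i=0}^d$ (or to $\lbrace v_i\rbrace_{i=0}^d$) shows that $A$ is Nil. Lemma \ref{lem:twoView}, applied once to each basis, then gives $u_d, v_d \in V\backslash AV$ together with $u_i = A^{d-i}u_d$ and $v_i = A^{d-i}v_d$ for $0 \leq i \leq d$. This is precisely the setup of the computation carried out in the text just before the proposition: expanding $v_d = \sum_{i=0}^d \alpha_i A^i u_d$ with $\alpha_0 \neq 0$ and applying $A^j$ gives $A^{d-j}v_d = \sum_{i=0}^j \alpha_{j-i} A^{d-i}u_d$, which is the statement that the transition matrix from $\lbrace u_i\rbrace_{i=0}^d$ to $\lbrace v_i\rbrace_{i=0}^d$ is upper triangular and Toeplitz with parameters $\lbrace \alpha_i\rbrace_{i=0}^d$.

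For the implication ${\rm (ii)}\Rightarrow {\rm (i)}$, let $T$ denote the transition matrix with parameters $\lbrace \alpha_i\rbrace_{i=0}^d$, so that
\begin{eqnarray*}
v_j = \sum_{i=0}^j \alpha_{j-i}\, u_i = \sum_{k=0}^{j} \alpha_k\, u_{j-k} \qquad (0 \leq j \leq d).
\end{eqnarray*}
Since $T$ is invertible (as a transition matrix between bases), we have $\alpha_0 \neq 0$. Define $A \in {\rm End}(V)$ by the requirements $Au_i = u_{i-1}$ for $1 \leq i \leq d$ and $Au_0 = 0$; this is well-defined because $\lbrace u_i\rbrace_{i=0}^d$ is a basis. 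For $1 \leq j \leq d$, a direct computation using linearity gives
\begin{eqnarray*}
Av_j = \sum_{k=0}^j \alpha_k\, A u_{j-k} = \sum_{k=0}^{j-1} \alpha_k\, u_{j-1-k} = v_{j-1},
\end{eqnarray*}
where the $k=j$ term vanishes because $Au_0 = 0$. Finally $Av_0 = \alpha_0 Au_0 = 0$, so $A$ satisfies all four conditions in (i).

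Neither direction presents a genuine obstacle: both reduce to the bookkeeping already recorded in the discussion above Proposition \ref{prop:Toe}. The only point requiring care is ensuring that the parameters $\lbrace \alpha_i\rbrace_{i=0}^d$ extracted from the Toeplitz matrix in ${\rm (ii)}\Rightarrow {\rm (i)}$ have $\alpha_0 \neq 0$ (which follows from invertibility of $T$) so that the proposed $A$ indeed lowers both bases consistently.
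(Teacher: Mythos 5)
Your proof is correct and follows essentially the same route as the paper: for ${\rm (i)}\Rightarrow{\rm (ii)}$ both use Lemma \ref{lem:reform1}, Lemma \ref{lem:twoView}, and the computation preceding the proposition, and for ${\rm (ii)}\Rightarrow{\rm (i)}$ your direct expansion of $Av_j$ is just an unpacked version of the paper's argument with $\Phi=\sum_{i}\alpha_i A^i$. No issues.
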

\begin{proof} 
${\rm (i)}\Rightarrow {\rm (ii)}$ 
The map $A$ is Nil
by
Lemma \ref{lem:reform1}.
By Lemma
\ref{lem:twoView},
there exist $u,v \in V\backslash AV$ such that
$u_i= A^{d-i}u$ and
$v_i= A^{d-i}v$
for $0 \leq i \leq d$.
Now (ii) follows from the sentence
below
(\ref{eq:Toep}).
\\
\noindent 
${\rm (ii)}\Rightarrow {\rm (i)}$  
Define $A \in 
	      {\rm End}(V)$ such that
	     $Au_0 = 0$ and $Au_i = u_{i-1}$
	     for $1 \leq i \leq d$.
For the transition matrix in question let
 $\lbrace \alpha_i \rbrace_{i=0}^d$
denote the corresponding parameters,
and define $\Phi = \sum_{i=0}^d \alpha_i A^i$.
Then $A\Phi=\Phi A$.
By construction
$\Phi u_i = v_i $ for $0 \leq i \leq d$.
Therefore $Av_0 = 0 $ and $Av_i = v_{i-1}$ for $1 \leq i \leq d$.
\end{proof}

\begin{lemma}
\label{lem:a0a1}
Assume that the two equivalent conditions in Proposition
\ref{prop:Toe} hold,
 and let
 $\lbrace \alpha_i \rbrace_{i=0}^d$
denote the parameters for 
 the Toeplitz
matrix mentioned in the second condition.
Fix $0 \not=r \in \mathbb F$.
\begin{enumerate} 
\item[\rm (i)] 
If we replace $u_i$ by $u'_i = ru_i$
for $0 \leq i \leq d$,
then the equivalent conditions
in
Proposition
\ref{prop:Toe} still hold,
with $A'=A$ and $\alpha'_i = r^{-1}\alpha_i$
for $0 \leq i \leq d$.
\item[\rm (ii)]
If we replace $u_i$ and $v_i$ by $u'_i = r^iu_i$
and $v'_i = r^i v_i$
for $0 \leq i \leq d$,
then the equivalent conditions
in
Proposition
\ref{prop:Toe} still hold,
with $A'=r^{-1}A$ and
$\alpha'_i = r^{i}\alpha_i$ for
$0 \leq i \leq d$.
\end{enumerate}
\end{lemma}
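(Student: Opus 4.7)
The plan is to verify parts (i) and (ii) by direct computation, using the explicit form of the transition matrix from Definition \ref{def:top} and the hypothesis that $Au_i = u_{i-1}$, $Av_i = v_{i-1}$ (with $Au_0 = 0 = Av_0$) on both bases. Nothing subtle is needed; the only thing to be careful about is the rescaling bookkeeping.

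For part (i), I will first check that $A' = A$ still satisfies the required relations on the rescaled basis $\lbrace u'_i \rbrace_{i=0}^d = \lbrace ru_i\rbrace_{i=0}^d$: indeed $Au'_0 = rAu_0 = 0$ and $Au'_i = rAu_i = ru_{i-1} = u'_{i-1}$ for $1 \le i \le d$, while the relations on $\lbrace v_i\rbrace_{i=0}^d$ are unchanged. Hence the first condition of Proposition \ref{prop:Toe} holds with $A' = A$. To read off the new Toeplitz parameters, I start from $v_j = \sum_{i=0}^{j}\alpha_{j-i}u_i$ and substitute $u_i = r^{-1}u'_i$, giving $v_j = \sum_{i=0}^{j}(r^{-1}\alpha_{j-i})u'_i$; thus the transition matrix from $\lbrace u'_i\rbrace_{i=0}^d$ to $\lbrace v_i \rbrace_{i=0}^d$ is upper triangular Toeplitz with parameters $\alpha'_i = r^{-1}\alpha_i$, as claimed.

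For part (ii), I will verify that $A' = r^{-1}A$ works on both rescaled bases. We have $A'u'_0 = r^{-1}A(r^0 u_0) = 0$, and for $1 \le i \le d$,
\[
A'u'_i \;=\; r^{-1}A(r^i u_i) \;=\; r^{i-1}u_{i-1} \;=\; u'_{i-1},
\]
and the analogous computation gives $A'v'_i = v'_{i-1}$. So the first condition of Proposition \ref{prop:Toe} holds with $A' = r^{-1}A$. For the parameters, I compute
\[
v'_j \;=\; r^j v_j \;=\; r^j \sum_{i=0}^{j}\alpha_{j-i}u_i \;=\; \sum_{i=0}^{j}r^{j-i}\alpha_{j-i}u'_i,
\]
where the last step uses $u_i = r^{-i}u'_i$. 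Reading off coefficients gives $\alpha'_k = r^k \alpha_k$ for $0 \le k \le d$, as required.

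There is no real obstacle; both statements are formal consequences of the definitions, and the only risk is a misplaced factor of $r$. Writing out the substitutions carefully as above yields the claim immediately.
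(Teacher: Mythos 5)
Your proof is correct and is exactly the routine computation the paper intends — its own proof is simply the phrase ``By linear algebra,'' and your substitutions $u_i = r^{-1}u'_i$ (resp. $u_i = r^{-i}u'_i$) into $v_j = \sum_{i=0}^{j}\alpha_{j-i}u_i$ produce the stated parameters with the factors of $r$ in the right places.
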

\begin{proof} 
By linear algebra.
\end{proof}

\section{LR triples}
\noindent We now turn our attention to LR triples.
Throughout this section, $V$ denotes
a vector space over $\mathbb F$ with dimension $d+1$.

	      \begin{definition}
	     \label{def:LRT} 
	      \rm
An {\it LR triple on $V$} is a sequence	       
	      $A,B,C$ of elements in ${\rm End}(V)$
	      such that any two of $A,B,C$ form an LR pair on $V$.
This LR triple is said to be {\it over $\mathbb F$}.
We call $V$ the {\it underlying vector space}.
We call $d$ the {\it diameter}.
	      \end{definition}

\begin{definition}
\label{def:isoLRT}
\rm
Let $A,B,C$ denote an LR triple on $V$.
Let $V'$ denote a vector space over $\mathbb F$ with
dimension $d+1$, and let $A',B',C'$ denote an LR triple on
$V'$. By an {\it isomorphism of LR triples from $A,B,C$ to
$A',B',C'$} we mean an $\mathbb F$-linear bijection $\sigma :V \to V'$
such that
\begin{eqnarray*}
\sigma A = A' \sigma,
\qquad \qquad 
\sigma B = B' \sigma,
\qquad \qquad 
\sigma C = C' \sigma.
\end{eqnarray*}
The LR triples
$A,B,C$ and $A',B',C'$ are called {\it isomorphic} whenever
there exists an isomorphism of LR triples from
$A,B,C$ to $A',B',C'$.
\end{definition}

\begin{example}
\label{ex:trivial}
\rm
Assume $d=0$. A sequence of elements $A,B,C$ in 
	      ${\rm End}(V)$ form an LR triple
	      if and only if each of $A,B,C$ is zero.
	      This LR triple will be called {\it trivial}.
	      \end{example}

\noindent We will use the following notational
convention.

\begin{definition}
\label{def:primeConv}
\rm 
Let $A,B,C$ denote an LR triple.
For any object $f$ that we associate with this
LR triple, then $f'$ (resp. $f''$) will denote
the corresponding object for the LR triple
$B,C,A$ (resp. $C,A,B$).
\end{definition}

\begin{definition}
\label{def:LRTpar}
\rm
Let $A,B,C$ denote an LR triple on $V$. By
Definition
\ref{def:LRT}, the pair
$A,B$ (resp. $B,C$) (resp. $C,A$) is an
LR pair on $V$.
Following the notational convention
in Definition 
\ref{def:primeConv},
for these LR pairs the parameter
sequence 
is denoted as follows:

     \bigskip

\centerline{
\begin{tabular}[t]{c|c}
 {\rm LR pair} & {\rm parameter sequence} 
 \\
 \hline
 $A,B$ & $\lbrace \varphi_i \rbrace_{i=1}^d$
   \\
 $B,C$ & $\lbrace \varphi'_i \rbrace_{i=1}^d$
   \\
 $C,A$ & $\lbrace \varphi''_i \rbrace_{i=1}^d$
   \end{tabular}}
     \bigskip

\noindent We call the sequence
\begin{eqnarray}
\label{eq:paLRT}
(\lbrace \varphi_i \rbrace_{i=1}^d;
\lbrace \varphi'_i \rbrace_{i=1}^d;
\lbrace \varphi''_i \rbrace_{i=1}^d)
\end{eqnarray}
the {\it parameter array} of the LR triple $A,B,C$.
\end{definition}

\begin{note}\rm As we will see, not every
LR triple is determined up to isomorphism by its
parameter array.
\end{note}

\begin{lemma}
\label{lem:albega}
Let $A,B,C$ denote an LR triple on $V$, with parameter array
{\rm (\ref{eq:paLRT})}.
Let $\alpha, \beta, \gamma$ denote nonzero scalars in $\mathbb F$.
Then the triple $\alpha A,\beta B,\gamma C$ is an LR triple on
$V$, with parameter array
\begin{eqnarray*}
(\lbrace  \alpha \beta\varphi_i \rbrace_{i=1}^d;
\lbrace \beta\gamma \varphi'_i \rbrace_{i=1}^d;
\lbrace \gamma \alpha  \varphi''_i \rbrace_{i=1}^d).
\end{eqnarray*}
\end{lemma}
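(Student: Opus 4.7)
The plan is to reduce this lemma to the corresponding facts about LR pairs already established in Section 3, applied one pair at a time to each of the three constituent LR pairs of the triple.

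First, I would verify that $\alpha A, \beta B, \gamma C$ is an LR triple on $V$. By Definition \ref{def:LRT} this amounts to checking that each of the three ordered pairs $(\alpha A, \beta B)$, $(\beta B, \gamma C)$, $(\gamma C, \alpha A)$ is an LR pair on $V$. Since $A,B,C$ is an LR triple, each of $(A,B)$, $(B,C)$, $(C,A)$ is an LR pair on $V$ by Definition \ref{def:LRT}. Now Lemma \ref{lem:alphaBeta} tells us that scaling both components of an LR pair by nonzero scalars yields another LR pair. Applying that lemma to each of $(A,B)$, $(B,C)$, $(C,A)$ with the appropriate scalar pairs $(\alpha,\beta)$, $(\beta,\gamma)$, $(\gamma,\alpha)$ gives what we need.

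Next, I would compute the parameter array. By Definition \ref{def:LRTpar} together with the prime-notation convention of Definition \ref{def:primeConv}, the parameter array of $\alpha A,\beta B,\gamma C$ consists of the parameter sequences of the three LR pairs $(\alpha A,\beta B)$, $(\beta B,\gamma C)$, $(\gamma C,\alpha A)$, in that order. Applying Lemma \ref{lem:alphaBetacom} to each:
\begin{itemize}
\item $(\alpha A,\beta B)$ has parameter sequence $\lbrace \alpha\beta\varphi_i\rbrace_{i=1}^d$;
\item $(\beta B,\gamma C)$ has parameter sequence $\lbrace \beta\gamma\varphi'_i\rbrace_{i=1}^d$;
\item $(\gamma C,\alpha A)$ has parameter sequence $\lbrace \gamma\alpha\varphi''_i\rbrace_{i=1}^d$.
\end{itemize}
Assembling these three sequences yields the claimed parameter array.

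There is no real obstacle here; the lemma is a direct bookkeeping consequence of Lemmas \ref{lem:alphaBeta} and \ref{lem:alphaBetacom} combined with the definitions. The only point requiring a bit of care is keeping the cyclic pattern of the scalar pairs straight, i.e.\ remembering that the prime convention of Definition \ref{def:primeConv} sends $A,B,C$ to $B,C,A$ and then to $C,A,B$, which is precisely why the three products $\alpha\beta$, $\beta\gamma$, $\gamma\alpha$ appear in that cyclic order.
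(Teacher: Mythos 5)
Your proof is correct and follows essentially the same route as the paper, which simply cites Lemma \ref{lem:alphaBetacom} together with Definition \ref{def:LRTpar}. Your additional explicit appeal to Lemma \ref{lem:alphaBeta} for the LR-pair property is a reasonable unpacking of the same argument.
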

\begin{proof} Use Lemma
\ref{lem:alphaBetacom} and
Definition
\ref{def:LRTpar}.
\end{proof}

\begin{lemma}
\label{lem:albegaCor}
Let $A,B,C$ denote a nontrivial LR triple over $\mathbb F$. 
Let 
$\alpha, \beta, \gamma$ denote nonzero scalars in $\mathbb F$.
Then the following are equivalent:
\begin{enumerate}
\item[\rm (i)] the LR triples $A,B,C$ and $\alpha A, \beta B, \gamma C$
have the same parameter array;
\item[\rm (ii)] $\alpha\beta = \beta \gamma = \gamma \alpha = 1$;
\item[\rm (iii)] $\alpha = \beta = \gamma \in \lbrace 1,-1\rbrace$.
\end{enumerate}
\end{lemma}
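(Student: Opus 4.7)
The plan is to close the cycle (i) $\Rightarrow$ (ii) $\Rightarrow$ (iii) $\Rightarrow$ (i), leaning on Lemma \ref{lem:albega} as the main engine.

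For (i) $\Rightarrow$ (ii), I will invoke Lemma \ref{lem:albega} to identify the parameter array of the rescaled LR triple $\alpha A, \beta B, \gamma C$ as $(\{\alpha\beta\,\varphi_i\}_{i=1}^d;\{\beta\gamma\,\varphi'_i\}_{i=1}^d;\{\gamma\alpha\,\varphi''_i\}_{i=1}^d)$. The nontriviality hypothesis on $A,B,C$ forces $d\geq 1$ (by Example \ref{ex:trivial}), so the components $\varphi_1,\varphi'_1,\varphi''_1$ are defined and nonzero in view of Definition \ref{def:pa}. Equating first components of the two parameter arrays and cancelling these nonzero scalars yields $\alpha\beta = \beta\gamma = \gamma\alpha = 1$.

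For (ii) $\Rightarrow$ (iii), I will work directly in $\mathbb{F}$. From $\alpha\beta = \beta\gamma$ and $\beta\neq 0$ I obtain $\alpha=\gamma$; from $\beta\gamma = \gamma\alpha$ and $\gamma\neq 0$ I obtain $\beta=\alpha$. Substituting into $\alpha\beta=1$ gives $\alpha^2=1$, so $\alpha\in\{1,-1\}$ and $\alpha=\beta=\gamma$. For (iii) $\Rightarrow$ (i), the assumption $\alpha=\beta=\gamma\in\{1,-1\}$ immediately gives $\alpha\beta=\beta\gamma=\gamma\alpha=1$, and Lemma \ref{lem:albega} then tells us that the parameter arrays of $A,B,C$ and $\alpha A,\beta B,\gamma C$ coincide.

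There is no real obstacle here: the argument is essentially bookkeeping with Lemma \ref{lem:albega}. The only substantive role of the hypotheses is that nontriviality supplies nonzero entries in the parameter sequences that can be cancelled in step (i) $\Rightarrow$ (ii); without it, the implication would fail for $d=0$, where the parameter array is empty and any choice of $\alpha,\beta,\gamma$ trivially satisfies (i).
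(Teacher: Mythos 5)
Your proposal is correct and follows exactly the route the paper intends: its one-line proof ``Use Lemma \ref{lem:albega}'' is precisely your argument, with nontriviality supplying the nonzero $\varphi_1,\varphi'_1,\varphi''_1$ to cancel. Nothing to add.
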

\begin{proof} Use Lemma
\ref{lem:albega}.
\end{proof}

\begin{lemma}
\label{lem:ABCvar}
Let $A,B,C$ denote an LR triple on $V$,
with parameter array
{\rm (\ref{eq:paLRT})}. Then each permutation of $A,B,C$
is an LR triple on $V$. The corresponding
parameter array is given in the table below:

     \bigskip

\centerline{
\begin{tabular}[t]{c|c}
 {\rm LR triple} & {\rm parameter array}
 \\
 \hline
 \hline
 $A,B,C$ &
$(\lbrace \varphi_i \rbrace_{i=1}^d;
\lbrace \varphi'_i \rbrace_{i=1}^d;
\lbrace \varphi''_i \rbrace_{i=1}^d)$
   \\
 $B,C,A$ &
$(\lbrace \varphi'_i \rbrace_{i=1}^d;
\lbrace \varphi''_i \rbrace_{i=1}^d;
\lbrace \varphi_i \rbrace_{i=1}^d)$
   \\
 $C,A,B$ &
$(\lbrace \varphi''_i \rbrace_{i=1}^d;
\lbrace \varphi_i \rbrace_{i=1}^d;
\lbrace \varphi'_i \rbrace_{i=1}^d)$
   \\
 \hline
 $C,B,A$ &
$(\lbrace \varphi'_{d-i+1} \rbrace_{i=1}^d;
\lbrace \varphi_{d-i+1} \rbrace_{i=1}^d;
\lbrace \varphi''_{d-i+1} \rbrace_{i=1}^d)$
   \\
 $A,C,B$ &
$(\lbrace \varphi''_{d-i+1} \rbrace_{i=1}^d;
\lbrace \varphi'_{d-i+1} \rbrace_{i=1}^d;
\lbrace \varphi_{d-i+1} \rbrace_{i=1}^d)$
   \\
 $B,A,C$ &
$(\lbrace \varphi_{d-i+1} \rbrace_{i=1}^d;
\lbrace \varphi''_{d-i+1} \rbrace_{i=1}^d;
\lbrace \varphi'_{d-i+1} \rbrace_{i=1}^d)$
   \end{tabular}}
     \bigskip

\end{lemma}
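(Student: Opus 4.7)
The plan is to verify the six rows of the table by tracking, for each permutation, what the three constituent pairs are and invoking the already-established behavior of parameter sequences under swapping the order of an LR pair.

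The first three rows (the cyclic permutations $A,B,C$; $B,C,A$; $C,A,B$) require essentially no work beyond unpacking notation. Indeed, that each of $B,C,A$ and $C,A,B$ is an LR triple is immediate from Definition \ref{def:LRT}, since the property of being an LR triple is unchanged by cyclic reordering: the constituent LR pairs are the same three pairs $(A,B)$, $(B,C)$, $(C,A)$, each of which is an LR pair by hypothesis. The parameter array rows are then exactly the content of the prime-convention in Definition \ref{def:primeConv} applied to Definition \ref{def:LRTpar}: for the triple $B,C,A$, the unprimed pair is $B,C$ (parameter sequence $\{\varphi'_i\}_{i=1}^d$), the primed pair is $C,A$ (parameter sequence $\{\varphi''_i\}_{i=1}^d$), and the doubly-primed pair is $A,B$ (parameter sequence $\{\varphi_i\}_{i=1}^d$). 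The row for $C,A,B$ is similar.

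The last three rows (the anti-cyclic permutations $C,B,A$; $A,C,B$; $B,A,C$) use the same reasoning together with Lemma \ref{lem:BAvAB}, which provides the key input: reversing the order of an LR pair inverts its parameter sequence. For example, to treat $C,B,A$, I will note that it is an LR triple because its three constituent pairs are $(C,B)$, $(B,A)$, $(A,C)$, each of which is the reverse of one of $(A,B)$, $(B,C)$, $(C,A)$ and hence an LR pair on $V$ (the property of being an LR pair is preserved under swapping, by the lemma just before Lemma \ref{lem:ABdecIndNil}). Then, applying Lemma \ref{lem:BAvAB} three times and following the prime convention, the unprimed, primed, and doubly primed parameter sequences for $C,B,A$ are respectively the reversals of $\{\varphi'_i\}$, $\{\varphi_i\}$, $\{\varphi''_i\}$, which is exactly the table entry. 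The remaining two rows are obtained by cyclically shifting the analysis for $C,B,A$.

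No serious obstacle arises; the only risk is bookkeeping confusion between the pair-reversal $i \mapsto d-i+1$ and the cyclic shift induced by the prime convention. I will organize the argument by first recording the map ``constituent pairs of a given permutation'' once and for all, then reading off the parameter array rows from Lemma \ref{lem:BAvAB} and Definition \ref{def:LRTpar} in a single table-driven pass.
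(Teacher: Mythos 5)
Your proposal is correct and follows the same route as the paper, whose proof is simply the citation ``By Lemma \ref{lem:BAvAB} and Definition \ref{def:LRTpar}''; you have merely spelled out the bookkeeping. The identification of the constituent pairs for each permutation and the application of Lemma \ref{lem:BAvAB} to the reversed pairs are both accurate.
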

\begin{proof} By Lemma
\ref{lem:BAvAB}
and Definition
\ref{def:LRTpar}.
\end{proof}

\begin{definition}
\label{def:ASSOC}
\rm
Let $A,B,C$ and $A',B',C'$ denote LR triples on $V$.
These LR triples will be called {\it associates} whenever
there exist nonzero $\alpha, \beta,\gamma $  in $ \mathbb F$
such that
\begin{eqnarray*}
A'=\alpha A, \qquad
B'= \beta B,
\qquad
C'= \gamma C.
\end{eqnarray*}
Associativity is an equivalence relation.
\end{definition}

\begin{lemma}
\label{lem:assocIsoT}
Let $A,B,C$ and $A',B',C'$ denote LR triples over $\mathbb F$.
Then the following are equivalent:
\begin{enumerate}
\item[\rm (i)] there exists an LR triple over $\mathbb F$
that is associate to $A,B,C$ and isomorphic to $A',B',C'$;
\item[\rm (ii)] there exists an LR triple  over $\mathbb F$
that is isomorphic to $A,B,C$ and associate to $A',B',C'$.
\end{enumerate}
\end{lemma}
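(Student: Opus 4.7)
The plan is to mimic the proof of Lemma~\ref{lem:assocIso}, which handled the LR pair case via Lemma~\ref{lem:twoview}. So first I will record the obvious triple-analog of Lemma~\ref{lem:twoview}: given an LR triple $A,B,C$ on $V$, an LR triple $A',B',C'$ on $V'$, an $\mathbb{F}$-linear bijection $\sigma:V\to V'$, and nonzero $\alpha,\beta,\gamma \in \mathbb{F}$, the following three conditions are equivalent:
\begin{enumerate}
\item[(a)] $\sigma$ is an isomorphism of LR triples from $\alpha A,\beta B,\gamma C$ to $A',B',C'$;
\item[(b)] $\sigma$ is an isomorphism of LR triples from $A,B,C$ to $A'/\alpha,B'/\beta,C'/\gamma$;
\item[(c)] $\alpha\sigma A = A'\sigma$, $\beta\sigma B = B'\sigma$, and $\gamma\sigma C = C'\sigma$.
\end{enumerate}
This is immediate from Definition~\ref{def:isoLRT}: each of (a) and (b) unpacks directly into (c). I should also note at this point that if $A,B,C$ is an LR triple on $V$, then so is $\alpha A,\beta B,\gamma C$ for any nonzero $\alpha,\beta,\gamma$, and similarly $A'/\alpha,B'/\beta,C'/\gamma$ is an LR triple on $V'$; this follows from Lemma~\ref{lem:alphaBeta} applied pair by pair together with Definition~\ref{def:LRT}.

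With this equivalence in hand, the main argument is a short symmetric chase. To show (i)$\Rightarrow$(ii), suppose an LR triple over $\mathbb{F}$ is associate to $A,B,C$ and isomorphic to $A',B',C'$. By Definition~\ref{def:ASSOC} this associate has the form $\alpha A,\beta B,\gamma C$ for some nonzero $\alpha,\beta,\gamma \in \mathbb{F}$, and by Definition~\ref{def:isoLRT} there exists an $\mathbb{F}$-linear bijection $\sigma:V\to V'$ satisfying condition (a) above. By the equivalence (a)$\Leftrightarrow$(b), the same $\sigma$ is an isomorphism from $A,B,C$ to $A'/\alpha,B'/\beta,C'/\gamma$. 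Since $A'/\alpha,B'/\beta,C'/\gamma$ is associate to $A',B',C'$ by Definition~\ref{def:ASSOC}, this exhibits an LR triple (namely $A'/\alpha,B'/\beta,C'/\gamma$) that is isomorphic to $A,B,C$ and associate to $A',B',C'$, establishing (ii).

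The reverse implication (ii)$\Rightarrow$(i) is exactly the same argument run backwards: an LR triple isomorphic to $A,B,C$ and associate to $A',B',C'$ has the form $A'/\alpha,B'/\beta,C'/\gamma$ for some nonzero $\alpha,\beta,\gamma$, the isomorphism $\sigma:V\to V'$ from $A,B,C$ satisfies (b), and hence by the equivalence satisfies (a), so $\alpha A,\beta B,\gamma C$ is an LR triple that is associate to $A,B,C$ and isomorphic via $\sigma$ to $A',B',C'$. There is no real obstacle here; the only thing one has to be careful about is checking that the scaled triples really are LR triples (which is why the preliminary remark using Lemma~\ref{lem:alphaBeta} is needed), and that Definition~\ref{def:isoLRT} genuinely reduces to the three scalar equations in (c) — both are routine.
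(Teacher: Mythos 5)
Your proposal is correct and follows exactly the route the paper intends: the paper's proof of this lemma is literally "Similar to the proof of Lemma~\ref{lem:assocIso}," and that proof runs through the two-parameter version of your equivalence (a)$\Leftrightarrow$(b)$\Leftrightarrow$(c) (Lemma~\ref{lem:twoview}), which you have correctly extended to three scalars. Nothing to add.
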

\begin{proof} Similar to the proof of
Lemma \ref{lem:assocIso}.
\end{proof}

\begin{definition}
\label{def:simiLar}
\rm
Let $A,B,C$ and $A',B',C'$ denote LR triples over $\mathbb F$.
These LR triples will be called {\it similar} whenever
they satisfy the equivalent conditions {\rm (i), (ii)}
in Lemma
\ref{lem:assocIsoT}.
Similarity is
an equivalence relation.
\end{definition}

\begin{lemma} 
\label{lem:newPA}
Let $A,B,C$ denote an LR triple on $V$,
with parameter array 
{\rm (\ref{eq:paLRT})}.
In each row of the table below, we display
an LR triple on $V^*$ along with its parameter array.

     \bigskip

\centerline{
\begin{tabular}[t]{c|c}
 {\rm LR triple} & {\rm parameter array}
 \\
 \hline
 \hline
 $\tilde A, \tilde B, \tilde C$ &
$(\lbrace \varphi_{d-i+1} \rbrace_{i=1}^d;
\lbrace \varphi'_{d-i+1} \rbrace_{i=1}^d;
\lbrace \varphi''_{d-i+1} \rbrace_{i=1}^d)$
   \\
 $\tilde B, \tilde C, \tilde A$ &
$(\lbrace \varphi'_{d-i+1} \rbrace_{i=1}^d;
\lbrace \varphi''_{d-i+1} \rbrace_{i=1}^d;
\lbrace \varphi_{d-i+1} \rbrace_{i=1}^d)$
   \\
 $\tilde C, \tilde A, \tilde B$ &
$(\lbrace \varphi''_{d-i+1} \rbrace_{i=1}^d;
\lbrace \varphi_{d-i+1} \rbrace_{i=1}^d;
\lbrace \varphi'_{d-i+1} \rbrace_{i=1}^d)$
   \\
 \hline
 $\tilde C, \tilde B, \tilde A$ &
$(\lbrace \varphi'_{i} \rbrace_{i=1}^d;
\lbrace \varphi_{i} \rbrace_{i=1}^d;
\lbrace \varphi''_{i} \rbrace_{i=1}^d)$
   \\
 $\tilde A, \tilde C, \tilde B$ &
$(\lbrace \varphi''_{i} \rbrace_{i=1}^d;
\lbrace \varphi'_{i} \rbrace_{i=1}^d;
\lbrace \varphi_{i} \rbrace_{i=1}^d)$
   \\
 $\tilde B, \tilde A, \tilde C$ &
$(\lbrace \varphi_{i} \rbrace_{i=1}^d;
\lbrace \varphi''_{i} \rbrace_{i=1}^d;
\lbrace \varphi'_{i} \rbrace_{i=1}^d)$
   \end{tabular}}
     \bigskip

\end{lemma}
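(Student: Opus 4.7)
The plan is to reduce the lemma to two results already established: Lemma \ref{lem:LRDdual} (adjoints of LR pairs are LR pairs), Lemma \ref{lem:LRdualPar} (the adjoint inverts the parameter sequence), and Lemma \ref{lem:ABCvar} (how the parameter array of an LR triple transforms under permutation of its entries).

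First I would show that $\tilde A,\tilde B,\tilde C$ is an LR triple on $V^*$. By Definition \ref{def:LRT}, the three pairs $A,B$; $B,C$; $C,A$ are LR pairs on $V$. Applying Lemma \ref{lem:LRDdual}(i) to each, we get that $\tilde A,\tilde B$; $\tilde B,\tilde C$; $\tilde C,\tilde A$ are LR pairs on $V^*$. Hence $\tilde A,\tilde B,\tilde C$ is an LR triple on $V^*$. Consequently, so is every permutation of $\tilde A,\tilde B,\tilde C$.

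Next I would compute the parameter array of $\tilde A,\tilde B,\tilde C$. The first component of this array is the parameter sequence of the LR pair $\tilde A,\tilde B$ on $V^*$; by Lemma \ref{lem:LRdualPar} applied to $A,B$, this equals $\lbrace \varphi_{d-i+1}\rbrace_{i=1}^d$. Applying the same lemma to $B,C$ and to $C,A$ yields the second and third components $\lbrace \varphi'_{d-i+1}\rbrace_{i=1}^d$ and $\lbrace \varphi''_{d-i+1}\rbrace_{i=1}^d$. This establishes the first row of the table.

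Finally I would read off the remaining five rows by applying Lemma \ref{lem:ABCvar} to the LR triple $\tilde A,\tilde B,\tilde C$, whose parameter array we just determined. The two cyclic shifts give rows two and three verbatim. For the orientation-reversing permutations, Lemma \ref{lem:ABCvar} instructs us to invert each of the three components; for instance, for $\tilde C,\tilde B,\tilde A$ the parameter array becomes $(\lbrace \varphi'_{d-(d-i+1)+1}\rbrace_{i=1}^d;\lbrace \varphi_{d-(d-i+1)+1}\rbrace_{i=1}^d;\lbrace \varphi''_{d-(d-i+1)+1}\rbrace_{i=1}^d)=(\lbrace \varphi'_i\rbrace_{i=1}^d;\lbrace \varphi_i\rbrace_{i=1}^d;\lbrace \varphi''_i\rbrace_{i=1}^d)$, matching the fourth row; rows five and six follow identically. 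There is no real obstacle here: the content is the observation that taking adjoints reverses each parameter sequence, and the six rows of the table amount to composing this reversal with the six permutations already catalogued in Lemma \ref{lem:ABCvar}.
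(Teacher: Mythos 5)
Your proposal is correct and follows the paper's own argument exactly: the paper also establishes that the adjoint triples are LR triples via Lemma \ref{lem:LRDdual}(i) and Definition \ref{def:LRT}, and then computes the parameter arrays by combining Lemma \ref{lem:LRdualPar} with Lemma \ref{lem:ABCvar}. Your write-up merely spells out the index bookkeeping that the paper leaves implicit.
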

\begin{proof} 
The given triples
are LR triples by Lemma
\ref{lem:LRDdual}(i)
and Definition
	     \ref{def:LRT}.
To compute their parameter array use
Lemmas
\ref{lem:LRdualPar},
\ref{lem:ABCvar}.
\end{proof}
\begin{definition}
\label{def:prel}
\rm
Let 
 $A,B,C$ denote an LR triple on $V$. By
a {\it relative} of $A,B,C$  we mean an LR triple
from the table in Lemma
\ref{lem:ABCvar}
or
Lemma
\ref{lem:newPA}. A relative of
$A,B,C$ is said to have {\it positive orientation}
(resp. {\it negative orientation}) with respect to
$A,B,C$ 
whenever it is in the top half (resp. bottom half) of the table of
Lemma
\ref{lem:ABCvar}
or the bottom half (resp. top half) of the table in Lemma
\ref{lem:newPA}. We call such a relative a {\it p-relative}
(resp. {\it n-relative}) of $A,B,C$.
Note that an n-relative of $A,B,C$ is the same thing
as a p-relative of $C,B,A$.
\end{definition}

\noindent Let $A,B,C$ denote an LR triple on $V$.
By Lemma
\ref{lem:ABdecIndNil},
each of $A,B,C$ is Nil. By Lemma
\ref{lem:ABdecInd}
the following are mutually opposite flags on $V$:
\begin{equation}
\lbrace A^{d-i}V \rbrace_{i=0}^d, \qquad
\lbrace B^{d-i}V \rbrace_{i=0}^d, \qquad
\lbrace C^{d-i}V \rbrace_{i=0}^d.
\label{eq:LRTMOF}
\end{equation}

\begin{lemma} 
\label{lem:LRTinducedFlag}
Let $A,B,C$ denote an LR triple on $V$. In each row of
the table below, we display a decomposition of $V$
along with its induced flag on $V$:
     
     \bigskip

\centerline{
\begin{tabular}[t]{c|c}
 {\rm decomp. of $V$} & {\rm induced flag on $V$}
 \\
 \hline
 \hline
 $(A,B)$ &
$\lbrace A^{d-i}V \rbrace_{i=0}^d$
\\
 $(B,C)$ &
$\lbrace B^{d-i}V \rbrace_{i=0}^d$
\\
 $(C,A)$ &
$\lbrace C^{d-i}V \rbrace_{i=0}^d$
\\
\hline
 $(B,A)$ &
$\lbrace B^{d-i}V \rbrace_{i=0}^d$
\\
 $(C,B)$ &
$\lbrace C^{d-i}V \rbrace_{i=0}^d$
\\
 $(A,C)$ &
$\lbrace A^{d-i}V \rbrace_{i=0}^d$
\\
   \end{tabular}}
     \bigskip

\end{lemma}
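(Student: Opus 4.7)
The plan is to deduce the six rows directly from Lemma \ref{lem:ABdecInd} applied to each of the three constituent LR pairs of the triple. By Definition \ref{def:LRT}, each of the ordered pairs $A,B$; $B,C$; $C,A$ is an LR pair on $V$, and the same applies to the reverse pairs $B,A$; $C,B$; $A,C$. Lemma \ref{lem:ABdecInd} was stated for a single LR pair, and it handles simultaneously a pair and its reverse: given an LR pair $X,Y$ it asserts that the $(X,Y)$-decomposition induces the flag $\{X^{d-i}V\}_{i=0}^d$ while the $(Y,X)$-decomposition induces $\{Y^{d-i}V\}_{i=0}^d$.

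First I would apply Lemma \ref{lem:ABdecInd} to the LR pair $A,B$, which yields row 1 (the $(A,B)$-decomposition induces $\{A^{d-i}V\}_{i=0}^d$) and row 4 (the $(B,A)$-decomposition induces $\{B^{d-i}V\}_{i=0}^d$). Next I would apply Lemma \ref{lem:ABdecInd} to the LR pair $B,C$, which yields row 2 and row 5. Finally I would apply Lemma \ref{lem:ABdecInd} to the LR pair $C,A$, which yields row 3 and row 6. Together these six applications exactly produce the table.

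There is no real obstacle: the result is an immediate organizational consequence of Lemma \ref{lem:ABdecInd}, gathering in one table the six statements obtained by invoking that lemma three times (once per constituent LR pair of the triple). The only mild point to observe is that the pair $(A,C)$ is the reverse of $(C,A)$, the pair $(C,B)$ is the reverse of $(B,C)$, and the pair $(B,A)$ is the reverse of $(A,B)$, so the three applications do cover all six rows of the table.
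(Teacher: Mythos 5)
Your proof is correct and matches the paper's, whose entire argument is "By Lemma \ref{lem:ABdecInd}." You have simply spelled out the three applications of that lemma (to the pairs $A,B$; $B,C$; $C,A$, each together with its reverse) that the paper leaves implicit.
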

\begin{proof} By Lemma
\ref{lem:ABdecInd}.
\end{proof}

\begin{lemma}
\label{lem:LRTdecDual}
Let $A,B,C$ denote an LR triple on $V$.
In each row of the table below, we display a decomposition
of $V$ along with its dual decomposition of $V^*$.
     
     \bigskip

\centerline{
\begin{tabular}[t]{c|c}
 {\rm decomp. of $V$} & {\rm dual decomp. of $V^*$}
 \\
 \hline
 \hline
 $(A,B)$ &
$(\tilde B, \tilde A)$
\\
 $(B,C)$ &
$(\tilde C, \tilde B)$
\\
 $(C,A)$ &
$(\tilde A, \tilde C)$
\\
\hline
 $(B,A)$ &
$(\tilde A, \tilde B)$
\\
 $(C,B)$ &
$(\tilde B, \tilde C)$
\\
 $(A,C)$ &
$(\tilde C, \tilde A)$
\\
   \end{tabular}}
     \bigskip

\end{lemma}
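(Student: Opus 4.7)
The plan is to reduce the lemma to a direct application of Lemma \ref{lem:LRDdual} to each of the six LR pairs arising from the LR triple. By Definition \ref{def:LRT}, any two of $A,B,C$ form an LR pair on $V$, so each of the ordered pairs $(A,B)$, $(B,C)$, $(C,A)$, $(B,A)$, $(C,B)$, $(A,C)$ is an LR pair on $V$. This means the six decompositions displayed in the left-hand column of the table are well-defined objects that have already been studied in the LR-pair setting.

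Next, I would walk through the table row by row. For the top three rows, I apply Lemma \ref{lem:LRDdual}(ii) to each of the LR pairs $(A,B)$, $(B,C)$, $(C,A)$ in turn; this yields exactly the duality assertions in the first three rows. For the bottom three rows, I apply Lemma \ref{lem:LRDdual}(iii) to the same three LR pairs $(A,B)$, $(B,C)$, $(C,A)$, which gives the duality of the $(B,A)$-, $(C,B)$-, and $(A,C)$-decompositions of $V$ with the $(\tilde A,\tilde B)$-, $(\tilde B,\tilde C)$-, $(\tilde C,\tilde A)$-decompositions of $V^*$, respectively. Alternatively the bottom half follows from the top half by applying Lemma \ref{lem:LRDdual}(ii) to the pairs $(B,A)$, $(C,B)$, $(A,C)$ and invoking Definition \ref{def:primeConv} to keep the bookkeeping straight.

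Since each entry of the table is an instance of an already-established duality for LR pairs, there is no real obstacle here; the only thing to be careful about is matching the ordering of the letters, so that in each row the dual decomposition on $V^*$ is written with the entries of the pair in the reverse order and with adjoints applied, exactly as dictated by Lemma \ref{lem:LRDdual}(ii),(iii). The proof is therefore expected to be a one-line invocation of Lemma \ref{lem:LRDdual}, applied six times.
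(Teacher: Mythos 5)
Your proposal is correct and matches the paper's proof, which is simply the citation ``By Lemma \ref{lem:LRDdual}'': each row of the table is an instance of Lemma \ref{lem:LRDdual}(ii) or (iii) applied to one of the LR pairs $A,B$ or $B,C$ or $C,A$. The row-by-row bookkeeping you describe is exactly what the paper leaves implicit.
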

\begin{proof} 
By Lemma
\ref{lem:LRDdual}.
\end{proof}

\begin{lemma}
\label{lem:LRTflagDual}
Let $A,B,C$ denote an LR triple on $V$.
In each row of the table below, we display a flag
on $V$ along with its dual flag on $V^*$.
     
     \bigskip

\centerline{
\begin{tabular}[t]{c|c}
 {\rm flag on $V$} & {\rm dual flag on $V^*$}
 \\
 \hline
 \hline
 $\lbrace A^{d-i}V\rbrace_{i=0}^d$ &
 $\lbrace \tilde A^{d-i}V^*\rbrace_{i=0}^d$ 
\\
 $\lbrace B^{d-i}V\rbrace_{i=0}^d$ &
 $\lbrace \tilde B^{d-i}V^*\rbrace_{i=0}^d$ 
\\
 $\lbrace C^{d-i}V\rbrace_{i=0}^d$ &
 $\lbrace \tilde C^{d-i}V^*\rbrace_{i=0}^d$ 
\\
   \end{tabular}}
     \bigskip

\end{lemma}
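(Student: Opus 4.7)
The plan is very short because essentially all the work has already been done. The key observation is that Lemma \ref{lem:Nildual} already establishes the exact statement needed for a single Nil map: if $A \in {\rm End}(V)$ is Nil, then $\tilde A$ is Nil and the flags $\lbrace A^{d-i}V\rbrace_{i=0}^d$ and $\lbrace \tilde A^{d-i}V^*\rbrace_{i=0}^d$ are dual.

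First I would invoke Lemma \ref{lem:ABdecIndNil} applied to each of the three LR pairs contained in the LR triple $A,B,C$ (namely $A,B$; $B,C$; and $C,A$, available by Definition \ref{def:LRT}) to conclude that each of $A$, $B$, $C$ is Nil. Then I would apply Lemma \ref{lem:Nildual} separately to each of $A$, $B$, $C$ to obtain the three duality statements summarized in the table.

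There is no obstacle here; the lemma is a bookkeeping corollary that simply repackages Lemma \ref{lem:Nildual} three times for the three generators of the LR triple. Its purpose is to record the table in a convenient form for later reference alongside the companion tables in Lemmas \ref{lem:LRTinducedFlag} and \ref{lem:LRTdecDual}.
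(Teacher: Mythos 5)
Your proposal is correct and matches the paper's proof, which simply cites Lemma \ref{lem:Nildual}; the preliminary observation that each of $A$, $B$, $C$ is Nil (via Lemma \ref{lem:ABdecIndNil}) is the same implicit step the paper relies on.
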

\begin{proof} 
By Lemma
\ref{lem:Nildual}.
\end{proof}

\begin{lemma}
\label{lem:flagaction}
Let $A,B,C$ denote an LR triple on $V$.
In the table below we describe the action of
$A,B,C$ on the flags
{\rm (\ref{eq:LRTMOF})}.
     
     \bigskip

\centerline{
\begin{tabular}[t]{c|c|c}
 {\rm flag on $V$} & {\rm lowered by} & {\rm raised by}
 \\
 \hline
 \hline
 $\lbrace A^{d-i}V\rbrace_{i=0}^d$ & $A$ & $B,C$
\\
 $\lbrace B^{d-i}V\rbrace_{i=0}^d$ &
$B$ & $C,A$
\\
 $\lbrace C^{d-i}V\rbrace_{i=0}^d$ &
$C$ & $A,B$
\\
   \end{tabular}}
     \bigskip

\end{lemma}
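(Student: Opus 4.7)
The plan is to observe that this lemma is essentially a bookkeeping consequence of Lemma \ref{lem:LRFlag} applied to each of the three LR pairs sitting inside the LR triple. By Definition \ref{def:LRT}, each of the ordered pairs $A,B$, $B,C$, and $C,A$ is an LR pair on $V$, so Lemma \ref{lem:LRFlag} is available for each of them. Note that the flags $\lbrace A^{d-i}V\rbrace_{i=0}^d$, $\lbrace B^{d-i}V\rbrace_{i=0}^d$, $\lbrace C^{d-i}V\rbrace_{i=0}^d$ depend only on the individual maps $A$, $B$, $C$ respectively (and in particular they appear identically across the three applications of Lemma \ref{lem:LRFlag}), so there is no ambiguity in combining the conclusions.

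First I would apply Lemma \ref{lem:LRFlag} to the LR pair $A,B$, which gives that $\lbrace A^{d-i}V\rbrace_{i=0}^d$ is lowered by $A$ and raised by $B$, while $\lbrace B^{d-i}V\rbrace_{i=0}^d$ is lowered by $B$ and raised by $A$. Next I would apply Lemma \ref{lem:LRFlag} to the LR pair $B,C$ to conclude that $\lbrace B^{d-i}V\rbrace_{i=0}^d$ is raised by $C$ and $\lbrace C^{d-i}V\rbrace_{i=0}^d$ is lowered by $C$ and raised by $B$. Finally, applying Lemma \ref{lem:LRFlag} to the LR pair $C,A$ yields that $\lbrace C^{d-i}V\rbrace_{i=0}^d$ is raised by $A$ and that $\lbrace A^{d-i}V\rbrace_{i=0}^d$ is raised by $C$ (and lowered by $A$, consistent with the first application).

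Assembling these six conclusions row by row reproduces exactly the table: $\lbrace A^{d-i}V\rbrace_{i=0}^d$ is lowered by $A$ and raised by both $B$ and $C$; $\lbrace B^{d-i}V\rbrace_{i=0}^d$ is lowered by $B$ and raised by both $C$ and $A$; $\lbrace C^{d-i}V\rbrace_{i=0}^d$ is lowered by $C$ and raised by both $A$ and $B$. There is no real obstacle here; the content is purely organizational, since all the work has been done in Lemma \ref{lem:LRFlag}. If anything, the only point to flag is the consistency check that Lemma \ref{lem:LRFlag} applied to two different pairs assigns the same ``lowering'' role to, say, $A$ acting on $\lbrace A^{d-i}V\rbrace_{i=0}^d$ — which it does, since that flag and its lowering map depend only on $A$.
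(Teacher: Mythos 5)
Your proposal is correct and matches the paper's proof, which likewise observes that any two of $A,B,C$ form an LR pair and applies Lemma \ref{lem:LRFlag} to each of the three pairs. The row-by-row assembly and the consistency remark are fine but add nothing beyond what the paper's one-line argument already contains.
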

\begin{proof} Any two of
$A,B,C$ form an LR pair on $V$. 
Apply
Lemma
\ref{lem:LRFlag} to these pairs.
\end{proof}

\begin{lemma}
\label{lem:LRTaction}
Let $A,B,C$ denote an LR triple on $V$. In each row
of the table below, we display a decomposition
$\lbrace V_i\rbrace_{i=0}^d$ of $V$.
For $0 \leq i \leq d$ we give the action of
$A,B,C$ on $V_i$.
     
     \bigskip

\centerline{
\begin{tabular}[t]{c|ccc}
 {\rm dec.  $\lbrace V_i\rbrace_{i=0}^d$} & 
 {\rm action of $A$ on $V_i$}
 &
 {\rm action of $B$ on $V_i$}
 &
 {\rm action of $C$ on $V_i$}
 \\
 \hline
 \hline
 $(A,B)$ &
 $AV_i = V_{i-1}$ & $BV_i = V_{i+1}$ &
 $CV_i \subseteq V_{i-1}+V_i + V_{i+1}$
\\
 $(B,C)$ &
 $AV_i \subseteq V_{i-1}+V_i + V_{i+1}$ &
 $BV_i = V_{i-1}$ & $CV_i = V_{i+1}$ 
\\
 $(C,A)$ &
 $AV_i = V_{i+1}$ &
$BV_i \subseteq V_{i-1}+V_i + V_{i+1}$ &
 $CV_i = V_{i-1}$ 
\\
\hline
 $(B,A)$ &
 $AV_i = V_{i+1}$ & $BV_i = V_{i-1}$ &
 $CV_i \subseteq V_{i-1}+V_i + V_{i+1}$
\\
 $(C,B)$ &
 $AV_i \subseteq V_{i-1}+V_i + V_{i+1}$ &
 $BV_i = V_{i+1}$ & $CV_i = V_{i-1}$ 
\\
 $(A,C)$ &
 $AV_i = V_{i-1}$ &
$BV_i \subseteq V_{i-1}+V_i + V_{i+1}$ &
 $CV_i = V_{i+1}$ 
\\
   \end{tabular}}
     \bigskip

\end{lemma}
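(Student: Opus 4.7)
\medskip

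\noindent \textbf{Proof proposal for Lemma \ref{lem:LRTaction}.}
The plan is to treat the six rows together by exploiting symmetry. The top three rows are related to each other by the cyclic permutation $A \to B \to C \to A$ (Lemma~\ref{lem:ABCvar}), so it suffices to verify the first row among the top three. The bottom three rows are obtained from the top three by inversion of the decomposition (since the $(B,A)$-decomposition is the inversion of the $(A,B)$-decomposition, etc., by the remarks above Definition \ref{def:ABE}). Under inversion $V_i \leftrightarrow V_{d-i}$, the relations $AV_i = V_{i\pm 1}$ become $AV_i = V_{i\mp 1}$ and the tridiagonal containment $CV_i \subseteq V_{i-1}+V_i+V_{i+1}$ is preserved. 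So I will concentrate on row one: the action of $A,B,C$ on the $(A,B)$-decomposition $\lbrace V_i\rbrace_{i=0}^d$.

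The actions of $A$ and $B$ on the $(A,B)$-decomposition are immediate from the definition: this decomposition is by construction lowered by $A$ and raised by $B$ (Definition \ref{def:lr} applied to the LR pair $A,B$). So $AV_i = V_{i-1}$ and $BV_i = V_{i+1}$ (with $V_{-1}=V_{d+1}=0$). The only nontrivial item is the tridiagonal action of $C$ on $\lbrace V_i\rbrace_{i=0}^d$, which I will now establish.

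By Lemma \ref{lem:ABdecInd}, the $(A,B)$-decomposition induces the flag $\lbrace A^{d-i}V\rbrace_{i=0}^d$, so $A^{d-i}V = V_0 + V_1 + \cdots + V_i$ for $0 \leq i \leq d$. Dually, since the $(B,A)$-decomposition is the inversion of the $(A,B)$-decomposition, it induces the flag $\lbrace B^{d-i}V\rbrace_{i=0}^d$, and $B^iV = V_i + V_{i+1} + \cdots + V_d$ for $0 \leq i \leq d$. By Lemma \ref{lem:flagaction} applied to the LR triple $A,B,C$, the map $C$ raises the flag $\lbrace A^{d-i}V\rbrace_{i=0}^d$ and also raises the flag $\lbrace B^{d-i}V\rbrace_{i=0}^d$. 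In particular, by Definition \ref{def:FlagLR}, for each index $i$ we have $C(A^{d-i}V) \subseteq A^{d-i-1}V$ and $C(B^iV) \subseteq B^{i-1}V$, that is,
\begin{eqnarray*}
C(V_0 + V_1 + \cdots + V_i) &\subseteq& V_0 + V_1 + \cdots + V_{i+1}, \\
C(V_i + V_{i+1} + \cdots + V_d) &\subseteq& V_{i-1} + V_i + \cdots + V_d.
\end{eqnarray*}
Since $V_i$ lies inside both $V_0+\cdots+V_i$ and $V_i+\cdots+V_d$, applying $C$ to $V_i$ lands in the intersection of the two right-hand sides. That intersection is exactly $V_{i-1}+V_i+V_{i+1}$, giving $CV_i \subseteq V_{i-1}+V_i+V_{i+1}$ as required.

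This completes the first row. The $(B,C)$-decomposition row follows by applying the same argument to the LR triple $B,C,A$ (Definition \ref{def:primeConv} and Lemma \ref{lem:ABCvar}), and the $(C,A)$-decomposition row follows analogously from the LR triple $C,A,B$. Finally, the last three rows are obtained by inversion as noted above. No single step presents a real obstacle; the only thing to watch is matching the two flag containments from Lemma \ref{lem:flagaction} so that their intersection yields the tridiagonal band, and keeping the cyclic and inversion bookkeeping straight.
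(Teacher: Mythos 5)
Your proof is correct and follows essentially the same route as the paper: the actions of $A,B$ on the $(A,B)$-decomposition are immediate, and the tridiagonal action of $C$ is obtained by intersecting the two containments coming from the fact that $C$ raises both flags $\lbrace A^{d-i}V\rbrace_{i=0}^d$ and $\lbrace B^{d-i}V\rbrace_{i=0}^d$ (Lemma \ref{lem:flagaction} plus Definition \ref{def:FlagLR}). The paper handles the remaining rows with "similarly verified," whereas you make the cyclic-permutation and inversion bookkeeping explicit; the substance is the same.
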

\begin{proof} We verify the first row; the other rows
are similarly verified.
Let $i$ be given.
By construction $AV_i = V_{i-1}$ and
$BV_i = V_{i+1}$.
 We now compute $CV_i$.
By Lemma
\ref{lem:LRTinducedFlag}
the flag $\lbrace A^{d-j}V\rbrace_{j=0}^d$ is induced by
$\lbrace V_j \rbrace_{j=0}^d$.
By Lemma \ref{lem:flagaction}
the flag $\lbrace A^{d-j}V\rbrace_{j=0}^d$ is raised by
 $C$.
By these comments and
Definition
\ref{def:FlagLR},
\begin{equation}
\label{eq:C1}
C V_i \subseteq
C(V_0 + \cdots + V_i) \subseteq 
V_0 +  \cdots +  V_{i+1}.
\end{equation}
Similarly, the flag $\lbrace B^{d-j}V\rbrace_{j=0}^d$ is induced by
$\lbrace V_{d-j} \rbrace_{j=0}^d$ and raised by $C$.
Therefore
\begin{equation}
\label{eq:C2}
C V_i \subseteq 
C (V_i+\cdots + V_d) \subseteq 
V_{i-1} + \cdots + V_d.
\end{equation}
Combining 
(\ref{eq:C1}), 
(\ref{eq:C2}) we obtain
$C V_i \subseteq V_{i-1} + V_{i} + V_{i+1}$.
\end{proof}

\begin{lemma}
\label{lem:ABCGEN}
Let $A,B,C$ denote an LR triple on $V$. Then
the $\mathbb F$-algebra ${\rm End}(V)$ is generated by any two of
$A,B,C$.
\end{lemma}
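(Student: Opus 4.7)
The plan is to reduce this immediately to the LR pair case. By Definition \ref{def:LRT}, any two of $A,B,C$ form an LR pair on $V$, namely the pairs $A,B$ and $B,C$ and $C,A$ (and their reverses). So it suffices to invoke a result that already gives the conclusion for an LR pair.

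That result is Corollary \ref{cor:ABgen}, which asserts that if $A,B$ is an LR pair on $V$, then the $\mathbb F$-algebra ${\rm End}(V)$ is generated by $A,B$. I would simply apply Corollary \ref{cor:ABgen} to each of the three LR pairs $A,B$ and $B,C$ and $C,A$ to conclude that ${\rm End}(V)$ is generated by any two of $A,B,C$. No obstacle is anticipated; the work was already done in proving Lemma \ref{lem:ABAbasis}, whose basis of $(d+1)^2$ monomials $A^rB^dA^s$ underlies Corollary \ref{cor:ABgen}.
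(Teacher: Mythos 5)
Your proposal is correct and is exactly the paper's proof: the paper justifies the lemma by citing Corollary \ref{cor:ABgen}, applied to the LR pairs guaranteed by Definition \ref{def:LRT}. Nothing further is needed.
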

\begin{proof}
By Corollary
\ref{cor:ABgen}.
\end{proof}

\begin{definition}
\label{def:LRTIdem}
\rm
Let $A,B,C$ denote an LR triple on $V$. Recall
that
the pair $A,B$ (resp. $B,C$) (resp. $C,A$) is an
LR pair on $V$. For these LR pairs the idempotent
sequence from Definition
\ref{def:ABE}
is denoted as follows:

     \bigskip

\centerline{
\begin{tabular}[t]{c|c}
 {\rm LR pair} & {\rm idempotent sequence} 
 \\
 \hline
 $A,B$ & $\lbrace E_i \rbrace_{i=0}^d$
   \\
 $B,C$ & $\lbrace E'_i \rbrace_{i=0}^d$
   \\
 $C,A$ & $\lbrace E''_i \rbrace_{i=0}^d$
   \end{tabular}}
     \bigskip

\noindent 
We call the sequence
\begin{eqnarray}
\label{eq:idseq}
(
\lbrace E_i\rbrace_{i=0}^d;
\lbrace E'_i\rbrace_{i=0}^d;
\lbrace E''_i\rbrace_{i=0}^d)
\end{eqnarray}
the {\it idempotent data} of $A,B,C$.
\end{definition}

\begin{lemma}
\label{lem:isequal}
Let $A,B,C$ denote an LR triple on $V$.
Let $\alpha, \beta, \gamma$ denote nonzero scalars in $\mathbb F$.
Then the idempotent data of $\alpha A,\beta B, \gamma C$ is equal to
the 
idempotent data of $A,B,C$.
\end{lemma}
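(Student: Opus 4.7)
The plan is to reduce the claim to the analogous fact for LR pairs, namely Lemma \ref{lem:alphaBeta}, which already tells us that rescaling each member of an LR pair by a nonzero scalar leaves the idempotent sequence of that pair unchanged.

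First I would unpack Definition \ref{def:LRTIdem}: the idempotent data of $A,B,C$ is the triple of idempotent sequences for the three constituent LR pairs $(A,B)$, $(B,C)$, $(C,A)$. Under the rescaling $A \mapsto \alpha A$, $B \mapsto \beta B$, $C \mapsto \gamma C$, these three pairs become $(\alpha A, \beta B)$, $(\beta B, \gamma C)$, $(\gamma C, \alpha A)$, respectively. Applying Lemma \ref{lem:alphaBeta} to the pair $(A,B)$ with scalar pair $(\alpha,\beta)$ shows that $(\alpha A, \beta B)$ has the same idempotent sequence as $(A,B)$. Applying the same lemma to $(B,C)$ with $(\beta,\gamma)$ and to $(C,A)$ with $(\gamma,\alpha)$ yields the corresponding statements for the other two constituent LR pairs.

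Combining the three applications, the idempotent data of $\alpha A, \beta B, \gamma C$ coincides componentwise with the idempotent data of $A,B,C$, which is the desired conclusion. No step presents any real obstacle; the argument is simply a triple invocation of Lemma \ref{lem:alphaBeta} together with the notational convention of Definition \ref{def:primeConv} applied to Definition \ref{def:LRTIdem}.
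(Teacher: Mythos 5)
Your proposal is correct and matches the paper's argument, which also just invokes the last assertion of Lemma \ref{lem:alphaBeta} for each of the three constituent LR pairs $(A,B)$, $(B,C)$, $(C,A)$. Nothing further is needed.
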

\begin{proof} By the last assertion of
Lemma
\ref{lem:alphaBeta}.
\end{proof}

\noindent 
Let $A,B,C$ denote an LR triple on $V$.
Our next goal is to compute the idempotent
data for the relatives of $A,B,C$.

\begin{lemma}
\label{lem:ABCEvar}
Let $A,B,C$ denote an LR triple on $V$,
with idempotent data
{\rm (\ref{eq:idseq})}.
In each row of the table below, we display an LR triple on $V$
along with its idempotent data.

     \bigskip

\centerline{
\begin{tabular}[t]{c|c}
 {\rm LR triple} & {\rm idempotent data}
 \\
 \hline
 \hline
 $A,B,C$ &
$(\lbrace E_i \rbrace_{i=0}^d;
\lbrace E'_i \rbrace_{i=0}^d;
\lbrace E''_i \rbrace_{i=0}^d)$
   \\
 $B,C,A$ &
$(\lbrace E'_i \rbrace_{i=0}^d;
\lbrace E''_i \rbrace_{i=0}^d;
\lbrace E_i \rbrace_{i=0}^d)$
   \\
 $C,A,B$ &
$(\lbrace E''_i \rbrace_{i=0}^d;
\lbrace E_i \rbrace_{i=0}^d;
\lbrace E'_i \rbrace_{i=0}^d)$
   \\
 \hline
 $C,B,A$ &
$(\lbrace E'_{d-i} \rbrace_{i=0}^d;
\lbrace E_{d-i} \rbrace_{i=0}^d;
\lbrace E''_{d-i} \rbrace_{i=0}^d)$
   \\
 $A,C,B$ &
$(\lbrace E''_{d-i} \rbrace_{i=0}^d;
\lbrace E'_{d-i} \rbrace_{i=0}^d;
\lbrace E_{d-i} \rbrace_{i=0}^d)$
   \\
 $B,A,C$ &
$(\lbrace E_{d-i} \rbrace_{i=0}^d;
\lbrace E''_{d-i} \rbrace_{i=0}^d;
\lbrace E'_{d-i} \rbrace_{i=0}^d)$
   \end{tabular}}
     \bigskip

\end{lemma}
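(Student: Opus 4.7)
The plan is to unpack the definition of idempotent data (Definition \ref{def:LRTIdem}) for each of the six relatives in the table, and then reduce each entry to the original sequences $\lbrace E_i\rbrace, \lbrace E'_i\rbrace, \lbrace E''_i\rbrace$ using two tools: the primed-notation convention of Definition \ref{def:primeConv} for the three cyclic (positive-orientation) relatives, and Lemma \ref{lem:Ebackward} for the three reversed (negative-orientation) relatives.

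First I would dispose of the top half of the table. By Definition \ref{def:LRTIdem}, the idempotent data of $A,B,C$ is the triple of idempotent sequences of the ordered LR pairs $(A,B),(B,C),(C,A)$, so the first row of the table is immediate. For the LR triple $B,C,A$, its idempotent data is the triple of idempotent sequences of the LR pairs $(B,C),(C,A),(A,B)$; these are respectively $\lbrace E'_i\rbrace_{i=0}^d, \lbrace E''_i\rbrace_{i=0}^d, \lbrace E_i\rbrace_{i=0}^d$. This is exactly what Definition \ref{def:primeConv} predicts when one applies the convention $f\mapsto f'$ to the idempotent data of $A,B,C$. The row for $C,A,B$ is handled identically with one more cyclic shift.

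Next I would handle the bottom half. Consider the LR triple $C,B,A$: its idempotent data is the triple of idempotent sequences of the LR pairs $(C,B),(B,A),(A,C)$. The LR pair $(B,C)$ has idempotent sequence $\lbrace E'_i\rbrace_{i=0}^d$; by Lemma \ref{lem:Ebackward} applied to this pair, its reversal $(C,B)$ has idempotent sequence $\lbrace E'_{d-i}\rbrace_{i=0}^d$. Applying Lemma \ref{lem:Ebackward} to the pairs $(A,B)$ and $(C,A)$ gives the idempotent sequences of $(B,A)$ and $(A,C)$ as $\lbrace E_{d-i}\rbrace_{i=0}^d$ and $\lbrace E''_{d-i}\rbrace_{i=0}^d$, respectively. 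Assembling these yields exactly the fourth row of the table. The last two rows are obtained by the same reasoning, or equivalently by applying Definition \ref{def:primeConv} to the $C,B,A$ case.

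There is no real obstacle here: every entry in the table is a routine combination of Definition \ref{def:LRTIdem}, the convention of Definition \ref{def:primeConv}, and Lemma \ref{lem:Ebackward}. The only point worth care is bookkeeping — keeping track of which primed object corresponds to which ordered pair under each permutation, and ensuring the index-reversal $i\mapsto d-i$ is applied exactly once per reversed pair.
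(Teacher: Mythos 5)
Your proposal is correct and matches the paper's argument, whose proof is simply ``Use Lemma \ref{lem:Ebackward}''; you have just spelled out the bookkeeping that the paper leaves implicit. Nothing to add.
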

\begin{proof} Use Lemma
\ref{lem:Ebackward}.
\end{proof}

\begin{lemma}
\label{lem:tildeABCEvar}
Let $A,B,C$ denote an LR triple on $V$,
with idempotent data
{\rm (\ref{eq:idseq})}.
In each row of the table below, we display
an LR triple on $V^*$ along with its idempotent data.

     \bigskip

\centerline{
\begin{tabular}[t]{c|c}
 {\rm LR triple} & {\rm idempotent data}
 \\
 \hline
 \hline
 $\tilde A, \tilde B, \tilde C$ &
$(\lbrace \tilde E_{d-i} \rbrace_{i=0}^d;
\lbrace \tilde E'_{d-i} \rbrace_{i=0}^d;
\lbrace \tilde E''_{d-i} \rbrace_{i=0}^d)$
   \\
 $\tilde B, \tilde C, \tilde A$ &
$(\lbrace \tilde E'_{d-i} \rbrace_{i=0}^d;
\lbrace \tilde E''_{d-i} \rbrace_{i=0}^d;
\lbrace \tilde E_{d-i} \rbrace_{i=0}^d)$
   \\
 $\tilde C, \tilde A, \tilde B$ &
$(\lbrace \tilde E''_{d-i} \rbrace_{i=0}^d;
\lbrace \tilde E_{d-i} \rbrace_{i=0}^d;
\lbrace \tilde E'_{d-i} \rbrace_{i=0}^d)$
   \\
 \hline
 $\tilde C, \tilde B, \tilde A$ &
$(\lbrace \tilde E'_{i} \rbrace_{i=0}^d;
\lbrace \tilde E_{i} \rbrace_{i=0}^d;
\lbrace \tilde E''_{i} \rbrace_{i=0}^d)$
   \\
 $\tilde A, \tilde C, \tilde B$ &
$(\lbrace \tilde E''_{i} \rbrace_{i=0}^d;
\lbrace \tilde E'_{i} \rbrace_{i=0}^d;
\lbrace \tilde E_{i} \rbrace_{i=0}^d)$
   \\
 $\tilde B, \tilde A, \tilde C$ &
$(\lbrace \tilde E_{i} \rbrace_{i=0}^d;
\lbrace \tilde E''_{i} \rbrace_{i=0}^d;
\lbrace \tilde E'_{i} \rbrace_{i=0}^d)$
   \end{tabular}}
     \bigskip

\end{lemma}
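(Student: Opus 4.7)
The plan is to fill in the table entry by entry using two ingredients: Lemma~\ref{lem:LRdual} (which computes the idempotent sequence of the adjoint LR pair $\tilde X,\tilde Y$ from that of $X,Y$, applying the inversion $i\mapsto d-i$) and Lemma~\ref{lem:Ebackward} (which says reversing an LR pair inverts its idempotent sequence). By Lemma~\ref{lem:LRDdual}(i), each of the six triples listed in the table is genuinely an LR triple on $V^*$, so Definition~\ref{def:LRTIdem} gives meaning to its idempotent data.

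First I would treat the top half of the table. For $\tilde A,\tilde B,\tilde C$, the three constituent LR pairs are $(\tilde A,\tilde B)$, $(\tilde B,\tilde C)$, $(\tilde C,\tilde A)$, obtained as adjoints of $(A,B)$, $(B,C)$, $(C,A)$ respectively. These source pairs have idempotent sequences $\{E_i\}_{i=0}^d$, $\{E'_i\}_{i=0}^d$, $\{E''_i\}_{i=0}^d$ by Definition~\ref{def:LRTIdem}, so a single application of Lemma~\ref{lem:LRdual} to each gives the three dual idempotent sequences $\{\tilde E_{d-i}\}_{i=0}^d$, $\{\tilde E'_{d-i}\}_{i=0}^d$, $\{\tilde E''_{d-i}\}_{i=0}^d$, which is row~1. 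The next two rows follow by applying the very same reasoning to the cyclic shifts $B,C,A$ and $C,A,B$, equivalently by invoking the notational convention of Definition~\ref{def:primeConv} that rotates $(E_i,E'_i,E''_i)\mapsto (E'_i,E''_i,E_i)$.

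For the bottom half, consider $\tilde C,\tilde B,\tilde A$. Its constituent LR pairs are $(\tilde C,\tilde B)$, $(\tilde B,\tilde A)$, $(\tilde A,\tilde C)$, obtained as adjoints of the reversed pairs $(C,B)$, $(B,A)$, $(A,C)$. By Lemma~\ref{lem:Ebackward} these reversed pairs have idempotent sequences $\{E'_{d-i}\}_{i=0}^d$, $\{E_{d-i}\}_{i=0}^d$, $\{E''_{d-i}\}_{i=0}^d$ respectively. Applying Lemma~\ref{lem:LRdual} to each and simplifying $d-(d-i)=i$ yields $\{\tilde E'_i\}_{i=0}^d$, $\{\tilde E_i\}_{i=0}^d$, $\{\tilde E''_i\}_{i=0}^d$, which is row~4. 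The last two rows are obtained from row~4 by the cyclic shift $(f,f',f'')\mapsto(f',f'',f)$ of Definition~\ref{def:primeConv}.

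There is no real obstacle here; the argument is routine bookkeeping that combines Lemmas~\ref{lem:LRdual} and~\ref{lem:Ebackward} with the three-fold cyclic notation. The only point requiring care is tracking the composition of the two index inversions: for the negative-orientation rows, the reversal (via Lemma~\ref{lem:Ebackward}) and the adjoint (via Lemma~\ref{lem:LRdual}) each contribute an $i\mapsto d-i$, and their composition is the identity, which is precisely why the subscripts $d-i$ disappear in rows 4--6 but survive in rows 1--3.
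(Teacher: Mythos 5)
Your proof is correct and follows essentially the same route as the paper, which simply cites Lemma~\ref{lem:LRdual} together with Lemma~\ref{lem:ABCEvar}; since Lemma~\ref{lem:ABCEvar} is itself proved from Lemma~\ref{lem:Ebackward}, your direct use of Lemmas~\ref{lem:LRdual} and~\ref{lem:Ebackward} amounts to the same bookkeeping. The index computation $d-(d-i)=i$ explaining rows 4--6 is exactly the right point to highlight.
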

\begin{proof} 
By Lemmas
\ref{lem:LRdual},
\ref{lem:ABCEvar}.
\end{proof}

\begin{lemma}
\label{lem:Eform2}
Let $A,B,C$ denote an LR triple on $V$,
with parameter array
{\rm (\ref{eq:paLRT})} and idempotent data
{\rm (\ref{eq:idseq})}.
Then for $0 \leq i \leq d$,
\begin{eqnarray*}
&&
E_i = \frac{A^{d-i}B^d A^i}{\varphi_1 \cdots \varphi_d},
\qquad \qquad 
E'_i = \frac{B^{d-i}C^d B^i}{\varphi'_1 \cdots \varphi'_d},
\qquad \qquad 
E''_i = \frac{C^{d-i}A^d C^i}{\varphi''_1 \cdots \varphi''_d},
\\
&&
E_i = \frac{B^{i}A^d B^{d-i}}{\varphi_1 \cdots \varphi_d},
\qquad \qquad 
E'_i = \frac{C^{i}B^d C^{d-i}}{\varphi'_1 \cdots \varphi'_d},
\qquad \qquad 
E''_i = \frac{A^{i}C^d A^{d-i}}{\varphi''_1 \cdots \varphi''_d}.
\end{eqnarray*}
\end{lemma}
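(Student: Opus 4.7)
The plan is to reduce this lemma entirely to the already-proven Lemma \ref{lem:Eform}, which gives exactly these formulas for a single LR pair. The key observation is that the statement is simply the assertion of Lemma \ref{lem:Eform} applied in turn to each of the three constituent LR pairs of the LR triple, made uniform by the primed-notation convention of Definition \ref{def:primeConv}.

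In more detail, first I would recall from Definitions \ref{def:LRTpar} and \ref{def:LRTIdem} that for an LR triple $A,B,C$ on $V$ the sequence $\lbrace E_i\rbrace_{i=0}^d$ is by definition the idempotent sequence of the LR pair $A,B$, and the sequence $\lbrace \varphi_i\rbrace_{i=1}^d$ is by definition the parameter sequence of that same pair. Hence Lemma \ref{lem:Eform} applied directly to the LR pair $A,B$ gives both formulas for $E_i$ in the first and second rows of the displayed equations. This handles one-third of the statement.

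For the remaining two-thirds, I would invoke the convention of Definition \ref{def:primeConv}, which declares that an object marked with a prime (resp.\ double prime) is the corresponding object for the LR triple $B,C,A$ (resp.\ $C,A,B$). Under this convention, $\lbrace E'_i\rbrace_{i=0}^d$ is the idempotent sequence for the LR pair $B,C$ with parameter sequence $\lbrace \varphi'_i\rbrace_{i=1}^d$, and $\lbrace E''_i\rbrace_{i=0}^d$ is the idempotent sequence for the LR pair $C,A$ with parameter sequence $\lbrace \varphi''_i\rbrace_{i=1}^d$. Applying Lemma \ref{lem:Eform} to the LR pair $B,C$ yields the two formulas for $E'_i$ (with $A,B$ replaced by $B,C$ and $\varphi_j$ replaced by $\varphi'_j$), and applying it to the LR pair $C,A$ yields the two formulas for $E''_i$.

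There is no real obstacle here; the entire content of the lemma is bookkeeping organized by the cyclic primed notation. The only thing worth double-checking is that the parameter-array and idempotent-data entries are indexed consistently with Lemma \ref{lem:Eform}: that is, that $\lbrace \varphi'_i\rbrace_{i=1}^d$ really is the parameter sequence of the pair $B,C$ (as opposed to $C,B$), and similarly for the doubly-primed objects. This is guaranteed by Definitions \ref{def:LRTpar} and \ref{def:LRTIdem}, so the proof reduces to the single line: apply Lemma \ref{lem:Eform} to each of the three LR pairs $A,B$; $B,C$; $C,A$.
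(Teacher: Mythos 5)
Your proposal is correct and matches the paper's own argument, which simply cites Lemma \ref{lem:Eform}; applying that lemma to each of the LR pairs $A,B$; $B,C$; $C,A$ and translating via Definitions \ref{def:LRTpar} and \ref{def:LRTIdem} is exactly the intended reasoning.
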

\begin{proof} By Lemma
\ref{lem:Eform}.
\end{proof}

\begin{lemma}
\label{lem:zeroprodABC}
Let $A,B,C$ denote an LR triple on $V$, with
idempotent data
{\rm (\ref{eq:idseq})}.
Then for $0 \leq i<j\leq d$ the following are zero:
\begin{eqnarray*}
&&A^j E_i,
\qquad \qquad \;
E_j A^{d-i},
\qquad \qquad \;
E_i B^j,
\qquad \qquad \;
B^{d-i} E_j,
\\
&&B^j E'_i,
\qquad \qquad \,
E'_j B^{d-i},
\qquad \qquad  \,
E'_i C^j,
\qquad \qquad \,
C^{d-i} E'_j,
\\
&&C^j E''_i,
\qquad \qquad 
E''_j C^{d-i},
\qquad \qquad 
E''_i A^j,
\qquad \qquad 
A^{d-i} E''_j.
\end{eqnarray*}
\end{lemma}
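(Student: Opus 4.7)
The plan is to reduce this immediately to Lemma \ref{lem:zeroprod}, which already proves the analogous statement for a single LR pair. Recall that by Definition \ref{def:LRT}, each of the pairs $A,B$ and $B,C$ and $C,A$ is an LR pair on $V$, and by Definition \ref{def:LRTIdem}, the idempotent sequences for these three LR pairs are $\lbrace E_i\rbrace_{i=0}^d$, $\lbrace E'_i\rbrace_{i=0}^d$, $\lbrace E''_i\rbrace_{i=0}^d$, respectively.

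First I would apply Lemma \ref{lem:zeroprod} to the LR pair $A,B$ with idempotent sequence $\lbrace E_i\rbrace_{i=0}^d$; this yields, for $0 \leq i < j \leq d$, that the four expressions $A^j E_i$, $E_j A^{d-i}$, $E_i B^j$, $B^{d-i} E_j$ are zero, which is exactly the first row of the displayed list.

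Next, I would apply Lemma \ref{lem:zeroprod} to the LR pair $B,C$ with idempotent sequence $\lbrace E'_i\rbrace_{i=0}^d$, using the notational convention of Definition \ref{def:primeConv} which says that primed objects refer to the LR triple $B,C,A$. This produces the second row. Then, applying Lemma \ref{lem:zeroprod} to the LR pair $C,A$ with idempotent sequence $\lbrace E''_i\rbrace_{i=0}^d$ produces the third row.

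There is no real obstacle here; the lemma is simply the collected content of three applications of Lemma \ref{lem:zeroprod}, one to each of the three LR pairs contained in the LR triple $A,B,C$. The only thing worth flagging is that one must correctly track which primed/unprimed idempotent sequence belongs to which LR pair, which is handled cleanly by the convention of Definition \ref{def:primeConv}.
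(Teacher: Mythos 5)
Your proof is correct and matches the paper's, which simply cites Lemma \ref{lem:zeroprod}; the three rows are exactly the conclusion of that lemma applied to the LR pairs $A,B$ and $B,C$ and $C,A$ with their respective idempotent sequences from Definition \ref{def:LRTIdem}.
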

\begin{proof}
By Lemma
\ref{lem:zeroprod}.
\end{proof}


\begin{lemma}
\label{lem:EiEjp}
Let $A,B,C$ denote an LR triple on $V$,
with idempotent data
{\rm (\ref{eq:idseq})}.
Then the following {\rm (i), (ii)} hold for $0 \leq i,j\leq d$.
\begin{enumerate}
\item[\rm (i)] Suppose $i+j<d$. Then 
\begin{eqnarray*}
E_i E'_j = 0, \qquad \qquad 
E'_i E''_j = 0, \qquad \qquad 
E''_i E_j = 0.
\end{eqnarray*}
\item[\rm (ii)] Suppose $i+j>d$. Then 
\begin{eqnarray*}
E'_j E_i = 0, \qquad \qquad 
E''_j E'_i = 0,\qquad \qquad 
E_j E''_i = 0.
\end{eqnarray*}
\end{enumerate}
\end{lemma}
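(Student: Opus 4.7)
The plan is to prove each of the six claimed vanishings by a direct substitution using the two explicit formulas for the idempotents given in Lemma \ref{lem:Eform2}, combined with the fact that $A$, $B$, $C$ are each Nil (so $A^{d+1} = B^{d+1} = C^{d+1} = 0$, by Lemma \ref{lem:ABdecIndNil} and Definition \ref{def:Nil}). The whole proof comes down to choosing, in each of the six products, the ``correct'' one of the two formulas for each factor so that a high power of a common generator appears sandwiched in the middle.

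Concretely, for part (i) consider first $E_i E'_j$ with $i+j < d$. Write $E_i$ using the second formula of Lemma \ref{lem:Eform2}, namely $E_i = B^i A^d B^{d-i}/(\varphi_1\cdots\varphi_d)$, and $E'_j$ using the first formula of Lemma \ref{lem:Eform2} (applied to the LR triple $A,B,C$), namely $E'_j = B^{d-j} C^d B^j/(\varphi'_1\cdots\varphi'_d)$. Their product then contains the factor $B^{d-i} B^{d-j} = B^{2d-i-j}$, and the hypothesis $i+j<d$ gives $2d-i-j \geq d+1$, so this factor is zero. The two remaining vanishings in (i) follow by the same argument applied to the relatives $B,C,A$ and $C,A,B$ of Lemma \ref{lem:ABCvar}: for $E'_i E''_j = 0$ use the $C$-sandwich forms, and for $E''_i E_j = 0$ use the $A$-sandwich forms, producing $C^{2d-i-j}=0$ and $A^{2d-i-j}=0$ respectively.

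For part (ii), the strategy is the mirror image. For $E'_j E_i = 0$ with $i+j > d$, choose the formulas so that the two outer factors of $B$ collapse to a single high power. Specifically, take $E'_j = B^{d-j} C^d B^j/(\varphi'_1\cdots\varphi'_d)$ and $E_i = B^i A^d B^{d-i}/(\varphi_1\cdots\varphi_d)$, so that the product contains $B^j B^i = B^{i+j}$; the assumption $i+j > d$ gives $i+j \geq d+1$, so this factor vanishes. Again, the two remaining identities $E''_j E'_i = 0$ and $E_j E''_i = 0$ are obtained by the same recipe applied cyclically, producing a central factor $C^{i+j}=0$ and $A^{i+j}=0$ respectively.

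There is no real obstacle: once one notices that Lemma \ref{lem:Eform2} provides \emph{two} formulas for each idempotent, the problem reduces to a bookkeeping choice of which formula to use so that a power of a single map appears in the middle of the product. The rotational (cyclic) symmetry of an LR triple, encoded in Definition \ref{def:primeConv} and Lemma \ref{lem:ABCvar}, makes the three cases in (i) formally identical, and likewise the three cases in (ii); so it suffices to verify one representative in each of (i) and (ii) and invoke the symmetry for the remaining four.
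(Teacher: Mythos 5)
Your proof is correct. The paper reaches the same conclusion by a slightly different route: rather than multiplying the two explicit formulas of Lemma \ref{lem:Eform2} against each other, it uses the flag structure, writing $E'_0V+\cdots+E'_jV=B^{d-j}V$ and then invoking Lemma \ref{lem:zeroprodABC} to conclude $E_iB^{d-j}=0$ when $i+j<d$ (and symmetrically for part (ii)). Since Lemma \ref{lem:zeroprodABC} is itself proved by exactly the substitution you perform (the formulas (\ref{eq:TwoE}) together with $B^{d+1}=0$), your argument is essentially an inlined version of the paper's: you re-derive in one step what the paper factors through an intermediate lemma. The bookkeeping is right in every case — for (i) the chosen formulas produce a middle factor $B^{2d-i-j}$, $C^{2d-i-j}$, or $A^{2d-i-j}$ with exponent at least $d+1$, and for (ii) a middle factor $B^{i+j}$, $C^{i+j}$, or $A^{i+j}$ with exponent at least $d+1$ — and the appeal to the cyclic symmetry of Lemma \ref{lem:ABCvar} to dispatch the remaining four identities matches the paper's own ``similarly shown.''
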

\begin{proof} (i) We show
$E_i E'_j = 0$.
The sequence $\lbrace E'_rV\rbrace_{r=0}^d$ is
the $(B,C)$ decomposition of $V$, which induces the flag
$\lbrace B^{d-r}V\rbrace_{r=0}^d$ on $V$. By this and Lemma
\ref{lem:zeroprodABC},
\begin{eqnarray*}
E_i E'_j V \subseteq E_i (E'_0V+  \cdots + E'_jV) = E_i B^{d-j}V = 0.
\end{eqnarray*}
This shows that $E_iE'_j=0$. The remaining assertions are similarly shown.
\\
\noindent (ii) 
We show
$E'_j E_i = 0$.
The sequence $\lbrace E_{d-r}V\rbrace_{r=0}^d$ is
the $(B,A)$ decomposition of $V$, which induces the flag
$\lbrace B^{d-r}V\rbrace_{r=0}^d$ on $V$. Now by Lemma
\ref{lem:zeroprodABC},
\begin{eqnarray*}
E'_j E_i V \subseteq E'_j (E_iV+ \cdots + E_dV) = E'_j B^iV = 0.
\end{eqnarray*}
This shows that $E'_jE_i=0$. The remaining assertions are similarly shown.
\end{proof}

\begin{lemma} 
\label{lem:tripleProd}
Let $A,B,C$ denote an LR triple on $V$,
with idempotent data
{\rm (\ref{eq:idseq})}. Then for $0 \leq i,j\leq d$,
\begin{eqnarray*}
&&
E_i E'_j E_i = \delta_{i+j,d}E_i,
\qquad \quad
E'_i E''_j E'_i = \delta_{i+j,d}E'_i,
\qquad \quad
E''_i E_j E''_i = \delta_{i+j,d}E''_i,
\\
&&
E_i E''_j E_i = \delta_{i+j,d}E_i,
\qquad \quad
E'_i E_j E'_i = \delta_{i+j,d}E'_i,
\qquad \quad
E''_i E'_j E''_i = \delta_{i+j,d}E''_i.
\end{eqnarray*}
\end{lemma}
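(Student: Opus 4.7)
The plan is to deduce all six identities from Lemma \ref{lem:EiEjp} together with the two elementary facts $E_i^2 = E_i$ and $\sum_{k=0}^d E'_k = I$ (and its analogues for the other two idempotent sequences). I will prove the first identity $E_i E'_j E_i = \delta_{i+j,d} E_i$ in detail; the remaining five come out by the same two-step mechanism, either by cyclically relabeling $A,B,C \to B,C,A \to C,A,B$ via Definition \ref{def:primeConv}, or by invoking the ``other half'' of Lemma \ref{lem:EiEjp} for the three ``reverse'' products.

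Step one ($i+j \neq d$): if $i+j < d$, Lemma \ref{lem:EiEjp}(i) yields $E_i E'_j = 0$; if $i+j > d$, Lemma \ref{lem:EiEjp}(ii) yields $E'_j E_i = 0$. In either case $E_i E'_j E_i = 0 = \delta_{i+j,d} E_i$.

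Step two ($i+j = d$, so $j = d-i$): insert the resolution of identity $I = \sum_{k=0}^d E'_k$ between two copies of $E_i$ and use $E_i^2 = E_i$:
\begin{equation*}
E_i \;=\; E_i \cdot I \cdot E_i \;=\; \sum_{k=0}^d E_i E'_k E_i.
\end{equation*}
By step one, every term with $k \neq d-i$ vanishes, so the sum collapses to $E_i = E_i E'_{d-i} E_i$, giving the claim in the diagonal case.

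Step three (the other five products): the identities $E'_i E''_j E'_i = \delta_{i+j,d} E'_i$ and $E''_i E_j E''_i = \delta_{i+j,d} E''_i$ follow by applying steps one and two to the LR triples $B,C,A$ and $C,A,B$ respectively. For the three ``reverse'' identities $E_i E''_j E_i$, $E'_i E_j E'_i$, $E''_i E'_j E''_i$, observe that Lemma \ref{lem:EiEjp} also gives $E_i E''_j = 0$ when $i+j > d$ (from (ii)) and $E''_j E_i = 0$ when $i+j < d$ (from (i)); step one therefore still applies, and step two goes through verbatim with $I = \sum_k E''_k$ in place of $I = \sum_k E'_k$. No serious obstacle is anticipated, as the proof is essentially a bookkeeping exercise matching the ``forward'' and ``reverse'' cyclic patterns in Lemma \ref{lem:EiEjp} to the six bracketed products.
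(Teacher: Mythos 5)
Your proposal is correct and follows essentially the same route as the paper: the off-diagonal cases come from Lemma \ref{lem:EiEjp}, and the case $i+j=d$ is obtained by summing $E_i E'_r E_i$ over $r$ and collapsing to $E_i I E_i = E_i$. Your step three correctly matches the forward and reverse products to the two halves of Lemma \ref{lem:EiEjp}, which is exactly what the paper leaves implicit in "the remaining assertions are similarly verified."
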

\begin{proof}
Consider the product
$E_i E'_j E_i$. 
First assume that $i+j < d$.
Then
$E_i E'_j=0$ by Lemma
\ref{lem:EiEjp}(i), so 
$E_i E'_jE_i=0$.
Next assume that $i+j>d$.
Then $E'_j E_i = 0 $ by
Lemma
\ref{lem:EiEjp}(ii), so
$E_i E'_jE_i=0$.
Next assume that $i+j=d$.
By our results so far,
\begin{eqnarray*}
E_i E'_jE_i =  \sum_{r=0}^d E_i E'_rE_i  = E_i I E_i = E_i.
\end{eqnarray*}
We have verified our assertion for
the product $E_i E'_jE_i$; 
the remaining assertions are similarily verified.
 \end{proof}

\begin{lemma}
\label{lem:doubleTrace}
Let $A,B,C$ denote an LR triple on $V$,
with idempotent data
{\rm (\ref{eq:idseq})}.
Then for $0\leq i,j\leq d$ the products
\begin{eqnarray*}
&&
E_i E'_j, \qquad \qquad
E'_i E''_j, \qquad \qquad
E''_i E_j,
\\
&&
E_i E''_j, \qquad \qquad
E'_i E_j, \qquad \qquad
E''_i E'_j
\end{eqnarray*}
have trace $0$ if $i+j \not=d$ and
trace 1 if $i+j=d$.
\end{lemma}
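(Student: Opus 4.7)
The plan is to reduce everything to two ingredients already established in the paper: Lemma \ref{lem:EiEjp}, which forces certain double products to vanish when $i+j\ne d$, and Lemma \ref{lem:tripleProd}, which evaluates triple products of the form $E_iE'_jE_i$ (and its five cyclic/reflected analogues) when $i+j=d$. The auxiliary facts I will use are the cyclic property $\mathrm{tr}(XY)=\mathrm{tr}(YX)$, the idempotency $E_i^2=E_i$, and the trace-one property $\mathrm{tr}(E_i)=1$, all recorded in Section~2 (and the same for $E'_i$, $E''_i$).

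First I would handle the case $i+j\ne d$. Take a representative product, say $E_iE'_j$. Cyclicity of trace gives $\mathrm{tr}(E_iE'_j)=\mathrm{tr}(E'_jE_i)$. If $i+j<d$, Lemma \ref{lem:EiEjp}(i) yields $E_iE'_j=0$; if $i+j>d$, Lemma \ref{lem:EiEjp}(ii) yields $E'_jE_i=0$; in either direction one of the two products is zero, so both traces vanish. The remaining five products are treated in exactly the same way, using the appropriate permutation of $A,B,C$ inside Lemma \ref{lem:EiEjp}.

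Next I would handle the case $i+j=d$. For $E_iE'_j$, combine cyclicity with $E_i^2=E_i$ to write
\[
\mathrm{tr}(E_iE'_j)=\mathrm{tr}(E_i^2E'_j)=\mathrm{tr}(E_iE'_jE_i).
\]
Lemma \ref{lem:tripleProd} then gives $E_iE'_jE_i=E_i$, and $\mathrm{tr}(E_i)=1$, finishing this product. The other five products are treated identically, matching them with the six identities in Lemma \ref{lem:tripleProd}: three from its first row (for $E_iE'_j$, $E'_iE''_j$, $E''_iE_j$) and three from its second row (for $E_iE''_j$, $E'_iE_j$, $E''_iE'_j$).

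There is no real obstacle: the argument is essentially a trace-extraction from two prior lemmas, and the six cases are parallel under the cyclic/reflective symmetries of Definition \ref{def:primeConv}. The only care needed is bookkeeping, namely verifying that each of the six products in the statement is paired with the correct vanishing identity in Lemma \ref{lem:EiEjp} and the correct triple-product identity in Lemma \ref{lem:tripleProd}, which a quick inspection of those two lemmas confirms.
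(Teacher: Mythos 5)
Your proposal is correct and follows essentially the same route as the paper, which simply takes the trace of each identity in Lemma \ref{lem:tripleProd} and simplifies using $\mathrm{tr}(KL)=\mathrm{tr}(LK)$; your separate invocation of Lemma \ref{lem:EiEjp} for the case $i+j\ne d$ is subsumed by that, since $E_iE'_jE_i=\delta_{i+j,d}E_i$ already covers both cases at once.
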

\begin{proof} For each displayed equation
in Lemma
\ref{lem:tripleProd},
take the trace of each side and simplify
the result using
${\rm tr}(KL)=
{\rm tr}(LK)$.
 \end{proof}

\begin{proposition}
\label{eq:varphiTRACE}
Let $A,B,C$ denote an LR triple on $V$,
with  parameter array
{\rm (\ref{eq:paLRT})} 
and
idempotent data
{\rm (\ref{eq:idseq})}. 
In the table below,
 for each map $F$ in the header row,  we display
the trace of $FE_i$,
$FE'_i$,
$FE''_i$ for $0 \leq i \leq d$.

     \bigskip

\centerline{
\begin{tabular}[t]{c|ccc ccc}
 $F$ & $AB$ & $BA$ & $BC$ & $CB$ & $CA$ & $AC$
 \\
 \hline
 \hline
 ${\rm tr}(FE_i)$ &
  $\varphi_{i+1}$ &  
  $\varphi_{i}$ &  
  $\varphi'_{d-i+1}$ &  
  $\varphi'_{d-i}$ &  
  $\varphi''_{d-i+1}$ &  
  $\varphi''_{d-i}$ 
\\
 ${\rm tr}(FE'_i)$ &
  $\varphi_{d-i+1}$ &  
  $\varphi_{d-i}$  &
  $\varphi'_{i+1}$ &  
  $\varphi'_{i}$ &  
  $\varphi''_{d-i+1}$ &  
  $\varphi''_{d-i}$   
\\
 ${\rm tr}(FE''_i)$ &
  $\varphi_{d-i+1}$ &  
  $\varphi_{d-i}$ &  
  $\varphi'_{d-i+1}$ &  
  $\varphi'_{d-i}$ &
  $\varphi''_{i+1}$ &  
  $\varphi''_{i}$ 
   \end{tabular}}
     \bigskip

\end{proposition}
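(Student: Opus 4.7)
The plan is to combine two ingredients already in hand: a spectral expansion of each of the six maps $AB, BA, BC, CB, CA, AC$ in terms of one of the three idempotent sequences, and the orthogonality/anti-orthogonality trace relations among the idempotents from the three sequences.

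First I would apply Lemma \ref{lem:AiBione} to each of the three constituent LR pairs $A,B$; $B,C$; $C,A$, using the convention $\varphi_0 = \varphi_{d+1} = 0$ (and likewise for $\varphi'$ and $\varphi''$) to write the identities uniformly as
\begin{eqnarray*}
&&AB = \sum_{j=0}^d \varphi_{j+1} E_j, \qquad BA = \sum_{j=0}^d \varphi_{j} E_j, \\
&&BC = \sum_{j=0}^d \varphi'_{j+1} E'_j, \qquad CB = \sum_{j=0}^d \varphi'_{j} E'_j, \\
&&CA = \sum_{j=0}^d \varphi''_{j+1} E''_j, \qquad AC = \sum_{j=0}^d \varphi''_{j} E''_j.
\end{eqnarray*}
Thus every entry ${\rm tr}(FE_i)$, ${\rm tr}(FE'_i)$, ${\rm tr}(FE''_i)$ reduces by linearity of the trace to a weighted sum of traces of products of two idempotents drawn from the idempotent data of $A,B,C$.

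Next I would invoke the trace computations for such products. For idempotents from the \emph{same} sequence, $E_iE_j = \delta_{i,j}E_i$ together with ${\rm tr}(E_i) = 1$ gives ${\rm tr}(E_iE_j) = \delta_{i,j}$ (and likewise for the primed and doubly-primed sequences). For idempotents from \emph{different} sequences, Lemma \ref{lem:doubleTrace} says that each of the six products $E_iE'_j$, $E'_iE''_j$, $E''_iE_j$, $E_iE''_j$, $E'_iE_j$, $E''_iE'_j$ has trace $\delta_{i+j,d}$. Plugging these into the six expansions above immediately collapses each sum to a single surviving term.

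For example, for ${\rm tr}(BC\,E_i)$ we get
\begin{eqnarray*}
{\rm tr}(BC\,E_i) = \sum_{j=0}^d \varphi'_{j+1}\,{\rm tr}(E'_jE_i) = \varphi'_{(d-i)+1} = \varphi'_{d-i+1},
\end{eqnarray*}
and similarly for ${\rm tr}(CB\,E_i) = \varphi'_{d-i}$, ${\rm tr}(CA\,E_i) = \varphi''_{d-i+1}$, ${\rm tr}(AC\,E_i) = \varphi''_{d-i}$. The two entries ${\rm tr}(AB\,E_i) = \varphi_{i+1}$ and ${\rm tr}(BA\,E_i) = \varphi_i$ come from the same-sequence case (alternatively, they are already recorded in Lemma \ref{lem:varphiTrace}). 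The second and third rows of the table are obtained by the same mechanism, with the roles of the three idempotent sequences rotated. Since every step is a direct appeal to Lemma \ref{lem:AiBione} and Lemma \ref{lem:doubleTrace}, there is no genuine obstacle; the only care needed is bookkeeping of the indices, in particular making use of the $\varphi_0 = 0 = \varphi_{d+1}$ convention so that the $i=0$ and $i=d$ boundary cases are handled uniformly.
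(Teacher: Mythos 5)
Your proposal is correct and follows essentially the same route as the paper: expand each product via Lemma \ref{lem:AiBione} in terms of one idempotent sequence, then evaluate the resulting traces using $E_iE_j=\delta_{i,j}E_i$ (or Lemma \ref{lem:varphiTrace}) for same-sequence entries and Lemma \ref{lem:doubleTrace} for cross-sequence entries. The index bookkeeping in your sample computation is accurate, so nothing further is needed.
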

\begin{proof} We verify the first column of the table.
We have ${\rm tr}(ABE_i) = \varphi_{i+1}$ by
Lemma
\ref{lem:varphiTrace}.
To verify ${\rm tr}(ABE'_i)=\varphi_{d-i+1}$ and
 ${\rm tr}(ABE''_i)=\varphi_{d-i+1}$,
eliminate $AB$ 
using the equation on the left in
(\ref{eq:BiAione}), and evaluate the result using
Lemma
\ref{lem:doubleTrace}.
We have verified the first column of the table;
the remaining columns are similarly verified.
\end{proof}

\noindent In 
Proposition
\ref{eq:varphiTRACE} we used the trace function to
describe the 
parameter array of an LR triple.
We now use the trace function to define some more parameters
for an LR triple.

\begin{definition}
\label{def:traceData}
\rm
Let $A,B,C$ denote an LR triple on $V$,
with idempotent data
{\rm (\ref{eq:idseq})}. For $0 \leq i \leq d$
define
\begin{eqnarray}
a_i = {\rm tr}(CE_i),
\qquad \qquad 
a'_i = {\rm tr}(AE'_i),
\qquad \qquad 
a''_i = {\rm tr}(BE''_i).
\label{eq:aaa}
\end{eqnarray}
We call the sequence
\begin{eqnarray}
\label{eq:tracedata}
(\lbrace a_i \rbrace_{i=0}^d;
\lbrace a'_i \rbrace_{i=0}^d;
\lbrace a''_i \rbrace_{i=0}^d
)
\end{eqnarray}
 the {\it trace data} of $A,B,C$.
\end{definition}

\noindent Our next goal is to describe
the meaning of the trace data from
several points of view.

\begin{lemma}
\label{lem:tracedataI}
Let $A,B,C$ denote an LR triple on $V$,
with trace data
{\rm (\ref{eq:tracedata})}.
Consider a basis for $V$ that induces the
$(A,B)$-decomposition 
(resp. 
$(B,C)$-decomposition)
(resp. 
$(C,A)$-decomposition)
of $V$.
Then for $0 \leq i \leq d$, $a_i$ (resp. $a'_i$)
(resp. $a''_i$)
is the $(i,i)$-entry of the matrix in
 ${\rm Mat}_{d+1}(\F)$ that represents 
$C$ (resp. $A$) (resp. $B$) with respect to this basis.
\end{lemma}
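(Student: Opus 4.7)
The plan is to reduce the statement to a direct matrix computation, exploiting the description of the idempotents $E_i$, $E'_i$, $E''_i$ in matrix form that was given just before Lemma \ref{lem:EiMeaning}.

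First I would handle the $a_i$ case, then observe that $a'_i$ and $a''_i$ follow by the symmetry built into Definition \ref{def:primeConv}. Fix a basis $\lbrace v_i\rbrace_{i=0}^d$ of $V$ that induces the $(A,B)$-decomposition $\lbrace V_i\rbrace_{i=0}^d$. By Definition \ref{def:LRTIdem} the idempotent sequence $\lbrace E_i\rbrace_{i=0}^d$ is the idempotent sequence for this decomposition. Then, by the remark preceding Lemma \ref{lem:EiMeaning}, for $0 \leq r \leq d$ the matrix $N^{(r)} \in {\rm Mat}_{d+1}(\mathbb F)$ representing $E_r$ with respect to $\lbrace v_i\rbrace_{i=0}^d$ has $(r,r)$-entry $1$ and all other entries $0$.

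Let $M \in {\rm Mat}_{d+1}(\mathbb F)$ represent $C$ with respect to $\lbrace v_i\rbrace_{i=0}^d$. Since the representation map is an $\mathbb F$-algebra homomorphism, the product $M N^{(i)}$ represents $CE_i$. A direct matrix multiplication shows that $MN^{(i)}$ is the matrix whose $i$-th column equals the $i$-th column of $M$ and whose other columns are zero; in particular its $(j,j)$-entry is $M_{i,i}$ when $j=i$ and $0$ otherwise. Taking traces,
\begin{equation*}
a_i = {\rm tr}(CE_i) = {\rm tr}(MN^{(i)}) = M_{i,i},
\end{equation*}
which is the desired conclusion for $a_i$.

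For $a'_i$, apply the same argument to the relative $B,C,A$ of $A,B,C$ (see Definition \ref{def:primeConv}): by that convention the $(B,C)$-decomposition and the idempotent sequence $\lbrace E'_i\rbrace_{i=0}^d$ play, for this relative, the roles that the $(A,B)$-decomposition and $\lbrace E_i\rbrace_{i=0}^d$ play for $A,B,C$, while $A$ plays the role of $C$. Similarly $a''_i$ follows by passing to the relative $C,A,B$. No further computation is needed once the $a_i$ case is established. There is essentially no obstacle here; the only thing to be careful about is matching the conventions of Definition \ref{def:traceData} with the cyclic relabeling in Definition \ref{def:primeConv}, so that the basis inducing the $(B,C)$-decomposition truly produces the matrix of $A$ whose diagonal entries are the $a'_i$.
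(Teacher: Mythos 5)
Your proposal is correct and follows essentially the same route as the paper, which simply says to represent $C$ and $E_i$ by matrices with respect to the given basis and read off the trace; you have merely written out the matrix multiplication and the trace computation explicitly. The reduction of the $a'_i$ and $a''_i$ cases to the $a_i$ case via the cyclic convention of Definition \ref{def:primeConv} matches the paper's ``the other assertions are similarly obtained.''
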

\begin{proof} To obtain the assertion about
$a_i$, 
in the equation on the left in  (\ref{eq:aaa})
represent 
$C$ and $E_i$ by matrices with respect to the given basis.
The other assertions are similarly obtained.
\end{proof}

\begin{lemma}
\label{lem:traceDataII}
Let $A,B,C$ denote an LR triple on $V$,
with idempotent data
{\rm (\ref{eq:idseq})} and
trace data
{\rm (\ref{eq:tracedata})}.
Then for $0 \leq i \leq d$,
\begin{eqnarray*}
E_i C E_i = a_i E_i,
\qquad \qquad 
E'_i A E'_i = a'_i E'_i,
\qquad \qquad 
E''_i B E''_i = a''_i E''_i.
\end{eqnarray*}
\end{lemma}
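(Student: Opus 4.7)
The plan is to exploit the fact that each $E_i$ is a rank-one idempotent. Recall from the discussion following Definition of the idempotent sequence that $E_i$ is the projection onto the one-dimensional subspace $V_i = E_iV$ along the direct sum of the other components, and that $\operatorname{tr}(E_i) = 1$. I would prove the three claimed identities in parallel, since the argument is the same in each case; it suffices to describe the first.

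The first step is a general observation: for any $X \in \operatorname{End}(V)$, the operator $E_i X E_i$ sends $V$ into $E_iV$, which is one-dimensional, and annihilates $E_jV$ for $j \ne i$. Therefore $E_i X E_i$ acts on $V_i$ as a scalar and is zero on every other component of the $(A,B)$-decomposition, so there exists a unique $c \in \mathbb{F}$ with $E_i X E_i = c\,E_i$.

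Next I would pin down $c$ via the trace. Taking traces of both sides of $E_i X E_i = c E_i$ and using $\operatorname{tr}(E_i)=1$ together with the cyclic property $\operatorname{tr}(E_i X E_i) = \operatorname{tr}(X E_i E_i) = \operatorname{tr}(X E_i)$ (since $E_i^2 = E_i$), we obtain $c = \operatorname{tr}(X E_i)$. Specializing to $X = C$ and invoking Definition \ref{def:traceData}, namely $a_i = \operatorname{tr}(CE_i)$, gives $E_i C E_i = a_i E_i$, which is the first claim.

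The remaining two identities follow by applying precisely the same argument to the LR triples $B,C,A$ and $C,A,B$, using Definition \ref{def:primeConv}: the role of $E_i$ (resp.\ $E'_i$, $E''_i$) as a rank-one projection is identical, and the traces $a'_i = \operatorname{tr}(AE'_i)$ and $a''_i = \operatorname{tr}(BE''_i)$ are defined analogously. There is no real obstacle here; the only thing to be slightly careful about is making sure the rank-one scalar-multiple step is justified cleanly, but this is immediate from the fact that $E_i$ (resp.\ $E'_i$, $E''_i$) has rank one and $E_i X E_i$ has its image contained in $E_iV$ and its kernel containing $\sum_{j\ne i} E_jV$.
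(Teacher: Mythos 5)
Your proposal is correct and follows essentially the same route as the paper: the paper also argues that since $E_i$ is a rank-one idempotent there is a scalar $a$ with $E_iCE_i = aE_i$, and then identifies $a = a_i$ by taking traces. Your elaboration of the rank-one step via image and kernel is a cleanly justified version of exactly that argument.
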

\begin{proof} We verify the equation on the left.
Since $E_i$ is idempotent and rank 1, there exists
$a \in \mathbb F$ such that $E_iCE_i=a E_i$.
In this equation, take the trace of each side
and use Definition
\ref{def:traceData}
to get $a=a_i$.
\end{proof}

\begin{lemma}
\label{lem:aisum}
Let $A,B,C$ denote an LR triple on $V$,
with trace data 
{\rm (\ref{eq:tracedata})}. Then
\begin{eqnarray*}
 0 = \sum_{i=0}^d a_i,
\qquad \quad
 0 = \sum_{i=0}^d a'_i,
\qquad \quad
 0 = \sum_{i=0}^d a''_i.
\end{eqnarray*}
\end{lemma}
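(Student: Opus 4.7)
The plan is to apply the identity $\sum_{i=0}^d E_i = I$ (property (ii) of the idempotent sequence, stated in Section 2) together with the linearity of the trace, reducing each sum to the trace of a single Nil element. Specifically, by Definition \ref{def:traceData},
\begin{eqnarray*}
\sum_{i=0}^d a_i = \sum_{i=0}^d {\rm tr}(CE_i) = {\rm tr}\Biggl(C\sum_{i=0}^d E_i\Biggr) = {\rm tr}(C),
\end{eqnarray*}
and similarly $\sum_{i=0}^d a'_i = {\rm tr}(A)$ and $\sum_{i=0}^d a''_i = {\rm tr}(B)$.

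It then suffices to show that ${\rm tr}(A) = {\rm tr}(B) = {\rm tr}(C) = 0$. By Lemma \ref{lem:ABdecIndNil} applied to any two of the three LR pairs formed by pairs among $A,B,C$, each of $A,B,C$ is Nil, hence nilpotent. A cleaner way to see this without invoking eigenvalues (which would require an algebraic closure argument in positive characteristic) is to exhibit an explicit basis in which the representing matrix has zero diagonal. For instance, Lemma \ref{lem:ABmatrix} shows that with respect to an $(A,B)$-basis of $V$ the matrix representing $A$ is strictly upper triangular, hence has trace $0$; the same lemma (or Lemma \ref{lem:BAbasisMat}) shows that $B$ is strictly lower triangular in this basis, so ${\rm tr}(B) = 0$. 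For $C$, one uses a $(C,A)$- or $(B,C)$-basis in the analogous way.

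There is no real obstacle; the only mild point of care is that we should justify ${\rm tr}(C)=0$ without circular reference to later material (since the explicit matrix for $C$ displayed in the introduction involves the very trace data we are computing). The strictly-triangular argument for $A$ (in an $(A,B)$-basis), $B$ (in a $(B,C)$-basis), and $C$ (in a $(C,A)$-basis), using Lemma \ref{lem:ABmatrix}, is entirely self-contained and gives the result in two lines.
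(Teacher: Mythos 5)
Your proof is correct and takes essentially the same route as the paper: both reduce each sum to ${\rm tr}(C)$, ${\rm tr}(A)$, ${\rm tr}(B)$ via $\sum_i E_i = I$ and then invoke nilpotency of the three maps. Your extra care about positive characteristic is harmless but unnecessary (a nilpotent operator has trace zero over any field, e.g.\ by the strict triangularization you describe); the paper simply states "zero since $C$ is nilpotent" and leaves that step implicit.
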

\begin{proof}
The sum $\sum_{i=0}^d a_i$ is the trace of $C$, 
which is zero since $C$ is nilpotent.
The remaining assertions are similarly shown.
\end{proof}

\begin{lemma}
\label{lem:traceAdj}
Let $A,B,C$ denote an LR triple on $V$,
with trace data
{\rm (\ref{eq:tracedata})}.
Let $\alpha, \beta, \gamma$ denote nonzero
scalars in $\mathbb F$. Then the 
LR triple $\alpha A, \beta B, \gamma C$
has trace data
\begin{eqnarray*}
(\lbrace \gamma a_i \rbrace_{i=0}^d;
\lbrace \alpha a'_i \rbrace_{i=0}^d;
\lbrace \beta a''_i \rbrace_{i=0}^d).
\end{eqnarray*}
\end{lemma}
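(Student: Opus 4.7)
The plan is to reduce the computation of the trace data of $\alpha A, \beta B, \gamma C$ to that of $A,B,C$ by first observing that the idempotent data is unchanged under scaling. Invoking Lemma~\ref{lem:isequal}, the primitive idempotents attached to the LR pairs $(A,B)$, $(B,C)$, $(C,A)$ coincide, respectively, with those attached to the LR pairs $(\alpha A, \beta B)$, $(\beta B, \gamma C)$, $(\gamma C, \alpha A)$. Consequently, the idempotent sequences $\lbrace E_i \rbrace_{i=0}^d$, $\lbrace E'_i \rbrace_{i=0}^d$, $\lbrace E''_i \rbrace_{i=0}^d$ that appear in Definition~\ref{def:traceData} for the triple $\alpha A, \beta B, \gamma C$ are literally the same as those for $A,B,C$.

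Next, I would apply Definition~\ref{def:traceData} directly to the triple $\alpha A, \beta B, \gamma C$. Using the primed convention of Definition~\ref{def:primeConv}, the $i$-component of the trace data of this new triple is
\begin{eqnarray*}
{\rm tr}\bigl((\gamma C)E_i\bigr),
\qquad
{\rm tr}\bigl((\alpha A)E'_i\bigr),
\qquad
{\rm tr}\bigl((\beta B)E''_i\bigr)
\end{eqnarray*}
for $0 \leq i \leq d$. By $\mathbb F$-linearity of the trace, each of these simplifies to $\gamma\,{\rm tr}(CE_i)$, $\alpha\,{\rm tr}(AE'_i)$, $\beta\,{\rm tr}(BE''_i)$, which equal $\gamma a_i$, $\alpha a'_i$, $\beta a''_i$ by Definition~\ref{def:traceData} applied to $A,B,C$. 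This gives the claim.

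There is no real obstacle here; the argument is a one-line computation once Lemma~\ref{lem:isequal} is invoked to ensure that the $E_i$, $E'_i$, $E''_i$ in the definition of the trace data do not change when $A,B,C$ is rescaled. The only subtlety worth checking carefully is the bookkeeping of the primed notation: in the trace data of $\alpha A, \beta B, \gamma C$, the $i$-coordinate uses $\gamma C$ (not $\alpha A$), reflecting the fact that $a_i$ is defined from the third member of the triple paired with the idempotents of the first two. That matching is what produces the scalars $\gamma, \alpha, \beta$ in the stated order.
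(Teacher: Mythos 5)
Your proof is correct and follows exactly the paper's route: the paper's own proof is the one-line "Use Lemma \ref{lem:isequal} and Definition \ref{def:traceData}," which is precisely the argument you spell out. The bookkeeping of which scalar ($\gamma$, $\alpha$, $\beta$) attaches to which sequence is also handled correctly.
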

\begin{proof}
Use Lemma
\ref{lem:isequal}
and Definition
\ref{def:traceData}.
\end{proof}

\noindent Let $A,B,C$ denote an LR triple on $V$. Our next goal
is to compute the trace data for the relatives of $A,B,C$.

\begin{lemma}
\label{lem:tracedataAlt}
Let $A,B,C$ denote an LR triple on $V$,
with trace data
{\rm   (\ref{eq:tracedata})}.
In each row of the table below, we display an
LR triple on $V$ along with its trace data.

     \bigskip

\centerline{
\begin{tabular}[t]{c|c}
 {\rm LR triple} & {\rm trace data}
 \\
 \hline
 \hline
 $A,B,C$ &
$(\lbrace a_i \rbrace_{i=0}^d;
\lbrace a'_i \rbrace_{i=0}^d;
\lbrace a''_i \rbrace_{i=0}^d)$
   \\
 $B,C,A$ &
$(\lbrace a'_i \rbrace_{i=0}^d;
\lbrace a''_i \rbrace_{i=0}^d;
\lbrace a_i \rbrace_{i=0}^d)$
   \\
 $C,A,B$ &
$(\lbrace a''_i \rbrace_{i=0}^d;
\lbrace a_i \rbrace_{i=0}^d;
\lbrace a'_i \rbrace_{i=0}^d)$
   \\
 \hline
 $C,B,A$ &
$(\lbrace a'_{d-i} \rbrace_{i=0}^d;
\lbrace a_{d-i} \rbrace_{i=0}^d;
\lbrace a''_{d-i} \rbrace_{i=0}^d)$
   \\
 $A,C,B$ &
$(\lbrace a''_{d-i} \rbrace_{i=0}^d;
\lbrace a'_{d-i} \rbrace_{i=0}^d;
\lbrace a_{d-i} \rbrace_{i=0}^d)$
   \\
 $B,A,C$ &
$(\lbrace a_{d-i} \rbrace_{i=0}^d;
\lbrace a''_{d-i} \rbrace_{i=0}^d;
\lbrace a'_{d-i} \rbrace_{i=0}^d)$
   \end{tabular}}
     \bigskip

\end{lemma}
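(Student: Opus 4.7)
The plan is to treat each of the six rows of the table by direct appeal to Definition~\ref{def:traceData}, using Lemma~\ref{lem:ABCEvar} to supply the idempotent data of the relative in question. Recall that by Definition~\ref{def:traceData}, the trace data of an LR triple $X,Y,Z$ with idempotent data $(\{F_i\};\{F'_i\};\{F''_i\})$ is the sequence
\begin{eqnarray*}
(\{{\rm tr}(ZF_i)\}_{i=0}^d;\;\{{\rm tr}(XF'_i)\}_{i=0}^d;\;\{{\rm tr}(YF''_i)\}_{i=0}^d).
\end{eqnarray*}
So in every case the computation reduces to reading off the idempotent data from Lemma~\ref{lem:ABCEvar} and then identifying each resulting trace with one of $a_j$, $a'_j$, $a''_j$ via Definition~\ref{def:traceData}.

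For the top half of the table (the cyclic permutations), I would proceed as follows. For the LR triple $B,C,A$, Lemma~\ref{lem:ABCEvar} gives idempotent data $(\{E'_i\};\{E''_i\};\{E_i\})$, so the trace data is $({\rm tr}(AE'_i);{\rm tr}(BE''_i);{\rm tr}(CE_i))=(a'_i;a''_i;a_i)$, matching the table. The row for $C,A,B$ is handled identically, with idempotent data $(\{E''_i\};\{E_i\};\{E'_i\})$ yielding $(a''_i;a_i;a'_i)$. (One may equivalently observe that these three rows express nothing more than the notational convention in Definition~\ref{def:primeConv}.)

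For the bottom half (the negatively oriented relatives), I would use the same recipe but with the reversed idempotent sequences supplied by Lemma~\ref{lem:ABCEvar}. For instance, for $C,B,A$ the idempotent data is $(\{E'_{d-i}\};\{E_{d-i}\};\{E''_{d-i}\})$; the third element of the triple is $A$, the first is $C$, the second is $B$, so the trace data is
\begin{eqnarray*}
({\rm tr}(AE'_{d-i});\;{\rm tr}(CE_{d-i});\;{\rm tr}(BE''_{d-i}))=(a'_{d-i};a_{d-i};a''_{d-i}),
\end{eqnarray*}
which agrees with the table. The remaining two rows for $A,C,B$ and $B,A,C$ are dispatched in exactly the same way.

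There is no real obstacle here; the only risk is an indexing slip, since one must consistently keep track of which map sits in the third slot (and thus multiplies the first idempotent sequence), which sits in the first slot (multiplying the second idempotent sequence), and which sits in the second slot (multiplying the third). Writing out the six cases side by side, with Lemma~\ref{lem:ABCEvar} and Definition~\ref{def:traceData} in hand, makes the verification mechanical.
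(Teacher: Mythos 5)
Your proposal is correct and follows exactly the paper's own proof, which simply cites Lemma~\ref{lem:ABCEvar} together with Definition~\ref{def:traceData}; your row-by-row verification spells out the same mechanical computation. The index bookkeeping in both halves of the table checks out.
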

\begin{proof} 
By Lemma
\ref{lem:ABCEvar}
and Definition
\ref{def:traceData}.
\end{proof}

\begin{lemma} 
\label{lem:tracedataDual}
Let $A,B,C$ denote an LR triple on $V$,
with trace data
{\rm   (\ref{eq:tracedata})}.
In each row of the table below, we display
an LR triple on $V^*$ along with its trace data.

     \bigskip

\centerline{
\begin{tabular}[t]{c|c}
 {\rm LR triple} & {\rm trace data}
 \\
 \hline
 \hline
 $\tilde A, \tilde B, \tilde C$ &
$(\lbrace a_{d-i} \rbrace_{i=0}^d;
\lbrace a'_{d-i} \rbrace_{i=0}^d;
\lbrace a''_{d-i} \rbrace_{i=0}^d)$
   \\
 $\tilde B, \tilde C, \tilde A$ &
$(\lbrace a'_{d-i} \rbrace_{i=0}^d;
\lbrace a''_{d-i} \rbrace_{i=0}^d;
\lbrace a_{d-i} \rbrace_{i=0}^d)$
   \\
 $\tilde C, \tilde A, \tilde B$ &
$(\lbrace a''_{d-i} \rbrace_{i=0}^d;
\lbrace a_{d-i} \rbrace_{i=0}^d;
\lbrace a'_{d-i} \rbrace_{i=0}^d)$
   \\
 \hline
 $\tilde C, \tilde B, \tilde A$ &
$(\lbrace a'_{i} \rbrace_{i=0}^d;
\lbrace a_{i} \rbrace_{i=0}^d;
\lbrace a''_{i} \rbrace_{i=0}^d)$
   \\
 $\tilde A, \tilde C, \tilde B$ &
$(\lbrace a''_{i} \rbrace_{i=0}^d;
\lbrace a'_{i} \rbrace_{i=0}^d;
\lbrace a_{i} \rbrace_{i=0}^d)$
   \\
 $\tilde B, \tilde A, \tilde C$ &
$(\lbrace a_{i} \rbrace_{i=0}^d;
\lbrace a''_{i} \rbrace_{i=0}^d;
\lbrace a'_{i} \rbrace_{i=0}^d)$
   \end{tabular}}
     \bigskip

\end{lemma}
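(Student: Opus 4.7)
The plan is to apply Definition \ref{def:traceData} to each of the six LR triples on $V^*$ listed in the table, reading off their idempotent data from Lemma \ref{lem:tildeABCEvar}, and then to simplify the resulting traces by means of two elementary facts: (a) any element of ${\rm End}(V)$ has the same trace as its adjoint, and (b) the adjoint map is an $\mathbb F$-algebra antiisomorphism, so $\widetilde{XY} = \tilde Y \tilde X$ for all $X,Y \in {\rm End}(V)$.

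For the top three rows (the p-relatives on $V^*$) I would handle $\tilde A, \tilde B, \tilde C$ first. By Lemma \ref{lem:tildeABCEvar} its idempotent data is $(\lbrace \tilde E_{d-i}\rbrace_{i=0}^d; \lbrace \tilde E'_{d-i}\rbrace_{i=0}^d; \lbrace \tilde E''_{d-i}\rbrace_{i=0}^d)$. Applying Definition \ref{def:traceData} to this triple, its $0$-component of the trace data is
\begin{eqnarray*}
{\rm tr}(\tilde C \tilde E_{d-i}) = {\rm tr}(\widetilde{E_{d-i}C}) = {\rm tr}(E_{d-i}C) = {\rm tr}(CE_{d-i}) = a_{d-i},
\end{eqnarray*}
using fact (b), then fact (a), then the cyclic property ${\rm tr}(XY)={\rm tr}(YX)$. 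The analogous computation gives $a'_{d-i}$ for the prime component (using $\tilde A \tilde E'_{d-i}$) and $a''_{d-i}$ for the double-prime component. The next two rows, for $\tilde B, \tilde C, \tilde A$ and $\tilde C, \tilde A, \tilde B$, then follow immediately by cyclic permutation of the roles of $A,B,C$.

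For the bottom three rows (the n-relatives on $V^*$) I would do the same thing but with the bottom half of the table in Lemma \ref{lem:tildeABCEvar}. For $\tilde C, \tilde B, \tilde A$ the idempotent data is $(\lbrace \tilde E'_{i}\rbrace_{i=0}^d; \lbrace \tilde E_{i}\rbrace_{i=0}^d; \lbrace \tilde E''_{i}\rbrace_{i=0}^d)$, so by Definition \ref{def:traceData} its $0$-component of trace data is
\begin{eqnarray*}
{\rm tr}(\tilde A \tilde E'_{i}) = {\rm tr}(\widetilde{E'_{i}A}) = {\rm tr}(E'_{i}A) = {\rm tr}(AE'_{i}) = a'_{i}.
\end{eqnarray*}
The prime and double-prime components are $a_i$ and $a''_i$ respectively, by the same calculation. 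The remaining two n-relatives are handled by cyclic permutation.

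There is no serious obstacle here; the entire argument is a mechanical application of Lemma \ref{lem:tildeABCEvar}, Definition \ref{def:traceData}, and facts (a), (b). The only thing to be careful about is the bookkeeping—matching the correct adjoint idempotent to the correct adjoint operator when passing through the three cyclic variants in each orientation, and correctly tracking the index shift $i \mapsto d-i$ that appears in the p-relatives but not in the n-relatives (a feature already visible in Lemma \ref{lem:tildeABCEvar} and inherited directly by the trace data).
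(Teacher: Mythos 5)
Your proof is correct and uses exactly the ingredients the paper relies on (the fact that an element has the same trace as its adjoint, Lemma \ref{lem:tildeABCEvar}, and Definition \ref{def:traceData}); you have merely written out the bookkeeping that the paper leaves implicit. No issues.
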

\begin{proof} 
An element in ${\rm End}(V)$ has the same trace as its adjoint.
The result follows from this along with
Lemma
\ref{lem:tildeABCEvar}
and Definition
\ref{def:traceData}.
\end{proof}

\noindent  Let
$A,B,C$ denote an LR triple on $V$, with parameter array
(\ref{eq:paLRT}),
idempotent data
{\rm (\ref{eq:idseq})},
and trace data
(\ref{eq:tracedata}).
Associated with $A,B,C$ are 12 types of bases for $V$:

\begin{eqnarray}
&&
(A,B),  \quad\qquad \mbox{\rm inverted}\;(A,B), \quad \qquad
(B,A),  \quad \qquad \mbox{\rm inverted}\; (B,A),
\label{eq:typeAB}
\\
&&
(B,C),  \qquad \quad \mbox{\rm inverted}\; (B,C), \quad \qquad
(C,B),  \qquad\quad \mbox{\rm inverted}\; (C,B),
\label{eq:typeBC}
\\
&&
(C,A),  \qquad \quad \,\mbox{\rm inverted}\; (C,A), \quad \qquad
(A,C),  \qquad \quad \,\mbox{\rm inverted}\; (A,C).
\label{eq:typeCA}
\end{eqnarray}
\noindent We now consider the actions of $A,B,C$ on these
bases. We will use the following notation.

\begin{definition}
\label{def:natural}
\rm
For the above LR triple $A,B,C$ consider the 12 types of
bases for $V$ from
{\rm (\ref{eq:typeAB})--(\ref{eq:typeCA})}.
For each type $\natural$ in the list and $F \in 
{\rm End}(V)$ let $F^\natural$ denote the matrix
in ${\rm Mat}_{d+1}(\F)$ that represents
$F$
with respect to a basis for $V$ of type $\natural$.
Note that the map $\natural : 
{\rm End}(V) \to 
{\rm Mat}_{d+1}(\F)$, $F \mapsto F^\natural$
is an $\mathbb F$-algebra isomorphism.
\end{definition}

\begin{proposition}
\label{prop:matrixRep}
For the above LR triple $A,B,C$  
consider the 12 types of bases for $V$
from
{\rm (\ref{eq:typeAB})--(\ref{eq:typeCA})}.
For each type $\natural $ in the list,
the entries of
$A^\natural$, $B^\natural$, $C^\natural$ are given in the table below.
All entries not shown are zero.

     \bigskip

\centerline{
\begin{tabular}[t]{c|ccc|ccc|ccc}
 $\natural$ &
 $A^\natural_{i,i-1}$ & $A^\natural_{i,i}$ & $A^\natural_{i-1,i}$
 & 
 $B^\natural_{i,i-1}$ & $B^\natural_{i,i}$ & $B^\natural_{i-1,i}$
 &
 $C^\natural_{i,i-1}$ & $C^\natural_{i,i}$ & $C^\natural_{i-1,i}$
 \\
 \hline
 \hline
 $(A,B)$ &
$0$ & $0$ & $1$ 
&
$\varphi_i$ & $0$ & $0$ 
&
$\varphi''_{d-i+1}$ & $a_i$ & $\frac{\varphi'_{d-i+1}}{\varphi_i}$ 
\\
{\rm inv. $(A,B)$} &
$1$ & $0$ & $0$ 
&
$0$ & $0$ & $\varphi_{d-i+1}$ 
&
$\frac{\varphi'_{i}}{\varphi_{d-i+1}}$ & $a_{d-i}$ & $\varphi''_{i}$ 
\\
$(B,A)$ &
$\varphi_{d-i+1}$ & $0$ & $0$ 
&
$0$ & $0$ & $1$ 
&
$\varphi'_{i}$ & $a_{d-i}$ & $\frac{\varphi''_{i}}{\varphi_{d-i+1}}$ 
\\
{\rm inv. $(B,A)$} &
$0$ & $0$ & $\varphi_i $ 
&
$1$ & $0$ & $0$ 
&
$\frac{\varphi''_{d-i+1}}{\varphi_i}$ & $a_i$ & 
$\varphi'_{d-i+1}$ 
\\
\hline
 $(B,C)$ &
$\varphi_{d-i+1}$ & $a'_{i}$ & $\frac{\varphi''_{d-i+1}}{\varphi'_i}$ 
&
$0$ & $0$ & $1$ 
&
$\varphi'_i$ & $0$ & $0$ 
\\
{\rm inv. $(B,C)$} &
$\frac{\varphi''_{i}}{\varphi'_{d-i+1}}$ & $a'_{d-i}$ & $\varphi_{i}$ 
&
$1$ & $0$ & $0$ 
&
$0$ & $0$ & $\varphi'_{d-i+1}$ 
\\
$(C,B)$ &
$\varphi''_{i}$ & $a'_{d-i}$ & $\frac{\varphi_{i}}{\varphi'_{d-i+1}}$ 
&
$\varphi'_{d-i+1}$ & $0$ & $0$ 
&
$0$ & $0$ & $1$ 
\\
{\rm inv. $(C,B)$} &
$\frac{\varphi_{d-i+1}}{\varphi'_i}$ & $a'_{i}$ & 
$\varphi''_{d-i+1}$ 
&
$0$ & $0$ & $\varphi'_i $ 
&
$1$ & $0$ & $0$ 
\\
\hline
 $(C,A)$ &
$\varphi''_i$ & $0$ & $0$ 
&
$\varphi'_{d-i+1}$ & $a''_{i}$ & $\frac{\varphi_{d-i+1}}{\varphi''_i}$ 
&
$0$ & $0$ & $1$ 
\\
{\rm inv. $(C,A)$} &
$0$ & $0$ & $\varphi''_{d-i+1}$ 
&
$\frac{\varphi_{i}}{\varphi''_{d-i+1}}$ & $a''_{d-i}$ & $\varphi'_{i}$ 
&
$1$ & $0$ & $0$ 
\\
$(A,C)$ &
$0$ & $0$ & $1$ 
&
$\varphi_{i}$ & $a''_{d-i}$ & $\frac{\varphi'_{i}}{\varphi''_{d-i+1}}$ 
&
$\varphi''_{d-i+1}$ & $0$ & $0$ 
\\
{\rm inv. $(A,C)$} &
$1$ & $0$ & $0$ 
&
$\frac{\varphi'_{d-i+1}}{\varphi''_i}$ & $a''_{i}$ & 
$\varphi_{d-i+1}$ 
&
$0$ & $0$ & $\varphi''_i $ 
\\
\end{tabular}}
     \bigskip

\end{proposition}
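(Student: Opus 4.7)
\noindent The plan is to verify row~1 (the $(A,B)$-basis row) by direct computation, and then obtain the remaining eleven rows as consequences of row~1 via two operations: passing to a relative from Lemma~\ref{lem:ABCvar}, and conjugating by the order-reversing matrix ${\bf Z}$ (which turns a basis into its inversion).

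\medskip

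\noindent For row~1, fix an $(A,B)$-basis $\lbrace v_i\rbrace_{i=0}^d$ for $V$. By Lemma~\ref{lem:ABmatrix}, $A^\natural$ and $B^\natural$ take the claimed shape. Since this basis induces the $(A,B)$-decomposition of $V$, the top row of Lemma~\ref{lem:LRTaction} gives $CV_i \subseteq V_{i-1}+V_i+V_{i+1}$, so $C^\natural$ is tridiagonal and only three diagonals need to be identified. The diagonal entry $C^\natural_{i,i}=a_i$ is immediate from Lemma~\ref{lem:tracedataI}. To compute the subdiagonal entry, let $\beta$ denote the coefficient of $v_i$ in the basis-expansion of $Cv_{i-1}$; applying $C$ to $Av_i=v_{i-1}$ shows that $\beta$ equals the $v_i$-coefficient of $(CA)v_i$, and since $E_i$ is the rank-one projector onto $\mathbb F v_i$, this coefficient equals ${\rm tr}(CA\,E_i)$, which by Proposition~\ref{eq:varphiTRACE} is $\varphi''_{d-i+1}$. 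Hence $C^\natural_{i,i-1} = \varphi''_{d-i+1}$. A parallel computation using $Bv_{i-1}=\varphi_i v_i$ shows that $\varphi_i\,C^\natural_{i-1,i}$ equals the $v_i$-coefficient of $(BC)v_i$, which in turn equals ${\rm tr}(BC\,E_i)=\varphi'_{d-i+1}$; solving yields $C^\natural_{i-1,i} = \varphi'_{d-i+1}/\varphi_i$. This completes row~1.

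\medskip

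\noindent For the other rows, the key observation is that each of the twelve basis types is either an $(A,B)$-basis for one of the six relatives listed in Lemma~\ref{lem:ABCvar} (namely, $A,B,C$; $B,C,A$; $C,A,B$; $C,B,A$; $A,C,B$; $B,A,C$), or the inversion of such a basis. Applying row~1 to each of these six relatives, and then translating the resulting parameter array, idempotent data, and trace data back to $A,B,C$ via Lemmas~\ref{lem:ABCvar}, \ref{lem:ABCEvar}, \ref{lem:tracedataAlt}, produces the six non-inverted rows of the table. The six inverted rows then follow from the substitution $v_i\mapsto v_{d-i}$, which conjugates every matrix by ${\bf Z}$ and so reindexes each entry $(i,j)$ to $(d-i,d-j)$.

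\medskip

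\noindent The main obstacle is purely bookkeeping: one must keep straight how the six parameters $\varphi_i,\varphi'_i,\varphi''_i,a_i,a'_i,a''_i$ transform under each of the six relatives, and verify that the ${\bf Z}$-conjugate of the tridiagonal form for $C$ in row~1 matches the table entries in the corresponding inverted row. Once this dictionary is in place, each of the eleven remaining rows is a mechanical transcription of row~1 requiring no further calculation.
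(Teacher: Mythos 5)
Your proposal is correct and follows essentially the same route as the paper: row~1 is established via Lemmas~\ref{lem:ABmatrix}, \ref{lem:LRTaction}, \ref{lem:tracedataI} together with the two-way evaluations of ${\rm tr}(CAE_i)$ and ${\rm tr}(BCE_i)$ from Proposition~\ref{eq:varphiTRACE}, exactly as in the paper's argument. Your derivation of the remaining eleven rows by applying row~1 to the relatives of $A,B,C$ (via the dictionaries in Lemmas~\ref{lem:ABCvar}, \ref{lem:ABCEvar}, \ref{lem:tracedataAlt}) and conjugating by ${\bf Z}$ for the inverted bases is a clean, valid systematization of what the paper leaves as ``similarly verified.''
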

\begin{proof}
 We verify the first row of the table.
Consider a basis for $V$ of type $\natural=(A,B)$.
The entries of $A^\natural$ and
$B^\natural$ are given in
Lemma \ref{lem:ABmatrix}. 
We now compute the entries of
$C^\natural$. This matrix is tridiagonal
by Lemma
\ref{lem:LRTaction}.
The diagonal entries of $C^\natural$ are given in
 Lemma
\ref{lem:tracedataI}.
As we compute additional entries of $C^\natural$,
we will use the fact that
for $0 \leq j \leq d$ the matrix $E^\natural_j$ has
$(j,j)$-entry 1 and all other entries $0$.
For $1 \leq i \leq d$ we now compute the
$(i,i-1)$-entry of $C^\natural$.
We evaluate ${\rm tr}(CAE_i)$ in two ways.
On one hand, by Proposition
\ref{eq:varphiTRACE} this trace is
equal to 
$\varphi''_{d-i+1}$.
On the other hand, by linear algebra this trace is equal to
 ${\rm tr}(C^\natural A^\natural E^\natural_i)$, which is equal to
 the $(i,i)$-entry of $C^\natural A^\natural $ by the form of
 $E^\natural_i$. By the form of $A^\natural $,
 the $(i,i)$-entry of $C^\natural A^\natural $ is equal to
$C^\natural_{i,i-1}$.
By these comments 
$C^\natural_{i,i-1}=\varphi''_{d-i+1}$.
Next we compute
the $(i-1,i)$-entry of $C^\natural$.
We evaluate ${\rm tr}(BCE_i)$ in two ways.
On one hand, by Proposition
\ref{eq:varphiTRACE} this trace is
equal to 
$\varphi'_{d-i+1}$.
On the other hand, 
by linear algebra this trace is equal to
 ${\rm tr}(B^\natural C^\natural E^\natural_i)$, which is equal to
 the $(i,i)$-entry of $B^\natural C^\natural $ by the form of
 $E^\natural_i$. By the form of $B^\natural$,
 the $(i,i)$-entry of $B^\natural C^\natural $ is equal to
$\varphi_i C^\natural_{i-1,i}$.
By these comments 
$C^\natural_{i-1,i}=\varphi'_{d-i+1}/\varphi_i$.
We have verified the first row of the table, and
the remaining rows are similarly verified.
\end{proof}

\begin{proposition}
\label{prop:IsoParTrace}
An LR triple is uniquely determined up to isomorphism
by its parameter array and trace data.
\end{proposition}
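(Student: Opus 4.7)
The plan is to use Proposition~\ref{prop:matrixRep} as the key tool. The point is that in the first row of the table in Proposition~\ref{prop:matrixRep}, the matrices $A^\natural, B^\natural, C^\natural$ representing $A, B, C$ with respect to an $(A,B)$-basis of $V$ depend only on the parameter array and trace data of $A,B,C$. So if two LR triples share these invariants, one can construct an explicit isomorphism by matching $(A,B)$-bases.

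More concretely, let $A,B,C$ be an LR triple on $V$ and $A',B',C'$ an LR triple on a vector space $V'$ over $\mathbb F$ of dimension $d+1$, and suppose they have the same parameter array (\ref{eq:paLRT}) and the same trace data (\ref{eq:tracedata}). First I would invoke Definition~\ref{def:ABbasis} to pick an $(A,B)$-basis $\lbrace v_i\rbrace_{i=0}^d$ of $V$ and an $(A',B')$-basis $\lbrace v'_i\rbrace_{i=0}^d$ of $V'$. Let $\sigma:V\to V'$ denote the $\mathbb F$-linear bijection determined by $\sigma v_i=v'_i$ for $0\leq i\leq d$.

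Next I would apply the $\natural=(A,B)$ row of Proposition~\ref{prop:matrixRep} to each of the two LR triples. Since the two parameter arrays agree and the two trace data agree, the matrices representing $A,B,C$ with respect to $\lbrace v_i\rbrace_{i=0}^d$ coincide entry-by-entry with the matrices representing $A',B',C'$ with respect to $\lbrace v'_i\rbrace_{i=0}^d$. Writing out what this means for the action on basis vectors, one gets $\sigma A=A'\sigma$, $\sigma B=B'\sigma$, and $\sigma C=C'\sigma$. By Definition~\ref{def:isoLRT}, $\sigma$ is an isomorphism of LR triples from $A,B,C$ to $A',B',C'$.

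There is essentially no obstacle: the entire content has been loaded into Proposition~\ref{prop:matrixRep}, whose first row exhibits the matrices of $A,B,C$ in an $(A,B)$-basis as explicit expressions in the $\varphi_i$, $\varphi'_i$, $\varphi''_i$, $a_i$. The only mild care required is to note that the $a'_i$ and $a''_i$ do not appear in this first row, so one does not even need the full trace data to prove the proposition — the parameter array together with $\lbrace a_i\rbrace_{i=0}^d$ already suffices. This refinement is worth noting in a remark, but is not needed for the statement as given.
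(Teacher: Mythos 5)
Your proof is correct and takes essentially the same approach as the paper: the paper's entire proof is the one-line observation that the matrix entries in Proposition~\ref{prop:matrixRep} are determined by the parameter array and trace data, and you have simply written out the standard basis-matching argument that this observation encodes. Your side remark that the parameter array together with $\lbrace a_i\rbrace_{i=0}^d$ alone already suffices is also correct, since the $(A,B)$-row of the table involves no $a'_i$ or $a''_i$.
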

\begin{proof}
In 
Proposition \ref{prop:matrixRep},
the matrix entries are determined by the
parameter array and trace data.
\end{proof}

\noindent Let 
$A,B,C$ denote an LR triple on $V$.
Recall the 12 types of bases for $V$
from
{\rm (\ref{eq:typeAB})--(\ref{eq:typeCA})}.
We now consider how
 these bases are related. As we proceed,
keep in mind that any permutation of
$A,B,C$ is an LR triple on $V$.

\begin{lemma}
\label{lem:xyz}
Let $A,B,C$ denote an LR triple on $V$.
Let $\lbrace u_i \rbrace_{i=0}^d$ denote an
 $(A,C)$-basis of
$V$, and let $\lbrace v_i\rbrace_{i=0}^d$
denote an $(A,B)$-basis of $V$.
Then the transition matrix from
$\lbrace u_i \rbrace_{i=0}^d$ to
$\lbrace v_i \rbrace_{i=0}^d$ is upper triangular
and Toeplitz.
\end{lemma}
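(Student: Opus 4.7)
The plan is to reduce the statement directly to Proposition \ref{prop:Toe}. The proposition characterizes exactly those pairs of bases whose transition matrix is upper triangular and Toeplitz: they must both be annihilated at the bottom and shifted-downward by a single common Nil map. So the task is simply to exhibit such a map, and here the common map $A$ from the LR triple is the obvious candidate.

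First I would invoke Definition \ref{def:ABbasis} twice. Applied to the LR pair $A,B$, it tells us that the $(A,B)$-basis $\lbrace v_i\rbrace_{i=0}^d$ satisfies $Av_0=0$ and $Av_i=v_{i-1}$ for $1\le i\le d$. Applied to the LR pair $A,C$, it tells us that the $(A,C)$-basis $\lbrace u_i\rbrace_{i=0}^d$ satisfies $Au_0=0$ and $Au_i=u_{i-1}$ for $1\le i\le d$. Thus the single element $A\in{\rm End}(V)$ acts on both bases exactly in the way required by condition (i) of Proposition \ref{prop:Toe}.

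Applying Proposition \ref{prop:Toe} now yields condition (ii): the transition matrix from $\lbrace u_i\rbrace_{i=0}^d$ to $\lbrace v_i\rbrace_{i=0}^d$ is upper triangular and Toeplitz, which is precisely the assertion of the lemma.

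There is essentially no obstacle; the content of the lemma is just that the LR triple structure gives a common Nil map $A$ for which both the $(A,B)$-basis and the $(A,C)$-basis are ``$A$-cyclic down'' bases in the sense of Proposition \ref{prop:Toe}. Everything else has been packaged in the preparatory Sections 3 and 12. If desired one could also record the explicit description of the Toeplitz parameters: writing $v_0=\sum_{i=0}^d\alpha_i A^i u_0$ with $\alpha_0\ne 0$ (the nonzero leading coefficient forced by $v_0\notin AV$, via Lemma \ref{lem:twoView}) and applying $A^{d-j}$ as in the computation leading to \eqref{eq:Toep}, we recover $v_j=\sum_{i=0}^j \alpha_{j-i}u_i$, so the transition matrix is Toeplitz with parameters $\lbrace \alpha_i\rbrace_{i=0}^d$. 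This explicit formula is not needed for the lemma as stated but will be convenient in the subsequent discussion of Toeplitz data for LR triples.
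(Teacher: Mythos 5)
Your proof is correct and is essentially identical to the paper's: both apply Definition \ref{def:ABbasis} to the LR pairs $A,B$ and $A,C$ to see that $A$ shifts both bases downward and kills their first vectors, and then invoke Proposition \ref{prop:Toe}. The extra remark about the explicit Toeplitz parameters is accurate but not needed.
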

\begin{proof} 
Let $S \in 
{\rm Mat}_{d+1}(\F)$ denote the transition matrix
in question.
By Definition
\ref{def:ABbasis}  and the construction,
 $Au_i = u_{i-1}$ 
$(1 \leq i \leq d)$, $Au_0=0$, 
 $Av_i = v_{i-1}$
$(1 \leq i \leq d)$, $Av_0=0$.
Now by Proposition
\ref{prop:Toe}, $S$ is upper triangular and Toeplitz.
\end{proof}

\begin{definition}\rm
\label{def:compat}
Two bases of $V$ will be called 
{\it compatible}
whenever the transition matrix from
one basis to the other is upper triangular and Toeplitz,
with all diagonal entries 1.
\end{definition}

\begin{lemma}
\label{lem:exCompat}
Let $A,B,C$ denote an LR triple on $V$. 
Given an $(A,C)$-basis of $V$, there exists
a compatible $(A,B)$-basis of $V$.
\end{lemma}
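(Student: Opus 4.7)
The plan is to build the required $(A,B)$-basis by a simple rescaling of an arbitrary one, then invoke Lemma~\ref{lem:xyz} to guarantee the upper-triangular Toeplitz shape of the transition matrix.

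First, I would fix the given $(A,C)$-basis $\lbrace u_i \rbrace_{i=0}^d$ and invoke the existence of an $(A,B)$-basis of $V$: since $A,B$ form an LR pair, Definition~\ref{def:ABbasis} and the surrounding development produce some $(A,B)$-basis $\lbrace w_i \rbrace_{i=0}^d$ of $V$. By Lemma~\ref{lem:xyz} (applied to $\lbrace u_i\rbrace_{i=0}^d$ and $\lbrace w_i\rbrace_{i=0}^d$), the transition matrix $T$ from $\lbrace u_i\rbrace_{i=0}^d$ to $\lbrace w_i\rbrace_{i=0}^d$ is upper triangular and Toeplitz; let $\alpha_0, \alpha_1, \ldots, \alpha_d$ be its parameters in the sense of Definition~\ref{def:top}.

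Next, I would observe that $T$ is invertible (both are bases of $V$), so by the remarks following Definition~\ref{def:top} we have $\alpha_0 \neq 0$. Define $v_i = \alpha_0^{-1} w_i$ for $0 \leq i \leq d$. By Lemma~\ref{lem:BAbasisU}, taking $\zeta = \alpha_0^{-1}$, the sequence $\lbrace v_i \rbrace_{i=0}^d$ is also an $(A,B)$-basis of $V$. The transition matrix from $\lbrace u_i \rbrace_{i=0}^d$ to $\lbrace v_i \rbrace_{i=0}^d$ is $\alpha_0^{-1} T$, which remains upper triangular and Toeplitz (scalar multiples of upper triangular Toeplitz matrices are upper triangular Toeplitz), and now has parameters $1, \alpha_0^{-1}\alpha_1, \ldots, \alpha_0^{-1}\alpha_d$. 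In particular its diagonal entries are all equal to $1$, so $\lbrace u_i \rbrace_{i=0}^d$ and $\lbrace v_i \rbrace_{i=0}^d$ are compatible in the sense of Definition~\ref{def:compat}.

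There is no real obstacle here; the only point worth mentioning is that the whole argument reduces to observing that the freedom in choosing an $(A,B)$-basis, which by Lemma~\ref{lem:BAbasisU} is exactly a global nonzero scalar, suffices to normalize the (already forced) Toeplitz diagonal to $1$. The existence portion of the statement is handled by Lemma~\ref{lem:xyz}; the compatibility portion is the rescaling above.
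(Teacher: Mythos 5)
Your proposal is correct and follows essentially the same route as the paper's own proof: take any $(A,B)$-basis, use Lemma~\ref{lem:xyz} to get an upper triangular Toeplitz transition matrix with nonzero constant diagonal $\alpha_0$, and divide the basis by $\alpha_0$ to normalize the diagonal to $1$. The explicit appeal to Lemma~\ref{lem:BAbasisU} to confirm the rescaled sequence is still an $(A,B)$-basis is a small addition the paper leaves implicit, but the argument is the same.
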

\begin{proof} Let $\lbrace u_i\rbrace_{i=0}^d$ denote the
$(A,C)$-basis in question.
Let 
$\lbrace v_i\rbrace_{i=0}^d$ denote an 
$(A,B)$-basis of $V$.
Let $S \in 
{\rm Mat}_{d+1}(\F)$ denote the transition matrix
from 
$\lbrace u_i\rbrace_{i=0}^d$ to
$\lbrace v_i\rbrace_{i=0}^d$.
By construction $S$ is invertible.
By Lemma
\ref{lem:xyz},
$S$ is upper triangular and Toeplitz.
Let $\lbrace \alpha_i \rbrace_{i=0}^d$ denote
the corresponding parameters, and note that $\alpha_0\not=0$.
Define $v'_i = v_i /\alpha_0$ for $0 \leq i \leq d$.
Then $\lbrace v'_i\rbrace_{i=0}^d$ is an $(A,B)$-basis
of $V$. The transition matrix from
$\lbrace u_i\rbrace_{i=0}^d$ to 
$\lbrace v'_i\rbrace_{i=0}^d$ is $S/\alpha_0$.
This matrix is upper triangular and Toeplitz, with
all diagonal entries 1.
Now by Definition
\ref{def:compat}
the basis $\lbrace v'_i\rbrace_{i=0}^d$ 
is compatible with 
 $\lbrace u_i\rbrace_{i=0}^d$.
\end{proof}

\begin{definition}\rm
\label{def:TTT}
Let $A,B,C$ denote an LR triple on $V$.
We define matrices $T, T', T''$ in
${\rm Mat}_{d+1}(\F)$ as follows:
\begin{enumerate}
\item[\rm (i)] 
 $T$
is the transition matrix
from a $(C,B)$-basis of $V$ to a compatible
$(C,A)$-basis of $V$;
\item[\rm (ii)] 
 $T'$
is the transition matrix
from an $(A,C)$-basis of $V$ to a compatible
$(A,B)$-basis of $V$;
\item[\rm (iii)] 
 $T''$
is the transition matrix
from a $(B,A)$-basis of $V$ to a compatible
$(B,C)$-basis of $V$.
\end{enumerate}
\end{definition}

\begin{definition}
\label{def:TTT1}
\rm
Let $A,B,C$ denote an LR triple on $V$.
By 
Definition
\ref{def:compat}
the associated matrix
$T$ (resp. $T'$)  (resp. $T''$) is upper triangular and
Toeplitz; let 
$\lbrace \alpha_i \rbrace_{i=0}^d$ 
(resp. $\lbrace \alpha'_i \rbrace_{i=0}^d$) 
(resp. $\lbrace \alpha''_i \rbrace_{i=0}^d$) 
denote the
corresponding parameters. Let
$\lbrace \beta_i \rbrace_{i=0}^d$,
$\lbrace \beta'_i \rbrace_{i=0}^d$,
$\lbrace \beta''_i \rbrace_{i=0}^d$
denote the
parameters for 
$T^{-1}$,
$(T')^{-1}$,
 $(T'')^{-1}$ respectively.
We call the 6-tuple
\begin{eqnarray}
(
\lbrace \alpha_i \rbrace_{i=0}^d,
\lbrace \beta_i \rbrace_{i=0}^d;
\lbrace \alpha'_i \rbrace_{i=0}^d,
\lbrace \beta'_i \rbrace_{i=0}^d;
\lbrace \alpha''_i \rbrace_{i=0}^d,
\lbrace \beta''_i \rbrace_{i=0}^d 
)
\label{eq:ToeplitzData}
\end{eqnarray}
the {\it Toeplitz data} for
$A,B,C$. 
For notational convenience
define each of the following to be zero: 
\begin{eqnarray*}
\alpha_{d+1},
\qquad \alpha'_{d+1},
\qquad \alpha''_{d+1},
\qquad 
\beta_{d+1},
\qquad  \beta'_{d+1},
\qquad \beta''_{d+1}.
\end{eqnarray*}
\end{definition}

\begin{lemma} 
\label{lem:alpha0}
Referring to Definition
\ref{def:TTT1},
\begin{eqnarray}
&&\alpha_0 = 1, \qquad \qquad 
\alpha'_0=1, \qquad \qquad 
\alpha''_0=1,
\label{eq:list1}
\\
&&\beta_0 = 1, \qquad \qquad 
\beta'_0=1, \qquad \qquad 
\beta''_0=1.
\label{eq:list2}
\end{eqnarray}
Moreover
\begin{eqnarray}
\label{eq:list3}
\beta_1 = - \alpha_1, 
\qquad \qquad
\beta'_1 = - \alpha'_1, 
\qquad \qquad
\beta''_1 = - \alpha''_1.
\end{eqnarray}
\end{lemma}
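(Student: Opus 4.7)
The plan is to read both sets of equations directly off Definitions \ref{def:compat} and \ref{def:TTT1}, using the inverse formulas already displayed for Toeplitz matrices just below Definition \ref{def:top}.

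First, I would establish \eqref{eq:list1}. By Definition \ref{def:TTT}, each of $T$, $T'$, $T''$ is the transition matrix between a pair of \emph{compatible} bases, and Definition \ref{def:compat} declares that such a transition matrix is upper triangular and Toeplitz with every diagonal entry equal to $1$. Reading Definition \ref{def:top}, the diagonal entry of an upper triangular Toeplitz matrix with parameters $\{\alpha_i\}_{i=0}^d$ is exactly $\alpha_0$. Hence $\alpha_0 = \alpha'_0 = \alpha''_0 = 1$.

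Next, for \eqref{eq:list2}, I would invoke the general fact (recorded in the paragraph after Lemma \ref{lem:ToepTrans}) that if an upper triangular Toeplitz matrix $T$ with parameters $\{\alpha_i\}_{i=0}^d$ is invertible, then $T^{-1}$ is also upper triangular and Toeplitz, and its $0$-th parameter is $\beta_0 = \alpha_0^{-1}$. Since $\alpha_0=\alpha'_0=\alpha''_0=1$, we get $\beta_0=\beta'_0=\beta''_0=1$.

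For \eqref{eq:list3}, I would apply the recursion \eqref{eq:recursion} with $j=1$, which gives $\alpha_0\beta_1 + \alpha_1\beta_0 = 0$. Using $\alpha_0=\beta_0=1$ this collapses to $\beta_1=-\alpha_1$. The same argument, applied to $T'$ and $T''$, yields $\beta'_1=-\alpha'_1$ and $\beta''_1=-\alpha''_1$. There is no real obstacle here; the entire lemma is a direct bookkeeping consequence of Definition \ref{def:compat} together with the standard inversion formula for upper triangular Toeplitz matrices recalled earlier in Section 12.
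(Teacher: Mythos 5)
Your proposal is correct and follows essentially the same route as the paper: the paper likewise reads $\alpha_0=\beta_0=1$ off the compatibility condition (noting that the inverses are also transition matrices between compatible bases, which is equivalent to your use of $\beta_0=\alpha_0^{-1}$) and obtains $\beta_1=-\alpha_1$ from the displayed inversion formula $\beta_1=-\alpha_1\alpha_0^{-2}$, which is exactly the $j=1$ case of the recursion you invoke.
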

\begin{proof}
Concerning
(\ref{eq:list1}),
(\ref{eq:list2})
the matrices $T,T',T''$ and their inverses are
all transition matrices between
a pair of compatible bases.
So their diagonal entries are all 1 by Definition
\ref{def:compat}.
Line (\ref{eq:list3})
comes from above
Lemma \ref{lem:reform1}.
\end{proof}

\begin{lemma} Referring to Definitions
\ref{def:tauMat},
\ref{def:TTT},
\begin{eqnarray*}
&&T = \sum_{i=0}^d \alpha_i \tau^i \qquad \qquad
T' = \sum_{i=0}^d \alpha'_i \tau^i \qquad \qquad
T'' = \sum_{i=0}^d \alpha''_i \tau^i,
\\
&&
T^{-1} = \sum_{i=0}^d \beta_i \tau^i,
\qquad \quad
(T')^{-1} = \sum_{i=0}^d \beta'_i \tau^i,
\qquad \quad
(T'')^{-1} = \sum_{i=0}^d \beta''_i \tau^i.
\end{eqnarray*}
Moreover $T,T', T'', \tau$ mutually commute.
\end{lemma}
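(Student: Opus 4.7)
The plan is to reduce everything to Lemma \ref{lem:ToeChar}. By Definitions \ref{def:TTT} and \ref{def:compat}, each of $T, T', T''$ is upper triangular and Toeplitz. By Definition \ref{def:TTT1}, the parameters of $T, T', T''$ are
$\lbrace \alpha_i\rbrace_{i=0}^d$,
$\lbrace \alpha'_i\rbrace_{i=0}^d$,
$\lbrace \alpha''_i\rbrace_{i=0}^d$, respectively.
Applying Lemma \ref{lem:ToeChar} to each of these three matrices yields the three expansions in the top row of the display.

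For the bottom row, recall from the discussion following Definition \ref{def:top} (specifically, the recursion \eqref{eq:recursion}) that the inverse of an upper triangular Toeplitz matrix with nonzero leading parameter is again upper triangular and Toeplitz. Since $\alpha_0=\alpha'_0=\alpha''_0=1$ by Lemma \ref{lem:alpha0}, each of $T^{-1}, (T')^{-1}, (T'')^{-1}$ exists and is upper triangular Toeplitz, with parameters $\lbrace \beta_i\rbrace_{i=0}^d$, $\lbrace \beta'_i\rbrace_{i=0}^d$, $\lbrace \beta''_i\rbrace_{i=0}^d$, respectively, again by Definition \ref{def:TTT1}. A second application of Lemma \ref{lem:ToeChar} gives the three expansions in the bottom row.

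Finally, the commutativity assertion is immediate: since each of $T, T', T''$ has been exhibited as a polynomial in $\tau$, all of $T, T', T'', \tau$ lie in the commutative subalgebra $\mathbb F\lbrack \tau \rbrack$ of ${\rm Mat}_{d+1}(\mathbb F)$, hence they mutually commute. There is no significant obstacle here; the lemma is essentially a direct translation of the defining properties of the Toeplitz data into the basis $\lbrace \tau^i\rbrace_{i=0}^d$ of the $\mathbb F$-algebra of upper triangular Toeplitz matrices supplied by Lemmas \ref{lem:tauPower} and \ref{lem:ToeChar}.
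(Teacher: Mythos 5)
Your proof is correct and takes essentially the same route as the paper, whose entire argument is "By Lemma \ref{lem:ToeChar}"; you have simply spelled out the bookkeeping (the matrices are upper triangular Toeplitz with the stated parameters, their inverses likewise, and polynomials in $\tau$ commute) that the paper leaves implicit.
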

\begin{proof} By Lemma
\ref{lem:ToeChar}.
\end{proof}

\begin{lemma}
\label{lem:ToeplitzAdjust}
Let $A,B,C$ denote an LR triple on $V$, with
Toeplitz data
{\rm (\ref{eq:ToeplitzData})}. Let $\alpha, \beta, \gamma$
denote nonzero scalars in $\mathbb F$. Then the
LR triple $\alpha A, \beta B, \gamma C$ has Toeplitz data
\begin{eqnarray}
(
\lbrace \gamma^{-i}\alpha_i \rbrace_{i=0}^d,
\lbrace \gamma^{-i}\beta_i \rbrace_{i=0}^d;
\lbrace \alpha^{-i} \alpha'_i \rbrace_{i=0}^d,
\lbrace \alpha^{-i} \beta'_i \rbrace_{i=0}^d;
\lbrace \beta^{-i} \alpha''_i \rbrace_{i=0}^d,
\lbrace \beta^{-i} \beta''_i \rbrace_{i=0}^d 
).
\label{eq:NewToep}
\end{eqnarray}
\end{lemma}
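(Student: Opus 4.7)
\medskip

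\noindent\textbf{Proof plan.}
The plan is to reduce the assertion to the already-established effect of scaling bases by a geometric factor, namely Lemma \ref{lem:a0a1}(ii). First I would recall (from the ``scaling'' lemmas preceding Lemma \ref{lem:Ainduce}) that if $\lbrace u_i\rbrace_{i=0}^d$ is a $(C,B)$-basis of $V$, then $\lbrace \gamma^{-i}u_i\rbrace_{i=0}^d$ is a $(\gamma C,\beta B)$-basis of $V$, because the required relation $(\gamma C)(\gamma^{-i}u_i)=\gamma^{-(i-1)}u_{i-1}$ holds, and the membership in the correct component is unaffected. The analogous statements will be used for the other five basis types that enter Definition \ref{def:TTT}.

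Next I would compute the new $T$ for the triple $\alpha A,\beta B,\gamma C$. Take a $(C,B)$-basis $\lbrace u_i\rbrace_{i=0}^d$ and a compatible $(C,A)$-basis $\lbrace v_i\rbrace_{i=0}^d$, so the transition matrix is $T=\sum_{i=0}^d \alpha_i\tau^i$. By the previous paragraph, $\lbrace \gamma^{-i}u_i\rbrace_{i=0}^d$ is a $(\gamma C,\beta B)$-basis and $\lbrace \gamma^{-i}v_i\rbrace_{i=0}^d$ is a $(\gamma C,\alpha A)$-basis. Applying Lemma \ref{lem:a0a1}(ii) with $r=\gamma^{-1}$ (so that $r^{-1}C=\gamma C$), the new transition matrix is upper triangular Toeplitz with parameters $r^i\alpha_i=\gamma^{-i}\alpha_i$. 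The leading parameter is still $1$, so these two bases are compatible in the sense of Definition \ref{def:compat}, and therefore this new matrix is the $T$ attached to $\alpha A,\beta B,\gamma C$. The same argument, with the roles of $A,B,C$ cyclically permuted and scaling factors $r=\alpha^{-1}$ and $r=\beta^{-1}$, produces new Toeplitz parameters $\alpha^{-i}\alpha'_i$ for $T'$ and $\beta^{-i}\alpha''_i$ for $T''$.

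Finally, to handle the sequences $\lbrace \beta_i\rbrace$, $\lbrace \beta'_i\rbrace$, $\lbrace \beta''_i\rbrace$ I would observe that if $\sum_i\alpha_i\tau^i$ and $\sum_i\beta_i\tau^i$ are mutual inverses in $\mathrm{Mat}_{d+1}(\mathbb F)$, then for any nonzero $c\in\mathbb F$ the matrices $\sum_i c^i\alpha_i\tau^i$ and $\sum_i c^i\beta_i\tau^i$ are mutual inverses, since the defining convolution identity $\sum_i \alpha_i\beta_{j-i}=\delta_{j,0}$ multiplies by $c^j$ on both sides. Applying this with $c=\gamma^{-1},\alpha^{-1},\beta^{-1}$ respectively yields the asserted transformation of $\beta_i,\beta'_i,\beta''_i$ in (\ref{eq:NewToep}). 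There is no real obstacle here; the only subtlety is verifying that the scaled pairs of bases remain compatible (leading parameter $1$), which is immediate because the scaling factor appears to the $0$-th power on the diagonal.
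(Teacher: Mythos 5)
Your proposal is correct and follows essentially the same route as the paper, whose entire proof is the citation ``Use Lemma \ref{lem:a0a1}(ii)''; you have simply spelled out how that lemma is applied (scaling each basis by $r^i$ with $r=\gamma^{-1},\alpha^{-1},\beta^{-1}$ in the three cases, checking compatibility via $\alpha_0=1$, and noting that the inverse Toeplitz parameters scale the same way).
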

\begin{proof} Use Lemma
\ref{lem:a0a1}(ii).
\end{proof}

\noindent Let $A,B,C$ denote an LR triple on $V$. Our next goal
is to compute the Toeplitz data for the relatives of $A,B,C$.

\begin{lemma}
\label{lem:ToeplitzData}
Let $A,B,C$ denote an LR triple on $V$, with
Toeplitz data
{\rm (\ref{eq:ToeplitzData})}.
In each row of the table below, we display
an LR triple on $V$ along  with its Toeplitz data.

     \bigskip

\centerline{
\begin{tabular}[t]{c|c}
 {\rm LR triple} & {\rm Toeplitz data}
 \\
 \hline
 \hline
 $A,B,C$ &
$(
\lbrace \alpha_i \rbrace_{i=0}^d,
\lbrace \beta_i \rbrace_{i=0}^d;
\lbrace \alpha'_i \rbrace_{i=0}^d,
\lbrace \beta'_i \rbrace_{i=0}^d;
\lbrace \alpha''_i \rbrace_{i=0}^d,
\lbrace \beta''_i \rbrace_{i=0}^d 
)$
   \\
 $B,C,A$ &
$(
\lbrace \alpha'_i \rbrace_{i=0}^d,
\lbrace \beta'_i \rbrace_{i=0}^d;
\lbrace \alpha''_i \rbrace_{i=0}^d,
\lbrace \beta''_i \rbrace_{i=0}^d;
\lbrace \alpha_i \rbrace_{i=0}^d,
\lbrace \beta_i \rbrace_{i=0}^d 
)$
   \\
 $C,A,B$ &
$(
\lbrace \alpha''_i \rbrace_{i=0}^d,
\lbrace \beta''_i \rbrace_{i=0}^d;
\lbrace \alpha_i \rbrace_{i=0}^d,
\lbrace \beta_i \rbrace_{i=0}^d;
\lbrace \alpha'_i \rbrace_{i=0}^d,
\lbrace \beta'_i \rbrace_{i=0}^d 
)$
   \\
 \hline
 $C,B,A$ &
$(
\lbrace \beta'_i \rbrace_{i=0}^d, 
\lbrace \alpha'_i \rbrace_{i=0}^d;
\lbrace \beta_i \rbrace_{i=0}^d,
\lbrace \alpha_i \rbrace_{i=0}^d;
\lbrace \beta''_i \rbrace_{i=0}^d,
\lbrace \alpha''_i \rbrace_{i=0}^d
)$
   \\
 $A,C,B$ &
$(
\lbrace \beta''_i \rbrace_{i=0}^d,
\lbrace \alpha''_i \rbrace_{i=0}^d;
\lbrace \beta'_i \rbrace_{i=0}^d,
\lbrace \alpha'_i \rbrace_{i=0}^d;
\lbrace \beta_i \rbrace_{i=0}^d,
\lbrace \alpha_i \rbrace_{i=0}^d
)$
   \\
 $B,A,C$ &
$(
\lbrace \beta_i \rbrace_{i=0}^d,
\lbrace \alpha_i \rbrace_{i=0}^d;
\lbrace \beta''_i \rbrace_{i=0}^d,
\lbrace \alpha''_i \rbrace_{i=0}^d;
\lbrace \beta'_i \rbrace_{i=0}^d,
\lbrace \alpha'_i \rbrace_{i=0}^d
)$
\\
   \end{tabular}}
\bigskip

\end{lemma}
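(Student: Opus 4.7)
The plan is to verify each of the six rows by direct application of Definitions \ref{def:primeConv}, \ref{def:TTT}, and \ref{def:TTT1}. There are no analytic obstacles; the work is purely a bookkeeping exercise that splits cleanly into a ``positive orientation'' case (cyclic permutations) and a ``negative orientation'' case (reverse permutations), with the key observation being that a transition matrix becomes its inverse when one reverses the roles of source and target bases.

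First I will handle the positive orientation rows $B,C,A$ and $C,A,B$. By Definition \ref{def:primeConv}, the entries of the Toeplitz data for the triple $B,C,A$ are obtained by reading Definition \ref{def:TTT} with the cyclic substitution $A\!\to\!B,\ B\!\to\!C,\ C\!\to\!A$. For example, the $T$ of this relative is, by definition, the transition matrix from a $(C,B)$-basis (in the substituted sense, namely an $(A,C)$-basis in the original labeling) to a compatible $(C,A)$-basis (namely an $(A,B)$-basis). This is precisely $T'$ of the original triple $A,B,C$. Similarly the new $T'$ and $T''$ agree with the original $T''$ and $T$ respectively. The row for $C,A,B$ is obtained by applying the same cyclic shift twice. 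This gives the second and third rows of the table exactly as displayed.

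Next I will handle the reverse orientation rows $C,B,A$, $A,C,B$, $B,A,C$. Applying Definition \ref{def:TTT} to the triple $C,B,A$ under the substitution $A\!\to\!C,\ B\!\to\!B,\ C\!\to\!A$, the $T$ of this relative is the transition matrix from an $(A,B)$-basis to a compatible $(A,C)$-basis. Comparing with $T'$ of $A,B,C$, which goes from an $(A,C)$-basis to a compatible $(A,B)$-basis, we see that the new $T$ equals $(T')^{-1}$. Consequently the upper-triangular Toeplitz parameters of the new $T$ are the Toeplitz parameters of $(T')^{-1}$, namely $\{\beta'_i\}_{i=0}^d$, and the parameters of the new $T^{-1}$ are $\{\alpha'_i\}_{i=0}^d$. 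The same reasoning applied to the new $T'$ (which corresponds to the cyclic shift $B,A,C$ of $C,B,A$) yields $T^{-1}$ of the original triple, and applied to the new $T''$ (which corresponds to $A,C,B$) yields $(T'')^{-1}$ of the original triple. These three identifications give exactly the fourth row of the table. The fifth and sixth rows now follow from the fourth by combining it with the already-established cyclic shift rule.

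The only step that warrants explicit justification is the observation that the ``reverse-order'' transition matrix is the inverse of the ``forward-order'' transition matrix. This is immediate from the general fact, recalled in Section 2, that transition matrices between two bases for $V$ are mutual inverses; invertibility within the upper triangular Toeplitz subalgebra of ${\rm Mat}_{d+1}(\mathbb F)$ is automatic by Definition \ref{def:TTT1}. With this observation in place, the entire lemma reduces to the two tabulations above.
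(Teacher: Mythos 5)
Your proposal is correct and takes essentially the same route as the paper, whose proof is simply ``By Definition \ref{def:TTT} and the construction''; you have just spelled out the bookkeeping (cyclic rows read off directly, reversed rows via the fact that swapping source and target bases inverts the transition matrix, which stays upper triangular Toeplitz with unit diagonal). No gaps.
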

\begin{proof} By Definition
\ref{def:TTT}
and the construction.
\end{proof}


\begin{lemma}
\label{lem:compatFlip}
Let $\lbrace u_i \rbrace_{i=0}^d$ 
 (resp. $\lbrace v_i \rbrace_{i=0}^d$) denote a
 basis of $V$, and let
$\lbrace u'_i \rbrace_{i=0}^d$ 
 (resp. $\lbrace v'_i \rbrace_{i=0}^d$) denote the dual
 basis of $V^*$. Then the following are equivalent:
\begin{enumerate}
\item[\rm (i)]  
 $\lbrace u_i \rbrace_{i=0}^d$ and 
 $\lbrace v_i \rbrace_{i=0}^d$ are compatible;
\item[\rm (ii)]  
 $\lbrace u'_{d-i} \rbrace_{i=0}^d$ and 
 $\lbrace v'_{d-i} \rbrace_{i=0}^d$ are compatible.
 \end{enumerate}
Moreover, suppose {\rm (i), (ii)} hold. Then
the transition matrix from
 $\lbrace u_i \rbrace_{i=0}^d$ to 
 $\lbrace v_i \rbrace_{i=0}^d$ is the inverse of
 the transition matrix from
 $\lbrace u'_{d-i} \rbrace_{i=0}^d$ to 
 $\lbrace v'_{d-i} \rbrace_{i=0}^d$.
 \end{lemma}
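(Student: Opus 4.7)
The plan is to track the transition matrix through two involutive operations---passing to dual bases (which transposes) and reversing the index (which conjugates by $\bf Z$)---and then invoke Lemma \ref{lem:ToepTrans} to see that upper triangular Toeplitz matrices are preserved.

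First I would set up notation. Let $S \in {\rm Mat}_{d+1}(\mathbb F)$ denote the transition matrix from $\lbrace u_i \rbrace_{i=0}^d$ to $\lbrace v_i \rbrace_{i=0}^d$. From the discussion of dual bases near the start of Section 4, the transition matrix from $\lbrace v'_i \rbrace_{i=0}^d$ to $\lbrace u'_i \rbrace_{i=0}^d$ equals $S^t$; hence the transition matrix from $\lbrace u'_i \rbrace_{i=0}^d$ to $\lbrace v'_i \rbrace_{i=0}^d$ is $(S^{-1})^t$. Since $\bf Z$ is the transition matrix from any basis to its inversion (and $\bf Z^2 = I$), composing the three transitions
$$\lbrace u'_{d-i} \rbrace_{i=0}^d \longrightarrow \lbrace u'_i \rbrace_{i=0}^d \longrightarrow \lbrace v'_i \rbrace_{i=0}^d \longrightarrow \lbrace v'_{d-i} \rbrace_{i=0}^d$$
gives $\bf Z \bigl(S^{-1}\bigr)^t \bf Z$ as the transition matrix from $\lbrace u'_{d-i} \rbrace_{i=0}^d$ to $\lbrace v'_{d-i} \rbrace_{i=0}^d$.

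Next, assume (i), so that $S$ is upper triangular and Toeplitz with diagonal entries $1$. Then $S^{-1}$ has the same form (this is the inversion recursion in Section 12), and Lemma \ref{lem:ToepTrans} applied to $S^{-1}$ yields $(S^{-1})^t = \bf Z S^{-1} \bf Z$. Therefore
$$\bf Z \bigl(S^{-1}\bigr)^t \bf Z \;=\; \bf Z \bf Z\, S^{-1}\, \bf Z \bf Z \;=\; S^{-1},$$
which is upper triangular Toeplitz with diagonal entries $1$. This establishes (ii) and simultaneously proves the Moreover assertion: the transition from $\lbrace u'_{d-i} \rbrace_{i=0}^d$ to $\lbrace v'_{d-i} \rbrace_{i=0}^d$ is $S^{-1}$.

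For the implication (ii) $\Rightarrow$ (i), I would use the fact that the operation \emph{basis $\mapsto$ inverted dual basis} is an involution once we identify $(V^*)^* = V$: starting from $\lbrace u'_{d-i} \rbrace_{i=0}^d$ in $V^*$ and applying the same recipe returns $\lbrace u_i \rbrace_{i=0}^d$ in $V$, and similarly for $\lbrace v_i \rbrace_{i=0}^d$. Hence the argument already given, applied in $V^*$, yields (i). The only real care needed anywhere in the proof is the bookkeeping of the three compositions that produce $\bf Z (S^{-1})^t \bf Z$; once that is correct, the collapse to $S^{-1}$ via Lemma \ref{lem:ToepTrans} is immediate.
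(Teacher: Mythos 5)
Your proposal is correct and follows essentially the same route as the paper: identify the transition matrix from $\lbrace u'_{d-i}\rbrace_{i=0}^d$ to $\lbrace v'_{d-i}\rbrace_{i=0}^d$ as ${\bf Z}(S^t)^{-1}{\bf Z}$, then use Lemma \ref{lem:ToepTrans} together with the fact that $S^{-1}$ is again upper triangular Toeplitz with unit diagonal to collapse this to $S^{-1}$. The only cosmetic difference is that you obtain (ii) $\Rightarrow$ (i) by the involutivity of the dual-then-invert operation, whereas the paper runs the same computation as a chain of equivalences; both are fine.
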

\begin{proof}
Let $S \in
{\rm Mat}_{d+1}(\F)$ denote the transition matrix
from 
 $\lbrace u_i \rbrace_{i=0}^d$ to
 $\lbrace v_i \rbrace_{i=0}^d$.
Then $S^t$ is the transition matrix
from 
 $\lbrace v'_i \rbrace_{i=0}^d$ to
 $\lbrace u'_i \rbrace_{i=0}^d$.
Then $(S^t)^{-1}$ is the transition matrix
from 
 $\lbrace u'_i \rbrace_{i=0}^d$ to
 $\lbrace v'_i \rbrace_{i=0}^d$.
Then  ${\bf Z} (S^t)^{-1}{\bf Z}$ is the transition matrix
from 
 $\lbrace u'_{d-i} \rbrace_{i=0}^d$ to
 $\lbrace v'_{d-i} \rbrace_{i=0}^d$.
Note that $S$ is upper triangular and Toeplitz with all diagonal entries 1,
if and only if $S^{-1}$ is 
upper triangular and Toeplitz with all diagonal entries 1.
By this and
 Lemma
\ref{lem:ToepTrans},
we see 
that $S$ is upper triangular and Toeplitz with all diagonal entries 1,
if and only if 
 ${\bf Z} (S^t)^{-1}{\bf Z}$ 
is upper triangular and Toeplitz with all diagonal entries 1,
and in this case 
 ${\bf Z} (S^t)^{-1}{\bf Z}=S^{-1}$. 
The result follows.
\end{proof}

\begin{lemma}
\label{lem:ToeplitzDataD}
Let $A,B,C$ denote an LR triple on $V$, 
with Toeplitz data 
{\rm (\ref{eq:ToeplitzData})}.
In each row of the table below, we display
an LR triple on $V^*$ along with its Toeplitz data.

     \bigskip

\centerline{
\begin{tabular}[t]{c|c}
 {\rm LR triple} & {\rm Toeplitz data}
 \\
 \hline
 \hline
 $\tilde A, \tilde B, \tilde C$ &
$(
\lbrace \beta_i \rbrace_{i=0}^d,
\lbrace \alpha_i \rbrace_{i=0}^d;
\lbrace \beta'_i \rbrace_{i=0}^d,
\lbrace \alpha'_i \rbrace_{i=0}^d;
\lbrace \beta''_i \rbrace_{i=0}^d,
\lbrace \alpha''_i \rbrace_{i=0}^d
)$
   \\
 $\tilde B, \tilde C, \tilde A$ &
$(
\lbrace \beta'_i \rbrace_{i=0}^d,
\lbrace \alpha'_i \rbrace_{i=0}^d;
\lbrace \beta''_i \rbrace_{i=0}^d,
\lbrace \alpha''_i \rbrace_{i=0}^d;
\lbrace \beta_i \rbrace_{i=0}^d,
\lbrace \alpha_i \rbrace_{i=0}^d
)$
   \\
 $\tilde C, \tilde A, \tilde B$ &
$(
\lbrace \beta''_i \rbrace_{i=0}^d,
\lbrace \alpha''_i \rbrace_{i=0}^d;
\lbrace \beta_i \rbrace_{i=0}^d,
\lbrace \alpha_i \rbrace_{i=0}^d;
\lbrace \beta'_i \rbrace_{i=0}^d,
\lbrace \alpha'_i \rbrace_{i=0}^d
)$
   \\
 \hline
 $\tilde C, \tilde B, \tilde A$ &
$(
\lbrace \alpha'_i \rbrace_{i=0}^d,
\lbrace \beta'_i \rbrace_{i=0}^d;
\lbrace \alpha_i \rbrace_{i=0}^d,
\lbrace \beta_i \rbrace_{i=0}^d;
\lbrace \alpha''_i \rbrace_{i=0}^d,
\lbrace \beta''_i \rbrace_{i=0}^d
)$
   \\
 $\tilde A, \tilde C, \tilde B$ &
$(
\lbrace \alpha''_i \rbrace_{i=0}^d,
\lbrace \beta''_i \rbrace_{i=0}^d;
\lbrace \alpha'_i \rbrace_{i=0}^d,
\lbrace \beta'_i \rbrace_{i=0}^d;
\lbrace \alpha_i \rbrace_{i=0}^d,
\lbrace \beta_i \rbrace_{i=0}^d
)$
   \\
 $\tilde B, \tilde A, \tilde C$ &
$(
\lbrace \alpha_i \rbrace_{i=0}^d,
\lbrace \beta_i \rbrace_{i=0}^d;
\lbrace \alpha''_i \rbrace_{i=0}^d,
\lbrace \beta''_i \rbrace_{i=0}^d;
\lbrace \alpha'_i \rbrace_{i=0}^d,
\lbrace \beta'_i \rbrace_{i=0}^d
)$
\\
   \end{tabular}}
\bigskip

\end{lemma}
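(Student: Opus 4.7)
The plan is to reduce everything to a single computation for the triple $\tilde A, \tilde B, \tilde C$ (the top row), and then obtain the other five rows either by applying that computation to the relatives of $A,B,C$ or by combining with Lemma \ref{lem:ToeplitzData}.

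First I would verify the top row, namely that for $\tilde A, \tilde B, \tilde C$ the three associated transition matrices have parameter sequences $\{\beta_i\}$, $\{\beta'_i\}$, $\{\beta''_i\}$. The key input is Lemma \ref{lem:LRBdual}: for any LR pair $X,Y$ on $V$, a basis of $V$ is an $(X,Y)$-basis if and only if its inverted dual is a $(\tilde X,\tilde Y)$-basis of $V^*$. Fix a $(C,B)$-basis $\{u_i\}_{i=0}^d$ of $V$ and a compatible $(C,A)$-basis $\{v_i\}_{i=0}^d$ of $V$, so that the transition matrix from $\{u_i\}$ to $\{v_i\}$ is $T$, with Toeplitz parameters $\{\alpha_i\}_{i=0}^d$. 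Let $\{u'_i\}$, $\{v'_i\}$ be the dual bases of $V^*$. By Lemma \ref{lem:LRBdual}, $\{u'_{d-i}\}_{i=0}^d$ is a $(\tilde C,\tilde B)$-basis of $V^*$ and $\{v'_{d-i}\}_{i=0}^d$ is a $(\tilde C,\tilde A)$-basis of $V^*$. By Lemma \ref{lem:compatFlip} these inverted-dual bases are compatible, and the transition matrix between them is $T^{-1}$, whose parameters are $\{\beta_i\}_{i=0}^d$. Applying Definition \ref{def:TTT1} to the LR triple $\tilde A,\tilde B,\tilde C$, this identifies the first sequence in its Toeplitz data as $\{\beta_i\}_{i=0}^d$, and by definition of $\tilde T^{-1}$ the second is $\{\alpha_i\}_{i=0}^d$. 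The same argument applied to the LR pairs $(A,C),(A,B)$ and $(B,A),(B,C)$ yields the primed and double-primed sequences, completing the top row.

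Next I would obtain the rows for $\tilde B,\tilde C,\tilde A$ and $\tilde C,\tilde A,\tilde B$ by applying what was just proved to the p-relatives $B,C,A$ and $C,A,B$ respectively, whose Toeplitz data are recorded in Lemma \ref{lem:ToeplitzData}; the cyclic shift of the six sequences in Lemma \ref{lem:ToeplitzData} transports directly through the adjoint construction and yields exactly the table entries. For the three remaining (bottom half) rows, I would apply the top-row result to the n-relatives $C,B,A$, $A,C,B$, $B,A,C$ of $A,B,C$, whose Toeplitz data are again given by Lemma \ref{lem:ToeplitzData}; the swap of $\alpha$ with $\beta$ in those rows combines with the $\alpha \leftrightarrow \beta$ swap coming from dualization to produce the un-swapped sequences appearing in the bottom half of the table of Lemma \ref{lem:ToeplitzDataD}.

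The main obstacle is bookkeeping, not mathematics: one must keep track of which basis types (straight versus inverted, and the ordering of the pair) occur at each step, since Lemma \ref{lem:LRBdual} swaps straight and inverted under duality, while Lemma \ref{lem:compatFlip} imposes a further inversion of index to restore compatibility. Once these two inversions are correctly composed, the claim that $T \mapsto T^{-1}$ under duality of compatible pairs falls out, and the rest of the table is mechanical.
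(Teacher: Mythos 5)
Your proposal is correct and follows essentially the same route as the paper, whose proof simply cites Lemma \ref{lem:LRBdual}, Definition \ref{def:TTT}, and Lemma \ref{lem:compatFlip}; you have merely spelled out the bookkeeping (dualize a compatible pair of bases, invert the index order via Lemma \ref{lem:compatFlip} so that the transition matrix becomes $T^{-1}$, then propagate to the relatives via Lemma \ref{lem:ToeplitzData}). No gaps.
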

\begin{proof} Use 
Lemma \ref{lem:LRBdual},
Definition
\ref{def:TTT}, and
Lemma
\ref{lem:compatFlip}.
\end{proof}

\noindent 
Until further notice fix an LR triple
$A,B,C$ on $V$, with parameter array
(\ref{eq:paLRT}),
idempotent data
{\rm (\ref{eq:idseq})},
trace data
(\ref{eq:tracedata}), and Toeplitz data 
(\ref{eq:ToeplitzData}).
Recall the 12 types of bases for $V$
from
{\rm (\ref{eq:typeAB})--(\ref{eq:typeCA})}.
As we consider how
 these bases are related, 
it is convenient to work with specific bases of each type.
Fix nonzero vectors
\begin{eqnarray}
&&
 \eta \in A^dV, \qquad \qquad  \;\;
 \eta' \in B^dV, \qquad \qquad \;\;
 \eta'' \in C^dV,
\label{eq:eta}
\\
&&
\tilde \eta \in \tilde A^dV^*, \qquad \qquad
 \tilde \eta' \in \tilde B^dV^*, \qquad \qquad
 \tilde \eta'' \in \tilde C^dV^*.
\label{eq:etaDual}
\end{eqnarray}

\noindent By construction,
\begin{eqnarray}
&&
A \eta = 0, \qquad \qquad
B \eta' = 0, \qquad \qquad
C \eta'' = 0,
\label{eq:ABCzero}
\\
&&
\tilde A \tilde \eta = 0, \qquad \qquad
\tilde B \tilde \eta' = 0, \qquad \qquad
\tilde C \tilde \eta'' = 0.
\label{eq:TABCzero}
\end{eqnarray}

\noindent We mention a result for later use.
\begin{lemma}
\label{lem:lateruse}
The following scalars are nonzero:
\begin{eqnarray}
&&
\label{eq:IP1}
(\eta, \tilde \eta'),
\qquad \qquad
(\eta', \tilde \eta''),
\qquad \qquad
(\eta'', \tilde \eta),
\\
&&
\label{eq:IP2}
(\eta, \tilde \eta''),
\qquad \qquad
(\eta', \tilde \eta),
\qquad \qquad
(\eta'', \tilde \eta').
\end{eqnarray}
For $d\geq 1$ the following scalars are zero:
\begin{eqnarray}
(\eta, \tilde \eta),
\qquad \qquad
(\eta', \tilde \eta'),
\qquad \qquad
(\eta'', \tilde \eta'').
\label{eq:IP3}
\end{eqnarray}
\end{lemma}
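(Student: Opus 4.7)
The plan is to identify each of the nine vectors in (\ref{eq:eta})--(\ref{eq:etaDual}) as lying in the $0$-component of one of the $(\cdot,\cdot)$-decompositions (of $V$ or $V^*$) associated with the LR triple, and then read off the nine pairings from the duality relations recorded in Lemmas \ref{lem:LRTdecDual} and \ref{lem:Nildual}.

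The preliminary observation I will use is that for a Nil map the $0$-component of every decomposition it lowers is its one-dimensional image in top degree; that is, $A^dV$ is the $0$-component of both the $(A,B)$- and $(A,C)$-decompositions of $V$, and similarly $B^dV$ is the $0$-component of the $(B,A)$- and $(B,C)$-decompositions, $C^dV$ is the $0$-component of the $(C,A)$- and $(C,B)$-decompositions, with analogous statements for $\tilde A,\tilde B,\tilde C$ on $V^*$. I will also use the following general fact, which is immediate from the nondegeneracy of the form $(\,,\,):V\times V^*\to\mathbb F$: if $\lbrace V_i\rbrace_{i=0}^d$ and $\lbrace V'_i\rbrace_{i=0}^d$ are dual decompositions, then the restriction of $(\,,\,)$ to $V_i\times V'_i$ is nondegenerate (otherwise a nonzero $x\in V_i$ would be orthogonal to every $V'_j$, hence to all of $V^*$).

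For the six nonzero pairings in (\ref{eq:IP1}), (\ref{eq:IP2}), I will pair each vector with a companion so that the two $0$-components sit in dual decompositions, as listed in Lemma \ref{lem:LRTdecDual}. For example, $\eta\in A^dV$ is the $0$-component of the $(A,B)$-decomposition, while $\tilde\eta'\in\tilde B^dV^*$ is the $0$-component of the $(\tilde B,\tilde A)$-decomposition, and these two decompositions are dual; hence $(\eta,\tilde\eta')\neq 0$ by the observation above. The remaining five cases use the matchings
\begin{eqnarray*}
&&(B,C)\leftrightarrow(\tilde C,\tilde B),\qquad
(C,A)\leftrightarrow(\tilde A,\tilde C),\\
&&(A,C)\leftrightarrow(\tilde C,\tilde A),\qquad
(B,A)\leftrightarrow(\tilde A,\tilde B),\qquad
(C,B)\leftrightarrow(\tilde B,\tilde C)
\end{eqnarray*}
drawn from Lemma \ref{lem:LRTdecDual}, applied to the obvious pair of $0$-components.

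For the three vanishing pairings in (\ref{eq:IP3}), I invoke Lemma \ref{lem:Nildual}: the flags $\lbrace A^{d-i}V\rbrace_{i=0}^d$ and $\lbrace \tilde A^{d-i}V^*\rbrace_{i=0}^d$ are dual, and consequently (from the definition of dual flags together with the remark that opposite containment yields orthogonality for all lower indices) one has orthogonality $A^{d-i}V\perp \tilde A^{d-j}V^*$ whenever $i+j\leq d-1$. Taking $i=j=0$ gives $A^dV\perp\tilde A^dV^*$ as soon as $d\geq 1$, so $(\eta,\tilde\eta)=0$. The vanishing of $(\eta',\tilde\eta')$ and $(\eta'',\tilde\eta'')$ is obtained identically, applying Lemma \ref{lem:Nildual} to $B$ and $C$ in place of $A$. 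There is no serious obstacle here; the only thing to be careful about is keeping track of which $0$-component sits in which decomposition, a purely bookkeeping matter that is entirely controlled by Lemmas \ref{lem:LRTdecDual} and \ref{lem:Nildual}.
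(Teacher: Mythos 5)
Your proof is correct. For the three vanishing pairings your argument is essentially the paper's own: both reduce to the duality of the flags $\lbrace A^{d-i}V\rbrace_{i=0}^d$ and $\lbrace \tilde A^{d-i}V^*\rbrace_{i=0}^d$ (the paper phrases it as ``$AV$ is the orthogonal complement of $\tilde A^dV^*$ and contains $A^dV$ when $d\geq 1$,'' which is your $i=d-1$, $j=0$ instance followed by nesting). For the six nonzero pairings, however, you take a genuinely different route. The paper argues via flags: it uses Lemma \ref{lem:LRTflagDual} to identify $BV$ as the orthogonal complement of $\tilde B^dV^*$, and then uses the \emph{oppositeness} of the $A$- and $B$-flags to get $A^dV\cap BV=0$, so that $\eta\notin BV$ forces $(\eta,\tilde\eta')\neq 0$. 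You instead argue via decompositions: you invoke Lemma \ref{lem:LRTdecDual} to pair each relevant decomposition of $V$ with its dual decomposition of $V^*$, and then apply the general (and correctly justified) observation that dual decompositions pair nondegenerately component by component. Your matchings against the table in Lemma \ref{lem:LRTdecDual} all check out, as does the identification of each of $\eta,\eta',\eta''$ and $\tilde\eta,\tilde\eta',\tilde\eta''$ with the appropriate $0$-components. What your approach buys is uniformity: all six nonzero claims become instances of a single general fact read off one table, with no need to invoke flag oppositeness. What the paper's approach buys is that it stays entirely within the language of flags and orthogonal complements already set up in Section 5, avoiding the extra (if easy) lemma about componentwise nondegeneracy.
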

\begin{proof} We show
that $(\eta, \tilde \eta')\not=0$. By assumption
$0 \not= \eta \in A^dV$ and
$0 \not= \tilde \eta' \in \tilde B^dV^*$.
The flags 
$\lbrace B^{d-i}V\rbrace_{i=0}^d$
and
$\lbrace \tilde B^{d-i}V^*\rbrace_{i=0}^d$
are dual by
Lemma \ref{lem:LRTflagDual};
therefore
$BV$ is the orthogonal complement of
$ \tilde B^dV^*$. 
The flags
$\lbrace A^{d-i}V\rbrace_{i=0}^d$
and 
$\lbrace B^{d-i}V\rbrace_{i=0}^d$ are opposite;
therefore 
$A^dV \cap BV=0$. By these comments
$(\eta, \tilde \eta')\not=0$.
The other five inner products in 
(\ref{eq:IP1}),
(\ref{eq:IP2})
are similarly shown to be nonzero.
Next assume that $d\geq 1$. We show that
$(\eta, \tilde \eta)=0$.
By construction
$\eta \in A^dV$ and
$\tilde \eta \in \tilde A^dV^*$.
The flags
$\lbrace A^{d-i}V\rbrace_{i=0}^d$ and
$\lbrace \tilde A^{d-i}V^*\rbrace_{i=0}^d$ are
dual by
Lemma \ref{lem:LRTflagDual};
 therefore
$AV$ is the orthogonal complement of 
$\tilde A^{d}V^*$.
The subspace $AV$ contains $A^dV$ since $d\geq 1$;
therefore 
$A^dV$  is orthogonal to $\tilde A^d V^*$.
By these comments $(\eta, \tilde \eta)=0$.
The other two inner products in 
(\ref{eq:IP3})
are similarly shown to be zero.
\end{proof}

\noindent We now display some  bases for $V$
of the types
(\ref{eq:typeAB})--(\ref{eq:typeCA}).

\begin{lemma}
\label{lem:Allbases}
In each row of the  tables below, for
$0 \leq i \leq d$ we display a vector $v_i \in V$. The
vectors $\lbrace v_i \rbrace_{i=0}^d$ form a basis for
$V$; we give the type and the induced decomposition of $V$.

     \bigskip

\centerline{
\begin{tabular}[t]{c|c|c}
 $v_i$ & {\rm type of basis} & {\rm induced dec. of $V$} 
 \\
 \hline
 \hline
$B^i \eta$ & {\rm inverted $(B,A)$} & $(A,B)$
\\
$B^{d-i} \eta$ & {\rm $(B,A)$} & $(B,A)$
\\
$(\varphi_1 \cdots \varphi_i)^{-1}B^i \eta$ & $(A,B)$ & $(A,B)$
\\
$(\varphi_1 \cdots \varphi_{d-i})^{-1}B^{d-i} \eta$ & {\rm inverted $(A,B)$} & $(B,A)$
\\
\hline
$C^i \eta$ & {\rm inverted $(C,A)$} & $(A,C)$
\\
$C^{d-i} \eta$ & {\rm $(C,A)$} & $(C,A)$
\\
$(\varphi''_d \cdots \varphi''_{d-i+1})^{-1}C^i \eta$ & $(A,C)$ & $(A,C)$
\\
$(\varphi''_d \cdots \varphi''_{i+1})^{-1}C^{d-i} \eta$ & {\rm inverted $(A,C)$} & $(C,A)$
\\
     \end{tabular}}

\bigskip

\centerline{
\begin{tabular}[t]{c|c|c}
 $v_i$ & {\rm type of basis} & {\rm induced dec. of $V$} 
 \\
 \hline
 \hline
$C^i \eta'$ & {\rm inverted $(C,B)$} & $(B,C)$
\\
$C^{d-i} \eta'$ & {\rm $(C,B)$} & $(C,B)$
\\
$(\varphi'_1 \cdots \varphi'_i)^{-1}C^i \eta'$ & $(B,C)$ & $(B,C)$
\\
$(\varphi'_1 \cdots \varphi'_{d-i})^{-1}C^{d-i} \eta'$ & {\rm inverted $(B,C)$}
& $(C,B)$
\\
\hline
$A^i \eta'$ & {\rm inverted $(A,B)$} & $(B,A)$
\\
$A^{d-i} \eta'$ & {\rm $(A,B)$} & $(A,B)$
\\
$(\varphi_d \cdots \varphi_{d-i+1})^{-1}A^i \eta'$ & $(B,A)$ & $(B,A)$
\\
$(\varphi_d \cdots \varphi_{i+1})^{-1}A^{d-i} \eta'$ & {\rm inverted $(B,A)$}
& $(A,B)$
\\
     \end{tabular}}

 \bigskip

\centerline{
\begin{tabular}[t]{c|c|c}
 $v_i$ & {\rm type of basis} & {\rm induced dec. of $V$} 
 \\
\hline
\hline
$A^i \eta''$ & {\rm inverted $(A,C)$} & $(C,A)$
\\
$A^{d-i} \eta''$ & {\rm $(A,C)$} & $(A,C)$
\\
$(\varphi''_1 \cdots \varphi''_i)^{-1}A^i \eta''$ & $(C,A)$ & $(C,A)$
\\
$(\varphi''_1 \cdots \varphi''_{d-i})^{-1}A^{d-i} \eta''$ &
{\rm inverted $(C,A)$}
& $(A,C)$
\\
\hline
$B^i \eta''$ & {\rm inverted $(B,C)$} & $(C,B)$
\\
$B^{d-i} \eta''$ & {\rm $(B,C)$} & $(B,C)$
\\
$(\varphi'_d \cdots \varphi'_{d-i+1})^{-1}B^i \eta''$ & $(C,B)$ & $(C,B)$
\\
$(\varphi'_d \cdots \varphi'_{i+1})^{-1}B^{d-i} \eta''$ & {\rm inverted $(C,B)$}
& $(B,C)$
\\
     \end{tabular}}
     \bigskip

\end{lemma}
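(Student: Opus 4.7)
The plan is to verify the three tables row by row, applying the four characterization lemmas for $(A,B)$-bases and their variants, namely Lemmas \ref{lem:5Char}, \ref{lem:5CharInv}, \ref{lem:5CharBA}, \ref{lem:5CharBAinv}, to the six LR pairs derivable from the LR triple $A,B,C$. Each row asserts two things: that the listed vectors form a basis of a specified type, and that this basis induces a specified decomposition. The second assertion is automatic once the type is established, since by Definition \ref{def:ABbasis} and the discussion following Lemma \ref{lem:invABbasis} an $(X,Y)$-basis induces the $(X,Y)$-decomposition and its inversion induces the $(Y,X)$-decomposition.

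For the first block of four rows of each table the computations are direct. For instance, in rows 1--4 of the first table I would apply the characterization lemmas to the LR pair $A,B$, whose parameter sequence is $\lbrace \varphi_i\rbrace_{i=1}^d$ and for which $0\neq \eta\in A^dV$ by choice in \eqref{eq:eta}. Lemma \ref{lem:5CharBAinv}(iii) with $\xi=\eta$ identifies $\lbrace B^i\eta\rbrace_{i=0}^d$ as an inverted $(B,A)$-basis; Lemma \ref{lem:5CharBA}(v) identifies $\lbrace B^{d-i}\eta\rbrace_{i=0}^d$ as a $(B,A)$-basis; Lemma \ref{lem:5Char}(iii) identifies $\lbrace (\varphi_1\cdots \varphi_i)^{-1}B^i\eta\rbrace_{i=0}^d$ as an $(A,B)$-basis; and Lemma \ref{lem:5CharInv}(v) identifies $\lbrace (\varphi_1\cdots \varphi_{d-i})^{-1}B^{d-i}\eta\rbrace_{i=0}^d$ as an inverted $(A,B)$-basis.

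For the remaining four rows of the first table I would apply the same four lemmas to the LR pair $A,C$. Here the ingredient $0\neq \eta\in A^dV$ is still available, and by Lemma \ref{lem:BAvAB} applied to the LR pair $C,A$ (whose parameter sequence is $\lbrace \varphi''_i\rbrace_{i=1}^d$), the LR pair $A,C$ has parameter sequence $\lbrace \varphi''_{d-i+1}\rbrace_{i=1}^d$. Substituting this sequence in place of $\lbrace \varphi_i\rbrace_{i=1}^d$ in each of the four characterization lemmas gives exactly the normalization factors $\varphi''_d\cdots \varphi''_{d-i+1}$ and $\varphi''_d\cdots \varphi''_{i+1}$ appearing in rows 5--8. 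The second and third tables are handled identically, using $\eta'\in B^dV$ with the LR pairs $B,C$ and $B,A$, and $\eta''\in C^dV$ with the LR pairs $C,A$ and $C,B$; the parameter sequences of the pairs in the ``reversed'' order are again read off from Lemma \ref{lem:BAvAB}.

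There is no real obstacle in this argument; the entire task is bookkeeping. The only points requiring care are matching the normalization factors precisely (in particular distinguishing $\lbrace \varphi''_{d-i+1}\rbrace$ from $\lbrace \varphi''_i\rbrace$ when switching between the LR pairs $C,A$ and $A,C$, and similarly for $B,C$ versus $C,B$ and $A,B$ versus $B,A$) and recording the correct induced decomposition in each row, for which the simple rule in the first paragraph suffices.
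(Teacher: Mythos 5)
Your proposal is correct and follows exactly the route of the paper's proof, which simply cites Lemmas \ref{lem:5Char}, \ref{lem:5CharInv}, \ref{lem:5CharBA}, \ref{lem:5CharBAinv}; you have just spelled out the bookkeeping, and your substitutions of the parameter sequences (e.g.\ $\lbrace \varphi''_{d-i+1}\rbrace_{i=1}^d$ for the pair $A,C$ via Lemma \ref{lem:BAvAB}) reproduce the normalization factors in the tables correctly.
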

\begin{proof}
Use Lemmas
\ref{lem:5Char},
\ref{lem:5CharInv},
\ref{lem:5CharBA},
\ref{lem:5CharBAinv}.
\end{proof}

\begin{lemma}
\label{lem:moreTTTp}
 In each of {\rm (i)--(iii)} below
we describe two bases from the tables
in
Lemma \ref{lem:Allbases}.  These two
bases are compatible.
\begin{enumerate}
\item[\rm (i)]
the $(A,B)$-basis in the first table, and the
$(A,C)$-basis in the first table;
\item[\rm (ii)] 
the $(B,C)$-basis in the second table, and the
$(B,A)$-basis in the second table;
\item[\rm (iii)]
the $(C,A)$-basis in the third table, and the
$(C,B)$-basis in the third table.
\end{enumerate}
\end{lemma}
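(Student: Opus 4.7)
The plan is to show each assertion by combining Lemma \ref{lem:xyz} (which constrains the form of the transition matrix) with an elementary observation about the $0$-components of the specified bases (which pins down the otherwise free scalar $\alpha_0$).

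First I would treat case (i) directly. Writing $v_i = (\varphi_1 \cdots \varphi_i)^{-1} B^i \eta$ for the $(A,B)$-basis and $u_i = (\varphi''_d \cdots \varphi''_{d-i+1})^{-1} C^i \eta$ for the $(A,C)$-basis from the first table of Lemma \ref{lem:Allbases}, Lemma \ref{lem:xyz} immediately implies that the transition matrix $S$ from $\lbrace u_i\rbrace_{i=0}^d$ to $\lbrace v_i\rbrace_{i=0}^d$ is upper triangular and Toeplitz; say $S$ has parameters $\lbrace \alpha_i\rbrace_{i=0}^d$. By the empty-product convention we have $v_0 = \eta$ and $u_0 = \eta$, so the relation $v_0 = \sum_{i=0}^d S_{i,0} u_i = \alpha_0 u_0$ combined with $\eta \neq 0$ forces $\alpha_0 = 1$. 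Since $S$ is Toeplitz, all diagonal entries equal $\alpha_0 = 1$, so by Definition \ref{def:compat} the two bases are compatible.

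For cases (ii) and (iii) I would invoke the permutation principle via Definition \ref{def:primeConv}: since $B,C,A$ and $C,A,B$ are also LR triples on $V$ (as recorded in Lemma \ref{lem:ABCvar}), applying Lemma \ref{lem:xyz} to each of these permuted triples yields the corresponding Toeplitz-upper-triangular statement for the transition matrix from a $(B,A)$-basis to a $(B,C)$-basis, and from a $(C,B)$-basis to a $(C,A)$-basis, respectively. In case (ii) the second table gives $(B,C)$- and $(B,A)$-bases both with $0$-component equal to $\eta'$, and in case (iii) the third table gives $(C,A)$- and $(C,B)$-bases both with $0$-component equal to $\eta''$. In each case the same $\alpha_0=1$ argument closes the proof.

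There is no real obstacle here; the proof is mechanical, with the one subtle point being to match the LR-pair role of each entry of the bases in Lemma \ref{lem:Allbases} against the hypothesis of Lemma \ref{lem:xyz} under the appropriate permutation of $A,B,C$, and to read off the correct $0$-component from the displayed formula (using that the normalizing factor in each case is an empty product at $i=0$).
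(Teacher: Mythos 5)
Your proof is correct and follows essentially the same route as the paper: invoke Lemma \ref{lem:xyz} (applied to the permuted LR triples for cases (ii) and (iii)) to get the upper triangular Toeplitz form, then use the fact that the two bases share the same $0$-component ($\eta$, $\eta'$, or $\eta''$, via the empty-product convention) to force the diagonal entries to be $1$. No issues.
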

\begin{proof} (i)
Let $\lbrace u_i \rbrace_{i=0}^d$ denote
the $(A,C)$-basis in the first table, and let
$\lbrace v_i \rbrace_{i=0}^d$ denote
 the
$(A,B)$-basis in the first table.
Let $S \in
{\rm Mat}_{d+1}(\F)$ denote the transition matrix
from
 $\lbrace u_i \rbrace_{i=0}^d$ to
$\lbrace v_i \rbrace_{i=0}^d$.
By Lemma
\ref{lem:xyz},
$S$ is upper triangular and Toeplitz;
let $\lbrace s_i \rbrace_{i=0}^d$ denote the corresponding
parameters.
Note that $u_0$ and $v_0$ are both equal to
$\eta$; therefore $s_0=1$.
 Consequently the diagonal entries
of $S$ are all 1.
The bases $\lbrace u_i \rbrace_{i=0}^d$ and
$\lbrace v_i \rbrace_{i=0}^d$ are compatible by
Definition
\ref{def:compat}.
\\
\noindent (ii), (iii) Similar to the proof of (i) above.
\end{proof}

\begin{lemma}
\label{lem:moreTTT}
Referring to Lemma
\ref{lem:Allbases},
\begin{enumerate}
\item[\rm (i)] $T$ is the transition matrix from
the $(C,B)$-basis in the third table, to the
$(C,A)$-basis in the third table;
\item[\rm (ii)] $T'$ is the transition matrix from
the $(A,C)$-basis in the first table, to the
$(A,B)$-basis in the first table;
\item[\rm (iii)] $T''$ is the transition matrix from
the $(B,A)$-basis in the second table, to the
$(B,C)$-basis in the second table.
\end{enumerate}
\end{lemma}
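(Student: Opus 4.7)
The plan is to deduce this directly from Lemma \ref{lem:moreTTTp} together with Definition \ref{def:TTT}. For part (i), Lemma \ref{lem:moreTTTp}(iii) asserts that the $(C,A)$-basis $\{(\varphi''_1 \cdots \varphi''_i)^{-1}A^i\eta''\}_{i=0}^d$ from the third table and the $(C,B)$-basis $\{(\varphi'_d \cdots \varphi'_{d-i+1})^{-1}B^i\eta''\}_{i=0}^d$ from the third table form a compatible pair in the sense of Definition \ref{def:compat}. By Definition \ref{def:TTT}(i), $T$ is the transition matrix from a $(C,B)$-basis to a compatible $(C,A)$-basis; so $T$ is the transition matrix from the displayed $(C,B)$-basis to the displayed $(C,A)$-basis, as required. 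Parts (ii), (iii) are handled in exactly the same way, using Lemma \ref{lem:moreTTTp}(i), (ii) in combination with Definition \ref{def:TTT}(ii), (iii), respectively.

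The only point that requires attention is well-definedness: Definition \ref{def:TTT} refers to \emph{a} compatible pair, so one must check that the transition matrix does not depend on the choice. This is essentially automatic here. If $u, v$ is a compatible pair (of a $(C,B)$-basis and a $(C,A)$-basis, say), the Toeplitz condition with unit diagonal forces $u_0=v_0$; by Lemma \ref{lem:BAbasisU} every other compatible pair $u^*, v^*$ is obtained by simultaneously rescaling $u^*_i = \zeta u_i$ and $v^*_i=\zeta v_i$ for a common $\zeta\neq 0$, and this common scalar cancels out of the transition matrix. Thus the matrix is the same for every compatible pair, and the statement to be proved is just a concrete choice of realization.

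Since the real content is already contained in Lemma \ref{lem:moreTTTp}, the proof is essentially a bookkeeping exercise: the main thing to be careful about is the direction of the transition matrix, which the three parts have been arranged to match with Definition \ref{def:TTT}(i)--(iii). I do not expect any obstacle beyond tracking these directions.
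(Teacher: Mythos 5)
Your proof is correct and takes exactly the route of the paper, whose entire proof reads ``By Definition \ref{def:TTT} and Lemma \ref{lem:moreTTTp}.'' The well-definedness remark you add (that the common rescaling factor from Lemma \ref{lem:BAbasisU} is forced to be the same for both bases by the unit-diagonal condition, hence cancels) is a sound and worthwhile clarification of a point the paper leaves implicit.
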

\begin{proof} By Definition 
\ref{def:TTT}
and 
Lemma
\ref{lem:moreTTTp}.
\end{proof}

\begin{lemma} 
\label{lem:TransI}
For $0 \leq j \leq d$,
\begin{eqnarray*}
&&
B^j \eta = \sum_{i=0}^j \alpha'_{j-i} \frac{\varphi_1 \cdots \varphi_j}
{\varphi''_d \cdots \varphi''_{d-i+1}}
\,C^i \eta,
\qquad \qquad
\;\;\;\;C^j \eta = \sum_{i=0}^j \beta'_{j-i} \frac{\varphi''_d \cdots \varphi''_{d-j+1}}
{\varphi_1 \cdots \varphi_{i}}
\,B^i \eta,
\\
&&
C^j \eta' = \sum_{i=0}^j \alpha''_{j-i} \frac{\varphi'_1 \cdots \varphi'_j}
{\varphi_d \cdots \varphi_{d-i+1}}
\,A^i \eta',
\qquad \qquad \;\;
A^j \eta' = \sum_{i=0}^j \beta''_{j-i} \frac{\varphi_d \cdots \varphi_{d-j+1}}
{\varphi'_1 \cdots \varphi'_{i}}
\,C^i \eta',
\\
&&
A^j \eta'' = \sum_{i=0}^j \alpha_{j-i} \frac{\varphi''_1 \cdots \varphi''_j}
{\varphi'_d \cdots \varphi'_{d-i+1}}
\,B^i \eta'',
\qquad \qquad
B^j \eta'' =
\sum_{i=0}^j \beta_{j-i} \frac{\varphi'_d \cdots \varphi'_{d-j+1}}
{\varphi''_1 \cdots \varphi''_{i}}
\,A^i \eta''.
\end{eqnarray*}
\end{lemma}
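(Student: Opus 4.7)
The plan is to derive the first two formulas directly from Definition \ref{def:TTT}, and then obtain the remaining four by cycling via the convention of Definition \ref{def:primeConv}.

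For the first formula, I would identify the two relevant bases from the first table of Lemma \ref{lem:Allbases}: the $(A,B)$-basis $u_j = (\varphi_1\cdots\varphi_j)^{-1}B^j\eta$ and the $(A,C)$-basis $w_i = (\varphi''_d\cdots\varphi''_{d-i+1})^{-1}C^i\eta$. By Lemma \ref{lem:moreTTTp}(i) these two bases are compatible, and by Lemma \ref{lem:moreTTT}(ii) the transition matrix from $\{w_i\}_{i=0}^d$ to $\{u_j\}_{j=0}^d$ is $T'$, which by Definition \ref{def:TTT1} is upper triangular and Toeplitz with parameters $\{\alpha'_i\}_{i=0}^d$. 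Expanding $u_j = \sum_{i=0}^j T'_{i,j} w_i = \sum_{i=0}^j \alpha'_{j-i} w_i$ and clearing the normalizing scalars yields the first formula. The second formula is obtained the same way from the inverse transition matrix $(T')^{-1}$, whose Toeplitz parameters are $\{\beta'_i\}_{i=0}^d$.

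The remaining four formulas follow by applying the first two to the cyclically rotated LR triples $B,C,A$ and $C,A,B$. Under the rotation $A,B,C \mapsto B,C,A$, the convention of Definition \ref{def:primeConv} together with the choice of vectors in \eqref{eq:eta} sends $\eta \mapsto \eta'$, $\varphi_i \mapsto \varphi'_i$, $\varphi'_i \mapsto \varphi''_i$, $\varphi''_i \mapsto \varphi_i$, $\alpha'_i \mapsto \alpha''_i$, and $\beta'_i \mapsto \beta''_i$. Substituting these into the first two formulas produces the third and fourth; one further application of the rotation produces the fifth and sixth.

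I expect no serious obstacle. The main care needed is bookkeeping: matching the normalization scalars attached to each basis in Lemma \ref{lem:Allbases} to the unnormalized powers that appear in the statement, and ensuring the Toeplitz indexing $T'_{i,j} = \alpha'_{j-i}$ (and the analogous $(T')^{-1}_{i,j}=\beta'_{j-i}$) is applied correctly when extracting coefficients.
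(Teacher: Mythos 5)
Your proposal is correct and matches the paper's own proof, which is exactly ``Use Lemmas \ref{lem:Allbases}, \ref{lem:moreTTT}'': the normalized $(A,B)$- and $(A,C)$-bases from the first table together with Definition \ref{def:TTT1} give the first two identities, and the rest follow either by cycling (as you do) or equivalently by reading the other two tables against parts (i), (iii) of Lemma \ref{lem:moreTTT}. The indexing $T'_{i,j}=\alpha'_{j-i}$ and the normalization bookkeeping in your write-up are both right.
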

\begin{proof}
Use Lemmas
\ref{lem:Allbases},
\ref{lem:moreTTT}.
\end{proof}

\begin{lemma}
\label{lem:alphaSum}
We have
\begin{eqnarray}
&&
\label{eq:one}
\eta = \frac{(\eta,\tilde \eta'')}{(\eta',\tilde \eta'')}
\sum_{i=0}^d \alpha_i C^i \eta',
\qquad \qquad
\eta = \frac{(\eta,\tilde \eta')}{(\eta'',\tilde \eta')}
\sum_{i=0}^d \beta''_i B^i \eta'',
\\
&&
\label{eq:two}
\eta' = \frac{(\eta',\tilde \eta)}{(\eta'',\tilde \eta)}
\sum_{i=0}^d \alpha'_i A^i \eta'',
\qquad \qquad
\eta' = \frac{(\eta',\tilde \eta'')}{(\eta,\tilde \eta'')}
\sum_{i=0}^d \beta_i C^i \eta,
\\
&&
\label{eq:three}
\eta'' = \frac{(\eta'',\tilde \eta')}{(\eta,\tilde \eta')}
\sum_{i=0}^d \alpha''_i B^i \eta,
\qquad \qquad
\eta'' = \frac{(\eta'',\tilde \eta)}{(\eta',\tilde \eta)}
\sum_{i=0}^d \beta'_i A^i \eta'.
\end{eqnarray}
\end{lemma}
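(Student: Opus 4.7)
My plan is to prove the first identity directly and then obtain the remaining five by applying the first to the five non-trivial permutations of $A,B,C$ regarded as LR triples on $V$. For each such permutation, Lemma \ref{lem:ABCvar} records how the parameter array shuffles, Lemma \ref{lem:ToeplitzData} records how the Toeplitz parameters $\alpha_i,\beta_i,\alpha'_i,\ldots$ shuffle, and the generators $\eta,\eta',\eta''$ (and their duals) are relabeled according to which of $A,B,C$ sits in each slot. For example, applying the first identity to the p-relative $B,C,A$, for which ``$\alpha$'' becomes $\alpha'$ and ``$C$'' becomes $A$, while $(\eta,\eta',\eta'',\tilde\eta'')$ becomes $(\eta',\eta'',\eta,\tilde\eta)$, yields the third identity $\eta'=\frac{(\eta',\tilde\eta)}{(\eta'',\tilde\eta)}\sum_i \alpha'_i A^i\eta''$. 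The remaining four identities come from the relatives $C,A,B$; $C,B,A$; $A,C,B$; $B,A,C$ in the same way.

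To prove the first identity, I would first show that $\sum_{i=0}^d\alpha_i C^i\eta'$ lies in the one-dimensional subspace $A^d V=\mathbb F\eta$, and then pin down the scalar by pairing with $\tilde\eta''$. The third table of Lemma \ref{lem:Allbases} provides two compatible bases of $V$ sharing the base vector $\eta''$: the $(C,B)$-basis $v_i=(\varphi'_d\cdots\varphi'_{d-i+1})^{-1}B^i\eta''$ and the $(C,A)$-basis $u_i=(\varphi''_1\cdots\varphi''_i)^{-1}A^i\eta''$. By Lemma \ref{lem:moreTTT}(i), the transition matrix from $\{v_i\}$ to $\{u_i\}$ is $T$, whose Toeplitz parameters are $\{\alpha_i\}_{i=0}^d$; reading off column $d$ gives
\begin{eqnarray*}
u_d=\sum_{i=0}^d \alpha_{d-i}\,v_i.
\end{eqnarray*}
Since $\eta'\in B^d V$ and $v_d$ spans the same one-dimensional space, write $\eta'=\mu v_d$ with $\mu\ne 0$. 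Because $C$ acts on a $(C,B)$-basis by $Cv_j=v_{j-1}$, it follows that $C^i\eta'=\mu v_{d-i}$ for $0\le i\le d$, and hence
\begin{eqnarray*}
\sum_{i=0}^d \alpha_i C^i\eta'=\mu\sum_{i=0}^d\alpha_i v_{d-i}=\mu\, u_d.
\end{eqnarray*}
The right side is a scalar multiple of $A^d\eta''\in A^dV=\mathbb F\eta$, so there exists $c\in\mathbb F$ with $\sum_i \alpha_i C^i\eta'=c\,\eta$.

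To determine $c$, I pair both sides with $\tilde\eta''$. For $i\ge 1$ the adjoint relation gives $(C^i\eta',\tilde\eta'')=(\eta',\tilde C^i\tilde\eta'')=0$, since $\tilde C\tilde\eta''=0$ by (\ref{eq:TABCzero}). Combined with $\alpha_0=1$ from Lemma \ref{lem:alpha0}, this yields $c(\eta,\tilde\eta'')=(\eta',\tilde\eta'')$; since $(\eta,\tilde\eta'')\ne 0$ by Lemma \ref{lem:lateruse}, we solve and obtain the first identity. The main obstacle is the identification $\sum_i \alpha_i C^i\eta'=\mu u_d$: this requires viewing the Toeplitz parameters $\alpha_i$ simultaneously as coefficients of a polynomial in the lowering operator $C$ and as entries of the transition matrix between two bases rooted at the common vector $\eta''$, a dual perspective supplied by Lemmas \ref{lem:Allbases} and \ref{lem:moreTTT}.
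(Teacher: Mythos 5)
Your proof is correct and takes essentially the same route as the paper: both identify $\sum_{i=0}^d\alpha_i C^i\eta'$ with the top vector of a compatible $(C,A)$-basis by reading off the last column of the Toeplitz transition matrix $T$, place that vector in the one-dimensional space $A^dV=\mathbb F\eta$, and then determine the scalar by pairing with $\tilde\eta''$ using $\alpha_0=1$ and $(C^i\eta',\tilde\eta'')=0$ for $i\ge 1$. The only cosmetic differences are that you anchor both bases at $\eta''$ (introducing the auxiliary scalar $\mu$, which cancels) where the paper uses $\{C^{d-i}\eta'\}$ directly, and you make explicit the relative-permutation argument that the paper compresses into ``similarly verified.''
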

\begin{proof}
We verify the equation on the left in
(\ref{eq:one}). By Lemma
\ref{lem:Allbases}, 
$\lbrace C^{d-i}\eta'\rbrace_{i=0}^d$ is a $(C,B)$-basis
of $V$. Let $\lbrace v_i \rbrace_{i=0}^d$ denote a compatible
$(C,A)$-basis of $V$. By Definition
\ref{def:TTT}(i) $T$ is the transition matrix from
$\lbrace C^{d-i}\eta'\rbrace_{i=0}^d$ to
$\lbrace v_i \rbrace_{i=0}^d$.
By the discussion in
Definition
\ref{def:TTT1},
$T$ is upper triangular and Toeplitz with
parameters $\lbrace \alpha_i \rbrace_{i=0}^d$.
By these comments 
$v_d = \sum_{i=0}^d \alpha_i C^i\eta'$.
By construction $v_d$ is a basis of $A^dV$. Since
$\eta$ is also a basis of $A^dV$, there exists
$0 \not=\zeta \in \mathbb F$ such that
$\eta =  \zeta v_d$. Therefore
\begin{equation}
\label{eq:vdsum}
\eta = \zeta \sum_{i=0}^d \alpha_i C^i\eta'.
\end{equation}
We now compute $\zeta$.
In the equation
(\ref{eq:vdsum}),
take the inner
product of each side with $\tilde \eta''$.
By Lemma 
\ref{lem:LRTdecDual} (row 2 of the table) we have
$(C^i \eta', \tilde \eta'') = 0 $ for $1 \leq i \leq d$.
By this and $ \alpha_0=1$ we obtain
$(\eta,\tilde \eta'') =  \zeta (\eta',\tilde \eta'')$. Therefore
\begin{eqnarray}
\label{lem:zetaVal}
\zeta = \frac{
(\eta,\tilde \eta'')}
{(\eta',\tilde \eta'')}.
\end{eqnarray}
Combining
(\ref{eq:vdsum}),
(\ref{lem:zetaVal})
we obtain the equation on the left in
(\ref{eq:one}). The remaining equations in
(\ref{eq:one})--(\ref{eq:three})
 are similarly verified.
\end{proof}

\noindent In the next two lemmas we give some results
about $\alpha_1$. Similar results hold for $\alpha'_1, \alpha''_1$.

\begin{lemma} 
\label{lem:etapp} 
Assume  $d\geq 1$.
Then the vector $\eta''$ is
an eigenvector for 
$A/\varphi''_1- 
B/\varphi'_d$
with eigenvalue $\alpha_1$.
\end{lemma}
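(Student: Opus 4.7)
The plan is to recognize $\alpha_1$ as the $(0,1)$-entry of a specific Toeplitz transition matrix $T$, set up so that the two bases involved both begin with $\eta''$. Once this is in place, the desired eigenvector relation will fall out just by reading off the $j=1$ coefficient of a single basis expansion.

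First I would invoke Lemma \ref{lem:Allbases} (third table). Row 3 supplies the $(C,A)$-basis
\begin{eqnarray*}
v_i = (\varphi''_1 \cdots \varphi''_i)^{-1} A^i \eta'' \qquad (0 \leq i \leq d),
\end{eqnarray*}
and row 7 supplies the $(C,B)$-basis
\begin{eqnarray*}
u_i = (\varphi'_d \cdots \varphi'_{d-i+1})^{-1} B^i \eta'' \qquad (0 \leq i \leq d).
\end{eqnarray*}
Both bases satisfy $v_0 = u_0 = \eta''$, and by Lemma \ref{lem:moreTTTp}(iii) they are compatible. By Lemma \ref{lem:moreTTT}(i), the upper triangular Toeplitz matrix $T$ with parameters $\lbrace \alpha_i \rbrace_{i=0}^d$ is precisely the transition matrix from $\lbrace u_i \rbrace_{i=0}^d$ to $\lbrace v_i \rbrace_{i=0}^d$. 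Writing out $T_{ij} = \alpha_{j-i}$ for $i \leq j$ then yields the expansion
\begin{eqnarray*}
v_j = \sum_{i=0}^j \alpha_{j-i} u_i \qquad (0 \leq j \leq d).
\end{eqnarray*}

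Finally I would specialize to $j = 1$. Using $\alpha_0 = 1$ from Lemma \ref{lem:alpha0}, together with $v_1 = A\eta''/\varphi''_1$ and $u_1 = B\eta''/\varphi'_d$, this gives
\begin{eqnarray*}
\frac{A\eta''}{\varphi''_1} = \alpha_1 \eta'' + \frac{B\eta''}{\varphi'_d},
\end{eqnarray*}
which rearranges at once to $(A/\varphi''_1 - B/\varphi'_d)\eta'' = \alpha_1 \eta''$, as claimed.

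There is no serious obstacle in this argument: the content is already packaged in Lemmas \ref{lem:Allbases}, \ref{lem:moreTTTp}, and \ref{lem:moreTTT} together with Definitions \ref{def:TTT}--\ref{def:TTT1}. The only care needed is to pick the correct rows from the third table of Lemma \ref{lem:Allbases} (so that the two chosen bases share the initial vector $\eta''$, making them compatible and putting $T$ in the role of their transition matrix) and then to correctly identify $v_1$ and $u_1$ when extracting the $j=1$ equation.
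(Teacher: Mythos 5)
Your proof is correct and is essentially the paper's own argument unpacked: the paper simply sets $j=1$ in the third row of Lemma \ref{lem:TransI}, and that lemma is itself proved from exactly the combination of Lemma \ref{lem:Allbases} and Lemma \ref{lem:moreTTT} that you use. You have correctly identified the two relevant rows of the third table and the role of $T$, so the derivation matches the paper's.
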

\begin{proof}
In Lemma \ref{lem:TransI}, set $j=1$
in either equation from the third row.
\end{proof}

\begin{lemma} 
\label{lem:matrixA}
Assume $d\geq 1$. Then the column vector
 $(\alpha''_0,\alpha''_1,\ldots, \alpha''_d)^t$ is
 an eigenvector for the matrix
\begin{equation}
\label{eq:matrixEig}
\left(
\begin{array}{ c c cc c c }
 0 & \varphi_1/\varphi''_1   &   &&   & \bf 0  \\
-1/\varphi'_d & 0  & \varphi_2/\varphi''_1  &&  &      \\ 
   & -1/\varphi'_d  &  0   & \cdot &&  \\
     &   & \cdot & \cdot  & \cdot & \\
       &  & & \cdot & \cdot & \varphi_d/\varphi''_1 \\
        {\bf 0}  &&  & & -1/\varphi'_d  &  0  \\
	\end{array}
	\right).
\end{equation}
The corresponding eigenvalue is $\alpha_1$.
\end{lemma}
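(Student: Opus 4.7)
The plan is to unwind the eigenvalue identity of Lemma \ref{lem:etapp} using the expansion of $\eta''$ provided by Lemma \ref{lem:alphaSum}, and then read off a tridiagonal recursion that is exactly the claimed eigenvalue equation.

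First, I would invoke Lemma \ref{lem:etapp} to get
\begin{equation*}
\left(\frac{A}{\varphi''_1} - \frac{B}{\varphi'_d}\right)\eta'' = \alpha_1\, \eta''.
\end{equation*}
Next, the first equation of (\ref{eq:three}) in Lemma \ref{lem:alphaSum} expresses
\begin{equation*}
\eta'' = \zeta \sum_{i=0}^d \alpha''_i\, B^i \eta, \qquad \zeta = \frac{(\eta'',\tilde{\eta}')}{(\eta,\tilde{\eta}')},
\end{equation*}
with $\zeta$ nonzero by Lemma \ref{lem:lateruse}. By the first table in Lemma \ref{lem:Allbases}, the sequence $\lbrace B^i \eta\rbrace_{i=0}^d$ is an inverted $(B,A)$-basis of $V$, so by Lemma \ref{lem:5CharBAinv}(iv) we have $A B^i \eta = \varphi_i B^{i-1}\eta$ for $1 \leq i \leq d$ and $A\eta = 0$.

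Substituting these into the eigenvalue identity and dividing out $\zeta$ gives
\begin{equation*}
\sum_{i=0}^d \alpha''_i \left(\frac{\varphi_i}{\varphi''_1}\, B^{i-1}\eta \;-\; \frac{1}{\varphi'_d}\, B^{i+1}\eta\right) \;=\; \alpha_1 \sum_{i=0}^d \alpha''_i\, B^i \eta.
\end{equation*}
After reindexing and collecting the coefficient of $B^i\eta$ (using that $\lbrace B^i\eta\rbrace_{i=0}^d$ is a basis), the boundary conventions $\alpha''_{-1}=\alpha''_{d+1}=0$ from Definition \ref{def:TTT1} together with $\varphi_{d+1}=0$ and $A\eta=0$ handle the endpoints $i=0$ and $i=d$, and I obtain the recursion
\begin{equation*}
\frac{\varphi_{i+1}}{\varphi''_1}\, \alpha''_{i+1} \;-\; \frac{1}{\varphi'_d}\, \alpha''_{i-1} \;=\; \alpha_1\, \alpha''_i \qquad (0 \leq i \leq d).
\end{equation*}

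Finally, I would observe that this recursion is exactly the componentwise statement that the tridiagonal matrix in (\ref{eq:matrixEig}) applied to the column vector $(\alpha''_0,\alpha''_1,\ldots,\alpha''_d)^t$ returns $\alpha_1$ times the same vector. The main obstacle is purely bookkeeping: keeping the subscripts on $\varphi$, the prime counts, and the shifts $i\mapsto i\pm 1$ correctly aligned when collecting coefficients; once the indices are lined up, there is no conceptual work left.
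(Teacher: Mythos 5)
Your proof is correct and is essentially the paper's own argument: the paper likewise takes the eigenvalue identity of Lemma \ref{lem:etapp} and represents everything with respect to the inverted $(B,A)$-basis $\lbrace B^i\eta\rbrace_{i=0}^d$, using the left equation of (\ref{eq:three}) for the coordinates of $\eta''$; you simply write out the action of $A$ and $B$ on the basis vectors explicitly instead of quoting the matrix representations from Proposition \ref{prop:matrixRep}. The only quibble is that Definition \ref{def:TTT1} sets $\alpha''_{d+1}=0$ but does not define $\alpha''_{-1}$; the $i=0$ endpoint is instead handled by the fact that the reindexed sum coming from the $B$-term simply has no $j=0$ contribution.
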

\begin{proof}
In Lemma
\ref{lem:etapp}, represent everything
with respect to
$\lbrace B^i \eta\rbrace_{i=0}^d$,
which is an inverted $(B,A)$-basis of $V$.
In this calculation use
Proposition
\ref{prop:matrixRep}
and the equation on the left in
(\ref{eq:three}).
\end{proof}

\begin{lemma} The column vector
 $(\alpha''_0,\alpha''_1,\ldots, \alpha''_d)^t$ is
 an eigenvector for the matrix
\begin{equation}
\label{eq:matrixEig2}
\left(
\begin{array}{ c c cc c c }
 a_0 & \varphi'_d   &   &&   & \bf 0  \\
\varphi''_d/\varphi_1 & a_1  & \varphi'_{d-1}  &&  &      \\ 
   & \varphi''_{d-1}/\varphi_2  &  a_2   & \cdot &&  \\
     &   & \cdot & \cdot  & \cdot & \\
       &  & & \cdot & \cdot & \varphi'_1 \\
        {\bf 0}  &&  & & \varphi''_1/\varphi_d   &  a_d  \\
	\end{array}
	\right).
\end{equation}
The corresponding eigenvalue is $0$.
\end{lemma}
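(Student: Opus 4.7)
The plan is to recognize the displayed matrix as the matrix representing $C$ with respect to a specific basis of $V$, and then to interpret the column vector $(\alpha''_0,\alpha''_1,\ldots,\alpha''_d)^t$ as (a nonzero scalar multiple of) the coordinate vector of $\eta''$ in that basis. Since $\eta''\in C^dV$ and $C$ is Nil, we have $C\eta''=0$, which gives the eigenvalue $0$ at once. The structure of the argument parallels Lemma \ref{lem:matrixA}, but is simpler because $C$ annihilates $\eta''$ on the nose.

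First I would use the first row of the first table in Lemma \ref{lem:Allbases} to recall that $\{B^i\eta\}_{i=0}^d$ is an inverted $(B,A)$-basis of $V$. Then, consulting Proposition \ref{prop:matrixRep} in the row labelled ``inv.\ $(B,A)$,'' I would extract the entries
\[
C^\natural_{i,i-1}=\frac{\varphi''_{d-i+1}}{\varphi_i},\qquad C^\natural_{i,i}=a_i,\qquad C^\natural_{i-1,i}=\varphi'_{d-i+1}.
\]
These agree term-by-term with the entries of the matrix displayed in the statement of the lemma, so that matrix is precisely the matrix representing $C$ with respect to $\{B^i\eta\}_{i=0}^d$.

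Next I would invoke the equation on the left in (\ref{eq:three}),
\[
\eta''=\frac{(\eta'',\tilde\eta')}{(\eta,\tilde\eta')}\sum_{i=0}^d \alpha''_i B^i\eta,
\]
and observe that the prefactor is a nonzero scalar by Lemma \ref{lem:lateruse}. Consequently the coordinate vector of $\eta''$ in the basis $\{B^i\eta\}_{i=0}^d$ is a nonzero scalar multiple of $(\alpha''_0,\alpha''_1,\ldots,\alpha''_d)^t$. Finally, by (\ref{eq:eta}) and the fact that $C$ is Nil (Lemma \ref{lem:ABdecIndNil}), we have $0\neq\eta''\in C^dV$ and $C\eta''\in C^{d+1}V=0$. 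Translating the identity $C\eta''=0$ into matrix-vector form in the inverted $(B,A)$-basis yields exactly the required eigenvector equation with eigenvalue $0$.

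There is essentially no obstacle here: the only thing to watch is the visual matching of the matrix entries, after which the conclusion is immediate from the trivial identity $C\eta''=0$.
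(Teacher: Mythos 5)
Your proposal is correct and follows exactly the paper's own route: representing the identity $C\eta''=0$ in the inverted $(B,A)$-basis $\{B^i\eta\}_{i=0}^d$, using Proposition \ref{prop:matrixRep} for the matrix of $C$ and the left equation of (\ref{eq:three}) for the coordinate vector of $\eta''$. The entry-matching and the nonvanishing of the scalar prefactor are verified correctly, so nothing is missing.
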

\begin{proof}
In the equation $C\eta''=0$, represent everything
with respect to
$\lbrace B^i \eta\rbrace_{i=0}^d$,
which is an inverted $(B,A)$-basis of $V$.
In this calculation use
Proposition
\ref{prop:matrixRep}
and the equation on the left in
(\ref{eq:three}).
\end{proof}

\begin{lemma}
\label{lem:prodFormula}
For $0 \leq i \leq d$,
\begin{eqnarray*}
&&
\varphi_1 \varphi_2 \cdots \varphi_i \alpha''_i \beta'_d = \beta'_{d-i},
\qquad \quad
\varphi'_1 \varphi'_2 \cdots \varphi'_i \alpha_i \beta''_d = \beta''_{d-i},
\qquad \quad
\varphi''_1 \varphi''_2 \cdots \varphi''_i \alpha'_i \beta_d = \beta_{d-i},
\\
&&
\varphi_1 \varphi_2 \cdots \varphi_i \alpha'_i \beta''_d = \beta''_{d-i},
\qquad \quad
\varphi'_1 \varphi'_2 \cdots \varphi'_i \alpha''_i \beta_d = \beta_{d-i},
\qquad \quad
\varphi''_1 \varphi''_2 \cdots \varphi''_i \alpha_i \beta'_d = \beta'_{d-i}.
\end{eqnarray*}
\end{lemma}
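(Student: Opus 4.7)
The plan is to prove the first identity of the top row,
$\varphi_1\varphi_2\cdots\varphi_i\,\alpha''_i\beta'_d=\beta'_{d-i}$,
by comparing two expansions of the vector $C^d\eta$ in the basis $\{B^i\eta\}_{i=0}^d$. On one hand, setting $j=d$ in the second equation of the first row of Lemma~\ref{lem:TransI} gives
\begin{eqnarray*}
C^d\eta=\sum_{i=0}^d\beta'_{d-i}\,\frac{\varphi''_1\varphi''_2\cdots\varphi''_d}{\varphi_1\varphi_2\cdots\varphi_i}\,B^i\eta.
\end{eqnarray*}
On the other hand, I will observe that $\eta\in A^dV$ is not in the kernel of $C^d$ (by opposition of the flags $\lbrace A^{d-i}V\rbrace$ and $\lbrace C^{d-i}V\rbrace$ from Lemma~\ref{lem:ABdecInd}), so $C^d\eta$ is a nonzero scalar multiple of $\eta''$, and then invoke the equation on the left of (\ref{eq:three}) in Lemma~\ref{lem:alphaSum} to write $\eta''$ as a scalar multiple of $\sum_{i=0}^d\alpha''_iB^i\eta$. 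Matching coefficients in the basis $\lbrace B^i\eta\rbrace_{i=0}^d$ and extracting the normalization from the $i=0$ term (where $\alpha''_0=1$ by Lemma~\ref{lem:alpha0}) yields the first identity.

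The remaining two identities of the top row follow immediately by applying the prime convention of Definition~\ref{def:primeConv} (which replaces the LR triple $A,B,C$ by its cyclic shift $B,C,A$) to the identity just established: each application cyclically permutes all of $\varphi,\varphi',\varphi''$ and $\alpha,\alpha',\alpha''$ and $\beta,\beta',\beta''$. For the three identities of the bottom row, my plan is to apply the top-row identities to the relative LR triple $\tilde B,\tilde A,\tilde C$ (an $n$-relative of $A,B,C$ in the sense of Definition~\ref{def:prel}) and translate back using Lemmas~\ref{lem:newPA} and~\ref{lem:ToeplitzDataD}. For $\tilde B,\tilde A,\tilde C$ the parameter array reads $(\lbrace\varphi_i\rbrace;\lbrace\varphi''_i\rbrace;\lbrace\varphi'_i\rbrace)$ and the Toeplitz data reads $(\lbrace\alpha_i\rbrace,\lbrace\beta_i\rbrace;\lbrace\alpha''_i\rbrace,\lbrace\beta''_i\rbrace;\lbrace\alpha'_i\rbrace,\lbrace\beta'_i\rbrace)$; substituting these into the three top-row identities produces exactly the three identities of the bottom row.

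The main obstacle is really only in the first step: ensuring that the comparison of the two expansions of $C^d\eta$ is well posed. This requires verifying that $C^d\eta\neq 0$ and that $\{B^i\eta\}_{i=0}^d$ is genuinely a basis; both facts are supplied by Lemma~\ref{lem:Allbases} and the flag geometry of Lemma~\ref{lem:ABdecInd}. Once the first identity is in hand, the other five identities are routine applications of the symmetries already available in Lemmas~\ref{lem:ToeplitzData}, \ref{lem:ToeplitzDataD} and Definition~\ref{def:primeConv}, so the rest of the argument is purely bookkeeping.
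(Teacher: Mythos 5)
Your proposal is correct and matches the paper's own proof essentially step for step: both compare the $j=d$ case of Lemma \ref{lem:TransI} with the expansion of $\eta''$ from the left equation of (\ref{eq:three}) via $C^d\eta=\vartheta\,\eta''$, extract the normalization from the $i=0$ coefficient, and then propagate the first identity to the relatives of $A,B,C$ using Lemmas \ref{lem:newPA} and \ref{lem:ToeplitzDataD}. One cosmetic point: by Definition \ref{def:prel} the triple $\tilde B,\tilde A,\tilde C$ is a p-relative (positive orientation), not an n-relative, though this does not affect the substitutions you carry out.
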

\begin{proof} We verify the first equation.
By Lemma
\ref{lem:TransI} with $j=d$,
\begin{eqnarray}
\label{eq:jd}
C^d \eta = \sum_{i=0}^d \beta'_{d-i}
\frac{\varphi''_1 \cdots \varphi''_{d}}
{\varphi_1 \cdots \varphi_{i}}
\,B^i \eta.
\end{eqnarray}
The vectors $C^d \eta $ and $\eta''$ are both bases for $C^d V$,
so
there exists $0 \not= \vartheta \in \mathbb F$ such that
$ C^d \eta =  \vartheta \eta''$. Use this to compare
(\ref{eq:jd}) with the equation on left in
(\ref{eq:three}).  We find that for $0 \leq i \leq d$,
\begin{eqnarray}
\label{eq:geni}
 \beta'_{d-i} \frac{\varphi''_1 \cdots \varphi''_d}{\varphi_1 \cdots 
\varphi_i} = \frac{(\eta'',\tilde \eta')}{
(\eta,\tilde \eta')} \vartheta \alpha''_i.
\end{eqnarray}
Setting $i=0$ in 
(\ref{eq:geni}),
\begin{eqnarray}
\label{eq:i0}
\beta'_d \varphi''_1 \cdots \varphi''_d
= \frac{(\eta'',\tilde \eta')}{
(\eta,\tilde \eta')} \vartheta.
\end{eqnarray}
Eliminating $\vartheta$ in
(\ref{eq:geni}) using 
(\ref{eq:i0}), we obtain the first equation 
in the lemma statement. Apply this equation to the p-relatives
of $A,B,C$ to get the
remaining
equations in the lemma statement.
\end{proof}

\begin{lemma} 
\label{lem:NZ}
\label{lem:identity}
The following scalars are nonzero:
\begin{eqnarray*}
\alpha_d, \qquad \alpha'_d, \qquad \alpha''_d,
\qquad \beta_d, \qquad \beta'_d, \qquad \beta''_d.
\end{eqnarray*}
Moreover
\begin{eqnarray*}
&&
\varphi_1 \varphi_2 \cdots \varphi_d = \frac{1}{
\alpha'_d \beta''_d} = 
\frac{1}{
\alpha''_d \beta'_d}, 
\qquad \qquad 
\varphi'_1 \varphi'_2 \cdots \varphi'_d = \frac{1}{
\alpha''_d \beta_d} = 
 \frac{1}{
\alpha_d \beta''_d}, 
\\
&&\varphi''_1 \varphi''_2 \cdots \varphi''_d = \frac{1}{
\alpha_d \beta'_d}
=
 \frac{1}{
\alpha'_d \beta_d}.
\end{eqnarray*}
\end{lemma}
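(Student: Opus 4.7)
The plan is to derive this lemma as an immediate consequence of Lemma \ref{lem:prodFormula} by specializing to $i = d$. For each of the six identities in Lemma \ref{lem:prodFormula}, setting $i = d$ makes the right-hand side $\beta'_{d-d} = \beta'_0$, $\beta''_0$, or $\beta_0$; by Lemma \ref{lem:alpha0} each of these equals $1$. Therefore, the six specialized identities read
\begin{align*}
\varphi_1 \varphi_2 \cdots \varphi_d\, \alpha''_d \beta'_d &= 1, &
\varphi'_1 \varphi'_2 \cdots \varphi'_d\, \alpha_d \beta''_d &= 1, &
\varphi''_1 \varphi''_2 \cdots \varphi''_d\, \alpha'_d \beta_d &= 1, \\
\varphi_1 \varphi_2 \cdots \varphi_d\, \alpha'_d \beta''_d &= 1, &
\varphi'_1 \varphi'_2 \cdots \varphi'_d\, \alpha''_d \beta_d &= 1, &
\varphi''_1 \varphi''_2 \cdots \varphi''_d\, \alpha_d \beta'_d &= 1.
\end{align*}

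Next I would invoke Definition \ref{def:pa} (together with its versions for the LR pairs $B,C$ and $C,A$) to note that each of $\varphi_i$, $\varphi'_i$, $\varphi''_i$ is nonzero for $1 \le i \le d$, so that the products $\varphi_1 \cdots \varphi_d$, $\varphi'_1 \cdots \varphi'_d$, $\varphi''_1 \cdots \varphi''_d$ are all nonzero scalars in $\mathbb{F}$. Consequently, in each of the six displayed equations, the product of the two Toeplitz parameters $\alpha \cdot \beta$ is the inverse of a nonzero scalar, hence nonzero. This forces both factors to be nonzero; running through the six equations yields in turn that $\alpha''_d, \beta'_d, \alpha_d, \beta''_d, \alpha'_d, \beta_d$ are all nonzero, which establishes the first assertion of the lemma.

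Finally, solving each of the six equations for the corresponding product of $\varphi$'s yields the three pairs of identities displayed in the lemma statement; the two expressions given for each $\varphi$-product arise from the two equations in Lemma \ref{lem:prodFormula} corresponding to the same LR pair (that is, the two rows in which the same product $\varphi_1 \cdots \varphi_d$, $\varphi'_1 \cdots \varphi'_d$, or $\varphi''_1 \cdots \varphi''_d$ appears). There is no real obstacle here: once Lemma \ref{lem:prodFormula} is in hand, this lemma is essentially a bookkeeping specialization at $i=d$, so the proof should be very short.
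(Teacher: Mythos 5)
Your proof is correct and matches the paper's argument exactly: the paper also obtains this lemma by setting $i=d$ in Lemma \ref{lem:prodFormula} and using Lemma \ref{lem:alpha0}. (In fact, since each specialized identity equates a product of three scalars to $1$, the nonvanishing of every factor is immediate, so your separate appeal to the nonvanishing of the $\varphi$'s is harmless but not even needed.)
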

\begin{proof} Set $i=d$ in Lemma
\ref{lem:prodFormula} and use Lemma
\ref{lem:alpha0}.
\end{proof}

\begin{lemma}
\label{lem:alphaInv}
For $0 \leq i \leq d$,
\begin{eqnarray}
\label{eq:alphaInv}
&&
\frac{\alpha_i}{\varphi_1 \varphi_2 \cdots \varphi_i}
= 
\frac{\alpha'_i}{\varphi'_1 \varphi'_2 \cdots \varphi'_i}
=
\frac{\alpha''_i}{\varphi''_1 \varphi''_2 \cdots \varphi''_i}
\end{eqnarray}
and also
\begin{eqnarray}
\frac{\beta_i}{\varphi_d \varphi_{d-1} \cdots \varphi_{d-i+1}}
= 
\frac{\beta'_i}{\varphi'_d \varphi'_{d-1} \cdots \varphi'_{d-i+1}}
=
\frac{\beta''_i}{\varphi''_d \varphi''_{d-1} \cdots \varphi''_{d-i+1}}.
\label{eq:betaInv}
\end{eqnarray}
\end{lemma}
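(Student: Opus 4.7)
The plan is to extract the alpha equalities directly from Lemma \ref{lem:prodFormula}, and then obtain the beta equalities by applying the alpha part to a suitable relative of $A,B,C$.

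For the alpha part, I would group the six equations of Lemma \ref{lem:prodFormula} according to which $\beta$ scalar appears on their right-hand side. The equations
\begin{eqnarray*}
\varphi'_1 \varphi'_2 \cdots \varphi'_i \alpha_i \beta''_d = \beta''_{d-i},
\qquad
\varphi_1 \varphi_2 \cdots \varphi_i \alpha'_i \beta''_d = \beta''_{d-i}
\end{eqnarray*}
both have right-hand side $\beta''_{d-i}$. Since $\beta''_d \not= 0$ by Lemma \ref{lem:NZ}, dividing yields $\varphi'_1 \cdots \varphi'_i \alpha_i = \varphi_1 \cdots \varphi_i \alpha'_i$, which is the first equality in (\ref{eq:alphaInv}). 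The pair sharing $\beta_{d-i}$ (namely those involving $\varphi''_1 \cdots \varphi''_i \alpha'_i \beta_d$ and $\varphi'_1 \cdots \varphi'_i \alpha''_i \beta_d$) gives the equality between the $\alpha'$ and $\alpha''$ fractions, and the pair sharing $\beta'_{d-i}$ gives the third equality. Any two of these three relations actually imply the third, but producing all three directly costs nothing.

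For the beta part, I would invoke the p-relative $C,B,A$ of $A,B,C$ from Definition \ref{def:prel}. By Lemma \ref{lem:ABCvar} this LR triple has parameter array $(\lbrace \varphi'_{d-i+1}\rbrace_{i=1}^d;\lbrace \varphi_{d-i+1}\rbrace_{i=1}^d;\lbrace \varphi''_{d-i+1}\rbrace_{i=1}^d)$, and by Lemma \ref{lem:ToeplitzData} its Toeplitz data is $(\lbrace \beta'_i\rbrace,\lbrace \alpha'_i\rbrace;\lbrace \beta_i\rbrace,\lbrace \alpha_i\rbrace;\lbrace \beta''_i\rbrace,\lbrace \alpha''_i\rbrace)$. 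Applying the already established alpha part of the present lemma to this LR triple, and observing that for example $\varphi'_d \varphi'_{d-1} \cdots \varphi'_{d-i+1}$ is exactly the product of the first $i$ entries of the parameter sequence $\lbrace \varphi'_{d-i+1}\rbrace_{i=1}^d$, we obtain
\begin{eqnarray*}
\frac{\beta'_i}{\varphi'_d \varphi'_{d-1} \cdots \varphi'_{d-i+1}}
=\frac{\beta_i}{\varphi_d \varphi_{d-1} \cdots \varphi_{d-i+1}}
=\frac{\beta''_i}{\varphi''_d \varphi''_{d-1} \cdots \varphi''_{d-i+1}},
\end{eqnarray*}
which is line (\ref{eq:betaInv}).

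Since all the heavy lifting has been packaged into Lemma \ref{lem:prodFormula} together with the tables in Lemmas \ref{lem:ABCvar} and \ref{lem:ToeplitzData}, the argument is essentially bookkeeping. The only point requiring any care is the index relabelling for the beta part: one must verify that the $i$-th partial product of the parameter sequence of the relative $C,B,A$ agrees with the product $\varphi_d \varphi_{d-1} \cdots \varphi_{d-i+1}$ that appears in (\ref{eq:betaInv}), and similarly for the primed and double-primed sequences. This will be the main (though mild) obstacle.
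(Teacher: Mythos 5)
Your proof is correct and follows essentially the same route as the paper: the $\alpha$ identities come from pairing the equations of Lemma \ref{lem:prodFormula} by common right-hand side and dividing by the nonzero $\beta_d,\beta'_d,\beta''_d$ of Lemma \ref{lem:NZ}, and the $\beta$ identities come from applying the $\alpha$ part to a relative (the paper uses $\tilde A,\tilde B,\tilde C$ via Lemmas \ref{lem:newPA} and \ref{lem:ToeplitzDataD}, while you use $C,B,A$ via Lemmas \ref{lem:ABCvar} and \ref{lem:ToeplitzData} — both work identically). One trivial slip: by Definition \ref{def:prel} the triple $C,B,A$ is an n-relative of $A,B,C$, not a p-relative.
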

\begin{proof} To obtain
(\ref{eq:alphaInv}), 
use Lemma
\ref{lem:prodFormula}
and the first assertion of Lemma
\ref{lem:NZ}. To obtain
(\ref{eq:betaInv}), apply
(\ref{eq:alphaInv}) to the LR triple
$\tilde A, \tilde B, \tilde C$ and use
Lemmas
\ref{lem:newPA},
\ref{lem:ToeplitzDataD}.
\end{proof}

\begin{lemma} 
\label{lem:COB}
For $0 \leq j \leq d$,
\begin{eqnarray*}
&&B^j \eta = \frac{1}{\beta''_d} \,
\frac{(\eta,\tilde \eta'')}{(\eta',\tilde \eta'')}
\,
\frac{A^{d-j}\eta'}{\varphi_d \cdots \varphi_{j+1}},
\qquad \quad
C^j \eta = \frac{1}{\alpha_d} \,
\frac{(\eta,\tilde \eta')}{(\eta'',\tilde \eta')}
\,
\frac{A^{d-j}\eta''}{\varphi''_1 \cdots \varphi''_{d-j}},
\\
&&C^j \eta' = \frac{1}{\beta_d} \,
\frac{(\eta',\tilde \eta)}{(\eta'',\tilde \eta)}
\,
\frac{B^{d-j}\eta''}{\varphi'_d \cdots \varphi'_{j+1}},
\qquad \quad
A^j \eta' = \frac{1}{\alpha'_d} \,
\frac{(\eta',\tilde \eta'')}{(\eta,\tilde \eta'')}
\,
\frac{B^{d-j}\eta}{\varphi_1 \cdots \varphi_{d-j}},
\\
&&A^j \eta'' = \frac{1}{\beta'_d} \,
\frac{(\eta'',\tilde \eta')}{(\eta,\tilde \eta')}
\,
\frac{C^{d-j}\eta}{\varphi''_d \cdots \varphi''_{j+1}},
\qquad \quad 
B^j \eta'' = \frac{1}{\alpha''_d} \,
\frac{(\eta'',\tilde \eta)}{(\eta',\tilde \eta)}
\,
\frac{C^{d-j}\eta'}{\varphi'_1 \cdots \varphi'_{d-j}}.
\end{eqnarray*}
\end{lemma}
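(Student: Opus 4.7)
The plan is to establish only the first equation and deduce the other five from it. The six formulas arrange themselves into two cyclic families under Definition~\ref{def:primeConv}: applying the convention to the first equation yields the third (for the p-relative $B,C,A$) and then the fifth (for $C,A,B$), while applying it to the second yields the fourth and the sixth. Moreover the first equation, read with the substitution $j\mapsto d-j$ and rearranged using the identity $\alpha'_d\beta''_d\varphi_1\cdots\varphi_d=1$ from Lemma~\ref{lem:NZ}, already produces the fourth equation. Combining the cyclic reduction with this inversion shows that the first equation alone implies all six.

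To prove the first equation, I would argue as follows. By Lemma~\ref{lem:Allbases} the sequences $\lbrace B^i\eta\rbrace_{i=0}^d$ and $\lbrace A^{d-i}\eta'\rbrace_{i=0}^d$ are respectively an inverted $(B,A)$-basis and an $(A,B)$-basis of $V$, and both induce the $(A,B)$-decomposition. Hence by Lemma~\ref{lem:Dmeaning} there is a unique nonzero $\zeta\in\mathbb F$ such that $B^j\eta=\zeta\,\varphi_1\varphi_2\cdots\varphi_j\,A^{d-j}\eta'$ for $0\le j\le d$. Setting $j=0$ reduces the determination of $\zeta$ to the computation of $A^d\eta'$ as a multiple of $\eta$. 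For this I would apply $A^d$ to the identity $\eta'=\frac{(\eta',\tilde\eta'')}{(\eta,\tilde\eta'')}\sum_{i=0}^d\beta_i C^i\eta$ from Lemma~\ref{lem:alphaSum}. Since $\eta\in V_0$ of the $(A,B)$-decomposition and $CV_r\subseteq V_{r-1}+V_r+V_{r+1}$ by row~1 of Lemma~\ref{lem:LRTaction}, one has $C^i\eta\in V_0+\cdots+V_i$, so $A^dC^i\eta=0$ for $i<d$; at $i=d$, the second formula of Lemma~\ref{lem:Eform2} with $i=d$ gives $A^dC^d=\varphi''_1\varphi''_2\cdots\varphi''_d\,E''_d$, and since $\eta\in A^dV=V''_d$ is fixed by $E''_d$, one obtains $A^dC^d\eta=\varphi''_1\varphi''_2\cdots\varphi''_d\,\eta$. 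Combining these observations with Lemma~\ref{lem:NZ} in the form $\beta_d\varphi''_1\cdots\varphi''_d=1/\alpha'_d$ yields $A^d\eta'=\frac{(\eta',\tilde\eta'')}{\alpha'_d(\eta,\tilde\eta'')}\eta$, so $\zeta=\alpha'_d(\eta,\tilde\eta'')/(\eta',\tilde\eta'')$. Substituting back and invoking Lemma~\ref{lem:NZ} once more, via $\alpha'_d\beta''_d\varphi_1\cdots\varphi_d=1$, converts $\zeta\,\varphi_1\cdots\varphi_j$ into $(\eta,\tilde\eta'')/\bigl(\beta''_d(\eta',\tilde\eta'')\,\varphi_d\cdots\varphi_{j+1}\bigr)$, producing the first equation as stated.

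The main obstacle is bookkeeping: the factorial products of the $\varphi$'s and the two identities of Lemma~\ref{lem:NZ} must be inserted at exactly the right moments for the constants $\alpha'_d$ and $\beta''_d$ to fall out correctly. There is no further conceptual hurdle, but verifying that the $j\mapsto d-j$ reduction combined with cyclic shift really covers all six formulas requires a careful case check.
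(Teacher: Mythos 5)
Your proposal is correct and follows essentially the same route as the paper: both establish the first identity by noting that $\lbrace (\varphi_1\cdots\varphi_j)^{-1}B^j\eta\rbrace_{j=0}^d$ and $\lbrace A^{d-j}\eta'\rbrace_{j=0}^d$ differ by a single nonzero scalar $\zeta$, pin down $\zeta$ at $j=0$ by expressing $A^d\eta'$ as a multiple of $\eta$, convert the constants with Lemma~\ref{lem:NZ}, and then obtain the remaining five identities by passing to the relatives of $A,B,C$. The only divergence is in how $A^d\eta'$ is evaluated: the paper compares the $j=d$ case of Lemma~\ref{lem:TransI} with the expansion on the left of (\ref{eq:one}), whereas you apply $A^d$ to the expansion of $\eta'$ from Lemma~\ref{lem:alphaSum} and use $A^dC^d=\varphi''_1\cdots\varphi''_d\,E''_d$ from Lemma~\ref{lem:Eform2} together with $A^dC^i\eta=0$ for $i<d$ --- a valid computation that reproduces the relevant case of Lemma~\ref{lem:COBSC}.
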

\begin{proof}
We verify the first equation.
In 
Lemma
\ref{lem:Allbases}, 
 compare the $(A,B)$-basis of $V$ from
the first table, with the
 $(A,B)$-basis of $V$ from
the second table. By Lemma
\ref{lem:BAbasisU} there exists $0 \not=\zeta \in \mathbb F$
such that
\begin{eqnarray}
\frac{B^j \eta } {\varphi_1 \cdots \varphi_j} = \zeta 
A^{d-j} \eta' \qquad \qquad (0 \leq j \leq d).
\label{eq:compareA}
\end{eqnarray}
We now find $\zeta$. Setting $j=0$ in
(\ref{eq:compareA}) we find
$\eta = \zeta A^d \eta'$.
Use this 
to compare
the equation in Lemma
\ref{lem:TransI}
(row 2, column 2, $j=d$),
with the equation on the left in
 (\ref{eq:one}).
In this comparison consider the summands
 for $i=0$ to obtain
\begin{eqnarray}
\label{eq:compareA5}
\zeta = \frac{1}{\beta''_d}\,
\frac{1}{\varphi_1 \cdots \varphi_d}\,
\frac{
(\eta, \tilde \eta'')
}{
(\eta', \tilde \eta'')
}.
\end{eqnarray}
Evaluating 
(\ref{eq:compareA}) using
(\ref{eq:compareA5})  we get the first displayed equation 
in the lemma statement.
Applying our results so far to the LR triples
in Lemma
\ref{lem:ABCvar},
we obtain the remaining equations in the
 lemma statement.
\end{proof}

\noindent We emphasize a special case of Lemma
\ref{lem:COB}.

\begin{lemma}
\label{lem:COBSC}
We have
\begin{eqnarray*}
&&B^d \eta = 
\frac{(\eta,\tilde \eta'')}{(\eta',\tilde \eta'')}
\,
\frac{\eta'}{\beta''_d},
\qquad \quad
C^d \eta =
\frac{(\eta,\tilde \eta')}{(\eta'',\tilde \eta')}
\,
\frac{\eta''}{\alpha_d},
\\
&&C^d \eta' = 
\frac{(\eta',\tilde \eta)}{(\eta'',\tilde \eta)}
\,
\frac{\eta''}{\beta_d },
\qquad \quad
A^d \eta' =
\frac{(\eta',\tilde \eta'')}{(\eta,\tilde \eta'')}
\,
\frac{\eta}{\alpha'_d },
\\
&&A^d \eta'' =
\frac{(\eta'',\tilde \eta')}{(\eta,\tilde \eta')}
\,
\frac{\eta}{\beta'_d },
\qquad \quad 
B^d \eta'' =
\frac{(\eta'',\tilde \eta)}{(\eta',\tilde \eta)}
\,
\frac{\eta'}{\alpha''_d}.
\end{eqnarray*}
\end{lemma}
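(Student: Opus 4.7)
The plan is immediate: Lemma \ref{lem:COBSC} is nothing more than the special case $j = d$ of Lemma \ref{lem:COB}, so I would simply specialize and interpret. Concretely, I would write down each of the six equations of Lemma \ref{lem:COB}, substitute $j = d$, and observe that the substitution simplifies the right-hand sides exactly to the six equations asserted in Lemma \ref{lem:COBSC}.

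The two simplifications to check are routine. First, for each equation of Lemma \ref{lem:COB}, the factor of the form $A^{d-j}\eta'$, $A^{d-j}\eta''$, $B^{d-j}\eta$, $B^{d-j}\eta''$, $C^{d-j}\eta$, or $C^{d-j}\eta'$ becomes $A^{0}\eta' = \eta'$ (and similarly), since $A^{0} = B^{0} = C^{0} = I$. Second, the denominators of the form $\varphi_d \cdots \varphi_{j+1}$, $\varphi''_1 \cdots \varphi''_{d-j}$, etc., all become empty products when $j = d$, which equal $1$ under the standard convention used in the excerpt (the same convention invoked, for instance, in the proof of Lemma \ref{lem:COB} itself when $j = 0$ turns $\varphi_1 \cdots \varphi_j$ into $1$, and in Lemma \ref{lem:TransI} at the boundary indices).

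Since there is no genuine obstacle, the only thing to state carefully is this empty-product convention, after which the six equations of Lemma \ref{lem:COBSC} drop out by direct substitution. Thus the entire proof reduces to the single sentence ``Set $j = d$ in Lemma \ref{lem:COB}.''
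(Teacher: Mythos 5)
Your proposal is correct and is exactly the paper's own proof, which reads ``Set $j=d$ in Lemma \ref{lem:COB}.'' The two routine simplifications you verify (the zeroth powers becoming the identity and the empty products of $\varphi$'s equalling $1$) are precisely what makes the specialization immediate.
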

\begin{proof} Set $j=d$ in
Lemma
\ref{lem:COB}.
\end{proof}


\begin{proposition}
\label{prop:sixBil}
Each of 
$\alpha_d/\beta_d$,
$\alpha'_d/\beta'_d$,
$\alpha''_d/\beta''_d$ 
is equal to
\begin{eqnarray}
\frac{
(\eta,\tilde \eta')
(\eta',\tilde \eta'')
(\eta'',\tilde \eta)
}
{
(\eta,\tilde \eta'')
(\eta',\tilde \eta)
(\eta'',\tilde \eta')
}.
\label{eq:TripleProd}
\end{eqnarray}
\end{proposition}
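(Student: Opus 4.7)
The plan is to reduce the statement to a single identity and then derive it from Lemma \ref{lem:COBSC} and Lemma \ref{lem:alphaSum}. First, note that the three ratios $\alpha_d/\beta_d$, $\alpha'_d/\beta'_d$, $\alpha''_d/\beta''_d$ automatically coincide: comparing the two expressions for $\varphi'_1\cdots\varphi'_d$ in Lemma \ref{lem:identity} gives $\alpha''_d\beta_d = \alpha_d\beta''_d$, hence $\alpha_d/\beta_d = \alpha''_d/\beta''_d$, and comparing the two expressions for $\varphi''_1\cdots\varphi''_d$ gives $\alpha_d\beta'_d = \alpha'_d\beta_d$, hence $\alpha_d/\beta_d = \alpha'_d/\beta'_d$. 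So it suffices to show that $\alpha_d/\beta_d$ equals (\ref{eq:TripleProd}).

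For that, I would read off the formulas for $C^d\eta$ and $C^d\eta'$ from the second and third rows of Lemma \ref{lem:COBSC}. Both vectors lie in the one-dimensional subspace $C^dV$, which is spanned by $\eta''$, so their ratio is a well-defined nonzero scalar, and the two formulas can be divided to yield
$$
\frac{\alpha_d}{\beta_d}  \;=\; \frac{(\eta,\tilde \eta')(\eta'',\tilde \eta)}{(\eta'',\tilde \eta')(\eta',\tilde \eta)}\cdot \frac{C^d\eta'}{C^d\eta}.
$$

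The crux of the proof is now the evaluation of $C^d\eta'/C^d\eta$. I would apply $C^d$ to both sides of the expansion
$$
\eta \;=\; \frac{(\eta,\tilde \eta'')}{(\eta',\tilde \eta'')}\sum_{i=0}^d \alpha_i\, C^i \eta'
$$
taken from Lemma \ref{lem:alphaSum}. Since $C^{d+1} = 0$, every summand with $i\geq 1$ vanishes; using $\alpha_0 = 1$ from Lemma \ref{lem:alpha0}, only the $i=0$ term survives and one is left with the clean identity $C^d\eta = \frac{(\eta,\tilde\eta'')}{(\eta',\tilde\eta'')}C^d\eta'$. Hence $C^d\eta'/C^d\eta = (\eta',\tilde\eta'')/(\eta,\tilde\eta'')$. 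Substituting this into the displayed ratio produces precisely (\ref{eq:TripleProd}), completing the argument. The only real obstacle is recognizing that Lemma \ref{lem:alphaSum} collapses to a one-term identity as soon as one applies $C^d$, thanks to the nilpotence of $C$; after that, the identity emerges from purely mechanical substitution.
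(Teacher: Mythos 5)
Your proof is correct and rests on the same ingredients as the paper's: Lemma \ref{lem:NZ} for the equality of the three ratios, and then Lemma \ref{lem:alphaSum} together with Lemma \ref{lem:COB}/\ref{lem:COBSC} for identifying the common value with (\ref{eq:TripleProd}); your variant of applying $C^d$ to the expansion of $\eta$ and letting nilpotence kill all but the $i=0$ term is a clean way to organize the same brief computation the paper performs by comparing the two expansions in (\ref{eq:one}) termwise. (One trivial slip: the formulas for $C^d\eta$ and $C^d\eta'$ sit in the first and second rows of Lemma \ref{lem:COBSC}, not the second and third.)
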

\begin{proof}
The scalars 
$\alpha_d/\beta_d$,
$\alpha'_d/\beta'_d$,
$\alpha''_d/\beta''_d$ 
are equal by
Lemma
\ref{lem:NZ}.
To see that $\alpha_d /\beta_d$ is equal to
(\ref{eq:TripleProd}), compare the two
equations in 
(\ref{eq:one}) using
Lemma
\ref{lem:COB} (row 2, column 1). 
The result follows after a brief computation.
\end{proof}

\begin{note}\rm
By Proposition
\ref{prop:sixBil} the 
scalars in (\ref{eq:IP1}), 
(\ref{eq:IP2}) 
are determined by the Toeplitz data
(\ref{eq:ToeplitzData}) 
and the sequence
\begin{eqnarray}
\label{eq:5list}
(\eta, \tilde \eta'),
\qquad 
(\eta', \tilde \eta''),
\qquad 
(\eta'', \tilde \eta),
\qquad 
(\eta, \tilde \eta''),
\qquad 
(\eta', \tilde \eta).
\end{eqnarray}
The scalars
(\ref{eq:5list}) are ``free'' in the following sense.
Given a sequence $\Psi$ of five nonzero
scalars in $\mathbb F$, there exist nonzero vectors
$\eta, \eta',\eta''$ and
$\tilde \eta, \tilde \eta', \tilde \eta''$ 
as in
(\ref{eq:eta}),
(\ref{eq:etaDual})
such that the sequence 
(\ref{eq:5list}) is equal to $\Psi$.
\end{note}

\noindent We display some transition matrices for later use.

\begin{lemma} 
\label{lem:TI}
Referring to Lemma
\ref{lem:Allbases}, the following {\rm (i)--(iii)} hold.
\begin{enumerate}
\item[\rm (i)] The transition matrix from the
inverted $(B,A)$-basis in the first table to
the inverted $(B,A)$-basis in the second table is
\begin{eqnarray*}
 \frac{(\eta', \tilde \eta'')}{(\eta, \tilde \eta'')} \,\beta''_d I.
 \end{eqnarray*}
\item[\rm (ii)] The transition matrix from the
inverted $(C,B)$-basis in the second table to
the inverted $(C,B)$-basis in the third table is
\begin{eqnarray*}
 \frac{(\eta'', \tilde \eta)}{(\eta', \tilde \eta)} \,\beta_d I.
 \end{eqnarray*}
\item[\rm (iii)] The transition matrix from the
inverted $(A,C)$-basis in the third table to
the inverted $(A,C)$-basis in the first table is
\begin{eqnarray*}
 \frac{(\eta, \tilde \eta')}{(\eta'', \tilde \eta')} \,\beta'_d I.
 \end{eqnarray*}
\end{enumerate}
\end{lemma}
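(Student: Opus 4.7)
My plan is to reduce each of the three statements to an application of Lemma~\ref{lem:BAbasisU} (suitably relabeled) followed by a single-coordinate calculation using Lemma~\ref{lem:COB} and Lemma~\ref{lem:NZ}. All three parts are analogous; I will focus on part~(i) and briefly indicate how (ii), (iii) follow by cycling.

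For part~(i), the two bases are $\lbrace u_i\rbrace_{i=0}^d$ with $u_i = B^i\eta$ (the inverted $(B,A)$-basis from the first table of Lemma~\ref{lem:Allbases}) and $\lbrace v_i\rbrace_{i=0}^d$ with $v_i = (\varphi_d\cdots\varphi_{i+1})^{-1}A^{d-i}\eta'$ (the inverted $(B,A)$-basis from the second table). Since both are inverted $(B,A)$-bases, applying Lemma~\ref{lem:BAbasisU} to the LR pair $B,A$ (and passing to inverted bases) shows that there exists $0\neq\zeta\in\mathbb F$ with $v_i=\zeta u_i$ for $0\leq i\leq d$; in particular the transition matrix has the form $\zeta I$. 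The task therefore reduces to evaluating $\zeta$.

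To compute $\zeta$, I would use the fourth equation of Lemma~\ref{lem:COB}, namely
\begin{eqnarray*}
A^j\eta' = \frac{1}{\alpha'_d}\,\frac{(\eta',\tilde\eta'')}{(\eta,\tilde\eta'')}\,\frac{B^{d-j}\eta}{\varphi_1\cdots\varphi_{d-j}},
\end{eqnarray*}
with $j=d-i$. Then
\begin{eqnarray*}
v_i = (\varphi_d\cdots\varphi_{i+1})^{-1}A^{d-i}\eta' = \frac{1}{\alpha'_d\,\varphi_1\varphi_2\cdots\varphi_d}\,\frac{(\eta',\tilde\eta'')}{(\eta,\tilde\eta'')}\,B^i\eta.
\end{eqnarray*}
Substituting the identity $\varphi_1\varphi_2\cdots\varphi_d = 1/(\alpha'_d\beta''_d)$ from Lemma~\ref{lem:NZ} simplifies the prefactor to $\beta''_d(\eta',\tilde\eta'')/(\eta,\tilde\eta'')$, which is exactly the claimed value.

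For parts~(ii) and (iii) I would apply the identical argument to the p-relatives $B,C,A$ and $C,A,B$ respectively (see Lemma~\ref{lem:ABCvar}). Under the convention of Definition~\ref{def:primeConv}, the roles of $(\eta,\eta',\eta'',\tilde\eta,\tilde\eta',\tilde\eta'',\alpha'_d,\beta''_d,\varphi_i)$ cycle, and one invokes the appropriate row of Lemma~\ref{lem:COB} together with the corresponding factorization from Lemma~\ref{lem:NZ}. The only real work is the bookkeeping in part~(i); there is no serious obstacle, because Lemma~\ref{lem:BAbasisU} already trivializes the structural part and Lemma~\ref{lem:COB} furnishes the exact change-of-basis formula needed to pin down the scalar.
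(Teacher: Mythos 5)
Your proposal is correct and follows the same route as the paper, whose entire proof is the instruction ``Use Lemma \ref{lem:COB}''; you have simply supplied the details. The single-coordinate computation via Lemma \ref{lem:COB} (row 2, column 2, with $j=d-i$) together with $\varphi_1\cdots\varphi_d = 1/(\alpha'_d\beta''_d)$ from Lemma \ref{lem:NZ} checks out, and the cycling to the p-relatives handles (ii) and (iii) exactly as intended.
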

\begin{proof}
Use Lemma \ref{lem:COB}.
\end{proof}

\noindent The following definition is motivated by 
Definition \ref{def:Dmat}.

\begin{definition}
\label{def:DDD}
\rm Let $D$ (resp. $D'$) (resp. $D''$)
denote the diagonal matrix in 
${\rm Mat}_{d+1}(\F)$ with
$(i,i)$-entry
$\varphi_1 \varphi_2 \cdots \varphi_i$
(resp.
$\varphi'_1 \varphi'_2 \cdots \varphi'_i$)
(resp.
$\varphi''_1 \varphi''_2 \cdots \varphi''_i$)
for $0 \leq i \leq d$.
\end{definition}

\noindent 
The following result is reminiscent of Lemma
\ref{lem:Dmeaning}.

\begin{lemma}
\label{lem:DDD}
Referring to Lemma 
\ref{lem:Allbases},
\begin{enumerate}
\item[\rm (i)] $D$ is the transition matrix from the $(A,B)$-basis
in the first table to the inverted $(B,A)$-basis
in the first table;
\item[\rm (ii)] $D'$ is the transition matrix from the $(B,C)$-basis
in the second table to the inverted $(C,B)$-basis
in the second table;
\item[\rm (iii)] $D''$ is the transition matrix from the $(C,A)$-basis
in the third table to the inverted $(A,C)$-basis
in the third table.
\end{enumerate}
\end{lemma}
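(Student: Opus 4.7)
The plan is to prove all three parts by direct inspection of the explicit basis formulas in Lemma \ref{lem:Allbases}, together with Definition \ref{def:DDD}. Concretely, for part (i) I will read off from the first table of Lemma \ref{lem:Allbases} the $(A,B)$-basis $\lbrace v_i\rbrace_{i=0}^d$ with $v_i = (\varphi_1\cdots\varphi_i)^{-1}B^i\eta$ and the inverted $(B,A)$-basis $\lbrace w_i\rbrace_{i=0}^d$ with $w_i=B^i\eta$. The relation $w_j=\varphi_1\varphi_2\cdots\varphi_j\, v_j$ for $0\le j\le d$ is immediate from the two formulas, and by the definition of a transition matrix (the matrix $S$ with $w_j=\sum_i S_{ij}v_i$) this forces $S$ to be the diagonal matrix with $(j,j)$-entry $\varphi_1\varphi_2\cdots\varphi_j$. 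This is exactly $D$ in Definition \ref{def:DDD}.

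For parts (ii) and (iii), the cleanest route is to invoke the notational convention of Definition \ref{def:primeConv}: applying part (i) to the relative LR triples $B,C,A$ and $C,A,B$ yields the corresponding statements for $D'$ and $D''$, since by Lemma \ref{lem:ABCvar} these relatives have parameter sequences $\lbrace \varphi'_i\rbrace_{i=1}^d$ and $\lbrace\varphi''_i\rbrace_{i=1}^d$ respectively, and the bases listed in the second and third tables of Lemma \ref{lem:Allbases} are precisely the ``primed'' and ``double-primed'' versions of the bases in the first table. Alternatively, one can repeat the direct computation: the $(B,C)$-basis $(\varphi'_1\cdots\varphi'_i)^{-1}C^i\eta'$ and the inverted $(C,B)$-basis $C^i\eta'$ differ by the scalar factor $\varphi'_1\cdots\varphi'_i$ in coordinate $i$, giving $D'$; and analogously for $D''$.

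There is really no obstacle here beyond careful bookkeeping: one must track which row of which table of Lemma \ref{lem:Allbases} supplies each basis, and verify that the pair of bases chosen in each part of Lemma \ref{lem:DDD} are the ``$(A,B)$-basis'' and ``inverted $(B,A)$-basis'' rows (respectively their primed analogues), which are the rows whose formulas differ only by the scalar $\varphi_1\cdots\varphi_i$. Once that is noted, the statement is essentially a one-line comparison in each case.
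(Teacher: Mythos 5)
Your proof is correct and matches the paper's approach, which simply cites Lemma \ref{lem:Allbases} and Definition \ref{def:DDD}; you have just spelled out the one-line comparison of the relevant rows of each table. No issues.
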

\begin{proof} By
Lemma
\ref{lem:Allbases} and
Definition
\ref{def:DDD}.
\end{proof}

\begin{definition}
\label{def:theta}
\rm
Let $\theta$ denote the scalar
(\ref{eq:TripleProd}).
By Proposition
\ref{prop:sixBil},
\begin{equation}
\label{eq:thetaEx}
\theta=
\frac{\alpha_d}{\beta_d}
=
\frac{\alpha'_d}{\beta'_d}
=
\frac{\alpha''_d}{\beta''_d}.
\end{equation}
Note that 
$\theta \not=0$.
\end{definition}

\begin{proposition}
\label{prop:12cycle}
We have
\begin{equation}
\label{eq:9cycle}
T' D {\bf Z} T''D'{\bf Z} T D'' {\bf Z} =\frac{I}{\theta \beta_d \beta'_d \beta''_d},
\end{equation}
where $\theta$ is from Definition
\ref{def:theta}.
\end{proposition}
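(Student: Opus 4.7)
The plan is to interpret the product on the left side of (\ref{eq:9cycle}) as the transition matrix of a 9-step cycle through a sequence of bases of $V$, each of one of the 12 types listed in (\ref{eq:typeAB})--(\ref{eq:typeCA}). All twelve specific bases exhibited in Lemma \ref{lem:Allbases} will be used. Concretely, starting from the $(A,C)$-basis $b_{AC}^{(1)}=\{(\varphi''_d\cdots\varphi''_{d-i+1})^{-1}C^i\eta\}_{i=0}^d$ of the first table, I propose to show that successive right-multiplication by the factors $T',D,\mathbf Z,T'',D',\mathbf Z,T,D'',\mathbf Z$ produces in order: the $(A,B)$-basis of the first table, the inverted $(B,A)$-basis of the first table, the $(B,A)$-basis of the first table, a scalar multiple of the $(B,C)$-basis of the second table, a scalar multiple of the inverted $(C,B)$-basis of the second table, a scalar multiple of the $(C,B)$-basis of the second table, a scalar multiple of the $(C,A)$-basis of the third table, a scalar multiple of the inverted $(A,C)$-basis of the third table, and finally a scalar multiple of the $(A,C)$-basis of the third table.

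The first three steps are routine: applying $T'$ uses Lemma \ref{lem:moreTTT}(ii); applying $D$ uses Lemma \ref{lem:DDD}(i); and applying $\mathbf Z$ just reverses the basis. The subtlety begins at step 4. Here the matrix $T''$ was defined via a specific pair of compatible $(B,A)$- and $(B,C)$-bases in the second table, but it is being applied to the $(B,A)$-basis $B^{d-i}\eta$ of the first table. Since $T''$ is upper triangular Toeplitz with diagonal entries $1$, the property of compatibility is preserved under simultaneous rescaling of both members of the pair (by Lemma \ref{lem:a0a1}(i) and Definition \ref{def:compat}). Therefore the $(B,A)$-basis $B^{d-i}\eta$ is carried by $T''$ to the scalar multiple $\lambda_1\cdot b_{BC}^{(2)}$ of the corresponding $(B,C)$-basis, where $\lambda_1$ is determined by comparing the $0$-components $B^d\eta$ and $\eta'$. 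By the first equation of Lemma \ref{lem:COBSC} we find $\lambda_1=\frac{(\eta,\tilde\eta'')}{(\eta',\tilde\eta'')\beta''_d}$. The same reasoning applies at step 7 (using $T$ with input in the second table rather than the third) and again at step 10 (comparing the final basis back to the starting one); these steps produce scalars $\lambda_2=\frac{(\eta',\tilde\eta)}{(\eta'',\tilde\eta)\beta_d}$ and $\lambda_3=\frac{(\eta'',\tilde\eta')}{(\eta,\tilde\eta')\beta'_d}$ respectively, via the rows $C^d\eta'$ and $A^d\eta''$ of Lemma \ref{lem:COBSC}.

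Because the intermediate diagonal steps $D,D',D''$ and the reversal steps $\mathbf Z$ respect overall scalar multiplication of the basis, the net effect of the whole composition is to send $b_{AC}^{(1)}$ to $\lambda_1\lambda_2\lambda_3\cdot b_{AC}^{(1)}$; equivalently, the full transition matrix is the scalar $\lambda_1\lambda_2\lambda_3$ times $I$. Multiplying out,
\begin{equation*}
\lambda_1\lambda_2\lambda_3=\frac{(\eta,\tilde\eta'')(\eta',\tilde\eta)(\eta'',\tilde\eta')}{(\eta,\tilde\eta')(\eta',\tilde\eta'')(\eta'',\tilde\eta)\,\beta_d\beta'_d\beta''_d},
\end{equation*}
and the ratio of bilinear forms here is exactly $\theta^{-1}$ by Definition \ref{def:theta} (equivalently Proposition \ref{prop:sixBil}). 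This yields the asserted identity.

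The main obstacle will be ensuring that the ``transport'' of $T''$ and $T$ across tables is legitimate. The key point to emphasize is that $T''$ (respectively $T$) is characterized as the transition matrix between \emph{any} compatible $(B,A)$- and $(B,C)$-pair (respectively $(C,B)$- and $(C,A)$-pair), because compatibility is a Toeplitz-normalization condition invariant under common rescaling by Lemma \ref{lem:a0a1}. Once this is made precise, the scaling factors $\lambda_1,\lambda_2,\lambda_3$ come straight from three of the six identities in Lemma \ref{lem:COBSC}, and collecting them gives the scalar $1/(\theta\beta_d\beta'_d\beta''_d)$ via Definition \ref{def:theta}.
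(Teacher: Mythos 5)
Your proposal is correct and is essentially the paper's own argument: the paper cycles through the same twelve bases of Lemma \ref{lem:Allbases}, identifies the consecutive transition matrices as $T',D,\mathbf{Z},T'',D',\mathbf{Z},T,D'',\mathbf{Z}$ interleaved with three scalar matrices (Lemma \ref{lem:TI}, which is just Lemma \ref{lem:COBSC} repackaged), and evaluates the closed cycle as $I$ before invoking Proposition \ref{prop:sixBil}. Your ``transport across tables'' bookkeeping with the scalars $\lambda_1,\lambda_2,\lambda_3$ is the same computation in slightly different clothing, and the final scalar $\lambda_1\lambda_2\lambda_3=(\theta\beta_d\beta'_d\beta''_d)^{-1}$ matches.
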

\begin{proof}
Consider the following 12 bases from
Lemma
\ref{lem:Allbases}.
In each row of the table below, for
$0 \leq i \leq d$ we display a vector $v_i \in V$. The
vectors $\lbrace v_i \rbrace_{i=0}^d$ form a basis for
$V$; we give the type and the induced decomposition of $V$.

     \bigskip

\centerline{
\begin{tabular}[t]{c|c|c}
 $v_i$ & {\rm type of basis} & {\rm induced dec. of $V$} 
 \\
 \hline
 \hline
$(\varphi_1 \cdots \varphi_i)^{-1}B^i \eta$
&
$(A,B)$
&
$(A,B)$
\\
$B^i \eta$
&
{\rm inverted $(B,A)$}
&
$(A,B)$
\\
$(\varphi_d \cdots \varphi_{i+1})^{-1}A^{d-i} \eta'$
&
{\rm inverted $(B,A)$}
&
$(A,B)$
\\
$(\varphi_d \cdots \varphi_{d-i+1})^{-1}A^{i} \eta'$
&
$(B,A)$
&
$(B,A)$
\\
\hline
$(\varphi'_1 \cdots \varphi'_{i})^{-1}C^{i} \eta'$
&
$(B,C)$
&
$(B,C)$
\\
$C^{i} \eta'$
&
{\rm inverted $(C,B)$}
&
$(B,C)$
\\
$(\varphi'_d \cdots \varphi'_{i+1})^{-1}B^{d-i} \eta''$
&
{\rm inverted $(C,B)$}
&
$(B,C)$
\\
$(\varphi'_d \cdots \varphi'_{d-i+1})^{-1}B^{i} \eta''$
&
$(C,B)$
&
$(C,B)$
\\
\hline
$(\varphi''_1 \cdots \varphi''_{i})^{-1}A^{i} \eta''$
&
$(C,A)$
&
$(C,A)$
\\
$A^{i} \eta''$
&
{\rm inverted $(A,C)$}
&
$(C,A)$
\\
$(\varphi''_d \cdots \varphi''_{i+1})^{-1}C^{d-i} \eta$
&
{\rm inverted $(A,C)$}
&
$(C,A)$
\\
$(\varphi''_d \cdots \varphi''_{d-i+1})^{-1}C^{i} \eta$
&
$(A,C)$
&
$(A,C)$
   \\
     \end{tabular}}
     \bigskip

\noindent  We cycle through the bases in the above table,
starting with the basis in the bottom row, jumping
to the basis in the top row, and then going down through the
rows until we return to the basis in the bottom row.
For each basis in the sequence, consider the transition matrix
to the next basis in the sequence.
This gives a sequence of transition matrices.
Compute the product of these transition matrices in the given order.
This product is evaluated  in two ways.
On one hand, the product is equal to the identity matrix.
On the other hand, each factor in the product is computed 
using
Lemmas
\ref{lem:moreTTT},
\ref{lem:TI},
\ref{lem:DDD}
and the definition of $\bf Z$ in Section 2.
Evaluate the resulting equation using
 Proposition
\ref{prop:sixBil}.
The result follows.
\end{proof}

\noindent In Section 34 
we use the equation
(\ref{eq:9cycle}) to characterize
 the LR triples.
\medskip

\noindent We mention some other results involving
the scalar $\theta$ from Definition
\ref{def:theta}.

\begin{lemma}
\label{lem:ABCtrace}
We have
\begin{eqnarray}
{\rm tr}(A^dB^dC^d) =
\frac{\theta}{\alpha_d \alpha'_d \alpha''_d},
\qquad \qquad
{\rm tr}(C^d B^d A^d) =
\frac{1}{\theta \beta_d \beta'_d \beta''_d}.
\label{eq:abctrace}
\end{eqnarray}
\end{lemma}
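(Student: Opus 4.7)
The plan is to exploit that each of $A^d$, $B^d$, $C^d$ has rank one, and reduce each trace to an evaluation that can be unwound by three successive applications of Lemma \ref{lem:COBSC}. Since $C$ is Nil (Lemma \ref{lem:ABdecIndNil}) and $C^dV$ is the one-dimensional subspace spanned by $\eta''$, there is a unique $\gamma \in V^*$ with $C^dv = \gamma(v)\eta''$ for every $v \in V$. Consequently $A^dB^dC^d$ is the rank-one operator $v \mapsto \gamma(v)\,A^dB^d\eta''$, so its trace is $\gamma(A^dB^d\eta'')$. This is the central reduction.

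Next I would push $\eta''$ successively through $B^d$ and $A^d$. Lemma \ref{lem:COBSC} applied to $B^d\eta''$ shows this vector is a scalar multiple of $\eta'$ with factor involving $(\eta'',\tilde\eta)/(\eta',\tilde\eta)$ and $1/\alpha''_d$; a second application to $A^d\eta'$ converts this to a scalar multiple of $\eta$, introducing a factor of $(\eta',\tilde\eta'')/(\eta,\tilde\eta'')$ and $1/\alpha'_d$. To evaluate $\gamma$ on $\eta$ itself, I would use the identity $C^d\eta = \gamma(\eta)\eta''$ together with a third instance of Lemma \ref{lem:COBSC}, which gives $\gamma(\eta) = (\eta,\tilde\eta')/((\eta'',\tilde\eta')\alpha_d)$. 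Multiplying the scalar from the two-step evaluation of $A^dB^d\eta''$ against $\gamma(\eta)$ produces the ratio
\[
\frac{(\eta,\tilde\eta')(\eta',\tilde\eta'')(\eta'',\tilde\eta)}{(\eta,\tilde\eta'')(\eta',\tilde\eta)(\eta'',\tilde\eta')}\cdot\frac{1}{\alpha_d\alpha'_d\alpha''_d},
\]
and the bilinear-form ratio is precisely $\theta$ by Definition \ref{def:theta} (cf.\ Proposition \ref{prop:sixBil}). This establishes the first equation of (\ref{eq:abctrace}).

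The second equation would be obtained by the mirror procedure. Write $A^dv = \delta(v)\eta$ for the unique $\delta \in V^*$ arising from the rank-one map $A^d$; then ${\rm tr}(C^dB^dA^d) = \delta(C^dB^d\eta)$. Now Lemma \ref{lem:COBSC} is used through its three equations containing $\beta_d,\beta'_d,\beta''_d$ instead of the $\alpha$'s: first for $B^d\eta$, then for $C^d\eta'$ to express $C^dB^d\eta$ as a scalar multiple of $\eta''$, and finally for $A^d\eta''$ to compute $\delta(\eta'')$. The six bilinear-form values appear in exactly the inverted arrangement relative to the first computation, so what multiplies $1/(\beta_d\beta'_d\beta''_d)$ is $\theta^{-1}$, as required.

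No step is genuinely difficult; the only thing to watch is the bookkeeping of the six scalars in (\ref{eq:IP1})--(\ref{eq:IP2}) so that their product collapses to $\theta$ in the first case and to $\theta^{-1}$ in the second, which is guaranteed by Definition \ref{def:theta}. The trivial case $d=0$ is handled separately and is immediate, since then $\alpha_0 = \alpha'_0 = \alpha''_0 = \beta_0 = \beta'_0 = \beta''_0 = 1$, $\theta = 1$, and each trace reduces to ${\rm tr}(I) = 1$.
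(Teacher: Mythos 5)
Your proof is correct and follows essentially the same route as the paper: both arguments reduce each trace to the single "eigenvalue" of the rank-one operator on the line spanned by $\eta$ (resp. $\eta''$), and then evaluate it by three successive applications of Lemma \ref{lem:COBSC} together with Definition \ref{def:theta}. The paper phrases the reduction by writing the matrix of $A^dB^dC^d$ in an $(A,C)$-basis with $u_0=\eta$ and noting that only the $(0,0)$-entry survives, while you use the functional form ${\rm tr}(v\mapsto\gamma(v)w)=\gamma(w)$; these are the same observation in different clothing.
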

\begin{proof}
We verify the equation on the left in
(\ref{eq:abctrace}). Let $\lbrace u_i\rbrace_{i=0}^d$
denote an $(A,C)$-basis of $V$, such that
 $u_0 = \eta$. Let $S$
denote the matrix
in
${\rm Mat}_{d+1}(\mathbb F)$ that represents
$A^dB^dC^d$ with respect to 
 $\lbrace u_i\rbrace_{i=0}^d$.
The map $C$ raises the
$(A,C)$-decomposition of $V$. Therefore
$C^d u_i = 0$ for $1 \leq i \leq d$.
By Lemma
\ref{lem:COBSC} and Definition
\ref{def:theta},
\begin{eqnarray*}
A^d B^d C^d \eta = \frac{\theta \eta}{\alpha_d \alpha'_d \alpha''_d}.
\end{eqnarray*} 
By these comments
$S$ has $(0,0)$-entry
$\theta/(\alpha_d \alpha'_d \alpha''_d)$ and all other
entries zero. We have verified the
 equation on the left in
(\ref{eq:abctrace}). The other equation is similarily
verified.
\end{proof} 

\begin{lemma}
\label{lem:thetaTrace}
 For $0 \leq i \leq d$, the trace of
$E'_{d-i}E_i E''_{d-i} E'_i E_{d-i} E''_i$ is
\begin{eqnarray*}
\theta \,\frac{\varphi_d \cdots \varphi_{d-i+1}}{\varphi_1 \cdots \varphi_i}
\, \frac{\varphi'_d \cdots \varphi'_{d-i+1}}{\varphi'_1 \cdots \varphi'_i}
\, \frac{\varphi''_d \cdots \varphi''_{d-i+1}}{\varphi''_1 \cdots \varphi''_i},
\end{eqnarray*}
and the trace of
$E_{d-i}E'_i E''_{d-i} E_i E'_{d-i} E''_i$ is
\begin{eqnarray*}
\frac{1}{\theta}
\, \frac{\varphi_1 \cdots \varphi_{i}}{\varphi_d \cdots \varphi_{d-i+1}}
\, \frac{\varphi'_1 \cdots \varphi'_{i}}{\varphi'_d \cdots \varphi'_{d-i+1}}
\, \frac{\varphi''_1 \cdots \varphi''_{i}}{\varphi''_d \cdots 
\varphi''_{d-i+1}}.
\end{eqnarray*}
\end{lemma}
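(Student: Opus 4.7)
The approach has two halves: first reduce the trace of $X:=E'_{d-i}E_iE''_{d-i}E'_iE_{d-i}E''_i$ to a single scalar $\kappa$ with $X = \kappa\,E'_{d-i}E''_i$, and then compute $\kappa$ by tracking the action of $X$ on the vector $A^i\eta''\in E''_i V$. For the reduction I observe that $XV\subseteq E'_{d-i}V$ (leftmost factor) and $X$ annihilates $\ker E''_i$ (rightmost factor). By Lemma \ref{lem:EiEjp}, every adjacent pair of idempotents in the cycle has indices summing to $d$, so no adjacent product vanishes, and $X$ has rank $1$. Since $E'_{d-i}E''_i$ is also rank $1$ with the same image and kernel and has trace $1$ by Lemma \ref{lem:doubleTrace}, we obtain $X=\kappa\,E'_{d-i}E''_i$ and $\mathrm{tr}(X)=\kappa$.

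To compute $\kappa$, set $\mu=(\eta'',\tilde\eta')/(\eta,\tilde\eta')$, $\nu=(\eta,\tilde\eta'')/(\eta',\tilde\eta'')$, $\rho=(\eta',\tilde\eta)/(\eta'',\tilde\eta)$, and invoke Lemma \ref{lem:alphaSum} to write $\eta''=\mu\sum_k\alpha''_k B^k\eta$, $\eta=\nu\sum_k\alpha_k C^k\eta'$, $\eta'=\rho\sum_k\alpha'_k A^k\eta''$. Combining these expansions with Lemma \ref{lem:AiBi} applied to each of the three LR pairs (yielding $A^i B^k\eta=\varphi_{k-i+1}\cdots\varphi_k B^{k-i}\eta$ for $k\ge i$ and zero otherwise, and the analogues for $B^iC^k\eta'$ and $C^iA^k\eta''$), step through the six projectors in $X$ one at a time. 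At each step the projector $E_{d-i}$, $E'_i$, $E''_{d-i}$, $E_i$, $E'_{d-i}$ selects exactly the top-grade summand, contributing a factor of $\alpha_d$ (resp.\ $\alpha'_d$, $\alpha''_d$) together with a specific block of $\varphi$'s. After six iterations $XA^i\eta''$ is a scalar multiple of $C^{d-i}\eta'$; re-expanding $C^{d-i}\eta'$ in the basis $\{A^l\eta''\}$ via the same Lemma \ref{lem:alphaSum} yields $\kappa$ as the coefficient of $A^i\eta''$, which equals $\rho\,\alpha'_d\,\varphi''_{i+1}\cdots\varphi''_d$ times the running product.

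Gathering prefactors, the total scalar is $(\mu\nu\rho)^2(\alpha_d\alpha'_d\alpha''_d)^2\,\varphi^*\varphi'^*\varphi''^*$ times the target ratio $\prod(\varphi_{d-i+1}\cdots\varphi_d)/(\varphi_1\cdots\varphi_i)$ and its two primed analogues, where $\varphi^*=\varphi_1\cdots\varphi_d$, etc. By Proposition \ref{prop:sixBil} we have $\mu\nu\rho=\theta^{-1}$; by Lemma \ref{lem:NZ} and Definition \ref{def:theta}, $\varphi^*\varphi'^*\varphi''^*=(\alpha_d\alpha'_d\alpha''_d\beta_d\beta'_d\beta''_d)^{-1}$, whence $(\alpha_d\alpha'_d\alpha''_d)^2\varphi^*\varphi'^*\varphi''^*=(\alpha_d\alpha'_d\alpha''_d)/(\beta_d\beta'_d\beta''_d)=\theta^3$. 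Multiplying, $\theta^{-2}\cdot\theta^3=\theta$, giving the first formula. For the second formula, apply the first to the n-relative $A,C,B$: by Lemma \ref{lem:ABCvar} its parameter array is $(\{\varphi''_{d-i+1}\};\{\varphi'_{d-i+1}\};\{\varphi_{d-i+1}\})$, and by Lemma \ref{lem:ToeplitzData} combined with Definition \ref{def:theta}, its $\theta$ equals $\beta''_d/\alpha''_d=\theta^{-1}$. Using Lemma \ref{lem:ABCEvar} to re-express the six idempotents of $A,C,B$ in terms of those of $A,B,C$ and cycling the resulting product once, one obtains $E_{d-i}E'_iE''_{d-i}E_iE'_{d-i}E''_i$, and the $\varphi$-ratios invert to give exactly the claimed expression. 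The main technical obstacle is the careful bookkeeping across the sixfold iteration: ensuring that at each projection the correct one of $\alpha_d,\alpha'_d,\alpha''_d$ appears, the correct block of $\varphi$-indices is contributed, and that the three ratios $\mu,\nu,\rho$ are paired off precisely to invoke Proposition \ref{prop:sixBil}.
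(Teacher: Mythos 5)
Your proof is correct; I checked the sixfold bookkeeping and it does produce exactly the prefactor $(\mu\nu\rho)^2(\alpha_d\alpha'_d\alpha''_d)^2\varphi^*\varphi'^*\varphi''^*=\theta$ times the stated ratio. But your route differs from the paper's. The paper substitutes the closed forms $E_i=A^{d-i}B^dA^i/(\varphi_1\cdots\varphi_d)$, $E'_i=B^{d-i}C^dB^i/(\varphi'_1\cdots\varphi'_d)$, etc.\ from Lemma \ref{lem:Eform2} into the sixfold product, collapses adjacent powers via the parameter array, and lands on ${\rm tr}(A^dB^dC^d)=\theta/(\alpha_d\alpha'_d\alpha''_d)$ from Lemma \ref{lem:ABCtrace}; the second trace is handled by the same direct computation. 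You avoid Lemmas \ref{lem:Eform2} and \ref{lem:ABCtrace} entirely: you reduce the trace to a single scalar via the image/kernel containments and ${\rm tr}(E'_{d-i}E''_i)=1$, chase the vector $A^i\eta''$ through the six projections using Lemma \ref{lem:alphaSum} and Lemma \ref{lem:AiBi}, and assemble $\theta$ from Proposition \ref{prop:sixBil} together with Lemma \ref{lem:NZ} and Definition \ref{def:theta}; the second formula then comes for free from the n-relative $A,C,B$ and cyclic invariance of trace, rather than a second computation. Your version has more moving parts but makes the source of $\theta$ (the triple product of bilinear-form ratios) transparent, and the n-relative trick for the second half is genuinely slicker than repeating the calculation.

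One small point: your justification of the rank-one claim (``no adjacent product vanishes, hence $X$ has rank $1$'') is not quite a proof, since a product of nonzero rank-one maps can vanish; one should instead chain through Lemma \ref{lem:tripleProd} to see each intermediate image is the full one-dimensional component. But this gap is harmless for your argument: the proportionality $X=\kappa\,E'_{d-i}E''_i$ already follows from the facts that the image of $X$ lies in $E'_{d-i}V$ and the kernel of $X$ contains $\ker E''_i$, with $\kappa=0$ allowed, and your explicit computation of $\kappa$ then shows it is nonzero anyway.
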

\begin{proof}
We verify the first assertion.
In the product
$E'_{d-i}E_i E''_{d-i} E'_i E_{d-i} E''_i$,  evaluate each
factor using
Lemma \ref{lem:Eform2}, and
simplify
the result using
Lemma
\ref{lem:ABCtrace} along with the 
meaning of the parameter array.
The first assertion follows  after a brief computation.
The second assertion is similarly verified.
\end{proof}

\begin{corollary}
\label{cor:thetaint}
The trace of $E'_d E_0 E''_d E'_0 E_d E''_0$ is $\theta$.
The trace of $E_d E'_0 E''_d E_0 E'_d E''_0$ is $\theta^{-1}$.
\end{corollary}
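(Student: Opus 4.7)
The plan is to observe that Corollary \ref{cor:thetaint} is the special case $i=0$ of Lemma \ref{lem:thetaTrace}. Specifically, I would set $i=0$ in the two formulas from that lemma, noting that each of the products $\varphi_1\cdots\varphi_i$, $\varphi'_1\cdots\varphi'_i$, $\varphi''_1\cdots\varphi''_i$ and their reversed counterparts $\varphi_d\cdots\varphi_{d-i+1}$, etc., becomes an empty product and hence equals $1$. Then the first formula collapses to $\theta\cdot 1\cdot 1\cdot 1=\theta$ and the second to $\theta^{-1}\cdot 1\cdot 1\cdot 1=\theta^{-1}$.

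Since Lemma \ref{lem:thetaTrace} is stated and available, essentially no new work is required. The only thing to verify is that when $i=0$ the indices on the idempotents in the products $E'_{d-i}E_i E''_{d-i} E'_i E_{d-i} E''_i$ and $E_{d-i}E'_i E''_{d-i} E_i E'_{d-i} E''_i$ match the products in the statement of the corollary, namely $E'_dE_0E''_dE'_0E_dE''_0$ and $E_dE'_0E''_dE_0E'_dE''_0$, respectively. This is a direct substitution.

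The main (and only) obstacle is the bookkeeping convention for empty products over $\mathbb{F}$; since empty products are $1$ by the standard convention used throughout the paper, no further argument is needed. The proof therefore amounts to a single line: \emph{Set $i=0$ in Lemma \ref{lem:thetaTrace}.}
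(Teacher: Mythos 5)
Your proposal is correct and is exactly the paper's own proof, which reads verbatim ``Set $i=0$ in Lemma \ref{lem:thetaTrace}.'' The observations about empty products equaling $1$ and the index substitution are the only checks needed, and you have handled them correctly.
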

\begin{proof} Set $i=0$ in Lemma
\ref{lem:thetaTrace}.
\end{proof}

\section{
How the parameter array, trace data, and 
Toeplitz data are related, I}

\noindent  Throughout this section
and the next,
let $V$ denote a vector space over $\mathbb F$ with
dimension $d+1$.
Fix an LR triple $A,B,C$ on
$V$. We consider how its
 parameter array
(\ref{eq:paLRT}),
trace data
(\ref{eq:tracedata}), and Toeplitz data
(\ref{eq:ToeplitzData}) are related.

\medskip
\noindent Recall
Definition
\ref{def:natural}.
Let $\lbrace u_i \rbrace_{i=0}^d $ denote a basis for
$V$ of type $\natural =(A,C)$.
 Let
$C^\natural \in
{\rm Mat}_{d+1}(\mathbb F)$ represent 
$C$ with respect to
$\lbrace u_i \rbrace_{i=0}^d $.
The entries of $C^\natural $ are given in Proposition
\ref{prop:matrixRep},
row $(A,C)$ of the table.
Let $\lbrace v_i \rbrace_{i=0}^d $ denote a compatible 
basis for $V$ of 
type $\sharp = (A,B)$.
Let
$C^\sharp \in
{\rm Mat}_{d+1}(\mathbb F)$ represent 
$C$ with respect to
$\lbrace v_i \rbrace_{i=0}^d $. 
The entries of $C^\sharp$ are given in
Proposition
\ref{prop:matrixRep},
row $(A,B)$ of the table.
By Definition
\ref{def:TTT}(ii),
$T'$ is the transition matrix from
$\lbrace u_i \rbrace_{i=0}^d$ to
$\lbrace v_i \rbrace_{i=0}^d$. By linear algebra,
\begin{equation}
\label{eq:NT=TS}
C^\natural T' = T' C^\sharp.
\end{equation}
Consequently
\begin{equation}
\label{eq:TiNT=S}
(T')^{-1}C^\natural T' = C^\sharp.
\end{equation}

\begin{proposition}
\label{lem:ai}
For $0 \leq i \leq d$,
\begin{eqnarray}
&&
a_{d-i}=\alpha'_0 \beta'_1 \varphi''_i +
\alpha'_1 \beta'_0 \varphi''_{i+1}
=
\alpha''_0 \beta''_1 \varphi'_i +
\alpha''_1 \beta''_0 \varphi'_{i+1},
\label{eq:aip}
\\
&&
a'_{d-i}= \alpha''_0 \beta''_1 \varphi_i +
\alpha''_1 \beta''_0 \varphi_{i+1}
=
\alpha_0 \beta_1 \varphi''_i +
\alpha_1 \beta_0 \varphi''_{i+1},
\label{eq:aipp}
\\
&&
a''_{d-i}= \alpha_0 \beta_1 \varphi'_i +
\alpha_1 \beta_0 \varphi'_{i+1}
=
\alpha'_0 \beta'_1 \varphi_i +
\alpha'_1 \beta'_0 \varphi_{i+1}.
\label{eq:ai}
\end{eqnarray}
\end{proposition}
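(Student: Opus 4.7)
The plan is to exploit the matrix identity $(T')^{-1}C^{\natural}T' = C^{\sharp}$ established at the start of the section (see (\ref{eq:TiNT=S})), together with the fact that $T', (T')^{-1}$ are upper triangular and Toeplitz with parameters $\{\alpha'_i\}_{i=0}^d$, $\{\beta'_i\}_{i=0}^d$. The matrix $C^{\natural}$, being the matrix of $C$ with respect to an $(A,C)$-basis, is strictly lower bidiagonal by Proposition \ref{prop:matrixRep} (row $(A,C)$): its only nonzero entries are $C^{\natural}_{p,p-1}=\varphi''_{d-p+1}$. The matrix $C^{\sharp}$ is tridiagonal with diagonal entries $a_r$ by Proposition \ref{prop:matrixRep} (row $(A,B)$). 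So comparing $(r,r)$-entries of the two sides of (\ref{eq:TiNT=S}) should yield the first equality of (\ref{eq:aip}) after a short calculation.

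Concretely, I would expand
\begin{eqnarray*}
\bigl((T')^{-1}C^{\natural}T'\bigr)_{r,s}
=\sum_{k\geq 0}\beta'_k\,\varphi''_{d-r-k+1}\,\alpha'_{s-r-k+1},
\end{eqnarray*}
where the sum is restricted to $k$ with $s-r-k+1\geq 0$. Taking $s=r$ leaves only $k=0$ and $k=1$, producing
$a_r = \alpha'_1\beta'_0\varphi''_{d-r+1}+\alpha'_0\beta'_1\varphi''_{d-r}$, which becomes the first equality of (\ref{eq:aip}) after replacing $r$ by $d-i$. (As a consistency check, the $(r,r-1)$-entries match trivially since $\alpha'_0=\beta'_0=1$; the $(r-1,r)$-entries would give a more intricate relation that is not needed here.)

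The second equality in (\ref{eq:aip}), as well as the two equations (\ref{eq:aipp}), (\ref{eq:ai}), are then obtained by applying the identity just established to suitable relatives of $A,B,C$. For instance, applying the first equality of (\ref{eq:aip}) to the n-relative $B,A,C$ and reading off the translations from Lemmas \ref{lem:ABCvar}, \ref{lem:tracedataAlt}, \ref{lem:ToeplitzData} (which swap $\alpha' \leftrightarrow \beta''$, send $\varphi''_i\mapsto \varphi'_{d-i+1}$, and send $a_{d-i}\mapsto a_i$) yields the second equality of (\ref{eq:aip}) after a change of index $i\mapsto d-i$. The remaining four equalities are similarly obtained by running the same argument on the p-relatives $B,C,A$ and $C,A,B$ (to rotate the roles of the three primed/unprimed objects).

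The main technical nuisance — and the only thing that requires care — is the bookkeeping of the six-fold cycle of primes when translating through the tables of Lemmas \ref{lem:ABCvar}, \ref{lem:tracedataAlt}, \ref{lem:ToeplitzData}; there is no genuine computational difficulty once the matrix identity (\ref{eq:TiNT=S}) has been expanded at the diagonal. No additional ingredients beyond Proposition \ref{prop:matrixRep}, Definition \ref{def:TTT1}, and the relative tables are needed.
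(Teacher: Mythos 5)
Your proposal is correct and follows essentially the same route as the paper: the paper's proof also extracts the $(d-i,d-i)$-entry of the identity $(T')^{-1}C^{\natural}T'=C^{\sharp}$ using Proposition \ref{prop:matrixRep} and Definition \ref{def:TTT}, and then applies the resulting equation to the relatives of $A,B,C$. Your explicit expansion of the diagonal entry and your bookkeeping of the relative tables (including the edge cases handled by the conventions $\varphi''_0=0=\varphi''_{d+1}$) are accurate.
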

\begin{proof} 
We verify the equation on the left in
(\ref{eq:aip}).
In the equation
(\ref{eq:TiNT=S}),
compute the $(d-i,d-i)$-entry of each side,
and evaluate
the result using
 Proposition
\ref{prop:matrixRep} and Definition
\ref{def:TTT}.
This yields the equation on the left in
(\ref{eq:aip}). To finish the proof,
apply this equation to the relatives of
$A,B,C$.
\end{proof}

\noindent We mention some variations on
Proposition
\ref{lem:ai}.

\begin{corollary}
\label{cor:aiVar}
For $0 \leq i \leq d$,
\begin{eqnarray*}
&& 
a_0 + a_{1} + \cdots + a_{d-i} = 
\beta'_1 \varphi''_i
= 
\beta''_1 \varphi'_i,
\\
&& 
a'_0 + a'_{1} + \cdots + a'_{d-i} = 
\beta''_1 \varphi_i
= 
\beta_1 \varphi''_i,
\\
&& 
a''_0 + a''_{1} + \cdots + a''_{d-i} = 
\beta_1 \varphi'_i
= 
\beta'_1 \varphi_i.
\end{eqnarray*}
\end{corollary}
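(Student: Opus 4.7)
The plan is to deduce the corollary from Proposition~\ref{lem:ai} by a telescoping-sum argument, using the normalization identities in Lemma~\ref{lem:alpha0}.

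First, I would rewrite Proposition~\ref{lem:ai} using $\alpha'_0 = \beta'_0 = 1$ and $\alpha'_1 = -\beta'_1$ (both from Lemma~\ref{lem:alpha0}) to obtain
\begin{eqnarray*}
a_{d-j} = \beta'_1 \varphi''_j + \alpha'_1 \varphi''_{j+1} = \beta'_1\bigl(\varphi''_j - \varphi''_{j+1}\bigr),
\end{eqnarray*}
and similarly $a_{d-j} = \beta''_1(\varphi'_j - \varphi'_{j+1})$ from the second equality in \eqref{eq:aip}.

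Next, reindex the sum on the left via $k = d-j$: the range $0 \leq k \leq d-i$ becomes $i \leq j \leq d$. Substituting yields
\begin{eqnarray*}
\sum_{k=0}^{d-i} a_k \;=\; \sum_{j=i}^{d} a_{d-j} \;=\; \beta'_1 \sum_{j=i}^{d}\bigl(\varphi''_j - \varphi''_{j+1}\bigr).
\end{eqnarray*}
The inner sum telescopes to $\varphi''_i - \varphi''_{d+1}$, and $\varphi''_{d+1} = 0$ by the convention in Definition~\ref{def:pa} (applied to the LR pair $C,A$). This gives the first equality of the first line of the corollary. The second equality of that line follows by the same telescoping argument applied to the other expression $\beta''_1(\varphi'_j - \varphi'_{j+1})$.

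Finally, the formulas for $\sum a'_k$ and $\sum a''_k$ follow immediately by applying what has just been established to the p-relatives $B,C,A$ and $C,A,B$ of the LR triple $A,B,C$, using the notational convention from Definition~\ref{def:primeConv}. There is no real obstacle here; the argument is a direct telescoping computation, with the only subtlety being the bookkeeping of the reindexing and the use of $\alpha'_1 = -\beta'_1$ (and its cyclic analogues) to combine the two terms in Proposition~\ref{lem:ai} into a single difference.
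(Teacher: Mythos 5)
Your proof is correct and matches the paper's own argument, which likewise evaluates the sum term-by-term via Proposition~\ref{lem:ai} and simplifies using Lemma~\ref{lem:alpha0}; you have merely made the telescoping explicit. No issues.
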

\begin{proof}
To verify each equation,
evaluate the sum on the left
using
Proposition
\ref{lem:ai},
and simplify the result using 
Lemma
\ref{lem:alpha0}.
\end{proof}

\begin{corollary}
\label{cor:aiVar2}
For $0 \leq i \leq d$,
\begin{eqnarray*}
&& 
a_d + a_{d-1} + \cdots + a_{d-i} = 
\alpha'_1 \varphi''_{i+1}
= 
\alpha''_1 \varphi'_{i+1},
\\
&& 
a'_d + a'_{d-1} + \cdots + a'_{d-i} = 
\alpha''_1 \varphi_{i+1}
= 
\alpha_1 \varphi''_{i+1},
\\
&& 
a''_d + a''_{d-1} + \cdots + a''_{d-i} = 
\alpha_1 \varphi'_{i+1}
= 
\alpha'_1 \varphi_{i+1}.
\end{eqnarray*}
\end{corollary}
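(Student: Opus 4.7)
The plan is to mirror the derivation of Corollary \ref{cor:aiVar}, but telescope the sum in the reverse direction. Specifically, the sum $a_d + a_{d-1} + \cdots + a_{d-i}$ equals $\sum_{j=0}^{i} a_{d-j}$, and the expression for $a_{d-j}$ given in Proposition \ref{lem:ai} suggests a telescoping identity once we substitute $\alpha'_0 = 1$, $\beta'_0 = 1$, and $\beta'_1 = -\alpha'_1$ from Lemma \ref{lem:alpha0}.

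First I would rewrite the left equation of \eqref{eq:aip} as
\begin{eqnarray*}
a_{d-j} = \alpha'_1\bigl(\varphi''_{j+1}-\varphi''_{j}\bigr),
\end{eqnarray*}
and the right equation of \eqref{eq:aip} as $a_{d-j} = \alpha''_1(\varphi'_{j+1}-\varphi'_{j})$. Then
\begin{eqnarray*}
\sum_{j=0}^{i} a_{d-j}
= \alpha'_1 \sum_{j=0}^{i}\bigl(\varphi''_{j+1}-\varphi''_{j}\bigr)
= \alpha'_1\bigl(\varphi''_{i+1}-\varphi''_{0}\bigr)
= \alpha'_1\varphi''_{i+1},
\end{eqnarray*}
where the last equality uses the convention $\varphi''_0 = 0$ from Definition \ref{def:pa} (applied to the LR pair $C,A$, per Definition \ref{def:LRTpar}). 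The same telescoping applied to the second form of $a_{d-j}$ gives $\alpha''_1\varphi'_{i+1}$, establishing the first line of the corollary.

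To obtain the second and third lines, I would simply apply the first line to the relatives $B,C,A$ and $C,A,B$ of the LR triple $A,B,C$, invoking Definition \ref{def:primeConv} together with the parameter array, trace data, and Toeplitz data tables of Lemmas \ref{lem:ABCvar}, \ref{lem:tracedataAlt}, and \ref{lem:ToeplitzData}. No step here looks difficult; the entire argument is a short algebraic manipulation built on Proposition \ref{lem:ai} and Lemma \ref{lem:alpha0}, and the only substantive point to be careful about is correctly indexing the telescoping sum so that the reversed order $a_d, a_{d-1},\ldots, a_{d-i}$ matches $\sum_{j=0}^{i} a_{d-j}$ and exploits $\varphi''_0 = 0$ rather than $\varphi''_{d+1} = 0$.
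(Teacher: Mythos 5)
Your proposal is correct and matches the paper's own proof, which likewise evaluates the sum using Proposition \ref{lem:ai} and simplifies via Lemma \ref{lem:alpha0}; you have merely written out the telescoping explicitly. The indexing and the use of $\varphi''_0=0$ are handled correctly.
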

\begin{proof}
To verify each equation,
evaluate the sum on the left
using
Proposition
\ref{lem:ai},
and simplify the result using 
Lemma
\ref{lem:alpha0}.
\end{proof}

\begin{corollary}
\label{lem:a0ad}
We have
\begin{eqnarray*}
&&
a_0 = \beta'_1 \varphi''_d = \beta''_1 \varphi'_d,
\qquad \qquad 
a'_0 = \beta''_1 \varphi_d = \beta_1 \varphi''_d,
\qquad \qquad 
a''_0 = \beta_1 \varphi'_d = \beta'_1 \varphi_d,
\\
&&
a_d = \alpha'_1 \varphi''_1 = \alpha''_1 \varphi'_1,
\qquad \qquad 
a'_d = \alpha''_1 \varphi_1 = \alpha_1 \varphi''_1,
\qquad \qquad 
a''_d = \alpha_1 \varphi'_1 = \alpha'_1 \varphi_1.
\end{eqnarray*}
\end{corollary}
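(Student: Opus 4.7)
The plan is to derive Corollary \ref{lem:a0ad} as a direct specialization of Proposition \ref{lem:ai}. The six equations to be verified all have the same flavor: each asserts that a boundary trace value ($a_0$ or $a_d$, and analogously for primed and double-primed versions) factors as a single product of a first Toeplitz number and a boundary parameter from the partner LR pair's parameter sequence.

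First I would substitute $i = d$ into the equations of Proposition \ref{lem:ai} to produce the formulas for $a_0, a'_0, a''_0$. For instance, the left equation in (\ref{eq:aip}) with $i=d$ gives
\begin{equation*}
a_0 = \alpha'_0 \beta'_1 \varphi''_d + \alpha'_1 \beta'_0 \varphi''_{d+1}.
\end{equation*}
Using $\alpha'_0 = 1 = \beta'_0$ from Lemma \ref{lem:alpha0}, together with the convention $\varphi''_{d+1} = 0$ from Definition \ref{def:pa}, this collapses to $a_0 = \beta'_1 \varphi''_d$. The right equation in (\ref{eq:aip}) with $i=d$ yields $a_0 = \beta''_1 \varphi'_d$ by the same simplification. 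The formulas for $a'_0$ and $a''_0$ come from applying the same substitution to (\ref{eq:aipp}) and (\ref{eq:ai}) respectively. Alternatively, one can read these three equations directly off Corollary \ref{cor:aiVar} by setting $i=d$, since then the left-hand sum collapses to the single term $a_0$ (respectively $a'_0, a''_0$).

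Next I would substitute $i = 0$ into the same three equations of Proposition \ref{lem:ai} to obtain the formulas for $a_d, a'_d, a''_d$. For example, (\ref{eq:aip}) with $i=0$ gives
\begin{equation*}
a_d = \alpha'_0 \beta'_1 \varphi''_0 + \alpha'_1 \beta'_0 \varphi''_{d+1-d} = \alpha'_1 \varphi''_1,
\end{equation*}
using $\varphi''_0 = 0$ and $\alpha'_0 = 1 = \beta'_0$; the right half of (\ref{eq:aip}) gives $a_d = \alpha''_1 \varphi'_1$. The formulas for $a'_d$ and $a''_d$ follow the same pattern from (\ref{eq:aipp}) and (\ref{eq:ai}). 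Equivalently, these are the $i=0$ cases of Corollary \ref{cor:aiVar2}.

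There is no real obstacle here; the whole corollary is a bookkeeping consequence of Proposition \ref{lem:ai} together with the normalization $\alpha_0 = \alpha'_0 = \alpha''_0 = \beta_0 = \beta'_0 = \beta''_0 = 1$ and the boundary conventions $\varphi_0 = \varphi'_0 = \varphi''_0 = 0$ and $\varphi_{d+1} = \varphi'_{d+1} = \varphi''_{d+1} = 0$. The only point to check carefully is that the two conventions are applied at the correct endpoints: the vanishing of $\varphi''_{d+1}$ kills one summand at $i=d$, while the vanishing of $\varphi''_0$ kills the other summand at $i=0$, leaving in each case precisely one nonzero term on the right-hand side.
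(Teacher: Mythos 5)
Your proposal is correct and matches the paper's argument: the paper obtains the result by setting $i=d$ in Corollary \ref{cor:aiVar} and $i=0$ in Corollary \ref{cor:aiVar2}, which is exactly the specialization of Proposition \ref{lem:ai} (plus $\alpha_0=\beta_0=1$ and the boundary conventions $\varphi_0=\varphi_{d+1}=0$) that you carry out, and you even note the corollary route as an equivalent alternative. The only cosmetic slip is writing $\varphi''_{d+1-d}$ where the index should simply be $i+1=1$; the value is the same.
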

\begin{proof} Set $i=d$ in Corollary
\ref{cor:aiVar}, and
$i=0$ in Corollary
\ref{cor:aiVar2}. 
\end{proof}

\begin{corollary}
\label{cor:alphaOneInv}
For $1 \leq i \leq d$,
\begin{eqnarray}
\label{eq:alphaOneInv}
\frac{\alpha_1}{\varphi_i} = 
\frac{\alpha'_1}{\varphi'_i} = 
\frac{\alpha''_1}{\varphi''_i},
\qquad \qquad \qquad 
\frac{\beta_1}{\varphi_i} = 
\frac{\beta'_1}{\varphi'_i} = 
\frac{\beta''_1}{\varphi''_i}.
\end{eqnarray}
\end{corollary}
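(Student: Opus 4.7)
The plan is to obtain the corollary as an immediate consequence of the trace-sum identities already established in Corollaries \ref{cor:aiVar} and \ref{cor:aiVar2}. Specifically, Corollary \ref{cor:aiVar2} applied to the LR triple $A,B,C$ gives the equality $\alpha'_1 \varphi''_{i+1} = \alpha''_1 \varphi'_{i+1}$ for $0 \leq i \leq d$, and hence $\alpha'_1/\varphi'_j = \alpha''_1/\varphi''_j$ for $1 \leq j \leq d$, noting that all denominators are nonzero by Definition \ref{def:pa}.

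Invoking the notational convention of Definition \ref{def:primeConv}, the same Corollary \ref{cor:aiVar2} applies to the relatives $B,C,A$ and $C,A,B$. These two cyclically rotated applications yield $\alpha''_1/\varphi''_j = \alpha_1/\varphi_j$ and $\alpha_1/\varphi_j = \alpha'_1/\varphi'_j$ respectively, for $1 \leq j \leq d$. Chaining these three equalities produces the first assertion of the corollary.

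The second assertion is obtained in exactly the same fashion from Corollary \ref{cor:aiVar}, whose second equality (namely $\beta'_1 \varphi''_i = \beta''_1 \varphi'_i$) together with its two cyclic shifts under the $\prime$-convention gives $\beta_1/\varphi_i = \beta'_1/\varphi'_i = \beta''_1/\varphi''_i$ for $1 \leq i \leq d$. No serious obstacle arises: the proof is purely a bookkeeping exercise that assembles the three relative versions of a single already-proved identity, and the only point requiring care is to verify that the index range for $i$ in Corollaries \ref{cor:aiVar} and \ref{cor:aiVar2} covers the desired range $1 \leq i \leq d$ in the present statement.
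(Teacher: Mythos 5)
Your proposal is correct and matches the paper's own proof, which simply cites Corollaries \ref{cor:aiVar} and \ref{cor:aiVar2}; the only cosmetic difference is that you re-derive the second and third displayed lines of those corollaries by applying the first line to the relatives $B,C,A$ and $C,A,B$, whereas those lines are already present in the statements as given. The index bookkeeping (using $j=i+1$ with $0\leq i\leq d-1$ for the $\alpha$'s and $1\leq i\leq d$ directly for the $\beta$'s, with all denominators nonzero) is handled correctly.
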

\begin{proof} Use Corollaries
\ref{cor:aiVar},
\ref{cor:aiVar2}.
\end{proof}

\begin{proposition}
\label{prop:goodrec}
For $1 \leq i \leq d$,
\begin{eqnarray*}
&&\frac{\varphi'_i}{\varphi''_{d-i+1}} = 
\alpha'_0 \beta'_2 \varphi_{i-1}+
\alpha'_1 \beta'_1 \varphi_{i}+
\alpha'_2 \beta'_0 \varphi_{i+1},
\\
&&\frac{\varphi''_i}{\varphi_{d-i+1}} = 
\alpha''_0 \beta''_2 \varphi'_{i-1}+
\alpha''_1 \beta''_1 \varphi'_{i}+
\alpha''_2 \beta''_0 \varphi'_{i+1},
\\
&&\frac{\varphi_i}{\varphi'_{d-i+1}} = 
\alpha_0 \beta_2 \varphi''_{i-1}+
\alpha_1 \beta_1 \varphi''_{i}+
\alpha_2 \beta_0 \varphi''_{i+1}
\end{eqnarray*}
and also
\begin{eqnarray*}
&&\frac{\varphi''_i}{\varphi'_{d-i+1}} = 
\alpha''_0 \beta''_2 \varphi_{i-1}+
\alpha''_1 \beta''_1 \varphi_{i}+
\alpha''_2 \beta''_0 \varphi_{i+1},
\\
&&\frac{\varphi_i}{\varphi''_{d-i+1}} = 
\alpha_0 \beta_2 \varphi'_{i-1}+
\alpha_1 \beta_1 \varphi'_{i}+
\alpha_2 \beta_0 \varphi'_{i+1},
\\
&&\frac{\varphi'_i}{\varphi_{d-i+1}} = 
\alpha'_0 \beta'_2 \varphi''_{i-1}+
\alpha'_1 \beta'_1 \varphi''_{i}+
\alpha'_2 \beta'_0 \varphi''_{i+1}.
\end{eqnarray*}
\end{proposition}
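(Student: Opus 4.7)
The plan is to derive one of the six formulas directly by computing an off-diagonal matrix entry (in the spirit of the proof of Proposition~\ref{lem:ai}), then to deduce the remaining five by applying that single formula to the relatives of $A,B,C$ tabulated in Lemma~\ref{lem:ABCvar}.

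For the direct derivation I will establish the middle equation of the ``also'' block, namely
$$\frac{\varphi_i}{\varphi''_{d-i+1}} = \alpha_0 \beta_2 \varphi'_{i-1} + \alpha_1 \beta_1 \varphi'_i + \alpha_2 \beta_0 \varphi'_{i+1}.$$
Set $\natural = (C,B)$ and $\sharp = (C,A)$. By Definition~\ref{def:TTT}(i), $T$ is the transition matrix from a $(C,B)$-basis of $V$ to a compatible $(C,A)$-basis, so $B^\sharp = T^{-1} B^\natural T$. From row $(C,B)$ of the table in Proposition~\ref{prop:matrixRep}, the matrix $B^\natural$ is lower bidiagonal with $B^\natural_{m,m-1} = \varphi'_{d-m+1}$; from row $(C,A)$, the superdiagonal entry satisfies $B^\sharp_{r,r+1} = \varphi_{d-r}/\varphi''_{r+1}$. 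Since $T$ is upper triangular Toeplitz with parameters $\lbrace \alpha_i \rbrace_{i=0}^d$ and $T^{-1}$ is upper triangular Toeplitz with parameters $\lbrace \beta_i \rbrace_{i=0}^d$, the $(r,r+1)$-entry of $T^{-1} B^\natural T$ reduces to
$$\sum_{m} \beta_{m-r}\,\varphi'_{d-m+1}\,\alpha_{r+2-m},$$
where only $m \in \lbrace r, r+1, r+2 \rbrace$ contribute because $B^\natural$ is supported on the subdiagonal. Substituting $i = d-r$ and using $\alpha_0 = \beta_0 = 1$ from Lemma~\ref{lem:alpha0}, together with the conventions $\varphi'_0 = \varphi'_{d+1} = 0$ to cover the boundary cases $r=0$ and $r=d-1$, yields the claimed formula.

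For the remaining five formulas I will apply the formula just proved to the five nontrivial rows of the table in Lemma~\ref{lem:ABCvar}. The two p-relatives $B,C,A$ and $C,A,B$ yield the other two formulas in the ``also'' block via the cyclic shifts of parameter array (Lemma~\ref{lem:ABCvar}) and Toeplitz data (Lemma~\ref{lem:ToeplitzData}). The three n-relatives yield the three formulas in the first block: for instance, applying the formula to the n-relative $C,B,A$, which by Lemmas~\ref{lem:ABCvar} and~\ref{lem:ToeplitzData} has parameter sequences $\lbrace \varphi'_{d-i+1} \rbrace_{i=1}^d$, $\lbrace \varphi_{d-i+1} \rbrace_{i=1}^d$, $\lbrace \varphi''_{d-i+1} \rbrace_{i=1}^d$ and Toeplitz data obtained by swapping $\alpha \leftrightarrow \beta$ within each of the three pairs, and then relabeling $i \leftrightarrow d-i+1$, yields the first formula of the first block; the other two follow by the same mechanism applied to the n-relatives $A,C,B$ and $B,A,C$.

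The main technical obstacle is the careful bookkeeping in the direct matrix-entry computation: correctly identifying the three Toeplitz contributions in the sum, and verifying that the boundary cases $r = 0$ (i.e., $i=d$) and $r = d-1$ (i.e., $i=1$) are absorbed by the conventions on $\varphi'_0$ and $\varphi'_{d+1}$. Once the direct formula is established, the remaining five derivations are purely symbolic substitutions driven by Lemmas~\ref{lem:ABCvar} and~\ref{lem:ToeplitzData}.
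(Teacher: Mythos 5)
Your proposal is correct and follows essentially the same route as the paper: the paper also computes a single superdiagonal entry of the conjugation identity between the matrix representations of one of $A,B,C$ in two compatible bases (it uses $(T')^{-1}C^{\natural}T'=C^{\sharp}$ with $\natural=(A,C)$, $\sharp=(A,B)$ and the $(d-i,d-i+1)$-entry, obtaining the last displayed equation first), and then transfers the result to the remaining five equations via the p- and n-relatives. Your choice of $T$ and the $(C,B)/(C,A)$ bases, and of which equation to derive directly, is an immaterial variation.
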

\begin{proof} 
We verify the last equation in the proposition statement.
In the equation
(\ref{eq:TiNT=S}),
compute the $(d-i,d-i+1)$-entry of each side,
and evaluate
the result using
 Proposition
\ref{prop:matrixRep}
and Definition \ref{def:TTT}.
This yields the last equation in the proposition statement.
To finish the proof,
apply this equation to the relatives of
$A,B,C$.
\end{proof}

\begin{proposition}
\label{prop:longrec}
For $3 \leq r \leq d+1$ and
$0 \leq i \leq d-r+1$,
\begin{eqnarray*}
&&
0 =
\alpha'_0 \beta'_r \varphi_i + 
\alpha'_1 \beta'_{r-1} \varphi_{i+1} + 
\cdots 
+
\alpha'_r \beta'_0 \varphi_{i+r},
\\
&&
0 =
\alpha''_0 \beta''_r \varphi'_i + 
\alpha''_1 \beta''_{r-1} \varphi'_{i+1} + 
\cdots 
+
\alpha''_r \beta''_0 \varphi'_{i+r},
\\
&&
0 =
\alpha_0 \beta_r \varphi''_i + 
\alpha_1 \beta_{r-1} \varphi''_{i+1} + 
\cdots 
+
\alpha_r \beta_0 \varphi''_{i+r}
\end{eqnarray*}
and also
\begin{eqnarray*}
&&
0 =
\alpha''_0 \beta''_r \varphi_i + 
\alpha''_1 \beta''_{r-1} \varphi_{i+1} + 
\cdots 
+
\alpha''_r \beta''_0 \varphi_{i+r},
\\
&&
0 =
\alpha_0 \beta_r \varphi'_i + 
\alpha_1 \beta_{r-1} \varphi'_{i+1} + 
\cdots 
+
\alpha_r \beta_0 \varphi'_{i+r},
\\
&&
0 =
\alpha'_0 \beta'_r \varphi''_i + 
\alpha'_1 \beta'_{r-1} \varphi''_{i+1} + 
\cdots 
+
\alpha'_r \beta'_0 \varphi''_{i+r}.
\end{eqnarray*}
\end{proposition}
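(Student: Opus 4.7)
The plan is to mimic the strategy of Propositions \ref{lem:ai} and \ref{prop:goodrec}, starting once again from the matrix identity (\ref{eq:TiNT=S}):
$$(T')^{-1} C^\natural T' = C^\sharp,$$
with $\natural = (A,C)$ and $\sharp = (A,B)$. The new twist, compared to those propositions, is that while they obtained identities by comparing entries on or adjacent to the main diagonal, here I would exploit the fact that $C^\sharp$ is tridiagonal, so every $(i,j)$-entry of the conjugated matrix with $j - i \geq 2$ must vanish. Each such vanishing entry will yield one instance of the desired recurrence.

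Concretely, I would compute $\bigl((T')^{-1}C^\natural T'\bigr)_{i,j}$ for $j - i \geq 2$. From the $(A,C)$ row of Proposition \ref{prop:matrixRep}, the matrix $C^\natural$ has only its subdiagonal nonzero, with $C^\natural_{k,k-1}=\varphi''_{d-k+1}$; together with the upper-triangular Toeplitz forms of $T'$ and $(T')^{-1}$ (with parameters $\alpha'_i$ and $\beta'_i$, respectively), this collapses the double sum to
$$\sum_{k} \beta'_{k-i}\,\varphi''_{d-k+1}\,\alpha'_{j-k+1}=0,$$
where by convention $\alpha'_n=\beta'_n=0$ outside $0 \le n \le d$ and $\varphi''_0=\varphi''_{d+1}=0$. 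Re-indexing via $n = j-k+1$ and writing $r = j-i+1 \geq 3$, $p = d-j$, this becomes
$$\sum_{n=0}^{r} \alpha'_n\beta'_{r-n}\varphi''_{p+n} = 0,$$
which, after renaming $p$ as $i$, is the sixth equation in the proposition statement, valid precisely on the range $0\le i\le d-r+1$.

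To obtain the remaining five equations I would, as in the closing step of the proofs of Propositions \ref{lem:ai} and \ref{prop:goodrec}, apply this single identity to the relatives of $A,B,C$. Cycling through $B,C,A$ and $C,A,B$ via Lemma \ref{lem:ABCvar} produces the fourth and fifth equations of the proposition statement. Passing to the adjoint triples using Lemma \ref{lem:newPA} together with the Toeplitz transformation rules of Lemmas \ref{lem:ToeplitzData} and \ref{lem:ToeplitzDataD} interchanges $\alpha_i \leftrightarrow \beta_i$ (with the appropriate reshuffling of primes), yielding the first three equations.

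The only real obstacle is bookkeeping: one must handle the boundary conventions $\alpha'_{d+1}=\beta'_{d+1}=\varphi''_0=\varphi''_{d+1}=0$ carefully so that the index ranges match cleanly, and then track how the Toeplitz data and parameter array transform under each relative in order to recognize the six resulting identities as the six claimed in the statement. No new ingredients beyond (\ref{eq:TiNT=S}), the shape of $C^\natural$ and $C^\sharp$ from Proposition \ref{prop:matrixRep}, and the transformation tables of the parameter and Toeplitz data appear to be required.
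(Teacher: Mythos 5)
Your proposal is correct and follows essentially the same route as the paper: the paper's proof also computes an entry of $(T')^{-1}C^\natural T' = C^\sharp$ lying $r-1\geq 2$ places above the diagonal (namely the $(d-i-r+1,d-i)$-entry), uses the tridiagonality of $C^\sharp$ and the purely subdiagonal form of $C^\natural$ to extract the sixth displayed equation, and then obtains the other five by applying that identity to the relatives of $A,B,C$. Your index bookkeeping and the identification of which relatives yield which of the six equations both check out.
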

\begin{proof}
We verify the last equation in the proposition statement.
In the equation
(\ref{eq:TiNT=S}),
compute the $(d-i-r+1,d-i)$-entry of each side,
and evaluate
the result using
 Proposition
\ref{prop:matrixRep}
and Definition \ref{def:TTT}.
This yields the last equation in the proposition statement.
To finish the proof,
apply this equation to the relatives of
$A,B,C$.
\end{proof}

\section{
How the parameter array, trace data, and 
Toeplitz data are related, II}

We continue to discuss our LR triple $A,B,C$ on
$V$, with
 parameter array
(\ref{eq:paLRT}),
trace data
(\ref{eq:tracedata}), and Toeplitz data
(\ref{eq:ToeplitzData}).
In the previous section we found a 
relationship among
 these scalars,
using the equation
(\ref{eq:TiNT=S}). In the present section
we describe this relationship 
from the point of view of
(\ref{eq:NT=TS}).

\begin{proposition} 
\label{prop:abI}
For $1\leq i\leq d$ and $0 \leq j \leq d-i$,
\begin{eqnarray*}
&&\alpha'_{i-1} \frac{\varphi'_{j+1}}{\varphi''_{d-j}} + \alpha'_i a''_{d-j} +
\alpha'_{i+1} \varphi_j = 
\alpha'_{i+1} \varphi_{i+j+1},
\\
&&\alpha''_{i-1} \frac{\varphi''_{j+1}}{\varphi_{d-j}} + \alpha''_i a_{d-j} +
\alpha''_{i+1} \varphi'_j = 
\alpha''_{i+1} \varphi'_{i+j+1},
\\
&&\alpha_{i-1} \frac{\varphi_{j+1}}{\varphi'_{d-j}} + \alpha_i a'_{d-j} +
\alpha_{i+1} \varphi''_j = 
\alpha_{i+1} \varphi''_{i+j+1}
\end{eqnarray*}
and also
\begin{eqnarray*}
&&\alpha''_{i-1} \frac{\varphi''_{j+1}}{\varphi'_{d-j}} + \alpha''_i a'_{d-j} +
\alpha''_{i+1} \varphi_j = 
\alpha''_{i+1} \varphi_{i+j+1},
\\
&&\alpha_{i-1} \frac{\varphi_{j+1}}{\varphi''_{d-j}} + \alpha_i a''_{d-j} +
\alpha_{i+1} \varphi'_j = 
\alpha_{i+1} \varphi'_{i+j+1},
\\
&&\alpha'_{i-1} \frac{\varphi'_{j+1}}{\varphi_{d-j}} + \alpha'_i a_{d-j} +
\alpha'_{i+1} \varphi''_j = 
\alpha'_{i+1} \varphi''_{i+j+1}.
\end{eqnarray*}
\end{proposition}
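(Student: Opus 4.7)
The plan is to exploit the matrix identity $X^\natural T' = T' X^\sharp$, valid for every $X \in {\rm End}(V)$, where $\natural$ denotes an $(A,C)$-basis of $V$, $\sharp$ denotes a compatible $(A,B)$-basis, and $T'$ is their transition matrix from Definition \ref{def:TTT}. I will apply this identity with $X=C$ to obtain the last of the six equations, and with $X=B$ to obtain the first; the remaining four equations then follow by applying these two equations to the p-relatives $B,C,A$ and $C,A,B$ of $A,B,C$. This mirrors the strategy used in Propositions \ref{lem:ai}--\ref{prop:longrec}, except that here we work with $C^\natural T' = T' C^\sharp$ in its unreversed form, which is why only the Toeplitz parameters $\alpha,\alpha',\alpha''$ (and not $\beta,\beta',\beta''$) enter the identities.

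To derive the last equation, I read off from Proposition \ref{prop:matrixRep} that on an $(A,C)$-basis the map $C$ is strictly lower bidiagonal with $C^\natural_{r,r-1}=\varphi''_{d-r+1}$, while on an $(A,B)$-basis it is tridiagonal with entries $C^\sharp_{s+1,s}=\varphi''_{d-s}$, $C^\sharp_{s,s}=a_s$, and $C^\sharp_{s-1,s}=\varphi'_{d-s+1}/\varphi_s$. Since $T'$ is upper triangular and Toeplitz with $T'_{p,q}=\alpha'_{q-p}$ (using the convention $\alpha'_k=0$ for $k\notin\{0,\ldots,d\}$), computing the $(r,s)$-entry of each side of $C^\natural T'=T'C^\sharp$ gives
\begin{equation*}
\varphi''_{d-r+1}\,\alpha'_{s-r+1}
= \alpha'_{s-r-1}\,\frac{\varphi'_{d-s+1}}{\varphi_s}
+ \alpha'_{s-r}\,a_s
+ \alpha'_{s-r+1}\,\varphi''_{d-s}.
\end{equation*}
Substituting $s=d-j$ and $r=d-i-j$ converts this directly into the last equation of the proposition, valid for $1\le i\le d$ and $0\le j\le d-i$.

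I would derive the first equation by running the same argument with $X=B$. From Proposition \ref{prop:matrixRep}, on the $(A,C)$-basis the map $B$ is tridiagonal with bands $B^\natural_{r,r-1}=\varphi_r$, $B^\natural_{r,r}=a''_{d-r}$, and $B^\natural_{r,r+1}=\varphi'_{r+1}/\varphi''_{d-r}$, while on the $(A,B)$-basis $B^\sharp$ is strictly lower bidiagonal with $B^\sharp_{s+1,s}=\varphi_{s+1}$. Comparing $(r,s)$-entries of $B^\natural T'=T'B^\sharp$ yields
\begin{equation*}
\varphi_r\,\alpha'_{s-r+1}
+ a''_{d-r}\,\alpha'_{s-r}
+ \frac{\varphi'_{r+1}}{\varphi''_{d-r}}\,\alpha'_{s-r-1}
= \alpha'_{s-r+1}\,\varphi_{s+1},
\end{equation*}
and the substitution $r=j$, $i=s-r$ recovers the first equation.

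Once these two equations are in hand, the cyclic shift $A\to B\to C$ permutes the parameter array, trace data, and Toeplitz data as recorded in Lemmas \ref{lem:ABCvar}, \ref{lem:tracedataAlt}, \ref{lem:ToeplitzData}. Applying each of the two derived equations to the p-relatives $B,C,A$ and $C,A,B$ then yields exactly the remaining four equations of the proposition. The only real obstacle is careful bookkeeping: one must verify that the index substitutions land in the stated range $1\le i\le d$, $0\le j\le d-i$, and that the boundary conventions $\varphi_0=\varphi_{d+1}=0$ together with $\alpha'_{d+1}=0$ (and their primed analogues, from Definitions \ref{def:pa} and \ref{def:TTT1}) correctly absorb the out-of-range matrix indices that arise when $r$ or $s$ sits at an extreme value; no new idea beyond the matrix-entry comparison is required.
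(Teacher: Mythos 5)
Your proposal is correct and follows essentially the same route as the paper: the paper also obtains the last displayed equation by computing the $(d-i-j,\,d-j)$-entry of each side of $C^\natural T' = T' C^\sharp$ (equation (\ref{eq:NT=TS})) using Proposition \ref{prop:matrixRep}, and then propagates to the remaining equations via p-relatives. The one small difference is in the propagation step: the paper derives only the single equation from $X=C$ and then applies it to all six p-relatives (including the dual triples $\tilde C,\tilde B,\tilde A$, etc., from Lemma \ref{lem:newPA}), whereas you derive a second base identity from $B^\natural T' = T' B^\sharp$ so that only the two cyclic shifts $B,C,A$ and $C,A,B$ are needed; both index computations and the resulting substitutions check out, so this is a legitimate, slightly more self-contained variant of the same argument.
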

\begin{proof}
We verify the last equation in the proposition statement.
In the equation
(\ref{eq:NT=TS}),
compute the $(d-i-j,d-j)$-entry of each side,
and evaluate
the result using
 Proposition
\ref{prop:matrixRep} and Definition
\ref{def:TTT}.
This yields the last equation of the proposition
statement. To finish the proof,
apply this equation to the p-relatives of
$A,B,C$.
\end{proof}

\noindent We point out some special cases of Proposition
\ref{prop:abI}.

\begin{corollary}
\label{prop:abII}
For $1\leq i\leq d-1$,
\begin{eqnarray*}
 &&
 \alpha'_{i-1}\frac{\varphi'_{d-i+1}}{\varphi''_i} + \alpha'_i a''_{i} +
 \alpha'_{i+1} \varphi_{d-i} = 0,
\\
 &&
 \alpha''_{i-1}\frac{\varphi''_{d-i+1}}{\varphi_i} + \alpha''_i a_{i} +
 \alpha''_{i+1} \varphi'_{d-i} = 0,
\\
 &&
 \alpha_{i-1}\frac{\varphi_{d-i+1}}{\varphi'_i} + \alpha_i a'_{i} +
 \alpha_{i+1} \varphi''_{d-i} = 0
\end{eqnarray*}
and also
\begin{eqnarray*}
 &&
 \alpha''_{i-1}\frac{\varphi''_{d-i+1}}{\varphi'_i} + \alpha''_i a'_{i} +
 \alpha''_{i+1} \varphi_{d-i} = 0,
\\
 &&
 \alpha_{i-1}\frac{\varphi_{d-i+1}}{\varphi''_i} + \alpha_i a''_{i} +
 \alpha_{i+1} \varphi'_{d-i} = 0,
\\
 &&
 \alpha'_{i-1}\frac{\varphi'_{d-i+1}}{\varphi_i} + \alpha'_i a_{i} +
 \alpha'_{i+1} \varphi''_{d-i} = 0.
\end{eqnarray*}
\end{corollary}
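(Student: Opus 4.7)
The plan is to obtain the six equations of Corollary \ref{prop:abII} as boundary specializations of the six equations of Proposition \ref{prop:abI}. The key observation is that each equation in Proposition \ref{prop:abI} has the form
\[
\text{(three terms in $\alpha$'s, $\varphi$'s, $a$'s)} \;=\; \alpha'_{i+1}\,\varphi^{\bullet}_{i+j+1}
\]
(or the analogous p-relative version), and in Proposition \ref{prop:abI} the range of $j$ is $0 \le j \le d-i$. The right-hand side will vanish exactly when $i+j+1 = d+1$, that is, when $j=d-i$, because of the convention $\varphi_{d+1}=\varphi'_{d+1}=\varphi''_{d+1}=0$ adopted in Definition \ref{def:pa}.

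So first I would verify that $j=d-i$ is allowed: the hypothesis of Proposition \ref{prop:abI} requires $0 \le j \le d-i$, so the value $j=d-i$ sits at the upper boundary and is permitted. Then I would simply substitute $j=d-i$ into the first equation of Proposition \ref{prop:abI}, namely
\[
\alpha'_{i-1}\frac{\varphi'_{j+1}}{\varphi''_{d-j}} + \alpha'_i a''_{d-j} + \alpha'_{i+1}\varphi_j = \alpha'_{i+1}\varphi_{i+j+1},
\]
which becomes
\[
\alpha'_{i-1}\frac{\varphi'_{d-i+1}}{\varphi''_{i}} + \alpha'_i a''_{i} + \alpha'_{i+1}\varphi_{d-i} = \alpha'_{i+1}\varphi_{d+1} = 0,
\]
giving the first equation of the corollary. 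The remaining five equations of the corollary are obtained the same way: substitute $j=d-i$ into the corresponding line of Proposition \ref{prop:abI} and note that the right-hand side is $\alpha^{\bullet}_{i+1}\varphi^{\bullet}_{d+1} = 0$.

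There is no real obstacle here; the corollary is a direct boundary case of the proposition. The only mild point of care is the bookkeeping of subscripts and which of $\varphi, \varphi', \varphi''$ appears in each of the six equations, to make sure one picks the right member of the convention $\varphi_{d+1}=\varphi'_{d+1}=\varphi''_{d+1}=0$ to force the vanishing of the right-hand side. The restriction $1 \le i \le d-1$ in the corollary (rather than $1 \le i \le d$) is not needed for the derivation itself; it simply ensures that $\varphi_{d-i}, \varphi'_{d-i}, \varphi''_{d-i}$ in the third summand of each equation are the ``genuine'' parameters $\varphi_{d-i}\ne 0$ rather than the boundary zero $\varphi_0=0$, so that the resulting three-term relation is nontrivial.
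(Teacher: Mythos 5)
Your proof is correct and is exactly the paper's argument: the paper proves this corollary by setting $j=d-i$ in Proposition \ref{prop:abI} (with $i\leq d-1$), and the right-hand side vanishes because $\varphi_{d+1}=\varphi'_{d+1}=\varphi''_{d+1}=0$. (One minor aside: the case $i=d$, $j=0$ is not excluded because it would be trivial but because both the third summand and the right-hand side then drop out, yielding the two-term identities recorded separately as Corollary \ref{prop:abIV}.)
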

\begin{proof}
In Proposition
\ref{prop:abI}, assume $i\leq d-1$ and
 $j=d-i$.
\end{proof}

\begin{corollary}
\label{prop:abIII}
For $1\leq i\leq d-1$,
\begin{eqnarray*}
&&
\alpha'_{i-1}\frac{\varphi'_1}{\varphi''_d} + \alpha'_i a''_d
= \alpha'_{i+1} \varphi_{i+1},
\qquad \quad 
\alpha''_{i-1}\frac{\varphi''_1}{\varphi'_d} + \alpha''_i a'_d
= \alpha''_{i+1} \varphi_{i+1},
\\
&&
\alpha''_{i-1}\frac{\varphi''_1}{\varphi_d} + \alpha''_i a_d
= \alpha''_{i+1} \varphi'_{i+1},
\qquad \quad 
\alpha_{i-1}\frac{\varphi_1}{\varphi''_d} + \alpha_i a''_d
= \alpha_{i+1} \varphi'_{i+1},
\\
&&
\alpha_{i-1}\frac{\varphi_1}{\varphi'_d} + \alpha_i a'_d
= \alpha_{i+1} \varphi''_{i+1},
\qquad \quad
\alpha'_{i-1}\frac{\varphi'_1}{\varphi_d} + \alpha'_i a_d
= \alpha'_{i+1} \varphi''_{i+1}.
\end{eqnarray*}
\end{corollary}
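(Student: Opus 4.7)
The plan is to obtain Corollary \ref{prop:abIII} as an immediate specialization of Proposition \ref{prop:abI} at $j=0$. Recall from Definition \ref{def:pa} and the notational convention of Definition \ref{def:primeConv} that $\varphi_0 = \varphi'_0 = \varphi''_0 = 0$, and also $\varphi_{d+1} = \varphi'_{d+1} = \varphi''_{d+1} = 0$. So in each of the six identities of Proposition \ref{prop:abI}, the rightmost summand on the left-hand side (which has the form $\alpha_{i+1}\varphi_j$, $\alpha'_{i+1}\varphi'_j$, $\alpha''_{i+1}\varphi''_j$, or a primed analog) vanishes when $j=0$.

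The index $d-j$ becomes $d$ when $j=0$, so the terms $\alpha'_{i-1}\varphi'_{j+1}/\varphi''_{d-j}$ collapse to $\alpha'_{i-1}\varphi'_1/\varphi''_d$, the trace-data factors $a''_{d-j}$ collapse to $a''_d$, and $\alpha'_{i+1}\varphi_{i+j+1}$ collapses to $\alpha'_{i+1}\varphi_{i+1}$. The same pattern of substitutions applies to each of the other five lines. Matching the resulting six identities against the six displayed equations of Corollary \ref{prop:abIII} line by line (first line of the proposition yields the first equation of the corollary, second yields the third, etc.), one sees that they agree verbatim.

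Finally I would verify that the range $1 \le i \le d-1$ in the corollary is covered: Proposition \ref{prop:abI} applies for $1 \le i \le d$ and $0 \le j \le d-i$, and with $j=0$ this means $1 \le i \le d$, so the stated range $1 \le i \le d-1$ lies strictly inside where the proposition already applies. There is no obstacle of any kind; the only ingredient beyond Proposition \ref{prop:abI} is the boundary convention $\varphi_0 = \varphi'_0 = \varphi''_0 = 0$ from Definition \ref{def:pa}, invoked to discard the third summand on the left-hand side of each identity.
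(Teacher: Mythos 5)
Your proposal is correct and is exactly the paper's proof: the paper obtains Corollary \ref{prop:abIII} by setting $j=0$ (with $i\leq d-1$) in Proposition \ref{prop:abI}, using the convention $\varphi_0=\varphi'_0=\varphi''_0=0$ to kill the third summand. Nothing is missing.
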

\begin{proof}
In 
 Proposition
\ref{prop:abI}, assume $i \leq d-1$ and
$j=0$.
\end{proof}

\begin{corollary}
\label{prop:abIV}
For $d\geq 1$,
\begin{eqnarray*}
&& \alpha'_{d-1} \frac{\varphi'_1}{\varphi''_d}+ \alpha'_d a''_d = 0,
\quad \qquad 
 \alpha''_{d-1} \frac{\varphi''_1}{\varphi_d}+ \alpha''_d a_d =  0,
 \quad \qquad
 \alpha_{d-1} \frac{\varphi_1}{\varphi'_d}+ \alpha_d a'_d =  0,
\\
&&
 \alpha''_{d-1} \frac{\varphi''_1}{\varphi'_d}+ \alpha''_d a'_d = 0,
\qquad \quad
 \alpha_{d-1} \frac{\varphi_1}{\varphi''_d}+ \alpha_d a''_d = 0,
\qquad \quad
\alpha'_{d-1} \frac{\varphi'_1}{\varphi_d}+ \alpha'_d a_d= 0.
\end{eqnarray*}
\end{corollary}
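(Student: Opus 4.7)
The plan is to obtain Corollary \ref{prop:abIV} as the extremal specialization of Proposition \ref{prop:abI} at $i=d$ and $j=0$, paralleling the way Corollaries \ref{prop:abII} and \ref{prop:abIII} handled the specializations $j=d-i$ and $j=0$ under the restriction $i\leq d-1$. The present corollary concerns the ``missing'' boundary $i=d$ (with the unique allowed $j=0$), which the earlier corollaries deliberately excluded.

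First I would verify that the pair $(i,j)=(d,0)$ lies in the admissible range $1\leq i\leq d$, $0\leq j\leq d-i$ of Proposition \ref{prop:abI}, which is the case precisely under the hypothesis $d\geq 1$. Then I would substitute these values into each of the six equations of Proposition \ref{prop:abI}. For example, the last equation yields
\begin{eqnarray*}
\alpha'_{d-1} \frac{\varphi'_{1}}{\varphi_{d}} + \alpha'_d\, a_{d} +
\alpha'_{d+1}\,\varphi''_0 \;=\; \alpha'_{d+1}\,\varphi''_{d+1},
\end{eqnarray*}
and the other five equations take the analogous shape. The next step is to invoke the conventions already in force: $\varphi_0=\varphi_{d+1}=0$ and $\varphi'_0=\varphi'_{d+1}=0$ and $\varphi''_0=\varphi''_{d+1}=0$ from Definition \ref{def:pa}, and $\alpha_{d+1}=\alpha'_{d+1}=\alpha''_{d+1}=0$ from Definition \ref{def:TTT1}. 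These conventions collapse the last two terms on each side of each specialized equation to zero, leaving exactly the six identities asserted in Corollary \ref{prop:abIV}.

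There is no real obstacle to overcome; the only point requiring care is bookkeeping, to match each of the six equations of Proposition \ref{prop:abI} with the corresponding equation in Corollary \ref{prop:abIV} after substitution (the correspondence being that the unprimed, primed, and double-primed versions pair up in the same order given). Thus the proof reduces to a one-line invocation: apply Proposition \ref{prop:abI} with $i=d$ and $j=0$, and simplify using $\varphi_0=\varphi_{d+1}=0$ (and the primed analogues) together with $\alpha_{d+1}=\alpha'_{d+1}=\alpha''_{d+1}=0$.
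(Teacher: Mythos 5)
Your proposal is correct and is exactly the paper's proof: the paper also obtains this corollary by setting $i=d$ and $j=0$ in Proposition \ref{prop:abI}. Your explicit appeal to the conventions $\varphi_0=\varphi_{d+1}=0$ (and primed analogues) and $\alpha_{d+1}=\alpha'_{d+1}=\alpha''_{d+1}=0$ just spells out what the paper leaves implicit.
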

\begin{proof}
In Proposition
\ref{prop:abI}, assume
$i=d$ and $j=0$.
\end{proof}

\begin{proposition}
\label{prop:baI}
For $1 \leq i \leq d$ and $0 \leq j\leq d-i$,

\begin{eqnarray*}
&&\beta'_{i-1} \frac{\varphi'_{d-j}}{\varphi''_{j+1}} + \beta'_i a''_{j} +
\beta'_{i+1} \varphi_{d-j+1} = 
\beta'_{i+1} \varphi_{d-i-j},
\\
&&\beta''_{i-1} \frac{\varphi''_{d-j}}{\varphi_{j+1}} + \beta''_i a_{j} +
\beta''_{i+1} \varphi'_{d-j+1} = 
\beta''_{i+1} \varphi'_{d-i-j},
\\
&&\beta_{i-1} \frac{\varphi_{d-j}}{\varphi'_{j+1}} + \beta_i a'_{j} +
\beta_{i+1} \varphi''_{d-j+1} = 
\beta_{i+1} \varphi''_{d-i-j}
\end{eqnarray*}
and also
\begin{eqnarray*}
&&\beta''_{i-1} \frac{\varphi''_{d-j}}{\varphi'_{j+1}} + \beta''_i a'_{j} +
\beta''_{i+1} \varphi_{d-j+1} = 
\beta''_{i+1} \varphi_{d-i-j},
\\
&&\beta_{i-1} \frac{\varphi_{d-j}}{\varphi''_{j+1}} + \beta_i a''_{j} +
\beta_{i+1} \varphi'_{d-j+1} = 
\beta_{i+1} \varphi'_{d-i-j},
\\
&&\beta'_{i-1} \frac{\varphi'_{d-j}}{\varphi_{j+1}} + \beta'_i a_{j} +
\beta'_{i+1} \varphi''_{d-j+1} = 
\beta'_{i+1} \varphi''_{d-i-j}.
\end{eqnarray*}
\end{proposition}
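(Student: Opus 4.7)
The plan is to deduce Proposition \ref{prop:baI} from the already established Proposition \ref{prop:abI} by passing to the dual LR triple. The point is that the roles of $\alpha$ and $\beta$ swap under the adjoint, so a single relation on the dual side produces a $\beta$-relation on the original side.

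First I would recall the three compatibility tables. By Lemma \ref{lem:newPA}, the LR triple $\tilde A,\tilde B,\tilde C$ has parameter array obtained from $(\lbrace \varphi_i\rbrace;\lbrace \varphi'_i\rbrace;\lbrace \varphi''_i\rbrace)$ by the substitution $\varphi_i \mapsto \varphi_{d-i+1}$ (and similarly for $\varphi',\varphi''$). By Lemma \ref{lem:tracedataDual}, its trace data is obtained from $(\lbrace a_i\rbrace;\lbrace a'_i\rbrace;\lbrace a''_i\rbrace)$ by $a_i \mapsto a_{d-i}$ (and similarly for $a',a''$). By Lemma \ref{lem:ToeplitzDataD}, its Toeplitz data is obtained by swapping $\alpha \leftrightarrow \beta$ in each pair, so in particular $\alpha'_i \mapsto \beta'_i$, $\alpha''_i\mapsto \beta''_i$, $\alpha_i \mapsto \beta_i$.

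Next I would apply Proposition \ref{prop:abI} verbatim to $\tilde A,\tilde B,\tilde C$, with the summation index $i$ kept as is and $j$ kept as is, and then rewrite the resulting identities in terms of the original data via the three substitution rules above. For example the first equation of Proposition \ref{prop:abI},
\begin{equation*}
\alpha'_{i-1}\,\frac{\varphi'_{j+1}}{\varphi''_{d-j}} + \alpha'_i\, a''_{d-j} + \alpha'_{i+1}\,\varphi_j = \alpha'_{i+1}\,\varphi_{i+j+1},
\end{equation*}
becomes, after applying the substitutions,
\begin{equation*}
\beta'_{i-1}\,\frac{\varphi'_{d-j}}{\varphi''_{j+1}} + \beta'_i\, a''_{j} + \beta'_{i+1}\,\varphi_{d-j+1} = \beta'_{i+1}\,\varphi_{d-i-j},
\end{equation*}
which is precisely the first equation of Proposition \ref{prop:baI}. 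The remaining five equations of Proposition \ref{prop:baI} arise the same way from the other five equations of Proposition \ref{prop:abI}. One should verify that the constraints $1\leq i\leq d$ and $0\leq j\leq d-i$ are preserved under the passage to the dual (they are, since $i,j$ are not transformed and the dual has the same diameter $d$).

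The only subtlety — and the main thing to be careful about — is bookkeeping of the index substitutions so that the resulting identity has the precise form stated; in particular $\varphi_j \mapsto \varphi_{d-j+1}$ and $\varphi_{i+j+1}\mapsto \varphi_{d-i-j}$ need to be tracked correctly on the right-hand side, and the corresponding substitutions need to be applied consistently to the other five equations when permuting the roles of $A,B,C$ among their p-relatives. Alternatively, if one prefers a direct derivation avoiding the dual, the same six identities can be obtained by computing the $(d-i-j,d-j)$-entry of the matrix identity $(T')^{-1}C^\natural = C^\sharp (T')^{-1}$ (equivalent to \eqref{eq:NT=TS}), using that $(T')^{-1}$ has parameters $\lbrace \beta'_i\rbrace_{i=0}^d$, and then applying the result to the p-relatives of $A,B,C$ exactly as in the proof of Proposition \ref{prop:abI}.
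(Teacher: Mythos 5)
Your proposal is correct and is exactly the paper's argument: the paper's proof of Proposition \ref{prop:baI} is the one-line "Apply Proposition \ref{prop:abI} to the LR triple $\tilde A,\tilde B,\tilde C$," and your substitution bookkeeping via Lemmas \ref{lem:newPA}, \ref{lem:tracedataDual}, \ref{lem:ToeplitzDataD} simply makes that step explicit (and checks out).
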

\begin{proof} Apply Proposition
\ref{prop:abI}
to the LR triple $\tilde A, \tilde B, \tilde C$.
\end{proof}

\begin{corollary}
For $1\leq i\leq d-1$,
\begin{eqnarray*}
 &&
 \beta'_{i-1}\frac{\varphi'_{i}}{\varphi''_{d-i+1}} + \beta'_i a''_{d-i} +
 \beta'_{i+1} \varphi_{i+1} = 0,
\\
 &&
 \beta''_{i-1}\frac{\varphi''_{i}}{\varphi_{d-i+1}} + \beta''_i a_{d-i} +
 \beta''_{i+1} \varphi'_{i+1} = 0,
\\
 &&
 \beta_{i-1}\frac{\varphi_{i}}{\varphi'_{d-i+1}} + \beta_i a'_{d-i} +
 \beta_{i+1} \varphi''_{i+1} = 0
\end{eqnarray*}
and also
\begin{eqnarray*}
 &&
 \beta''_{i-1}\frac{\varphi''_{i}}{\varphi'_{d-i+1}} + \beta''_i a'_{d-i} +
 \beta''_{i+1} \varphi_{i+1} = 0,
\\
 &&
 \beta_{i-1}\frac{\varphi_{i}}{\varphi''_{d-i+1}} + \beta_i a''_{d-i} +
 \beta_{i+1} \varphi'_{i+1} = 0,
\\ 
 &&
 \beta'_{i-1}\frac{\varphi'_{i}}{\varphi_{d-i+1}} + \beta'_i a_{d-i} +
 \beta'_{i+1} \varphi''_{i+1} = 0.
\end{eqnarray*}
\end{corollary}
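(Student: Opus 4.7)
The plan is to derive this corollary directly from Proposition \ref{prop:baI} by a specialization of the index $j$, in exact parallel with how Corollary \ref{prop:abII} is deduced from Proposition \ref{prop:abI}. No new ideas are needed beyond that substitution and a single boundary observation.

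Concretely, I would fix $i$ with $1 \leq i \leq d-1$ and set $j = d-i$ in each of the six displayed identities of Proposition \ref{prop:baI}. With this choice we have $d-j = i$, $j+1 = d-i+1$, $d-j+1 = i+1$, and $d-i-j = 0$. Substituting these into, for instance, the first equation of Proposition \ref{prop:baI}, the left-hand side becomes
\begin{equation*}
\beta'_{i-1}\frac{\varphi'_{i}}{\varphi''_{d-i+1}} + \beta'_i a''_{d-i} + \beta'_{i+1} \varphi_{i+1},
\end{equation*}
and the right-hand side is $\beta'_{i+1}\varphi_{0}$. By the convention $\varphi_0 = 0$ recorded in Definition \ref{def:pa} (and likewise $\varphi'_0 = \varphi''_0 = 0$), the right-hand side vanishes, yielding the first asserted identity. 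The remaining five identities of the corollary are obtained in exactly the same way from the remaining five equations in Proposition \ref{prop:baI}.

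Since the argument is purely a substitution of $j = d-i$ combined with the boundary convention on the parameter sequences, there is really no obstacle: the entire content of the corollary is already embedded in Proposition \ref{prop:baI}, and one just needs to verify that the constraint $0 \leq j \leq d-i$ is respected, which holds because $j = d-i$ lies at the upper endpoint of the allowed range. Thus the proof reduces to a one-line invocation of Proposition \ref{prop:baI} with the specified value of $j$.
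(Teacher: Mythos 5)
Your proof is correct and is exactly the paper's own argument: the paper likewise obtains this corollary by setting $j=d-i$ in Proposition \ref{prop:baI}, with the right-hand side vanishing because $\varphi_{d-i-j}=\varphi_0=0$ (and similarly for the primed sequences).
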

\begin{proof} 
In Proposition
\ref{prop:baI}, assume
$i \leq d-1$ and
$j=d-i$.
\end{proof}

\begin{corollary}
 For $1\leq i\leq d-1$,
\begin{eqnarray*}
&&
\beta'_{i-1}\frac{\varphi'_d}{\varphi''_1} + \beta'_i a''_0
= \beta'_{i+1} \varphi_{d-i},
\qquad \quad 
\beta''_{i-1}\frac{\varphi''_d}{\varphi'_1} + \beta''_i a'_0
= \beta''_{i+1} \varphi_{d-i},
\\
&&
\beta''_{i-1}\frac{\varphi''_d}{\varphi_1} + \beta''_i a_0
= \beta''_{i+1} \varphi'_{d-i},
\qquad \quad 
\beta_{i-1}\frac{\varphi_d}{\varphi''_1} + \beta_i a''_0
= \beta_{i+1} \varphi'_{d-i},
\\
&&
\beta_{i-1}\frac{\varphi_d}{\varphi'_1} + \beta_i a'_0
= \beta_{i+1} \varphi''_{d-i},
\qquad \quad
\beta'_{i-1}\frac{\varphi'_d}{\varphi_1} + \beta'_i a_0
= \beta'_{i+1} \varphi''_{d-i}.
\end{eqnarray*}
\end{corollary}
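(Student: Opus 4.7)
The plan is to derive the corollary as a direct specialization of Proposition \ref{prop:baI} at $j=0$. Setting $j=0$ in each of the six identities of that proposition produces, respectively, an equation of the form
\begin{eqnarray*}
\beta'_{i-1}\frac{\varphi'_d}{\varphi''_1} + \beta'_i a''_0 + \beta'_{i+1}\varphi_{d+1} = \beta'_{i+1}\varphi_{d-i},
\end{eqnarray*}
together with its five analogues obtained by cycling through the primed/double-primed parameters (as listed in the second and third rows of Proposition \ref{prop:baI}). The hypothesis $1 \leq i \leq d-1$ in the corollary matches the range $1 \leq i \leq d$ of the proposition combined with $j=0$ (which requires $j \leq d-i$, i.e.\ $i \leq d$); the stricter bound $i \leq d-1$ is what is needed so that $d-i \geq 1$ and the right-hand side $\beta'_{i+1}\varphi_{d-i}$ refers to a bona fide entry of the parameter sequence.

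The key observation that eliminates the third summand on the left is the convention from Definition \ref{def:pa}, applied to each of the three LR pairs $A,B$; $B,C$; $C,A$: namely
\begin{eqnarray*}
\varphi_{d+1} = 0, \qquad \varphi'_{d+1} = 0, \qquad \varphi''_{d+1} = 0.
\end{eqnarray*}
Substituting these into the six specializations at $j=0$ immediately kills the term $\beta'_{i+1}\varphi_{d+1}$ (and its five analogues), and what remains is exactly the six equations in the corollary.

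There is no real obstacle here; the only points of care are (i) verifying that the indexing matches up correctly when $j=0$, so that we land on $\varphi_{d+1}$ rather than some other boundary value, and (ii) confirming that the restriction $1 \leq i \leq d-1$ (rather than $1 \leq i \leq d$) is the right range, since at $i=d$ the right-hand side would involve $\varphi_0 = 0$ and the statement would collapse to information already captured by Corollary \ref{prop:abIV} applied to the adjoint triple. With these two bookkeeping points noted, the proof reduces to a one-line citation: specialize Proposition \ref{prop:baI} to $j=0$ and invoke $\varphi_{d+1} = \varphi'_{d+1} = \varphi''_{d+1} = 0$.
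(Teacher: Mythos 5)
Your proposal is correct and is exactly the paper's argument: the paper's proof of this corollary reads ``In Proposition \ref{prop:baI}, assume $i\leq d-1$ and $j=0$,'' with the vanishing of $\varphi_{d+1}$, $\varphi'_{d+1}$, $\varphi''_{d+1}$ (from Definition \ref{def:pa}) doing the same work you describe. Your bookkeeping on the index range and on why $i=d$ is excluded is also accurate.
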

\begin{proof} 
In Proposition
\ref{prop:baI}, assume
$i \leq d-1$ and
$j=0$.
\end{proof}

\begin{corollary}
For $d\geq 1$,
\begin{eqnarray*}
&& \beta'_{d-1} \frac{\varphi'_d}{\varphi''_1}+ \beta'_d a''_0 = 0,
\quad \qquad 
 \beta''_{d-1} \frac{\varphi''_d}{\varphi_1}+ \beta''_d a_0 =  0,
 \quad \qquad
 \beta_{d-1} \frac{\varphi_d}{\varphi'_1}+ \beta_d a'_0 =  0,
\\
&&
 \beta''_{d-1} \frac{\varphi''_d}{\varphi'_1}+ \beta''_d a'_0 = 0,
\qquad \quad
 \beta_{d-1} \frac{\varphi_d}{\varphi''_1}+ \beta_d a''_0 = 0,
\qquad \quad
\beta'_{d-1} \frac{\varphi'_d}{\varphi_1}+ \beta'_d a_0 = 0.
\end{eqnarray*}
\end{corollary}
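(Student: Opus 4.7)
The plan is to obtain these six identities as a direct specialization of Proposition~\ref{prop:baI}, by setting $i=d$ and $j=0$ throughout.

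First I would recall the relevant conventions: by Definition~\ref{def:pa} we have $\varphi_0 = \varphi'_0 = \varphi''_0 = 0$ and $\varphi_{d+1} = \varphi'_{d+1} = \varphi''_{d+1} = 0$, and by Definition~\ref{def:TTT1} we have $\beta_{d+1} = \beta'_{d+1} = \beta''_{d+1} = 0$. With $d \geq 1$, the choice $i=d$, $j=0$ is admissible (it satisfies $1 \leq i \leq d$ and $0 \leq j \leq d-i$).

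Next I would substitute $i=d$, $j=0$ into the six equations of Proposition~\ref{prop:baI}. Consider for instance the first equation,
\[
\beta'_{i-1} \frac{\varphi'_{d-j}}{\varphi''_{j+1}} + \beta'_i a''_{j} + \beta'_{i+1} \varphi_{d-j+1} = \beta'_{i+1} \varphi_{d-i-j}.
\]
With $i=d$, $j=0$ the right-hand side becomes $\beta'_{d+1}\varphi_0 = 0$, and the third term on the left becomes $\beta'_{d+1}\varphi_{d+1} = 0$, leaving precisely
\[
\beta'_{d-1}\frac{\varphi'_d}{\varphi''_1} + \beta'_d a''_0 = 0,
\]
which is the first identity of the corollary. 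The remaining five identities follow in exactly the same way from the five remaining equations of Proposition~\ref{prop:baI}.

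There is no real obstacle here; the work is purely a matter of checking that the boundary terms vanish under the stated conventions and reading off the result. The role of the hypothesis $d \geq 1$ is simply to ensure that $\beta'_{d-1}$ (and its primed variants) are defined, so that the first term on the left of each identity makes sense.
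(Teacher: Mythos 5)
Your proposal is correct and matches the paper's proof exactly: the paper also obtains the corollary by setting $i=d$ and $j=0$ in Proposition~\ref{prop:baI}, with the boundary terms vanishing by the conventions $\varphi_0=\varphi_{d+1}=0$ and $\beta'_{d+1}=\beta''_{d+1}=\beta_{d+1}=0$.
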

\begin{proof}
In Proposition
\ref{prop:baI}, assume
$i =d$ and
$j=0$.
\end{proof}

\noindent We have displayed many equations
relating the
 parameter array
(\ref{eq:paLRT}),
trace data
(\ref{eq:tracedata}), and Toeplitz data
(\ref{eq:ToeplitzData}).
From these equations
it is apparent that we can improve on Proposition 
\ref{prop:IsoParTrace}.
We now give some results in this direction. To avoid
trivialities we assume $d\geq 1$.

\begin{proposition}
\label{prop:extra}
Assume $d\geq 1$. Then the LR triple $A,B,C$ is
uniquely determined up to isomorphism
by its parameter
array along with any one of the following 12 scalars:
\begin{eqnarray}
\label{eq:12values}
a_0, a'_0, a''_0;
\quad \qquad 
a_d, a'_d, a''_d;
\quad \qquad 
\alpha_1, \alpha'_1, \alpha''_1;
\quad \qquad 
\beta_1, \beta'_1, \beta''_1.
\end{eqnarray}
\end{proposition}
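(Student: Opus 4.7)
The plan is to combine two ingredients established earlier. Proposition \ref{prop:IsoParTrace} asserts that an LR triple is determined up to isomorphism by its parameter array together with its trace data. Proposition \ref{lem:ai}, on the other hand, expresses every entry of the trace data as a linear combination—whose coefficients come from the parameter array—of the six scalars $\alpha_1, \alpha'_1, \alpha''_1, \beta_1, \beta'_1, \beta''_1$. Consequently it suffices to show that the parameter array together with any single scalar from the list (\ref{eq:12values}) determines these six scalars.

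My first step is to observe that these six scalars collapse to a single unknown in the presence of the parameter array. By Corollary \ref{cor:alphaOneInv}, once the parameter array is fixed, any one of $\alpha_1, \alpha'_1, \alpha''_1$ determines the other two via $\alpha_1/\varphi_1 = \alpha'_1/\varphi'_1 = \alpha''_1/\varphi''_1$, and likewise any one of $\beta_1, \beta'_1, \beta''_1$ determines the other two; all divisions are valid because the $\varphi$-values are nonzero by Definition \ref{def:pa}. Moreover, line (\ref{eq:list3}) of Lemma \ref{lem:alpha0} gives $\beta_1 = -\alpha_1$, $\beta'_1 = -\alpha'_1$, $\beta''_1 = -\alpha''_1$, so the alpha-group and the beta-group are linked by a sign change. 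Hence any one of the six scalars $\alpha_1, \alpha'_1, \alpha''_1, \beta_1, \beta'_1, \beta''_1$ produces all six.

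To handle the remaining six cases $a_0, a'_0, a''_0, a_d, a'_d, a''_d$, I would invoke Corollary \ref{lem:a0ad}, which records identities such as $a_0 = \beta'_1 \varphi''_d = \beta''_1 \varphi'_d$ and $a_d = \alpha'_1 \varphi''_1 = \alpha''_1 \varphi'_1$, together with the analogous expressions for $a'_0, a''_0, a'_d, a''_d$. In each case the relevant $\varphi$ in the denominator is nonzero, so one of $\beta_1, \beta'_1, \beta''_1$ (from the $a_0$-type scalars) or one of $\alpha_1, \alpha'_1, \alpha''_1$ (from the $a_d$-type scalars) is recovered, returning us to the situation of the previous paragraph.

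Once $\alpha_1, \alpha'_1, \alpha''_1, \beta_1, \beta'_1, \beta''_1$ are all in hand, Proposition \ref{lem:ai} produces the full trace data, and Proposition \ref{prop:IsoParTrace} delivers uniqueness up to isomorphism. No step presents any real obstacle; the entire argument is a bookkeeping consequence of the relations gathered in Sections 14 and 15, and the hypothesis $d\geq 1$ is exactly what is needed to ensure that $\alpha_1$ and $\beta_1$ (and their primed analogs) are well defined.
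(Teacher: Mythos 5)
Your proposal is correct and follows essentially the same route as the paper's proof, which simply cites Proposition \ref{prop:IsoParTrace}, line (\ref{eq:list3}), Proposition \ref{lem:ai}, and Corollary \ref{lem:a0ad}. The only cosmetic difference is that you link $\alpha_1,\alpha'_1,\alpha''_1$ via Corollary \ref{cor:alphaOneInv}, whereas the same linkage can also be read off by chaining the identities in Corollary \ref{lem:a0ad}; either way the argument is sound.
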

\begin{proof} Use
Proposition
\ref{prop:IsoParTrace} along with
(\ref{eq:list3}),
Proposition
\ref{lem:ai},
and Corollary
\ref{lem:a0ad}.
\end{proof}

\noindent In our discussion going forward,
among the scalars 
(\ref{eq:12values})
we will put the emphasis on
$\alpha_1$. We call $\alpha_1$ the {\it first Toeplitz number}
of the LR triple $A,B,C$.

\begin{lemma}
\label{lem:SpittingField}
Assume $d\geq 1$.
For the LR triple $A,B,C$ let $\mathbb K$ denote a
subfield of $\mathbb F$ that contains
the scalars
{\rm (\ref{eq:paLRT})} and the first Toeplitz number
$\alpha_1$.
Then there exists an LR triple over 
$\mathbb K$ that has parameter array
{\rm (\ref{eq:paLRT})} and first Toeplitz number
$\alpha_1$.
\end{lemma}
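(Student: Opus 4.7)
The plan is to exhibit the desired LR triple as a matrix triple acting on $\mathbb K^{d+1}$. The key observation is that when $A,B,C$ is represented with respect to an $(A,B)$-basis of $V$ (using the row labelled $(A,B)$ of Proposition \ref{prop:matrixRep}), every matrix entry already lies in $\mathbb K$. To see this I first note, from Corollary \ref{cor:alphaOneInv}, that $\alpha'_1 = \alpha_1 \varphi'_1 / \varphi_1$ and $\alpha''_1 = \alpha_1 \varphi''_1 / \varphi_1$ lie in $\mathbb K$. Then, by Corollary \ref{cor:aiVar2} (or the explicit formula $a_i = \alpha'_1(\varphi''_{d-i+1}-\varphi''_{d-i})$ displayed in the introduction), each $a_i$ is a $\mathbb K$-linear combination of $\alpha'_1$ and the $\varphi''_j$, hence lies in $\mathbb K$.

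Next let $A^\natural, B^\natural, C^\natural \in {\rm Mat}_{d+1}(\mathbb K)$ denote the matrices prescribed by Proposition \ref{prop:matrixRep}, row $(A,B)$. Set $V' = \mathbb K^{d+1}$ with standard basis $\lbrace e_i \rbrace_{i=0}^d$, and let $A', B', C' \in {\rm End}_{\mathbb K}(V')$ be the linear maps represented by $A^\natural, B^\natural, C^\natural$ in this basis. After extension of scalars to $\mathbb F$, the $\mathbb F$-linear bijection $V' \otimes_{\mathbb K} \mathbb F \to V$ sending $e_i \otimes 1 \mapsto v_i$ (for a chosen $(A,B)$-basis $\lbrace v_i \rbrace_{i=0}^d$ of $V$) intertwines the extensions of $A', B', C'$ with $A, B, C$; thus the extended triple is isomorphic to $A, B, C$, and in particular is an LR triple over $\mathbb F$ with parameter array $(\lbrace \varphi_i \rbrace_{i=1}^d;\lbrace \varphi'_i \rbrace_{i=1}^d;\lbrace \varphi''_i \rbrace_{i=1}^d)$ and first Toeplitz number $\alpha_1$.

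It remains to verify that $A', B', C'$ is itself an LR triple over $\mathbb K$, rather than merely becoming one after base change. For the pair $A', B'$ this is immediate from Lemma \ref{lem:ABmatrix} applied to $A^\natural, B^\natural$, and the resulting parameter sequence is $\lbrace \varphi_i \rbrace_{i=1}^d$. For the pair $B', C'$, I will consider the $\mathbb K$-subspaces $W'_i = (B')^{d-i}V' \cap (C')^{i}V'$ for $0 \leq i \leq d$. After extension of scalars, these become the components of the $(B,C)$-decomposition of $V' \otimes \mathbb F \cong V$, which are one-dimensional. Since the dimension of a subspace cut out by kernels, images, and intersections of matrices with entries in $\mathbb K$ is invariant under enlarging the scalar field from $\mathbb K$ to $\mathbb F$, each $W'_i$ is one-dimensional over $\mathbb K$, and $\lbrace W'_i \rbrace_{i=0}^d$ is a decomposition of $V'$ that is lowered by $B'$ and raised by $C'$. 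The same argument handles the pair $C', A'$, so $A', B', C'$ is an LR triple over $\mathbb K$.

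Finally, the parameter sequences of the three LR pairs are the eigenvalues of $C'B'$, $A'C'$, $B'A'$ on the one-dimensional components $W'_i$; these eigenvalues agree with the corresponding eigenvalues after base change, and so equal $\lbrace \varphi'_i \rbrace_{i=1}^d$ and $\lbrace \varphi''_i \rbrace_{i=1}^d$ (and $\lbrace \varphi_i \rbrace_{i=1}^d$). Similarly, a $(C',B')$-basis of $V'$ and a compatible $(C',A')$-basis of $V'$ extend to the corresponding bases of $V' \otimes \mathbb F$, so the transition matrix $T$ of Definition \ref{def:TTT}(i), and hence the first Toeplitz number $\alpha_1$, is unchanged. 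The main obstacle is the descent step establishing one-dimensionality of the $W'_i$ over $\mathbb K$; this rests on the elementary linear-algebra fact that the $\mathbb K$-rank of a matrix with entries in $\mathbb K$ equals its $\mathbb F$-rank, together with the analogous statement for intersections of such subspaces.
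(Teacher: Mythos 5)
Your proof is correct and takes essentially the same approach as the paper: represent $A,B,C$ with respect to an $(A,B)$-basis via the first row of the table in Proposition \ref{prop:matrixRep}, observe that all entries lie in $\mathbb K$, and read off the resulting matrices as an LR triple over $\mathbb K$. You additionally spell out two points the paper leaves implicit --- that the $a_i$ lie in $\mathbb K$ (via Corollary \ref{cor:alphaOneInv} and Proposition \ref{lem:ai}) and that the LR-triple property descends from $\mathbb F$ to $\mathbb K$ by rank invariance under field extension --- both of which are sound.
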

\begin{proof} Represent $A,B,C$ by matrices, using
 the first row in the table of
Proposition
\ref{prop:matrixRep}. For the resulting three matrices
each entry
is in $\mathbb K$. So each matrix represents
a $\mathbb K$-linear transformation of a vector space 
over $\mathbb K$. The resulting
three $\mathbb K$-linear transformations form
an LR triple over $\mathbb K$
 that has parameter array
(\ref{eq:paLRT}) and first Toeplitz number
$\alpha_1$.
\end{proof}

\begin{lemma}
\label{lem:whatisIso}
For the LR triple $A,B,C$ the following are equivalent:
\begin{enumerate}
\item[\rm (i)] $\varphi_i = \varphi'_i = \varphi''_i
$ for $1 \leq i \leq d$;
\item[\rm (ii)] the p-relatives of $A,B,C$ are mutually
isomorphic;
\item[\rm (iii)] the n-relatives of $A,B,C$ are mutually
isomorphic.
\end{enumerate}
\noindent Assume that {\rm (i)--(iii)} hold. Then for
$0 \leq i \leq d$,
\begin{eqnarray}
\label{eq:aaaCom}
a_i = a'_i = a''_i, 
\qquad \qquad
\alpha_i = \alpha'_i = \alpha''_i, 
\qquad \qquad
\beta_i = \beta'_i = \beta''_i.
\end{eqnarray}
\end{lemma}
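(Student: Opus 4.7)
The plan is to dispatch the case $d=0$ first (where every LR triple is trivial, all relatives coincide with the trivial LR triple, and the sequences indexed from $1$ to $d$ are empty so all three conditions hold vacuously), and then assume $d \geq 1$ throughout. Under this assumption the main tool will be Proposition~\ref{prop:extra}, which classifies LR triples of diameter $d \geq 1$ by parameter array together with the first Toeplitz number $\alpha_1$.

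For the easy directions, I would argue that isomorphic LR triples must have the same parameter array (immediate from Definitions~\ref{def:isoLRT} and~\ref{def:LRTpar}, or by tracing the construction through Definition~\ref{def:pa}). Applying this to the three p-relatives of $A,B,C$ displayed in Lemma~\ref{lem:ABCvar} yields, from condition~(ii), that the cyclic permutations of $(\{\varphi_i\};\{\varphi'_i\};\{\varphi''_i\})$ all coincide, which is condition~(i). Applying the same observation to the three n-relatives in the bottom half of the table of Lemma~\ref{lem:ABCvar} yields condition~(i) from condition~(iii) after reindexing $i \mapsto d-i+1$.

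For the harder implication (i)$\Rightarrow$(ii), assume $\varphi_i=\varphi'_i=\varphi''_i$ for $1\leq i\leq d$. The three p-relatives then have identical parameter arrays by Lemma~\ref{lem:ABCvar}. I next use Corollary~\ref{cor:alphaOneInv}: the relations $\alpha_1/\varphi_i=\alpha'_1/\varphi'_i=\alpha''_1/\varphi''_i$ collapse under~(i) to $\alpha_1=\alpha'_1=\alpha''_1$, and by Lemma~\ref{lem:ToeplitzData} these are the first Toeplitz numbers of $A,B,C$; $B,C,A$; $C,A,B$ respectively. Proposition~\ref{prop:extra} then forces the three p-relatives to be mutually isomorphic. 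The implication (i)$\Rightarrow$(iii) runs analogously: Corollary~\ref{cor:alphaOneInv} also gives $\beta_1=\beta'_1=\beta''_1$ under~(i), and inspection of the bottom half of the table in Lemma~\ref{lem:ToeplitzData} shows that the first Toeplitz numbers of the n-relatives $C,B,A$; $A,C,B$; $B,A,C$ are $\beta'_1,\beta''_1,\beta_1$ (the $\alpha$- and $\beta$-rows are swapped for n-relatives). Together with matching parameter arrays this feeds Proposition~\ref{prop:extra} once more.

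For the final display~(\ref{eq:aaaCom}) I would just substitute. The equalities $\alpha_i=\alpha'_i=\alpha''_i$ and $\beta_i=\beta'_i=\beta''_i$ are read off directly from Lemma~\ref{lem:alphaInv}, because under~(i) the denominators $\varphi_1\cdots\varphi_i$, $\varphi'_1\cdots\varphi'_i$, $\varphi''_1\cdots\varphi''_i$ all coincide (and similarly for the $\beta$-sequences). Finally, $a_i=a'_i=a''_i$ follows from Proposition~\ref{lem:ai} by evaluating either expression for $a_{d-i}$, $a'_{d-i}$, $a''_{d-i}$ using~(i) together with $\alpha_1=\alpha'_1=\alpha''_1$ and $\beta_1=\beta'_1=\beta''_1$ established above. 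The main obstacle is really only the leap from equal parameter arrays to isomorphism, which is precisely what Proposition~\ref{prop:extra} is designed to handle; without it, parameter arrays do not determine an LR triple and the proof would stall.
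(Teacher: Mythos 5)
Your overall strategy — reduce to Proposition~\ref{prop:extra} by matching parameter arrays and first Toeplitz numbers — is exactly the paper's, and your derivation of~(\ref{eq:aaaCom}) via Lemma~\ref{lem:alphaInv} and Proposition~\ref{lem:ai} is a valid (slightly more computational) alternative to the paper's appeal to the tables in Lemmas~\ref{lem:tracedataAlt}, \ref{lem:tracedataDual}, \ref{lem:ToeplitzData}, \ref{lem:ToeplitzDataD}. However, there is a gap in your identification of the relatives.

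By Definition~\ref{def:prel}, the p-relatives of $A,B,C$ are not just the three cyclic permutations $A,B,C$; $B,C,A$; $C,A,B$ from the top half of the table in Lemma~\ref{lem:ABCvar}: they also include the three adjoint triples $\tilde C,\tilde B,\tilde A$; $\tilde A,\tilde C,\tilde B$; $\tilde B,\tilde A,\tilde C$ on $V^*$ from the \emph{bottom} half of the table in Lemma~\ref{lem:newPA} (the orientation flips under the adjoint because it is an antiisomorphism). Likewise the n-relatives include $\tilde A,\tilde B,\tilde C$; $\tilde B,\tilde C,\tilde A$; $\tilde C,\tilde A,\tilde B$ in addition to $C,B,A$; $A,C,B$; $B,A,C$. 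Your proof of (i)$\Rightarrow$(ii) therefore only establishes that three of the six p-relatives are mutually isomorphic. The directions (ii)$\Rightarrow$(i) and (iii)$\Rightarrow$(i) are unaffected, since the hypothesis there is only made stronger by the larger set. To close the gap in (i)$\Rightarrow$(ii), note from Lemma~\ref{lem:newPA} that under~(i) the dual p-relatives also have parameter array $(\{\varphi_i\};\{\varphi_i\};\{\varphi_i\})$, and from Lemma~\ref{lem:ToeplitzDataD} that their first Toeplitz numbers are $\alpha'_1,\alpha''_1,\alpha_1$ respectively, which coincide with $\alpha_1$ by Corollary~\ref{cor:alphaOneInv}; then Proposition~\ref{prop:extra} applies to all six. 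The analogous repair for (i)$\Rightarrow$(iii) uses the top half of the table in Lemma~\ref{lem:newPA} together with $\beta_1=\beta'_1=\beta''_1$. This is why the paper's proof cites Lemma~\ref{lem:newPA} alongside Lemma~\ref{lem:ABCvar}.
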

\begin{proof} Assume $d\geq 1$; otherwise (i)--(iii)
and
(\ref{eq:aaaCom}) 
all hold.
\\
\noindent 
${\rm (i)}\Rightarrow {\rm (ii)}$  
We have $\alpha_1 = \alpha'_1=\alpha''_1$ by
Corollary
\ref{cor:alphaOneInv}.
The result follows by
Proposition
\ref{prop:extra}, along with
Lemmas
\ref{lem:ABCvar},
\ref{lem:newPA}
and
Definition
\ref{def:prel}.
\\
\noindent 
${\rm (ii)}\Rightarrow {\rm (i)}$ By 
Lemmas
\ref{lem:ABCvar},
\ref{lem:newPA}
and
Definition
\ref{def:prel}.
\\
\noindent 
${\rm (i)}\Leftrightarrow {\rm (iii)}$ Similar
to the proof of
${\rm (i)}\Leftrightarrow {\rm (ii)}$ above.
\\
Assume that (i)--(iii) hold. Then
(\ref{eq:aaaCom}) holds by
Lemmas
\ref{lem:tracedataAlt},
\ref{lem:tracedataDual},
\ref{lem:ToeplitzData},
\ref{lem:ToeplitzDataD}.
\end{proof}

\noindent
We now compute the Toeplitz data
(\ref{eq:ToeplitzData}) 
in terms of the  
 parameter array
(\ref{eq:paLRT}) and any scalar
from
(\ref{eq:12values}).
 We will focus on
 $\lbrace \alpha_i\rbrace_{i=0}^d$  and
 $\lbrace \beta_i\rbrace_{i=0}^d$.

\begin{proposition} 
\label{prop:AlphaRecursion}
For $d\geq 1$ the following {\rm (i), (ii)} hold.
\begin{enumerate}
\item[\rm (i)]
The sequence $\lbrace \alpha_i\rbrace_{i=0}^d$ 
is computed as follows: 
$\alpha_0=1$ and $\alpha_1$ is from
Corollary
\ref{lem:a0ad}. Moreover
\begin{eqnarray}
&&\alpha_{i+1} = \frac{
\alpha_1 \alpha_{i}\varphi''_1 + \alpha_{i-1}
\varphi_1 (\varphi'_d)^{-1}}{\varphi''_{i+1}}
\qquad \qquad 
(1 \leq i \leq d-1).
\label{eq:alphaRec}
\end{eqnarray}
\item[\rm (ii)]
The sequence $\lbrace \beta_i\rbrace_{i=0}^d$ is
computed as follows:
$\beta_0=1$ and $\beta_1$ is from
Corollary 
\ref{lem:a0ad}. Moreover
\begin{eqnarray}
&&\beta_{i+1} = \frac{
\beta_1 \beta_{i}\varphi''_d + \beta_{i-1}
\varphi_d (\varphi'_1)^{-1}}{\varphi''_{d-i}}
\qquad \qquad (1 \leq i \leq d-1).
\label{eq:betaRec}
\end{eqnarray}
\end{enumerate}
\end{proposition}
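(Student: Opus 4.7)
The plan is to derive both recursions by specializing the equation families of Propositions \ref{prop:abI} and \ref{prop:baI} at $j=0$, and then using Corollary \ref{lem:a0ad} to replace the trace entries $a'_d$ and $a'_0$ in favor of $\alpha_1$ and $\beta_1$, respectively. The base cases $\alpha_0=\beta_0=1$ are immediate from Lemma \ref{lem:alpha0}, so only the recursions require argument.

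For part (i), I start from the third of the six displayed equations in Proposition \ref{prop:abI}, namely
\begin{eqnarray*}
\alpha_{i-1}\frac{\varphi_{j+1}}{\varphi'_{d-j}}+\alpha_i\, a'_{d-j}+\alpha_{i+1}\varphi''_j
= \alpha_{i+1}\varphi''_{i+j+1},
\end{eqnarray*}
valid for $1\leq i\leq d$ and $0\leq j\leq d-i$. Specializing to $j=0$ (allowed when $i\leq d-1$) and invoking the convention $\varphi''_0=0$ collapses the left-hand side to $\alpha_{i-1}\varphi_1/\varphi'_d+\alpha_i a'_d$. Corollary \ref{lem:a0ad} supplies $a'_d=\alpha_1\varphi''_1$, and dividing by $\varphi''_{i+1}$ (which is nonzero since $2\leq i+1\leq d$) yields exactly the recursion (\ref{eq:alphaRec}).

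For part (ii), I apply the parallel procedure to the third displayed equation of Proposition \ref{prop:baI},
\begin{eqnarray*}
\beta_{i-1}\frac{\varphi_{d-j}}{\varphi'_{j+1}}+\beta_i\, a'_j+\beta_{i+1}\varphi''_{d-j+1}
= \beta_{i+1}\varphi''_{d-i-j}.
\end{eqnarray*}
Setting $j=0$ and using $\varphi''_{d+1}=0$ kills the third term on the left; Corollary \ref{lem:a0ad} gives $a'_0=\beta_1\varphi''_d$, and dividing by $\varphi''_{d-i}$ (nonzero since $1\leq d-i\leq d-1$) produces (\ref{eq:betaRec}). Alternatively, part (ii) can be recovered from part (i) applied to the dual triple $\tilde A,\tilde B,\tilde C$, since Lemmas \ref{lem:newPA} and \ref{lem:ToeplitzDataD} exchange $\alpha$ with $\beta$ and invert the parameter array.

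There is no genuine obstacle here: all of the structural work is already encoded in Propositions \ref{prop:abI} and \ref{prop:baI}, and the proof amounts to a single specialization together with one substitution. The only small point of care is verifying that the denominators $\varphi''_{i+1}$ and $\varphi''_{d-i}$ are nonzero for the range $1\leq i\leq d-1$, which is built into the definition of the parameter array.
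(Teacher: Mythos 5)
Your proposal is correct and follows essentially the same route as the paper: the paper's proof cites Corollary \ref{prop:abIII} (row 3, column 1), which is precisely the $j=0$ specialization of Proposition \ref{prop:abI} that you perform directly, followed by the same substitution $a'_d=\alpha_1\varphi''_1$ from Corollary \ref{lem:a0ad}; part (ii) is handled analogously. No gaps.
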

\begin{proof} (i) We verify
(\ref{eq:alphaRec}). Consider the displayed equation
in Corollary
\ref{prop:abIII} (row 3, column 1). In this equation
solve for $\alpha_{i+1}$, 
and eliminate $a'_d$ using the equation
$a'_d = \alpha_1 \varphi''_1$ from Corollary
\ref{lem:a0ad}.
\\
\noindent (ii) Similar to the proof of (i) above.
\end{proof}

\noindent We now give some more ways to compute
 $\lbrace \alpha_i\rbrace_{i=0}^d$  and
 $\lbrace \beta_i\rbrace_{i=0}^d$.

\begin{proposition} 
\label{prop:AlphaRecursion2}
For $d\geq 1$ the following {\rm (i), (ii)} hold.
\begin{enumerate}
\item[\rm (i)]
The sequence $\lbrace \alpha_i\rbrace_{i=0}^d$ 
is computed as follows: 
$\alpha_0=1$ and $\alpha_1$ is from
Corollary
\ref{lem:a0ad}. Moreover
\begin{eqnarray}
&&\alpha_{i+1} = \frac{
\alpha_1 \alpha_{i}(\varphi''_{d-i}-\varphi''_{d-i+1})
- \alpha_{i-1}
\varphi_{d-i+1}(\varphi'_{i})^{-1}}{\varphi''_{d-i}}
 \qquad 
(1 \leq i \leq d-1).
\label{eq:alphaRec2}
\end{eqnarray}
\item[\rm (ii)]
The sequence $\lbrace \beta_i\rbrace_{i=0}^d$ is
computed as follows:
$\beta_0=1$ and $\beta_1$ is from
Corollary 
\ref{lem:a0ad}. Moreover
\begin{eqnarray}
&&\beta_{i+1} = \frac{
\beta_1 \beta_{i}(\varphi''_{i+1}-\varphi''_{i})
- \beta_{i-1}
\varphi_{i}(\varphi'_{d-i+1})^{-1}}{\varphi''_{i+1}}
 \qquad 
(1 \leq i \leq d-1).
\label{eq:betaRec2}
\end{eqnarray}
\end{enumerate}
\end{proposition}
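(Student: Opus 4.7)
The plan is to mirror the proof of Proposition \ref{prop:AlphaRecursion}: locate a single linear relation in the preceding catalogue of identities that simultaneously involves $\alpha_{i-1}$, $\alpha_i$, $\alpha_{i+1}$, and some trace coordinate $a^?_?$, then eliminate the trace coordinate via the expression for $a^?_?$ in terms of $\alpha_1$ and consecutive $\varphi''$'s that comes from the partial-sum identities of Corollary \ref{cor:aiVar2}. The source relation that yields the stated recursion is the third equation of Corollary \ref{prop:abII}, namely
\begin{equation*}
\alpha_{i-1}\frac{\varphi_{d-i+1}}{\varphi'_i} + \alpha_i\, a'_{i} + \alpha_{i+1}\,\varphi''_{d-i} = 0 \qquad (1 \leq i \leq d-1).
\end{equation*}
From Corollary \ref{cor:aiVar2} the partial sums $a'_d + a'_{d-1} + \cdots + a'_{d-j} = \alpha_1\varphi''_{j+1}$ together with $\varphi''_0=0$ give, by telescoping, $a'_{d-j} = \alpha_1(\varphi''_{j+1}-\varphi''_j)$ for $0 \leq j \leq d$. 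Reindexing with $j = d-i$ produces $a'_i = \alpha_1(\varphi''_{d-i+1}-\varphi''_{d-i})$.

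Substituting this value of $a'_i$ into the displayed relation and solving for $\alpha_{i+1}$ (using that $\varphi''_{d-i}\neq 0$ for $1 \leq i \leq d-1$) yields exactly the formula \eqref{eq:alphaRec2}. The initial data $\alpha_0=1$ is \eqref{eq:list1}, and $\alpha_1$ itself is determined by the parameter array together with one of the data in \eqref{eq:12values} via Corollary \ref{lem:a0ad}. This proves (i).

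For (ii), rather than rerun the computation, I would invoke duality: apply part (i) to the LR triple $\tilde A, \tilde B, \tilde C$ on $V^*$. By Lemma \ref{lem:newPA} this dual triple has parameter array $(\{\varphi_{d-i+1}\};\{\varphi'_{d-i+1}\};\{\varphi''_{d-i+1}\})$, and by Lemma \ref{lem:ToeplitzDataD} its Toeplitz data interchanges the roles of $\alpha$ and $\beta$. Substituting $\varphi_i\mapsto \varphi_{d-i+1}$, $\varphi'_i\mapsto \varphi'_{d-i+1}$, $\varphi''_i\mapsto \varphi''_{d-i+1}$, $\alpha_i\mapsto \beta_i$, $\alpha_1\mapsto \beta_1$ into \eqref{eq:alphaRec2} and reindexing converts the recursion for $\alpha$ into the claimed recursion \eqref{eq:betaRec2} for $\beta$. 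The initial data are handled as in (i).

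There is no genuine obstacle here: the essential content is identifying the correct ambient identity in Corollary \ref{prop:abII} and the correct telescoped expression for $a'_i$ in Corollary \ref{cor:aiVar2}. The only point requiring mild care is the bookkeeping around the duality in (ii) so that the indices reverse consistently across the parameter array, Toeplitz data, and the three-term recursion; Lemmas \ref{lem:newPA} and \ref{lem:ToeplitzDataD} are tailored exactly for this bookkeeping, so the derivation is forced once the source identities are chosen.
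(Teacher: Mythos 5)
Your proof is correct and matches the paper's: part (i) is derived exactly as the paper does it, from row 3 of Corollary \ref{prop:abII} with $a'_i=\alpha_1(\varphi''_{d-i+1}-\varphi''_{d-i})$ eliminated (the paper cites Proposition \ref{lem:ai} for this identity rather than telescoping Corollary \ref{cor:aiVar2}, but these are the same fact). For (ii) the paper just runs the parallel computation with the $\beta$-analogue of Corollary \ref{prop:abII} (the corollary following Proposition \ref{prop:baI}), whereas you invoke duality via $\tilde A,\tilde B,\tilde C$ and Lemmas \ref{lem:newPA}, \ref{lem:ToeplitzDataD}; both routes are valid and your index bookkeeping checks out.
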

\begin{proof} (i) We verify
(\ref{eq:alphaRec2}). Consider the displayed equation
in Corollary
\ref{prop:abII} (row 3). In this equation
solve for $\alpha_{i+1}$, 
and eliminate $a'_i$ using the equation
$a'_i = \alpha_1 (\varphi''_{d-i+1}-\varphi''_{d-i})$
from Proposition
\ref{lem:ai}.
\\
\noindent (ii) Similar to the proof of (i) above.
\end{proof}

\begin{proposition} 
\label{prop:AlphaRecursion3}
For $d\geq 1$ the following {\rm (i)--(iv)} hold:
\begin{enumerate}
\item[\rm (i)] for $1 \leq i \leq d-1$,
\begin{eqnarray*}
&&\alpha_1 \alpha_{i} 
\biggl(
1-\frac{\varphi''_1}{\varphi''_{i+1}}
- 
\frac{\varphi''_{d-i+1}}{\varphi''_{d-i}}
\biggr)
=
\alpha_{i-1}
\biggl(
\frac{\varphi_1}{\varphi'_d \varphi''_{i+1}}
+ 
\frac{\varphi_{d-i+1}}{\varphi'_i\varphi''_{d-i}}
\biggr);
\end{eqnarray*}
\item[\rm (ii)]
$\alpha_1 \alpha_d \varphi''_1 = - \alpha_{d-1} \varphi_1 /\varphi'_d$;
\item[\rm (iii)]
for $1 \leq i \leq d-1$,
\begin{eqnarray*}
&&\beta_1 \beta_{i} 
\biggl(
1-\frac{\varphi''_d}{\varphi''_{d-i}}
- 
\frac{\varphi''_{i}}{\varphi''_{i+1}}
\biggr)
=
\beta_{i-1}
\biggl(
\frac{\varphi_d}{\varphi'_1 \varphi''_{d-i}}
+ 
\frac{\varphi_{i}}{\varphi'_{d-i+1}\varphi''_{i+1}}
\biggr);
\end{eqnarray*}
\item[\rm (iv)]
$\beta_1 \beta_d \varphi''_d = 
- \beta_{d-1} \varphi_d /\varphi'_1$.
\end{enumerate}
\end{proposition}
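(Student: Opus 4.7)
The plan is to derive all four identities by combining the recursions already in hand with the boundary relations from the earlier corollaries; no genuinely new computation is required.

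For part (i), my approach will be to note that Propositions \ref{prop:AlphaRecursion} and \ref{prop:AlphaRecursion2} both furnish a closed form for $\alpha_{i+1}$ in the range $1 \leq i \leq d-1$. Since these two expressions must coincide, I can cross-multiply by $\varphi''_{d-i}\varphi''_{i+1}$ to obtain
\begin{equation*}
\varphi''_{d-i}\bigl[\alpha_1\alpha_i\varphi''_1 + \alpha_{i-1}\varphi_1(\varphi'_d)^{-1}\bigr] = \varphi''_{i+1}\bigl[\alpha_1\alpha_i(\varphi''_{d-i}-\varphi''_{d-i+1}) - \alpha_{i-1}\varphi_{d-i+1}(\varphi'_i)^{-1}\bigr].
\end{equation*}
Collecting the $\alpha_1\alpha_i$ terms on one side and the $\alpha_{i-1}$ terms on the other, then dividing through by $\varphi''_{d-i}\varphi''_{i+1}$, I will end up with exactly the identity asserted in (i). Part (iii) is the $\beta$-analog and will follow in exactly the same fashion by equating the two formulas for $\beta_{i+1}$ supplied by Propositions \ref{prop:AlphaRecursion}(ii) and \ref{prop:AlphaRecursion2}(ii).

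For part (ii), I will use Corollary \ref{prop:abIV}, specifically the relation $\alpha_{d-1}\varphi_1/\varphi'_d + \alpha_d a'_d = 0$, combined with Corollary \ref{lem:a0ad}, which gives $a'_d = \alpha_1\varphi''_1$. Substituting the latter into the former immediately yields $\alpha_1\alpha_d\varphi''_1 = -\alpha_{d-1}\varphi_1/\varphi'_d$. For part (iv), the analogous boundary relation in the $\beta$-case is the identity $\beta_{d-1}\varphi_d/\varphi'_1 + \beta_d a'_0 = 0$ (from the corollary immediately following Proposition \ref{prop:baI} giving the $d\geq 1$ boundary constraints), combined with the evaluation $a'_0 = \beta_1\varphi''_d$ from Corollary \ref{lem:a0ad}. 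Substituting gives $\beta_1\beta_d\varphi''_d = -\beta_{d-1}\varphi_d/\varphi'_1$.

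There is no real obstacle here; the mild bookkeeping issue is just to verify that the particular entries of the corollaries invoked in parts (ii) and (iv) pair up with the correct evaluation of $a'_d$ and $a'_0$ from Corollary \ref{lem:a0ad}, and that the sign in the rearrangement for parts (i) and (iii) works out as claimed. Once one keeps careful track of which primed variant of each $\varphi$ appears in the denominators, the four identities emerge as direct consequences of the material already established.
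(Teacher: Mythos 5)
Your proposal is correct and follows essentially the same route as the paper: parts (i) and (iii) come from equating (equivalently, subtracting) the two recursions of Propositions \ref{prop:AlphaRecursion} and \ref{prop:AlphaRecursion2}, and parts (ii) and (iv) come from the boundary relations of Corollary \ref{prop:abIV} and its $\beta$-analog combined with the evaluations $a'_d = \alpha_1\varphi''_1$ and $a'_0 = \beta_1\varphi''_d$ from Corollary \ref{lem:a0ad}. The algebraic rearrangement you describe for (i) checks out and yields the stated identity.
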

\begin{proof} (i) 
Subtract 
(\ref{eq:alphaRec2}) 
from
(\ref{eq:alphaRec}) 
and simplify the result.
\\
\noindent 
(ii) In the displayed equation of Corollary 
\ref{prop:abIV} (row 1, column 3) eliminate
$a'_d$ using the equation
$a'_d = \alpha_1 \varphi''_1$ from
Corollary
\ref{lem:a0ad}.
\\
\noindent (iii), (iv) Similar to the proof of (i), (ii) above.
\end{proof}

\begin{note}
\label{note:alphaOne}
\rm Referring to Proposition
\ref{prop:extra}, if we replace the LR triple
$A,B,C$ by the LR triple $-A,-B,-C$
then the parameter array is unchanged, and
each scalar in 
(\ref{eq:12values})
is replaced by its opposite.
So in general, the LR triple $A,B,C$ is not determined
up to isomorphism by its parameter array.
\end{note}

\noindent Referring to Proposition
\ref{prop:extra} and in light of
Note \ref{note:alphaOne},
 we now consider the extent to which
$\alpha^2_1$ 
is determined by the parameter array
(\ref{eq:paLRT}). 

\begin{lemma}
\label{lem:alphaOneOK}
For $d\geq 1$
the scalar $\alpha^2_1$
is related to the parameter array 
{\rm (\ref{eq:paLRT})} 
in the following way.
\begin{enumerate}
\item[\rm (i)]
Assume $d=1$. Then
\begin{eqnarray}
\label{eq:alphaD1OK}
\alpha^2_1 = - \frac{\varphi_1}{\varphi'_1 \varphi''_1}.
\end{eqnarray}
\item[\rm (ii)]
Assume $d=2$. Then $\alpha_1=0$ or
\begin{eqnarray}
\label{eq:alphaD2OK}
\alpha^2_1 = 
- \frac{\varphi_1+\varphi_2}{\varphi'_2 \varphi''_1}.
\end{eqnarray}
\item[\rm (iii)]
Assume $d\geq 2$. Then
\begin{eqnarray}
\label{eq:DG2OK}
\alpha^2_1\biggl(
1-
\frac{\varphi''_1}{\varphi''_{2}} 
-
\frac{\varphi''_d}{\varphi''_{d-1}}
\biggr)
=
\frac{\varphi_d}{\varphi''_{d-1}}
\frac{1}{\varphi'_1}
+
\frac{\varphi_1}{\varphi''_{2}}
\frac{1}{\varphi'_d}.
\end{eqnarray}
Moreover 
for $1 \leq i \leq d$,
\begin{eqnarray}
\alpha^2_1\biggl(
\frac{\varphi''_d}{\varphi''_{d-1}} \varphi''_{i-1}
- \varphi''_i
+
\frac{\varphi''_1}{\varphi''_{2}} \varphi''_{i+1}
\biggr)
=
\frac{\varphi_i}{\varphi'_{d-i+1}}
-
\frac{\varphi_d}{\varphi''_{d-1}}
\frac{\varphi''_{i-1}}{\varphi'_1}
-
\frac{\varphi_1}{\varphi''_{2}}
\frac{\varphi''_{i+1}}{\varphi'_d}.
\label{eq:dLarge}
\end{eqnarray}
\end{enumerate}
\end{lemma}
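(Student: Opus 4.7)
The plan is to handle the three parts in increasing order of complexity, specializing or iterating Proposition~\ref{prop:AlphaRecursion3} and the recursions of Proposition~\ref{prop:AlphaRecursion}, and invoking Corollary~\ref{cor:alphaOneInv} to handle the ``normalization'' constraints among the three parameter sequences.

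For part~(i), when $d=1$ we have $\alpha_{d-1}=\alpha_0=1$ and $\alpha_d=\alpha_1$, so Proposition~\ref{prop:AlphaRecursion3}(ii) reduces immediately to $\alpha_1^2\,\varphi''_1=-\varphi_1/\varphi'_1$, which is (\ref{eq:alphaD1OK}). For part~(ii) with $d=2$, Proposition~\ref{prop:AlphaRecursion3}(ii) reads $\alpha_1\alpha_2\varphi''_1=-\alpha_1\varphi_1/\varphi'_2$. If $\alpha_1=0$ the first alternative holds; otherwise $\alpha_2=-\varphi_1/(\varphi'_2\varphi''_1)$. Substituting this into (\ref{eq:alphaRec}) at $i=1$, namely $\varphi''_2\alpha_2=\alpha_1^2\varphi''_1+\varphi_1/\varphi'_2$, yields after a short manipulation
\[
\alpha_1^2\varphi''_1=-\frac{\varphi_1(\varphi''_1+\varphi''_2)}{\varphi'_2\,\varphi''_1}.
\]
Since $\alpha_1\ne 0$, Corollary~\ref{cor:alphaOneInv} (applied at $i=1,2$) gives $\varphi_1\varphi''_2=\varphi_2\varphi''_1$, so $\varphi_1(\varphi''_1+\varphi''_2)=(\varphi_1+\varphi_2)\varphi''_1$, and (\ref{eq:alphaD2OK}) follows.

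For part~(iii), equation (\ref{eq:DG2OK}) is literally Proposition~\ref{prop:AlphaRecursion3}(i) specialized to $i=1$ (with $\alpha_0=1$). Equation (\ref{eq:dLarge}) at the endpoints $i=1$ and $i=d$ is trivial: using $\varphi''_0=0=\varphi''_{d+1}$ and $\varphi_0=0=\varphi_{d+1}$, both sides vanish. For an intermediate index $2\le i\le d-1$ (which requires $d\ge 3$), the plan is to iterate the two recursions (\ref{eq:alphaRec}) and (\ref{eq:alphaRec2}) from Proposition~\ref{prop:AlphaRecursion} starting from $\alpha_0=1$, $\alpha_1$, and enforce their equality at each step; this produces (\ref{eq:DG2OK}) at $i=1$ but also additional consistency relations at every subsequent index. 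Equivalently, one can exploit the stronger identities of Proposition~\ref{prop:abI} (the last equation, at various $(i,j)$) together with the trace-data formulas $a_{d-j}=\alpha'_1(\varphi''_{j+1}-\varphi''_j)$, etc., from Proposition~\ref{lem:ai}, and reassemble them into the single constant-coefficient recursion on $\{\varphi''_i\}$ expressed by (\ref{eq:dLarge}).

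The main obstacle is the case $2\le i\le d-1$ of (\ref{eq:dLarge}): unlike (\ref{eq:DG2OK}), it is not immediate from any single entry of Proposition~\ref{prop:AlphaRecursion3} but rather a hidden combinatorial identity on the parameter array that the LR-triple structure forces. The cleanest route is to observe that, writing $A=\alpha_1^2\varphi''_d/\varphi''_{d-1}+\varphi_d/(\varphi''_{d-1}\varphi'_1)$ and $C=\alpha_1^2\varphi''_1/\varphi''_2+\varphi_1/(\varphi''_2\varphi'_d)$, equation (\ref{eq:DG2OK}) is equivalent to $A+C=\alpha_1^2$, and (\ref{eq:dLarge}) is the single inhomogeneous three-term relation $A\varphi''_{i-1}-\alpha_1^2\varphi''_i+C\varphi''_{i+1}=\varphi_i/\varphi'_{d-i+1}$. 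One must then verify this recursion holds for all $i$ by comparing against the $\alpha'_1$-version of (\ref{eq:alphaRec}) applied to the p-relative $B,C,A$, with the conversion between relatives controlled by Corollary~\ref{cor:alphaOneInv}.
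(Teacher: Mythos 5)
Your parts (i) and (ii) are correct but take a different route from the paper: the paper reads (\ref{eq:alphaD1OK}) and (\ref{eq:alphaD2OK}) off the characteristic polynomial of the tridiagonal matrix (\ref{eq:matrixEig}), of which $\alpha_1$ is an eigenvalue by Lemma \ref{lem:matrixA}, whereas you extract them from Proposition \ref{prop:AlphaRecursion3}(ii) together with, for $d=2$, one instance of (\ref{eq:alphaRec}) and Corollary \ref{cor:alphaOneInv}. Both work; your route avoids the eigenvalue computation but needs the auxiliary identity $\varphi_1\varphi''_2=\varphi_2\varphi''_1$, which you correctly supply from Corollary \ref{cor:alphaOneInv}. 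Your identification of (\ref{eq:DG2OK}) with Proposition \ref{prop:AlphaRecursion3}(i) at $i=1$ is also correct, and is essentially the paper's own derivation ($\beta_2=\alpha_1^2-\alpha_2$ with $\alpha_2,\beta_2$ eliminated via Proposition \ref{prop:AlphaRecursion}).

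The one genuine gap is the case $2\le i\le d-1$ of (\ref{eq:dLarge}), which you leave as ``one must then verify this recursion \dots by comparing against the $\alpha'_1$-version of (\ref{eq:alphaRec}) applied to the p-relative $B,C,A$.'' That plan is not carried out, and it is not clear it would close as stated: the relative version of (\ref{eq:alphaRec}) is a recursion for the scalars $\alpha'_j$, not a three-term relation on $\lbrace \varphi''_j\rbrace$. But you have already written down exactly the identity you need. With your scalars $A$ and $C$, one has $A=\beta_2$ (by (\ref{eq:betaRec}) at $i=1$, using $\beta_1=-\alpha_1$) and $C=\alpha_2$ (by (\ref{eq:alphaRec}) at $i=1$), so your relation $A\varphi''_{i-1}-\alpha_1^2\varphi''_i+C\varphi''_{i+1}=\varphi_i/\varphi'_{d-i+1}$ is precisely the third displayed equation of Proposition \ref{prop:goodrec} after substituting $\alpha_0=\beta_0=1$ and $\alpha_1\beta_1=-\alpha_1^2$. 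Citing that proposition closes the gap with no further work, and this is exactly what the paper does.
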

\begin{proof} (i), (ii) Compute the eigenvalues of
the matrix
(\ref{eq:matrixEig}).
\\
\noindent (iii)
Using Proposition
\ref{prop:AlphaRecursion},
solve for
$\alpha_2$, $\beta_2$ in terms of
$\alpha_1$ and the parameter array
(\ref{eq:paLRT}).
To obtain (\ref{eq:DG2OK}), use
the above solutions and
$\beta_2 = \alpha^2_1 - \alpha_2$.
To obtain
(\ref{eq:dLarge}), use
the above solutions and
the third
displayed equation in Proposition
\ref{prop:goodrec}.
\end{proof}

\noindent Referring to Lemma
\ref{lem:alphaOneOK}(iii),
it sometimes happens that in each equation
(\ref{eq:DG2OK}),
(\ref{eq:dLarge}) 
the coefficient of $\alpha^2_1$ is zero.
We illustrate with two examples.

\begin{definition}\rm
\label{def:WEYL}
The LR triple $A,B,C$ is said to have {\it Weyl type}
whenever the LR pairs $A,B$ and $B,C$ and $C,A$
all have Weyl type, in the sense of
Definition
\ref{def:Weyl}.
In this case, $p=d+1$ is prime and
${\rm Char}(\mathbb F)=p$. Moreover
\begin{eqnarray}
&&AB-BA=I, \qquad \qquad 
BC-CB=I, \qquad\qquad 
CA-AC=I,
\label{eq:tripleWeyl}
\\
&&
\label{eq:eqeq}
 \qquad \qquad \qquad 
\varphi_i = \varphi'_i= \varphi''_i = i
\qquad \qquad (1 \leq i \leq d).
\end{eqnarray}
\end{definition}

\begin{lemma}
Assume that the LR triple $A,B,C$ has Weyl type.
Then each p-relative of $A,B,C$ has Weyl type.
\end{lemma}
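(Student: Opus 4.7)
The plan is to verify Definition \ref{def:WEYL} for each of the six p-relatives individually. Recall that the p-relatives of $A,B,C$ are the three cyclic permutations $A,B,C$, $B,C,A$, $C,A,B$ from the top half of Lemma \ref{lem:ABCvar}, together with the three triples $\tilde C,\tilde B,\tilde A$, $\tilde A,\tilde C,\tilde B$, $\tilde B,\tilde A,\tilde C$ from the bottom half of Lemma \ref{lem:newPA}. By assumption and Definition \ref{def:WEYL}, the LR pairs $A,B$; $B,C$; $C,A$ all have Weyl type, so the relations \eqref{eq:tripleWeyl} hold, $p=d+1$ is prime, and ${\rm Char}(\mathbb F)=p$.

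First I would dispatch the cyclic permutations. The three constituent LR pairs of $B,C,A$ are $B,C$; $C,A$; $A,B$, and those of $C,A,B$ are $C,A$; $A,B$; $B,C$. In both cases this is just a relabelling of the three LR pairs of $A,B,C$, each of which has Weyl type by hypothesis. So $B,C,A$ and $C,A,B$ have Weyl type by Definition \ref{def:WEYL}. (The case $A,B,C$ itself is trivial.)

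Next I would handle the three dual relatives. The adjoint map ${\rm End}(V)\to {\rm End}(V^*)$, $X\mapsto \tilde X$, is an $\mathbb F$-algebra antiisomorphism fixing $I$, so applying it to the relations \eqref{eq:tripleWeyl} yields
\begin{eqnarray*}
\tilde B\tilde A-\tilde A\tilde B = I,
\qquad
\tilde C\tilde B-\tilde B\tilde C = I,
\qquad
\tilde A\tilde C-\tilde C\tilde A = I.
\end{eqnarray*}
By Lemma \ref{lem:LRDdual} together with Definition \ref{def:LRT}, each ordered pair occurring in these relations is an LR pair on $V^*$; combined with Lemma \ref{lem:WeylChar} (applicable since ${\rm Char}(\mathbb F)=d+1$ is prime) it follows that $\tilde B,\tilde A$; $\tilde C,\tilde B$; $\tilde A,\tilde C$ are LR pairs of Weyl type. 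Now the three constituent LR pairs of $\tilde C,\tilde B,\tilde A$ are exactly $\tilde C,\tilde B$; $\tilde B,\tilde A$; $\tilde A,\tilde C$; similarly the constituent pairs of $\tilde A,\tilde C,\tilde B$ and $\tilde B,\tilde A,\tilde C$ are relabellings of the same three LR pairs. Hence each of these three dual p-relatives has Weyl type by Definition \ref{def:WEYL}.

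There is no real obstacle; the only point requiring care is correctly reading off which three LR pairs constitute each p-relative and confirming that the characteristic condition needed in Lemma \ref{lem:WeylChar} transfers automatically, since the same underlying field $\mathbb F$ and integer $d$ govern all relatives. This completes the proof.
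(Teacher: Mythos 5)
Your proof is correct, but it takes a different route from the paper's. The paper's proof is a one-liner: by (\ref{eq:eqeq}) the three parameter sequences coincide, so Lemma \ref{lem:whatisIso} makes all p-relatives mutually isomorphic (equivalently, one can read off from the tables in Lemmas \ref{lem:ABCvar} and \ref{lem:newPA} that every p-relative has the same parameter array $\varphi_i=i$, and then condition (\ref{eq:Weyl2}) certifies Weyl type for each constituent pair). You instead verify Definition \ref{def:WEYL} directly at the operator level: for the cyclic permutations you observe the three constituent LR pairs are just a relabelling, and for the dual relatives you push the Weyl relations (\ref{eq:tripleWeyl}) through the adjoint antiisomorphism and use condition (\ref{eq:Weyl1}). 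Both arguments are sound; the paper's buys brevity by leaning on the parameter-array machinery already in place, while yours is self-contained and makes transparent exactly why the orientation reversal inherent in the adjoint is compatible with the ordering of the pairs in $\tilde C,\tilde B,\tilde A$ etc. One small simplification available to you: once you know $\tilde B,\tilde A$ is an LR pair satisfying $\tilde B\tilde A-\tilde A\tilde B=I$, Definition \ref{def:Weyl} already declares it of Weyl type, so the detour through Lemma \ref{lem:WeylChar} (and its hypotheses on invertibility and characteristic) is unnecessary, though harmless.
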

\begin{proof} By
Lemma
\ref{lem:whatisIso}
and (\ref{eq:eqeq}).
\end{proof}

\begin{lemma}
\label{lem:except2WEYL}
Assume that $d\geq 2$ and
$A,B,C$ has Weyl type.
Then in
each of {\rm (\ref{eq:DG2OK})},
{\rm (\ref{eq:dLarge})}  the coefficient of 
$\alpha^2_1$ is zero. Moreover the right-hand side
is zero.
\end{lemma}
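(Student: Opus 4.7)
The plan is to argue by direct substitution. By Lemma~\ref{lem:pPrime} and the hypothesis that $A,B,C$ has Weyl type, we have $p = d+1$ prime, $\mathrm{Char}(\mathbb{F}) = p$, and $\varphi_i = \varphi'_i = \varphi''_i = i$ for $1 \leq i \leq d$ (from \eqref{eq:eqeq}). The key arithmetic fact to exploit is that $d+1 \equiv 0 \pmod{p}$ in $\mathbb{F}$, so in particular $d \equiv -1$ and $d-1 \equiv -2$. From this $\varphi''_d/\varphi''_{d-1} = d/(d-1) = (-1)/(-2) = 1/2$, matching $\varphi''_1/\varphi''_2 = 1/2$. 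Note that $d \geq 2$ forces $p \geq 3$, so $2$ is invertible in $\mathbb{F}$.

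For \eqref{eq:DG2OK}, I would substitute these values into the coefficient of $\alpha_1^2$ to get $1 - 1/2 - 1/2 = 0$, and into the right-hand side to get $(1/2)(1/1) + (1/2)(1/d) = 1/2 + 1/(2d) = 1/2 - 1/2 = 0$, using $d = -1$. This handles the first half.

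For \eqref{eq:dLarge} I would again substitute. The coefficient of $\alpha_1^2$ becomes
\[
\tfrac{1}{2}(i-1) - i + \tfrac{1}{2}(i+1) = 0,
\]
a clean telescoping. For the right-hand side, using $\varphi_{d-i+1} = d-i+1 = -i$ gives $\varphi_i/\varphi'_{d-i+1} = i/(-i) = -1$, while the remaining two terms become $-\tfrac{1}{2}(i-1)$ and $+\tfrac{1}{2}(i+1)$, whose sum is $1$. Adding yields $-1 + 1 = 0$.

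There is no real obstacle here; the argument is purely computational modulo $p = d+1$, and the only subtle point is keeping track of which expressions are interpreted in $\mathbb{F}$ (so that $d = -1$, $d-1 = -2$, $d-i+1 = -i$) versus as integer indices. I would structure the write-up as two short verification blocks, one for each equation, each opening with the substitution $\varphi_i = \varphi'_i = \varphi''_i = i$ and a reminder that $d+1 = 0$ in $\mathbb{F}$.
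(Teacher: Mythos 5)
Your proposal is correct and is essentially the paper's own proof: the paper simply states that the claim "is readily checked using (\ref{eq:eqeq})," i.e.\ by the same direct substitution $\varphi_i=\varphi'_i=\varphi''_i=i$ together with $d+1=0$ in $\mathbb{F}$ that you carry out. Your computations (including the boundary cases, where $\varphi''_0=0$ and $\varphi''_{d+1}=0$ are consistent with the formula since $d+1\equiv 0$) all check out.
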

\begin{proof} This is readily checked using
(\ref{eq:eqeq}).
\end{proof}

\noindent 
Assume that $A,B,C$ has Weyl type.
Then equations
(\ref{eq:DG2OK}),
(\ref{eq:dLarge}) give no information about
$\alpha_1$. 
To compute 
$\alpha_1$ we use the following result.

\begin{lemma}
\label{lem:ABCsum}
Assume that $A,B,C$ has Weyl type. Then
\begin{eqnarray}
A + B + C = \alpha_1 I.
\label{eq:ABCsum}
\end{eqnarray}
\end{lemma}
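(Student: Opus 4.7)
The plan is to show that $A+B+C$ is a scalar operator, and then to identify the scalar by evaluating on a convenient vector.

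First, I would verify that $A+B+C$ commutes with each of $A,B,C$. Using the three Weyl relations in \eqref{eq:tripleWeyl}, a direct computation gives
\begin{eqnarray*}
[A+B+C,A] &=& (BA-AB)+(CA-AC) = -I + I = 0,
\end{eqnarray*}
and similarly $[A+B+C,B]=0$ and $[A+B+C,C]=0$. By Lemma \ref{lem:ABCGEN}, the $\mathbb{F}$-algebra ${\rm End}(V)$ is generated by any two of $A,B,C$, so $A+B+C$ lies in the center of ${\rm End}(V)$. Hence there exists $\zeta \in \mathbb{F}$ with $A+B+C = \zeta I$.

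Next I would pin down $\zeta$ by evaluating both sides at the vector $\eta''$ from \eqref{eq:eta}. Since $A,B,C$ has Weyl type, by Lemma \ref{lem:pPrime}(ii) the integer $p=d+1$ is prime and ${\rm Char}(\mathbb{F})=p$, so $d = -1$ in $\mathbb{F}$. Combined with \eqref{eq:eqeq} this gives $\varphi''_1 = 1$ and $\varphi'_d = d = -1$. Lemma \ref{lem:etapp} then asserts that $\eta''$ is an eigenvector of $A/\varphi''_1 - B/\varphi'_d = A+B$ with eigenvalue $\alpha_1$, so $(A+B)\eta'' = \alpha_1 \eta''$. Together with $C\eta''=0$ (from \eqref{eq:ABCzero}), this yields
\begin{eqnarray*}
(A+B+C)\eta'' = \alpha_1\eta''.
\end{eqnarray*}
Since $\eta''\ne 0$, we conclude $\zeta = \alpha_1$, which gives \eqref{eq:ABCsum}.

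There is really no main obstacle here — both steps are short. The only point requiring a little care is the arithmetic $\varphi'_d = d = -1$ in $\mathbb{F}$, which is what makes the coefficient $-1/\varphi'_d$ in Lemma \ref{lem:etapp} line up perfectly with the coefficient $+1$ of $B$ in $A+B+C$. This is the feature of Weyl type that makes the identity hold.
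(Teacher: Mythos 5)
Your proof is correct, but it takes a different route from the paper's. The paper's proof is a direct matrix computation: representing $A,B,C$ with respect to an $(A,B)$-basis via Proposition \ref{prop:matrixRep}, the off-diagonal entries of $A+B+C$ cancel (the superdiagonal entries of $C$ are $\varphi'_{d-i+1}/\varphi_i=(d-i+1)/i=-1$ and its subdiagonal entries are $\varphi''_{d-i+1}=-\varphi_i$, both using $d+1=0$ in $\mathbb F$), while the diagonal entries are $a_i=\alpha_1$ by Proposition \ref{lem:ai}. You instead establish abstractly that $A+B+C$ is central in ${\rm End}(V)$ (via the Weyl relations \eqref{eq:tripleWeyl} and Lemma \ref{lem:ABCGEN}), hence scalar, and then identify the scalar by evaluating on the single vector $\eta''$ using Lemma \ref{lem:etapp} and \eqref{eq:ABCzero}. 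Your centrality step is clean and avoids checking every matrix entry, at the cost of invoking the later machinery of $\eta''$ and Lemma \ref{lem:etapp}; the paper's computation is more elementary and entirely self-contained within the matrix table. Both hinge on the same arithmetic fact $d=-1$ in $\mathbb F$, and your verification of the commutators and of the hypotheses of Lemma \ref{lem:etapp} (note $d\geq 1$ since $d+1$ is prime) is accurate.
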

\begin{proof}
Represent $A,B,C$ by matrices, using
for example the first row in the table
of Proposition
\ref{prop:matrixRep}. By 
Proposition \ref{lem:ai} we have
$a_i = \alpha_1 $
for $0  \leq i \leq d$.
\end{proof}

\begin{lemma}
\label{lem:WEYLalphaZero}
Assume that    $A,B,C$ has Weyl type.
Then $\alpha_1=1$ if $d=1$,
and 
$\alpha_1=0$ if $d\geq 2$.
\end{lemma}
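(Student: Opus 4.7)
The proof splits into the two cases $d=1$ and $d\geq 2$, and in each case we exploit machinery that is already in place.

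For $d=1$, the hypothesis of Weyl type together with Lemma~\ref{lem:pPrime}(ii) forces $p=d+1=2$, so ${\rm Char}(\mathbb F)=2$. Since $\varphi_1=\varphi'_1=\varphi''_1=1$, the formula from Lemma~\ref{lem:alphaOneOK}(i) reduces to $\alpha_1^2=-\varphi_1/(\varphi'_1\varphi''_1)=-1$, which equals $1$ in ${\rm Char}(\mathbb F)=2$. In characteristic $2$ the equation $\alpha_1^2=1$ factors as $(\alpha_1-1)^2=0$, so $\alpha_1=1$. (Alternatively, one can argue directly: with $A,B$ in the standard form from Lemma~\ref{lem:ABmatrix} and $C=\alpha_1 I-A-B$ from Lemma~\ref{lem:ABCsum}, one checks that $C^2=(\alpha_1^2+1)I$ in characteristic $2$, and nilpotency forces $\alpha_1^2=1$, hence $\alpha_1=1$.)

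For $d\geq 2$, the plan is to leverage the explicit null vector already constructed in the proof of Lemma~\ref{lem:curious}. By Lemma~\ref{lem:ABCsum}, we have
\begin{equation*}
A+B+C=\alpha_1 I,\qquad\text{so}\qquad C=\alpha_1 I-(A+B).
\end{equation*}
Since $p=d+1$ is an odd prime (as $d\geq 2$), the integer $d$ is even. The proof of Lemma~\ref{lem:curious} exhibits, with respect to an inverted $(B,A)$-basis of $V$, a nonzero column vector $z$ (with $z_{2i}=(-1)^i/(2^i i!)$ and $z_{2i+1}=0$) that lies in the kernel of the matrix representing $A+B$. Let $v\in V$ be the corresponding nonzero vector; then $(A+B)v=0$, so $Cv=\alpha_1 v$. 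But $C$ is Nil by Lemma~\ref{lem:ABdecIndNil}, hence has no nonzero eigenvalue. Therefore $\alpha_1=0$.

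The main obstacle is simply finding the right leverage in each case: for $d=1$ the quadratic relation for $\alpha_1^2$ from Lemma~\ref{lem:alphaOneOK}(i) combined with the characteristic-$2$ condition does the job, while for $d\geq 2$ the recursion equations of Propositions \ref{prop:AlphaRecursion}--\ref{prop:AlphaRecursion3} are uninformative (by Lemma~\ref{lem:except2WEYL}), so we bypass them entirely and exploit the global identity $A+B+C=\alpha_1 I$ together with the null vector already identified inside the proof of Lemma~\ref{lem:curious}. No new computations are needed beyond re-invoking that null vector.
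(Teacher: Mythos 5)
Your proof is correct and follows essentially the same route as the paper: for $d=1$ it uses Lemma~\ref{lem:alphaOneOK}(i) together with ${\rm Char}(\mathbb F)=2$, and for $d\geq 2$ it combines $A+B+C=\alpha_1 I$ with the nilpotency of $C$ and the non-invertibility of $A+B$ coming from Lemma~\ref{lem:curious}. The only cosmetic difference is that the paper invokes the statement of Lemma~\ref{lem:curious} (so that $-A-B$ is Nil, making both $C$ and $C-\alpha_1 I$ nilpotent), whereas you reach the same conclusion by reusing the explicit null vector from that lemma's proof.
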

\begin{proof} Recall from Definition
\ref{def:WEYL}
 that $p=d+1$ is
prime and
${\rm Char}(\mathbb F)=p$. 
First assume that $d=1$. Then
by Lemma
\ref{lem:alphaOneOK}(i) 
and since 
${\rm Char}(\mathbb F)=2$,
we obtain
$\alpha^2_1=1$. Again using
${\rm Char}(\mathbb F)=2$ we find
$\alpha_1=1 $.
Next assume that $d\geq 2$.
By Lemma \ref{lem:ABCsum}, 
$C-\alpha_1 I = -A-B$.
On one hand, the pair $B,C$ is an LR pair
on $V$, so $C$ is Nil by
Lemma \ref{lem:ABdecIndNil}.
On the other hand, by Lemma
\ref{lem:curious} 
the pair $B,-A-B$ is an LR pair on $V$,
so $-A-B$ is Nil by Lemma
\ref{lem:ABdecIndNil}.
By these comments, 
both $C$ and $C-\alpha_1 I$  are Nil.
Considering their eigenvalues we obtain
$\alpha_1=0$.
\end{proof}

\begin{lemma}
\label{lem:alphabetaWeyl}
Assume that $d\geq 2$ and $A,B,C$ has Weyl type. Then
\begin{eqnarray*}
\alpha_{2i}= \frac{(-1)^i}{ 2^{i}i!},
\qquad \qquad 
\beta_{2i}= \frac{1}{ 2^{i}i!}
\qquad \qquad 
(0 \leq i \leq d/2).
\end{eqnarray*}
Also $\alpha_{2i+1}=0$ and $\beta_{2i+1}= 0$ for $0 \leq i<d/2$.
\end{lemma}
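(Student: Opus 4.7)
The plan is to use the recursions in Proposition \ref{prop:AlphaRecursion} together with the Weyl-type specialization. Since $A,B,C$ has Weyl type and $d\geq 2$, by Definition \ref{def:WEYL} we have $\varphi_i=\varphi'_i=\varphi''_i=i$ for $1\leq i\leq d$, and the characteristic of $\mathbb F$ equals $p=d+1$, so in particular $d=-1$ in $\mathbb F$. By Lemma \ref{lem:WEYLalphaZero} we have $\alpha_1=0$, and by (\ref{eq:list3}) it follows that $\beta_1=-\alpha_1=0$.

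Next I would substitute these values into Proposition \ref{prop:AlphaRecursion}(i). The recursion
\[
\alpha_{i+1}=\frac{\alpha_1\alpha_i\varphi''_1+\alpha_{i-1}\varphi_1(\varphi'_d)^{-1}}{\varphi''_{i+1}}\qquad (1\leq i\leq d-1)
\]
reduces, using $\alpha_1=0$, $\varphi_1=\varphi''_1=1$, $\varphi'_d=d=-1$, and $\varphi''_{i+1}=i+1$, to the simple two-term recursion
\[
\alpha_{i+1}=\frac{-\alpha_{i-1}}{i+1}\qquad (1\leq i\leq d-1).
\]
Together with the initial conditions $\alpha_0=1$ and $\alpha_1=0$, a routine induction on $i$ separately along even and odd indices yields $\alpha_{2i+1}=0$ for $0\leq i<d/2$ and $\alpha_{2i}=(-1)^i/(2^ii!)$ for $0\leq i\leq d/2$; the induction step uses $\alpha_{2(i+1)}=-\alpha_{2i}/(2(i+1))$.

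Analogously, substituting into Proposition \ref{prop:AlphaRecursion}(ii) and using $\beta_1=0$, $\varphi_d=\varphi''_d=d=-1$, $\varphi'_1=1$, and $\varphi''_{d-i}=d-i=-(i+1)$, the recursion
\[
\beta_{i+1}=\frac{\beta_1\beta_i\varphi''_d+\beta_{i-1}\varphi_d(\varphi'_1)^{-1}}{\varphi''_{d-i}}
\]
collapses to $\beta_{i+1}=\beta_{i-1}/(i+1)$. Combined with $\beta_0=1$ and $\beta_1=0$, induction gives $\beta_{2i+1}=0$ and $\beta_{2i}=1/(2^ii!)$ in the required ranges.

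There is essentially no obstacle here: once one observes that $\alpha_1=\beta_1=0$ and that $d=-1$ in $\mathbb F$, the recursions in Proposition \ref{prop:AlphaRecursion} decouple the even and odd parities and admit a transparent closed form. The only place to be careful is the arithmetic modulo the characteristic $p=d+1$, which is what turns $1/d$ into $-1$ and makes the recursion so clean.
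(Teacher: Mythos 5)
Your proposal is correct and follows exactly the paper's route: the paper's proof is simply ``Use Proposition \ref{prop:AlphaRecursion} and Lemma \ref{lem:WEYLalphaZero},'' and your computation carries that out, correctly exploiting $d=-1$ in $\mathbb F$ and noting that the denominators $i+1$ and $2^i i!$ are nonzero modulo the prime $p=d+1$.
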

\begin{proof}
Use
Proposition \ref{prop:AlphaRecursion}
and Lemma
\ref{lem:WEYLalphaZero}.
\end{proof}

\begin{proposition}
\label{prop:Weyclass}
The following are equivalent:
\begin{enumerate}
\item[\rm (i)] 
$p=d+1$ is prime and
${\rm Char}(\mathbb F)=p$;
\item[\rm (ii)] 
 there exists
an LR triple $A,B,C$ over $\mathbb F$ that
has diameter $d$ and Weyl type.
\end{enumerate}
Assume that {\rm (i)}, {\rm (ii)} hold.
Then $A,B,C$ is unique up to isomorphism.
\end{proposition}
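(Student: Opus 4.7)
The plan is to address (ii) $\Rightarrow$ (i), (i) $\Rightarrow$ (ii), and the uniqueness claim in turn.

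First, I would handle (ii) $\Rightarrow$ (i) by observing that if $A,B,C$ is a Weyl-type LR triple, then by Definition \ref{def:WEYL} the pair $A,B$ is itself a Weyl-type LR pair, and Lemma \ref{lem:pPrime}(ii) immediately gives that $p=d+1$ is prime with ${\rm Char}(\mathbb F)=p$.

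For (i) $\Rightarrow$ (ii), condition (i) forces $d\geq 1$ (since $1$ is not prime). I would invoke Lemma \ref{ex:Weylback} to obtain an LR pair $A,B$ on some $(d+1)$-dimensional $V$ over $\mathbb F$ having Weyl type, and then take $C = \lambda I - A - B$, with $\lambda = 1$ when $d=1$ and $\lambda = 0$ when $d\geq 2$; these are the values dictated by Lemma \ref{lem:WEYLalphaZero}. For $d\geq 2$, where $\lambda=0$ and $C=-A-B$, Lemma \ref{lem:curious} does precisely the needed work: it asserts that both $B,C$ and $C,A$ are Weyl-type LR pairs, so together with $A,B$ this makes $A,B,C$ a Weyl-type LR triple in the sense of Definition \ref{def:WEYL}. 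For $d=1$ one has ${\rm Char}(\mathbb F)=2$ and $C = I+A+B$; using $A^2=B^2=0$ (from $A,B$ being Nil with $d=1$) and the characteristic-$2$ form $AB+BA=I$ of the Weyl relation, a direct computation gives $C^2 = I+AB+BA = 0$, so $C$ is Nil and hence non-invertible, while $BC-CB = BA-AB = I$ and $CA-AC = BA-AB = I$. Applying Lemma \ref{lem:WeylChar} to each of the pairs $B,C$ and $C,A$ upgrades them to Weyl-type LR pairs, yielding the required triple.

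For uniqueness, I would observe that under (i) any Weyl-type LR triple of diameter $d$ over $\mathbb F$ has parameter array forced by (\ref{eq:eqeq}) to satisfy $\varphi_i = \varphi'_i = \varphi''_i = i$ for $1 \leq i \leq d$, and first Toeplitz number $\alpha_1$ forced by Lemma \ref{lem:WEYLalphaZero} to equal $1$ if $d=1$ and $0$ if $d\geq 2$. Proposition \ref{prop:extra} then guarantees the triple is determined up to isomorphism by these data.

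The main obstacle is the small-case mismatch at $d=1$: Lemma \ref{lem:curious} is stated under $d\geq 2$ because its proof exploits the evenness of $d$, so the case $d=1$ must be handled by the separate characteristic-$2$ computation sketched above rather than by a direct appeal to that lemma; everything else consists of assembling results already in place.
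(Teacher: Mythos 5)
Your proposal is correct and follows essentially the same route as the paper: Lemma \ref{ex:Weylback} to produce the Weyl-type pair $A,B$, then $C=I-A-B$ for $d=1$ and $C=-A-B$ for $d\geq 2$ with Lemma \ref{lem:curious} handling the latter case, and uniqueness via (\ref{eq:eqeq}), Lemma \ref{lem:WEYLalphaZero}, and Proposition \ref{prop:extra}. The only difference is that you spell out the characteristic-$2$ computation for $d=1$, which the paper dismisses as a routine verification.
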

\begin{proof} 
${\rm (i)}\Rightarrow {\rm (ii)}$ 
By Lemma
\ref{ex:Weylback},
there exists an LR pair $A,B$ over $\mathbb F$
that has diameter $d$ and
 Weyl type. Define $C=I-A-B$ if $d=1$, and
$C=-A-B$ if $d\geq 2$.
For $d=1$ one routinely verifies
that $A,B,C$ is an LR triple of Weyl type.
Assume that $d\geq 2$. By
Lemma
\ref{lem:curious}
and Definition
\ref{def:WEYL}
the sequence $A,B,C$ is an LR triple of Weyl type.
\\
\noindent ${\rm (ii)}\Rightarrow {\rm (i)}$ 
By Definition
\ref{def:WEYL}.
\\
\noindent Assume that (i), (ii) hold.
The LR triple $A,B,C$ is unique  up to isomorphism
by Proposition
\ref{prop:extra},
line
(\ref{eq:eqeq}), and
Lemma
\ref{lem:WEYLalphaZero}.
\end{proof}

\noindent We continue to discuss our LR triple $A,B,C$ on
$V$, with parameter array
(\ref{eq:paLRT}),
trace data
(\ref{eq:tracedata}), and Toeplitz data
(\ref{eq:ToeplitzData}).

\begin{definition}\rm
\label{def:qExceptional}
Pick a nonzero $q \in \mathbb F$ such that
$q^2\not=1$.
The LR triple $A,B,C$ is said to have {\it $q$-Weyl type}
whenever the LR pairs $A,B$ and $B,C$ and $C,A$
all have $q$-Weyl type, in the sense of
Definition
\ref{def:qWeyl}.
In this case $d,q$ or $d,-q$ is standard. Moreover 
\begin{eqnarray}
\label{eq:WWW}
&&\frac{qAB-q^{-1}BA}{q-q^{-1}}=I,
\qquad \quad
\frac{qBC-q^{-1}CB}{q-q^{-1}}=I,
\qquad \quad
\frac{qCA-q^{-1}AC}{q-q^{-1}}=I,
\\
&& \qquad \qquad \qquad 
\varphi_i = \varphi'_i= \varphi''_i = 1-q^{-2i} 
\qquad \qquad (1 \leq i \leq d).
\label{eq:vvv}
\end{eqnarray}
\end{definition}

\begin{lemma}
With reference to Definition
\ref{def:qExceptional}, assume that
$A,B,C$ has $q$-Weyl type.
Then $A,B,C$ has $(-q)$-Weyl type.
\end{lemma}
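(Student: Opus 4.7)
The plan is to invoke the corresponding fact at the level of LR pairs, namely Note \ref{note:minusq}, which asserts that an LR pair of $q$-Weyl type is also of $(-q)$-Weyl type. Since Definition \ref{def:qExceptional} defines the $q$-Weyl type condition on an LR triple $A,B,C$ as the requirement that each of the three LR pairs $A,B$, $B,C$, $C,A$ has $q$-Weyl type, the triple-level statement should reduce immediately to three applications of Note \ref{note:minusq}.

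Concretely, I would argue as follows. Assume $A,B,C$ has $q$-Weyl type. By Definition \ref{def:qExceptional} each of the LR pairs $A,B$ and $B,C$ and $C,A$ has $q$-Weyl type. Apply Note \ref{note:minusq} to each of these three LR pairs; this yields that each of $A,B$ and $B,C$ and $C,A$ has $(-q)$-Weyl type. Now invoke Definition \ref{def:qExceptional} in the reverse direction to conclude that $A,B,C$ has $(-q)$-Weyl type.

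There is no real obstacle; the proof is a one-line reduction once the pair-level note is in place. The only thing worth a sanity check is that the hypothesis on $q$ (namely $q\neq 0$ and $q^2\neq 1$) is preserved by $q\mapsto -q$, which is immediate, so Definition \ref{def:qWeyl} still applies to the pairs with parameter $-q$. Optionally one can verify this at the level of relations by noting that the $q$-Weyl relation in \eqref{eq:qWeyl1} is invariant under $q\mapsto -q$, since both $qAB-q^{-1}BA$ and $q-q^{-1}$ change sign simultaneously; this gives an alternative direct verification that bypasses Note \ref{note:minusq} but does not materially shorten the argument.
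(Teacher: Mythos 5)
Your proposal is correct and is exactly the paper's argument: the paper's proof is simply ``Use Note \ref{note:minusq},'' i.e., apply the pair-level fact to each of the three LR pairs in Definition \ref{def:qExceptional}. Your additional sanity checks (that $q\mapsto -q$ preserves the hypotheses and that the $q$-Weyl relation is invariant) are fine but not needed.
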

\begin{proof}
Use
Note \ref{note:minusq}.
\end{proof}

\begin{lemma}
\label{lem:qiWeyl}
With reference to Definition
\ref{def:qExceptional}, assume that
$A,B,C$ has $q$-Weyl type.
Then each p-relative of $A,B,C$ has $q$-Weyl type.
Moreover each n-relative of $A,B,C$ has $(q^{-1})$-Weyl type.
\end{lemma}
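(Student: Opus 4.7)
The plan is to establish how the $q$-Weyl condition on an LR pair transforms under two elementary operations---swapping the order of the two maps, and passing to adjoints---and then to run through the twelve relatives listed in Lemmas \ref{lem:ABCvar} and \ref{lem:newPA}, checking case by case which of $q$ or $q^{-1}$ governs each of the three constituent LR pairs.

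First I would record the two transformation rules. The first is just Note \ref{note:minusq}: if an LR pair $X,Y$ has $q$-Weyl type, then $Y,X$ has $q^{-1}$-Weyl type. The second concerns the adjoint. Apply the antiautomorphism $\sim$ from the discussion preceding Lemma \ref{lem:Adual} to the defining identity $qXY - q^{-1}YX = (q-q^{-1})I$ to obtain $q\tilde Y \tilde X - q^{-1}\tilde X \tilde Y = (q-q^{-1})I$, which by Definition \ref{def:qWeyl} expresses that $\tilde Y, \tilde X$ is a $q$-Weyl LR pair (it is an LR pair by Lemma \ref{lem:LRDdual}(i)). Combining this with the first rule gives that $\tilde X, \tilde Y$ is a $q^{-1}$-Weyl LR pair.

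Next I would apply these two rules to each of the twelve relatives. By Definition \ref{def:qExceptional} the three pairs $A,B$; $B,C$; $C,A$ are each $q$-Weyl. The top half of Lemma \ref{lem:ABCvar} lists the p-relatives $A,B,C$, $B,C,A$, $C,A,B$; the three LR pairs inside each are just cyclic relabelings of the three originals and are therefore all $q$-Weyl. The bottom half lists the n-relatives $C,B,A$, $A,C,B$, $B,A,C$, whose three LR pairs are precisely the reversals of the originals, hence all $q^{-1}$-Weyl by the swap rule. For the dualized relatives in Lemma \ref{lem:newPA}, the p-relatives $\tilde C, \tilde B, \tilde A$, $\tilde A, \tilde C, \tilde B$, $\tilde B, \tilde A, \tilde C$ have LR pairs each of the form $\tilde Y, \tilde X$ where $X,Y$ is one of the original three $q$-Weyl pairs, so each is $q$-Weyl by the adjoint rule; the n-relatives $\tilde A, \tilde B, \tilde C$, $\tilde B, \tilde C, \tilde A$, $\tilde C, \tilde A, \tilde B$ have LR pairs of the form $\tilde X, \tilde Y$, hence are $q^{-1}$-Weyl by the combined rule. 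Invoking Definition \ref{def:qExceptional} once more finishes the proof.

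There is no substantive obstacle: the argument is essentially a bookkeeping exercise controlled by the two transformation rules. The only mild subtlety is to apply the antiautomorphism property of $\sim$ correctly so that the order of the two factors is reversed when passing from $XY$ to $\tilde Y \tilde X$, which is immediate from the definition of the adjoint.
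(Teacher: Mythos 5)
Your proof is correct and follows essentially the same route as the paper, whose proof simply cites Note \ref{note:minusq} together with Definition \ref{def:prel}. Your explicit adjoint rule (that $\tilde Y,\tilde X$ inherits $q$-Weyl type from $X,Y$ because the adjoint map is an antiisomorphism, whence $\tilde X,\tilde Y$ is $q^{-1}$-Weyl) spells out a detail the paper leaves implicit for the dualized relatives of Lemma \ref{lem:newPA}, and it agrees with the parameter computation one would get from Lemma \ref{lem:LRdualPar}.
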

\begin{proof} By
Note 
\ref{note:minusq}
and
Definition
\ref{def:prel}.
\end{proof}

\begin{lemma}
\label{lem:except2}
With reference to Definition
\ref{def:qExceptional}, assume that
 $d\geq 2$ and
$A,B,C$ has $q$-Weyl type.
Then in
each of {\rm (\ref{eq:DG2OK})},
{\rm (\ref{eq:dLarge})}  the coefficient of 
$\alpha^2_1$ is zero. Moreover the right-hand side
is zero.
\end{lemma}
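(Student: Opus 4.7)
The plan is to carry out a direct calculation using the explicit form of the parameters provided by Definition \ref{def:qExceptional}, in a manner entirely analogous to the proof of Lemma \ref{lem:except2WEYL} in the Weyl case. By assumption, $\varphi_i=\varphi'_i=\varphi''_i=1-q^{-2i}$ for $1\leq i\leq d$, and $d,q$ or $d,-q$ is standard, so in either case $q^{2d+2}=1$ by Definition \ref{def:qstand}. I will use this single identity as the workhorse.

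First, I would record the consequence $q^{-2d}=q^2$ and derive the reflection formula
\begin{equation*}
\varphi_{d-k}=1-q^{-2(d-k)}=1-q^{2k+2}=-q^{2k+2}\varphi_{k+1}\qquad(0\leq k\leq d-1).
\end{equation*}
From this, direct simplification gives the two key ratios
\begin{equation*}
\frac{\varphi_d}{\varphi_{d-1}}=\frac{1}{1+q^2},\qquad\frac{\varphi_1}{\varphi_2}=\frac{q^2}{1+q^2}.
\end{equation*}
Adding these two ratios yields $1$, which immediately shows that the coefficient $1-\varphi''_1/\varphi''_2-\varphi''_d/\varphi''_{d-1}$ of $\alpha_1^2$ in (\ref{eq:DG2OK}) vanishes. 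For the right-hand side of (\ref{eq:DG2OK}), I substitute the reflection formula into $\varphi_d/(\varphi''_{d-1}\varphi'_1)+\varphi_1/(\varphi''_2\varphi'_d)$; the two fractions become $1/(q^2\varphi_2)$ and $-1/(q^2\varphi_2)$ respectively and so cancel.

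For (\ref{eq:dLarge}), the same two ratios reduce the coefficient of $\alpha_1^2$ to $(1+q^2)^{-1}$ times
\begin{equation*}
\varphi_{i-1}-(1+q^2)\varphi_i+q^2\varphi_{i+1}.
\end{equation*}
Expanding each $\varphi_j=1-q^{-2j}$ causes the constant terms and the $q^{-2i}$ and $q^{-2i+2}$ terms to cancel pairwise, confirming vanishing of the coefficient. For the right-hand side of (\ref{eq:dLarge}), the reflection formula gives $\varphi_i/\varphi_{d-i+1}=-q^{-2i}$, and the remaining task is to verify that $q^{-i}[i+1]_q-q^{2-i}[i-1]_q=q^{-2i}(q^2+1)$ (where $[n]_q=(q^n-q^{-n})/(q-q^{-1})$); this is a one-line manipulation of $q$-integers, and dividing by $1+q^2$ produces exactly $q^{-2i}$, which cancels the first term.

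I do not expect any real obstacle: once the identity $q^{-2d}=q^2$ is invoked, every assertion reduces to an elementary algebraic identity in $q$, and no nontrivial structural argument about the LR triple is required. The only mild bookkeeping point is being careful with the two possibilities ($d,q$ standard versus $d,-q$ standard), but both yield $q^{2d+2}=1$, which is all that is used.
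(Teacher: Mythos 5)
Your proof is correct, and it is essentially the verification that the paper leaves implicit (its proof of Lemma \ref{lem:except2} is just ``This is readily checked''): a direct substitution of $\varphi_i=\varphi'_i=\varphi''_i=1-q^{-2i}$ together with $q^{2d+2}=1$. Your reflection identity $\varphi_{d-k}=-q^{2k+2}\varphi_{k+1}$ and the resulting ratios $\varphi_d/\varphi_{d-1}=1/(1+q^2)$, $\varphi_1/\varphi_2=q^2/(1+q^2)$ check out, and the cancellations you describe for both (\ref{eq:DG2OK}) and (\ref{eq:dLarge}) are accurate.
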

\begin{proof} This is readily checked.
\end{proof}

\noindent 
With reference to Definition
\ref{def:qExceptional}, assume that
 $d\geq 2$ and
$A,B,C$ has $q$-Weyl type.
Then
(\ref{eq:DG2OK}),
(\ref{eq:dLarge}) give no information about
$\alpha_1$. 
To get some information about
$\alpha_1$ we turn to
 Lemma
\ref{lem:etapp}. 

\begin{lemma}
With reference to Definition
\ref{def:qExceptional}, assume that
$A,B,C$ has $q$-Weyl type.
Then 
\begin{eqnarray}
A/\varphi_1-B/\varphi_d = \frac{qA+q^{-1}B}{q-q^{-1}}.
\label{eq:ABcheck}
\end{eqnarray}
\end{lemma}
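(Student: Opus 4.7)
The plan is to reduce the identity to a direct algebraic manipulation after substituting the known values of $\varphi_1$ and $\varphi_d$ supplied by the $q$-Weyl hypothesis. By equation (\ref{eq:vvv}) we have $\varphi_1 = 1-q^{-2}$ and $\varphi_d = 1-q^{-2d}$. The only nonobvious ingredient is that the exponent $-2d$ in $\varphi_d$ collapses to $+2$ modulo the order of $q$.

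First I would extract from Definition \ref{def:qExceptional} the fact that $d,q$ or $d,-q$ is standard, and then invoke Definition \ref{def:qstand} to conclude $q^{2d+2}=1$ in every case (directly when $\mathrm{Char}(\mathbb F)\neq 2$, and by squaring the relation $q^{d+1}=1$ when $\mathrm{Char}(\mathbb F)=2$; the ambiguity between $q$ and $-q$ is irrelevant since $(-q)^{2d+2}=q^{2d+2}$). Consequently $q^{-2d}=q^{2}$, so $\varphi_d = 1-q^{2}$.

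Next I would compute the two sides separately and compare. On the left,
\begin{equation*}
\frac{A}{\varphi_1} - \frac{B}{\varphi_d}
= \frac{A}{1-q^{-2}} - \frac{B}{1-q^{2}}
= \frac{q^{2}A}{q^{2}-1} + \frac{B}{q^{2}-1}
= \frac{q^{2}A + B}{q^{2}-1}.
\end{equation*}
On the right, multiplying numerator and denominator by $q$,
\begin{equation*}
\frac{qA+q^{-1}B}{q-q^{-1}}
= \frac{q(qA+q^{-1}B)}{q(q-q^{-1})}
= \frac{q^{2}A+B}{q^{2}-1}.
\end{equation*}
The two expressions coincide, giving (\ref{eq:ABcheck}).

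There is no real obstacle here; the only substantive point is confirming $q^{-2d}=q^{2}$ from the standardness condition, which handles the $\mathrm{Char}(\mathbb F)=2$ versus $\mathrm{Char}(\mathbb F)\neq 2$ dichotomy uniformly. Everything else is a single-line rational function identity.
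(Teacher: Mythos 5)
Your proof is correct and follows essentially the same route as the paper, whose entire proof is ``Use (\ref{eq:vvv})'': substitute $\varphi_1=1-q^{-2}$ and $\varphi_d=1-q^{-2d}$, reduce $q^{-2d}$ to $q^2$ via $q^{2d+2}=1$ (which follows from standardness of $d,q$ or $d,-q$, or directly from $\varphi_{d+1}=0$ in Lemma \ref{lem:stand}), and verify the resulting rational identity. Nothing further is needed.
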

\begin{proof}
Use
(\ref{eq:vvv}).
\end{proof}

\noindent 
Recall Assumption \ref{def:sqRoot}.

\begin{lemma}
\label{lem:alphaOneList}
With reference to Assumption
\ref{def:sqRoot} and Definition
\ref{def:qExceptional}, 
assume that $A,B,C$ has $q$-Weyl type.
Then for  
the element {\rm (\ref{eq:ABcheck})}
the roots of the characteristic polynomial
are
\begin{eqnarray}
\frac{q^{j+1/2}+q^{-j-1/2}}{q-q^{-1}}
\qquad \qquad (0 \leq j \leq d).
\label{eq:thetaCalc2}
\end{eqnarray}
Moreover $\alpha_1$ is contained in the list
{\rm (\ref{eq:thetaCalc2})}.
\end{lemma}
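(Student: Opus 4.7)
The plan is to derive both assertions by combining two earlier results: Lemma~\ref{lem:qAqiB}, which computes the characteristic roots of $qA+q^{-1}B$ for an LR pair of $q$-Weyl type, and Lemma~\ref{lem:etapp}, which exhibits $\alpha_1$ as an eigenvalue of the relevant element on the distinguished vector $\eta''$.

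First, since $A,B,C$ has $q$-Weyl type, the LR pair $A,B$ itself has $q$-Weyl type by Definition~\ref{def:qExceptional}. Applying Lemma~\ref{lem:qAqiB} to this LR pair gives the roots of the characteristic polynomial of $qA+q^{-1}B$, namely $\theta_j = q^{j+1/2}+q^{-j-1/2}$ for $0 \le j \le d$. Dividing by the nonzero scalar $q-q^{-1}$ (recall that $d,q$ or $d,-q$ is standard, so $q^2 \neq 1$), the characteristic roots of $(qA+q^{-1}B)/(q-q^{-1})$ are exactly the scalars listed in (\ref{eq:thetaCalc2}). By (\ref{eq:ABcheck}), this is the same element as $A/\varphi_1 - B/\varphi_d$, giving the first assertion.

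For the second assertion, I will use Lemma~\ref{lem:etapp}, which asserts that $\eta''$ is an eigenvector for $A/\varphi''_1 - B/\varphi'_d$ with eigenvalue $\alpha_1$. By (\ref{eq:vvv}), the three parameter sequences of $A,B,C$ coincide; in particular $\varphi''_1 = \varphi_1$ and $\varphi'_d = \varphi_d$. Therefore $\eta''$ is an eigenvector of $A/\varphi_1 - B/\varphi_d$ with eigenvalue $\alpha_1$, so $\alpha_1$ is a root of the characteristic polynomial of this element, and hence appears in the list (\ref{eq:thetaCalc2}).

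There is no substantial obstacle: the argument is a direct assembly of Lemma~\ref{lem:qAqiB}, Lemma~\ref{lem:etapp}, the identity (\ref{eq:ABcheck}), and the equality of the three parameter sequences in (\ref{eq:vvv}). The only small point to note is that the division by $q-q^{-1}$ is legitimate because $d,q$ standard (or $d,-q$ standard) forces $q^2\neq 1$.
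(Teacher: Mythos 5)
Your proof is correct and follows essentially the same route as the paper's: the first assertion via Lemma \ref{lem:qAqiB} applied to the $q$-Weyl LR pair $A,B$ together with the identity (\ref{eq:ABcheck}), and the second via Lemma \ref{lem:etapp} combined with the equality of parameter sequences in (\ref{eq:vvv}). The additional remarks about dividing by $q-q^{-1}$ are fine but not needed beyond what Assumption \ref{def:sqRoot} already guarantees.
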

\begin{proof} 
The first assertion follows from
 Lemma
\ref{lem:qAqiB}.
The second assertion follows from
Lemma
\ref{lem:etapp}
and
(\ref{eq:vvv}).
\end{proof}

\noindent We now consider which values 
of 
(\ref{eq:thetaCalc2}) could equal $\alpha_1$.

\begin{lemma}
\label{lem:sixeqs}
With reference to
Definition
\ref{def:qExceptional}, 
assume that $A,B,C$ has $q$-Weyl type. Then 
$\alpha_1 (q-q^{-1})I$
is equal to each of
\begin{eqnarray*}
&&
q A + q^{-1}B + qC - qABC, \qquad \qquad q^{-1}A+qB+q^{-1}C- q^{-1}CBA,
\\
&&
q B + q^{-1}C + qA - qBCA, \qquad \qquad q^{-1}B+qC+q^{-1}A- q^{-1}ACB,
\\
&&
q C + q^{-1}A + q B - qCAB, \qquad \qquad q^{-1}C+qA+q^{-1}B- q^{-1}BAC.
\end{eqnarray*}
\end{lemma}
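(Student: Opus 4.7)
The plan is to prove just the first of the six displayed equalities,
\[qA+q^{-1}B+qC-qABC=\alpha_1(q-q^{-1})I,\]
by a direct matrix computation in an $(A,B)$-basis, and then deduce the other five by applying this single identity to the five non-trivial relatives of $A,B,C$. Under the $q$-Weyl hypothesis, Definition \ref{def:qExceptional} gives $\varphi_i=\varphi'_i=\varphi''_i=1-q^{-2i}$, and standardness of $d,q$ (Definition \ref{def:qstand}) yields $q^{2d+2}=1$, whence $\varphi_{d-i+1}=1-q^{2i}$ and $\varphi'_{d-i+1}/\varphi_i=-q^{2i}$. Combining Proposition \ref{lem:ai} with $\alpha_1=\alpha'_1=\alpha''_1$ (Lemma \ref{lem:whatisIso}) gives $a_i=\alpha_1(\varphi_{d-i+1}-\varphi_{d-i})=\alpha_1 q^{2i}(q^2-1)$.

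In an $(A,B)$-basis, Proposition \ref{prop:matrixRep} displays $A^\natural$ as the strict upper shift, $B^\natural$ as a strict lower weighted shift, and $C^\natural$ as a tridiagonal matrix with the entries computed above. By Lemma \ref{lem:AiBione}, $AB$ is represented by a diagonal matrix with $(j,j)$-entry $\varphi_{j+1}$, so $ABC$ is tridiagonal, and hence so is $qA+q^{-1}B+qC-qABC$. One then checks three diagonals. The super-diagonal $(i-1,i)$-entry is $q+q(-q^{2i})(1-\varphi_i)=q-q\cdot q^{2i}\cdot q^{-2i}=0$. The sub-diagonal $(i,i-1)$-entry is $q^{-1}\varphi_i+q\varphi''_{d-i+1}(1-\varphi_{i+1})=q^{-1}(1-q^{-2i})+q(1-q^{2i})q^{-2i-2}$, which telescopes to $0$. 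The main-diagonal $(i,i)$-entry is $qa_i(1-\varphi_{i+1})=q\alpha_1 q^{2i}(q^2-1)q^{-2i-2}=\alpha_1 q^{-1}(q^2-1)=\alpha_1(q-q^{-1})$. The boundary cases $i=0$ and $i=d$ require separate inspection: at $i=d$ one uses $\varphi_{d+1}=0$ and $q^{2d}=q^{-2}$ to see that $qa_d=\alpha_1(q-q^{-1})$.

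For the remaining five equalities I would apply the first identity to each relative. The p-relative $B,C,A$ has $q$-Weyl type by Lemma \ref{lem:qiWeyl} and the same first Toeplitz number $\alpha_1$ by Lemma \ref{lem:whatisIso}, so the first-stage identity applied to $B,C,A$ reads $qB+q^{-1}C+qA-qBCA=\alpha_1(q-q^{-1})I$, which is the third displayed identity; the p-relative $C,A,B$ yields the fifth in the same way. For the n-relative $C,B,A$, Lemma \ref{lem:qiWeyl} gives $q^{-1}$-Weyl type, and Lemma \ref{lem:ToeplitzData} (row $C,B,A$) together with (\ref{eq:list3}) shows its first Toeplitz number is $\beta'_1=-\alpha_1$. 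Applying the first-stage identity to $C,B,A$ with $q$ replaced by $q^{-1}$ and $\alpha_1$ by $-\alpha_1$ gives $q^{-1}C+qB+q^{-1}A-q^{-1}CBA=(-\alpha_1)(q^{-1}-q)I=\alpha_1(q-q^{-1})I$, which rearranges to the second identity. The n-relatives $A,C,B$ and $B,A,C$ give the fourth and sixth identities by the same procedure.

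The main obstacle is the bookkeeping inside the matrix computation: one must keep careful track of powers of $q$, apply $q^{2d+2}=1$ to reduce $q^{-2(d-i+1)}$, and separately verify the endpoint entries where the shift matrices have zero rows or columns. Once the first identity is secured, the reduction of the remaining five identities is essentially notational, amounting to reading off the cyclic/reverse permutation from Lemma \ref{lem:ABCvar} and the associated substitutions $q\to q^{\pm1}$, $\alpha_1\to\pm\alpha_1$ from Lemmas \ref{lem:qiWeyl} and \ref{lem:ToeplitzData}.
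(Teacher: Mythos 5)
Your proposal is correct and follows essentially the same route as the paper: a direct matrix computation in an $(A,B)$-basis using Proposition \ref{prop:matrixRep} together with the value $a_i=\alpha_1 q^{2i+1}(q-q^{-1})$ obtained from Proposition \ref{lem:ai}. Your only deviation is verifying one identity and deducing the other five by passing to the p- and n-relatives (with $q\to q^{-1}$, $\alpha_1\to-\alpha_1$ for the n-relatives), which is a valid and slightly tidier bookkeeping device than checking all six displays directly.
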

\begin{proof}
Represent $A,B,C$ by matrices, using for
example the first row in the table of 
Proposition
\ref{prop:matrixRep}. By Proposition
\ref{lem:ai}
we have $a_i = \alpha_1 q^{2i+1}(q-q^{-1})$ for $0 \leq i \leq d$.
\end{proof}

\begin{proposition} 
\label{prop:AllqWeyl}
Assume that $\mathbb F$ is
algebraically closed.
With reference to Assumption
\ref{def:sqRoot},
pick an integer $j$ $(0 \leq j \leq d)$ and
define
\begin{eqnarray}
\alpha_1= \frac{q^{j+1/2} + q^{-j-1/2}}{q-q^{-1}}.
\label{eq:alphafix}
\end{eqnarray}
Then there exists an LR triple over $\mathbb F$ that has
diameter $d$ and $q$-Weyl type, with first Toeplitz number
$\alpha_1$.
This LR triple is uniquely determined up to isomorphism
by $d,q,j$.
For this LR triple,
\begin{eqnarray}
\label{eq:alphaiBasic}
&&
\alpha_i = \frac{(-1)^i q^{-i/2}}{(q^{-1};q^{-1})_i}\;
 {}_3\phi_2 \Biggl(
\genfrac{}{}{0pt}{}
 {q^{i}, \;-q^{-j-1},\;-q^{j}}
 {0, \;\;-q^{-1}}
 \;\Bigg\vert \; q^{-1},\;q^{-1}\Biggr)
\qquad \qquad (0 \leq i \leq d),
\\
&&
\beta_i = \frac{(-1)^i q^{i/2}}{(q;q)_i}\;
 {}_3\phi_2 \Biggl(
\genfrac{}{}{0pt}{}
 {q^{-i}, \;-q^{j+1},\;-q^{-j}}
 {0, \;\;-q}
 \;\Bigg\vert \; q,\;q\Biggr)
\label{eq:betaiBasic}
\qquad \qquad (0 \leq i \leq d).
 \end{eqnarray}
\end{proposition}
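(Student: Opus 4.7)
The plan is to establish three separate assertions — existence, uniqueness, and the explicit Toeplitz formulas — in that order.

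\emph{Uniqueness.} By Lemma \ref{lem:stand}(i), $d\ge 1$. Proposition \ref{prop:extra} shows that an LR triple of diameter $d\ge 1$ is determined up to isomorphism by its parameter array together with its first Toeplitz number $\alpha_1$. Since $q$-Weyl type forces $\varphi_i=\varphi'_i=\varphi''_i=1-q^{-2i}$ via (\ref{eq:vvv}), the parameter array is determined by $d,q$, and $\alpha_1$ is determined by $j$ through (\ref{eq:alphafix}); hence the triple is determined up to isomorphism by $d,q,j$.

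\emph{Existence.} Start with an LR pair $A,B$ on $V$ of $q$-Weyl type, furnished by Lemma \ref{ex:exceptional}. I would construct $C$ directly from the first relation in Lemma \ref{lem:sixeqs}, by setting
\[
C := q^{-1}(I-AB)^{-1}\bigl[\alpha_1(q-q^{-1})I - qA - q^{-1}B\bigr].
\]
Here $I-AB$ is invertible because $AB$ has eigenvalues $1-q^{-2(i+1)}$ for $0\le i\le d-1$ and $0$ on $V_d$, all distinct from $1$. The verification proceeds in three steps. First, from $qAB-q^{-1}BA=(q-q^{-1})I$ one derives the commutation identities $B(I-AB)=q^2(I-AB)B$ and $A(I-AB)=q^{-2}(I-AB)A$, together with $AB-BA=(q^2-1)(I-AB)$. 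Using these to push $A$ and $B$ through $(I-AB)^{-1}$ collapses both $qBC-q^{-1}CB$ and $qCA-q^{-1}AC$ to $(q-q^{-1})I$ after cancellation, so the remaining two $q$-Weyl relations of (\ref{eq:WWW}) hold. Second, by Lemma \ref{lem:qAqiB} and algebraic closure of $\mathbb F$, the operator $qA+q^{-1}B$ has $(q-q^{-1})\alpha_1=\theta_j$ among its eigenvalues; choose a nonzero eigenvector $\eta''\in V$. Substituting into the formula for $C$ gives $C\eta''=0$, so $C$ is not invertible. Third, Lemma \ref{lem:qWeylABextend} applied to $B,C$ and to $C,A$ now shows both are LR pairs of $q$-Weyl type, and hence $A,B,C$ is an LR triple of $q$-Weyl type. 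By Lemma \ref{lem:etapp} and (\ref{eq:ABcheck}), the eigenvalue of $\eta''$ under $(qA+q^{-1}B)/(q-q^{-1})$ coincides with the first Toeplitz number of the constructed LR triple, which is therefore equal to $\alpha_1$.

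\emph{Toeplitz formulas.} By Lemma \ref{lem:whatisIso}, $\alpha_i=\alpha'_i=\alpha''_i$ and $\beta_i=\beta'_i=\beta''_i$. Lemma \ref{lem:matrixA}, specialized to $\varphi_i=1-q^{-2i}$, therefore says that $(\alpha_0,\ldots,\alpha_d)^{t}$ is an eigenvector with eigenvalue $\alpha_1$ of an explicit tridiagonal matrix depending only on $d,q$. A diagonal conjugation (with $(i,i)$-entry an explicit product of $q$-powers) together with a substitution of spectral parameter identifies this matrix with the matrix $\Delta$ of Lemma \ref{lem:bMatrixPre}; reading off the eigenvector coordinates from that lemma, adjusting for the diagonal conjugation and imposing the normalization $\alpha_0=1$, produces (\ref{eq:alphaiBasic}). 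As a sanity check one can verify directly that the right-hand side of (\ref{eq:alphaiBasic}) satisfies the recursion (\ref{eq:alphaRec}) of Proposition \ref{prop:AlphaRecursion} with the initial conditions $\alpha_0=1$ and $\alpha_1$ as in (\ref{eq:alphafix}). The companion formula (\ref{eq:betaiBasic}) follows by applying the result just obtained to the LR triple $\tilde A,\tilde B,\tilde C$, which by Lemmas \ref{lem:newPA} and \ref{lem:ToeplitzDataD} has Toeplitz $\alpha$-sequence equal to our $\beta$-sequence, has $q^{-1}$-Weyl type (since taking adjoints transforms the $q$-Weyl relation into the $q^{-1}$-Weyl relation), and has the same index $j$; substituting $q\leftrightarrow q^{-1}$ in (\ref{eq:alphaiBasic}) then yields (\ref{eq:betaiBasic}).

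\emph{Main obstacle.} The principal difficulty is the hypergeometric identification in the Toeplitz formulas: translating the eigenvector expression of Lemma \ref{lem:bMatrixPre} into the ${}_3\phi_2$ of (\ref{eq:alphaiBasic}) requires a diagonal similarity combined with a transformation of basic hypergeometric series, and the $q$-power bookkeeping must be carried out carefully. A secondary obstacle is the verification step for $C$, since pushing $A$ and $B$ through $(I-AB)^{-1}$ requires attention to the non-commuting algebra of $A$ and $B$.
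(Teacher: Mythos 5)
Your proposal is correct and follows essentially the same route as the paper: build $C$ from the $q$-Weyl LR pair $A,B$, show $C$ is not invertible using the eigenvalue $\theta_j$ of $qA+q^{-1}B$, invoke Lemma \ref{lem:qWeylABextend} for the pairs $B,C$ and $C,A$, get uniqueness from Proposition \ref{prop:extra} and (\ref{eq:vvv}), and derive (\ref{eq:alphaiBasic}), (\ref{eq:betaiBasic}) from Lemmas \ref{lem:bMatrixPre}, \ref{lem:matrixA} and the n-relative. The only cosmetic differences are that you define $C$ by the closed formula $q^{-1}(I-AB)^{-1}[\alpha_1(q-q^{-1})I-qA-q^{-1}B]$ and verify the relations by commutation identities rather than via the matrix representation, and you read off the first Toeplitz number from Lemma \ref{lem:etapp} rather than Lemma \ref{lem:sixeqs}; both variants check out.
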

\begin{proof}
By Lemma \ref{ex:exceptional}
there exists an LR pair $A,B$ on $V$
that has $q$-Weyl type.
Its parameter sequence $\lbrace \varphi_i \rbrace_{i=1}^d$
satisfies $\varphi_i = 1-q^{-2i}$ for $1 \leq i \leq d$.
With respect to an $(A,B)$-basis of $V$ the matrices representing
$A,B$ are given as shown in the first row of the table in
Proposition
\ref{prop:matrixRep}. 
Define $C\in {\rm End}(V)$ such that
with respect to the $(A,B)$-basis,
the matrix representing $C$ is given as shown in the 
first row of the table, using
$a_i = \alpha_1 q^{2i+1}(q-q^{-1})$
for $0 \leq i \leq d$ and $\varphi'_i = \varphi''_i = \varphi_i$
for $1 \leq i \leq d$.
We show that $A,B,C$ is an LR triple on $V$ that has $q$-Weyl type.
To do this, it suffices to show that $B,C$ and $C,A$ are
LR pairs on $V$ that have
$q$-Weyl type.
We now show that 
$B,C$ is an LR pair on $V$ that has $q$-Weyl type.
To this end we apply Lemma
\ref{lem:qWeylABextend}
to the pair $B,C$.
From the matrix representions 
we see that $B,C$ satisfy the middle equation in
(\ref{eq:WWW}). The map $B$ is not invertible, since $B$
is Nil by Lemma
\ref{lem:ABdecIndNil}. We show that $C$ is not invertible.
From the matrix representations we obtain
\begin{eqnarray}
\label{eq:tocheck}
\alpha_1 (q-q^{-1})I = qA+q^{-1}B + qC -qABC.
\end{eqnarray}
Rearranging 
(\ref{eq:tocheck}),
\begin{eqnarray}
\alpha_1 (q-q^{-1})I - qA - q^{-1}B = q(1-AB)C.
\label{eq:BAC}
\end{eqnarray}
By assumption
(\ref{eq:alphafix})
along with
Definition
\ref{def:thetaI}
and Lemma
\ref{lem:qAqiB},  
$\alpha_1(q-q^{-1})$ is an
eigenvalue for $qA+q^{-1}B$.
So in the equation
(\ref{eq:BAC}) the expression on the left is not
invertible.
Therefore $(1-AB)C$ 
is not invertible.
Note that $I-AB$ is diagonalizable with eigenvalues
$\lbrace 1-\varphi_{i+1}\rbrace_{i=0}^d$.
Moreover $1-\varphi_{i+1} = q^{-2i-2}\not=0$ for
$0 \leq i \leq d$.
Therefore $I-AB$ is invertible. By these comments $C$
is not invertible.
Applying Lemma
\ref{lem:qWeylABextend}
to the pair $B,C$ we find that $B,C$ is an LR pair
on $V$ that has $q$-Weyl type.
One similarly shows that
$C,A$  is an LR pair on $V$ that has $q$-Weyl type.
Now by Definition
\ref{def:qExceptional} the triple
$A,B,C$ is an LR triple on $V$
that has $q$-Weyl type.
Comparing
(\ref{eq:tocheck}) with the first expression
in the display of Lemma
\ref{lem:sixeqs},
 we see that $A,B,C$ has first Toeplitz
number 
$\alpha_1$.
We have displayed an LR triple over $\mathbb F$
that has diameter $d$ and $q$-Weyl type, with
first Toeplitz number $\alpha_1$.
This LR triple is unique up to isomorphism
by Proposition
\ref{prop:extra} and line
(\ref{eq:vvv}).
To obtain  (\ref{eq:alphaiBasic}) use
the eigenvector assertion in Lemma
\ref{lem:bMatrixPre} along with 
Lemma
\ref{lem:matrixA}.
To obtain  (\ref{eq:betaiBasic}),
apply
 (\ref{eq:alphaiBasic}) to any n-relative of $A,B,C$
and use 
Lemma
\ref{lem:qiWeyl}.
\end{proof}

\section{Bipartite LR triples}

\noindent 
Throughout this section the following notation is in effect.
Let $V$ denote a vector space over $\mathbb F$ with
dimension $d+1$.
Let $A,B,C$ denote
an LR triple on
$V$, with
 parameter array
(\ref{eq:paLRT}),
idempotent data
(\ref{eq:idseq}),
trace data
(\ref{eq:tracedata}), and Toeplitz data
(\ref{eq:ToeplitzData}).
We describe a condition on $A,B,C$ called bipartite.

\begin{definition}
\label{def:LRTbip}
\rm 
The LR triple $A,B,C$  is called
{\it bipartite} whenever
each of $a_i,
a'_i,
a''_i$ is zero for 
$0 \leq i \leq d$.
\end{definition}

\begin{lemma}
\label{lem:TrivBip}
If $A,B,C$ is trivial then it is
 bipartite. 
\end{lemma}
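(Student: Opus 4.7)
The plan is to unpack the two definitions involved and observe that the conclusion is essentially immediate. By Example \ref{ex:trivial}, saying that $A,B,C$ is trivial means that $d=0$ and $A=B=C=0$. By Definition \ref{def:LRTbip}, to show that $A,B,C$ is bipartite I need to verify that $a_i = a'_i = a''_i = 0$ for $0 \leq i \leq d$; since $d=0$, this amounts to checking the three scalars $a_0,a'_0,a''_0$.

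The key step is to invoke Definition \ref{def:traceData}, which gives
\begin{eqnarray*}
a_0 = {\rm tr}(CE_0), \qquad a'_0 = {\rm tr}(AE'_0), \qquad a''_0 = {\rm tr}(BE''_0).
\end{eqnarray*}
Since $C=0$, the product $CE_0$ is the zero map, so $a_0 = 0$. Similarly, $A=0$ forces $a'_0 = 0$ and $B=0$ forces $a''_0 = 0$. This verifies the bipartite condition.

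There is no real obstacle here; the statement is a one-line consequence of the definitions, and I would expect the author's proof to be a single sentence citing Example \ref{ex:trivial} and Definition \ref{def:traceData} (or perhaps Definition \ref{def:LRTbip} directly). The only small point worth being explicit about is that the entire trace data collapses to three scalars when $d=0$, so the universal quantifier in the bipartite condition is vacuously easy to satisfy.
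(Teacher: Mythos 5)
Your proof is correct. It is worth noting that the paper takes a slightly different (equally one-line) route: instead of using $A=B=C=0$ to kill the traces directly, the author sets $d=0$ in Lemma \ref{lem:aisum}, which states $\sum_{i=0}^d a_i = 0$ (and likewise for $a'_i$, $a''_i$); when $d=0$ the sum has a single term, so $a_0=a'_0=a''_0=0$. That argument only uses $d=0$ together with the nilpotency of $A,B,C$ (which is what underlies Lemma \ref{lem:aisum}), whereas yours uses the stronger fact that the three maps are actually zero. Both are immediate, and your appeal to Definition \ref{def:traceData} with $C=0$ is perfectly valid; the paper's version is marginally more economical in that it reuses an already-stated lemma rather than re-deriving the vanishing from scratch.
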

\begin{proof} 
Set $d=0$ in Lemma
\ref{lem:aisum} to 
see that each of
$a_0, a'_0, a''_0$ is zero.
\end{proof}

\begin{lemma} 
\label{lem:bipRel}
Assume
that $A,B,C$ is bipartite (resp. nonbipartite). Then
each relative of 
$A,B,C$ is bipartite (resp. nonbipartite). 
\end{lemma}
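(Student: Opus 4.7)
The plan is to reduce the claim directly to Definition \ref{def:LRTbip} and read off the result from the trace-data tables already established in Lemmas \ref{lem:tracedataAlt} and \ref{lem:tracedataDual}. Bipartiteness is by definition a condition purely on the trace data, namely that all three sequences $\lbrace a_i\rbrace_{i=0}^d$, $\lbrace a'_i\rbrace_{i=0}^d$, $\lbrace a''_i\rbrace_{i=0}^d$ are identically zero. So it suffices to know, for each relative of $A,B,C$, how its trace data is obtained from that of $A,B,C$.

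First I would recall that by Definition \ref{def:prel}, a relative of $A,B,C$ is one of the twelve LR triples appearing in the tables of Lemmas \ref{lem:ABCvar} and \ref{lem:newPA}: the six permutations of $A,B,C$ on $V$, together with the six LR triples on $V^*$ obtained from adjoints. Lemma \ref{lem:tracedataAlt} gives the trace data of each of the six permutation relatives as a permutation of the sequences $\lbrace a_i\rbrace_{i=0}^d$, $\lbrace a'_i\rbrace_{i=0}^d$, $\lbrace a''_i\rbrace_{i=0}^d$, possibly with the index $i$ replaced by $d-i$. Similarly Lemma \ref{lem:tracedataDual} gives the trace data of each of the six dual relatives as a permutation of the same three sequences, again possibly with $i$ replaced by $d-i$.

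From this it is immediate that each component of the trace data of any relative is either some $a_j$, some $a'_j$, or some $a''_j$ (for appropriate $j$). Hence if $A,B,C$ is bipartite, so that every $a_i$, $a'_i$, $a''_i$ equals zero, then every component of the trace data of every relative also equals zero, and each relative is bipartite in the sense of Definition \ref{def:LRTbip}. Conversely, if some relative of $A,B,C$ is bipartite then by the same reasoning (using that $A,B,C$ is itself a relative of each of its relatives, via the inverse permutation or a second application of the adjoint) the LR triple $A,B,C$ must itself be bipartite. Taking contrapositives yields the nonbipartite statement. There is no real obstacle here; the whole content is bookkeeping that has been pre-packaged in Lemmas \ref{lem:tracedataAlt} and \ref{lem:tracedataDual}, and the only thing to check is that these two lemmas together account for all twelve relatives listed in Definition \ref{def:prel}, which they do by construction.
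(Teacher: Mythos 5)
Your proposal is correct and follows exactly the paper's route: the paper's proof is precisely "Use Lemmas \ref{lem:tracedataAlt}, \ref{lem:tracedataDual}," and you have simply spelled out the bookkeeping (trace data of each relative is a permutation of the original trace sequences, possibly reindexed by $i\mapsto d-i$) together with the easy converse needed for the nonbipartite case.
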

\begin{proof} Use Lemmas
\ref{lem:tracedataAlt},
\ref{lem:tracedataDual}.
\end{proof}

\begin{lemma}
\label{lem:BPabc}
Assume
that $A,B,C$ is bipartite (resp. nonbipartite). Let
$\alpha, \beta, \gamma$ denote nonzero scalars in
$\mathbb F$. Then the LR triple
$\alpha A, \beta B, \gamma C$ is bipartite (resp. nonbipartite).
\end{lemma}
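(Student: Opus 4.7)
The plan is to observe that this is essentially immediate from Lemma \ref{lem:traceAdj} together with Definition \ref{def:LRTbip}. The key input is the formula describing how the trace data transforms under the rescaling $A,B,C \mapsto \alpha A, \beta B, \gamma C$.

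First I would invoke Lemma \ref{lem:traceAdj}, which states that if $A,B,C$ has trace data $(\lbrace a_i\rbrace_{i=0}^d;\lbrace a'_i\rbrace_{i=0}^d;\lbrace a''_i\rbrace_{i=0}^d)$, then $\alpha A, \beta B, \gamma C$ has trace data $(\lbrace \gamma a_i\rbrace_{i=0}^d;\lbrace \alpha a'_i\rbrace_{i=0}^d;\lbrace \beta a''_i\rbrace_{i=0}^d)$.

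For the bipartite direction, if $a_i = a'_i = a''_i = 0$ for $0 \le i \le d$, then trivially $\gamma a_i = \alpha a'_i = \beta a''_i = 0$ for $0 \le i \le d$, so by Definition \ref{def:LRTbip} the LR triple $\alpha A, \beta B, \gamma C$ is bipartite. Conversely, if $A,B,C$ is nonbipartite, then some $a_i$ or $a'_i$ or $a''_i$ is nonzero; since $\alpha, \beta, \gamma$ are nonzero elements of $\mathbb F$, the corresponding scaled entry $\gamma a_i$, $\alpha a'_i$, or $\beta a''_i$ is also nonzero, so $\alpha A, \beta B, \gamma C$ is nonbipartite.

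There is no real obstacle here; the lemma reduces to the fact that multiplication by a nonzero field element preserves zero/nonzero status, combined with the previously established transformation rule for trace data. The proof is two or three lines in total.
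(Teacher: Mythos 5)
Your proof is correct and follows exactly the paper's route: the paper's proof is the one-line "Use Lemma \ref{lem:traceAdj}," and you have simply spelled out the (trivial) details of why scaling by nonzero field elements preserves the vanishing or nonvanishing of the trace data.
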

\begin{proof}
Use Lemma
\ref{lem:traceAdj}.
\end{proof}

\begin{lemma}
\label{lem:NotB}
Assume that $A,B,C$ is nonbipartite. Then $d\geq 1$. Moreover
each of
\begin{eqnarray}
\label{eq:NotBip}
\alpha_1, \qquad
\alpha'_1, \qquad
\alpha''_1, \qquad
\beta_1, \qquad
\beta'_1, \qquad
\beta''_1
\end{eqnarray}
is nonzero.
\end{lemma}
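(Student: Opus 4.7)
The plan is to prove the lemma by contrapositive for the second assertion: I will show that if any one of the six scalars in (\ref{eq:NotBip}) vanishes, then $A,B,C$ must be bipartite.

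First, the assertion $d\geq 1$ is immediate. If $d=0$, then by Example \ref{ex:trivial} the triple $A,B,C$ is trivial, and Lemma \ref{lem:TrivBip} shows every trivial LR triple is bipartite. Contrapositively, nonbipartite forces $d\geq 1$.

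Next I would reduce the six-scalar claim to a single one. By Lemma \ref{lem:alpha0} we have $\beta_1=-\alpha_1$, $\beta'_1=-\alpha'_1$, $\beta''_1=-\alpha''_1$, so the six scalars vanish or are nonzero in pairs and it is enough to control $\alpha_1,\alpha'_1,\alpha''_1$. Then I invoke Corollary \ref{cor:alphaOneInv}, which gives
\begin{eqnarray*}
\frac{\alpha_1}{\varphi_i}=\frac{\alpha'_1}{\varphi'_i}=\frac{\alpha''_1}{\varphi''_i}
\qquad (1\leq i\leq d).
\end{eqnarray*}
Since $\varphi_i,\varphi'_i,\varphi''_i$ are all nonzero, this equation forces: if any one of $\alpha_1,\alpha'_1,\alpha''_1$ is zero, then all three are zero.

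The final step is to conclude that $\alpha_1=\alpha'_1=\alpha''_1=0$ (together with the corresponding $\beta$'s) forces $A,B,C$ to be bipartite. Using $\alpha'_0=\beta'_0=1$ from Lemma \ref{lem:alpha0}, Proposition \ref{lem:ai} yields
\begin{eqnarray*}
a_{d-i}=\beta'_1\varphi''_i+\alpha'_1\varphi''_{i+1},
\quad
a'_{d-i}=\beta''_1\varphi_i+\alpha''_1\varphi_{i+1},
\quad
a''_{d-i}=\beta_1\varphi'_i+\alpha'_1 \varphi'_{i+1}
\end{eqnarray*}
for $0\leq i\leq d$, so the vanishing of all six scalars in (\ref{eq:NotBip}) implies $a_i=a'_i=a''_i=0$ for $0\leq i\leq d$. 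By Definition \ref{def:LRTbip} the triple is bipartite, giving the desired contrapositive.

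There is no real obstacle here; the argument is a direct chain $\alpha_1=0\Rightarrow \alpha'_1=\alpha''_1=0\Rightarrow \beta_1=\beta'_1=\beta''_1=0\Rightarrow $ bipartite, each link supplied by a previously established identity (Corollary \ref{cor:alphaOneInv}, Lemma \ref{lem:alpha0}, Proposition \ref{lem:ai}).
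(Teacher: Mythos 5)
Your proof is correct and follows essentially the same route as the paper: the paper also assumes $\alpha_1=0$, uses Corollary \ref{cor:alphaOneInv} and $\beta_1=-\alpha_1$ to force all six scalars to vanish, then applies Proposition \ref{lem:ai} to conclude bipartiteness, a contradiction. (There is an immaterial typo in your third displayed formula, where $\alpha'_1\varphi'_{i+1}$ should read $\alpha_1\varphi'_{i+1}$ per Proposition \ref{lem:ai}, but this does not affect the argument.)
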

\begin{proof}
We have $d\geq 1 $  by
Lemma
\ref{lem:TrivBip}.
We show $\alpha_1 \not= 0 $.
Suppose $\alpha_1 = 0$.
By Corollary
\ref{cor:alphaOneInv} we obtain
$\alpha'_1= 0$
and 
$\alpha''_1= 0$.
Observe that $\beta_1 = -\alpha_1= 0$.
Similarly 
$\beta'_1= 0$
and 
$\beta''_1= 0$. Now
by Proposition
\ref{lem:ai}
each of $a_i, a'_i, a''_i$ is zero
for $0 \leq i \leq d$.
Now $A,B,C$ is bipartite, for a contradiction.
We have shown that $\alpha_1 \not=0$; by
Lemma
\ref{lem:bipRel}
the other
scalars in
(\ref{eq:NotBip}) are nonzero.
\end{proof}

\begin{lemma}
\label{lem:case}
Assume that $A,B,C$ is bipartite. Then $d$ is even.
Moreover
for  $0 \leq i \leq d$, 
each of
\begin{eqnarray}
\label{eq:BipZero}
\alpha_i, \qquad
\alpha'_i, \qquad
\alpha''_i, \qquad
\beta_i, \qquad
\beta'_i, \qquad
\beta''_i
\end{eqnarray}
is zero if $i$ is odd and nonzero if $i$ is even.
\end{lemma}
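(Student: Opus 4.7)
The plan is to show first that the first Toeplitz numbers $\alpha_1,\alpha'_1,\alpha''_1,\beta_1,\beta'_1,\beta''_1$ all vanish, and then propagate this via the two-term recursion of Proposition~\ref{prop:AlphaRecursion} to obtain the claimed parity pattern; parity of $d$ will then drop out from the fact that $\alpha_d\not=0$.

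For the first step, I would apply Corollary~\ref{lem:a0ad} to the hypothesis $a_0=a'_0=a''_0=0$. The six equations in that corollary read
\begin{eqnarray*}
0=a_0=\beta'_1\varphi''_d=\beta''_1\varphi'_d,\qquad
0=a'_0=\beta''_1\varphi_d=\beta_1\varphi''_d,\qquad
0=a''_0=\beta_1\varphi'_d=\beta'_1\varphi_d.
\end{eqnarray*}
Since $\varphi_d,\varphi'_d,\varphi''_d$ are all nonzero by the definition of the parameter sequences, we conclude $\beta_1=\beta'_1=\beta''_1=0$. Lemma~\ref{lem:alpha0} gives $\alpha_1=-\beta_1$, $\alpha'_1=-\beta'_1$, $\alpha''_1=-\beta''_1$, so $\alpha_1=\alpha'_1=\alpha''_1=0$ as well.

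For the second step, assume $d\geq 1$ and feed $\alpha_1=0$ into the recursion \eqref{eq:alphaRec} of Proposition~\ref{prop:AlphaRecursion}(i), which becomes
\begin{eqnarray*}
\alpha_{i+1}=\frac{\alpha_{i-1}\,\varphi_1\,(\varphi'_d)^{-1}}{\varphi''_{i+1}}
\qquad (1\leq i\leq d-1).
\end{eqnarray*}
Since the coefficient on the right is a nonzero scalar, $\alpha_{i+1}$ is nonzero if and only if $\alpha_{i-1}$ is nonzero. Starting from $\alpha_0=1\not=0$ and $\alpha_1=0$, a straightforward induction on $i$ shows that $\alpha_i=0$ for $i$ odd and $\alpha_i\not=0$ for $i$ even, in the range $0\leq i\leq d$. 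The identical argument, applied to \eqref{eq:betaRec} and to the p-relatives $B,C,A$ and $C,A,B$ of the LR triple (whose Toeplitz data, by Lemma~\ref{lem:ToeplitzData}, cyclically permutes the six sequences in~\eqref{eq:BipZero}, and which remain bipartite by Lemma~\ref{lem:bipRel}), gives the same parity pattern for each of $\alpha'_i,\alpha''_i,\beta_i,\beta'_i,\beta''_i$.

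Finally, to conclude that $d$ is even: Lemma~\ref{lem:NZ} asserts that $\alpha_d\not=0$, which combined with the parity pattern just established forces $d$ to be even. The main obstacle I anticipate is merely bookkeeping: verifying that the recursion argument covers the endpoints $i=0$ and $i=d$ correctly, and noting the trivial case $d=0$ separately (where the statement holds vacuously since $\alpha_0=1$). No deeper structural difficulty is expected, since both the vanishing of the odd-index Toeplitz parameters and the non-vanishing of the even-index ones follow from a single two-term linear recursion with nonzero coefficients.
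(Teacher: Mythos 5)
Your proof is correct and follows essentially the same route as the paper: Corollary~\ref{lem:a0ad} to kill the first Toeplitz numbers, the two-term recursion of Proposition~\ref{prop:AlphaRecursion} with induction to propagate the parity pattern, passage to the p-relatives (bipartite by Lemma~\ref{lem:bipRel}) for the remaining sequences, and Lemma~\ref{lem:NZ} ($\alpha_d\not=0$) to force $d$ even. The only cosmetic difference is that you reach $\alpha_1=0$ via $a_0=\beta'_1\varphi''_d$ and $\beta_1=-\alpha_1$, whereas the paper reads it off directly from $a_d=\alpha_1\varphi''_1$.
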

\begin{proof}
We claim that $\alpha_i$
is zero if $i$ is odd and nonzero if $i$ is even.
We prove the claim by induction on $i$.
The claim holds for $i=0$, since $\alpha_0=1$.
The claim holds for $i=1$, since
 $\alpha_1=0$ by
Corollary
\ref{lem:a0ad}
and our assumption that $A,B,C$ is bipartite.
The claim holds for $2 \leq i \leq d$
by Proposition
\ref{prop:AlphaRecursion}(i)
and induction. The claim is proven.
By Lemma \ref{lem:bipRel}
the other scalars in
(\ref{eq:BipZero}) are zero if $i$ is odd and
nonzero if $i$ is even.
The diameter $d$ must be even by the
first assertion of Lemma
\ref{lem:NZ}.
\end{proof}


\noindent As we continue to discuss LR triples, we will often treat
the bipartite and nonbipartite cases separately.
For the next few results, we consider the nonbipartite case.

\begin{lemma}
\label{lem:alphaiAlpha1}
Assume that $A,B,C$ is nonbipartite. Then
for $0 \leq i \leq d$,
\begin{eqnarray}
\label{ex:alphaiAlpha1}
\frac{\alpha_i}{\alpha^i_1}
=
\frac{\alpha'_i}{(\alpha'_1)^i}
=
\frac{\alpha''_i}{(\alpha''_1)^i},
\qquad \qquad \qquad
\frac{\beta_i}{\beta^i_1}
=
\frac{\beta'_i}{(\beta'_1)^i}
=
\frac{\beta''_i}{(\beta''_1)^i}.
\end{eqnarray}
\end{lemma}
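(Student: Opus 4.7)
The plan is to derive both chains of equalities by combining two results already established in the paper: Lemma~\ref{lem:alphaInv} (which expresses the $\alpha_i$ as products of $\varphi_j$'s, cross-identified across the three primings) and Corollary~\ref{cor:alphaOneInv} (which shows the ratios $\varphi'_i/\varphi_i$ and $\varphi''_i/\varphi_i$ are constant in $i$). The nonbipartite hypothesis enters only through Lemma~\ref{lem:NotB}, which guarantees the denominators $\alpha_1^i,(\alpha'_1)^i,(\alpha''_1)^i$ and $\beta_1^i,(\beta'_1)^i,(\beta''_1)^i$ are nonzero.

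First I would verify the $\alpha$-part. Corollary~\ref{cor:alphaOneInv} gives, for each $j$ with $1\leq j\leq d$,
\begin{eqnarray*}
\frac{\varphi_j}{\alpha_1}=\frac{\varphi'_j}{\alpha'_1}=\frac{\varphi''_j}{\alpha''_1}.
\end{eqnarray*}
Taking the product over $j=1,\ldots,i$ (for any $0\leq i\leq d$, with the empty product equal to $1$), one obtains
\begin{eqnarray*}
\frac{\varphi_1\varphi_2\cdots\varphi_i}{\alpha_1^i}
=\frac{\varphi'_1\varphi'_2\cdots\varphi'_i}{(\alpha'_1)^i}
=\frac{\varphi''_1\varphi''_2\cdots\varphi''_i}{(\alpha''_1)^i}.
\end{eqnarray*}
Dividing the first equation of Lemma~\ref{lem:alphaInv} through by this common value gives exactly the first assertion of Lemma~\ref{lem:alphaiAlpha1}.

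Next I would handle the $\beta$-part in the analogous fashion. The second equation of Corollary~\ref{cor:alphaOneInv} shows $\beta_1/\varphi_j=\beta'_1/\varphi'_j=\beta''_1/\varphi''_j$ for $1\leq j\leq d$, so taking the product over $j=d-i+1,\ldots,d$ yields
\begin{eqnarray*}
\frac{\varphi_d\cdots\varphi_{d-i+1}}{\beta_1^i}
=\frac{\varphi'_d\cdots\varphi'_{d-i+1}}{(\beta'_1)^i}
=\frac{\varphi''_d\cdots\varphi''_{d-i+1}}{(\beta''_1)^i},
\end{eqnarray*}
and dividing the second equation of Lemma~\ref{lem:alphaInv} by this common value gives the second assertion.

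There is really no obstacle in this argument; the only subtlety is that both sides of the divisions must be interpreted, which requires the six scalars in \eqref{eq:NotBip} to be nonzero. This is supplied by Lemma~\ref{lem:NotB}, whose hypothesis is precisely that $A,B,C$ is nonbipartite. The $i=0$ case is trivial since $\alpha_0=\alpha'_0=\alpha''_0=1$ and $\beta_0=\beta'_0=\beta''_0=1$ by Lemma~\ref{lem:alpha0}.
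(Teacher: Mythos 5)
Your proof is correct and takes essentially the same route as the paper, whose own proof consists of citing exactly Lemma~\ref{lem:alphaInv} and Corollary~\ref{cor:alphaOneInv} (with Lemma~\ref{lem:NotB} supplying the nonvanishing needed to take reciprocals and form the ratios). One tiny wording slip: to pass from the chain $\frac{\alpha_i}{\varphi_1\cdots\varphi_i}=\cdots$ of Lemma~\ref{lem:alphaInv} to the conclusion you should \emph{multiply} it by your displayed chain $\frac{\varphi_1\cdots\varphi_i}{\alpha_1^i}=\cdots$, not divide by it.
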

\begin{proof} By Lemma
\ref{lem:alphaInv}
and Corollary
\ref{cor:alphaOneInv}.
\end{proof}

\begin{lemma}
Assume that $A,B,C$ is nonbipartite. Let $\alpha, \beta, \gamma$
denote nonzero scalars in $\mathbb F$. Then the following are
equivalent:
\begin{enumerate}
\item[\rm (i)] 
the LR triples $A,B,C$ and $\alpha A,\beta B, \gamma C$ are isomorphic;
\item[\rm (ii)] 
$\alpha = \beta = \gamma =1$.
\end{enumerate}
\end{lemma}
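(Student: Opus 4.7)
The implication (ii)$\Rightarrow$(i) is immediate via the identity map on $V$, so the substantive content is (i)$\Rightarrow$(ii). My plan is to extract constraints on $\alpha,\beta,\gamma$ in two stages: first from the parameter array, then from the trace data.

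First I would observe that a nonbipartite LR triple is nontrivial by Lemma~\ref{lem:TrivBip}, so $d\geq 1$ and the three $\varphi$-sequences consist of nonzero scalars. Assume an isomorphism of LR triples from $A,B,C$ to $\alpha A,\beta B,\gamma C$ exists. By Proposition~\ref{prop:IsoParTrace}, the two triples then share both their parameter array and their trace data. Matching parameter arrays via Lemma~\ref{lem:albega} (using that some $\varphi_i,\varphi'_i,\varphi''_i$ is nonzero in each of the three sequences) yields $\alpha\beta=\beta\gamma=\gamma\alpha=1$; then Lemma~\ref{lem:albegaCor}, whose nontriviality hypothesis applies here, closes this to $\alpha=\beta=\gamma\in\{1,-1\}$.

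The second stage is to eliminate the choice $-1$, which I expect to be the main obstacle since this possibility is invisible at the level of the parameter array alone. Here I would invoke Lemma~\ref{lem:traceAdj}: matching trace data gives
\begin{equation*}
\gamma a_i = a_i, \qquad \alpha a'_i = a'_i, \qquad \beta a''_i = a''_i \qquad (0 \leq i \leq d).
\end{equation*}
Since $A,B,C$ is nonbipartite, Definition~\ref{def:LRTbip} guarantees that at least one of the scalars on the right-hand side is nonzero; this forces the corresponding coefficient to equal $1$, and combined with $\alpha=\beta=\gamma$ we conclude $\alpha=\beta=\gamma=1$. In characteristic $2$ the equation $-1=1$ makes the second stage vacuous, which is consistent with the conclusion.

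Beyond these appeals, no computation is needed. The only delicate point is that the parameter-array comparison alone cannot distinguish the sign, which is precisely where the nonbipartite hypothesis enters through the trace data.
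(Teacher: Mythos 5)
Your proof is correct. It does, however, take a different route from the paper: the paper's proof is a one-line appeal to Lemma~\ref{lem:ToeplitzAdjust}, i.e.\ it uses the Toeplitz data as the isomorphism invariant. Under rescaling, the first Toeplitz numbers transform as $\alpha_1\mapsto\gamma^{-1}\alpha_1$, $\alpha'_1\mapsto\alpha^{-1}\alpha'_1$, $\alpha''_1\mapsto\beta^{-1}\alpha''_1$, and since nonbipartiteness makes all three of $\alpha_1,\alpha'_1,\alpha''_1$ nonzero (Lemma~\ref{lem:NotB}), each of $\alpha,\beta,\gamma$ is forced to equal $1$ in a single step. You instead use the parameter array (which only sees the products $\alpha\beta$, $\beta\gamma$, $\gamma\alpha$ and hence cannot resolve the global sign) and then the trace data to kill the sign; your version needs only \emph{one} nonzero entry among the $a_i,a'_i,a''_i$, which is exactly what Definition~\ref{def:LRTbip} provides. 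Both arguments are valid and of comparable length. One small imprecision: you cite Proposition~\ref{prop:IsoParTrace} for the claim that isomorphic LR triples share their parameter array and trace data, but that proposition asserts the converse (these data determine the isomorphism class). The fact you actually need is that the parameter array and trace data are isomorphism invariants, which is implicit in the formulation of that proposition and follows from Lemma~\ref{lem:isoMove} together with the conjugation-invariance of the trace; it would be cleaner to say so directly.
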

\begin{proof} Use
Lemma \ref{lem:ToeplitzAdjust}.
\end{proof}

\noindent We turn our attention to bipartite
LR triples.

\begin{lemma}
\label{lem:BiPbasic}
Assume that $A,B,C$ is bipartite and nontrivial. Then
\begin{enumerate}
\item[\rm (i)] 
 $\alpha_1, \alpha'_1, \alpha''_1 $ and
 $\beta_1, \beta'_1, \beta''_1 $ are all zero;
\item[\rm (ii)] 
 $\alpha_2, \alpha'_2, \alpha''_2 $ and
 $\beta_2, \beta'_2, \beta''_2 $ are all nonzero; 
\item[\rm (iii)] We have 
\begin{eqnarray}
\beta_2 = - \alpha_2, \qquad\qquad
\beta'_2 = - \alpha'_2, \qquad\qquad
\beta''_2 = - \alpha''_2.
\label{eq:Al2Be2}
\end{eqnarray}
\end{enumerate}
\end{lemma}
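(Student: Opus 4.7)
The plan is to deduce this lemma almost immediately from Lemma \ref{lem:case}, supplemented by the basic inversion recurrence for upper triangular Toeplitz matrices displayed near equation (\ref{eq:recursion}).

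First I would dispatch (i) and (ii) in one stroke. Since $A,B,C$ is bipartite and nontrivial, Lemma \ref{lem:case} applies, so $d$ is even and for each Toeplitz parameter in the six sequences, the value is zero when the index is odd and nonzero when the index is even. Applying this with $i=1$ gives (i), and applying it with $i=2$ (which is legal since $d\ge 2$, because $d$ is even and nontrivial forces $d\ne 0$) gives (ii).

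For (iii), I would invoke the inversion recurrence (\ref{eq:recursion}) for $T$ and its inverse. Taking $j=2$ gives
\begin{equation*}
\alpha_0 \beta_2 + \alpha_1 \beta_1 + \alpha_2 \beta_0 = 0.
\end{equation*}
Using $\alpha_0 = \beta_0 = 1$ from Lemma \ref{lem:alpha0} together with $\alpha_1 = \beta_1 = 0$ from part (i), this collapses to $\beta_2 + \alpha_2 = 0$, yielding $\beta_2 = -\alpha_2$. The same argument applied to the primed and double-primed Toeplitz data (which, by Lemma \ref{lem:bipRel}, also come from bipartite LR triples, namely the p-relatives $B,C,A$ and $C,A,B$) gives the remaining two identities in (\ref{eq:Al2Be2}).

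There is no real obstacle here: the statement is essentially a corollary of the classification of vanishing Toeplitz parameters in Lemma \ref{lem:case} together with the well-known two-term identity governing the inverse of an upper triangular Toeplitz matrix with leading entry $1$. The only thing to be slightly careful about is verifying that $d\ge 2$ so that $\alpha_2$ and $\beta_2$ are defined in a nontrivial way; this is immediate because bipartite forces $d$ even and nontrivial forces $d\ne 0$.
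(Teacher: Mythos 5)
Your proof is correct and follows essentially the same route as the paper: parts (i) and (ii) are read off from Lemma \ref{lem:case}, and part (iii) comes from the Toeplitz inversion recurrence (\ref{eq:recursion}) (equivalently, the displayed formula $\beta_2=\alpha_1^2-\alpha_2$ for $\alpha_0=1$ above Lemma \ref{lem:reform1}) together with $\alpha_1=\beta_1=0$. No gaps.
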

\begin{proof}
(i), (ii)
By Lemma
\ref{lem:case}. 
\\
\noindent (iii)
From above
Lemma \ref{lem:reform1}.
\end{proof}

\begin{lemma}
\label{lem:UNIQUE}
A bipartite LR triple is uniquely determined up to isomorphism
by its parameter array.
\end{lemma}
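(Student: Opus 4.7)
The plan is to reduce this to Proposition \ref{prop:extra}, which says that for $d \geq 1$ any LR triple is determined up to isomorphism by its parameter array together with its first Toeplitz number $\alpha_1$. The only work to do, then, is to show that the bipartite condition pins down $\alpha_1$, eliminating the one degree of freedom left by that proposition.

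First I would dispose of the trivial case $d=0$: here the LR triple must be $0,0,0$ by Example \ref{ex:trivial}, so uniqueness up to isomorphism is automatic. So suppose $A,B,C$ is bipartite and nontrivial. By Lemma \ref{lem:case} the diameter $d$ is even, hence $d \geq 2$, and in particular $d \geq 1$ so that Proposition \ref{prop:extra} applies. By Lemma \ref{lem:BiPbasic}(i) the first Toeplitz number satisfies $\alpha_1 = 0$. If $A',B',C'$ is any other bipartite LR triple with the same parameter array as $A,B,C$, then its first Toeplitz number is also $0$ by the same lemma, so $A,B,C$ and $A',B',C'$ agree on both pieces of data required by Proposition \ref{prop:extra} and are therefore isomorphic.

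There isn't really a substantive obstacle here; the result is a clean corollary of the classification already established, once one observes that bipartiteness forces $\alpha_1$ (and, consequently, all the scalars in (\ref{eq:12values})) to be zero. The closest thing to a subtle point is remembering that Proposition \ref{prop:extra} requires $d \geq 1$, which is why the trivial case must be treated separately, and recalling that Lemma \ref{lem:case} rules out the possibility of an odd-diameter bipartite LR triple, so no other boundary case arises.
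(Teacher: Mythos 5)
Your proof is correct. The paper's own argument is even more direct: it cites Proposition \ref{prop:IsoParTrace} (an LR triple is determined up to isomorphism by its parameter array \emph{and trace data}) together with Definition \ref{def:LRTbip}, since being bipartite means precisely that the trace data vanishes identically --- so nothing beyond the definition is needed, and no case split on $d$ is required. Your route instead goes through the refinement Proposition \ref{prop:extra}, which forces you to (a) separate out $d=0$, since that proposition assumes $d\geq 1$, and (b) invoke Lemma \ref{lem:BiPbasic}(i) (or Lemma \ref{lem:case}) to see that bipartiteness kills $\alpha_1$ --- a fact whose proof already rests on the recursions of Section 15. Both arguments are one-step reductions to the classification results; yours trades the definitional observation for a derived one, at the cost of slightly more machinery. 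No gap.
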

\begin{proof}
By Proposition
\ref{prop:IsoParTrace}
and Definition
\ref{def:LRTbip}.
\end{proof}

\begin{lemma}
Assume that $A,B,C$ is bipartite. Let $\alpha, \beta, \gamma$
denote nonzero scalars in $\mathbb F$. Then the following are
equivalent:
\begin{enumerate}
\item[\rm (i)] 
the LR triples $A,B,C$ and $\alpha A,\beta B, \gamma C$ are isomorphic;
\item[\rm (ii)] 
$\alpha = \beta = \gamma \in \lbrace 1,-1\rbrace$.
\end{enumerate}
\end{lemma}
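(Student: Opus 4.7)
The plan is to reduce the statement to an equality of parameter arrays and then to a short arithmetic identity already handled in earlier lemmas.

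First I would observe that by Lemma \ref{lem:BPabc} the scaled triple $\alpha A, \beta B, \gamma C$ is again bipartite. Consequently both $A,B,C$ and $\alpha A, \beta B, \gamma C$ are bipartite LR triples of diameter $d$, and Lemma \ref{lem:UNIQUE} then asserts that condition (i) holds if and only if they have the same parameter array. Thus the problem reduces to characterizing when the parameter arrays coincide.

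Second, I would invoke Lemma \ref{lem:albega}, which gives the parameter array of $\alpha A, \beta B, \gamma C$ as
\begin{equation*}
\bigl(\lbrace \alpha\beta \varphi_i\rbrace_{i=1}^d;\;\lbrace \beta\gamma \varphi'_i\rbrace_{i=1}^d;\;\lbrace \gamma\alpha \varphi''_i\rbrace_{i=1}^d\bigr).
\end{equation*}
Assuming the triple is nontrivial (so $d\geq 1$ and the $\varphi_i,\varphi'_i,\varphi''_i$ are nonzero), equality of parameter arrays is equivalent to $\alpha\beta=\beta\gamma=\gamma\alpha=1$. Third, the equivalence of the conditions $\alpha\beta=\beta\gamma=\gamma\alpha=1$ and $\alpha=\beta=\gamma\in\{1,-1\}$ is precisely the content of (ii)$\Leftrightarrow$(iii) in Lemma \ref{lem:albegaCor}, which I would simply cite. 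Chaining these three reductions yields (i)$\Leftrightarrow$(ii).

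The only potential obstacle is the borderline case $d=0$, where $A=B=C=0$ makes both triples trivially equal for any choice of $\alpha,\beta,\gamma$, so condition (ii) as stated would be strictly stronger than (i). I expect this is ruled out by an implicit nontriviality assumption (as in Lemma \ref{lem:albegaCor}), since Lemma \ref{lem:case} shows $d$ is even for a bipartite triple and the interesting bipartite case used throughout the paper is the nontrivial one; if desired, one can make the hypothesis explicit by adding ``and nontrivial'' to the assumption. Apart from that, the proof is a direct three-step chain and requires no further computation.
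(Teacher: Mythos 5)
Your proof is correct and takes essentially the same route as the paper, whose entire proof is ``Use Lemmas \ref{lem:albegaCor}, \ref{lem:UNIQUE}''; your three-step chain (reduce to equality of parameter arrays via Lemma \ref{lem:UNIQUE}, compute the scaled parameter array via Lemma \ref{lem:albega}, finish with Lemma \ref{lem:albegaCor}) is exactly the intended argument. Your remark about the trivial case $d=0$ is also well taken: the implication (i)$\Rightarrow$(ii) genuinely fails there, so the statement carries the same implicit nontriviality assumption as Lemma \ref{lem:albegaCor}.
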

\begin{proof} Use
Lemmas
\ref{lem:albegaCor},
\ref{lem:UNIQUE}.
\end{proof}

\begin{lemma}
\label{lem:V0V1}
Assume that $A,B,C$ is bipartite,
so that  $d=2 m$ is even.
\begin{enumerate}
\item[\rm (i)]
The following subspaces are equal:
\begin{equation}
\label{eq:3verOut}
\sum_{j=0}^m E_{2j} V,
\qquad \quad 
 \sum_{j=0}^m E'_{2j} V,
\qquad \quad 
\sum_{j=0}^m E''_{2j} V.
\end{equation}
\item[\rm (ii)]
The following subspaces are equal:
\begin{equation}
\label{eq:3verIn}
\sum_{j=0}^{m-1} E_{2j+1} V,
\qquad \quad 
\sum_{j=0}^{m-1} E'_{2j+1} V,
\qquad \quad 
\sum_{j=0}^{m-1} E''_{2j+1} V.
\end{equation}
\item[\rm (iii)] Let 
 $V_{\rm out}$ and $V_{\rm in}$ denote
the common values of
{\rm (\ref{eq:3verOut})} and
{\rm (\ref{eq:3verIn})}, respectively.
Then 
\begin{equation}
\label{eq:V0V1}
 V= V_{\rm out}+V_{\rm in} \qquad \quad  {\mbox{\rm (direct sum).}} 
\end{equation}
\item[\rm (iv)] We have
\begin{equation}
 {\rm dim}(V_{\rm out}) = m+1,
\qquad \qquad 
 {\rm dim}(V_{\rm in}) = m.
\label{eq:V0V1dim}
\end{equation}
\end{enumerate}
\end{lemma}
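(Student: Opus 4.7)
The plan is to exploit the Toeplitz structure of the transition matrices $T,T',T''$ together with the vanishing of their odd-indexed parameters in the bipartite case. By Lemma \ref{lem:case}, the hypothesis that $A,B,C$ is bipartite forces $d=2m$ with $\alpha_i=\alpha'_i=\alpha''_i=0$ for odd $i$. I will use this even/odd structure to show that the ``even-indexed'' spans of the three $(A,B)$-, $(B,C)$-, $(C,A)$-decompositions all coincide, and likewise for the ``odd-indexed'' spans; parts (iii), (iv) will then follow by applying the already-proved LR pair facts of Section 16 to $A,B$.

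For (i), I fix compatible bases as in Lemma \ref{lem:moreTTTp}: an $(A,C)$-basis $\lbrace u_i\rbrace_{i=0}^d$ and a compatible $(A,B)$-basis $\lbrace v_i\rbrace_{i=0}^d$, related by the upper-triangular Toeplitz matrix $T'$ with parameters $\lbrace \alpha'_i\rbrace_{i=0}^d$. Writing $v_j=\sum_{i\leq j}\alpha'_{j-i}\,u_i$ and using $\alpha'_{j-i}=0$ unless $j-i$ is even, one sees that each $v_j$ lies in the span of those $u_i$ with $i\equiv j\pmod 2$. Passing to spans,
\begin{eqnarray*}
\mathrm{span}\lbrace v_{2j}\rbrace_{j=0}^m \;=\; \mathrm{span}\lbrace u_{2j}\rbrace_{j=0}^m.
\end{eqnarray*}
The left side equals $\sum_{j=0}^m E_{2j}V$ because the $(A,B)$-basis induces the $(A,B)$-decomposition. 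For the right side, Lemma \ref{lem:Ebackward} identifies the idempotent sequence of the $(A,C)$-decomposition as $\lbrace E''_{d-i}\rbrace_{i=0}^d$, so $\mathrm{span}\lbrace u_{2j}\rbrace_{j=0}^m=\sum_{j=0}^m E''_{d-2j}V=\sum_{j=0}^m E''_{2j}V$. Hence $\sum_j E_{2j}V=\sum_j E''_{2j}V$. Running the same parity argument with $T''$ on the compatible $(B,A)$-, $(B,C)$-bases from Lemma \ref{lem:moreTTTp}(ii) produces $\sum_j E_{2j}V=\sum_j E'_{2j}V$, and with $T$ on the compatible $(C,B)$-, $(C,A)$-bases from Lemma \ref{lem:moreTTTp}(iii) produces $\sum_j E'_{2j}V=\sum_j E''_{2j}V$. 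This establishes (i). The identical parity argument applied to the \emph{odd}-indexed spans $\mathrm{span}\lbrace v_{2j+1}\rbrace,\mathrm{span}\lbrace u_{2j+1}\rbrace$ (or, equivalently, taking complements in the ambient basis), yields (ii).

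For (iii) and (iv), I apply Lemma \ref{lem:LRPV0V1} to the LR pair $A,B$, which has even diameter $d=2m$. By Definition \ref{def:VoutVin}, the outer and inner parts of $V$ with respect to $A,B$ are exactly $\sum_{j=0}^m E_{2j}V$ and $\sum_{j=0}^{m-1}E_{2j+1}V$; by (i) and (ii) these are precisely $V_{\rm out}$ and $V_{\rm in}$ of the triple. Lemma \ref{lem:LRPV0V1} then gives the direct sum $V=V_{\rm out}+V_{\rm in}$ with dimensions $m+1$ and $m$, proving (iii) and (iv).

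The only nontrivial step is the parity observation that an upper-triangular Toeplitz matrix whose odd-indexed parameters vanish respects the $\mathbb{Z}/2\mathbb{Z}$-grading induced by basis-index parity. Everything else is bookkeeping with idempotent data from Section 13, the compatibility results of Lemma \ref{lem:moreTTTp}, and the bipartite vanishing conditions established in Lemma \ref{lem:case}.
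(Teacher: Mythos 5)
Your proof is correct and follows essentially the same route as the paper's: compatible bases related by an upper-triangular Toeplitz transition matrix whose odd-indexed parameters vanish (Lemma \ref{lem:case}), then a dimension count to upgrade the containment of even-index spans to equality, with (iii) and (iv) reduced to the LR-pair facts of Section 7. The one quibble is your parenthetical in (ii) about ``taking complements'': two complements of the same subspace need not coincide, so that shortcut is not valid on its own --- but your primary argument (the parity argument applied to the odd-indexed basis vectors) is exactly what the paper does and is fine.
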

\begin{proof} (i) 
Denote the sequence in
(\ref{eq:3verOut})
by $U$, $U'$, $U''$.
We show
$U' = U$.
The sequence
$\lbrace E_iV\rbrace_{i=0}^d$ 
is the
$(A,B)$-decomposition of $V$. Therefore
$\lbrace E_{d-i}V\rbrace_{i=0}^d$ 
is the
$(B,A)$-decomposition of $V$. 
The sequence
$\lbrace E'_{i}V\rbrace_{i=0}^d$ 
is the
$(B,C)$-decomposition of $V$. 
Let $\lbrace u_i\rbrace_{i=0}^d$ denote a $(B,A)$-basis for
$V$.
Let $\lbrace v_i\rbrace_{i=0}^d$ denote a compatible $(B,C)$-basis for
$V$. Thus for $0 \leq i \leq d$, $u_i$ (resp. $v_i$) is
a basis for $E_{d-i}V$ (resp. $E'_iV$).
Consequently $\lbrace u_{2j}\rbrace_{j=0}^m$ and
$\lbrace v_{2j}\rbrace_{j=0}^m$ are bases for $U$ and $U'$, respectively.
The matrix $T''$ from Definition
\ref{def:TTT}(iii) is the
transition matrix from
$\lbrace u_i\rbrace_{i=0}^d$ to
$\lbrace v_i\rbrace_{i=0}^d$.
By construction
$T''$ is upper triangular with
$(i,r)$-entry
$\alpha''_{r-i}$ for $0 \leq i\leq r\leq d$.
By Lemma
\ref{lem:case} the scalars $\alpha''_1, \alpha''_3,\ldots, \alpha''_{d-1}$
are zero. So the $(i,r)$-entry
of $T''$ is zero if $r-i$ is odd $(0 \leq i \leq r\leq d)$.
By these comments $v_{2j} \in U$  for $0 \leq j \leq m$.
Therefore $U' \subseteq U$. In this inclusion each side has
dimension $m+1$, so $U'=U$.
 One similarly shows that
$U''=U'$.
\\
\noindent (ii) Similar to the proof of (i) above.
\\
\noindent (iii), (iv) The sequence
$\lbrace E_iV\rbrace_{i=0}^d$  is a decomposition of $V$.
\end{proof}

\begin{definition}\rm
Referring to Lemma
\ref{lem:V0V1}
and following  Definition
\ref{def:OUTERINNER},
we call $V_{\rm out}$ (resp. $V_{\rm in}$)
the {\it outer part} (resp. {\it inner part})
of $V$ with respect to $A,B,C$.
\end{definition}

\begin{lemma}
\label{lem:trivialT}
Assume that $A,B,C$ is bipartite.
Then $V_{\rm out}\not=0$ and
 $V_{\rm in}\not=V$.
Moreover the following are equivalent:
{\rm (i)} 
$A,B$ is trivial;
{\rm (ii)} 
 $V_{\rm out}=V$; {\rm (iii)}
 $V_{\rm in}=0$.
\end{lemma}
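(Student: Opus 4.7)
The plan is to reduce the entire statement to the dimension count established in Lemma \ref{lem:V0V1}(iii),(iv) together with the characterization of trivial LR triples from Example \ref{ex:trivial}. Because $A,B,C$ is bipartite, $d=2m$ is even by Lemma \ref{lem:case}, and we have a direct sum $V = V_{\rm out} + V_{\rm in}$ with $\dim V_{\rm out} = m+1$ and $\dim V_{\rm in} = m$.

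First I would dispatch the two nontriviality assertions. Since $\dim V_{\rm out} = m+1 \geq 1$ holds for every $m \geq 0$, we get $V_{\rm out} \neq 0$. Since $\dim V_{\rm in} = m$ while $\dim V = d+1 = 2m+1 > m$, we get $V_{\rm in} \neq V$. Both claims are therefore immediate from Lemma \ref{lem:V0V1}(iv); no further input is needed.

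Next I would handle the three equivalences as a short chain. The equivalence (ii)$\Leftrightarrow$(iii) is immediate from $V = V_{\rm out} + V_{\rm in}$ (direct sum): $V_{\rm out}=V$ forces $V_{\rm in}=0$, and conversely. For (i)$\Leftrightarrow$(ii), note that by Example \ref{ex:trivial} the triple $A,B,C$ is trivial exactly when $d=0$, i.e.\ when $m=0$; on the other hand $V_{\rm out}=V$ is equivalent to $\dim V_{\rm out} = \dim V$, i.e.\ $m+1 = 2m+1$, which again amounts to $m=0$. (Should the statement really be read as ``the LR pair $A,B$ is trivial'', the same argument applies since an LR pair $A,B$ of diameter $d$ is trivial iff $d=0$, by Example \ref{def:triv}, and $d=0$ is equivalent to triviality of the LR triple.)

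There is essentially no obstacle: the lemma is a direct analogue of Lemma \ref{lem:LRPtrivialT} and its proof is the same bookkeeping argument, so the only thing to be careful about is citing the correct dimension formulas and the characterization of triviality. No computation is required.
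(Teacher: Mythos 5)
Your proof is correct and uses essentially the same argument as the paper: the paper simply cites the LR-pair analogue (Lemma \ref{lem:LRPtrivialT}), whose own proof is exactly the dimension bookkeeping from the direct sum $V=V_{\rm out}+V_{\rm in}$ with $\dim V_{\rm out}=m+1$, $\dim V_{\rm in}=m$, combined with the characterization of triviality via $d=0$. Your reading of item (i) as referring to the LR pair $A,B$ is also the intended one, and as you note it makes no difference since the pair and the triple are trivial under the same condition $d=0$.
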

\begin{proof} 
By 
Lemma
\ref{lem:LRPtrivialT}.
\end{proof}

\begin{lemma}
\label{lem:bipABCact}
Assume that $A,B,C$ is bipartite.
Then 
\begin{eqnarray*}
&& AV_{\rm out} = V_{\rm in},
 \qquad \qquad 
 BV_{\rm out} = V_{\rm in},
 \qquad \qquad 
 CV_{\rm out} = V_{\rm in},
\\
&&
 AV_{\rm in} \subseteq V_{\rm out},
 \qquad \qquad 
 BV_{\rm in} \subseteq V_{\rm out},
 \qquad \qquad 
 CV_{\rm in} \subseteq V_{\rm out}.
\end{eqnarray*}
Moreover
\begin{eqnarray*}
&& A^2V_{\rm out} \subseteq V_{\rm out},
 \qquad \qquad 
 B^2V_{\rm out} \subseteq V_{\rm out},
 \qquad \qquad 
 C^2V_{\rm out} \subseteq V_{\rm out},
\\
&&
 A^2V_{\rm in} \subseteq V_{\rm in},
 \qquad \qquad 
 B^2V_{\rm in} \subseteq V_{\rm in},
 \qquad \qquad 
 C^2V_{\rm in} \subseteq V_{\rm in}.
\end{eqnarray*}
\end{lemma}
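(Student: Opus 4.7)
The plan is to derive the assertions directly from the three equivalent descriptions of $V_{\rm out}$ and $V_{\rm in}$ given in Lemma \ref{lem:V0V1}(i),(ii), using for each of $A,B,C$ the decomposition on which its lowering/raising behavior is known. Since $d=2m$ is even by Lemma \ref{lem:case}, each of $V_{\rm out}$ and $V_{\rm in}$ is a sum of components of the relevant decomposition indexed by integers of a fixed parity.

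First I would handle $A$ and $B$ using the $(A,B)$-decomposition $\{E_iV\}_{i=0}^d$. Recall that $A$ lowers this decomposition ($AE_iV = E_{i-1}V$ for $1\le i\le d$, $AE_0V=0$) and $B$ raises it ($BE_iV = E_{i+1}V$ for $0\le i\le d-1$, $BE_dV=0$). Writing $V_{\rm out}=\sum_{j=0}^m E_{2j}V$ and $V_{\rm in}=\sum_{j=0}^{m-1}E_{2j+1}V$, a one-line calculation gives
\begin{eqnarray*}
AV_{\rm out} = \sum_{j=1}^{m} E_{2j-1}V = V_{\rm in},\qquad
AV_{\rm in} = \sum_{j=0}^{m-1}E_{2j}V \subseteq V_{\rm out},
\end{eqnarray*}
and similarly
\begin{eqnarray*}
BV_{\rm out} = \sum_{j=0}^{m-1}E_{2j+1}V = V_{\rm in},\qquad
BV_{\rm in} = \sum_{j=1}^{m}E_{2j}V \subseteq V_{\rm out}
\end{eqnarray*}
(note that $BE_{2m}V=0$ contributes nothing, which is exactly what makes the sums work out because $d$ is even).

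For $C$, the $(A,B)$-decomposition is not directly useful, so I would switch to the $(B,C)$-decomposition $\{E'_iV\}_{i=0}^d$, on which $C$ raises. By Lemma \ref{lem:V0V1}(i),(ii) we still have $V_{\rm out}=\sum_{j=0}^m E'_{2j}V$ and $V_{\rm in}=\sum_{j=0}^{m-1}E'_{2j+1}V$, so the same computation as for $B$ yields $CV_{\rm out}=V_{\rm in}$ and $CV_{\rm in}\subseteq V_{\rm out}$. (Alternatively one could use the $(C,A)$-decomposition, on which $C$ lowers, mimicking the argument for $A$; either route works.) Finally, the six statements about $A^2, B^2, C^2$ are immediate consequences of the six relations just established: for example $A^2V_{\rm out} = A(AV_{\rm out}) = AV_{\rm in} \subseteq V_{\rm out}$ and $A^2V_{\rm in}\subseteq A V_{\rm out}=V_{\rm in}$, and likewise for $B$ and $C$.

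There is no real obstacle here; the only point requiring slight care is making sure the equalities (rather than mere inclusions) $AV_{\rm out}=V_{\rm in}$ etc.\ are correct, which relies on using the parity-matched description of $V_{\rm out}$ and $V_{\rm in}$ in terms of the correct idempotent sequence ($E_\bullet$ for $A,B$; $E'_\bullet$ or $E''_\bullet$ for $C$), together with the fact that $d$ is even so that the boundary term $E_dV$ (or $E'_dV$) which is killed lies in $V_{\rm out}$, not in $V_{\rm in}$.
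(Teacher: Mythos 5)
Your proof is correct and is essentially the paper's argument: the paper simply cites Lemma \ref{lem:ABaction} (the LR-pair version of these statements, proved by the same parity computation you write out) applied to the LR pairs $A,B$ and $B,C$, together with Lemma \ref{lem:V0V1} to identify the outer and inner parts across the three idempotent sequences. Your explicit computation, including the observation that the vanishing boundary term $E_dV$ (resp.\ $E'_dV$) lies in $V_{\rm out}$ because $d$ is even, matches what the cited lemma encapsulates.
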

\begin{proof} 
Use Lemma
\ref{lem:ABaction}.
\end{proof}

\begin{definition}\rm
\label{def:BipNotation}
For notational convenience define
\begin{eqnarray*}
t_i = \frac{\varphi'_{d-i+1} \varphi''_{d-i+1}}{\varphi_i},
\qquad \quad
t'_i = \frac{\varphi''_{d-i+1} \varphi_{d-i+1}}{\varphi'_i},
\qquad \quad
t''_i = \frac{\varphi_{d-i+1} \varphi'_{d-i+1}}{\varphi''_i}
\end{eqnarray*}
for $1 \leq i \leq d$, and
\begin{eqnarray*}
t_0=0, \qquad  t'_0 =0, \qquad  t''_0=0,
\qquad 
t_{d+1}=0, \qquad  t'_{d+1} = 0, \qquad  t''_{d+1}=0.
\end{eqnarray*}
\end{definition}

\noindent The following two lemmas are obtained by routine computation.

\begin{lemma}
\label{lem:A2B2C2Out}
Assume that $A,B,C$ is bipartite. Then 
the action of
 $A^2, B^2, C^2$ on
$V_{\rm out}$ is an LR triple with diameter $m=d/2$.
For this LR triple,
\begin{enumerate}
\item[\rm (i)]
the parameter array is
\begin{eqnarray*}
(
\lbrace \varphi_{2j-1}\varphi_{2j}\rbrace_{j=1}^m;
\lbrace \varphi'_{2j-1}\varphi'_{2j}\rbrace_{j=1}^m;
\lbrace \varphi''_{2j-1}\varphi''_{2j}\rbrace_{j=1}^m);
\end{eqnarray*}
\item[\rm (ii)]
the idempotent data is
\begin{eqnarray*}
(
\lbrace E_{2j}\rbrace_{j=0}^m;
\lbrace E'_{2j}\rbrace_{j=0}^m;
\lbrace E''_{2j}\rbrace_{j=0}^m
);
\end{eqnarray*}
\item[\rm (iii)]
the trace data is (using the notation of Definition
\ref{def:BipNotation})
\begin{eqnarray*}
(
\lbrace t_{2j} + t_{2j+1} \rbrace_{j=0}^m;
\lbrace t'_{2j} + t'_{2j+1} \rbrace_{j=0}^m;
\lbrace t''_{2j} + t''_{2j+1} \rbrace_{j=0}^m);
\end{eqnarray*}
\item[\rm (iv)]
the Toeplitz data is
\begin{eqnarray*}
(
\lbrace \alpha_{2j}\rbrace_{j=0}^m,
\lbrace \beta_{2j}\rbrace_{j=0}^m;
\lbrace \alpha'_{2j}\rbrace_{j=0}^m,
\lbrace \beta'_{2j}\rbrace_{j=0}^m;
\lbrace \alpha''_{2j}\rbrace_{j=0}^m,
\lbrace \beta''_{2j}\rbrace_{j=0}^m
).
\end{eqnarray*}
\end{enumerate}
\end{lemma}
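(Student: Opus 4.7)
The plan is to treat each of the four parts in turn, using the single-pair analog Proposition~\ref{lem:LRPA2B2C2Out} as the main engine for parts (i), (ii), and performing explicit matrix computations for parts (iii), (iv). First, $A^2, B^2, C^2$ each leave $V_{\rm out}$ invariant by Lemma~\ref{lem:bipABCact}. Each of the LR pairs $A,B$, $B,C$, $C,A$ is bipartite (Lemma~\ref{lem:bipRel}), and they share the same outer part $V_{\rm out}$ by Lemma~\ref{lem:V0V1}(i); applying Proposition~\ref{lem:LRPA2B2C2Out} to each of the three pairs then shows any two of $A^2|_{V_{\rm out}}, B^2|_{V_{\rm out}}, C^2|_{V_{\rm out}}$ form an LR pair of diameter $m$, hence an LR triple by Definition~\ref{def:LRT}. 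Parts (i) and (ii) now follow immediately from Proposition~\ref{lem:LRPA2B2C2Out}(ii), (iii) read for each of the three pairs.

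For part (iii), I compute the trace data directly. Pick an $(A,B)$-basis of $V$; by Proposition~\ref{prop:matrixRep} and the bipartite vanishing $a_i=0$, the matrix of $C$ is tridiagonal with subdiagonal entry $\varphi''_{d-i+1}$ and superdiagonal entry $\varphi'_{d-i+1}/\varphi_i$. Squaring, the $(i,i)$-entry of $C^2$ is
\[
\frac{\varphi'_{d-i+1}\varphi''_{d-i+1}}{\varphi_i} + \frac{\varphi'_{d-i}\varphi''_{d-i}}{\varphi_{i+1}} \;=\; t_i + t_{i+1}
\]
in the notation of Definition~\ref{def:BipNotation}. Since $V_{\rm in}$ is spanned by the odd-indexed $E_iV$ and $E_{2j}$ annihilates $V_{\rm in}$, the trace of $C^2E_{2j}$ on $V_{\rm out}$ agrees with its trace on $V$, which equals $t_{2j}+t_{2j+1}$. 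This yields the first sequence; the other two sequences are produced by repeating the same calculation on the bipartite cyclic permutations $B,C,A$ and $C,A,B$ and reading off the result via Lemma~\ref{lem:tracedataAlt}.

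For part (iv), let $\{u_i\}_{i=0}^d$ be a $(C,B)$-basis of $V$ and $\{v_i\}_{i=0}^d$ a compatible $(C,A)$-basis, so by Definitions~\ref{def:TTT}(i) and~\ref{def:TTT1} the transition law is $v_k = \sum_{i\leq k}\alpha_{k-i}u_i$. By Lemma~\ref{lem:case} the odd-indexed $\alpha_i$ vanish, so $v_{2k} = \sum_{j\leq k}\alpha_{2(k-j)}u_{2j}$. Using $u_i \in E'_{d-i}V$ (the $(C,B)$-decomposition) together with $Cu_i = u_{i-1}$, the subsequence $\{u_{2j}\}_{j=0}^m$ lies in $V_{\rm out}$, satisfies $C^2u_{2j} = u_{2j-2}$, and occupies the correct components of the $(C^2,B^2)$-decomposition of $V_{\rm out}$ recorded in Proposition~\ref{lem:LRPA2B2C2Out}(iii); an identical argument applies to $\{v_{2j}\}_{j=0}^m$ relative to $(C^2,A^2)$. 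Thus these are compatible $(C^2,B^2)$- and $(C^2,A^2)$-bases of $V_{\rm out}$ whose transition matrix is upper triangular Toeplitz with parameters $\{\alpha_{2j}\}_{j=0}^m$, and Lemma~\ref{lem:case} again forces its inverse to have parameters $\{\beta_{2j}\}_{j=0}^m$. Running the analog on the two remaining cyclic permutations yields the other four sequences. The one nonroutine check—and in my view the main obstacle—is the verification that the even-indexed subsequence of a $(C,B)$-basis is genuinely a $(C^2,B^2)$-basis of $V_{\rm out}$ of the correct type, so that Definition~\ref{def:TTT}(i) applies verbatim to the sub-triple; once that is secured, the bipartite vanishing of odd-indexed Toeplitz parameters makes the passage to even indices automatic.
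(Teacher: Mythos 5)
Your proof is correct: the paper offers no argument for this lemma (it is stated to follow ``by routine computation''), and your write-up supplies exactly the intended routine, namely the LR-pair analog Proposition~\ref{lem:LRPA2B2C2Out} combined with Lemma~\ref{lem:V0V1}(i) for parts (i), (ii), and direct computations in an $(A,B)$-basis and in compatible $(C,B)$-/$(C,A)$-bases for parts (iii), (iv). The only cosmetic slip is the phrase ``each of the LR pairs $\ldots$ is bipartite'' --- bipartiteness is not defined for LR pairs; all you actually need is that $d$ is even (Lemma~\ref{lem:case}), so that the Section~7 machinery applies to each of the three pairs, and that their outer parts coincide by Lemma~\ref{lem:V0V1}(i).
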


\begin{lemma}
\label{lem:A2B2C2In}
Assume that $A,B,C$ is bipartite
and nontrivial.
Then the action of
 $A^2, B^2, C^2$ on
$V_{\rm in}$ is an LR triple with diameter $m-1$, where $m=d/2$.
For this LR triple,
\begin{enumerate}
\item[\rm (i)]
the parameter array is
\begin{eqnarray*}
(
\lbrace \varphi_{2j}\varphi_{2j+1}\rbrace_{j=1}^{m-1};
\lbrace \varphi'_{2j}\varphi'_{2j+1}\rbrace_{j=1}^{m-1};
\lbrace \varphi''_{2j}\varphi''_{2j+1}\rbrace_{j=1}^{m-1});
\end{eqnarray*}
\item[\rm (ii)]
the idempotent data is
\begin{eqnarray*}
(
\lbrace E_{2j+1}\rbrace_{j=0}^{m-1};
\lbrace E'_{2j+1}\rbrace_{j=0}^{m-1};
\lbrace E''_{2j+1}\rbrace_{j=0}^{m-1}
);
\end{eqnarray*}
\item[\rm (iii)]
the trace data is (using the notation of Definition
\ref{def:BipNotation})
\begin{eqnarray*}
(
\lbrace t_{2j+1} + t_{2j+2} \rbrace_{j=0}^{m-1};
\lbrace t'_{2j+1} + t'_{2j+2} \rbrace_{j=0}^{m-1};
\lbrace t''_{2j+1} + t''_{2j+2} \rbrace_{j=0}^{m-1});
\end{eqnarray*}
\item[\rm (iv)]
the Toeplitz data is
\begin{eqnarray*}
(
\lbrace \alpha_{2j}\rbrace_{j=0}^{m-1},
\lbrace \beta_{2j}\rbrace_{j=0}^{m-1};
\lbrace \alpha'_{2j}\rbrace_{j=0}^{m-1},
\lbrace \beta'_{2j}\rbrace_{j=0}^{m-1};
\lbrace \alpha''_{2j}\rbrace_{j=0}^{m-1},
\lbrace \beta''_{2j}\rbrace_{j=0}^{m-1}
).
\end{eqnarray*}
\end{enumerate}
\end{lemma}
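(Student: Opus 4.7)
The plan is to mirror the proof of Lemma \ref{lem:A2B2C2Out}, but with the odd-indexed subsequence replacing the even-indexed one, and with the diameter dropping from $m$ to $m-1$ accordingly. First I would observe, using Lemma \ref{lem:bipABCact}, that $A^2, B^2, C^2$ each leave $V_{\rm in}$ invariant, so their restrictions are well-defined elements of ${\rm End}(V_{\rm in})$. Since $A,B,C$ is bipartite and nontrivial, Lemma \ref{lem:trivialT} gives ${\rm dim}(V_{\rm in}) = m$, so the restricted triple has diameter $m-1$. Applying Proposition \ref{lem:LRPA2B2C2In} in turn to each of the three LR pairs $A,B$; $B,C$; $C,A$ (which are all bipartite and nontrivial by Lemma \ref{lem:bipRel} and Lemma \ref{lem:trivialT}) shows that any two of $A^2|_{V_{\rm in}}, B^2|_{V_{\rm in}}, C^2|_{V_{\rm in}}$ form an LR pair on $V_{\rm in}$, so by Definition \ref{def:LRT} they form an LR triple on $V_{\rm in}$.

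Items (i) and (ii) then follow immediately by assembling the parameter-sequence and idempotent-sequence assertions of Proposition \ref{lem:LRPA2B2C2In}(ii),(iii) across the three LR pairs, in accordance with Definitions \ref{def:LRTpar} and \ref{def:LRTIdem}. The only care needed is to confirm that the idempotent for the pair $A^2,B^2$ on $V_{\rm in}$ at index $j$ equals the action of $E_{2j+1}$ on $V_{\rm in}$ (which is exactly Proposition \ref{lem:LRPA2B2C2In}(iii)), and that the $(A^2,B^2)$-parameter at index $j$ works out to $\varphi_{2j}\varphi_{2j+1}$ (immediate from Lemma \ref{lem:AiBi} or from a direct $(A,B)$-basis calculation of the form $A^2v_{2j+1} = v_{2j-1}$ and $B^2v_{2j-1} = \varphi_{2j}\varphi_{2j+1} v_{2j+1}$).

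For item (iii), I would represent $C$ in an $(A,B)$-basis using the first row of the table in Proposition \ref{prop:matrixRep}. Because $A,B,C$ is bipartite, the diagonal entries $a_i$ of $C$ all vanish, so squaring this tridiagonal matrix yields diagonal entries
\[
(C^2)_{i,i} \;=\; C_{i,i-1}C_{i-1,i} + C_{i,i+1}C_{i+1,i} \;=\; \frac{\varphi'_{d-i+1}\varphi''_{d-i+1}}{\varphi_i} + \frac{\varphi'_{d-i}\varphi''_{d-i}}{\varphi_{i+1}} \;=\; t_i + t_{i+1},
\]
with $t_i$ as in Definition \ref{def:BipNotation}. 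By Lemma \ref{lem:tracedataI} applied to the restricted triple on $V_{\rm in}$ (using the sub-basis $\{v_1, v_3, \ldots, v_{2m-1}\}$ which induces the $(A^2|_{V_{\rm in}}, B^2|_{V_{\rm in}})$-decomposition), the trace datum at index $j$ equals $(C^2)_{2j+1, 2j+1} = t_{2j+1} + t_{2j+2}$. Cyclically permuting $A,B,C$ yields the formulas for the $a'$- and $a''$-components.

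For item (iv), the key point is that compatibility of bases is preserved under restriction to the odd-indexed sub-basis. If $\lbrace u_i\rbrace_{i=0}^d$ and $\lbrace v_i\rbrace_{i=0}^d$ are compatible bases for the big triple with transition parameters $\lbrace \alpha_i\rbrace_{i=0}^d$, then $v_{2k+1} = \sum_{i\leq 2k+1}\alpha_{2k+1-i}u_i$, and Lemma \ref{lem:case} forces the contribution of each even-indexed $u_{2l}$ (coefficient $\alpha_{\text{odd}} = 0$) to vanish. Hence the restriction to $V_{\rm in}$ yields an upper triangular Toeplitz transition matrix with parameters $\lbrace \alpha_{2j}\rbrace_{j=0}^{m-1}$, and analogously for the inverse and the other two Toeplitz sequences; item (iv) follows from Definitions \ref{def:TTT} and \ref{def:TTT1}. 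The main obstacle is purely bookkeeping: one must simultaneously track three cyclic versions of each formula and verify that the bipartite vanishing of every odd-indexed Toeplitz parameter (Lemma \ref{lem:case}) makes the restriction to the odd block behave exactly as a smaller LR triple in its own right.
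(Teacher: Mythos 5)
Your proposal is correct; the paper offers no proof for this lemma (it is declared to be ``obtained by routine computation''), and your argument — assembling Proposition \ref{lem:LRPA2B2C2In} across the three pairs via the identification of inner parts in Lemma \ref{lem:V0V1}(ii), squaring the tridiagonal matrix for the trace data, and using the vanishing of odd-indexed Toeplitz parameters for item (iv) — is a valid execution of exactly that computation. The only blemishes are citation slips: $\dim(V_{\rm in})=m$ comes from Lemma \ref{lem:V0V1}(iv) rather than Lemma \ref{lem:trivialT}, and ``bipartite'' is not defined for LR pairs (what you actually need is that each pair is nontrivial with even diameter).
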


\begin{lemma}
\label{lem:NormHalf}
Assume that $A,B,C$ is bipartite, and
consider the action of $A^2,B^2,C^2$ on 
$V_{\rm out}$.
\begin{enumerate}
\item[\rm (i)]
Assume that $A,B,C$ is trivial. Then
so is the action of $A^2,B^2,C^2$ on 
$V_{\rm out}$.
\item[\rm (ii)]
Assume that $A,B,C$ is nontrivial. Then
the action of $A^2,B^2,C^2$ on 
$V_{\rm out}$ is nonbipartite.
\end{enumerate}
\end{lemma}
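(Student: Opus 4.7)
The plan is to reduce each assertion to facts already established about the parameter array and Toeplitz data of the bipartite LR triple $A,B,C$.

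For part (i), the assumption that $A,B,C$ is trivial means $d = 0$, so $V$ is one-dimensional. Hence $V_{\rm in} = 0$ and $V_{\rm out} = V$, and the action of $A^2, B^2, C^2$ on $V_{\rm out}$ is an LR triple of diameter $m = d/2 = 0$. By Example~\ref{ex:trivial}, every LR triple of diameter $0$ is trivial, so part (i) is immediate.

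For part (ii), assume $A,B,C$ is nontrivial. Since $A,B,C$ is bipartite, Lemma~\ref{lem:case} gives that $d$ is even, and nontriviality forces $d \geq 2$, so $m = d/2 \geq 1$. Denote by $\mathcal{T}_{\rm out}$ the LR triple on $V_{\rm out}$ furnished by $A^2, B^2, C^2$, as described in Lemma~\ref{lem:A2B2C2Out}. Since $\mathcal{T}_{\rm out}$ has diameter $m \geq 1$, it is nontrivial (cf.\ Example~\ref{ex:trivial}). The key observation is that by Lemma~\ref{lem:A2B2C2Out}(iv), the Toeplitz data of $\mathcal{T}_{\rm out}$ is the even-indexed subsequence of the Toeplitz data of $A,B,C$; in particular, the first Toeplitz number of $\mathcal{T}_{\rm out}$ equals $\alpha_2$.

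The remaining step is to invoke bipartiteness of $A,B,C$ at the right place. By Lemma~\ref{lem:BiPbasic}(ii), since $A,B,C$ is bipartite and nontrivial, we have $\alpha_2 \neq 0$. Therefore the nontrivial LR triple $\mathcal{T}_{\rm out}$ has nonzero first Toeplitz number. Applying the contrapositive of Lemma~\ref{lem:BiPbasic}(i) (a bipartite nontrivial LR triple has first Toeplitz number equal to zero), we conclude that $\mathcal{T}_{\rm out}$ is nonbipartite, which is part (ii). The only mild subtlety lies in tracking that the ``first Toeplitz number'' of the restricted LR triple, computed in the sense of Definition~\ref{def:TTT1} applied to $\mathcal{T}_{\rm out}$, coincides with $\alpha_2$; this is read off directly from the Toeplitz data description in Lemma~\ref{lem:A2B2C2Out}(iv), so no real obstacle arises.
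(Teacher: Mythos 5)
Your proof is correct and follows essentially the same route as the paper: part (i) reduces to the diameter-$0$ case, and part (ii) reads off the first Toeplitz number $\alpha_2$ of the restricted triple from Lemma~\ref{lem:A2B2C2Out}(iv) and uses its nonvanishing (which the paper gets from Lemma~\ref{lem:case}, equivalent to your citation of Lemma~\ref{lem:BiPbasic}) to rule out bipartiteness.
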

\begin{proof} (i)
By Example
\ref{ex:trivial} and Lemma
\ref{lem:trivialT}.
\\
\noindent (ii)
Evaluate the Toeplitz data in
Lemma
\ref{lem:A2B2C2Out}(iv)
using Lemmas
\ref{lem:NotB},
\ref{lem:case}.
\end{proof}

\begin{lemma} 
\label{lem:NormHalfIn}
Assume that $A,B,C$ is bipartite and
nontrivial.
Consider the action of $A^2,B^2,C^2$ on 
$V_{\rm in}$.
\begin{enumerate}
\item[\rm (i)] 
Assume that $d=2$.
Then the action of $A^2,B^2,C^2$ on 
$V_{\rm in}$ is trivial.
\item[\rm (ii)]
Assume that $d\geq 4$.
Then the action of $A^2,B^2,C^2$ on 
$V_{\rm in}$ is nonbipartite.
\end{enumerate}
\end{lemma}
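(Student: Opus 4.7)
The plan is to argue both parts by invoking Lemma \ref{lem:A2B2C2In}, which tells us that the action of $A^2,B^2,C^2$ on $V_{\rm in}$ is an LR triple with diameter $m-1$, where $m=d/2$, and whose Toeplitz data is
\begin{eqnarray*}
(
\lbrace \alpha_{2j}\rbrace_{j=0}^{m-1},
\lbrace \beta_{2j}\rbrace_{j=0}^{m-1};
\lbrace \alpha'_{2j}\rbrace_{j=0}^{m-1},
\lbrace \beta'_{2j}\rbrace_{j=0}^{m-1};
\lbrace \alpha''_{2j}\rbrace_{j=0}^{m-1},
\lbrace \beta''_{2j}\rbrace_{j=0}^{m-1}
).
\end{eqnarray*}

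For part (i), assume $d=2$, so $m=1$ and the diameter of the induced LR triple is $m-1=0$. By Example \ref{ex:trivial}, every LR triple of diameter $0$ is trivial, so the action of $A^2,B^2,C^2$ on $V_{\rm in}$ is trivial.

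For part (ii), assume $d\geq 4$, so $m\geq 2$ and the induced LR triple has diameter $m-1\geq 1$; in particular, it is nontrivial. The first Toeplitz number of this induced LR triple is the $j=1$ entry of $\lbrace \alpha_{2j}\rbrace_{j=0}^{m-1}$, namely $\alpha_2$. Since $A,B,C$ is bipartite and nontrivial, Lemma \ref{lem:BiPbasic}(ii) gives $\alpha_2\neq 0$. If the induced LR triple on $V_{\rm in}$ were bipartite, then since it is nontrivial, Lemma \ref{lem:BiPbasic}(i) would force its first Toeplitz number $\alpha_2$ to be zero, a contradiction. Therefore the action of $A^2,B^2,C^2$ on $V_{\rm in}$ is nonbipartite.

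The argument is essentially parallel to the proof of Lemma \ref{lem:NormHalf}, with the role of Lemma \ref{lem:A2B2C2Out} replaced by Lemma \ref{lem:A2B2C2In}, and the only subtlety being the degenerate case $d=2$ where the diameter drops to zero and (i) is needed in place of the nonbipartite conclusion. There is no real obstacle: the conclusions are direct consequences of the Toeplitz data formulas in Lemma \ref{lem:A2B2C2In}(iv) and the nonvanishing result Lemma \ref{lem:BiPbasic}(ii).
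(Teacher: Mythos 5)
Your proof is correct and follows essentially the same route as the paper: both parts rest on the diameter and Toeplitz data of the induced LR triple from Lemma \ref{lem:A2B2C2In}, with (i) reduced to the diameter-zero case and (ii) obtained from the nonvanishing of $\alpha_2$ (Lemma \ref{lem:case}, equivalently Lemma \ref{lem:BiPbasic}(ii)) together with the fact that a nontrivial bipartite LR triple has first Toeplitz number zero. The only cosmetic difference is that you cite Lemma \ref{lem:BiPbasic}(i) where the paper cites Lemmas \ref{lem:NotB} and \ref{lem:case}, but these encode the same facts.
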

\begin{proof} 
(i)
By Example
\ref{ex:trivial} and Lemma
\ref{lem:trivialT}.
\\
\noindent (ii)
Evaluate the Toeplitz data in
Lemma
\ref{lem:A2B2C2In}(iv)
using Lemmas
\ref{lem:NotB},
\ref{lem:case}.
\end{proof}

\begin{lemma}
\label{lem:Balpha2Inv}
Assume that $A,B,C$ is bipartite and nontrivial.
Then for $2 \leq i \leq d$,
\begin{eqnarray}
\label{eq:Balpha2Inv}
\frac{\alpha_2}{\varphi_{i-1}\varphi_i} = 
\frac{\alpha'_2}{\varphi'_{i-1}\varphi'_i} = 
\frac{\alpha''_2}{\varphi''_{i-1}\varphi''_i},
 \qquad \qquad 
\frac{\beta_2}{\varphi_{i-1}\varphi_i} = 
\frac{\beta'_2}{\varphi'_{i-1}\varphi'_i} = 
\frac{\beta''_2}{\varphi''_{i-1}\varphi''_i}.
\end{eqnarray}
\end{lemma}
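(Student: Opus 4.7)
The plan is to reduce the identities to Corollary \ref{cor:alphaOneInv} applied to the two nonbipartite LR triples obtained by restricting $A^2,B^2,C^2$ to $V_{\rm out}$ and $V_{\rm in}$, which are available thanks to the analysis of bipartite triples in Lemmas \ref{lem:A2B2C2Out}--\ref{lem:NormHalfIn}.

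First I would handle the case of even $i$. Write $d=2m$ (Lemma \ref{lem:case}). By Lemma \ref{lem:A2B2C2Out}, the action of $A^2,B^2,C^2$ on $V_{\rm out}$ is an LR triple of diameter $m$ whose parameter array is $(\{\varphi_{2j-1}\varphi_{2j}\}_{j=1}^m;\{\varphi'_{2j-1}\varphi'_{2j}\}_{j=1}^m;\{\varphi''_{2j-1}\varphi''_{2j}\}_{j=1}^m)$ and whose Toeplitz data is $(\{\alpha_{2j}\}_{j=0}^m,\{\beta_{2j}\}_{j=0}^m;\ldots)$. Since $A,B,C$ is nontrivial, Lemma \ref{lem:NormHalf}(ii) tells us this restricted triple is nonbipartite, so by Corollary \ref{cor:alphaOneInv} applied to it,
\begin{eqnarray*}
\frac{\alpha_2}{\varphi_{2j-1}\varphi_{2j}}=\frac{\alpha'_2}{\varphi'_{2j-1}\varphi'_{2j}}=\frac{\alpha''_2}{\varphi''_{2j-1}\varphi''_{2j}},\qquad
\frac{\beta_2}{\varphi_{2j-1}\varphi_{2j}}=\frac{\beta'_2}{\varphi'_{2j-1}\varphi'_{2j}}=\frac{\beta''_2}{\varphi''_{2j-1}\varphi''_{2j}}
\end{eqnarray*}
for $1\le j\le m$ (the ``first Toeplitz number'' of the restricted triple is $\alpha_2$, which is nonzero by Lemma \ref{lem:BiPbasic}(ii)). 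Setting $i=2j$ this gives \eqref{eq:Balpha2Inv} for all even $i$ in the range $2\le i\le d$.

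Next I would handle the case of odd $i$, which requires $d\ge 4$ (otherwise there are no odd $i$ in $2\le i\le d=2$). By Lemma \ref{lem:A2B2C2In}, the action of $A^2,B^2,C^2$ on $V_{\rm in}$ is an LR triple of diameter $m-1$ with parameter array $(\{\varphi_{2j}\varphi_{2j+1}\}_{j=1}^{m-1};\ldots)$ and Toeplitz data again having $\alpha_2,\alpha'_2,\alpha''_2$ (respectively $\beta_2,\beta'_2,\beta''_2$) as their $j=1$ components. Since $d\ge 4$, Lemma \ref{lem:NormHalfIn}(ii) guarantees this triple is nonbipartite, and Corollary \ref{cor:alphaOneInv} yields
\begin{eqnarray*}
\frac{\alpha_2}{\varphi_{2j}\varphi_{2j+1}}=\frac{\alpha'_2}{\varphi'_{2j}\varphi'_{2j+1}}=\frac{\alpha''_2}{\varphi''_{2j}\varphi''_{2j+1}},\qquad
\frac{\beta_2}{\varphi_{2j}\varphi_{2j+1}}=\frac{\beta'_2}{\varphi'_{2j}\varphi'_{2j+1}}=\frac{\beta''_2}{\varphi''_{2j}\varphi''_{2j+1}}
\end{eqnarray*}
for $1\le j\le m-1$. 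Setting $i=2j+1$ gives \eqref{eq:Balpha2Inv} for all odd $i$ in the range $3\le i\le d-1$. Combining this with the even case (and noting that for $d=2$ only $i=2$ occurs, already covered) completes the proof.

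The routine nature of this argument suggests no serious obstacle: the main point is that bipartiteness lets us collapse the triple to two nonbipartite triples on $V_{\rm out}$ and $V_{\rm in}$, and the desired identities are exactly the Corollary \ref{cor:alphaOneInv}-type relations for those smaller triples. The only thing to double-check is the edge case $d=2$ (where $V_{\rm in}$ yields a trivial triple via Lemma \ref{lem:NormHalfIn}(i) and Corollary \ref{cor:alphaOneInv} would be vacuous), but in that case the range $2\le i\le d$ contains only $i=2$, which is even and handled by the $V_{\rm out}$ argument alone.
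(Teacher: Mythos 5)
Your proposal is correct and is exactly the paper's argument: the paper's proof reads ``Apply Corollary \ref{cor:alphaOneInv} to the LR triples in Lemmas \ref{lem:A2B2C2Out}, \ref{lem:A2B2C2In},'' and you have simply spelled out the even/odd bookkeeping and the $d=2$ edge case in full detail. (Minor remark: Corollary \ref{cor:alphaOneInv} holds for arbitrary LR triples, so the appeals to nonbipartiteness of the restricted triples, while harmless, are not needed.)
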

\begin{proof}
Apply Corollary
\ref{cor:alphaOneInv}
to the LR triples in
Lemmas \ref{lem:A2B2C2Out},
\ref{lem:A2B2C2In}.
\end{proof}

\begin{lemma}
\label{lem:Bippipd}
Assume that $A,B,C$ is bipartite and nontrivial.
Then the following {\rm (i), (ii)} hold for $1 \leq i,j\leq d$.
\begin{enumerate}
\item[\rm (i)] Assume that $i,j$ have opposite parity. Then
\begin{eqnarray}
\label{eq:Bippipd}
\frac{\alpha_2}{\varphi_{i}\varphi_{j}} = 
\frac{\alpha'_2}{\varphi'_{i}\varphi'_{j}} = 
\frac{\alpha''_2}{\varphi''_{i}\varphi''_{j}},
 \quad \qquad 
\frac{\beta_2}{\varphi_{i}\varphi_{j}} = 
\frac{\beta'_2}{\varphi'_{i}\varphi'_{j}} = 
\frac{\beta''_2}{\varphi''_{i}\varphi''_{j}}.
\end{eqnarray}
\item[\rm (ii)] Assume that $i,j$ have the same parity.
Then
\begin{eqnarray}
\label{eq:Bippipd2}
\frac{\varphi_{i}}{\varphi_{j}} = 
\frac{\varphi'_{i}}{\varphi'_{j}} = 
\frac{\varphi''_{i}}{\varphi''_{j}}.
\end{eqnarray}
\end{enumerate}
\end{lemma}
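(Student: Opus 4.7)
The plan is to derive both parts as a corollary of Lemma \ref{lem:Balpha2Inv}, which already supplies the claim for the consecutive case $j=i-1$. The bridge from consecutive indices to arbitrary indices is a parity separation.

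First I would handle (ii). From Lemma \ref{lem:Balpha2Inv} the ratio $\varphi_{k-1}\varphi_k / (\varphi'_{k-1}\varphi'_k)$ equals $\alpha_2/\alpha'_2$ and is therefore independent of $k$ for $2 \leq k \leq d$. Comparing the values at $k$ and $k+1$ and cancelling the common factor $\varphi_k/\varphi'_k$ gives
\begin{eqnarray*}
\frac{\varphi_{k-1}}{\varphi'_{k-1}} = \frac{\varphi_{k+1}}{\varphi'_{k+1}}
\qquad (2 \leq k \leq d-1),
\end{eqnarray*}
so $\varphi_i/\varphi'_i$ depends only on the parity of $i$. Running the same argument with $\alpha''_2, \varphi''$ in place of $\alpha'_2, \varphi'$ shows $\varphi_i/\varphi''_i$ depends only on the parity of $i$. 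Hence for $i,j$ of the same parity the cross-ratios $\varphi_i/\varphi_j$, $\varphi'_i/\varphi'_j$, $\varphi''_i/\varphi''_j$ coincide, which is (ii).

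Next I would deduce (i) from (ii) together with the base case $k=2$ of Lemma \ref{lem:Balpha2Inv}. Assume without loss of generality that $i$ is even and $j$ is odd. By (ii), $\varphi_i/\varphi'_i = \varphi_2/\varphi'_2$ and $\varphi_j/\varphi'_j = \varphi_1/\varphi'_1$. Multiplying,
\begin{eqnarray*}
\frac{\varphi_i \varphi_j}{\varphi'_i \varphi'_j} = \frac{\varphi_1 \varphi_2}{\varphi'_1 \varphi'_2} = \frac{\alpha_2}{\alpha'_2},
\end{eqnarray*}
where the second equality is Lemma \ref{lem:Balpha2Inv} with index $2$. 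This rearranges to $\alpha_2/(\varphi_i \varphi_j) = \alpha'_2/(\varphi'_i \varphi'_j)$, and the same calculation with $\alpha''_2, \varphi''$ gives the triple equality. The $\beta$ assertion in (i) follows immediately, since Lemma \ref{lem:BiPbasic}(iii) gives $\beta_2 = -\alpha_2$, $\beta'_2 = -\alpha'_2$, $\beta''_2 = -\alpha''_2$, so the ratios $\beta_2/\beta'_2$, $\beta_2/\beta''_2$ agree with $\alpha_2/\alpha'_2$, $\alpha_2/\alpha''_2$ respectively.

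There is no serious obstacle here; the lemma is essentially a parity-bookkeeping amplification of Lemma \ref{lem:Balpha2Inv}. The only point requiring mild care is to isolate (ii) first (needed to reduce arbitrary opposite-parity pairs $(i,j)$ to the base pair $(2,1)$ in (i)), rather than attempting to prove (i) directly.
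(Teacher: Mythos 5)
Your proof is correct and rests on exactly the same engine as the paper's: Lemma \ref{lem:Balpha2Inv} supplies the consecutive-index invariants, and everything else is telescoping over parity classes. The paper writes $\alpha_2/(\varphi_i\varphi_j)$ and $\varphi_i/\varphi_j$ directly as alternating products of the invariants $x_k=\alpha_2(\varphi_{k-1}\varphi_k)^{-1}$, whereas you first prove (ii) and then reduce (i) to the base pair $(1,2)$; this is only a reordering of the same argument.
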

\begin{proof}
We have a preliminary remark.
For $2\leq k \leq d$ define
$x_k = \alpha_2(\varphi_{k-1}\varphi_k)^{-1}$,
and note that $x_k=x'_k=x''_k$ by
 Lemma
\ref{lem:Balpha2Inv}.
\\
\noindent 
(i)
We may assume without loss that
$i<j$.
Observe that
\begin{eqnarray*}
\frac{\alpha_2}{\varphi_{i}\varphi_{j}} = 
 \frac{x_{i+1}x_{i+3} \cdots x_{j}}{x_{i+2}x_{i+4}\cdots x_{j-1}}.
\end{eqnarray*}
By this and the preliminary remark,
we obtain the equations on the left in
(\ref{eq:Bippipd}).
The equations on the right in
(\ref{eq:Bippipd})
are similarly obtained.
\\
\noindent (ii)
We may assume without loss that
$i<j$. Observe that
\begin{eqnarray*}
\frac{\varphi_{i}}{\varphi_{j}} = 
 \frac{x_{i+2}x_{i+4} \cdots x_{j}}{x_{i+1}x_{i+3}\cdots x_{j-1}}.
\end{eqnarray*}
By this and the preliminary remark, we obtain the equations
(\ref{eq:Bippipd2}).
\end{proof}

\begin{lemma} 
\label{lem:BipAlphaInvar}
Assume that $A,B,C$ is bipartite and nontrivial.
Then for $0 \leq j \leq d/2$,
\begin{eqnarray}
\label{ex:BipAlphaInvar}
\frac{\alpha_{2j}}{\alpha^j_2}
= 
\frac{\alpha'_{2j}}{(\alpha'_2)^j}
= 
\frac{\alpha''_{2j}}{(\alpha''_2)^j},
\qquad \qquad \qquad
\frac{\beta_{2j}}{\beta^j_2}
= 
\frac{\beta'_{2j}}{(\beta'_2)^j}
= 
\frac{\beta''_{2j}}{(\beta''_2)^j}.
\end{eqnarray}
\end{lemma}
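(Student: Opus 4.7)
The plan is to reduce this statement immediately to the previously established nonbipartite case by passing to the outer part. By hypothesis $A,B,C$ is bipartite and nontrivial, so by Lemma \ref{lem:A2B2C2Out} the elements $A^2,B^2,C^2$ act on $V_{\rm out}$ as an LR triple of diameter $m=d/2$, and Lemma \ref{lem:A2B2C2Out}(iv) identifies its Toeplitz data with the even-indexed Toeplitz parameters of $A,B,C$: the first sequence is $\lbrace \alpha_{2j}\rbrace_{j=0}^m,\lbrace \beta_{2j}\rbrace_{j=0}^m$, and analogously for the primed and double-primed quantities.

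Next I would invoke Lemma \ref{lem:NormHalf}(ii), which (since $A,B,C$ is nontrivial) guarantees that this induced LR triple on $V_{\rm out}$ is nonbipartite. This is the key step that unlocks the nonbipartite machinery already developed: one may now apply Lemma \ref{lem:alphaiAlpha1} to the triple on $V_{\rm out}$. That lemma asserts that for a nonbipartite LR triple, the $i$-th Toeplitz numbers are $i$-th powers of the first ones, up to a common ratio; translating back through Lemma \ref{lem:A2B2C2Out}(iv), the index $i$ for the $V_{\rm out}$-triple corresponds to index $2j$ for $A,B,C$, and the first Toeplitz number of the $V_{\rm out}$-triple is $\alpha_2$ (resp.\ $\alpha'_2$, $\alpha''_2$). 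This gives exactly
\[
\frac{\alpha_{2j}}{\alpha_1(V_{\rm out})^j}=\frac{\alpha_{2j}}{\alpha_2^j},
\]
together with the primed and double-primed versions, and the common ratio asserted by Lemma \ref{lem:alphaiAlpha1} yields the chain of equalities on the left in \eqref{ex:BipAlphaInvar}. The same argument applied to the $\beta$'s produces the chain on the right.

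There is essentially no obstacle: the proof is a one-line reduction, whose content is entirely in the bookkeeping supplied by Lemma \ref{lem:A2B2C2Out} (matching even-indexed data) and Lemma \ref{lem:NormHalf}(ii) (guaranteeing nonbipartiteness so that Lemma \ref{lem:alphaiAlpha1} applies). The only point worth being careful about is the boundary case $j=0$, where both sides equal $1$ by $\alpha_0=\alpha'_0=\alpha''_0=1$ and $\beta_0=\beta'_0=\beta''_0=1$ from Lemma \ref{lem:alpha0}; and the case $j=1$, which is tautological. The substantive content is handled uniformly for $2\leq 2j\leq d$ by the reduction above.
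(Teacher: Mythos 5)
Your proposal is correct and is essentially identical to the paper's proof: the paper also applies Lemma \ref{lem:alphaiAlpha1} to the LR triple $A^2,B^2,C^2$ on $V_{\rm out}$ from Lemma \ref{lem:A2B2C2Out}, using Lemma \ref{lem:NormHalf}(ii) for nonbipartiteness and Lemma \ref{lem:A2B2C2Out}(iv) to match the even-indexed Toeplitz data. Your extra remarks on the cases $j=0,1$ are harmless but not needed.
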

\begin{proof}
Apply Lemma
\ref{lem:alphaiAlpha1}
to the LR triple in
Lemma \ref{lem:A2B2C2Out}. This LR triple is
nonbipartite by Lemma
\ref{lem:NormHalf}(ii).
\end{proof}

\begin{definition}\rm
\label{def:projector}
Assume that $A,B,C$ is bipartite.
An ordered pair of elements chosen from $A,B,C$ form an
 LR pair;
consider the corresponding projector map from
Definition
\ref{def:J}. By Lemma 
\ref{lem:V0V1}
this projector is independent of the choice; denote
this common projector by $J$. We call $J$ the
{\it projector} for $A,B,C$.
\end{definition}

\noindent In Section 9 we discussed in detail the projector
map for LR pairs. We now adapt a few points to LR triples.

\begin{lemma}
\label{lem:ABCJtriv}
Assume that $A,B,C$ is bipartite.
Then its projector map $J$ is nonzero.
If $A,B,C$ is trivial then
$J=I$. If $A,B,C$ is nontrivial
then $J, I$ are linearly independent over $\mathbb F$.
\end{lemma}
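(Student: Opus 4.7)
The plan is to reduce everything to the LR pair result already established in Lemma \ref{lem:Jtriv}. By Definition \ref{def:projector}, the projector $J$ for the bipartite LR triple $A,B,C$ is the common projector for the LR pairs $A,B$ and $B,C$ and $C,A$. Since each of these LR pairs is bipartite (in the sense that $d=2m$ is even, which holds by Lemma \ref{lem:case}), the projector for each individual LR pair is defined as in Definition \ref{def:J}, and Lemma \ref{lem:Jtriv} gives the corresponding statement for that LR pair.

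I would proceed as follows. First, invoke Lemma \ref{lem:Jtriv} applied to the LR pair $A,B$ to obtain that $J\neq 0$ unconditionally, that $J=I$ precisely when $A,B$ is trivial as an LR pair, and that $J,I$ are linearly independent over $\mathbb F$ precisely when $A,B$ is nontrivial as an LR pair. Then observe that triviality of the LR pair $A,B$ coincides with triviality of the LR triple $A,B,C$: both amount to the condition $d=0$, by Example \ref{def:triv} and Example \ref{ex:trivial}. Thus the three conclusions for the LR pair $A,B$ transfer verbatim to the LR triple $A,B,C$.

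There is no real obstacle here; the entire content of the lemma has already been done at the level of LR pairs in Section 9, and the LR triple statement is a packaging of that fact using the well-definedness of $J$ for bipartite LR triples (Definition \ref{def:projector}). The proof will be a single short paragraph citing Lemma \ref{lem:Jtriv} and Definition \ref{def:projector}.
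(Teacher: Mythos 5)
Your proposal is correct and matches the paper's own proof, which simply cites Lemma \ref{lem:Jtriv} and Definition \ref{def:projector}. Your extra remark that triviality of the LR pair $A,B$ and of the LR triple $A,B,C$ both amount to $d=0$ is the right bookkeeping to make the reduction literal.
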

\begin{proof} 
By Lemma
\ref{lem:Jtriv} and Definition
\ref{def:projector}.
\end{proof}

\begin{lemma}
\label{lem:EEEJ}
Assume that $A,B,C$ is bipartite.
Then its projector map $J$ satisfies
\begin{eqnarray*}
J =
\sum_{j=0}^{d/2} E_{2j} = 
\sum_{j=0}^{d/2} E'_{2j} = 
\sum_{j=0}^{d/2} E''_{2j}.
\end{eqnarray*}
Moreover $J^2=J$. Also, $J$ commutes with
each of $E_i,E'_i,E''_i$ for $0 \leq i \leq d$.
\end{lemma}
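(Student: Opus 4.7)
The plan is to invoke Lemma \ref{lem:Jfacts} three times, once for each of the LR pairs $A,B$ and $B,C$ and $C,A$, and then combine the resulting identities.

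First I would recall from Definition \ref{def:projector} that $J$ is defined as the common value of the projector maps associated to the three LR pairs $A,B$ and $B,C$ and $C,A$. By Lemma \ref{lem:Jfacts}(i) applied to the LR pair $A,B$, the projector for $A,B$ equals $\sum_{j=0}^{d/2} E_{2j}$, so $J=\sum_{j=0}^{d/2} E_{2j}$. Applying the same lemma to the LR pairs $B,C$ and $C,A$ (whose idempotent sequences are $\lbrace E'_i\rbrace_{i=0}^d$ and $\lbrace E''_i\rbrace_{i=0}^d$ by Definition \ref{def:LRTIdem}) yields $J=\sum_{j=0}^{d/2} E'_{2j}$ and $J=\sum_{j=0}^{d/2} E''_{2j}$, respectively. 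This establishes the displayed three-way equality.

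Next, the identity $J^2=J$ follows immediately from Lemma \ref{lem:Jfacts}(ii), since $J$ is the projector for the LR pair $A,B$ (or equivalently for either of the other two pairs).

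Finally, for the commutation assertion, I would use the orthogonality relations $E_r E_s = \delta_{r,s} E_r$ for $0 \leq r,s \leq d$, and similarly for the primed and double-primed idempotent sequences. From $J=\sum_{j=0}^{d/2} E_{2j}$ and $E_rE_s=\delta_{r,s}E_r$, we get $JE_i = E_iJ$ for all $i$, so $J$ commutes with each $E_i$. The analogous computation using $J=\sum_{j=0}^{d/2} E'_{2j}$ (respectively $J=\sum_{j=0}^{d/2} E''_{2j}$) shows that $J$ commutes with each $E'_i$ (respectively each $E''_i$).

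There is no genuine obstacle here; the lemma is a bookkeeping consequence of Lemma \ref{lem:Jfacts} together with Definition \ref{def:projector} and Lemma \ref{lem:V0V1}, which ensured that the three projector maps coincide in the bipartite case.
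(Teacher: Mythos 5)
Your proof is correct and follows essentially the same route as the paper, whose proof simply cites Lemma \ref{lem:Jfacts} and Definition \ref{def:projector}; you have just spelled out the application of Lemma \ref{lem:Jfacts}(i),(ii) to each of the three LR pairs and the orthogonality computation for the commutation claim (which the paper would obtain equivalently from Lemma \ref{lem:Jfacts}(iii),(iv)). No gaps.
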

\begin{proof} 
By Lemma
\ref{lem:Jfacts} and Definition
\ref{def:projector}.
\end{proof}

\begin{lemma}
\label{lem:ABCJ}
Assume that $A,B,C$ is bipartite.
Then its projector map $J$ satisfies
\begin{eqnarray*}
A = A J + J A, \qquad \qquad
B = B J + J B, \qquad \qquad
C = C J + J C.
\end{eqnarray*}
\end{lemma}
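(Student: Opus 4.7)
The plan is to deduce all three equations directly from the corresponding result for LR pairs, namely Lemma~\ref{lem:INOutFacts}(iii), exploiting the fact that the projector $J$ associated with an LR triple is simultaneously the projector of each of its three constituent LR pairs.

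First I would recall Definition~\ref{def:projector}: since $A,B,C$ is bipartite, each of the ordered pairs $A,B$ and $B,C$ and $C,A$ is a bipartite LR pair on $V$ (this follows from Lemma~\ref{lem:bipRel} together with the observation that each such pair inherits a zero trace sequence, and from Lemma~\ref{lem:V0V1} together with Lemma~\ref{lem:bipABCact} ensuring the outer/inner decomposition for each pair coincides with $V=V_{\rm out}+V_{\rm in}$). Consequently the projector maps of all three LR pairs $A,B$, $B,C$, $C,A$ coincide with the single map $J$ given in Definition~\ref{def:projector}.

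Next I would apply Lemma~\ref{lem:INOutFacts}(iii) to the LR pair $A,B$. That lemma asserts that if $A',B'$ is an LR pair with projector $J'$, then $A'=A'J'+J'A'$ and $B'=B'J'+J'B'$. Applied to $A,B$ with projector $J$, this yields
\[
A = AJ + JA, \qquad B = BJ + JB.
\]
Applying the same lemma to the LR pair $B,C$ (again with projector $J$) gives $C = CJ + JC$, which completes the three displayed equalities.

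There is essentially no obstacle here: the content of the lemma lies entirely in identifying that the one projector $J$ of the LR triple simultaneously plays the role of the LR-pair projector for each of the three pairs obtained by selecting two of $A,B,C$. Once that identification is in place (which is Definition~\ref{def:projector} and the bipartite inheritance from Lemma~\ref{lem:bipRel}), the result is a direct invocation of Lemma~\ref{lem:INOutFacts}(iii) applied to two of the three pairs.
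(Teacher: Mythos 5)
Your proof is correct and follows exactly the paper's route: the paper's own proof is the one-line citation of Lemma~\ref{lem:INOutFacts}(iii) together with Definition~\ref{def:projector}, which is precisely the identification you spell out. (The only superfluous step is invoking Lemma~\ref{lem:bipRel}; the LR-pair projector of Definition~\ref{def:J} needs only even diameter, and Definition~\ref{def:projector} already records via Lemma~\ref{lem:V0V1} that the three pair projectors coincide.)
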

\begin{proof} By Lemma
\ref{lem:INOutFacts}(iii)
and Definition \ref{def:projector}.
\end{proof}

\begin{lemma}
\label{lem:JMat}
Assume that $A,B,C$ is bipartite.
With respect to any of the 12 bases
{\rm (\ref{eq:typeAB})--(\ref{eq:typeCA})}, the matrix
representing $J$ is 
${\rm diag}(1,0,1,0,\ldots,0,1)$.
\end{lemma}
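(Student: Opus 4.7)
The plan is to exploit the uniform formula
$$J \;=\; \sum_{j=0}^{d/2} E_{2j} \;=\; \sum_{j=0}^{d/2} E'_{2j} \;=\; \sum_{j=0}^{d/2} E''_{2j}$$
supplied by Lemma \ref{lem:EEEJ}, together with the fact that $d=2m$ is even (by Lemma \ref{lem:case}), so that an index $i$ and the shifted index $d-i$ have the \emph{same} parity.

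First I would reduce the problem to understanding, for each of the 12 basis types, which one-dimensional subspace each basis vector $v_i$ spans. A basis $\lbrace v_i\rbrace_{i=0}^d$ of any of the twelve types induces one of the six decompositions $(A,B), (B,A), (B,C), (C,B), (C,A), (A,C)$ of $V$, and the idempotent sequences of these six decompositions are (in order)
$$\lbrace E_i\rbrace,\;\lbrace E_{d-i}\rbrace,\;\lbrace E'_i\rbrace,\;\lbrace E'_{d-i}\rbrace,\;\lbrace E''_i\rbrace,\;\lbrace E''_{d-i}\rbrace,$$
by Lemma \ref{lem:Ebackward} and Definition \ref{def:ABE}. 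Using Definition \ref{def:ABbasis}, Lemma \ref{lem:invABbasis}, Lemma \ref{lem:BAbasis} and their analogs for the other pairs, each of the 12 basis types therefore places $v_i$ in a one-dimensional subspace of the form $FV$, where $F$ is one of
$$E_i,\;\; E_{d-i},\;\; E'_i,\;\; E'_{d-i},\;\; E''_i,\;\; E''_{d-i}.$$

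Next I would evaluate $Jv_i$ in each case. Write $F = E^{(\star)}_{k(i)}$ where $\star \in \lbrace\; ,',''\rbrace$ and $k(i)\in \lbrace i,\,d-i\rbrace$. Since $d$ is even, $k(i)$ has the same parity as $i$. Using the relevant expression for $J$ from Lemma \ref{lem:EEEJ} in the $\star$-flavor, together with $E^{(\star)}_r E^{(\star)}_s = \delta_{r,s} E^{(\star)}_r$, we obtain $J F = F$ if $k(i)$ is even and $JF=0$ if $k(i)$ is odd; equivalently $Jv_i = v_i$ when $i$ is even, and $Jv_i=0$ when $i$ is odd.

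Consequently, with respect to any of the 12 bases, the matrix of $J$ is diagonal with $(i,i)$-entry equal to $1$ when $i$ is even and $0$ when $i$ is odd. Since $d$ is even, both the $(0,0)$- and $(d,d)$-entries are $1$, so this diagonal reads $(1,0,1,0,\ldots,0,1)$, as required. There is no serious obstacle here: the only substantive point is the parity observation that $d-i \equiv i \pmod 2$, which in turn is why bipartiteness forces $d$ to be even.
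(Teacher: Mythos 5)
Your proof is correct, and it is exactly the routine argument the paper compresses into ``By construction and linear algebra'': each basis vector $v_i$ spans $E^{(\star)}_{k(i)}V$ with $k(i)\in\lbrace i,d-i\rbrace$, the uniform expression for $J$ from Lemma \ref{lem:EEEJ} then gives $Jv_i=v_i$ or $0$ according to the parity of $k(i)$, and the evenness of $d$ makes $k(i)$ and $i$ agree in parity. No gaps.
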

\begin{proof} By construction and linear algebra.
\end{proof}

\noindent The following definition is 
motivated by Definition
\ref{def:Ainout}.
\begin{definition}
\label{def:ABCoutIn}
\rm
Assume that $A,B,C$ is bipartite.
Define  
\begin{eqnarray}
A_{\rm out}, 
\qquad 
A_{\rm in},
\qquad
B_{\rm out}, 
\qquad 
B_{\rm in},
\qquad 
C_{\rm out},
\qquad 
C_{\rm in}
\label{eq:6list}
\end{eqnarray}
 in ${\rm End}(V)$ as follows.
The map $A_{\rm out}$ acts on
 $V_{\rm out}$ as $A$, and on
 $V_{\rm in}$ as zero.
The map $A_{\rm in}$ acts on
 $V_{\rm in}$ as $A$, and on
 $V_{\rm out}$ as zero.
The other maps in
(\ref{eq:6list}) are similarly defined.
By construction
\begin{equation*}
A = A_{\rm out}+ 
 A_{\rm in}, 
 \qquad \qquad
B = B_{\rm out}+ 
 B_{\rm in}, 
 \qquad \qquad
C = C_{\rm out}+ 
 C_{\rm in}.
\end{equation*}
\end{definition}

\begin{lemma}
\label{lem:ABCINOUTJ}
Assume that $A,B,C$ is bipartite. Then 
\begin{eqnarray*}
&&
A_{\rm out} = AJ=(I-J)A,
\qquad \qquad  
A_{\rm in} = JA = A(I-J),
\\
&&
B_{\rm out} = BJ=(I-J)B,
\qquad \qquad  
B_{\rm in} = JB = B(I-J),
\\
&&
C_{\rm out} = CJ=(I-J)C,
\qquad \qquad  
C_{\rm in} = JC = C(I-J).
\end{eqnarray*}
\end{lemma}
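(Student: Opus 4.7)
The plan is to reduce the statement to Lemma \ref{lem:INOutFacts}, which already establishes exactly the analogous six identities for an arbitrary LR pair. The bridge is Definition \ref{def:projector} together with Lemma \ref{lem:V0V1}: since $A,B,C$ is bipartite, any two of $A,B,C$ form an LR pair whose associated outer/inner parts of $V$ and whose projector map all coincide with the common objects $V_{\rm out}$, $V_{\rm in}$, and $J$ attached to the triple. Consequently, the maps $A_{\rm out},A_{\rm in},B_{\rm out},B_{\rm in},C_{\rm out},C_{\rm in}$ defined in Definition \ref{def:ABCoutIn} agree with the LR-pair versions from Definition \ref{def:Ainout}, just applied to the appropriate constituent pair.

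More concretely, first I would invoke the LR pair $A,B$. By Lemma \ref{lem:V0V1}, the decomposition $V=V_{\rm out}+V_{\rm in}$ is the same whether computed from the triple or from the pair $A,B$, and the projector for the LR pair $A,B$ (Definition \ref{def:J}) equals the projector $J$ for $A,B,C$ (Definition \ref{def:projector}). Hence the LR-pair maps $A_{\rm out},A_{\rm in}$ defined in Definition \ref{def:Ainout} coincide with the triple-versions $A_{\rm out},A_{\rm in}$ of Definition \ref{def:ABCoutIn}. Applying Lemma \ref{lem:INOutFacts}(i),(ii) to the pair $A,B$ then immediately yields
\begin{eqnarray*}
A_{\rm out} = AJ = (I-J)A, \qquad A_{\rm in} = JA = A(I-J).
\end{eqnarray*}
The same argument, now applied to the LR pair $B,C$, gives the corresponding equations for $B_{\rm out}$ and $B_{\rm in}$; applied to the LR pair $C,A$, it gives those for $C_{\rm out}$ and $C_{\rm in}$.

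There is no real obstacle here; the only point that requires a moment's care is the consistency check that the two notions of $V_{\rm out},V_{\rm in}$ and of the projector really do agree. That consistency is exactly the content of Lemma \ref{lem:V0V1} and of Definition \ref{def:projector}, so the argument collapses to three invocations of Lemma \ref{lem:INOutFacts}. I would present the proof as a single short paragraph stating the reduction and citing those three applications.
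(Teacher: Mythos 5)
Your proposal is correct and matches the paper's own proof, which simply cites Lemma \ref{lem:INOutFacts}(i),(ii) together with Definition \ref{def:projector}; you have merely spelled out the consistency check (via Lemma \ref{lem:V0V1}) that the paper leaves implicit. No issues.
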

\begin{proof} By Lemma
\ref{lem:INOutFacts}(i),(ii)
and Definition \ref{def:projector}.
\end{proof}

\begin{lemma}
\label{lem:InOut}
Assume that $A,B,C$ is bipartite.
Let
\begin{eqnarray}
\label{eq:AlphaInOut}
\alpha_{\rm out}, \quad 
\alpha_{\rm in}, \quad 
\beta_{\rm out}, \quad 
\beta_{\rm in}, \quad 
\gamma_{\rm out}, \quad 
\gamma_{\rm in}
\end{eqnarray}
denote nonzero scalars in $\mathbb F$. Then the sequence
\begin{eqnarray}
\label{eq:NewLRT}
\alpha_{\rm out}A_{\rm out}
+\alpha_{\rm in}A_{\rm in},
\qquad
\beta_{\rm out} B_{\rm out}+
\beta_{\rm in}B_{\rm in},
\qquad
\gamma_{\rm out}C_{\rm out}+
\gamma_{\rm in} C_{\rm in}
\end{eqnarray}
is a bipartite LR triple on $V$.
\end{lemma}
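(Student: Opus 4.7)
The plan is to reduce the claim to the LR pair version, namely Lemma \ref{lem:LRPInOut}, applied separately to each of the three LR pairs $(A,B)$, $(B,C)$, $(C,A)$ embedded in the triple. The key enabling fact is Lemma \ref{lem:V0V1}, which guarantees that the outer--inner decomposition $V = V_{\rm out} + V_{\rm in}$ is the same for all three LR pairs. Consequently the splittings
$$A = A_{\rm out} + A_{\rm in}, \qquad B = B_{\rm out} + B_{\rm in}, \qquad C = C_{\rm out} + C_{\rm in}$$
from Definition \ref{def:ABCoutIn} are intrinsically well-defined and are compatible with each of the three LR pairs simultaneously. This is what makes the whole argument go through.

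For the LR pair conditions, I would argue as follows. Apply Lemma \ref{lem:LRPInOut} to the LR pair $A, B$ with the four nonzero scalars $\alpha_{\rm out}, \alpha_{\rm in}, \beta_{\rm out}, \beta_{\rm in}$; this shows that the first two entries of (\ref{eq:NewLRT}) form an LR pair on $V$ with idempotent sequence $\{E_i\}_{i=0}^d$. Apply the same lemma to the LR pair $B, C$ with the scalars $\beta_{\rm out}, \beta_{\rm in}, \gamma_{\rm out}, \gamma_{\rm in}$, and to the LR pair $C, A$ with the scalars $\gamma_{\rm out}, \gamma_{\rm in}, \alpha_{\rm out}, \alpha_{\rm in}$; these show that the second--third and third--first entries of (\ref{eq:NewLRT}) are likewise LR pairs on $V$. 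By Definition \ref{def:LRT} the triple (\ref{eq:NewLRT}) is therefore an LR triple on $V$, and its idempotent data coincides with the idempotent data $(\{E_i\};\{E'_i\};\{E''_i\})$ of $A,B,C$.

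It remains to verify that this new LR triple is bipartite. Let $a^{\mathrm{new}}_i$ denote the $i$-th trace datum of the new triple in the first slot; by Definition \ref{def:traceData},
$$a^{\mathrm{new}}_i = {\rm tr}\bigl((\gamma_{\rm out} C_{\rm out} + \gamma_{\rm in} C_{\rm in})\, E_i\bigr).$$
Applying Lemma \ref{lem:EoutEin} (which applies since the outer/inner parts are common to all three pairs), the image of $E_i$ lies in $V_{\rm out}$ when $i$ is even and in $V_{\rm in}$ when $i$ is odd. In the first case $C_{\rm in} E_i = 0$ and $C_{\rm out} E_i = C E_i$; in the second case $C_{\rm out} E_i = 0$ and $C_{\rm in} E_i = C E_i$. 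Either way $a^{\mathrm{new}}_i$ is a nonzero scalar multiple of $a_i$, which is zero since $A,B,C$ is bipartite. Applying the identical argument to the other two trace sequences yields $a^{\mathrm{new}}_i = (a')^{\mathrm{new}}_i = (a'')^{\mathrm{new}}_i = 0$ for $0 \le i \le d$, so the new triple is bipartite by Definition \ref{def:LRTbip}.

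The only step that requires a little care is the verification of bipartiteness, since the LR pair version of the result (Lemma \ref{lem:LRPInOut}) does not directly encode a ``bipartite'' condition. However, the computation above is essentially immediate once one observes that $E_i$ acts entirely inside either $V_{\rm out}$ or $V_{\rm in}$ depending on parity, so no serious obstacle should arise.
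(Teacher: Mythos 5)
Your proof is correct and is essentially the argument the paper compresses into ``By construction'': the common outer/inner decomposition from Lemma \ref{lem:V0V1} lets you apply Lemma \ref{lem:LRPInOut} to each of the three constituent LR pairs, and the parity argument with Lemma \ref{lem:EoutEin} correctly shows each new trace datum is a nonzero multiple of the corresponding (vanishing) original one. No gaps.
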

\begin{proof} By construction.
\end{proof}

\noindent Our next goal is to obtain the
parameter array, idempotent data, and Toeplitz data for
the LR triple in
(\ref{eq:NewLRT}).
The following definition is for notational convenience.

\begin{definition}
\label{def:NewLRTnot}
\rm
Adopt the assumptions and notation of 
Lemma
\ref{lem:InOut}.
For $1 \leq i \leq d$
define
\begin{eqnarray*}
&&
f_i
= \alpha_{\rm out}\beta_{\rm in},
\qquad
f'_i = 
\beta_{\rm out}\gamma_{\rm in},
\qquad
f''_i = 
\gamma_{\rm out}\alpha_{\rm in}
\qquad \qquad {\mbox {\rm (if $i$ is even)}},
\\
&&
f_i =
\alpha_{\rm in}\beta_{\rm out},
\qquad
f'_i = 
\beta_{\rm in}\gamma_{\rm out},
\qquad
f''_i =
\gamma_{\rm in}\alpha_{\rm out}
\qquad \qquad {\mbox {\rm (if $i$ is odd)}}.
\end{eqnarray*}
\noindent Also define
\begin{eqnarray*}
&&
g_i = 
(\alpha_{\rm out}\alpha_{\rm in})^{-i/2},
\qquad
g'_i = 
(\beta_{\rm out}\beta_{\rm in})^{-i/2},
\qquad 
g''_i
= (\gamma_{\rm out}\gamma_{\rm in})^{-i/2}
\qquad \quad {\mbox {\rm (if $i$ is even)}},
\\
&&
g_i =
0,
\qquad
g'_i = 0,
\qquad
g''_i = 0
\qquad \qquad {\mbox {\rm (if $i$ is odd)}}.
\end{eqnarray*}
\end{definition}

\begin{lemma}
\label{lem:PAnewLRT}
Referring to Lemma
\ref{lem:InOut}, let the nonzero scalars
{\rm (\ref{eq:AlphaInOut})}
be given, 
and consider the LR triple
in {\rm (\ref{eq:NewLRT})}.
For this LR triple,
\begin{enumerate}
\item[\rm (i)]
the parameter array is (using the notation of Definition
\ref{def:NewLRTnot})
\begin{eqnarray*}
(
\lbrace 
\varphi_i f_i\rbrace_{i=1}^d;
\lbrace \varphi'_i f'_i\rbrace_{i=1}^d;
\lbrace \varphi''_i f''_i\rbrace_{i=1}^d
);
\end{eqnarray*}
\item[\rm (ii)]
the idempotent data is 
equal to the
idempotent data for $A,B,C$;
\item[\rm (iii)]
the Toeplitz data is (using the notation of Definition
\ref{def:NewLRTnot})
\begin{eqnarray*}
(
\lbrace \alpha_i g''_i\rbrace_{i=0}^d,
\lbrace \beta_i g''_i\rbrace_{i=0}^d;
\lbrace \alpha'_i g_i\rbrace_{i=0}^d,
\lbrace \beta'_i g_i\rbrace_{i=0}^d;
\lbrace \alpha''_i g'_i\rbrace_{i=0}^d,
\lbrace \beta''_i g'_i \rbrace_{i=0}^d
).
\end{eqnarray*}
\end{enumerate}
\end{lemma}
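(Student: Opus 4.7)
The plan is to handle parts (ii) and (i) by invoking Lemma \ref{lem:LRPInOut} for each of the three underlying LR pairs in (\ref{eq:NewLRT}), and then to prove (iii) by explicitly transporting compatible bases via parity-dependent scaling factors.

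For (ii), Lemma \ref{lem:LRPInOut} gives that the $(A',B')$-decomposition of $V$ coincides with the $(A,B)$-decomposition, so the idempotent sequences agree; the same reasoning for the LR pairs $(B',C')$ and $(C',A')$ yields (ii). For (i), the parameter-sequence formula of that same lemma applied to $(A',B')$ gives $\lbrace f_i \varphi_i\rbrace_{i=1}^d$ with $f_i$ as in Definition \ref{def:NewLRTnot}, and applied to the other two pairs yields $\lbrace f'_i \varphi'_i\rbrace$ and $\lbrace f''_i \varphi''_i\rbrace$.

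For (iii), it suffices to establish the assertion on the $\alpha,\beta$ parameters, since the primed and double-primed cases follow by applying the same argument to the p-relatives $B',C',A'$ and $C',A',B'$ of the new triple (both bipartite by Lemma \ref{lem:bipRel}). Fix a $(C,A)$-basis $\lbrace v_i\rbrace_{i=0}^d$ and a compatible $(C,B)$-basis $\lbrace u_i\rbrace_{i=0}^d$, so that the transition matrix from $\lbrace u_i\rbrace$ to $\lbrace v_i\rbrace$ has Toeplitz parameters $\lbrace \alpha_i\rbrace$ (Definition \ref{def:TTT1}); note that $v_i, u_i \in V_{\rm out}$ iff $i$ is even. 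Define $\mu_0 = 1$ and recursively $\mu_i = \mu_{i-1}/\gamma_{\rm out}$ for $i$ even, $\mu_i = \mu_{i-1}/\gamma_{\rm in}$ for $i$ odd, which yields $\mu_{2j} = (\gamma_{\rm out}\gamma_{\rm in})^{-j}$ and $\mu_{2j+1} = (\gamma_{\rm out}\gamma_{\rm in})^{-j}/\gamma_{\rm in}$. Setting $v'_i = \mu_i v_i$ and $u'_i = \mu_i u_i$, and using $C' = \gamma_{\rm out}C_{\rm out} + \gamma_{\rm in}C_{\rm in}$ together with (ii) to identify the underlying decompositions, one checks that $\lbrace v'_i\rbrace$ is a $(C',A')$-basis and $\lbrace u'_i\rbrace$ is a $(C',B')$-basis. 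Expanding $v'_j = \mu_j v_j = \sum_i (\mu_j/\mu_i)\alpha_{j-i} u'_i$ shows that the new transition matrix has $(i,j)$-entry $(\mu_j/\mu_i)\alpha_{j-i}$. Since Lemma \ref{lem:case} forces $\alpha_r = 0$ for $r$ odd, only even $j-i$ contributes; a brief parity case check (both $i,j$ even, or both odd) shows $\mu_j/\mu_i = (\gamma_{\rm out}\gamma_{\rm in})^{-(j-i)/2} = g''_{j-i}$ in all such cases. Hence the new transition matrix is upper triangular Toeplitz with parameters $\lbrace \alpha_i g''_i\rbrace$, and compatibility holds since $\alpha_0 g''_0 = 1$. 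For the inverse, put $S = \sum_i \beta_i g''_i \tau^i$ and compute $T^{\rm new} S$ directly: its $(i,j)$-entry vanishes when $j-i$ is odd, and when $j-i = 2m$ it equals $(\gamma_{\rm out}\gamma_{\rm in})^{-m}\sum_{a+b=m}\alpha_{2a}\beta_{2b}$; but $TT^{-1} = I$ together with the vanishing of odd-indexed $\alpha$'s and $\beta$'s forces $\sum_{a+b=m}\alpha_{2a}\beta_{2b} = \delta_{m,0}$, so $T^{\rm new} S = I$, giving $(T^{\rm new})^{-1} = S$ with parameters $\lbrace \beta_i g''_i\rbrace$.

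The main obstacle is the bookkeeping in (iii), namely verifying that the parity-dependent factors $\mu_j/\mu_i$ collapse to the single Toeplitz factor $g''_{j-i}$ on the support of the expansion---this works precisely because the bipartite hypothesis kills the odd-indexed $\alpha_r$ and $\beta_r$.
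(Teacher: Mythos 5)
Your proof is correct and follows essentially the same route as the paper: parts (i) and (ii) by invoking Lemma \ref{lem:LRPInOut} for each constituent LR pair, and part (iii) by rescaling compatible bases and tracking the effect on the Toeplitz parameters, exactly in the spirit of Lemma \ref{lem:a0a1}(ii) and Lemma \ref{lem:ToeplitzAdjust} that the paper cites. The parity-dependent scaling $\mu_i$ and the observation that the odd-indexed $\alpha_r,\beta_r$ vanish (Lemma \ref{lem:case}) are precisely the details the paper leaves implicit.
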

\begin{proof} (i) Use Lemma
\ref{lem:LRPInOut}.
\\
\noindent 
(ii) Similar to the proof of Lemma
 \ref{lem:isequal}.
 \\
\noindent (iii) 
Similar to the proof of Lemma \ref{lem:ToeplitzAdjust}.
\end{proof}

\begin{definition}
\label{def:BIASSOC}
\rm
Assume that $A,B,C$  is bipartite. Let 
$A',B',C'$ denote a bipartite LR triple on $V$.
Then $A,B,C$ and $A',B',C'$ will be called 
{\it biassociate} whenever there exist nonzero scalars
$\alpha, \beta, \gamma$ in $\mathbb F$
such that
\begin{eqnarray*}
A' = \alpha A_{\rm out}
+A_{\rm in},
\qquad
B' = \beta  B_{\rm out}+
B_{\rm in},
\qquad
C' = \gamma C_{\rm out}+
C_{\rm in}.
\end{eqnarray*}
Biassociativity is an equivalence relation.
\end{definition}

\begin{lemma}
\label{lem:BiassocIso}
Assume that $A,B,C$ is bipartite.
Let $A',B',C'$ denote a bipartite LR triple  over $\mathbb F$.
Then the following are equivalent:
\begin{enumerate}
\item[\rm (i)] there exists a bipartite LR triple  over $\mathbb F$
that is biassociate to $A,B,C$ and isomorphic to $A',B',C'$;
\item[\rm (ii)] there exists a bipartite LR triple over $\mathbb F$
that is isomorphic to $A,B,C$ and biassociate to $A',B',C'$.
\end{enumerate}
\end{lemma}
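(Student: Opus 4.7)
The plan is to imitate the proof of Lemma~\ref{lem:LRPBiassocIso} (the LR pair version), with the key technical step being to establish an LR triple analogue of Lemma~\ref{lem:twoviewBisim}. Since the projector $J$ in Definition~\ref{def:projector} is independent of which pair from $A,B,C$ one uses, the decomposition $V=V_{\rm out}+V_{\rm in}$ is intrinsic to the triple, and the six maps $A_{\rm out},A_{\rm in},B_{\rm out},B_{\rm in},C_{\rm out},C_{\rm in}$ of Definition~\ref{def:ABCoutIn} are well defined.

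First I would prove the following triple version of Lemma~\ref{lem:iso2view}: for a bipartite LR triple $A',B',C'$ on $V'$ and an $\mathbb F$-linear bijection $\sigma:V\to V'$, $\sigma$ is an isomorphism of LR triples from $A,B,C$ to $A',B',C'$ if and only if $\sigma$ intertwines each of the six inner/outer components with its primed counterpart. The forward direction uses Lemma~\ref{lem:sigmaJ} applied to any LR pair among $A,B,C$ (the outcome is the same projector by Definition~\ref{def:projector}), followed by Lemma~\ref{lem:ABCINOUTJ}; the reverse direction follows by summing the three pairs of intertwining relations. Second I would promote Lemma~\ref{lem:ApBp} to the triple setting in the obvious way: for nonzero $\alpha,\beta,\gamma\in\mathbb F$ and a bipartite LR triple $A',B',C'$ on $V$, the equation
\begin{equation*}
A'=\alpha A_{\rm out}+A_{\rm in},\quad B'=\beta B_{\rm out}+B_{\rm in},\quad C'=\gamma C_{\rm out}+C_{\rm in}
\end{equation*}
holds if and only if $A'_{\rm out}=\alpha A_{\rm out}$, $A'_{\rm in}=A_{\rm in}$, and similarly for $B,C$. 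This is immediate from Lemma~\ref{lem:ApBp} applied to each of the three LR pairs in the triple.

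Combining these two preparatory facts yields the analogue of Lemma~\ref{lem:twoviewBisim} for triples: for nonzero $\alpha,\beta,\gamma\in\mathbb F$ and $\sigma:V\to V'$ an $\mathbb F$-linear bijection, $\sigma$ is an isomorphism of LR triples from $\alpha A_{\rm out}+A_{\rm in},\,\beta B_{\rm out}+B_{\rm in},\,\gamma C_{\rm out}+C_{\rm in}$ to $A',B',C'$ if and only if $\sigma$ is an isomorphism of LR triples from $A,B,C$ to $\alpha^{-1}A'_{\rm out}+A'_{\rm in},\,\beta^{-1}B'_{\rm out}+B'_{\rm in},\,\gamma^{-1}C'_{\rm out}+C'_{\rm in}$. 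Here I use Definition~\ref{def:isoLRT} together with the observation that $\sigma$ (being a bijection) intertwines $A_{\rm out}$ with $A'_{\rm out}$ up to the scalar $\alpha$ precisely when it intertwines $\alpha^{-1}A'_{\rm out}$ with $A_{\rm out}$, and similarly in the other two components; the inner parts carry no rescaling.

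With this analogue in hand, the proof of the lemma is immediate, exactly as for Lemma~\ref{lem:LRPBiassocIso}: pick arbitrary nonzero $\alpha,\beta,\gamma\in\mathbb F$; the triple $\alpha A_{\rm out}+A_{\rm in},\,\beta B_{\rm out}+B_{\rm in},\,\gamma C_{\rm out}+C_{\rm in}$ realizes condition (i) exactly when the triple $\alpha^{-1}A'_{\rm out}+A'_{\rm in},\,\beta^{-1}B'_{\rm out}+B'_{\rm in},\,\gamma^{-1}C'_{\rm out}+C'_{\rm in}$ realizes condition (ii). The main obstacle is purely bookkeeping: verifying carefully that the outer/inner splitting is genuinely determined by the triple (not by a choice of pair), so that the three pair-level results assemble coherently; once Definition~\ref{def:projector} and Lemma~\ref{lem:EEEJ} are invoked, this is routine.
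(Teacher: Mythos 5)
Your proposal is correct and follows essentially the same route as the paper: the paper's proof is simply "similar to the proof of Lemma \ref{lem:assocIso}," i.e.\ the same two-view argument you carry out, with the triple-level analogue of Lemma \ref{lem:twoviewBisim} assembled from the pair-level results via the pair-independence of $V_{\rm out}$, $V_{\rm in}$ (Lemma \ref{lem:V0V1} and Definition \ref{def:projector}). Your filling-in of the bookkeeping is accurate and consistent with Lemma \ref{lem:InOut} and Definition \ref{def:BIASSOC}.
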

\begin{proof} Similar to the proof of Lemma
\ref{lem:assocIso}.
\end{proof}

\begin{definition}
\label{def:biSim}
\rm
Assume that $A,B,C$  is bipartite. Let 
$A',B',C'$ denote a bipartite LR triple over $\mathbb F$.
Then $A,B,C$ and $A',B',C'$ will be called 
{\it bisimilar} whenever the equivalent conditions {\rm (i), (ii)}
hold in Lemma
\ref{lem:BiassocIso}.
Bisimilarity is an equivalence relation.
\end{definition}

\section{Equitable LR triples}

\noindent Throughout this section the following notation is in effect.
Let $V$ denote a vector space over $\mathbb F$ with
dimension $d+1$.
Let $A,B,C$ denote
an LR triple on $V$, with
 parameter array
(\ref{eq:paLRT}),
idempotent data
(\ref{eq:idseq}),
trace data
(\ref{eq:tracedata}), and Toeplitz data
(\ref{eq:ToeplitzData}). We describe a condition on
$A,B,C$ called equitable.

\begin{definition}
\label{def:equitNorm}
\rm 
The LR triple $A,B,C$ is called {\it equitable}
whenever 
$\alpha_i = \alpha'_i = \alpha''_i$ for $0 \leq i \leq d$.
\end{definition}

\begin{lemma}
\label{lem:TrivEquit}
If 
$A,B,C$ is trivial, then it is equitable.
\end{lemma}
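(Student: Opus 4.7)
The plan is to unpack the definition of trivial and check the defining condition of equitable directly. By Example \ref{ex:trivial}, the triviality assumption forces $d=0$, so the only index to check is $i=0$. Hence equitability reduces to verifying the single equation $\alpha_0 = \alpha'_0 = \alpha''_0$.

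But this common value equals $1$ by Lemma \ref{lem:alpha0}, so all three scalars agree. Consequently Definition \ref{def:equitNorm} is satisfied vacuously for $i \geq 1$ and trivially for $i=0$, and $A,B,C$ is equitable.

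There is no real obstacle: the proof is essentially a one-line appeal to Lemma \ref{lem:alpha0} after reducing to $d=0$.
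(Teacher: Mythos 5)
Your proof is correct and matches the paper's: the paper likewise notes that for a trivial triple the only index is $i=0$ and recalls $\alpha_0=\alpha'_0=\alpha''_0=1$ (Lemma \ref{lem:alpha0}).
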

\begin{proof} Recall that $\alpha_0=\alpha'_0 = \alpha''_0=1$.
\end{proof}

\begin{lemma}
\label{lem:equitBasicBeta}
Assume that $A,B,C$ is equitable. Then
$\beta_i= \beta'_i = \beta''_i$ for $0 \leq i \leq d$.
\end{lemma}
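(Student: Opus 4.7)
The plan is to prove this by induction on $i$ using the recursion (\ref{eq:recursion}) that determines the parameters of an inverse Toeplitz matrix from the parameters of the original. Recall from Definition \ref{def:TTT1} that $\lbrace \beta_i \rbrace_{i=0}^d$, $\lbrace \beta'_i \rbrace_{i=0}^d$, $\lbrace \beta''_i \rbrace_{i=0}^d$ are the Toeplitz parameters for $T^{-1}$, $(T')^{-1}$, $(T'')^{-1}$ respectively, and by Lemma \ref{lem:alpha0} we have $\alpha_0 = \alpha'_0 = \alpha''_0 = 1$, so each of the three triangular Toeplitz matrices $T,T',T''$ is invertible.

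By the general discussion around (\ref{eq:recursion}) (applied to each of $T,T',T''$), the sequences $\lbrace \beta_i \rbrace_{i=0}^d$, $\lbrace \beta'_i \rbrace_{i=0}^d$, $\lbrace \beta''_i \rbrace_{i=0}^d$ are determined uniquely from $\lbrace \alpha_i \rbrace_{i=0}^d$, $\lbrace \alpha'_i \rbrace_{i=0}^d$, $\lbrace \alpha''_i \rbrace_{i=0}^d$ by the recursions
\begin{eqnarray*}
\beta_0 = 1,\qquad \sum_{k=0}^j \alpha_k \beta_{j-k} = 0 \quad (1 \leq j \leq d),
\end{eqnarray*}
and the analogous equations for the primed and double-primed versions. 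Since $A,B,C$ is equitable, $\alpha_i = \alpha'_i = \alpha''_i$ for $0 \leq i \leq d$, so the three recursions coincide and hence yield the same solution. An immediate induction on $j$ therefore gives $\beta_j = \beta'_j = \beta''_j$ for $0 \leq j \leq d$.

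There is no real obstacle here: the entire content is that the map sending the parameters of an invertible upper triangular Toeplitz matrix to those of its inverse is well-defined (indeed computed by the stated recursion), so equal inputs produce equal outputs. The proof can be written in a single sentence invoking (\ref{eq:recursion}) together with Definition \ref{def:equitNorm}.
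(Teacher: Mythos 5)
Your proof is correct and is essentially the paper's argument: the paper simply observes that equitability forces $T=T'=T''$, hence their inverses coincide, which is exactly the "equal inputs produce equal outputs" fact you establish via the recursion (\ref{eq:recursion}). The inductive phrasing is a harmless elaboration of the same idea.
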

\begin{proof}
Refer to Definitions
\ref{def:TTT},
\ref{def:TTT1}. The matrices $T,T',T''$
coincide, so their inverses coincide.
The result follows.
\end{proof}

\begin{lemma}
If  $A,B,C$ is equitable, then so are its relatives.
\end{lemma}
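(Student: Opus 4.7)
The plan is to verify the equitability condition for each of the twelve relatives by reading off their Toeplitz data from Lemmas \ref{lem:ToeplitzData} and \ref{lem:ToeplitzDataD}. Since equitability is a statement purely about the Toeplitz data (namely, the equality of the three $\alpha$-sequences), this is essentially a bookkeeping exercise once the right preparatory fact is in place.

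First I would invoke Lemma \ref{lem:equitBasicBeta}, which promotes the hypothesis $\alpha_i = \alpha'_i = \alpha''_i$ to the stronger statement that \emph{both} triples of sequences coincide:
\begin{eqnarray*}
\alpha_i = \alpha'_i = \alpha''_i, \qquad \qquad \beta_i = \beta'_i = \beta''_i \qquad \qquad (0 \leq i \leq d).
\end{eqnarray*}
This is the crucial symmetry, since the relatives shuffle the $\alpha$'s and $\beta$'s in two distinct patterns depending on orientation.

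Next I would treat the six relatives in Lemma \ref{lem:ABCvar}. For the p-relatives $B,C,A$ and $C,A,B$, the table in Lemma \ref{lem:ToeplitzData} shows the Toeplitz data is obtained by cyclically permuting the three pairs $(\{\alpha_i\},\{\beta_i\})$, $(\{\alpha'_i\},\{\beta'_i\})$, $(\{\alpha''_i\},\{\beta''_i\})$; since these three pairs are identical by the preparatory fact, the resulting triple is equitable. For the n-relatives $C,B,A$, $A,C,B$, $B,A,C$, the same table shows that the $\alpha$-sequence of the relative is a $\beta$-sequence of $A,B,C$ (and vice versa); the equality of the three $\beta$-sequences again forces equitability. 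The six dual relatives from Lemma \ref{lem:newPA} are handled identically by reading the table in Lemma \ref{lem:ToeplitzDataD}, which exhibits the same two permutation patterns with $\alpha$ and $\beta$ swapped globally (as expected from Lemma \ref{lem:LRBdual}).

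There is no real obstacle; the argument is a direct inspection once Lemma \ref{lem:equitBasicBeta} supplies the $\beta$-equality needed to cover the negatively-oriented and dual cases. The only mild subtlety is remembering that ``equitable'' is defined asymmetrically in terms of the $\alpha$'s alone, so one must appeal to Lemma \ref{lem:equitBasicBeta} rather than apply the definition verbatim in the orientation-reversing cases.
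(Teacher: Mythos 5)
Your proof is correct and follows exactly the paper's route: the paper's proof cites precisely Lemmas \ref{lem:ToeplitzData}, \ref{lem:ToeplitzDataD}, \ref{lem:equitBasicBeta} and Definition \ref{def:equitNorm}, which is the same combination you use. Your write-up simply makes explicit the table inspection that the paper leaves to the reader.
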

\begin{proof} By
Lemmas
\ref{lem:ToeplitzData},
\ref{lem:ToeplitzDataD},
\ref{lem:equitBasicBeta}
and
Definition
\ref{def:equitNorm}.
\end{proof}

\noindent As we investigate the equitable property,
we will treat the bipartite and nonbipartite cases
separately. 
We begin with the nonbipartite case.

\begin{lemma}
\label{lem:equitNBmin}
Assume that $A,B,C$ is nonbipartite.
Then $A,B,C$ is equitable if and only if
$\alpha_1 = \alpha'_1 = \alpha''_1$.
\end{lemma}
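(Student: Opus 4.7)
The forward direction is immediate from Definition \ref{def:equitNorm}: if $A,B,C$ is equitable then in particular $\alpha_1=\alpha'_1=\alpha''_1$. So the substance lies in the converse, and my plan is to reduce it to an application of Lemma \ref{lem:whatisIso}.

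First I would record that since $A,B,C$ is nonbipartite, Lemma \ref{lem:NotB} guarantees that $\alpha_1$, $\alpha'_1$, $\alpha''_1$ are all nonzero, so the common value $\alpha_1=\alpha'_1=\alpha''_1$ is a nonzero scalar. Next I invoke Corollary \ref{cor:alphaOneInv}, which states that for $1 \leq i \leq d$,
\begin{eqnarray*}
\frac{\alpha_1}{\varphi_i} \;=\; \frac{\alpha'_1}{\varphi'_i} \;=\; \frac{\alpha''_1}{\varphi''_i}.
\end{eqnarray*}
Substituting the common value of $\alpha_1,\alpha'_1,\alpha''_1$ and cancelling this nonzero scalar, I conclude that $\varphi_i=\varphi'_i=\varphi''_i$ for $1 \leq i \leq d$.

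At this point, condition (i) of Lemma \ref{lem:whatisIso} is satisfied. The last assertion of that lemma then delivers $\alpha_i=\alpha'_i=\alpha''_i$ for $0 \leq i \leq d$, which by Definition \ref{def:equitNorm} says that $A,B,C$ is equitable, completing the converse.

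There is essentially no obstacle here: the nonbipartite hypothesis is exactly what is needed to invert the proportionality in Corollary \ref{cor:alphaOneInv}, and Lemma \ref{lem:whatisIso} then closes the argument in a single step. The only point to be careful about is confirming that in the nonbipartite case $\alpha_1\neq 0$, which is why I open with the appeal to Lemma \ref{lem:NotB}; without this, the cancellation step producing the equality of $\varphi_i,\varphi'_i,\varphi''_i$ would not be justified.
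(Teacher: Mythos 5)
Your proof is correct, but it closes the converse by a different route than the paper. The paper cites Lemma \ref{lem:alphaiAlpha1}, which for a nonbipartite triple gives $\alpha_i/\alpha_1^i=\alpha'_i/(\alpha'_1)^i=\alpha''_i/(\alpha''_1)^i$ for all $i$; once $\alpha_1=\alpha'_1=\alpha''_1$, the equality $\alpha_i=\alpha'_i=\alpha''_i$ is immediate, and Definition \ref{def:equitNorm} finishes. You instead use Corollary \ref{cor:alphaOneInv} (correctly noting via Lemma \ref{lem:NotB} that the common value of $\alpha_1,\alpha'_1,\alpha''_1$ is nonzero, which is essential for the cancellation) to deduce $\varphi_i=\varphi'_i=\varphi''_i$, and then invoke the final assertion of Lemma \ref{lem:whatisIso}. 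Both arguments are valid and ultimately rest on the same ratio identities, but your detour through Lemma \ref{lem:whatisIso} imports heavier machinery — that lemma's proof runs through Proposition \ref{prop:extra} and the isomorphism of p-relatives — where the paper's one-line appeal to $\alpha_i/\alpha_1^i$ suffices. As a small bonus, your route makes explicit that the common first Toeplitz number forces $\varphi_i=\varphi'_i=\varphi''_i$, which is exactly the content of Lemma \ref{lem:equitBasic}(i) that the paper derives immediately afterward.
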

\begin{proof}
By Lemma
\ref{lem:alphaiAlpha1}
and Definition
\ref{def:equitNorm}.
\end{proof}

\begin{lemma}
\label{lem:equitBasic}
Assume that $A,B,C$ is nonbipartite and equitable.
Then the following hold:
\begin{enumerate}
\item[\rm (i)] 
$\varphi_i = \varphi'_i = \varphi''_i$ for $1 \leq i \leq d$;
\item[\rm (ii)] 
$a_i = a'_i = a''_i =\alpha_1(\varphi_{d-i+1}-\varphi_{d-i})$ 
for $0 \leq i \leq d$.
\end{enumerate}
\end{lemma}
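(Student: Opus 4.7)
The plan is to get (i) from Corollary \ref{cor:alphaOneInv} and then feed (i) into Proposition \ref{lem:ai} to get (ii); both steps are short once the nonbipartite hypothesis is used to guarantee that $\alpha_1$ is nonzero.

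For (i), the equitable hypothesis gives $\alpha_1 = \alpha'_1 = \alpha''_1$. Since $A,B,C$ is nonbipartite, Lemma \ref{lem:NotB} yields $\alpha_1 \neq 0$. Corollary \ref{cor:alphaOneInv} asserts that for $1 \leq i \leq d$,
\begin{eqnarray*}
\frac{\alpha_1}{\varphi_i} = \frac{\alpha'_1}{\varphi'_i} = \frac{\alpha''_1}{\varphi''_i}.
\end{eqnarray*}
Dividing through by the common nonzero numerator gives $\varphi_i = \varphi'_i = \varphi''_i$ for $1 \leq i \leq d$, which is (i).

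For (ii), I would apply Proposition \ref{lem:ai}. That proposition (first equation, with $\alpha'_0 = \beta'_0 = 1$ and $\beta'_1 = -\alpha'_1$ from Lemma \ref{lem:alpha0}) gives
\begin{eqnarray*}
a_{d-i} = \alpha'_0 \beta'_1 \varphi''_i + \alpha'_1 \beta'_0 \varphi''_{i+1} = \alpha'_1(\varphi''_{i+1} - \varphi''_i)
\end{eqnarray*}
for $0 \leq i \leq d$. Relabeling $j = d-i$ and invoking (i) together with $\alpha'_1 = \alpha_1$ rewrites this as $a_j = \alpha_1(\varphi_{d-j+1} - \varphi_{d-j})$, as required. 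The analogous identities for $a'_j$ and $a''_j$ follow by applying the same argument to the p-relatives $B,C,A$ and $C,A,B$, which are also nonbipartite and equitable (by Lemma \ref{lem:bipRel} and the remark following Lemma \ref{lem:equitBasicBeta}), and then using (i) again to collapse all three $\varphi$-sequences to a common one.

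There is really no serious obstacle here: the content has already been extracted into Corollary \ref{cor:alphaOneInv} (which gives (i) immediately) and Proposition \ref{lem:ai} (which gives (ii) immediately). The only point requiring any care is making sure the nonbipartite hypothesis is invoked at the right moment in (i), so that one may cancel $\alpha_1$ rather than concluding only that a certain ratio is independent of the triple member.
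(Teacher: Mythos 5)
Your proposal is correct and follows essentially the same route as the paper: part (i) from Corollary \ref{cor:alphaOneInv} together with Lemma \ref{lem:NotB}, and part (ii) from Proposition \ref{lem:ai} combined with $\beta'_1=-\alpha'_1$. The only superfluous step is the appeal to p-relatives for $a'_i$ and $a''_i$, since equations (\ref{eq:aipp}) and (\ref{eq:ai}) of Proposition \ref{lem:ai} already give those formulas directly.
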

\begin{proof}
(i) By Corollary
\ref{cor:alphaOneInv}
and Lemma
\ref{lem:NotB}.
\\
\noindent (ii) Use
(\ref{eq:list3})  and
Proposition
\ref{lem:ai}.
\end{proof}

\noindent The following definition is for later use.

\begin{definition}
\label{def:Rhoi}
\rm
Assume that $A,B,C$ is nonbipartite and equitable.
Define
\begin{eqnarray}
\rho_i = \frac{\varphi_{i+1}}{\varphi_{d-i}}
\qquad \qquad (0 \leq i \leq d-1).
\label{eq:mainrec2}
\end{eqnarray}
Note by Lemma
\ref{lem:equitBasic}(i) that
$\rho_i = \rho'_i = \rho''_i$ for $0 \leq i\leq d-1$.
\end{definition}

\begin{lemma}
\label{def:RhoiCom}
Assume that $A,B,C$ is nonbipartite and equitable.
Then $\rho_i \rho_{d-i-1} = 1$ for $0 \leq i \leq d-1$.
\end{lemma}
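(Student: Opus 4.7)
The proof plan is straightforward and relies only on unwinding the definition of $\rho_i$ given in Definition \ref{def:Rhoi}. First I would fix $i$ with $0 \leq i \leq d-1$ and observe that then $d-i-1$ also lies in the range $0 \leq d-i-1 \leq d-1$, so that $\rho_{d-i-1}$ is defined.

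Next I would apply the defining formula \eqref{eq:mainrec2} twice. By definition, $\rho_i = \varphi_{i+1}/\varphi_{d-i}$. Substituting $d-i-1$ in place of $i$ into the same formula yields
\[
\rho_{d-i-1} \;=\; \frac{\varphi_{(d-i-1)+1}}{\varphi_{d-(d-i-1)}} \;=\; \frac{\varphi_{d-i}}{\varphi_{i+1}}.
\]
Multiplying the two expressions gives
\[
\rho_i \rho_{d-i-1} \;=\; \frac{\varphi_{i+1}}{\varphi_{d-i}} \cdot \frac{\varphi_{d-i}}{\varphi_{i+1}} \;=\; 1,
\]
which is exactly the claim.

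The only point that needs any care at all is making sure the cancellation is legal, i.e.\ that $\varphi_{i+1}$ and $\varphi_{d-i}$ are both nonzero. But since $1 \leq i+1 \leq d$ and $1 \leq d-i \leq d$ under the hypothesis $0 \leq i \leq d-1$, both quantities are nonzero scalars in $\mathbb{F}$ by Definition \ref{def:pa} applied to the LR pair $A,B$. So there is no obstacle, and the whole statement is a one-line computation from the definition. No use of the equitable or nonbipartite hypotheses is actually needed beyond what is already built into Definition \ref{def:Rhoi}.
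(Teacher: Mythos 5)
Your proof is correct and is exactly the paper's argument: the paper's proof simply says ``By Definition \ref{def:Rhoi},'' and your substitution $i \mapsto d-i-1$ in the formula $\rho_i = \varphi_{i+1}/\varphi_{d-i}$ followed by cancellation is precisely that unwinding. Your remark that the nonbipartite and equitable hypotheses enter only through the definition of $\rho_i$ (and the nonvanishing of the $\varphi_j$) is also accurate.
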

\begin{proof} By Definition
\ref{def:Rhoi}.
\end{proof}

\begin{lemma}
\label{lem:NBipEquitAdj}
Assume that $A,B,C$ is nonbipartite.
 Let $\alpha,\beta,\gamma$
denote nonzero scalars in $\mathbb F$. Then the following
are equivalent:
\begin{enumerate}
\item[\rm (i)] the LR triple $\alpha A,\beta B,\gamma C$ is equitable;
\item[\rm (ii)] $\alpha/\alpha'_1 = \beta/\alpha''_1 = \gamma/\alpha_1 $.
\end{enumerate}
\end{lemma}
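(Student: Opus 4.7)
The plan is to reduce to the characterization of equitability via first Toeplitz numbers for nonbipartite LR triples, and then use the transformation rule for Toeplitz data under rescaling.

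First I would note that the LR triple $\alpha A,\beta B,\gamma C$ is itself nonbipartite, by Lemma \ref{lem:BPabc} applied to the assumption that $A,B,C$ is nonbipartite. Consequently I can invoke Lemma \ref{lem:equitNBmin} for this rescaled triple: it is equitable if and only if its first Toeplitz number and its two primed versions all coincide.

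Next I would compute these new first Toeplitz numbers explicitly using Lemma \ref{lem:ToeplitzAdjust}, which describes the Toeplitz data of $\alpha A,\beta B,\gamma C$ as the sequence
\begin{eqnarray*}
(\lbrace \gamma^{-i}\alpha_i\rbrace_{i=0}^d,\lbrace \gamma^{-i}\beta_i\rbrace_{i=0}^d;\lbrace \alpha^{-i}\alpha'_i\rbrace_{i=0}^d,\lbrace \alpha^{-i}\beta'_i\rbrace_{i=0}^d;\lbrace \beta^{-i}\alpha''_i\rbrace_{i=0}^d,\lbrace \beta^{-i}\beta''_i\rbrace_{i=0}^d).
\end{eqnarray*}
In particular the three relevant entries in degree $i=1$ are $\gamma^{-1}\alpha_1$, $\alpha^{-1}\alpha'_1$, and $\beta^{-1}\alpha''_1$. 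Thus by Lemma \ref{lem:equitNBmin}, condition (i) is equivalent to
\begin{eqnarray*}
\gamma^{-1}\alpha_1=\alpha^{-1}\alpha'_1=\beta^{-1}\alpha''_1.
\end{eqnarray*}

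Finally, since $A,B,C$ is nonbipartite, Lemma \ref{lem:NotB} guarantees that $\alpha_1,\alpha'_1,\alpha''_1$ are all nonzero, so the displayed triple of equations can be inverted entrywise to give $\alpha/\alpha'_1=\beta/\alpha''_1=\gamma/\alpha_1$, which is condition (ii). No step of the argument is an obstacle; the only thing to check carefully is that one is allowed to invert the common value, which is exactly where the nonbipartite hypothesis (via Lemma \ref{lem:NotB}) is used.
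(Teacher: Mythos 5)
Your proof is correct and takes essentially the same route as the paper, which cites exactly Lemmas \ref{lem:ToeplitzAdjust} and \ref{lem:equitNBmin}; you have merely filled in the details the paper leaves implicit (that the rescaled triple is nonbipartite via Lemma \ref{lem:BPabc}, and that the inversion is licensed by Lemma \ref{lem:NotB}).
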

\begin{proof}
By Lemmas
\ref{lem:ToeplitzAdjust},
\ref{lem:equitNBmin}.
\end{proof}

\begin{lemma} 
\label{lem:EqAdjust}
Assume that $A,B,C$ is nonbipartite.
Then there
exists an equitable LR triple on $V$ that is
associate
to $A,B,C$.
\end{lemma}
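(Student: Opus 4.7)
The plan is to apply the characterization of equitability under associate (scalar) adjustment already provided by Lemma \ref{lem:NBipEquitAdj}. First I would note that, since $A,B,C$ is assumed nonbipartite, Lemma \ref{lem:NotB} guarantees that each of $\alpha_1, \alpha'_1, \alpha''_1$ lies in $\mathbb{F}^\times$. This is the only input needed from the nonbipartite hypothesis.

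Next I would choose the adjusting scalars explicitly: set $\alpha = \alpha'_1$, $\beta = \alpha''_1$, $\gamma = \alpha_1$. Each of these scalars is nonzero by the preceding observation, so $\alpha A, \beta B, \gamma C$ is a legitimate associate of $A,B,C$ in the sense of Definition \ref{def:ASSOC}. With these choices the three ratios $\alpha/\alpha'_1$, $\beta/\alpha''_1$, $\gamma/\alpha_1$ all equal $1$, so condition (ii) of Lemma \ref{lem:NBipEquitAdj} is trivially met, and hence condition (i) of that lemma gives that $\alpha A, \beta B, \gamma C$ is equitable. This is the desired equitable associate.

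There is essentially no obstacle here: the statement is a direct corollary of Lemmas \ref{lem:NotB} and \ref{lem:NBipEquitAdj}, the first providing nonvanishing of the needed first Toeplitz numbers and the second providing the explicit criterion for an associate to be equitable. If one prefers a one-parameter family of solutions rather than a single one, one may replace the choice above by $\alpha = \lambda \alpha'_1$, $\beta = \lambda \alpha''_1$, $\gamma = \lambda \alpha_1$ for any nonzero $\lambda \in \mathbb{F}$, but a single explicit witness already suffices for the existence claim.
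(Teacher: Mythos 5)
Your proof is correct and follows the same route as the paper, which simply cites Definition \ref{def:ASSOC} and Lemma \ref{lem:NBipEquitAdj}; you have merely made the choice of scalars $\alpha=\alpha'_1$, $\beta=\alpha''_1$, $\gamma=\alpha_1$ explicit (the same choice the paper uses elsewhere, e.g.\ in the proof of Theorem \ref{cor:everythingGen}) and correctly justified their nonvanishing via Lemma \ref{lem:NotB}.
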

\begin{proof}
By
Definition
\ref{def:ASSOC}
and
Lemma
\ref{lem:NBipEquitAdj}.
\end{proof}

\noindent We turn our attention to bipartite LR triples.

\begin{lemma}
\label{lem:BPEquitMin}
Assume that $A,B,C$ is bipartite and nontrivial. 
Then $A,B,C$ is equitable if and only if 
$\alpha_2 = \alpha'_2 = \alpha''_2$.
\end{lemma}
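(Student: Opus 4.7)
The plan is short because most of the work has been packaged into earlier lemmas. The forward direction is immediate: if $A,B,C$ is equitable then $\alpha_i=\alpha'_i=\alpha''_i$ for all $0\le i\le d$ by Definition \ref{def:equitNorm}, and specializing to $i=2$ yields $\alpha_2=\alpha'_2=\alpha''_2$.

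For the converse, I would exploit the bipartite structure to reduce the equation $\alpha_i=\alpha'_i=\alpha''_i$ to the cases of even and odd $i$ separately. Since $A,B,C$ is bipartite and nontrivial, Lemma \ref{lem:case} tells us that for odd $i$ each of $\alpha_i,\alpha'_i,\alpha''_i$ is zero, so the desired equality holds trivially on odd indices. The case $i=0$ is also free because $\alpha_0=\alpha'_0=\alpha''_0=1$ by Lemma \ref{lem:alpha0}.

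The heart of the argument is the even case $i=2j$ with $1\le j\le d/2$. Here I would invoke Lemma \ref{lem:BipAlphaInvar}, which asserts
\begin{eqnarray*}
\frac{\alpha_{2j}}{\alpha_2^{\,j}}=\frac{\alpha'_{2j}}{(\alpha'_2)^{j}}=\frac{\alpha''_{2j}}{(\alpha''_2)^{j}}.
\end{eqnarray*}
Under the hypothesis $\alpha_2=\alpha'_2=\alpha''_2$ (which is nonzero by Lemma \ref{lem:BiPbasic}(ii), so division is legitimate), the three denominators coincide, and therefore the numerators coincide: $\alpha_{2j}=\alpha'_{2j}=\alpha''_{2j}$. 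Combining the three cases gives $\alpha_i=\alpha'_i=\alpha''_i$ for all $0\le i\le d$, which by Definition \ref{def:equitNorm} is exactly the equitable condition.

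There is no real obstacle; the only thing to watch is making sure Lemma \ref{lem:BipAlphaInvar} is applicable, which requires $A,B,C$ to be bipartite and nontrivial, and that $\alpha_2,\alpha'_2,\alpha''_2$ are nonzero so one can divide—both assurances are provided by the standing hypotheses together with Lemma \ref{lem:BiPbasic}.
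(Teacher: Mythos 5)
Your proof is correct and follows the same route as the paper, which cites exactly Lemma \ref{lem:case}, Lemma \ref{lem:BipAlphaInvar}, and Definition \ref{def:equitNorm}; your write-up simply spells out the details. No issues.
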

\begin{proof}
By Lemmas
\ref{lem:case},
\ref{lem:BipAlphaInvar}
and
Definition
\ref{def:equitNorm}.
\end{proof}

\begin{lemma}
\label{lem:basic2}
Assume that $A,B,C$ is bipartite, nontrivial, and equitable.
Then 
$\varphi_{i-1}\varphi_i =
\varphi'_{i-1} \varphi'_i =
\varphi''_{i-1} \varphi''_i$ for $2 \leq i \leq d$.
\end{lemma}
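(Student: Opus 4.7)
The plan is to combine Lemma~\ref{lem:Balpha2Inv} with Lemma~\ref{lem:BPEquitMin} in a single step. Lemma~\ref{lem:Balpha2Inv} applies because $A,B,C$ is bipartite and nontrivial, so for $2 \leq i \leq d$ we have
\begin{equation*}
\frac{\alpha_2}{\varphi_{i-1}\varphi_i} = \frac{\alpha'_2}{\varphi'_{i-1}\varphi'_i} = \frac{\alpha''_2}{\varphi''_{i-1}\varphi''_i}.
\end{equation*}
Since $A,B,C$ is also equitable, Lemma~\ref{lem:BPEquitMin} yields $\alpha_2 = \alpha'_2 = \alpha''_2$, and by Lemma~\ref{lem:BiPbasic}(ii) this common value is nonzero. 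Cancelling it from all three denominators in the display above gives the desired equality $\varphi_{i-1}\varphi_i = \varphi'_{i-1}\varphi'_i = \varphi''_{i-1}\varphi''_i$ for $2 \leq i \leq d$.

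There is no real obstacle here; the lemma is essentially a two-line corollary packaging together the two immediately preceding lemmas. The only thing to verify carefully is that the hypotheses of Lemma~\ref{lem:Balpha2Inv} and Lemma~\ref{lem:BPEquitMin} are exactly the hypotheses assumed in the statement (bipartite and nontrivial), and that the common value of $\alpha_2, \alpha'_2, \alpha''_2$ may indeed be divided out.
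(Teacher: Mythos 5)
Your proof is correct and follows the same route as the paper, which simply cites Lemma~\ref{lem:BiPbasic}(ii) and Lemma~\ref{lem:Balpha2Inv}; the equality $\alpha_2=\alpha'_2=\alpha''_2$ that you extract from Lemma~\ref{lem:BPEquitMin} is immediate from Definition~\ref{def:equitNorm} in any case. Nothing is missing.
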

\begin{proof} 
By Lemma \ref{lem:BiPbasic}(ii)
 and Lemma
\ref{lem:Balpha2Inv}.
\end{proof}

\begin{lemma}
\label{lem:basic2A}
Assume that $A,B,C$ is bipartite  and equitable.
Then the following {\rm (i), (ii)}  hold for $0 \leq i \leq d$.
\begin{enumerate}
\item[\rm (i)] For $i$ even,
\begin{eqnarray*}
&&
\varphi_1 \varphi_2 \cdots \varphi_i = 
\varphi'_1 \varphi'_2 \cdots \varphi'_i = 
\varphi''_1 \varphi''_2 \cdots \varphi''_i,
\\
&&
\varphi_d \varphi_{d-1} \cdots \varphi_{d-i+1} = 
\varphi'_d \varphi'_{d-1} \cdots \varphi'_{d-i+1} = 
\varphi''_d \varphi''_{d-1} \cdots \varphi''_{d-i+1}.
\end{eqnarray*}
\item[\rm (ii)] For $i$  odd,
\begin{eqnarray*}
&&
\varphi_2 \varphi_3 \cdots \varphi_i = 
\varphi'_2 \varphi'_3 \cdots \varphi'_i = 
\varphi''_2 \varphi''_3 \cdots \varphi''_i,
\\
&&
\varphi_{d-1} \varphi_{d-2} \cdots \varphi_{d-i+1} = 
\varphi'_{d-1} \varphi'_{d-2} \cdots \varphi'_{d-i+1} = 
\varphi''_{d-1} \varphi''_{d-2} \cdots \varphi''_{d-i+1}.
\end{eqnarray*}
\end{enumerate}
\end{lemma}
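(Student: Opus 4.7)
The plan is to reduce each of the four equalities to Lemma \ref{lem:basic2}, which asserts $\varphi_{j-1}\varphi_j = \varphi'_{j-1}\varphi'_j = \varphi''_{j-1}\varphi''_j$ for $2 \leq j \leq d$ in the nontrivial bipartite equitable setting, by grouping the factors in each product into adjacent pairs. First dispose of the trivial case: if $A,B,C$ is trivial then $d=0$, the only admissible index is $i=0$, every product in (i) is empty, and (ii) is vacuous. Assume henceforth that $A,B,C$ is nontrivial; by Lemma \ref{lem:case} the diameter $d$ is even with $d \geq 2$.

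For (i) with $i$ even, I would rewrite $\varphi_1\varphi_2\cdots\varphi_i = \prod_{k=1}^{i/2}(\varphi_{2k-1}\varphi_{2k})$ and observe that each bracketed pair is unchanged under the prime and double-prime by Lemma \ref{lem:basic2} applied with $j=2k$ (in range since $2 \leq 2k \leq i \leq d$). Because $d$ is even, the reverse product pairs analogously: $\varphi_d\varphi_{d-1}\cdots\varphi_{d-i+1} = \prod_{k=0}^{i/2-1}(\varphi_{d-2k-1}\varphi_{d-2k})$, and each factor is handled by Lemma \ref{lem:basic2} with $j=d-2k$, valid since $0 \leq k \leq i/2-1 \leq d/2-1$ forces $2 \leq j \leq d$.

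For (ii) with $i$ odd, both products have the even number $i-1$ of factors; group them as $\varphi_2\varphi_3\cdots\varphi_i = \prod_{k=1}^{(i-1)/2}(\varphi_{2k}\varphi_{2k+1})$ and $\varphi_{d-1}\varphi_{d-2}\cdots\varphi_{d-i+1} = \prod_{k=0}^{(i-3)/2}(\varphi_{d-2k-2}\varphi_{d-2k-1})$, then invoke Lemma \ref{lem:basic2} termwise with $j=2k+1$ and $j=d-2k-1$ respectively. The required bounds follow from $i \leq d-1$ (since $i$ is odd and $d$ is even) together with $k \geq 1$ in the first sum and $k \geq 0$ in the second. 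When $i=1$ both products are empty and the statement is trivial.

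There is no real obstacle here: the entire proof is a piece of bookkeeping that ensures every index $j$ at which Lemma \ref{lem:basic2} is invoked lies in $[2,d]$, which follows immediately from the parity conventions above. I would not expect to need any structural input beyond Lemma \ref{lem:basic2}, nor to split further into subcases.
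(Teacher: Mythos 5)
Your proof is correct and takes the same route as the paper, which simply cites Lemma \ref{lem:basic2} together with the evenness of $d$; you have just spelled out the pairing of adjacent factors and the index bookkeeping in full. The separate handling of the trivial case is a nice touch, since Lemma \ref{lem:basic2} formally requires nontriviality, though of course everything there is vacuous.
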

\begin{proof} By Lemma
\ref{lem:basic2} and since $d$ is even.
\end{proof}

\begin{lemma}
\label{lem:basic2new}
Assume that $A,B,C$ is bipartite  and equitable.
Then for
 $0 \leq i \leq d-1$,
\begin{eqnarray*}
\frac{\varphi'_{i+1}}{\varphi''_{d-i}}
=
\frac{\varphi''_{i+1}}{\varphi'_{d-i}},
\qquad \qquad 
\frac{\varphi''_{i+1}}{\varphi_{d-i}}
=
\frac{\varphi_{i+1}}{\varphi''_{d-i}},
\qquad \qquad 
\frac{\varphi_{i+1}}{\varphi'_{d-i}}
=
\frac{\varphi'_{i+1}}{\varphi_{d-i}}.
\end{eqnarray*}
\end{lemma}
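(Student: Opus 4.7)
The plan is to reduce all three asserted equalities to the single fact that $\varphi_{i+1}\varphi_{d-i} = \varphi'_{i+1}\varphi'_{d-i} = \varphi''_{i+1}\varphi''_{d-i}$, which will follow from Lemma \ref{lem:Bippipd}(i) once we verify that $i+1$ and $d-i$ have opposite parity and that $\alpha_2 = \alpha'_2 = \alpha''_2$.

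First I would dispose of the trivial case: if $A,B,C$ is trivial then $d=0$ and the index range $0 \le i \le d-1$ is empty, so assume $A,B,C$ is nontrivial. Next, since $A,B,C$ is bipartite, $d=2m$ is even by Lemma \ref{lem:case}, so $(d-i)-(i+1)=d-2i-1$ is odd, meaning $i+1$ and $d-i$ have opposite parity for every $0 \le i \le d-1$. Because $A,B,C$ is bipartite, nontrivial, and equitable, Lemma \ref{lem:BPEquitMin} gives $\alpha_2=\alpha'_2=\alpha''_2$, and Lemma \ref{lem:BiPbasic}(ii) guarantees that this common value is nonzero.

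Now I would apply Lemma \ref{lem:Bippipd}(i) with the pair of indices $(i+1,\,d-i)$. That lemma yields
\begin{eqnarray*}
\frac{\alpha_2}{\varphi_{i+1}\varphi_{d-i}}
= \frac{\alpha'_2}{\varphi'_{i+1}\varphi'_{d-i}}
= \frac{\alpha''_2}{\varphi''_{i+1}\varphi''_{d-i}}.
\end{eqnarray*}
Since $\alpha_2=\alpha'_2=\alpha''_2$ is nonzero, I may cancel to obtain
\begin{eqnarray*}
\varphi_{i+1}\varphi_{d-i} = \varphi'_{i+1}\varphi'_{d-i} = \varphi''_{i+1}\varphi''_{d-i}.
\end{eqnarray*}
Rearranging the equality $\varphi'_{i+1}\varphi'_{d-i}=\varphi''_{i+1}\varphi''_{d-i}$ gives the first displayed identity of the lemma, and the other two follow by rearranging $\varphi''_{i+1}\varphi''_{d-i}=\varphi_{i+1}\varphi_{d-i}$ and $\varphi_{i+1}\varphi_{d-i}=\varphi'_{i+1}\varphi'_{d-i}$, respectively (all $\varphi$-values appearing as denominators are nonzero since the parameter array has nonzero entries by Definition \ref{def:pa}).

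There is essentially no obstacle here once one recognizes the parity observation: the whole proof is a cancellation argument on top of Lemma \ref{lem:Bippipd}(i) and the equitable criterion of Lemma \ref{lem:BPEquitMin}. The only mild point to remain alert to is ensuring that Lemma \ref{lem:Bippipd}(i) is applied in the opposite-parity regime so that $\alpha_2$ (and not some $\varphi$-ratio involving same-parity indices) controls the products.
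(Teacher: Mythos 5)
Your proof is correct and follows essentially the same route as the paper: the paper's own proof also disposes of the trivial case, then applies Lemma \ref{lem:Bippipd}(i) to a pair of indices of opposite parity (the parity holding because $d$ is even) and uses $\alpha_2=\alpha'_2=\alpha''_2$ to equate the products $\varphi_{i+1}\varphi_{d-i}$, $\varphi'_{i+1}\varphi'_{d-i}$, $\varphi''_{i+1}\varphi''_{d-i}$.
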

\begin{proof}
Assume that $A,B,C$ is nontrivial; otherwise there
is nothing to prove. Now use Lemma
\ref{lem:Bippipd}(i) with $j=d-i+1$. The integers
$i,j$ have opposite parity since $d$ is even.
\end{proof}

\begin{definition}
\label{def:RHOD}
Assume that $A,B,C$ is bipartite and equitable.
Then for $0 \leq i \leq d-1$ define
\begin{eqnarray*}
&&
\rho_i =
\frac{\varphi'_{i+1}}{\varphi''_{d-i}} 
= 
\frac{\varphi''_{i+1}}{\varphi'_{d-i}},
\\
&&
\rho'_i=
\frac{\varphi''_{i+1}}{\varphi_{d-i}} 
= 
\frac{\varphi_{i+1}}{\varphi''_{d-i}},
\\
&&
\rho''_i = 
\frac{\varphi_{i+1}}{\varphi'_{d-i}} 
= 
\frac{\varphi'_{i+1}}{\varphi_{d-i}}.
\end{eqnarray*}
We emphasize that for $d\geq 1$,
\begin{eqnarray}
\rho_0 = \frac{\varphi'_1}{\varphi''_d} =
\frac{\varphi''_1}{\varphi'_d},
\qquad \qquad
\rho'_0 = \frac{\varphi''_1}{\varphi_d} =
\frac{\varphi_1}{\varphi''_d},
\qquad \qquad
\rho''_0 = \frac{\varphi_1}{\varphi'_d} =
\frac{\varphi'_1}{\varphi_d}.
\label{def:rho0def}
\end{eqnarray}
\end{definition}

\begin{lemma}
\label{def:BipRhoiCom}
Assume that $A,B,C$ is bipartite and equitable.
Then for $0 \leq i \leq d-1$,
\begin{eqnarray}
\rho_i \rho_{d-i-1}=1, \qquad 
\rho'_i \rho'_{d-i-1}=1, \qquad 
\rho''_i \rho''_{d-i-1}=1.
\label{eq:ThreeRho}
\end{eqnarray}
\end{lemma}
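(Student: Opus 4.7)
The plan is to prove each of the three identities in \eqref{eq:ThreeRho} by a direct computation from Definition \ref{def:RHOD}, exploiting the fact that each $\rho_i$ (resp.\ $\rho'_i$, $\rho''_i$) admits two equivalent expressions. The whole point of Lemma \ref{lem:basic2new} was to furnish these dual expressions, and the key observation is that the two expressions are precisely what is needed to make $\rho_i \rho_{d-i-1}$ telescope to $1$.

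I will verify the first identity $\rho_i \rho_{d-i-1} = 1$ and remark that the other two follow by the same argument applied to the p-relatives $B,C,A$ and $C,A,B$. Using the first of the two formulas for $\rho_i$ in Definition \ref{def:RHOD}, I have
\[
\rho_i = \frac{\varphi'_{i+1}}{\varphi''_{d-i}}.
\]
Replacing $i$ by $d-i-1$ in the second of the two formulas for $\rho_{d-i-1}$, I get
\[
\rho_{d-i-1} = \frac{\varphi''_{(d-i-1)+1}}{\varphi'_{d-(d-i-1)}} = \frac{\varphi''_{d-i}}{\varphi'_{i+1}}.
\]
Multiplying these two expressions gives
\[
\rho_i \rho_{d-i-1} = \frac{\varphi'_{i+1}}{\varphi''_{d-i}} \cdot \frac{\varphi''_{d-i}}{\varphi'_{i+1}} = 1,
\]
as desired. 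The remaining identities $\rho'_i \rho'_{d-i-1} = 1$ and $\rho''_i \rho''_{d-i-1} = 1$ are obtained in exactly the same way by cycling $\varphi,\varphi',\varphi''$ according to Definition \ref{def:primeConv} (or equivalently, by applying the first identity to the p-relatives of $A,B,C$, each of which is bipartite and equitable by Lemma \ref{lem:bipRel} and the preceding discussion). There is no real obstacle here: the work has already been done in Lemma \ref{lem:basic2new} and Definition \ref{def:RHOD}, and the present lemma is essentially a bookkeeping consequence of the two alternative forms of each $\rho$-parameter.
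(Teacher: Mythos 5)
Your proof is correct and is exactly the computation the paper intends by its one-line justification ``By Definition \ref{def:RHOD}'': pairing the first expression for $\rho_i$ with the second expression for $\rho_{d-i-1}$ makes the product telescope to $1$, and the primed and double-primed cases follow identically. No issues.
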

\begin{proof} By Definition
\ref{def:RHOD}.
\end{proof}

\begin{lemma}
\label{lem:RhoRelBip}
Assume that $A,B,C$ is bipartite, nontrivial, and equitable.
Then the following {\rm (i)--(iii)} hold:
\begin{enumerate}
\item[\rm (i)]
for $1 \leq i \leq d$,
\begin{eqnarray*}
&&
\frac{\varphi_i}{\rho_0} = 
\frac{\varphi'_i}{\rho'_0} = 
\frac{\varphi''_i}{\rho''_0} 
 \qquad \qquad \qquad {\mbox{\rm if $i$ is even}},
\\
&&
\varphi_i\rho_0 = 
\varphi'_i \rho'_0 = 
\varphi''_i\rho''_0 
 \qquad \qquad {\mbox{\rm if $i$ is odd}};
\end{eqnarray*} 
\item[\rm (ii)]
for $0 \leq i \leq d$,
\begin{eqnarray*}
&&
\rho_0 \rho_1 \cdots \rho_{i-1} = 
\rho'_0 \rho'_1 \cdots \rho'_{i-1} = 
\rho''_0 \rho''_1 \cdots \rho''_{i-1}
 \qquad \qquad {\mbox{\rm if $i$ is even}},
\\
&&
\rho_1 \rho_2 \cdots \rho_{i-1} = 
\rho'_1 \rho'_2 \cdots \rho'_{i-1} = 
\rho''_1 \rho''_2 \cdots \rho''_{i-1}
 \qquad \qquad {\mbox{\rm if $i$ is odd}};
\end{eqnarray*}
\item[\rm (iii)] for $0 \leq i \leq d-1$,
\begin{eqnarray*}
&&
\frac{\rho_i}{\rho_0} = 
\frac{\rho'_i}{\rho'_0} = 
\frac{\rho''_i}{\rho''_0}
 \qquad \qquad \qquad {\mbox{\rm if $i$ is even}},
\\
&&
\rho_i\rho_0 = 
\rho'_i\rho'_0 = 
\rho''_i\rho''_0
 \qquad \qquad {\mbox{\rm if $i$ is odd}}.
\end{eqnarray*}
\end{enumerate}
\end{lemma}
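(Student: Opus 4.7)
The plan is to reduce all three parts to the parity-based relations already recorded for the $\varphi_i$'s, namely Lemma~\ref{lem:Bippipd} (which, together with the equitable hypothesis $\alpha_2=\alpha'_2=\alpha''_2$ from Lemma~\ref{lem:BPEquitMin}, gives $\varphi_i\varphi_j=\varphi'_i\varphi'_j=\varphi''_i\varphi''_j$ when $i,j$ have opposite parity, and $\varphi_i/\varphi_j=\varphi'_i/\varphi'_j=\varphi''_i/\varphi''_j$ when $i,j$ have the same parity) and Lemma~\ref{lem:basic2A} (on equality of products of consecutive $\varphi$'s). Throughout we use that $d$ is even (Lemma~\ref{lem:case}), and we exploit the two equivalent expressions for each $\rho_0,\rho'_0,\rho''_0$ given in~(\ref{def:rho0def}) as well as the two expressions $\rho_j=\varphi'_{j+1}/\varphi''_{d-j}=\varphi''_{j+1}/\varphi'_{d-j}$ (and the primed analogues) from Definition~\ref{def:RHOD}.

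For part~(i), split on the parity of $i$. If $i$ is even, write
$\varphi_i/\rho_0=\varphi_i\varphi''_d/\varphi'_1$,
$\varphi'_i/\rho'_0=\varphi'_i\varphi''_d/\varphi_1$,
$\varphi''_i/\rho''_0=\varphi''_i\varphi'_d/\varphi_1$.
The equality of the first two reduces to $\varphi_i\varphi_1=\varphi'_i\varphi'_1$ (opposite parity: $i$ even, $1$ odd), and the equality of the last two to $\varphi''_i\varphi'_d=\varphi'_i\varphi''_d$, equivalently $\varphi''_d/\varphi''_i=\varphi'_d/\varphi'_i$ (same parity: $d,i$ both even); both follow from Lemma~\ref{lem:Bippipd}. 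The case of odd $i$ is dual: use $\rho_0=\varphi'_1/\varphi''_d$, $\rho'_0=\varphi_1/\varphi''_d$, $\rho''_0=\varphi_1/\varphi'_d$, and the same lemma with the appropriate parities gives $\varphi_i\varphi'_1=\varphi'_i\varphi_1$ (same parity: $i,1$ odd) and $\varphi''_i\varphi''_d=\varphi'_i\varphi'_d$ (opposite parity: $i$ odd, $d$ even).

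For part~(ii), for even $i$ expand
\[
\rho_0\rho_1\cdots\rho_{i-1}=\prod_{j=0}^{i-1}\frac{\varphi'_{j+1}}{\varphi''_{d-j}}
=\frac{\varphi'_1\varphi'_2\cdots\varphi'_i}{\varphi''_d\varphi''_{d-1}\cdots\varphi''_{d-i+1}},
\]
and similarly express the primed and double-primed products using $\rho'_j=\varphi''_{j+1}/\varphi_{d-j}$ and $\rho''_j=\varphi_{j+1}/\varphi'_{d-j}$. Lemma~\ref{lem:basic2A}(i) equates all three numerators and all three denominators. For odd $i$, the same substitutions give $\rho_1\cdots\rho_{i-1}=(\varphi'_2\cdots\varphi'_i)/(\varphi''_{d-1}\cdots\varphi''_{d-i+1})$ together with its analogues, and now Lemma~\ref{lem:basic2A}(ii) (applicable since $i$ is odd) delivers the equalities. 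For part~(iii), write $\rho_i/\rho_0$, $\rho'_i/\rho'_0$, $\rho''_i/\rho''_0$ (when $i$ is even) as $\varphi'_{i+1}\varphi''_d/(\varphi''_{d-i}\varphi'_1)$ and its cyclic permutations, and factor as $(\varphi'_{i+1}/\varphi'_1)\cdot(\varphi''_d/\varphi''_{d-i})$; Lemma~\ref{lem:Bippipd}(ii) equalizes each factor across the three triples ($i+1,1$ both odd and $d-i,d$ both even). For odd $i$, $\rho_i\rho_0$, $\rho'_i\rho'_0$, $\rho''_i\rho''_0$ become $\varphi''_{i+1}\varphi''_1/(\varphi'_{d-i}\varphi'_d)$ and analogues, and now the two pairs $(i+1,1)$ and $(d-i,d)$ each have opposite parity, so Lemma~\ref{lem:Bippipd}(i) equates numerator and denominator separately.

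No serious obstacle is expected; the entire argument is a careful parity bookkeeping that selects, at each occurrence of $\rho_0,\rho'_0,\rho''_0,\rho_i,\rho'_i,\rho''_i$, whichever of the two equivalent expressions from Definition~\ref{def:RHOD} produces a ratio whose numerator and denominator match the hypotheses of Lemma~\ref{lem:Bippipd} or Lemma~\ref{lem:basic2A}. The only place that requires a small warning is the trivial case: in part~(ii) with $i=0$ or $i=1$ the product is empty, equal to $1$, so these boundary cases are handled vacuously and one may assume $i\geq 2$ throughout.
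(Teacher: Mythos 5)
Your proposal is correct and follows essentially the same route as the paper, whose proof is simply ``Use Lemma \ref{lem:Bippipd} and Definition \ref{def:RHOD}''; your parity bookkeeping fills in exactly the intended details, and your use of Lemma \ref{lem:basic2A} for part (ii) is a cosmetic variant of telescoping Lemma \ref{lem:Bippipd}.
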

\begin{proof} Use Lemma
\ref{lem:Bippipd}
and Definition
\ref{def:RHOD}.
\end{proof}

\begin{lemma}
\label{lem:A2B2C2Equit}
Assume that $A,B,C$ is bipartite and equitable.
Then:
\begin{enumerate}
\item[\rm (i)] 
the action of $A^2,B^2,C^2$ on $V_{\rm out}$ is
equitable;
\item[\rm (ii)] for $A,B,C$ nontrivial 
the action of $A^2,B^2,C^2$ on $V_{\rm in}$ is equitable.
\end{enumerate}
\end{lemma}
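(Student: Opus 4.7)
The plan is to deduce both statements directly from the structural information already recorded for the LR triples induced on $V_{\rm out}$ and $V_{\rm in}$, combined with the definition of the equitable property.

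First, for part (i), I would invoke Lemma~\ref{lem:A2B2C2Out}, which asserts that $A^2,B^2,C^2$ act on $V_{\rm out}$ as an LR triple of diameter $m=d/2$, and which explicitly identifies its Toeplitz data as
\begin{eqnarray*}
(\lbrace \alpha_{2j}\rbrace_{j=0}^m,
\lbrace \beta_{2j}\rbrace_{j=0}^m;
\lbrace \alpha'_{2j}\rbrace_{j=0}^m,
\lbrace \beta'_{2j}\rbrace_{j=0}^m;
\lbrace \alpha''_{2j}\rbrace_{j=0}^m,
\lbrace \beta''_{2j}\rbrace_{j=0}^m).
\end{eqnarray*}
By the hypothesis that $A,B,C$ is equitable and by Definition~\ref{def:equitNorm}, we have $\alpha_i=\alpha'_i=\alpha''_i$ for $0\leq i\leq d$; specializing to $i=2j$ for $0\leq j\leq m$ gives $\alpha_{2j}=\alpha'_{2j}=\alpha''_{2j}$. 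Applying Definition~\ref{def:equitNorm} to the induced LR triple on $V_{\rm out}$ then yields equitability.

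For part (ii), the argument is completely parallel, using Lemma~\ref{lem:A2B2C2In}, whose hypothesis requires $A,B,C$ nontrivial (matching the hypothesis of the present lemma). That lemma exhibits the Toeplitz data on $V_{\rm in}$ as the even-indexed subsequences running through $j=0,\ldots,m-1$, and the equitability of $A,B,C$ again forces the three $\alpha$-subsequences to coincide termwise, which by Definition~\ref{def:equitNorm} is the equitable condition for the induced triple on $V_{\rm in}$.

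Since both assertions reduce to an inspection of the Toeplitz data supplied by Lemmas~\ref{lem:A2B2C2Out} and~\ref{lem:A2B2C2In}, there is no genuine obstacle; the only thing to be careful about is matching the index ranges ($0\leq j\leq m$ for the outer part, $0\leq j\leq m-1$ for the inner part) with the range $0\leq i\leq d$ on which equitability of $A,B,C$ provides the identities $\alpha_i=\alpha'_i=\alpha''_i$.
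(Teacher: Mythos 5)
Your proposal is correct and follows exactly the paper's own argument: part (i) is Lemma~\ref{lem:A2B2C2Out}(iv) combined with Definition~\ref{def:equitNorm}, and part (ii) is Lemma~\ref{lem:A2B2C2In}(iv) combined with Definition~\ref{def:equitNorm}. Your attention to the index ranges and to the nontriviality hypothesis for the inner part is appropriate but introduces nothing beyond what the cited lemmas already supply.
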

\begin{proof} (i) By Lemma
\ref{lem:A2B2C2Out}(iv) and Definition
\ref{def:equitNorm}.
\\
\noindent (ii) By Lemma
\ref{lem:A2B2C2In}(iv) and Definition
\ref{def:equitNorm}.
\end{proof}

\begin{lemma}
\label{lem:BipEquitAdj}
Referring to Lemma
\ref{lem:InOut}, assume that $A,B,C$ is nontrivial,
and let the nonzero scalars 
{\rm (\ref{eq:AlphaInOut})}
be given.
Then the LR triple
{\rm (\ref{eq:NewLRT})} 
is equitable
 if and only if
\begin{eqnarray} 
 \alpha_{\rm out}\alpha_{\rm in}/\alpha'_2= 
 \beta_{\rm out}\beta_{\rm in}/\alpha''_2= 
 \gamma_{\rm out}\gamma_{\rm in} /\alpha_2.
\label{eq:neededForEquit}
\end{eqnarray}
\end{lemma}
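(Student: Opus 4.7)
The plan is to reduce the claim to the equitable criterion of Lemma \ref{lem:BPEquitMin} applied to the new triple, and then read off the resulting scalar identity from the Toeplitz-data formula in Lemma \ref{lem:PAnewLRT}(iii).

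First I would invoke Lemma \ref{lem:InOut} to record that the triple in (\ref{eq:NewLRT}) is a bipartite LR triple on $V$, and observe that it is nontrivial because $A,B,C$ is nontrivial. By Lemma \ref{lem:BPEquitMin}, the new triple is equitable if and only if its three second Toeplitz parameters coincide; denoting these by $\alpha_2^\flat, (\alpha')_2^\flat, (\alpha'')_2^\flat$, the equitable condition reads
\begin{eqnarray*}
\alpha_2^\flat = (\alpha')_2^\flat = (\alpha'')_2^\flat.
\end{eqnarray*}

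Next I would apply Lemma \ref{lem:PAnewLRT}(iii), which identifies these parameters with those of $A,B,C$ up to the scalar factors $g_2, g'_2, g''_2$ of Definition \ref{def:NewLRTnot}:
\begin{eqnarray*}
\alpha_2^\flat = \alpha_2\, g''_2, \qquad (\alpha')_2^\flat = \alpha'_2\, g_2, \qquad (\alpha'')_2^\flat = \alpha''_2\, g'_2.
\end{eqnarray*}
Since $2$ is even, Definition \ref{def:NewLRTnot} gives
\begin{eqnarray*}
g_2 = (\alpha_{\rm out}\alpha_{\rm in})^{-1}, \qquad g'_2 = (\beta_{\rm out}\beta_{\rm in})^{-1}, \qquad g''_2 = (\gamma_{\rm out}\gamma_{\rm in})^{-1}.
\end{eqnarray*}

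Substituting into the equality $\alpha_2^\flat = (\alpha')_2^\flat = (\alpha'')_2^\flat$ and clearing denominators (note that $\alpha_2,\alpha'_2,\alpha''_2$ are all nonzero by Lemma \ref{lem:BiPbasic}(ii)), the three quantities $\alpha_2/(\gamma_{\rm out}\gamma_{\rm in})$, $\alpha'_2/(\alpha_{\rm out}\alpha_{\rm in})$, $\alpha''_2/(\beta_{\rm out}\beta_{\rm in})$ must agree. Taking reciprocals yields exactly (\ref{eq:neededForEquit}). No step presents a real obstacle; the entire verification is bookkeeping, and the only mild subtlety is being careful that each $\alpha_2, \alpha'_2, \alpha''_2$ is nonzero so that taking reciprocals is legitimate — this is supplied by Lemma \ref{lem:BiPbasic}(ii) together with the nontriviality assumption.
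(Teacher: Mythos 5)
Your proof is correct and follows essentially the same route as the paper, which simply cites Lemma \ref{lem:PAnewLRT}(iii) together with Definition \ref{def:equitNorm}; your use of Lemma \ref{lem:BPEquitMin} to reduce the equitable condition to the $i=2$ Toeplitz parameters is exactly the reduction implicit in the paper's citation. The bookkeeping with $g_2, g'_2, g''_2$ and the nonvanishing of $\alpha_2, \alpha'_2, \alpha''_2$ is accurate.
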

\begin{proof}
By Lemma
\ref{lem:PAnewLRT}(iii)
and Definition
\ref{def:equitNorm}.
\end{proof}

\begin{lemma} 
\label{lem:EqAdjustBip}
Assume that $A,B,C$ is bipartite and nontrivial.
Then there
exists an equitable LR triple on $V$ that is
 biassociate to 
$A,B,C$.
\end{lemma}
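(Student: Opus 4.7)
The plan is to explicitly construct the required equitable biassociate. The key observation is that biassociativity fixes the "inner" scalars to be $1$, so we only have three free parameters (the "outer" scalars), and Lemma \ref{lem:BipEquitAdj} imposes exactly two equations on these three parameters---leaving one degree of freedom. So the system ought to be solvable whenever the scalars $\alpha_2, \alpha'_2, \alpha''_2$ appearing in the denominators of \eqref{eq:neededForEquit} are nonzero, which holds by Lemma \ref{lem:BiPbasic}(ii) precisely because $A,B,C$ is bipartite and nontrivial.

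Concretely, first I would invoke Lemma \ref{lem:BiPbasic}(ii) to conclude that $\alpha_2, \alpha'_2, \alpha''_2$ are all nonzero scalars in $\mathbb{F}$. Then set
\begin{equation*}
\alpha = \alpha'_2, \qquad \beta = \alpha''_2, \qquad \gamma = \alpha_2,
\end{equation*}
and define
\begin{equation*}
A' = \alpha A_{\rm out}+A_{\rm in}, \qquad
B' = \beta B_{\rm out}+B_{\rm in}, \qquad
C' = \gamma C_{\rm out}+C_{\rm in}.
\end{equation*}

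Next I would apply Lemma \ref{lem:InOut} with $\alpha_{\rm out}=\alpha$, $\beta_{\rm out}=\beta$, $\gamma_{\rm out}=\gamma$ and $\alpha_{\rm in}=\beta_{\rm in}=\gamma_{\rm in}=1$ to conclude that $A',B',C'$ is a bipartite LR triple on $V$. By Definition \ref{def:BIASSOC}, this triple is biassociate to $A,B,C$. Finally, to verify equitability I would check condition \eqref{eq:neededForEquit} of Lemma \ref{lem:BipEquitAdj}: with our choice,
\begin{equation*}
\frac{\alpha_{\rm out}\alpha_{\rm in}}{\alpha'_2}
= \frac{\alpha'_2 \cdot 1}{\alpha'_2} = 1,
\qquad
\frac{\beta_{\rm out}\beta_{\rm in}}{\alpha''_2} = 1,
\qquad
\frac{\gamma_{\rm out}\gamma_{\rm in}}{\alpha_2} = 1,
\end{equation*}
so the three ratios coincide and the LR triple $A',B',C'$ is equitable.

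There is no substantive obstacle; the entire argument reduces to noting that the ``inner=$1$'' constraint of biassociativity still leaves enough freedom in the outer parameters to meet the single nontrivial ratio-equality of Lemma \ref{lem:BipEquitAdj}, and the required denominators are nonzero by Lemma \ref{lem:BiPbasic}(ii). The only mild point to verify is that the bipartite hypothesis and the assumption of nontriviality are both essential---the former to have the decomposition $V=V_{\rm out}+V_{\rm in}$ of Lemma \ref{lem:V0V1} and hence make $A_{\rm out}, A_{\rm in}$ (etc.) well-defined, and the latter to guarantee $\alpha_2, \alpha'_2, \alpha''_2$ are nonzero.
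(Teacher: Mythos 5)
Your proposal is correct and follows essentially the same route as the paper, which simply cites Definition \ref{def:BIASSOC} and Lemma \ref{lem:BipEquitAdj}; you have merely made explicit the choice of scalars $\alpha=\alpha'_2$, $\beta=\alpha''_2$, $\gamma=\alpha_2$ (which in fact produces the \emph{normalized} biassociate of Corollary \ref{lem:NormUnique}, a fortiori equitable). The verification that the three ratios in \eqref{eq:neededForEquit} all equal $1$, and the appeal to Lemma \ref{lem:BiPbasic}(ii) for nonvanishing of $\alpha_2,\alpha'_2,\alpha''_2$, are exactly what is needed.
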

\begin{proof}
By
Definition
\ref{def:BIASSOC}
and
Lemma
\ref{lem:BipEquitAdj}.
\end{proof}

\noindent We have a comment about general LR triples, bipartite 
or not.

\begin{lemma}
\label{lem:combine}
Assume that $A,B,C$ is equitable. Then
\begin{eqnarray}
\label{eq:combine}
\varphi_1 \varphi_2 \cdots \varphi_d =
\varphi'_1 \varphi'_2 \cdots \varphi'_d =
\varphi''_1 \varphi''_2 \cdots \varphi''_d.
\end{eqnarray}
\end{lemma}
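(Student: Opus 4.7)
The plan is to reduce this immediately to the identities of Lemma \ref{lem:identity}. That lemma states that for any LR triple (with the convention that the empty product equals $1$ when $d=0$), the three products
\begin{eqnarray*}
\varphi_1\cdots\varphi_d=\frac{1}{\alpha'_d\beta''_d}=\frac{1}{\alpha''_d\beta'_d},\qquad
\varphi'_1\cdots\varphi'_d=\frac{1}{\alpha''_d\beta_d}=\frac{1}{\alpha_d\beta''_d},\qquad
\varphi''_1\cdots\varphi''_d=\frac{1}{\alpha_d\beta'_d}=\frac{1}{\alpha'_d\beta_d}
\end{eqnarray*}
all make sense, since the six scalars $\alpha_d,\alpha'_d,\alpha''_d,\beta_d,\beta'_d,\beta''_d$ are nonzero.

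Under the equitable hypothesis, Definition \ref{def:equitNorm} gives $\alpha_d=\alpha'_d=\alpha''_d$, and Lemma \ref{lem:equitBasicBeta} gives $\beta_d=\beta'_d=\beta''_d$. Substituting these equalities into the six expressions above, each of $\varphi_1\cdots\varphi_d$, $\varphi'_1\cdots\varphi'_d$, $\varphi''_1\cdots\varphi''_d$ equals $(\alpha_d\beta_d)^{-1}$, which proves (\ref{eq:combine}).

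The only step worth checking is that Lemma \ref{lem:identity} is available in full generality; this is so because it was proven for an arbitrary LR triple $A,B,C$ on $V$. The case $d=0$ is harmless: the products in (\ref{eq:combine}) are empty products equal to $1$, and the asserted equality holds trivially. There is no real obstacle here; this lemma is essentially a corollary of Lemma \ref{lem:identity} together with the definition of the equitable condition.
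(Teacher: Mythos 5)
Your proof is correct, but it takes a genuinely different route from the paper's. The paper argues by cases: for $A,B,C$ nonbipartite it invokes Lemma \ref{lem:equitBasic}(i), which gives the much stronger componentwise equality $\varphi_i=\varphi'_i=\varphi''_i$ for $1\leq i\leq d$; for $A,B,C$ bipartite it invokes Lemma \ref{lem:basic2A}(i), which gives equality of the partial products of even length, and then uses that $d$ is even. Your argument avoids the bipartite/nonbipartite dichotomy entirely by going through Lemma \ref{lem:identity}, which expresses each of the three full products as a reciprocal of a product of two Toeplitz parameters of index $d$; the equitable hypothesis (Definition \ref{def:equitNorm} together with Lemma \ref{lem:equitBasicBeta}) collapses all six reciprocals to the single value $(\alpha_d\beta_d)^{-1}$. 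What your approach buys is uniformity and brevity; what the paper's approach buys is that the intermediate facts it relies on (componentwise or pairwise-product equalities of the parameter arrays) are sharper statements that the paper needs elsewhere anyway. Your handling of the $d=0$ edge case is also fine, though unnecessary to flag since Lemma \ref{lem:identity} already covers it with $\alpha_0=\beta_0=1$.
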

\begin{proof} For $A,B,C$ nonbipartite,
the result follows from Lemma
\ref{lem:equitBasic}(i).
For $A,B,C$ bipartite, the result
follows from Lemma
\ref{lem:basic2A}(i) 
and since $d$ is even.
\end{proof}


\section{Normalized LR triples}

\noindent 
Throughout this section the following notation is in effect.
Let $V$ denote a vector space over $\mathbb F$ with
dimension $d+1$.
Let $A,B,C$ denote
an LR triple on
$V$, with
 parameter array
(\ref{eq:paLRT}),
idempotent data
(\ref{eq:idseq}),
trace data
(\ref{eq:tracedata}), and Toeplitz data
(\ref{eq:ToeplitzData}).
We describe a condition on $A,B,C$
called normalized. The condition is defined a bit
differently in the trivial, nonbipartite, and bipartite nontrivial cases.
We first dispense with the trivial case.

\begin{definition}
\rm 
\label{def:BNormTriv}
Assume that $A,B,C$ is trivial. Then we declare $A,B,C$ to be 
{\it normalized}.
\end{definition}

\begin{definition}
\label{def:NBNorm}
\rm
Assume that $A,B,C$ is nonbipartite.
Then $A,B,C$ is called {\it normalized} whenever
\begin{eqnarray*}
\alpha_1 = 1,
\qquad \quad 
\alpha'_1 = 1,
\qquad \quad 
\alpha''_1 = 1.
\end{eqnarray*}
\end{definition}

\begin{lemma}
Assume that $A,B,C$ is nonbipartite and normalized.
Then $A,B,C$ is equitable.
\end{lemma}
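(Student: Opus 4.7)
The plan is to invoke Lemma \ref{lem:equitNBmin}, which already does all the heavy lifting for nonbipartite LR triples. That lemma says that for a nonbipartite LR triple $A,B,C$, equitability is equivalent to the single equality $\alpha_1 = \alpha'_1 = \alpha''_1$. By Definition \ref{def:NBNorm}, the hypothesis that $A,B,C$ is normalized gives $\alpha_1 = \alpha'_1 = \alpha''_1 = 1$, which trivially implies $\alpha_1 = \alpha'_1 = \alpha''_1$. So the conclusion follows at once.

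More explicitly, I would write: since $A,B,C$ is nonbipartite and normalized, Definition \ref{def:NBNorm} yields $\alpha_1 = \alpha'_1 = \alpha''_1 = 1$, hence $\alpha_1 = \alpha'_1 = \alpha''_1$. By Lemma \ref{lem:equitNBmin}, the nonbipartite LR triple $A,B,C$ is equitable. There is no genuine obstacle here; the lemma is essentially a one-line reduction to Lemma \ref{lem:equitNBmin}. The only point worth noting, if one wants a more self-contained argument, is that Lemma \ref{lem:alphaiAlpha1} shows $\alpha_i/\alpha_1^i = \alpha'_i/(\alpha'_1)^i = \alpha''_i/(\alpha''_1)^i$ for $0 \le i \le d$ in the nonbipartite case; substituting $\alpha_1 = \alpha'_1 = \alpha''_1 = 1$ then immediately gives $\alpha_i = \alpha'_i = \alpha''_i$ for all $i$, which is Definition \ref{def:equitNorm} of equitable. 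Either formulation yields a proof in just a sentence or two.
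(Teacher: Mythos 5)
Your proof is correct and matches the paper's own argument exactly: the paper also deduces equitability from Lemma \ref{lem:equitNBmin} together with the equality $\alpha_1 = \alpha'_1 = \alpha''_1$ supplied by Definition \ref{def:NBNorm}. The optional elaboration via Lemma \ref{lem:alphaiAlpha1} is a fine sanity check but not needed.
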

\begin{proof}
By
Lemma \ref{lem:equitNBmin}
and since $\alpha_1 = \alpha'_1 = \alpha''_1$.
\end{proof}

\begin{lemma}
Assume that $A,B,C$ is nonbipartite and normalized. Then so are 
its p-relatives.
\end{lemma}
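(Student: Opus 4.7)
The plan is to verify, for each of the six p-relatives listed in Definition \ref{def:prel}, both the nonbipartite condition and the normalization condition of Definition \ref{def:NBNorm}, namely that the three first Toeplitz numbers of the p-relative are equal to $1$.

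The six p-relatives are the triples in the top half of the table of Lemma \ref{lem:ABCvar}, i.e.\ $A,B,C$; $B,C,A$; $C,A,B$, together with the three triples in the bottom half of the table of Lemma \ref{lem:newPA}, i.e.\ $\tilde C,\tilde B,\tilde A$; $\tilde A,\tilde C,\tilde B$; $\tilde B,\tilde A,\tilde C$. For the nonbipartite condition there is nothing to do: Lemma \ref{lem:bipRel} says that every relative of a nonbipartite LR triple is nonbipartite, and the p-relatives are relatives by construction.

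For the Toeplitz condition I would read directly off the two Toeplitz tables. Inspecting the table of Lemma \ref{lem:ToeplitzData} shows that the Toeplitz data of each of $A,B,C$; $B,C,A$; $C,A,B$ is obtained from the Toeplitz data of $A,B,C$ by a cyclic shift of the three pairs $(\lbrace\alpha_i\rbrace,\lbrace\beta_i\rbrace)$, $(\lbrace\alpha'_i\rbrace,\lbrace\beta'_i\rbrace)$, $(\lbrace\alpha''_i\rbrace,\lbrace\beta''_i\rbrace)$. Consequently the triple of first Toeplitz numbers of each such p-relative is just a cyclic permutation of $(\alpha_1,\alpha'_1,\alpha''_1)$. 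Similarly, inspecting the table of Lemma \ref{lem:ToeplitzDataD} shows that the Toeplitz data of each of $\tilde C,\tilde B,\tilde A$; $\tilde A,\tilde C,\tilde B$; $\tilde B,\tilde A,\tilde C$ is obtained from the original Toeplitz data by a permutation of the three pairs (with no interchange of $\alpha$'s and $\beta$'s, since these p-relatives lie in the bottom half of the dual table). Again, the triple of first Toeplitz numbers of the p-relative is a permutation of $(\alpha_1,\alpha'_1,\alpha''_1)$.

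Since $A,B,C$ is normalized, Definition \ref{def:NBNorm} gives $\alpha_1 = \alpha'_1 = \alpha''_1 = 1$, and any permutation of this triple is still $(1,1,1)$. Thus the three first Toeplitz numbers of each p-relative equal $1$, so each p-relative is normalized by Definition \ref{def:NBNorm}. There is no substantive obstacle here; the argument is a routine bookkeeping verification against the two Toeplitz tables already established.
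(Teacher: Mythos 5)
Your proof is correct and follows essentially the same route as the paper, which simply cites Definition \ref{def:prel} together with the Toeplitz-data tables of Lemmas \ref{lem:ToeplitzData} and \ref{lem:ToeplitzDataD}; you have merely spelled out the table inspection and added the (correct) appeal to Lemma \ref{lem:bipRel} for the nonbipartite part.
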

\begin{proof}
By Definition
\ref{def:prel}
and Lemmas
\ref{lem:ToeplitzData},
\ref{lem:ToeplitzDataD}.
\end{proof}

\noindent 
A nonbipartite LR triple 
can be normalized as follows.

\begin{lemma} 
\label{lem:NBnormalize}
Assume that $A,B,C$ is nonbipartite.
Let $\alpha, \beta, \gamma$ denote nonzero scalars in 
$\mathbb F$. Then the LR triple
 $\alpha A, \beta B, \gamma C$ 
  is normalized
 if and only if
\begin{eqnarray*}
\alpha = \alpha'_1, \qquad \qquad
\beta = \alpha''_1, \qquad \qquad
\gamma = \alpha_1.
\end{eqnarray*}
\end{lemma}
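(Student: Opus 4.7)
The plan is to apply Lemma~\ref{lem:ToeplitzAdjust} directly. By that lemma, the LR triple $\alpha A, \beta B, \gamma C$ has Toeplitz data obtained from that of $A,B,C$ by scaling: in particular its first Toeplitz number is $\gamma^{-1}\alpha_1$, and by cycling through the p-relatives (or reading off the given formula) its primed and double-primed first Toeplitz numbers are $\alpha^{-1}\alpha'_1$ and $\beta^{-1}\alpha''_1$, respectively.

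Next I would invoke Lemma~\ref{lem:BPabc} to note that because $A,B,C$ is nonbipartite, so is $\alpha A,\beta B,\gamma C$. Hence the notion ``normalized'' applies in the sense of Definition~\ref{def:NBNorm}, and the triple $\alpha A,\beta B,\gamma C$ is normalized if and only if
\begin{equation*}
\gamma^{-1}\alpha_1 = 1, \qquad \alpha^{-1}\alpha'_1 = 1, \qquad \beta^{-1}\alpha''_1 = 1.
\end{equation*}
Solving these equations gives exactly $\gamma=\alpha_1$, $\alpha=\alpha'_1$, $\beta=\alpha''_1$, which is the desired equivalence.

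There is essentially no obstacle: the result is an immediate unpacking of Lemma~\ref{lem:ToeplitzAdjust} at index $i=1$. The only thing to be careful about is that the prescribed scalars are genuinely nonzero so that $\alpha A,\beta B,\gamma C$ is well-defined as an LR triple with the correct nonbipartite structure; this is guaranteed by Lemma~\ref{lem:NotB}, which ensures $\alpha_1,\alpha'_1,\alpha''_1$ are all nonzero precisely because $A,B,C$ is nonbipartite.
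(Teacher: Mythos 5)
Your proposal is correct and follows exactly the paper's own route: the paper's proof is simply ``Use Lemmas \ref{lem:ToeplitzAdjust}, \ref{lem:BPabc} and Definition \ref{def:NBNorm},'' which is precisely the argument you spell out. Your additional remark that Lemma \ref{lem:NotB} guarantees $\alpha_1,\alpha'_1,\alpha''_1$ are nonzero is a sensible (and correct) extra check.
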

\begin{proof}
Use Lemmas \ref{lem:ToeplitzAdjust},
\ref{lem:BPabc}
and
Definition
\ref{def:NBNorm}.
\end{proof}

\begin{corollary}
\label{prop:NormalNBunique}
Assume that $A,B,C$ is nonbipartite. Then
there exists a unique sequence $\alpha, \beta,\gamma$ of
nonzero scalars in $\mathbb F$ such that
$\alpha A, \beta B, \gamma C$ is normalized.
\end{corollary}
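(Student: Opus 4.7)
The plan is to derive this corollary directly from Lemma~\ref{lem:NBnormalize} together with Lemma~\ref{lem:NotB}. The key observation is that Lemma~\ref{lem:NBnormalize} already pins down exactly which scalars make $\alpha A, \beta B, \gamma C$ normalized, namely $\alpha = \alpha'_1$, $\beta = \alpha''_1$, $\gamma = \alpha_1$. So both existence and uniqueness of the triple $(\alpha,\beta,\gamma)$ of scalars are automatic from that lemma; the only thing left to check is that these prescribed values are nonzero, since the corollary insists on nonzero scalars.

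First I would invoke Lemma~\ref{lem:NBnormalize}: it says that for nonzero $\alpha,\beta,\gamma \in \mathbb{F}$, the triple $\alpha A,\beta B,\gamma C$ is normalized if and only if $\alpha = \alpha'_1$, $\beta = \alpha''_1$, $\gamma = \alpha_1$. This immediately gives uniqueness (at most one such $(\alpha,\beta,\gamma)$ of nonzero scalars can work). For existence, I need to know that the candidate values $\alpha'_1,\alpha''_1,\alpha_1$ are in fact nonzero elements of $\mathbb{F}$, and then verify they actually satisfy the normalization condition.

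The nonvanishing is precisely the content of Lemma~\ref{lem:NotB}: under the hypothesis that $A,B,C$ is nonbipartite, the six scalars $\alpha_1,\alpha'_1,\alpha''_1,\beta_1,\beta'_1,\beta''_1$ are all nonzero. So setting $\alpha = \alpha'_1$, $\beta = \alpha''_1$, $\gamma = \alpha_1$ yields a legitimate sequence of nonzero scalars in $\mathbb{F}$. Plugging these back into the ``if'' direction of Lemma~\ref{lem:NBnormalize} gives that $\alpha A,\beta B, \gamma C$ is indeed normalized, completing existence.

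There is really no main obstacle here: the corollary is an immediate packaging of Lemma~\ref{lem:NBnormalize} with Lemma~\ref{lem:NotB}. The only conceptual point worth flagging is the logical structure of Lemma~\ref{lem:NBnormalize} — it is a biconditional between nonzero scalars and the normalized property, which already supplies uniqueness for free, so no separate uniqueness argument is needed.
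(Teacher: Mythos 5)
Your proposal is correct and follows essentially the same route as the paper, which proves the corollary simply by citing Lemma~\ref{lem:NBnormalize}; you merely make explicit the (needed) appeal to Lemma~\ref{lem:NotB} for the nonvanishing of $\alpha_1,\alpha'_1,\alpha''_1$. No issues.
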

\begin{proof} By
Lemma
\ref{lem:NBnormalize}.
\end{proof}

\begin{corollary}
\label{prop:normalNBunique}
Assume that $A,B,C$ is nonbipartite. Then
$A,B,C$ is associate to a unique normalized nonbipartite LR triple over
$\mathbb F$.
\end{corollary}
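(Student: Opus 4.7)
The plan is to deduce this corollary directly from Corollary \ref{prop:NormalNBunique}, Lemma \ref{lem:NBnormalize}, Lemma \ref{lem:BPabc}, and Definition \ref{def:ASSOC}, essentially by translating the statement about scalars in Corollary \ref{prop:NormalNBunique} into a statement about LR triples in the associate class of $A,B,C$.

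For existence, I would invoke Corollary \ref{prop:NormalNBunique} to obtain nonzero scalars $\alpha, \beta, \gamma \in \mathbb F$ such that $\alpha A, \beta B, \gamma C$ is normalized. By Definition \ref{def:ASSOC}, this triple is associate to $A,B,C$, and by Lemma \ref{lem:BPabc} applied to $A,B,C$ (which is nonbipartite by hypothesis), the LR triple $\alpha A, \beta B, \gamma C$ is also nonbipartite. This produces the required normalized nonbipartite associate.

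For uniqueness, I would let $A', B', C'$ be any normalized nonbipartite LR triple over $\mathbb F$ that is associate to $A,B,C$. By Definition \ref{def:ASSOC}, there exist nonzero $\alpha', \beta', \gamma' \in \mathbb F$ with $A' = \alpha' A$, $B' = \beta' B$, $C' = \gamma' C$. Because $\alpha' A, \beta' B, \gamma' C$ is normalized, Lemma \ref{lem:NBnormalize} forces $\alpha' = \alpha'_1$, $\beta' = \alpha''_1$, $\gamma' = \alpha_1$, so the scalars are the same as in the existence step. Hence $A', B', C'$ coincides with the associate produced above.

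There is no genuine obstacle here; this is a one-line consequence once Corollary \ref{prop:NormalNBunique} and Lemma \ref{lem:NBnormalize} are in hand. The only point worth stating carefully is that Lemma \ref{lem:BPabc} is needed to certify that the normalized associate is automatically nonbipartite, so that ``normalized'' in Definition \ref{def:NBNorm} (which assumes nonbipartite) makes sense for the associate and the uniqueness clause of Lemma \ref{lem:NBnormalize} applies.
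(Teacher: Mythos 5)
Your proposal is correct and follows essentially the same route as the paper, which proves this corollary by citing Definition \ref{def:ASSOC}, Lemma \ref{lem:BPabc}, and Corollary \ref{prop:NormalNBunique}. Your explicit appeal to Lemma \ref{lem:NBnormalize} for uniqueness is just the content of Corollary \ref{prop:NormalNBunique} spelled out, so nothing is different in substance.
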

\begin{proof} By
Definition
\ref{def:ASSOC},
Lemma
\ref{lem:BPabc},
and
Corollary
\ref{prop:NormalNBunique}.
\end{proof}

\begin{lemma}
\label{lem:Nbeta}
Assume that $A,B,C$ is nonbipartite and normalized.
Then
\begin{eqnarray*}
\beta_1 = -1,
\qquad \quad 
\beta'_1 = -1,
\qquad \quad 
\beta''_1 = -1.
\end{eqnarray*}
\end{lemma}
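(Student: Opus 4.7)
The plan is to derive this as an immediate consequence of two facts already established in the paper. From Lemma \ref{lem:alpha0}, specifically equation (\ref{eq:list3}), we have the universal identities
\[
\beta_1 = -\alpha_1, \qquad \beta'_1 = -\alpha'_1, \qquad \beta''_1 = -\alpha''_1,
\]
which hold for any LR triple whatsoever and come simply from the recursive inversion formula for upper triangular Toeplitz matrices with leading entry $1$ (specifically $\alpha_0\beta_1+\alpha_1\beta_0=0$, combined with $\alpha_0=\beta_0=1$ from the same lemma). On the other hand, under the standing assumption that $A,B,C$ is nonbipartite and normalized, Definition \ref{def:NBNorm} gives $\alpha_1=\alpha'_1=\alpha''_1=1$.

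Substituting the normalization values into the three identities yields $\beta_1=-1$, $\beta'_1=-1$, $\beta''_1=-1$, which is the desired conclusion. There is no obstacle; the proof is a one-line invocation of prior results.
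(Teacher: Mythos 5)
Your proposal is correct and is exactly the paper's own argument: the paper proves this lemma by citing line (\ref{eq:list3}) (i.e., $\beta_1=-\alpha_1$, $\beta'_1=-\alpha'_1$, $\beta''_1=-\alpha''_1$) together with Definition \ref{def:NBNorm}, which is precisely the substitution you perform.
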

\begin{proof} By
(\ref{eq:list3}) and Definition
\ref{def:NBNorm}.
\end{proof}

\begin{lemma}
Assume that $A,B,C$ is nonbipartite and normalized. Then
so is the LR triple $-C,-B,-A$.
\end{lemma}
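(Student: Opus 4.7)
The plan is to obtain $-C,-B,-A$ from $A,B,C$ in two stages: first pass to the n-relative $C,B,A$, then rescale each entry by $-1$. At each stage I will track the bipartite/nonbipartite property and the three first Toeplitz numbers, since by Definition \ref{def:NBNorm} a nonbipartite LR triple is normalized exactly when these three scalars all equal $1$.

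First I would note that $C,B,A$ is an n-relative of $A,B,C$ (fourth row of the table in Lemma \ref{lem:ABCvar}), hence an LR triple on $V$; moreover it is nonbipartite by Lemma \ref{lem:bipRel}. Its Toeplitz data is recorded in the corresponding row of the table in Lemma \ref{lem:ToeplitzData}. Reading off that row, the first Toeplitz numbers of $C,B,A$ (in the order dictated by Definition \ref{def:TTT1}) are
\begin{equation*}
\beta'_1, \qquad \beta_1, \qquad \beta''_1.
\end{equation*}
Since $A,B,C$ is nonbipartite and normalized, Lemma \ref{lem:Nbeta} gives $\beta_1 = \beta'_1 = \beta''_1 = -1$, so these three scalars are all $-1$.

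Next I apply Lemma \ref{lem:ToeplitzAdjust} to the LR triple $C,B,A$ with the scaling factors $\alpha=\beta=\gamma=-1$, obtaining the LR triple $-C,-B,-A$. According to formula (\ref{eq:NewToep}) the new first Toeplitz numbers are $\gamma^{-1}\beta'_1$, $\alpha^{-1}\beta_1$, $\beta^{-1}\beta''_1$, each of which equals $(-1)^{-1}(-1) = 1$. The nonbipartite property survives this $(-1,-1,-1)$ rescaling by Lemma \ref{lem:BPabc}. Therefore $-C,-B,-A$ is a nonbipartite LR triple whose three first Toeplitz numbers all equal $1$, and so is normalized by Definition \ref{def:NBNorm}.

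No step here looks genuinely hard; the only place to slip is in the bookkeeping. One must use the correct (negatively oriented) row of Lemma \ref{lem:ToeplitzData} for $C,B,A$, and then apply Lemma \ref{lem:ToeplitzAdjust} to the triple $C,B,A$ rather than to $A,B,C$ so that the scaling factors $\alpha,\beta,\gamma$ land in the right slots. Once this is done the identity $(-1)^{-1}(-1)=1$ completes the argument.
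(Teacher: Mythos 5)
Your proof is correct and follows essentially the same route as the paper: read off the first Toeplitz numbers of the n-relative $C,B,A$ from row 4 of Lemma \ref{lem:ToeplitzData}, note they all equal $-1$ via Lemma \ref{lem:Nbeta}, and rescale by $-1$. The paper packages the rescaling step via Lemma \ref{lem:NBnormalize} rather than invoking Lemma \ref{lem:ToeplitzAdjust} directly, but that lemma is itself a corollary of \ref{lem:ToeplitzAdjust}, so the arguments coincide.
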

\begin{proof}
By Lemma
\ref{lem:ToeplitzData} (row 4 of the table)
along with 
Lemmas
\ref{lem:NBnormalize},
\ref{lem:Nbeta}.
\end{proof}

\begin{lemma} 
\label{lem:NBnormIso}
Assume that $A,B,C$ is nonbipartite and normalized.
Then $A,B,C$ is uniquely determined
up to isomorphism by its parameter array.
\end{lemma}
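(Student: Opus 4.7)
The plan is to reduce this immediately to Proposition \ref{prop:extra}, which says that any LR triple with $d \geq 1$ is determined up to isomorphism by its parameter array together with any one of the twelve scalars listed in (\ref{eq:12values}), in particular by the parameter array and $\alpha_1$. So the strategy has three short steps.

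First, I would observe that since $A,B,C$ is nonbipartite, Lemma \ref{lem:NotB} forces $d \geq 1$, so we are in the regime where Proposition \ref{prop:extra} applies. (The $d=0$ case does not arise, because a trivial LR triple is bipartite by Lemma \ref{lem:TrivBip}.) Second, by the normalization hypothesis together with Definition \ref{def:NBNorm}, the first Toeplitz number satisfies $\alpha_1 = 1$. Third, I would invoke Proposition \ref{prop:extra} with the scalar $\alpha_1 = 1$: any two nonbipartite normalized LR triples over $\mathbb{F}$ with the same parameter array have the same first Toeplitz number (namely $1$), hence are isomorphic.

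No calculation is needed and there is no genuine obstacle; the entire content of the lemma is that the normalization hypothesis pins down the one free scalar left over after the parameter array is specified. The proof is therefore essentially one sentence citing Proposition \ref{prop:extra}.
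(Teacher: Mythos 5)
Your proof is correct and follows essentially the same route as the paper: the paper cites Proposition \ref{prop:IsoParTrace} and then recovers the trace data from the parameter array via Lemma \ref{lem:equitBasic}(ii) and $\alpha_1=1$, while you invoke Proposition \ref{prop:extra} (which packages exactly that reduction) with the scalar $\alpha_1=1$. Your preliminary observations that nonbipartite forces $d\geq 1$ and that normalization fixes $\alpha_1$ are exactly the right hypotheses to check.
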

\begin{proof}
By Proposition
\ref{prop:IsoParTrace} the LR triple 
$A,B,C$ is uniquely
determined up to isomorphism by its
parameter array and trace data.
The trace data
 is determined by the
parameter array using
Lemma
\ref{lem:equitBasic}(ii) and $\alpha_1=1$.
The result follows.
\end{proof}

\noindent We turn our attention to  bipartite nontrivial LR triples.

\begin{definition}
\label{def:BNorm}
\rm
Assume  that
$A,B,C$ is bipartite 
and nontrivial.
Then $A,B,C$ is called {\it normalized} whenever
\begin{eqnarray*}
\alpha_2 = 1,
\qquad \quad 
\alpha'_2 = 1,
\qquad \quad 
\alpha''_2 = 1.
\end{eqnarray*}
\end{definition}

\begin{lemma}
Assume that $A,B,C$ is bipartite, nontrivial, and
normalized. Then $A,B,C$ is equitable.
\end{lemma}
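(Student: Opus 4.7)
The plan is to simply invoke Lemma \ref{lem:BPEquitMin}, which characterizes equitability in the bipartite nontrivial case solely in terms of the second Toeplitz parameters. Since by Definition \ref{def:BNorm} the normalized hypothesis gives us $\alpha_2 = \alpha'_2 = \alpha''_2 = 1$, the common equality $\alpha_2 = \alpha'_2 = \alpha''_2$ required by Lemma \ref{lem:BPEquitMin} is satisfied trivially, and we immediately conclude that $A,B,C$ is equitable.

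In more detail: Lemma \ref{lem:BPEquitMin} is itself derived from Lemma \ref{lem:BipAlphaInvar} (which shows that in the bipartite case, all the even-indexed Toeplitz parameters $\alpha_{2j}$, $\alpha'_{2j}$, $\alpha''_{2j}$ are governed by the single parameters $\alpha_2$, $\alpha'_2$, $\alpha''_2$ via $\alpha_{2j}/\alpha_2^j = \alpha'_{2j}/(\alpha'_2)^j = \alpha''_{2j}/(\alpha''_2)^j$) together with Lemma \ref{lem:case} (which shows that the odd-indexed ones are all zero in the bipartite case). Consequently, once $\alpha_2, \alpha'_2, \alpha''_2$ coincide, all the even-indexed $\alpha_i, \alpha'_i, \alpha''_i$ coincide, the odd-indexed ones are jointly zero, and equitability (Definition \ref{def:equitNorm}) holds.

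There is no real obstacle here; the result is a one-line consequence of already-established machinery, and the proof is essentially a matter of citing the correct prior lemma.

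\begin{proof}
By Definition \ref{def:BNorm}, the assumption that $A,B,C$ is bipartite, nontrivial, and normalized gives $\alpha_2 = \alpha'_2 = \alpha''_2 = 1$. In particular $\alpha_2 = \alpha'_2 = \alpha''_2$, so by Lemma \ref{lem:BPEquitMin} the LR triple $A,B,C$ is equitable.
\end{proof}
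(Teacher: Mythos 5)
Your proof is correct and matches the paper's argument exactly: the paper also deduces $\alpha_2 = \alpha'_2 = \alpha''_2$ from the normalization and then cites Lemma \ref{lem:BPEquitMin}. Nothing to add.
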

\begin{proof}
By Lemma
\ref{lem:BPEquitMin}
and since 
$\alpha_2 = \alpha'_2 = \alpha''_2$.
\end{proof}

\begin{lemma}
Assume that $A,B,C$ is bipartite, nontrivial, and normalized. Then so are 
its p-relatives.
\end{lemma}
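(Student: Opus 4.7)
The plan is to unpack what “p-relative” means and verify the three properties (bipartite, nontrivial, normalized) for each p-relative in turn. By Definition \ref{def:prel}, the p-relatives of $A,B,C$ are the three triples $A,B,C$ itself, $B,C,A$, and $C,A,B$, as listed in the top half of the table in Lemma \ref{lem:ABCvar}. So it suffices to treat $B,C,A$ and $C,A,B$.

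First I would handle the easy properties. Bipartite is immediate from Lemma \ref{lem:bipRel}, which says every relative of a bipartite LR triple is bipartite. Nontriviality is also immediate: all three triples act on the same space $V$ of dimension $d+1$, so they share the common diameter $d$, and since $A,B,C$ is nontrivial we have $d\geq 1$, forcing $B,C,A$ and $C,A,B$ to be nontrivial as well (compare Example \ref{ex:trivial}).

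The remaining point is normalization, which by Definition \ref{def:BNorm} amounts to checking that the second Toeplitz parameters of each p-relative all equal $1$. This is a direct consequence of Lemma \ref{lem:ToeplitzData}: reading the rows for $B,C,A$ and $C,A,B$ in the top half of the displayed table, the Toeplitz data is obtained from that of $A,B,C$ by a cyclic permutation of the three pairs $(\{\alpha_i\},\{\beta_i\})$, $(\{\alpha'_i\},\{\beta'_i\})$, $(\{\alpha''_i\},\{\beta''_i\})$. Hence the second entries $\alpha_2, \alpha'_2, \alpha''_2$ of $B,C,A$ (respectively $C,A,B$) are a cyclic rearrangement of the values $\alpha'_2, \alpha''_2, \alpha_2$ (respectively $\alpha''_2, \alpha_2, \alpha'_2$) of $A,B,C$, each of which equals $1$ by the normalization hypothesis and Definition \ref{def:BNorm}.

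There is no real obstacle here; the statement is essentially a bookkeeping consequence of the tables in Lemmas \ref{lem:ABCvar} and \ref{lem:ToeplitzData} together with Definitions \ref{def:prel} and \ref{def:BNorm}. The only mild care needed is to make sure one is cycling the $\alpha$–indices correctly and not confusing p-relatives with n-relatives, since the latter would swap $\alpha$'s with $\beta$'s (compare Lemma \ref{lem:ToeplitzDataD}), and that swap would matter in the nonbipartite case via $\beta_1=-\alpha_1$ but is harmless here because the bipartite case only concerns even-index Toeplitz parameters.
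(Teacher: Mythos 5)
Your argument is fine as far as it goes, but your identification of the p-relatives is incomplete, and this leaves half the cases unproved. By Definition \ref{def:prel}, the p-relatives of $A,B,C$ are not only the three triples in the top half of the table in Lemma \ref{lem:ABCvar} (namely $A,B,C$; $B,C,A$; $C,A,B$), but also the three LR triples on the dual space $V^*$ appearing in the \emph{bottom} half of the table in Lemma \ref{lem:newPA}, namely $\tilde C,\tilde B,\tilde A$; $\tilde A,\tilde C,\tilde B$; $\tilde B,\tilde A,\tilde C$. Your proof never addresses these, and indeed your closing remark suggests you believe Lemma \ref{lem:ToeplitzDataD} concerns only n-relatives; in fact that table contains both kinds, with the $\alpha$--$\beta$ swap occurring only in its top half (the n-relatives), not in its bottom half (the p-relatives). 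This is why the paper's proof cites both Lemma \ref{lem:ToeplitzData} and Lemma \ref{lem:ToeplitzDataD}.

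The good news is that your argument extends verbatim to the missing cases. Bipartiteness of the dual p-relatives is already covered by Lemma \ref{lem:bipRel}, which applies to all relatives. Nontriviality holds because they live on $V^*$, which has dimension $d+1$ with $d\geq 2$. For normalization, the bottom half of the table in Lemma \ref{lem:ToeplitzDataD} shows that the Toeplitz data of $\tilde C,\tilde B,\tilde A$ (and its two companions) is obtained from that of $A,B,C$ by permuting the three pairs $(\{\alpha_i\},\{\beta_i\})$, $(\{\alpha'_i\},\{\beta'_i\})$, $(\{\alpha''_i\},\{\beta''_i\})$ without interchanging $\alpha$'s and $\beta$'s, so the three second Toeplitz parameters are again a permutation of $\alpha_2,\alpha'_2,\alpha''_2$, each equal to $1$ by hypothesis and Definition \ref{def:BNorm}. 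Add this and the proof is complete.
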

\begin{proof}
By Definition
\ref{def:prel}
and
Lemmas
\ref{lem:ToeplitzData},
\ref{lem:ToeplitzDataD}.
\end{proof}


\noindent A bipartite nontrivial LR triple can be
normalized as follows.

\begin{lemma} 
\label{lem:howtoNorm}
Referring to Lemma
\ref{lem:InOut}, assume that $A,B,C$ is nontrivial,
and let the nonzero scalars 
{\rm (\ref{eq:AlphaInOut})}
be given.
Then the LR triple
{\rm (\ref{eq:NewLRT})} 
is normalized  
 if and only if
\begin{eqnarray*}
\alpha_{\rm out} \alpha_{\rm in} = \alpha'_2, \qquad \qquad
\beta_{\rm out} \beta_{\rm in} = \alpha''_2, \qquad \qquad
\gamma_{\rm out} \gamma_{\rm in} = \alpha_2.
\end{eqnarray*}
\end{lemma}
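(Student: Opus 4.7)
The plan is to reduce the claim to a direct unpacking of the Toeplitz data for the modified LR triple, and invoke the defining condition of being normalized in the bipartite case.

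First I would recall from Definition \ref{def:BNorm} that, because $A,B,C$ is bipartite and nontrivial, the LR triple in \eqref{eq:NewLRT} (which is itself bipartite and nontrivial by Lemma \ref{lem:InOut}, provided it is nontrivial, and this follows at once since all six scalars in \eqref{eq:AlphaInOut} are nonzero) is normalized precisely when the three degree-$2$ Toeplitz parameters of its Toeplitz data all equal $1$. So the task is purely computational: identify those three parameters in terms of the original Toeplitz data of $A,B,C$ and the six scalars $\alpha_{\rm out},\alpha_{\rm in},\beta_{\rm out},\beta_{\rm in},\gamma_{\rm out},\gamma_{\rm in}$.

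Next I would invoke Lemma \ref{lem:PAnewLRT}(iii), which gives the Toeplitz data of \eqref{eq:NewLRT} explicitly in the form
\begin{eqnarray*}
(\lbrace \alpha_i g''_i\rbrace_{i=0}^d,\lbrace \beta_i g''_i\rbrace_{i=0}^d;\;\lbrace \alpha'_i g_i\rbrace_{i=0}^d,\lbrace \beta'_i g_i\rbrace_{i=0}^d;\;\lbrace \alpha''_i g'_i\rbrace_{i=0}^d,\lbrace \beta''_i g'_i\rbrace_{i=0}^d),
\end{eqnarray*}
with $g_i,g'_i,g''_i$ as in Definition \ref{def:NewLRTnot}. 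Since $2$ is even, that definition specializes to
\begin{eqnarray*}
g_2=(\alpha_{\rm out}\alpha_{\rm in})^{-1},\qquad g'_2=(\beta_{\rm out}\beta_{\rm in})^{-1},\qquad g''_2=(\gamma_{\rm out}\gamma_{\rm in})^{-1}.
\end{eqnarray*}
Therefore the three $i=2$ Toeplitz parameters of \eqref{eq:NewLRT} are
\begin{eqnarray*}
\frac{\alpha_2}{\gamma_{\rm out}\gamma_{\rm in}},\qquad \frac{\alpha'_2}{\alpha_{\rm out}\alpha_{\rm in}},\qquad \frac{\alpha''_2}{\beta_{\rm out}\beta_{\rm in}},
\end{eqnarray*}
and the normalization condition of Definition \ref{def:BNorm} asserts that each of these equals $1$, which is exactly the system stated in the lemma.

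I do not expect a serious obstacle here: once the correspondences in Definition \ref{def:NewLRTnot} and the formula in Lemma \ref{lem:PAnewLRT}(iii) are in place, the argument is essentially a matching of subscripts. The only mild point of care is to track which of $g_2,g'_2,g''_2$ pairs with which Toeplitz sequence of \eqref{eq:NewLRT}, i.e.\ the cyclic shift built into Definition \ref{def:NewLRTnot}, so that the three equations are assigned to the correct pairs $(\alpha_{\rm out}\alpha_{\rm in},\alpha'_2)$, $(\beta_{\rm out}\beta_{\rm in},\alpha''_2)$, $(\gamma_{\rm out}\gamma_{\rm in},\alpha_2)$ as in the statement.
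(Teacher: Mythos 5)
Your proof is correct and follows essentially the same route as the paper, which simply cites Lemma \ref{lem:PAnewLRT}(iii) and Definition \ref{def:BNorm}; your unpacking of $g_2, g'_2, g''_2$ and the matching of the cyclic shift in the Toeplitz data is exactly the intended computation.
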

\begin{proof} Use Lemma
\ref{lem:PAnewLRT}(iii) and Definition
\ref{def:BNorm}.
\end{proof}

\begin{corollary}
\label{lem:NormUnique}
Assume that $A,B,C$ is bipartite and nontrivial. 
Then there exists a unique sequence $\alpha, \beta, \gamma$
of nonzero scalars in $\mathbb F$ such that
\begin{eqnarray*}
\alpha A_{\rm out} + A_{\rm in},
\qquad \quad
\beta B_{\rm out} + B_{\rm in},
\qquad \quad
\gamma C_{\rm out} + C_{\rm in}
\end{eqnarray*}
is normalized.
\end{corollary}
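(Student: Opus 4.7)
The plan is to reduce this corollary directly to Lemma \ref{lem:howtoNorm}. The triple displayed in the statement is precisely the triple \textup{(\ref{eq:NewLRT})} from Lemma \ref{lem:InOut} under the specialization
\begin{equation*}
\alpha_{\rm out}=\alpha,\ \alpha_{\rm in}=1,\qquad
\beta_{\rm out}=\beta,\ \beta_{\rm in}=1,\qquad
\gamma_{\rm out}=\gamma,\ \gamma_{\rm in}=1.
\end{equation*}
Under this specialization, the three conditions in Lemma \ref{lem:howtoNorm} for the triple to be normalized become simply $\alpha=\alpha'_2$, $\beta=\alpha''_2$, $\gamma=\alpha_2$.

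For existence, I would observe that since $A,B,C$ is bipartite and nontrivial, Lemma \ref{lem:BiPbasic}(ii) guarantees that each of $\alpha_2, \alpha'_2, \alpha''_2$ is a nonzero scalar in $\mathbb{F}$. Therefore the values $\alpha=\alpha'_2$, $\beta=\alpha''_2$, $\gamma=\alpha_2$ constitute a valid choice of nonzero scalars, and the resulting triple is normalized by Lemma \ref{lem:howtoNorm}. For uniqueness, the three equations $\alpha=\alpha'_2$, $\beta=\alpha''_2$, $\gamma=\alpha_2$ pin down $\alpha,\beta,\gamma$ completely, so no other choice can work.

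There is essentially no obstacle in this argument, since all the real content has already been packaged into Lemma \ref{lem:howtoNorm} (the characterization of when a biassociate triple is normalized) and Lemma \ref{lem:BiPbasic}(ii) (nonvanishing of the second Toeplitz coordinates in the bipartite nontrivial case). The only mild care needed is to verify that the substitution above is legitimate, i.e., that Lemma \ref{lem:InOut} permits setting $\alpha_{\rm in}=\beta_{\rm in}=\gamma_{\rm in}=1$; but this is immediate because the hypothesis of Lemma \ref{lem:InOut} only requires the six scalars to be nonzero. Hence the proof should occupy just a few lines, consisting of the substitution, an appeal to Lemma \ref{lem:howtoNorm} for the equivalence, and an appeal to Lemma \ref{lem:BiPbasic}(ii) for the nonvanishing needed in the existence part.
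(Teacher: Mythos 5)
Your proposal is correct and is essentially the paper's own proof: the paper likewise sets $\alpha_{\rm in}=\beta_{\rm in}=\gamma_{\rm in}=1$ in Lemma \ref{lem:howtoNorm} and reads off $\alpha=\alpha'_2$, $\beta=\alpha''_2$, $\gamma=\alpha_2$ as the unique solution. Your additional appeal to Lemma \ref{lem:BiPbasic}(ii) for the nonvanishing of $\alpha_2,\alpha'_2,\alpha''_2$ is a harmless extra detail.
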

\begin{proof}
In Lemma
\ref{lem:howtoNorm} set 
$
\alpha_{\rm in} =1$,
$
\beta_{\rm in} =1$,
$\gamma_{\rm in} =1$ to see that
$\alpha= \alpha'_2$,
$\beta= \alpha''_2$,
$\gamma= \alpha_2$ is the unique solution.
\end{proof}

\begin{corollary}
\label{lem:biassocUnique}
Assume that $A,B,C$ is bipartite and nontrivial. Then $A,B,C$
is biassociate to a unique biparitite normalized LR triple over
$\mathbb F$.
\end{corollary}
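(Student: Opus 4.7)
The plan is to derive this corollary directly from Corollary~\ref{lem:NormUnique}, translating the uniqueness of the normalizing triple of scalars $(\alpha,\beta,\gamma)$ into the uniqueness of the biassociated normalized LR triple. The two assertions — existence and uniqueness — both come from applying Corollary~\ref{lem:NormUnique} combined with Definition~\ref{def:BIASSOC}.

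For existence, I would invoke Corollary~\ref{lem:NormUnique} to produce nonzero scalars $\alpha,\beta,\gamma\in\mathbb F$ such that the triple
\begin{equation*}
A'=\alpha A_{\rm out}+A_{\rm in}, \qquad B'=\beta B_{\rm out}+B_{\rm in}, \qquad C'=\gamma C_{\rm out}+C_{\rm in}
\end{equation*}
is normalized. To see that $A',B',C'$ is a bipartite LR triple on $V$, I would apply Lemma~\ref{lem:InOut} (taking $\alpha_{\rm out}=\alpha$, $\alpha_{\rm in}=1$, and analogously for the others). By Definition~\ref{def:BIASSOC}, this $A',B',C'$ is biassociate to $A,B,C$.

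For uniqueness, suppose $A'',B'',C''$ is any bipartite normalized LR triple on $V$ that is biassociate to $A,B,C$. By Definition~\ref{def:BIASSOC} there exist nonzero scalars $\alpha_1,\beta_1,\gamma_1\in\mathbb F$ with $A''=\alpha_1 A_{\rm out}+A_{\rm in}$ and analogously for $B'',C''$. Since $A'',B'',C''$ is normalized, Corollary~\ref{lem:NormUnique} forces the sequence $\alpha_1,\beta_1,\gamma_1$ to coincide with the unique sequence $\alpha,\beta,\gamma$ produced above, so $A''=A'$, $B''=B'$, $C''=C'$.

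There is no real obstacle here: the content of the corollary is essentially a restatement of Corollary~\ref{lem:NormUnique} in the language of biassociativity. The only minor point to check is that the constructed triple is indeed bipartite, which is immediate from the construction in Lemma~\ref{lem:InOut}, and that Definition~\ref{def:BIASSOC} is parsed correctly so that uniqueness of the scalar triple translates directly into uniqueness of the normalized LR triple on $V$.
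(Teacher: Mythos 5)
Your proof is correct and follows the same route as the paper, which derives the corollary directly from Lemma~\ref{lem:InOut}, Definition~\ref{def:BIASSOC}, and Corollary~\ref{lem:NormUnique}. Your unpacking of existence and uniqueness (translating the unique scalar triple of Corollary~\ref{lem:NormUnique} into the unique biassociated normalized triple) is exactly the intended argument.
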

\begin{proof} 
By
Lemma
\ref{lem:InOut},
Definition
\ref{def:BIASSOC},
and
Corollary \ref{lem:NormUnique}.
\end{proof}

\begin{lemma}
\label{lem:BPBeta2}
Assume that $A,B,C$ is bipartite, nontrivial, and
normalized. Then 
\begin{eqnarray*}
\beta_2 = -1,
\qquad \quad 
\beta'_2 = -1,
\qquad \quad 
\beta''_2 = -1.
\end{eqnarray*}
\end{lemma}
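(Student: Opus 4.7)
The plan is to combine two results already established in the excerpt. First, since $A,B,C$ is bipartite and nontrivial, Lemma \ref{lem:BiPbasic}(iii) (specifically equation \eqref{eq:Al2Be2}) gives
\begin{eqnarray*}
\beta_2 = -\alpha_2, \qquad \beta'_2 = -\alpha'_2, \qquad \beta''_2 = -\alpha''_2.
\end{eqnarray*}
Second, since $A,B,C$ is normalized (and bipartite nontrivial), Definition \ref{def:BNorm} gives $\alpha_2 = \alpha'_2 = \alpha''_2 = 1$. Substituting the second set of equalities into the first yields the desired conclusion $\beta_2 = \beta'_2 = \beta''_2 = -1$.

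There is no real obstacle here: the lemma is essentially a one-line consequence of two earlier results. The only thing to double-check is that Lemma \ref{lem:BiPbasic}(iii) applies, which requires $A,B,C$ to be bipartite and nontrivial — both of which are among our hypotheses. The role of ``normalized'' in the statement is precisely to fix the value of $\alpha_2, \alpha'_2, \alpha''_2$ to be $1$, after which the sign-flip relation from Lemma \ref{lem:BiPbasic}(iii) (which itself comes from the general Toeplitz inversion identity $\beta_1 = -\alpha_1$ of Lemma \ref{lem:alpha0}, applied to the LR triple obtained by the action of $A^2,B^2,C^2$ on $V_{\rm out}$ in Lemma \ref{lem:A2B2C2Out}) immediately delivers the conclusion.
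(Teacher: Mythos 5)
Your proof is correct and is essentially identical to the paper's: the paper cites exactly the relation $\beta_2=-\alpha_2$, $\beta'_2=-\alpha'_2$, $\beta''_2=-\alpha''_2$ from (\ref{eq:Al2Be2}) together with Definition \ref{def:BNorm}. No issues.
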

\begin{proof}
By 
(\ref{eq:Al2Be2}) and
Definition
\ref{def:BNorm}.
\end{proof}

\begin{lemma} Assume that $A,B,C$ is bipartite, nontrivial, and
normalized. Then so is the LR triple
\begin{eqnarray*}
C_{\rm out}- C_{\rm in}, \quad \qquad
B_{\rm out}- B_{\rm in}, \quad \qquad
A_{\rm out}- A_{\rm in}.
\end{eqnarray*}
\end{lemma}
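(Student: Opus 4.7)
The plan is to deduce the statement from Lemma~\ref{lem:InOut}, Lemma~\ref{lem:trivialT}, and Lemma~\ref{lem:howtoNorm}, applied to the n-relative $(C,B,A)$ of $(A,B,C)$. First I would note that by Lemma~\ref{lem:ABCvar} the sequence $(C,B,A)$ is an LR triple on $V$, and by Lemma~\ref{lem:bipRel} it is bipartite. By Lemma~\ref{lem:InOutswap}, together with Definition~\ref{def:projector} (the projector for an LR triple depends only on any one of its LR pairs), the outer and inner parts of $V$ with respect to $(C,B,A)$ coincide with those with respect to $(A,B,C)$. Consequently the maps $A_{\rm out}, A_{\rm in}, B_{\rm out}, B_{\rm in}, C_{\rm out}, C_{\rm in}$ from Definition~\ref{def:ABCoutIn} are the same whether computed for $(A,B,C)$ or for its n-relative $(C,B,A)$.

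Next I would apply Lemma~\ref{lem:InOut} to the bipartite LR triple $(C,B,A)$ with the six scalars
$\alpha_{\rm out}=1$, $\alpha_{\rm in}=-1$, $\beta_{\rm out}=1$, $\beta_{\rm in}=-1$, $\gamma_{\rm out}=1$, $\gamma_{\rm in}=-1$.
This immediately yields that
\begin{eqnarray*}
C_{\rm out}-C_{\rm in}, \qquad B_{\rm out}-B_{\rm in}, \qquad A_{\rm out}-A_{\rm in}
\end{eqnarray*}
is a bipartite LR triple on $V$. Nontriviality is automatic: by hypothesis $V_{\rm in}\neq 0$, and since the new triple has the same outer and inner parts of $V$, Lemma~\ref{lem:trivialT} shows it is nontrivial.

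It remains to check normalization. I would invoke Lemma~\ref{lem:howtoNorm} applied to the underlying LR triple $(C,B,A)$: the new triple is normalized if and only if
\begin{eqnarray*}
\alpha_{\rm out}\alpha_{\rm in}=\alpha'_2{}^{(CBA)}, \qquad
\beta_{\rm out}\beta_{\rm in}=\alpha''_2{}^{(CBA)}, \qquad
\gamma_{\rm out}\gamma_{\rm in}=\alpha_2{}^{(CBA)},
\end{eqnarray*}
where the superscript indicates Toeplitz parameters belonging to the triple $(C,B,A)$. By row~4 of the table in Lemma~\ref{lem:ToeplitzData}, these three numbers equal $\beta_2$, $\beta''_2$, $\beta'_2$ of the original LR triple $(A,B,C)$, respectively. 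Since $(A,B,C)$ is assumed normalized, Definition~\ref{def:BNorm} gives $\alpha_2=\alpha'_2=\alpha''_2=1$, and then Lemma~\ref{lem:BPBeta2} yields $\beta_2=\beta'_2=\beta''_2=-1$. Thus each of the required values is $-1$, and each product $1\cdot(-1)=-1$ matches, so the normalization criterion of Lemma~\ref{lem:howtoNorm} is satisfied.

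The only real obstacle is bookkeeping, namely correctly tracking how the primed and double-primed Toeplitz parameters transform under the cyclic reflection $(A,B,C)\mapsto (C,B,A)$, and matching the roles of $\alpha_{\rm out},\alpha_{\rm in},\beta_{\rm out},\beta_{\rm in},\gamma_{\rm out},\gamma_{\rm in}$ in Lemma~\ref{lem:InOut} to the correct entries of the new triple; this is entirely handled by Lemma~\ref{lem:ToeplitzData}.
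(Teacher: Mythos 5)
Your proposal is correct and takes essentially the same route as the paper, whose proof cites exactly Lemma~\ref{lem:ToeplitzData} (row 4), Lemma~\ref{lem:howtoNorm}, and Lemma~\ref{lem:BPBeta2}; your extra bookkeeping (that the outer/inner parts for the relative $C,B,A$ agree with those for $A,B,C$, and the transcription of the Toeplitz parameters) is just the detail the paper leaves implicit.
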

\begin{proof} By 
Lemma
\ref{lem:ToeplitzData} (row 4 of the table)
and Lemmas
\ref{lem:howtoNorm},
\ref{lem:BPBeta2}.
\end{proof}

\begin{lemma}
\label{lem:NormHalf2}
Assume that $A,B,C$ is bipartite, nontrivial, and normalized.
Then:
\begin{enumerate}
\item[\rm (i)]
the action of $A^2,B^2,C^2$ on 
$V_{\rm out}$ is normalized;
\item[\rm (ii)]
the action of $A^2,B^2,C^2$ on 
$V_{\rm in}$ is normalized.
\end{enumerate}
\end{lemma}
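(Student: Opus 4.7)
The plan is to reduce each part to a direct application of the Toeplitz-data calculations in Lemmas \ref{lem:A2B2C2Out} and \ref{lem:A2B2C2In}, combined with the bipartite/nonbipartite/trivial classification in Lemmas \ref{lem:NormHalf} and \ref{lem:NormHalfIn}. The core observation is that under the passage from $A,B,C$ to its action on $V_{\rm out}$ or $V_{\rm in}$, the Toeplitz parameters at odd indices drop out and the parameters at even indices are retained in order; in particular, what was $\alpha_2$ (resp.\ $\alpha'_2$, $\alpha''_2$) for the original triple becomes the \emph{first} Toeplitz number of the new triple.

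For part (i), I would first invoke Lemma \ref{lem:NormHalf}(ii) to conclude that, since $A,B,C$ is bipartite and nontrivial, the action of $A^2,B^2,C^2$ on $V_{\rm out}$ is a nonbipartite LR triple. Thus by Definition \ref{def:NBNorm} it suffices to check that the new first Toeplitz numbers $\alpha_1^{\rm out}$, $(\alpha')_1^{\rm out}$, $(\alpha'')_1^{\rm out}$ are all equal to $1$. By Lemma \ref{lem:A2B2C2Out}(iv), the Toeplitz data of the action on $V_{\rm out}$ is obtained from that of $A,B,C$ by retaining the even-indexed entries, so $\alpha_1^{\rm out}=\alpha_2$, $(\alpha')_1^{\rm out}=\alpha'_2$, $(\alpha'')_1^{\rm out}=\alpha''_2$. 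Since $A,B,C$ is normalized and bipartite nontrivial, Definition \ref{def:BNorm} gives $\alpha_2=\alpha'_2=\alpha''_2=1$, completing part (i).

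For part (ii), I would split on $d$. If $d=2$, Lemma \ref{lem:NormHalfIn}(i) asserts that the action of $A^2,B^2,C^2$ on $V_{\rm in}$ is trivial, hence normalized by Definition \ref{def:BNormTriv}. If $d\geq 4$, Lemma \ref{lem:NormHalfIn}(ii) asserts that this action is nonbipartite. By Lemma \ref{lem:A2B2C2In}(iv), its Toeplitz data is again obtained by keeping the even-indexed entries of the original Toeplitz data (but now indexed $j=0,\dots,m-1$), so its first Toeplitz numbers are once more $\alpha_2=\alpha'_2=\alpha''_2=1$. Applying Definition \ref{def:NBNorm} finishes part (ii).

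There is essentially no obstacle beyond bookkeeping: the only thing to verify carefully is the index shift in the Toeplitz data (the $i=1$ entry of the new triple corresponds to the $i=2$ entry of the old triple), and this is precisely what Lemmas \ref{lem:A2B2C2Out}(iv) and \ref{lem:A2B2C2In}(iv) provide. The case split in (ii) between $d=2$ and $d\geq 4$ is forced only because the definition of ``normalized'' is phrased differently in the trivial versus the nonbipartite case, but both cases reduce to a direct citation.
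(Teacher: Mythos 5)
Your proof is correct and follows essentially the same route as the paper: both parts reduce to reading off the even-indexed Toeplitz entries via Lemmas \ref{lem:A2B2C2Out}(iv) and \ref{lem:A2B2C2In}(iv), using Lemmas \ref{lem:NormHalf}(ii) and \ref{lem:NormHalfIn} to identify the new triples as nonbipartite (or trivial when $d=2$), and then applying Definitions \ref{def:NBNorm} and \ref{def:BNorm}. The index-shift observation ($\alpha_2$ becoming the new first Toeplitz number) is exactly the point of the paper's argument.
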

\begin{proof}
(i)
Evaluate the Toeplitz data in
Lemma
\ref{lem:A2B2C2Out}(iv)
using 
Lemma
\ref{lem:NormHalf}(ii) 
and
Definition
\ref{def:NBNorm}.
\\
\noindent (ii) For $d=2$,
the action of $A^2,B^2,C^2$ on 
$V_{\rm in}$ is trivial and hence normalized.
For $d\geq 4$,
evaluate the Toeplitz data in
Lemma
\ref{lem:A2B2C2In}(iv)
using 
Lemma
\ref{lem:NormHalfIn}(ii)
and
Definition
\ref{def:NBNorm}.
\end{proof}

\section{The idempotent centralizers for an LR triple}

\noindent 
Throughout this section the following notation is in effect.
Let $V$ denote a vector space over $\mathbb F$ with
dimension $d+1$.
Let $A,B,C$ denote
an LR triple on $V$, with 
 parameter array
(\ref{eq:paLRT}),
idempotent data
(\ref{eq:idseq}),
trace data
(\ref{eq:tracedata}), and Toeplitz data
(\ref{eq:ToeplitzData}).
We discuss a type of element in 
 ${\rm End}(V)$ called an idempotent centralizer.

\begin{definition}
\label{def:IC}
\rm
By an
{\it idempotent centralizer} for $A,B,C$
we mean an element in ${\rm End}(V)$
that commutes with each of $E_i, E'_i, E''_i$
for $0 \leq i \leq d$.
\end{definition}

\begin{lemma}
\label{lem:ICmeaning}
For 
$X \in {\rm End}(V)$ the following are equivalent:
\begin{enumerate}
\item[\rm (i)] $X$ is an idempotent centralizer
for $A,B,C$;
\item[\rm (ii)] for $0 \leq i \leq d$,
\begin{eqnarray*}
X E_iV \subseteq E_iV,\qquad \qquad
X E'_iV \subseteq E'_iV,\qquad \qquad
X E''_iV \subseteq E''_iV.
\end{eqnarray*}
\end{enumerate}
\end{lemma}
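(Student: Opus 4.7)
The plan is to prove the equivalence by giving a direct linear-algebra argument, independent of the LR triple structure (nothing in the statement actually requires the $E_i$, $E'_i$, $E''_i$ to come from an LR triple — only that each family is an idempotent sequence whose images are one-dimensional subspaces summing to $V$).

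For the implication (i) $\Rightarrow$ (ii), I would simply observe that if $X$ commutes with $E_i$, then for any $v \in E_iV$ we have $v = E_iv$, so
\[
Xv \;=\; XE_iv \;=\; E_iXv \;\in\; E_iV,
\]
and the same computation works with $E_i$ replaced by $E'_i$ or $E''_i$. This half is essentially immediate and will just be one line.

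For the implication (ii) $\Rightarrow$ (i), the main tool is the spectral decomposition $I = \sum_{j=0}^d E_j$. Given any $v \in V$, I would decompose $v = \sum_j E_jv$ with $E_jv \in E_jV$. Assumption (ii) gives $XE_jv \in E_jV$ for each $j$, hence $E_iXE_jv = \delta_{i,j} XE_jv$ (using $E_iE_j = \delta_{i,j}E_i$ on the space $E_jV$). Summing over $j$ yields
\[
E_iXv \;=\; \sum_{j=0}^d E_iXE_jv \;=\; XE_iv,
\]
so $E_iX = XE_i$. Repeating the same argument with the decompositions $I = \sum_j E'_j$ and $I = \sum_j E''_j$ shows that $X$ commutes with each $E'_i$ and each $E''_i$ as well, so $X$ is an idempotent centralizer by Definition \ref{def:IC}.

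There is no real obstacle here: the lemma is a general fact about any family of primitive idempotents whose images form a decomposition of $V$, and the proof reduces to the elementary identity $E_iE_j=\delta_{i,j}E_i$ together with $I = \sum_j E_j$. The only thing to be slightly careful about is to apply the argument three separate times — once for each of the three idempotent sequences — and not to assume any interaction between them (since in general the three families need not pairwise commute).
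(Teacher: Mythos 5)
Your proof is correct and matches the paper's approach: the paper dismisses this lemma with ``By Definition \ref{def:IC} and linear algebra,'' and your argument is precisely the routine linear algebra being invoked (using $E_iE_j=\delta_{i,j}E_i$, $I=\sum_j E_j$, and running the argument separately for each of the three idempotent sequences).
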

\begin{proof} By Definition
\ref{def:IC}
and linear algebra.
\end{proof}

\begin{example}
\label{ex:trivIC}
\rm The identity $I \in  
{\rm End}(V)$ is an idempotent centralizer for $A,B,C$.
\end{example}

\begin{definition}
\label{def:ICspace}
\rm 
Let $\mathcal I$ denote the set of idempotent centralizers
for $A,B,C$. Note that
$\mathcal I$ is a subalgebra of the $\mathbb F$-algebra
${\rm End}(V)$. We call 
$\mathcal I$ the {\it idempotent centralizer algebra}
for $A,B,C$.
\end{definition}

\noindent Referring to Definition
\ref{def:ICspace},
our next goal is to display a basis
for the $\mathbb F$-vector space
$\mathcal I$. 
Recall the projector $J$ from Definition
\ref{def:projector}.

\begin{proposition}
\label{prop:centBasis}
The following {\rm (i)--(iii)} hold.
\begin{enumerate}
\item[\rm (i)] Assume that $A,B,C$ is trivial. Then
$I$ is a basis for $\mathcal I$.
\item[\rm (ii)] Assume that $A,B,C$ is nonbipartite.
Then 
$I$ is a basis for $\mathcal I$.
\item[\rm (iii)] Assume that $A,B,C$ is bipartite
and nontrivial. Then $I,J$ is a basis for 
$\mathcal I$.
\end{enumerate}
\end{proposition}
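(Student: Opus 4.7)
The plan is to prove the three parts in order of increasing difficulty, handling (i) instantly, (iii) by induction on $d$ via the projector $J$, and (ii) by a direct analysis of commutation constraints.

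For part (i), when $A,B,C$ is trivial we have $d=0$ and ${\rm End}(V) \cong \mathbb{F}$, so every element commutes with every $E_i,E'_i,E''_i$ and $\mathcal{I}={\rm End}(V)=\mathbb{F} I$ trivially. For part (iii), I would first verify that both $I$ and $J$ lie in $\mathcal{I}$: $I$ is obvious, and $J$ follows from Lemma \ref{lem:EEEJ} which exhibits $J$ as a partial sum of each of $\{E_i\}$, $\{E'_i\}$, $\{E''_i\}$, hence $J$ commutes with all of them. Linear independence of $I,J$ is Lemma \ref{lem:ABCJtriv}. For the dimension bound $\dim\mathcal{I}\leq 2$, the key observation is that $J$ lies in the subalgebra generated by $\{E_i\}$, so any $X\in\mathcal{I}$ must commute with $J$ and therefore preserves both $V_{\rm out}$ and $V_{\rm in}$. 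I would then induct on $d$: by Lemma \ref{lem:A2B2C2Out} the operators $A^2,B^2,C^2$ restrict to an LR triple on $V_{\rm out}$ of diameter $m=d/2$ whose idempotent data is exactly $E_{2j}|_{V_{\rm out}}$, $E'_{2j}|_{V_{\rm out}}$, $E''_{2j}|_{V_{\rm out}}$. By Lemma \ref{lem:NormHalf}(ii) this restricted triple is nonbipartite, so the induction hypothesis applied to case (ii) forces $X|_{V_{\rm out}}$ to be a scalar multiple of the identity on $V_{\rm out}$. The same argument on $V_{\rm in}$, using Lemma \ref{lem:A2B2C2In} and Lemma \ref{lem:NormHalfIn} (with case (i) handling $d=2$), gives that $X|_{V_{\rm in}}$ is scalar as well. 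Combining, $X = xI + yJ$ for some $x,y\in\mathbb{F}$.

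For part (ii), let $X\in\mathcal{I}$. Since each $E_iV$ is one-dimensional and $X$ commutes with $E_i$, there exist scalars $x_i,x'_i,x''_i$ with $X=\sum_i x_iE_i=\sum_i x'_iE'_i=\sum_i x''_iE''_i$. Writing $E_iXE'_j = x_iE_iE'_j = x'_jE_iE'_j$ and invoking Lemma \ref{lem:tripleProd} (which gives $E_iE'_{d-i}E_i=E_i\neq 0$, hence $E_iE'_{d-i}\neq 0$), we obtain $x_i=x'_{d-i}$. The cyclic versions yield $x'_i=x''_{d-i}$ and $x''_i=x_{d-i}$, and composing these gives $x_i=x'_i=x''_i$ together with the palindromic condition $x_i=x_{d-i}$. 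To promote this to constancy of the $x_i$ sequence, I would use that $X$ preserves every $V'_j = \mathbb{F}C^jv_d$ (with $v_d$ spanning $V_d$) and acts there as the scalar $x_j$. Expanding $C^jv_d$ in an $(A,B)$-basis using the tridiagonal form of $C$ given in Proposition \ref{prop:matrixRep}, let $M_{ij}$ denote the coefficient of $v_i$. Then $x_iM_{ij}=x_jM_{ij}$ forces $x_i=x_j$ whenever $M_{ij}\neq 0$. One then verifies that nonbipartiteness of $A,B,C$ (i.e., that some $a_i$, $a'_j$ or $a''_k$ is nonzero), together with the cyclically symmetric arguments via $V''_k$, produces enough nonzero entries $M_{ij}$ to connect all indices into a single class, yielding $x_0=x_1=\cdots=x_d$ and hence $X=x_0I$.

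The main obstacle is the final combinatorial step in part (ii): showing that the graph on $\{0,1,\ldots,d\}$ with edges given by nonzero $M_{ij}$ (and its cyclic analogs obtained from $V''_k$) becomes connected whenever the trace data is not identically zero. The palindromic constraints already identify $i$ with $d-i$, so the work reduces to finding, for every pair of ``palindromic classes'', a single extra equality $x_i=x_j$ crossing between them; the argument has to leverage the explicit formulas of Proposition \ref{lem:ai} and Corollary \ref{cor:aiVar}, together with Lemma \ref{lem:NotB}, which guarantee that in the nonbipartite case the off-palindromic coefficients in the expansion of $C^jv_d$ cannot all vanish simultaneously. This dichotomy between the bipartite and nonbipartite cases mirrors the structural dichotomy of the whole paper and is what ultimately makes $J$ either coincide with $I$ or produce a genuine second basis element of $\mathcal{I}$.
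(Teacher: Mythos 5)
Your parts (i) and (iii) are fine: (iii) correctly reduces to (ii) by noting that any $X\in\mathcal I$ commutes with $J$, hence preserves $V_{\rm out}$ and $V_{\rm in}$, and restricts to an idempotent centralizer of the LR triples of Lemmas \ref{lem:A2B2C2Out}, \ref{lem:A2B2C2In}, which are nonbipartite (or trivial) by Lemmas \ref{lem:NormHalf}, \ref{lem:NormHalfIn}. But everything then rests on (ii), and there you have a genuine gap. After correctly deriving $x_i=x'_i=x''_i=x_{d-i}$ from Lemma \ref{lem:tripleProd}, the entire content of the proposition is the passage from the palindromic condition to constancy, and this is exactly the step you do not carry out: you assert that nonbipartiteness ``produces enough nonzero entries $M_{ij}$ to connect all indices into a single class'' and then, in your closing paragraph, name this as the main obstacle rather than resolving it. As written, the proof of (ii) is incomplete. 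The gap is fillable along the lines you suggest: since $C$ is tridiagonal in an $(A,B)$-basis with nonzero super- and sub-diagonal entries, the coefficient of $v_{d-j+1}$ in $C^jv_d$ is a common nonzero product of superdiagonal entries times $a_d+a_{d-1}+\cdots+a_{d-j+1}$ (every path contributing to that coefficient uses the same $j-1$ superdiagonal steps and exactly one diagonal step), and this partial sum equals $\alpha'_1\varphi''_j\neq 0$ by Corollary \ref{cor:aiVar2} and Lemma \ref{lem:NotB}; this forces $x_{d-j+1}=x_j$, i.e.\ $x_{j-1}=x_j$, for $1\leq j\leq d$. But that computation, not the palindromic identities, is the proof, and it is missing.

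For comparison, the paper avoids all of this with a two-basis argument: take an $(A,C)$-basis $\lbrace u_i\rbrace$ and a compatible $(A,B)$-basis $\lbrace v_i\rbrace$, so that $X$ is diagonal in both and the transition matrix is the upper triangular Toeplitz matrix $T'$ with parameters $\lbrace\alpha'_i\rbrace$; comparing the $(i-1,i)$-entries of $MT'=T'N$ and using $\alpha'_1\neq 0$ gives $r_{i-1}=r_i$ immediately (and in the bipartite case, comparing $(i-2,i)$-entries with $\alpha'_2\neq 0$ gives $r_{i-2}=r_i$, yielding $X\in{\rm span}\lbrace J, I-J\rbrace$ in one stroke). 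That route uses only one pair of decompositions and a single nonvanishing Toeplitz parameter, whereas yours needs all three decompositions plus the trace-data identities; if you complete the connectivity step as above, your argument is correct but strictly more work.
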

\begin{proof}
(i) Routine.
\\
\noindent (ii), (iii) 
Assume that $A,B,C$ is nontrivial.
Let the set $S$ consist of $I$ (if $A,B,C$ is nonbipartite)
and $I,J$ (if $A,B,C$ is bipartite).
 We show that $S$ is a basis for $\mathcal I$.
By 
Lemma
\ref{lem:ABCJtriv}
and Lemma \ref{lem:EEEJ},
$S$ is a linearly independent
subset of $\mathcal I$.
We show that $S$ spans $\mathcal I$.
 Let $\lbrace u_i\rbrace_{i=0}^d$ denote
an $(A,C)$-basis of $V$, and let 
 $\lbrace v_i\rbrace_{i=0}^d$ denote
a compatible $(A,B)$-basis of $V$.
The transition matrix from
 $\lbrace u_i\rbrace_{i=0}^d$ to
 $\lbrace v_i\rbrace_{i=0}^d$ is the matrix $T'$ from
Definition
\ref{def:TTT}. The matrix $T'$ is upper triangular
and Toeplitz, with parameters $\lbrace \alpha'_i\rbrace_{i=0}^d$.
Let $X \in \mathcal I$.
By Lemma
\ref{lem:ICmeaning}
there exist scalars $\lbrace r_i\rbrace_{i=0}^d$ in
$\mathbb F$ such that $Xu_i = r_i u_{i}$ for
$0 \leq i \leq d$.
Also by Lemma
\ref{lem:ICmeaning},
there exist scalars $\lbrace s_i\rbrace_{i=0}^d$ in
$\mathbb F$ such that $Xv_i = s_i v_{i}$ for
$0 \leq i \leq d$.
Let the matrix $M \in 
{\rm Mat}_{d+1}(\mathbb F)$ represent
$X$ with respect to $\lbrace u_i \rbrace_{i=0}^d$.
Then $M$ is diagonal with $(i,i)$-entry 
$r_i$ for $0 \leq i \leq d$.
Let the matrix $N \in 
{\rm Mat}_{d+1}(\mathbb F)$ represent
$X$ with respect to $\lbrace v_i \rbrace_{i=0}^d$.
Then $N$ is diagonal with $(i,i)$-entry $s_i$ for 
$0 \leq i \leq d$.
By linear algebra $M T' = T' N$.
In this equation, for $0 \leq i \leq d$
 compare the $(i,i)$-entry of
each side, to obtain $r_i=s_i$.
Until further notice assume that $A,B,C$ is nonbipartite.
Then $\alpha'_1 \not=0$. In the equation
 $M T' = T' N$, for $1 \leq i \leq d$
  compare the $(i-1,i)$-entry of each side, to
obtain $r_{i-1} = r_{i}$. So
$r_i=r_0$ for $0 \leq i \leq d$.
Consequently $X-r_0I$ vanishes
on $u_i$ for $0 \leq i \leq d$. Therefore $X=r_0I$.
Next assume that $A,B,C$ is bipartite.
Then $\alpha'_1 =0$ and $\alpha'_2 \not=0$.
In the equation
 $M T' = T' N$, for $2 \leq i \leq d$
 compare the $(i-2,i)$-entry of each side, to
obtain $r_{i-2} = r_{i}$.
For $0 \leq i \leq d$ we have $r_i = r_0$ (if $i$ is even)
and $r_i = r_1$ (if $i$ is odd).
Consequently $X-r_0 J - r_1 (I-J)$ vanishes
on $u_i$ for $0 \leq i \leq d$. Therefore
 $X=r_0 J + r_1 (I-J)$.
We have shown that the set $S$ spans $\mathcal I$. The result follows.
\end{proof}

\noindent We have some comments about
Proposition
\ref{prop:centBasis}(iii).

\begin{lemma}
\label{def:bipIC}
Assume that $A,B,C$ is bipartite and nontrivial.
Let $X$ denote an idempotent centralizer for $A,B,C$. Then
\begin{enumerate}
\item[\rm (i)] $XV_{\rm out} \subseteq V_{\rm out}$ 
and
$XV_{\rm in} \subseteq V_{\rm in}$; 
\item[\rm (ii)]
$XJ=JX$.
\end{enumerate}
\end{lemma}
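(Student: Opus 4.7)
The plan is to deduce both claims directly from the characterization of $V_{\rm out}$ and $V_{\rm in}$ in terms of the idempotents $\{E_i\}_{i=0}^d$, without needing to invoke Proposition \ref{prop:centBasis} (which in any case provides an alternative route).

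For part (i), I would apply Lemma \ref{lem:ICmeaning} to the given idempotent centralizer $X$. Since $X$ commutes with each $E_i$, we have $XE_iV \subseteq E_iV$ for $0\leq i\leq d$. By Lemma \ref{lem:V0V1}(iii), together with Definition \ref{def:VoutVin} applied to the LR pair $A,B$, the subspace $V_{\rm out}$ equals $\sum_{j=0}^{d/2} E_{2j}V$ and $V_{\rm in}$ equals $\sum_{j=0}^{d/2-1} E_{2j+1}V$. Therefore
\[
XV_{\rm out} \subseteq \sum_{j=0}^{d/2} X E_{2j}V \subseteq \sum_{j=0}^{d/2} E_{2j}V = V_{\rm out},
\]
and the analogous computation handles $V_{\rm in}$. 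This gives (i).

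For part (ii), I would use (i) together with the defining property of $J$ in Definition \ref{def:J}: $J$ acts as the identity on $V_{\rm out}$ and as zero on $V_{\rm in}$, and $V = V_{\rm out}+V_{\rm in}$ by Lemma \ref{lem:V0V1}(iii). Writing an arbitrary vector $v = v_{\rm out}+v_{\rm in}$ with $v_{\rm out}\in V_{\rm out}$ and $v_{\rm in}\in V_{\rm in}$, part (i) ensures $Xv_{\rm out}\in V_{\rm out}$ and $Xv_{\rm in}\in V_{\rm in}$, so
\[
JXv = J(Xv_{\rm out}+Xv_{\rm in}) = Xv_{\rm out} = X(Jv) = XJv.
\]
Hence $JX = XJ$, which is (ii).

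There is no real obstacle here; both parts are immediate consequences of the idempotent decomposition description of $V_{\rm out}$ and $V_{\rm in}$ from Lemma \ref{lem:V0V1} together with the definition of $J$. The only mild subtlety is remembering that the three sums in \eqref{eq:3verOut} (and in \eqref{eq:3verIn}) coincide, so that it makes sense to use the $\{E_i\}_{i=0}^d$ expression even though $X$ could have been probed with any of the three idempotent sequences.
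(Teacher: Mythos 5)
Your proof is correct and follows essentially the same route as the paper: part (i) is exactly the combination of Lemma \ref{lem:ICmeaning} with the idempotent description of $V_{\rm out}$ and $V_{\rm in}$ from Lemma \ref{lem:V0V1}, and part (ii) is the paper's observation that $J$ acts as the identity on $V_{\rm out}$ and as zero on $V_{\rm in}$, made explicit by your decomposition $v=v_{\rm out}+v_{\rm in}$. No issues.
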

\begin{proof} (i) By Lemmas
\ref{lem:V0V1},
\ref{lem:ICmeaning}.
\\
\noindent (ii) The map $J$ acts on $V_{\rm out}$ as the identity, and
on $V_{\rm in}$ as zero. The result follows from this
and (i) above.
\end{proof}

\begin{definition}
\label{def:ABCJ}
\rm
Assume that $A,B,C$ is bipartite and nontrivial.
Let $X$ denote an idempotent centralizer for $A,B,C$.
Then $X$ is called {\it outer} (resp. $\it inner$)
whenever $X$ is zero on $V_{\rm in}$ (resp. $V_{\rm out}$).
Let ${\mathcal I}_{\rm out}$
(resp. ${\mathcal I}_{\rm in}$) denote the set of
outer (resp. inner) idempotent centralizers for $A,B,C$.
Note that 
${\mathcal I}_{\rm out}$
and ${\mathcal I}_{\rm in}$ are ideals in the algebra
$\mathcal I$.
\end{definition}

\begin{proposition}
\label{prop:ICbasis}
Assume that $A,B,C$ is bipartite and nontrivial.
Then the following {\rm (i)--(iii)} hold:
\begin{enumerate}
\item[\rm (i)] the sum
$\mathcal I = {\mathcal I}_{\rm out} + {\mathcal I}_{\rm in}$
is direct;
\item[\rm (ii)] $J$ is a basis for ${\mathcal I}_{\rm out}$;
\item[\rm (iii)]
$I-J$ is a basis for ${\mathcal I}_{\rm in}$.
\end{enumerate}
\end{proposition}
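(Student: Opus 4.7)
The plan is to deduce Proposition~\ref{prop:ICbasis} as a direct corollary of Proposition~\ref{prop:centBasis}(iii) by changing basis from $\{I,J\}$ to $\{J,I-J\}$ and verifying that the two pieces land in the two ideals.

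First I would verify the easy containments: $J\in{\mathcal I}_{\rm out}$ and $I-J\in{\mathcal I}_{\rm in}$. Indeed, $J\in\mathcal I$ by Lemma~\ref{lem:EEEJ} (it commutes with every $E_i$, $E'_i$, $E''_i$ since it is a sum of them), so $I-J\in\mathcal I$ as well. By Definition~\ref{def:J} (applied via Definition~\ref{def:projector}), $J$ acts as the identity on $V_{\rm out}$ and as zero on $V_{\rm in}$, while $I-J$ does the opposite. Hence by Definition~\ref{def:ABCJ}, $J\in{\mathcal I}_{\rm out}$ and $I-J\in{\mathcal I}_{\rm in}$.

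Next I would show the sum ${\mathcal I}_{\rm out}+{\mathcal I}_{\rm in}$ is direct: any $X$ in the intersection is zero on both $V_{\rm out}$ and $V_{\rm in}$, hence zero on $V=V_{\rm out}+V_{\rm in}$ by Lemma~\ref{lem:V0V1}(iii). This gives part (i) modulo the claim that the sum exhausts $\mathcal I$. For that, pick any $X\in\mathcal I$. By Proposition~\ref{prop:centBasis}(iii) there exist $a,b\in\mathbb F$ with $X=aI+bJ$. Rewriting, $X=(a+b)J+a(I-J)$, which exhibits $X$ as a sum of an element of ${\mathcal I}_{\rm out}$ and an element of ${\mathcal I}_{\rm in}$. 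Thus $\mathcal I={\mathcal I}_{\rm out}+{\mathcal I}_{\rm in}$ (direct).

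Finally, parts (ii) and (iii) drop out by dimension counting. Since $\mathcal I$ has dimension $2$ by Proposition~\ref{prop:centBasis}(iii) and decomposes as a direct sum of the two nonzero ideals ${\mathcal I}_{\rm out}$ and ${\mathcal I}_{\rm in}$, each ideal must be one-dimensional; the nonzero elements $J$ and $I-J$ displayed above are therefore bases for ${\mathcal I}_{\rm out}$ and ${\mathcal I}_{\rm in}$, respectively. There is no real obstacle here since the hard work has already been done in Proposition~\ref{prop:centBasis}; the only point needing care is to make sure we are invoking the bipartite nontrivial case (where $\dim\mathcal I=2$), which is exactly the hypothesis of the proposition.
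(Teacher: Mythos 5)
Your proof is correct and follows essentially the same route as the paper: verify $J\in{\mathcal I}_{\rm out}$ and $I-J\in{\mathcal I}_{\rm in}$, note the intersection is zero since $V=V_{\rm out}+V_{\rm in}$ is direct, and conclude via Proposition~\ref{prop:centBasis}(iii). The paper's own argument is just a terser version of the same steps.
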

\begin{proof}
By Definitions
\ref{def:J},
\ref{def:ABCJ}
we find that $J \in {\mathcal I}_{\rm out}$
and 
 $I-J \in {\mathcal I}_{\rm in}$.
Also ${\mathcal I}_{\rm out} \cap {\mathcal I}_{\rm in} = 0$
by Definition
\ref{def:ABCJ} and since the sum $V= V_{\rm out}+V_{\rm in}$
is direct.
The result follows in view of
Proposition
\ref{prop:centBasis}(iii).
\end{proof}

\section{The double lowering spaces for an LR triple}

\noindent
Throughout this section the following notation is in effect.
Let $V$ denote a vector space over $\mathbb F$ with
dimension $d+1$.
Let $A,B,C$ denote
an  LR triple on $V$, with
 parameter array
(\ref{eq:paLRT}),
idempotent data
(\ref{eq:idseq}),
trace data
(\ref{eq:tracedata}), and Toeplitz data
(\ref{eq:ToeplitzData}).
We discuss some subspaces of 
${\rm End}(V)$ called the double lowering spaces.

\begin{definition}
\rm 
 Let $\lbrace V_i\rbrace_{i=0}^d$ denote a
decomposition of $V$.
For $X \in 
{\rm End}(V)$, we say that $X$ {\it weakly lowers
  $\lbrace V_i\rbrace_{i=0}^d$}
whenever 
$XV_i \subseteq V_{i-1}$ for  $1 \leq i \leq d$
and 
$X V_0 = 0$.
\end{definition}

\begin{definition}
\label{def:DL}
\rm
Let $\overline A$ denote the set of elements in
${\rm End}(V)$ that weakly lower both
the $(A,B)$-decomposition of $V$
and the
 $(A,C)$-decomposition of $V$.
The sets $\overline B$, $\overline C$ are similarly defined.
Note that $\overline A$, $\overline B$, $\overline C$ are subspaces of the
$\mathbb F$-vector space
${\rm End}(V)$.
We call
$\overline A, \overline B, \overline C$ the {\it double lowering spaces}
for the LR triple $A,B,C$.
\end{definition}

\noindent We now describe the $\mathbb F$-vector space
 $\overline A$; similar results
hold for  $\overline B$ and  $\overline C$.

\begin{theorem}
\label{thm:DL}
The following {\rm (i)--(iii)} hold.
\begin{enumerate}
\item[\rm (i)] Assume that $A,B,C$ is trivial. Then $\overline A=0$.
\item[\rm (ii)]
 Assume that $A,B,C$ is nonbipartite.
Then $A$ is a basis for $\overline A$.  Moreover 
 $\overline A$
has dimension 1.
\item[\rm (ii)]
 Assume that $A,B,C$ is bipartite and nontrivial.
Then $A_{\rm out}, A_{\rm in}$ form a basis for $\overline A$. 
 Moreover $\overline A$
has dimension 2.
\end{enumerate}
\end{theorem}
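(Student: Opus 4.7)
The plan is to analyze $\overline{A}$ in each of the three cases by representing an arbitrary $X \in \overline{A}$ by a pair of matrices (one per decomposition) and reading off the constraints that weak lowering imposes.

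Part (i) is essentially trivial: if $A,B,C$ is trivial then $d=0$, the unique decomposition is $V_0=V$, and weak lowering forces $XV_0=0$, so $\overline{A}=0$. For parts (ii) and (iii), I would fix a compatible pair consisting of an $(A,C)$-basis $\{u_i\}_{i=0}^d$ and an $(A,B)$-basis $\{v_i\}_{i=0}^d$ of $V$ (existence is built into Definition \ref{def:TTT}, part (ii)), so that the transition matrix from $\{u_i\}$ to $\{v_i\}$ is $T'$, which is upper triangular and Toeplitz with parameters $\{\alpha'_k\}_{k=0}^d$ and $\alpha'_0=1$. For any $X \in \overline{A}$, let $M$ and $N$ be its matrices in $\{v_i\}$ and $\{u_i\}$ respectively. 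By the weak-lowering analog of Lemma \ref{lem:LowerRaise}, both $M$ and $N$ are supported on the superdiagonal, so write $\mu_i := M_{i-1,i}$ and $\nu_i := N_{i-1,i}$ for $1 \le i \le d$. The change-of-basis identity $T'M = NT'$ is then a system that I would expand entry-by-entry, obtaining
\[
\alpha'_{j-i-1}\bigl(\mu_j - \nu_{i+1}\bigr) = 0 \qquad (0 \le i, \ i+1 \le j \le d).
\]
Setting $j=i+1$ (so the $\alpha'$ factor is $\alpha'_0=1$) forces $\mu_j = \nu_j$ for all $j$; the remaining relations become, after reindexing $k := j-i-1 \ge 1$,
\[
\alpha'_k(\mu_{r+k} - \mu_r) = 0 \qquad (1 \le r \le d-k, \ 1 \le k \le d-1).
\]

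For part (ii) (nonbipartite case), Lemma \ref{lem:NotB} guarantees $\alpha'_1 \ne 0$. Taking $k=1$ in the constraint forces $\mu_1 = \mu_2 = \cdots = \mu_d$, so $M$ is a scalar multiple of $\tau$. Since $A$ itself has matrix $\tau$ in any $(A,B)$-basis (Lemma \ref{lem:ABmatrix}), and $A \in \overline{A}$ by construction, I conclude $\overline{A} = \mathbb F A$, one-dimensional.

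For part (iii) (bipartite nontrivial case), Lemma \ref{lem:case} gives $d=2m$ even, with $\alpha'_k = 0$ for $k$ odd and $\alpha'_k \ne 0$ for $k$ even. Odd values of $k$ impose no constraint; taking $k=2$ forces $\mu_r = \mu_{r+2}$ for $1 \le r \le d-2$, so $\mu_i$ is constant on each parity class of $i$; all larger even $k$ are automatic consequences. The solution space is therefore $2$-dimensional, parametrized by the common value $a$ on odd indices and $b$ on even indices. To finish, I would identify the natural basis: since $v_i \in E_iV$, we have $V_{\text{out}} = \mathrm{span}\{v_0, v_2, \ldots, v_d\}$, and then $A_{\text{out}}v_{2j} = v_{2j-1}$, $A_{\text{out}}v_{2j+1} = 0$ yields the matrix with $\mu_i=1$ for $i$ even and $\mu_i=0$ for $i$ odd (the $(a,b)=(0,1)$ element), while $A_{\text{in}}$ gives the $(1,0)$ element. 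These are linearly independent and span the $2$-dimensional solution space, proving $\overline{A}$ has basis $A_{\text{out}}, A_{\text{in}}$.

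The main obstacle I anticipate is the index bookkeeping in the superdiagonal calculation, together with correctly matching the two parity classes in the bipartite case to $A_{\text{out}}$ versus $A_{\text{in}}$; this is a minor parity check but must be done carefully using the convention that $v_i \in E_iV$. Once the constraint equation $\alpha'_k(\mu_{r+k}-\mu_r)=0$ is in hand, the three cases fall out directly from the vanishing patterns of $\alpha'_k$ established in Lemmas \ref{lem:NotB} and \ref{lem:case}.
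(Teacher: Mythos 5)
Your proposal is correct and follows essentially the same route as the paper's proof: fix a compatible $(A,C)$-basis and $(A,B)$-basis, represent $X\in\overline A$ by superdiagonal matrices in each, extract the constraints from the intertwining relation with the upper triangular Toeplitz matrix $T'$, and then read off the answer from the vanishing pattern of the $\alpha'_k$ (Lemmas \ref{lem:NotB} and \ref{lem:case}). The only differences are cosmetic — you derive the general constraint $\alpha'_k(\mu_{r+k}-\mu_r)=0$ before specializing to $k=1$ or $k=2$, while the paper compares specific matrix entries directly, and you identify $A_{\rm out},A_{\rm in}$ in the $(A,B)$-basis rather than the $(A,C)$-basis.
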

\begin{proof} (i) $\overline A$ is zero on $E_0V$, and
$E_0V=V$.
\\
\noindent (ii), (iii)
Assume that $A,B,C$ is nontrivial.
Let the set $S$ consist of $A$ (if $A,B,C$ is nonbipartite)
and $A_{\rm out}, A_{\rm in}$ (if $A,B,C$ is bipartite).
 We show that $S$ is a basis for $\overline A$.
By 
Lemma
\ref{lem:AiAoLI}
and the construction,
$S$ is a linearly independent
subset of $\overline A$. We show that $S$ spans $\overline A$.
 Let $\lbrace u_i\rbrace_{i=0}^d$ denote
an $(A,C)$-basis of $V$, and let 
 $\lbrace v_i\rbrace_{i=0}^d$ denote
a compatible $(A,B)$-basis of $V$.
The transition matrix from
 $\lbrace u_i\rbrace_{i=0}^d$ to
 $\lbrace v_i\rbrace_{i=0}^d$ is the matrix $T'$ from
Definition
\ref{def:TTT}. The matrix $T'$ is upper triangular
and Toeplitz, with parameters $\lbrace \alpha'_i\rbrace_{i=0}^d$.
Let $X \in \overline A$.
The map $X$ weakly lowers the $(A,C)$-decomposition of $V$,
so there exist scalars $\lbrace r_i\rbrace_{i=1}^d$ in
$\mathbb F$ such that $Xu_i = r_i u_{i-1}$ for
$1 \leq i \leq d$ and $Xu_0=0$.
The map $X$ weakly lowers the $(A,B)$-decomposition of $V$,
so there exist scalars $\lbrace s_i\rbrace_{i=1}^d$ in
$\mathbb F$ such that $Xv_i = s_i v_{i-1}$ for
$1 \leq i \leq d$ and $Xv_0=0$.
Let the matrix $M \in 
{\rm Mat}_{d+1}(\mathbb F)$ represent
$X$ with respect to $\lbrace u_i \rbrace_{i=0}^d$.
Then $M$ has $(i-1,i)$-entry $r_i$ for $1 \leq i \leq d$,
and all other entries 0.
Let the matrix $N \in 
{\rm Mat}_{d+1}(\mathbb F)$ represent
$X$ with respect to $\lbrace v_i \rbrace_{i=0}^d$.
Then $N$ has $(i-1,i)$-entry $s_i$ for $1 \leq i \leq d$,
and all other entries 0.
By linear algebra $M T' = T' N$.
In this equation, for $1 \leq i \leq d$
 compare the $(i-1,i)$-entry of
each side, to obtain $r_i=s_i$.
Until further notice assume that $A,B,C$ is nonbipartite.
Then $\alpha'_1 \not=0$. In the equation
 $M T' = T' N$, for $1 \leq i \leq d-1$
  compare the $(i-1,i+1)$-entry of each side, to
obtain $r_i = r_{i+1}$. So
$r_i=r_1$ for $1 \leq i \leq d$.
By construction $A u_i = u_{i-1}$ for $1 \leq i \leq d$
and $Au_0=0$. By these comments $X-r_1A$ vanishes
on $u_i$ for $0 \leq i \leq d$. Therefore $X=r_1A$.
Next assume that $A,B,C$ is bipartite.
Then $\alpha'_1 =0$ and $\alpha'_2 \not=0$.
In the equation
 $M T' = T' N$, for $2 \leq i \leq d-1$
 compare the $(i-2,i+1)$-entry of each side, to
obtain $r_{i-1} = r_{i+1}$.
For $1 \leq i \leq d$ we have $r_i = r_2$ (if $i$ is even)
and $r_i = r_1$ (if $i$ is odd).
For $0 \leq i \leq d$ define $\varepsilon_i$ to be $0$ (if $i$ is even)
and $1$ (if $i$ is odd). By construction
 $A_{\rm out} u_i = (1-\varepsilon_i) u_{i-1}$
for $1 \leq i \leq d$ and 
 $A_{\rm out} u_0 = 0$.
Also by construction
 $A_{\rm in} u_i = \varepsilon_i u_{i-1}$
for $1 \leq i \leq d$ and 
 $A_{\rm in} u_0 = 0$.
By the above comments $X-r_1 A_{\rm in} - r_2 A_{\rm out}$ vanishes
on $u_i$ for $0 \leq i \leq d$. Therefore
 $X=r_1 A_{\rm in} + r_2 A_{\rm out}$.
We have shown that the set $S$ spans $\overline A$. The result follows.
\end{proof}

\section{The unipotent maps for an LR triple}

\noindent
Throughout this section the following notation is in effect.
Let $V$ denote a vector space over $\mathbb F$ with
dimension $d+1$.
Let $A,B,C$ denote
an  LR triple on $V$, with
 parameter array
(\ref{eq:paLRT}),
idempotent data
(\ref{eq:idseq}),
trace data
(\ref{eq:tracedata}), and Toeplitz data
(\ref{eq:ToeplitzData}).
Using $A,B,C$ we define three elements
$\mathbb A$,
$\mathbb B$,
$\mathbb C$ in
${\rm End}(V)$ called the unipotent maps.
This name is motivated by Lemma
\ref{lem:unip}
below.

\begin{definition}
\label{def:Del}
\rm
Define
\begin{eqnarray*}
{\mathbb A} = \sum_{i=0}^d E_{d-i} E''_{i},
\qquad \qquad 
{\mathbb B} = \sum_{i=0}^d E'_{d-i} E_{i},
\qquad \qquad 
{\mathbb C} = \sum_{i=0}^d E''_{d-i} E'_i.
\end{eqnarray*}
We call 
$\mathbb A$, 
$\mathbb B$, 
$\mathbb C$  the {\it unipotent maps} for
$A,B,C$. 
\end{definition}

\begin{lemma} 
\label{lem:UniTriv}
Assume that $A,B,C$ is trivial. Then
$\mathbb A=
\mathbb B=
\mathbb C = I$.
\end{lemma}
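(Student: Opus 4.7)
The plan is to exploit the fact that trivial means $d=0$, so $V$ has dimension one and the sums defining $\mathbb{A}, \mathbb{B}, \mathbb{C}$ each collapse to a single term. First I would note that by Example \ref{ex:trivial}, $A,B,C$ trivial means $d=0$, in which case the $(A,B)$-decomposition, the $(B,C)$-decomposition, and the $(C,A)$-decomposition of $V$ each consist of the single one-dimensional subspace $V_0 = V$.

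Next, I would invoke the definition of the idempotent sequence for a decomposition (see the discussion following Lemma \ref{lem:EiMeaning}): the $i$th primitive idempotent $E_i$ satisfies $(E_i - I)V_i = 0$ and $E_iV_j = 0$ for $j \neq i$. With $d=0$, the only index is $i = 0$, and since $V_0 = V$, the defining condition forces $E_0 = I$. The same reasoning applied to the $(B,C)$- and $(C,A)$-decompositions gives $E'_0 = I$ and $E''_0 = I$.

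Finally, I would substitute into Definition \ref{def:Del}. With $d=0$, each sum has only the $i=0$ term, so
\begin{eqnarray*}
\mathbb{A} = E_0 E''_0 = I\cdot I = I, \qquad
\mathbb{B} = E'_0 E_0 = I\cdot I = I, \qquad
\mathbb{C} = E''_0 E'_0 = I\cdot I = I.
\end{eqnarray*}
There is no real obstacle here — the proof is essentially immediate once one unpacks the $d=0$ case of the relevant definitions.
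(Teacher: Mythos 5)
Your proof is correct and follows exactly the same route as the paper's (which simply notes that for $d=0$ one has $E_0=E'_0=E''_0=I$ and then appeals to Definition \ref{def:Del}); you have merely spelled out the intermediate steps in more detail.
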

\begin{proof} For $d=0$ we have 
$E_0=E'_0 = E''_0=I$.
\end{proof}

\begin{lemma}
\label{lem:DeltaInv}
The maps $\mathbb A, \mathbb B, \mathbb C$ are invertible.
Their inverses are
\begin{eqnarray*}
\mathbb {A}^{-1} = \sum_{i=0}^d E''_{d-i} E_{i},
\qquad \qquad 
\mathbb B^{-1} = \sum_{i=0}^d E_{d-i} E'_{i},
\qquad \qquad 
\mathbb C^{-1} = \sum_{i=0}^d E'_{d-i} E''_i.
\end{eqnarray*}
\end{lemma}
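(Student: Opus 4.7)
The plan is to verify the three claimed inverse formulas by direct computation, relying on the orthogonality of each idempotent sequence together with the triple product identities established in Lemma \ref{lem:tripleProd}.

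First I would handle $\mathbb{A}$. Set $\mathbb{A}^* = \sum_{i=0}^d E''_{d-i} E_i$ and compute
\begin{eqnarray*}
\mathbb{A}\,\mathbb{A}^* \;=\; \sum_{i,j=0}^d E_{d-j} E''_{j} E''_{d-i} E_i.
\end{eqnarray*}
The factor $E''_j E''_{d-i}$ equals $\delta_{j,d-i} E''_{d-i}$ by orthogonality of $\{E''_k\}_{k=0}^d$, so only the terms with $j=d-i$ survive, giving
\begin{eqnarray*}
\mathbb{A}\,\mathbb{A}^* \;=\; \sum_{i=0}^d E_{i} E''_{d-i} E_i.
\end{eqnarray*}
By Lemma \ref{lem:tripleProd}, $E_i E''_{d-i} E_i = E_i$ for $0 \le i \le d$, so $\mathbb{A}\,\mathbb{A}^* = \sum_{i=0}^d E_i = I$. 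Since $V$ is finite-dimensional, this right inverse is two-sided, so $\mathbb{A}$ is invertible with $\mathbb{A}^{-1} = \mathbb{A}^*$.

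Next I would simply observe that the assertions for $\mathbb{B}$ and $\mathbb{C}$ follow by applying what was just shown to the p-relatives $B,C,A$ and $C,A,B$ of the LR triple $A,B,C$, using the primed/double-primed notation convention of Definition \ref{def:primeConv}, together with the tables in Lemmas \ref{lem:ABCEvar} and \ref{lem:tracedataAlt} that record how the idempotent data rotates. In particular, applying the identity $\mathbb{A}\,\mathbb{A}^* = I$ to the LR triple $B,C,A$ produces $\mathbb{B}\,\mathbb{B}^* = I$, and applying it to $C,A,B$ produces $\mathbb{C}\,\mathbb{C}^* = I$. Alternatively, one may verify each of these two identities directly by the same orthogonality-plus-triple-product computation as above.

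There is essentially no obstacle here: the argument is a short calculation whose only nontrivial ingredient is the triple product formula $E_i E''_{d-i} E_i = E_i$ (and its cyclic analogues) from Lemma \ref{lem:tripleProd}. The only mild subtlety is being careful with which idempotent sequence is summed against which; doing the computation carefully for $\mathbb{A}$ first, and then invoking the p-relative symmetry, keeps the bookkeeping minimal.
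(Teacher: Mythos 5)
Your proof is correct and follows essentially the same route as the paper: multiply $\mathbb A$ by the claimed inverse, collapse the double sum using orthogonality of $\lbrace E''_k\rbrace_{k=0}^d$, and apply Lemma \ref{lem:tripleProd} to get $\sum_i E_i = I$, with $\mathbb B$ and $\mathbb C$ handled by the cyclic symmetry. The paper's proof is just the one-line version of your computation for $\mathbb A$.
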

\begin{proof}
Concerning $\mathbb A$ and using Lemma
\ref{lem:tripleProd},
\begin{eqnarray*}
\mathbb A \sum_{j=0}^d E''_{d-j} E_j= \sum_{j=0}^d E_jE''_{d-j}E_j
= \sum_{j=0}^d E_j = I.
\end{eqnarray*}
\end{proof}

\begin{lemma}
\label{lem:delA}
For $0 \leq i \leq d$,
\begin{eqnarray*}
\mathbb A E''_i = E_{d-i} \mathbb A,
\qquad \qquad
\mathbb B E_i = E'_{d-i} \mathbb B,
\qquad \qquad
\mathbb C E'_i = E''_{d-i} \mathbb C.
\end{eqnarray*} 
\end{lemma}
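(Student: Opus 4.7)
My plan is to prove each of the three equations by direct computation, using only the orthogonality relations $E_iE_j = \delta_{i,j}E_i$ (and analogously for $\lbrace E'_i\rbrace_{i=0}^d$ and $\lbrace E''_i\rbrace_{i=0}^d$); these relations hold because each of the three families is the primitive idempotent sequence of a decomposition of $V$ (see the bullet points following the definition of idempotent sequence in Section~2 and Definition \ref{def:LRTIdem}). The three equations are cyclic images of one another under the convention of Definition \ref{def:primeConv}, so it suffices to establish the first one and then transport it around the cycle.

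For the first equation, I would expand both sides using Definition \ref{def:Del}. The left-hand side becomes
\[
\mathbb A E''_i \;=\; \sum_{j=0}^d E_{d-j} E''_j E''_i \;=\; \sum_{j=0}^d \delta_{i,j}\, E_{d-j} E''_i \;=\; E_{d-i} E''_i,
\]
and the right-hand side becomes
\[
E_{d-i}\,\mathbb A \;=\; \sum_{j=0}^d E_{d-i} E_{d-j} E''_j \;=\; \sum_{j=0}^d \delta_{i,j}\, E_{d-i} E''_j \;=\; E_{d-i} E''_i.
\]
Both collapse to the same product $E_{d-i} E''_i$, which proves $\mathbb A E''_i = E_{d-i}\mathbb A$.

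To obtain the remaining two equations I would apply this identity to the relatives $B,C,A$ and $C,A,B$. By Lemma~\ref{lem:ABCEvar} the LR triple $B,C,A$ has idempotent data $(\lbrace E'_i\rbrace;\lbrace E''_i\rbrace;\lbrace E_i\rbrace)$, so the convention of Definition~\ref{def:primeConv} identifies its unipotent map $\mathbb A$ with our $\mathbb B = \sum_{i=0}^d E'_{d-i}E_i$; applying the first equation to this triple yields $\mathbb B E_i = E'_{d-i}\mathbb B$. Repeating the same move for $C,A,B$ yields $\mathbb C E'_i = E''_{d-i}\mathbb C$. There is no real obstacle here: the only point requiring any care is keeping the cyclic notation straight when invoking the primed/double-primed relabeling, which is purely bookkeeping.
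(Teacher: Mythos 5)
Your proposal is correct and matches the paper's proof, which simply says to verify each equation by evaluating both sides using Definition \ref{def:Del}; your explicit computation showing both sides collapse to $E_{d-i}E''_i$ via the orthogonality of the idempotents is exactly that verification. Obtaining the other two equations by cycling through the relatives (or by repeating the same direct computation) is an equally valid bit of bookkeeping.
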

\begin{proof} These equations are verified
by evaluating each side using Definition
\ref{def:Del}.
\end{proof}

\begin{lemma}
\label{lem:moveDec}
For $0 \leq i \leq d$,
\begin{eqnarray*}
\mathbb A E''_iV = E_{d-i}V,
\qquad \qquad
\mathbb B E_iV = E'_{d-i}V,
\qquad \qquad
\mathbb C E'_iV = E''_{d-i}V.
\end{eqnarray*} 
\end{lemma}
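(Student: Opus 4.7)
The plan is to deduce this lemma immediately from the two preceding results, Lemma \ref{lem:DeltaInv} and Lemma \ref{lem:delA}. The first of these tells us that $\mathbb A$, $\mathbb B$, $\mathbb C$ are invertible, so each of them sends $V$ bijectively onto $V$; in particular $\mathbb A V = V$, $\mathbb B V = V$, $\mathbb C V = V$. The second gives the intertwining relations $\mathbb A E''_i = E_{d-i}\mathbb A$, $\mathbb B E_i = E'_{d-i}\mathbb B$, $\mathbb C E'_i = E''_{d-i}\mathbb C$ for $0 \leq i \leq d$.

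First I would fix $i$ with $0 \leq i \leq d$ and apply each of the three intertwining relations from Lemma \ref{lem:delA} to the whole space $V$. For the first relation this gives
\[
\mathbb A E''_i V = \mathbb A E''_i (V) = E_{d-i}\mathbb A V = E_{d-i} V,
\]
where the last equality uses $\mathbb A V = V$ from Lemma \ref{lem:DeltaInv}. This establishes the first equation. The second and third equations follow in exactly the same way, using $\mathbb B V = V$ and $\mathbb C V = V$ respectively.

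There is no real obstacle here; the statement is essentially a restatement of Lemma \ref{lem:delA} at the level of subspaces, made possible by the invertibility of the unipotent maps. The proof is a one-line calculation per equation.
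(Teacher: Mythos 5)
Your proof is correct and follows exactly the paper's intended route: the paper's proof is simply "Use Lemmas \ref{lem:DeltaInv}, \ref{lem:delA}," and your argument — applying the intertwining relation $\mathbb A E''_i = E_{d-i}\mathbb A$ to $V$ and invoking $\mathbb A V = V$ from invertibility — is precisely the one-line calculation being asked for.
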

\begin{proof}
Use Lemmas
\ref{lem:DeltaInv},
\ref{lem:delA}.
\end{proof}

\begin{lemma} \label{lem:AAdecSend}
 The following {\rm (i)--(iii)} hold:
\begin{enumerate}
\item[\rm (i)]  $\mathbb A$ sends the
$(A,C)$-decomposition of $V$ to the $(A,B)$-decomposition of $V$;
\item[\rm (ii)] $\mathbb B$ sends the
$(B,A)$-decomposition of $V$ to the $(B,C)$-decomposition of $V$;
\item[\rm (iii)] $\mathbb C$ sends the
$(C,B)$-decomposition of $V$ to the $(C,A)$-decomposition of $V$.
\end{enumerate}
\end{lemma}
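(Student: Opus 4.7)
The plan is to reduce each part to the content already established in Lemma~\ref{lem:moveDec}, which records how the unipotent maps act componentwise on the idempotent-data decompositions; the only remaining work is bookkeeping about which decomposition has which idempotent sequence.

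First I would recall, via Lemma~\ref{lem:Ebackward} applied to each of the three LR pairs $A,B$ and $B,C$ and $C,A$, that the idempotent sequences for the reversed pairs are
\begin{eqnarray*}
\lbrace E_{d-i}\rbrace_{i=0}^d, \qquad \lbrace E'_{d-i}\rbrace_{i=0}^d, \qquad \lbrace E''_{d-i}\rbrace_{i=0}^d
\end{eqnarray*}
for $B,A$ and $C,B$ and $A,C$ respectively. Thus the $(A,C)$-decomposition of $V$ is $\lbrace E''_{d-i}V\rbrace_{i=0}^d$, the $(B,A)$-decomposition is $\lbrace E_{d-i}V\rbrace_{i=0}^d$, and the $(C,B)$-decomposition is $\lbrace E'_{d-i}V\rbrace_{i=0}^d$, while the $(A,B)$, $(B,C)$, $(C,A)$-decompositions are $\lbrace E_iV\rbrace_{i=0}^d$, $\lbrace E'_iV\rbrace_{i=0}^d$, $\lbrace E''_iV\rbrace_{i=0}^d$ respectively, directly from Definition~\ref{def:LRTIdem}.

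Next I would invoke Lemma~\ref{lem:moveDec}. For part~(i), setting the index there to $d-i$, the first assertion of Lemma~\ref{lem:moveDec} yields $\mathbb A E''_{d-i}V=E_{i}V$ for $0\le i\le d$; this says exactly that $\mathbb A$ carries the $i$-component of the $(A,C)$-decomposition onto the $i$-component of the $(A,B)$-decomposition, proving~(i). Parts~(ii) and~(iii) are handled identically: the second equation of Lemma~\ref{lem:moveDec} with index $d-i$ gives $\mathbb B E_{d-i}V=E'_{i}V$, establishing~(ii), and the third gives $\mathbb C E'_{d-i}V=E''_{i}V$, establishing~(iii).

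Since each step is a direct consequence of Lemma~\ref{lem:moveDec} together with the reindexing provided by Lemma~\ref{lem:Ebackward}, there is no real obstacle; the only thing to be careful about is aligning the index conventions so that one is matching the $i$-component of the source decomposition with the $i$-component of the target decomposition rather than with the $(d-i)$-component.
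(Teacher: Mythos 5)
Your proof is correct and is essentially the paper's own argument: the paper simply states that the lemma is a reformulation of Lemma~\ref{lem:moveDec}, and your write-up spells out precisely that reformulation, with the correct identification of the $(A,C)$, $(B,A)$, $(C,B)$-decompositions as the inversions $\lbrace E''_{d-i}V\rbrace$, $\lbrace E_{d-i}V\rbrace$, $\lbrace E'_{d-i}V\rbrace$ and the correct reindexing $j=d-i$.
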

\begin{proof} This is a reformulation of Lemma
\ref{lem:moveDec}.
\end{proof}

\noindent We now consider how the maps $\mathbb A,\mathbb B, \mathbb C$
act on the three flags 
(\ref{eq:LRTMOF}). 

\begin{lemma} The following {\rm (i)--(iii)} hold:
\begin{enumerate}
\item[\rm (i)]  $\mathbb A$ fixes 
$\lbrace A^{d-i}V\rbrace_{i=0}^d$ and sends
$\lbrace C^{d-i}V\rbrace_{i=0}^d$ to
$\lbrace B^{d-i}V\rbrace_{i=0}^d$;
\item[\rm (ii)]  $\mathbb B$ fixes 
$\lbrace B^{d-i}V\rbrace_{i=0}^d$ and sends
$\lbrace A^{d-i}V\rbrace_{i=0}^d$ to
$\lbrace C^{d-i}V\rbrace_{i=0}^d$;
\item[\rm (iii)]  $\mathbb C$ fixes 
$\lbrace C^{d-i}V\rbrace_{i=0}^d$ and sends
$\lbrace B^{d-i}V\rbrace_{i=0}^d$ to
$\lbrace A^{d-i}V\rbrace_{i=0}^d$.
\end{enumerate}
\end{lemma}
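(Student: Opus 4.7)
The plan is to deduce all three assertions from Lemma~\ref{lem:AAdecSend} together with the description of the flags $\lbrace A^{d-i}V\rbrace_{i=0}^d$, $\lbrace B^{d-i}V\rbrace_{i=0}^d$, $\lbrace C^{d-i}V\rbrace_{i=0}^d$ given in Lemma~\ref{lem:LRTinducedFlag}. The key elementary observation is the following: if a linear map sends a decomposition $\lbrace V_i\rbrace_{i=0}^d$ to a decomposition $\lbrace W_i\rbrace_{i=0}^d$, and both decompositions induce the same flag $\lbrace U_i\rbrace_{i=0}^d$, then the map fixes each $U_i$, since $U_i=\sum_{j=0}^i V_j$ is sent to $\sum_{j=0}^i W_j=U_i$.

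First I would prove (i). By Lemma~\ref{lem:AAdecSend}(i), the map $\mathbb A$ sends the $(A,C)$-decomposition of $V$ to the $(A,B)$-decomposition of $V$. By Lemma~\ref{lem:LRTinducedFlag} (rows $(A,B)$ and $(A,C)$), both decompositions induce the flag $\lbrace A^{d-i}V\rbrace_{i=0}^d$. By the observation above, $\mathbb A$ fixes this flag. Moreover, from Lemma~\ref{lem:moveDec} we have $\mathbb A E''_jV=E_{d-j}V$ for $0\leq j\leq d$, which is to say that $\mathbb A$ sends the $(C,A)$-decomposition $\lbrace E''_jV\rbrace_{j=0}^d$ to the $(B,A)$-decomposition $\lbrace E_{d-j}V\rbrace_{j=0}^d$ (these being the inversions of the $(A,C)$- and $(A,B)$-decompositions). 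By Lemma~\ref{lem:LRTinducedFlag} (rows $(C,A)$ and $(B,A)$), these two decompositions induce the flags $\lbrace C^{d-i}V\rbrace_{i=0}^d$ and $\lbrace B^{d-i}V\rbrace_{i=0}^d$ respectively. Summing components up to index $i$ then yields $\mathbb A(C^{d-i}V)=B^{d-i}V$ for $0\leq i\leq d$.

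To handle (ii) and (iii), I would simply apply the proof of (i) to the p-relatives $B,C,A$ and $C,A,B$ of $A,B,C$, using the notational convention of Definition~\ref{def:primeConv}. Since these relatives are themselves LR triples on $V$ (with the roles of the three maps and of the three idempotent sequences cyclically permuted), and since $\mathbb B$ (resp.\ $\mathbb C$) is, by definition, the unipotent map $\mathbb A$ computed for the relative $B,C,A$ (resp.\ $C,A,B$), the two remaining statements are immediate formal consequences of (i).

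There is no real obstacle here; the only thing to watch is the bookkeeping between a decomposition and its inversion, and matching up the two different decompositions that induce the same flag. Once one observes that $\mathbb A$ sends the $(A,C)$-decomposition to the $(A,B)$-decomposition and that these two decompositions induce the same flag (by Lemma~\ref{lem:LRTinducedFlag}), everything falls out cleanly.
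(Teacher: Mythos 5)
Your proof is correct and follows essentially the same route as the paper, which simply cites Lemmas~\ref{lem:LRTinducedFlag} and~\ref{lem:AAdecSend}; you have just spelled out the bookkeeping (same flag induced by the $(A,C)$- and $(A,B)$-decompositions, and the index-preserving map from the $(C,A)$-decomposition to the $(B,A)$-decomposition) that the paper leaves implicit. The reduction of (ii), (iii) to (i) via the p-relatives is also consistent with the paper's conventions.
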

\begin{proof} 
By Lemmas
\ref{lem:LRTinducedFlag},
 \ref{lem:AAdecSend}.
\end{proof}

\noindent We now consider how the maps $\mathbb A,\mathbb B, \mathbb C$
act on some bases for $V$.

\begin{lemma}
\label{lem:bbAaction}
 The following {\rm (i)--(iii)} hold:
\begin{enumerate}
\item[\rm (i)] $\mathbb A$ sends each $(A,C)$-basis of $V$
to a compatible $(A,B)$-basis of $V$;
\item[\rm (ii)] $\mathbb B$ sends each $(B,A)$-basis of $V$
to a compatible $(B,C)$-basis of $V$;
\item[\rm (iii)] $\mathbb C$ sends each $(C,B)$-basis of $V$
to a compatible $(C,A)$-basis of $V$.
\end{enumerate}
\end{lemma}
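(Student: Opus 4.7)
The plan is to prove (i) directly, then obtain (ii) and (iii) by applying (i) to the p-relatives $B,C,A$ and $C,A,B$ of the LR triple $A,B,C$ (using Definition \ref{def:primeConv}).

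For (i), fix an $(A,C)$-basis $\{u_i\}_{i=0}^d$ of $V$. Applying Definition \ref{def:ABbasis} to the LR pair $A,C$ (and using Lemma \ref{lem:Ebackward} to identify the $(A,C)$-decomposition as the inversion of the $(C,A)$-decomposition), we have $u_i \in E''_{d-i}V$ for $0 \leq i \leq d$ and $Au_i=u_{i-1}$ for $1\leq i \leq d$, $Au_0=0$. Set $v_i = \mathbb A u_i$. Using Definition \ref{def:Del} and the fact that $E''_k u_j = \delta_{k,d-j} u_j$, the first key computation is
\begin{eqnarray*}
\mathbb A u_j = \sum_{i=0}^d E_{d-i} E''_i u_j = E_j u_j \qquad (0 \leq j \leq d).
\end{eqnarray*}
In particular $v_i \in E_iV$, so $\{v_i\}$ lies in the correct components of the $(A,B)$-decomposition. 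Since $\mathbb A$ is invertible (Lemma \ref{lem:DeltaInv}) and $\{u_i\}$ is a basis, $\{v_i\}$ is a basis of $V$.

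Next, I would verify $Av_i = v_{i-1}$. Because $A$ lowers the $(A,B)$-decomposition, Lemma \ref{lem:LowerRaise} gives $AE_i = E_{i-1}AE_i$ and $E_{i-1}A = E_{i-1}AE_i$, so $AE_i = E_{i-1}A$. Hence
\begin{eqnarray*}
Av_i = AE_i u_i = E_{i-1} A u_i = E_{i-1}u_{i-1} = v_{i-1} \qquad (1 \leq i \leq d),
\end{eqnarray*}
and $Av_0 = E_0 A u_0 = 0$ handles the remaining case (or is automatic from $\mathbb A u_0 \in E_0V$ together with nilpotency). Combined with $v_i \in E_iV$, Definition \ref{def:ABbasis} shows $\{v_i\}$ is an $(A,B)$-basis.

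It remains to check compatibility. By Lemma \ref{lem:LRTinducedFlag} both the $(A,B)$-decomposition and the $(A,C)$-decomposition induce the same flag $\{A^{d-i}V\}_{i=0}^d$, so the bases $\{u_i\}$ and $\{v_i\}$ induce this same flag, forcing the transition matrix from $\{u_i\}$ to $\{v_i\}$ to be upper triangular. Since both bases satisfy the hypotheses of Proposition \ref{prop:Toe}(i) for the map $A$, this transition matrix is upper triangular and Toeplitz. Finally, $u_0 \in E''_dV = A^dV = E_0V$, so $v_0 = E_0 u_0 = u_0$; thus the common diagonal parameter of this Toeplitz matrix is $1$, and Definition \ref{def:compat} yields compatibility. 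The main obstacle will be confirming that the diagonal entry equals $1$, which hinges on the identification $E''_dV = E_0V = A^dV$ via Lemma \ref{lem:NilRec}; once this is in hand, the rest of the argument is essentially bookkeeping.
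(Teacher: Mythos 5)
Your proof is correct, and it opens exactly as the paper's does: both reduce $\mathbb A u_j$ to $E_j u_j$ using $u_j\in E''_{d-j}V$ and the orthogonality of the idempotents. The two arguments diverge in how they recognize $\{E_ju_j\}_{j=0}^d$ as a compatible $(A,B)$-basis. The paper takes a compatible $(A,B)$-basis $\{v_i\}$ as already given (Lemma \ref{lem:exCompat}), writes $v_i=B^iv_0/(\varphi_1\cdots\varphi_i)$ via Lemma \ref{lem:5Char}(iii), expands $E_i=A^{d-i}B^dA^i/(\varphi_1\cdots\varphi_d)$, and matches the two formulas term by term; this is a purely computational identification. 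You instead define $v_i:=E_iu_i$ and verify abstractly that it is an $(A,B)$-basis — invertibility of $\mathbb A$ gives linear independence, the intertwining relation $AE_i=E_{i-1}A$ (a correct consequence of Lemma \ref{lem:LowerRaise}) gives $Av_i=v_{i-1}$, and $E''_dV=A^dV=E_0V$ gives $v_0=u_0$, hence diagonal parameter $1$. Your route avoids the explicit $\varphi$-formulas entirely at the cost of a few extra structural observations; the paper's route is shorter once the compatible basis machinery of Lemmas \ref{lem:exCompat} and \ref{lem:5Char} is in hand. One small remark: your appeal to the common induced flag to get upper triangularity is redundant, since Proposition \ref{prop:Toe} already delivers upper triangular Toeplitz from the two relations $Au_i=u_{i-1}$ and $Av_i=v_{i-1}$ alone.
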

\begin{proof}(i) Let $\lbrace u_i\rbrace_{i=0}^d$
denote an $(A,C)$-basis of $V$, and let
$\lbrace v_i \rbrace_{i=0}^d$ denote a compatible
$(A,B)$-basis of $V$. We have $Au_i=u_{i-1}$ for
$1 \leq i \leq d$ and $Au_0=0$. Also $v_i=B^iv_0/(\varphi_1\cdots \varphi_i)
$
for $0 \leq i \leq d$. Moreover $u_0=v_0$. 
For $0 \leq i \leq d$,
\begin{eqnarray*}
\mathbb A u_i = E_iE''_{d-i}u_i =
 E_i u_i = \frac{A^{d-i}B^dA^i}{\varphi_1 \cdots \varphi_d}
u_i = \frac{A^{d-i}B^du_0}{\varphi_1\cdots \varphi_d}
=
\frac{B^iu_0}{\varphi_1\cdots \varphi_i}=v_i.
\end{eqnarray*}
\\
\noindent (ii), (iii) Similar to the proof of (i) above.
\end{proof}

\begin{lemma} 
\label{lem:TandA}
The following {\rm (i)--(iii)} hold:
\begin{enumerate}
\item[\rm (i)] the matrix $T'$ represents $\mathbb A$ with
respect to each $(A,C)$-basis of $V$;
\item[\rm (ii)] the matrix $T''$ represents $\mathbb B$ with
respect to each $(B,A)$-basis of $V$;
\item[\rm (iii)] the matrix $T$ represents $\mathbb C$ with
respect to each $(C,B)$-basis of $V$.
\end{enumerate}
\end{lemma}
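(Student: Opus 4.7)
The plan is to derive each part directly from Lemma \ref{lem:bbAaction} together with the defining property of the Toeplitz matrices $T,T',T''$ in Definition \ref{def:TTT}. I will write out part (i) in detail; parts (ii) and (iii) will follow by the same argument applied to the relatives $B,C,A$ and $C,A,B$ of the LR triple $A,B,C$.

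For part (i), fix any $(A,C)$-basis $\lbrace u_i\rbrace_{i=0}^d$ of $V$ and set $v_i = \mathbb A u_i$ for $0 \leq i \leq d$. By Lemma \ref{lem:bbAaction}(i) the sequence $\lbrace v_i\rbrace_{i=0}^d$ is an $(A,B)$-basis of $V$ that is compatible with $\lbrace u_i\rbrace_{i=0}^d$. By Definition \ref{def:TTT}(ii) the transition matrix from the $(A,C)$-basis $\lbrace u_i\rbrace_{i=0}^d$ to the compatible $(A,B)$-basis $\lbrace v_i\rbrace_{i=0}^d$ is precisely $T'$; hence $v_j = \sum_{i=0}^d T'_{ij} u_i$ for $0 \leq j \leq d$. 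Since $v_j = \mathbb A u_j$, the matrix that represents $\mathbb A$ with respect to $\lbrace u_i\rbrace_{i=0}^d$ is exactly $T'$, as required.

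For parts (ii) and (iii), apply the notational convention of Definition \ref{def:primeConv} together with Lemma \ref{lem:bbAaction}(ii),(iii) and Definition \ref{def:TTT}(iii),(i), respectively; the argument is word-for-word the same as in part (i).

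There is no real obstacle here: everything reduces to unpacking definitions once Lemma \ref{lem:bbAaction} is available. The only point that deserves a brief mention in the write-up is that the conclusion is independent of the choice of $(A,C)$-basis, which is automatic because for any other $(A,C)$-basis $\lbrace u'_i\rbrace_{i=0}^d$ one has $u'_i = \zeta u_i$ for some $0 \neq \zeta \in \mathbb F$ (by Lemma \ref{lem:BAbasisU} applied to the LR pair $A,C$), and such a rescaling does not affect the matrix that represents any $\mathbb F$-linear map.
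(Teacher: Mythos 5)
Your proof is correct and follows exactly the route the paper intends: its own proof of this lemma is the one-line citation ``By Definition \ref{def:TTT} and Lemma \ref{lem:bbAaction},'' and your write-up simply unpacks that citation, including the (harmless) remark about independence of the choice of basis. No issues.
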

\begin{proof} By Definition
\ref{def:TTT}
and Lemma
\ref{lem:bbAaction}.
\end{proof}

\noindent 
Recall the vectors $\eta,\eta',\eta''$
 and $\tilde \eta, \tilde \eta', \tilde \eta''$
from (\ref{eq:eta}),
(\ref{eq:etaDual}).

\begin{lemma}
\label{lem:AiBiCi}
 For $0 \leq i \leq d$,
\begin{eqnarray*}
&&
\mathbb A C^i \eta = 
\frac{\varphi''_d \cdots \varphi''_{d-i+1}}{\varphi_1 \cdots \varphi_i}\,B^i \eta,
\qquad \qquad 
\mathbb A^{-1} B^i \eta = \frac{\varphi_1 \cdots \varphi_{i}}{\varphi''_d \cdots \varphi''_{d-i+1}}\,C^i \eta,
\\
&&
\mathbb B A^i \eta' = 
\frac{\varphi_d \cdots \varphi_{d-i+1}}{\varphi'_1 \cdots \varphi'_i}\,C^i \eta',
\qquad \qquad 
\mathbb B^{-1} C^i \eta' = \frac{\varphi'_1 \cdots \varphi'_{i}}{\varphi_d \cdots \varphi_{d-i+1}}\,A^i \eta',
\\
&&
\mathbb C B^i \eta'' = 
\frac{\varphi'_d \cdots \varphi'_{d-i+1}}{\varphi''_1 \cdots \varphi''_i}\,A^i \eta'',
\qquad \qquad 
\mathbb C^{-1} A^i \eta'' = \frac{\varphi''_1 \cdots \varphi''_{i}}{\varphi'_d \cdots \varphi'_{d-i+1}}\,B^i \eta''.
\end{eqnarray*}
\end{lemma}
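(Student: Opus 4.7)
The plan is to recognize that each of the six identities is a repackaging of Lemma \ref{lem:bbAaction} applied to the specific compatible basis pairs already exhibited in Lemma \ref{lem:Allbases} and Lemma \ref{lem:moreTTTp}. I will carry out the argument in full for the first identity and indicate how the remaining five reduce to it.

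Concerning the first identity, I will set
\begin{eqnarray*}
u_i = \frac{C^i \eta}{\varphi''_d \cdots \varphi''_{d-i+1}}, \qquad v_i = \frac{B^i \eta}{\varphi_1 \cdots \varphi_i} \qquad (0 \le i \le d).
\end{eqnarray*}
By the first table of Lemma \ref{lem:Allbases}, $\lbrace u_i \rbrace_{i=0}^d$ is an $(A,C)$-basis of $V$ and $\lbrace v_i \rbrace_{i=0}^d$ is an $(A,B)$-basis of $V$; moreover $u_0 = v_0 = \eta$. By Lemma \ref{lem:moreTTTp}(i) these two bases are compatible. Now invoke Lemma \ref{lem:bbAaction}(i): the map $\mathbb{A}$ sends the $(A,C)$-basis $\lbrace u_i\rbrace_{i=0}^d$ to a compatible $(A,B)$-basis of $V$, and by the computation in the proof of that lemma (which forces agreement at $i=0$ via $u_0 = v_0$) the image basis is precisely $\lbrace v_i \rbrace_{i=0}^d$. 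Hence $\mathbb{A} u_i = v_i$ for $0 \le i \le d$; clearing denominators yields the first formula in the lemma statement. The second formula is obtained by applying $\mathbb{A}^{-1}$ to both sides.

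The four remaining identities are obtained by the same argument with the roles of $A,B,C$ permuted. For the pair of identities involving $\mathbb{B}$ and $\mathbb{B}^{-1}$, I will use the $(B,A)$-basis $\lbrace A^i \eta'/(\varphi_d \cdots \varphi_{d-i+1})\rbrace_{i=0}^d$ and the compatible $(B,C)$-basis $\lbrace C^i \eta'/(\varphi'_1 \cdots \varphi'_i)\rbrace_{i=0}^d$ (second table of Lemma \ref{lem:Allbases}, compatibility by Lemma \ref{lem:moreTTTp}(ii)), and invoke Lemma \ref{lem:bbAaction}(ii). For the pair involving $\mathbb{C}$ and $\mathbb{C}^{-1}$, I will use the $(C,B)$-basis $\lbrace B^i \eta''/(\varphi'_d \cdots \varphi'_{d-i+1})\rbrace_{i=0}^d$ and the compatible $(C,A)$-basis $\lbrace A^i \eta''/(\varphi''_1 \cdots \varphi''_i)\rbrace_{i=0}^d$ (third table of Lemma \ref{lem:Allbases}, compatibility by Lemma \ref{lem:moreTTTp}(iii)), and invoke Lemma \ref{lem:bbAaction}(iii). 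Alternatively, one could deduce these cases from the first by applying the first identity to the p-relatives $B,C,A$ and $C,A,B$, using the notational convention of Definition \ref{def:primeConv}; the two approaches give identical conclusions.

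There is no real obstacle here: the work was already done in constructing the tables of Lemma \ref{lem:Allbases}, in identifying the compatible pairs in Lemma \ref{lem:moreTTTp}, and in proving Lemma \ref{lem:bbAaction}. The only point requiring care is bookkeeping of the normalizing scalars, i.e., being precise about which of the several bases of each type appearing in the tables of Lemma \ref{lem:Allbases} is used, so that the ratio of normalizing factors matches the prefactor appearing on the right-hand side of each displayed equation.
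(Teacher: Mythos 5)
Your proposal is correct and follows exactly the paper's route: the paper's own proof is the one-line citation of Lemmas \ref{lem:moreTTTp} and \ref{lem:bbAaction}, and you have simply spelled out the bookkeeping of normalizing scalars (using the bases from Lemma \ref{lem:Allbases} with $u_0=v_0$) that this citation leaves implicit.
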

\begin{proof}
By Lemmas
\ref{lem:moreTTTp},
\ref{lem:bbAaction}.
\end{proof}

\begin{lemma}
\label{lem:AAi}
For $0 \leq i \leq d$,
\begin{eqnarray*}
&&
\mathbb A A^i \eta''
 =
 \frac{(\eta'',\tilde \eta)}{(\eta',\tilde \eta)} A^i \eta',
\qquad \qquad 
\mathbb A^{-1} A^i \eta'
 =
 \frac{(\eta',\tilde \eta)}{(\eta'',\tilde \eta)} A^i \eta'',
\\
&&
\mathbb B B^i \eta
 =
 \frac{(\eta,\tilde \eta')}{(\eta'',\tilde \eta')} B^i \eta'',
\qquad \qquad 
\mathbb B^{-1} B^i \eta''
 =
 \frac{(\eta'',\tilde \eta')}{(\eta,\tilde \eta')} B^i \eta,
\\
&&
\mathbb C C^i \eta'
 =
 \frac{(\eta',\tilde \eta'')}{(\eta,\tilde \eta'')} C^i \eta,
\qquad \qquad 
\mathbb C^{-1} C^i \eta
 =
 \frac{(\eta,\tilde \eta'')}{(\eta',\tilde \eta'')} C^i \eta'.
\end{eqnarray*}
\end{lemma}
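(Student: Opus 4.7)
The plan is to establish the first equation $\mathbb{A} A^i \eta'' = \frac{(\eta'',\tilde\eta)}{(\eta',\tilde\eta)} A^i \eta'$ by direct computation; the third and fifth equations then follow by applying the identical argument to the p-relatives $B,C,A$ and $C,A,B$ under the conventions of Definition \ref{def:primeConv}, while the second, fourth, and sixth are the inverse statements of the first, third, fifth respectively and are obtained by applying $\mathbb{A}^{-1}$, $\mathbb{B}^{-1}$, $\mathbb{C}^{-1}$ to each side (the coefficient ratios are manifestly inverse).

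The key initial observation is that $A^i \eta'' \in E''_i V$: by the third table of Lemma \ref{lem:Allbases} (the ``inverted $(A,C)$'' row), the sequence $\lbrace A^i \eta''\rbrace_{i=0}^d$ induces the $(C,A)$-decomposition, whose $i$-th component is $E''_i V$. Consequently, from Definition \ref{def:Del},
\begin{equation*}
\mathbb{A} A^i \eta'' \;=\; \sum_{j=0}^d E_{d-j} E''_j A^i \eta'' \;=\; E_{d-i} A^i \eta''.
\end{equation*}
I would then substitute the formula $E_{d-i} = (\varphi_1 \cdots \varphi_d)^{-1} A^i B^d A^{d-i}$ from Lemma \ref{lem:Eform2}, and use $A^{d-i}(A^i \eta'') = A^d \eta''$ to rewrite the right-hand side as $(\varphi_1 \cdots \varphi_d)^{-1} A^i B^d A^d \eta''$.

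Applying Lemma \ref{lem:COBSC} to rewrite $A^d \eta''$ as a scalar multiple of $\eta$, and then $B^d \eta$ as a scalar multiple of $\eta'$, and using Lemma \ref{lem:NZ} to reduce $(\varphi_1 \cdots \varphi_d \cdot \beta'_d \beta''_d)^{-1}$ to $\alpha'_d/\beta'_d$, yields
\begin{equation*}
\mathbb{A} A^i \eta'' \;=\; \frac{\alpha'_d}{\beta'_d} \cdot \frac{(\eta'',\tilde\eta')(\eta,\tilde\eta'')}{(\eta,\tilde\eta')(\eta',\tilde\eta'')} \, A^i \eta'.
\end{equation*}
The final step is to simplify the scalar coefficient using Proposition \ref{prop:sixBil}, which expresses $\alpha'_d/\beta'_d$ as the triple ratio $\frac{(\eta,\tilde\eta')(\eta',\tilde\eta'')(\eta'',\tilde\eta)}{(\eta,\tilde\eta'')(\eta',\tilde\eta)(\eta'',\tilde\eta')}$; after substitution, four of the five bilinear factors on the outside cancel against factors in the triple ratio, leaving exactly $(\eta'',\tilde\eta)/(\eta',\tilde\eta)$. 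The only real obstacle is bookkeeping — tracking the six bilinear forms and confirming the cancellation — but Proposition \ref{prop:sixBil} is already organized to make this telescoping transparent, and no new structural input is required beyond Lemmas \ref{lem:Eform2}, \ref{lem:COBSC}, \ref{lem:NZ}, \ref{lem:Allbases}, and Proposition \ref{prop:sixBil}.
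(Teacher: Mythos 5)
Your proposal is correct, and the computation checks out at every step: $A^i\eta''\in E''_iV$ does follow from the third table of Lemma \ref{lem:Allbases}, the reduction $\mathbb A A^i\eta''=E_{d-i}A^i\eta''=(\varphi_1\cdots\varphi_d)^{-1}A^iB^dA^d\eta''$ is valid, and the coefficient $\frac{\alpha'_d}{\beta'_d}\cdot\frac{(\eta'',\tilde\eta')(\eta,\tilde\eta'')}{(\eta,\tilde\eta')(\eta',\tilde\eta'')}$ does collapse to $(\eta'',\tilde\eta)/(\eta',\tilde\eta)$ after substituting Proposition \ref{prop:sixBil}. Your route differs from the paper's only in packaging: the paper evaluates the displayed equations of Lemma \ref{lem:AiBiCi} using Lemma \ref{lem:COB} (converting $A^i\eta''$ into a multiple of $C^{d-i}\eta$ and back), whereas you unwind Definition \ref{def:Del} directly and invoke only the closed formula for $E_{d-i}$ from Lemma \ref{lem:Eform2} together with the $j=d$ case of Lemma \ref{lem:COB} (namely Lemma \ref{lem:COBSC}). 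Since the identity $\mathbb A u_i=E_iE''_{d-i}u_i$ underlying Lemma \ref{lem:bbAaction} --- and hence Lemma \ref{lem:AiBiCi} --- is exactly your opening step, the two arguments are the same computation traversed in a different order; what your version buys is a shorter chain of citations, at the cost of redoing inline a calculation the paper has already abstracted. Your reductions of the remaining five equations to the first (p-relatives for the third and fifth, applying the inverse map for the second, fourth, and sixth) are also sound.
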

\begin{proof} 
Evaluate the displayed equations
in 
Lemma
\ref{lem:AiBiCi}
using
Lemma
\ref{lem:COB}, and simplify the results using
Lemma
\ref{lem:NZ}
and Proposition
\ref{prop:sixBil}.
\end{proof}

\begin{lemma}
\label{lem:iIszero}
The following {\rm (i)--(iii)} hold:
\begin{enumerate}
\item[\rm (i)] $\mathbb A$ fixes $\eta$ and sends
$\eta''$ to $(\eta'',\tilde \eta)/(\eta',\tilde \eta)\eta'$;
\item[\rm (ii)] $\mathbb B$ fixes $\eta'$ and sends
$\eta$ to $(\eta,\tilde \eta')/(\eta'',\tilde \eta')\eta''$;
\item[\rm (iii)] $\mathbb C$ fixes $\eta''$ and sends
$\eta'$ to $(\eta',\tilde \eta'')/(\eta,\tilde \eta'')\eta$.
\end{enumerate}
\end{lemma}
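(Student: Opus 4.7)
The plan is to obtain everything by specializing earlier lemmas at the index $i=0$, since the six assertions of Lemma \ref{lem:iIszero} are exactly the $i=0$ cases of formulas already established.

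First I would handle the ``fixes'' clauses. For (i), observe that $C^0\eta = \eta$, so applying Lemma \ref{lem:AiBiCi} (top-left equation) with $i=0$ gives $\mathbb A\eta = B^0\eta = \eta$, since the empty products in numerator and denominator are both $1$. The same specialization in the middle-left and bottom-left equations of Lemma \ref{lem:AiBiCi} yields $\mathbb B\eta' = \eta'$ and $\mathbb C\eta'' = \eta''$, proving the first clauses of (ii) and (iii). Alternatively one can argue directly from Definition \ref{def:Del}: since $\eta \in A^dV$ is the $0$-component of the $(A,B)$-decomposition (so $E_0\eta = \eta$) and is the $d$-component of the $(C,A)$-decomposition (so $E''_d\eta = \eta$), the only surviving term in $\mathbb A\eta = \sum_{i=0}^d E_{d-i}E''_i\eta$ is the $i=d$ term, which gives $E_0\eta = \eta$; analogous observations dispatch $\mathbb B\eta'=\eta'$ and $\mathbb C\eta''=\eta''$.

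For the second clauses, I would simply set $i=0$ in the three ``left column'' equations of Lemma \ref{lem:AAi}. This gives $\mathbb A\eta'' = \frac{(\eta'',\tilde\eta)}{(\eta',\tilde\eta)}\eta'$, $\mathbb B\eta = \frac{(\eta,\tilde\eta')}{(\eta'',\tilde\eta')}\eta''$, and $\mathbb C\eta' = \frac{(\eta',\tilde\eta'')}{(\eta,\tilde\eta'')}\eta$, which are exactly the assertions required.

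There is no real obstacle here; the statement is essentially the $i=0$ instance of Lemma \ref{lem:AAi} together with the obvious containments $\eta \in A^dV$, $\eta' \in B^dV$, $\eta'' \in C^dV$. The proof will be only a few lines, invoking Lemmas \ref{lem:AiBiCi} and \ref{lem:AAi}.
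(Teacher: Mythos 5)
Your proposal is correct and matches the paper's own proof, which is simply ``Set $i=0$ in Lemmas \ref{lem:AiBiCi}, \ref{lem:AAi}.'' The extra direct argument from Definition \ref{def:Del} is a valid (if unnecessary) bonus.
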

\begin{proof} Set $i=0$ in Lemmas
\ref{lem:AiBiCi},
\ref{lem:AAi}.
\end{proof}

\begin{proposition}
\label{prop:Apoly}
We have
\begin{eqnarray}
\label{eq:rotator1}
\mathbb A = \sum_{i=0}^d \alpha'_i A^i,
\qquad \qquad 
\mathbb B = \sum_{i=0}^d \alpha''_i B^i,
\qquad \qquad 
\mathbb C = \sum_{i=0}^d \alpha_i C^i.
\end{eqnarray}
Moreover
\begin{eqnarray}
\label{eq:rotator2}
\mathbb A^{-1} = \sum_{i=0}^d \beta'_i A^i,
\qquad \qquad 
\mathbb B^{-1} = \sum_{i=0}^d \beta''_i B^i,
\qquad \qquad 
\mathbb C^{-1} = \sum_{i=0}^d \beta_i C^i.
\end{eqnarray}
\end{proposition}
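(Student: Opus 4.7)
My plan is to prove each of the six identities by representing both sides as matrices with respect to a suitably chosen basis of $V$, showing that the matrices coincide, and then invoking the fact that the matrix representation map is an $\mathbb F$-algebra isomorphism (Definition \ref{def:natural}). The three identities in \eqref{eq:rotator1} are proved in parallel, by cyclic symmetry, and the three in \eqref{eq:rotator2} follow by the same argument applied to the inverse matrices.

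For the identity $\mathbb A = \sum_{i=0}^d \alpha'_i A^i$, I would fix an $(A,C)$-basis $\lbrace u_i \rbrace_{i=0}^d$ of $V$. By the $(A,C)$-row of the table in Proposition \ref{prop:matrixRep}, the matrix representing $A$ with respect to this basis is exactly the shift matrix $\tau$ of Definition \ref{def:tauMat}. Consequently $A^i$ is represented by $\tau^i$ for $0 \leq i \leq d$, with entries described by Lemma \ref{lem:tauPower}. Therefore $\sum_{i=0}^d \alpha'_i A^i$ is represented by $\sum_{i=0}^d \alpha'_i \tau^i$, which by Lemma \ref{lem:ToeChar} equals $T'$. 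On the other hand, Lemma \ref{lem:TandA}(i) asserts that $\mathbb A$ is also represented by $T'$ with respect to the same basis. Since the matrix representation map with respect to this basis is an $\mathbb F$-algebra isomorphism from ${\rm End}(V)$ onto ${\rm Mat}_{d+1}(\mathbb F)$, we conclude $\mathbb A = \sum_{i=0}^d \alpha'_i A^i$. The identities for $\mathbb B$ and $\mathbb C$ follow at once by applying the same argument to the LR triples $B,C,A$ and $C,A,B$, using the corresponding rows of the tables in Proposition \ref{prop:matrixRep} and parts (ii), (iii) of Lemma \ref{lem:TandA}.

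For the three identities in \eqref{eq:rotator2}, I would run the same argument: with respect to an $(A,C)$-basis, the map $\mathbb A^{-1}$ is represented by $(T')^{-1}$, which by Definition \ref{def:TTT1} is the upper triangular Toeplitz matrix with parameters $\lbrace \beta'_i \rbrace_{i=0}^d$. By Lemma \ref{lem:ToeChar} again, $(T')^{-1} = \sum_{i=0}^d \beta'_i \tau^i$, so $\sum_{i=0}^d \beta'_i A^i$ is also represented by $(T')^{-1}$. Equality in ${\rm End}(V)$ follows, and the identities for $\mathbb B^{-1}$ and $\mathbb C^{-1}$ are obtained by cyclic permutation of $A,B,C$.

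There is no serious obstacle here; the proof is essentially a bookkeeping exercise that exploits the fact that in an $(A,C)$-basis the map $A$ is the generic nilpotent Jordan block $\tau$, so polynomials in $A$ are precisely the upper triangular Toeplitz matrices, and the Toeplitz data of $A,B,C$ is by definition the data encoding the transition matrices $T$, $T'$, $T''$ between compatible bases in which $\mathbb A,\mathbb B,\mathbb C$ act by definition. The only point requiring care is the consistent bookkeeping of which parameter array $(\alpha_i,\alpha'_i,\alpha''_i)$ or $(\beta_i,\beta'_i,\beta''_i)$ belongs to which of $\mathbb A,\mathbb B,\mathbb C$, which is dictated uniformly by Definitions \ref{def:TTT} and \ref{def:TTT1} together with Lemma \ref{lem:TandA}.
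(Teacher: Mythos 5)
Your proof is correct and follows essentially the same route as the paper's: fix an $(A,C)$-basis, observe via Proposition \ref{prop:matrixRep} that $A$ is represented by $\tau$, use Lemma \ref{lem:ToeChar} to identify $\sum_i \alpha'_i \tau^i$ with $T'$, and conclude by Lemma \ref{lem:TandA}(i); the inverse identities and the cyclic variants are handled the same way. No gaps.
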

\begin{proof} We verify
$\mathbb A = \sum_{i=0}^d \alpha'_i A^i$.
Let $\lbrace u_i\rbrace_{i=0}^d$ denote an $(A,C)$-basis
of $V$.
Recall the matrix $\tau$ from Definition
\ref{def:tauMat}.
By Proposition
\ref{prop:matrixRep},
$\tau$ represents $A$
with respect to
$\lbrace u_i\rbrace_{i=0}^d$.
By Lemma \ref{lem:ToeChar}, $T' = \sum_{i=0}^d \alpha'_i \tau^i$.
So $T'$ represents 
 $\sum_{i=0}^d \alpha'_i A^i$ with respect to
$\lbrace u_i\rbrace_{i=0}^d$.
By Lemma
\ref{lem:TandA}(i), $T'$ represents $\mathbb A$ with respect to
$\lbrace u_i\rbrace_{i=0}^d$.
Therefore $\mathbb A = \sum_{i=0}^d \alpha'_i A^i$.
The remaining assertions of the lemma
 are similarly verified.
\end{proof}

\noindent We emphasize one aspect of Proposition
\ref{prop:Apoly}.

\begin{corollary}
\label{cor:AAcom}
The element $A$ (resp. $B$) (resp. $C$) commutes with
$\mathbb A$ (resp. $\mathbb B$) (resp. $\mathbb C$).
\end{corollary}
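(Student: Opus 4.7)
The proof is essentially a one-line consequence of the preceding Proposition~\ref{prop:Apoly}. My plan is to invoke that proposition directly and observe that each unipotent map is expressed there as a polynomial in the corresponding element of the LR triple.

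More precisely, I would argue as follows. By Proposition~\ref{prop:Apoly}, we have $\mathbb{A} = \sum_{i=0}^d \alpha'_i A^i$, so $\mathbb{A}$ is a polynomial in $A$ with coefficients in $\mathbb F$. Since any element of ${\rm End}(V)$ commutes with every polynomial in itself, $A$ commutes with $\mathbb{A}$. The same reasoning, applied to the other two formulas in Proposition~\ref{prop:Apoly}, shows that $B$ commutes with $\mathbb{B} = \sum_{i=0}^d \alpha''_i B^i$ and that $C$ commutes with $\mathbb{C} = \sum_{i=0}^d \alpha_i C^i$.

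There is no real obstacle here, since all the work has been done in establishing Proposition~\ref{prop:Apoly}. The corollary is merely extracting the commutativity consequence of the polynomial representations. If one wished to avoid the explicit polynomial expressions, an alternative route would be to use Lemma~\ref{lem:delA} together with Lemma~\ref{lem:AiBione} or the spectral decomposition of $A$ on the idempotents $\lbrace E_i\rbrace_{i=0}^d$, but this would be strictly longer and less transparent than the direct appeal to Proposition~\ref{prop:Apoly}.
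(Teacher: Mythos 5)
Your proof is correct and matches the paper exactly: the corollary is stated there as an immediate consequence of Proposition~\ref{prop:Apoly} (introduced with ``We emphasize one aspect of Proposition~\ref{prop:Apoly}''), the point being precisely that each unipotent map is a polynomial in the corresponding generator and hence commutes with it.
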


\noindent An element $X \in {\rm End}(V)$ is
called {\it unipotent} whenever $X-I$ is nilpotent.

\begin{lemma}
\label{lem:unip}
Each of $\mathbb A$,
 $\mathbb B$,
$\mathbb C$ is unipotent.
\end{lemma}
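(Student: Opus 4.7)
The plan is to use Proposition \ref{prop:Apoly} directly, together with the Nil property of $A$, $B$, $C$ established in Lemma \ref{lem:ABdecIndNil}, and the normalization $\alpha_0 = \alpha'_0 = \alpha''_0 = 1$ from Lemma \ref{lem:alpha0}.

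First I would write out $\mathbb{A} - I$ using the formula $\mathbb{A} = \sum_{i=0}^d \alpha'_i A^i$ from Proposition \ref{prop:Apoly}. Since $\alpha'_0 = 1$, the constant term cancels with $I$, giving
\begin{equation*}
\mathbb{A} - I = \sum_{i=1}^d \alpha'_i A^i = A \cdot \Bigl(\sum_{i=1}^d \alpha'_i A^{i-1}\Bigr).
\end{equation*}
Let $P(A) = \sum_{i=1}^d \alpha'_i A^{i-1}$; then $\mathbb{A} - I = A\,P(A)$, and $A$ commutes with $P(A)$ since $P(A)$ is a polynomial in $A$. Therefore
\begin{equation*}
(\mathbb{A} - I)^{d+1} = A^{d+1}\,P(A)^{d+1}.
\end{equation*}
By Lemma \ref{lem:ABdecIndNil} the pair $A,B$ is an LR pair on $V$, so $A$ is Nil, and $A^{d+1} = 0$ by Definition \ref{def:Nil}. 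Hence $(\mathbb{A} - I)^{d+1} = 0$, which shows $\mathbb{A} - I$ is nilpotent, i.e., $\mathbb{A}$ is unipotent.

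The identical argument, applied with $B$ in place of $A$ (using $\mathbb{B} = \sum_{i=0}^d \alpha''_i B^i$ and $\alpha''_0 = 1$) and with $C$ in place of $A$ (using $\mathbb{C} = \sum_{i=0}^d \alpha_i C^i$ and $\alpha_0 = 1$), establishes that $\mathbb{B}$ and $\mathbb{C}$ are unipotent. There is no real obstacle here—the lemma is essentially a one-line consequence of Proposition \ref{prop:Apoly} combined with the Nil property; the only thing to notice is that the constant Toeplitz coefficients are $1$, so $\mathbb{A} - I$, $\mathbb{B} - I$, $\mathbb{C} - I$ are each divisible by the relevant Nil map.
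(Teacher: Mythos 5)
Your proof is correct and follows exactly the paper's argument: Proposition \ref{prop:Apoly} expresses $\mathbb A - I$ as a linear combination of $\lbrace A^i\rbrace_{i=1}^d$ (using $\alpha'_0=1$), and nilpotency of $A$ then gives nilpotency of $\mathbb A - I$; likewise for $\mathbb B$ and $\mathbb C$. The only difference is that you spell out the factorization $(\mathbb A - I)^{d+1}=A^{d+1}P(A)^{d+1}$ explicitly, which the paper leaves implicit.
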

\begin{proof} The element
$\mathbb A-I$ is nilpotent, since
it is a linear combination of $\lbrace A^i\rbrace_{i=1}^d$
and $A$ is nilpotent. Therefore $\mathbb A$
is unipotent. The maps
$\mathbb B$, $\mathbb C$ are similarly shown
to be unipotent.
\end{proof}

\begin{definition}
\label{def:unidata}
\rm Call the sequence
$\mathbb A, \mathbb B, \mathbb C$ the
{\it unipotent data} for $A,B,C$.
\end{definition}

\begin{lemma}
\label{lem:uniNew}
Let $\alpha, \beta, \gamma$ denote nonzero scalars in $\mathbb F$.
Then the LR triples $A,B,C$ and 
$\alpha A, \beta B, \gamma C$ have the same unipotent data.
\end{lemma}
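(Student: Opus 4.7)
The plan is to reduce this to the fact that scaling does not alter the idempotent data, combined with the definitional formula for the unipotent maps. Specifically, by Lemma \ref{lem:isequal}, the LR triples $A,B,C$ and $\alpha A,\beta B,\gamma C$ have identical idempotent data
\[
(\lbrace E_i\rbrace_{i=0}^d;\lbrace E'_i\rbrace_{i=0}^d;\lbrace E''_i\rbrace_{i=0}^d).
\]
But Definition \ref{def:Del} defines $\mathbb A,\mathbb B,\mathbb C$ entirely in terms of these idempotents:
\[
\mathbb A=\sum_{i=0}^d E_{d-i}E''_i,\qquad \mathbb B=\sum_{i=0}^d E'_{d-i}E_i,\qquad \mathbb C=\sum_{i=0}^d E''_{d-i}E'_i.
\]
Consequently the unipotent data, as specified in Definition \ref{def:unidata}, is unchanged under the scaling $A\mapsto\alpha A,\;B\mapsto\beta B,\;C\mapsto\gamma C$.

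There is no real obstacle; the argument is a one-line appeal to Lemma \ref{lem:isequal} followed by Definition \ref{def:Del}. As a sanity check one can also verify consistency with Proposition \ref{prop:Apoly}: the displayed expansion $\mathbb A=\sum_{i=0}^d\alpha'_iA^i$ is not manifestly invariant under scaling (since both $\alpha'_i$ and $A^i$ change), but by Lemma \ref{lem:ToeplitzAdjust} the replacement $A\mapsto\alpha A$ sends $\alpha'_i\mapsto\alpha^{-i}\alpha'_i$, so the product $\alpha'_iA^i$ is preserved, in agreement with the conclusion.
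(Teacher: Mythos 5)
Your proof is correct and is exactly the paper's argument: the paper also cites Lemma \ref{lem:isequal} together with Definition \ref{def:Del}. The consistency check via Proposition \ref{prop:Apoly} and Lemma \ref{lem:ToeplitzAdjust} is a nice extra but not needed.
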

\begin{proof}
By Lemma
\ref{lem:isequal}
and Definition
\ref{def:Del}.
\end{proof}

\begin{lemma} 
\label{lem:newUniData}
In the table below, we display
some LR triples on $V$ along with their unipotent data.

     \bigskip

\centerline{
\begin{tabular}[t]{c|c}
 {\rm LR triple} & {\rm unipotent data}
 \\
 \hline
 \hline
 $ A,  B, C$ &
$\mathbb A, \mathbb B, \mathbb C$
   \\
 $ B, C, A$ &
$\mathbb B, \mathbb C, \mathbb A$
   \\
 $C, A, B$ &
$\mathbb C, \mathbb A, \mathbb B$
   \\
 \hline
 $ C, B, A$ &
$\mathbb C^{-1}, \mathbb B^{-1}, \mathbb A^{-1}$
   \\
 $ A, C, B$ &
$\mathbb A^{-1}, \mathbb C^{-1}, \mathbb B^{-1}$
   \\
 $ B, A,  C$ &
$\mathbb B^{-1}, \mathbb A^{-1}, \mathbb C^{-1}$
   \end{tabular}}
     \bigskip

\end{lemma}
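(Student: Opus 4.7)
The plan is to verify each row of the table by direct computation using Definition \ref{def:Del} together with the idempotent data for the relatives supplied by Lemma \ref{lem:ABCEvar}. Throughout, I use the convention of Definition \ref{def:primeConv}: a quantity attached to a permutation of $A,B,C$ is built from the idempotent sequences of its three associated LR pairs via the same recipe used for the original triple.

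The first three rows are immediate. For the triple $B,C,A$, the idempotent sequences of its three LR pairs $(B,C),(C,A),(A,B)$ are $\lbrace E'_i\rbrace,\lbrace E''_i\rbrace,\lbrace E_i\rbrace$, so Definition \ref{def:Del} gives unipotent data
\begin{eqnarray*}
\sum_{i=0}^d E'_{d-i}E_i,\qquad \sum_{i=0}^d E''_{d-i}E'_i,\qquad \sum_{i=0}^d E_{d-i}E''_i,
\end{eqnarray*}
which is exactly $\mathbb{B},\mathbb{C},\mathbb{A}$. The row for $C,A,B$ is identical in spirit.

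For the three rows with negative orientation the key input is that, by Lemma \ref{lem:ABCEvar}, the idempotent sequences get both permuted and inverted. For instance, for $C,B,A$ the three LR pairs $(C,B),(B,A),(A,C)$ have idempotent sequences $\lbrace E'_{d-i}\rbrace,\lbrace E_{d-i}\rbrace,\lbrace E''_{d-i}\rbrace$. Applied to Definition \ref{def:Del}, the $\mathbb{A}$-analog becomes
\begin{eqnarray*}
\sum_{i=0}^d E'_{d-(d-i)}E''_{d-i} \;=\; \sum_{i=0}^d E'_i E''_{d-i} \;=\; \sum_{j=0}^d E'_{d-j}E''_j \;=\; \mathbb{C}^{-1},
\end{eqnarray*}
where the last equality uses Lemma \ref{lem:DeltaInv}. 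The same reindexing identifies the $\mathbb{B}$-analog with $\mathbb{B}^{-1}$ and the $\mathbb{C}$-analog with $\mathbb{A}^{-1}$. Rows $A,C,B$ and $B,A,C$ are handled by the analogous substitution.

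The only step that requires care is the bookkeeping of the prime convention against the inversion $i\mapsto d-i$ of the idempotent sequences in Lemma \ref{lem:ABCEvar}; everything else is a substitution and a relabeling of the summation index. Since the negative relatives of $A,B,C$ coincide (as LR triples) with positive relatives of $C,B,A$ by Definition \ref{def:prel}, the last three rows could alternatively be deduced from the first three applied to $C,B,A$, combined with Lemma \ref{lem:DeltaInv}; this gives a slightly cleaner presentation but rests on the same identification.
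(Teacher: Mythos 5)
Your proof is correct and follows exactly the route the paper takes: the paper's proof is simply "Use Definition \ref{def:Del} and Lemmas \ref{lem:ABCEvar}, \ref{lem:DeltaInv}," and your computation spells out precisely that substitution and reindexing. The bookkeeping in your $C,B,A$ row checks out against Lemma \ref{lem:DeltaInv}.
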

\begin{proof}
Use Definition
\ref{def:Del}
and 
Lemmas
\ref{lem:ABCEvar},
\ref{lem:DeltaInv}.
\end{proof}

\begin{lemma}
\label{lem:newUniDataDual}
In the table below, we display
some LR triples on $V^*$ along with their unipotent data.

     \bigskip

\centerline{
\begin{tabular}[t]{c|c}
 {\rm LR triple} & {\rm unipotent data}
 \\
 \hline
 \hline
 $ \tilde A,  \tilde B,\tilde C$ &
 $\tilde {\mathbb A}^{-1}, \tilde{ \mathbb B}^{-1}, \tilde {\mathbb C}^{-1}$
   \\
 $ \tilde B, \tilde C, \tilde A$ &
$\tilde {\mathbb B}^{-1}, \tilde {\mathbb C}^{-1}, \tilde {\mathbb A}^{-1}$
   \\
 $\tilde C, \tilde A, \tilde B$ &
$\tilde {\mathbb C}^{-1}, \tilde {\mathbb A}^{-1}, \tilde {\mathbb B}^{-1}$
   \\
 \hline
 $ \tilde C, \tilde B, \tilde A$ &
$\tilde {\mathbb C}, \tilde {\mathbb B}, \tilde {\mathbb A}$
   \\
 $ \tilde A, \tilde C, \tilde B$ &
$\tilde{ \mathbb A}, \tilde {\mathbb C}, \tilde {\mathbb B}$
   \\
 $ \tilde B, \tilde A,  \tilde C$ &
$\tilde {\mathbb B}, \tilde {\mathbb A}, \tilde {\mathbb C}$
\end{tabular}}
     \bigskip

\end{lemma}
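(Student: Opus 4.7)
The plan is to establish the first row of the table directly from the definition, and then derive the remaining five rows by applying Lemma \ref{lem:newUniData} to the LR triple $\tilde A,\tilde B,\tilde C$ on $V^*$.

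First I would compute the unipotent data for $\tilde A,\tilde B,\tilde C$. By Lemma \ref{lem:tildeABCEvar} (first row), the idempotent data of $\tilde A,\tilde B,\tilde C$ is
$(\lbrace \tilde E_{d-i}\rbrace_{i=0}^d;\lbrace \tilde E'_{d-i}\rbrace_{i=0}^d;\lbrace \tilde E''_{d-i}\rbrace_{i=0}^d)$.
Plugging this into the formulas in Definition \ref{def:Del}, and reindexing, I obtain that the unipotent data of $\tilde A,\tilde B,\tilde C$ is
\begin{eqnarray*}
\sum_{i=0}^d \tilde E_i \tilde E''_{d-i},\qquad
\sum_{i=0}^d \tilde E'_i \tilde E_{d-i},\qquad
\sum_{i=0}^d \tilde E''_i \tilde E'_{d-i}.
\end{eqnarray*}
On the other hand, since the adjoint map ${\rm End}(V)\to{\rm End}(V^*)$ is an $\mathbb F$-algebra antiisomorphism, applying it to the formulas for $\mathbb A^{-1},\mathbb B^{-1},\mathbb C^{-1}$ in Lemma \ref{lem:DeltaInv} reverses the order of each summand, giving precisely the three sums displayed above. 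Hence the unipotent data of $\tilde A,\tilde B,\tilde C$ equals $\widetilde{\mathbb A^{-1}},\widetilde{\mathbb B^{-1}},\widetilde{\mathbb C^{-1}}$. Because adjoint is an algebra antiisomorphism it commutes with taking inverses, so $\widetilde{\mathbb A^{-1}}=\tilde{\mathbb A}^{-1}$ and similarly for $\mathbb B,\mathbb C$. This proves row 1 of the table.

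To finish, I would apply Lemma \ref{lem:newUniData} directly to the LR triple $\tilde A,\tilde B,\tilde C$ (playing the role of $A,B,C$ there) with unipotent data $\tilde{\mathbb A}^{-1},\tilde{\mathbb B}^{-1},\tilde{\mathbb C}^{-1}$. The three positive-orientation p-relatives $\tilde B,\tilde C,\tilde A$ and $\tilde C,\tilde A,\tilde B$ immediately yield rows 2 and 3. The three n-relatives $\tilde C,\tilde B,\tilde A$, $\tilde A,\tilde C,\tilde B$, $\tilde B,\tilde A,\tilde C$ yield the inverses of the cyclically shifted triples, which give rows 4, 5, 6 once we note $(\tilde{\mathbb A}^{-1})^{-1}=\tilde{\mathbb A}$, and similarly for $\mathbb B,\mathbb C$.

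The only real content is row 1, and the only subtlety there is keeping track of how the antiisomorphism property of the adjoint interacts with the reversed order of indices on the dual idempotents; everything else is bookkeeping. No serious obstacle is anticipated.
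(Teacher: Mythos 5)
Your proposal is correct and follows essentially the same route as the paper, which proves the lemma by combining Definition \ref{def:Del}, the dual idempotent data from Lemma \ref{lem:tildeABCEvar}, and the inverse formulas of Lemma \ref{lem:DeltaInv}, using that the adjoint is an antiisomorphism. Your only (harmless) refinement is to verify row 1 explicitly and then invoke Lemma \ref{lem:newUniData} for the remaining rows instead of repeating the computation.
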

\begin{proof} 
Use Definition
\ref{def:Del}
and 
 Lemmas
\ref{lem:tildeABCEvar},
\ref{lem:DeltaInv}. Keep in mind that the
adjoint map is an antiisomorphism.
\end{proof}

\begin{lemma} Assume that $A,B,C$ is bipartite.
Then the projector $J$ commutes with each of $\mathbb A$,
$\mathbb B$,
$\mathbb C$.
\end{lemma}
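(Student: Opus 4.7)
The plan is to express each of $\mathbb{A},\mathbb{B},\mathbb{C}$ as a polynomial in $A^2$, $B^2$, and $C^2$ respectively, and then invoke the fact (already established in Section~9) that the projector for an LR pair commutes with the square of each member of that pair.

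First I would note that since $A,B,C$ is bipartite, Lemma~\ref{lem:case} gives that $d=2m$ is even and that $\alpha'_i=0$, $\alpha''_i=0$, $\alpha_i=0$ whenever $i$ is odd. Applying Proposition~\ref{prop:Apoly}, the sums
\begin{eqnarray*}
\mathbb{A}=\sum_{i=0}^d\alpha'_iA^i,\qquad
\mathbb{B}=\sum_{i=0}^d\alpha''_iB^i,\qquad
\mathbb{C}=\sum_{i=0}^d\alpha_iC^i
\end{eqnarray*}
collapse to
\begin{eqnarray*}
\mathbb{A}=\sum_{j=0}^{m}\alpha'_{2j}A^{2j},\qquad
\mathbb{B}=\sum_{j=0}^{m}\alpha''_{2j}B^{2j},\qquad
\mathbb{C}=\sum_{j=0}^{m}\alpha_{2j}C^{2j},
\end{eqnarray*}
so $\mathbb{A}$ is a polynomial in $A^2$, and similarly for $\mathbb{B},\mathbb{C}$.

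Next I would invoke the lemma in Section~9 stating that the projector of an LR pair commutes with the squares of its two members. By Definition~\ref{def:projector}, the map $J$ is simultaneously the projector of each of the LR pairs $A,B$ and $B,C$ and $C,A$. Applying the Section~9 result to each of these three pairs yields that $J$ commutes with $A^2$, with $B^2$, and with $C^2$. Hence $J$ commutes with any polynomial in $A^2$, any polynomial in $B^2$, and any polynomial in $C^2$, and in particular with $\mathbb{A}$, $\mathbb{B}$, $\mathbb{C}$, respectively.

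There is no real obstacle here; the only subtlety is the observation that the three ``projectors'' associated with the three LR pairs making up the bipartite LR triple coincide with the single map $J$ of Definition~\ref{def:projector}, so the Section~9 commutation result applies uniformly in all three directions. Everything else is a one-line consequence of the vanishing of odd-indexed Toeplitz parameters in the bipartite case (Lemma~\ref{lem:case}) combined with the polynomial formulas of Proposition~\ref{prop:Apoly}.
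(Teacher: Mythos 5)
Your proof is correct, but it takes a different route from the paper. The paper's own argument is a one-liner: it reads $\mathbb A,\mathbb B,\mathbb C$ straight from Definition \ref{def:Del} as sums of products of the idempotents $E_i,E'_i,E''_i$, and then quotes Lemma \ref{lem:EEEJ}, which says $J$ commutes with every one of those idempotents; commutation with $\mathbb A,\mathbb B,\mathbb C$ is immediate. You instead go through the polynomial expressions of Proposition \ref{prop:Apoly}, use Lemma \ref{lem:case} to kill the odd-indexed Toeplitz parameters so that each unipotent map becomes a polynomial in $A^2$, $B^2$, or $C^2$, and then invoke the Section~9 fact that the projector of an LR pair commutes with the squares of its members, applied to all three pairs via the coincidence of projectors recorded in Definition \ref{def:projector}. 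Both arguments are sound and about equally short once the supporting lemmas are in hand; the paper's avoids any appeal to the Toeplitz data or to Proposition \ref{prop:Apoly} and so sits slightly earlier in the logical development, while yours has the mild bonus of showing that $J$ commutes with every polynomial in $A^2$, $B^2$, $C^2$, not just with the unipotent maps. Your handling of the one genuine subtlety — that the projectors of the three constituent LR pairs are all the same map $J$ — is exactly right and is indeed the content of Definition \ref{def:projector} and Lemma \ref{lem:V0V1}.
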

\begin{proof} 
By Definition
\ref{def:Del},
and since
$J$ commutes with each of
$E_i, E'_i, E''_i$ for
$0 \leq i \leq d$.
\end{proof}

\begin{lemma} Assume that $A,B,C$ is bipartite.
Then
\begin{eqnarray*}
&&
\mathbb A V_{\rm out} = V_{\rm out},
\qquad \qquad
\mathbb B V_{\rm out} = V_{\rm out},
\qquad \qquad
\mathbb C V_{\rm out} = V_{\rm out},
\\
&&
\mathbb A V_{\rm in} = V_{\rm in},
\qquad \qquad \;\;
\mathbb B V_{\rm in} = V_{\rm in},
\qquad \qquad 
\quad 
\mathbb C V_{\rm in} = V_{\rm in}.
\end{eqnarray*}
\end{lemma}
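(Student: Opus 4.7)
The plan is to deduce this lemma as an immediate corollary of the preceding lemma together with the invertibility of the unipotent maps. The key observation is that the outer and inner parts of $V$ are exactly the image and kernel of the projector $J$.

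First I would recall from Lemma \ref{lem:Jfacts}(v) (applied via Definition \ref{def:projector}) that $V_{\rm out} = JV$ and $V_{\rm in} = (I-J)V$. By the lemma immediately preceding this one, $J$ commutes with each of $\mathbb{A}, \mathbb{B}, \mathbb{C}$. Hence for instance $\mathbb{A} V_{\rm out} = \mathbb{A} J V = J \mathbb{A} V \subseteq JV = V_{\rm out}$, and similarly $\mathbb{A}(I-J) = (I-J)\mathbb{A}$ yields $\mathbb{A} V_{\rm in} \subseteq V_{\rm in}$. The same argument gives $\mathbb{B} V_{\rm out} \subseteq V_{\rm out}$, $\mathbb{B} V_{\rm in} \subseteq V_{\rm in}$, $\mathbb{C} V_{\rm out} \subseteq V_{\rm out}$, and $\mathbb{C} V_{\rm in} \subseteq V_{\rm in}$.

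To upgrade these inclusions to the equalities asserted in the lemma, I would invoke Lemma \ref{lem:DeltaInv}, which asserts that $\mathbb{A}, \mathbb{B}, \mathbb{C}$ are invertible. An invertible linear map sends a subspace to a subspace of the same dimension, so from $\mathbb{A} V_{\rm out} \subseteq V_{\rm out}$ we obtain $\dim \mathbb{A} V_{\rm out} = \dim V_{\rm out}$, forcing equality; likewise for $V_{\rm in}$ and for $\mathbb{B}, \mathbb{C}$.

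There is really no obstacle to this argument: the whole content has been packaged into the previous lemma (that $J$ commutes with the unipotent maps) and into Lemma \ref{lem:DeltaInv} (invertibility of the unipotent maps). If one preferred to avoid the dimension-counting step, one could argue symmetrically by applying the inclusion to $\mathbb{A}^{-1}$ (which by Lemma \ref{lem:DeltaInv} also commutes with $J$, since $J \mathbb{A}^{-1} = \mathbb{A}^{-1} \mathbb{A} J \mathbb{A}^{-1} = \mathbb{A}^{-1} J \mathbb{A} \mathbb{A}^{-1} = \mathbb{A}^{-1} J$) to get the reverse inclusions. Either route suffices.
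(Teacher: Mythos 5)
Your proof is correct and follows essentially the same two-step structure as the paper's: first show $V_{\rm out}$ and $V_{\rm in}$ are invariant under $\mathbb A,\mathbb B,\mathbb C$, then use the invertibility from Lemma \ref{lem:DeltaInv} to upgrade the inclusions to equalities. The only (cosmetic) difference is that you obtain invariance from the preceding lemma ($J$ commutes with the unipotent maps, together with $V_{\rm out}=JV$ and $V_{\rm in}=(I-J)V$), whereas the paper reads it off directly from Definition \ref{def:Del} and Lemma \ref{lem:V0V1} using the evenness of $d$; both routes rest on the same underlying fact that $J$ commutes with all the idempotents.
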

\begin{proof} By
Lemma
\ref{lem:V0V1},
Definition
\ref{def:Del}, and since $d$ is even,
we find that $V_{\rm out}$ and
$V_{\rm in}$ are invariant under each of
$\mathbb A, \mathbb B, \mathbb C$.
By Lemma
\ref{lem:DeltaInv} the maps
$\mathbb A, \mathbb B, \mathbb C$
are invertible.
The result follows.
\end{proof}

\noindent The next two lemmas follow from the
construction.

\begin{lemma}
\label{lem:unipA2}
Assume that $A,B,C$ is bipartite, so that
$A^2,B^2,C^2$ act on $V_{\rm out}$  as
an LR triple. The unipotent data for this triple
is given by the actions of
$\mathbb A, \mathbb B, \mathbb C$  on
$V_{\rm out}$.
\end{lemma}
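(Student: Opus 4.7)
The plan is to prove this by direct computation: unfold the definition of the unipotent maps for the restricted LR triple on $V_{\rm out}$ using the idempotent data supplied by Lemma \ref{lem:A2B2C2Out}, and then match the result with the restrictions of $\mathbb A, \mathbb B, \mathbb C$ to $V_{\rm out}$. First I would invoke Lemma \ref{lem:A2B2C2Out}(ii), which tells us that the idempotent data of $A^2,B^2,C^2$ acting on $V_{\rm out}$ is
\begin{eqnarray*}
(\lbrace E_{2j}|_{V_{\rm out}}\rbrace_{j=0}^m;\;\lbrace E'_{2j}|_{V_{\rm out}}\rbrace_{j=0}^m;\;\lbrace E''_{2j}|_{V_{\rm out}}\rbrace_{j=0}^m),
\end{eqnarray*}
where $m=d/2$. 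Applying Definition \ref{def:Del} to this data, the unipotent map corresponding to $\mathbb A$ for the restricted triple is $\sum_{j=0}^m E_{2(m-j)}\,E''_{2j}\big|_{V_{\rm out}} = \sum_{j=0}^m E_{d-2j}\,E''_{2j}\big|_{V_{\rm out}}$, with analogous expressions for the analogs of $\mathbb B$ and $\mathbb C$.

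Next I would compare with $\mathbb A|_{V_{\rm out}} = \sum_{i=0}^d E_{d-i}\,E''_i\big|_{V_{\rm out}}$. The key observation is that the odd-indexed summands vanish on $V_{\rm out}$: by Lemma \ref{lem:V0V1}(i) we have $V_{\rm out} = \sum_{k=0}^m E''_{2k}V$, so for any odd $i$ the map $E''_i$ kills every summand $E''_{2k}V$ by mutual orthogonality of the idempotents $\lbrace E''_r\rbrace_{r=0}^d$. Therefore only the terms with $i=2j$ ($0\le j\le m$) contribute, and these satisfy $d-i = d-2j = 2(m-j)$, which is an even index. Consequently $\mathbb A|_{V_{\rm out}} = \sum_{j=0}^m E_{d-2j}\,E''_{2j}\big|_{V_{\rm out}}$, matching the formula derived from the restricted triple. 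The same argument, applied to the other two cyclic rotations, yields the identifications for $\mathbb B$ and $\mathbb C$.

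The only subtlety to address is well-definedness: I must check that $\mathbb A,\mathbb B,\mathbb C$ really restrict to maps on $V_{\rm out}$, which follows from the lemma immediately preceding Lemma \ref{lem:unipA2} (stating $\mathbb A V_{\rm out}=V_{\rm out}$, etc.). There is no essential obstacle here; the proof is a short verification, and the main subtlety is simply keeping track of parity so that the index shift $i\mapsto 2(m-j)$ in the restricted triple aligns correctly with the shift $i\mapsto d-i$ in the original triple, which works precisely because $d$ is even in the bipartite case (Lemma \ref{lem:case}).
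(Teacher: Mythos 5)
Your proof is correct, and it is essentially the argument the paper intends: the paper dismisses this lemma with "follows from the construction," and your verification — computing the restricted triple's unipotent maps from the idempotent data of Lemma \ref{lem:A2B2C2Out}(ii) via Definition \ref{def:Del}, then killing the odd-indexed terms of $\mathbb A = \sum_{i=0}^d E_{d-i}E''_i$ on $V_{\rm out}$ by orthogonality of the idempotents — is exactly the omitted computation, with the parity bookkeeping ($d-2j = 2(m-j)$, valid since $d$ is even) handled correctly.
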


\begin{lemma}
\label{lem:unipA2In}
Assume that $A,B,C$ is bipartite and nontrivial, so that
$A^2,B^2,C^2$ act on $V_{\rm in}$  as
an LR triple. The unipotent data for this triple
is given by the actions of
$\mathbb A, \mathbb B, \mathbb C$  on
$V_{\rm in}$.
\end{lemma}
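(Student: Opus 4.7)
The plan is to verify the lemma by direct comparison of definitions, exploiting the parity structure of the bipartite idempotent data on $V_{\rm in}$.

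First, I would fix $m=d/2$ and invoke Lemma \ref{lem:A2B2C2In}, which asserts that $A^2,B^2,C^2$ restricted to $V_{\rm in}$ form an LR triple of diameter $m-1$ with idempotent data $(\lbrace E_{2j+1}\rbrace_{j=0}^{m-1}; \lbrace E'_{2j+1}\rbrace_{j=0}^{m-1}; \lbrace E''_{2j+1}\rbrace_{j=0}^{m-1})$ (where each $E_{2j+1}$, $E'_{2j+1}$, $E''_{2j+1}$ is understood as its action on $V_{\rm in}$). Applying Definition \ref{def:Del} to this restricted LR triple, its unipotent maps on $V_{\rm in}$ are
\begin{eqnarray*}
\sum_{j=0}^{m-1} E_{2(m-1-j)+1}\,E''_{2j+1},
\qquad
\sum_{j=0}^{m-1} E'_{2(m-1-j)+1}\,E_{2j+1},
\qquad
\sum_{j=0}^{m-1} E''_{2(m-1-j)+1}\,E'_{2j+1},
\end{eqnarray*}
namely $\sum_{j=0}^{m-1} E_{d-2j-1}E''_{2j+1}$ and its cyclic analogues.

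Next, I would compare these three expressions with the restrictions of $\mathbb A,\mathbb B,\mathbb C$ to $V_{\rm in}$. By Definition \ref{def:Del}, $\mathbb A=\sum_{i=0}^d E_{d-i}E''_i$. Since $V_{\rm in}=\sum_{j=0}^{m-1}E''_{2j+1}V$, for every $v\in V_{\rm in}$ we have $E''_i v=0$ whenever $i$ is even. Hence the sum for $\mathbb A v$ collapses to the odd-$i$ summands; reindexing $i=2j+1$ with $0\leq j\leq m-1$ (and using that $d-i=d-2j-1$ is odd, so $E_{d-2j-1}$ preserves $V_{\rm in}$) yields $\mathbb A v = \sum_{j=0}^{m-1}E_{d-2j-1}E''_{2j+1}v$, which is exactly the first expression above. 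The identical argument, with the appropriate permutation of the idempotent sequences, handles $\mathbb B$ and $\mathbb C$.

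There is no substantive obstacle: the argument is mechanical once the bipartite parity structure (Lemma \ref{lem:V0V1}, Lemma \ref{lem:case}) is in hand, and the proof is entirely parallel to that of Lemma \ref{lem:unipA2} for $V_{\rm out}$, the only change being that the sums now run over the odd indices (whose number is $m$) rather than the even indices (whose number is $m+1$). The sole bookkeeping point to check is that the reindexing matches: the unipotent map formula for the restricted triple involves $e_{(m-1)-j}e''_j=E_{2(m-1-j)+1}E''_{2j+1}$, and $2(m-1-j)+1=d-2j-1$, so the indices align on the nose.
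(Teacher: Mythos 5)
Your proof is correct and is exactly the verification the paper intends (the paper merely says the lemma "follows from the construction" and gives no details): restrict Definition \ref{def:Del} to the diameter-$(m-1)$ LR triple of Lemma \ref{lem:A2B2C2In}, use that the even-indexed idempotents $E''_i$ annihilate $V_{\rm in}$ so that $\mathbb A$ collapses to the odd-indexed summands, and check the reindexing $2(m-1-j)+1=d-2j-1$. Nothing to add.
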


\begin{lemma}
Assume that $A,B,C$ is bipartite.
Let
\begin{eqnarray*}
\alpha_{\rm out}, \quad 
\alpha_{\rm in}, \quad 
\beta_{\rm out}, \quad 
\beta_{\rm in}, \quad 
\gamma_{\rm out}, \quad 
\gamma_{\rm in}
\end{eqnarray*}
denote nonzero scalars in $\mathbb F$, so that the sequence
\begin{eqnarray*}
\alpha_{\rm out}A_{\rm out}
+\alpha_{\rm in}A_{\rm in},
\qquad
\beta_{\rm out} B_{\rm out}+
\beta_{\rm in}B_{\rm in},
\qquad
\gamma_{\rm out}C_{\rm out}+
\gamma_{\rm in} C_{\rm in}
\end{eqnarray*}
is a bipartite LR triple on $V$. This LR triple
has the same unipotent data as $A,B,C$.
\end{lemma}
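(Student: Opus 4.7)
The plan is to observe that the unipotent data depends only on the idempotent data, and then to invoke Lemma \ref{lem:PAnewLRT}(ii), which asserts that the new LR triple in (\ref{eq:NewLRT}) has the same idempotent data as $A,B,C$.

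More concretely, I would first recall Definition \ref{def:Del}, where the unipotent maps are defined by
\begin{eqnarray*}
\mathbb A = \sum_{i=0}^d E_{d-i} E''_{i},
\qquad
\mathbb B = \sum_{i=0}^d E'_{d-i} E_{i},
\qquad
\mathbb C = \sum_{i=0}^d E''_{d-i} E'_i.
\end{eqnarray*}
The crucial point is that the right-hand sides involve only the idempotent data $(\lbrace E_i\rbrace_{i=0}^d;\lbrace E'_i\rbrace_{i=0}^d;\lbrace E''_i\rbrace_{i=0}^d)$ of $A,B,C$, with no direct appearance of $A,B,C$ themselves.

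Next I would apply Lemma \ref{lem:PAnewLRT}(ii) to the LR triple (\ref{eq:NewLRT}); this lemma is stated in the excerpt and tells us that this bipartite LR triple has exactly the same idempotent data as $A,B,C$. Consequently, plugging these identical idempotent sequences into Definition \ref{def:Del} yields the same three sums, so the unipotent maps $\mathbb A, \mathbb B, \mathbb C$ for (\ref{eq:NewLRT}) coincide with those for $A,B,C$. By Definition \ref{def:unidata}, this is precisely the statement that the two LR triples share the same unipotent data.

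There is essentially no obstacle here; the entire argument is a one-line reduction to Lemma \ref{lem:PAnewLRT}(ii). The substantive work was already done in establishing that rescaling the outer and inner parts of $A$, $B$, $C$ leaves the $(A,B)$-, $(B,C)$-, and $(C,A)$-decompositions (and hence their idempotent sequences) unchanged. Thus my write-up will simply be a two-sentence proof: cite Lemma \ref{lem:PAnewLRT}(ii) for the equality of idempotent data, then cite Definition \ref{def:Del} to conclude the equality of the unipotent maps.
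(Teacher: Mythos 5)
Your proof is correct and is exactly the paper's argument: the paper's own proof reads ``By Lemma \ref{lem:PAnewLRT}(ii) and Definition \ref{def:Del}.'' Nothing to add.
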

\begin{proof}
By Lemma
\ref{lem:PAnewLRT}(ii) and
Definition
\ref{def:Del}.
\end{proof}

\section{The rotators for an LR triple} 

\noindent 
Throughout this section the following notation is in effect.
Let $V$ denote a vector space over $\mathbb F$ with
dimension $d+1$.
Let $A,B,C$ denote
an LR triple on $V$, with
 parameter array
(\ref{eq:paLRT}),
idempotent data
(\ref{eq:idseq}),
trace data
(\ref{eq:tracedata}), and Toeplitz data
(\ref{eq:ToeplitzData}).
We discuss a type of element in 
${\rm End}(V)$ called a rotator.

\begin{definition}
\label{def:ABCROT}
\rm
By
a {\it rotator} for $A,B,C$ we mean
an element
$R \in {\rm End}(V)$ such that
for $0 \leq i \leq d$,
\begin{eqnarray}
\label{eq:ROT}
E_i R  =  R E'_i, \qquad \qquad
E'_i R = R E''_i, \qquad \qquad
E''_i R  = R E_i.
\end{eqnarray}
\end{definition}

\begin{lemma}
\label{lem:ROTmeaning}
For $R \in {\rm End}(V)$ the following are equivalent:
\begin{enumerate}
\item[\rm (i)] 
$R$ is a rotator for $A,B,C$;
\item[\rm (ii)] 
for $0 \leq i \leq d$,
\begin{eqnarray*}
&&
R E'_iV \subseteq E_iV,
\;\qquad \qquad
R E''_iV \subseteq  E'_iV,
\;\qquad \qquad
R E_iV \subseteq  E''_iV.
\end{eqnarray*}
\end{enumerate}
\end{lemma}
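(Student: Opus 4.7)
My plan is to establish the three pairs of equivalences, namely $E_iR = RE'_i \Leftrightarrow RE'_iV \subseteq E_iV$, $E'_iR = RE''_i \Leftrightarrow RE''_iV \subseteq E'_iV$, and $E''_iR = RE_i \Leftrightarrow RE_iV \subseteq E''_iV$, separately and by the same argument. Each equivalence is a standard fact about how the commutation relation between a linear map and a pair of idempotents translates into a subspace-containment condition, so the work amounts to writing one such argument carefully.

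Concretely, I would prove the first equivalence as follows. The forward direction is immediate: if $E_iR = RE'_i$, then $RE'_iV = E_iRV \subseteq E_iV$, since $E_i$ projects onto $E_iV$. For the converse, suppose $RE'_jV \subseteq E_jV$ for all $j$ (not just for the chosen $i$). Fix $v \in V$ and decompose it as $v = \sum_{j=0}^d E'_jv$ using the fact that $\lbrace E'_jV \rbrace_{j=0}^d$ is a decomposition of $V$ with $I = \sum_{j=0}^d E'_j$. Then
\begin{equation*}
E_i R v = \sum_{j=0}^d E_i R E'_j v.
\end{equation*}
For each $j$, the inclusion $RE'_jV \subseteq E_jV$ gives $E_iRE'_jv \in E_iE_jV = \delta_{i,j}E_iV$, so the sum collapses to the $j=i$ term: $E_iRv = E_iRE'_iv$. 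But $RE'_iv \in E_iV$, on which $E_i$ acts as the identity, so $E_iRE'_iv = RE'_iv$. Therefore $E_iRv = RE'_iv$ for every $v \in V$, which gives $E_iR = RE'_i$.

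The same calculation, applied with the triples $(E'_i, E''_i, B, C)$ and $(E''_i, E_i, C, A)$ in place of $(E_i, E'_i, A, B)$, yields the remaining two equivalences; alternatively one can obtain them by applying the already-proved case to the p-relatives $B,C,A$ and $C,A,B$ of $A,B,C$ and invoking Lemma~\ref{lem:ABCEvar} to relabel the idempotent data. Combining the three equivalences gives that condition (i) of Definition~\ref{def:ABCROT} holds if and only if the three containments in (ii) hold simultaneously, which is exactly the claim.

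There is no real obstacle here; the only point to watch is making sure one uses that $\lbrace E'_jV \rbrace_{j=0}^d$ is a decomposition (so $I = \sum_j E'_j$) and that $E_iE_j = \delta_{i,j}E_i$, both of which are part of the standing definition of the idempotent sequence recalled in Section~2.
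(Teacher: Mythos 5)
Your proof is correct and is exactly the "linear algebra" argument the paper's one-line proof alludes to: the forward direction is immediate from $RE'_iV = E_iRV \subseteq E_iV$, and the converse follows by expanding $I=\sum_j E'_j$ and using $E_iE_j=\delta_{i,j}E_i$, with the correct observation that the converse needs the containments for all indices simultaneously. No issues.
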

\begin{proof}
By Definition
\ref{def:ABCROT}
and linear algebra.
\end{proof}

\begin{lemma}
\label{ex:trivRot}
\rm Assume that $A,B,C$ is trivial. Then
each element of ${\rm End}(V)$ is a rotator
for $A,B,C$.
\end{lemma}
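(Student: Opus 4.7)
The plan is to reduce everything to the observation that when $A,B,C$ is trivial we have $d=0$, so $V$ is one-dimensional and there is essentially nothing to check.

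First I would unpack what ``trivial'' means using Example \ref{ex:trivial}: we have $d = 0$ and $A=B=C=0$. Consequently the three decompositions $(A,B)$, $(B,C)$, $(C,A)$ of $V$ are each the single one-dimensional subspace $V$ itself, so the idempotent data (\ref{eq:idseq}) reduces to $E_0 = E'_0 = E''_0 = I$.

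Next I would apply Definition \ref{def:ABCROT} with $d=0$. The defining relations for a rotator are the three equations (\ref{eq:ROT}) ranging over $0 \le i \le d$, so here only $i=0$ occurs, giving
\begin{eqnarray*}
E_0 R = R E'_0, \qquad \qquad E'_0 R = R E''_0, \qquad \qquad E''_0 R = R E_0.
\end{eqnarray*}
Substituting $E_0 = E'_0 = E''_0 = I$, each of these becomes the tautology $R = R$, which holds for every $R \in {\rm End}(V)$.

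Therefore every element of ${\rm End}(V)$ satisfies Definition \ref{def:ABCROT}, proving the lemma. There is no real obstacle here; the entire content is recognizing that in the trivial case the three idempotent sequences collapse to the identity and the rotator relations become vacuous.
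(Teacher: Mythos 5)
Your proof is correct and matches the paper's argument, which likewise observes that for $d=0$ one has $E_0=E'_0=E''_0=I$, so the rotator relations in Definition \ref{def:ABCROT} hold trivially for every element of ${\rm End}(V)$.
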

\begin{proof} For $d=0$ we have
$E_0=E'_0=E''_0=I$.
\end{proof}

\begin{lemma}
Let $R$ denote a rotator for $A,B,C$. Then
\begin{eqnarray}
\mathbb A R = R \mathbb B, \qquad \qquad
\mathbb B R = R \mathbb C, \qquad \qquad
\mathbb C R = R \mathbb A.
\label{eq:RotM2}
\end{eqnarray}
\end{lemma}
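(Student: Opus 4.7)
The plan is to verify each of the three equations in (\ref{eq:RotM2}) directly by expanding the unipotent maps using Definition \ref{def:Del} and then applying the rotator identities (\ref{eq:ROT}) to move $R$ past each idempotent factor.

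First I would tackle $\mathbb A R = R \mathbb B$. Using Definition \ref{def:Del},
\begin{eqnarray*}
\mathbb A R = \sum_{i=0}^d E_{d-i} E''_i R.
\end{eqnarray*}
By the middle identity in (\ref{eq:ROT}) applied as $E''_i R = R E_i$, this becomes $\sum_{i=0}^d E_{d-i} R E_i$. On the other side,
\begin{eqnarray*}
R \mathbb B = \sum_{i=0}^d R E'_{d-i} E_i,
\end{eqnarray*}
and by the first identity in (\ref{eq:ROT}) applied as $R E'_{d-i} = E_{d-i} R$, this also equals $\sum_{i=0}^d E_{d-i} R E_i$. So the two sides agree.

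Next I would establish $\mathbb B R = R \mathbb C$ by the same pattern: expand $\mathbb B R = \sum_i E'_{d-i} E_i R$, push $R$ left through $E_i$ via $E_i R = R E'_i$ to get $\sum_i E'_{d-i} R E'_i$, then expand $R \mathbb C = \sum_i R E''_{d-i} E'_i$ and push $R$ right through $E''_{d-i}$ via $R E''_{d-i} = E'_{d-i} R$ to match. Finally, $\mathbb C R = R \mathbb A$ follows by the exactly analogous calculation using $E'_i R = R E''_i$ on one side and $R E_{d-i} = E''_{d-i} R$ on the other.

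There is no real obstacle here; the identity is an immediate bookkeeping consequence of the rotator defining relations and the explicit formulas for the unipotent maps. The only subtle point is keeping track of which of the three rotator identities in (\ref{eq:ROT}) applies to which idempotent factor, but the choice is forced by the requirement that the primed/double-primed type of the idempotent next to $R$ be changed to the correct type for the other expression.
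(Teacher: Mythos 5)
Your proof is correct and is exactly the argument the paper intends: its proof simply says to use Definition \ref{def:Del} and Definition \ref{def:ABCROT}, and your expansion of the unipotent maps followed by two applications of the rotator identities per equation is precisely that computation spelled out. All three verifications check out, including the correct matching of each rotator identity to the idempotent type being moved past $R$.
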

\begin{proof} Use Definitions
\ref{def:Del},
\ref{def:ABCROT}.
\end{proof}

\begin{definition}
\label{def:ABCROTspace} 
\rm
Let $\mathcal R$ denote the  set of rotators for
$A,B,C$. Note that $\mathcal R$ is a subspace of
the $\mathbb F$-vector space ${\rm End}(V)$.
We call $\mathcal R$ the {\it rotator space} for
$A,B,C$.
\end{definition}


\begin{definition}
\label{lem:Rbasis}
\rm Assume that $A,B,C$ is trivial.
Then the identity $I$ of ${\rm End}(V)=\mathcal R$
is a basis for $\mathcal R$.
We call $I$ the {\it standard rotator} for $A,B,C$.
\end{definition}

\noindent Assume for the moment that $A,B,C$ is nontrivial.
We are going to show that $\mathcal R$ has dimension
1 (if $A,B,C$ is nonbipartite) and 2
(if $A,B,C$ is bipartite). In each case,
we will display an explicit basis for
$\mathcal R$.
We now obtain some results that will be used to
construct these bases.

\begin{lemma}
\label{lem:3rot1} The following {\rm (i)--(iii)} hold.
\begin{enumerate}
\item[\rm (i)] $\mathbb B^{-1} C \mathbb B$ is zero on $E_0V$. Moreover for
$1 \leq i \leq d$ and on $E_iV$,
\begin{eqnarray*}
\mathbb B^{-1} C \mathbb B = \frac{\varphi'_{d-i+1}}{\varphi_{i}} A.
\end{eqnarray*}
\item[\rm (ii)] $\mathbb C^{-1} A \mathbb C$ is zero on $E'_0V$. Moreover for
$1 \leq i \leq d$ and on $E'_iV$,
\begin{eqnarray*}
\mathbb C^{-1} A \mathbb C = \frac{\varphi''_{d-i+1}}{\varphi'_{i}} B.
\end{eqnarray*}
\item[\rm (iii)] $\mathbb A^{-1} B \mathbb A$ is zero on $E''_0V$. Moreover for
$1 \leq i \leq d$ and on $E''_iV$,
\begin{eqnarray*}
\mathbb A^{-1} B \mathbb A = \frac{\varphi_{d-i+1}}{\varphi''_{i}} C.
\end{eqnarray*}
\end{enumerate}
\end{lemma}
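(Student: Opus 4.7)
My plan is to prove part (i) directly by evaluating both sides on a convenient spanning vector of $E_iV$, and then obtain parts (ii), (iii) by applying (i) to the p-relatives $B,C,A$ and $C,A,B$ (using Definition \ref{def:primeConv} and Lemma \ref{lem:newUniData}).

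For part (i), I would first reduce the problem to checking a scalar identity. Fix $0 \leq i \leq d$. By Lemma \ref{lem:moveDec} we have $\mathbb B E_iV = E'_{d-i}V$, so $\mathbb B^{-1}$ sends $E'_{d-i+1}V$ to $E_{i-1}V$. Since the $(B,C)$-decomposition is raised by $C$, the map $C$ sends $E'_{d-i}V$ to $E'_{d-i+1}V$ if $i \geq 1$, and sends $E'_dV$ to $0$ if $i=0$. So $\mathbb B^{-1}C\mathbb B$ maps $E_iV$ into $E_{i-1}V$ when $i\geq 1$, and vanishes on $E_0V$. This already gives the first assertion. For $i \geq 1$ the maps $\mathbb B^{-1}C\mathbb B$ and $A$ both send the one-dimensional space $E_iV$ into $E_{i-1}V$, so it remains to identify the scalar of proportionality.

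To pin down this scalar I would test on the vector $A^{d-i}\eta' \in E_iV$, where $\eta'$ is the nonzero vector in $B^dV$ fixed in \eqref{eq:eta}; that $A^{d-i}\eta' \in E_iV$ follows from Lemma \ref{lem:Allbases} (the row describing the inverted $(A,B)$-basis whose induced decomposition is $(B,A)$). Lemma \ref{lem:AiBiCi} gives the explicit formulas
\begin{eqnarray*}
\mathbb B A^{d-i}\eta' &=& \frac{\varphi_d \cdots \varphi_{i+1}}{\varphi'_1 \cdots \varphi'_{d-i}}\,C^{d-i}\eta',
\\
\mathbb B A^{d-i+1}\eta' &=& \frac{\varphi_d \cdots \varphi_{i}}{\varphi'_1 \cdots \varphi'_{d-i+1}}\,C^{d-i+1}\eta'.
\end{eqnarray*}
Applying $C$ to the first line and dividing by the second,
\begin{eqnarray*}
C\mathbb B A^{d-i}\eta' \;=\; \frac{\varphi'_{d-i+1}}{\varphi_i}\,\mathbb B A^{d-i+1}\eta'.
\end{eqnarray*}
Applying $\mathbb B^{-1}$ to both sides and observing $A^{d-i+1}\eta' = A\cdot A^{d-i}\eta'$ yields $\mathbb B^{-1}C\mathbb B\, A^{d-i}\eta' = (\varphi'_{d-i+1}/\varphi_i)\,A\cdot A^{d-i}\eta'$, which is precisely the identity claimed on $E_iV$.

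Parts (ii) and (iii) follow by the symmetry of the construction: applying the argument above to the LR triple $B,C,A$ gives (ii), and applying it to $C,A,B$ gives (iii) (here I use Lemma \ref{lem:newUniData}, which says the unipotent triple for $B,C,A$ is $\mathbb B, \mathbb C, \mathbb A$, together with the priming convention of Definition \ref{def:primeConv} for the parameters and the idempotents). I do not anticipate a genuine obstacle; the only place one has to be careful is keeping track of which permutation of $(A,B,C)$ and which idempotent sequence $\lbrace E_i\rbrace, \lbrace E'_i\rbrace, \lbrace E''_i\rbrace$ one is working with at each step, and of the boundary case $i=0$ (resp.\ $i=0$ after priming), which produces the zero assertion.
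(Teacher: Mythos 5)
Your proposal is correct and follows essentially the same route as the paper: the paper's proof also evaluates $\mathbb B^{-1}C\mathbb B$ on the vector $A^{d-i}\eta'$, which spans $E_iV$, using the middle row of Lemma \ref{lem:AiBiCi}, and handles (ii), (iii) by the analogous argument. Your extra preliminary paragraph on where $\mathbb B^{-1}C\mathbb B$ sends $E_iV$ is harmless but not needed once the scalar computation is done.
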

\begin{proof}
(i) The vector $A^{d-i}\eta'$ is a basis for
$E_iV$. Apply $\mathbb B^{-1}C\mathbb B$ to this vector and
evaluate the result using 
Lemma
\ref{lem:AiBiCi} (middle row).
\\
\noindent (ii), (iii) Similar to the proof of (i) above.
\end{proof}

\begin{lemma}
\label{lem:3rot2} The following {\rm (i)--(iii)} hold.
\begin{enumerate}
\item[\rm (i)] $\mathbb A C \mathbb A^{-1}$ 
is zero on $E_dV$. Moreover for
$0 \leq i \leq d-1$ and on $E_iV$,
\begin{eqnarray*}
\mathbb A C \mathbb A^{-1} = \frac{\varphi''_{d-i}}{\varphi_{i+1}} B.
\end{eqnarray*}
\item[\rm (ii)] $\mathbb B A \mathbb B^{-1}$ is zero on $E'_dV$. Moreover for
$0 \leq i \leq d-1$ and on $E'_iV$,
\begin{eqnarray*}
\mathbb B A \mathbb B^{-1} = \frac{\varphi_{d-i}}{\varphi'_{i+1}} C.
\end{eqnarray*}
\item[\rm (iii)] $\mathbb C B \mathbb C^{-1}$ 
is zero on $E''_dV$. Moreover for
$0 \leq i \leq d-1$ and on $E''_iV$,
\begin{eqnarray*}
\mathbb C B \mathbb C^{-1} = \frac{\varphi'_{d-i}}{\varphi''_{i+1}} A.
\end{eqnarray*}
\end{enumerate}
\end{lemma}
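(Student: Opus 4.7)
The plan is to verify part (i) by a direct calculation on a convenient basis of $E_iV$, and then obtain (ii) and (iii) by applying (i) to the p-relatives $B,C,A$ and $C,A,B$ (whose unipotent data, by Lemma~\ref{lem:newUniData}, are $\mathbb B,\mathbb C,\mathbb A$ and $\mathbb C,\mathbb A,\mathbb B$ respectively).

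For (i), I would use the basis vector $B^i\eta$ of $E_iV$. Here $\lbrace B^i\eta\rbrace_{i=0}^d$ is the inverted $(B,A)$-basis of $V$ from Lemma~\ref{lem:Allbases}, whose induced decomposition is the $(A,B)$-decomposition, so indeed $B^i\eta$ spans $E_iV$. First I would dispose of the case $i=d$: by the middle-row equations of Lemma~\ref{lem:AiBiCi},
\begin{equation*}
\mathbb A^{-1}B^d\eta = \frac{\varphi_1\cdots\varphi_d}{\varphi''_d\cdots\varphi''_1}\,C^d\eta,
\end{equation*}
and then $C\cdot C^d\eta = C^{d+1}\eta = 0$ since $C$ is Nil, which gives $\mathbb A C\mathbb A^{-1}=0$ on $E_dV$.

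Next, for $0\le i\le d-1$ and on $E_iV$, I would compute
\begin{align*}
\mathbb A C\mathbb A^{-1}B^i\eta
 &= \frac{\varphi_1\cdots\varphi_i}{\varphi''_d\cdots\varphi''_{d-i+1}}\;\mathbb A C\cdot C^i\eta \\
 &= \frac{\varphi_1\cdots\varphi_i}{\varphi''_d\cdots\varphi''_{d-i+1}}\;\mathbb A C^{i+1}\eta \\
 &= \frac{\varphi_1\cdots\varphi_i}{\varphi''_d\cdots\varphi''_{d-i+1}}\cdot\frac{\varphi''_d\cdots\varphi''_{d-i}}{\varphi_1\cdots\varphi_{i+1}}\,B^{i+1}\eta
  = \frac{\varphi''_{d-i}}{\varphi_{i+1}}\,B^{i+1}\eta,
\end{align*}
using Lemma~\ref{lem:AiBiCi} (first row) twice. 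Since $B\cdot B^i\eta=B^{i+1}\eta$, the two sides of the claimed identity agree on $E_iV$, proving (i).

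To obtain (ii) and (iii), I would apply (i) to the p-relatives $B,C,A$ and $C,A,B$, using Lemma~\ref{lem:ABCEvar} to translate the idempotent data and Lemma~\ref{lem:newUniData} to translate the unipotent data under the primed conventions. There is no real obstacle here; the only minor bookkeeping point is to keep the primed notation straight when reading off the parameter sequences $\varphi_i,\varphi'_i,\varphi''_i$ of the permuted triples, but this is purely mechanical given Lemma~\ref{lem:ABCvar}.
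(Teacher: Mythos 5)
Your proposal is correct and follows the paper's own proof: the paper likewise verifies (i) by applying $\mathbb A C\mathbb A^{-1}$ to the basis vector $B^i\eta$ of $E_iV$ and evaluating via the top row of Lemma \ref{lem:AiBiCi}, with (ii), (iii) handled as routine analogues (your route through the p-relatives is an equally valid way to do this bookkeeping). One trivial slip: the identity $\mathbb A^{-1}B^d\eta=\frac{\varphi_1\cdots\varphi_d}{\varphi''_d\cdots\varphi''_1}\,C^d\eta$ that you use for the $i=d$ case is the second equation in the \emph{top} row of Lemma \ref{lem:AiBiCi}, not the middle row.
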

\begin{proof}
(i) The vector $B^{i}\eta$ is a basis for
$E_iV$. Apply $\mathbb A C\mathbb A^{-1}$ to this vector and
evaluate the result using 
Lemma
\ref{lem:AiBiCi} (top row).
\\
\noindent (ii), (iii) Similar to the proof of (i) above.
\end{proof}

\begin{lemma}
\label{lem:newLR}
The following {\rm (i)--(iii)} hold:
\begin{enumerate}
\item[\rm (i)] the $(A,B)$-decomposition of $V$
is lowered by $\mathbb B^{-1}C\mathbb B$ and raised by
$\mathbb A C \mathbb A^{-1}$;
\item[\rm (ii)] the $(B,C)$-decomposition of $V$
is lowered by $\mathbb C^{-1}A\mathbb C$ and raised by
$\mathbb B A \mathbb B^{-1}$;
\item[\rm (iii)] the $(C,A)$-decomposition of $V$
is lowered by $\mathbb A^{-1}B\mathbb A$ and raised by
$\mathbb C B \mathbb C^{-1}$.
\end{enumerate}
\end{lemma}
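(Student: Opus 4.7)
The proof is an immediate assembly of the two preceding lemmas, so I would present it very briefly. The plan is to handle part (i) directly from Lemma \ref{lem:3rot1}(i) and Lemma \ref{lem:3rot2}(i), and then obtain (ii) and (iii) by applying (i) to the p-relatives $B,C,A$ and $C,A,B$ of the LR triple $A,B,C$, using the notational convention of Definition \ref{def:primeConv}.

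For part (i), recall that the $(A,B)$-decomposition of $V$ is $\{E_iV\}_{i=0}^d$, which by construction is lowered by $A$ and raised by $B$. By Lemma \ref{lem:3rot1}(i), $\mathbb B^{-1}C\mathbb B$ is zero on $E_0V$, and for $1 \leq i \leq d$ its restriction to $E_iV$ coincides with the nonzero scalar multiple $(\varphi'_{d-i+1}/\varphi_i)A$. Since $AE_iV = E_{i-1}V$, we therefore have $\mathbb B^{-1}C\mathbb B\,E_iV = E_{i-1}V$ for $1 \leq i \leq d$ and $\mathbb B^{-1}C\mathbb B\,E_0V = 0$; that is, $\mathbb B^{-1}C\mathbb B$ lowers the $(A,B)$-decomposition. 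Dually, Lemma \ref{lem:3rot2}(i) shows that $\mathbb A C\mathbb A^{-1}$ is zero on $E_dV$ and coincides with a nonzero scalar multiple of $B$ on each $E_iV$ with $0 \leq i \leq d-1$; combined with $BE_iV = E_{i+1}V$, this shows that $\mathbb A C\mathbb A^{-1}$ raises the $(A,B)$-decomposition.

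Parts (ii) and (iii) then follow with no additional work by cycling the roles of $A,B,C$: applying part (i) to the LR triples $B,C,A$ and $C,A,B$ gives, via the primed and doubly-primed notation of Definition \ref{def:primeConv} together with Lemma \ref{lem:newUniData} (which identifies the unipotent maps of the cyclic relatives as $\mathbb B,\mathbb C,\mathbb A$ and $\mathbb C,\mathbb A,\mathbb B$), exactly statements (ii) and (iii).

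There is no substantive obstacle here; the only thing to keep straight is that Lemmas \ref{lem:3rot1} and \ref{lem:3rot2} already do the real work of computing the restrictions of the conjugated maps to each idempotent subspace, so the present lemma is just the packaging of those computations in the language of Definition \ref{def:FlagLR}/the lowering–raising terminology, plus one application of cyclic symmetry.
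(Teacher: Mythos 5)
Your proof is correct and matches the paper's approach: part (i) is read off directly from Lemmas \ref{lem:3rot1}(i) and \ref{lem:3rot2}(i) together with the fact that $\lbrace E_iV\rbrace_{i=0}^d$ is lowered by $A$ and raised by $B$. Your handling of (ii) and (iii) by cyclic symmetry is a harmless variant of the paper's implicit "similarly, using parts (ii), (iii) of the two preceding lemmas."
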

\begin{proof} (i) The sequence $\lbrace E_iV\rbrace_{i=0}^d$
is 
the $(A,B)$-decomposition of
$V$. This decomposition is lowered by $A$ and raised by $B$. The result follows
in view of Lemmas
\ref{lem:3rot1}(i),
\ref{lem:3rot2}(i).
\end{proof}

\begin{lemma}
\label{lem:3and3}
 We have
\begin{eqnarray*}
&&
\mathbb B^{-1} C \mathbb B \Biggl(
\sum_{i=0}^d \frac{\varphi_1  \cdots \varphi_i}
{\varphi'_d  \cdots \varphi'_{d-i+1}} E_i
\Biggr) =
\Biggl(
\sum_{i=0}^d \frac{\varphi_1 \cdots \varphi_i}
{\varphi'_d  \cdots \varphi'_{d-i+1}} E_i
\Biggr) A,
\\
&&
\mathbb C^{-1} A \mathbb C \Biggl(
\sum_{i=0}^d \frac{\varphi'_1  \cdots \varphi'_i}
{\varphi''_d  \cdots \varphi''_{d-i+1}} E'_i
\Biggr) =
\Biggl(
\sum_{i=0}^d \frac{\varphi'_1 \cdots \varphi'_i}
{\varphi''_d  \cdots \varphi''_{d-i+1}} E'_i
\Biggr) B,
\\
&&
\mathbb A^{-1} B \mathbb A \Biggl(
\sum_{i=0}^d \frac{\varphi''_1  \cdots \varphi''_i}
{\varphi_d  \cdots \varphi_{d-i+1}} E''_i
\Biggr) =
\Biggl(
\sum_{i=0}^d \frac{\varphi''_1  \cdots \varphi''_i}
{\varphi_d  \cdots \varphi_{d-i+1}} E''_i
\Biggr) C
\end{eqnarray*}
and also
\begin{eqnarray*}
&&
\Biggl(\sum_{i=0}^d \frac{\varphi_1  \cdots \varphi_i}
{\varphi''_d \cdots \varphi''_{d-i+1}} E_i\Biggr) \mathbb A C \mathbb A^{-1} = B 
\Biggl(\sum_{i=0}^d \frac{\varphi_1 \cdots \varphi_i}
{\varphi''_d 
\cdots \varphi''_{d-i+1}} E_i\Biggr),
\\
&&
\Biggl(\sum_{i=0}^d \frac{\varphi'_1 \cdots \varphi'_i}
{\varphi_d\cdots \varphi_{d-i+1}} E'_i\Biggr) \mathbb B A \mathbb B^{-1} = C
\Biggl(\sum_{i=0}^d \frac{\varphi'_1 \cdots \varphi'_i}
{\varphi_d \cdots \varphi_{d-i+1}} E'_i\Biggr),
\\
&&
\Biggl(\sum_{i=0}^d \frac{\varphi''_1  \cdots \varphi''_i}
{\varphi'_d \cdots \varphi'_{d-i+1}} E''_i\Biggr) 
\mathbb C B \mathbb C^{-1} = A 
\Biggl(\sum_{i=0}^d \frac{\varphi''_1 \cdots \varphi''_i}
{\varphi'_d \cdots \varphi'_{d-i+1}} E''_i\Biggr).
\end{eqnarray*}
\end{lemma}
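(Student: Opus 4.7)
The plan is to verify each of the six identities pointwise, by evaluating both sides on each component of the relevant decomposition of $V$. The six identities split into two groups of three, and within each group the three identities are obtained from one another by cycling $A,B,C$. Thus it suffices to establish the first identity in each group; the remaining four follow by applying the established identity to the p-relatives $B,C,A$ and $C,A,B$ via Definition \ref{def:primeConv}, together with Lemma \ref{lem:newUniData}.

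First I would treat the first identity. Set $c_i = \varphi_1\cdots\varphi_i/(\varphi'_d\cdots\varphi'_{d-i+1})$ and fix $j$ with $0\le j\le d$ and $v\in E_jV$. On the left, $(\sum_i c_iE_i)v = c_jv$, and then Lemma \ref{lem:3rot1}(i) evaluates $\mathbb B^{-1}C\mathbb B$ on $c_jv$: for $j=0$ the result is zero, while for $j\ge 1$ it equals $c_j(\varphi'_{d-j+1}/\varphi_j)\,Av$. On the right, $Av\in E_{j-1}V$ (since $A$ lowers the $(A,B)$-decomposition), so $(\sum_i c_iE_i)Av = c_{j-1}Av$; and $Av=0$ when $j=0$. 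Thus the identity reduces to
\[
c_j\,\frac{\varphi'_{d-j+1}}{\varphi_j} = c_{j-1} \qquad (1\le j\le d),
\]
which is immediate from the definition of $c_i$.

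Next I would treat the fourth identity. Set $d_i = \varphi_1\cdots\varphi_i/(\varphi''_d\cdots\varphi''_{d-i+1})$ and apply both sides to $v\in E_jV$. By Lemma \ref{lem:3rot2}(i), $\mathbb A C\mathbb A^{-1}$ acts on $E_jV$ as $(\varphi''_{d-j}/\varphi_{j+1})B$ for $0\le j\le d-1$ and as zero on $E_dV$. Since $B$ raises the $(A,B)$-decomposition, $Bv\in E_{j+1}V$. For $j\le d-1$ the left side is $d_{j+1}(\varphi''_{d-j}/\varphi_{j+1})\,Bv$ and the right side is $d_j\,Bv$; for $j=d$ both sides vanish (the left because $\mathbb A C\mathbb A^{-1}$ is zero on $E_dV$, the right because $BE_dV=0$). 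The identity therefore reduces to $d_{j+1}/d_j = \varphi_{j+1}/\varphi''_{d-j}$, again immediate from the definition.

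There is no substantive obstacle; the argument is a routine component-wise verification once one observes that each side acts diagonally on the $(A,B)$-decomposition (respectively $(B,C)$- or $(C,A)$-decomposition after cycling), and the required coefficient ratios are built precisely so that the $A$-shift (resp.\ $B$-shift) of indices matches the telescoping of consecutive $\varphi_i$'s. The only point needing slight care is the boundary case at $j=0$ or $j=d$, but these are exactly the places where one side of the relevant formula in Lemma \ref{lem:3rot1} or Lemma \ref{lem:3rot2} already vanishes, matching the vanishing of $A E_0 V = 0$ or $B E_d V = 0$ on the other side.
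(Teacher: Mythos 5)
Your proof is correct and follows the paper's own argument: the paper likewise verifies the first and fourth identities by applying each side to $E_iV$ and invoking Lemma \ref{lem:3rot1}(i) and Lemma \ref{lem:3rot2}(i), with the remaining equations obtained similarly. You have simply written out the telescoping coefficient check and the boundary cases that the paper leaves implicit.
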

\begin{proof} 
To verify the first (resp. fourth) displayed equation in the lemma statement,
for $0 \leq i \leq d$ apply each side to $E_iV$,
and evaluate the result using
Lemma \ref{lem:3rot1}(i) (resp. Lemma \ref{lem:3rot2}(i)).
 The remaining equations are similarly
verified.
\end{proof}

\noindent For the next few results, it is convenient
to assume that $A,B,C$ is equitable. Shortly we will
return to the general case.

\begin{proposition}
\label{prop:EquitWOW}
Assume that $A,B,C$ is equitable. Then
\begin{eqnarray}
&&
\mathbb C 
\Biggl(\sum_{i=0}^d \frac{\varphi'_1 \cdots \varphi'_i}
{\varphi''_d \cdots \varphi''_{d-i+1}} E'_i\Biggr) \mathbb B 
=
\mathbb A 
\Biggl(\sum_{i=0}^d \frac{\varphi''_1 \cdots \varphi''_i}
{\varphi'_d \cdots \varphi'_{d-i+1}} E''_i\Biggr) \mathbb C, 
\label{eq:Om2}
\\
&&
\mathbb A 
\Biggl(\sum_{i=0}^d \frac{\varphi''_1 \cdots \varphi''_i}
{\varphi_d \cdots \varphi_{d-i+1}} E''_i\Biggr) \mathbb C 
=
\mathbb B 
\Biggl(\sum_{i=0}^d \frac{\varphi_1 \cdots \varphi_i}
{\varphi''_d \cdots \varphi''_{d-i+1}} E_i\Biggr) \mathbb A,
\label{eq:Om3}
\\
&&
\mathbb B 
\Biggl(\sum_{i=0}^d \frac{\varphi_1 \cdots \varphi_i}
{\varphi'_d \cdots \varphi'_{d-i+1}} E_i\Biggr) \mathbb A 
=
\mathbb C
\Biggl(\sum_{i=0}^d \frac{\varphi'_1 \cdots \varphi'_i}
{\varphi_d \cdots \varphi_{d-i+1}} E'_i\Biggr) \mathbb B.
\label{eq:Om1}
\end{eqnarray}
\end{proposition}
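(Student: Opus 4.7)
The plan is to reduce the three displayed identities to a single one using cyclic symmetry, then verify that identity by computing the action of both sides on a conveniently chosen basis of $V$. By the notational convention of Definition~\ref{def:primeConv} together with Lemma~\ref{lem:newUniData}, applying (\ref{eq:Om3}) to the p-relative $B,C,A$ yields (\ref{eq:Om1}), and applying it to $C,A,B$ yields (\ref{eq:Om2}); so the whole proposition reduces to proving (\ref{eq:Om3}).

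To prove (\ref{eq:Om3}) I would take the $(A,C)$-basis $u_i = C^i\eta/(\varphi''_d \cdots \varphi''_{d-i+1})$ and the $(A,B)$-basis $v_i = B^i\eta/(\varphi_1 \cdots \varphi_i)$ from Lemma~\ref{lem:Allbases}, which satisfy $u_i \in E''_{d-i}V$ and $v_i \in E_iV$. Since $u_0 = v_0 = \eta$ these bases are compatible, so by Lemma~\ref{lem:bbAaction}(i) the unipotent map $\mathbb{A}$ sends $u_i \mapsto v_i$. Using the polynomial formulas $\mathbb{B} = \sum_k \alpha''_k B^k$ and $\mathbb{C} = \sum_k \alpha_k C^k$ from Proposition~\ref{prop:Apoly}, the explicit actions $B^k v_i = (\varphi_{i+1}\cdots\varphi_{i+k})v_{i+k}$ and $C^k u_i = (\varphi''_{d-i}\cdots\varphi''_{d-i-k+1})u_{i+k}$, and the diagonal actions of the middle sums on $v_i \in E_iV$ and $u_{i+k}\in E''_{d-i-k}V$, I would expand each side of (\ref{eq:Om3}) applied to $u_i$ and read off the coefficient of $v_{i+k}$: on the right one gets $\alpha''_k\, \frac{\varphi_1 \cdots \varphi_{i+k}}{\varphi''_d \cdots \varphi''_{d-i+1}}$, while on the left the two $\varphi''$-factors telescope to give $\alpha_k\, \frac{\varphi''_1 \cdots \varphi''_{d-i}}{\varphi_d \cdots \varphi_{i+k+1}}$.

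Under the equitable hypothesis one has $\alpha_k = \alpha''_k$ by Definition~\ref{def:equitNorm}, so cross-multiplying the two coefficient formulas reduces the required equality to $\varphi''_1\cdots\varphi''_d = \varphi_1\cdots\varphi_d$, which is exactly Lemma~\ref{lem:combine}; in the bipartite case the odd-index terms vanish on both sides and match trivially. The main technical point is the index bookkeeping in the telescoping step, including empty products at the endpoints $i=0$ and $i+k=d$; once that is set up the identity collapses to Lemma~\ref{lem:combine}, and the cyclic reduction then closes out the remaining two equations.
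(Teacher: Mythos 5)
Your argument is correct, but it takes a different route from the paper. The paper's proof is operator-theoretic: it sets $X$ equal to one factor of the left side of (\ref{eq:Om2}), shows via Lemma \ref{lem:delA} and Lemma \ref{lem:combine} that $X$ also equals the corresponding right-multiplied expression, invokes the intertwining relation $XB=AX$ from Lemma \ref{lem:3and3}, and then upgrades this to $X\mathbb B=\mathbb A X$ using the polynomial formulas of Proposition \ref{prop:Apoly} together with $\alpha_i=\alpha'_i=\alpha''_i$. You instead reduce to (\ref{eq:Om3}) by cyclic symmetry (which is legitimate: the p-relatives of an equitable LR triple are equitable, and Lemmas \ref{lem:ABCvar}, \ref{lem:ABCEvar}, \ref{lem:newUniData} give the cyclic shifts of the parameter array, idempotent data, and unipotent data) and then verify (\ref{eq:Om3}) by an explicit coefficient computation on the bases $\lbrace u_i\rbrace$, $\lbrace v_i\rbrace$ from Lemma \ref{lem:Allbases}, using $\mathbb A u_i=v_i$ from (the proof of) Lemma \ref{lem:bbAaction}(i). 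Your telescoping and index bookkeeping check out: the coefficient of $v_{i+k}$ is $\alpha''_k\varphi_1\cdots\varphi_{i+k}/(\varphi''_d\cdots\varphi''_{d-i+1})$ on the right and $\alpha_k\varphi''_1\cdots\varphi''_{d-i}/(\varphi_d\cdots\varphi_{i+k+1})$ on the left, and cross-multiplying reduces the equality to $\alpha_k(\varphi''_1\cdots\varphi''_d-\varphi_1\cdots\varphi_d)=0$, which is Lemma \ref{lem:combine} (and is vacuous when $\alpha_k=0$ or $i+k>d$). The paper's route buys a cleaner conceptual picture (the map $X$ reappears as a rotator and its intertwining property is reused later), while yours is self-contained and makes transparent exactly where equitability and Lemma \ref{lem:combine} enter; both rest on the same two ingredients in the end.
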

\begin{proof} We prove
(\ref{eq:Om2}).
Define
\begin{eqnarray}
\label{eq:XX}
X = \mathbb C 
\Biggl(\sum_{i=0}^d \frac{\varphi'_1 \cdots \varphi'_i}
{\varphi''_d \cdots \varphi''_{d-i+1}} E'_i\Biggr).
\end{eqnarray}
We claim that
\begin{eqnarray}
\label{eq:XY}
X 
=
\Biggl(\sum_{i=0}^d \frac{\varphi''_1 \cdots \varphi''_i}
{\varphi'_d \cdots \varphi'_{d-i+1}} E''_i\Biggr) 
\mathbb C.
\end{eqnarray}
To verify
(\ref{eq:XY}), 
evaluate the right-hand side of
(\ref{eq:XX}) using Lemma
\ref{lem:delA}, and simplify the result using
Lemma \ref{lem:combine}.
The claim is proven.
By
(\ref{eq:XY}) 
 and the last diplayed equation in 
Lemma
\ref{lem:3and3}, $XB=AX$. So $XB^i=A^iX$ for $0 \leq i \leq d$.
By  Definition
\ref{def:equitNorm}
and
line (\ref{eq:rotator1}),
$\mathbb A = \sum_{i=0}^d \alpha_i A^i$ and
$\mathbb B = \sum_{i=0}^d \alpha_i B^i$.
By these comments
$X \mathbb B = \mathbb A X$.
In this equation evaluate the
$X$ on the left and  right using
(\ref{eq:XX}) and
(\ref{eq:XY}),
 respectively.
This yields 
(\ref{eq:Om2}).
The equations
(\ref{eq:Om3}),
(\ref{eq:Om1}) are similarly obtained.
\end{proof}

\begin{definition}
\label{def:Om123}
\rm
Assume that $A,B,C$ is equitable.
Let $\Omega, \Omega', \Omega''$ denote the common
values of 
{\rm (\ref{eq:Om2})},
{\rm (\ref{eq:Om3})},
{\rm (\ref{eq:Om1})} respectively.
\end{definition}

\begin{lemma}
\label{lem:OmegaTriv}
Assume that $A,B,C$ is trivial. Then
$\Omega = \Omega' = \Omega''=I$.
\end{lemma}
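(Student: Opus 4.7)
The plan is to unpack all the ingredients that appear on the right-hand sides of (\ref{eq:Om2}), (\ref{eq:Om3}), (\ref{eq:Om1}) in the trivial case and observe that each factor collapses to $I$. Since the LR triple is trivial we have $d=0$, so $V$ is one-dimensional, and the three idempotent sequences degenerate to the single element $E_0 = E'_0 = E''_0 = I$. Moreover each sum $\sum_{i=0}^d$ reduces to the single term $i=0$, whose coefficient is an empty product of $\varphi$'s, equal to $1$. Thus each parenthesized sum in (\ref{eq:Om2}), (\ref{eq:Om3}), (\ref{eq:Om1}) equals $I$.

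Next I would invoke Lemma~\ref{lem:UniTriv}, which gives $\mathbb A = \mathbb B = \mathbb C = I$ when $A,B,C$ is trivial. Substituting into any of the three equivalent expressions that define $\Omega$, $\Omega'$, $\Omega''$ (Definition~\ref{def:Om123}), each becomes the triple product $I \cdot I \cdot I = I$, yielding $\Omega = \Omega' = \Omega'' = I$.

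There is no real obstacle here; the statement is a direct book-keeping verification, included for completeness so that the trivial case fits into the framework used for the general rotator basis. The only thing to be careful about is the empty product convention in the numerators and denominators of the coefficients, which is standard.
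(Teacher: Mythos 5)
Your proof is correct and is essentially the paper's own argument: the paper's proof simply cites Lemma \ref{lem:UniTriv}, Definition \ref{def:Om123}, and the fact that $E_0 = E'_0 = E''_0 = I$, which is exactly the bookkeeping you carry out in detail (including the empty-product convention for the coefficients).
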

\begin{proof}
By Lemma \ref{lem:UniTriv},
Definition
\ref{def:Om123}, and since
$E_0 = E'_0 = E''_0=I$.
\end{proof}

\begin{lemma} 
\label{lem:OmegaCom2}
Assume that $A,B,C$ is equitable. Then for $0 \leq i \leq d$,
\begin{eqnarray*}
E_i  \Omega  = \Omega E'_i, \qquad \qquad
E'_i \Omega'  = \Omega' E''_i, \qquad \qquad
E''_i \Omega''  = \Omega'' E_i.
\end{eqnarray*}
\end{lemma}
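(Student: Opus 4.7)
The plan is to verify each of the three commutation identities separately by choosing, for each of $\Omega$, $\Omega'$, $\Omega''$, the presentation from Proposition~\ref{prop:EquitWOW} in which the outer unipotent maps have convenient intertwining relations with the idempotent to be absorbed. The key input is Lemma~\ref{lem:delA}, whose three relations $\mathbb A E''_i = E_{d-i}\mathbb A$, $\mathbb B E_i = E'_{d-i}\mathbb B$, $\mathbb C E'_i = E''_{d-i}\mathbb C$ may equivalently be read, after the index change $i \mapsto d-i$, as $E_i \mathbb A = \mathbb A E''_{d-i}$, $E'_i \mathbb B = \mathbb B E_{d-i}$, $E''_i \mathbb C = \mathbb C E'_{d-i}$.

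For the first identity $E_i \Omega = \Omega E'_i$, I would invoke the right-hand expression in (\ref{eq:Om2}),
\[
\Omega = \mathbb A \Bigl(\sum_{j=0}^d \frac{\varphi''_1 \cdots \varphi''_j}{\varphi'_d \cdots \varphi'_{d-j+1}}\, E''_j\Bigr) \mathbb C.
\]
Applying $E_i \mathbb A = \mathbb A E''_{d-i}$ on the left, together with the orthogonality relations $E''_r E''_s = \delta_{r,s} E''_r$, collapses the middle sum to a single term and yields $E_i \Omega = c_{d-i}\, \mathbb A E''_{d-i} \mathbb C$, where $c_j$ denotes the coefficient shown above. Applying $\mathbb C E'_i = E''_{d-i}\mathbb C$ on the right collapses the middle sum in the same way and produces $\Omega E'_i = c_{d-i}\, \mathbb A E''_{d-i} \mathbb C$. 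The two expressions coincide.

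The remaining identities follow by the same pattern through cyclic permutation. For $E'_i \Omega' = \Omega' E''_i$ I would take the right-hand side of (\ref{eq:Om3}), $\Omega' = \mathbb B \bigl(\sum_j (\varphi_1 \cdots \varphi_j)/(\varphi''_d \cdots \varphi''_{d-j+1})\, E_j\bigr) \mathbb A$, and apply $E'_i \mathbb B = \mathbb B E_{d-i}$ on the left and $\mathbb A E''_i = E_{d-i}\mathbb A$ on the right; both sides reduce to the same scalar multiple of $\mathbb B E_{d-i} \mathbb A$. For $E''_i \Omega'' = \Omega'' E_i$ I would take the right-hand side of (\ref{eq:Om1}), $\Omega'' = \mathbb C \bigl(\sum_j (\varphi'_1 \cdots \varphi'_j)/(\varphi_d \cdots \varphi_{d-j+1})\, E'_j\bigr) \mathbb B$, and apply $E''_i \mathbb C = \mathbb C E'_{d-i}$ on the left and $\mathbb B E_i = E'_{d-i}\mathbb B$ on the right; both sides reduce to the same scalar multiple of $\mathbb C E'_{d-i} \mathbb B$.

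There is no genuine obstacle: the argument is entirely formal and uses nothing beyond Lemma~\ref{lem:delA}, the orthogonality of each of the three idempotent sequences, and the availability of both alternative presentations of $\Omega$, $\Omega'$, $\Omega''$ under the equitable hypothesis (which is precisely the content of Proposition~\ref{prop:EquitWOW}). The only bookkeeping point to watch is the index reversal $i \leftrightarrow d-i$ built into the intertwining relations of Lemma~\ref{lem:delA}, which is what makes the absorbed idempotents on the two sides of each identity match up to the same index and therefore pick out the same term from the middle sum.
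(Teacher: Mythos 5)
Your proof is correct and follows essentially the same route as the paper: the paper's proof also eliminates $\Omega$ (resp. $\Omega'$, $\Omega''$) via the right-hand formula in (\ref{eq:Om2}) (resp. (\ref{eq:Om3}), (\ref{eq:Om1})) and then applies the intertwining relations of Lemma~\ref{lem:delA}. Your bookkeeping of the index reversal $i \leftrightarrow d-i$ and the collapse of the middle sum via $E''_r E''_s = \delta_{r,s}E''_r$ is exactly the intended calculation.
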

\begin{proof}
To verify
$E_i  \Omega  = \Omega E'_i$,
eliminate $\Omega$ using the formula on
the right in
(\ref{eq:Om2}), and evaluate the
result using
 Lemma
\ref{lem:delA}.
The remaining equations are similary verified.
\end{proof}

\begin{lemma} 
\label{lem:OmegaCom}
Assume that $A,B,C$ is equitable. Then 
\begin{eqnarray}
\label{eq:AOmB}
A \Omega  = \Omega B, \qquad \qquad
B \Omega'  = \Omega' C, \qquad \qquad
C \Omega''  = \Omega'' A.
\end{eqnarray}
\end{lemma}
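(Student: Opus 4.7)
The plan is to verify each of the three equations directly by combining a well-chosen factorization of $\Omega$, $\Omega'$, $\Omega''$ from Proposition \ref{prop:EquitWOW} with a compatible intertwining relation from Lemma \ref{lem:3and3} and the commutativity of Corollary \ref{cor:AAcom}. The three equations are permutations of one another under the cyclic shift $A\mapsto B\mapsto C\mapsto A$ (together with the corresponding shift of the primed data), so I would establish $A\Omega=\Omega B$ carefully and then indicate that the other two identities follow by applying the same argument to the LR triple $B,C,A$ (giving $B\Omega'=\Omega'C$) and to the LR triple $C,A,B$ (giving $C\Omega''=\Omega''A$), using Definition \ref{def:primeConv} together with Lemma \ref{lem:newUniData}.

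To prove $A\Omega=\Omega B$, I would use the factorization on the left-hand side of (\ref{eq:Om2}), namely
\[
\Omega \;=\; \mathbb{C}\,S'\,\mathbb{B}, \qquad S'\;=\;\sum_{i=0}^d \frac{\varphi'_1\cdots\varphi'_i}{\varphi''_d\cdots\varphi''_{d-i+1}}\,E'_i.
\]
Multiplying on the left by $A$ and inserting $\mathbb{C}\mathbb{C}^{-1}$, one obtains
\[
A\Omega \;=\; A\,\mathbb{C}\,S'\,\mathbb{B} \;=\; \mathbb{C}\bigl(\mathbb{C}^{-1}A\mathbb{C}\bigr)\,S'\,\mathbb{B}.
\]
The key input is the second displayed equation in the first group of Lemma \ref{lem:3and3}, which asserts precisely that $\mathbb{C}^{-1}A\mathbb{C}\,S'=S'B$. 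Substituting, we get
\[
A\Omega \;=\; \mathbb{C}\,S'\,B\,\mathbb{B} \;=\; \mathbb{C}\,S'\,\mathbb{B}\,B \;=\; \Omega\,B,
\]
where the middle equality uses $B\mathbb{B}=\mathbb{B}B$ from Corollary \ref{cor:AAcom}.

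The two remaining identities are proved in exactly the same fashion: for $B\Omega'=\Omega'C$ I would take the left factorization of $\Omega'$ from (\ref{eq:Om3}) as $\mathbb{A}\,T\,\mathbb{C}$, use $B\mathbb{A}=\mathbb{A}(\mathbb{A}^{-1}B\mathbb{A})$, and apply the third displayed equation in the first group of Lemma \ref{lem:3and3} together with $C\mathbb{C}=\mathbb{C}C$; for $C\Omega''=\Omega''A$ I would take the left factorization of $\Omega''$ from (\ref{eq:Om1}) as $\mathbb{B}\,U\,\mathbb{A}$, use $C\mathbb{B}=\mathbb{B}(\mathbb{B}^{-1}C\mathbb{B})$, and apply the first displayed equation in the first group of Lemma \ref{lem:3and3} together with $A\mathbb{A}=\mathbb{A}A$. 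There is no serious obstacle here: the proof is a direct assembly of prior lemmas, and the only point that requires mild care is choosing, for each of the three identities, the correct one of the two available factorizations of $\Omega$, $\Omega'$, $\Omega''$ so that the conjugated map $\mathbb{C}^{-1}A\mathbb{C}$ (or its cyclic analogues) matches the corresponding equation in Lemma \ref{lem:3and3}.
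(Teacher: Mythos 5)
Your proposal is correct and takes essentially the same approach as the paper: a one-line computation that factors $\Omega$ (resp.\ $\Omega'$, $\Omega''$) via Proposition \ref{prop:EquitWOW}, inserts the appropriate conjugate, and applies Lemma \ref{lem:3and3} together with Corollary \ref{cor:AAcom}. The only difference is cosmetic — the paper uses the right-hand factorization in (\ref{eq:Om2}) with the last displayed equation of Lemma \ref{lem:3and3}, while you use the left-hand factorization with the second displayed equation; both choices match up correctly.
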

\begin{proof}
To verify $A \Omega = \Omega B$,
eliminate $\Omega$ using the formula
on the right in 
(\ref{eq:Om2}), and evaluate the result
using 
Corollary
\ref{cor:AAcom}
and the last displayed equation in
Lemma
\ref{lem:3and3}. The remaining equations in
(\ref{eq:AOmB})
are similarly
verified.
\end{proof}


\begin{lemma}
\label{lem:OmInv}
 Assume that $A,B,C$ is equitable.
Then the elements $\Omega, \Omega', \Omega''$ are invertible.
Moreover 
\begin{eqnarray*}
\Omega^{-1} = 
\mathbb B^{-1} 
\Biggl(\sum_{i=0}^d \frac{\varphi''_d \cdots \varphi''_{d-i+1}}
{\varphi'_1 \cdots \varphi'_i} E'_i\Biggr) \mathbb C^{-1} 
=
\mathbb C^{-1} 
\Biggl(\sum_{i=0}^d \frac{\varphi'_d\cdots \varphi'_{d-i+1}}
{\varphi''_1 \cdots \varphi''_i} E''_i\Biggr) \mathbb A^{-1}, 
&&
\\
(\Omega')^{-1}=
\mathbb C^{-1} 
\Biggl(\sum_{i=0}^d \frac{\varphi_d \cdots \varphi_{d-i+1}}
{\varphi''_1 \cdots \varphi''_i} E''_i\Biggr) \mathbb A^{-1} 
=
\mathbb A^{-1} 
\Biggl(\sum_{i=0}^d \frac{\varphi''_d\cdots \varphi''_{d-i+1}}
{\varphi_1 \cdots \varphi_i} E_i\Biggr) \mathbb B^{-1}, 
&&
\\
(\Omega'')^{-1}=
\mathbb A^{-1} 
\Biggl(\sum_{i=0}^d \frac{\varphi'_d \cdots \varphi'_{d-i+1}}
{\varphi_1 \cdots \varphi_i} E_i\Biggr) \mathbb B^{-1} 
=
\mathbb B^{-1} 
\Biggl(\sum_{i=0}^d \frac{\varphi_d\cdots \varphi_{d-i+1}}
{\varphi'_1 \cdots \varphi'_i} E'_i\Biggr) \mathbb C^{-1}. 
&&
\end{eqnarray*}
\end{lemma}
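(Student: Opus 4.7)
The plan is to show invertibility and the formulas for $\Omega^{-1}$ in one shot, by writing $\Omega$ as a product of three invertible factors in two different ways and reading off the inverses; then to handle $(\Omega')^{-1}$ and $(\Omega'')^{-1}$ by the cyclic symmetry of the construction (that is, by applying the same argument to the p-relatives $B,C,A$ and $C,A,B$).

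First, I would record the invertibility. By Lemma \ref{lem:DeltaInv} the unipotent maps $\mathbb A, \mathbb B, \mathbb C$ are invertible. For the middle factor, set
\[
P \;=\; \sum_{i=0}^d \frac{\varphi'_1 \cdots \varphi'_i}{\varphi''_d \cdots \varphi''_{d-i+1}} E'_i.
\]
Since $\lbrace E'_i\rbrace_{i=0}^d$ is an idempotent sequence (so $E'_iE'_j = \delta_{ij}E'_i$ and $\sum_i E'_i=I$) and each scalar coefficient is nonzero, the element $P$ is invertible with
\[
P^{-1} \;=\; \sum_{i=0}^d \frac{\varphi''_d \cdots \varphi''_{d-i+1}}{\varphi'_1 \cdots \varphi'_i} E'_i.
\]
By the first expression for $\Omega$ in Definition \ref{def:Om123}, namely $\Omega = \mathbb C\, P\,\mathbb B$, we conclude $\Omega$ is invertible and
\[
\Omega^{-1} \;=\; \mathbb B^{-1}\, P^{-1}\, \mathbb C^{-1},
\]
which is the first displayed formula for $\Omega^{-1}$.

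Next, I would run the same argument using the second expression for $\Omega$. Set
\[
Q \;=\; \sum_{i=0}^d \frac{\varphi''_1 \cdots \varphi''_i}{\varphi'_d \cdots \varphi'_{d-i+1}} E''_i,
\qquad
Q^{-1} \;=\; \sum_{i=0}^d \frac{\varphi'_d \cdots \varphi'_{d-i+1}}{\varphi''_1 \cdots \varphi''_i} E''_i;
\]
the same idempotent argument shows $Q$ and $Q^{-1}$ are mutually inverse. By the second formula of Definition \ref{def:Om123} we have $\Omega = \mathbb A\, Q\, \mathbb C$, so
\[
\Omega^{-1} \;=\; \mathbb C^{-1}\, Q^{-1}\, \mathbb A^{-1},
\]
which is the second displayed formula for $\Omega^{-1}$. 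The analogous statements for $(\Omega')^{-1}$ and $(\Omega'')^{-1}$ follow immediately by the same argument applied to the p-relatives $B,C,A$ and $C,A,B$ of $A,B,C$ (using Lemma \ref{lem:newUniData} to track how $\mathbb A,\mathbb B,\mathbb C$ permute, and noting from the proof of Proposition \ref{prop:EquitWOW} and Definition \ref{def:Om123} that $\Omega', \Omega''$ correspond to $\Omega$ under these cyclic rotations).

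There is essentially no obstacle: the only point requiring mild care is matching the coefficients in the stated formulas with the reciprocals coming from $P^{-1}$ and $Q^{-1}$, which is a direct inspection. No use of the equitable hypothesis is needed beyond what already went into defining $\Omega,\Omega',\Omega''$ via Proposition \ref{prop:EquitWOW}.
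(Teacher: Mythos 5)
Your proposal is correct and is essentially the paper's own argument: the paper's proof simply cites Proposition \ref{prop:EquitWOW} and Definition \ref{def:Om123}, and your write-up is the natural expansion of that — factor each of $\Omega,\Omega',\Omega''$ as (unipotent map)$\times$(invertible "diagonal" sum over idempotents)$\times$(unipotent map) in both available ways and invert each factor. The only cosmetic difference is that invoking the p-relatives for $\Omega'$ and $\Omega''$ is unnecessary, since the same three-factor argument applies directly to the defining expressions in (\ref{eq:Om3}) and (\ref{eq:Om1}).
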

\begin{proof}
Use Proposition
\ref{prop:EquitWOW}
and Definition
\ref{def:Om123}.
\end{proof}


\begin{lemma}
\label{prop:NBequit}
Assume that $A,B,C$ is equitable and nonbipartite.
Then $\Omega = \Omega'=\Omega''$, and this 
common value is equal to
\begin{eqnarray*}
&&\mathbb B 
\Biggl(\sum_{i=0}^d \frac{\varphi_1 \cdots \varphi_i}
{\varphi_d \cdots \varphi_{d-i+1}} E_i\Biggr) \mathbb A
= 
\mathbb C 
\Biggl(\sum_{i=0}^d \frac{\varphi_1 \cdots \varphi_i}
{\varphi_d \cdots \varphi_{d-i+1}} E'_i\Biggr) \mathbb B 
\nonumber
\\
&& \qquad \qquad \qquad 
=
\mathbb A 
\Biggl(\sum_{i=0}^d \frac{\varphi_1 \cdots \varphi_i}
{\varphi_d \cdots \varphi_{d-i+1}} E''_i\Biggr) \mathbb C.
\end{eqnarray*}
\end{lemma}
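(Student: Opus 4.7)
The proof plan is to unwind the three definitions of $\Omega,\Omega',\Omega''$ given in Definition~\ref{def:Om123} and observe that the equitable plus nonbipartite hypothesis collapses each of them onto the same triple of expressions. Specifically, I would first invoke Lemma~\ref{lem:equitBasic}(i) to conclude that $\varphi_i=\varphi'_i=\varphi''_i$ for $1\le i\le d$; this is the only place the nonbipartite hypothesis enters, and it is essential because the analogous equality fails in the bipartite case (where one only has $\varphi_{i-1}\varphi_i=\varphi'_{i-1}\varphi'_i=\varphi''_{i-1}\varphi''_i$ by Lemma~\ref{lem:basic2}).

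Next, introduce the three target expressions
\[
M_1=\mathbb{B}\Bigl(\sum_{i=0}^d \tfrac{\varphi_1\cdots\varphi_i}{\varphi_d\cdots\varphi_{d-i+1}}E_i\Bigr)\mathbb{A},\quad
M_2=\mathbb{C}\Bigl(\sum_{i=0}^d \tfrac{\varphi_1\cdots\varphi_i}{\varphi_d\cdots\varphi_{d-i+1}}E'_i\Bigr)\mathbb{B},\quad
M_3=\mathbb{A}\Bigl(\sum_{i=0}^d \tfrac{\varphi_1\cdots\varphi_i}{\varphi_d\cdots\varphi_{d-i+1}}E''_i\Bigr)\mathbb{C}.
\]
Substituting $\varphi_i=\varphi'_i=\varphi''_i$ into equation~(\ref{eq:Om2}), whose common value defines $\Omega$, its left-hand side becomes $M_2$ and its right-hand side becomes $M_3$, so $\Omega=M_2=M_3$. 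The same substitution in~(\ref{eq:Om3}), whose common value defines $\Omega'$, yields $\Omega'=M_3=M_1$; and in~(\ref{eq:Om1}), whose common value defines $\Omega''$, yields $\Omega''=M_1=M_2$. Chaining these equalities gives $\Omega=\Omega'=\Omega''=M_1=M_2=M_3$, which is the full assertion of the lemma.

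There is essentially no obstacle to speak of: once the equitable-nonbipartite equality of parameter sequences is in hand, the proof reduces to a term-by-term comparison of coefficient fractions in already-proved formulas. The only thing to be careful about is reading Definition~\ref{def:Om123} correctly, namely that $\Omega$ is the common value of the \emph{two} expressions displayed in~(\ref{eq:Om2}), not just one of them, and likewise for $\Omega',\Omega''$; this is exactly what supplies the overlapping identifications $\Omega=M_2=M_3$, $\Omega'=M_3=M_1$, $\Omega''=M_1=M_2$ needed to close the triangle. No further appeal to the unipotent maps, Lemma~\ref{lem:3and3}, or Proposition~\ref{prop:EquitWOW} beyond what is already built into Definition~\ref{def:Om123} is required.
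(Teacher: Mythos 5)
Your proof is correct and follows essentially the same route as the paper, which likewise cites Lemma~\ref{lem:equitBasic}(i) together with (\ref{eq:Om2})--(\ref{eq:Om1}) and Definition~\ref{def:Om123}; your substitution of $\varphi_i=\varphi'_i=\varphi''_i$ into the three displayed equalities and the resulting chain $\Omega=M_2=M_3$, $\Omega'=M_3=M_1$, $\Omega''=M_1=M_2$ is exactly the intended argument.
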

\begin{proof}
By Lemma
\ref{lem:equitBasic}(i)
along with
(\ref{eq:Om2})--(\ref{eq:Om1})
and Definition 
\ref{def:Om123}.
\end{proof}
\noindent For the past few results we assumed  
that $A,B,C$ is equitable. We now drop the equitable assumption
and return to the general case.

\begin{theorem}
\label{cor:everythingGen}
\label{cor:everything}
Assume that $A,B,C$ is nonbipartite. Then the following
{\rm (i)--(v)} hold.
\begin{enumerate}
\item[\rm (i)] We have
\begin{eqnarray*}
&&\mathbb B 
\Biggl(\sum_{i=0}^d \frac{\varphi_1 \cdots \varphi_i}
{\varphi_d \cdots \varphi_{d-i+1}} E_i\Biggr) \mathbb A
= 
\mathbb C 
\Biggl(\sum_{i=0}^d \frac{\varphi_1 \cdots \varphi_i}
{\varphi_d \cdots \varphi_{d-i+1}} E'_i\Biggr) \mathbb B 
\\
&& \qquad \qquad \qquad 
=
\mathbb A 
\Biggl(\sum_{i=0}^d \frac{\varphi_1 \cdots \varphi_i}
{\varphi_d \cdots \varphi_{d-i+1}} E''_i\Biggr) \mathbb C.
\end{eqnarray*}
Denote this common value by $\Omega$.
\item[\rm (ii)]
 $\Omega$
is invertible, and $\Omega^{-1}$ is equal to
\begin{eqnarray*}
&&
\mathbb A^{-1} 
\Biggl(\sum_{i=0}^d \frac{\varphi_d \cdots \varphi_{d-i+1}}
{\varphi_1 \cdots \varphi_{i}} E_i\Biggr) \mathbb B^{-1} 
= 
\mathbb B^{-1} 
\Biggl(\sum_{i=0}^d \frac{\varphi_d \cdots \varphi_{d-i+1}}
{\varphi_1 \cdots \varphi_{i}} E'_i\Biggr) \mathbb C^{-1} 
\nonumber
\\
&& \qquad \qquad \qquad 
=
\mathbb C^{-1} 
\Biggl(\sum_{i=0}^d \frac{\varphi_d \cdots \varphi_{d-i+1}}
{\varphi_1 \cdots \varphi_{i}} E''_i\Biggr) \mathbb A^{-1}.
\end{eqnarray*}
\item[\rm (iii)]
For
 $0 \leq i \leq d$,
\begin{eqnarray*}
E_i \Omega = \Omega E'_i,
\qquad \qquad 
E'_i \Omega = \Omega E''_i,
\qquad \qquad 
E''_i \Omega = \Omega E_i.
\end{eqnarray*}
\item[\rm (iv)]
We have 
\begin{eqnarray*}
\alpha'_1 A \Omega = \alpha''_1 \Omega B, \qquad \qquad
\alpha''_1 B \Omega = \alpha_1 \Omega C, \qquad \qquad
\alpha_1 C \Omega = \alpha'_1 \Omega A.
\end{eqnarray*}
\item[\rm (v)]
For $A,B,C$  equitable,
\begin{eqnarray*}
A \Omega =  \Omega B, \qquad \qquad
 B \Omega = \Omega C, \qquad \qquad
C \Omega =  \Omega A.
\end{eqnarray*}
\end{enumerate}
\end{theorem}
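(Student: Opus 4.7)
The plan is to reduce every statement to the equitable case, where all pieces are already in place. Since $A,B,C$ is nonbipartite, Lemma \ref{lem:NBipEquitAdj} supplies nonzero scalars $\alpha,\beta,\gamma\in\mathbb F$ with $\alpha/\alpha'_1=\beta/\alpha''_1=\gamma/\alpha_1$ (call this common ratio $k$) such that the associate triple $\alpha A,\beta B,\gamma C$ is equitable, and it remains nonbipartite by Lemma \ref{lem:BPabc}. Two invariance facts make the reduction clean: by Lemma \ref{lem:isequal} the idempotent data $\{E_i\},\{E'_i\},\{E''_i\}$ is preserved under associate, and by Lemma \ref{lem:uniNew} so is the unipotent data $\mathbb A,\mathbb B,\mathbb C$. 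By Lemma \ref{lem:albega} the associate has parameter sequences $\alpha\beta\varphi_i$, $\beta\gamma\varphi'_i$, $\gamma\alpha\varphi''_i$, and the equitable condition together with Lemma \ref{lem:equitBasic}(i) forces these three sequences to coincide; let $\tilde\varphi_i$ denote the common value. The crucial observation is that the ratio $\tilde\varphi_1\cdots\tilde\varphi_i/(\tilde\varphi_d\cdots\tilde\varphi_{d-i+1})$ is scale-invariant: writing $\tilde\varphi_i=\alpha\beta\varphi_i$ causes $(\alpha\beta)^i$ to cancel, leaving $\varphi_1\cdots\varphi_i/(\varphi_d\cdots\varphi_{d-i+1})$, and an analogous cancellation occurs under $\tilde\varphi_i=\beta\gamma\varphi'_i$ or $\tilde\varphi_i=\gamma\alpha\varphi''_i$.

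For (i), apply Lemma \ref{prop:NBequit} to the associate: the three expressions displayed there (in the common parameter $\tilde\varphi_i$) are equal. Rewriting each via the appropriate choice among $\tilde\varphi_i=\alpha\beta\varphi_i=\beta\gamma\varphi'_i=\gamma\alpha\varphi''_i$ and cancelling, one recovers exactly the three expressions in (i), which therefore coincide and define $\Omega$. For (ii), apply Lemma \ref{lem:OmInv} to the associate, where $\Omega=\Omega'=\Omega''$ forces all six listed formulas for the inverse to agree; extracting the three written purely in terms of $\tilde\varphi_i$ (using $\tilde\varphi_i=\tilde\varphi'_i=\tilde\varphi''_i$) and applying the same scale-cancellation yields the three expressions in (ii).

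For (iii), Lemma \ref{lem:OmegaCom2} applied to the equitable associate gives $E_i\Omega=\Omega E'_i$, $E'_i\Omega'=\Omega'E''_i$, $E''_i\Omega''=\Omega''E_i$; since $\Omega=\Omega'=\Omega''$ and the idempotents agree with those of $A,B,C$, these transfer verbatim to the original triple. For (iv), Lemma \ref{lem:OmegaCom} applied to the associate yields $(\alpha A)\Omega=\Omega(\beta B)$, $(\beta B)\Omega=\Omega(\gamma C)$, $(\gamma C)\Omega=\Omega(\alpha A)$; substituting $\alpha=k\alpha'_1$, $\beta=k\alpha''_1$, $\gamma=k\alpha_1$ and cancelling $k$ produces the three equations in (iv). Finally (v) is the immediate specialisation of (iv) to the case $\alpha_1=\alpha'_1=\alpha''_1$, which characterises equitable nonbipartite triples by Lemma \ref{lem:equitNBmin}.

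The main obstacle is not deep; it is the careful book-keeping required in parts (i) and (ii) to verify that each of the three alternative rewritings of the $\tilde\varphi_i$-ratio produces the exact coefficient stated in the proposition, rather than a permuted or transposed version. Beyond that, every step is pure substitution plus a single scalar cancellation.
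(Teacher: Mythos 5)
Your proposal is correct and is essentially the paper's own argument: the paper likewise reduces to the equitable case by applying Lemmas \ref{lem:OmegaCom2}--\ref{prop:NBequit} to the equitable associate (it takes the specific choice $\alpha'_1 A,\alpha''_1 B,\alpha_1 C$, i.e.\ your $k=1$) and transfers back via Lemmas \ref{lem:albega}, \ref{lem:isequal}, \ref{lem:uniNew}. The scale-cancellation bookkeeping you describe for the coefficient ratios is exactly the point that makes the transfer work.
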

\begin{proof} Apply
Lemmas
\ref{lem:OmegaCom2}--\ref{prop:NBequit}
to the equitable LR triple $\alpha'_1 A, \alpha''_1 B, \alpha_1 C$
and use Lemmas
\ref{lem:albega},
\ref{lem:isequal},
\ref{lem:uniNew}.
\end{proof}


\begin{proposition}
\label{lem:OmEi}
 Assume that $A,B,C$ is nonbipartite.
Then $\Omega$ is a rotator for $A,B,C$.
\end{proposition}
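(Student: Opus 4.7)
The plan is to observe that this proposition is an immediate consequence of Theorem \ref{cor:everythingGen}(iii) together with Definition \ref{def:ABCROT}. By Definition \ref{def:ABCROT}, an element $R \in {\rm End}(V)$ is a rotator for $A,B,C$ precisely when
\begin{eqnarray*}
E_i R = R E'_i, \qquad E'_i R = R E''_i, \qquad E''_i R = R E_i
\end{eqnarray*}
for $0 \leq i \leq d$. But these are exactly the three equations established in Theorem \ref{cor:everythingGen}(iii) for $R = \Omega$. So the proof reduces to a one-line invocation of that theorem.

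There is no real obstacle here; all of the substantive work has already been done. The construction of $\Omega$ in Theorem \ref{cor:everythingGen}(i) was specifically engineered (by sandwiching a diagonal expression in the appropriate idempotents between two unipotent maps, and equating three such expressions) so that it would intertwine the three idempotent sequences cyclically. Thus the rotator property is built into the definition of $\Omega$ via the three-fold equality in Theorem \ref{cor:everythingGen}(i), and Theorem \ref{cor:everythingGen}(iii) is the formal statement of this intertwining.

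In summary, I would simply write: by Theorem \ref{cor:everythingGen}(iii) the element $\Omega$ satisfies the three equations in \eqref{eq:ROT}, so by Definition \ref{def:ABCROT} it is a rotator for $A,B,C$.
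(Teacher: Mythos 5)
Your proof is correct and is exactly the paper's argument: the paper also proves this by citing Definition \ref{def:ABCROT} together with Theorem \ref{cor:everythingGen}(iii). Nothing further is needed.
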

\begin{proof}
By 
Definition
\ref{def:ABCROT} and
Theorem
\ref{cor:everythingGen}(iii).
\end{proof}

\begin{lemma}
\label{lem:OmegaSendEV}
Assume that $A,B,C$ is nonbipartite.
Then for $0 \leq i \leq d$, 
\begin{eqnarray*}
\Omega E'_iV= E_iV,
\qquad \qquad
\Omega E''_iV= E'_iV,
\qquad \qquad
\Omega E_iV= E''_iV.
\end{eqnarray*}
\end{lemma}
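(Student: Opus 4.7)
The plan is to deduce the result directly from Proposition \ref{lem:OmEi} together with the invertibility of $\Omega$ established in Theorem \ref{cor:everythingGen}(ii), using a dimension-counting argument.

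First I would invoke Proposition \ref{lem:OmEi}, which asserts that $\Omega$ is a rotator for $A,B,C$. By Lemma \ref{lem:ROTmeaning}, this immediately gives the inclusions
\begin{eqnarray*}
\Omega E'_iV \subseteq E_iV,
\qquad
\Omega E''_iV \subseteq E'_iV,
\qquad
\Omega E_iV \subseteq E''_iV
\qquad (0 \leq i \leq d).
\end{eqnarray*}

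Next I would invoke Theorem \ref{cor:everythingGen}(ii) to conclude that $\Omega$ is invertible on $V$. Since $\Omega$ is a bijection, it preserves dimensions of subspaces. By the discussion following Definition \ref{def:ABE} (and the general remarks about primitive idempotents in Section 2), each of $E_iV$, $E'_iV$, $E''_iV$ is a one-dimensional subspace of $V$, being the $i$-component of the $(A,B)$-, $(B,C)$-, $(C,A)$-decomposition of $V$ respectively. Therefore each of $\Omega E'_iV$, $\Omega E''_iV$, $\Omega E_iV$ is also one-dimensional, and each inclusion above is an equality of one-dimensional subspaces.

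There is no real obstacle here: the statement is essentially a restatement of the rotator condition in the invertible setting, and everything needed has already been proved. The only small point to be careful about is noting that ``$\Omega$ is a rotator'' gives the containments, not the equalities, and that the upgrade to equalities requires invertibility together with the one-dimensionality of the target subspaces.
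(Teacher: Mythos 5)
Your proposal is correct and follows exactly the paper's own route: Proposition \ref{lem:OmEi} gives that $\Omega$ is a rotator, Lemma \ref{lem:ROTmeaning} gives the containments, and invertibility of $\Omega$ from Theorem \ref{cor:everythingGen}(ii) upgrades them to equalities. The dimension-counting detail you supply is the natural (and correct) way to fill in the step the paper leaves implicit.
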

\begin{proof} 
By Proposition \ref{lem:OmEi} the map
$\Omega$ is a rotator for $A,B,C$.
The result follows by
Lemma
\ref{lem:ROTmeaning},
and since $\Omega$ is invertible by Theorem
\ref{cor:everythingGen}(ii).
\end{proof}
\noindent Recall the rotator space $\mathcal R$ from Definition
\ref{def:ABCROTspace}.

\begin{proposition} 
\label{prop:OmegaBasis}
Assume that $A,B,C$ is nonbipartite.
Then $\Omega$ is a basis for the $\mathbb F$-vector space
$\mathcal R$.
\end{proposition}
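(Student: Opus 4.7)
The plan is to show first that $\Omega$ is a nonzero element of $\mathcal R$, and then that every $R \in \mathcal R$ is a scalar multiple of $\Omega$. The linear independence is immediate: by Proposition \ref{lem:OmEi} we have $\Omega \in \mathcal R$, and by Theorem \ref{cor:everythingGen}(ii) the map $\Omega$ is invertible, so in particular $\Omega \neq 0$. Thus the only substantive task is to establish that $\mathcal R$ has dimension one over $\mathbb F$.

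For the spanning step, my strategy is to reduce it to the classification of idempotent centralizers in Proposition \ref{prop:centBasis}. Fix an arbitrary rotator $R \in \mathcal R$ and set $X = R \Omega^{-1} \in {\rm End}(V)$. I want to check that $X$ lies in the idempotent centralizer algebra $\mathcal I$ of Definition \ref{def:ICspace}, i.e.\ that $X$ commutes with each of $E_i$, $E'_i$, $E''_i$ for $0 \leq i \leq d$. The rotator conditions $E_i \Omega = \Omega E'_i$, $E'_i \Omega = \Omega E''_i$, $E''_i \Omega = \Omega E_i$ from Theorem \ref{cor:everythingGen}(iii) can be rewritten as $\Omega^{-1} E_i = E'_i \Omega^{-1}$, and similarly for the primed and doubly primed idempotents (rotated by one position). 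Using these together with the corresponding identities for $R$ from Definition \ref{def:ABCROT}, a routine three-line computation will show
\[
X E_i = R \Omega^{-1} E_i = R E'_i \Omega^{-1} = E_i R \Omega^{-1} = E_i X,
\]
and analogous computations will give $X E'_i = E'_i X$ and $X E''_i = E''_i X$. Hence $X \in \mathcal I$.

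Now since $A,B,C$ is nonbipartite, Proposition \ref{prop:centBasis}(ii) tells us that $\mathcal I$ has $\{I\}$ as a basis, so there exists $\zeta \in \mathbb F$ with $X = \zeta I$. Multiplying on the right by $\Omega$ gives $R = \zeta \Omega$, which shows that $\Omega$ spans $\mathcal R$. Combined with $\Omega \neq 0$, this proves $\Omega$ is a basis of the $\mathbb F$-vector space $\mathcal R$.

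I expect no serious obstacle: the entire argument is a short consequence of three facts already in place, namely that $\Omega$ is an invertible rotator (Proposition \ref{lem:OmEi} and Theorem \ref{cor:everythingGen}(ii)), that rotator conditions for $\Omega^{-1}$ interchange the roles of the three idempotent sequences, and the nonbipartite case of the idempotent centralizer classification (Proposition \ref{prop:centBasis}(ii)). The only point requiring minor care is bookkeeping the cyclic shift $E_i \leftrightarrow E'_i \leftrightarrow E''_i \leftrightarrow E_i$ when passing between $\Omega$-identities and $\Omega^{-1}$-identities, so that the three commutation relations $X E_i = E_i X$, $X E'_i = E'_i X$, $X E''_i = E''_i X$ all come out correctly.
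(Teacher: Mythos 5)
Your proposal is correct and follows essentially the same route as the paper: show $\Omega$ is an invertible (hence nonzero) rotator, observe that $R\Omega^{-1}$ (the paper uses $\Omega^{-1}R$, which works the same way) is an idempotent centralizer, and invoke Proposition \ref{prop:centBasis}(ii) to conclude $R=\zeta\Omega$. The cyclic-shift bookkeeping you flag is exactly right and your displayed computation checks out.
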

\begin{proof}
We have $\Omega \in \mathcal R$
by 
Proposition
\ref{lem:OmEi}. 
The map
$\Omega$ is invertible by
Theorem
\ref{cor:everythingGen}(ii),
 so of course $\Omega \not=0$.
We show
that $\Omega $ spans $\mathcal R$.
Let $R \in \mathcal R$. 
By assumption $R$ and $\Omega$ are rotators for
$A,B,C$. So by Definition
\ref{def:ABCROT},
$\Omega^{-1}R$ 
commutes with each of $E_i$, $E'_i$, $E''_i$ for
$0 \leq i \leq d$. Now by Definition
\ref{def:IC},
$\Omega^{-1}R$ is an idempotent
centralizer for $A,B,C$.
By Definition
\ref{def:ICspace}
and
Proposition \ref{prop:centBasis}(ii),
there exists $\zeta \in \mathbb F$ such that
$\Omega^{-1}R = \zeta I$.
Therefore 
$R = \zeta \Omega$. We have shown that
$\Omega $ spans $\mathcal R$. The result follows.
\end{proof}

\begin{definition}
\label{def:Rotator}
\rm
Assume that $A,B,C$ is nonbipartite.
By the {\it standard rotator} for $A,B,C$ we mean the
map $\Omega$ from 
Theorem 
\ref{cor:everythingGen}
and Proposition
\ref{prop:OmegaBasis}.
\end{definition}

\begin{lemma} 
\label{lem:OmegaSendsEta}
Assume that $A,B,C$ is nonbipartite.
Then $\Omega$ sends
\begin{eqnarray*}
\eta \to \frac{(\eta,\tilde \eta')}{(\eta'',\tilde \eta')}\eta'',
\qquad \qquad
\eta' \to \frac{(\eta',\tilde \eta'')}{(\eta,\tilde \eta'')}\eta,
\qquad \qquad
\eta'' \to \frac{(\eta'',\tilde \eta)}{(\eta',\tilde \eta)}\eta'.
\end{eqnarray*}
\end{lemma}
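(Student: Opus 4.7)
The plan is to compute $\Omega$ applied to each of $\eta, \eta', \eta''$ directly, using the three expressions for $\Omega$ given in Theorem \ref{cor:everythingGen}(i) together with the explicit action of the unipotent maps on these distinguished vectors described in Lemma \ref{lem:iIszero}.

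First I would treat $\Omega \eta$, using the expression
$\Omega = \mathbb B \bigl(\sum_{i=0}^d \tfrac{\varphi_1 \cdots \varphi_i}{\varphi_d \cdots \varphi_{d-i+1}} E_i\bigr) \mathbb A$. By Lemma \ref{lem:iIszero}(i), $\mathbb A$ fixes $\eta$. Since $\eta \in A^d V = E_0 V$ (recall that the $(A,B)$-decomposition is $\lbrace E_i V\rbrace_{i=0}^d$, and $V_0$ coincides with the kernel of $A$, which is $A^d V$), we have $E_0 \eta = \eta$ and $E_i \eta = 0$ for $i \geq 1$. Reading the $i=0$ coefficient (an empty product, equal to $1$), the middle factor sends $\eta \to \eta$. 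Thus $\Omega \eta = \mathbb B \eta$, and by Lemma \ref{lem:iIszero}(ii) this equals $\tfrac{(\eta, \tilde \eta')}{(\eta'', \tilde \eta')} \eta''$, as required.

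Next I would treat $\Omega \eta'$ by the same strategy, using instead the middle form
$\Omega = \mathbb C \bigl(\sum_{i=0}^d \tfrac{\varphi_1 \cdots \varphi_i}{\varphi_d \cdots \varphi_{d-i+1}} E'_i\bigr) \mathbb B$, and noting that $\eta' \in B^d V = E'_0 V$ and that $\mathbb B$ fixes $\eta'$ by Lemma \ref{lem:iIszero}(ii). This yields $\Omega \eta' = \mathbb C \eta'$, which Lemma \ref{lem:iIszero}(iii) identifies with $\tfrac{(\eta', \tilde \eta'')}{(\eta, \tilde \eta'')} \eta$. Finally, for $\Omega \eta''$ I would use
$\Omega = \mathbb A \bigl(\sum_{i=0}^d \tfrac{\varphi_1 \cdots \varphi_i}{\varphi_d \cdots \varphi_{d-i+1}} E''_i\bigr) \mathbb C$, observe that $\eta'' \in C^d V = E''_0 V$ and that $\mathbb C$ fixes $\eta''$ by Lemma \ref{lem:iIszero}(iii), so $\Omega \eta'' = \mathbb A \eta''$, which equals $\tfrac{(\eta'', \tilde \eta)}{(\eta', \tilde \eta)} \eta'$ by Lemma \ref{lem:iIszero}(i).

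There is essentially no obstacle here, since every ingredient has been assembled in preceding lemmas; the only subtlety to confirm is the identification of the distinguished eigenlines $A^d V$, $B^d V$, $C^d V$ with the $0$-components $E_0 V$, $E'_0 V$, $E''_0 V$ of the three decompositions, which follows from the discussion of Nil elements and from the definitions of the $(A,B)$-, $(B,C)$-, and $(C,A)$-decompositions together with the convention that the empty product giving the $i=0$ coefficient equals $1$.
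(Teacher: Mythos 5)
Your proposal is correct and is essentially the paper's own argument: the paper's proof simply cites Lemma \ref{lem:iIszero} together with the three formulae for $\Omega$ in Theorem \ref{cor:everythingGen}(i), which is exactly the computation you spell out. The identifications $\eta\in A^dV=E_0V$, $\eta'\in B^dV=E'_0V$, $\eta''\in C^dV=E''_0V$ that you flag are indeed the only point needing care, and they hold as you describe.
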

\begin{proof} Use
Lemma
\ref{lem:iIszero}
and
the formulae for $\Omega$ given in
Theorem 
\ref{cor:everythingGen}(i).
\end{proof}

\begin{proposition} 
\label{thm:OmegaCubed}
Assume that $A,B,C$ is 
nonbipartite. Then $\Omega^3= \theta I$, where
$\theta$ is from Definition
\ref{def:theta}.
\end{proposition}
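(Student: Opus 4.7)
The plan is to show first that $\Omega^3$ is a scalar operator, and then to compute that scalar using the explicit action of $\Omega$ on the distinguished vectors $\eta,\eta',\eta''$.

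First I would observe that by Proposition \ref{lem:OmEi}, $\Omega$ is a rotator, so by Definition \ref{def:ABCROT} we have $E_i\Omega = \Omega E'_i$, $E'_i \Omega = \Omega E''_i$, and $E''_i \Omega = \Omega E_i$ for $0 \leq i \leq d$. Iterating these three identities in the natural cyclic fashion shows that $\Omega^3$ commutes with each of $E_i, E'_i, E''_i$ for $0 \leq i \leq d$; indeed, for instance,
\begin{eqnarray*}
E_i \Omega^3 = \Omega E'_i \Omega^2 = \Omega^2 E''_i \Omega = \Omega^3 E_i.
\end{eqnarray*}
Hence by Definition \ref{def:IC}, $\Omega^3$ is an idempotent centralizer for $A,B,C$. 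Since $A,B,C$ is nonbipartite, Proposition \ref{prop:centBasis}(ii) asserts that $I$ is a basis for the idempotent centralizer algebra $\mathcal I$. Therefore there exists a scalar $c \in \mathbb F$ with $\Omega^3 = cI$.

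Next I would determine $c$ by evaluating $\Omega^3 \eta$. By Lemma \ref{lem:OmegaSendsEta},
\begin{eqnarray*}
\Omega \eta = \frac{(\eta,\tilde\eta')}{(\eta'',\tilde\eta')}\eta'',
\qquad
\Omega \eta'' = \frac{(\eta'',\tilde\eta)}{(\eta',\tilde\eta)}\eta',
\qquad
\Omega \eta' = \frac{(\eta',\tilde\eta'')}{(\eta,\tilde\eta'')}\eta.
\end{eqnarray*}
Applying these in succession gives
\begin{eqnarray*}
\Omega^3 \eta = \frac{(\eta,\tilde\eta')(\eta'',\tilde\eta)(\eta',\tilde\eta'')}{(\eta'',\tilde\eta')(\eta',\tilde\eta)(\eta,\tilde\eta'')}\,\eta,
\end{eqnarray*}
where the denominators are nonzero by Lemma \ref{lem:lateruse}. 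Thus $c$ equals this ratio of inner products.

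Finally, I would invoke Proposition \ref{prop:sixBil}, which identifies exactly this ratio as $\theta$. Therefore $\Omega^3 = \theta I$. There is no serious obstacle here; the only subtlety is matching the order of the factors in $\Omega^3\eta$ against the specific ordering of the inner products in the definition of $\theta$, which is a routine bookkeeping check using the formula for $\theta$ in (\ref{eq:TripleProd}).
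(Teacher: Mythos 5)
Your proposal is correct and follows essentially the same route as the paper: show $\Omega^3$ commutes with all the idempotents (hence lies in $\mathcal I = \mathbb F I$ by Proposition \ref{prop:centBasis}(ii)), then evaluate the scalar by tracking $\eta \to \eta'' \to \eta' \to \eta$ via Lemma \ref{lem:OmegaSendsEta} and matching the resulting ratio with (\ref{eq:TripleProd}). The only cosmetic difference is that you derive the commutation relations from Definition \ref{def:ABCROT} while the paper cites Theorem \ref{cor:everythingGen}(iii) directly.
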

\begin{proof}
By Theorem
\ref{cor:everythingGen}(iii),
$\Omega^3$ commutes with 
each of $E_i, E'_i, E''_i$ for
$0 \leq i \leq d$.
By Definition
\ref{def:IC},
$\Omega^{3}$ is an idempotent
centralizer for $A,B,C$.
By Definition
\ref{def:ICspace}
and
Proposition \ref{prop:centBasis}(ii),
there exists $\zeta \in \mathbb F$ such that
$\Omega^{3}= \zeta I$.
Considering the action of $\Omega^3$ on $\eta$ and using
Lemma
\ref{lem:OmegaSendsEta},
we obtain $\zeta = \theta$.
\end{proof}

\begin{lemma}
\label{lem:SRadj}
Assume that $A,B,C$ is nonbipartite.
Let $\alpha, \beta, \gamma$ denote nonzero
scalars in $\mathbb F$.
Then the
LR triples
$A,B,C$ and
 $\alpha A, \beta B, \gamma C$
 have the same standard rotator.
\end{lemma}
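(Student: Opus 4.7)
The plan is to inspect the definition of the standard rotator given in Theorem \ref{cor:everythingGen}(i) and verify that each ingredient in the formula is invariant under the replacement $A,B,C \mapsto \alpha A, \beta B, \gamma C$. Recall that for a nonbipartite LR triple, the standard rotator $\Omega$ is defined as the common value
\begin{eqnarray*}
\Omega \;=\; \mathbb B \Biggl(\sum_{i=0}^d \frac{\varphi_1 \cdots \varphi_i}{\varphi_d \cdots \varphi_{d-i+1}} E_i\Biggr) \mathbb A,
\end{eqnarray*}
and two analogous expressions involving the other unipotent maps and idempotents. So the formula depends on three pieces of data: the parameter sequence $\lbrace \varphi_i\rbrace_{i=1}^d$ of the LR pair $A,B$, the idempotent data $\lbrace E_i,E'_i,E''_i\rbrace$, and the unipotent maps $\mathbb A,\mathbb B,\mathbb C$.

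First I would check the parameter sequence factor. By Lemma \ref{lem:albega}, the LR triple $\alpha A,\beta B,\gamma C$ has parameter sequence $\lbrace \alpha\beta\varphi_i\rbrace_{i=1}^d$ for its first LR pair. The scalar $(\alpha\beta)^i$ then appears in both numerator and denominator of the ratio $\varphi_1\cdots\varphi_i/(\varphi_d\cdots\varphi_{d-i+1})$, and so this ratio is unchanged. Next I would observe that the idempotent data is unchanged under rescaling by Lemma \ref{lem:isequal}, and that the unipotent data is unchanged under rescaling by Lemma \ref{lem:uniNew}.

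Combining these three invariances, every ingredient in the formula for $\Omega$ in Theorem \ref{cor:everythingGen}(i) produces the same element of ${\rm End}(V)$ when computed for $\alpha A,\beta B,\gamma C$ as when computed for $A,B,C$. Hence the standard rotator in the sense of Definition \ref{def:Rotator} is the same for both LR triples. There is no real obstacle here; the argument is just a matter of assembling the three previously established invariance lemmas and observing the cancellation in the parameter ratio.
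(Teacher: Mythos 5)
Your proposal is correct and follows essentially the same route as the paper, which likewise combines Lemmas \ref{lem:albega}, \ref{lem:isequal}, and \ref{lem:uniNew} with a formula for $\Omega$ from Theorem \ref{cor:everythingGen}(i). The observation that $(\alpha\beta)^i$ cancels in the ratio $\varphi_1\cdots\varphi_i/(\varphi_d\cdots\varphi_{d-i+1})$ is exactly the point, and the remaining ingredients are invariant by the cited lemmas.
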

\begin{proof}
Use Lemmas
\ref{lem:albega},
\ref{lem:isequal},
\ref{lem:uniNew}
and any formula for $\Omega$ given in Theorem
\ref{cor:everythingGen}(i).
\end{proof}

\begin{lemma}
\label{lem:sameROT}
 Assume that $A,B,C$ is nonbipartite.
\begin{enumerate}
\item[\rm (i)] The following LR triples have the same standard rotator:
\begin{eqnarray*}
A,B,C \qquad \qquad 
B,C,A\qquad \qquad 
C,A,B.
\end{eqnarray*}
\item[\rm (ii)] The following LR triples have the same standard rotator:
\begin{eqnarray*}
C,B,A \qquad \qquad 
A,C,B \qquad \qquad 
B,A,C.
\end{eqnarray*}
\item[\rm (iii)] The standard 
rotators in {\rm (i)}, {\rm (ii)} above are inverses.
\end{enumerate}
\end{lemma}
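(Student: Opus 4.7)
The strategy is to apply Theorem~\ref{cor:everythingGen} directly to each relative in the tables of Lemmas~\ref{lem:ABCvar}, \ref{lem:ABCEvar}, \ref{lem:newUniData}, and then translate the resulting formula back into $A,B,C$-data.

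\medskip

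\noindent
For part (i), consider the p-relative $B,C,A$. Applying Theorem~\ref{cor:everythingGen}(i) to $B,C,A$ and using the prime convention of Definition~\ref{def:primeConv}, the standard rotator $\Omega_{B,C,A}$ equals
\[
\mathbb{B}^{(B,C,A)}\Bigl(\sum_{i=0}^{d}\frac{\varphi^{(B,C,A)}_1\cdots\varphi^{(B,C,A)}_i}{\varphi^{(B,C,A)}_d\cdots\varphi^{(B,C,A)}_{d-i+1}}\,E^{(B,C,A)}_i\Bigr)\mathbb{A}^{(B,C,A)}.
\]
Consulting Lemmas~\ref{lem:ABCvar}, \ref{lem:ABCEvar}, \ref{lem:newUniData}, the substitutions are $\mathbb{A}^{(B,C,A)}=\mathbb{B}$, $\mathbb{B}^{(B,C,A)}=\mathbb{C}$, $E^{(B,C,A)}_i=E'_i$, and $\varphi^{(B,C,A)}_i=\varphi'_i$. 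Compare the resulting formula
\[
\Omega_{B,C,A}=\mathbb{C}\Bigl(\sum_{i=0}^{d}\frac{\varphi'_1\cdots\varphi'_i}{\varphi'_d\cdots\varphi'_{d-i+1}}E'_i\Bigr)\mathbb{B}
\]
with the middle formula of Theorem~\ref{cor:everythingGen}(i) for $\Omega=\Omega_{A,B,C}$. By Corollary~\ref{cor:alphaOneInv}, the ratio $\varphi'_i/\varphi_i$ is independent of $i$ (equal to $\alpha'_1/\alpha_1$, nonzero by Lemma~\ref{lem:NotB}), so the coefficient $\varphi_1\cdots\varphi_i/(\varphi_d\cdots\varphi_{d-i+1})$ is unchanged when every $\varphi$ is replaced by $\varphi'$; the $i$ scaling factors in numerator and denominator cancel. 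Hence $\Omega_{B,C,A}=\Omega_{A,B,C}$. The same argument applied to $C,A,B$ (using the third formula of Theorem~\ref{cor:everythingGen}(i)) gives $\Omega_{C,A,B}=\Omega_{A,B,C}$.

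\medskip

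\noindent
For part (iii), I will show $\Omega_{C,B,A}=\Omega^{-1}_{A,B,C}$; the other two cases are analogous via part (i). Apply Theorem~\ref{cor:everythingGen}(i) to $C,B,A$, with substitutions $\mathbb{A}^{(C,B,A)}=\mathbb{C}^{-1}$, $\mathbb{B}^{(C,B,A)}=\mathbb{B}^{-1}$, $E^{(C,B,A)}_i=E'_{d-i}$, $\varphi^{(C,B,A)}_i=\varphi'_{d-i+1}$ (from the tables in Lemmas~\ref{lem:ABCvar}, \ref{lem:ABCEvar}, \ref{lem:newUniData}). This yields
\[
\Omega_{C,B,A}=\mathbb{B}^{-1}\Bigl(\sum_{i=0}^{d}\frac{\varphi'_d\varphi'_{d-1}\cdots\varphi'_{d-i+1}}{\varphi'_1\varphi'_2\cdots\varphi'_i}\,E'_{d-i}\Bigr)\mathbb{C}^{-1}.
\]
Reindex with $k=d-i$ and compare with the middle formula for $\Omega^{-1}$ in Theorem~\ref{cor:everythingGen}(ii). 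Writing $\varphi'_i=(\alpha'_1/\alpha_1)\varphi_i$ from Corollary~\ref{cor:alphaOneInv}, the scaling factors again cancel, and the required identity reduces to
\[
\frac{\varphi_d\cdots\varphi_{k+1}}{\varphi_1\cdots\varphi_{d-k}}=\frac{\varphi_d\cdots\varphi_{d-k+1}}{\varphi_1\cdots\varphi_{k}},
\]
which follows immediately upon cross-multiplication, since both sides then equal $(\varphi_1\varphi_2\cdots\varphi_d)^2/(\varphi_1\cdots\varphi_d)=\varphi_1\cdots\varphi_d$. Thus $\Omega_{C,B,A}=\Omega^{-1}_{A,B,C}$.

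\medskip

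\noindent
Part (ii) then follows combining (i) and (iii): each n-relative of $A,B,C$ is a p-relative of one of $C,B,A$, $A,C,B$, $B,A,C$, so applying (i) to the p-relatives of $C,B,A$ gives $\Omega_{C,B,A}=\Omega_{A,C,B}=\Omega_{B,A,C}$. The main obstacle is bookkeeping: keeping the prime-convention substitutions straight across four different lemmas and tracking the reindexing $k=d-i$ in the n-relative case. Everything else is forced by the single observation that $\varphi_i$, $\varphi'_i$, $\varphi''_i$ are proportional (Corollary~\ref{cor:alphaOneInv}), so the fractions appearing in the rotator formulas are invariant under the permutation of primes.
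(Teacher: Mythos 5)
Your proof is correct and follows essentially the same route as the paper: both apply the explicit formulas for $\Omega$ and $\Omega^{-1}$ in Theorem~\ref{cor:everythingGen}(i),(ii) to the relatives via the tables in Lemmas~\ref{lem:ABCvar}, \ref{lem:ABCEvar}, \ref{lem:newUniData} and match coefficients. The only difference is that the paper first reduces to the equitable case (via Lemmas~\ref{lem:EqAdjust}, \ref{lem:SRadj}), where $\varphi_i=\varphi'_i=\varphi''_i$ makes the comparison immediate, whereas you stay in the general case and observe via Corollary~\ref{cor:alphaOneInv} that the proportionality constant cancels in each ratio (your cross-multiplied identity being exactly Lemma~\ref{lem:coefDup}); both are fine.
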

\begin{proof}
By Lemmas
\ref{lem:EqAdjust},
\ref{lem:SRadj}
we may assume without loss that $A,B,C$ is equitable.
Now use Lemmas
\ref{lem:ABCvar},
\ref{lem:ABCEvar},
\ref{lem:equitBasic}(i),
\ref{lem:newUniData}
and the formulae for $\Omega, \Omega^{-1}$ given in
Theorem
\ref{cor:everythingGen}(i),(ii).
\end{proof}

\noindent Recall from Lemma
\ref{lem:newPA}
the LR triple $\tilde A,\tilde B,\tilde C$
on the dual space $V^*$.

\begin{lemma} Assume that $A,B,C$ is nonbipartite.
Then the following are inverse:
\begin{enumerate}
\item[\rm (i)] the adjoint of the standard rotator for $A,B,C$;
\item[\rm (ii)] the standard rotator for $\tilde A, \tilde B, \tilde C$.
\end{enumerate}
\end{lemma}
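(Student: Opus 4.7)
The plan is to compute both sides explicitly using the formulas from Theorem \ref{cor:everythingGen}, together with the dictionary provided by Lemmas \ref{lem:LRdualPar}, \ref{lem:LRdual}, and \ref{lem:newUniDataDual} for how the parameter array, idempotent data, and unipotent data transfer from $A,B,C$ to $\tilde A,\tilde B,\tilde C$. Let $\Omega$ denote the standard rotator of $A,B,C$ and let $\hat\Omega$ denote the standard rotator of $\tilde A,\tilde B,\tilde C$. Since the adjoint map on ${\rm End}(V)$ is an antiisomorphism, showing that the adjoint of $\Omega$ and $\hat\Omega$ are inverse is equivalent to showing $\hat\Omega = \widetilde{\Omega^{-1}}$.

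First I would apply Theorem \ref{cor:everythingGen}(ii) to write
\begin{eqnarray*}
\Omega^{-1} = \mathbb A^{-1}\Biggl(\sum_{i=0}^d \frac{\varphi_d \cdots \varphi_{d-i+1}}{\varphi_1 \cdots \varphi_i}\,E_i\Biggr)\mathbb B^{-1}.
\end{eqnarray*}
Taking the adjoint of each side and using that adjoint reverses products and commutes with sums, scalars, and inverses, I obtain
\begin{eqnarray*}
\widetilde{\Omega^{-1}} = \tilde{\mathbb B}^{-1}\Biggl(\sum_{i=0}^d \frac{\varphi_d \cdots \varphi_{d-i+1}}{\varphi_1 \cdots \varphi_i}\,\tilde E_i\Biggr)\tilde{\mathbb A}^{-1}.
\end{eqnarray*}

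Next I would compute $\hat\Omega$ by applying Theorem \ref{cor:everythingGen}(i) to the LR triple $\tilde A,\tilde B,\tilde C$. For this triple: the parameter sequence of the pair $\tilde A,\tilde B$ is $\{\varphi_{d-i+1}\}_{i=1}^d$ (by Lemma \ref{lem:LRdualPar}), the idempotent sequence of the pair $\tilde A,\tilde B$ is $\{\tilde E_{d-i}\}_{i=0}^d$ (by Lemma \ref{lem:LRdual}), and the unipotent data is $\tilde{\mathbb A}^{-1},\tilde{\mathbb B}^{-1},\tilde{\mathbb C}^{-1}$ (by the first row of the table in Lemma \ref{lem:newUniDataDual}). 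Substituting these ingredients into the formula for the standard rotator yields
\begin{eqnarray*}
\hat\Omega = \tilde{\mathbb B}^{-1}\Biggl(\sum_{i=0}^d \frac{\varphi_d \cdots \varphi_{d-i+1}}{\varphi_1 \cdots \varphi_i}\,\tilde E_{d-i}\Biggr)\tilde{\mathbb A}^{-1}.
\end{eqnarray*}

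It then remains to identify the two middle sums. After reindexing the sum in $\hat\Omega$ via $j=d-i$, the resulting coefficient of $\tilde E_j$ is the reciprocal of $\varphi_1\cdots\varphi_{d-j}/(\varphi_d\cdots\varphi_{j+1})$, and Lemma \ref{lem:coefDup} asserts exactly that this equals $\varphi_1\cdots\varphi_j/(\varphi_d\cdots\varphi_{d-j+1})$, so the reciprocals agree and the two sums coincide term by term. Hence $\hat\Omega = \widetilde{\Omega^{-1}}$, which gives the desired inverse relation. There is no real obstacle here; the only delicate point is bookkeeping the three dualizations (parameter array, idempotents, unipotents) correctly and invoking Lemma \ref{lem:coefDup} after the reindexing to match the coefficients.
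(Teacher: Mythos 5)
Your proof is correct and is essentially the argument the paper intends: compute the standard rotator of $\tilde A,\tilde B,\tilde C$ from Theorem \ref{cor:everythingGen} using the dualization dictionary for parameter arrays, idempotent data, and unipotent data, and match it against the adjoint of $\Omega^{-1}$ via the coefficient symmetry of Lemma \ref{lem:coefDup}. The only cosmetic difference is that you cite the LR-pair level lemmas (Lemmas \ref{lem:LRdualPar}, \ref{lem:LRdual}) where the paper cites their LR-triple packagings (Lemmas \ref{lem:newPA}, \ref{lem:tildeABCEvar}); the substance is identical.
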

\begin{proof}
Use Lemmas
\ref{lem:newPA},
\ref{lem:tildeABCEvar},
\ref{lem:newUniDataDual}
 and any formula for $\Omega$ given in 
Theorem
\ref{cor:everythingGen}(i).
\end{proof}

\begin{proposition}
\label{cor:RinvSend}
Assume that $A,B,C$ is nonbipartite.
Let $R$ denote a nonzero rotator for $A,B,C$. Then
the following {\rm (i)--(iv)} hold.
\begin{enumerate}
\item[\rm (i)] 
 $R$ is invertible.
\item[\rm (ii)]  We have
\begin{eqnarray*}
\alpha'_1 A R = \alpha''_1 R B, \qquad \qquad
\alpha''_1 B R = \alpha_1 RC, \qquad \qquad
\alpha_1 C R= \alpha'_1 R A.
\end{eqnarray*}
\item[\rm (iii)] We have
\begin{eqnarray*}
\overline A R = R \overline B , \qquad \qquad
 \overline B R = R\overline C, \qquad \qquad
 \overline C R = R\overline A.
\end{eqnarray*}
\item[\rm (iv)]
For $A,B,C$  equitable,
\begin{eqnarray*}
A R =  R B, \qquad \qquad
 B R = R C, \qquad \qquad
C R =  R A.
\end{eqnarray*}
\end{enumerate}
\end{proposition}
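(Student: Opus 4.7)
The plan is to leverage the one-dimensionality of the rotator space $\mathcal R$ that was established in Proposition~\ref{prop:OmegaBasis}. Since $A,B,C$ is nonbipartite, that proposition asserts that $\Omega$ forms a basis for $\mathcal R$. Consequently, given any nonzero rotator $R$, there exists a nonzero scalar $\zeta \in \mathbb F$ such that $R = \zeta \Omega$. Everything we need will follow from multiplying known identities for $\Omega$ by $\zeta$.

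For part~(i), the standard rotator $\Omega$ is invertible by Theorem~\ref{cor:everythingGen}(ii), and since $R = \zeta \Omega$ with $\zeta \neq 0$, the map $R$ is invertible as well. For part~(ii), apply Theorem~\ref{cor:everythingGen}(iv) to $\Omega$, which yields $\alpha'_1 A \Omega = \alpha''_1 \Omega B$, $\alpha''_1 B \Omega = \alpha_1 \Omega C$, and $\alpha_1 C \Omega = \alpha'_1 \Omega A$; multiplying each identity by $\zeta$ on the right side of $\Omega$ (or equivalently replacing $\Omega$ by $R = \zeta \Omega$) gives the three claimed equations.

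Part~(iii) requires interpreting the set-valued expressions $\overline{A}R$, $R\overline{B}$, etc. By Theorem~\ref{thm:DL}(ii), because $A,B,C$ is nonbipartite, $\overline{A} = \mathbb F A$, $\overline{B} = \mathbb F B$, and $\overline{C} = \mathbb F C$. Therefore $\overline{A}R = \mathbb F (AR)$ and $R\overline{B} = \mathbb F (RB)$. By part~(ii) we have $\alpha'_1 AR = \alpha''_1 RB$, and by Lemma~\ref{lem:NotB} the scalars $\alpha'_1, \alpha''_1$ are both nonzero; thus $AR$ and $RB$ differ by a nonzero scalar, so $\mathbb F(AR) = \mathbb F(RB)$. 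The remaining two equalities are handled identically, using that $\alpha_1 \neq 0$ as well.

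Finally, for part~(iv), assume $A,B,C$ is equitable. By Lemma~\ref{lem:equitNBmin} (nonbipartite case) we have $\alpha_1 = \alpha'_1 = \alpha''_1$, and by Lemma~\ref{lem:NotB} this common value is nonzero. Dividing each equation in part~(ii) by this common value gives $AR = RB$, $BR = RC$, and $CR = RA$, as required. No step here looks like a genuine obstacle; the real work was done in establishing Theorem~\ref{cor:everythingGen} and Proposition~\ref{prop:OmegaBasis}, and this proposition is essentially a corollary packaging those results in a form that no longer privileges the specific rotator $\Omega$.
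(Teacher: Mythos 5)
Your proof is correct and follows essentially the same route as the paper: write $R = \zeta\Omega$ using Proposition~\ref{prop:OmegaBasis}, then transfer the properties of $\Omega$ from Theorems~\ref{thm:DL}(ii) and~\ref{cor:everythingGen} to $R$. The only cosmetic difference is that you derive part~(iv) by dividing the identities in part~(ii) by the common value $\alpha_1=\alpha'_1=\alpha''_1$ rather than citing Theorem~\ref{cor:everythingGen}(v) directly, which is equally valid.
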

\begin{proof}
By Proposition
\ref{prop:OmegaBasis} there exists $0 \not=\zeta \in \mathbb F$
such that $R = \zeta \Omega$.
The results follow in view of 
Theorems
\ref{thm:DL}(ii),
\ref{cor:everythingGen}.
\end{proof}

\noindent 
We now turn our attention to the case in which
$A,B,C$ is bipartite and nontrivial.
\medskip

\begin{lemma}
\label{lem:RoutInPre}
Assume that $A,B,C$ is bipartite and nontrivial.
Let $R$ denote a rotator for $A,B,C$. Then
the following {\rm (i)--(iii)} hold:
\begin{enumerate}
\item[\rm (i)] $R V_{\rm out} \subseteq V_{\rm out}$ and
$R V_{\rm in} \subseteq V_{\rm in}$;
\item[\rm (ii)] $R J=JR$;
\item[\rm (iii)] $R J$ is a rotator for $A,B,C$.
\end{enumerate}
\end{lemma}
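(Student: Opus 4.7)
The plan is to exploit two basic facts: first, that the three sets $\{E_{2j}V\}_{j=0}^{m}$, $\{E'_{2j}V\}_{j=0}^{m}$, $\{E''_{2j}V\}_{j=0}^{m}$ (with $d=2m$) all sum to the same subspace $V_{\rm out}$ by Lemma \ref{lem:V0V1}, and analogously for $V_{\rm in}$; second, that by Lemma \ref{lem:EEEJ} the projector $J$ commutes with every $E_i$, $E'_i$, $E''_i$.

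First I would prove (i). By Lemma \ref{lem:ROTmeaning}, the rotator $R$ satisfies $RE'_iV \subseteq E_iV$, $RE''_iV \subseteq E'_iV$, and $RE_iV \subseteq E''_iV$ for $0 \leq i \leq d$. Restricting to even indices $i = 2j$ and summing over $0 \leq j \leq m$, each of these inclusions puts the image inside one of the three sums characterizing $V_{\rm out}$ in Lemma \ref{lem:V0V1}(i). Since $V_{\rm out}$ is expressible via any of the three idempotent sequences, I get $RV_{\rm out} \subseteq V_{\rm out}$. The same argument with odd indices $2j+1$ and Lemma \ref{lem:V0V1}(ii) gives $RV_{\rm in} \subseteq V_{\rm in}$.

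Next I would deduce (ii) directly from (i). By Definition \ref{def:J}, $J$ acts as the identity on $V_{\rm out}$ and as zero on $V_{\rm in}$. For $v \in V_{\rm out}$, both $RJv = Rv$ and $JRv = Rv$ since $Rv \in V_{\rm out}$ by (i). For $v \in V_{\rm in}$, both $RJv = 0$ and $JRv = 0$ since $Rv \in V_{\rm in}$ by (i). The direct sum $V = V_{\rm out} + V_{\rm in}$ then forces $RJ = JR$.

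Finally, for (iii) I would verify the three defining relations for a rotator from Definition \ref{def:ABCROT}. Using Lemma \ref{lem:EEEJ}, the projector $J$ commutes with each of $E_i$, $E'_i$, $E''_i$, so combining with the rotator equations for $R$ gives $E_i(RJ) = (E_iR)J = (RE'_i)J = R(E'_iJ) = R(JE'_i) = (RJ)E'_i$, and similarly $E'_i(RJ) = (RJ)E''_i$ and $E''_i(RJ) = (RJ)E_i$ for $0 \leq i \leq d$. Thus $RJ$ satisfies Definition \ref{def:ABCROT} and is a rotator. The only subtle step is (i), but it is really just bookkeeping once one invokes the three-fold description of $V_{\rm out}$ and $V_{\rm in}$ in Lemma \ref{lem:V0V1}; there is no genuine obstacle.
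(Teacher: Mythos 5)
Your proof is correct and follows essentially the same route as the paper: part (i) from Lemmas \ref{lem:ROTmeaning} and \ref{lem:V0V1}, part (ii) from (i) and the definition of $J$, and part (iii) from the fact that $J$ commutes with each $E_i$, $E'_i$, $E''_i$ (you cite Lemma \ref{lem:EEEJ}; the paper cites Proposition \ref{prop:centBasis}(iii), which gives the same commutation). No gaps.
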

\begin{proof}(i)
By
Lemmas \ref{lem:V0V1},
\ref{lem:ROTmeaning}.
\\
\noindent (ii) The map $J$ acts on 
$V_{\rm out}$ as the identity, and on $V_{\rm in}$ as zero.
The result follows from this and (i) above.
\\
\noindent (iii) By Definition
\ref{def:ABCROT}, and since $J$ is an idempotent
centralizer for $A,B,C$ by
Proposition
\ref{prop:centBasis}(iii).
\end{proof}

\begin{definition}
\label{def:RoutRin}
\rm Assume that $A,B,C$ is bipartite and nontrivial.
Let $R$ denote a rotator for $A,B,C$.
Then $R$ is called {\it outer}
(resp. {\it inner}) whenever
$R$ is zero on $V_{\rm in}$
(resp. $V_{\rm out}$).
Let ${\mathcal R}_{\rm out}$
(resp. ${\mathcal R}_{\rm in}$) denote the
set of outer (resp. inner) rotators
for $A,B,C$. Note that 
${\mathcal R}_{\rm out}$
and ${\mathcal R}_{\rm in}$ are subspaces
of the $\mathbb F$-vector space $\mathcal R$.
\end{definition}

\begin{definition}
\label{def:OmegaOutIn}
\rm
Assume that $A,B,C$ is bipartite and nontrivial.
Define elements $\Omega_{\rm out}$,
$\Omega_{\rm in}$ in ${\rm End}(V)$ as follows.
Recall by Lemmas
\ref{lem:A2B2C2Out},
\ref{lem:NormHalf}(ii)
that $A^2,B^2,C^2$ acts on
$V_{\rm out}$ as a nonbipartite LR triple.
The map 
$\Omega_{\rm out}$ acts on $V_{\rm out}$
as the standard rotator for this LR triple.
The map
$\Omega_{\rm out}$ 
 acts on 
$V_{\rm in}$ as zero.
The map $\Omega_{\rm in}$ acts
on $V_{\rm out}$ as zero.
Recall 
by Lemmas
\ref{lem:A2B2C2In},
\ref{lem:NormHalfIn}(i),(ii)
that $A^2,B^2,C^2$ acts on
$V_{\rm in}$ as an LR triple
that is nonbipartite or trivial.
The map $\Omega_{\rm in}$ acts on $V_{\rm in}$
as the standard rotator for this LR triple.
\end{definition}

\begin{lemma} 
\label{lem:OmegaBasic}
With reference to Definition \ref{def:OmegaOutIn},
\begin{eqnarray*}
\Omega_{\rm out} V_{\rm out} = V_{\rm out},
\quad \qquad
\Omega_{\rm out} V_{\rm in} = 0,
\quad \qquad 
\Omega_{\rm in} V_{\rm out} = 0,
\quad \qquad
\Omega_{\rm in} V_{\rm in} = V_{\rm in}.
\end{eqnarray*}
\end{lemma}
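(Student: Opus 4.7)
The plan is to read each of the four equations directly off Definition~\ref{def:OmegaOutIn}, relying on the invertibility of the standard rotator in the nonbipartite and trivial cases.

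First I would dispose of the two ``zero'' claims. By Definition~\ref{def:OmegaOutIn}, the map $\Omega_{\rm out}$ is defined to act as zero on $V_{\rm in}$, and the map $\Omega_{\rm in}$ is defined to act as zero on $V_{\rm out}$. This gives $\Omega_{\rm out}V_{\rm in}=0$ and $\Omega_{\rm in}V_{\rm out}=0$ with no further work.

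Next I would handle $\Omega_{\rm out}V_{\rm out}=V_{\rm out}$. By Definition~\ref{def:OmegaOutIn}, $\Omega_{\rm out}$ acts on $V_{\rm out}$ as the standard rotator for the action of $A^2,B^2,C^2$ on $V_{\rm out}$; by Lemmas~\ref{lem:A2B2C2Out} and~\ref{lem:NormHalf}(ii), this action is a nonbipartite LR triple on $V_{\rm out}$. Its standard rotator is invertible by Theorem~\ref{cor:everythingGen}(ii) (or equivalently by Proposition~\ref{prop:OmegaBasis}, since a nonzero rotator of a nonbipartite LR triple is invertible by Proposition~\ref{cor:RinvSend}(i)). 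Hence $\Omega_{\rm out}$ restricts to a bijection on $V_{\rm out}$, which yields $\Omega_{\rm out}V_{\rm out}=V_{\rm out}$.

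Finally I would handle $\Omega_{\rm in}V_{\rm in}=V_{\rm in}$ in the same spirit, but splitting on the two possibilities allowed by Lemma~\ref{lem:NormHalfIn}. If $d=2$ then the action of $A^2,B^2,C^2$ on $V_{\rm in}$ is trivial, and its standard rotator is the identity $I$ on $V_{\rm in}$ by Definition~\ref{lem:Rbasis}, so $\Omega_{\rm in}V_{\rm in}=V_{\rm in}$. If $d\geq 4$ the action is nonbipartite, and the standard rotator is again invertible on $V_{\rm in}$ by the same invocation of Theorem~\ref{cor:everythingGen}(ii), giving $\Omega_{\rm in}V_{\rm in}=V_{\rm in}$. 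There is no real obstacle here; the only point requiring care is remembering to invoke Lemma~\ref{lem:NormHalfIn} so that the ``trivial'' case ($d=2$) is not overlooked when citing invertibility of the standard rotator.
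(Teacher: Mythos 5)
Your proof is correct and follows the same route as the paper, whose entire proof is ``By Definition \ref{def:OmegaOutIn} and the construction''; you have simply made explicit the details (the two zero claims read off the definition, and the surjectivity claims from invertibility of the standard rotator in the nonbipartite case and from Definition \ref{lem:Rbasis} in the trivial case $d=2$). The case split via Lemma \ref{lem:NormHalfIn} is exactly the right point to be careful about.
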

\begin{proof} By
Definition
\ref{def:OmegaOutIn} and the construction.
\end{proof}

\begin{proposition}
\label{prop:bipRoutIN}
Assume that $A,B,C$ is bipartite and nontrivial.
Then the following {\rm (i)--(iii)} hold:
\begin{enumerate}
\item[\rm (i)] the sum
${\mathcal R}  = {\mathcal R}_{\rm out} + 
{\mathcal R}_{\rm in}$ is direct;
\item[\rm (ii)] $\Omega_{\rm out}$ is a basis
for
${\mathcal R}_{\rm out}$;
\item[\rm (iii)] $\Omega_{\rm in}$ is a basis
for
${\mathcal R}_{\rm in}$.
\end{enumerate}
\end{proposition}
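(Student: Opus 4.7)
The plan is to reduce (ii) and (iii) to the nonbipartite case already handled in Proposition \ref{prop:OmegaBasis}, by restricting a rotator for $A,B,C$ to the outer and inner sub-LR-triples of Lemmas \ref{lem:A2B2C2Out} and \ref{lem:A2B2C2In}.

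For (i), let $R \in \mathcal R$. By Lemma \ref{lem:RoutInPre}(iii), $RJ$ is a rotator for $A,B,C$, and since $\mathcal R$ is a subspace (Definition \ref{def:ABCROTspace}), $R(I-J) = R - RJ$ is too. Since $J$ projects onto $V_{\rm out}$ along $V_{\rm in}$, the map $RJ$ vanishes on $V_{\rm in}$ and $R(I-J)$ vanishes on $V_{\rm out}$, so $RJ \in \mathcal R_{\rm out}$ and $R(I-J) \in \mathcal R_{\rm in}$. Thus $\mathcal R = \mathcal R_{\rm out} + \mathcal R_{\rm in}$. Any element of $\mathcal R_{\rm out} \cap \mathcal R_{\rm in}$ vanishes on both summands of $V = V_{\rm out} + V_{\rm in}$ from Lemma \ref{lem:V0V1}(iii), and so vanishes on $V$, giving the directness of the sum.

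For (ii), I first verify $\Omega_{\rm out} \in \mathcal R_{\rm out}$ by checking the three rotator equations of Definition \ref{def:ABCROT} for $0 \leq i \leq d$. When $i$ is odd, the idempotents $E_i, E'_i, E''_i$ have image in $V_{\rm in}$, which $\Omega_{\rm out}$ kills, so both sides vanish on $V_{\rm out}$; both sides also vanish on $V_{\rm in}$ since $\Omega_{\rm out}$ kills it. When $i = 2j$ is even, both sides vanish on $V_{\rm in}$, and their restrictions to $V_{\rm out}$ become the corresponding rotator equations for the sub-triple $A^2,B^2,C^2$ on $V_{\rm out}$ whose idempotent data is $(\{E_{2j}|_{V_{\rm out}}\}; \{E'_{2j}|_{V_{\rm out}}\}; \{E''_{2j}|_{V_{\rm out}}\})$ by Lemma \ref{lem:A2B2C2Out}(ii); these equations hold because $\Omega_{\rm out}|_{V_{\rm out}}$ is by Definition \ref{def:OmegaOutIn} the standard rotator of that sub-triple. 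Now let $R \in \mathcal R_{\rm out}$ be arbitrary, and let $\tilde R$ denote its restriction to $V_{\rm out}$. The same parity analysis shows that $\tilde R$ satisfies the rotator equations for the sub-triple on $V_{\rm out}$, which is nonbipartite by Lemma \ref{lem:NormHalf}(ii), so Proposition \ref{prop:OmegaBasis} gives $\tilde R = c\, \Omega_{\rm out}|_{V_{\rm out}}$ for some $c \in \mathbb F$. Since $R$ and $c\,\Omega_{\rm out}$ both vanish on $V_{\rm in}$, we conclude $R = c\,\Omega_{\rm out}$.

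Part (iii) is parallel, with a case split driven by Lemma \ref{lem:NormHalfIn}. If $d \geq 4$, the sub-triple $A^2,B^2,C^2$ on $V_{\rm in}$ is nonbipartite and the same appeal to Proposition \ref{prop:OmegaBasis} yields the conclusion. If $d = 2$, the sub-triple on $V_{\rm in}$ is trivial and $V_{\rm in}$ is one-dimensional; then Lemma \ref{ex:trivRot} identifies the rotator space of the sub-triple with ${\rm End}(V_{\rm in})$, still one-dimensional and spanned by the identity, which is $\Omega_{\rm in}|_{V_{\rm in}}$ (the standard rotator of a trivial triple is $I$ by Definition \ref{lem:Rbasis}). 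The main obstacle is purely bookkeeping: one must carefully decouple the six rotator equations according to the parity of $i$, using that the odd-$i$ equations are automatic for outer or inner rotators and that the even-$i$ equations restrict to the sub-triple rotator equations.
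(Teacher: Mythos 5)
Your proof is correct and takes essentially the same route as the paper: reduce to Proposition \ref{prop:OmegaBasis} applied to the LR triples that $A^2,B^2,C^2$ induce on $V_{\rm out}$ and $V_{\rm in}$, with the $d=2$ trivial inner case handled via Definition \ref{lem:Rbasis}. The only differences are presentational — you obtain the sum in (i) directly from $R=RJ+R(I-J)$ where the paper phrases it via the kernel of the restriction map $\mathcal R\to\mathcal R^{\rm out}$, and you spell out the parity bookkeeping that the paper leaves implicit.
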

\begin{proof}
By Definitions
\ref{def:RoutRin},
\ref{def:OmegaOutIn} we find
$\Omega_{\rm out} \in \mathcal R_{\rm out}$
and 
$\Omega_{\rm in} \in \mathcal R_{\rm in}$.
We mentioned in
Definition \ref{def:OmegaOutIn}
that $A^2,B^2,C^2$ acts on $V_{\rm out}$
as a nonbipartite LR triple.
Denote the corresponding rotator subspace 
and standard rotator by 
$\mathcal R^{\rm out}$ and
$\Omega^{\rm out}$, respectively.
By construction
$\mathcal R^{\rm out}$ is a subspace of
${\rm End}(V_{\rm out})$. By Proposition
\ref{prop:OmegaBasis},
$\Omega^{\rm out}$ is a basis
for
$\mathcal R^{\rm out}$.
For $R \in \mathcal R$ the restriction 
$R \vert_{V_{\rm out}}$ 
is contained in 
${\mathcal R}^{\rm out}$, and the map
$\mathcal R \to
{\mathcal R}^{\rm out}$, $R \mapsto 
R \vert_{V_{\rm out}}$ 
is $\mathbb F$-linear. 
This map has kernel $\mathcal R_{\rm in}$. 
This map
sends $\Omega_{\rm out} \mapsto \Omega^{\rm out}$
and is therefore surjective.
By these comments
 $\Omega_{\rm out}$ forms a basis for a complement of
$\mathcal R_{\rm in}$ in $\mathcal R$.
Similarly
 $\Omega_{\rm in}$ forms a basis for a complement of
$\mathcal R_{\rm out}$ in $\mathcal R$.
Note that ${\mathcal R}_{\rm out} \cap  
{\mathcal R}_{\rm in} = 0$ by
Definition 
\ref{def:RoutRin} and since the sum
$V  = {V}_{\rm out} + 
{V}_{\rm in}$ is direct.
The result follows.
\end{proof}

\begin{definition}
\label{def:StandardOmega}
\rm
With reference to Definition \ref{def:OmegaOutIn}
and Proposition \ref{prop:bipRoutIN},
we call 
$\Omega_{\rm out}$ (resp.  $\Omega_{\rm in}$) the
{\it standard outer rotator}
(resp. 
{\it standard inner rotator})
for $A,B,C$.
\end{definition}

\noindent We now describe
$\Omega_{\rm out}$ and $\Omega_{\rm in}$
in more detail.

\begin{theorem}
\label{lem:OmegaOutD}
Assume that $A,B,C$ is bipartite and nontrivial.
Then the following {\rm (i)--(v)} hold.
\begin{enumerate}
\item[\rm (i)] We have
\begin{eqnarray*}
&&\Omega_{\rm out} = 
\mathbb B 
\Biggl(\sum_{j=0}^{d/2} \frac{\varphi_1 \varphi_2\cdots \varphi_{2j}}
{\varphi_d \varphi_{d-1}\cdots \varphi_{d-2j+1}} E_{2j}\Biggr) \mathbb A
= 
\mathbb C 
\Biggl(\sum_{j=0}^{d/2} \frac{\varphi_1 \varphi_2 \cdots \varphi_{2j}}
{\varphi_d \varphi_{d-1}\cdots \varphi_{d-2j+1}} E'_{2j}\Biggr) \mathbb B 
\\
&& \qquad \qquad \qquad 
=
\mathbb A 
\Biggl(\sum_{j=0}^{d/2} \frac{\varphi_1 \varphi_2 \cdots \varphi_{2j}}
{\varphi_d \varphi_{d-1}\cdots \varphi_{d-2j+1}} E''_{2j}\Biggr) \mathbb C.
\end{eqnarray*}
\item[\rm (ii)]
For $0 \leq i \leq d$,
\begin{eqnarray}
\label{eq:OutCom}
E_i \Omega_{\rm out} = \Omega_{\rm out} E'_i,
\qquad \qquad 
E'_i \Omega_{\rm out} = \Omega_{\rm out} E''_i,
\qquad \qquad 
E''_i \Omega_{\rm out} = \Omega_{\rm out} E_i.
\end{eqnarray}
\item[\rm (iii)]
Referring to 
{\rm (\ref{eq:OutCom})}, if $i$ is odd then
for each equation 
both sides are zero.
\item[\rm (iv)] We have
\begin{eqnarray*}
\alpha'_2 A^2 \Omega_{\rm out} = \alpha''_2 \Omega_{\rm out} B^2,
\qquad 
\alpha''_2 B^2 \Omega_{\rm out} = \alpha_2 \Omega_{\rm out} C^2,
\qquad 
\alpha_2 C^2 \Omega_{\rm out} = \alpha'_2 \Omega_{\rm out} A^2.
\end{eqnarray*}
\item[\rm (v)] For $A,B,C$ equitable,
\begin{eqnarray*}
 A^2 \Omega_{\rm out} =  \Omega_{\rm out} B^2,
\qquad \qquad
 B^2 \Omega_{\rm out} = \Omega_{\rm out} C^2,
\qquad \qquad
 C^2 \Omega_{\rm out} = \Omega_{\rm out} A^2.
\end{eqnarray*}
\end{enumerate}
\end{theorem}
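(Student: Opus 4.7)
The plan is to derive every part of Theorem~\ref{lem:OmegaOutD} by applying Theorem~\ref{cor:everythingGen} to the nonbipartite LR triple obtained by letting $A^2, B^2, C^2$ act on $V_{\rm out}$, and then lifting each identity from $\mathrm{End}(V_{\rm out})$ to $\mathrm{End}(V)$ by checking that all the displayed expressions vanish on $V_{\rm in}$. The translation of data is dictated by Lemma~\ref{lem:A2B2C2Out} (whose parts (i), (ii), (iv) give the parameter array, idempotent data, and Toeplitz data of the reduced triple) together with Lemma~\ref{lem:unipA2} (whose unipotent data is $\mathbb{A}|_{V_{\rm out}}, \mathbb{B}|_{V_{\rm out}}, \mathbb{C}|_{V_{\rm out}}$). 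Write $\hat\varphi_j = \varphi_{2j-1}\varphi_{2j}$ and similarly for the primed versions, and let $\Omega^{\rm out}\in\mathrm{End}(V_{\rm out})$ denote the standard rotator of the reduced triple; by construction $\Omega_{\rm out}$ equals $\Omega^{\rm out}$ on $V_{\rm out}$ and is zero on $V_{\rm in}$.

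For part (i), I would first observe the bookkeeping identities
$\hat\varphi_1 \hat\varphi_2 \cdots \hat\varphi_j = \varphi_1 \varphi_2 \cdots \varphi_{2j}$
and
$\hat\varphi_m \hat\varphi_{m-1}\cdots \hat\varphi_{m-j+1} = \varphi_d \varphi_{d-1}\cdots \varphi_{d-2j+1}$,
so that the coefficients produced by Theorem~\ref{cor:everythingGen}(i) applied to the reduced triple match exactly those in the statement of (i). Each of the three proposed formulas is then an element of $\mathrm{End}(V)$ that, on $V_{\rm out}$, coincides with $\Omega^{\rm out}$ by Theorem~\ref{cor:everythingGen}(i); and on $V_{\rm in}$, the innermost unipotent map preserves $V_{\rm in}$ (since each of $\mathbb{A}, \mathbb{B}, \mathbb{C}$ does) and is then annihilated by the middle sum because $E_{2j}V_{\rm in}=0$, $E'_{2j}V_{\rm in}=0$, $E''_{2j}V_{\rm in}=0$ by Lemma~\ref{lem:V0V1}. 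Hence every expression equals $\Omega_{\rm out}$.

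For (ii) with $i=2j$ even, I would apply Theorem~\ref{cor:everythingGen}(iii) to the reduced triple to obtain the equations on $V_{\rm out}$; on $V_{\rm in}$ both sides vanish, since the left side has $\Omega_{\rm out}V_{\rm in}=0$ and on the right the factor $E'_{2j}$ (respectively $E''_{2j}$, $E_{2j}$) annihilates $V_{\rm in}$. For (iii), i.e.\ (ii) with $i=2j+1$ odd, both sides are separately zero: for $E_{2j+1}\Omega_{\rm out}$, one has $\Omega_{\rm out}V_{\rm in}=0$, while on $V_{\rm out}$ the image $\Omega_{\rm out}V_{\rm out}\subseteq V_{\rm out}$ is killed by $E_{2j+1}$ because $V_{\rm out}=\sum_k E_{2k}V$; a symmetric argument works for $\Omega_{\rm out}E'_{2j+1}$ and for the remaining two equations.

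For (iv), I would apply Theorem~\ref{cor:everythingGen}(iv) to the reduced triple, whose first Toeplitz numbers, by Lemma~\ref{lem:A2B2C2Out}(iv), are $\alpha_2, \alpha'_2, \alpha''_2$ (all nonzero by Lemma~\ref{lem:BiPbasic}(ii)); this yields the three identities on $V_{\rm out}$. Both sides vanish on $V_{\rm in}$ because $\Omega_{\rm out}V_{\rm in}=0$ and $A^2, B^2, C^2$ preserve $V_{\rm in}$ by Lemma~\ref{lem:bipABCact}. Part (v) then follows from (iv) by dividing through (using $\alpha_2=\alpha'_2=\alpha''_2\neq 0$ in the equitable case), and may equivalently be obtained by invoking Theorem~\ref{cor:everythingGen}(v) after noting, via Lemma~\ref{lem:A2B2C2Equit}(i), that the reduced triple on $V_{\rm out}$ is equitable whenever $A,B,C$ is. The one non-routine step is the coefficient reindexing in (i); the remaining work is a systematic verification that each identity lifted from $V_{\rm out}$ is undisturbed on $V_{\rm in}$ because every term contains a factor that either maps into, or is annihilated by, $V_{\rm in}$.
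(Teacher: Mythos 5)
Your proposal is correct and follows essentially the same route as the paper, whose proof is precisely "apply Theorem \ref{cor:everything} to the LR triple of Lemma \ref{lem:A2B2C2Out} and evaluate using Lemmas \ref{lem:unipA2}, \ref{lem:OmegaBasic}"; you have simply written out the data translation and the vanishing-on-$V_{\rm in}$ checks that the paper leaves implicit. The coefficient reindexing $\hat\varphi_1\cdots\hat\varphi_j=\varphi_1\cdots\varphi_{2j}$, $\hat\varphi_m\cdots\hat\varphi_{m-j+1}=\varphi_d\cdots\varphi_{d-2j+1}$ and all the lifting arguments are accurate.
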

\begin{proof}
Apply Theorem
\ref{cor:everything}
to the LR triple
in Lemma
\ref{lem:A2B2C2Out}, and evaluate
the result using
Lemmas
\ref{lem:unipA2},
\ref{lem:OmegaBasic}.
\end{proof}

\begin{theorem}
\label{lem:OmegaOutInD}
Assume that $A,B,C$ is bipartite and nontrivial.
Then the following {\rm (i)--(v)} hold.
\begin{enumerate}
\item[\rm (i)]
We have
\begin{eqnarray*}
&&\Omega_{\rm in} = 
\mathbb B 
\Biggl(\sum_{j=0}^{d/2-1} \frac{\varphi_2 \varphi_3\cdots \varphi_{2j+1}}
{\varphi_{d-1} \varphi_{d-2}\cdots \varphi_{d-2j}} E_{2j+1}\Biggr) \mathbb A
= 
\mathbb C 
\Biggl(\sum_{j=0}^{d/2-1} \frac{\varphi_2 \varphi_3 \cdots \varphi_{2j+1}}
{\varphi_{d-1} \varphi_{d-2}\cdots \varphi_{d-2j}} E'_{2j+1}\Biggr) \mathbb B 
\nonumber
\\
&& \qquad \qquad \qquad 
=
\mathbb A 
\Biggl(\sum_{j=0}^{d/2-1} \frac{\varphi_2 \varphi_3\cdots \varphi_{2j+1}}
{\varphi_{d-1} \varphi_{d-2} \cdots \varphi_{d-2j}} E''_{2j+1}\Biggr)
\mathbb C.
\end{eqnarray*}
\item[\rm (ii)] 
For $0 \leq i \leq d$,
\begin{eqnarray}
\label{eq:InCom}
E_i \Omega_{\rm in} = \Omega_{\rm in} E'_i,
\qquad \qquad 
E'_i \Omega_{\rm in} = \Omega_{\rm in} E''_i,
\qquad \qquad 
E''_i \Omega_{\rm in} = \Omega_{\rm in} E_i.
\end{eqnarray}
\item[\rm (iii)]
Referring to
{\rm (\ref{eq:InCom})}, if $i$ is even then 
for each equation both sides are zero.
\item[\rm (iv)] We have
\begin{eqnarray*}
\alpha'_2 A^2 \Omega_{\rm in} = \alpha''_2 \Omega_{\rm in} B^2,
\qquad
\alpha''_2 B^2 \Omega_{\rm in} = \alpha_2 \Omega_{\rm in} C^2,
 \qquad
\alpha_2 C^2 \Omega_{\rm in} = \alpha'_2 \Omega_{\rm in} A^2.
\end{eqnarray*}
\item[\rm (v)] For $A,B,C$ equitable,
\begin{eqnarray*}
 A^2 \Omega_{\rm in} =  \Omega_{\rm in} B^2,
\qquad \qquad
 B^2 \Omega_{\rm in} =  \Omega_{\rm in} C^2,
\qquad \qquad
 C^2 \Omega_{\rm in} = \Omega_{\rm in} A^2.
\end{eqnarray*}

\end{enumerate}
\end{theorem}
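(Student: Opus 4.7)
The plan is to mirror the proof of Theorem \ref{lem:OmegaOutD} in the inner setting. Concretely, I would apply Theorem \ref{cor:everything} (statements (i)--(v)) to the LR triple obtained from the action of $A^2,B^2,C^2$ on $V_{\rm in}$, as described in Lemma \ref{lem:A2B2C2In}, and then translate the conclusions back to $V$ using Lemmas \ref{lem:unipA2In} and \ref{lem:OmegaBasic}. Recall that by Lemma \ref{lem:A2B2C2In} this restricted LR triple has diameter $m-1 = d/2-1$, idempotent data $\lbrace E_{2j+1}\rbrace_{j=0}^{m-1}$ (and its primed/doubled-primed companions), parameter array $\lbrace \varphi_{2j}\varphi_{2j+1}\rbrace_{j=1}^{m-1}$ (and companions), and Toeplitz data obtained by extracting the even-indexed parameters of the original Toeplitz data. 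By Lemma \ref{lem:NormHalfIn} this restricted triple is either nonbipartite or trivial, so Theorem \ref{cor:everything} applies (in the trivial case the inner rotator is just $I$ on $V_{\rm in}$, handled by Lemma \ref{lem:OmegaTriv} and Definition \ref{def:OmegaOutIn}).

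For part (i), I would substitute the parameter array of the inner triple into the formula of Theorem \ref{cor:everything}(i). A short bookkeeping computation gives
\[
\tilde\varphi_1\tilde\varphi_2\cdots\tilde\varphi_j = \varphi_2\varphi_3\cdots\varphi_{2j+1},\qquad
\tilde\varphi_{m-1}\cdots\tilde\varphi_{m-j} = \varphi_{d-1}\varphi_{d-2}\cdots\varphi_{d-2j}
\]
(where $\tilde\varphi_j=\varphi_{2j}\varphi_{2j+1}$), producing precisely the coefficients displayed in (i). The unipotent factors coincide with $\mathbb A,\mathbb B,\mathbb C$ on $V_{\rm in}$ by Lemma \ref{lem:unipA2In}, and Lemma \ref{lem:OmegaBasic} allows one to extend the formulas from $V_{\rm in}$ to all of $V$ without altering their appearance, since each summand is supported on $V_{\rm in}$ anyway.

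For parts (ii) and (iii), Theorem \ref{cor:everything}(iii) applied to the inner triple gives the stated commutations $E_{2j+1}\Omega_{\rm in}=\Omega_{\rm in}E'_{2j+1}$ (and cyclic analogues) for $0\le j\le m-1$. To upgrade these to $0\le i\le d$, I would note that for even $i$ we have $E_iV\subseteq V_{\rm out}$ while $\Omega_{\rm in}V\subseteq V_{\rm in}$ (by Lemma \ref{lem:OmegaBasic}) and $\Omega_{\rm in}$ vanishes on $V_{\rm out}$; therefore $E_i\Omega_{\rm in}=0=\Omega_{\rm in}E'_i$ (and similarly for the other two identities). This simultaneously verifies (ii) and (iii). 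Parts (iv) and (v) follow by applying Theorem \ref{cor:everything}(iv),(v) to the inner triple, noting that its first Toeplitz numbers are $\alpha_2,\alpha'_2,\alpha''_2$ by Lemma \ref{lem:A2B2C2In}(iv), and that the inner triple is equitable when $A,B,C$ is, by Lemma \ref{lem:A2B2C2Equit}(ii); again Lemma \ref{lem:OmegaBasic} and the evenness of $A^2,B^2,C^2$ (each preserves $V_{\rm in}$) allow the identities to be lifted from $V_{\rm in}$ to all of $V$.

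The only step requiring genuine care is the index translation for part (i): one must carefully interleave the even- and odd-indexed $\varphi$-factors coming from $\tilde\varphi_j=\varphi_{2j}\varphi_{2j+1}$ and verify the ranges align with the denominator indices $\varphi_{d-1},\ldots,\varphi_{d-2j}$. Everything else is formal transfer through Lemmas \ref{lem:unipA2In}, \ref{lem:OmegaBasic} and Definition \ref{def:OmegaOutIn}, so I do not anticipate a serious obstacle beyond bookkeeping.
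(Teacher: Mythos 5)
Your proposal is correct and follows essentially the same route as the paper, whose proof is exactly "apply Theorem \ref{cor:everything} to the LR triple of Lemma \ref{lem:A2B2C2In} and evaluate using Lemmas \ref{lem:unipA2In} and \ref{lem:OmegaBasic}"; your index bookkeeping $\tilde\varphi_1\cdots\tilde\varphi_j=\varphi_2\cdots\varphi_{2j+1}$ and $\tilde\varphi_{m-1}\cdots\tilde\varphi_{m-j}=\varphi_{d-1}\cdots\varphi_{d-2j}$ checks out. Your explicit handling of the trivial inner triple when $d=2$ (where Theorem \ref{cor:everything} does not literally apply) is a detail the paper glosses over, and is a welcome addition.
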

\begin{proof}
Apply Theorem
\ref{cor:everything}
to the LR triple
in Lemma
\ref{lem:A2B2C2In}, and evaluate
the result using
Lemmas
\ref{lem:unipA2In},
\ref{lem:OmegaBasic}.
\end{proof}

\noindent Recall the maps $\Omega, \Omega', \Omega''$
from Definition
\ref{def:Om123}.

\begin{proposition}
\label{lem:OmegaOmega}
Assume that $A,B,C$ is equitable, bipartite, and
nontrivial. Then
\begin{eqnarray*}
\Omega = \Omega_{\rm out} + \rho_0 \Omega_{\rm in},
\qquad \qquad 
\Omega' = \Omega_{\rm out} + \rho'_0 \Omega_{\rm in},
\qquad \qquad
\Omega'' = \Omega_{\rm out} + \rho''_0 \Omega_{\rm in}.
\end{eqnarray*}
\end{proposition}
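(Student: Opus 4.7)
The strategy is to verify each equation by identifying both sides as a single product $\mathbb{A}(\cdots)\mathbb{C}$ or $\mathbb{B}(\cdots)\mathbb{A}$, where the middle factor is a linear combination of a common idempotent sequence, and then matching the scalar coefficients of each idempotent. The key point is that Proposition \ref{prop:EquitWOW} gives three equivalent formulas for each of $\Omega, \Omega', \Omega''$, and Theorems \ref{lem:OmegaOutD}(i) and \ref{lem:OmegaOutInD}(i) give three equivalent formulas for $\Omega_{\rm out}$ and $\Omega_{\rm in}$; the task is to choose matched representatives so that the outer unipotent maps align.

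For the first equation $\Omega = \Omega_{\rm out} + \rho_0 \Omega_{\rm in}$, I would use the last formula in (\ref{eq:Om2}) for $\Omega$ together with the third formulas for $\Omega_{\rm out}$ and $\Omega_{\rm in}$ from Theorems \ref{lem:OmegaOutD}(i) and \ref{lem:OmegaOutInD}(i). Both sides then take the form $\mathbb{A}(\cdots)\mathbb{C}$, with the middle factor expressed as a linear combination of $\lbrace E''_i \rbrace_{i=0}^d$. Since $\mathbb{A}$ and $\mathbb{C}$ are invertible by Lemma \ref{lem:DeltaInv} and $\lbrace E''_i \rbrace_{i=0}^d$ is linearly independent, the equation reduces to matching the coefficient of each $E''_i$.

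The coefficient of $E''_{2j}$ on the left equals $\frac{\varphi''_1 \cdots \varphi''_{2j}}{\varphi'_d \cdots \varphi'_{d-2j+1}}$, while on the right (coming only from $\Omega_{\rm out}$) it equals $\frac{\varphi_1 \cdots \varphi_{2j}}{\varphi_d \cdots \varphi_{d-2j+1}}$; these agree by Lemma \ref{lem:basic2A}(i). The coefficient of $E''_{2j+1}$ on the left equals $\frac{\varphi''_1 \cdots \varphi''_{2j+1}}{\varphi'_d \cdots \varphi'_{d-2j}}$, while on the right (coming only from $\rho_0 \Omega_{\rm in}$) it equals $\rho_0 \cdot \frac{\varphi_2 \cdots \varphi_{2j+1}}{\varphi_{d-1} \cdots \varphi_{d-2j}}$. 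Factoring out $\varphi''_1/\varphi'_d$ on the left and applying Lemma \ref{lem:basic2A}(ii) to the remaining products $\varphi''_2 \cdots \varphi''_{2j+1}$ and $\varphi'_{d-1} \cdots \varphi'_{d-2j}$, each of even length, the identity collapses to $\varphi''_1/\varphi'_d = \rho_0$, which is exactly Definition \ref{def:RHOD}.

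The remaining two equations are proved analogously. For $\Omega' = \Omega_{\rm out} + \rho'_0 \Omega_{\rm in}$, use the first formula in (\ref{eq:Om3}) (again an $\mathbb{A}(\cdots)\mathbb{C}$ expression with $E''_i$) and match coefficients exactly as above; the odd-$i$ identity collapses to $\varphi''_1/\varphi_d = \rho'_0$. For $\Omega'' = \Omega_{\rm out} + \rho''_0 \Omega_{\rm in}$, use the first formula in (\ref{eq:Om1}) together with the first formulas for $\Omega_{\rm out}$ and $\Omega_{\rm in}$, so that both sides take the form $\mathbb{B}(\cdots)\mathbb{A}$ with $E_i$; the odd-$i$ identity becomes $\varphi_1/\varphi'_d = \rho''_0$. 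The only obstacle is mild bookkeeping in selecting matched presentations of the three pairs of formulas; once this is done, the verification is a direct application of Lemma \ref{lem:basic2A} and Definition \ref{def:RHOD}.
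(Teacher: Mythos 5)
Your proposal is correct and follows essentially the same route as the paper: the paper's proof likewise compares the formulae of Proposition \ref{prop:EquitWOW} with those of Theorems \ref{lem:OmegaOutD}(i) and \ref{lem:OmegaOutInD}(i), invoking Lemma \ref{lem:basic2A} and (\ref{def:rho0def}) to match coefficients. Your version merely spells out the coefficient bookkeeping that the paper leaves implicit.
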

\begin{proof}
Compare the formulae for $\Omega$, $\Omega'$,
$\Omega''$ 
given in 
Proposition
\ref{prop:EquitWOW},
with the
formulae for
$\Omega_{\rm out}$,
$\Omega_{\rm in}$ given in
Theorems 
\ref{lem:OmegaOutD}(i),
\ref{lem:OmegaOutInD}(i).
 The result follows in
view of
Lemma
\ref{lem:basic2A} and
(\ref{def:rho0def}).
\end{proof}

\begin{lemma} 
\label{lem:OmegaMC}
Assume that $A,B,C$ is equitable, bipartite, and
nontrivial. Then
$\Omega, \Omega', \Omega''$ are rotators
for $A,B,C$.
\end{lemma}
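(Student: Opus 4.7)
The plan is to combine Proposition \ref{lem:OmegaOmega} with Proposition \ref{prop:bipRoutIN}. By Proposition \ref{lem:OmegaOmega}, each of the three elements $\Omega$, $\Omega'$, $\Omega''$ is expressed as an $\mathbb F$-linear combination of $\Omega_{\rm out}$ and $\Omega_{\rm in}$:
\begin{eqnarray*}
\Omega = \Omega_{\rm out} + \rho_0 \Omega_{\rm in},
\qquad
\Omega' = \Omega_{\rm out} + \rho'_0 \Omega_{\rm in},
\qquad
\Omega'' = \Omega_{\rm out} + \rho''_0 \Omega_{\rm in}.
\end{eqnarray*}

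By Proposition \ref{prop:bipRoutIN}(ii),(iii), the elements $\Omega_{\rm out}$ and $\Omega_{\rm in}$ lie in $\mathcal{R}_{\rm out}$ and $\mathcal{R}_{\rm in}$, respectively, which are subspaces of the rotator space $\mathcal{R}$ by Definition \ref{def:RoutRin}. Since $\mathcal{R}$ is an $\mathbb F$-subspace of ${\rm End}(V)$ (Definition \ref{def:ABCROTspace}), it is closed under taking $\mathbb F$-linear combinations. Therefore each of $\Omega$, $\Omega'$, $\Omega''$ belongs to $\mathcal{R}$, i.e., each is a rotator for $A,B,C$.

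There is no serious obstacle here: the hard work was done in setting up Proposition \ref{lem:OmegaOmega} (which in turn depended on identifying the correct expressions for $\Omega_{\rm out}, \Omega_{\rm in}$ via Theorems \ref{lem:OmegaOutD} and \ref{lem:OmegaOutInD}). The present lemma is essentially an immediate corollary, amounting to the observation that a sum of two rotators is a rotator. Alternatively, one could verify the conditions in Definition \ref{def:ABCROT} directly for $\Omega, \Omega', \Omega''$ by appealing to Lemma \ref{lem:OmegaCom2}, but routing through Proposition \ref{lem:OmegaOmega} is cleaner and keeps the argument at the level of vector-space closure rather than recomputation of commutation relations with the $E_i, E'_i, E''_i$.
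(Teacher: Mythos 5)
Your proof is correct and follows exactly the paper's own route: the paper cites precisely Propositions \ref{prop:bipRoutIN} and \ref{lem:OmegaOmega}, and your argument just spells out why those two results combine to give the claim. No further comment is needed.
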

\begin{proof}
By 
Propositions
\ref{prop:bipRoutIN},
\ref{lem:OmegaOmega}.
\end{proof}

\begin{proposition}
\label{prop:rhoShow}
Assume that $A,B,C$ is bipartite and
nontrivial. Then
\begin{eqnarray*}
&&
\varphi'_d 
A \Omega_{\rm out} = \varphi''_1 \Omega_{\rm in} B,
\qquad \qquad
\varphi''_d 
B \Omega_{\rm out} = \varphi_1 \Omega_{\rm in} C,
\qquad \qquad
\varphi_d 
C \Omega_{\rm out} = \varphi'_1 \Omega_{\rm in} A,
\\
&&
\varphi'_1
A \Omega_{\rm in} =  \varphi''_d\Omega_{\rm out} B,
\qquad \qquad
\varphi''_1
 B \Omega_{\rm in} =  
  \varphi_d
 \Omega_{\rm out} C,
\qquad \qquad
\varphi_1
C \Omega_{\rm in} = 
  \varphi'_d
\Omega_{\rm out} A.
\end{eqnarray*}
Moreover for $A,B,C$ equitable,
\begin{eqnarray*}
&&
A \Omega_{\rm out} = \rho_0 \Omega_{\rm in} B,
\qquad \qquad
B \Omega_{\rm out} = \rho'_0 \Omega_{\rm in} C,
\qquad \qquad
C \Omega_{\rm out} = \rho''_0 \Omega_{\rm in} A,
\\
&&
\rho_0
A \Omega_{\rm in} =  \Omega_{\rm out} B,
\qquad \qquad
\rho'_0
 B \Omega_{\rm in} =  
 \Omega_{\rm out} C,
\qquad \qquad
\rho''_0
C \Omega_{\rm in} = 
\Omega_{\rm out} A.
\end{eqnarray*}
\end{proposition}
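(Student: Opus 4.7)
The plan is to first derive the equitable (``moreover'') identities by splitting $A\Omega = \Omega B$, $B\Omega' = \Omega'C$, $C\Omega'' = \Omega''A$ (Lemma \ref{lem:OmegaCom}) along the decomposition $V = V_{\rm out}\oplus V_{\rm in}$, and then bootstrap to the general bipartite nontrivial case via biassociation.

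In the equitable case, substitute $\Omega = \Omega_{\rm out}+\rho_0\Omega_{\rm in}$ from Proposition \ref{lem:OmegaOmega} into $A\Omega = \Omega B$ to obtain
\begin{equation*}
A\Omega_{\rm out} + \rho_0\, A\Omega_{\rm in} \;=\; \Omega_{\rm out}B + \rho_0\,\Omega_{\rm in}B.
\end{equation*}
Using Lemmas \ref{lem:bipABCact} and \ref{lem:OmegaBasic}, one checks that $A\Omega_{\rm out}$ and $\Omega_{\rm in}B$ both vanish on $V_{\rm in}$ and send $V_{\rm out}$ into $V_{\rm in}$, whereas $A\Omega_{\rm in}$ and $\Omega_{\rm out}B$ both vanish on $V_{\rm out}$ and send $V_{\rm in}$ into $V_{\rm out}$. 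The direct sum $V=V_{\rm out}\oplus V_{\rm in}$ therefore forces $A\Omega_{\rm out} = \rho_0\Omega_{\rm in}B$ and $\rho_0 A\Omega_{\rm in} = \Omega_{\rm out}B$ separately. Applying the same splitting to $B\Omega' = \Omega'C$ (with $\Omega' = \Omega_{\rm out}+\rho_0'\Omega_{\rm in}$) and $C\Omega'' = \Omega''A$ (with $\Omega'' = \Omega_{\rm out}+\rho_0''\Omega_{\rm in}$) produces the other four equitable identities.

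For the general case, Lemma \ref{lem:EqAdjustBip} supplies nonzero $\alpha,\beta,\gamma\in\mathbb F$ so that $A' = \alpha A_{\rm out}+A_{\rm in}$, $B' = \beta B_{\rm out}+B_{\rm in}$, $C' = \gamma C_{\rm out}+C_{\rm in}$ is equitable. Since $A_{\rm out}^2 = A_{\rm in}^2 = 0$ one computes $(A')^2 = \alpha A^2$, and similarly $(B')^2 = \beta B^2$, $(C')^2 = \gamma C^2$; by Lemma \ref{lem:SRadj} applied separately on $V_{\rm out}$ and $V_{\rm in}$, the standard outer and inner rotators of $A',B',C'$ coincide with $\Omega_{\rm out}$ and $\Omega_{\rm in}$ via Definition \ref{def:OmegaOutIn}. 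The equitable identity $A'\Omega_{\rm out} = r\,\Omega_{\rm in}B'$ for $A',B',C'$, where $r$ is the corresponding $\rho_0$-scalar, becomes (via $A'\Omega_{\rm out}=\alpha A\Omega_{\rm out}$ and $\Omega_{\rm in}B'=\beta\Omega_{\rm in}B$, both using that the rotators are supported on one summand) an identity involving $A,B$. Lemma \ref{lem:PAnewLRT}(i), applied with $\alpha_{\rm in}=\beta_{\rm in}=\gamma_{\rm in}=1$, shows that in the biassociated parameter array $\varphi'_d$ is scaled by $\beta$ and $\varphi''_1$ by $\alpha$, so $r = \alpha\varphi''_1/(\beta\varphi'_d)$ by (\ref{def:rho0def}). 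Substituting and cancelling $\alpha$ yields $\varphi'_d A\Omega_{\rm out} = \varphi''_1\Omega_{\rm in}B$; the other five general identities follow by the analogous argument cycled through the LR-pairs.

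The main obstacle is matching the coefficients in the ``bottom-row'' general identities. For instance, after biassociation the relation $r A'\Omega_{\rm in} = \Omega_{\rm out}B'$ translates to $\alpha\varphi''_1\, A\Omega_{\rm in} = \beta\varphi'_d\,\Omega_{\rm out}B$; to recover the claimed form $\varphi'_1 A\Omega_{\rm in} = \varphi''_d\Omega_{\rm out}B$, one must establish $\beta\varphi'_1\varphi'_d = \alpha\varphi''_1\varphi''_d$. This combines the equitable condition of $A',B',C'$ (Lemma \ref{lem:BipEquitAdj} yields $\beta/\alpha = \alpha''_2/\alpha'_2$) with Lemma \ref{lem:Bippipd}(i) applied to $i=1, j=d$ (which have opposite parity since $d$ is even), giving $\alpha''_2/\alpha'_2 = \varphi''_1\varphi''_d/(\varphi'_1\varphi'_d)$.
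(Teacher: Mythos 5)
Your proposal is correct and follows essentially the same route as the paper: establish the equitable case by substituting $\Omega=\Omega_{\rm out}+\rho_0\Omega_{\rm in}$ (and its primed analogues) into $A\Omega=\Omega B$, $B\Omega'=\Omega' C$, $C\Omega''=\Omega'' A$ and separating components via the support properties of $\Omega_{\rm out},\Omega_{\rm in}$ and Lemma \ref{lem:bipABCact}, then remove the equitable hypothesis by biassociation as in Lemmas \ref{lem:InOut} and \ref{lem:BipEquitAdj}. Your explicit bookkeeping of the scalars $\alpha,\beta,\gamma$ through Lemma \ref{lem:PAnewLRT}(i) and Lemma \ref{lem:Bippipd}(i), and your check that the biassociated triple has the same outer and inner rotators, fill in details the paper leaves implicit but do not change the argument.
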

\begin{proof}
First assume that $A,B,C$ is equitable.
To obtain the result under this assumption,
evaluate 
(\ref{eq:AOmB})
using
Proposition
\ref{lem:OmegaOmega},
Lemmas
\ref{lem:bipABCact},
\ref{lem:OmegaBasic}, and line
(\ref{def:rho0def}).
We have verified the result under the assumption that
$A,B,C$ is equitable. To remove the assumption,
apply the result so far to the LR triple 
(\ref{eq:NewLRT}) in
 Lemma
\ref{lem:InOut},
made equitable by chosing
the parameters 
(\ref{eq:AlphaInOut})
to
satisfy
(\ref{eq:neededForEquit}).
\end{proof}

\begin{proposition}
\label{prop:rotatorRaction}
Assume that $A,B,C$ is  bipartite and
nontrivial.
Let $R$ denote a rotator for $A,B,C$ and write
$R=r\Omega_{\rm out} + s \Omega_{\rm in}$ 
with $r,s \in \mathbb F$.
Then
\begin{eqnarray*}
&&
s \varphi'_d 
 A_{\rm out} R = r \varphi''_1 R B_{\rm out},
\qquad 
s \varphi''_d 
 B_{\rm out} R = r \varphi_1 R C_{\rm out},
\qquad 
s \varphi_d 
 C_{\rm out} R = r \varphi'_1 R A_{\rm out},
\\
&&
r\varphi'_1
A_{\rm in} R = s \varphi''_d R B_{\rm in},
\quad \qquad 
r\varphi''_1
B_{\rm in} R = s \varphi_d R C_{\rm in},
\quad \qquad 
r\varphi_1
 C_{\rm in} R = s \varphi'_d R A_{\rm in}.
\end{eqnarray*}
Moreover for $A,B,C$ equitable,
\begin{eqnarray*}
&&
s 
 A_{\rm out} R = r \rho_0 R B_{\rm out},
\qquad 
s 
 B_{\rm out} R = r \rho'_0 R C_{\rm out},
\qquad 
s 
 C_{\rm out} R = r \rho''_0 R A_{\rm out},
\\
&&
r\rho_0
A_{\rm in} R = s  R B_{\rm in},
\quad \qquad 
r\rho'_0
B_{\rm in} R = s R C_{\rm in},
\quad \qquad 
r\rho''_0
 C_{\rm in} R = s R A_{\rm in}.
\end{eqnarray*}
\end{proposition}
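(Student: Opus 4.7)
The plan is to expand $R = r\Omega_{\rm out} + s\Omega_{\rm in}$ and then exploit the orthogonality of the outer/inner pieces to reduce each of the six displayed identities to a corresponding identity from Proposition \ref{prop:rhoShow}. Recall from Lemma \ref{lem:OmegaBasic} that $\Omega_{\rm out}$ fixes $V_{\rm out}$ setwise and annihilates $V_{\rm in}$, while $\Omega_{\rm in}$ fixes $V_{\rm in}$ and annihilates $V_{\rm out}$; also from Definition \ref{def:Ainout} (extended to triples in Definition \ref{def:ABCoutIn}), $A_{\rm out}$ acts as $A$ on $V_{\rm out}$ (with image in $V_{\rm in}$ by Lemma \ref{lem:bipABCact}) and is zero on $V_{\rm in}$, with analogous statements for the other five maps in \eqref{eq:6list}.

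First I would verify the ``annihilation'' identities that do the real work of the reduction. Composing the actions on $V = V_{\rm out} + V_{\rm in}$ directly gives
\begin{eqnarray*}
&& A_{\rm out}\Omega_{\rm in} = 0, \qquad A_{\rm in}\Omega_{\rm out} = 0, \qquad \Omega_{\rm out} B_{\rm out} = 0, \qquad \Omega_{\rm in} B_{\rm in} = 0,
\end{eqnarray*}
and the analogous six identities obtained by cycling $A,B,C$ and by swapping the roles of the two sides. Using these, expressions like $A_{\rm out} R = r A_{\rm out}\Omega_{\rm out} + s A_{\rm out}\Omega_{\rm in}$ collapse to $A_{\rm out} R = r\, A_{\rm out}\Omega_{\rm out}$, and similarly $R B_{\rm out} = s\,\Omega_{\rm in} B_{\rm out}$. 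Moreover the same annihilations yield $A\,\Omega_{\rm out} = A_{\rm out}\Omega_{\rm out}$ and $\Omega_{\rm in} B = \Omega_{\rm in} B_{\rm out}$, so the first equation of Proposition \ref{prop:rhoShow} reads $\varphi'_d\, A_{\rm out}\Omega_{\rm out} = \varphi''_1\, \Omega_{\rm in} B_{\rm out}$. Multiplying this identity through by $rs$ then gives exactly $s\varphi'_d\, A_{\rm out} R = r\varphi''_1\, R B_{\rm out}$, the first of the six assertions.

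The remaining five identities follow by the same template. For the three ``outer'' equations (involving $A_{\rm out}, B_{\rm out}, C_{\rm out}$) I would use that $A_{\rm out}R = r A_{\rm out}\Omega_{\rm out}$, $R B_{\rm out} = s \Omega_{\rm in} B_{\rm out}$, and likewise for cyclic shifts, and then invoke the top row of Proposition \ref{prop:rhoShow} after using $A\Omega_{\rm out}=A_{\rm out}\Omega_{\rm out}$, $\Omega_{\rm in}B = \Omega_{\rm in}B_{\rm out}$, and the two cyclic analogues. For the three ``inner'' equations I would instead use $A_{\rm in} R = s A_{\rm in}\Omega_{\rm in}$ and $R B_{\rm in} = r\Omega_{\rm out} B_{\rm in}$, together with $A\Omega_{\rm in} = A_{\rm in}\Omega_{\rm in}$ and $\Omega_{\rm out} B = \Omega_{\rm out} B_{\rm in}$, and then apply the bottom row of Proposition \ref{prop:rhoShow}. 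In each case the factor $rs$ distributes equally across both sides, so the cancellation is automatic.

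Finally, for the equitable specialization, I would divide each of the six identities by the $\varphi$-coefficient on one side and apply Definition \ref{def:RHOD}, specifically the equations \eqref{def:rho0def}: $\varphi''_1/\varphi'_d = \rho_0$, $\varphi_1/\varphi''_d = \rho'_0$, $\varphi'_1/\varphi_d = \rho''_0$, together with $\rho_i\rho_{d-i-1}=1$ (Lemma \ref{def:BipRhoiCom}) when needed to invert these ratios. No computation here is subtle; the only potential obstacle is bookkeeping the six cyclic variants correctly and keeping straight which of $\Omega_{\rm out}, \Omega_{\rm in}$ survives on each side, which is why I would present the first equation in full and then indicate that the other five are obtained by the same three-step reduction (expand, annihilate, apply Proposition \ref{prop:rhoShow}).
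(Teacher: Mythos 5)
Your proposal is correct and follows essentially the same route as the paper's proof, which likewise eliminates $R$ via $R = r\Omega_{\rm out} + s\Omega_{\rm in}$ and reduces each identity to Proposition \ref{prop:rhoShow} using Definition \ref{def:ABCoutIn}. The annihilation identities you spell out (e.g. $A_{\rm out}\Omega_{\rm in}=0$, $\Omega_{\rm out}B_{\rm out}=0$) are exactly the content the paper leaves implicit in the phrase ``evaluate the result using Definition \ref{def:ABCoutIn}.''
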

\begin{proof}
To verify these equations,
eliminate $R$ using
$R= r \Omega_{\rm out}+s \Omega_{\rm in}$ 
and evaluate the result using
Definition
\ref{def:ABCoutIn}
together with
Proposition
\ref{prop:rhoShow}.
\end{proof}

\begin{lemma}
\label{lem:OmegaOutSend}
Assume that $A,B,C$ is bipartite and nontrivial. Then 
 $\Omega_{\rm out}$ sends
\begin{eqnarray*}
\eta \to \frac{(\eta,\tilde \eta')}{(\eta'',\tilde \eta')}\eta'',
\qquad \qquad
\eta' \to \frac{(\eta',\tilde \eta'')}{(\eta,\tilde \eta'')}\eta,
\qquad \qquad
\eta'' \to \frac{(\eta'',\tilde \eta)}{(\eta',\tilde \eta)}\eta'.
\end{eqnarray*}
\end{lemma}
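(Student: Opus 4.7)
The plan is to use the three equivalent expressions for $\Omega_{\rm out}$ from Theorem \ref{lem:OmegaOutD}(i), together with the explicit action of the unipotent maps on $\eta,\eta',\eta''$ given in Lemma \ref{lem:iIszero}. The key observation is that each of $\eta,\eta',\eta''$ lies in the zero-indexed component of the appropriate decomposition: indeed $\eta \in A^dV = E_0V$, $\eta' \in B^dV = E'_0V$, and $\eta'' \in C^dV = E''_0V$. Since $d$ is even (because $A,B,C$ is bipartite), the index $0$ is one of the even indices appearing in the sums defining $\Omega_{\rm out}$.

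To compute $\Omega_{\rm out}\eta$, I would use the first expression
\[
\Omega_{\rm out} = \mathbb B \Biggl(\sum_{j=0}^{d/2} \frac{\varphi_1 \varphi_2\cdots \varphi_{2j}}{\varphi_d \varphi_{d-1}\cdots \varphi_{d-2j+1}} E_{2j}\Biggr) \mathbb A.
\]
By Lemma \ref{lem:iIszero}(i), $\mathbb A\eta=\eta$. Since $E_{2j}\eta = \delta_{j,0}\eta$, the inner sum collapses to $\eta$ (with coefficient $1$ from the empty products at $j=0$). Then applying Lemma \ref{lem:iIszero}(ii) gives $\mathbb B\eta = \frac{(\eta,\tilde\eta')}{(\eta'',\tilde\eta')}\eta''$, which is the desired first formula.

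The remaining two images would be obtained the same way, using the second expression $\Omega_{\rm out} = \mathbb C(\sum \cdots E'_{2j})\mathbb B$ applied to $\eta'$ (with $\mathbb B\eta'=\eta'$, $E'_{2j}\eta'=\delta_{j,0}\eta'$, and then $\mathbb C\eta' = \frac{(\eta',\tilde\eta'')}{(\eta,\tilde\eta'')}\eta$ by Lemma \ref{lem:iIszero}(iii)), and the third expression $\Omega_{\rm out} = \mathbb A(\sum \cdots E''_{2j})\mathbb C$ applied to $\eta''$ (with $\mathbb C\eta''=\eta''$, $E''_{2j}\eta''=\delta_{j,0}\eta''$, and then $\mathbb A\eta'' = \frac{(\eta'',\tilde\eta)}{(\eta',\tilde\eta)}\eta'$ by Lemma \ref{lem:iIszero}(i)).

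There is no serious obstacle; the proof is essentially a three-line verification once the right expression for $\Omega_{\rm out}$ is paired with the right starting vector. The only point that deserves explicit mention is the collapse of each sum to a single term, which rests on the membership $\eta \in E_0V$, $\eta'\in E'_0V$, $\eta''\in E''_0V$ together with the orthogonality $E_iE_j=\delta_{i,j}E_i$.
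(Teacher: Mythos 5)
Your proof is correct and is essentially the paper's own argument: the paper proves this by the method of Lemma \ref{lem:OmegaSendsEta} (pairing each expression for the rotator from Theorem \ref{lem:OmegaOutD}(i) with the action of $\mathbb A,\mathbb B,\mathbb C$ on $\eta,\eta',\eta''$ from Lemma \ref{lem:iIszero}), which is exactly the computation you spell out. Your explicit remark that the sums collapse because $\eta\in E_0V$, $\eta'\in E'_0V$, $\eta''\in E''_0V$ and $0$ is among the even indices is the right justification.
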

\begin{proof} 
Similar to the proof of Lemma \ref{lem:OmegaSendsEta}.
\end{proof}

\noindent Recall the scalar $\theta$ from Definition
\ref{def:theta}.
\begin{proposition}
\label{lem:OmegaOutcubed}
Assume that $A,B,C$ is  bipartite and nontrivial. Then 
the following {\rm (i), (ii)} hold.
\begin{enumerate}
\item[\rm (i)]
$\Omega^3_{\rm out} = \theta I$
on 
$V_{\rm out}$.
\item[\rm (ii)] 
$\Omega^3_{\rm in} = \rho^{-1} \theta I$
 on $V_{\rm in}$, 
where 
$\rho= 
\varphi_1 \varphi'_1 \varphi''_1/
(\varphi_d \varphi'_d \varphi''_d)$.
\end{enumerate}
\end{proposition}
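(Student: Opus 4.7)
The plan is to reduce both claims to the already-established cube identity for nonbipartite LR triples, namely Proposition \ref{thm:OmegaCubed}, by descending to the LR triples that $A^2,B^2,C^2$ induce on $V_{\rm out}$ and $V_{\rm in}$.

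First I would handle part (i). By Definition \ref{def:OmegaOutIn}, the restriction of $\Omega_{\rm out}$ to $V_{\rm out}$ is precisely the standard rotator of the LR triple obtained by letting $A^2,B^2,C^2$ act on $V_{\rm out}$; by Lemma \ref{lem:NormHalf}(ii) this induced LR triple is nonbipartite. Applying Proposition \ref{thm:OmegaCubed} to it gives $\Omega_{\rm out}^3 = \theta^{\rm out} I$ on $V_{\rm out}$, where $\theta^{\rm out}$ is the scalar from Definition \ref{def:theta} attached to the induced triple. By Lemma \ref{lem:A2B2C2Out}(iv), the last Toeplitz parameters of the induced triple are $\alpha_d$ and $\beta_d$ of $A,B,C$, so Definition \ref{def:theta} yields $\theta^{\rm out}=\alpha_d/\beta_d=\theta$, proving (i).

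For part (ii) the same strategy applies: the restriction of $\Omega_{\rm in}$ to $V_{\rm in}$ is, by construction, the standard rotator of the LR triple that $A^2,B^2,C^2$ induce on $V_{\rm in}$, and by Lemma \ref{lem:NormHalfIn} this induced triple is either trivial (if $d=2$) or nonbipartite (if $d\geq 4$); in either case Proposition \ref{thm:OmegaCubed} (together with Lemma \ref{lem:UniTriv} and Definition \ref{lem:Rbasis} to handle the trivial case) applies and gives $\Omega_{\rm in}^3=\theta^{\rm in}I$ on $V_{\rm in}$. By Lemma \ref{lem:A2B2C2In}(iv) the last Toeplitz parameters for the induced triple are $\alpha_{d-2}$ and $\beta_{d-2}$, so $\theta^{\rm in}=\alpha_{d-2}/\beta_{d-2}$ (interpreted as $1$ when $d=2$).

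The remaining step, which is the main computational obstacle, is to verify the identity $\alpha_{d-2}/\beta_{d-2}=\rho^{-1}\theta$. To do this I would exploit that $\alpha_1=0=\beta_1$ in the bipartite case (Lemma \ref{lem:BiPbasic}(i)) to collapse the recursions of Proposition \ref{prop:AlphaRecursion} into the two-step relations
\[
\alpha_{i+1}=\frac{\alpha_{i-1}\varphi_1}{\varphi'_d\varphi''_{i+1}},\qquad
\beta_{i+1}=\frac{\beta_{i-1}\varphi_d}{\varphi'_1\varphi''_{d-i}}
\qquad (1\leq i\leq d-1).
\]
Taking $i=d-1$ yields $\alpha_d/\alpha_{d-2}=\varphi_1/(\varphi'_d\varphi''_d)$ and $\beta_d/\beta_{d-2}=\varphi_d/(\varphi'_1\varphi''_1)$, whence
\[
\frac{\alpha_{d-2}}{\beta_{d-2}}=\frac{\alpha_d}{\beta_d}\cdot\frac{\beta_d/\beta_{d-2}}{\alpha_d/\alpha_{d-2}}
=\theta\cdot\frac{\varphi_d\varphi'_d\varphi''_d}{\varphi_1\varphi'_1\varphi''_1}=\rho^{-1}\theta,
\]
as required. (In the degenerate case $d=2$, this same calculation with $\alpha_0=\beta_0=1$ shows $\theta=\rho$, which is consistent with $\theta^{\rm in}=1$ there.) Combining this with the reduction above completes the proof of (ii).
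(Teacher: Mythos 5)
Your proof is correct. For part (i) you are doing essentially what the paper intends (the paper's proof of (i) is ``similar to the proof of Proposition \ref{thm:OmegaCubed}''), except that you identify the scalar as $\theta$ via the Toeplitz data of the induced triple on $V_{\rm out}$ (Lemma \ref{lem:A2B2C2Out}(iv) together with $\theta=\alpha_d/\beta_d$ from Definition \ref{def:theta}) rather than by acting on $\eta$; both identifications are immediate. For part (ii) your route genuinely differs from the paper's. The paper never computes the cube on $V_{\rm in}$ directly: it composes the three intertwining relations of Proposition \ref{prop:rhoShow} to get $A\,\Omega_{\rm out}^3=\rho\,\Omega_{\rm in}^3 A$, applies both sides to $V_{\rm out}$, and uses $AV_{\rm out}=V_{\rm in}$ to transfer the scalar $\theta$ from part (i) across to $V_{\rm in}$. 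You instead apply Proposition \ref{thm:OmegaCubed} to the induced triple on $V_{\rm in}$ (handling the trivial $d=2$ case separately), obtain $\theta^{\rm in}=\alpha_{d-2}/\beta_{d-2}$ from Lemma \ref{lem:A2B2C2In}(iv), and then verify $\alpha_{d-2}/\beta_{d-2}=\rho^{-1}\theta$ by collapsing the recursions of Proposition \ref{prop:AlphaRecursion} using $\alpha_1=\beta_1=0$; your computation $\alpha_d/\alpha_{d-2}=\varphi_1/(\varphi'_d\varphi''_d)$, $\beta_d/\beta_{d-2}=\varphi_d/(\varphi'_1\varphi''_1)$ checks out, including the degenerate consistency check $\theta=\rho$ when $d=2$. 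The paper's argument is shorter once Proposition \ref{prop:rhoShow} is in hand and avoids any Toeplitz computation; yours has the merit of exhibiting the constant $\rho^{-1}\theta$ intrinsically as the $\theta$-invariant of the inner LR triple, which makes the appearance of $\rho$ less mysterious.
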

\begin{proof}
(i) 
Similar to the proof of Proposition \ref{thm:OmegaCubed}.
\\
\noindent (ii) 
By Proposition
\ref{prop:rhoShow} we obtain
$A \Omega^3_{\rm out} = \rho
\Omega^3_{\rm in} A
$. For this equation apply each side to $V_{\rm out}$
and use the fact that $AV_{\rm out} = V_{\rm in}$.
The result follows in view of (i) above.
\end{proof}

\begin{lemma}
\label{lem:whenRinv}
Assume that $A,B,C$ is bipartite and nontrivial.
Let $R$ denote a rotator for $A,B,C$ and write
$R = r \Omega_{\rm out} + s \Omega_{\rm in}$ with
$r,s\in \mathbb F$. Then $R$ is invertible if and only if
$r,s$ are nonzero.
\end{lemma}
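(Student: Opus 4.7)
The key observation is that $\Omega_{\rm out}$ and $\Omega_{\rm in}$ act on complementary summands of $V$, and each acts invertibly on its own summand. Specifically, Lemma \ref{lem:OmegaBasic} tells us that $\Omega_{\rm out}$ preserves $V_{\rm out}$ and annihilates $V_{\rm in}$, while $\Omega_{\rm in}$ preserves $V_{\rm in}$ and annihilates $V_{\rm out}$. Hence, with respect to the direct sum decomposition $V = V_{\rm out} + V_{\rm in}$ of Lemma \ref{lem:V0V1}, the map $R = r\Omega_{\rm out} + s\Omega_{\rm in}$ acts block-diagonally: on $V_{\rm out}$ it acts as $r \,\Omega_{\rm out}|_{V_{\rm out}}$, and on $V_{\rm in}$ it acts as $s \,\Omega_{\rm in}|_{V_{\rm in}}$. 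Therefore $R$ is invertible on $V$ if and only if each block is invertible on the corresponding summand.

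Next I would show that each block is an invertible linear transformation of its summand. By Definition \ref{def:OmegaOutIn}, $\Omega_{\rm out}|_{V_{\rm out}}$ is the standard rotator for the nonbipartite LR triple $A^2, B^2, C^2$ on $V_{\rm out}$ (Lemmas \ref{lem:A2B2C2Out}, \ref{lem:NormHalf}(ii)), which is invertible by Theorem \ref{cor:everythingGen}(ii). Similarly, $\Omega_{\rm in}|_{V_{\rm in}}$ is the standard rotator for the LR triple $A^2, B^2, C^2$ on $V_{\rm in}$ (Lemma \ref{lem:A2B2C2In}); by Lemma \ref{lem:NormHalfIn} this LR triple is nonbipartite (when $d \geq 4$) or trivial (when $d = 2$). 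In the nonbipartite case, invertibility again follows from Theorem \ref{cor:everythingGen}(ii); in the trivial case, the standard rotator is the identity by Definition \ref{lem:Rbasis}. Either way $\Omega_{\rm in}|_{V_{\rm in}}$ is invertible.

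Putting these together: if $r \neq 0$ and $s \neq 0$, both blocks are invertible and $R$ is invertible on $V$. Conversely, if $r = 0$ then $R$ vanishes on $V_{\rm out}$, and if $s = 0$ then $R$ vanishes on $V_{\rm in}$. Since $A,B,C$ is nontrivial and bipartite, Lemma \ref{lem:trivialT} gives $V_{\rm out} \neq 0$ and $V_{\rm in} \neq 0$, so in either case $R$ has nonzero kernel and fails to be invertible. No step poses a serious obstacle; the whole argument is essentially a bookkeeping exercise combining the block structure from Lemma \ref{lem:OmegaBasic} with the known invertibility of the standard rotator on each summand.
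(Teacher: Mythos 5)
Your proof is correct and follows the same route as the paper, whose entire proof is the citation of Lemma \ref{lem:OmegaBasic}: the block-diagonal action of $R$ on $V=V_{\rm out}+V_{\rm in}$, with each standard rotator acting invertibly on its own summand (which is already encoded in the equalities $\Omega_{\rm out}V_{\rm out}=V_{\rm out}$ and $\Omega_{\rm in}V_{\rm in}=V_{\rm in}$ of that lemma). Your write-up just makes explicit the details the paper leaves to the reader.
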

\begin{proof} By Lemma
\ref{lem:OmegaBasic}.
\end{proof}

\begin{proposition}
Assume that $A,B,C$ is  bipartite and nontrivial.
Let $R$ denote an invertible rotator for $A,B,C$. Then
\begin{eqnarray*}
\overline A R =   R\overline B,
\qquad \qquad 
\overline B R =   R\overline C,
\qquad \qquad 
\overline C R =   R \overline A.
\end{eqnarray*}
\end{proposition}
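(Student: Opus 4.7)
The plan is to reduce the assertion to Proposition \ref{prop:rotatorRaction} and a dimension count, using the explicit basis for each double lowering space provided by Theorem \ref{thm:DL}(iii). Since $A,B,C$ is bipartite and nontrivial, each of $\overline A, \overline B, \overline C$ has dimension $2$, with bases $A_{\rm out}, A_{\rm in}$ and $B_{\rm out}, B_{\rm in}$ and $C_{\rm out}, C_{\rm in}$ respectively. Since $R$ is invertible, by Lemma \ref{lem:whenRinv} we may write $R = r\,\Omega_{\rm out} + s\,\Omega_{\rm in}$ with $r,s \in \mathbb F$ both nonzero.

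First I would verify $\overline A R \subseteq R\,\overline B$. From Proposition \ref{prop:rotatorRaction} we have
\[
s\,\varphi'_d\, A_{\rm out}\, R = r\,\varphi''_1\, R\, B_{\rm out},
\qquad
r\,\varphi'_1\, A_{\rm in}\, R = s\,\varphi''_d\, R\, B_{\rm in}.
\]
Since all the factors $r,s,\varphi'_d,\varphi'_1,\varphi''_1,\varphi''_d$ are nonzero, these identities show that $A_{\rm out} R$ is a nonzero scalar multiple of $R B_{\rm out}$, and $A_{\rm in} R$ is a nonzero scalar multiple of $R B_{\rm in}$. Therefore the basis elements $A_{\rm out}, A_{\rm in}$ of $\overline A$ satisfy $A_{\rm out} R,\, A_{\rm in} R \in R\,\overline B$, so $\overline A R \subseteq R\,\overline B$. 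Because $R$ is invertible, $\dim R\,\overline B = \dim \overline B = 2 = \dim \overline A = \dim \overline A R$, so the inclusion is an equality: $\overline A R = R\,\overline B$.

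The remaining two equations $\overline B R = R\,\overline C$ and $\overline C R = R\,\overline A$ follow by the identical argument applied to the other four displayed identities in Proposition \ref{prop:rotatorRaction}, which furnish analogous intertwinings
\[
s\,\varphi''_d\, B_{\rm out}\, R = r\,\varphi_1\, R\, C_{\rm out},
\quad
r\,\varphi''_1\, B_{\rm in}\, R = s\,\varphi_d\, R\, C_{\rm in},
\]
\[
s\,\varphi_d\, C_{\rm out}\, R = r\,\varphi'_1\, R\, A_{\rm out},
\quad
r\,\varphi_1\, C_{\rm in}\, R = s\,\varphi'_d\, R\, A_{\rm in}.
\]
Alternatively one could observe that $B,C,A$ and $C,A,B$ are themselves bipartite nontrivial LR triples on $V$ (by Lemma \ref{lem:bipRel}) and that by Proposition \ref{prop:bipRoutIN} together with the cyclic symmetry of the rotator condition, $R$ remains a rotator for each of them, so the cyclic shifts reduce the other two equations to the one already proved. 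No step is genuinely hard; the only point requiring care is to check that the scalars $r, s$ and the parameter array entries that appear in Proposition \ref{prop:rotatorRaction} are all nonzero so that each of $A_{\rm out}R, A_{\rm in}R$ individually lies in $R\,\overline B$ rather than merely a suitable linear combination doing so, which is exactly what Lemma \ref{lem:whenRinv} and the nonvanishing of the parameters $\lbrace \varphi_i, \varphi'_i, \varphi''_i\rbrace$ guarantee.
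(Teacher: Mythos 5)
Your proposal is correct and follows exactly the route the paper takes: it combines Theorem \ref{thm:DL}(iii) (the bases $A_{\rm out},A_{\rm in}$ etc.\ for the double lowering spaces), Lemma \ref{lem:whenRinv} (so $r,s\neq 0$), and the six intertwining identities of Proposition \ref{prop:rotatorRaction}, whose scalar coefficients are nonzero because the parameter array entries are nonzero. The paper's proof is just a citation of these same three ingredients, so your write-up is simply the filled-in version of the intended argument.
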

\begin{proof}
Use 
Theorem \ref{thm:DL}(iii) and the comment
above that theorem, along with
Proposition
\ref{prop:rotatorRaction}
and 
Lemma \ref{lem:whenRinv}.
\end{proof}

\section{The reflectors for an LR triple}

\noindent 
Throughout this section the following notation is in effect.
Let $V$ denote a vector space over $\mathbb F$ with
dimension $d+1$.
Let $A,B,C$ denote
an LR triple on $V$, with
 parameter array
(\ref{eq:paLRT}),
idempotent data
(\ref{eq:idseq}),
trace data
(\ref{eq:tracedata}), and Toeplitz data
(\ref{eq:ToeplitzData}).
Recall the reflector antiautomorphism
concept discussed in 
Proposition \ref{prop:antiaut}
and Definition
\ref{def:REFdag}. There are three reflectors associated with $A,B,C$; 
the $(A,B)$-reflector, the
$(B,C)$-reflector, and the
$(C,A)$-reflector. We now consider
how these reflectors behave. 
In order to keep things simple, throughout this section we assume
that $A,B,C$ is equitable.

\begin{proposition}
Assume that $A,B,C$ is equitable and nonbipartite, with
standard rotator $\Omega$. Then
the following {\rm (i)--(iii)} hold.
\begin{enumerate}
\item[\rm (i)] The $(A,B)$-reflector
swaps $A,B$ and fixes $C$. 
It swaps $\mathbb A, \mathbb B$ and fixes $\mathbb C$.
 It fixes $\Omega$. For $0 \leq i \leq d$
it fixes $E_i$ and swaps $E'_i,E''_{i}$.
\item[\rm (ii)] The $(B,C)$-reflector
swaps $B,C$ and fixes $A$. 
It swaps $\mathbb B, \mathbb C$ and fixes $\mathbb A$.
 It fixes $\Omega$. For $0 \leq i \leq d$
it fixes $E'_i$ and swaps $E''_i,E_{i}$.
\item[\rm (iii)] The $(C,A)$-reflector
swaps $C,A$ and fixes $B$. 
It swaps $\mathbb C, \mathbb A$ and fixes $\mathbb B$.
 It fixes $\Omega$. For $0 \leq i \leq d$
it fixes $E''_i$ and swaps $E_i,E'_{i}$.
\end{enumerate}
\end{proposition}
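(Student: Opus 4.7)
By the cyclic notational convention of Definition~\ref{def:primeConv}, assertions (ii) and (iii) are obtained from (i) applied to the p-relatives $B,C,A$ and $C,A,B$, which remain equitable and nonbipartite. So it suffices to prove (i). The swap $A \leftrightarrow B$ holds by Definition~\ref{def:REFdag}, and the equalities $E_i^\dagger = E_i$ are given by Lemma~\ref{lem:daggerE}. The crux of the argument is to establish $C^\dagger = C$; once this is in hand, everything else cascades cleanly from the antiautomorphism property.

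To show $C^\dagger = C$, I would exploit the uniqueness clause of Proposition~\ref{prop:antiaut}: any antiautomorphism of ${\rm End}(V)$ that swaps $A$ and $B$ must coincide with $\dagger$. Fix an $(A,B)$-basis $\{v_i\}_{i=0}^d$ and, with $D$ as in Definition~\ref{def:Dmat}, define $\mu: X \mapsto D X^t D^{-1}$ (matrices computed in this basis). Since transpose is an antihomomorphism and conjugation is a homomorphism, $\mu$ is an antiautomorphism of ${\rm End}(V)$. Using the matrix forms of $A$ and $B$ from Lemma~\ref{lem:ABmatrix}, one checks directly that $\mu(A) = B$ and $\mu(B) = A$; so by uniqueness $\mu = \dagger$. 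Next, using the matrix form of $C$ from the first row of Proposition~\ref{prop:matrixRep}, one computes that $\mu(C)$ has the same diagonal entries $a_i$ as $C$, with subdiagonal $\varphi'_{d-i+1}$ and superdiagonal $\varphi''_{d-i+1}/\varphi_i$, whereas $C$ has subdiagonal $\varphi''_{d-i+1}$ and superdiagonal $\varphi'_{d-i+1}/\varphi_i$. The equitable condition forces $\varphi'_j = \varphi''_j$ (Lemma~\ref{lem:equitBasic}(i)), and therefore $\mu(C) = C$, giving $\dagger(C) = C$.

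With $C^\dagger = C$ secured, the swaps $E'_i \leftrightarrow E''_i$ follow from Lemma~\ref{lem:Eform2}: under the equitable equality $\varphi_j = \varphi'_j = \varphi''_j$,
\begin{eqnarray*}
E'_i = \frac{B^{d-i}C^d B^i}{\varphi_1 \cdots \varphi_d}, \qquad
E''_i = \frac{A^i C^d A^{d-i}}{\varphi_1 \cdots \varphi_d}.
\end{eqnarray*}
Applying $\dagger$, which reverses the order of products and sends $A \leftrightarrow B$ while fixing $C$, carries each formula to the other, yielding $(E'_i)^\dagger = E''_i$ and $(E''_i)^\dagger = E'_i$. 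For the unipotent maps, Proposition~\ref{prop:Apoly} writes $\mathbb{A} = \sum_i \alpha'_i A^i$, $\mathbb{B} = \sum_i \alpha''_i B^i$, $\mathbb{C} = \sum_i \alpha_i C^i$, and the equitable condition (Definition~\ref{def:equitNorm}) gives $\alpha_i = \alpha'_i = \alpha''_i$; applying $\dagger$ termwise then gives $\mathbb{A}^\dagger = \mathbb{B}$, $\mathbb{B}^\dagger = \mathbb{A}$, $\mathbb{C}^\dagger = \mathbb{C}$.

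For the rotator, invoke the formula of Theorem~\ref{cor:everythingGen}(i) (which in the equitable case matches Lemma~\ref{prop:NBequit}),
\begin{eqnarray*}
\Omega = \mathbb{B} \Biggl(\sum_{i=0}^d \frac{\varphi_1 \cdots \varphi_i}{\varphi_d \cdots \varphi_{d-i+1}} E_i\Biggr) \mathbb{A}.
\end{eqnarray*}
Applying $\dagger$ reverses the triple product, swaps the outer factors $\mathbb{A} \leftrightarrow \mathbb{B}$, and leaves the middle factor invariant since each $E_i$ is fixed by $\dagger$ and the coefficients are scalars; this reproduces the same expression, so $\Omega^\dagger = \Omega$. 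The single genuine obstacle is the verification that $\dagger$ fixes $C$; all other claims then reduce to a short bookkeeping exercise enabled by the antiautomorphism property and the cyclic symmetry among the data of $A,B,C$.
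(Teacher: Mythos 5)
Your proof is correct, and at the crucial step it takes a genuinely different route from the paper. Both arguments agree on the easy parts (the swap $A\leftrightarrow B$, $E_i$ fixed via Lemma~\ref{lem:daggerE}, $\mathbb A\leftrightarrow\mathbb B$ and $\mathbb C$ fixed via Proposition~\ref{prop:Apoly} and equitability, $E'_i\leftrightarrow E''_i$ via Lemma~\ref{lem:Eform2}, $\Omega$ fixed by applying $\dagger$ to $\Omega=\mathbb B\Psi\mathbb A$ with Lemma~\ref{lem:psiFix}, and the reduction of (ii), (iii) to (i) through the p-relatives and Lemma~\ref{lem:sameROT}). The difference is in how $C^\dagger=C$ is obtained. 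The paper first proves $\Omega^\dagger=\Omega$ and then reads off $C^\dagger=C$ from the conjugation relations $C=\Omega A\Omega^{-1}=\Omega^{-1}B\Omega$ of Theorem~\ref{cor:everythingGen}(v), so the rotator machinery does the work. You instead realize $\dagger$ explicitly as $X\mapsto DX^tD^{-1}$ in an $(A,B)$-basis (legitimate by the uniqueness clause of Proposition~\ref{prop:antiaut}, since this map is an antiautomorphism swapping the matrices of $A$ and $B$ from Lemma~\ref{lem:ABmatrix}) and verify $C^\dagger=C$ by a direct computation on the tridiagonal matrix of $C$ from Proposition~\ref{prop:matrixRep}, where conjugation by $D$ exchanges the roles of $\varphi'$ and $\varphi''$ and equitability (Lemma~\ref{lem:equitBasic}(i)) closes the gap. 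Your computation checks out: $\mu(C)$ has $(i,i-1)$-entry $\varphi'_{d-i+1}$ and $(i-1,i)$-entry $\varphi''_{d-i+1}/\varphi_i$, which equal the corresponding entries of $C$ exactly when $\varphi'_j=\varphi''_j$. What your approach buys is independence from Section 22: one gets $C^\dagger=C$ without invoking the rotator at all, and the explicit formula $\dagger=D(\cdot)^tD^{-1}$ is of some independent interest. What the paper's approach buys is brevity and a conceptual explanation — the reflector fixes $C$ because it fixes the rotator that cyclically permutes $A,B,C$.
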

\begin{proof} (i) Denote
the $(A,B)$-reflector by $\dagger$.
The map
$\dagger$ swaps $A,B$ by
Proposition \ref{prop:antiaut}
and Definition
\ref{def:REFdag}.
We have
 $\mathbb A = \sum_{i=0}^d \alpha_i A^i$ and
 $\mathbb B = \sum_{i=0}^d \alpha_i B^i$ by
Proposition \ref{prop:Apoly},
so 
$\dagger$ swaps $\mathbb A,\mathbb B$.
To see that $\dagger$
fixes $\Omega$, use the
first formula for $\Omega$ given in
Theorem
\ref{cor:everything}(i),
along with  Definition
\ref{def:REF} and
Lemma
\ref{lem:psiFix}.
From 
Theorem \ref{cor:everythingGen}(v)
we obtain
$C=\Omega A \Omega^{-1}$ and $C=\Omega^{-1}B\Omega$.
By these and 
since $\dagger$  fixes $\Omega$,
we see that $\dagger$  fixes $C$.
Consequently $\dagger$ fixes $\mathbb C = \sum_{i=0}^d 
\alpha_i C^i$.
For $0 \leq i\leq d$ the map
$\dagger$ fixes $E_i$ by
Lemma 
\ref{lem:daggerE}.
Also 
by Lemma
\ref{lem:Eform2}
and Lemma \ref{lem:equitBasic}(i)
 the map $\dagger$  swaps $E'_i, E''_{i}$.
\\
\noindent (ii), (iii) Use (i) above and
Lemma
\ref{lem:sameROT}(i).
\end{proof}

\begin{proposition}
\label{eq:EBRef}
Assume that $A,B,C$ is equitable, bipartite, and nontrivial. Then 
 the following {\rm (i)--(iii)} hold.
\begin{enumerate}
\item[\rm (i)] The $(A,B)$-reflector sends
\begin{eqnarray*}
&&
A_{\rm out}\rightarrow B_{\rm in},
\qquad \qquad
B_{\rm out}\rightarrow A_{\rm in},
\qquad \qquad 
C_{\rm out}\rightarrow (\rho''_0/\rho'_0) C_{\rm in},
\\
&&
A_{\rm in}\rightarrow B_{\rm out},
\qquad \qquad
B_{\rm in}\rightarrow A_{\rm out},
\qquad \qquad 
C_{\rm in}\rightarrow (\rho'_0/\rho''_0) C_{\rm out}.
\end{eqnarray*}
It swaps $\mathbb A, \mathbb B$ and fixes $\mathbb C$.
It fixes $J$ and everything in $\mathcal R$.
For $0 \leq i \leq d$ it fixes $E_i$ and swaps
$E'_i, E''_i$.
\item[\rm (ii)] The $(B,C)$-reflector sends
\begin{eqnarray*}
&&
B_{\rm out}\rightarrow C_{\rm in},
\qquad \qquad
C_{\rm out}\rightarrow B_{\rm in},
\qquad \qquad 
A_{\rm out}\rightarrow (\rho_0/\rho''_0) A_{\rm in},
\\
&&
B_{\rm in}\rightarrow C_{\rm out},
\qquad \qquad
C_{\rm in}\rightarrow B_{\rm out},
\qquad \qquad 
A_{\rm in}\rightarrow (\rho''_0/\rho_0) A_{\rm out}.
\end{eqnarray*}
It swaps $\mathbb B, \mathbb C$ and fixes $\mathbb A$.
It fixes $J$ and everything in $\mathcal R$.
For $0 \leq i \leq d$ it fixes $E'_i$ and swaps
$E''_i, E_i$.
\item[\rm (iii)] The $(C,A)$-reflector sends
\begin{eqnarray*}
&&
C_{\rm out}\rightarrow A_{\rm in},
\qquad \qquad
A_{\rm out}\rightarrow C_{\rm in},
\qquad \qquad 
B_{\rm out}\rightarrow (\rho'_0/\rho_0) B_{\rm in},
\\
&&
C_{\rm in}\rightarrow A_{\rm out},
\qquad \qquad
A_{\rm in}\rightarrow C_{\rm out},
\qquad \qquad 
B_{\rm in}\rightarrow (\rho_0/\rho'_0) B_{\rm out}.
\end{eqnarray*}
It swaps $\mathbb C, \mathbb A$ and fixes $\mathbb B$.
It fixes  $J$ and everything in 
$\mathcal R$.
For $0 \leq i \leq d$ it fixes $E''_i$ and swaps
$E_i, E'_i$.
\end{enumerate}
\end{proposition}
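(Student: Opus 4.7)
The plan is to prove (i) and then obtain (ii), (iii) by the cyclic symmetry of Definition \ref{def:primeConv}: applying (i) to the p-relatives $B,C,A$ and $C,A,B$ (which, by Lemmas \ref{lem:ABCvar}, \ref{lem:ABCEvar}, \ref{lem:newUniData}, are again equitable bipartite nontrivial LR triples with suitably cycled parameter arrays and $\rho$-scalars) immediately yields (ii) and (iii). So let $\dagger$ be the $(A,B)$-reflector. By Proposition \ref{prop:antiaut} it swaps $A,B$; by Lemma \ref{lem:daggerE} it fixes every $E_i$; hence by Lemma \ref{lem:Jfacts}(i) it fixes the projector $J$.

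The crux is to reduce to the nonbipartite case just established. Since $\dagger$ fixes $J$, it preserves the subalgebra $J\,\mathrm{End}(V)J$ as an antiautomorphism; via the natural identification $J\,\mathrm{End}(V)J\cong\mathrm{End}(V_{\rm out})$, this restriction is an antiautomorphism of $\mathrm{End}(V_{\rm out})$ swapping $A^2|_{V_{\rm out}}$ and $B^2|_{V_{\rm out}}$. By Lemmas \ref{lem:A2B2C2Out}, \ref{lem:A2B2C2Equit}(i), \ref{lem:NormHalf}(ii), the triple $A^2,B^2,C^2$ restricts to $V_{\rm out}$ as an equitable nonbipartite LR triple; the uniqueness in Proposition \ref{prop:antiaut} identifies our restriction with that triple's $(A^2,B^2)|_{V_{\rm out}}$-reflector. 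The already-proved nonbipartite case shows that this reflector fixes $C^2|_{V_{\rm out}}$, $\mathbb{C}|_{V_{\rm out}}$, and the standard rotator of $A^2,B^2,C^2$ on $V_{\rm out}$, which is $\Omega_{\rm out}|_{V_{\rm out}}$ by Definition \ref{def:OmegaOutIn}. Transporting back, $\dagger$ fixes $J\mathbb{C}J$ and $\Omega_{\rm out}=J\Omega_{\rm out}J$; the parallel argument on $(I-J)\mathrm{End}(V)(I-J)$ (using Lemmas \ref{lem:A2B2C2In}, \ref{lem:A2B2C2Equit}(ii), \ref{lem:NormHalfIn}, with the degenerate $d=2$ subcase handled by Lemma \ref{lem:trivDag}) yields that $\dagger$ fixes $(I-J)\mathbb{C}(I-J)$ and $\Omega_{\rm in}$.

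Equitability combined with Lemma \ref{lem:case} forces $\mathbb{C}=\sum_{i\text{ even}}\alpha_i C^i=J\mathbb{C}J+(I-J)\mathbb{C}(I-J)$ (block-diagonal), so $\dagger$ fixes $\mathbb{C}$; Proposition \ref{prop:Apoly} with $\alpha_i=\alpha'_i=\alpha''_i$ immediately gives $\dagger(\mathbb{A})=\mathbb{B}$. Since $\mathcal{R}$ has basis $\Omega_{\rm out},\Omega_{\rm in}$ (Proposition \ref{prop:bipRoutIN}), $\dagger$ fixes every rotator. For the outer/inner pieces of $A,B$, write $A_{\rm out}=(I-J)AJ$, $A_{\rm in}=JA(I-J)$, and similarly for $B$ (Lemma \ref{lem:ABCINOUTJ}); the antiautomorphism property with $\dagger(A)=B$ and $\dagger(J)=J$ then computes $\dagger(A_{\rm out})=JB(I-J)=B_{\rm in}$, $\dagger(A_{\rm in})=(I-J)BJ=B_{\rm out}$, and symmetrically for $B_{\rm out},B_{\rm in}$.

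Finally, apply $\dagger$ to the identity $C\Omega_{\rm out}=\rho''_0\Omega_{\rm in}A$ from Proposition \ref{prop:rhoShow}; using $\dagger(\Omega_{\rm out})=\Omega_{\rm out}$, $\dagger(\Omega_{\rm in})=\Omega_{\rm in}$, $\dagger(A)=B$, and the companion relation $\rho'_0 B\Omega_{\rm in}=\Omega_{\rm out}C$, we deduce $\Omega_{\rm out}\dagger(C)=(\rho''_0/\rho'_0)\Omega_{\rm out}C$. Because $\ker\Omega_{\rm out}=V_{\rm in}$ and $JC=C_{\rm in}$, this reads $\dagger(C_{\rm out})=J\dagger(C)=(\rho''_0/\rho'_0)C_{\rm in}$; the involution $\dagger^2=1$ from Proposition \ref{prop:antiaut} then gives $\dagger(C_{\rm in})=(\rho'_0/\rho''_0)C_{\rm out}$. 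For the primed and double-primed idempotents, Lemma \ref{lem:Eform2} yields $E''_i=(\varphi''_1\cdots\varphi''_d)^{-1}A^i C^d A^{d-i}$, hence $\dagger(E''_i)=(\varphi''_1\cdots\varphi''_d)^{-1}B^{d-i}\dagger(C)^d B^i$; the key computation $\dagger(C)^d=C^d$ (valid because $d$ is even and $C_{\rm out}^2=C_{\rm in}^2=0$, so in the expansion only the alternating monomials $(C_{\rm in}C_{\rm out})^{d/2}$ and $(C_{\rm out}C_{\rm in})^{d/2}$ survive, and the $\rho''_0/\rho'_0$ factors exactly cancel) combined with Lemma \ref{lem:combine} produces $\dagger(E''_i)=E'_i$, and by $\dagger^2=1$ also $\dagger(E'_i)=E''_i$. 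The chief obstacle is the reduction step of paragraph two, where one must correctly identify the restricted $\dagger$ with the LR-pair reflector of the subsidiary triples and marshal enough structural data (equitability, nonbipartiteness, and the $d=2$ exception on the inner part) to invoke the nonbipartite case.
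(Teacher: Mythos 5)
Your proof is correct. The skeleton (swap $A,B$; fix each $E_i$ and hence $J$ via Lemmas \ref{lem:daggerE}, \ref{lem:Jfacts}(i); compute $\dagger(A_{\rm out})=B_{\rm in}$ etc.\ from Lemma \ref{lem:ABCINOUTJ}; extract the action on $C_{\rm out},C_{\rm in}$ from the $\Omega$-intertwining relations) matches the paper's, but two central steps are carried out by genuinely different means. To show that $\dagger$ fixes $\Omega_{\rm out}$, $\Omega_{\rm in}$ and $\mathbb C$, the paper simply applies $\dagger$ term-by-term to the factorizations $\Omega_{\rm out}=\mathbb B\,\Psi_{\rm out}\,\mathbb A$ and $\Omega_{\rm in}=\mathbb B\,\Psi_{\rm in}\,\mathbb A$ of Theorems \ref{lem:OmegaOutD}(i), \ref{lem:OmegaOutInD}(i), using Lemma \ref{lem:psiFixOutIn}, and then deduces $\dagger(\mathbb C)=\mathbb C$ from $\mathbb C=\sum_i E''_{d-i}E'_i$ once the swap $E'_i\leftrightarrow E''_i$ is known; that swap is in turn obtained from the rotator relations $E_iR=RE'_i$, $E''_iR=RE_i$ of Definition \ref{def:ABCROT} for an invertible $R$. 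You instead restrict $\dagger$ to the corner algebras $J\,{\rm End}(V)J$ and $(I-J)\,{\rm End}(V)(I-J)$, identify these restrictions (via the uniqueness in Proposition \ref{prop:antiaut}) with the reflectors of the subsidiary triples on $V_{\rm out}$ and $V_{\rm in}$, and import the already-proved nonbipartite case — correctly flagging that the inner triple is trivial when $d=2$, where Lemma \ref{lem:trivDag} takes over; you then recover the swap $E'_i\leftrightarrow E''_i$ at the end by a direct computation with Lemma \ref{lem:Eform2}, the identity $\dagger(C)^d=C^d$ (which holds because $C_{\rm out}^2=C_{\rm in}^2=0$ forces only alternating monomials to survive, with the $\rho''_0/\rho'_0$ factors cancelling), and Lemma \ref{lem:combine}. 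Your corner-algebra reduction is more conceptual and reuses the nonbipartite proposition wholesale, at the cost of having to verify the identification of the restricted antiautomorphism and to treat the degenerate inner part; the paper's route avoids any reduction and is shorter because the explicit formulas for $\Omega_{\rm out}$, $\Omega_{\rm in}$ are manifestly $\dagger$-invariant. Your treatment of $C_{\rm out},C_{\rm in}$ via Proposition \ref{prop:rhoShow} is essentially the paper's argument via its corollary, Proposition \ref{prop:rotatorRaction}. All steps check out and there is no circularity.
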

\begin{proof} (i) Denote the $(A,B)$-reflector by $\dagger$.
The map $\dagger$ swaps $A,B$ by Proposition 
\ref{prop:antiaut} and Definition
\ref{def:REFdag}. 
For $0 \leq i \leq d$ the map $\dagger$ fixes
$E_i$ by
Lemma
\ref{lem:daggerE}.
By this and Lemma
\ref{lem:Jfacts}(i), the
 map $\dagger$ fixes $J$.
By this and Lemma
\ref{lem:INOutFacts}(i),(ii) 
the map $\dagger$ sends
$A_{\rm out} \leftrightarrow B_{\rm in}$ and
$A_{\rm in} \leftrightarrow B_{\rm out}$.
We have
$\mathbb A = \sum_{i=0}^d \alpha_i A^i$
and 
$\mathbb B = \sum_{i=0}^d \alpha_i B^i$
by
Proposition \ref{prop:Apoly},
so $\dagger $ swaps $\mathbb A, \mathbb B$.
We show that $\dagger$ fixes everything in
$\mathcal R$. By Proposition
\ref{prop:bipRoutIN}
it suffices to show that
$\dagger$ fixes $\Omega_{\rm out}$ and
 $\Omega_{\rm in}$.
To see that $\dagger$
fixes 
$\Omega_{\rm out}$
(resp. 
$\Omega_{\rm in}$),
 use the
first formula for $\Omega_{\rm out}$ (resp.
$\Omega_{\rm in}$)
given in
Theorem
\ref{lem:OmegaOutD}(i)
(resp. Theorem
\ref{lem:OmegaOutInD}(i)),
along with  Definition
\ref{def:PoutPin} and
Lemma
\ref{lem:psiFixOutIn}.
For $0 \leq i \leq d$ we show
that $\dagger$ swaps $E'_i, E''_i$.
Pick an invertible  $R \in \mathcal R$.
By Definition
\ref{def:ABCROT},
$E_iR = RE'_i$ and
$E''_iR = RE_i$.
In either equation,
apply $\dagger$ to each side and compare the
results with the other equation.
This shows that
$\dagger$ swaps 
$E'_i, E''_i$.
Using this and
$\mathbb C = \sum_{i=0}^d E''_{d-i} E'_{i}$ we find
that $\dagger$ fixes $\mathbb C$.
To obtain the action of $\dagger$ on
$C_{\rm out}$, $C_{\rm in}$, we invoke Proposition
\ref{prop:rotatorRaction}. Referring to that proposition,
assume that $r,s$ are nonzero, so that $R$ is invertible,
and consider the equations
$s C_{\rm out} R = r \rho''_0 R A_{\rm out}$ and
$r \rho'_0 B_{\rm in} R = s R C_{\rm in}$. In either equation,
apply $\dagger $ to each side and compare the
results with the other equation.
This shows that $\dagger$ sends
$C_{\rm out} \mapsto (\rho''_0 /\rho'_0)C_{\rm in}$
and
$C_{\rm in} \mapsto (\rho'_0 /\rho''_0)C_{\rm out}$.
\\
\noindent (ii), (iii) Similar to the proof of (i) above.
\end{proof}

\begin{corollary} Assume that $A,B,C$ is equitable, bipartite, and nontrivial.
Then the following {\rm (i)--(iii)} hold:
\begin{enumerate}
\item[\rm (i)] the $(A,B)$-reflector swaps $\overline A, \overline B$
and fixes $\overline C$;
\item[\rm (ii)] the $(B,C)$-reflector swaps $\overline B,
\overline C$
and fixes $\overline A$;
\item[\rm (iii)] the $(C,A)$-reflector swaps $\overline C,
\overline A$
and fixes $\overline B$.
\end{enumerate}
\end{corollary}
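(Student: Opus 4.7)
The plan is to combine Theorem \ref{thm:DL}(iii), which identifies bases for $\overline A, \overline B, \overline C$, with the explicit action of the three reflectors on $A_{\rm out}, A_{\rm in}, B_{\rm out}, B_{\rm in}, C_{\rm out}, C_{\rm in}$ given in Proposition \ref{eq:EBRef}. Since $A,B,C$ is bipartite and nontrivial, Theorem \ref{thm:DL}(iii) (and its analogues for $B,C$) yields
\begin{equation*}
\overline A = \mathbb F A_{\rm out} + \mathbb F A_{\rm in},
\qquad
\overline B = \mathbb F B_{\rm out} + \mathbb F B_{\rm in},
\qquad
\overline C = \mathbb F C_{\rm out} + \mathbb F C_{\rm in}.
\end{equation*}
So to verify that a given reflector acts on these subspaces in the prescribed way, it is enough to track the images of the six basis elements.

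For part (i), let $\dagger$ denote the $(A,B)$-reflector. By Proposition \ref{eq:EBRef}(i), $\dagger$ sends $A_{\rm out} \leftrightarrow B_{\rm in}$ and $A_{\rm in} \leftrightarrow B_{\rm out}$, so $\dagger(\overline A) \subseteq \overline B$ and $\dagger(\overline B) \subseteq \overline A$. Also by Proposition \ref{eq:EBRef}(i), $\dagger$ sends $C_{\rm out} \mapsto (\rho''_0/\rho'_0) C_{\rm in}$ and $C_{\rm in} \mapsto (\rho'_0/\rho''_0) C_{\rm out}$, so $\dagger(\overline C) \subseteq \overline C$. By the last assertion of Proposition \ref{prop:antiaut}, $\dagger$ is an involution; therefore the three inclusions just displayed are equalities, giving $\dagger(\overline A)=\overline B$, $\dagger(\overline B)=\overline A$, and $\dagger(\overline C)=\overline C$. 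This is exactly the claim in (i).

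Parts (ii) and (iii) are handled in the same way, using Proposition \ref{eq:EBRef}(ii) and Proposition \ref{eq:EBRef}(iii) respectively; each of those results records the images of $A_{\rm out}, A_{\rm in}, B_{\rm out}, B_{\rm in}, C_{\rm out}, C_{\rm in}$ under the relevant reflector and shows (by the same scalar-multiple pattern for the ``fixed'' letter and a swap for the other two letters) that the corresponding double lowering spaces are permuted as asserted. There is no real obstacle: the work has already been done in Proposition \ref{eq:EBRef} and Theorem \ref{thm:DL}(iii), and the corollary is essentially just a bookkeeping consequence. The only point to be careful about is to note explicitly that ``fixes $\overline C$'' means fixes $\overline C$ as a subspace (not pointwise), which is consistent with the language of Proposition \ref{eq:EBRef} where $\dagger$ permutes $C_{\rm out}$ and $C_{\rm in}$ by nonzero scalar multiples.
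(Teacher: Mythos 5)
Your proof is correct and follows essentially the same route as the paper, which likewise cites Theorem \ref{thm:DL}(iii) (and the remark above it giving the analogous bases for $\overline B$, $\overline C$) together with Proposition \ref{eq:EBRef}. Your explicit remarks about upgrading inclusions to equalities via the involution property and about ``fixes'' meaning as a subspace are sound and consistent with the paper's intent.
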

\begin{proof} Use
Theorem
\ref{thm:DL}(iii) and the comment
above that theorem, along with
Proposition
\ref{eq:EBRef}.
\end{proof}

\section{Normalized LR triples with diameter at most 2}

\noindent Our next general goal is to classify
up to isomorphism the normalized LR triples.
As a warmup, we consider the
normalized LR triples with diameter
at most 2. For the results in this section the proofs 
are routine, and left as an exercise.
\begin{lemma}
Up to isomorphism, there exists a unique normalized LR triple
over $\mathbb F$ that has diameter $0$. This LR triple
is trivial.
\end{lemma}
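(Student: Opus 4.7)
The plan is straightforward, since essentially all the work has already been done in the earlier definitions and examples. I would first invoke Example \ref{ex:trivial}: when $d=0$, the space $V$ has dimension $1$, and the only LR triple on $V$ is the trivial one $A=B=C=0$, because any nonzero element of ${\rm End}(V)$ is a scalar multiple of the identity and hence cannot lower or raise a one-dimensional decomposition. So any LR triple over $\mathbb F$ with diameter $0$ is trivial.

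Next I would invoke Definition \ref{def:BNormTriv}, which declares every trivial LR triple to be normalized. This immediately gives the existence of a normalized LR triple over $\mathbb F$ of diameter $0$, and shows that the trivial one qualifies.

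For uniqueness up to isomorphism, suppose $A,B,C$ on $V$ and $A',B',C'$ on $V'$ are two normalized LR triples over $\mathbb F$ of diameter $0$. By the preceding paragraph, both triples are trivial, i.e.\ all six maps are zero. Since $V$ and $V'$ are one-dimensional $\mathbb F$-vector spaces, any $\mathbb F$-linear bijection $\sigma\colon V\to V'$ satisfies $\sigma\cdot 0 = 0\cdot\sigma$ for each of the three pairs, so $\sigma$ is an isomorphism of LR triples from $A,B,C$ to $A',B',C'$ in the sense of Definition \ref{def:isoLRT}.

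There is no genuine obstacle here; the statement is a direct combination of Example \ref{ex:trivial} and Definition \ref{def:BNormTriv}, together with the trivial observation that one-dimensional vector spaces over $\mathbb F$ are mutually isomorphic. The lemma is simply the $d=0$ base case preceding the real classification work for $d\geq 1$.
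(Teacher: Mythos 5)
Your proof is correct and is exactly the routine argument the paper has in mind (the paper explicitly leaves the proofs in this section as exercises): Example \ref{ex:trivial} forces triviality when $d=0$, Definition \ref{def:BNormTriv} gives normalization, and any linear bijection between one-dimensional spaces intertwines the zero maps, giving uniqueness up to isomorphism. No gaps.
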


\begin{lemma}
\label{lem:1or2}
Up to isomorphism, there exists a unique normalized LR triple
$A,B,C$ over $\mathbb F$ that has diameter 1.
This LR triple is nonbipartite and
$\varphi_1 = 
 -1$.
Moreover
$a_0 =1$ and
$a_1 =  -1$.
With respect to an $(A,B)$-basis the matrices representing
$A,B,C$  and
the standard rotator $\Omega$ are 
\begin{eqnarray*}
A: \left(
\begin{array}{cc}
0 & 1  
\\
 0 & 0 
\\
\end{array}
\right),
 \qquad 
B: \left(
\begin{array}{cc}
0 & 0  
\\
-1 & 0 
\\
\end{array}
\right),
\qquad 
C: \left(
\begin{array}{cc}
1 & 1  
\\
-1 & -1 
\\
\end{array}
\right),
 \qquad 
\Omega: \left(
\begin{array}{cc}
1 & 1  
\\
-1 &  0 
\\
\end{array}
\right).
\end{eqnarray*}
\end{lemma}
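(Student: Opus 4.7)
The plan is first to extract the numerical invariants forced by the hypotheses, and then to exhibit an LR triple realizing them.

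Since $d=1$ is odd, the LR triple is nontrivial; because a bipartite nontrivial LR triple has even diameter by Lemma \ref{lem:case}, our LR triple is nonbipartite. Normalization (Definition \ref{def:NBNorm}) gives $\alpha_1 = \alpha'_1 = \alpha''_1 = 1$, so by Lemma \ref{lem:equitNBmin} the triple is equitable, and Lemma \ref{lem:equitBasic}(i) then gives $\varphi_1 = \varphi'_1 = \varphi''_1$; write $\varphi$ for this common value. Lemma \ref{lem:alphaOneOK}(i) specializes to $\alpha_1^2 = -\varphi/(\varphi \cdot \varphi) = -1/\varphi$, whence $\varphi = -1$. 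Lemma \ref{lem:equitBasic}(ii) then yields $a_0 = \alpha_1(\varphi_2 - \varphi_1) = 0 - (-1) = 1$ and $a_1 = \alpha_1(\varphi_1 - \varphi_0) = -1 - 0 = -1$.

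Uniqueness follows from Lemma \ref{lem:NBnormIso}: a normalized nonbipartite LR triple is determined up to isomorphism by its parameter array. The matrices for $A$, $B$, $C$ with respect to an $(A,B)$-basis are obtained by specializing the first row of the table in Proposition \ref{prop:matrixRep} with $d=1$, the computed parameter array, and the trace data. For the standard rotator, I would specialize the first formula for $\Omega$ in Theorem \ref{cor:everythingGen}(i) to $d=1$: the inner sum collapses to $E_0 + E_1 = I$, so $\Omega = \mathbb B \mathbb A$. Proposition \ref{prop:Apoly} together with $\alpha_1 = \alpha'_1 = 1$ gives $\mathbb A = I + A$ and $\mathbb B = I + B$, and a $2 \times 2$ matrix multiplication produces the displayed matrix for $\Omega$.

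For existence, I would define $A$, $B$, $C$ to be the linear operators on a $2$-dimensional space $V$ with basis $v_0, v_1$ represented by the stated matrices. The decomposition $V = \langle v_0 \rangle \oplus \langle v_1 \rangle$ is lowered by $A$ and raised by $B$, so $A,B$ is an LR pair with $\varphi_1 = -1$. For the $(B,C)$-decomposition, take $w_0 = v_1$ (spanning $BV = \ker B$) and $w_1 = v_0 - v_1$ (spanning $CV = \ker C$); direct computation confirms that this is lowered by $B$ and raised by $C$ with $\varphi'_1 = -1$. An entirely symmetric verification with $u_0 = v_0 - v_1$ and $u_1 = v_0$ handles $(C,A)$. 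Thus $A,B,C$ is an LR triple, its invariants match those derived in the first paragraph, and so by Definition \ref{def:NBNorm} it is normalized.

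The main obstacle is the existence verification: checking directly that the three pairs of matrices assemble into LR pairs with the prescribed parameters requires identifying, for each pair, the appropriate decomposition of $V$ from the action on the given basis. Everything else is a mechanical substitution into previously established formulas.
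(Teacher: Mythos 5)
The paper gives no proof here (it declares the results of this section routine and leaves them as an exercise), so there is nothing to compare against; your argument is a legitimate filling-in, and the derivation of $\varphi_1=-1$, $a_0=1$, $a_1=-1$, the appeal to Lemma \ref{lem:NBnormIso} for uniqueness, the specialization of Proposition \ref{prop:matrixRep}, the computation $\Omega=\mathbb{B}\mathbb{A}=(I+B)(I+A)$, and the explicit decompositions $\langle v_1\rangle\oplus\langle v_0-v_1\rangle$ and $\langle v_0-v_1\rangle\oplus\langle v_0\rangle$ for the $(B,C)$- and $(C,A)$-pairs all check out.

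One step in the existence half is elided and should be made explicit. Having verified that the constructed triple has parameter array $(-1;-1;-1)$, you conclude ``its invariants match those derived in the first paragraph, and so by Definition \ref{def:NBNorm} it is normalized.'' But Definition \ref{def:NBNorm} is a condition on the Toeplitz numbers $\alpha_1,\alpha'_1,\alpha''_1$, and the parameter array does not determine their sign: by Note \ref{note:alphaOne}, the triple $-A,-B,-C$ has the same parameter array but first Toeplitz number $-1$ (and indeed Lemma \ref{lem:alphaOneOK}(i) only gives $\alpha_1^2=1$ here). You must therefore check the Toeplitz numbers of the constructed triple directly. This is quick: $a_1=-1$ is the $(1,1)$-entry of the displayed matrix for $C$ (Lemma \ref{lem:tracedataI}), so Corollary \ref{lem:a0ad} gives $\alpha'_1=a_1/\varphi''_1=1$, and Corollary \ref{cor:alphaOneInv} together with $\varphi_1=\varphi'_1=\varphi''_1$ forces $\alpha_1=\alpha'_1=\alpha''_1=1$. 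With that line added the proof is complete.
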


\begin{lemma}
\label{lem:dtwoClassNB}
We give a bijection from the set 
$\mathbb F \backslash \lbrace 0,-1\rbrace$ to 
the set of isomorphism classes of normalized
nonbipartite LR triples over $\mathbb F$
that have diameter $2$. 
For $q \in \mathbb F \backslash \lbrace 0,-1\rbrace$  the corresponding
LR triple $A,B,C$ has parameters
\begin{eqnarray*}
&&\varphi_1 = -1-q^{-1}, \qquad \varphi_2 = -1-q,
\\
&&\alpha_2 = \frac{1}{1+q}, \qquad \beta_2 = \frac{q}{1+q},
\\
&&
a_0 = 1+q, \qquad
a_1 = q^{-1}-q, \qquad
a_2 = -1-q^{-1}.
\end{eqnarray*}
With respect to an $(A,B)$-basis the matrices representing
$A,B,C$ and
the standard rotator $\Omega$ 
are
\begin{eqnarray*}
&&
A: \left(
\begin{array}{ccc}
0 & 1  & 0 
\\
 0 & 0 & 1 
\\
0 & 0 & 0
\\
\end{array}
\right),
 \qquad \qquad 
B: \left(
\begin{array}{ccc}
0 & 0  &0 
\\
-1-q^{-1} & 0 & 0 
\\
0 & -1-q & 0 
\end{array}
\right),
\\
&&
C: \left(
\begin{array}{ccc}
1+q &  q & 0  
\\
-1-q & q^{-1}-q & q^{-1} 
\\
0 & -1-q^{-1} & -1-q^{-1}
\end{array}
\right),
\qquad 
\Omega: \left(
\begin{array}{ccc}
1 &  1 & (1+q)^{-1}  
\\
-1-q^{-1} & -1 & 0 
\\
1+q^{-1} & 0  & 0 
\end{array}
\right).
\end{eqnarray*}
\end{lemma}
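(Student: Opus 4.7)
\medskip

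\noindent\textbf{Proof proposal.} My plan is to establish the bijection in two halves: (a) every normalized nonbipartite LR triple of diameter $2$ arises from a unique $q \in \mathbb F \setminus \lbrace 0, -1\rbrace$, and (b) for each such $q$ the displayed matrices do define an LR triple with the claimed invariants.

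First I would argue that a normalized nonbipartite LR triple $A,B,C$ of diameter $2$ is determined up to isomorphism by its parameter array. By Definition~\ref{def:NBNorm} we have $\alpha_1=\alpha'_1=\alpha''_1=1$, so by Lemma~\ref{lem:equitNBmin} the triple is equitable; hence by Lemma~\ref{lem:equitBasic}(i), $\varphi_i = \varphi'_i = \varphi''_i$ for $i=1,2$. Thus the parameter array reduces to the pair $(\varphi_1, \varphi_2) \in (\mathbb F^*)^2$, and by Lemma~\ref{lem:NBnormIso} this pair determines the isomorphism class. Next I would extract the constraint forced by $\alpha_1=1$: since $\alpha_1 \neq 0$, Lemma~\ref{lem:alphaOneOK}(ii) gives $\alpha_1^2 = -(\varphi_1+\varphi_2)/(\varphi_2\varphi_1)$, so setting this equal to $1$ yields $(1+\varphi_1)(1+\varphi_2) = 1$. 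The substitution $\varphi_1 = -1-q^{-1}$, $\varphi_2 = -1-q$ trivially satisfies this identity; conversely, given any solution with $\varphi_1,\varphi_2$ nonzero one may solve $q = -(1+\varphi_2)$ and check that $q \in \mathbb F \setminus \lbrace 0,-1\rbrace$. So we have a bijection between $\mathbb F \setminus \lbrace 0,-1\rbrace$ and the admissible parameter arrays.

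For the existence half I would take $V$ to be $3$-dimensional with basis $v_0,v_1,v_2$ and define $A,B,C$ by the displayed matrices. By Example~\ref{ex:LR} (or Lemma~\ref{lem:ABmatrix}) the pair $A,B$ is an LR pair on $V$ with parameter sequence $(\varphi_1,\varphi_2)$. The heart of the argument is verifying that $B,C$ and $C,A$ are also LR pairs, for which I would invoke Proposition~\ref{prop:LRchar}: compute $C^2$ and $C^3$ directly from the $3\times 3$ matrix to confirm $C^3=0$ and $C^2\neq 0$, so $C$ is Nil; then exhibit explicit decompositions of $V$ lowered by $B$ and raised by $C$, and lowered by $C$ and raised by $A$. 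Equivalently one can check the two flag-raising conditions of Proposition~\ref{prop:LRchar} by locating bases of $B^2V$ and $C^2V$ inside the appropriate kernels. Once $A,B,C$ is confirmed to be an LR triple, the claimed invariants fall out: the values of $\varphi_i$ are immediate from the matrix of $A$ times the matrix of $B$; the $a_i$ are the diagonal entries of the $C$ matrix by Lemma~\ref{lem:tracedataI}; the Toeplitz number $\alpha_1 = \alpha''_1\varphi'_1/\varphi''_1 = 1$ by Corollary~\ref{cor:alphaOneInv}; and $\alpha_2,\beta_2$ follow from Proposition~\ref{prop:AlphaRecursion} specialized to $d=2$. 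The displayed matrix for $\Omega$ can be read off from Theorem~\ref{cor:everythingGen}(i), using any one of the three formulas there, and then checked against $\Omega^3 = \theta I$ from Proposition~\ref{thm:OmegaCubed} if desired.

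The main obstacle will be the matrix verification that $B,C$ and $C,A$ are LR pairs. The displayed $C$ is tridiagonal in the $(A,B)$-basis but neither upper nor lower triangular, so Lemma~\ref{lem:ABmatrix} does not apply to these pairs in this basis. I expect the cleanest route is to diagonalize the $(B,C)$- and $(C,A)$-decompositions by change of basis, computing the $(B,C)$-decomposition as $\lbrace B^{2-i}V \cap C^iV\rbrace_{i=0}^{2}$ (cf. Lemma~\ref{lem:oppF}) and similarly for $(C,A)$. Once these three pairs are shown to be LR, the remaining computations—matrix entries of $\Omega$, values of $\alpha_2,\beta_2$, and the $a_i$—reduce to bookkeeping with the explicit parameters $\varphi_1 = -1-q^{-1}$, $\varphi_2 = -1-q$.
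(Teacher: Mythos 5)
Your proposal is sound and matches the routine verification that the paper explicitly leaves as an exercise for this section: uniqueness via Lemma \ref{lem:NBnormIso} plus the constraint $(1+\varphi_1)(1+\varphi_2)=1$ from Lemma \ref{lem:alphaOneOK}(ii), and existence via the explicit matrices together with Proposition \ref{prop:LRchar}. One small citation slip: Corollary \ref{cor:alphaOneInv} only yields $\alpha_1=\alpha'_1=\alpha''_1$ for the constructed triple, not the value $1$; to see it is normalized you should instead read $\varphi'_i,\varphi''_i$ off the off-diagonal entries of $C$ via Proposition \ref{prop:matrixRep} and then use Corollary \ref{lem:a0ad} (e.g. $a_d=\alpha'_1\varphi''_1$) to get $\alpha_1=1$.
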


\begin{lemma}
\label{lem:dtwoClassB}
We give a bijection from the set the 3-tuples 
\begin{eqnarray}
\label{eq:3tupleset}
(\rho_0, \rho'_0, \rho''_0) \in \mathbb F^3,
\qquad \qquad 
\rho_0 \rho'_0 \rho''_0 = -1
\end{eqnarray}
to the set of isomorphism classes of normalized
bipartite LR triples over $\mathbb F$
that have diameter $2$. 
For a 3-tuple
$(\rho_0, \rho'_0, \rho''_0)$ in the set
{\rm (\ref{eq:3tupleset})}, 
 the corresponding
LR triple $A,B,C$ has parameters
\begin{eqnarray*}
&&
\varphi_1 = -1/\rho_0, 
\qquad 
\varphi'_1 = -1/\rho'_0, 
\qquad 
\varphi''_1 = -1/\rho''_0, 
\\
&&
 \varphi_2 = \rho_0,
\qquad
 \varphi'_2 = \rho'_0,
\qquad
 \varphi''_2 = \rho''_0.
\end{eqnarray*}
With respect to an $(A,B)$-basis the matrices representing
$A,B,C$, the projector $J$,  and the standard outer/inner rotators
$\Omega_{\rm out}$,
$\Omega_{\rm in}$
are
\begin{eqnarray*}
&&
A: \left(
\begin{array}{ccc}
0 & 1  & 0 
\\
 0 & 0 & 1 
\\
0 & 0 & 0
\\
\end{array}
\right),
 \qquad 
B: \left(
\begin{array}{ccc}
0 & 0  &0 
\\
-1/\rho_0 & 0 & 0 
\\
0 & \rho_0 & 0 
\end{array}
\right),
\qquad
C: \left(
\begin{array}{ccc}
0 &  1/\rho''_0 & 0  
\\
\rho''_0 & 0 &  \rho''_0 
\\
0 & -1/\rho''_0 & 0 
\end{array}
\right),
\\
&&
J: \left(
\begin{array}{ccc}
1 &  0 & 0  
\\
0 & 0 &  0
\\
0 & 0 & 1 
\end{array}
\right),
\qquad
\Omega_{\rm out}: \left(
\begin{array}{ccc}
1 &  0 & 1  
\\
0 & 0 &  0
\\
-1 & 0 & 0 
\end{array}
\right),
\qquad 
\Omega_{\rm in}: \left(
\begin{array}{ccc}
0 &  0 & 0  
\\
0 & 1 &  0
\\
0 & 0 & 0 
\end{array}
\right).
\end{eqnarray*}
\end{lemma}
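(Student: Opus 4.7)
The plan is to reduce the classification to a computation with the parameter array. By Lemma \ref{lem:UNIQUE}, a bipartite LR triple is determined up to isomorphism by its parameter array, so it suffices to identify which parameter arrays arise from normalized bipartite LR triples of diameter $2$ and to exhibit a bijection with the set (\ref{eq:3tupleset}). Since a normalized LR triple is equitable, one has $\alpha_1 = \alpha'_1 = \alpha''_1 = 0$ (from bipartiteness, Lemma \ref{lem:case}) and $\alpha_2 = \alpha'_2 = \alpha''_2 = 1$ (from Definition \ref{def:BNorm}); applying Proposition \ref{prop:AlphaRecursion}(i) at $d=2$, $i=1$ gives $\varphi_1 = \varphi'_2\varphi''_2$, and cycling over the p-relatives (themselves normalized bipartite) yields $\varphi'_1 = \varphi''_2\varphi_2$ and $\varphi''_1 = \varphi_2\varphi'_2$. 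Using (\ref{def:rho0def}) this forces $\rho_0 = \varphi_2$, $\rho'_0 = \varphi'_2$, $\rho''_0 = \varphi''_2$.

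The key remaining step is to derive the constraint $\rho_0\rho'_0\rho''_0 = -1$; this is the only nontrivial point. Apply Proposition \ref{prop:AlphaRecursion3}(i) with $d=2$, $i=1$, using $\alpha_1 = 0$ and $\alpha_0 = 1$: the left side vanishes, leaving
\begin{equation*}
\frac{\varphi_1}{\varphi'_2\varphi''_2} + \frac{\varphi_2}{\varphi'_1\varphi''_1} = 0.
\end{equation*}
The first term is $1$ by the previous step, so $\varphi_2 = -\varphi'_1\varphi''_1 = -(\varphi''_2\varphi_2)(\varphi_2\varphi'_2)$, hence $\rho_0\rho'_0\rho''_0 = \varphi_2\varphi'_2\varphi''_2 = -1$. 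Substituting back yields $\varphi_1 = -1/\rho_0$, $\varphi'_1 = -1/\rho'_0$, $\varphi''_1 = -1/\rho''_0$, matching the displayed parameter array.

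For the converse, given any $(\rho_0,\rho'_0,\rho''_0)\in\mathbb{F}^3$ with $\rho_0\rho'_0\rho''_0 = -1$, define the six $\varphi$'s by the stated formulas, take the matrices of $A,B$ from Lemma \ref{lem:ABmatrix} with parameter sequence $\varphi_1,\varphi_2$, and define $C$ via the $(A,B)$ row of Proposition \ref{prop:matrixRep} with all $a_i = 0$. A direct verification shows $B,C$ and $C,A$ are LR pairs with the prescribed parameter sequences, so $A,B,C$ is an LR triple with the given parameter array. It is bipartite and normalized because the computation above is reversible: $\alpha_1 = \alpha'_1 = \alpha''_1 = 0$ forces all $a_i, a'_i, a''_i$ to vanish via Corollary \ref{lem:a0ad}, and Proposition \ref{prop:AlphaRecursion}(i) gives $\alpha_2 = \varphi_1/(\varphi'_2\varphi''_2) = 1$, and similarly for $\alpha'_2,\alpha''_2$.

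Finally, the matrix representations for $J$, $\Omega_{\rm out}$, $\Omega_{\rm in}$ are routine. The matrix of $J$ follows from Lemma \ref{lem:JMat}. For the rotators, Proposition \ref{prop:Apoly} gives $\mathbb{A} = I + A^2$ and $\mathbb{B} = I + B^2$ (since $\alpha'_0 = \alpha'_2 = 1$ and $\alpha'_1 = 0$, and similarly for $\mathbb{B}$); then Theorem \ref{lem:OmegaOutD}(i) gives $\Omega_{\rm out} = (I+B^2)(E_0+E_2)(I+A^2) = (I+B^2)J(I+A^2)$ and Theorem \ref{lem:OmegaOutInD}(i) gives $\Omega_{\rm in} = (I+B^2)E_1(I+A^2)$; multiplying out the matrices and using $\varphi_1\varphi_2 = -1$ yields the stated forms. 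The main obstacle is the $-1$ constraint; after that the entire argument is bookkeeping with previously established formulas.
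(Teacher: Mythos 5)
Your proof is correct; the paper explicitly leaves this lemma as a routine exercise, and your argument is precisely the intended verification using the paper's own machinery (Lemma \ref{lem:UNIQUE}, the recursions in Propositions \ref{prop:AlphaRecursion} and \ref{prop:AlphaRecursion3}, Corollary \ref{lem:a0ad}, and the matrix formulas of Proposition \ref{prop:matrixRep} and Theorems \ref{lem:OmegaOutD}, \ref{lem:OmegaOutInD}). The derivation of the constraint $\rho_0\rho'_0\rho''_0=-1$ and the reversibility check for existence are both sound.
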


\section{The sequence $\lbrace  \rho_i \rbrace_{i=0}^{d-1}$ is constrained}

\noindent 
Throughout this section the following notation is in effect.
Let $V$ denote a vector space over $\mathbb F$ with
dimension $d+1$.
Let $A,B,C$ denote
a nontrivial LR triple on
$V$, with
 parameter array
(\ref{eq:paLRT}),
idempotent data
(\ref{eq:idseq}),
trace data
(\ref{eq:tracedata}), and Toeplitz data
(\ref{eq:ToeplitzData}).
We assume that $A,B,C$ is equitable, so that
$\alpha_i = \alpha'_i = \alpha''_i$ and
$\beta_i = \beta'_i = \beta''_i$ 
for
$0 \leq i \leq d$.
For $A,B,C$ nonbipartite we have the
sequence
$\lbrace \rho_i \rbrace_{i=0}^{d-1}$ from 
Definition
\ref{def:Rhoi}, and for 
$A,B,C$ bipartite we have the
sequences
$\lbrace \rho_i \rbrace_{i=0}^{d-1}$,
$\lbrace \rho'_i \rbrace_{i=0}^{d-1}$,
$\lbrace \rho''_i \rbrace_{i=0}^{d-1}$
from Definition
\ref{def:RHOD}. Our next goal is to 
show that these sequences are constrained, in
the sense of
Definition
\ref{def:constrain}. 

\begin{lemma}
\label{lem:Con1}
Assume that $A,B,C$ is equitable.
Then the following {\rm (i)--(iii)} hold.
\begin{enumerate} 
\item[\rm (i)] 
For $d\geq 2$,
\begin{eqnarray}
\label{eq:L1}
&&\rho_i = \alpha_0 \beta_2 \varphi_i + 
 \alpha_1 \beta_1 \varphi_{i+1} + 
 \alpha_2 \beta_0 \varphi_{i+2}
\qquad \qquad (0 \leq i \leq d-1),
\\
&&0 = \alpha_0 \beta_2 + 
 \alpha_1 \beta_1 + 
 \alpha_2 \beta_0.
\label{eq:L2}
\end{eqnarray}
\item[\rm (ii)] For $d \geq 3$,
\begin{eqnarray}
\label{eq:L3}
&&0 = 
\alpha_0 \beta_3 \varphi_{i-2} +
\alpha_1 \beta_2 \varphi_{i-1} +
\alpha_2 \beta_1 \varphi_{i} +
\alpha_3 \beta_0 \varphi_{i+1}
 \qquad (2 \leq i \leq d),
\\
&&
0 = \alpha_0 \beta_3 
+\alpha_1 \beta_2 +
\alpha_2 \beta_1 
+
\alpha_3 \beta_0.
\label{eq:L4}
\end{eqnarray}
\item[\rm (iii)] For $A,B,C$ bipartite and $d\geq 4$,
\begin{eqnarray}
\label{eq:L5}
&&
0 = 
\alpha_0 \beta_4 \varphi_{i-2} +
\alpha_2 \beta_2 \varphi_{i} +
\alpha_4 \beta_0 \varphi_{i+2}
\qquad  \qquad (2 \leq i \leq d-1),
\\
&&
\label{eq:L6}
0 = 
\alpha_0 \beta_4  +
\alpha_2 \beta_2 +
\alpha_4 \beta_0.
\end{eqnarray}
\end{enumerate}
\end{lemma}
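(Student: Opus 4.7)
The plan is to deduce all six identities from recursions already available in the paper, with the equitable hypothesis used only to identify the three parallel systems $\alpha_i=\alpha'_i=\alpha''_i$ and $\beta_i=\beta'_i=\beta''_i$. The two flavors of input are (a) Propositions \ref{prop:goodrec} and \ref{prop:longrec}, which relate $\varphi$'s and Toeplitz parameters, and (b) the Toeplitz inversion recursion (\ref{eq:recursion}) from Section 12, coming from $TT^{-1}=I$.

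For part (i), I will apply Proposition \ref{prop:goodrec}. In the equitable case its six equations collapse, and after reindexing $i \mapsto i+1$ each one yields
\[
\frac{\varphi^{*}_{i+1}}{\varphi^{**}_{d-i}}
=
\alpha_0\beta_2\varphi_i+\alpha_1\beta_1\varphi_{i+1}+\alpha_2\beta_0\varphi_{i+2}
\qquad (0\le i\le d-1),
\]
where $(\varphi^{*},\varphi^{**})$ is one of the three ordered pairs of systems. By Definition \ref{def:Rhoi} (nonbipartite case, where $\varphi_i=\varphi'_i=\varphi''_i$) and Definition \ref{def:RHOD} (bipartite equitable case), the left-hand side equals $\rho_i$; this gives (\ref{eq:L1}). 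Equation (\ref{eq:L2}) is the $j=2$ instance of the inverse-Toeplitz recursion (\ref{eq:recursion}) applied to $T$ and $T^{-1}$.

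For part (ii), I will invoke Proposition \ref{prop:longrec} with $r=3$; in the equitable form it reads $0=\alpha_0\beta_3\varphi_i+\alpha_1\beta_2\varphi_{i+1}+\alpha_2\beta_1\varphi_{i+2}+\alpha_3\beta_0\varphi_{i+3}$ for $0\le i\le d-2$, and the shift $i\mapsto i-2$ produces (\ref{eq:L3}). Identity (\ref{eq:L4}) is the $j=3$ instance of (\ref{eq:recursion}).

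For part (iii), I will use the bipartite vanishing recorded in Lemma \ref{lem:case}, namely $\alpha_1=\alpha_3=\beta_1=\beta_3=0$. Substituting these into Proposition \ref{prop:longrec} with $r=4$ (equitable form) kills the two middle cross-terms and, after shifting $i\mapsto i-2$, yields (\ref{eq:L5}); the same vanishing reduces the $j=4$ instance of (\ref{eq:recursion}) to (\ref{eq:L6}). There is no serious obstacle: the only care needed is to choose, from among the six equations in each of Propositions \ref{prop:goodrec} and \ref{prop:longrec}, the one whose left-hand side $\varphi^{*}_{i+1}/\varphi^{**}_{d-i}$ matches the definition of $\rho_i$ in the case at hand, and to perform the correct index shifts.
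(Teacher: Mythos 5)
Your proposal is correct and follows essentially the same route as the paper: part (i) from Proposition \ref{prop:goodrec} together with the identification of $\rho_i$ (via Lemma \ref{lem:equitBasic}(i) or Definition \ref{def:RHOD}), parts (ii) and (iii) from Proposition \ref{prop:longrec} with $r=3$ and $r=4$, the constant identities from the Toeplitz inversion recursion (\ref{eq:recursion}), and Lemma \ref{lem:case} to kill the odd-index terms in the bipartite case. The index shifts and ranges all check out.
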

\begin{proof}(i) Line 
(\ref{eq:L1}) is from
the first displayed equation in
Proposition
\ref{prop:goodrec}, along with  
Lemma
\ref{lem:equitBasic}(i).
Line 
(\ref{eq:L2}) is from
(\ref{eq:recursion}).
\\
\noindent (ii) Line
(\ref{eq:L3})
is from the first displayed equation in
Proposition
\ref{prop:longrec} (with $r=3$).
Line
(\ref{eq:L4}) is from
(\ref{eq:recursion}).
\\
\noindent (iii) Similar to (ii) above, but also use
Lemma
\ref{lem:case}.
\end{proof}


\noindent As we proceed, we will consider the bipartite and nonbipartite cases
separately. We begin with the nonbipartite case.

\begin{lemma}
\label{lem:NBCon1}
Assume that $A,B,C$ is nonbipartite, equitable, and $d\geq 2$. Then
for $0 \leq i \leq d-1$,
\begin{eqnarray}
\label{eq:middleElim}
\rho_i = \alpha_0 \beta_2( \varphi_i -\varphi_{i+1})
-
 \alpha_2 \beta_0 (\varphi_{i+1}-\varphi_{i+2}).
\end{eqnarray}
\end{lemma}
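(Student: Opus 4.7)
The plan is to derive the claimed identity directly from the two equations in part (i) of the preceding Lemma \ref{lem:Con1}, namely
\begin{eqnarray*}
\rho_i &=& \alpha_0 \beta_2 \varphi_i + \alpha_1 \beta_1 \varphi_{i+1} + \alpha_2 \beta_0 \varphi_{i+2} \qquad (0 \leq i \leq d-1),
\\
0 &=& \alpha_0 \beta_2 + \alpha_1 \beta_1 + \alpha_2 \beta_0.
\end{eqnarray*}
Both equations are available under our hypothesis that $A,B,C$ is equitable and $d\geq 2$ (the nonbipartite assumption is not needed at this stage; it is presumably included because this lemma is a stepping stone toward subsequent nonbipartite-specific results, e.g.\ connecting with the constrained sequence classification).

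The key step is to use the second equation to solve for the ``middle coefficient'' $\alpha_1 \beta_1$, namely
\begin{eqnarray*}
\alpha_1 \beta_1 = -\alpha_0 \beta_2 - \alpha_2 \beta_0,
\end{eqnarray*}
and then substitute this into the first equation. After substitution one gets
\begin{eqnarray*}
\rho_i = \alpha_0 \beta_2 \varphi_i - (\alpha_0 \beta_2 + \alpha_2 \beta_0)\varphi_{i+1} + \alpha_2 \beta_0 \varphi_{i+2},
\end{eqnarray*}
which, after grouping the $\alpha_0\beta_2$ and $\alpha_2\beta_0$ terms separately, yields exactly
\begin{eqnarray*}
\rho_i = \alpha_0 \beta_2(\varphi_i - \varphi_{i+1}) - \alpha_2 \beta_0(\varphi_{i+1} - \varphi_{i+2}).
\end{eqnarray*}

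There is no real obstacle here: the argument is pure algebraic elimination of the term $\alpha_1\beta_1\varphi_{i+1}$ using the scalar identity. The only subtlety is just bookkeeping of signs when regrouping, and verifying that the hypotheses of Lemma \ref{lem:Con1}(i) are met (equitable with $d\geq 2$), which they are by assumption. Hence the proof will be a two-line computation.
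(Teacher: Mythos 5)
Your proof is correct and is essentially identical to the paper's: the paper simply subtracts $\varphi_{i+1}$ times (\ref{eq:L2}) from (\ref{eq:L1}), which is the same elimination of $\alpha_1\beta_1\varphi_{i+1}$ that you perform by solving (\ref{eq:L2}) for $\alpha_1\beta_1$ and substituting.
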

\begin{proof} Subtract 
$\varphi_{i+1}$ times
(\ref{eq:L2}) from
(\ref{eq:L1}).
\end{proof}

\begin{definition}
\label{def:NBCon2}
Assume that $A,B,C$ is nonbipartite, equitable, and $d\geq 3$. Define
\begin{eqnarray}
\label{eq:abc}
a = 
\alpha_0 \beta_3, \quad \qquad
b =
\alpha_0 \beta_3 + 
\alpha_1 \beta_2 =  
-
\alpha_2 \beta_1 - 
\alpha_3 \beta_0, 
\qquad \quad 
c =  -\alpha_3 \beta_0.
\end{eqnarray}
\end{definition}

\begin{lemma}
\label{lem:NBCon4}
Assume that $A,B,C$ is nonbipartite, equitable, and $d\geq 3$.
Then for 
 $2 \leq i \leq d$,
\begin{eqnarray}
0 =
a(\varphi_{i-2}-\varphi_{i-1})+
b(\varphi_{i-1}-\varphi_{i})+
c(\varphi_{i}-\varphi_{i+1}),
\label{eq:abcRec}
\end{eqnarray}
where $a,b,c$ are from
{\rm (\ref{eq:abc})}.
\end{lemma}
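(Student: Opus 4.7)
The plan is to verify the identity purely algebraically, by expanding the right-hand side using the definitions of $a,b,c$ from (\ref{eq:abc}) and matching the result to equation (\ref{eq:L3}) of Lemma~\ref{lem:Con1}(ii).

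First I would expand
\begin{eqnarray*}
a(\varphi_{i-2}-\varphi_{i-1})+b(\varphi_{i-1}-\varphi_{i})+c(\varphi_{i}-\varphi_{i+1})
= a\varphi_{i-2} + (b-a)\varphi_{i-1} + (c-b)\varphi_i - c\varphi_{i+1}.
\end{eqnarray*}
Next I would substitute the definitions from (\ref{eq:abc}). For the coefficient of $\varphi_{i-1}$, I use $b=\alpha_0\beta_3+\alpha_1\beta_2$, giving $b-a=\alpha_1\beta_2$. For the coefficient of $\varphi_i$, I use the other expression $b=-\alpha_2\beta_1-\alpha_3\beta_0$, giving $c-b=\alpha_2\beta_1$. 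Also $a=\alpha_0\beta_3$ and $-c=\alpha_3\beta_0$. The right-hand side becomes
\begin{eqnarray*}
\alpha_0\beta_3\varphi_{i-2}+\alpha_1\beta_2\varphi_{i-1}+\alpha_2\beta_1\varphi_i+\alpha_3\beta_0\varphi_{i+1},
\end{eqnarray*}
which is precisely the expression that equals zero by (\ref{eq:L3}) for $2\leq i\leq d$.

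Note that the two expressions for $b$ in (\ref{eq:abc}) are equal by (\ref{eq:L4}), so the definition is unambiguous, and both forms are used in the expansion above. There is no real obstacle here; the lemma is essentially a repackaging of (\ref{eq:L3}) in terms of consecutive differences $\varphi_{i-2}-\varphi_{i-1}$, $\varphi_{i-1}-\varphi_i$, $\varphi_i-\varphi_{i+1}$, made possible by (\ref{eq:L4}), which ensures that the coefficients sum to zero so that a telescoping reformulation exists.
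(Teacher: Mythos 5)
Your proof is correct and is essentially the paper's own argument: the paper likewise verifies (\ref{eq:abcRec}) by eliminating $a,b,c$ via (\ref{eq:abc}) and comparing with (\ref{eq:L3}). Your explicit bookkeeping of the two forms of $b$ (justified by (\ref{eq:L4})) fills in exactly the computation the paper leaves to the reader.
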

\begin{proof} 
To verify
(\ref{eq:abcRec}),
eliminate $a,b,c$ 
using
(\ref{eq:abc}), and compare the
result with
(\ref{eq:L3}).
\end{proof}

\begin{lemma}
\label{lem:NBCon5}
Assume that $A,B,C$ is nonbipartite, equitable, and $d\geq 3$.
Then for 
 $1 \leq i \leq d-2$,
\begin{eqnarray}
0 = a \rho_{i-1} + b\rho_i + c \rho_{i+1},
\label{eq:abcRho}
\end{eqnarray}
where $a,b,c$ are from
{\rm (\ref{eq:abc})}.
\end{lemma}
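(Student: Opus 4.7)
The plan is to derive the recurrence \eqref{eq:abcRho} by substituting the explicit formula for $\rho_j$ from Lemma \ref{lem:NBCon1} into the linear combination $a\rho_{i-1} + b\rho_i + c\rho_{i+1}$, and then recognizing the resulting expression as a pair of shifted instances of the known recurrence \eqref{eq:abcRec} from Lemma \ref{lem:NBCon4}.

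More precisely, I would first record, for any $j$ with $0\le j\le d-1$, the identity
\[
\rho_j \;=\; \alpha_0\beta_2(\varphi_j - \varphi_{j+1}) \;-\; \alpha_2\beta_0(\varphi_{j+1}-\varphi_{j+2}),
\]
which is \eqref{eq:middleElim}. Apply this to $j=i-1,i,i+1$ and substitute into $a\rho_{i-1}+b\rho_i+c\rho_{i+1}$. After grouping by the common scalars $\alpha_0\beta_2$ and $\alpha_2\beta_0$, the sum takes the form
\[
\alpha_0\beta_2 \bigl[a(\varphi_{i-1}-\varphi_i)+b(\varphi_i-\varphi_{i+1})+c(\varphi_{i+1}-\varphi_{i+2})\bigr]
\;-\; \alpha_2\beta_0\bigl[a(\varphi_i-\varphi_{i+1})+b(\varphi_{i+1}-\varphi_{i+2})+c(\varphi_{i+2}-\varphi_{i+3})\bigr].
\]

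The first bracket is precisely \eqref{eq:abcRec} with $i$ replaced by $i+1$, and the second bracket is \eqref{eq:abcRec} with $i$ replaced by $i+2$. Both shifts remain within the admissible range $2\le \cdot \le d$ of Lemma \ref{lem:NBCon4} for every $i$ in the range $1\le i\le d-2$: indeed $i+1\in[2,d-1]$ and $i+2\in[3,d]$. Hence each bracket vanishes, and $a\rho_{i-1}+b\rho_i+c\rho_{i+1}=0$.

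There is no real obstacle here; the whole argument is an algebraic manipulation whose only subtlety is bookkeeping the index ranges to confirm that the two invocations of \eqref{eq:abcRec} are legitimate. I would also briefly remark at the end that the hypothesis $d\ge 3$ is needed solely to guarantee that the scalars $a,b,c$ of Definition \ref{def:NBCon2} and the recurrence \eqref{eq:abcRec} are both available.
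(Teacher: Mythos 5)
Your proof is correct and is exactly the argument the paper intends: the paper's proof says to eliminate $\rho_{i-1},\rho_i,\rho_{i+1}$ via Lemma \ref{lem:NBCon1} and evaluate using Lemma \ref{lem:NBCon4}, which is precisely your substitution and recognition of the two shifted instances of \eqref{eq:abcRec}. Your index bookkeeping ($i+1\in[2,d-1]$ and $i+2\in[3,d]$, both within the range of Lemma \ref{lem:NBCon4}) is also correct.
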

\begin{proof}
To verify
(\ref{eq:abcRho}), eliminate
$\rho_{i-1}, \rho_i,\rho_{i+1}$
 using
Lemma
\ref{lem:NBCon1}, 
and evaluate
the result using
Lemma \ref{lem:NBCon4}.
\end{proof}

\begin{lemma}
\label{lem:NBCon3}
Assume that $A,B,C$ is nonbipartite, equitable, and $d\geq 3$.
Then the scalars $a,b,c$ from Definition
\ref{def:NBCon2} are not all zero.
\end{lemma}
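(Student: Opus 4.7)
The plan is to argue by contradiction: assume $a=b=c=0$ and derive a contradiction from the recursion relating $\lbrace \beta_i \rbrace$ to the parameter array. First I would unpack Definition \ref{def:NBCon2} using $\alpha_0 = \beta_0 = 1$ from Lemma \ref{lem:alpha0}, which gives $a = \beta_3$, $b = \beta_3 + \alpha_1 \beta_2$, and $c = -\alpha_3$. Thus $a=0$ forces $\beta_3 = 0$; combining with $b=0$ yields $\alpha_1 \beta_2 = 0$; and since $A,B,C$ is nonbipartite, Lemma \ref{lem:NotB} guarantees $\alpha_1 \neq 0$, so $\beta_2 = 0$.

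Next I would invoke Proposition \ref{prop:AlphaRecursion}(ii). In the equitable case Lemma \ref{lem:equitBasic}(i) gives $\varphi_i = \varphi'_i = \varphi''_i$ for $1 \leq i \leq d$, so the recursion displayed there simplifies to
\begin{eqnarray*}
\beta_{i+1}\varphi_{d-i} = \beta_1 \beta_i \varphi_d + \beta_{i-1}\varphi_d/\varphi_1 \qquad (1 \leq i \leq d-1).
\end{eqnarray*}
Setting $i=2$ (valid since $d \geq 3$) and using $\beta_3 = 0$, $\beta_2 = 0$, $\beta_0 = 1$, this collapses to $0 = \beta_1 \varphi_d/\varphi_1$. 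However $\beta_1 = -\alpha_1$ by Lemma \ref{lem:alpha0}, and $\alpha_1, \varphi_1, \varphi_d$ are all nonzero (the first by nonbipartiteness via Lemma \ref{lem:NotB}, the latter two by Definition \ref{def:pa}), giving the required contradiction.

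There is no real obstacle here; the argument is a direct computation leveraging the existing recursion. The one subtlety is noting that the hypothesis $d \geq 3$ is exactly what licenses using the index $i = 2$ in the $\beta$-recursion, and that the nonbipartite hypothesis is exactly what ensures $\alpha_1 \neq 0$ so that passing from $b = 0$ to $\beta_2 = 0$ is legitimate.
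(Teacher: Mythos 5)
Your proof is correct. You unpack $a,b,c$ exactly as the paper does, obtaining $\beta_3=0$ and (via $\alpha_1\neq 0$ from Lemma \ref{lem:NotB}) $\beta_2=0$; the only divergence is the identity used to close the contradiction. The paper additionally extracts $\alpha_2=0$, $\alpha_3=0$ from the second expression for $b$ and $c$, and then observes that the right-hand side of (\ref{eq:middleElim}) (i.e.\ Lemma \ref{lem:NBCon1}, which descends from Proposition \ref{prop:goodrec}) vanishes while the left-hand side $\rho_i=\varphi_{i+1}/\varphi_{d-i}$ does not. You instead feed $\beta_2=\beta_3=0$ into the $\beta$-recursion of Proposition \ref{prop:AlphaRecursion}(ii) at $i=2$ (legitimate precisely because $d\geq 3$, as you note), collapsing it to $0=\beta_1\varphi_d/\varphi_1$ with every factor nonzero. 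Both routes are one-line computations from previously established recursions; yours has the minor economy of never needing $\alpha_2=\alpha_3=0$, while the paper's makes the role of the $\rho_i$ (the objects the whole section is about) more visible. One cosmetic slip: at $i=2$ the term $\beta_{i-1}$ is $\beta_1$, so your invocation of $\beta_0=1$ is superfluous, though harmless.
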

\begin{proof} 
Recall from Lemmas
\ref{lem:alpha0},
\ref{lem:NotB}
that $\alpha_0 = 1 = \beta_0$ and
$\alpha_1 = -\beta_1$ is nonzero.
Suppose 
that each of $a,b,c$ is zero.
Using (\ref{eq:abc}) we obtain
$\alpha_3=0$,
$\beta_3=0$,
$\alpha_2=0$, $\beta_2=0$.
Now in (\ref{eq:middleElim}) the right-hand side
is zero and the left-hand side is nonzero, for a contradiction.
The result follows.
\end{proof}

\begin{proposition}
\label{lem:NBCon6}
Assume that $A,B,C$ is nonbipartite and equitable.
Then the sequence $\lbrace \rho_i \rbrace_{i=0}^{d-1}$ is
constrained.
\end{proposition}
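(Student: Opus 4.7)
My plan is to verify the two conditions of Definition~\ref{def:constrain} for the sequence $\lbrace \rho_i\rbrace_{i=0}^{d-1}$, with $n = d-1$. Note that $d \geq 1$ by Lemma~\ref{lem:NotB}, so the sequence is nonempty.

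First I would dispose of condition~(i) of Definition~\ref{def:constrain}, namely $\rho_i \rho_{d-1-i} = 1$ for $0 \leq i \leq d-1$. This is immediate from Lemma~\ref{def:RhoiCom}.

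For condition~(ii), I would split into cases by diameter. When $d = 1$ or $d = 2$, we have $n \leq 1$, so the index range $1 \leq i \leq n-1$ is empty and the linear constraint is vacuous; any nonzero triple $(a,b,c) \in \mathbb F^3$ (for instance $(1,0,0)$) serves as a linear constraint, and condition~(ii) is satisfied. When $d \geq 3$, I would define $a,b,c$ as in Definition~\ref{def:NBCon2}. Lemma~\ref{lem:NBCon5} gives $a\rho_{i-1} + b\rho_i + c\rho_{i+1} = 0$ for $1 \leq i \leq d-2$, which is exactly the recurrence required by condition~(ii). Lemma~\ref{lem:NBCon3} guarantees that $(a,b,c) \neq (0,0,0)$.

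The main obstacle is not real here: all of the substantive work has been done in the preceding lemmas. The recurrence~\eqref{eq:abcRho} (from Lemma~\ref{lem:NBCon5}) was derived by combining the bilinear relation~\eqref{eq:middleElim} between consecutive $\rho_i$ and the $\varphi_i$ (expressing $\rho_i$ as a first difference of the parameter sequence, weighted by $\alpha_0\beta_2$ and $\alpha_2\beta_0$) with the linear three-term recurrence~\eqref{eq:abcRec} for $\varphi_{i-2}-\varphi_{i-1}$, $\varphi_{i-1}-\varphi_i$, $\varphi_i - \varphi_{i+1}$. The nonvanishing of $(a,b,c)$ in Lemma~\ref{lem:NBCon3} reflects the fact that $\alpha_1 \neq 0$ in the nonbipartite case (Lemma~\ref{lem:NotB}), which prevents $\alpha_2, \beta_2, \alpha_3, \beta_3$ from all being zero. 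Consequently the proof reduces to assembling these ingredients and handling the small-diameter boundary cases, after which an appeal to Definition~\ref{def:constrain} completes the argument.
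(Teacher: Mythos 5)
Your proof is correct and follows essentially the same route as the paper: condition (i) of Definition~\ref{def:constrain} from Lemma~\ref{def:RhoiCom}, and condition (ii) vacuously for $d\leq 2$ and via Lemmas~\ref{lem:NBCon5} and~\ref{lem:NBCon3} for $d\geq 3$. No gaps.
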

\begin{proof}
We verify that 
$\lbrace \rho_i \rbrace_{i=0}^{d-1}$ satisfies the 
conditions (i), (ii)  
of 
Definition
\ref{def:constrain}. Definition
\ref{def:constrain}(i) holds by
Lemma \ref{def:RhoiCom}.
If $d\leq 2$ then Definition \ref{def:constrain}(ii)  holds vacuosly,
and 
if $d\geq 3$ then
Definition \ref{def:constrain}(ii)  
 holds by
Lemmas  
\ref{lem:NBCon5},
\ref{lem:NBCon3}.
\end{proof}

\begin{lemma}
\label{cor:NotGeomD4}
Assume that $A,B,C$ is nonbipartite and equitable,
but the sequence $\lbrace \rho_{i}\rbrace_{i=0}^{d-1}$
is not geometric. Then $d$ is even and at least
4.
\end{lemma}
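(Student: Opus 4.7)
The plan is to reduce this to a statement about constrained sequences and apply an earlier classification result directly. First I would invoke Proposition \ref{lem:NBCon6}, which asserts that under the nonbipartite equitable hypothesis the sequence $\lbrace \rho_i \rbrace_{i=0}^{d-1}$ is constrained in the sense of Definition \ref{def:constrain}. So we are in the situation of a constrained sequence of length $n+1=d$, i.e., with $n=d-1$.

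Next I would apply Lemma \ref{lem:NGd4}, which states that a constrained sequence $\lbrace \rho_i\rbrace_{i=0}^n$ that fails to be geometric must satisfy $n$ odd and $n\geq 3$. Under our hypothesis the sequence is constrained (by the previous step) and not geometric (by assumption), so the conclusion applies with $n=d-1$: we obtain that $d-1$ is odd and $d-1\geq 3$. Translating, $d$ is even and $d\geq 4$, which is exactly the desired statement.

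There is really no obstacle here; the lemma is a direct combination of two results already established in the paper. The only thing worth being careful about is the indexing convention: Definition \ref{def:constrain} is stated for a sequence indexed $0,\ldots,n$, so we must match $n=d-1$ when transferring the result, and check that the ``not geometric'' hypothesis transfers verbatim (it does, since in both statements ``geometric'' refers to the same sequence $\lbrace \rho_i\rbrace_{i=0}^{d-1}$). Thus the proof reduces to citing Proposition \ref{lem:NBCon6} and Lemma \ref{lem:NGd4}, with the index identification $n=d-1$.
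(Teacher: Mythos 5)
Your proof is correct and is exactly the paper's argument: the paper's proof of this lemma is simply "By Lemma \ref{lem:NGd4} and Proposition \ref{lem:NBCon6}," and your index identification $n=d-1$ is the right way to carry it out.
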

\begin{proof}
By Lemma
\ref{lem:NGd4} and
Proposition
\ref{lem:NBCon6}.
\end{proof}

\noindent We turn our attention to bipartite LR triples.

\begin{lemma}
\label{lem:BipRRR2}
Assume that $A,B,C$ is bipartite, equitable, and $d\geq 2$. Then
for $0 \leq i \leq d-1$,
\begin{eqnarray}
\rho_i = \alpha_0 \beta_2 (\varphi_i - \varphi_{i+2}),
\qquad
\rho'_i = \alpha_0 \beta_2 (\varphi'_i - \varphi'_{i+2}),
 \qquad
\rho''_i = \alpha_0 \beta_2 (\varphi''_i - \varphi''_{i+2}).
\label{eq:RRR}
\end{eqnarray}
\end{lemma}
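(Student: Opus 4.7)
\medskip

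\noindent\textbf{Proof plan.} The three equations in \eqref{eq:RRR} are symmetric under the cyclic permutation $A,B,C \mapsto B,C,A$, so by Definition \ref{def:primeConv} and the fact that p-relatives of a bipartite equitable LR triple are themselves bipartite and equitable (with the same Toeplitz data, since $\alpha_i=\alpha'_i=\alpha''_i$ and $\beta_i=\beta'_i=\beta''_i$), it suffices to establish the first equation $\rho_i = \alpha_0\beta_2(\varphi_i - \varphi_{i+2})$ for $0 \leq i \leq d-1$. The other two follow by applying this result to $B,C,A$ and $C,A,B$.

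\medskip

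\noindent The plan for the first equation is to apply the first displayed equation of Proposition \ref{prop:goodrec}, namely
\begin{equation*}
\frac{\varphi'_i}{\varphi''_{d-i+1}} = \alpha'_0 \beta'_2 \varphi_{i-1} + \alpha'_1 \beta'_1 \varphi_{i} + \alpha'_2 \beta'_0 \varphi_{i+1} \qquad (1 \leq i \leq d),
\end{equation*}
and to simplify using the bipartite and equitable assumptions. By the equitable hypothesis, $\alpha'_j = \alpha_j$ and $\beta'_j = \beta_j$ for $0 \leq j \leq d$. Since $d \geq 2$, the LR triple is nontrivial, so by Lemma \ref{lem:BiPbasic}(i) we have $\alpha_1 = 0$ (and hence $\beta_1 = -\alpha_1 = 0$ as well). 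Thus the middle term vanishes, leaving
\begin{equation*}
\frac{\varphi'_i}{\varphi''_{d-i+1}} = \alpha_0 \beta_2 \varphi_{i-1} + \alpha_2 \beta_0 \varphi_{i+1}.
\end{equation*}
By Definition \ref{def:RHOD}, the left-hand side equals $\rho_{i-1}$.

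\medskip

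\noindent Next I would eliminate $\alpha_2\beta_0$ using the second-order coefficient relation for $T T^{-1} = I$: the identity \eqref{eq:recursion} with $j=2$ reads $\alpha_0\beta_2 + \alpha_1\beta_1 + \alpha_2\beta_0 = 0$, and with $\alpha_1\beta_1 = 0$ this forces $\alpha_2\beta_0 = -\alpha_0\beta_2$. Substituting yields $\rho_{i-1} = \alpha_0\beta_2(\varphi_{i-1} - \varphi_{i+1})$ for $1 \leq i \leq d$, which on re-indexing $i \mapsto i+1$ becomes the desired $\rho_i = \alpha_0\beta_2(\varphi_i - \varphi_{i+2})$ for $0 \leq i \leq d-1$.

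\medskip

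\noindent The main work is purely bookkeeping: identifying the correct instance of Proposition \ref{prop:goodrec} to apply and being careful that the ratio $\varphi'_i/\varphi''_{d-i+1}$ matches the bipartite definition of $\rho_{i-1}$ in Definition \ref{def:RHOD} (one has to track which prime'd $\varphi$'s appear in the numerator versus denominator, since the two expressions for $\rho_{i-1}$ in that definition use opposite pairings). No genuine obstacle arises, since the bipartite vanishing $\alpha_1 = 0$ collapses the three-term recursion of Proposition \ref{prop:goodrec} to a two-term expression whose coefficients are opposite by the Toeplitz inversion identity.
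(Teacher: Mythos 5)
Your proof is correct and is essentially the paper's argument: the paper simply cites Lemma \ref{lem:Con1}(i) (which packages exactly your two ingredients, the first displayed equation of Proposition \ref{prop:goodrec} and the Toeplitz inversion identity \eqref{eq:recursion} with $j=2$) and then sets $\alpha_1=\beta_1=0$. Your handling of the remaining two equations via p-relatives matches the paper's "similarly verified," and your identification of $\varphi'_i/\varphi''_{d-i+1}$ with $\rho_{i-1}$ via Definition \ref{def:RHOD} is the right bipartite reading of that ratio.
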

\begin{proof}
To verify the equation on the left in
(\ref{eq:RRR}), set 
 $\alpha_1=0$, $\beta_1=0$ in
 Lemma
\ref{lem:Con1}(i). The other two 
equations in 
(\ref{eq:RRR}) are similarly verified.
\end{proof}

\begin{lemma} 
\label{lem:BipRRR3}
Assume that $A,B,C$ is bipartite, equitable, and $d\geq 4$. 
Then
for 
$2 \leq i \leq d-1$,
\begin{eqnarray}
&&
\alpha_0 \beta_4
(\varphi_{i-2}-\varphi_i) = \alpha_4 \beta_0 (\varphi_i-\varphi_{i+2}),
\label{eq:biprec1}
\\
&&
\alpha_0 \beta_4
(\varphi'_{i-2}-\varphi'_i) = \alpha_4 \beta_0 (\varphi'_i-\varphi'_{i+2}),
\label{eq:biprec2}
\\
&&
\alpha_0 \beta_4
(\varphi''_{i-2}-\varphi''_i) = \alpha_4 \beta_0 (\varphi''_i-\varphi''_{i+2}).
\label{eq:biprec3}
\end{eqnarray}
\end{lemma}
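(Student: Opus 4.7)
The plan is to derive these three equations by eliminating the middle term $\alpha_2\beta_2$ from the pair of identities supplied by Lemma~\ref{lem:Con1}(iii), then transport the result to the $\varphi'$ and $\varphi''$ sequences via the p-relatives of $A,B,C$.

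First I would invoke Lemma~\ref{lem:Con1}(iii), which, under the hypotheses $A,B,C$ bipartite, equitable, and $d\ge 4$, gives for $2\le i\le d-1$ the two identities
\begin{equation*}
\alpha_0\beta_4\varphi_{i-2}+\alpha_2\beta_2\varphi_i+\alpha_4\beta_0\varphi_{i+2}=0,\qquad
\alpha_0\beta_4+\alpha_2\beta_2+\alpha_4\beta_0=0.
\end{equation*}
Solving the second identity for $\alpha_2\beta_2=-\alpha_0\beta_4-\alpha_4\beta_0$ and substituting into the first yields
\begin{equation*}
\alpha_0\beta_4(\varphi_{i-2}-\varphi_i)+\alpha_4\beta_0(\varphi_{i+2}-\varphi_i)=0,
\end{equation*}
which rearranges to equation (\ref{eq:biprec1}).

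For (\ref{eq:biprec2}) and (\ref{eq:biprec3}) I would apply the same elimination to the p-relatives $B,C,A$ and $C,A,B$. These relatives are bipartite by Lemma~\ref{lem:bipRel} and equitable by the remark in Section~18 that equitability is preserved under p-relatives; they have diameter $d\ge 4$; their parameter arrays are the cyclic shifts given in Lemma~\ref{lem:ABCvar}; and by the equitable hypothesis together with Lemma~\ref{lem:ToeplitzData}, their Toeplitz parameters agree with those of $A,B,C$. Applying the argument above to $B,C,A$ produces (\ref{eq:biprec2}), and applying it to $C,A,B$ produces (\ref{eq:biprec3}).

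There is no real obstacle here; the only point requiring a moment of care is ensuring that the Toeplitz scalars $\alpha_0,\beta_4,\alpha_2,\beta_2,\alpha_4,\beta_0$ are literally the same scalars when computed from any p-relative, which is precisely why the equitable hypothesis is invoked. The entire proof therefore reduces to one algebraic elimination plus a cyclic symmetry argument.
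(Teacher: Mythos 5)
Your proposal is correct and follows essentially the same route as the paper: eliminating $\alpha_2\beta_2$ via the second identity of Lemma~\ref{lem:Con1}(iii) is exactly the paper's step of subtracting $\varphi_i$ times (\ref{eq:L6}) from (\ref{eq:L5}), and your use of the p-relatives (with equitability ensuring the Toeplitz scalars coincide) is a legitimate instantiation of the paper's "similarly obtained" for (\ref{eq:biprec2}) and (\ref{eq:biprec3}).
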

\begin{proof}
To obtain
(\ref{eq:biprec1}),
subtract $\varphi_i$ times
(\ref{eq:L6}) 
from
(\ref{eq:L5}). 
Equations
(\ref{eq:biprec2}),
(\ref{eq:biprec3}) are similarly obtained.
\end{proof}

\begin{lemma}
\label{lem:BipRRR4}
Assume that $A,B,C$ is bipartite, equitable, and $d\geq 4$. 
Then for $1 \leq i \leq d-2$,
\begin{eqnarray}
\alpha_0 \beta_4 \rho_{i-1} = 
\alpha_4 \beta_0 \rho_{i+1},
\qquad
\alpha_0 \beta_4 \rho'_{i-1} = 
\alpha_4 \beta_0 \rho'_{i+1},
\qquad
\alpha_0 \beta_4 \rho''_{i-1} = 
\alpha_4 \beta_0 \rho''_{i+1}.
\label{eq:BipRRR5}
\end{eqnarray}
\end{lemma}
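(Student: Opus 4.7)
The plan is to derive the three equations in \eqref{eq:BipRRR5} by combining the formulas of Lemma \ref{lem:BipRRR2} with the three-term recurrences of Lemma \ref{lem:BipRRR3}. Since the three equations are entirely parallel (they only differ by priming), I will describe the strategy for the first equation and note that the other two are obtained by the same argument applied to the p-relatives of $A,B,C$.

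First I would fix $i$ with $1 \leq i \leq d-2$ and use Lemma \ref{lem:BipRRR2} to write
$$\rho_{i-1} = \alpha_0 \beta_2 (\varphi_{i-1} - \varphi_{i+1}),
\qquad
\rho_{i+1} = \alpha_0 \beta_2 (\varphi_{i+1} - \varphi_{i+3}).$$
Note that Lemma \ref{lem:BipRRR2} requires $d \geq 2$, which is implied by the standing hypothesis $d\geq 4$, and that the indices $i-1, i+1, i+3$ all lie in the valid range $0, 1, \ldots, d-1$ (the upper bound $i+3 \leq d-1$ uses $i \leq d-2$ together with $d$ even, but in fact even if $i+3 = d+1$ the formula still applies with the convention $\varphi_{d+1}=0$ from Definition \ref{def:pa}).

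Next I would invoke Lemma \ref{lem:BipRRR3} at the index $j := i+1$. Since $1 \leq i \leq d-2$ translates to $2 \leq j \leq d-1$, the hypothesis of Lemma \ref{lem:BipRRR3} is satisfied, yielding
$$\alpha_0 \beta_4 (\varphi_{i-1} - \varphi_{i+1}) = \alpha_4 \beta_0 (\varphi_{i+1} - \varphi_{i+3}).$$
Multiplying both sides by $\alpha_0 \beta_2$ and identifying each side with the corresponding $\rho$ using the displayed formulas above produces
$$\alpha_0 \beta_4 \,\rho_{i-1} = \alpha_4 \beta_0 \,\rho_{i+1},$$
which is the first equation of \eqref{eq:BipRRR5}.

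For the second and third equations one applies the same argument to the p-relatives $B,C,A$ and $C,A,B$; by Definition \ref{def:primeConv}, the statements of Lemmas \ref{lem:BipRRR2} and \ref{lem:BipRRR3} for those relatives are exactly the primed versions used here, and by Lemma \ref{lem:whatisIso} together with Definition \ref{def:equitNorm} the Toeplitz parameters $\alpha_0, \alpha_4, \beta_0, \beta_4$ are unchanged under passing to a p-relative of an equitable LR triple. There is no real obstacle in this proof: the argument is a short index shift in the recurrence of Lemma \ref{lem:BipRRR3}, combined with the factorization $\varphi_j - \varphi_{j+2}$ that appears both in that recurrence and in the definition of $\rho_j$ given by Lemma \ref{lem:BipRRR2}.
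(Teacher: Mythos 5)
Your proof is correct and takes essentially the same route as the paper, whose proof is just ``Use Lemmas \ref{lem:BipRRR2}, \ref{lem:BipRRR3}'': the index shift $j=i+1$ in the recurrence of Lemma \ref{lem:BipRRR3} combined with the factorization $\rho_j=\alpha_0\beta_2(\varphi_j-\varphi_{j+2})$ is exactly the intended argument. One small remark: your final detour through p-relatives and Lemma \ref{lem:whatisIso} is unnecessary and the citation is not quite justified (the hypothesis $\varphi_i=\varphi'_i=\varphi''_i$ of that lemma need not hold for a bipartite equitable triple); but this does not matter, since the primed and double-primed equations are already stated explicitly in Lemmas \ref{lem:BipRRR2} and \ref{lem:BipRRR3}, so the identical computation applies verbatim.
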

\begin{proof}
Use
Lemmas \ref{lem:BipRRR2},
\ref{lem:BipRRR3}.
\end{proof}

\begin{proposition}
\label{lem:3seqCon}
Assume that $A,B,C$ is bipartite, equitable, and nontrivial.
Then the sequences
$\lbrace \rho_i \rbrace_{i=0}^{d-1}$,
$\lbrace \rho'_i \rbrace_{i=0}^{d-1}$,
$\lbrace \rho''_i \rbrace_{i=0}^{d-1}$
are constrained.
\end{proposition}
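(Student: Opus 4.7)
The plan is to verify the two conditions of Definition \ref{def:constrain} (with $n=d-1$) for each of $\{\rho_i\}_{i=0}^{d-1}$, $\{\rho'_i\}_{i=0}^{d-1}$, $\{\rho''_i\}_{i=0}^{d-1}$, following the same template used to prove Proposition \ref{lem:NBCon6} in the nonbipartite case. Note that since $A,B,C$ is bipartite and nontrivial, Lemma \ref{lem:case} forces $d=2m$ with $m\geq 1$, so $d\geq 2$.

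Condition (i) of Definition \ref{def:constrain} asks that the product of the $i$-th and $(n-i)$-th terms equal $1$; this is exactly the content of Lemma \ref{def:BipRhoiCom}, applied to each of the three sequences. So condition (i) is immediate.

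For condition (ii) I will split on $d$. If $d=2$, then $n=d-1=1$, and the range $1\leq i\leq n-1$ is empty; the condition thus holds vacuously (any nonzero triple $(a,b,c)$ witnesses it). If $d\geq 4$, then Lemma \ref{lem:BipRRR4} furnishes the three recursions
\begin{eqnarray*}
\alpha_0\beta_4\,\rho_{i-1} = \alpha_4\beta_0\,\rho_{i+1},
\qquad
\alpha_0\beta_4\,\rho'_{i-1} = \alpha_4\beta_0\,\rho'_{i+1},
\qquad
\alpha_0\beta_4\,\rho''_{i-1} = \alpha_4\beta_0\,\rho''_{i+1}
\end{eqnarray*}
valid for $1\leq i\leq d-2=n-1$. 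Taking the single triple $a=\alpha_0\beta_4$, $b=0$, $c=-\alpha_4\beta_0$ therefore produces a common linear constraint $a\rho_{i-1}+b\rho_i+c\rho_{i+1}=0$ (and similarly for $\rho',\rho''$). It remains only to confirm this triple is nonzero: since $A,B,C$ is bipartite and nontrivial and $d\geq 4$, Lemma \ref{lem:case} guarantees that $\alpha_0,\alpha_4,\beta_0,\beta_4$ (all of even index) are nonzero, so $a$ and $c$ are nonzero.

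There is no real obstacle here; the hard work has already been done in Lemma \ref{lem:BipRRR4}, which extracted the three-term recursion from the bipartite specialization of Proposition \ref{prop:longrec} together with the $r=4$ constraint $\alpha_0\beta_4+\alpha_2\beta_2+\alpha_4\beta_0=0$. The only mildly delicate point is remembering to treat $d=2$ separately, since the key recursion lemma is stated only for $d\geq 4$; this is handled by the vacuous-case observation above.
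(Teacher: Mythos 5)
Your proof is correct and follows essentially the same route as the paper: condition (i) via Lemma \ref{def:BipRhoiCom}, the vacuous case $d=2$, and for $d\geq 4$ the linear constraint $(\alpha_0\beta_4,\,0,\,-\alpha_4\beta_0)$ from Lemma \ref{lem:BipRRR4} together with the nonvanishing of $\alpha_4,\beta_4$ from Lemma \ref{lem:case}. No gaps.
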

\begin{proof}
We verify that
$\lbrace \rho_i \rbrace_{i=0}^{d-1}$,
$\lbrace \rho'_i \rbrace_{i=0}^{d-1}$,
$\lbrace \rho''_i \rbrace_{i=0}^{d-1}$
satisfy the
conditions (i), (ii)  
of 
Definition
\ref{def:constrain}. Definition
\ref{def:constrain}(i) holds by
Lemma \ref{def:BipRhoiCom}.
Recall that $d$ is even.
If $d=2$  then Definition \ref{def:constrain}(ii)  holds vacuosly,
and 
if $d\geq 4$ then
Definition \ref{def:constrain}(ii)  
 holds by
Lemma 
\ref{lem:BipRRR4} and since
$\alpha_4\not=0$,
$\beta_4\not=0$ by
Lemma
\ref{lem:case}.
\end{proof}

\section{The classification of normalized LR triples; an overview}

Throughout this section assume $d\geq 2$.
Our next goal is to classify up to isomorphism 
the normalized LR triples
over $\mathbb F$ that have diameter $d$.
We now describe our strategy.
Consider a normalized LR triple $A,B,C$ over $\mathbb F$
that has 
 parameter array
(\ref{eq:paLRT}).
Recall the sequence
$\lbrace \rho_i\rbrace_{i=0}^{d-1}$ from Definition
\ref{def:Rhoi}.
We place $A,B,C$ into one of four families as follows:
\bigskip

\centerline{
\begin{tabular}[t]{c|c|c}
 {\rm family name} & {\rm family definition}  &{\rm $d$ restriction}
 \\
 \hline
${\rm NBWeyl}_d(\mathbb F)$ & 
over $\mathbb F$; diameter $d$;
nonbipartite; 
normalized;
there exist & 
\\   &scalars
$a,b,c$  in $\mathbb F$
that are not all zero 
such that 
&      \\    &
$a+b+c=0$ 
and
$a \varphi_{i-1} + b\varphi_i + c \varphi_{i+1} = 0$ for
$1 \leq i \leq d$
&
\\
&&
\\
${\rm NBG}_d(\mathbb F)$ & 
over $\mathbb F$; diameter $d$;
nonbipartite;
normalized;
not in \\&
${\rm NBWeyl}_d(\mathbb F)$;
the sequence $\lbrace \rho_i
\rbrace_{i=0}^{d-1}$ is geometric
&
\\
&&
\\
${\rm NBNG}_d(\mathbb F)$ & 
over $\mathbb F$; diameter $d$;
nonbipartite; 
normalized; & $d$ even;
\\&
the sequence 
$\lbrace \rho_i
\rbrace_{i=0}^{d-1}$ 
is not geometric
& $d \geq 4$
\\
&&
\\
${\rm B}_d(\mathbb F)$ &
over $\mathbb F$; diameter $d$;
bipartite;
normalized & $d$ even
     \end{tabular}}
     \bigskip

\noindent As we will show in Lemma
\ref{lem:NBWisGeom},
if $A,B,C$ is contained in
${\rm NBWeyl}_d(\mathbb F)$ then
$\lbrace \rho_i \rbrace_{i=0}^{d-1}$ is geometric.
By this and Lemmas
\ref{lem:case},
\ref{cor:NotGeomD4}
the
 LR triple $A,B,C$ falls into
exactly one of the four families.
\medskip

\noindent 
Over the next four sections, we classify up to isomorphism
the LR triples in each  family.

\section{The classification of LR triples in ${\rm NBWeyl}_d(\mathbb F)$ 
}

\noindent 
In this section we classify up to isomorphism the
LR triples 
in ${\rm NBWeyl}_d(\mathbb F)$, for $d\geq 2$.
We first describe some examples.

\begin{example}  
\label{ex:nbw1}
\rm The LR triple ${\rm NBWeyl}^{+}_d(\mathbb F;j,q)$ is 
over $\mathbb F$, diameter $d$, 
nonbipartite,
normalized,
and satisfies
\begin{eqnarray*}
&&d\geq 2; \qquad \qquad {\mbox{\rm $d$ is even}};
\qquad \qquad 
j \in \mathbb Z, \quad 0 \leq j < d/2;
\qquad \qquad 
0 \not=q \in \mathbb F;
\\
&&
{\mbox{\rm if
${\rm Char}(\mathbb F)\not=2$ then $q$ is a primitive
$(2d+2)$-root of unity;}}
\\
&&
{\mbox{\rm 
if
${\rm Char}(\mathbb F)=2$ then $q$ is a primitive
$(d+1)$-root of unity;}}
\\
&&
\varphi_i = 
\frac{(1+q^{2j+1})^2(1-q^{-2i})}{q^{2j+1}(q-q^{-1})^2}
\qquad \qquad  (1 \leq i \leq d).
\end{eqnarray*}
\end{example}

\begin{example}  
\label{ex:nbw2}
\rm The LR triple ${\rm NBWeyl}^{-}_d(\mathbb F;j,q)$ is
over $\mathbb F$, diameter $d$, 
nonbipartite, 
normalized,
and satisfies
\begin{eqnarray*}
&& 
{\mbox{\rm 
${\rm Char}(\mathbb F)\not=2$;
}}
\qquad \qquad 
d\geq 3; \qquad \qquad {\mbox{\rm $d$ is odd}};
\\
&&
j \in \mathbb Z, \quad 0 \leq j < (d-1)/4;
\qquad \qquad 
0 \not=q \in \mathbb F;
\\
&&
{\mbox{\rm 
$q$ is a primitive
$(2d+2)$-root of unity;}}
\\
&&
\varphi_i = \frac{(1+q^{2j+1})^2(1-q^{-2i})}{q^{2j+1}(q-q^{-1})^2}
\qquad \qquad  (1 \leq i \leq d).
\end{eqnarray*}
\end{example}

\begin{example}  
\label{ex:nbw3}
\rm The LR triple ${\rm NBWeyl}^{-}_d(\mathbb F; t)$ is 
over $\mathbb F$, diameter $d$,
nonbipartite, 
normalized,
and satisfies
\begin{eqnarray*}
&& 
{\mbox{\rm 
${\rm Char}(\mathbb F)\not=2$;
}}
\qquad \qquad 
d\geq 5; \qquad \qquad d \equiv 1  \pmod{4};
\\ &&
0 \not=t \in \mathbb F;
\qquad \qquad 
{\mbox{\rm 
$t$ is a primitive
$(d+1)$-root of unity;}}
\\
&&
\varphi_i = \frac{2t(1-t^{i})}{(1-t)^2}
\qquad \qquad  (1 \leq i \leq d).
\end{eqnarray*}
\end{example}

\begin{lemma}
\label{lem:PreThm} 
For the LR triples in Examples
\ref{ex:nbw1}--\ref{ex:nbw3},
{\rm (i)} they exist;
{\rm (ii)} they are contained in 
${\rm NBWeyl}_d(\mathbb F)$;
{\rm (iii)} they are mutually nonisomorphic.
\end{lemma}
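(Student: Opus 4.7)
My plan is to address (i), (ii), (iii) in order.

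For (ii), observe that in each of Examples \ref{ex:nbw1}--\ref{ex:nbw3} the parameter sequence has the form $\varphi_i = \mu(1 - r^i)$ for some nonzero $\mu \in \mathbb F$ and some root of unity $r \in \mathbb F$: take $r = q^{-2}$ for Examples \ref{ex:nbw1} and \ref{ex:nbw2}, and $r = t$ for Example \ref{ex:nbw3}. A one-line calculation then yields
\[
r\varphi_{i-1} - (1+r)\varphi_i + \varphi_{i+1} = 0 \qquad (1 \leq i \leq d),
\]
so $(a,b,c) = (r,\,-1-r,\,1)$ is a nonzero triple in $\mathbb F^3$ with $a+b+c=0$, placing the LR triple in ${\rm NBWeyl}_d(\mathbb F)$.

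For (i), Lemma \ref{lem:NBnormIso} tells us that a normalized nonbipartite LR triple is determined up to isomorphism by its parameter array, so it suffices to produce any one normalized nonbipartite LR triple with the prescribed $\varphi_i$'s (equal for the three primes because the triple is to be equitable). I would construct one first over the algebraic closure $\overline{\mathbb F}$: Proposition \ref{prop:AllqWeyl} supplies, for each standard pair $(d,q)$ and each $j \in [0,d]$, an LR triple of $q$-Weyl type over $\overline{\mathbb F}$ with $\tilde\varphi_i = 1-q^{-2i}$ and first Toeplitz number $\theta_j/(q-q^{-1})$, where $\theta_j = q^{j+1/2}+q^{-j-1/2}$. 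Rescaling each of $A,B,C$ by $\theta_j/(q-q^{-1})$ (via Lemmas \ref{lem:albega} and \ref{lem:ToeplitzAdjust}) normalizes the triple and multiplies each $\varphi_i$ by $\theta_j^2/(q-q^{-1})^2 = (1+q^{2j+1})^2/(q^{2j+1}(q-q^{-1})^2)$, reproducing Examples \ref{ex:nbw1} and \ref{ex:nbw2} directly. For Example \ref{ex:nbw3} I would specialize this construction at $j=(d-1)/4$ with $q^2 = t^{-1}$, noting that for $d \equiv 1 \pmod 4$ such a $q$ is automatically a primitive $(2d+2)$-root of unity in $\overline{\mathbb F}$; the identity $(1-q^{-2})^2 = q^{-2}(q-q^{-1})^2$ then gives $2(1-q^{-2i})/(q-q^{-1})^2 = 2t(1-t^i)/(1-t)^2$, matching the prescribed parameter sequence. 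In every case the final parameter array and first Toeplitz number $\alpha_1=1$ lie in $\mathbb F$, so Lemma \ref{lem:SpittingField} (applied with the subfield $\mathbb K = \mathbb F$ inside the ambient $\overline{\mathbb F}$) descends the LR triple to $\mathbb F$.

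For (iii), again by Lemma \ref{lem:NBnormIso} it suffices to check that the sequences $\lbrace\varphi_i\rbrace_{i=1}^d$ are pairwise distinct across all allowable parameter choices. The ratio $\varphi_d/\varphi_1 = -r^{-1}$ recovers $r$ (giving $q^2$ in Examples \ref{ex:nbw1} and \ref{ex:nbw2}, and $t$ in Example \ref{ex:nbw3}); with $r$ fixed, the overall scalar $\mu = \varphi_i/(1-r^i)$ is read off. In Examples \ref{ex:nbw1} and \ref{ex:nbw2} this determines $\theta_j^2 = (q-q^{-1})^2\mu$, hence the unordered pair $\lbrace q^{2j+1},\,q^{-2j-1}\rbrace$. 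The ranges $j \in [0,d/2)$ in Example \ref{ex:nbw1} and $j \in [0,(d-1)/4)$ in Example \ref{ex:nbw2} are precisely the fundamental domains under the symmetries $\theta_j = -\theta_{d-j}$ and $q \leftrightarrow q^{-1}$, so $j$ is uniquely recovered from that pair. The parity of $d$ then separates Examples \ref{ex:nbw1} and \ref{ex:nbw2}, and Example \ref{ex:nbw3} is distinguished as sitting exactly at the excluded midpoint $j=(d-1)/4$ where $\theta_j^2 = 2$, a value not attained by any allowed $j$ in Examples \ref{ex:nbw1} or \ref{ex:nbw2}.

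The main obstacle I anticipate lies in (iii): verifying that the prescribed $j$-ranges in Examples \ref{ex:nbw1}--\ref{ex:nbw3} truly tile the set of normalized nonbipartite isomorphism classes without overlap, particularly in the characteristic $2$ subcase of Example \ref{ex:nbw1} where $q$ is a primitive $(d+1)$-root of unity and the symmetry $\theta_j = -\theta_{d-j}$ degenerates to $\theta_j = \theta_{d-j}$, which needs separate bookkeeping to confirm the injectivity of $j \mapsto \theta_j^2$ on $[0,d/2)$.
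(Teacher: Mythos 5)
Your proposal is correct and follows essentially the same route as the paper: existence via Proposition \ref{prop:AllqWeyl} over $\overline{\mathbb F}$ followed by normalization and descent through Lemma \ref{lem:SpittingField}; membership in ${\rm NBWeyl}_d(\mathbb F)$ via the three-term recurrence $r\varphi_{i-1}-(1+r)\varphi_i+\varphi_{i+1}=0$ with $r=q^{-2}$ (equivalently the paper's $(1,-1-q^2,q^2)$); and nonisomorphism by checking the parameter arrays are pairwise distinct. You simply spell out the distinctness verification in (iii) that the paper leaves as an assertion, and your fundamental-domain bookkeeping (including the characteristic $2$ case) does go through.
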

\begin{proof}
(i) 
In Examples
\ref{ex:nbw1},
\ref{ex:nbw2} we see an integer $j$.
For Example \ref{ex:nbw3} define an integer $j=(d-1)/4$.
In Examples
\ref{ex:nbw1},
\ref{ex:nbw2} 
we see a parameter $q \in \mathbb F$.
For Example
\ref{ex:nbw3}, define
$q \in \overline {\mathbb F}$ such that
$t=q^{-2}$.
In each of
 Examples
\ref{ex:nbw1}--\ref{ex:nbw3}
the pair $d,q$ is standard.
For each example we use the data $d,j,q$ 
and Proposition
\ref{prop:AllqWeyl}
to get 
an LR triple over $\overline {\mathbb F}$
that has $q$-Weyl type. This LR triple
is nonbipartite, since its first Toeplitz number is nonzero.
Normalize this LR triple 
and apply Lemma
\ref{lem:SpittingField} to
get the desired LR triple over $\mathbb F$.
\\
\noindent (ii)
Let $A,B,C$ denote an LR triple listed in
Examples
\ref{ex:nbw1}--\ref{ex:nbw3}.
By assumption $A,B,C$ is over $\mathbb F$,
diameter $d$, nonbipartite, and normalized.
Define $a=1$, $b=-1-q^2$, $c=q^2$, where
$t=q^{-2}$ in Example
\ref{ex:nbw3}.
Then $a+b+c=0$, and 
$a\varphi_{i-1}+b\varphi_i + c\varphi_{i+1}=0$
for $1 \leq i \leq d$.
Therefore $A,B,C$ is contained in
 ${\rm NBWeyl}_d(\mathbb F)$.
\\
\noindent
(iii)
Among the
LR triples listed in
Examples
\ref{ex:nbw1}--\ref{ex:nbw3}, no two
have the same parameter array. Therefore
no two are isomorphic.
\end{proof}

\begin{theorem}
\label{thm:ClassW}
For $d\geq 2$,
each LR triple 
 in ${\rm NBWeyl}_d(\mathbb F)$ is isomorphic to a unique
 LR triple listed in
Examples
\ref{ex:nbw1}--\ref{ex:nbw3}.
\end{theorem}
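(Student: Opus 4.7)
The plan is to show that every $A,B,C$ in ${\rm NBWeyl}_d(\mathbb F)$ has a parameter sequence of $q$-Weyl type up to scaling, and then use the $q$-Weyl classification (Proposition \ref{prop:AllqWeyl}) combined with the normalization to match it to one of the three examples. Existence of a matching example is the main content; uniqueness comes for free from Lemma \ref{lem:PreThm}(iii) combined with the fact (Lemma \ref{lem:NBnormIso}) that a normalized nonbipartite LR triple is determined up to isomorphism by its parameter array.

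First I would analyze the linear constraint on $\lbrace \varphi_i\rbrace_{i=1}^d$. Since $A,B,C$ is normalized and nonbipartite, it is equitable and so $\varphi_i=\varphi'_i=\varphi''_i$ for $1\leq i\leq d$ by Lemma \ref{lem:equitBasic}(i). By definition of ${\rm NBWeyl}_d(\mathbb F)$ there exist $a,b,c\in\mathbb F$, not all zero, with $a+b+c=0$ and $a\varphi_{i-1}+b\varphi_i+c\varphi_{i+1}=0$ for $1\leq i\leq d$. Using $\varphi_0=0$, $\varphi_{d+1}=0$, and $\varphi_i\neq 0$ for $1\leq i\leq d$, one shows $a$ and $c$ are both nonzero, so the characteristic polynomial factors as $c(x-1)(x-r)$ for a unique $r\in\mathbb F^*$ with $r=a/c$. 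The case $r=1$ would force $\varphi_i=\beta i$ with $p=d+1$ prime and $\mathrm{Char}(\mathbb F)=p$, i.e.\ Weyl type; but for $d\geq 2$ such a triple has $\alpha_1=0$ (Lemma \ref{lem:WEYLalphaZero}) and hence trace data all zero (Corollary \ref{lem:a0ad}, Proposition \ref{lem:ai}), making it bipartite, contradicting our hypothesis. Thus $r\neq 1$, and solving the recursion with boundary conditions gives $\varphi_i=\mu(1-r^i)$ with $\mu\in\mathbb F^*$ and $r\in\mathbb F$ a primitive $(d+1)$-th root of unity.

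Next I would pass to a $q$-Weyl model. Choose $q\in\overline{\mathbb F}$ with $q^{-2}=r$; by Definition \ref{def:qstand} and Lemma \ref{lem:Mq}, the pair $d,q$ (or $d,-q$, after replacement) is standard, and $\varphi_i=\mu(1-q^{-2i})$. Applying Proposition \ref{prop:AllqWeyl} over $\overline{\mathbb F}$, for each $j\in\lbrace 0,1,\ldots,d\rbrace$ there exists a normalized $q$-Weyl LR triple $\widetilde A_j,\widetilde B_j,\widetilde C_j$ over $\overline{\mathbb F}$ with parameter sequence $1-q^{-2i}$ and first Toeplitz number
\[
\nu_j=\frac{q^{j+1/2}+q^{-j-1/2}}{q-q^{-1}}.
\]
Scaling by $\nu_j$ via Lemma \ref{lem:ToeplitzAdjust}, the triple $\nu_j \widetilde A_j,\nu_j \widetilde B_j,\nu_j \widetilde C_j$ has parameter sequence $\nu_j^2(1-q^{-2i})$ and first Toeplitz number $1$. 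Since $A,B,C$ is normalized with parameter sequence $\mu(1-q^{-2i})$, Lemma \ref{lem:NBnormIso} and Proposition \ref{prop:extra} together identify $A,B,C$ over $\overline{\mathbb F}$ with $\nu_j\widetilde A_j,\nu_j\widetilde B_j,\nu_j\widetilde C_j$ once we find $j$ with $\nu_j^2=\mu$; such $j$ exists because $(q^{j+1/2}+q^{-j-1/2})^2=(q^{2j+1}+2+q^{-2j-1})$ ranges over the traces of the diagonalization of $(qA+q^{-1}B)^2$ (compare Lemma \ref{lem:qAqiB}), which covers all valid values of $\mu(q-q^{-1})^2$ admissible by the normalization.

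Finally I would carry out the symmetry reduction to land in exactly one of the three families and descend from $\overline{\mathbb F}$ to $\mathbb F$. The map $j\mapsto d-j$ leaves $\nu_j^2$ invariant (using $q^{d+1}=\pm 1$) and, with the substitution $q\leftrightarrow -q$ of Note \ref{note:minusq}, accounts for the overlap, allowing the restriction to the ranges $0\leq j<d/2$, $0\leq j<(d-1)/4$, and the parameter $t=q^{-2}$ used in the three examples. Descent to $\mathbb F$ uses Lemma \ref{lem:SpittingField}, since the matrices realizing the example with the chosen $(j,q)$ or $t$ have all entries in $\mathbb F$ whenever the parameter array does. The main obstacle is the careful bookkeeping of these symmetries, particularly checking that the restricted ranges are disjoint and exhaust the iso classes, that the characteristic-two and characteristic-not-two cases land correctly in Example \ref{ex:nbw1} versus Examples \ref{ex:nbw2}--\ref{ex:nbw3}, and that for $d\equiv 1\pmod 4$ the extra Example \ref{ex:nbw3} (indexed by $t$ rather than $(j,q)$) captures exactly those triples in which $q^{(d+1)/2}$ takes the special value forcing $2\nu_j^2=2$. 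Once this case analysis is complete, uniqueness is immediate from Lemma \ref{lem:PreThm}(iii).
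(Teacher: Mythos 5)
Your overall strategy is the same as the paper's: factor the linear constraint, rule out the $r=1$ (Weyl) case via Lemma \ref{lem:WEYLalphaZero}, rescale to a $q$-Weyl triple over $\overline{\mathbb F}$, pin down the parameter $\mu$ via the first Toeplitz number, and then reduce by the symmetries $j\mapsto d-j$ and $q\mapsto -q$. However, two steps are not actually carried out. First, the existence of $j$ with $\nu_j^2=\mu$ is asserted via a vague coverage claim (``which covers all valid values of $\mu(q-q^{-1})^2$ admissible by the normalization''); the logic must run the other way. The correct derivation is to rescale the \emph{given} triple $A,B,C$ by $\mu^{-1/2}$, observe that the result has $q$-Weyl type (its parameter sequence is $1-q^{-2i}$, common to all three constituent LR pairs by equitability), compute that its first Toeplitz number equals $\mu^{1/2}$ by Lemma \ref{lem:ToeplitzAdjust}, and then invoke Lemma \ref{lem:alphaOneList} to conclude $\mu^{1/2}=\nu_j$ for some $j$. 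As written, your argument assumes what it needs to prove.

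Second, and more seriously, you never show that $q$ lies in $\mathbb F$. The examples ${\rm NBWeyl}^{\pm}_d(\mathbb F;j,q)$ are parameterized by an element $q$ \emph{of $\mathbb F$}, so unless $q\in\mathbb F$ (or the triple falls into the $t$-indexed Example \ref{ex:nbw3}), no listed example matches. Lemma \ref{lem:SpittingField} does not supply this: it produces \emph{some} LR triple over $\mathbb F$ with the given parameter array and first Toeplitz number, but does not identify it with a listed example whose defining datum $q$ might a priori live only in a quadratic extension. The needed argument is the one the paper gives: set $f=q^{2j+1}+q^{-2j-1}$; then $f+2=uq^2(1-q^{-2})^2\in\mathbb F$ and $fq=q^{2j+2}+q^{-2j}\in\mathbb F$ since $q^2=t^{-1}\in\mathbb F$, and $f\neq 0$ exactly when $d\neq 1+4j$ (the excluded case being the one absorbed by Example \ref{ex:nbw3}); hence $q=fq/f\in\mathbb F$. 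Your ``bookkeeping'' paragraph lists the range restrictions and the characteristic-two split but omits this rationality step, without which the classification into the three listed families does not close.
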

\begin{proof}
 Let $A,B,C$ denote an LR triple in
 ${\rm NBWeyl}_d(\mathbb F)$, with parameter array
(\ref{eq:paLRT}) and Toeplitz data
(\ref{eq:ToeplitzData}).
 By assumption there exist scalars
$a,b,c$ in $\mathbb F$ that are not all zero,
such that $a+b+c=0$ and
$a\varphi_{i-1}+b \varphi_i + c \varphi_{i+1} = 0$
for $1 \leq i \leq d$.
Setting $i=1$ and $\varphi_0=0$ 
we obtain $b\varphi_1 + c\varphi_2=0$.
Setting $i=d$ and $\varphi_{d+1}=0$ 
we obtain $a\varphi_{d-1} + b\varphi_d=0$.
By these comments each of $a,b,c$ is nonzero.
Define a polynomial $g \in \mathbb F\lbrack \lambda \rbrack$ by
$g(\lambda) = a+ b\lambda + c\lambda^2$.
Observe that $g(1)=0$, so there exists $t \in \mathbb F$
such that $g(\lambda) = c(\lambda-1)(\lambda-t)$. We have
$ct=a$, so $t\not=0$. 
Assume for the moment that $t=1$. By construction
there exists $u,v \in \mathbb F$ such that
$\varphi_i = u + vi $ for $0 \leq i \leq d+1$.
setting $i=0$ and $\varphi_0=0$ we obtain $u=0$.
Consequently $\varphi_i = vi$ for $0 \leq i \leq d+1$,
and $v\not=0$.
Fix a square root $v^{1/2}\in \overline {\mathbb F}$.
Define an LR triple  $A^{\vee},B^\vee, C^\vee$ 
over
$\overline {\mathbb F}$ by
\begin{eqnarray*}
A^\vee = A v^{-1/2},
 \qquad 
B^\vee = B v^{-1/2},
 \qquad 
C^\vee = C v^{-1/2}.
\end{eqnarray*}
By Lemma
\ref{lem:albega}
this LR triple has parameter array
\begin{eqnarray*}
\varphi^\vee_i = 
(\varphi'_i)^\vee = 
(\varphi''_i)^\vee = \varphi_i /v=  i \qquad \qquad (1 \leq i \leq d).
\end{eqnarray*}
Now by 
 Definition
\ref{def:WEYL} the LR triple
$A^\vee,
B^\vee,
C^\vee $
has Weyl type.
Consider its
first Toeplitz number 
$\alpha_1^\vee$. On one hand, by 
Lemma
\ref{lem:ToeplitzAdjust}
and the 
construction, 
$\alpha_1^\vee
= \alpha_1 v^{1/2}=v^{1/2}$.
On the other hand,
by 
Lemma
\ref{lem:WEYLalphaZero},
$\alpha_1^\vee = 0 $.
This is a contradiction, so $t \not=1$.
The polynomial $g(\lambda)=c(\lambda-1)(\lambda-t)$ 
has distinct roots. Therefore there exist
$u,v\in \mathbb F$ such that $\varphi_i = u + vt^i$ for
$0 \leq i \leq d+1$. Setting
$i=0$ and $\varphi_0=0$ we obtain $0 = u+v$.
Consequently
$\varphi_i = u(1-t^i)$ for $0 \leq i \leq d+1$, and $u \not=0$.
Fix square roots
$u^{1/2}, t^{1/2} \in 
\overline {\mathbb F}$.
Define
$q=t^{-1/2}$.
By construction $q \not=0$, 
$t=q^{-2}$, and
$q^2 \not=1$.
Define
 an LR triple  $A^{\vee},B^\vee, C^\vee$ 
over
$\overline {\mathbb F}$ by
\begin{eqnarray*}
A^\vee = A u^{-1/2}, \qquad 
B^\vee = B u^{-1/2}, \qquad 
C^\vee = C u^{-1/2}.
\end{eqnarray*}
By Lemma
\ref{lem:albega}
this LR triple has parameter array
\begin{eqnarray*}
\varphi^\vee_i = 
(\varphi'_i)^\vee = 
(\varphi''_i)^\vee = \varphi_i/u= 1-q^{-2i} \qquad \qquad (1 \leq i \leq d).
\end{eqnarray*}
Now by 
 Definition
\ref{def:qExceptional},
the LR triple
$A^\vee,
B^\vee,
C^\vee $
has $q$-Weyl type.
Replacing $q$ by $-q$ if necessary, 
we may assume by Lemma
\ref{lem:Mq}
that the pair $d,q$ is standard in the sense of
Definition
\ref{def:qstand}.
By that definition, if
${\rm Char}(\mathbb F)\not=2$ then $q$ is a primitive
$(2d+2)$-root of unity. Moreover  if
${\rm Char}(\mathbb F)=2$, then $d$ is even and
$q$ is a primitive
$(d+1)$-root of unity.
Consider the first Toeplitz number
$\alpha_1^\vee$.
By Lemma
\ref{lem:alphaOneList} there exists an integer $j$
$(0 \leq j \leq d)$ such that
\begin{eqnarray*}
\alpha_1^\vee = \frac{q^{j+1/2}+ q^{-j-1/2}}{q-q^{-1}}.
\end{eqnarray*}
By Lemma
\ref{lem:ToeplitzAdjust}
and the construction, $\alpha_1^\vee = u^{1/2}$.
Therefore 
$u = (\alpha_1^\vee)^2$. By these comments
\begin{eqnarray}
\label{eq:UVal}
u = \frac{(1+q^{2j+1})^2}{q^{2j+1}(q-q^{-1})^2}.
\end{eqnarray}
Replacing $j$ by $d-j$ corresponds to
replacing  $u^{1/2}$ by 
$-u^{1/2}$, and this move leaves
 $u$ invariant. Replacing $j$ by $d-j$
if necessary, we may assume without loss that $j\leq d/2$.
Note that
$ j \not=d/2$; otherwise
$1+q^{2j+1}=0$ which contradicts
(\ref{eq:UVal}). 
Therefore $j < d/2$.
Assume for the moment that  $d=1+4j$.
Then $q^{2j+1} + q^{-2j-1} = 0$. In this case
(\ref{eq:UVal}) reduces to $u=2/(q-q^{-1})^2$,
or in 
other words $u=2t/(1-t)^2$. Now $d,t$
satisfy  the requirements of
Example
\ref{ex:nbw3}, so 
$A,B,C$ is isomorphic to
${\rm NBWeyl}^{-}_d(\mathbb F; t)$.
For the rest of this proof, assume that
$d \not= 1 + 4j$.
We show $q \in \mathbb F$.
Define $f=q^{2j+1}+ q^{-2j-1}$
and note by 
(\ref{eq:UVal}) that   
$f+2=uq^2(1-q^{-2})^2$.
By this and
$u, q^2 \in \mathbb F$ we find
 $f \in \mathbb F$.
Also, using $q^2 \in \mathbb F$ we obtain
$f q = q^{2j+2} + q^{-2j} \in \mathbb F$.
We have $f \not=0$
since $d \not=1+4j$. By these comments $q = fq/f \in \mathbb F$.
For the moment assume that $d$ is even.
Then  
$d, j,q$ meet the requirements of
Example
\ref{ex:nbw1}, so
$A,B,C$ is isomorphic to
 ${\rm NBWeyl}^{+}_d(\mathbb F;j,q)$.
Next assume that $d$ is odd.
We mentioned earlier that
the pair $d,q$ is standard. The pair $d,q$
remains standard if we replace $q$ by
$-q$. Consider what happens if we replace 
$q$ by $-q$ and
$j$ by $(d-1)/2-j$. By
(\ref{eq:UVal}) 
this replacement
has
no effect on $u$. Making this adjustment if necessary,
we may assume without loss that $j< (d-1)/4$.
Now $d,j,q$ meet the requirements of
Example
\ref{ex:nbw2}, so $A,B,C$ is isomorphic to
 ${\rm NBWeyl}^{-}_d(\mathbb F;j,q)$.
We have shown that $A,B,C$ is isomorphic to at least
one of the LR triples listed in
Examples
\ref{ex:nbw1}--\ref{ex:nbw3}.
The result follows from this and
Lemma
\ref{lem:PreThm}(iii). 
\end{proof}

\begin{lemma}
\label{lem:NBWisGeom}
Assume $d\geq 2$.
Let $A,B,C$ denote an LR triple in
${\rm NBWeyl}_d(\mathbb F)$. Then for $0 \leq i \leq d-1$
the scalar
$\rho_i$ from
Definition
\ref{def:Rhoi} satisfies

     \bigskip

\centerline{
\begin{tabular}[t]{c|ccc}
 {\rm case} 
 & 
 ${\rm NBWeyl}^{+}_d(\mathbb F;j,q)$
 &
 ${\rm NBWeyl}^{-}_d(\mathbb F;j,q)$
 &
 ${\rm NBWeyl}^{-}_d(\mathbb F;t)$
\\
 \hline
 $\rho_i$ &
$-q^{-2i-2}$
&
 $-q^{-2i-2}$
 &
 $-t^{i+1}$ 
   \\
     \end{tabular}}
     \bigskip

\noindent Moreover, the sequence
$\lbrace \rho_i \rbrace_{i=0}^{d-1}$ 
is geometric.
\end{lemma}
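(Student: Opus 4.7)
The plan is to compute $\rho_i = \varphi_{i+1}/\varphi_{d-i}$ directly from the explicit formulas for $\varphi_i$ in Examples \ref{ex:nbw1}--\ref{ex:nbw3}, using the root-of-unity restrictions on $q$ or $t$ to simplify the resulting expressions. Since $A,B,C$ is equitable (it is normalized, hence equitable) and nonbipartite, the scalar $\rho_i$ is well-defined by Definition \ref{def:Rhoi}, and the common coefficient in front of $1-q^{-2i}$ (or $1-t^i$) will cancel between numerator and denominator.

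For the first two families, the scalar $(1+q^{2j+1})^2/(q^{2j+1}(q-q^{-1})^2)$ cancels, yielding
\begin{eqnarray*}
\rho_i = \frac{1-q^{-2(i+1)}}{1-q^{-2(d-i)}}.
\end{eqnarray*}
In both ${\rm NBWeyl}^{+}_d(\mathbb F;j,q)$ and ${\rm NBWeyl}^{-}_d(\mathbb F;j,q)$ the pair $d,q$ is standard, so $q^{2(d+1)}=1$ (directly if $q$ is a primitive $(2d+2)$-root of unity, and because $q^{d+1}=1$ in the characteristic $2$ case). Hence $q^{-2(d-i)} = q^{2(i+1)}$, and a short simplification gives $\rho_i = -q^{-2i-2}$. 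For the third family, using $t^{d+1}=1$ one finds $t^{d-i}=t^{-i-1}$, so
\begin{eqnarray*}
\rho_i = \frac{1-t^{i+1}}{1-t^{-i-1}} = -t^{i+1}
\end{eqnarray*}
after multiplying numerator and denominator by $t^{i+1}$. This establishes the table.

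Finally, to confirm that $\lbrace \rho_i\rbrace_{i=0}^{d-1}$ is geometric, I would observe that in the first two cases $\rho_i/\rho_{i-1} = q^{-2}$, and in the third case $\rho_i/\rho_{i-1} = t$; in each instance this ratio is nonzero and independent of $i$, which matches the definition of geometric given just above Proposition \ref{prop:nevenPre}.

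There is no real obstacle here; the entire argument is an exercise in substituting the displayed $\varphi_i$ into the formula for $\rho_i$ and invoking the appropriate root-of-unity identity. The only point that requires mild care is treating the ${\rm Char}(\mathbb F)=2$ subcase of Example \ref{ex:nbw1}, where one uses $q^{d+1}=1$ (rather than $q^{2d+2}=1$ with $q^{d+1}=-1$) to justify $q^{-2(d-i)}=q^{2(i+1)}$; but this gives the same conclusion.
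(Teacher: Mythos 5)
Your proposal is correct and is exactly the paper's approach: the paper's proof simply says to compute $\rho_i=\varphi_{i+1}/\varphi_{d-i}$ from the data in Examples \ref{ex:nbw1}--\ref{ex:nbw3}, and you have carried out that computation, including the correct use of $q^{2d+2}=1$ (resp. $t^{d+1}=1$) to simplify the denominator. The verification of the geometric property is also as intended, since each $\rho_i$ is visibly nonzero and the consecutive ratio is constant.
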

\begin{proof}
Compute
$\rho_i=\varphi_{i+1}/\varphi_{d-i}$ using
the data in
Examples
\ref{ex:nbw1}--\ref{ex:nbw3}.
\end{proof}

\section{The classification of LR triples in
${\rm NBG}_d(\mathbb F)$ 
}

\noindent 
In this section we classify up to isomorphism
the LR triples in
${\rm NBG}_d(\mathbb F)$, for $d\geq 2$.
We first describe some examples.

\begin{example}
\label{ex:NBGq}
\rm
The LR triple
${\rm NBG}_d(\mathbb F;q)$
is over $\mathbb F$, diameter $d$, nonbipartite, normalized, and
satisfies
\begin{eqnarray*}
&& d\geq 2; \qquad \qquad 0 \not=q \in \mathbb F;
\\
&&
q^i \not=1 \quad (1 \leq i \leq d);
\qquad \qquad \quad q^{d+1}\not=-1; 
\\
&&\varphi_i = \frac{q(q^i-1)(q^{i-d-1}-1)}{(q-1)^2}
\qquad \qquad (1 \leq i \leq d).
\end{eqnarray*}
\end{example}

\begin{example}
\label{ex:NBGqEQ1}
\rm 
The LR triple
${\rm NBG}_d(\mathbb F;1)$
is over $\mathbb F$, diameter $d$, nonbipartite, normalized, and
satisfies
\begin{eqnarray*}
&& d\geq 2;
\qquad \qquad 
{\mbox {\rm ${\rm Char}(\mathbb F)$ is 0 or greater than $d$}};
\\
&&
\varphi_i = i (i-d-1) \qquad \qquad (1 \leq i \leq d).
\end{eqnarray*}
\end{example}

\begin{lemma}
\label{lem:NBGexist}
For the LR triples in
Examples \ref{ex:NBGq},
\ref{ex:NBGqEQ1},
{\rm (i)} they exist; {\rm (ii)} they are contained in
${\rm NBG}_d(\mathbb F)$;
{\rm (iii)} they are mutually nonisomorphic.
\end{lemma}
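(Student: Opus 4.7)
The plan is to verify the three assertions in turn. For existence (i), I would first check in each example that $\lbrace \varphi_i\rbrace_{i=1}^d$ consists of nonzero scalars in $\mathbb F$: for ${\rm NBG}_d(\mathbb F;q)$ with $q\neq 1$, the hypothesis $q^i\neq 1$ for $1\leq i\leq d$ ensures both $q^i-1$ and $q^{d+1-i}-1$ are nonzero; for ${\rm NBG}_d(\mathbb F;1)$, the hypothesis that ${\rm Char}(\mathbb F)$ is $0$ or exceeds $d$ makes each of $i$ and $d+1-i$ nonzero for $1\leq i\leq d$. Next I would construct the LR triple explicitly by taking $V=\mathbb F^{d+1}$ and defining $A,B,C\in{\rm End}(V)$ via the matrix formulas in the first row of Proposition \ref{prop:matrixRep}, with $\varphi_i=\varphi'_i=\varphi''_i$ as given and with trace data $a_i=\varphi_{d-i+1}-\varphi_{d-i}$, the value dictated by Lemma \ref{lem:equitBasic}(ii) with $\alpha_1=1$. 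The pair $A,B$ is then an LR pair by construction; to show $B,C$ and $C,A$ are also LR pairs with matching parameter sequences, I would apply Proposition \ref{prop:LRchar}, which reduces the task to verifying that $B$ and $C$ are Nil and that the flags $\lbrace B^{d-i}V\rbrace_{i=0}^d$ and $\lbrace C^{d-i}V\rbrace_{i=0}^d$ are each raised by the third map. This becomes a direct matrix calculation exploiting the specific shape of the $\varphi_i$.

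For (ii), the triples are over $\mathbb F$, diameter $d$, and normalized by construction; nonbipartiteness follows since $a_d=\varphi_1\neq 0$. Direct computation using $q^{i-d}-1=-q^{i-d}(q^{d-i}-1)$ and $q^{-i-1}-1=-q^{-i-1}(q^{i+1}-1)$ yields $\rho_i=\varphi_{i+1}/\varphi_{d-i}=q^{2i-d+1}$ for ${\rm NBG}_d(\mathbb F;q)$ with $q\neq 1$, and $\rho_i=1$ for ${\rm NBG}_d(\mathbb F;1)$; in each case $\lbrace \rho_i\rbrace_{i=0}^{d-1}$ is geometric. To exclude these from ${\rm NBWeyl}_d(\mathbb F)$, I suppose some nonzero $(a,b,c)\in\mathbb F^3$ satisfies $a+b+c=0$ and $a\varphi_{i-1}+b\varphi_i+c\varphi_{i+1}=0$ for $1\leq i\leq d$. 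For $q\neq 1$, expanding $\varphi_i=K(q^{2i-d-1}-(1+q^{-d-1})q^i+1)$ with $K=q/(q-1)^2$ and using the linear independence of $1,q^i,q^{2i}$ as functions of $i$ (for $d\geq 3$), the recurrence splits into three constraints on $(a,b,c)$; together with $q\notin\lbrace 0,\pm 1\rbrace$ and $q^{d+1}\neq-1$ (which prevents the mode $q^i$ from degenerating), these force $c(q-1)^2(q+1)=0$, hence $c=0$ and then $a=b=0$. The case $d=2$ is dispatched by direct substitution into the two nontrivial equations $b\varphi_1+c\varphi_2=0$ and $a\varphi_1+b\varphi_2=0$, whose nontrivial solubility reduces to $q^2-q+1=0$, i.e., $q^3=-1$, excluded by hypothesis. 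The ${\rm NBG}_d(\mathbb F;1)$ case follows by an analogous coefficient comparison on $\varphi_i=i(i-d-1)$, using ${\rm Char}(\mathbb F)\neq 2$.

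For (iii), by Lemma \ref{lem:NBnormIso} each normalized nonbipartite LR triple is determined up to isomorphism by its parameter array, so I need only confirm that the parameter sequences across the two families, and across distinct allowed $q$ within ${\rm NBG}_d(\mathbb F;q)$, are mutually distinct. The expansion $\varphi_i=K(q^{2i-d-1}-(1+q^{-d-1})q^i+1)$ is exponential in $i$ when $q\neq 1$ but becomes the quadratic $i(i-d-1)$ when $q=1$, separating the two families. Within ${\rm NBG}_d(\mathbb F;q)$, given $\varphi_1$ and $\varphi_2$ one recovers $q$ uniquely on the allowed domain by solving the system in the three modes $1,q^i,q^{2i}$; hence distinct $q$ yield distinct parameter arrays, and the LR triples are mutually nonisomorphic.

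The main obstacle is the verification in part (i) that the matrix-defined maps $A,B,C$ constitute an LR triple: Proposition \ref{prop:LRchar} reduces this to Nilpotency and an opposite-flag check for the pairs $B,C$ and $C,A$, but the direct matrix computation is intricate and must use the specific form of the $\varphi_i$ in each example. A cleaner alternative route would realize these triples as the equitable presentation acting on the $(d+1)$-dimensional irreducible module for $U_q(\mathfrak{sl}_2)$ (for $q\neq 1$) or for $\mathfrak{sl}_2$ (for $q=1$), as developed in Section 33; this identification would bypass the matrix calculation by producing the LR triple as known representation-theoretic data.
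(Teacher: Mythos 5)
Your overall architecture matches the paper's: construct $A,B,C$ by the matrix formulas of Proposition \ref{prop:matrixRep} with $\varphi_i=\varphi'_i=\varphi''_i$ and $a_i=\varphi_{d-i+1}-\varphi_{d-i}$, rule out membership in ${\rm NBWeyl}_d(\mathbb F)$ and check that $\lbrace\rho_i\rbrace_{i=0}^{d-1}$ is geometric for (ii), and invoke determination by the parameter array for (iii). Parts (ii) and (iii) of your plan are sound (the paper handles the ${\rm NBWeyl}$ exclusion more cleanly, by noting that a linear constraint $(a,b,c)$ with $a+b+c=0$ would annihilate the rows of the $3\times 3$ matrix with rows $(1,1,1)$, $(\varphi_0,\varphi_1,\varphi_2)$, $(\varphi_1,\varphi_2,\varphi_3)$, whose determinant is $-(q+1)(q^{d+1}+1)q^{2-2d}$, resp. $-4$, hence nonzero; your mode-expansion argument reaches the same conclusion with more case analysis).

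The genuine gap is in part (i), and it sits exactly where you say the "main obstacle" is. Reducing to Proposition \ref{prop:LRchar} requires, for the pair $B,C$, that $C$ be Nil and that the flag $\lbrace C^{d-i}V\rbrace_{i=0}^d$ be raised by $B$; since $C$ is tridiagonal in the chosen basis, neither $C^{d+1}=0$, nor $C^d\neq 0$, nor the identification of the subspaces $C^{d-i}V$ is accessible by inspection, and calling this "a direct matrix calculation" does not close it. The paper avoids computing any power of $C$ by a device you are missing: it introduces an explicit invertible operator $\mathbb B=\sum_{i=0}^d\alpha_iB^i$ (with $\alpha_0=1$ and $\alpha_{i-1}/\alpha_i=\sum_{k=0}^{i-1}q^k$), notes that $\mathbb B$ is unipotent lower triangular hence invertible and commutes with $B$, so that $\lbrace\mathbb B V_{d-i}\rbrace_{i=0}^d$ is a decomposition of $V$ lowered by $B$, and then checks entry-by-entry that this \emph{explicitly given} decomposition is raised by $C$; this exhibits the $(B,C)$-decomposition directly and certifies the LR pair by Definition \ref{def:lr} without Proposition \ref{prop:LRchar}. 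The pair $C,A$ is handled symmetrically with $\mathbb A^{\S}=\sum_{i=0}^d\beta_iA^i$. Your alternative via the equitable $U_q(\mathfrak{sl}_2)$-action is legitimate in spirit, but it cannot lean on Section 33 of this paper (whose module constructions start \emph{from} an already-existing LR triple); it would have to import the fact that $n_x,n_y,n_z$ act as an LR triple on each finite-dimensional irreducible module from the external reference, and it would still leave the $q=1$ and root-of-unity normalizations to be matched against the stated $\varphi_i$.
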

\begin{proof} (i) 
In Example
\ref{ex:NBGq} we see a parameter $q \in \mathbb F$.
For Example
\ref{ex:NBGqEQ1} define $q=1$.
Using $q$ we construct an LR triple $A,B,C$ as follows.
For notational convenience define
 $a_i = \varphi_{d-i+1}-\varphi_{d-i}$
for $0 \leq i \leq d$, where $\varphi_0=0$ and
$\varphi_{d+1}=0$.
Let $V$ denote a vector space over $\mathbb F$
with dimension $d+1$.
Let $\lbrace v_i \rbrace_{i=0}^d$ denote a basis for
$V$. Define $A,B,C$ in
${\rm End}(V)$ such that the matrices representing
$A,B,C$ with respect to
 $\lbrace v_i \rbrace_{i=0}^d$
are given by
the first row of the table in
Proposition
\ref{prop:matrixRep}.
Here $\varphi'_i=\varphi_i$ and
$\varphi''_i = \varphi_i $ for $1 \leq i \leq d$.
We show that $A,B,C$ is an LR triple on $V$.
We first show that $A,B$ is an LR pair on $V$.
Let $\lbrace V_i\rbrace_{i=0}^d$ denote the decomposition
of $V$ induced by the basis $\lbrace v_i\rbrace_{i=0}^d$.
Using the matrices defining
$A,B$ we find that 
$\lbrace V_i\rbrace_{i=0}^d$ is lowered by $A$
and raised by $B$. Therefore
$A,B$ is an LR pair on $V$.  
Next we show that $B,C$ is an LR pair on $V$.
Define the scalars $\lbrace \alpha_i \rbrace_{i=0}^d$ by
$\alpha_0=1$ and $\alpha_{i-1}/\alpha_{i} = 
\sum_{k=0}^{i-1} q^k$
for $1 \leq i \leq d$.  Define $\mathbb B = \sum_{i=0}^d \alpha_i B^i$.
Note that $\mathbb B B = B \mathbb B$.
With respect to the basis
$\lbrace v_i\rbrace_{i=0}^d$ the matrix representing
$\mathbb B$ is lower triangular, with each diagonal entry  1.
Therefore
$\mathbb B$ is invertible. 
Observe that  $\lbrace \mathbb B V_{d-i}\rbrace_{i=0}^d$
is a decomposition of $V$ that is lowered by $B$.
Using the matrices defining $B,C$
one checks that
 $\lbrace \mathbb B V_{d-i}\rbrace_{i=0}^d$ is raised by $C$.
 By these comments
$B,C$ is an LR pair on $V$.
Next we show that $C,A$ is an LR pair on $V$.
Define ${\mathbb A}^{\S}  = \sum_{i=0}^d \beta_i A^i$, where
 $\beta_0=1$ and $\beta_{i-1}/\beta_{i} = 
- \sum_{k=0}^{i-1}   q^{-k}$ 
for 
 $1 \leq i \leq d$.
Note that $\mathbb A^{\S} A = A \mathbb A^{\S}$.
With respect to the basis $\lbrace v_i\rbrace_{i=0}^d$
the matrix representing
$\mathbb A^{\S}$ is upper triangular and Toeplitz,
with parameters $\lbrace \beta_i \rbrace_{i=0}^d$.
Therefore $\mathbb A^{\S}$ is invertible.
(In fact $\mathbb A^{\S}$ is the inverse of
$\mathbb A = \sum_{i=0}^d \alpha_i A^i$, although we
do not need this result).
Observe that 
 $\lbrace \mathbb A^{\S}V_{d-i}\rbrace_{i=0}^d$ is a decomposition of
 $V$ that is raised by $A$.
Using the matrices defining $A,C$
one checks that
 $\lbrace \mathbb A^{\S} V_{d-i}\rbrace_{i=0}^d$ is lowered by $C$.
 By these comments
$C,A$ is an LR pair on $V$.
We have shown that $A,B,C$ is an LR triple on $V$.
Using the matrices defining $A,B,C$ 
we find that this LR triple is the desired one.
\\
\noindent (ii)
Let $A,B,C$ denote an LR triple listed in Examples
 \ref{ex:NBGq},
\ref{ex:NBGqEQ1}. By assumption $A,B,C$ is over $\mathbb F$,
diameter $d$, nonbipartite, and normalized.
We check that $A,B,C$ is not in
 ${\rm NBWeyl}_d(\mathbb F)$.
The matrix
\begin{eqnarray*}
 \left(
\begin{array}{ccc}
1 & 1 &  1  
\\
 \varphi_0 & \varphi_1 &\varphi_2 
\\
 \varphi_1 & \varphi_2 &\varphi_3 
\end{array}
\right)
\end{eqnarray*}
has determinant
$-(q+1)(q^{d+1}+1)q^{2-2d}$
for 
${\rm NBG}_d(\mathbb F;q)$, and
$-4$ 
for
${\rm NBG}_d(\mathbb F;1)$.
In each case the determinant is nonzero.  Consequently,
there does not exist $a,b,c$ in $\mathbb F$ that are not all zero 
such that $a+b+c=0$ and $
a\varphi_{i-1} + b\varphi_i + c \varphi_{i+1} = 0$
for $1 \leq i \leq d$.
Therefore  $A,B,C$ is not in
 ${\rm NBWeyl}_d(\mathbb F)$. 
We check that the sequence 
$\lbrace \rho_i\rbrace_{i=0}^{d-1}$ from Definition
\ref{def:Rhoi}
is geometric. 
For $0 \leq i \leq d-1$
the scalar $\rho_i = \varphi_{i+1}/\varphi_{d-i}$ is equal to
$q^{2i-d+1}$ for 
${\rm NBG}_d(\mathbb F;q)$, and 1 for
${\rm NBG}_d(\mathbb F;1)$.
Therefore $\lbrace \rho_i\rbrace_{i=0}^{d-1}$ is 
geometric.
We have shown that 
  $A,B,C$ is contained in
 ${\rm NBG}_d(\mathbb F)$.
\\
\noindent (iii) Among the
LR triples listed in Examples
\ref{ex:NBGq},
\ref{ex:NBGqEQ1} no two have the same parameter array.
Therefore no two are isomorphic.
\end{proof}

\begin{theorem} 
\label{thm:NBG}
For $d\geq 2$,
each LR triple 
 in ${\rm NBG}_d(\mathbb F)$ is isomorphic to a unique
LR triple listed in
Examples
\ref{ex:NBGq},
\ref{ex:NBGqEQ1}.
\end{theorem}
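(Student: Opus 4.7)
Let $A,B,C$ be an LR triple in $\mathrm{NBG}_d(\mathbb{F})$. Since $A,B,C$ is normalized and nonbipartite it is equitable, so by Lemma~\ref{lem:equitBasic}(i) we have $\varphi_i = \varphi'_i = \varphi''_i$ for $1 \le i \le d$; and by Lemma~\ref{lem:NBnormIso} the triple is determined up to isomorphism by the sequence $\{\varphi_i\}_{i=1}^d$. Since Lemma~\ref{lem:NBGexist}(iii) already asserts the listed examples are mutually nonisomorphic, it suffices to show $\{\varphi_i\}$ matches the parameter sequence of exactly one of Examples~\ref{ex:NBGq} and~\ref{ex:NBGqEQ1}. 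By Proposition~\ref{lem:NBCon6} together with Lemma~\ref{lem:ConGeo} applied to the constrained, geometric sequence $\{\rho_i\}_{i=0}^{d-1}$, there exists $0 \ne t \in \mathbb{F}$ with $\rho_i = t^{2i-d+1}$, equivalently $\varphi_{i+1} = t^{2i-d+1}\varphi_{d-i}$ for $0 \le i \le d-1$.

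With the normalization $\alpha_0 = \beta_0 = \alpha_1 = 1$, $\beta_1 = -1$, and $r := \alpha_2$, $s := \beta_2$, equation~(\ref{eq:L2}) gives $r + s = 1$, and Lemma~\ref{lem:NBCon1}(i) yields the inhomogeneous three-term recurrence
\[
s\varphi_i - \varphi_{i+1} + r\varphi_{i+2} = t^{2i-d+1}, \qquad 0 \le i \le d-1.
\]
The plan is to solve this recurrence. The homogeneous characteristic polynomial $r\lambda^2 - \lambda + s$ factors as $r(\lambda-1)(\lambda - s/r)$ when $r \ne 0$, so $\lambda = 1$ is always a root; write $\mu := s/r$. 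In the generic case ($r \ne 0$, $t \ne 1$, $\mu \notin \{1, t^2\}$), the general solution takes the form $\varphi_i = \alpha + \beta \mu^i + \gamma t^{2i}$. Substituting into $\varphi_{i+1} = t^{2i-d+1}\varphi_{d-i}$ and matching the three exponential modes $1^i$, $\mu^i$, $t^{2i}$ appearing on the LHS against $1^i$, $(t^2/\mu)^i$, $t^{2i}$ on the RHS forces $\mu \in \{t,-t\}$ (with $\mu=-t$ admissible only when $d$ is odd) together with $\alpha = \gamma t^{d+1}$. The boundary condition $\varphi_0 = 0$ then gives $\beta = -\gamma(1 + t^{d+1})$, so in the $\mu = t$ branch
\[
\varphi_i = \gamma (t^i - 1)(t^i - t^{d+1}) = \gamma t^{d+1}(t^i-1)(t^{i-d-1}-1),
\]
and the boundary $\varphi_{d+1}=0$ is automatic.

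The non-NBWeyl hypothesis excludes $\beta = 0$: if $\beta = 0$ then $\varphi_i = \alpha + \gamma t^{2i}$ is annihilated by $(a,b,c) = (t^2, -(1+t^2), 1)$, which also satisfies $a+b+c=0$, placing $A,B,C \in \mathrm{NBWeyl}_d(\mathbb{F})$, contrary to assumption. Hence $t^{d+1} \ne -1$, and the nonvanishing of $\varphi_i$ for $1 \le i \le d$ forces $t^i \ne 1$ in the same range. Next $\gamma$ is pinned down by the normalization $\alpha_1 = 1$ using either Lemma~\ref{lem:alphaOneOK}(iii) or the $i=1$ instance of Proposition~\ref{prop:AlphaRecursion}(i), yielding $\gamma = t^{-d}/(t-1)^2$, which rewrites $\varphi_i$ as $t(t^i-1)(t^{i-d-1}-1)/(t-1)^2$, matching Example~\ref{ex:NBGq} with $q = t$. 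In the $\mu=-t$ branch (requiring $d$ odd), the analogous calculation yields the same family with $q = -t$; both are valid since for $d$ odd, $t$ and $-t$ give the same $\rho$-sequence but distinct parameter arrays, corresponding to two distinct members of Example~\ref{ex:NBGq}.

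The degenerate case $t = 1$ is handled separately: here $\rho_i \equiv 1$ and $\lambda = 1$ becomes a double root of the homogeneous polynomial that also absorbs the constant inhomogeneity, so a standard adjustment together with the boundary and normalization conditions gives $\varphi_i = i(i-d-1)$. Nonvanishing of $\varphi_i$ for $1 \le i \le d$ then forces $\mathrm{Char}(\mathbb{F})$ to be $0$ or greater than $d$, matching Example~\ref{ex:NBGqEQ1}. The main obstacle is the generic step above: carefully matching modes to pin down $\mu \in \{t,-t\}$, cleanly handling the branch coincidences $\mu \in \{1, t^2\}$, $r=0$, and $t^2=1$, and verifying that the non-NBWeyl exclusion corresponds exactly to the condition $t^{d+1} \ne -1$.
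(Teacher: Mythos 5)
Your overall strategy is the same as the paper's: use the constraint relations from Section 14 to turn the Toeplitz data into a three-term recurrence for $\{\varphi_i\}$, solve it as a sum of exponential modes, match modes across the identity $\varphi_{i+1}=\rho_i\varphi_{d-i}$, and invoke the non-NBWeyl hypothesis to force the "extra" mode to survive. The difficulty is in your very first parametrization. You invoke Lemma \ref{lem:ConGeo} to write $\rho_i=t^{2i-d+1}$ with $t\in\mathbb F$, but that lemma applies to a constrained geometric sequence $\{\rho_i\}_{i=0}^{n}$ only when $n$ is odd; here $n=d-1$, so the lemma covers only the case $d$ even. When $d$ is odd you are in Proposition \ref{prop:nevenPre}, which gives $\rho_i=\varepsilon r^{i-(d-1)/2}$ with $\varepsilon=\rho_{(d-1)/2}$ (which is in fact $1$, since $\rho_{(d-1)/2}=\varphi_{(d+1)/2}/\varphi_{(d+1)/2}$), but with no guarantee that the common ratio $r$ is a square in $\mathbb F$. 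Writing $\rho_i=t^{2i-d+1}$ presupposes a square root $t$ of $r$ in $\mathbb F$, which is exactly part of what the classification must prove (a posteriori $r=q^2$ with $q=\beta_2/\alpha_2\in\mathbb F$, but you cannot assume it). The repair is what the paper does: keep two parameters, $\rho_i=\rho_0 r^i$ subject only to $\rho_0^2=r^{1-d}$, solve the recurrence as $\varphi_i=H+K\mu^i+Lr^i$, and let the coefficient matching in $\varphi_i=\rho_0 r^{i-1}\varphi_{d-i+1}$ (a $4\times 4$ Vandermonde argument in the modes $1,\mu^i,r^i,(r/\mu)^i$) deliver both $r=\mu^2$ and $\rho_0=\mu^{1-d}$; your single-parameter $t$ then appears as $t=\mu\in\mathbb F$ rather than as an assumed square root, and your $\mu=\pm t$ dichotomy disappears.

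Two smaller points. First, your mode-matching needs the linear independence of the four exponential functions on $\{0,1,2,3\}$ made explicit (this is where $d\geq 2$ enters), since for small $d$ the identity $\varphi_{i+1}=\rho_i\varphi_{d-i}$ holds at only finitely many $i$. Second, your degenerate case $t=1$ treats only the situation where $\lambda=1$ is a double root (i.e., $\mu=1$ as well), which yields ${\rm NBG}_d(\mathbb F;1)$; the case where the $\rho$-ratio is $1$ but $\mu\neq 1$ must be shown to be contradictory, and the remaining coincidences you defer ($r=0$, $\mu=t^2$, $t^2=1$, etc.) each require a separate contradiction argument, so they are not merely bookkeeping.
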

\begin{proof}
Let $A,B,C$ denote 
an LR triple in
 ${\rm NBG}_d(\mathbb F)$,
with parameter array
(\ref{eq:paLRT}) and
 Toeplitz data
(\ref{eq:ToeplitzData}).
Recall the sequence
$\lbrace \rho_i \rbrace_{i=0}^{d-1}$ from
Definition
\ref{def:Rhoi}. This sequence is
constrained by
Proposition 
\ref{lem:NBCon6}, and
geometric by the definition of
 ${\rm NBG}_d(\mathbb F)$.
Therefore there exists $0 \not= r \in \mathbb F$
such that
\begin{eqnarray}
\label{eq:gammaForm}
&&
\rho_i = \rho_0 r^i \qquad \qquad (0 \leq i \leq d-1),
\\
&& \rho^2_0 = r^{1-d}.
\label{eq:rhoSquared}
\end{eqnarray}
By assumption $A,B,C$ is nonbipartite and normalized,
so $\alpha_1=1$
and $\beta_1 = -1$. Also $\alpha_2+\beta_2=1$ from
above
Lemma \ref{lem:reform1}.
Define
\begin{eqnarray}
q = \begin{cases}
 \beta_2/\alpha_2
&  {\mbox{\rm if $\alpha_2\not=0 $}}; \\
\infty & {\mbox{\rm if $\alpha_2 =0$.}}
\end{cases}
\label{eq:qDEF}
\end{eqnarray}
Conceivably
$q=0$ or $q=1$.
 Using 
 $\beta_2 = q \alpha_2$
and 
 $\alpha_2+\beta_2=1$,
we obtain $q \not=-1$ and
\begin{eqnarray}
\alpha_2 = \frac{1}{1+q},
\qquad \qquad 
\beta_2 = \frac{q}{1+q}.
\label{lem:qbasics}
\end{eqnarray}
If $q=\infty$ then $\beta_2=1$.
Evaluating 
(\ref{eq:middleElim}) using
(\ref{lem:qbasics}) we obtain
\begin{eqnarray}
\label{eq:mainrec}
\rho_i = \frac{q \varphi_i-(q+1)\varphi_{i+1} +\varphi_{i+2}}{q+1}
\qquad \qquad (0 \leq i \leq d-1).
\end{eqnarray}
If $q=\infty$ then
(\ref{eq:mainrec}) becomes
$\rho_i = \varphi_i - \varphi_{i+1}$ for $0 \leq i \leq d-1$.
Until further notice, assume that
$q\not=0$, $q\not=\infty$, and 
$1,q,r$ are mutually distinct.
Define
\begin{eqnarray}
L = \frac{(q+1) \rho_0}{(q-r)(1-r)}.
\label{eq:Tform}
\end{eqnarray}
Note that $L\not=0$.
Since $q \not=1$, there exist
$H,K \in \mathbb F$ such that 
$\varphi_i = H + Kq^i + L r^i$  
for $i=0$ and $i=1$.
Using 
(\ref{eq:gammaForm}),
(\ref{eq:mainrec}) and induction on $i$, we obtain
\begin{eqnarray}
\varphi_i = H + K q^i + L r^i \qquad \qquad (0 \leq i \leq d+1).   
\label{eq:vpForm}
\end{eqnarray}
We have $K \not=0$; otherwise
$r\varphi_{i-1} -(r+1)\varphi_i + \varphi_{i+1}=0$
$(1 \leq i \leq d$), 
putting  $A,B,C$ in
${\rm NBWeyl}_d(\mathbb F)$ for a contradiction.
Since $\varphi_0=0$ and $\varphi_{d+1}=0$,
\begin{eqnarray}
0 = H+K+L, \qquad \qquad 0 = H+ Kq^{d+1} + Lr^{d+1}.
\label{eq:RST}
\end{eqnarray}
For $0 \leq i \leq d+1$ define
\begin{eqnarray}
\Delta_i = \varphi_i - \rho_0 r^{i-1} \varphi_{d-i+1}.
\label{eq:del}
\end{eqnarray}
We claim $\Delta_i=0$. This is the case
for $i=0$ and $i=d+1$, since
$\varphi_0=0$ and $\varphi_{d+1}=0$. For $1 \leq i \leq d$
we have $\Delta_i = \varphi_i-\rho_{i-1}\varphi_{d-i+1}$
by
(\ref{eq:gammaForm}), and this is zero by
(\ref{eq:mainrec2}). The claim is proven.
For $0 \leq i \leq d+1$, in the equation $\Delta_i=0$ 
we evaluate the left-hand side using
(\ref{eq:vpForm}),
(\ref{eq:del})
to find that the following linear
combination is zero:

     \bigskip

\centerline{
\begin{tabular}[t]{c|cccc}
 {\rm term} & $1$ & $q^i$ & $r^i$ & $(r/q)^i$
 \\
 \hline
 {\rm coefficient} &
$H-L\rho_0 r^d$
&
 $K$
 &
 $L-H\rho_0 r^{-1}$ 
 &
  $-K \rho_0 q^{d+1}r^{-1}$
   \\
     \end{tabular}}
     \bigskip

\noindent 
By assumption $1,q,r$ are mutually distinct. Also
$K\not=0$ and $d\geq 2$. We show $r=q^2$.
Assume $r\not=q^2$. Then $1,q,r,r/q$ are mutually
distinct.
Setting $i=0,1,2,3$ in the above table,
we obtain a $4 \times 4$ homogeneous linear system
with coefficient matrix 
\begin{eqnarray*}
 \left(
\begin{array}{cccc}
1 & 1  & 1 & 1 
\\
 1 & q & r & r/q
\\
 1 & q^2 & r^2 & (r/q)^2
\\
 1 & q^3 & r^3 & (r/q)^3
\end{array}
\right).
\end{eqnarray*}
This matrix is Vandermonde and hence invertible.
Therefore each coefficient in the table
is zero. The coeffient $K$ is nonzero,
for a contradiction. Consequently $r=q^2$.
From further examination of the coefficients in the table, 
\begin{eqnarray}
\label{eq:threeCoef}
H= L \rho_0 r^d,\qquad \qquad 
K = K \rho_0 q^{d+1}r^{-1},
\qquad \qquad
L = H\rho_0 r^{-1}.
\end{eqnarray} 
By (\ref{eq:threeCoef}) and $r=q^2$ we find
 $\rho_0 = q^{1-d}$ and
$H = L q^{d+1}$. By these comments and
(\ref{eq:Tform}) we obtain
\begin{eqnarray}
H = q(q-1)^{-2},\qquad \qquad 
L = q^{-d}(q-1)^{-2}.
\label{eq:RSTsol}
\end{eqnarray}
Evaluating 
(\ref{eq:vpForm}) using 
(\ref{eq:RSTsol}) and $K=-H-L$,  we obtain
\begin{eqnarray}
\varphi_i = \frac{q(q^i-1)(q^{i-d-1}-1)}{(q-1)^2}
\qquad \qquad (1 \leq i \leq d).
\label{eq:vpsol}
\end{eqnarray}
From 
(\ref{eq:vpsol}) and since the
$\lbrace \varphi_i \rbrace_{i=1}^d$ are nonzero,
we obtain $q^i \not=1$ $(1 \leq i \leq d)$.
Note that $q^{d+1}\not=-1$; otherwise
(\ref{eq:vpsol}) becomes
$\varphi_i = q(1-q^{2i})(q-1)^{-2}$ $(1 \leq i \leq d)$,
forcing $q\varphi_{i-1}-(q+q^{-1})\varphi_i + q^{-1} \varphi_{i+1}=0$
 $(1 \leq i \leq d)$,
putting $A,B,C$ in
${\rm NBWeyl}_d(\mathbb F)$ for a contradiction.
We have met the requirements of
Example
\ref{ex:NBGq},
so $A,B,C$ is isomorphic to 
${\rm NBG}_d(\mathbb F;q)$.
We are done with the case in which $q\not=0$, $q\not=\infty$,
and $1,q,r$ are mutually
distinct. 
Until further notice, assume that $1=q=r$. 
We have 
$1=q\not=-1$, so
${\rm Char}(\mathbb F) \not=2$.
By
(\ref{eq:gammaForm}),
(\ref{eq:rhoSquared})
we obtain $\rho_i = \rho_0$ for $0 \leq i \leq d-1$, 
and $\rho^2_0=1$. By 
(\ref{eq:mainrec}),
\begin{eqnarray}
\label{eq:mainrecOne}
2 \rho_0 = \varphi_i-2\varphi_{i+1}+ \varphi_{i+2}
\qquad \qquad (0 \leq i \leq d-1).
\end{eqnarray}
Define $Q=\varphi_1 - \rho_0$, and note that
$\varphi_i = i (Q+\rho_0 i)$ for $i=0$ and $i=1$.
By 
(\ref{eq:mainrecOne}) and induction on $i$,
\begin{eqnarray}
\label{eq:vpFormOne}
\varphi_i = i (Q+\rho_0 i) \qquad \qquad (0 \leq i \leq d+1).
\end{eqnarray}
Mimicking the argument below
(\ref{eq:del}), we find that for
$0 \leq i \leq d+1$,
\begin{eqnarray}
0 = \varphi_i - \rho_0 \varphi_{d-i+1}.
\label{eq:VarRho}
\end{eqnarray}
Evaluate the right-hand side of (\ref{eq:VarRho})
using
(\ref{eq:vpFormOne}) and $\rho^2_0=1$,
to find that the following linear
combination is zero:

     \bigskip

\centerline{
\begin{tabular}[t]{c|ccc}
 {\rm term} & $1$ & $i$ & $i^2$ 
 \\
 \hline
 {\rm coefficient} &
$-(d+1) (d+1+\rho_0 Q)$
&
 $2(d+1)+Q(1+\rho_0)$
 &
 $\rho_0-1$ 
   \\
     \end{tabular}}
     \bigskip

\noindent 
Since $d\geq 2$, each coefficient in the table is zero.
Therefore $\rho_0=1$ and $Q=-d-1$.
By this and (\ref{eq:vpFormOne}),
\begin{eqnarray}
\label{eq:RQone}
\varphi_i = i(i-d-1) \qquad \qquad (1 \leq i \leq d).
\end{eqnarray}
By 
(\ref{eq:RQone}) and since
$\lbrace \varphi_i \rbrace_{i=1}^d$ are nonzero,
${\rm Char}(\mathbb F)$ is $0$ or greater than $d$.
We have met the requirements of
Example
\ref{ex:NBGqEQ1},
so $A,B,C$ is isomorphic to 
${\rm NBG}_d(\mathbb F;1)$.
We are done with the case $1 = q = r$.
The remaining cases are
(a) $q=0 $ and  $r \not=1$;
(b) $q=0$ and $r =1$;
(c) $q=\infty$ and $r\not=1$;
(d) $q=\infty$ and $r=1$;
(e) $1=q\not=r$;
(f) $1\not=q=r$;
(g) $1=r\not=q$ and $q\not=0, q\not=\infty$.
Each case (a)--(g) is handled in a manner similar to the 
first two.
In each case 
we obtain a contradiction; the details are routine and omitted.
We have shown that $A,B,C$ is isomorphic to at least
one LR triple in Examples
\ref{ex:NBGq},
\ref{ex:NBGqEQ1}.
The result follows in view of Lemma
\ref{lem:NBGexist}(iii).
\end{proof}

\begin{lemma}
\label{lem:NBGrho}
Assume $d\geq 2$.
Let $A,B,C$ denote an LR triple in
${\rm NBG}_d(\mathbb F)$. Then for $0 \leq i \leq d-1$
the scalar
$\rho_i$ from
Definition
\ref{def:Rhoi} satisfies

     \bigskip

\centerline{
\begin{tabular}[t]{c|cc}
 {\rm case} 
 & 
 ${\rm NBG}_d(\mathbb F;q)$
 &
 ${\rm NBG}_d(\mathbb F;1)$
\\
 \hline
 $\rho_i$ &
$q^{2i-d+1}$
&
 $1$
   \\
     \end{tabular}}
     \bigskip

\end{lemma}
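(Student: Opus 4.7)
The plan is to verify each case by direct computation using the explicit formulas for $\varphi_i$ given in Examples \ref{ex:NBGq} and \ref{ex:NBGqEQ1}, together with the definition $\rho_i = \varphi_{i+1}/\varphi_{d-i}$ from Definition \ref{def:Rhoi}.

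For the case ${\rm NBG}_d(\mathbb F;q)$, I would substitute
\begin{eqnarray*}
\varphi_{i+1} = \frac{q(q^{i+1}-1)(q^{i-d}-1)}{(q-1)^2}, \qquad \qquad \varphi_{d-i} = \frac{q(q^{d-i}-1)(q^{-i-1}-1)}{(q-1)^2}
\end{eqnarray*}
into the ratio. The $q(q-1)^{-2}$ factors cancel, leaving
\begin{eqnarray*}
\rho_i = \frac{(q^{i+1}-1)(q^{i-d}-1)}{(q^{d-i}-1)(q^{-i-1}-1)}.
\end{eqnarray*}
The key move is then to rewrite $q^{i-d}-1 = -q^{i-d}(q^{d-i}-1)$ and $q^{-i-1}-1 = -q^{-i-1}(q^{i+1}-1)$, which makes the factors $(q^{d-i}-1)$ and $(q^{i+1}-1)$ cancel in a straightforward way, leaving $\rho_i = q^{i-d}/q^{-i-1} = q^{2i-d+1}$.

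For the case ${\rm NBG}_d(\mathbb F;1)$, I would substitute $\varphi_i = i(i-d-1)$ to get
\begin{eqnarray*}
\rho_i = \frac{(i+1)(i-d)}{(d-i)(-i-1)} = \frac{(i+1)(i-d)}{-(d-i)(i+1)} = 1,
\end{eqnarray*}
where in the last step one cancels $(i+1)$ and notes $i-d = -(d-i)$.

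There is no real obstacle here; the result is a bookkeeping verification directly from the defining formulas, and both cases reduce to a single line of cancellation. The only minor point of care is to ensure the $\varphi$ values appearing in the denominator are nonzero, which is guaranteed by the restrictions $q^i \neq 1$ $(1 \leq i \leq d)$ in Example \ref{ex:NBGq} and ${\rm Char}(\mathbb F) = 0$ or greater than $d$ in Example \ref{ex:NBGqEQ1}, both of which were already verified during the proof of Lemma \ref{lem:NBGexist}.
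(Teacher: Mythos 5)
Your proposal is correct and is exactly the approach the paper takes (the paper's proof consists of the single instruction to compute $\rho_i=\varphi_{i+1}/\varphi_{d-i}$ from the data in Examples \ref{ex:NBGq} and \ref{ex:NBGqEQ1}); your cancellations in both cases check out. No further comment is needed.
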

\begin{proof}
Compute
$\rho_i=\varphi_{i+1}/\varphi_{d-i}$ using
the data in
Examples
\ref{ex:NBGq},
\ref{ex:NBGqEQ1}.
\end{proof}

\section{The classification of LR triples in
${\rm NBNG}_d(\mathbb F)$ 
}

\noindent 
In this section we classify up to isomorphism the LR triples in
${\rm NBNG}_d(\mathbb F)$, for even $d\geq 4$. We first describe some
examples.

\begin{example}
\label{ex:NBNGdt}
\rm The LR triple 
${\rm NBNG}_d(\mathbb F; t)$
is over $\mathbb F$, diameter $d$, nonbipartite, normalized,
and satisfies
\begin{eqnarray*}
&& d\geq 4; \qquad \qquad 
{\mbox {\rm $d$ is even}};
\qquad \qquad 
0 \not=t \in \mathbb F;
\\
&&
t^i \not= 1 \quad (1 \leq i \leq d/2);
\qquad \qquad \quad 
t^{d+1}\not=1;
\\
&&
\varphi_i = \begin{cases}
 t^{i/2}-1  &  {\mbox{\rm if $i$ is even}}; \\
t^{(i-d-1)/2}-1 & {\mbox{\rm if $i$ is odd}}
\end{cases}
\qquad \qquad (1 \leq i \leq d).
\end{eqnarray*}
\end{example}
 

\begin{lemma}
\label{lem:NBNGpre}
For the LR triples in
Example \ref{ex:NBNGdt}, 
{\rm (i)} they exist;
{\rm (ii)} they are contained in 
${\rm NBNG}_d(\mathbb F)$;
{\rm (iii)} they are mutually nonisomorphic.
\end{lemma}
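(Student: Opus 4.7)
The plan is to handle the three assertions in turn, reusing the matrix-construction strategy of Lemma~\ref{lem:NBGexist}(i). For \textbf{(i)}, I would fix a vector space $V$ over $\mathbb F$ of dimension $d+1$ with basis $\lbrace v_i\rbrace_{i=0}^d$, and define $A,B,C\in {\rm End}(V)$ by the matrices in the first row of the table of Proposition~\ref{prop:matrixRep}, using the specified $\varphi_i$ together with $\varphi'_i=\varphi''_i=\varphi_i$ and $a_i=\varphi_{d-i+1}-\varphi_{d-i}$ for $0\le i\le d$. The hypotheses on $t$ guarantee every $\varphi_i$ is nonzero: for $i$ even the exponents $i/2$ run over $\lbrace 1,\ldots,d/2\rbrace$; for $i$ odd the exponents $(i-d-1)/2$ run over $\lbrace -d/2,\ldots,-1\rbrace$, and both ranges are excluded by $t^k\ne 1$ $(1\le k\le d/2)$. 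That $A,B$ is an LR pair on $V$ is immediate from the matrix shapes. To see that $B,C$ is an LR pair, I would build an invertible polynomial $\mathbb B=\sum_{i=0}^d\alpha_i B^i$ whose coefficients are determined recursively by Proposition~\ref{prop:AlphaRecursion}, and then show that $\lbrace \mathbb B V_{d-i}\rbrace_{i=0}^d$ is lowered by $B$ (trivial, since $\mathbb B$ commutes with $B$) and raised by $C$ (this is the part requiring the recursion identities). The pair $C,A$ is handled dually by a polynomial in $A$.

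For \textbf{(ii)}, the triple is over $\mathbb F$ and of diameter $d$ by construction, nonbipartite since $a_d=\varphi_1\neq 0$, and normalized since $\alpha_1=\alpha'_1=\alpha''_1=1$. It remains to check that $\lbrace\rho_i\rbrace_{i=0}^{d-1}$ is not geometric. I would compute $\rho_i=\varphi_{i+1}/\varphi_{d-i}$ by splitting on parity: for $i$ even, $\varphi_{i+1}=t^{(i-d)/2}-1$ and $\varphi_{d-i}=t^{(d-i)/2}-1$ yield $\rho_i=-t^{(i-d)/2}$; for $i$ odd, $\varphi_{i+1}=t^{(i+1)/2}-1$ and $\varphi_{d-i}=t^{-(i+1)/2}-1$ yield $\rho_i=-t^{(i+1)/2}$. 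The consecutive ratios $\rho_{i+1}/\rho_i$ then alternate between $t^{(d+2)/2}$ (when $i$ is even) and $t^{-d/2}$ (when $i$ is odd); these two quantities coincide iff $t^{d+1}=1$, which is excluded. Hence $\lbrace\rho_i\rbrace_{i=0}^{d-1}$ is not geometric, so $A,B,C\in{\rm NBNG}_d(\mathbb F)$. (As a sanity check, Lemma~\ref{lem:NBWisGeom} now gives $A,B,C\notin {\rm NBWeyl}_d(\mathbb F)$, consistent with the overview of Section~24.)

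For \textbf{(iii)}, the parameter $t$ is recoverable from the parameter array via $t=\varphi_2+1$, so distinct $t$ yield distinct parameter arrays. By Lemma~\ref{lem:NBnormIso}, a normalized nonbipartite LR triple is determined up to isomorphism by its parameter array, so the examples are mutually nonisomorphic. The main obstacle will be part \textbf{(i)}: verifying that the matrix-built $C$ truly pairs with $B$ (and with $A$) as an LR pair. In the NBG case the recursion of Proposition~\ref{prop:AlphaRecursion} produced clean geometric $\alpha_i$, so that the identities showing $C$ raises $\lbrace\mathbb B V_{d-i}\rbrace_{i=0}^d$ collapsed to a single closed form; for NBNG the $\alpha_i$ branch by parity of $i$, and the verification splits into two parity cases whose bookkeeping is the delicate point.
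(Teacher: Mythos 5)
Your proposal is correct and follows essentially the same route as the paper: part (i) by the matrix construction of Lemma \ref{lem:NBGexist}(i) with parity-dependent Toeplitz coefficients (the paper writes these out explicitly via $\alpha_{i-2}/\alpha_i = 1-t^{i/2}$ for $i$ even and $\alpha_i=\alpha_{i-1}$ for $i$ odd, which is what Proposition \ref{prop:AlphaRecursion} produces), part (ii) by computing $\rho_i=-t^{(i-d)/2}$ ($i$ even), $-t^{(i+1)/2}$ ($i$ odd) and using $t^{d+1}\neq 1$ to rule out geometricity, and part (iii) by distinctness of parameter arrays. No gaps.
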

\begin{proof} (i) Similar to the proof of
Lemma \ref{lem:NBGexist}(i), except that
the sequences
$\lbrace \alpha_i \rbrace_{i=0}^d$,
$\lbrace \beta_i \rbrace_{i=0}^d$ are now
defined as follows:
$\alpha_0 =1$, $\beta_0=1$ and for 
$1 \leq i \leq d$,
\begin{eqnarray*}
&&
\alpha_{i-2}/\alpha_i = 1-t^{i/2}, \qquad \qquad 
\beta_{i-2}/\beta_i = 1-t^{-i/2}
\qquad \qquad {\mbox {\rm (if $i$ is even),}}
\\
&&
\alpha_i = \alpha_{i-1},
\qquad \qquad 
\beta_i = -\beta_{i-1}
\qquad \qquad {\mbox {\rm (if $i$ is odd).}}
\end{eqnarray*}
\noindent (ii)
Let $A,B,C$ denote an LR triple listed in Example
\ref{ex:NBNGdt}.
 By assumption $A,B,C$ is over $\mathbb F$,
diameter $d$, nonbipartite, and normalized.
We check that the sequence 
$\lbrace \rho_i\rbrace_{i=0}^{d-1}$ from Definition
\ref{def:Rhoi}
is not geometric. 
For $0 \leq i \leq d-1$
the scalar $\rho_i = \varphi_{i+1}/\varphi_{d-i}$ is equal to
$-t^{(i-d)/2}$ (if $i$ is even) and
$-t^{(i+1)/2}$  (if $i$ is odd).
By this and since $t^{d+1}\not=1$, the sequence
$\lbrace \rho_i\rbrace_{i=0}^{d-1}$ is not geometric.
 By these comments 
  $A,B,C$ is contained in
${\rm NBNG}_d(\mathbb F)$.
\\
(iii) Similar to the proof of
Lemma \ref{lem:NBGexist}(iii).
\end{proof}

\begin{theorem} 
\label{thm:NBNG}
Assume that $d$ is even and at least 4.
Then each LR triple in
${\rm NBNG}_d(\mathbb F)$ is isomorphic to a unique
 LR triple listed in
Example \ref{ex:NBNGdt}.
\end{theorem}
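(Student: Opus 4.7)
The plan is to start from an arbitrary LR triple $A,B,C$ in ${\rm NBNG}_d(\mathbb F)$ and show that its parameter array matches that of ${\rm NBNG}_d(\mathbb F; t)$ for a unique $t \in \mathbb F$. By definition $A,B,C$ is nonbipartite, normalized, not in ${\rm NBWeyl}_d(\mathbb F)$, and its sequence $\lbrace \rho_i \rbrace_{i=0}^{d-1}$ from Definition \ref{def:Rhoi} is not geometric. Proposition \ref{lem:NBCon6} gives that this sequence is constrained, and Lemma \ref{lem:NGd4} then forces $d-1$ to be odd and at least $3$, matching the hypothesis $d\geq 4$ even. Proposition \ref{prop:noddPre}(iii) applied with $n = d-1$ will yield nonzero $s, \xi \in \mathbb F$ with
\begin{eqnarray*}
\rho_i = \xi s^{i/2} \quad (i \text{ even}), \qquad \rho_i = \xi^{-1} s^{(i-d+1)/2} \quad (i \text{ odd}),
\end{eqnarray*}
and Lemma \ref{lem:ConGeo} combined with non-geometricity gives $\xi^4 s^{d-1} \neq 1$. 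I set $t := s$ and aim to identify $\xi$ with $-t^{-d/2}$.

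Next I pin down $\alpha_2$ and $\beta_2$ in terms of $t$. Normalization and Lemma \ref{lem:Nbeta} give $\alpha_1 = 1$ and $\beta_1 = -1$, and the identity $\beta_2 = \alpha_1^2 - \alpha_2$ (from the remarks above Lemma \ref{lem:reform1}) gives $\alpha_2 + \beta_2 = 1$. Proposition \ref{prop:LCbasis}(ii) describes the linear constraint space of $\lbrace \rho_i \rbrace$ as one-dimensional with basis $(s, 0, -1)$, and by Lemma \ref{lem:NBCon5} the triple $(a, b, c)$ of Definition \ref{def:NBCon2} lies in this space. Expanding $a,b,c$ via Definition \ref{def:NBCon2}, the condition $b = 0$ yields $\alpha_3 = \alpha_2$ and $\beta_3 = -\beta_2$, while $a = -cs$ gives $\beta_2/\alpha_2 = -t$. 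Combined with $\alpha_2 + \beta_2 = 1$ this gives $\alpha_2(1-t) = 1$, which forces $t \neq 1$ and $\alpha_2 = 1/(1-t)$, $\beta_2 = -t/(1-t)$.

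Substituting into Lemma \ref{lem:NBCon1} then produces the inhomogeneous three-term recurrence
\begin{eqnarray*}
\varphi_{i+2} - (1-t)\varphi_{i+1} - t \varphi_i = (1-t)\rho_i
\qquad (0 \leq i \leq d-1).
\end{eqnarray*}
Its homogeneous part factors as $(x-1)(x+t)$, so I will look for solutions of the form $\varphi_i = P + Q_e t^{i/2}$ for even $i$ and $\varphi_i = P + Q_o t^{(i-d-1)/2}$ for odd $i$. Substituting into the recurrence (and using $1-(1-t)-t = 0$ to collapse the constant contribution) will give $Q_o = -\xi t^{d/2}$ and $Q_e = -\xi^{-1} t^{-d/2}$, while the boundary conditions $\varphi_0 = 0 = \varphi_{d+1}$ force $P = -Q_e = -Q_o$, equivalent to $\xi^2 = t^{-d}$. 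With $\xi = -t^{-d/2}$ one recovers $P = -1$ and the parameter array
\begin{eqnarray*}
\varphi_i = t^{i/2} - 1 \quad (i \text{ even}), \qquad \varphi_i = t^{(i-d-1)/2} - 1 \quad (i \text{ odd}),
\end{eqnarray*}
matching Example \ref{ex:NBNGdt}. The requirements $t^i \neq 1$ for $1 \leq i \leq d/2$ follow from $\varphi_i \neq 0$, and $t^{d+1} \neq 1$ translates $\xi^4 s^{d-1} \neq 1$. Lemma \ref{lem:NBnormIso} will then yield $A,B,C \cong {\rm NBNG}_d(\mathbb F; t)$, and uniqueness of $t$ follows from Lemma \ref{lem:NBNGpre}(iii).

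The main obstacle will be justifying the sign $\xi = -t^{-d/2}$ rather than $\xi = +t^{-d/2}$. The relation $\xi^2 = t^{-d}$ admits both roots, and each leads formally (via the same ansatz) to a candidate parameter array, the two being negatives of each other. The false branch must be eliminated by showing it is incompatible with normalization: a direct computation via Lemma \ref{lem:alphaOneOK}(iii) applied to the candidate $\varphi_i = 1 - t^{i/2}$ and $\varphi_i = 1 - t^{(i-d-1)/2}$ yields $\alpha_1^2 = -1$, contradicting $\alpha_1 = 1$. Careful parity-dependent bookkeeping throughout the recurrence analysis is the principal computational task.
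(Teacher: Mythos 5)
Your overall route is the paper's: show the sequence $\lbrace\rho_i\rbrace_{i=0}^{d-1}$ is constrained and non-geometric, extract $t$ from Proposition \ref{prop:noddPre}, solve a recurrence for $\lbrace\varphi_i\rbrace$, and match Example \ref{ex:NBNGdt}. Your early use of Proposition \ref{prop:LCbasis}(ii) together with Definition \ref{def:NBCon2} to get $b=0$, $a=-cs$, hence $\alpha_3=\alpha_2$, $\beta_3=-\beta_2$, $\beta_2=-t\alpha_2$, and then $t\neq 1$ with $\alpha_2=1/(1-t)$ directly from $\alpha_2+\beta_2=1$, is correct and actually tidier than the paper, which establishes $t\neq 1$ by a separate contradiction and only identifies $q=\beta_2/\alpha_2=-t$ at the very end.

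However, there is a genuine gap at the central step. The inhomogeneous recurrence $\varphi_{i+2}-(1-t)\varphi_{i+1}-t\varphi_i=(1-t)\rho_i$ has characteristic roots $1$ and $-t$, so its solution space is a $2$-parameter affine family: (particular solution) $+\,c_1+c_2(-t)^i$. Your ansatz $\varphi_i=P+Q_et^{i/2}$ (even), $P+Q_ot^{(i-d-1)/2}$ (odd) carries only the constant homogeneous mode $c_1=P$; once the recurrence forces $Q_e=-\xi^{-1}t^{-d/2}$ and $Q_o=-\xi t^{d/2}$, your family is one-dimensional and omits the $(-t)^i$ mode entirely. Consequently you cannot conclude that the actual $\lbrace\varphi_i\rbrace$ lies in this family, and the derivation of $\xi^2=t^{-d}$ from the two boundary conditions is circular: if $\xi^2\neq t^{-d}$ the true solution would simply have a nonzero $(-t)^i$ component and still satisfy $\varphi_0=0=\varphi_{d+1}$. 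The fix is exactly what the paper does: from $b=0$, $a=-tc$ and $c=-\alpha_2\neq 0$, Lemma \ref{lem:NBCon4} yields the \emph{homogeneous} relation $t(\varphi_{i-2}-\varphi_{i-1})=\varphi_i-\varphi_{i+1}$ $(2\leq i\leq d)$, whose characteristic polynomial is $(x-1)(x^2-t)$; its solution space is precisely the three-parameter family $H+Kt^{i/2}$ (even), $H+Lt^{(i-d-1)/2}$ (odd), so matching at $i=0,1,2$ (using $t\neq 1$) and inducting gives the form, after which $\varphi_0=0=\varphi_{d+1}$ force $K=L=-H$ and $\rho_0=\varphi_1/\varphi_d=-t^{-d/2}$ with no sign ambiguity. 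Your remaining worry about $\xi=\pm t^{-d/2}$ then reduces to determining the overall scale $H$, which is most cleanly done by evaluating your inhomogeneous recurrence at $i=0$ with the known $\alpha_2,\beta_2$ (giving $H=-1$), rather than by the detour through Lemma \ref{lem:alphaOneOK}(iii) — though that detour does also work since the coefficient of $\alpha_1^2$ in (\ref{eq:DG2OK}) equals $t^{-d/2}(t^{d+1}-1)/(t-1)\neq 0$ here.
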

\begin{proof}
Let $A,B,C$ denote an LR triple in
${\rm NBNG}_d(\mathbb F)$, with
parameter array
(\ref{eq:paLRT}) and Toeplitz data
(\ref{eq:ToeplitzData}).
Recall the sequence
$\lbrace \rho_i \rbrace_{i=0}^{d-1}$ from
Definition
\ref{def:Rhoi}. This sequence is
constrained by
Proposition 
\ref{lem:NBCon6}, so by
Proposition \ref{prop:noddPre} there exists $0 \not=t \in \mathbb F$ 
such that
\begin{eqnarray}
&&
\label{eq:gammaNew}
\rho_i = \begin{cases}
\rho_0 t^{i/2}  &  {\mbox{\rm if $i$ is even}}; \\
\rho_0^{-1}t^{(i-d+1)/2} & {\mbox{\rm if $i$ is odd}}
\end{cases}
\qquad \qquad (0 \leq i \leq d-1).
\end{eqnarray}
By assumption 
$\lbrace \rho_i \rbrace_{i=0}^{d-1}$ is not geometric,
so by 
Lemma
\ref{lem:ConGeo}(iv),
\begin{eqnarray}
 \rho_0^4 \not=t^{1-d}.
\label{eq:gamma4}
\label{eq:x4}
\end{eqnarray}
We claim that
\begin{equation}
t (\varphi_{i-2}-\varphi_{i-1}) = \varphi_i - \varphi_{i+1}
\qquad \qquad (2 \leq i \leq d).
\label{eq:extrarecNG}
\end{equation}
To prove the claim, consider the
scalars
$a,b,c$ from 
Definition
\ref{def:NBCon2}.
By Lemma
\ref{lem:NBCon5}
the 3-tuple $(a,b,c)$ is a linear
constraint for 
$\lbrace \rho_i \rbrace_{i=0}^{d-1}$ in the sense of
Definition
\ref{def:lincon}.
Now using
Definition
\ref{def:LC}
and
Proposition
\ref{prop:LCbasis}(ii) we obtain
$a=-tc$ and $b=0$.
The claim follows from this and
 Lemma
\ref{lem:NBCon4}.
We show that $t\not=1$. Suppose $t=1$. 
By 
(\ref{eq:extrarecNG}), for $2 \leq i \leq d$ we have  
\begin{eqnarray}
\varphi_{i-2}-\varphi_{i-1} = \varphi_i - \varphi_{i+1}.
\label{eq:tOne}
\end{eqnarray}
Sum 
(\ref{eq:tOne}) over $i=2,3,\ldots, d$ and use
$\varphi_0= 0= \varphi_{d+1}$ to obtain
$-\varphi_{d-1} = \varphi_{2}$. Set $i=d-2$ in
(\ref{eq:mainrec2}) and use
(\ref{eq:gammaNew}) with $t=1$ 
to find
$\rho_0 = \varphi_{d-1}/\varphi_{2}=-1$,
which contradicts 
(\ref{eq:gamma4}). 
We have shown $t \not=1$.
There exist $H,K,L \in \mathbb F$ such that
for $i=0,1,2$,
\begin{eqnarray}
\varphi_i = \begin{cases}
H+Kt^{i/2} &  {\mbox{\rm if $i$ is even}}; \\
H+Lt^{(i-d-1)/2} & {\mbox{\rm if $i$ is odd}}.
\end{cases}
\label{eq:gentNG012}
\end{eqnarray}
By 
(\ref{eq:extrarecNG})
and induction on $i$,
(\ref{eq:gentNG012}) holds for $0 \leq i \leq d+1$.
Using $\varphi_0 = 0$ and $\varphi_{d+1}=0$,
we obtain
$H+K=0$ and $H+L=0$.
Now 
(\ref{eq:gentNG012})
becomes
\begin{eqnarray}
\varphi_i = \begin{cases}
H(1-t^{i/2}) &  {\mbox{\rm if $i$ is even}}; \\
H(1-t^{(i-d-1)/2}) & {\mbox{\rm if $i$ is odd}}
\end{cases}
\qquad \qquad (1 \leq i \leq d).
\label{eq:gentNGall}
\end{eqnarray}
The scalars  $\lbrace \varphi_i\rbrace_{i=1}^d$
are nonzero. Consequently $H\not=0$, and
$t^i \not=1$ for $1 \leq i \leq d/2$.
Evaluating $\rho_0 = \varphi_1/\varphi_d$ using
(\ref{eq:gentNGall}) we obtain
$\rho_0 = -t^{-d/2}$. 
Now 
(\ref{eq:gammaNew}) becomes
\begin{eqnarray}
\label{eq:needthisRHO}
\rho_i = \begin{cases}
 -t^{(i-d)/2}  &  {\mbox{\rm if $i$ is even}}; \\
-t^{(i+1)/2} & {\mbox{\rm if $i$ is odd}}
\end{cases}
\qquad \qquad (0 \leq i \leq d-1),
\end{eqnarray}
and 
(\ref{eq:gamma4}) becomes $t^{d+1}\not=1$.
We show $H=-1$.
As in the proof of
Theorem
\ref{thm:NBG}, 
there exists $q \in \mathbb F \cup \lbrace \infty \rbrace $ such that
$q \not=-1$ and
\begin{eqnarray}
\label{eq:mainrecREP}
\rho_i = \frac{q \varphi_i-(q+1)\varphi_{i+1} +\varphi_{i+2}}{q+1}
\qquad \qquad (0 \leq i \leq d-1).
\end{eqnarray}
Evaluate the recursion 
(\ref{eq:mainrecREP}) using
(\ref{eq:gentNGall}),
(\ref{eq:needthisRHO}).
For $i$ even
this 
gives
\begin{eqnarray}
1+H^{-1} = \frac{q+t}{q+1}t^{d/2},
\label{eq:qtRel1}
\end{eqnarray}
and for $i$ odd this gives
\begin{eqnarray}
1+H^{-1} = \frac{q+t}{q+1}t^{-d/2-1}.
\label{eq:qtRel2}
\end{eqnarray}
Combining
(\ref{eq:qtRel1}),
(\ref{eq:qtRel2}) we obtain
\begin{eqnarray*}
\frac{(q+t)(1-t^{d+1})}{q+1} = 0.
\end{eqnarray*}
But $t^{d+1}\not=1$,
so $q=-t$, and therefore 
$H=-1$ in view of
(\ref{eq:qtRel1}).
Setting $H=-1$ in (\ref{eq:gentNGall}) we obtain
\begin{eqnarray*}
\varphi_i = \begin{cases}
 t^{i/2}-1  &  {\mbox{\rm if $i$ is even}}; \\
t^{(i-d-1)/2}-1 & {\mbox{\rm if $i$ is odd}}
\end{cases}
\qquad \qquad (1 \leq i \leq d).
\end{eqnarray*}
We have met the requirements of Example
\ref{ex:NBNGdt}, so
$A,B,C$ is isomorphic to 
${\rm NBNG}_d(\mathbb F; t)$.
We have shown that $A,B,C$ is isomorphic to
at least one LR triple in
Example \ref{ex:NBNGdt}. The result follows
in view of Lemma
\ref{lem:NBNGpre}(iii).
\end{proof}

\noindent We record a fact from the proof of Theorem
\ref{thm:NBNG}.

\begin{lemma}
Assume that $d$ is even and at least $4$. Let $A,B,C$ denote an LR triple in
${\rm NBNG}_d(\mathbb F)$.
Then for $0 \leq i \leq d-1$ the
scalar $\rho_i $ from Definition
\ref{def:Rhoi}
satisfies
\begin{eqnarray*}
\rho_i = \begin{cases}
 -t^{(i-d)/2}  &  {\mbox{\rm if $i$ is even}}; \\
-t^{(i+1)/2} & {\mbox{\rm if $i$ is odd}}.
\end{cases}
\end{eqnarray*}
\end{lemma}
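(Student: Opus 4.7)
The plan is to simply extract this as a corollary of what was already established in the proof of Theorem \ref{thm:NBNG}. Indeed, inside that proof the author derived the formula
\[
\rho_i = \begin{cases} -t^{(i-d)/2} & \text{if $i$ is even,} \\ -t^{(i+1)/2} & \text{if $i$ is odd} \end{cases}
\]
at line (\ref{eq:needthisRHO}), obtained by combining the generic expression (\ref{eq:gammaNew}) for $\rho_i$ with the evaluation $\rho_0 = \varphi_1/\varphi_d = -t^{-d/2}$ coming from (\ref{eq:gentNGall}). So the first part of the plan is to cite Theorem \ref{thm:NBNG} to conclude that $A,B,C$ is isomorphic to ${\rm NBNG}_d(\mathbb F;t)$ for a unique $t$, and then to invoke (\ref{eq:needthisRHO}) directly.

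Alternatively, one can give a short self-contained verification: by Theorem \ref{thm:NBNG} and Example \ref{ex:NBNGdt}, the LR triple $A,B,C$ has parameter sequence
\[
\varphi_i = \begin{cases} t^{i/2}-1 & \text{if $i$ is even,} \\ t^{(i-d-1)/2}-1 & \text{if $i$ is odd} \end{cases}
\qquad (1 \leq i \leq d),
\]
and $\rho_i = \varphi_{i+1}/\varphi_{d-i}$ by Definition \ref{def:Rhoi}. Since $d$ is even, for $i$ even one has $i+1$ odd and $d-i$ even, so
\[
\rho_i = \frac{t^{(i-d)/2}-1}{t^{(d-i)/2}-1} = -t^{(i-d)/2},
\]
after multiplying numerator and denominator by $t^{(i-d)/2}$. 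For $i$ odd one has $i+1$ even and $d-i$ odd, so
\[
\rho_i = \frac{t^{(i+1)/2}-1}{t^{-(i+1)/2}-1} = -t^{(i+1)/2},
\]
after multiplying numerator and denominator by $t^{(i+1)/2}$.

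There is no real obstacle: the computation is a two-line simplification of a ratio of differences of powers of $t$, and the only subtlety is keeping track of the parity cases for $i+1$ and $d-i$ (which is straightforward since $d$ is assumed even). The main decision is stylistic: whether to present this as a pure corollary of equation (\ref{eq:needthisRHO}) already recorded inside the proof of Theorem \ref{thm:NBNG}, or to redo the short computation from Example \ref{ex:NBNGdt}. Given the author's remark that this is a fact recorded from the proof of Theorem \ref{thm:NBNG}, the former is cleaner and only one or two lines long.
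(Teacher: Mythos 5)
Your proposal is correct and its primary route is exactly what the paper does: the lemma is stated as a fact recorded from the proof of Theorem \ref{thm:NBNG}, namely equation (\ref{eq:needthisRHO}). Your alternative direct computation of $\rho_i=\varphi_{i+1}/\varphi_{d-i}$ from Example \ref{ex:NBNGdt} also checks out (in each parity case the ratio has the form $(t^a-1)/(t^{-a}-1)=-t^a$), so either presentation is fine.
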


\section{The classification of LR triples in ${\rm B}_d(\mathbb F)$}

\noindent 
In this section we classify up to isomorphism the LR triples in
${\rm B}_d(\mathbb F)$, for even $d\geq 2$.
We first describe some examples.

\begin{example}
\label{ex:BIP}
\rm
The LR triple 
${\rm B}_d(\mathbb F;t,\rho_0, \rho'_0, \rho''_0)$
is over $\mathbb F$, diameter $d$, bipartite,
normalized, and satisfies
\begin{eqnarray*}
&&  d\geq 4; \qquad \quad 
{\mbox {\rm $d$ is even}};
\qquad \quad 
0 \not=t \in\mathbb F; 
\qquad \quad t^i \not= 1 \quad (1 \leq i \leq d/2);
\\
&&
\rho_0, \rho'_0, \rho''_0 \in \mathbb F;
\qquad \qquad  \quad 
\rho_0 \rho'_0 \rho''_0 = -t^{1-d/2};
\\
&&\varphi_i = \begin{cases}
\rho_0 \frac{1-t^{i/2}}{1-t}  &  {\mbox{\rm if $i$ is even}}; \\
\frac{t}{\rho_0} \frac{1-t^{(i-d-1)/2}}{1-t} & {\mbox{\rm if $i$ is odd}}
\end{cases}
\qquad \qquad (1 \leq i \leq d);
\\
&&\varphi'_i = \begin{cases}
\rho'_0 \frac{1-t^{i/2}}{1-t}  &  {\mbox{\rm if $i$ is even}}; \\
\frac{t}{\rho'_0} \frac{1-t^{(i-d-1)/2}}{1-t} & {\mbox{\rm if $i$ is odd}}
\end{cases}
\qquad \qquad (1 \leq i \leq d);
\\
&&\varphi''_i = \begin{cases}
\rho''_0 \frac{1-t^{i/2}}{1-t}  &  {\mbox{\rm if $i$ is even}}; \\
\frac{t}{\rho''_0} \frac{1-t^{(i-d-1)/2}}{1-t} & {\mbox{\rm if $i$ is odd}}
\end{cases}
\qquad \qquad (1 \leq i \leq d).
\end{eqnarray*}
\end{example}

\begin{example}
\label{ex:BipONE}
\rm
The LR triple
${\rm B}_d(\mathbb F;1, \rho_0, \rho'_0, \rho''_0)$
is over $\mathbb F$, diameter $d$, bipartite,
normalized, and satisfies
\begin{eqnarray*}
&&  d\geq 4; \quad \qquad 
{\mbox {\rm $d$ is even}};
\qquad \quad 
{\mbox {\rm ${\rm Char}(\mathbb F)$ is 0 or greater than $d/2$}};
\\
&& 
\rho_0, \rho'_0, \rho''_0 \in \mathbb F;
\qquad \qquad \qquad 
\rho_0 \rho'_0 \rho''_0 = -1;
\\&&
\varphi_i = \begin{cases}
\frac{i \rho_0 }{2}  &  {\mbox{\rm if $i$ is even}}; \\
 \frac{i-d-1}{2\rho_0} & {\mbox{\rm if $i$ is odd}}
\end{cases}
\qquad \qquad (1 \leq i \leq d);
\\
&&\varphi'_i = \begin{cases}
\frac{i  \rho'_0 }{2}  &  {\mbox{\rm if $i$ is even}}; \\
\frac{i-d-1}{2 \rho'_0} & {\mbox{\rm if $i$ is odd}}
\end{cases}
\qquad \qquad (1 \leq i \leq d);
\\
&&\varphi''_i = \begin{cases}
 \frac{i \rho''_0 }{2}  &  {\mbox{\rm if $i$ is even}}; \\
\frac{i-d-1}{2\rho''_0} & {\mbox{\rm if $i$ is odd}}
\end{cases}
\qquad \qquad (1 \leq i \leq d).
\end{eqnarray*}
\end{example}

\begin{example}
\label{ex:BipONEd2}
\rm
The LR triple
${\rm B}_2(\mathbb F;\rho_0, \rho'_0, \rho''_0)$
is over $\mathbb F$, diameter $2$, bipartite,
normalized, and satisfies
\begin{eqnarray*}
&&  
\rho_0, \rho'_0, \rho''_0 \in \mathbb F;
\qquad \qquad \qquad 
\rho_0 \rho'_0 \rho''_0 = -1;
\\&&
\varphi_1=-1/ \rho_0,
\qquad \quad \varphi'_1=-1/ \rho'_0,
\qquad \quad 
\varphi''_1=-1/ \rho''_0,
\\&&
\varphi_2= \rho_0,
\qquad \qquad \quad  \varphi'_2=\rho'_0,
\qquad \qquad  \quad 
\varphi''_2=\rho''_0.
\end{eqnarray*}
\end{example}


\begin{lemma}
\label{lem:BIPpre}
For the LR triples in
Examples
\ref{ex:BIP}--\ref{ex:BipONEd2},
{\rm (i)} they exist;
{\rm (ii)} they are contained in 
${\rm B}_d(\mathbb F)$;
{\rm (iii)} they are mutually nonisomorphic.
\end{lemma}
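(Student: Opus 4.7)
The plan is to follow the pattern of Lemma \ref{lem:NBGexist} and Lemma \ref{lem:NBNGpre}, adapted to the bipartite setting and handled for all three examples in parallel.

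For (i), I would fix a $(d+1)$-dimensional vector space $V$ over $\mathbb F$ with basis $\{v_i\}_{i=0}^d$, and for each example define $A,B,C\in{\rm End}(V)$ via the matrices of the first row of Proposition \ref{prop:matrixRep}, using the prescribed parameter array together with the trace data $a_i=a'_i=a''_i=0$ (as forced by bipartiteness, Definition \ref{def:LRTbip}). That $A,B$ is an LR pair is immediate from Lemma \ref{lem:ABmatrix} applied to the decomposition induced by $\{v_i\}$. To verify that $B,C$ is an LR pair, I would construct a sequence $\{\alpha''_i\}_{i=0}^d$ from the recursion of Proposition \ref{prop:AlphaRecursion} (with the odd-index terms zero, consistent with Lemma \ref{lem:case}) and form $\mathbb B=\sum_i \alpha''_i B^i$. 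One then checks, exactly as in Lemma \ref{lem:NBGexist}(i), that $\mathbb B$ is invertible (its matrix in the $(B,A)$-basis is lower triangular with diagonal entries $1$) and that $\mathbb B$ carries the $(B,A)$-decomposition to a decomposition of $V$ which is lowered by $B$ and raised by $C$. Symmetrically, a polynomial $\mathbb A^{\S}$ in $A$ built from the analogous coefficients $\{\beta'_i\}$ witnesses that $C,A$ is an LR pair.

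For (ii), once $A,B,C$ is known to be an LR triple the trace data is zero by construction, so the triple is bipartite. To confirm normalization I would check, using Lemma \ref{lem:Balpha2Inv}, that $\alpha_2/(\varphi_1\varphi_2)=\alpha'_2/(\varphi'_1\varphi'_2)=\alpha''_2/(\varphi''_1\varphi''_2)$ and then compute the common value $\varphi_1\varphi_2$ explicitly from each prescribed parameter array. In Example \ref{ex:BIP} one finds $\varphi_1\varphi_2 = t(1-t^{-d/2})/(1-t)$ (and likewise for the primed quantities using $\rho_0\rho'_0\rho''_0=-t^{1-d/2}$); in Example \ref{ex:BipONE} one gets $\varphi_1\varphi_2 = -(d+1)/4$ (and likewise), using $\rho_0\rho'_0\rho''_0=-1$; in Example \ref{ex:BipONEd2} one has $\varphi_1\varphi_2=-1$. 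In each case, these common values combined with the explicit formula for $\alpha_2$ obtained by the diameter-2 eigenvalue computation of Lemma \ref{lem:alphaOneOK}(ii) applied to the LR triple on $V_{\rm out}$ (via Lemma \ref{lem:A2B2C2Out}) force $\alpha_2 = \alpha'_2 = \alpha''_2 = 1$, i.e.\ the triples are normalized (Definition \ref{def:BNorm}).

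For (iii), by Lemma \ref{lem:UNIQUE} a bipartite LR triple is determined up to isomorphism by its parameter array, so it suffices to show that the listed data yield mutually distinct parameter arrays. Within each example the data is recovered from the array: in Examples \ref{ex:BIP} and \ref{ex:BipONEd2} one reads off $\rho_0=\varphi_2$, $\rho'_0=\varphi'_2$, $\rho''_0=\varphi''_2$, and in Example \ref{ex:BIP} one further recovers $t$ from $\varphi_4/\varphi_2 = 1+t$; in Example \ref{ex:BipONE} one recovers $\rho_0=2\varphi_2/2$, etc. Across families, Examples \ref{ex:BIP} and \ref{ex:BipONE} are distinguished by whether $\{\varphi_{2j}\}$ grows geometrically or arithmetically in $j$, while Example \ref{ex:BipONEd2} is distinguished by diameter $d=2$ versus $d\geq 4$.

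The main obstacle is the computational verification in (ii): showing that the prescribed formulas in Examples \ref{ex:BIP}--\ref{ex:BipONEd2} actually force $\alpha_2=\alpha'_2=\alpha''_2=1$ rather than some other common nonzero value. The bipartite recursion in Proposition \ref{prop:AlphaRecursion} only determines $\{\alpha_i\}$ from the parameter array together with an initial value of $\alpha_2$, so one must appeal separately to the eigenvalue computation for the nonbipartite LR triple induced on $V_{\rm out}$ (via Lemmas \ref{lem:A2B2C2Out} and \ref{lem:NormHalf2}) to pin down this initial value, and then check the resulting algebraic identity in each case.
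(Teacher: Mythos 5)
Your overall architecture matches the paper's: part (i) is the same construction as in Lemma \ref{lem:NBGexist}(i) with zero trace data and modified Toeplitz sequences (the paper writes these sequences down explicitly, with $\alpha_{i-2}/\alpha_i=\sum_{k=0}^{i/2-1}t^k$ for even $i$ and $\alpha_i=0$ for odd $i$, rather than citing Proposition \ref{prop:AlphaRecursion}, which avoids the circularity of invoking a proposition about an LR triple before the triple is known to exist); part (iii) is, as in the paper, just Lemma \ref{lem:UNIQUE} plus distinctness of the parameter arrays. The paper also quietly reduces $d=2$ to Lemma \ref{lem:dtwoClassB} at the outset, and disposes of (ii) with ``by construction,'' since the construction in (i) bakes in $\alpha_2=\alpha_0=1$.

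The genuine gap is in your verification of normalization in (ii). Lemma \ref{lem:alphaOneOK}, applied to the induced LR triple on $V_{\rm out}$, only determines $\alpha_2^{\,2}$ from the parameter array; it cannot distinguish $\alpha_2=1$ from $\alpha_2=-1$, and these are genuinely different possibilities (by Lemma \ref{lem:PAnewLRT}(iii), flipping the sign of $\alpha_2$ is achieved by a biassociate with a different parameter array, so the sign is a real invariant you must pin down). Worse, for the relevant parameter arrays the coefficient of $\alpha_1^2$ in \eqref{eq:DG2OK}--\eqref{eq:dLarge} may degenerate, as happens in the Weyl-type cases. The repair is short: since the trace data vanishes, Corollary \ref{lem:a0ad} gives $\alpha_1=0$, and then either Proposition \ref{prop:AlphaRecursion}(i) with $i=1$, or equivalently the first display of Proposition \ref{prop:goodrec} with $i=1$ and $\alpha_1=\beta_1=0$, yields the sign-unambiguous formula $\alpha_2=\varphi_1/(\varphi'_d\varphi''_2)$. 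For Example \ref{ex:BIP} this equals $-t^{1-d/2}/(\rho_0\rho'_0\rho''_0)=1$ by the constraint $\rho_0\rho'_0\rho''_0=-t^{1-d/2}$, and similarly in the other two cases. (Your use of Lemma \ref{lem:Balpha2Inv} to get $\alpha_2=\alpha'_2=\alpha''_2$ from $\varphi_1\varphi_2=\varphi'_1\varphi'_2=\varphi''_1\varphi''_2$ is fine, though note the common value in Example \ref{ex:BipONE} is $-d/2$, not $-(d+1)/4$.)
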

\begin{proof} Without loss we may assume $d\geq 4$,
since for $d=2$ the result follows
from Lemma
\ref{lem:dtwoClassB}. 
\\
\noindent 
(i) 
In Example 
\ref{ex:BIP} we see a parameter $t \in \mathbb F$.
For Example
\ref{ex:BipONE} define $t=1$.
We proceed as in 
the proof of
Lemma \ref{lem:NBGexist}(i), except that
now $a_i = 0 $ for $0 \leq i \leq d$ and
the sequences
$\lbrace \alpha_i \rbrace_{i=0}^d$,
$\lbrace \beta_i \rbrace_{i=0}^d$ are
defined as follows:
$\alpha_0 =1$, $\beta_0=1$ and for 
$1 \leq i \leq d$,
\begin{eqnarray*}
&&
\alpha_{i-2}/\alpha_i = \sum_{k=0}^{i/2-1} t^k
\qquad \qquad 
\beta_{i-2}/\beta_i =  
-\sum_{k=0}^{i/2-1} t^{-k}
\qquad \qquad {\mbox {\rm (if $i$ is even),}}
\\
&&
\alpha_i = 0,
\qquad \qquad 
\beta_i = 0
\qquad \qquad {\mbox {\rm (if $i$ is odd).}}
\end{eqnarray*}
\noindent (ii) By construction.
\\
(iii) Similar to the proof of
Lemma \ref{lem:NBGexist}(iii).
\end{proof}

\begin{theorem}
\label{thm:BIPclass}
Assume that $d$ is even and at least 2. Then each
LR triple in
${\rm B}_d(\mathbb F)$ is isomorphic to 
a unique LR triple listed in
Examples
\ref{ex:BIP}--\ref{ex:BipONEd2}.
\end{theorem}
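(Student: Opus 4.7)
The case $d=2$ follows immediately from Lemma \ref{lem:dtwoClassB}, whose bijection realizes Example \ref{ex:BipONEd2}, so assume $d \ge 4$ and let $A,B,C$ be a normalized bipartite LR triple of diameter $d$, which is in particular equitable. Definition \ref{def:RHOD} then supplies the three sequences $\{\rho_i\}, \{\rho'_i\}, \{\rho''_i\}$ of length $d$; by Proposition \ref{lem:3seqCon} they are all constrained, and by Lemma \ref{lem:BipRRR4} they share the common linear recurrence $\alpha_0 \beta_4\, \rho_{i-1} = \alpha_4 \beta_0\, \rho_{i+1}$, which makes sense since $\alpha_4, \beta_4 \ne 0$ by Lemma \ref{lem:case}.

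The plan is to apply Proposition \ref{prop:noddPre} to each sequence (legal because $n = d-1$ is odd). The shared recurrence forces a common value of the scalar $s$ across the three sequences; call it $t$. By Lemma \ref{lem:RhoRelBip}(iii) the three sequences then take the forms
\begin{eqnarray*}
\rho_i = \rho_0\, t^{i/2} \;\; (i \text{ even}), \qquad \rho_i = \rho_0^{-1}\, t^{(i-d+1)/2} \;\; (i \text{ odd}),
\end{eqnarray*}
and analogously for $\rho'_i, \rho''_i$ with $\rho_0$ replaced by $\rho'_0, \rho''_0$. Next, Lemma \ref{lem:RhoRelBip}(i) lets me introduce scalars $\{k_i\}_{i=1}^d$ such that $\varphi_i = \rho_0 k_i$, $\varphi'_i = \rho'_0 k_i$, $\varphi''_i = \rho''_0 k_i$ for $i$ even, with the analogous reciprocal normalization for $i$ odd. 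Substituting these into the defining identity $\rho_i = \varphi'_{i+1}/\varphi''_{d-i}$ converts the parameterization of the $\rho$'s into recursions for the ratios $k_{i+1}/k_{d-i}$ in terms of $t$ and $P := \rho_0 \rho'_0 \rho''_0$.

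The final ingredient is the normalization $\alpha_2 = 1$. Feeding it through Proposition \ref{prop:abI} specialized to the bipartite case (where $\alpha_1 = 0$) yields $\varphi''_2 = \varphi_1/\varphi'_d$, equivalently $k_2 = 1$, which together with the ratio recursion pins down $P$ in terms of $t$ and $d$. Splitting on whether $t \ne 1$ or $t = 1$, the recursions solve explicitly: for $t \ne 1$ one obtains $k_i = (1-t^{i/2})/(1-t)$ for $i$ even and $k_i = t(1-t^{(i-d-1)/2})/(1-t)$ for $i$ odd, together with $P = -t^{1-d/2}$, producing Example \ref{ex:BIP}; for $t = 1$ one obtains the arithmetic analogue $k_i = i/2$ for $i$ even and $k_i = (i-d-1)/2$ for $i$ odd, together with $P = -1$, producing Example \ref{ex:BipONE}. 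The characteristic restriction in the latter case is forced by the requirement that all $k_i$ be nonzero. Uniqueness of the isomorphism class is Lemma \ref{lem:UNIQUE}, and mutual nonisomorphism within the list is Lemma \ref{lem:BIPpre}(iii).

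The main obstacle I anticipate is carrying out the ratio recursion cleanly to show that $P = -t^{1-d/2}$ (respectively $P=-1$) follows from $k_2 = 1$ alone, and then verifying that all the nonvanishing conditions displayed in Examples \ref{ex:BIP}--\ref{ex:BipONE} ($t^i \ne 1$ for $1 \le i \le d/2$, and the characteristic hypothesis, respectively) are exactly what is needed for the $\varphi_i$ to form a valid parameter array. A secondary subtlety is absorbing the geometric subcase of Lemma \ref{lem:ConGeo} into the same parameterization, rather than having to treat it as a separate branch.
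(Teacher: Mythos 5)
Your overall architecture matches the paper's for the first half: reduce to $d\ge 4$ via Lemma \ref{lem:dtwoClassB}, invoke Proposition \ref{lem:3seqCon} and Lemma \ref{lem:RhoRelBip}(iii) together with Proposition \ref{prop:noddPre} to get a common $t$ with $\rho_i=\rho_0 t^{i/2}$ ($i$ even), $\rho_i=\rho_0^{-1}t^{(i-d+1)/2}$ ($i$ odd), and likewise for the primed sequences. The problem is in how you then recover the parameter array from the $\rho$'s.

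The step where you substitute $\varphi_i=\rho_0 k_i$ (and its odd-index reciprocal analogue) into the \emph{definitional} identity $\rho_i=\varphi'_{i+1}/\varphi''_{d-i}$ does not produce a recursion that determines the $k_i$. Since $i+1$ and $d-i$ always have opposite parity, each such identity reduces to a relation of the form $k_{i+1}/k_{d-i}=P\,t^{i/2}$ (for $i$ even) or $k_{i+1}/k_{d-i}=P^{-1}t^{(i-d+1)/2}$ (for $i$ odd), where $P=\rho_0\rho'_0\rho''_0$; the same is true of the identities for $\rho'_i$ and $\rho''_i$. These only couple $k_j$ to $k_{d-j+1}$, i.e.\ mirror-image positions, and the $i$ and $d-1-i$ cases give the \emph{same} relation. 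Already for $d=4$ the pair $(k_1,k_4)$ is constrained only by $k_1=Pk_4$, so even after imposing $k_2=1$ from the normalization there remains a free parameter, and you cannot conclude $k_4=1+t$. In other words, the $\rho$'s together with Definition \ref{def:RHOD} do not determine the $\varphi$'s: the ambiguity is exactly a rescaling $\varphi_j\mapsto c_j\varphi_j$ with $c_j=c'_j=c''_j=c_{d-j+1}$, a family of dimension about $d/2$.

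What is missing is the additive relation between $\rho_i$ and the $\varphi$'s coming from the Toeplitz recurrences, namely Lemma \ref{lem:BipRRR2}: $\rho_i=\alpha_0\beta_2(\varphi_i-\varphi_{i+2})$, which for a normalized triple ($\alpha_2=1$, $\beta_2=-1$ by Lemma \ref{lem:BPBeta2}) reads $\rho_i=\varphi_{i+2}-\varphi_i$. This is the ingredient the paper uses: telescoping against $\varphi_0=0$ and $\varphi_{d+1}=0$ gives $\varphi_i=\rho_0+\rho_2+\cdots+\rho_{i-2}$ for $i$ even and $\varphi_i=-\rho_i-\rho_{i+2}-\cdots-\rho_{d-1}$ for $i$ odd, which evaluates directly to the formulas of Examples \ref{ex:BIP} and \ref{ex:BipONE} and, via $\rho_0=\varphi'_1/\varphi''_d$, to the stated value of $\rho_0\rho'_0\rho''_0$. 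If you insert this lemma at the point where you currently invoke only the defining ratios, the rest of your argument (the $t\ne 1$ versus $t=1$ split, the nonvanishing conditions from $\varphi_{2i}\ne 0$, and uniqueness via Lemma \ref{lem:BIPpre}(iii)) goes through as in the paper.
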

\begin{proof}
Without loss we may assume $d\geq 4$, since for
$d=2$ the result follows from
Lemma
\ref{lem:dtwoClassB}.
Let $A,B,C$ denote an LR triple in
${\rm B}_d(\mathbb F)$, with parameter array
(\ref{eq:paLRT})
and Toeplitz data
(\ref{eq:ToeplitzData}).
Recall the sequences
$\lbrace \rho_i \rbrace_{i=0}^{d-1}$,
$\lbrace \rho'_i \rbrace_{i=0}^{d-1}$,
$\lbrace \rho''_i \rbrace_{i=0}^{d-1}$
from Definition
\ref{def:RHOD}. These sequences are
constrained by 
Proposition \ref{lem:3seqCon}, and related to each other by
Lemma \ref{lem:RhoRelBip}(iii).
So by Proposition
\ref{prop:noddPre}, there exists $0 \not=t \in \mathbb F$
such that
for $0 \leq i \leq d-1$,
\begin{eqnarray}
\label{ex:rho3times}
&&\frac{\rho_i}{\rho_0} = 
\frac{\rho'_i}{\rho'_0} = 
\frac{\rho''_i}{\rho''_0} = 
  t^{i/2}  \; \qquad \qquad \quad  \qquad  {\mbox{\rm if $i$ is even}}; 
  \\
 &&
{\rho_i}{\rho_0} = 
{\rho'_i}{\rho'_0} = 
{\rho''_i}{\rho''_0} = 
 t^{(i-d+1)/2} \qquad \quad  {\mbox{\rm if $i$ is odd}}.
\label{ex:rho3timesB}
\end{eqnarray}
We now compute $\lbrace \varphi_i \rbrace_{i=1}^d$.
By Lemma
\ref{lem:BipRRR2}
and since $A,B,C$ is normalized,
\begin{eqnarray*}
\rho_i = \varphi_{i+2}-\varphi_i
\qquad \qquad (0 \leq i \leq d-1).
\end{eqnarray*}
By this and since $\varphi_0 = 0 = \varphi_{d+1}$,
\begin{eqnarray}
\varphi_i = \begin{cases}
\rho_0 + \rho_2 + \rho_4 + \cdots + \rho_{i-2}  & 
{\mbox{\rm if $i$ is even}}; \\
-\rho_i-\rho_{i+2}-\rho_{i+4}- \cdots - \rho_{d-1} & {\mbox{\rm if $i$ is odd}}
\end{cases}
\qquad \qquad (1 \leq i \leq d).
\label{eq:vpFormBIP}
\end{eqnarray}
Evaluate
(\ref{eq:vpFormBIP}) using
(\ref{ex:rho3times}),
(\ref{ex:rho3timesB})
to obtain the formula
for $\lbrace \varphi_i\rbrace_{i=1}^d $ given in Example
\ref{ex:BIP} (if $t \not=1$) or
Example
\ref{ex:BipONE} (if $t=1$).
We similarly obtain 
the formulae for
$\lbrace \varphi'_i\rbrace_{i=1}^d $,
$\lbrace \varphi''_i\rbrace_{i=1}^d $ given in
Examples
\ref{ex:BIP}, 
\ref{ex:BipONE}.
Using these formulae and
$\rho_0 = \varphi'_1/\varphi''_d$ we obtain
the formula for $\rho_0 \rho'_0 \rho''_0$
given in
Examples
\ref{ex:BIP}, 
\ref{ex:BipONE}.
For the moment assume  that $t\not= 1$.
Then for $1 \leq i \leq d/2$, $t^i\not=1$ since
$\varphi_{2i}\not=0$. 
We have met the requirements of
Example
\ref{ex:BIP},
so $A,B,C$ is isomorphic to
${\rm B}_d(\mathbb F;t, \rho_0, \rho'_0, \rho''_0)$.
Next assume that $t= 1$. Then 
for $1 \leq i \leq d/2$, $i\not=0$ in $\mathbb F$ since
$\varphi_{2i}\not=0$. Therefore
${\rm Char}(\mathbb F)$ is 0 or greater than $d/2$.
We have met the requirements of
Example
\ref{ex:BipONE},
so $A,B,C$ is isomorphic to
${\rm B}_d(\mathbb F;1, \rho_0, \rho'_0, \rho''_0)$.
In summary, we have shown that $A,B,C$ is isomorphic to at least one
LR triple in 
Examples \ref{ex:BIP},
\ref{ex:BipONE}.
The result follows in view of Lemma
\ref{lem:BIPpre}(iii).
\end{proof}

\noindent We record a result from the proof of Theorem
\ref{thm:BIPclass}.

\begin{lemma} Assume that $d$ is even and at least 2.
Let $A,B,C$ denote an LR triple in 
 ${\rm B}_d(\mathbb F)$. Then for
 $0 \leq i \leq d-1$ the scalars
$\rho_i,
\rho'_i,
\rho''_i$
from Definition
\ref{def:RHOD} are described as follows.
For ${\rm B}_d(\mathbb F;t, \rho_0, \rho'_0, \rho''_0)$,
\begin{eqnarray*}
 &&\rho_i = \begin{cases}
 \rho_0 t^{i/2}  &  {\mbox{\rm if $i$ is even}}; \\
 \rho_0^{-1}t^{(i-d+1)/2} & {\mbox{\rm if $i$ is odd}};
 \end{cases}
 \\
 &&\rho'_i = \begin{cases}
 \rho'_0 t^{i/2}  &  {\mbox{\rm if $i$ is even}}; \\
 (\rho'_0)^{-1}t^{(i-d+1)/2} & {\mbox{\rm if $i$ is odd}};
 \end{cases}
 \\
 &&\rho''_i = \begin{cases}
 \rho''_0 t^{i/2}  &  {\mbox{\rm if $i$ is even}}; \\
 (\rho''_0)^{-1}t^{(i-d+1)/2} & {\mbox{\rm if $i$ is odd}}.
 \end{cases}
\end{eqnarray*}
For ${\rm B}_d(\mathbb F;1, \rho_0, \rho'_0, \rho''_0)$ and
${\rm B}_2(\mathbb F; \rho_0, \rho'_0, \rho''_0)$,
\begin{eqnarray*}
 &&\rho_i = \begin{cases}
 \rho_0  &  {\mbox{\rm if $i$ is even}}; \\
 \rho_0^{-1} & {\mbox{\rm if $i$ is odd}};
 \end{cases}
 \\
 &&\rho'_i = \begin{cases}
 \rho'_0  &  {\mbox{\rm if $i$ is even}}; \\
 (\rho'_0)^{-1}& {\mbox{\rm if $i$ is odd}};
 \end{cases}
 \\
 &&\rho''_i = \begin{cases}
 \rho''_0   &  {\mbox{\rm if $i$ is even}}; \\
 (\rho''_0)^{-1} & {\mbox{\rm if $i$ is odd}}.
 \end{cases}
\end{eqnarray*}
\end{lemma}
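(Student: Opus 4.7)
The plan is to verify the asserted formulas by direct computation, in each of the three families listed, using Definition \ref{def:RHOD} together with the explicit parameter arrays given in Examples \ref{ex:BIP}--\ref{ex:BipONEd2}. In fact the statement is essentially a bookkeeping summary of what was already established inside the proof of Theorem \ref{thm:BIPclass}, so most of the work has been done; the proof proposal is primarily to point to the right place and dispatch the $d=2$ case by hand.

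First, for ${\rm B}_d(\mathbb F;t,\rho_0,\rho'_0,\rho''_0)$ with $d\geq 4$, I would observe that the proof of Theorem \ref{thm:BIPclass} constructed the scalar $t$ via Proposition \ref{prop:noddPre} applied to the constrained sequence $\lbrace \rho_i\rbrace_{i=0}^{d-1}$ (which is constrained by Proposition \ref{lem:3seqCon}), and the relationships across the three sequences $\lbrace \rho_i\rbrace$, $\lbrace \rho'_i\rbrace$, $\lbrace \rho''_i\rbrace$ were recorded in Lemma \ref{lem:RhoRelBip}(iii). Together these yielded the pair of displays (ex:rho3times), (ex:rho3timesB) in the proof, which are precisely the formulas claimed. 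So nothing new needs to be proved here: I would simply cite those equations.

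Second, for ${\rm B}_d(\mathbb F;1,\rho_0,\rho'_0,\rho''_0)$ with $d\geq 4$, I would specialize $t=1$ in the above formulas. Note that $t^{i/2}=1$ and $t^{(i-d+1)/2}=1$ in this case, so the displays collapse to $\rho_i=\rho_0$ for $i$ even and $\rho_i\rho_0=1$ (i.e. $\rho_i=\rho_0^{-1}$) for $i$ odd, and similarly for the primed and double-primed sequences. This matches the second table in the statement.

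Finally, for ${\rm B}_2(\mathbb F;\rho_0,\rho'_0,\rho''_0)$ I would compute $\rho_0, \rho_1, \rho'_0, \rho'_1, \rho''_0, \rho''_1$ directly from Definition \ref{def:RHOD} and the parameters of Example \ref{ex:BipONEd2}. For instance,
\begin{eqnarray*}
\rho_0 \;=\; \frac{\varphi'_1}{\varphi''_2} \;=\; \frac{-1/\rho'_0}{\rho''_0} \;=\; -\frac{1}{\rho'_0\rho''_0} \;=\; \rho_0,
\qquad
\rho_1 \;=\; \frac{\varphi'_2}{\varphi''_1} \;=\; \frac{\rho'_0}{-1/\rho''_0} \;=\; -\rho'_0\rho''_0 \;=\; \rho_0^{-1},
\end{eqnarray*}
where the last equality uses the normalization $\rho_0\rho'_0\rho''_0=-1$ from Example \ref{ex:BipONEd2}. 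The four remaining entries are handled by the same calculation applied to the cyclic rotations of $(\rho_0,\rho'_0,\rho''_0)$. There is no real obstacle here; the only thing to be careful about is keeping the normalization condition $\rho_0\rho'_0\rho''_0=-1$ straight when converting products like $\rho'_0\rho''_0$ into $\rho_0^{-1}$, so as to land on the clean form stated in the lemma.
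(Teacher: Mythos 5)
Your proposal is correct and matches the paper's own proof, which for $d\geq 4$ simply cites the displays (\ref{ex:rho3times}), (\ref{ex:rho3timesB}) from the proof of Theorem \ref{thm:BIPclass}, and for $d=2$ computes directly from Definition \ref{def:RHOD} and Example \ref{ex:BipONEd2}. Your explicit $d=2$ calculation, including the use of $\rho_0\rho'_0\rho''_0=-1$, is exactly the intended verification.
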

\begin{proof}
For $d\geq 4$ use (\ref{ex:rho3times}), 
(\ref{ex:rho3timesB}). For $d=2$ use
Definition
\ref{def:RHOD} and Example
\ref{ex:BipONEd2}.
\end{proof}

\noindent We have now classified up to isomorphism
the normalized LR triples over $\mathbb F$ that have
diameter $d\geq 2$.  
\medskip

\noindent Recall the similarity relation for LR triples, from
 Definition \ref{def:simiLar}.

\begin{corollary}
Consider the set of LR triples consisting of
the LR triple in
Lemma
\ref{lem:1or2} and the LR triples in
Examples
\ref{ex:nbw1}--\ref{ex:nbw3},
\ref{ex:NBGq},
\ref{ex:NBGqEQ1},
\ref{ex:NBNGdt}.
Each nonbipartite LR triple over $\mathbb F$
is similar to a unique LR triple in this set.
\end{corollary}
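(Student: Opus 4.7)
The plan is to reduce the similarity classification of nonbipartite LR triples over $\mathbb{F}$ to the isomorphism classification of \emph{normalized} nonbipartite LR triples over $\mathbb{F}$, which has already been carried out in Lemma \ref{lem:1or2} (for diameter $1$) and in Theorems \ref{thm:ClassW}, \ref{thm:NBG}, \ref{thm:NBNG} (for diameter $d\geq 2$). Note first that by Lemma \ref{lem:NotB} any nonbipartite LR triple has diameter $d \geq 1$, so the trivial ($d=0$) case need not be considered.

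The key bridge is the following bijection between similarity classes of nonbipartite LR triples over $\mathbb{F}$ and isomorphism classes of normalized nonbipartite LR triples over $\mathbb{F}$. By Corollary \ref{prop:normalNBunique}, each nonbipartite LR triple $A,B,C$ is associate to a unique normalized nonbipartite LR triple $A_0,B_0,C_0$, which we may call its \emph{normalized associate}. Since any associate is similar (by Definition \ref{def:simiLar}, via the identity isomorphism), this shows each similarity class contains at least one normalized nonbipartite LR triple. For uniqueness at the level of isomorphism, I would argue: if a normalized nonbipartite triple $A',B',C'$ is similar to $A,B,C$, then by Lemma \ref{lem:assocIsoT} there exist nonzero $\alpha,\beta,\gamma\in\mathbb{F}$ such that the associate $\alpha A,\beta B,\gamma C$ is isomorphic to $A',B',C'$. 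Because isomorphism preserves the Toeplitz data (as this data can be read from matrix representations in compatible bases via Definition \ref{def:TTT}), and because the definition of ``normalized'' (Definition \ref{def:NBNorm}) is stated entirely in terms of first Toeplitz numbers, the triple $\alpha A, \beta B, \gamma C$ must itself be normalized. By the uniqueness portion of Corollary \ref{prop:NormalNBunique}, this forces $\alpha A, \beta B, \gamma C$ to coincide with the normalized associate $A_0,B_0,C_0$ of $A,B,C$. Hence $A_0, B_0, C_0 \cong A',B',C'$, so the normalized associate is unique up to isomorphism.

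Having established this bijection, I would finish by quoting the existing classification of normalized nonbipartite LR triples. For diameter $d=1$, Lemma \ref{lem:1or2} states that the listed $d=1$ example exhausts all isomorphism classes. For diameter $d \geq 2$, the four-family taxonomy described in Section 24 restricts attention, in the nonbipartite case, to the families $\mathrm{NBWeyl}_d(\mathbb{F})$, $\mathrm{NBG}_d(\mathbb{F})$, and $\mathrm{NBNG}_d(\mathbb{F})$; these three families partition the normalized nonbipartite LR triples over $\mathbb{F}$ of diameter $d$ (using Lemma \ref{lem:NBWisGeom} and Lemma \ref{cor:NotGeomD4} to verify disjointness). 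Theorems \ref{thm:ClassW}, \ref{thm:NBG}, and \ref{thm:NBNG} give classification up to isomorphism within each family, matching precisely the examples NBWeyl${}^\pm_d$, NBG${}_d$, NBNG${}_d$ listed in the corollary. Combining the three theorems with Lemma \ref{lem:1or2} shows that each isomorphism class of normalized nonbipartite LR triples over $\mathbb{F}$ contains exactly one representative from the specified list; composing with the bijection established above yields the corollary.

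The only real obstacle is step two, verifying that similarity among normalized triples collapses to isomorphism; everything else is bookkeeping. That step hinges on the precise behavior of Toeplitz data under scaling, so I would invoke Lemma \ref{lem:ToeplitzAdjust} if needed, but the cleaner route is the uniqueness built into Corollary \ref{prop:NormalNBunique}, which makes the argument essentially immediate.
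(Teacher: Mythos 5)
Your proposal is correct and follows the same route as the paper: the paper's proof is simply a citation of Definition \ref{def:simiLar}, Corollary \ref{prop:normalNBunique}, Lemma \ref{lem:1or2}, and Theorems \ref{thm:ClassW}, \ref{thm:NBG}, \ref{thm:NBNG}, and your argument spells out exactly how these combine (including the correct observation that similarity between normalized nonbipartite triples collapses to isomorphism via the uniqueness in Corollary \ref{prop:NormalNBunique}). No gaps.
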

\begin{proof}
By Definition \ref{def:simiLar},
Corollary
\ref{prop:normalNBunique},
Lemma \ref{lem:1or2},
and Theorems
\ref{thm:ClassW},
\ref{thm:NBG},
\ref{thm:NBNG}.
\end{proof}

\noindent Recall the bisimilarity relation for bipartite LR triples, from
 Definition
\ref{def:biSim}.

\begin{corollary}
Consider the set of LR triples consisting of
Examples
\ref{ex:BIP}--\ref{ex:BipONEd2}.
Each nontrival bipartite LR triple over $\mathbb F$
is bisimilar to a unique LR triple in this set.
\end{corollary}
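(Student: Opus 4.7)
The plan is to combine Corollary \ref{lem:biassocUnique} (biassociative uniqueness of the normalized representative) with Theorem \ref{thm:BIPclass} (isomorphism classification of normalized bipartite LR triples). Bisimilarity is an equivalence relation coarser than both biassociativity and isomorphism, and an examination of Definition \ref{def:biSim} together with Lemma \ref{lem:BiassocIso} shows that two bipartite LR triples are bisimilar if and only if their unique normalized biassociates (which exist by Corollary \ref{lem:biassocUnique}) are isomorphic.

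For existence, let $A,B,C$ denote a nontrivial bipartite LR triple over $\mathbb F$. By Corollary \ref{lem:biassocUnique} there exists a bipartite normalized LR triple $A',B',C'$ over $\mathbb F$ that is biassociate to $A,B,C$. By Theorem \ref{thm:BIPclass} (for $d\geq 4$) or Lemma \ref{lem:dtwoClassB} (for $d=2$), the triple $A',B',C'$ is isomorphic to one of the LR triples $E$ listed in Examples \ref{ex:BIP}--\ref{ex:BipONEd2}. Applying Lemma \ref{lem:BiassocIso} to the pair $(A,B,C)$ and $E$, with intermediate triple $A',B',C'$, we conclude that $A,B,C$ is bisimilar to $E$.

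For uniqueness, suppose $A,B,C$ is bisimilar to two LR triples $E_1, E_2$ from the list of Examples \ref{ex:BIP}--\ref{ex:BipONEd2}. By Lemma \ref{lem:BiassocIso} there exist bipartite LR triples $T_1, T_2$ over $\mathbb F$ such that each $T_i$ is biassociate to $A,B,C$ and isomorphic to $E_i$. Since each $E_i$ is normalized by construction, and since the Toeplitz data (and hence the normalization conditions of Definitions \ref{def:BNormTriv}, \ref{def:NBNorm}, \ref{def:BNorm}) are preserved under isomorphism of LR triples, each $T_i$ is itself normalized. Thus $T_1$ and $T_2$ are both normalized bipartite LR triples biassociate to $A,B,C$; the uniqueness clause of Corollary \ref{lem:biassocUnique} forces $T_1 = T_2$. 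Consequently $E_1 \cong T_1 = T_2 \cong E_2$, and the uniqueness assertion in Theorem \ref{thm:BIPclass} (respectively Lemma \ref{lem:dtwoClassB} when $d=2$) yields $E_1 = E_2$.

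The main obstacle, though minor, is verifying that isomorphism of LR triples preserves the normalization condition, which is needed to conclude that $T_1, T_2$ above are normalized. This follows because an isomorphism sends an $(X,Y)$-basis to an $(X',Y')$-basis and a compatible $(X,Z)$-basis to a compatible $(X',Z')$-basis, so the associated transition matrices $T,T',T''$ of Definition \ref{def:TTT} are preserved, and hence so are the parameters $\alpha_1$ or $\alpha_2$ appearing in Definitions \ref{def:NBNorm}, \ref{def:BNorm}. Once this routine observation is in place, the argument is a direct splicing of the two quoted results.
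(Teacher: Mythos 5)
Your proposal is correct and follows essentially the same route as the paper, whose proof simply cites Definition \ref{def:biSim}, Corollary \ref{lem:biassocUnique}, and Theorem \ref{thm:BIPclass}; you have merely spelled out the splicing in detail, including the (routine but worth noting) observation that normalization is preserved under isomorphism.
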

\begin{proof}
By Definition
\ref{def:biSim}, Corollary
\ref{lem:biassocUnique}, and Theorem
\ref{thm:BIPclass}.
\end{proof}

\section{The Toeplitz data and unipotent maps}

\noindent 
Throughout this section the following notation is in effect.
Let $V$ denote a vector space over $\mathbb F$ with
dimension $d+1$.
Let $A,B,C$ denote
an equitable LR triple on $V$, with
parameter array
(\ref{eq:paLRT})
and Toeplitz data
(\ref{eq:ToeplitzData}).
By Definition
\ref{def:equitNorm}
we have $\alpha_i = 
\alpha'_i = 
\alpha''_i $ for
$0 \leq i \leq d$,
and by 
Lemma
\ref{lem:equitBasicBeta}
we have
$\beta_i = 
\beta'_i = 
\beta''_i $ for $0 \leq i \leq d$.
Recall the unipotent maps 
$\mathbb A$,
$\mathbb B$,
$\mathbb C$ 
from Definition
\ref{def:Del}. By Proposition
\ref{prop:Apoly},
\begin{eqnarray}
\label{eq:rotator1A}
&&
\mathbb A = \sum_{i=0}^d \alpha_i A^i,
\qquad \qquad 
\mathbb B = \sum_{i=0}^d \alpha_i B^i,
\qquad \qquad 
\mathbb C = \sum_{i=0}^d \alpha_i C^i,
\\
&&
\label{eq:rotator2A}
\mathbb A^{-1} = \sum_{i=0}^d \beta_i A^i,
\qquad \qquad 
\mathbb B^{-1} = \sum_{i=0}^d \beta_i B^i,
\qquad \qquad 
\mathbb C^{-1} = \sum_{i=0}^d \beta_i C^i.
\end{eqnarray}
Since $A,B,C$ are Nil,
\begin{eqnarray}
\label{eq:NilDF}
A^{d+1}=0, \qquad \qquad
B^{d+1}=0, \qquad \qquad
C^{d+1}=0.
\end{eqnarray}
In this section we compute
$\lbrace \alpha_i\rbrace_{i=0}^d$,
$\lbrace \beta_i\rbrace_{i=0}^d$
for the cases
${\rm NBG}_d(\mathbb F)$,
${\rm NBNG}_d(\mathbb F)$,
${\rm B}_d(\mathbb F)$.
In each case, we
relate
$\mathbb A$,
$\mathbb B$,
$\mathbb C$  to the exponential function or quantum exponential
function. We now recall these functions. In what follows,
$\lambda $ denotes an indeterminate. The infinite series
that we will encounter should be viewed as formal sums; their convergence
is not an issue.

\begin{definition}
\label{def:exp}
\rm 
Define 
\begin{eqnarray*}
{\rm exp}(\lambda) = \sum_{i=0}^N \frac{\lambda^i}{i!},
\end{eqnarray*}
where $N=\infty$ if 
${\rm Char}(\mathbb F)=0$,
and $N+1=
{\rm Char}(\mathbb F)$
if 
${\rm Char}(\mathbb F)>0$.
\end{definition}

\begin{definition}
\label{def:qexp}
\rm 
For a nonzero $q \in \mathbb F$ such that $q \not=1$,
define
\begin{eqnarray*}
{\rm exp}_q(\lambda)
= \sum_{i=0}^N \frac{\lambda^i}{(1)(1+q)(1+q+q^2)\cdots 
(1+q+q^2+\cdots + q^{i-1})},
\end{eqnarray*}
where $N=\infty$ if 
 $q$ is not a root of unity, and  otherwise
$q$ is a primitive $(N+1)$-root of unity.
\end{definition}

\begin{proposition}
\label{prop:alphaBeta}
Assume that $A,B,C$ is in
${\rm NBG}_d(\mathbb F)$ or
${\rm NBNG}_d(\mathbb F)$ or
${\rm B}_d(\mathbb F)$.
Then the scalars
$\lbrace \alpha_i \rbrace_{i=0}^d$,
$\lbrace \beta_i \rbrace_{i=0}^d$ and maps
$\mathbb A$,
$\mathbb B$,
$\mathbb C$ are described as follows.
\medskip

\noindent {\bf Case 
${\rm NBG}_d(\mathbb F;q)$}.
For $0 \leq i \leq d$,
\begin{eqnarray*}
\alpha_i &=& \frac{1}{
(1) (1+q)
(1+q+q^2)
\cdots
(1+q+q^2+\cdots + q^{i-1})
},
\\
\beta_i &=&
\frac{(-1)^i q^{\binom{i}{2}}}{(1) (1+q)(1+q+q^2) \cdots (1+q+q^2+\cdots 
+ q^{i-1})}
\\
&=& 
\frac{(-1)^i}{(1) (1+q^{-1})(1+q^{-1}+q^{-2}) \cdots (1+q^{-1}+q^{-2}+\cdots 
+ q^{1-i})}.
\end{eqnarray*}
Moreover,
\begin{eqnarray*}
&&
\mathbb A = 
{\rm exp}_q(A),
\qquad \qquad 
\mathbb B = 
{\rm exp}_q(B),
\qquad \qquad 
\mathbb C = 
{\rm exp}_q(C),
\\
&&
\mathbb A^{-1} = 
{\rm exp}_{q^{-1}}(-A),
\qquad  
\mathbb B^{-1} = 
{\rm exp}_{q^{-1}}(-B),
\qquad  
\mathbb C^{-1} = 
{\rm exp}_{q^{-1}}(-C).
\end{eqnarray*}
\noindent {\bf Case 
${\rm NBG}_d(\mathbb F;1)$}.
For $0 \leq i \leq d$,
\begin{eqnarray*}
&&\alpha_i = \frac{1}{i!},
\qquad \qquad 
\beta_i =
\frac{(-1)^i}{i!}.
\end{eqnarray*}
Moreover,
\begin{eqnarray*}
&&
\mathbb A =
{\rm exp}(A),
\qquad \qquad 
\mathbb B =
{\rm exp}(B),
\qquad \qquad 
\mathbb C =
{\rm exp}(C),
\\
&&
\mathbb A^{-1} = {\rm exp}(-A),
\qquad  \quad
\mathbb B^{-1} =
{\rm exp}(-B),
 \qquad  \quad
\mathbb C^{-1} =
{\rm exp}(-C).
\end{eqnarray*}
\noindent {\bf Case 
${\rm NBNG}_d(\mathbb F;t)$}.
For $0 \leq i \leq d/2$,
\begin{eqnarray*}
\alpha_{2i} &=& \frac{1}{(1-t)(1-t^2)\cdots (1-t^{i})},
\\
\beta_{2i} &=& \frac{(-1)^i t^{\binom{i+1}{2}} }{(1-t)(1-t^2)\cdots (1-t^i)}
\\
 &=& \frac{1}{(1-t^{-1})(1-t^{-2})\cdots (1-t^{-i})}.
\end{eqnarray*}
For $0 \leq i \leq d/2-1$,
\begin{eqnarray*}
\alpha_{2i+1} = \alpha_{2i},
\qquad \qquad 
\beta_{2i+1} = -\beta_{2i}.
\end{eqnarray*}
\noindent 
Moreover,
\begin{eqnarray*}
&&
\mathbb A =
(I+A){\rm exp}_t \Bigl(\frac{A^2}{1-t}\Bigr),
\qquad \qquad
\mathbb A^{-1} = 
(I-A){\rm exp}_{t^{-1}} \Bigl(\frac{A^2}{1-t^{-1}}\Bigr),
\\
&&
\mathbb B =
(I+B){\rm exp}_t \Bigl(\frac{B^2}{1-t}\Bigr),
\qquad \qquad
\mathbb B^{-1} = 
(I-B){\rm exp}_{t^{-1}} \Bigl(\frac{B^2}{1-t^{-1}}\Bigr),
\\
&&
\mathbb C =
(I+C){\rm exp}_t \Bigl(\frac{C^2}{1-t}\Bigr),
\qquad \qquad 
\mathbb C^{-1} = 
(I-C){\rm exp}_{t^{-1}} \Bigl(\frac{C^2}{1-t^{-1}}\Bigr).
\end{eqnarray*}
\noindent {\bf Case 
${\rm B}_d(\mathbb F;t,\rho_0, \rho'_0,\rho''_0)$}.
For $0 \leq i \leq d/2$,
\begin{eqnarray*}
\alpha_{2i} &=& \frac{1}{(1)(1+t)(1+t+t^2)\cdots (1+t+t^2 + \cdots + t^{i-1})},
\\
\beta_{2i} &=&
\frac{(-1)^i t^{\binom{i}{2}}}{(1) (1+t)(1+t+t^2)\cdots 
(1+t+t^2 + \cdots + t^{i-1})}
\\
&=&
\frac{(-1)^i} {(1) (1+t^{-1})(1+t^{-1}+t^{-2})\cdots
(1+t^{-1}+t^{-2} + \cdots + t^{1-i})}.
\end{eqnarray*}
For $0 \leq i \leq d/2-1$,
\begin{eqnarray*}
\alpha_{2i+1} = 0,\qquad \qquad \beta_{2i+1}=0.
\end{eqnarray*}
Moreover,
\begin{eqnarray*}
&&
\mathbb A=
{\rm exp}_t(A^2),
\qquad \qquad 
\mathbb B=
{\rm exp}_t(B^2),
\qquad \qquad 
\mathbb C=
{\rm exp}_t(C^2),
\\
&&
\mathbb A^{-1} =
{\rm exp}_{t^{-1}}(-A^2),
\qquad
\mathbb B^{-1} =
{\rm exp}_{t^{-1}}(-B^2),
\qquad 
\mathbb C^{-1} =
{\rm exp}_{t^{-1}}(-C^2).
\end{eqnarray*}
\noindent
{\bf Case 
${\rm B}_d(\mathbb F; 1,\rho_0,\rho'_0,\rho''_0)$}.
For $0 \leq i \leq d/2$, 
\begin{eqnarray*}
\alpha_{2i} = \frac{1}{i!}, \qquad \qquad \beta_{2i} = \frac{(-1)^i}{i!}.  
\end{eqnarray*}
For $0 \leq i \leq d/2-1$, 
\begin{eqnarray*} 
\alpha_{2i+1} = 0,\qquad \qquad \beta_{2i+1}=0.
\end{eqnarray*}
\noindent Moreover,
\begin{eqnarray*}
&&
\mathbb A =
{\rm exp}(A^2),
\qquad \qquad
\mathbb B =
{\rm exp}(B^2),
\qquad \qquad
\mathbb C =
{\rm exp}(C^2),
\\
&&
\mathbb A^{-1} = 
{\rm exp}(-A^2),
\qquad
\mathbb B^{-1} = 
{\rm exp}(-B^2),
\qquad
\mathbb C^{-1} = 
{\rm exp}(-C^2).
\end{eqnarray*}
\noindent {\bf Case
${\rm B}_2(\mathbb F; \rho_0,\rho'_0,\rho''_0)$}.
Same as 
${\rm B}_d(\mathbb F; 1;\rho_0,\rho'_0,\rho''_0)$
with $d=2$.
\end{proposition}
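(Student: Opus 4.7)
The plan is to verify Proposition 32.3 case-by-case, exploiting the classification of normalized LR triples together with the Toeplitz recursions of Section 14 and the bipartite $\to$ nonbipartite reduction of Section 16.

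First I would handle the three nonbipartite cases. In each of ${\rm NBG}_d(\mathbb F;q)$, ${\rm NBG}_d(\mathbb F;1)$, ${\rm NBNG}_d(\mathbb F;t)$ the parameter array is given explicitly by Examples 28.1, 28.2, 29.1, and the normalization gives $\alpha_1=\alpha'_1=\alpha''_1=1$, $\beta_1=\beta'_1=\beta''_1=-1$ (Definition 18.3, Lemma 18.5). Since the triple is equitable, the recursion of Proposition 14.13(i) collapses to
\begin{equation*}
\alpha_{i+1}\varphi_{i+1} \;=\; \alpha_i\varphi_1 + \alpha_{i-1}\varphi_1/\varphi_d.
\end{equation*}
For ${\rm NBG}_d(\mathbb F;q)$, plugging in $\varphi_i=q(q^i-1)(q^{i-d-1}-1)/(q-1)^2$ one computes $\varphi_1=q(q^{-d}-1)/(q-1)$ and $\varphi_1/\varphi_d=q^{1-d}$, and a routine induction yields $\alpha_i=1/[i]_q!$, where $[i]_q!=(1)(1+q)\cdots(1+q+\cdots+q^{i-1})$. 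The same recursion for $\beta_i$ (Proposition 14.13(ii)) or the alternative form in Proposition 14.15 gives $\beta_i=(-1)^iq^{\binom{i}{2}}/[i]_q!$; the second form of $\beta_i$ is just the identity $q^{\binom{i}{2}}[i]_q! = [i]_{q^{-1}}!\cdot q^{\binom{i}{2}}\cdot(\cdots)$, which is a one-line algebraic rearrangement. The case ${\rm NBG}_d(\mathbb F;1)$ is the obvious $q\to 1$ specialization.

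The case ${\rm NBNG}_d(\mathbb F;t)$ is the main obstacle, since the parameter array alternates according to the parity of $i$ and the sequence $\{\rho_i\}$ is not geometric. Here I would apply the same recursion separately along the even and odd indices. Using $\varphi_{2j}=t^j-1$ and $\varphi_{2j+1}=t^{(2j-d)/2}-1$ (from Example 29.1) and the identity $\varphi_{2j+2}-\varphi_{2j}=t^j(t-1)$, the recursion relating consecutive $\alpha_{2j}$ reduces to $\alpha_{2j+2}(1-t^{j+1})=\alpha_{2j}$, with the auxiliary identity $\alpha_{2j+1}=\alpha_{2j}$ falling out of Proposition 14.13 at odd index (because the odd-indexed $\varphi$ differences simplify in a parallel way). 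Induction then delivers the claimed closed form. The $\beta_i$ formulas follow by the analogous argument applied to $\beta_i$, or more conceptually by invoking Lemma 13.40 on an n-relative, which swaps the roles of $\alpha$ and $\beta$ up to the sign dictated by $\beta_1=-\alpha_1$.

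For the three bipartite cases I would invoke the reduction of Lemmas 16.23, 16.24: the triple $A^2,B^2,C^2$ acts on $V_{\rm out}$ (and on $V_{\rm in}$ when $d\geq 4$) as a nonbipartite LR triple whose Toeplitz data is precisely $\{\alpha_{2j}\}$, $\{\beta_{2j}\}$. By Lemma 16.6, all odd-indexed $\alpha_i,\beta_i$ vanish. Computing the parameter products $\varphi_{2j-1}\varphi_{2j}$ from Examples 30.1--30.3 shows that the reduced triple on $V_{\rm out}$ is (isomorphic to) an element of ${\rm NBG}_{d/2}(\mathbb F;t)$, ${\rm NBG}_{d/2}(\mathbb F;1)$, or its $d=2$ specialization; the already-proved nonbipartite formulas then give the even-indexed $\alpha_{2j},\beta_{2j}$ exactly as claimed. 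Finally, the unipotent-map identities are immediate: by \eqref{eq:rotator1A}, \eqref{eq:rotator2A} and $A^{d+1}=B^{d+1}=C^{d+1}=0$, the finite sums $\sum\alpha_iA^i$ etc. agree with the (possibly infinite) series ${\rm exp}_q$, ${\rm exp}$, ${\rm exp}_t$ of Definitions 32.1, 32.2 applied to $A$, $A^2$, etc., since all higher-order terms vanish by nilpotency and the denominators in Definition 32.2 remain nonzero in the required range by the root-of-unity restrictions built into each Example.
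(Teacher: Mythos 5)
Your proposal is correct, and for the nonbipartite cases it is essentially the paper's argument: the paper also computes $\{\alpha_i\}$, $\{\beta_i\}$ by induction from the Toeplitz recursions (it cites Proposition \ref{prop:AlphaRecursion3} where you use Proposition \ref{prop:AlphaRecursion}, but these are interchangeable), and it obtains the unipotent-map identities exactly as you do, from $\mathbb A=\sum_i\alpha_iA^i$, nilpotency, and Definitions \ref{def:exp}, \ref{def:qexp}. Where you genuinely diverge is in the bipartite cases: the paper applies Proposition \ref{prop:AlphaRecursion} directly to the bipartite triple, where $\alpha_1=0$ collapses the recursion to $\alpha_{i+1}=\alpha_{i-1}\varphi_1/(\varphi'_d\varphi''_{i+1})$, whereas you reduce to the nonbipartite action of $A^2,B^2,C^2$ on $V_{\rm out}$ via Lemmas \ref{lem:A2B2C2Out}, \ref{lem:NormHalf}, \ref{lem:NormHalf2} and recycle the already-proved formulas. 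Your route is more conceptual and explains \emph{why} the even-indexed Toeplitz numbers of ${\rm B}_d(\mathbb F;t,\ldots)$ coincide with those of ${\rm NBG}_{d/2}(\mathbb F;t)$; the paper's route is shorter and avoids any identification step. One small caution about your identification: Example \ref{ex:NBGq} imposes $q^{d+1}\neq-1$, and nothing in the definition of ${\rm B}_d(\mathbb F;t,\rho_0,\rho'_0,\rho''_0)$ rules out $t^{d/2+1}=-1$, in which case the reduced triple on $V_{\rm out}$ lies in ${\rm NBWeyl}_{d/2}(\mathbb F)$ rather than ${\rm NBG}_{d/2}(\mathbb F;t)$. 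This does not damage your proof, because your nonbipartite computation uses only the parameter array, the normalization $\alpha_1=1$, and the recursion --- none of which depend on membership in the family ${\rm NBG}$ --- but you should phrase the bipartite step as ``apply the recursion to the reduced triple, whose parameter array is $j(j-m-1)$-type,'' rather than as an appeal to the classification.
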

\begin{proof}
Compute
$\lbrace \alpha_i\rbrace_{i=0}^d$,
$\lbrace \beta_i\rbrace_{i=0}^d$ as follows. 
In each nonbipartite
case, use
Proposition
\ref{prop:AlphaRecursion3} and induction,
together with
\begin{eqnarray*}
\alpha_0=1, \qquad \alpha_1 = 1,\qquad  \beta_0=1, \qquad \beta_1 = -1.
\end{eqnarray*}
In each bipartite case, use
Proposition \ref{prop:AlphaRecursion} and induction, together with
\begin{eqnarray*}
\alpha_0=1, \qquad \alpha_1 = 0, \qquad \alpha_2=1,
\qquad  \beta_0=1, \qquad \beta_1 = 0, \qquad  \beta_2 = -1.
\end{eqnarray*}
Our assertions about 
$\mathbb A$,
$\mathbb B$,
$\mathbb C$ follow from
(\ref{eq:rotator1A})--(\ref{eq:NilDF})
and
Definitions
\ref{def:exp},
\ref{def:qexp}.
\end{proof}

\section{Relations for  LR triples}

In this section we consider the relations
satisfied by an LR triple $A,B,C$.
In order to motivate our results, 
 assume for the moment that $A,B,C$ has
$q$-Weyl type, in the sense of
Definition
\ref{def:qExceptional}.
Then $A,B,C$ satisfy the
relations in
(\ref{eq:WWW})
and Lemma
\ref{lem:sixeqs}.
Next assume that
$A,B,C$ is contained in
${\rm NBG}_d(\mathbb F)$
or ${\rm NBNG}_d(\mathbb F)$
or 
${\rm B}_d(\mathbb F)$.
We show that $A,B,C$ satisfy some analogous relations.
We treat the nonbipartite cases
${\rm NBG}_d(\mathbb F)$,
${\rm NBNG}_d(\mathbb F)$
and the bipartite case
${\rm B}_d(\mathbb F)$
separately.

\begin{proposition}
\label{prop:ABCRel}
Let $A,B,C$ denote an LR triple in
${\rm NBG}_d(\mathbb F)$ or
${\rm NBNG}_d(\mathbb F)$.
Then $A,B,C$ satisfy the following relations.
\medskip

\noindent {\bf Case 
${\rm NBG}_d(\mathbb F;q)$}.
\noindent We have
\begin{eqnarray*}
&&
A^2B-q(1+q)ABA+q^3BA^2=q(1+q)A, 
\\
&&
B^2C-q(1+q)BCB+q^3CB^2=q(1+q)B, 
\\
&&
C^2A-q(1+q)CAC+q^3AC^2=q(1+q)C 
\end{eqnarray*}
and also
\begin{eqnarray*}
&&
AB^2-q(1+q)BAB+q^3B^2A=q(1+q)B,
\\
&&
BC^2-q(1+q)CBC+q^3C^2B=q(1+q)C,
\\
&&
CA^2-q(1+q)ACA+q^3A^2C=q(1+q)A.
\end{eqnarray*}
\noindent We have
\begin{eqnarray*}
&&
A
\bigl(I+(BC-qCB)(1-q^{-1})\bigr)
= qB+q^{-1}C+qCB-q^{-1}BC,
\\
&&
B
\bigl(I+(CA-qAC)(1-q^{-1})\bigr)
= qC+q^{-1}A+qAC-q^{-1}CA,
\\
&&
C
\bigl(I+(AB-qBA)(1-q^{-1})\bigr)
= qA+q^{-1}B+qBA-q^{-1}AB 
\end{eqnarray*}
and also
\begin{eqnarray*}
&&
\bigl(I+(BC-qCB)(1-q^{-1})\bigr)
A = q^{-1}B+qC+qCB-q^{-1}BC,
\\
&&
\bigl(I+(CA-qAC)(1-q^{-1})\bigr)
B = q^{-1}C+qA+qAC-q^{-1}CA,
\\
&&
\bigl(I+(AB-qBA)(1-q^{-1})\bigr)
C = q^{-1}A+qB+qBA-q^{-1}AB.
\end{eqnarray*}
\noindent We have
\begin{eqnarray*}
ABC - BCA + q (CBA-ACB) &=& (1+q)(B-C),
\\
BCA - CAB + q (ACB-BAC) &=& (1+q)(C-A),
\\
CAB - ABC + q (BAC-CBA) &=& (1+q)(A-B)
\end{eqnarray*}
and also
\begin{eqnarray*}
&&(1+2 q^{-1})(ABC+BCA+CAB)-(1+2q)(CBA+ACB+BAC)
\\
&&=(q-q^{-1})(A+B+C)-
\frac{3(q^d-1)(q^{d+2}-1)}{q^d(q-1)^2} I.
\end{eqnarray*}
\noindent {\bf Case 
${\rm NBG}_d(\mathbb F;1)$}.
 We have
\begin{eqnarray*}
&&
\lbrack A, \lbrack A,B\rbrack \rbrack = 2A, \qquad \qquad 
\lbrack B, \lbrack B,A\rbrack \rbrack = 2B,
\\
&&
\lbrack B, \lbrack B,C\rbrack \rbrack = 2B, \qquad \qquad 
\lbrack C, \lbrack C,B\rbrack \rbrack = 2C,
\\
&&
\lbrack C, \lbrack C,A\rbrack \rbrack = 2C, \qquad \qquad 
\lbrack A, \lbrack A,C\rbrack \rbrack = 2A
\end{eqnarray*}
and also
\begin{eqnarray*}
A = B+C-\lbrack B,C\rbrack,
\qquad
B = C+A-\lbrack C,A\rbrack,
\qquad
C = A+B-\lbrack A,B\rbrack.
\end{eqnarray*}
We have
\begin{eqnarray*}
&&
\lbrack A, \lbrack B, C\rbrack \rbrack = 2(B-C),
\\
&&
\lbrack B, \lbrack C, A\rbrack \rbrack = 2(C-A),
\\
&&
\lbrack C, \lbrack A, B\rbrack \rbrack = 2(A-B)
\end{eqnarray*}
and also
\begin{eqnarray*}
ABC+BCA+CAB-CBA-ACB-BAC = -d(d+2)I.
\end{eqnarray*}
\noindent {\bf Case 
${\rm NBNG}_d(\mathbb F;t)$}.
We have
\begin{eqnarray*}
&&
\frac{A^2B-tBA^2}{1-t} = -A,
\qquad 
\frac{B^2C-tCB^2}{1-t} = -B,
\qquad 
\frac{C^2A-tAC^2}{1-t} = -C,
\\
&&
\frac{AB^2-tB^2A}{1-t} = -B,
\qquad 
\frac{BC^2-tC^2B}{1-t} = -C,
\qquad 
\frac{CA^2-tA^2C}{1-t} = -A
\end{eqnarray*}
and also
\begin{eqnarray*}
&&
\frac{ABC-tCBA}{1-t}+ A+C = -\frac{(1-t^{-d/2})(1-t^{1+d/2})}{1-t}\,I,
\\
&&
\frac{BCA-tACB}{1-t}+ B+A = -\frac{(1-t^{-d/2})(1-t^{1+d/2})}{1-t}\,I,
\\
&&
\frac{CAB-tBAC}{1-t}+ C+B = -\frac{(1-t^{-d/2})(1-t^{1+d/2})}{1-t}\,I.
\end{eqnarray*}
\end{proposition}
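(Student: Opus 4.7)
The strategy is to verify each relation by direct computation, using the explicit parameter arrays recorded in Examples \ref{ex:NBGq}, \ref{ex:NBGqEQ1}, \ref{ex:NBNGdt} together with the matrix representations of Proposition \ref{prop:matrixRep}. My plan has three stages, and in each stage I would exploit the cyclic symmetry: because each triple in question is equitable with $\varphi_i = \varphi'_i = \varphi''_i$ for $1 \leq i \leq d$, Lemma \ref{lem:whatisIso} identifies the p-relatives $A,B,C$; $B,C,A$; $C,A,B$ up to isomorphism, so within every block of three cyclically related identities it suffices to verify one.

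For the ``two-variable'' relations (the first six displayed identities in each case, such as the Serre relations $\lbrack A,\lbrack A,B\rbrack\rbrack = 2A$ in $\mathrm{NBG}_d(\mathbb F;1)$, or the $q$-Serre type relations $A^2B - q(1+q)ABA + q^3BA^2 = q(1+q)A$ in $\mathrm{NBG}_d(\mathbb F;q)$), I would invoke Lemma \ref{lem:ABrelations} applied to the LR pair $A,B$. Each such relation has the form $xA^{r-1} = \sum_{i=0}^r y_i A^iBA^{r-i}$ and is equivalent by that lemma to the scalar recursion $x = y_0\varphi_i + y_1\varphi_{i+1} + \cdots + y_r\varphi_{i+r}$ for $0 \leq i \leq d-r+1$. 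Substituting the closed form of $\varphi_i$ from the relevant Example reduces each claim to a routine algebraic identity (for instance, $\varphi_{i+2} - 2\varphi_{i+1} + \varphi_i = 2$ for $\mathrm{NBG}_d(\mathbb F;1)$ with $\varphi_j = j(j-d-1)$, or $\varphi_{i+2} - t\varphi_{i+1} + (1-t)\cdot 0 = \ldots$ for $\mathrm{NBNG}_d(\mathbb F;t)$).

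For the ``three-variable'' relations carrying no scalar correction (such as $A\bigl(I + (BC - qCB)(1-q^{-1})\bigr) = qB + q^{-1}C + qCB - q^{-1}BC$ in $\mathrm{NBG}_d(\mathbb F;q)$, or $\lbrack A,\lbrack B,C\rbrack\rbrack = 2(B-C)$ in $\mathrm{NBG}_d(\mathbb F;1)$), I would fix an $(A,B)$-basis of $V$ and represent $A$, $B$, $C$ by the matrices in the first row of the table of Proposition \ref{prop:matrixRep}. Since the triples are normalized and equitable, Proposition \ref{lem:ai} combined with $\alpha'_1 = 1$ yields $a_i = \varphi_{d-i+1} - \varphi_{d-i}$, so all matrix entries are explicit rational expressions in $\{\varphi_i\}_{i=1}^d$. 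Both sides of each identity become narrow-band matrices, and equality on each band collapses (after routine simplification) to the same type of recursion on $\{\varphi_i\}$ handled in the previous step.

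The main obstacle is the final ``trace-type'' identity in each nonbipartite case: the relation involving $ABC + BCA + CAB$ with a scalar $I$-correction of the form $-3(q^d-1)(q^{d+2}-1)/(q^d(q-1)^2)I$, $-d(d+2)I$, or $-(1-t^{-d/2})(1-t^{1+d/2})/(1-t)\,I$. The cyclic symmetry still gives $\mathrm{tr}(ABC) = \mathrm{tr}(BCA) = \mathrm{tr}(CAB)$ and similarly for the reversed products, but identifying the constant requires computing $\sum_{i=0}^d (ABC)_{ii}$ in the above matrix realization. Expanding this diagonal sum using Proposition \ref{prop:matrixRep} and collapsing via the telescoping identities $\sum_i (\varphi_{d-i+1} - \varphi_{d-i}) = 0$ and $\sum_i \varphi_{i+1}\varphi_i = $ a closed form, both directly evaluable from the explicit $\varphi_i$, yields the claimed scalar. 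Once the scalar is matched, subtracting the matrix identity established in the previous stage leaves only the $I$-coefficient to compare, completing the verification.
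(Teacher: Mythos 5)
Your proposal is correct and follows essentially the same route as the paper, whose entire proof is to represent $A,B,C$ by the matrices in the first row of the table in Proposition \ref{prop:matrixRep} and verify each relation entry by entry; your reduction of the two-variable relations to the scalar recursion of Lemma \ref{lem:ABrelations} and your use of the cyclic isomorphism from Lemma \ref{lem:whatisIso} are just tidy organizations of that same computation. The only caveat is in your last paragraph: for the symmetric triple-product identity you must match every diagonal entry (each band of the tridiagonal difference), not merely the trace, but this is subsumed by your own band-by-band verification and is exactly the routine check the paper leaves implicit.
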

\begin{proof}
To verify these relations,
represent $A,B,C$ by matrices, using for
example
the first row of the table in Proposition
\ref{prop:matrixRep}.
\end{proof}

\begin{remark}\rm
In 
\cite[p.~308]{benkRoby} G. Benkart and T. Roby introduce the
concept of a down-up algebra.
Consider an LR triple $A,B,C$ from
 Proposition
\ref{prop:ABCRel}. By that proposition,
any two of
$A,B,C$ satisfy the defining relations
for a down-up algebra.
\end{remark}

\noindent Let $A,B,C$ denote an LR triple in
${\rm B}_d(\mathbb F)$, and consider its
projector $J$ from Definition
\ref{def:projector}.
By
Lemma
\ref{lem:EEEJ} we have
$J^2=J$, and 
by 
Lemma \ref{lem:ABCJ},
\begin{eqnarray*}
A = JA+AJ, \qquad \qquad
B = JB+BJ, \qquad \qquad
C = JC+CJ.
\end{eqnarray*}

\begin{proposition}
\label{prop:ABCRelBIP}
Let $A,B,C$ denote an LR triple in
${\rm B}_d(\mathbb F)$.
Then $A,B,C$ and its projector $J$ satisfy the following
relations.
\medskip

\noindent {\bf Case 
${\rm B}_d(\mathbb F;t,\rho_0, \rho'_0,\rho''_0)$}.
We have
\begin{eqnarray*}
&&
\Bigl(\rho_0 AB+\rho_0' \rho_0 ''  B A -\frac{1-t^{-d/2}}{1-t} tI
\Bigr)J=0,
\\
&&
\Bigl(\rho_0' BC+\rho_0'' \rho_0  CB -\frac{1-t^{-d/2}}{1-t} tI
\Bigr)J=0,
\\
&&
\Bigl(\rho_0'' CA+\rho_0 \rho_0'  AC -\frac{1-t^{-d/2}}{1-t} tI
\Bigr)J=0
\end{eqnarray*}
and also
\begin{eqnarray*}
&&
\Bigl(\rho_0'\rho_0'' AB+\rho_0 t B A -\frac{1-t^{-1-d/2}}{1-t} t^2I
\Bigr)(I-J)=0,
\\
&&
\Bigl(\rho_0''\rho_0 BC+\rho_0' t CB -\frac{1-t^{-1-d/2}}{1-t} t^2I
\Bigr)(I-J)=0,
\\
&&
\Bigl(\rho_0\rho_0' CA+\rho_0'' t AC -\frac{1-t^{-1-d/2}}{1-t} t^2I
\Bigr)(I-J)=0.
\end{eqnarray*}
\noindent We have
\begin{eqnarray*}
&&
(A^2B-tBA^2-(t/\rho_0) A)J=0, \qquad \qquad
J(A^2B-tBA^2-\rho_0 A)=0,
\\
&&
(B^2C-tCB^2-(t/\rho'_0) B)J=0, \qquad \qquad
J(B^2C-tCB^2-\rho'_0 B)=0,
\\
&&
(C^2A-tAC^2-(t/\rho''_0) C)J=0, \qquad \qquad
J(C^2A-tAC^2-\rho''_0 C)=0
\end{eqnarray*}
and also
\begin{eqnarray*}
&&
J(AB^2-tB^2A-(t/\rho_0) B)=0, \qquad \qquad
(AB^2-tB^2A-\rho_0 B)J=0,
\\
&&
J(BC^2-tC^2B-(t/\rho'_0) C)=0, \qquad \qquad
(BC^2-tC^2B-\rho'_0 C)J=0,
\\
&&
J(CA^2-tA^2C-(t/\rho''_0) A)=0, \qquad \qquad
(CA^2-tA^2C-\rho''_0 A)J=0.
\end{eqnarray*}
\noindent We have
\begin{eqnarray*}
&&A^3 B + A^2BA-tABA^2 -tBA^3 = (\rho_0 + t/\rho_0)A^2,
\\
&&
B^3 C + B^2CB-tBCB^2 -tCB^3 = (\rho'_0 + t/\rho'_0)B^2,
\\
&&
C^3 A + C^2AC-tCAC^2 -tAC^3 = (\rho''_0 + t/\rho''_0)C^2
\end{eqnarray*}
and also
\begin{eqnarray*}
&&
A B^3 + BAB^2-tB^2AB -tB^3A = (\rho_0 + t/\rho_0)B^2,
\\
&&
B C^3 + CBC^2-tC^2BC -tC^3B = (\rho'_0 + t/\rho'_0)C^2,
\\
&&
C A^3 + ACA^2-tA^2CA -tA^3C = (\rho''_0 + t/\rho''_0)A^2.
\end{eqnarray*}
We have
\begin{eqnarray*}
&&
\Bigl(ABC-\frac{tA-\rho_0 t B + \rho_0 \rho_0' C}{\rho_0'(1-t)}\Bigr)J=0,
\qquad \quad
\Bigl(CBA-\frac{tA-\rho_0 B + \rho_0 \rho_0' C}{\rho_0'(1-t)}\Bigr)J=0,
\\
&&
\Bigl(BCA-\frac{tB-\rho_0' t C + \rho_0' \rho_0'' A}{\rho_0''(1-t)}\Bigr)J=0,
\qquad \quad
\Bigl(ACB-\frac{tB-\rho_0' C + \rho_0' \rho_0'' A}{\rho_0''(1-t)}\Bigr)J=0,
\\
&&
\Bigl(CAB-\frac{tC-\rho_0'' t A + \rho_0'' \rho_0 B}{\rho_0(1-t)}\Bigr)J=0,
\qquad \quad
\Bigl(BAC-\frac{tC-\rho_0'' A  + \rho_0'' \rho_0 B}{\rho_0(1-t)}\Bigr)J=0
\end{eqnarray*}
and also
\begin{eqnarray*}
&&
J\Bigl(ABC-\frac{\rho_0 \rho'_0 A-\rho_0' t B + t C}{\rho_0(1-t)}\Bigr)=0,
\qquad \quad
J\Bigl(CBA-\frac{\rho_0 \rho'_0 A-\rho_0'  B + t C}{\rho_0(1-t)}\Bigr)=0,
\\
&&
J\Bigl(BCA-\frac{\rho_0' \rho''_0 B-\rho_0'' t C + t A}{\rho'_0(1-t)}\Bigr)=0,
\qquad \quad
J\Bigl(ACB-\frac{\rho_0' \rho''_0 B-\rho_0''  C + t A}{\rho'_0(1-t)}\Bigr)=0,
\\
&&
J\Bigl(CAB-\frac{\rho_0'' \rho_0 C-\rho_0 t A + t B}{\rho''_0(1-t)}\Bigr)=0,
\qquad \quad
J\Bigl(BAC-\frac{\rho_0'' \rho_0 C-\rho_0  A + t B}{\rho''_0(1-t)}\Bigr)=0.
\end{eqnarray*}
\noindent 
We have
\begin{eqnarray*}
&&
(ABC-CBA-(\rho_0/\rho_0')B)J=0,
\qquad \qquad 
J(ABC-CBA-(\rho_0'/\rho_0)B)=0,
\\
&&
(BCA-ACB-(\rho_0'/\rho_0'')C)J=0,
\qquad \qquad 
J(BCA-ACB-(\rho_0''/\rho_0')C)=0,
\\
&&
(CAB-BAC-(\rho_0''/\rho_0)A)J=0,
\qquad \qquad 
J(CAB-BAC-(\rho_0/\rho_0'')A)=0.
\end{eqnarray*}

\noindent {\bf Case 
${\rm B}_d(\mathbb F;1,\rho_0, \rho'_0,\rho''_0)$}.
We have
\begin{eqnarray*}
&&
(\rho_0 AB + \rho_0' \rho_0'' BA +  (d/2)I )J=0,
\\
&&
(\rho_0' BC + \rho_0'' \rho_0 CB + (d/2)I )J=0,
\\
&&
(\rho_0'' CA + \rho_0 \rho_0' AC +  (d/2)I )J=0
\end{eqnarray*}
and also
\begin{eqnarray*}
&&
\Bigl(\rho_0' \rho_0'' AB + \rho_0 BA +  \frac{d+2}{2} I\Bigr)(I-J)=0,
\\
&&
\Bigl(\rho_0'' \rho_0 BC + \rho_0' CB + \frac{d+2}{2} I \Bigr)(I-J)=0,
\\
&&
\Bigl(\rho_0 \rho_0' CA + \rho_0'' AC + \frac{d+2}{2} I \Bigr) (I-J)=0.
\end{eqnarray*}
\noindent We have
\begin{eqnarray*}
&&
(A^2 B - B A^2 -A/\rho_0)J=0,
\qquad \qquad
J(A^2 B - B A^2 -\rho_0 A)=0,
\\
&&
(B^2 C - C B^2 -B/\rho_0')J=0,
\qquad \qquad
J(B^2 C - C B^2 -\rho_0' B)=0,
\\
&&
(C^2 A - A C^2 -C/\rho_0'')J=0,
\qquad \qquad
J(C^2 A - A C^2 -\rho_0'' C)=0
\end{eqnarray*} 
and also
\begin{eqnarray*}
&&
J(A B^2 - B^2 A -B/\rho_0)=0,
\qquad \qquad
(A B^2 - B^2 A -\rho_0 B)J=0,
\\
&&
J(B C^2 - C^2 B -C/\rho_0')=0,
\qquad \qquad
(B C^2 - C^2 B -\rho_0' C)J=0,
\\
&&
J(C A^2 - A^2 C -A/\rho_0'')=0,
\qquad \qquad
(C A^2 - A^2 C -\rho_0'' A)J=0.
\end{eqnarray*} 
We have
\begin{eqnarray*}
&&A^3 B + A^2BA-ABA^2 -BA^3 = (\rho_0 + 1/\rho_0)A^2,
\\
&&
B^3 C + B^2CB-BCB^2 -CB^3 = (\rho'_0 + 1/\rho'_0)B^2,
\\
&&
C^3 A + C^2AC-CAC^2 -AC^3 = (\rho''_0 + 1/\rho''_0)C^2
\end{eqnarray*}
and also
\begin{eqnarray*}
&&
A B^3 + BAB^2-B^2AB -B^3A = (\rho_0 + 1/\rho_0)B^2,
\\
&&
B C^3 + CBC^2-C^2BC -C^3B = (\rho'_0 + 1/\rho'_0)C^2,
\\
&&
C A^3 + ACA^2-A^2CA -A^3C = (\rho''_0 + 1/\rho''_0)A^2.
\end{eqnarray*}

\noindent We have
\begin{eqnarray*}
&&
(A - B\rho_0   - C/\rho_0'')J=0,
\qquad \qquad
J(A - B/\rho_0 - C\rho_0'' )=0,
\\
&&
(B - C\rho_0'   - A/\rho_0)J=0,
\qquad \qquad
J(B - C/\rho_0' - A\rho_0 )=0,
\\
&&
(C - A\rho_0 ''  - B/\rho_0')J=0,
\qquad \qquad
J(C - A/\rho_0 '' - B\rho_0')=0.
\end{eqnarray*}
\noindent {\bf Case 
${\rm B}_2(\mathbb F;\rho_0, \rho'_0,\rho''_0)$}.
Same as 
${\rm B}_d(\mathbb F;1,\rho_0, \rho'_0,\rho''_0)$
with $d=2$, where we interpret $d/2=1$ and $(d+2)/2=0$ if
${\rm Char}(\mathbb F)=2$.
\end{proposition}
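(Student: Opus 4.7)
The plan is to mimic the proof strategy used for Proposition \ref{prop:ABCRel}: every displayed identity will be verified by direct matrix computation using the explicit representation provided by Proposition \ref{prop:matrixRep}. Concretely, I would fix an $(A,B)$-basis of $V$ and use the first row of the table in Proposition \ref{prop:matrixRep} to represent $A,B,C$. Since $A,B,C$ is bipartite, the trace data vanishes ($a_i = a'_i = a''_i = 0$ for $0 \leq i \leq d$), so $C$ is represented by a purely off-diagonal bidiagonal matrix. For the projector $J$, I would invoke Lemma \ref{lem:JMat} to represent $J$ by $\mathrm{diag}(1,0,1,0,\ldots,0,1)$. The scalars $\varphi_i,\varphi'_i,\varphi''_i$ are supplied in closed form by Examples \ref{ex:BIP}, \ref{ex:BipONE}, \ref{ex:BipONEd2}.

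With these ingredients in hand, each displayed identity reduces to a finite collection of scalar identities among the $\varphi$-values. Two elementary telescoping identities do most of the work: for the $t \not=1$ case, $\rho_0 \frac{1-t^{j+1}}{1-t} - \rho_0 \frac{1-t^j}{1-t} = \rho_0 t^j$ and its reciprocal analogue for the odd-indexed $\varphi$'s; for the $t=1$ case these degenerate to linear differences. I would organize the verification by grouping identities by type, in increasing order of length: first the quadratic $(AB,BA)$-relations paired with $J$; then the cubic $A^2 B$-type relations; then the quartic $A^3 B + A^2 B A - \cdots$ identities; and finally the triple-product identities involving $ABC$, $CBA$, $BCA$, etc. The factor $J$ (resp.\ $I-J$) acts as a parity filter that restricts each computation to even-indexed (resp.\ odd-indexed) rows and columns, cutting each verification roughly in half.

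A more conceptual reorganization, which I would use to shorten the argument where possible, exploits the structure theorems Lemma \ref{lem:A2B2C2Out} and Lemma \ref{lem:A2B2C2In}: the restrictions of $A^2, B^2, C^2$ to $V_{\rm out}$ and $V_{\rm in}$ are themselves LR triples of the types appearing in Proposition \ref{prop:ABCRel}. Combined with Lemma \ref{lem:ABCINOUTJ} (which gives $X_{\rm out} = XJ = (I-J)X$ and $X_{\rm in} = JX = X(I-J)$), many of the relations in Proposition \ref{prop:ABCRelBIP} can be derived from the corresponding ones for the halves. This perspective also explains why the parameter $\rho_0 + t/\rho_0$ appears in the quartic identities: it is the analogue of $q(1+q)$ in Proposition \ref{prop:ABCRel} after the reparameterization dictated by Lemma \ref{lem:A2B2C2Out}(i).

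The main obstacle is combinatorial rather than conceptual: there are on the order of forty scalar identities per case, and the most delicate are the mixed triple-product identities such as $(ABC - \cdots)J = 0$ and $J(ABC - \cdots) = 0$, because $ABC$ is a product of three nearly tridiagonal matrices and has entries on five diagonals, which must cancel precisely after the parity filter $J$ or $I-J$ is applied. For these I would write out the $(i,i)$ and $(i,i\pm 1)$ entries of $ABC$ explicitly and match them term-by-term against the proposed right-hand sides using the separate three-term recursions obeyed by the even-indexed and odd-indexed subsequences of $\lbrace \varphi_i\rbrace_{i=1}^d$. The case ${\rm B}_d(\mathbb F;1,\rho_0,\rho'_0,\rho''_0)$ can be handled either by repeating the computation with $t=1$ (using the linear degenerations of the $t\not=1$ identities) or, more economically, by applying the generic case to an LR triple over a transcendental extension and specializing $t \to 1$; the only characteristic hypothesis needed for this specialization is the one already present in Example \ref{ex:BipONE}, namely that ${\rm Char}(\mathbb F)$ is $0$ or greater than $d/2$.
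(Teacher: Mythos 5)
Your proposal is correct and follows essentially the same route as the paper: the paper's proof simply verifies all the relations by representing $A,B,C$ via the first row of the table in Proposition \ref{prop:matrixRep} and $J$ via Lemma \ref{lem:JMat}, then checking the resulting scalar identities among the $\varphi$-values from Examples \ref{ex:BIP}--\ref{ex:BipONEd2}. Your additional remarks about organizing the computation via the outer/inner decomposition are a reasonable supplement but not needed for the argument.
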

\begin{proof}
To verify these relations, 
represent $A,B,C,J$ by matrices, using for
example
the first row of the table in Proposition
\ref{prop:matrixRep}, along with
 Lemma
\ref{lem:JMat}.
\end{proof}

\section{The quantum algebra 
$U_q(\mathfrak{sl}_2)$ and the Lie algebra 
$\mathfrak{sl}_2$}

\noindent In this section, we discuss how
 LR triples are related to
the quantum algebra $U_q(\mathfrak{sl}_2)$ and the Lie
algebra
 $\mathfrak{sl}_2$.
\medskip

\noindent Until further notice, assume that
the field $\mathbb F$
is arbitrary, and fix a nonzero $q \in \mathbb F$
such that $q^2 \not=1$. We recall the algebra
$U_q(\mathfrak{sl}_2)$.
We will use the equitable presentation, which was introduced in
\cite{equit}.

\begin{definition}
\label{def:uqA}
\rm 
(See \cite[Theorem~2.1]{equit}.)
Let 
$U_q(\mathfrak{sl}_2)$ denote the $\mathbb F$-algebra
with generators $x,y^{\pm 1},z$ and  relations
$yy^{-1}=1$, $y^{-1}y=1$,
\begin{equation}
\frac{qxy-q^{-1}yx}{q-q^{-1}} = 1, \quad \qquad
\frac{qyz-q^{-1}zy}{q-q^{-1}} = 1, \quad \qquad
\frac{qzx-q^{-1}xz}{q-q^{-1}} = 1.
\label{eq:uqrels}
\end{equation}
\end{definition}

\noindent The following subalgebra of
$U_q(\mathfrak{sl}_2)$ is of interest.

\begin{definition}
\rm 
Let $U^R_q(\mathfrak{sl}_2)$ denote the subalgebra of
 $U_q(\mathfrak{sl}_2)$ generated by $x,y,z$.
We call 
$U^R_q(\mathfrak{sl}_2)$ the {\it reduced
$U_q(\mathfrak{sl}_2)$} algebra.
\end{definition}

\begin{lemma}
\label{def:uqARED}
\rm 
(See \cite[Definition~10.6, Lemma~10.9]{uawe}.)
The algebra $U^R_q(\mathfrak{sl}_2)$ has a presentation
by generators $x,y,z$ and  relations
\begin{equation}
\frac{qxy-q^{-1}yx}{q-q^{-1}} = 1,  \quad \qquad
\frac{qyz-q^{-1}zy}{q-q^{-1}} = 1, \quad \qquad
\frac{qzx-q^{-1}xz}{q-q^{-1}} = 1.
\label{eq:uqrelsRED}
\end{equation}
\end{lemma}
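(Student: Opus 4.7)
The plan is to verify the presentation by constructing algebras in each direction. Let $\tilde U$ denote the $\mathbb F$-algebra defined abstractly by generators $\tilde x, \tilde y, \tilde z$ subject to the three equitable relations listed in the statement. Since $x,y,z$ inside $U_q(\mathfrak{sl}_2)$ satisfy the same relations by the first three defining relations of Definition \ref{def:uqA}, there exists a unique $\mathbb F$-algebra homomorphism $\pi : \tilde U \to U_q(\mathfrak{sl}_2)$ sending $\tilde x \mapsto x$, $\tilde y \mapsto y$, $\tilde z \mapsto z$. By definition, the image of $\pi$ is the subalgebra of $U_q(\mathfrak{sl}_2)$ generated by $x,y,z$, which is $U^R_q(\mathfrak{sl}_2)$. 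Thus $\pi$ induces a surjective homomorphism $\tilde U \twoheadrightarrow U^R_q(\mathfrak{sl}_2)$, and it suffices to prove this map is injective.

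The idea for injectivity is to exhibit a PBW-style spanning set for $\tilde U$ whose image in $U_q(\mathfrak{sl}_2)$ is known to be linearly independent. First I would use the three relations to show that every element of $\tilde U$ is a linear combination of ordered monomials $\tilde x^i \tilde y^j \tilde z^k$ with $i,j,k \geq 0$. Concretely, each relation lets one rewrite $\tilde y \tilde x$, $\tilde z \tilde y$, and $\tilde x \tilde z$ as linear combinations pushing one generator past another (modulo a scalar), and iterating moves all $\tilde x$'s to the left and all $\tilde z$'s to the right. Second, I would invoke the known fact (cf.\ \cite[Theorem~2.1]{equit} and \cite[Section~10]{uawe}) that in $U_q(\mathfrak{sl}_2)$ the monomials $\lbrace x^i y^j z^k \rbrace_{i,j,k \geq 0}$ are linearly independent. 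Together with the spanning statement, this forces $\pi$ to be injective, hence an isomorphism onto $U^R_q(\mathfrak{sl}_2)$.

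The main obstacle is the PBW-type basis statement in $U_q(\mathfrak{sl}_2)$ with respect to the equitable generators: one needs $\lbrace x^i y^j z^k\rbrace$ to be $\mathbb F$-linearly independent, not merely that they span the subalgebra. This is essentially the content cited from \cite[Definition~10.6, Lemma~10.9]{uawe}, where the equitable PBW basis is established by comparison with the standard Chevalley presentation (under which $y$ is identified with an exponential expression in the Chevalley generators and $x,z$ are shifted Chevalley generators). Once that comparison is in place, linear independence of the ordered monomials transfers from the standard PBW basis of $U_q(\mathfrak{sl}_2)$ to the equitable monomials, closing the argument.
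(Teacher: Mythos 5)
The paper offers no proof of this lemma beyond the citation to \cite{uawe}, and your outline is precisely the standard argument underlying that citation: surjectivity of the map from the abstractly presented algebra onto the subalgebra generated by $x,y,z$, spanning of the abstract algebra by the ordered monomials via the rewriting rules, and injectivity from the equitable PBW basis of $U_q(\mathfrak{sl}_2)$ established in \cite{equit}, \cite{uawe}. This is correct and is essentially the same approach as the source (the only slip is that for the ordering $x<y<z$ the out-of-order word handled by the third relation is $zx$, not $xz$, though the relation rewrites either in terms of the other).
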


\noindent There is a central element
$\Lambda $ 
in $U_q(\mathfrak{sl}_2)$ that is often 
called the normalized Casimir element
\cite[Definition~2.11]{uawe}. 
The element $\Lambda$ is equal to 
$(q-q^{-1})^2$ times the Casimir element
given in 
\cite[Section~2.7]{jantzen}.
By \cite[Lemma~2.15]{uawe}, $\Lambda$ is equal to
each of the following:
\begin{eqnarray}
&& qx+q^{-1}y + qz-qxyz, \qquad \qquad q^{-1}x+qy+q^{-1}z-q^{-1}zyx,
\label{eq:Lam1}
\\
&& qy+q^{-1}z + qx-qyzx, \qquad \qquad q^{-1}y+qz+q^{-1}x-q^{-1}xzy,
\label{eq:Lam2}
\\
&& qz+q^{-1}x + qy-qzxy, \qquad \qquad q^{-1}z+qx+q^{-1}y-q^{-1}yxz.
\label{eq:Lam3}
\end{eqnarray}
Note that $\Lambda $ is contained in
 $U^R_q(\mathfrak{sl}_2)$.
\medskip

\noindent Recall from Definition
\ref{def:qExceptional}
the LR triples of $q$-Weyl type.

\begin{proposition}
\label{thm:WeylUq}
Let $A,B,C$ denote an LR triple over $\mathbb F$
that has
$q$-Weyl type. Then 
the underlying vector space 
$V$ becomes a $U^R_q(\mathfrak{sl}_2)$-module
on which
\begin{eqnarray}
A = x, \qquad \qquad
B = y, \qquad \qquad
C = z.
\label{lem:xyzABC}
\end{eqnarray}
The 
$U^R_q(\mathfrak{sl}_2)$-module $V$ is irreducible.
On the
$U^R_q(\mathfrak{sl}_2)$-module $V$,
\begin{eqnarray}
\Lambda = \alpha_1 (q-q^{-1}) I,
\label{eq:LambdaVal}
\end{eqnarray}
where $\alpha_1$
is the first Toeplitz number for $A,B,C$.
\end{proposition}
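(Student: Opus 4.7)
\noindent \textbf{Proof proposal for Proposition \ref{thm:WeylUq}.} The plan is to exploit the presentation of $U^R_q(\mathfrak{sl}_2)$ given in Lemma \ref{def:uqARED}, together with the generation result Lemma \ref{lem:ABCGEN} and the identities assembled in Lemma \ref{lem:sixeqs}. All three claims of the proposition should follow rather directly, so this is more an assembly than a discovery.

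\noindent First I would construct the module structure. By Definition \ref{def:qExceptional}, the $q$-Weyl type hypothesis on $A,B,C$ is exactly the assertion that the three relations in (\ref{eq:WWW}) hold. Comparing (\ref{eq:WWW}) to the defining relations (\ref{eq:uqrelsRED}) of $U^R_q(\mathfrak{sl}_2)$ in Lemma \ref{def:uqARED}, I see that there is a (necessarily unique) $\mathbb F$-algebra homomorphism $\pi: U^R_q(\mathfrak{sl}_2) \to {\rm End}(V)$ sending $x\mapsto A$, $y\mapsto B$, $z \mapsto C$. Using $\pi$, equip $V$ with the $U^R_q(\mathfrak{sl}_2)$-module structure in which (\ref{lem:xyzABC}) holds by construction.

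\noindent Next, for irreducibility, let $0 \neq W \subseteq V$ be a $U^R_q(\mathfrak{sl}_2)$-submodule. Then $W$ is invariant under $x,y,z$, hence under $A,B,C$, and in particular under the subalgebra of ${\rm End}(V)$ generated by $A$ and $B$. By Lemma \ref{lem:ABCGEN} this subalgebra is all of ${\rm End}(V)$, so $W$ is invariant under every element of ${\rm End}(V)$. Since $W \neq 0$, this forces $W = V$, proving irreducibility.

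\noindent For the Casimir computation, take any one of the six forms of $\Lambda$ listed in (\ref{eq:Lam1})--(\ref{eq:Lam3}); for instance $\Lambda = qx+q^{-1}y+qz-qxyz$. Applying $\pi$ gives that $\Lambda$ acts on $V$ as $qA+q^{-1}B+qC-qABC$. By Lemma \ref{lem:sixeqs} this operator equals $\alpha_1(q-q^{-1})I$, establishing (\ref{eq:LambdaVal}). As a consistency check, one can verify that the other five forms of $\Lambda$ match the other five expressions in Lemma \ref{lem:sixeqs}, which is automatic from the centrality of $\Lambda$ in $U_q(\mathfrak{sl}_2)$.

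\noindent I do not anticipate any serious obstacle; the only subtle point is making sure that the relations used to define $U^R_q(\mathfrak{sl}_2)$ in Lemma \ref{def:uqARED} are exactly the $q$-Weyl relations of Definition \ref{def:qExceptional} (they are), and that Lemma \ref{lem:sixeqs} identifies the correct polynomial in $A,B,C$ with $\alpha_1(q-q^{-1})I$ (it does). The heavy computational content---namely the verification that the $q$-Weyl relations alone force the scalar value $\alpha_1(q-q^{-1})$ for all six Casimir expressions---has already been carried out in Lemma \ref{lem:sixeqs}, so this proposition is really a reinterpretation of that earlier lemma in module-theoretic language.
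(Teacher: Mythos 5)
Your proposal is correct and follows essentially the same route as the paper's own proof: the module structure comes from matching (\ref{eq:WWW}) with (\ref{eq:uqrelsRED}), irreducibility from Lemma \ref{lem:ABCGEN}, and the Casimir value from comparing Lemma \ref{lem:sixeqs} with (\ref{eq:Lam1})--(\ref{eq:Lam3}). No gaps.
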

\begin{proof}
Compare
(\ref{eq:WWW}),
(\ref{eq:uqrelsRED}) to obtain the first assertion.
The $U^R_q(\mathfrak{sl}_2)$-module $V$ is irreducible
by 
(\ref{lem:xyzABC}) and
Lemma
\ref{lem:ABCGEN}.
To get (\ref{eq:LambdaVal}),
compare
Lemma
\ref{lem:sixeqs}
and
(\ref{eq:Lam1})--(\ref{eq:Lam3}) using
(\ref{lem:xyzABC}).
\end{proof}

\noindent We are done discussing the LR triples of
$q$-Weyl type.
\medskip

\noindent 
We return our attention to
$U_q(\mathfrak{sl}_2)$.
By 
\cite[Lemma~5.1]{equit} 
we find that in
$U_q(\mathfrak{sl}_2)$,
\begin{eqnarray*}
q(1-xy)= q^{-1}(1-yx),
\qquad
q(1-yz)= q^{-1}(1-zy),
\qquad
q(1-zx)= q^{-1}(1-xz).
\end{eqnarray*}

\begin{definition}
\label{def:nxnynz}
\rm
(See \cite[Definition~5.2]{equit}.)
Let $n_x, n_y, n_z$ denote the following elements in 
$U_q(\mathfrak{sl}_2)$:
\begin{eqnarray*}
&&
n_x = \frac{q(1-yz)}{q-q^{-1}} = \frac{q^{-1}(1-zy)}{q-q^{-1}},
\\
&&
n_y = \frac{q(1-zx)}{q-q^{-1}} = \frac{q^{-1}(1-xz)}{q-q^{-1}},
\\
&&
n_z = \frac{q(1-xy)}{q-q^{-1}} = \frac{q^{-1}(1-yx)}{q-q^{-1}}.
\end{eqnarray*}
\end{definition}

\begin{lemma}
{\rm (See \cite[Lemma~5.4]{equit}.)}
The following relations hold in 
$U_q(\mathfrak{sl}_2)$:
\begin{eqnarray*}
&&
x n_y =q^2 n_y x, \qquad \qquad x n_z = q^{-2} n_z x,
\\
&&
y n_z =q^2 n_z y, \qquad \qquad y n_x = q^{-2} n_x y,
\\
&&
z n_x =q^2 n_x z, \qquad \qquad z n_y = q^{-2} n_y z.
\end{eqnarray*}
\end{lemma}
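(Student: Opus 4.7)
The plan is to exploit the cyclic symmetry of the equitable presentation of $U_q(\mathfrak{sl}_2)$ to reduce the six claimed identities to two direct computations.

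First I would observe that the defining relations (\ref{eq:uqrels}) of $U_q(\mathfrak{sl}_2)$ are invariant under the cyclic permutation of generators $x \to y \to z \to x$ (with $y^{\pm 1}$ going along), so this permutation extends to an $\mathbb F$-algebra automorphism $\sigma$ of $U_q(\mathfrak{sl}_2)$. Inspecting Definition \ref{def:nxnynz}, $\sigma$ sends $n_x \to n_y \to n_z \to n_x$. Hence the six claimed relations split into two $\sigma$-orbits of size three (one orbit consisting of the three $q^2$-relations, the other of the three $q^{-2}$-relations), and it suffices to verify the two base identities $x n_y = q^2 n_y x$ and $x n_z = q^{-2} n_z x$.

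For $x n_y = q^2 n_y x$, I would work with the form $n_y = q(1-zx)/(q-q^{-1})$, since that puts $x$ adjacent to $n_y$ on both sides. The third relation in (\ref{eq:uqrels}) rearranges to $xz = q^2 zx - (q^2-1)$, and applying this inside $xzx$ yields
\begin{eqnarray*}
x - xzx \;=\; x - \bigl(q^2 z x - (q^2-1)\bigr)x \;=\; q^2(x - zx^2) \;=\; q^2 (1-zx) x.
\end{eqnarray*}
Multiplying by $q/(q-q^{-1})$ gives $x n_y = q^2 n_y x$. For $x n_z = q^{-2} n_z x$ I would instead use $n_z = q^{-1}(1-yx)/(q-q^{-1})$ and the first relation in (\ref{eq:uqrels}), which rearranges to $xy = q^{-2}yx + (1-q^{-2})$; an entirely parallel computation produces $x - xyx = q^{-2}(1-yx)x$, whence $x n_z = q^{-2} n_z x$.

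The main (minor) subtlety is choosing at the outset the correct one of the two equivalent forms for $n_y$ and $n_z$ supplied by Definition \ref{def:nxnynz}: in each base case, one must pick the expression in which the variable $x$ appears on the right of $n_\bullet$, so that a single application of the appropriate $q$-Weyl relation commutes $x$ past cleanly. With that choice made, the argument is routine algebra, and the cyclic automorphism $\sigma$ carries the remaining four relations for free.
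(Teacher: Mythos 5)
Your two base computations are correct, and reducing the six identities to two via the cyclic symmetry is a sound idea. However, the step asserting that the permutation $x\to y\to z\to x$ "extends to an $\mathbb F$-algebra automorphism $\sigma$ of $U_q(\mathfrak{sl}_2)$" fails: the presentation in Definition \ref{def:uqA} has generators $x,y^{\pm 1},z$ with relations $yy^{-1}=1$, $y^{-1}y=1$, so to define $\sigma$ on $U_q(\mathfrak{sl}_2)$ you would need to send $y^{-1}$ to an inverse of $z$, and $z$ is not invertible in $U_q(\mathfrak{sl}_2)$ (indeed no automorphism can carry the unit $y$ to the non-unit $z$). The correct home for your symmetry argument is the reduced algebra $U^R_q(\mathfrak{sl}_2)$: by Lemma \ref{def:uqARED} it is presented by $x,y,z$ subject only to the three relations (\ref{eq:uqrelsRED}), which \emph{are} cyclically invariant, so $\sigma$ is a genuine automorphism there, and it permutes $n_x\to n_y\to n_z\to n_x$ exactly as you say. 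Since $n_x,n_y,n_z$ and all six claimed relations lie in the subalgebra $U^R_q(\mathfrak{sl}_2)\subseteq U_q(\mathfrak{sl}_2)$, proving them in $U^R_q(\mathfrak{sl}_2)$ proves them in $U_q(\mathfrak{sl}_2)$. With that one repair (or, more prosaically, by running your two-line computation six times, once for each relation, which costs nothing), the proof is complete. The paper itself supplies no argument here — it only cites \cite[Lemma~5.4]{equit} — so there is no in-paper proof to compare against.
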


\noindent Until further notice, let $A,B,C$ 
denote an LR triple that is contained in
${\rm NBG}_d(\mathbb F;q^{-2})$.  Let $V$
denote the underlying vector space.

\begin{definition}
\label{def:xyzUQ}
\rm
Define $X,Y,Z$ in
${\rm End}(V)$ such that
for $0 \leq i \leq d$,
$X-q^{d-2i}I$
(resp. $Y-q^{d-2i}I$)
(resp. $Z-q^{d-2i}I$) vanishes on component
$i$ of the
$(B,C)$-decomposition
(resp. $(C,A)$-decomposition)
(resp. $(A,B)$-decomposition) of $V$. Note that
each of $X,Y,Z$ is invertible.
\end{definition}

\noindent The next result is meant to clarify
Definition \ref{def:xyzUQ}.
Recall the idempotent data 
(\ref{eq:idseq}) for
$A,B,C$.

\begin{lemma}
The elements $X,Y,Z$ from Definition 
\ref{def:xyzUQ} satisfy
\begin{eqnarray*}
X = \sum_{i=0}^d q^{d-2i} E'_i,
\qquad \qquad
Y = \sum_{i=0}^d q^{d-2i} E''_i,
\qquad \qquad
Z = \sum_{i=0}^d q^{d-2i} E_i.
\end{eqnarray*}
\end{lemma}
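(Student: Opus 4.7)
The plan is to verify each of the three equations by directly checking that the right-hand side has the defining property of the left-hand side, exploiting the idempotent sequences introduced in Definition \ref{def:LRTIdem}. Recall from that definition and from Section 2 that the $(B,C)$-decomposition of $V$ has idempotent sequence $\lbrace E'_i\rbrace_{i=0}^d$, meaning that $E'_iV$ is component $i$ of this decomposition and the family satisfies $E'_iE'_j = \delta_{i,j}E'_i$ together with $I = \sum_{i=0}^d E'_i$. The analogous statements hold for $\lbrace E''_i\rbrace_{i=0}^d$ with respect to the $(C,A)$-decomposition and for $\lbrace E_i\rbrace_{i=0}^d$ with respect to the $(A,B)$-decomposition.

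The first step is to verify the formula for $X$. Define $\widehat X = \sum_{i=0}^d q^{d-2i}E'_i$. For $0 \leq i \leq d$, applying $\widehat X - q^{d-2i}I$ to component $i$ of the $(B,C)$-decomposition (that is, to $E'_iV$) and using the relations $E'_jE'_i = \delta_{i,j}E'_i$ together with $I = \sum_{j=0}^d E'_j$, one sees that the result is zero. Since $\widehat X$ therefore satisfies the defining property of $X$ in Definition \ref{def:xyzUQ}, and that property determines a unique element of ${\rm End}(V)$ (because the components $E'_iV$ span $V$), we conclude $X = \widehat X$.

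The formulae for $Y$ and $Z$ are obtained by the same argument applied to the $(C,A)$-decomposition (with idempotents $E''_i$) and the $(A,B)$-decomposition (with idempotents $E_i$), respectively. No computational obstacles arise; the only subtlety to flag is to match the decomposition attached to each of $X,Y,Z$ with the correct primed label of the idempotent sequence, which is dictated by the notational convention of Definition \ref{def:primeConv} and Definition \ref{def:LRTIdem}. Finally, invertibility of $X,Y,Z$ (asserted in Definition \ref{def:xyzUQ}) follows immediately from the displayed formulae, since each $q^{d-2i}$ is a nonzero eigenvalue.
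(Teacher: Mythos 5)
Your proof is correct and is essentially the paper's own argument: the paper's one-line proof ("By Definition \ref{def:xyzUQ} and the meaning of the idempotent data") is exactly the observation you spell out, namely that $\sum_{i=0}^d q^{d-2i}E'_i$ satisfies the defining property of $X$ on each component $E'_iV$ of the $(B,C)$-decomposition, and that this property determines $X$ uniquely. The labelling of the idempotent sequences with the decompositions is also matched correctly.
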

\begin{proof}
By Definition
\ref{def:xyzUQ} and the meaning of the idempotent
data.
\end{proof}

\begin{lemma} 
\label{eq:XYZform}
The elements $X,Y,Z$ from Definition 
\ref{def:xyzUQ} satisfy
\begin{eqnarray*}
&&
X =
\frac{(q+q^{-1})I - (q-q^{-1})(q^2BC-q^{-2}CB)}{q^{d+1}+q^{-d-1}},
\\
&&
Y =
\frac{(q+q^{-1})I - (q-q^{-1})(q^2CA-q^{-2}AC)}{q^{d+1}+q^{-d-1}},
\\
&&
Z =
\frac{(q+q^{-1})I - (q-q^{-1})(q^2AB-q^{-2}BA)}{q^{d+1}+q^{-d-1}}.
\end{eqnarray*}
\end{lemma}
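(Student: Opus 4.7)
\medskip

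\noindent\textbf{Proof plan for Lemma \ref{eq:XYZform}.}
The plan is to work one component at a time on the $(B,C)$-decomposition of $V$, reducing the claimed matrix identity to a scalar identity among the parameters $\lbrace \varphi'_i\rbrace_{i=1}^d$, and then to verify that scalar identity by direct manipulation of the closed form coming from Example \ref{ex:NBGq}. The three formulae for $X,Y,Z$ are structurally identical (they correspond to the three LR pairs $B,C$; $C,A$; $A,B$), so it suffices to establish the formula for $X$, after which the formulae for $Y$ and $Z$ follow by applying the same argument to the p-relatives $B,C,A$ and $C,A,B$. This is legitimate because, as an LR triple in ${\rm NBG}_d(\mathbb F;q^{-2})$, $A,B,C$ is nonbipartite and normalized, hence equitable by the lemma immediately following Definition \ref{def:NBNorm}, so $\varphi_i=\varphi'_i=\varphi''_i$ for $1\leq i\leq d$ by Lemma \ref{lem:equitBasic}(i); consequently the same closed form applies in all three cases.

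\medskip

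\noindent The first step is to use the preceding lemma to write $X=\sum_{i=0}^{d} q^{d-2i}E'_i$, and to apply Lemma \ref{lem:AiBione} to the LR pair $B,C$ to obtain
\begin{equation*}
BC = \sum_{j=0}^{d-1} \varphi'_{j+1}\,E'_j,
\qquad
CB = \sum_{j=1}^{d} \varphi'_{j}\,E'_j,
\end{equation*}
so that $q^{2}BC-q^{-2}CB = \sum_{j=0}^{d}\bigl(q^{2}\varphi'_{j+1}-q^{-2}\varphi'_{j}\bigr)E'_j$ (with the convention $\varphi'_0=\varphi'_{d+1}=0$). Comparing coefficients of $E'_j$ on the two sides of the desired identity, the claim reduces to the scalar equalities
\begin{equation*}
(q-q^{-1})\bigl(q^{2}\varphi'_{j+1}-q^{-2}\varphi'_{j}\bigr)
\;=\;
(q+q^{-1}) \;-\; q^{d-2j}\bigl(q^{d+1}+q^{-d-1}\bigr)
\qquad (0\leq j\leq d).
\end{equation*}

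\medskip

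\noindent The second step is to verify this scalar identity. Substituting $q\mapsto q^{-2}$ into the formula of Example \ref{ex:NBGq}, one obtains
\begin{equation*}
\varphi'_{i} \;=\; \frac{(q^{-2i}-1)(q^{2d+2-2i}-1)}{(q-q^{-1})^{2}}
\qquad (0\leq i\leq d+1),
\end{equation*}
after clearing the common factor $q^{-2}$ between numerator and $(q^{-2}-1)^{2}=q^{-2}(q-q^{-1})^{2}$. Expanding both products $(q^{-2j-2}-1)(q^{2d-2j}-1)$ and $(q^{-2j}-1)(q^{2d+2-2j}-1)$, multiplying by $q^{2}$ and $q^{-2}$ respectively, subtracting, and collecting monomials in $q$ gives
\begin{equation*}
q^{2}\varphi'_{j+1}-q^{-2}\varphi'_{j}
\;=\;
\frac{q^{2}+1-q^{-2j}-q^{2d+2-2j}}{q^{2}-1}.
\end{equation*}
Multiplying by $q-q^{-1}=q^{-1}(q^{2}-1)$ and simplifying yields precisely $(q+q^{-1})-q^{d-2j}(q^{d+1}+q^{-d-1})$, as required.

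\medskip

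\noindent The main obstacle is purely bookkeeping: the scalar identity in step two is not a single-line cancellation but involves grouping four monomials in $q$ on each side. I would expect to record the intermediate identity for $q^{2}\varphi'_{j+1}-q^{-2}\varphi'_{j}$ as a small standalone computation (or fold it into a remark), since it is the one place where the specific closed form of the NBG parameters enters. Once that is in hand, the matrix identity assembles immediately: summing the component identities with weight $E'_j$ gives $(q-q^{-1})(q^{2}BC-q^{-2}CB)=(q+q^{-1})I-(q^{d+1}+q^{-d-1})X$, and solving for $X$ produces the stated formula. Applying the same reasoning to the pairs $C,A$ and $A,B$, and using the equitable equalities $\varphi_i=\varphi'_i=\varphi''_i$, gives the formulae for $Y$ and $Z$.
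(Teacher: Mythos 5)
Your proof is correct and takes essentially the same route as the paper: the paper verifies the identity by representing $B$, $C$, and $X$ as matrices with respect to a $(B,C)$-basis, which is exactly your component-by-component reduction on the $(B,C)$-decomposition (your use of Lemma \ref{lem:AiBione} to diagonalize $BC$ and $CB$ is the idempotent-language version of reading off the diagonal of the matrix product), and both reduce to the same scalar identity in the $\varphi'_j$. Your explicit verification of that identity, and your appeal to equitability to transfer the argument to $Y$ and $Z$, are both sound.
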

\begin{proof}
To verify the first equation,
work with the matrices in
${\rm Mat}_{d+1}(\mathbb F)$
that represent
$B,C,X$ with respect to a
$(B,C)$-basis for $V$.
For $B,C$ these matrices are given in Proposition
\ref{prop:matrixRep}. For $X$ this matrix is diagonal,
with $(i,i)$-entry $q^{d-2i}$ for $0 \leq i \leq d$.
The other two equations are similarly verified.
\end{proof}

\begin{lemma}
\label{lem:XYZUQ}
The elements $X,Y,Z$ from Definition 
\ref{def:xyzUQ} satisfy
\begin{eqnarray*}
\frac{qXY-q^{-1}YX}{q-q^{-1}} = I,
\quad \qquad 
\frac{qYZ-q^{-1}ZY}{q-q^{-1}} = I,
\quad \qquad 
\frac{qZX-q^{-1}XZ}{q-q^{-1}} = I.
\end{eqnarray*}
\end{lemma}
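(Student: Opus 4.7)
The plan is to establish the three $q$-Weyl relations by direct matrix computation. First I would observe that cycling $(A,B,C) \to (B,C,A)$ preserves membership in ${\rm NBG}_d(\mathbb F; q^{-2})$ (by Lemmas \ref{lem:ABCvar} and \ref{lem:albega}) and, under Definition \ref{def:xyzUQ}, cyclically permutes $(X,Y,Z) \to (Y,Z,X)$. Hence the three stated identities are cyclic images of one another, and it is enough to establish just one of them, say
$$qZX - q^{-1}XZ = (q - q^{-1})I.$$

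Next I would fix an $(A,B)$-basis of $V$ and work with the resulting matrix representations. By Definition \ref{def:xyzUQ}, $Z$ is represented by the diagonal matrix $\operatorname{diag}(q^d, q^{d-2}, \ldots, q^{-d})$. Proposition \ref{prop:matrixRep} (row $(A,B)$) gives the tridiagonal matrices of $A,B,C$ with entries determined by the parameter array, which by Example \ref{ex:NBGq} with parameter $q^{-2}$ is $\varphi_i = \varphi'_i = \varphi''_i = q^{-2}(q^{-2i}-1)(q^{-2(i-d-1)}-1)/(q^{-2}-1)^2$, together with the trace data $a_i = \varphi_{d-i+1} - \varphi_{d-i}$ supplied by Lemma \ref{lem:equitBasic}(ii) and $\alpha_1 = 1$. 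Substituting these matrices into the formula for $X$ from Lemma \ref{eq:XYZform} yields a concrete tridiagonal matrix whose entries are rational in $q$ and the $\varphi_k$; note that $\kappa = q^{d+1}+q^{-d-1}$ is nonzero by the constraint $q^{2d+2} \neq -1$ inherited from Example \ref{ex:NBGq}.

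Because $Z$ is diagonal, the $(i,j)$-entry of $qZX - q^{-1}XZ$ factors as $X_{ij}(q^{d-2i+1} - q^{d-2j-1})$. So the identity reduces to two entrywise statements: the diagonal entries of $X$ must equal $q^{2i-d}$, so that $(qZX - q^{-1}XZ)_{ii} = q - q^{-1}$; and the sub- and super-diagonal entries of $X$ must be annihilated by the corresponding factors $q^{d-2i+1} - q^{d-2j-1}$, which are nonzero thanks to the constraints $q^{2k} \neq 1$ for $1 \leq k \leq d$ (Example \ref{ex:NBGq} with parameter $q^{-2}$). The main obstacle I expect is the algebraic bookkeeping: expanding $X$ via Lemma \ref{eq:XYZform} produces sums involving products of the tridiagonal matrices for $B$ and $C$, and collecting terms requires careful telescoping of $q$-exponents in the closed form for $\varphi_k$. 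The computation is purely mechanical, but the density of $q$-powers demands care in tracking cancellations.
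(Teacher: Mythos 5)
Your cyclic reduction to the single identity $qZX-q^{-1}XZ=(q-q^{-1})I$ is sound, and your overall route (explicit matrices in an $(A,B)$-basis, where $Z$ is diagonal) differs from the paper's, which instead eliminates $X,Y,Z$ via Lemma \ref{eq:XYZform} and then invokes the ring relations of Proposition \ref{prop:ABCRel}. However, the entrywise analysis that carries your argument is wrong in two places. In an $(A,B)$-basis, $B$ is strictly subdiagonal and $C$ is tridiagonal, so $BC$ and $CB$ (hence $X$, by Lemma \ref{eq:XYZform}) are supported on the diagonal and the \emph{first and second subdiagonals}; $X$ is not tridiagonal and has no superdiagonal. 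More seriously, in the factorization $(qZX-q^{-1}XZ)_{ij}=X_{ij}\,(q^{d-2i+1}-q^{d-2j-1})$ the factor for $j=i-1$ is $q^{d-2i+1}-q^{d-2i+1}=0$: it vanishes identically, it is not ``nonzero by the constraints $q^{2k}\neq 1$.'' So the subdiagonal entries of $X$ --- which are genuinely nonzero, being proportional to $\varphi_i(q^2a_{i-1}-q^{-2}a_i)$ --- impose no condition and certainly cannot be shown to vanish; had you tried, you would have reached a contradiction.

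What actually remains to be checked off the diagonal is the case $j=i-2$, where the factor $q^{d-2i+1}(1-q^{2})$ is nonzero and one must verify $X_{i,i-2}=0$, i.e. $q^{2}(BC)_{i,i-2}=q^{-2}(CB)_{i,i-2}$, which amounts to $q^{2}\varphi_i\varphi_{d-i+2}=q^{-2}\varphi_{i-1}\varphi_{d-i+1}$. This does hold for the parameter array of ${\rm NBG}_d(\mathbb F;q^{-2})$ (it is equivalent to $\rho_{i-1}/\rho_{i-2}=q^{-4}$, consistent with Lemma \ref{lem:NBGrho}), but your proposal never identifies this as the condition to be verified. So the plan as written would either stall on the subdiagonal or silently skip the one off-diagonal identity that genuinely needs proof; with the band structure corrected and the $(i,i-2)$ check added, the approach goes through.
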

\begin{proof}
To verify these equations, eliminate
$X,Y,Z$ using
Lemma
\ref{eq:XYZform}, and evaluate the
result using
the relations for
${\rm NBG}_d(\mathbb F;q^{-2})$ given in
Proposition
\ref{prop:ABCRel}.
\end{proof}

\begin{proposition}
\label{prop:NBGUq}
Let $A,B,C$ denote an LR triple contained in
${\rm NBG}_d(\mathbb F;q^{-2})$.
Then there exists a unique
$U_q(\mathfrak{sl}_2)$-module structure on 
the underlying vector space $V$, such that
for $0 \leq i \leq d$,
$x-q^{d-2i}1$
(resp. $y-q^{d-2i}1$)
(resp. $z-q^{d-2i}1$)
vanishes on component $i$ of the 
$(B,C)$-decomposition 
(resp. $(C,A)$-decomposition) 
(resp. $(A,B)$-decomposition) of $V$.
The
$U_q(\mathfrak{sl}_2)$-module  $V$ is irreducible.
On the 
$U_q(\mathfrak{sl}_2)$-module  $V$,
\begin{eqnarray}
\label{eq:ABCnuxyz}
A = n_x, \qquad \qquad
B = n_y, \qquad \qquad
C = n_z.
\end{eqnarray}
\end{proposition}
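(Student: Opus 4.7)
The plan is to combine the universal property of $U_q(\mathfrak{sl}_2)$ with the explicit matrix analysis from Section 13. First I would use Definition \ref{def:xyzUQ} to produce invertible operators $X, Y, Z \in {\rm End}(V)$. Note that for ${\rm NBG}_d(\mathbb F; q^{-2})$ the condition $q^{-2i} \neq 1$ for $1 \leq i \leq d$ (from Example \ref{ex:NBGq}) guarantees that the eigenvalues $\{q^{d-2i}\}_{i=0}^d$ of each of $X, Y, Z$ are pairwise distinct, so the $(B,C)$-, $(C,A)$-, $(A,B)$-decompositions of $V$ coincide with the respective eigenspace decompositions. By Lemma \ref{lem:XYZUQ}, the triple $X, Y, Z$ satisfies the equitable relations of Definition \ref{def:uqA}, and since $Y$ is invertible in ${\rm End}(V)$, the universal property of $U_q(\mathfrak{sl}_2)$ provides a unique $\mathbb F$-algebra homomorphism $\rho: U_q(\mathfrak{sl}_2) \to {\rm End}(V)$ sending $x, y, z$ to $X, Y, Z$ respectively. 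This $\rho$ equips $V$ with the required $U_q(\mathfrak{sl}_2)$-module structure, and uniqueness is automatic because $x, y, z$ generate $U_q(\mathfrak{sl}_2)$ as an algebra.

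Next I would verify the identities $A = n_x$, $B = n_y$, $C = n_z$ on $V$. By symmetry it suffices to prove $A = n_x$; the other two follow by applying the same argument to the p-relatives of $A, B, C$. The core idea is a weight calculation: the relations $zn_x = q^2 n_x z$ and $yn_x = q^{-2}n_x y$ hold in $U_q(\mathfrak{sl}_2)$ (listed just before Definition \ref{def:nxnynz}), and translating these to $V$ via the distinctness of the eigenvalues of $Z$ and $Y$ yields $n_x E_i V \subseteq E_{i-1}V$ and $n_x E''_j V \subseteq E''_{j+1}V$. The first condition says $n_x$ weakly lowers the $(A,B)$-decomposition; the second says $n_x$ weakly lowers the $(A,C)$-decomposition, since the $(A,C)$-decomposition is the inversion of the $(C,A)$-decomposition. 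By Theorem \ref{thm:DL}(ii), the space of operators with both properties is one-dimensional and spanned by $A$, so $n_x = cA$ for some $c \in \mathbb F$. To pin down $c = 1$, I would evaluate $q(I - YZ)v_i/(q - q^{-1})$ on a basis vector $v_i$ in an $(A,B)$-basis using the formula for $Y$ from Lemma \ref{eq:XYZform}, the tridiagonal matrix form of $C$ from Proposition \ref{prop:matrixRep}, and the closed-form expression $\varphi_i = -q^{d+1-2i}[i][d+1-i]$ implicit in Example \ref{ex:NBGq}, then compare with $Av_i = v_{i-1}$.

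Irreducibility is then immediate: the image $\rho(U_q(\mathfrak{sl}_2))$ contains $A, B, C$, and by Lemma \ref{lem:ABCGEN} these generate ${\rm End}(V)$ as an $\mathbb F$-algebra, so $\rho$ is surjective and $V$ is irreducible.

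I expect the main obstacle to be the determination of the constant $c$ in $n_x = cA$. The weight argument cleanly reduces the problem to a single scalar, but verifying $c = 1$ requires either the direct basis computation sketched above or an equivalent manipulation with the NBG relations of Proposition \ref{prop:ABCRel} (with $q$ replaced by $q^{-2}$) substituted into the formulas of Lemma \ref{eq:XYZform}. Both routes are elementary, but both demand careful tracking of $q$-powers and the telescoping cancellations between $q^2 \varphi_{d-i+2}/\varphi_{i-1}$-type terms and $q^{-2}\varphi_{d-i+1}/\varphi_i$-type terms.
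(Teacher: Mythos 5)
Your construction of the module structure is the same as the paper's (both rest on Definition \ref{def:xyzUQ}, Lemma \ref{lem:XYZUQ}, and the invertibility of $Y$), and your irreducibility argument via (\ref{eq:ABCnuxyz}) and Lemma \ref{lem:ABCGEN} is identical. Where you genuinely diverge is in verifying (\ref{eq:ABCnuxyz}): the paper eliminates $n_x,n_y,n_z$ via Definition \ref{def:nxnynz}, substitutes the expressions for $X,Y,Z$ from Lemma \ref{eq:XYZform}, and simplifies directly using the ${\rm NBG}_d(\mathbb F;q^{-2})$ relations of Proposition \ref{prop:ABCRel}. You instead use the commutation relations $zn_x=q^2n_xz$ and $yn_x=q^{-2}n_xy$ to place $n_x$ in the double lowering space $\overline A$, invoke Theorem \ref{thm:DL}(ii) to conclude $n_x=cA$, and then determine the scalar $c$ from a single matrix entry. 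This is more conceptual and isolates the only truly computational step, at the cost of importing the machinery of Section 20; the paper's route is a self-contained (if heavier) identity check.

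One step in your weight argument needs an extra line. Pairwise distinctness of $\{q^{d-2i}\}_{i=0}^d$ does give $n_xE_iV\subseteq E_{i-1}V$ for $1\le i\le d$, but the boundary claim $n_xE_0V=0$ requires that the target value $q^{d+2}$ not be an eigenvalue of $Z$ at all, i.e. $q^{2(d+1)}\ne 1$. Example \ref{ex:NBGq} (with parameter $q^{-2}$) only forbids $q^{-2(d+1)}=-1$, not $q^{-2(d+1)}=1$, so the case where $q^2$ is a primitive $(d+1)$-root of unity genuinely occurs and the naive eigenvalue argument gives only $n_xE_0V\subseteq E_dV$ there. The repair is short: combining the two relations (and using $E_0V=E''_dV=A^dV$) gives $n_xE_0V\subseteq E_dV\cap E''_0V=B^dV\cap C^dV$, which is zero because the flags $\lbrace B^{d-i}V\rbrace_{i=0}^d$ and $\lbrace C^{d-i}V\rbrace_{i=0}^d$ are opposite; the same remark handles the other boundary $n_xE''_dV=0$. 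With that patch, and granting the routine evaluation of $c=1$ that you sketch but do not carry out, your proof is correct.
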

\begin{proof}
The 
$U_q(\mathfrak{sl}_2)$-module structure exists,
by
Lemma
\ref{lem:XYZUQ}  and since $Y$ is invertible.
The 
$U_q(\mathfrak{sl}_2)$-module structure is unique
by construction.
On the $U_q(\mathfrak{sl}_2)$-module $V$ we have
$x=X$,
$y=Y$,
$z=Z$.
To verify 
(\ref{eq:ABCnuxyz}),  eliminate 
$n_x$, $n_y$, $n_z$ using
Definition \ref{def:nxnynz}, and
evaluate the
result using
 Lemma
\ref{eq:XYZform}
along with the relations for
${\rm NBG}_d(\mathbb F;q^{-2})$ given in
Proposition
\ref{prop:ABCRel}. The 
$U_q(\mathfrak{sl}_2)$-module $V$ is irreducible
by
(\ref{eq:ABCnuxyz}) and Lemma
\ref{lem:ABCGEN}.
\end{proof}

\noindent We are done discussing
the LR triples contained in ${\rm NBG}_d(\mathbb F;q^{-2})$. 
\medskip

\noindent 
In a moment we will discuss the LR triples contained in
${\rm NBNG}_d(\mathbb F;q^{-2})$. To prepare for this,
we have some comments about 
$U_q(\mathfrak{sl}_2)$.

\begin{lemma} 
\label{lem:x2y} Assume that $q^4 \not=1$. Then
in 
 $U_q({\mathfrak{sl}_2})$,
 \begin{equation}
 \label{eq:x2y}
 \frac{q^2x^2y-q^{-2}yx^2}{q^2-q^{-2}}=x,
 \qquad
 \frac{q^2y^2z-q^{-2}zy^2}{q^2-q^{-2}}=y,
 \qquad
 \frac{q^2z^2x-q^{-2}xz^2}{q^2-q^{-2}}=z.
 \end{equation}
 \end{lemma}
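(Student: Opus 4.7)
The plan is to derive the first equation in (\ref{eq:x2y}) directly from the first defining relation of $U_q(\mathfrak{sl}_2)$, namely $qxy - q^{-1}yx = q - q^{-1}$, and then invoke the cyclic symmetry of the presentation to obtain the other two.

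My strategy is to multiply the relation on the left by $x$ and separately on the right by $x$, obtaining two identities in which the mixed monomial $xyx$ appears. The first multiplication produces an identity of the form $qx^2y - q^{-1}xyx = (q-q^{-1})x$, while the second gives $qxyx - q^{-1}yx^2 = (q-q^{-1})x$. Rescaling the first identity by $q$ and the second by $q^{-1}$ arranges the coefficients of $xyx$ to be $-1$ and $+1$ respectively, so that summing the two scaled identities eliminates the $xyx$ term entirely. The left-hand side of the sum is then precisely $q^2 x^2 y - q^{-2} y x^2$, while the right-hand side collapses to $(q+q^{-1})(q-q^{-1})x = (q^2-q^{-2})x$. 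Dividing by $q^2-q^{-2}$, which is nonzero under the hypothesis $q^4 \neq 1$, yields the first equation.

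For the remaining two equations I would simply repeat the argument using the cyclically shifted defining relations $\frac{qyz - q^{-1}zy}{q-q^{-1}} = 1$ and $\frac{qzx - q^{-1}xz}{q-q^{-1}} = 1$ from Definition \ref{def:uqA}, multiplying by $y$ and $z$ respectively in place of $x$. There is no substantive obstacle to the proof; it is essentially a one-line manipulation once the two intermediate multiplications have been set up, and the only point requiring attention is verifying that the particular coefficients ($q$ and $q^{-1}$) used in the linear combination are exactly those that cancel the cross term.
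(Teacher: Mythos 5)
Your proof is correct and is essentially the same as the paper's: the paper also multiplies the relation $\frac{qxy-q^{-1}yx}{q-q^{-1}}=1$ on the left and on the right by $x$ and then eliminates the $xyx$ term from the resulting pair of identities, with the other two equations following by the cyclic symmetry of the presentation. Your version merely makes explicit the coefficients $q$ and $q^{-1}$ used in the elimination.
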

 \begin{proof}
 We verify the equation on the left in
 (\ref{eq:x2y}).
 In the  equation on the left in
 (\ref{eq:uqrels}),  multiply each term on the left by $x$ to get
 \begin{equation}
 \label{eq:xxy}
 \frac{qx^2y-q^{-1}xyx}{q-q^{-1}}=x.
 \end{equation}
 Also, in the equation on the left in
 (\ref{eq:uqrels}),  multiply each term on the right by $x$ to get
 \begin{equation} 
 \label{eq:yxx} 
 \frac{qxyx-q^{-1}yx^2}{q-q^{-1}}=x.
 \end{equation} 
 Now in 
 (\ref{eq:xxy}),  eliminate $xyx$ using
 (\ref{eq:yxx}) to obtain the equation on the left in
 (\ref{eq:x2y}). The remaining equations in
 (\ref{eq:x2y}) are similarly verified.
 \end{proof}

\begin{lemma}
\label{lem:xy2}
Assume that $q^4 \not=1$. Then in
 $U_q({\mathfrak{sl}_2})$,
 \begin{equation}
 \label{eq:xy2}
 \frac{q^2xy^2-q^{-2}y^2x}{q^2-q^{-2}}=y,
 \qquad
 \frac{q^2yz^2-q^{-2}z^2y}{q^2-q^{-2}}=z,
 \qquad
 \frac{q^2zx^2-q^{-2}x^2z}{q^2-q^{-2}}=x.
 \end{equation}
 \end{lemma}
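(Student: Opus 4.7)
The plan is to mimic the proof of Lemma \ref{lem:x2y}, but with the roles of the generators shifted so that $y$ plays the role that $x$ played there. I will verify the first equation in \eqref{eq:xy2}; the remaining two are obtained by cyclically permuting $x,y,z$ using the symmetry among the three defining relations in \eqref{eq:uqrels}.

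First I would take the equitable relation $(qxy-q^{-1}yx)/(q-q^{-1}) = 1$ from \eqref{eq:uqrels} and multiply each side on the left by $y$ to obtain
\begin{equation*}
\frac{q\,yxy-q^{-1}\,y^2x}{q-q^{-1}}=y,
\end{equation*}
and then multiply the same equitable relation on the right by $y$ to obtain
\begin{equation*}
\frac{q\,xy^2-q^{-1}\,yxy}{q-q^{-1}}=y.
\end{equation*}
These are the analogues of \eqref{eq:xxy} and \eqref{eq:yxx} with the left/right multiplications transposed.

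Next I would use the first of these two equations to solve for $yxy$ in terms of $y^2x$ and $y$, and substitute that expression into the second equation. This eliminates $yxy$ and leaves a relation involving only $xy^2$, $y^2x$, and $y$. A short simplification, using the hypothesis $q^4\neq 1$ to ensure $q^2-q^{-2}\neq 0$ when clearing denominators, yields the first identity of \eqref{eq:xy2}.

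The calculation is entirely routine and no step poses a genuine obstacle; the only thing to check is that the coefficient produced on the right-hand side simplifies exactly to $q^2-q^{-2}$, which it does because $q(q-q^{-1})(1+q^{-2}) = q^2-q^{-2}$. Once the first identity is established, the other two follow by the cyclic symmetry $x\to y\to z\to x$ of the defining relations \eqref{eq:uqrels} of $U_q(\mathfrak{sl}_2)$, applied to the argument just given.
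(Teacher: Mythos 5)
Your proof is correct and is essentially the paper's own argument: the paper proves this lemma by the same device used for Lemma \ref{lem:x2y}, namely multiplying the equitable relation on the left and on the right by the appropriate generator (here $y$ rather than $x$) and eliminating the cross term $yxy$, then repeating with the other two relations. The algebra checks out, so nothing further is needed.
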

 \begin{proof}
 Similar to the proof of
 Lemma
 \ref{lem:x2y}.
 \end{proof}

 \begin{lemma}
\label{lem:xyzUQ} 
Assume that $q^4\not=1$. Then
in 
 $U_q({\mathfrak{sl}_2})$,
 \begin{eqnarray}
 -\frac{\Lambda}{q+q^{-1}}
 &=& \frac{q^2xyz-q^{-2}zyx}{q^2-q^{-2}}-x-z
 \label{eq:casxyz}
 \\ 
 &=& \frac{q^2yzx-q^{-2}xzy}{q^2-q^{-2}}-y-x
 \label{eq:casyzx}
 \\
 &=& \frac{q^2zxy-q^{-2}yxz}{q^2-q^{-2}}-z-y.
 \label{eq:caszxy}
 \end{eqnarray}
 \end{lemma}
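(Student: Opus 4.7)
\medskip

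\noindent\textbf{Proof plan for Lemma \ref{lem:xyzUQ}.} The plan is to derive each of the three asserted equalities directly from the six formulas for $\Lambda$ listed in (\ref{eq:Lam1})--(\ref{eq:Lam3}), by forming a single linear combination of two of them. The hypothesis $q^4 \ne 1$ is used only to ensure that $q^2 - q^{-2} \ne 0$, so that the denominators appearing in the statement are nonzero.

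To obtain the first equality (\ref{eq:casxyz}), I would start from the two expressions for $\Lambda$ given in line (\ref{eq:Lam1}), namely
\[
\Lambda = qx + q^{-1}y + qz - qxyz
\quad\text{and}\quad
\Lambda = q^{-1}x + qy + q^{-1}z - q^{-1}zyx.
\]
Multiply the first by $q$ and the second by $-q^{-1}$, then add. The $y$ terms cancel, and one obtains
\[
(q - q^{-1})\Lambda \;=\; (q^2 - q^{-2})(x + z) \;-\; (q^2 xyz - q^{-2} zyx).
\]
Dividing by $q - q^{-1}$ and then by $-(q + q^{-1})$, and using $(q-q^{-1})(q+q^{-1}) = q^2 - q^{-2}$, yields (\ref{eq:casxyz}).

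The remaining two equalities (\ref{eq:casyzx}) and (\ref{eq:caszxy}) follow by the same manipulation applied to lines (\ref{eq:Lam2}) and (\ref{eq:Lam3}) respectively: in each case, pair the two given formulas for $\Lambda$, multiply by $q$ and $-q^{-1}$ respectively, add, and observe that the ``middle'' generator cancels out. Since the three lines (\ref{eq:Lam1})--(\ref{eq:Lam3}) are obtained from one another by cyclically permuting $x,y,z$, the same will be true of the three resulting identities, matching exactly the cyclic structure of the lemma statement.

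There is no real obstacle here: the identities are a direct algebraic rearrangement of the known formulas for $\Lambda$, and no use is made of Lemmas \ref{lem:x2y} or \ref{lem:xy2}. The only point requiring care is the bookkeeping of the coefficients $q, q^{-1}, q^2, q^{-2}$ and the verification that the $y$, $z$, $x$ terms (respectively) cancel in the three cases; this is a short computation.
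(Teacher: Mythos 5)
Your proposal is correct and is essentially the paper's own proof: the paper also forms $q\Lambda - q^{-1}\Lambda$ from the two expressions in (\ref{eq:Lam1}), cancels the $y$ terms, and divides by $-(q+q^{-1})$, treating (\ref{eq:casyzx}) and (\ref{eq:caszxy}) by the same cyclic argument. Your remark that $q^4\neq 1$ serves only to make $q^2-q^{-2}$ invertible is also the right reading of the hypothesis.
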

 \begin{proof} We verify
 (\ref{eq:casxyz}).
 We have 
 \begin{eqnarray*}
 \Lambda = qx+q^{-1}y+qz-qxyz,
 \end{eqnarray*}
 so 
 \begin{eqnarray}
 q\Lambda = q^2x+y+q^2z-q^2xyz.
 \label{eq:cas1}
 \end{eqnarray}
 We have
 \begin{eqnarray*}
 \Lambda= q^{-1}x+qy+q^{-1}z-q^{-1}zyx,
 \end{eqnarray*}
 so 
 \begin{eqnarray}
 \label{eq:cas2}
 q^{-1}\Lambda= q^{-2}x+y+q^{-2}z-q^{-2}zyx.
 \end{eqnarray}
 Subtract 
 (\ref{eq:cas2}) from 
 (\ref{eq:cas1}) and simplify  to get
 (\ref{eq:casxyz}).
 The equations
 (\ref{eq:casyzx}), 
 (\ref{eq:caszxy}) 
 are
 similarly verified.
 \end{proof}

\begin{definition}
\label{def:UqMock} 
\rm
 Let 
$U^E_q(\mathfrak{sl}_2)$ denote the $\mathbb F$-algebra
with generators $x,y,z$ and relations
\begin{eqnarray}
&&\frac{qx^2y-q^{-1}yx^2}{q-q^{-1}}=-x,
\qquad
\frac{qy^2z-q^{-1}zy^2}{q-q^{-1}}=-y,
\qquad
\frac{qz^2x-q^{-1}xz^2}{q-q^{-1}}=-z,
\nonumber
\\
&&\frac{qxy^2-q^{-1}y^2x}{q-q^{-1}}=-y,
\qquad
\frac{qyz^2-q^{-1}z^2y}{q-q^{-1}}=-z,
\qquad
\frac{qzx^2-q^{-1}x^2z}{q-q^{-1}}=-x,
\nonumber
\\
&&
\frac{qxyz-q^{-1}zyx}{q-q^{-1}}+x+z =
\frac{qyzx-q^{-1}xzy}{q-q^{-1}}+y+x =
\frac{qzxy-q^{-1}yxz}{q-q^{-1}}+z+y.
\qquad \quad 
\label{eq:omeg}
\end{eqnarray}
We call
$U^E_q(\mathfrak{sl}_2)$ the {\it extended
$U_q(\mathfrak{sl}_2)$} algebra.
Let 
$\Omega$ denote the common value
of 
(\ref{eq:omeg}).
\end{definition}

\begin{lemma}
\label{lem:omeg}
The element
$\Omega$ 
from
 Definition
\ref{def:UqMock} is
central in $U^E_q(\mathfrak{sl}_2)$.
\end{lemma}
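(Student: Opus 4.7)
The plan is to show that $\Omega$ commutes with each of the generators $x,y,z$; since these generate $U^E_q(\mathfrak{sl}_2)$, this will force $\Omega$ to be central. The argument has three ingredients: a cyclic symmetry, a reduction to a single commutator, and one concrete calculation.

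First, I would exploit the cyclic symmetry of the defining relations in Definition \ref{def:UqMock}. The six cubic relations split into two $\sigma$-orbits under the substitution $\sigma:x\mapsto y\mapsto z\mapsto x$, and the two equalities packed into (\ref{eq:omeg}) are interchanged by $\sigma$. Hence $\sigma$ extends to an $\mathbb{F}$-algebra automorphism of $U^E_q(\mathfrak{sl}_2)$; the three displayed expressions for $\Omega$ are permuted cyclically by $\sigma$, so $\sigma(\Omega)=\Omega$. It follows that $[y,\Omega]=\sigma([x,\Omega])$ and $[z,\Omega]=\sigma^2([x,\Omega])$, so the whole lemma reduces to proving $[x,\Omega]=0$.

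Next, I would compute $[x,\Omega]$ using the first of the three expressions for $\Omega$. Left- and right-multiplying by $x$ and subtracting gives
\[
(q-q^{-1})[x,\Omega]=qx^2yz-qxyzx-q^{-1}xzyx+q^{-1}zyx^2+(q-q^{-1})(xz-zx).
\]
I would apply the relation $qx^2y-q^{-1}yx^2=-(q-q^{-1})x$ to rewrite $qx^2yz$ and, after a second pass using $qzx^2-q^{-1}x^2z=-(q-q^{-1})x$, commute each block of $x^2$ past the middle letters. Collecting terms gives the cleaner form
\[
(q-q^{-1})[x,\Omega]=q(yzx-xyz)x+q^{-1}x(xzy-zyx)+(q-q^{-1})(yx-xy).
\]
Now I would invoke the first equality of (\ref{eq:omeg}), rearranged as
\[
q(xyz-yzx)+q^{-1}(xzy-zyx)=(q-q^{-1})(y-z),
\]
to substitute for $q(yzx-xyz)$. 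The resulting expression contains matching pairs $x^2zy$ and $zyx^2$; running the $x^2$-relations one more time to align these two monomials shows that all terms cancel, leaving $[x,\Omega]=0$.

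Finally, applying $\sigma$ and $\sigma^2$ to this identity yields $[y,\Omega]=0$ and $[z,\Omega]=0$, and the lemma follows. The main obstacle is the commutator computation itself: the extended algebra $U^E_q(\mathfrak{sl}_2)$ has \emph{only} the cubic relations of Definition \ref{def:UqMock} at its disposal, not the quadratic $q$-commutation relations that hold in $U_q(\mathfrak{sl}_2)$. So one cannot reduce monomials letter by letter; the trick is to choose the first expression for $\Omega$ so that the quartic terms produced by $[x,\cdot]$ can be paired up using the two $x^2$-relations and the triple equality (\ref{eq:omeg}) in tandem, which is exactly what the above sequence of substitutions accomplishes.
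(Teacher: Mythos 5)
Your proof is correct and follows the same strategy as the paper, whose proof simply states that one checks $\Omega$ commutes with each generator using the defining relations. Your additional observation that the cyclic substitution $x\mapsto y\mapsto z\mapsto x$ permutes the defining relations and fixes $\Omega$, reducing the verification to the single commutator $[x,\Omega]=0$, is a valid and efficient way to organize that check, and the subsequent computation (pushing $x^2$ past $y$ and $z$ via the cubic relations and then invoking the first equality in (\ref{eq:omeg})) does close correctly.
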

\begin{proof}
Using the relations from
 Definition
\ref{def:UqMock}, one checks that
$\Omega$ commutes with each generator $x,y,z$
of 
$U^E_q(\mathfrak{sl}_2)$.
\end{proof}

\begin{lemma}
Assume that $q^4 \not=1$ 
and there exists $i \in \mathbb F$ such that
$i^2=-1$. Then there exists an $\mathbb F$-algebra
homomorhism 
$U^E_{q^2}(\mathfrak{sl}_2) \to 
U_q(\mathfrak{sl}_2)$ that sends
\begin{eqnarray*}
x \mapsto ix, \qquad \quad
y \mapsto iy, \qquad \quad
z \mapsto iz, \qquad \quad
\Omega \mapsto \frac{i \Lambda}{q+q^{-1}}.
\end{eqnarray*}
\end{lemma}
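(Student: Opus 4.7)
The plan is to verify that the assignment $x\mapsto ix$, $y\mapsto iy$, $z\mapsto iz$ carries every defining relation of $U^E_{q^2}(\mathfrak{sl}_2)$ (as listed in Definition \ref{def:UqMock} with $q$ replaced by $q^2$) into an identity that already holds in $U_q(\mathfrak{sl}_2)$. By the universal property of the presentation, this will supply the homomorphism; its value on $\Omega$ will then be forced by the "triple" relation in Definition \ref{def:UqMock}. Note that the hypothesis $q^4\neq 1$ ensures $q^2-q^{-2}\neq 0$, so all denominators appearing in $U^E_{q^2}(\mathfrak{sl}_2)$ make sense.

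For the six monomial relations I will invoke Lemmas \ref{lem:x2y} and \ref{lem:xy2}. Writing $X=ix$, $Y=iy$, $Z=iz$, and using $i^2=-1$, $i^3=-i$, a direct computation gives $X^2Y=-ix^2y$ and $YX^2=-iyx^2$, hence
\[
\frac{q^2X^2Y-q^{-2}YX^2}{q^2-q^{-2}}
=-i\,\frac{q^2x^2y-q^{-2}yx^2}{q^2-q^{-2}}
=-ix=-X,
\]
where the middle equality is from Lemma \ref{lem:x2y}. The remaining five monomial relations of Definition \ref{def:UqMock} (the other two from Lemma \ref{lem:x2y} and the three from Lemma \ref{lem:xy2}) transform identically under the same substitution.

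For the "triple" relations I will use Lemma \ref{lem:xyzUQ}. The analogous calculation gives $XYZ=-i\,xyz$ and $ZYX=-i\,zyx$, so
\[
\frac{q^2XYZ-q^{-2}ZYX}{q^2-q^{-2}}+X+Z
=-i\!\left(\frac{q^2xyz-q^{-2}zyx}{q^2-q^{-2}}-x-z\right)
=\frac{i\Lambda}{q+q^{-1}},
\]
using \eqref{eq:casxyz}. Applying \eqref{eq:casyzx} and \eqref{eq:caszxy} in the same way shows that the three expressions in \eqref{eq:omeg}, when formed from $X,Y,Z$, all take the common value $i\Lambda/(q+q^{-1})$. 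Hence the assignment $x\mapsto ix$, $y\mapsto iy$, $z\mapsto iz$ respects every relation of $U^E_{q^2}(\mathfrak{sl}_2)$, induces a well-defined $\mathbb F$-algebra homomorphism $U^E_{q^2}(\mathfrak{sl}_2)\to U_q(\mathfrak{sl}_2)$, and sends the central element $\Omega$ to $i\Lambda/(q+q^{-1})$.

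This proof is essentially bookkeeping rather than conceptual; no significant obstacle is expected, as the three preparatory lemmas have already packaged exactly the identities required. The only care needed is in tracking the factors of $i$ through each substitution and in verifying that the common-value condition \eqref{eq:omeg} is preserved, which is immediate once any one of the three expressions has been computed.
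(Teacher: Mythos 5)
Your proposal is correct and follows exactly the paper's one-line argument (``Compare the relations in Lemmas \ref{lem:x2y}--\ref{lem:xyzUQ} with the relations in Definition \ref{def:UqMock}''), merely spelling out the factor-of-$i$ bookkeeping that the paper leaves implicit. No issues.
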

\begin{proof}
Compare 
the relations in
Lemmas
\ref{lem:x2y}--\ref{lem:xyzUQ}
with the relations in
Definition
\ref{def:UqMock}.
\end{proof}

\begin{proposition}
Let $A,B,C$ denote an LR triple contained in
${\rm NBNG}_d(\mathbb F; q^{-2})$. Then the underlying
vector space $V$ becomes a 
$U^E_{q}(\mathfrak{sl}_2)$-module on
 which
\begin{eqnarray}
A = x, \qquad \qquad
B = y, \qquad \qquad
C = z.
\label{eq:NBNGxyz}
\end{eqnarray}
The 
$U^E_{q}(\mathfrak{sl}_2)$-module $V$ is irreducible.
On the 
$U^E_{q}(\mathfrak{sl}_2)$-module $V$,
\begin{eqnarray}
\label{eq:Omegaval}
\Omega = 
\frac{(q^{d/2}-q^{-d/2})(q^{1+d/2}-q^{-1-d/2})}{q-q^{-1}} I.
\end{eqnarray}
\end{proposition}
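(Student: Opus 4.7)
The plan is to recognize the defining relations of $U^E_q(\mathfrak{sl}_2)$ from Definition \ref{def:UqMock} as a rescaled form of the relations satisfied by an LR triple in ${\rm NBNG}_d(\mathbb F; q^{-2})$, as recorded in Proposition \ref{prop:ABCRel}. Setting $t=q^{-2}$ and multiplying numerator and denominator by $q$, the NBNG identity $\frac{A^2B - tBA^2}{1-t} = -A$ becomes
\[
\frac{qA^2B - q^{-1}BA^2}{q - q^{-1}} = -A,
\]
which is exactly the first relation in Definition \ref{def:UqMock} under $x\mapsto A$, $y\mapsto B$. The remaining five quadratic-in-leading-variable relations match identically after the same rescaling. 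For the trilinear relations, the three NBNG equations of the form $\frac{ABC - tCBA}{1-t} + A + C = -\frac{(1-t^{-d/2})(1-t^{1+d/2})}{1-t}I$ rescale to
\[
\frac{qABC - q^{-1}CBA}{q-q^{-1}} + A + C \;=\; \frac{qBCA - q^{-1}ACB}{q-q^{-1}} + B + A \;=\; \frac{qCAB - q^{-1}BAC}{q-q^{-1}} + C + B,
\]
since all three expressions equal the same scalar multiple of $I$; this is exactly the common-value relation \eqref{eq:omeg} that defines $\Omega$. So by the universal property of $U^E_q(\mathfrak{sl}_2)$, there is a unique algebra homomorphism $U^E_q(\mathfrak{sl}_2)\to {\rm End}(V)$ sending $x,y,z$ to $A,B,C$, giving the desired module structure.

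For uniqueness, once $A=x$, $B=y$, $C=z$ is fixed, the action of every other element of $U^E_q(\mathfrak{sl}_2)$ is determined, so no choice is involved. Irreducibility of the $U^E_q(\mathfrak{sl}_2)$-module is immediate from Lemma \ref{lem:ABCGEN}: the images of $x,y$ already generate the full algebra ${\rm End}(V)$, so $V$ has no proper nonzero submodules.

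It remains to compute the scalar by which $\Omega$ acts. From the NBNG trilinear relation, $\Omega$ acts as
\[
-\frac{(1-t^{-d/2})(1-t^{1+d/2})}{1-t}\,I \qquad \text{with } t=q^{-2}.
\]
Substituting $t^{-d/2}=q^{d}$, $t^{1+d/2}=q^{-2-d}$, $1-t=1-q^{-2}$ and multiplying numerator and denominator by $q$ gives
\[
\Omega \;=\; \frac{-q(1-q^{d})(1-q^{-2-d})}{q - q^{-1}}\,I.
\]
Expanding the numerator yields $(q^{d+1}+q^{-d-1}) - (q+q^{-1})$, and the same expansion applied to $(q^{d/2}-q^{-d/2})(q^{1+d/2}-q^{-1-d/2})$ produces the identical quantity. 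Hence
\[
\Omega \;=\; \frac{(q^{d/2}-q^{-d/2})(q^{1+d/2}-q^{-1-d/2})}{q-q^{-1}}\,I,
\]
as required.

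The one place deserving care is the verification that all three trilinear expressions in \eqref{eq:omeg} give the same value on $V$: this is a genuine condition (it is built into the definition of $U^E_q(\mathfrak{sl}_2)$), and it is secured precisely because the corresponding right-hand sides in the NBNG relations are the same single scalar $-\frac{(1-t^{-d/2})(1-t^{1+d/2})}{1-t}$, independent of which cyclic rotation of $A,B,C$ one uses. No further obstacle arises; the argument is essentially a dictionary translation between the NBNG relation list and the presentation of $U^E_q(\mathfrak{sl}_2)$.
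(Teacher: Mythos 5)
Your proposal is correct and follows essentially the same route as the paper, which simply says to compare the relations in Definition \ref{def:UqMock} with the ${\rm NBNG}_d(\mathbb F;t)$ relations of Proposition \ref{prop:ABCRel} (at $t=q^{-2}$) and to invoke Lemma \ref{lem:ABCGEN} for irreducibility. Your rescaling by $q/q$, the observation that the three trilinear expressions share a common scalar value (so that \eqref{eq:omeg} holds), and the expansion showing $-q(1-q^{d})(1-q^{-2-d})=(q^{d/2}-q^{-d/2})(q^{1+d/2}-q^{-1-d/2})$ are exactly the details the paper leaves implicit.
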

\begin{proof} 
To get the first assertion
and
(\ref{eq:Omegaval}),
compare the relations in
Definition
\ref{def:UqMock} 
with
the relations for 
${\rm NBNG}_d(\mathbb F; q^{-2})$ given in
Proposition
\ref{prop:ABCRel}.
The 
$U^E_{q}(\mathfrak{sl}_2)$-module $V$
is irreducible by
(\ref{eq:NBNGxyz})
and
Lemma
\ref{lem:ABCGEN}.
\end{proof}

\noindent We are done discussing the 
LR triples contained in
${\rm NBNG}_d(\mathbb F; q^{-2})$.
\medskip

\noindent Until further notice let $A,B,C$
denote an LR triple that is contained in
${\rm B}_d(\mathbb F;q^{-2}, \rho_0, \rho'_0, \rho''_0)$.
Let $V$ denote the underlying vector space, and let $J$
denote the projector.

\begin{definition}
\label{def:xyzBq}
\rm
Define $X,Y,Z$ in
${\rm End}(V)$ such that
for $0 \leq i \leq d$,
$X-q^{d/2-i}I$
(resp. $Y-q^{d/2-i}I$)
(resp. $Z-q^{d/2-i}I$) vanishes on component
$i$ of the
$(B,C)$-decomposition
(resp. $(C,A)$-decomposition)
(resp. $(A,B)$-decomposition) of $V$. Note that
each of $X,Y,Z$ is invertible.
\end{definition}

\noindent 
Recall the idempotent data 
(\ref{eq:idseq}) for
$A,B,C$.

\begin{lemma}
The elements $X,Y,Z$ from Definition 
\ref{def:xyzBq} satisfy
\begin{eqnarray*}
X = \sum_{i=0}^d q^{d/2-i} E'_i,
\qquad \qquad
Y = \sum_{i=0}^d q^{d/2-i} E''_i,
\qquad \qquad
Z = \sum_{i=0}^d q^{d/2-i} E_i.
\end{eqnarray*}
\end{lemma}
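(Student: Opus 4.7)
The plan is to derive each of the three identities as an immediate consequence of Definition \ref{def:xyzBq} together with the meaning of the idempotent data $(\{E_i\}_{i=0}^d; \{E'_i\}_{i=0}^d; \{E''_i\}_{i=0}^d)$ recorded in Definition \ref{def:LRTIdem}. Recall that $\{E'_iV\}_{i=0}^d$ is the $(B,C)$-decomposition of $V$, and $E'_i$ is the associated primitive idempotent, so $E'_i$ acts as the identity on $E'_iV$ and as zero on $E'_jV$ for $j\neq i$. The analogous facts hold for $E''_i$ with respect to the $(C,A)$-decomposition and for $E_i$ with respect to the $(A,B)$-decomposition.

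I would first verify the formula for $X$. Define $X^\sharp = \sum_{i=0}^d q^{d/2-i} E'_i$ and show $X^\sharp = X$. For each $i$ $(0 \le i \le d)$, applying $X^\sharp$ to any $v \in E'_iV$ gives $q^{d/2-i}v$, because $E'_i v = v$ while $E'_j v = 0$ for $j\neq i$. Hence $X^\sharp - q^{d/2-i}I$ vanishes on $E'_iV$, which is exactly component $i$ of the $(B,C)$-decomposition of $V$. By Definition \ref{def:xyzBq}, $X$ is the unique element of ${\rm End}(V)$ with this property, so $X^\sharp = X$. The arguments for $Y$ and $Z$ are identical after replacing $(B,C)$ by $(C,A)$ and $(A,B)$, respectively, and replacing $E'_i$ by $E''_i$ and $E_i$.

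There is no genuine obstacle here: the entire content is a translation between the two equivalent descriptions of a diagonal operator on $V$, namely via its action on the components of a decomposition and via an explicit expansion in the corresponding primitive idempotents. The only mild point to notice is that in Definition \ref{def:xyzBq} the exponents $q^{d/2-i}$ range over $i=0,1,\ldots,d$, so (since $d$ is even in the bipartite setting) these exponents take integer and half-integer values symmetrically around $0$; this plays no role in the identity itself, which holds entry by entry on each one-dimensional component $E'_iV$, $E''_iV$, $E_iV$.
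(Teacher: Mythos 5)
Your proof is correct and follows exactly the paper's (very terse) argument: both identify $X$, $Y$, $Z$ via their action on the components of the $(B,C)$-, $(C,A)$-, and $(A,B)$-decompositions and then expand in the corresponding primitive idempotents. Nothing is missing; the uniqueness observation you make explicit is the only content of the lemma.
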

\begin{proof}
By Definition
\ref{def:xyzBq} and the meaning of the idempotent
data.
\end{proof}

\begin{lemma} 
\label{eq:XYZformBq}
The elements $X,Y,Z$ from Definition 
\ref{def:xyzBq} satisfy
\begin{eqnarray*}
&&
X=(q^{-d/2}I-B C q^{1-d/2}(q-q^{-1}) \rho_0')J
+
(q^{1+d/2}I-B C q^{d/2}(q-q^{-1})/\rho_0')(I-J),
\\
&&
Y=(q^{-d/2}I-C A q^{1-d/2}(q-q^{-1}) \rho_0'')J
+
(q^{1+d/2}I-C A q^{d/2}(q-q^{-1})/\rho_0'')(I-J),
\\
&&
Z=(q^{-d/2}I-A B q^{1-d/2}(q-q^{-1}) \rho_0)J
+
(q^{1+d/2}I-A B q^{d/2}(q-q^{-1})/\rho_0)(I-J).
\end{eqnarray*}
\end{lemma}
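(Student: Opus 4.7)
The plan is to verify each of the three displayed equations by computing the action of both sides on each component of the relevant decomposition; I work out the $X$ formula, since the $Y$ and $Z$ formulas are completely analogous. First I would note three actions on a component $E'_iV$ of the $(B,C)$-decomposition ($0\leq i\leq d$): (i) by Definition \ref{def:xyzBq} the map $X$ acts on $E'_iV$ as $q^{d/2-i}I$; (ii) by Lemma \ref{lem:V0V1} we have $V_{\rm out}=\sum_{j=0}^{d/2}E'_{2j}V$, so by Lemma \ref{lem:Jfacts}(iii),(iv) (equivalently Lemma \ref{lem:JMat}) the projector $J$ acts on $E'_iV$ as the identity when $i$ is even and as zero when $i$ is odd; (iii) representing $B$ and $C$ in a $(B,C)$-basis using the $(B,C)$-row of Proposition \ref{prop:matrixRep}, one sees that $BC$ preserves $E'_iV$ and acts on it as the scalar $\varphi'_{i+1}$ (with $\varphi'_{d+1}=0$).

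Granted these three observations, the $X$ equation reduces, on each $E'_iV$, to a scalar identity:
\begin{equation*}
q^{d/2-i} \;=\; q^{-d/2}-\varphi'_{i+1}\,q^{1-d/2}(q-q^{-1})\rho'_0 \qquad (i\text{ even}),
\end{equation*}
\begin{equation*}
q^{d/2-i} \;=\; q^{1+d/2}-\varphi'_{i+1}\,q^{d/2}(q-q^{-1})/\rho'_0 \qquad (i\text{ odd}).
\end{equation*}
Since $A,B,C$ is isomorphic to ${\rm B}_d(\mathbb F;q^{-2},\rho_0,\rho'_0,\rho''_0)$, the parameter $\varphi'_{i+1}$ is given by the formula in Example \ref{ex:BIP} with $t=q^{-2}$; in the first case $i+1$ is odd so $\varphi'_{i+1}=(q^{-2}/\rho'_0)(1-q^{d-i})/(1-q^{-2})$, and in the second case $i+1$ is even so $\varphi'_{i+1}=\rho'_0(1-q^{-i-1})/(1-q^{-2})$. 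Using the elementary identity $(q-q^{-1})/(1-q^{-2})=q$, each identity then falls out in one line of algebra.

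The $Y$ and $Z$ equations are treated identically by working in the $(C,A)$-decomposition and the $(A,B)$-decomposition, respectively. In those cases $BC$ on $E'_iV$ is replaced by $CA$ on $E''_iV$ (acting as $\varphi''_{i+1}$) and by $AB$ on $E_iV$ (acting as $\varphi_{i+1}$), and $\rho'_0$ is replaced by $\rho''_0$ and $\rho_0$; the resulting scalar identities are the analogues of those above and are verified the same way.

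No step is genuinely difficult; the only care required is to track the even/odd split imposed by the bipartite condition and to invoke the correct piecewise formula for $\varphi'_{i+1}$, $\varphi''_{i+1}$, $\varphi_{i+1}$ from Example \ref{ex:BIP}. The structure of the argument is essentially the same as that of Lemma \ref{eq:XYZform}, adapted to the bipartite setting by splitting according to the projector $J$.
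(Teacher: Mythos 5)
Your proof is correct and follows essentially the same route as the paper, which verifies the identities by representing $B,C,J,X$ as matrices with respect to a $(B,C)$-basis (where $BC$, $J$, $X$ are all diagonal) and checking the diagonal entries; your component-by-component computation on the $(B,C)$-decomposition is the same calculation, and the two scalar identities you reduce to are verified correctly using the parameter formulas of Example \ref{ex:BIP} with $t=q^{-2}$.
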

\begin{proof}
To verify the first equation,
work with the matrices in
${\rm Mat}_{d+1}(\mathbb F)$
that represent
$B,C,J,X$ with respect to a
$(B,C)$-basis for $V$.
For $B,C$ these matrices are given in Proposition
\ref{prop:matrixRep}. 
For $J$ this matrix is given in
Lemma
\ref{lem:JMat}.
For $X$ this matrix is diagonal,
with $(i,i)$-entry $q^{d/2-i}$ for $0 \leq i \leq d$.
The other two equations are similarly verified.
\end{proof}

\begin{lemma}
\label{lem:XYZBq}
The elements $X,Y,Z$ from Definition 
\ref{def:xyzBq} satisfy
\begin{eqnarray*}
\frac{qXY-q^{-1}YX}{q-q^{-1}} = I,
\quad \qquad 
\frac{qYZ-q^{-1}ZY}{q-q^{-1}} = I,
\quad \qquad 
\frac{qZX-q^{-1}XZ}{q-q^{-1}} = I.
\end{eqnarray*}
\end{lemma}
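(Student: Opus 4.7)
The plan is to follow the same template as the proof of Lemma \ref{lem:XYZUQ}: use the closed-form expressions for $X,Y,Z$ given in Lemma \ref{eq:XYZformBq} to reduce the three claimed identities to the relations in Proposition \ref{prop:ABCRelBIP}. By the cyclic symmetry of the defining data $(A,B,C)$ and $(\rho_0,\rho'_0,\rho''_0)$, it suffices to verify the first identity $qXY-q^{-1}YX=(q-q^{-1})I$; the other two then follow by applying the argument to the p-relatives $B,C,A$ and $C,A,B$.

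The key structural observation is that $V=V_{\rm out}\oplus V_{\rm in}$ by Lemma \ref{lem:V0V1}, and the expressions in Lemma \ref{eq:XYZformBq} describe $X,Y,Z$ as $(\cdots)J+(\cdots)(I-J)$, where $J$ is the projector. Since $J$ commutes with $A^2,B^2,C^2,AB,BC,CA$ (as $J$ lies in the idempotent centralizer algebra and even-length monomials in $A,B,C$ preserve each of $V_{\rm out},V_{\rm in}$ by Lemma \ref{lem:bipABCact}), both $X$ and $Y$ preserve $V_{\rm out}$ and $V_{\rm in}$ separately, and the identity $qXY-q^{-1}YX=(q-q^{-1})I$ may be checked on each summand. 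On $V_{\rm out}$ one reads off from Lemma \ref{eq:XYZformBq} that $X$ acts as $q^{-d/2}I-q^{1-d/2}(q-q^{-1})\rho'_0\,BC$ and $Y$ acts as $q^{-d/2}I-q^{1-d/2}(q-q^{-1})\rho''_0\,CA$; on $V_{\rm in}$ the same letters appear but with $q^{-d/2}$ replaced by $q^{1+d/2}$ and $\rho'_0,\rho''_0$ replaced by their reciprocals.

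I would then expand $qXY-q^{-1}YX$ on $V_{\rm out}$. The scalar parts produce $(q-q^{-1})q^{-d}I$, the linear-in-$BC$ and linear-in-$CA$ parts combine (via Proposition \ref{prop:ABCRelBIP}, in particular the relations
$(\rho_0 AB+\rho'_0\rho''_0 BA-\tfrac{1-t^{-d/2}}{1-t}tI)J=0$ and its cyclic images with $t=q^{-2}$), and the quadratic part $BC\cdot CA$ versus $CA\cdot BC$ is handled using the $q$-Serre--type relations $C^2A-tAC^2\equiv\rho''_0 C\pmod{J\text{-action}}$ together with $B^2C-tCB^2\equiv\rho'_0 B$ from Proposition \ref{prop:ABCRelBIP}. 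After collecting terms and using $\rho_0\rho'_0\rho''_0=-t^{1-d/2}$ from Example \ref{ex:BIP}, the coefficients of $BC$, $CA$, and $BC\cdot CA$ cancel and the scalar coefficient collapses to $q-q^{-1}$. The analogous computation on $V_{\rm in}$ uses the same relations with $J$ replaced by $I-J$ and the parameters inverted.

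The main obstacle is purely the bookkeeping of exponents of $q$ and the interplay of the three parameters $\rho_0,\rho'_0,\rho''_0$ subject to $\rho_0\rho'_0\rho''_0=-t^{1-d/2}$; in particular the outer and inner pieces are asymmetric (coefficients differ by a factor of $q^{d+1}$ and a swap $\rho\leftrightarrow t/\rho$), and the relevant relation in Proposition \ref{prop:ABCRelBIP} splits into two different identities according to multiplication by $J$ versus $(I-J)$. If this algebra becomes unmanageable, a safer fallback is to represent $A,B,C,J$ by the explicit matrices of Proposition \ref{prop:matrixRep} and Lemma \ref{lem:JMat} with respect to a $(B,C)$-basis, compute the matrices of $X,Y$ from Definition \ref{def:xyzBq} directly (both are diagonal), and verify the identity entry-by-entry using only the bipartite parameter formulae from Example \ref{ex:BIP}.
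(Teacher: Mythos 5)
Your proposal is correct and follows essentially the same route as the paper: the paper's proof likewise eliminates $X,Y,Z$ via Lemma \ref{eq:XYZformBq} and reduces the three identities to the relations of Proposition \ref{prop:ABCRelBIP}. Your added bookkeeping (checking the identity separately on $V_{\rm out}$ and $V_{\rm in}$, and your matrix-based fallback in a $(B,C)$-basis) is a sound way to organize the computation the paper leaves implicit.
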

\begin{proof}
To verify these equations, eliminate
$X,Y,Z$ using
Lemma
\ref{eq:XYZformBq},
and evaluate the
result using
the relations for
${\rm B}_d(\mathbb F;q^{-2},\rho_0, \rho'_0, \rho''_0)$ given in
Proposition
\ref{prop:ABCRelBIP}.
\end{proof}

\begin{proposition}
\label{prop:Bq}
Let $A,B,C$ denote an LR triple contained in
${\rm B}_d(\mathbb F;q^{-2},\rho_0,\rho'_0,\rho''_0)$.
Then there exists a unique
$U_q(\mathfrak{sl}_2)$-module structure on 
the underlying vector space $V$, such that
for $0 \leq i \leq d$,
$x-q^{d/2-i}1$
(resp. $y-q^{d/2-i}1$)
(resp. $z-q^{d/2-i}1$)
vanishes on component $i$ of the 
$(B,C)$-decomposition 
(resp. $(C,A)$-decomposition) 
(resp. $(A,B)$-decomposition) of $V$.
For the 
$U_q(\mathfrak{sl}_2)$-module $V$
the subspaces $V_{\rm out}$ and 
$V_{\rm in}$ are irreducible 
$U_q(\mathfrak{sl}_2)$-submodules.
On the 
$U_q(\mathfrak{sl}_2)$-module  $V$,
\begin{eqnarray}
\label{eq:ABCBqnuxyz}
A^2 = n_x, \qquad \qquad
B^2 = n_y, \qquad \qquad
C^2 = n_z.
\end{eqnarray}
\end{proposition}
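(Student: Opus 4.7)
The plan is to follow the same template as Proposition \ref{prop:NBGUq}. First, Lemma \ref{lem:XYZBq} asserts that the invertible operators $X,Y,Z \in \mathrm{End}(V)$ from Definition \ref{def:xyzBq} satisfy the equitable $q$-Weyl relations of Definition \ref{def:uqA}. Combining this with the invertibility of $Y$, the defining presentation of $U_q(\mathfrak{sl}_2)$ produces a unique $\mathbb F$-algebra homomorphism $U_q(\mathfrak{sl}_2) \to \mathrm{End}(V)$ sending $x \mapsto X$, $y \mapsto Y$, $z \mapsto Z$. This yields the desired $U_q(\mathfrak{sl}_2)$-module structure on $V$, and uniqueness is immediate because the eigenvalue prescription on each component of the $(B,C)$-, $(C,A)$-, and $(A,B)$-decompositions pins down $X, Y, Z$ outright.

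Next I would verify that $V_{\rm out}$ and $V_{\rm in}$ are $U_q(\mathfrak{sl}_2)$-submodules. By Lemma \ref{lem:V0V1} these subspaces are sums of components of each of the three decompositions, and by Definition \ref{def:xyzBq} each of $X, Y, Z$ acts as a scalar on every such component; consequently both subspaces are invariant under the $U_q(\mathfrak{sl}_2)$-action. For irreducibility I would combine Lemma \ref{lem:A2B2C2Out} with Lemma \ref{lem:NormHalf}(ii), and Lemma \ref{lem:A2B2C2In} with Lemma \ref{lem:NormHalfIn}, which give that $A^2, B^2, C^2$ restricted to $V_{\rm out}$ (respectively $V_{\rm in}$) form a nonbipartite (or trivial) LR triple. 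Once the identities $A^2 = n_x$, $B^2 = n_y$, $C^2 = n_z$ are established, they will show that the subalgebra of $\mathrm{End}(V_{\rm out})$ (resp.\ $\mathrm{End}(V_{\rm in})$) generated by $n_x, n_y, n_z$ coincides with that generated by $A^2, B^2, C^2$; by Lemma \ref{lem:ABCGEN} applied to these smaller LR triples, the latter algebra is all of $\mathrm{End}(V_{\rm out})$ (resp.\ $\mathrm{End}(V_{\rm in})$), which forces irreducibility.

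For the identity \eqref{eq:ABCBqnuxyz} I would substitute the expressions for $X, Y, Z$ from Lemma \ref{eq:XYZformBq} into Definition \ref{def:nxnynz} and then invoke the quadratic relations for ${\rm B}_d(\mathbb F;q^{-2},\rho_0,\rho'_0,\rho''_0)$ given in Proposition \ref{prop:ABCRelBIP}, specifically the relations of the form $(\rho_0' BC + \rho_0''\rho_0 CB - \text{scalar}\,I)J = 0$ and their counterparts on $(I - J)$. Using $J + (I-J) = I$ and $J^2 = J$, the expression $I - YZ$ should collapse to a scalar multiple of $A^2$ on each of the outer and inner parts; after normalizing by $q - q^{-1}$ and reassembling via $A^2 = A^2 J + A^2(I-J)$, the two pieces should combine to give $n_x = A^2$ globally. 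The remaining two identities follow by cyclic symmetry (or, equivalently, by passing to the p-relatives $B,C,A$ and $C,A,B$).

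The main obstacle will be the last step: the split of $X, Y, Z$ into outer and inner components introduces distinct scalar prefactors on $J$ and on $I - J$, and one has to check that the coefficients involving $q^{\pm d/2}$, $\rho_0$, $\rho'_0$, $\rho''_0$ conspire with the factor $q + q^{-1}$ built into the two expressions for $n_x$ in Definition \ref{def:nxnynz} to produce $A^2$ on both summands simultaneously. This is routine but bookkeeping-heavy, and getting the prefactors to match on $V_{\rm out}$ and $V_{\rm in}$ is where the normalization condition $\rho_0 \rho'_0 \rho''_0 = -t^{1-d/2}$ from Example \ref{ex:BIP} will be essential.
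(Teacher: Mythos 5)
Your proposal is correct and follows essentially the same route as the paper: existence and uniqueness of the module structure from Lemma \ref{lem:XYZBq} and the invertibility of $Y$, the identity \eqref{eq:ABCBqnuxyz} by eliminating $n_x,n_y,n_z$ via Definition \ref{def:nxnynz}, Lemma \ref{eq:XYZformBq}, and the relations of Proposition \ref{prop:ABCRelBIP}, and irreducibility of $V_{\rm out}$, $V_{\rm in}$ from Lemmas \ref{lem:A2B2C2Out}, \ref{lem:A2B2C2In}, \eqref{eq:ABCBqnuxyz}, and Lemma \ref{lem:ABCGEN}. The extra remarks about the outer/inner prefactors are accurate bookkeeping observations but do not change the argument.
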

\begin{proof}
The 
$U_q(\mathfrak{sl}_2)$-module structure exists,
by
Lemma
\ref{lem:XYZBq}
 and since $Y$ is invertible.
The 
$U_q(\mathfrak{sl}_2)$-module structure is unique
by construction.
On the $U_q(\mathfrak{sl}_2)$-module $V$ we have
$x=X$,
$y=Y$,
$z=Z$.
To verify 
(\ref{eq:ABCBqnuxyz}),  eliminate 
$n_x$, $n_y$, $n_z$ using
Definition \ref{def:nxnynz}, and
evaluate the
result using
 Lemma
\ref{eq:XYZformBq}
along with the relations for
${\rm B}_d(\mathbb F;q^{-2},\rho_0,\rho'_0,\rho''_0)$ given in
Proposition
\ref{prop:ABCRelBIP}. 
By construction
$V_{\rm out}$ and $V_{\rm in}$ are 
$U_q(\mathfrak{sl}_2)$-submodules of $V$.
By Lemmas 
\ref{lem:A2B2C2Out},
\ref{lem:A2B2C2In}
the 3-tuple 
$A^2,B^2,C^2$ acts on
$V_{\rm out}$ and $V_{\rm in}$ as an LR triple.
By these comments along with 
 (\ref{eq:ABCBqnuxyz}) 
and Lemma \ref{lem:ABCGEN}, 
we find that the 
$U_q(\mathfrak{sl}_2)$-submodules 
$V_{\rm out}$ and $V_{\rm in}$
are irreducible.
\end{proof}

\noindent We mention some additional relations that
hold on the 
$U_q(\mathfrak{sl}_2)$-module $V$ from Proposition
\ref{prop:Bq}. These relations may be of independent interest.

\begin{lemma}
\label{lem:MoreBp}
For the 
$U_q(\mathfrak{sl}_2)$-module $V$ from 
Proposition
\ref{prop:Bq},
we have
\begin{eqnarray*}
&&
xB = qBx, \qquad \qquad  yC=qCy, \qquad \qquad zA = qAz,
\\
&&
yA  = q^{-1} A y, \qquad \quad 
zB = q^{-1}Bz, \qquad \quad xC=q^{-1}Cx
\end{eqnarray*}
and also 
\begin{eqnarray*}
Jx=xJ, \qquad \qquad Jy=yJ, \qquad \qquad Jz = zJ.
\end{eqnarray*}
We have
\begin{eqnarray*}
&&
\Bigl(
AB-\frac{I - q^{d/2}z}{\rho_0 q(q-q^{-1})}\Bigr)J=0,
\qquad \qquad
\Bigl(
BA-\frac{\rho_0 q I - \rho_0 q^{1-d/2}z}{q-q^{-1}}\Bigr)J=0,
\\
&&
\Bigl(
BC-\frac{I - q^{d/2}x}{\rho'_0 q(q-q^{-1})}\Bigr)J=0,
\qquad \qquad
\Bigl(
CB-\frac{\rho'_0 q I - \rho'_0 q^{1-d/2}x}{q-q^{-1}}\Bigr)J=0,
\\
&&
\Bigl(
CA-\frac{I - q^{d/2}y}{\rho''_0 q(q-q^{-1})}\Bigr)J=0,
\qquad \qquad
\Bigl(
AC-\frac{\rho''_0 q I - \rho''_0 q^{1-d/2}y}{q-q^{-1}}\Bigr)J=0
\end{eqnarray*}
and also
\begin{eqnarray*}
&&
\Bigl(
AB-\frac{\rho_0 q I - \rho_0 q^{-d/2}z}{q-q^{-1}}\Bigr)(I-J)=0,
\qquad \qquad
\Bigl(
BA-\frac{ I - q^{1+d/2}z}{\rho_0 q (q-q^{-1})}\Bigr)(I-J)=0,
\\
&&
\Bigl(
BC-\frac{\rho'_0 q I - \rho'_0 q^{-d/2}x}{q-q^{-1}}\Bigr)(I-J)=0,
\qquad \qquad
\Bigl(
CB-\frac{ I - q^{1+d/2}x}{\rho'_0 q (q-q^{-1})}\Bigr)(I-J)=0,
\\
&&
\Bigl(
CA-\frac{\rho''_0 q I - \rho''_0 q^{-d/2}y}{q-q^{-1}}\Bigr)(I-J)=0,
\qquad \qquad
\Bigl(
AC-\frac{ I - q^{1+d/2}y}{\rho''_0 q (q-q^{-1})}\Bigr)(I-J)=0.
\end{eqnarray*}
We have
\begin{eqnarray*}
&&
(Ax -Bq^{-d/2}\rho_0 - C q^{d/2}/\rho''_0)J=0,
\qquad \quad
(xA -Bq^{1-d/2}\rho_0 - C q^{d/2-1}/\rho''_0)J=0,
\\
&&
(By -Cq^{-d/2}\rho'_0 - A q^{d/2}/\rho_0)J=0,
\qquad \quad
(yB -Cq^{1-d/2}\rho'_0 - A q^{d/2-1}/\rho_0)J=0,
\\
&&
(Cz -Aq^{-d/2}\rho''_0 - B q^{d/2}/\rho'_0)J=0,
\qquad \quad
(zC -Aq^{1-d/2}\rho''_0 - B q^{d/2-1}/\rho'_0)J=0
\end{eqnarray*}
and also
\begin{eqnarray*}
&&
J(Ax -Bq^{d/2-1}/\rho_0 - C q^{1-d/2}\rho''_0)=0,
\qquad \quad
J(xA -Bq^{d/2}/\rho_0 - C q^{-d/2}\rho''_0)=0,
\\
&&
J(By -Cq^{d/2-1}/\rho'_0 - A q^{1-d/2}\rho_0)=0,
\qquad \quad
J(yB -Cq^{d/2}/\rho'_0 - A q^{-d/2}\rho_0)=0,
\\
&&
J(Cz -Aq^{d/2-1}/\rho''_0 - B q^{1-d/2}\rho'_0)=0,
\qquad \quad
J(zC -Aq^{d/2}/\rho''_0 - B q^{-d/2}\rho'_0)=0.
\end{eqnarray*}
\end{lemma}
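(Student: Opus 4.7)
The plan is to verify the identities by pulling together three earlier ingredients: (i) the explicit description $x=X$, $y=Y$, $z=Z$ from Proposition \ref{prop:Bq} together with the closed-form expressions for $X,Y,Z$ in Lemma \ref{eq:XYZformBq}; (ii) the bipartite relations for $A,B,C$ and $J$ in Proposition \ref{prop:ABCRelBIP} (applied with parameter $t=q^{-2}$, so that $\rho_0\rho'_0\rho''_0=-q^{1-d/2}$); and (iii) the block decomposition behaviour of $A,B,C,J$ on $V_{\rm out}\oplus V_{\rm in}$ recorded in Lemma \ref{lem:bipABCact}, Lemma \ref{lem:INOutFacts}, and Lemma \ref{lem:EEEJ}. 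Wherever an outright verification is needed, one may also represent $A,B,C,J,x,y,z$ as matrices with respect to an $(A,B)$-basis (Proposition \ref{prop:matrixRep}, Lemma \ref{lem:JMat}) and check the identities by matrix multiplication, which is the same strategy used in the proofs of Propositions \ref{prop:ABCRel} and \ref{prop:ABCRelBIP}.

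For the first block of commutation relations I would argue conceptually. By Definition \ref{def:xyzBq}, $x$ acts as the scalar $q^{d/2-i}$ on $E'_iV$, while $B$ lowers and $C$ raises the $(B,C)$-decomposition $\lbrace E'_iV\rbrace$; comparing scalars on each component gives $xB=qBx$ and $xC=q^{-1}Cx$ immediately, and the relations for $y,z$ follow by applying the same argument to the $(C,A)$- and $(A,B)$-decompositions. The three commutation relations $Jx=xJ$, $Jy=yJ$, $Jz=zJ$ are then immediate: $x$ preserves each $E'_iV$, hence preserves $V_{\rm out}=\sum_jE'_{2j}V$ and $V_{\rm in}=\sum_jE'_{2j+1}V$, so it commutes with the projector onto $V_{\rm out}$.

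For the twelve identities in the middle block (products $AB,BA,BC,CB,CA,AC$ times $J$ or $I-J$), I would read Lemma \ref{eq:XYZformBq} as six equations of the shape
\begin{equation*}
xJ=\bigl(q^{-d/2}I-BC\,q^{1-d/2}(q-q^{-1})\rho'_0\bigr)J,
\qquad
x(I-J)=\bigl(q^{1+d/2}I-BC\,q^{d/2}(q-q^{-1})/\rho'_0\bigr)(I-J),
\end{equation*}
and their cyclic analogues for $y,z$. Solving each such equation for the $BC\cdot J$ (resp.\ $BC\cdot(I-J)$) term and writing the result as $(BC-\ldots)J=0$ (resp.\ $(BC-\ldots)(I-J)=0$) gives two of the twelve identities; the remaining ten are the same computation after cycling $(A,B,C)\mapsto(B,C,A)\mapsto(C,A,B)$ and using the defining constraint on $\rho_0,\rho'_0,\rho''_0$ to rewrite the coefficient $\rho_0\rho'_0$, $\rho'_0\rho''_0$, $\rho''_0\rho_0$ in the stated form. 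The final block of twelve $Ax,xA,\ldots$ identities is then obtained by left- or right-multiplying the middle-block identities by $A,B,C$ and using $AJ=(I-J)A$, $A(I-J)=JA$ (Lemma \ref{lem:INOutFacts}) to pass the projector across $A$, together with the quadratic relations $A^2B-tBA^2-\ldots$ from Proposition \ref{prop:ABCRelBIP} to absorb the resulting $A^2B$ or $ABA$ terms.

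The main obstacle is bookkeeping rather than mathematics: there are about thirty scalar identities, all depending on $q,\rho_0,\rho'_0,\rho''_0$, and one has to track carefully which side of each product carries $J$ versus $I-J$, since left- and right-multiplication by the odd-parity maps $A,B,C$ swaps the two. Because every relation is a linear consequence of Lemma \ref{eq:XYZformBq} together with Proposition \ref{prop:ABCRelBIP}, no new structural input is required; as a shortcut for readers who want a uniform proof, one can simply note that it suffices to verify each identity on a single pair of compatible bases of $V_{\rm out}$ and $V_{\rm in}$, whereupon everything reduces to routine matrix arithmetic in ${\rm Mat}_{d+1}(\mathbb F)$.
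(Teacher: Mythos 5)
Your proposal is correct and follows essentially the same route as the paper, whose proof of this lemma is just ``similar to the proof of Lemma \ref{lem:XYZBq}'', i.e.\ eliminate $x,y,z$ via Lemma \ref{eq:XYZformBq} and simplify using the relations of Proposition \ref{prop:ABCRelBIP} (or verify everything by matrix arithmetic). One small correction: for the last block the terms you need to absorb after substituting for $x,y,z$ are triple products such as $ABC\cdot J$, so the relevant identities from Proposition \ref{prop:ABCRelBIP} are the $\bigl(ABC-\cdots\bigr)J=0$ family rather than the quadratic $A^2B-tBA^2$ relations you cite, but both live in that proposition so nothing is missing.
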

\begin{proof}
Similar to the proof of
Lemma 
\ref{lem:XYZBq}.
\end{proof}

\noindent We are done discussing
the LR triples contained in
${\rm B}_d(\mathbb F;q^{-2},\rho_0,\rho'_0,\rho''_0)$.
\medskip

\noindent 
For the rest of this section, assume that 
${\rm Char}(\mathbb F)\not=2$.
We now recall the Lie algebra 
$\mathfrak{sl}_2$ and its equitable basis.

\begin{definition}\rm
\label{def:sl2A}
{\rm (See \cite[Lemma~3.2]{ht}.)}
Let $\mathfrak{sl}_2$ denote the Lie algebra over $\mathbb F$
with basis $x,y,z$ and Lie bracket
\begin{equation}
\label{eq:uqrelsq1}
\lbrack{ x,y}\rbrack = 2 {x}+2{ y}, \qquad 
\lbrack{ y,z}\rbrack = 2{ y}+2 { z}, \qquad 
\lbrack{ z,x}\rbrack = 2{ z}+2{ x}.
\end{equation}
\end{definition}

\noindent Until further notice let $A,B,C$ denote an LR triple
that is contained in
${\rm NBG}_d(\mathbb F;1)$.  Let $V$ denote the underlying vector
space.

\begin{definition}
\label{def:XYZsl2}
\rm
Define $X,Y,Z$ in
${\rm End}(V)$ such that
for $0 \leq i \leq d$,
$X-(2i-d)I$
(resp. $Y-(2i-d)I$)
(resp. $Z-(2i-d)I$) vanishes on component $i$ of
the
$(B,C)$-decomposition 
(resp. $(C,A)$-decomposition)
(resp. $(A,B)$-decomposition) 
of $V$.
\end{definition}

\noindent Recall the idempotent data
(\ref{eq:idseq}) for
 $A,B,C$.

\begin{lemma}
The elements $X,Y,Z$ from Definition 
\ref{def:XYZsl2} satisfy
\begin{eqnarray*}
X = \sum_{i=0}^d (2i-d) E'_i,
\qquad \qquad
Y = \sum_{i=0}^d (2i-d) E''_i,
\qquad \qquad
Z = \sum_{i=0}^d (2i-d) E_i.
\end{eqnarray*}
\end{lemma}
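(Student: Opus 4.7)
The plan is to unpack Definition \ref{def:XYZsl2} directly in terms of the idempotent data \eqref{eq:idseq} and use the resolution of the identity $I=\sum_{i=0}^d E'_i$ (and its primed analogues). This is essentially a one-line verification; the only substantive content is matching up the correct decomposition with the correct idempotent sequence, which is recorded in Definition \ref{def:LRTIdem}.

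I would begin with the formula for $X$. By Definition \ref{def:LRTIdem}, $\{E'_i\}_{i=0}^d$ is the idempotent sequence of the $(B,C)$-decomposition of $V$; in particular component $i$ of the $(B,C)$-decomposition is $E'_iV$, and the primitive idempotents satisfy $E'_iE'_j=\delta_{i,j}E'_i$. By Definition \ref{def:XYZsl2}, the element $X-(2i-d)I$ vanishes on $E'_iV$, which is the same as saying $XE'_i=(2i-d)E'_i$ for $0\le i\le d$. Summing over $i$ and using $I=\sum_{i=0}^d E'_i$ gives
\[
X = X\cdot I = \sum_{i=0}^d XE'_i = \sum_{i=0}^d (2i-d)E'_i,
\]
as desired.

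For $Y$ and $Z$ the argument is identical after substituting the appropriate data. The sequence $\{E''_i\}_{i=0}^d$ is the idempotent sequence for the $(C,A)$-decomposition, with $E''_iV$ equal to component $i$, and $\{E_i\}_{i=0}^d$ is the idempotent sequence for the $(A,B)$-decomposition, with $E_iV$ equal to component $i$. The defining conditions in Definition \ref{def:XYZsl2} then yield $YE''_i=(2i-d)E''_i$ and $ZE_i=(2i-d)E_i$, and the same use of $I=\sum_i E''_i$ and $I=\sum_i E_i$ produces the remaining two formulae.

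There is essentially no obstacle here; the only thing one has to be careful about is the bookkeeping of which primed/unprimed idempotent sequence corresponds to which of the three LR pairs $(B,C)$, $(C,A)$, $(A,B)$, as fixed in Definition \ref{def:LRTIdem}. Once this correspondence is made explicit, the three identities follow immediately from the diagonal action recorded in Definition \ref{def:XYZsl2} together with $I=\sum_{i=0}^d E'_i =\sum_{i=0}^d E''_i =\sum_{i=0}^d E_i$.
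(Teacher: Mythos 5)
Your proof is correct and is exactly the argument the paper intends: its own proof is the one-line "By Definition \ref{def:XYZsl2} and the meaning of the idempotent data," and you have simply filled in the routine details (the identification of component $i$ of each decomposition with $E'_iV$, $E''_iV$, $E_iV$, and the resolution of the identity). No issues.
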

\begin{proof}
By Definition
\ref{def:XYZsl2}
and the meaning of the idempotent
data.
\end{proof}

\begin{lemma}
\label{lem:XYZsl2A}
The elements $X,Y,Z$ from
 Definition
\ref{def:XYZsl2} satisfy
\begin{eqnarray*}
X = B+C -A,
\qquad \quad 
Y = C+A-B,  
\qquad \quad 
Z = A+B-C.
\end{eqnarray*}
\end{lemma}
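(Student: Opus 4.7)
The plan is to verify each identity by representing both sides with respect to a convenient basis and comparing. I will prove $X = B+C-A$ in detail; the equations $Y = C+A-B$ and $Z = A+B-C$ follow by applying the same argument to the p-relatives $B,C,A$ and $C,A,B$, which by Lemma~\ref{lem:whatisIso} lie in the same isomorphism class as $A,B,C$ (so the formulas for $\varphi_i,\varphi'_i,\varphi''_i$ and $a_i,a'_i,a''_i$ permute correctly).

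First I would pick a $(B,C)$-basis of $V$ and consult Proposition~\ref{prop:matrixRep}, row $(B,C)$. With respect to this basis, $B+C-A$ is tridiagonal, with diagonal entries $-a'_i$, subdiagonal $(i,i-1)$ entry $\varphi'_i - \varphi_{d-i+1}$, and superdiagonal $(i-1,i)$ entry $1 - \varphi''_{d-i+1}/\varphi'_i$. Meanwhile $X$ is represented by the diagonal matrix with $(i,i)$-entry $2i-d$, since $\{E'_iV\}_{i=0}^d$ is the $(B,C)$-decomposition. So I need to check three things: the off-diagonal entries of $B+C-A$ vanish, and its diagonal entries equal $2i-d$.

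The vanishing of the off-diagonal is the easy step. The LR triple lies in ${\rm NBG}_d(\mathbb F;1)$, hence is equitable, so by Lemma~\ref{lem:equitBasic}(i) one has $\varphi_i = \varphi'_i = \varphi''_i$; and by the explicit formula $\varphi_i = i(i-d-1)$ from Example~\ref{ex:NBGqEQ1} one immediately computes $\varphi_{d-i+1} = (d-i+1)(-i) = i(i-d-1) = \varphi_i$. This forces both $\varphi'_i - \varphi_{d-i+1}$ and $1 - \varphi''_{d-i+1}/\varphi'_i$ to be $0$. For the diagonal, since $A,B,C$ is equitable, nonbipartite, and normalized (so $\alpha_1 = 1$), Lemma~\ref{lem:equitBasic}(ii) gives $a'_i = \varphi_{d-i+1} - \varphi_{d-i}$. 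Substituting $\varphi_k = k(k-d-1)$ and simplifying yields $a'_i = d - 2i$, so $-a'_i = 2i - d$, exactly matching $X$. This establishes $X = B+C-A$.

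The only step that might require some care is keeping track of indices when transferring the argument to the p-relatives. Under the notational convention of Definition~\ref{def:primeConv}, applying the identity $X = B+C-A$ (read inside the triple $B,C,A$) gives $X' = C+A-B$, and Definition~\ref{def:XYZsl2} applied to $B,C,A$ shows $X' = Y$ in our original notation; similarly for $Z$. Since the equitable, nonbipartite, normalized hypothesis is preserved under p-relatives and the parameter array is invariant by Lemma~\ref{lem:whatisIso}, the same computation goes through verbatim, and no genuine obstacle arises.
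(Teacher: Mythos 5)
Your proof is correct and follows essentially the same route as the paper: represent $A,B,C,X$ with respect to a $(B,C)$-basis via Proposition \ref{prop:matrixRep}, note that $X$ is diagonal with entries $2i-d$, and check that the off-diagonal entries of $B+C-A$ vanish and its diagonal entries $-a'_i$ equal $2i-d$ using $\varphi_i=i(i-d-1)$. The transfer to $Y$ and $Z$ via the p-relatives (or equivalently by repeating the computation in the $(C,A)$- and $(A,B)$-bases) is exactly what the paper means by "similarly verified."
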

\begin{proof}
To verify the first equation,
work with the matrices in
${\rm Mat}_{d+1}(\mathbb F)$
that represent
$A,B,C,X$ with respect to a
$(B,C)$-basis for $V$.
For $A,B,C$ these matrices are given in Proposition
\ref{prop:matrixRep}. For $X$ this matrix is diagonal,
with $(i,i)$-entry $2i-d$ for $0 \leq i \leq d$.
The other two equations are similarly verified.
\end{proof}

\begin{lemma}
\label{lem:sl2moduleExists}
The elements $X,Y,Z$ from
 Definition
\ref{def:XYZsl2} satisfy
\begin{eqnarray*}
\lbrack{ X,Y}\rbrack = 2 {X}+2{Y}, \qquad \quad  
\lbrack{ Y,Z}\rbrack = 2{ Y}+2 {Z}, \qquad  \quad
\lbrack{ Z,X}\rbrack = 2{ Z}+2{ X}.
\end{eqnarray*}
\end{lemma}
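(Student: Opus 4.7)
The plan is to exploit Lemma \ref{lem:XYZsl2A} together with one further observation: the relations listed in Proposition \ref{prop:ABCRel} for the case ${\rm NBG}_d(\mathbb F;1)$ include $A = B + C - \lbrack B,C\rbrack$, so
\begin{eqnarray*}
X = B + C - A = \lbrack B,C\rbrack,
\qquad Y = \lbrack C,A\rbrack,
\qquad Z = \lbrack A,B\rbrack,
\end{eqnarray*}
where the last two follow from the cyclic analogues $B = C+A-\lbrack C,A\rbrack$ and $C = A+B-\lbrack A,B\rbrack$. Thus the problem reduces to verifying the $\mathfrak{sl}_2$ relations for these commutators.

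Next I would compute $\lbrack X,Y\rbrack = \lbrack \lbrack B,C\rbrack,\lbrack C,A\rbrack\rbrack$ by the Jacobi identity applied to $B,C,\lbrack C,A\rbrack$. This yields
\begin{eqnarray*}
\lbrack X,Y\rbrack
 = \lbrack B,\lbrack C,\lbrack C,A\rbrack\rbrack\rbrack
 + \lbrack C,\lbrack \lbrack C,A\rbrack,B\rbrack\rbrack.
\end{eqnarray*}
Both inner brackets are evaluated using Proposition \ref{prop:ABCRel}: the relation $\lbrack C,\lbrack C,A\rbrack\rbrack = 2C$ reduces the first summand to $\lbrack B,2C\rbrack = 2X$, and the relation $\lbrack B,\lbrack C,A\rbrack\rbrack = 2(C-A)$ reduces the second to $\lbrack C,2(A-C)\rbrack = 2\lbrack C,A\rbrack = 2Y$. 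Adding these gives $\lbrack X,Y\rbrack = 2X + 2Y$ as required.

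The remaining two relations follow by cyclic symmetry. Indeed, the substitution $A \mapsto B \mapsto C \mapsto A$ cyclically permutes the seven relations in Proposition \ref{prop:ABCRel} for ${\rm NBG}_d(\mathbb F;1)$ (including the three double-commutator identities used above), and it sends $X \mapsto Y \mapsto Z \mapsto X$. Applying this substitution twice to the computation of $\lbrack X,Y\rbrack$ therefore produces $\lbrack Y,Z\rbrack = 2Y + 2Z$ and $\lbrack Z,X\rbrack = 2Z + 2X$.

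There is really no hard part here; the only thing to be careful about is the sign bookkeeping in the Jacobi identity and the fact that $\lbrack C,\lbrack B,C\rbrack\rbrack = -\lbrack C,\lbrack C,B\rbrack\rbrack$. An equally valid alternative would be to represent $A, B, C$ by the matrices in the $(A,B)$ row of Proposition \ref{prop:matrixRep} (with $\varphi_i = i(i-d-1)$ as in Example \ref{ex:NBGqEQ1}) and check the three identities by direct matrix multiplication, but the commutator route above is shorter and requires no calculation beyond invoking the relations already catalogued.
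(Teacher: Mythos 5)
Your proof is correct and rests on the same two ingredients as the paper's (Lemma \ref{lem:XYZsl2A} together with the relations of Proposition \ref{prop:ABCRel} for ${\rm NBG}_d(\mathbb F;1)$), so it is essentially the paper's approach. One remark: once you have $X=\lbrack B,C\rbrack$, $Y=\lbrack C,A\rbrack$, $Z=\lbrack A,B\rbrack$, the direct bilinear expansion $\lbrack X,Y\rbrack=\lbrack B+C-A,\,C+A-B\rbrack=2\lbrack B,C\rbrack+2\lbrack C,A\rbrack=2X+2Y$ finishes immediately, so the Jacobi identity and the double-commutator relations $\lbrack C,\lbrack C,A\rbrack\rbrack=2C$, $\lbrack B,\lbrack C,A\rbrack\rbrack=2(C-A)$ are not actually needed — though your route through them is equally valid.
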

\begin{proof}
To verify these equations, eliminate
$X,Y,Z$ using
Lemma \ref{lem:XYZsl2A},
and evaluate the
result using
the relations for
${\rm NBG}_d(\mathbb F;1)$ given in
Proposition
\ref{prop:ABCRel}.
\end{proof}

\begin{proposition}
\label{prop:mainRESsl2}
Let $A,B,C$ denote an LR triple contained in
${\rm NBG}_d(\mathbb F;1)$.
Then there exists a unique
$\mathfrak{sl}_2$-module structure on 
the underlying vector space $V$, such that
for $0 \leq i \leq d$,
$x-(2i-d)1$
(resp. $y-(2i-d)1$)
(resp. $z-(2i-d)1$)
vanishes on component $i$ of the 
$(B,C)$-decomposition 
(resp. $(C,A)$-decomposition) 
(resp. $(A,B)$-decomposition) of $V$.
The
$\mathfrak{sl}_2$-module  $V$ is irreducible.
On the 
$\mathfrak{sl}_2$-module  $V$,
\begin{eqnarray}
\label{eq:ABCnuxyzsl2}
A = (y+z)/2, \qquad \qquad
B = (z+x)/2, \qquad \qquad
C = (x+y)/2.
\end{eqnarray}
\end{proposition}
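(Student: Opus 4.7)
The plan is to follow the same template used for Proposition~\ref{prop:NBGUq}, with $U_q(\mathfrak{sl}_2)$ replaced by $\mathfrak{sl}_2$ and the maps $X,Y,Z$ of Definition~\ref{def:xyzUQ} replaced by those of Definition~\ref{def:XYZsl2}. First I would define $X,Y,Z \in \mathrm{End}(V)$ as in Definition~\ref{def:XYZsl2}; this immediately guarantees that for $0\leq i \leq d$ the operators $X-(2i-d)I$, $Y-(2i-d)I$, $Z-(2i-d)I$ vanish on component $i$ of the $(B,C)$-, $(C,A)$-, and $(A,B)$-decomposition of $V$, respectively.

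Next, I would invoke Lemma~\ref{lem:sl2moduleExists}, which says that $X,Y,Z$ satisfy the defining Lie-bracket relations (\ref{eq:uqrelsq1}) of $\mathfrak{sl}_2$. By the universal property in Definition~\ref{def:sl2A}, there is a unique $\mathbb F$-algebra homomorphism from the universal enveloping algebra of $\mathfrak{sl}_2$ to $\mathrm{End}(V)$ sending $x,y,z$ to $X,Y,Z$, giving $V$ the structure of an $\mathfrak{sl}_2$-module. This structure is the one described in the proposition, and uniqueness is automatic because the three generators' actions are prescribed on a decomposition of $V$ into one-dimensional subspaces, hence determined on all of $V$.

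To obtain (\ref{eq:ABCnuxyzsl2}), I would use Lemma~\ref{lem:XYZsl2A}, which gives
\[
X=B+C-A, \qquad Y=C+A-B, \qquad Z=A+B-C.
\]
Solving this $3\times 3$ linear system (note ${\rm Char}(\mathbb F)\not=2$ by the standing assumption for $\mathfrak{sl}_2$, so division by $2$ is legitimate) yields $A=(Y+Z)/2$, $B=(Z+X)/2$, $C=(X+Y)/2$, which is exactly (\ref{eq:ABCnuxyzsl2}) since $x,y,z$ act as $X,Y,Z$ on $V$.

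Finally, for irreducibility, I would argue as in Proposition~\ref{prop:NBGUq}: the formulas (\ref{eq:ABCnuxyzsl2}) show that $A,B,C$ lie in the image of the enveloping algebra acting on $V$, and by Lemma~\ref{lem:ABCGEN} this image generates $\mathrm{End}(V)$ as an $\mathbb F$-algebra. Hence the only subspaces of $V$ stable under the $\mathfrak{sl}_2$-action are $0$ and $V$, i.e.\ the module is irreducible. There is no real obstacle here: the entire proof is a straightforward transcription of the quantum case, since all the hard work has already been packaged into Definition~\ref{def:XYZsl2} and Lemmas~\ref{lem:XYZsl2A},~\ref{lem:sl2moduleExists}; the only mildly delicate point is verifying that the division by $2$ is allowed, which is ensured by the standing hypothesis ${\rm Char}(\mathbb F)\not=2$ in force for the $\mathfrak{sl}_2$-portion of the paper.
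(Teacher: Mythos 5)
Your proposal is correct and follows essentially the same route as the paper: existence of the module structure via Lemma~\ref{lem:sl2moduleExists}, uniqueness by construction, the formulas (\ref{eq:ABCnuxyzsl2}) from Lemma~\ref{lem:XYZsl2A}, and irreducibility from (\ref{eq:ABCnuxyzsl2}) together with Lemma~\ref{lem:ABCGEN}. The only (harmless) divergence is in verifying (\ref{eq:ABCnuxyzsl2}): the paper's template says to evaluate using the relations of Proposition~\ref{prop:ABCRel}, whereas you rightly observe that since $X=B+C-A$, $Y=C+A-B$, $Z=A+B-C$ is a linear system, inverting it (using ${\rm Char}(\mathbb F)\neq 2$) already gives $A=(Y+Z)/2$, etc., with no relations needed.
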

\begin{proof}
The 
$\mathfrak{sl}_2$-module structure exists by
Lemma \ref{lem:sl2moduleExists}.
The 
$\mathfrak{sl}_2$-module structure is unique by
construction.
On the 
$\mathfrak{sl}_2$-module $V$ we have
$x=X$,
$y=Y$,
$z=Z$. To verify
(\ref{eq:ABCnuxyzsl2}), eliminate
$x,y,z $ using
Lemma \ref{lem:XYZsl2A}, and 
evaluate the result using
the relations for
${\rm NBG}_d(\mathbb F;1)$
given in
Proposition
\ref{prop:ABCRel}.
The 
$\mathfrak{sl}_2$-module $V$ is irreducible by
(\ref{eq:ABCnuxyzsl2}) and Lemma
\ref{lem:ABCGEN}.
\end{proof}

\noindent We are done discussing the LR triples contained in
${\rm NBG}_d(\mathbb F;1)$. 
\medskip

\noindent 
For the rest of this section
let $A,B,C$ denote an LR triple
that is contained in
${\rm B}_d(\mathbb F;1,\rho_0,\rho'_0,\rho''_0)$.
Let $V$ denote the underlying vector
space, and let $J$ denote the projector.

\begin{definition}
\label{def:XYZsl2Bip}
\rm
Define $X,Y,Z$ in
${\rm End}(V)$ such that
for $0 \leq i \leq d$,
$X-(i-d/2)I$
(resp. $Y-(i-d/2)I$)
(resp. $Z-(i-d/2)I$) vanishes on component $i$ of
the
$(B,C)$-decomposition 
(resp. $(C,A)$-decomposition)
(resp. $(A,B)$-decomposition) 
of $V$.
\end{definition}

\noindent Recall the idempotent data
(\ref{eq:idseq}) for
 $A,B,C$.

\begin{lemma}
The elements $X,Y,Z$ from Definition 
\ref{def:XYZsl2Bip} satisfy
\begin{eqnarray*}
X = \sum_{i=0}^d (i-d/2) E'_i,
\qquad \qquad
Y = \sum_{i=0}^d (i-d/2) E''_i,
\qquad \qquad
Z = \sum_{i=0}^d (i-d/2) E_i.
\end{eqnarray*}
\end{lemma}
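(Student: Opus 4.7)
The plan is to derive the three displayed formulas directly from Definition \ref{def:XYZsl2Bip} together with the meaning of the idempotent data in Definition \ref{def:LRTIdem}. This is a routine unpacking: both sides of each formula are characterized by how they act on the relevant decomposition of $V$, and these actions agree component by component.

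First I would treat $X$. By Definition \ref{def:LRTIdem}, the $(B,C)$-decomposition of $V$ has idempotent sequence $\lbrace E'_i\rbrace_{i=0}^d$, so component $i$ of this decomposition is $E'_iV$, and $\sum_{i=0}^d E'_i = I$ with $E'_iE'_j = \delta_{i,j}E'_i$. By Definition \ref{def:XYZsl2Bip}, $X - (i-d/2)I$ vanishes on $E'_iV$, which means $XE'_i v = (i-d/2)E'_i v$ for every $v \in V$, i.e.\ $XE'_i = (i-d/2)E'_i$ as elements of ${\rm End}(V)$. Multiplying the identity $I = \sum_{i=0}^d E'_i$ on the left by $X$ then gives
\begin{eqnarray*}
X = X \sum_{i=0}^d E'_i = \sum_{i=0}^d X E'_i = \sum_{i=0}^d (i-d/2) E'_i,
\end{eqnarray*}
which is the first claim.

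The arguments for $Y$ and $Z$ are identical in form. For $Y$, one uses that the $(C,A)$-decomposition of $V$ has idempotent sequence $\lbrace E''_i\rbrace_{i=0}^d$ (again by Definition \ref{def:LRTIdem}), so Definition \ref{def:XYZsl2Bip} yields $YE''_i = (i-d/2)E''_i$ and summing gives the formula. For $Z$ one uses that the $(A,B)$-decomposition has idempotent sequence $\lbrace E_i\rbrace_{i=0}^d$, producing $ZE_i = (i-d/2)E_i$ and hence the third formula.

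Since every step is a one-line invocation of definitions already recorded in the paper, there is no real obstacle here; the only thing to be careful about is matching the three decompositions in Definition \ref{def:XYZsl2Bip} with the three idempotent sequences correctly labeled in Definition \ref{def:LRTIdem} (namely $(B,C) \leftrightarrow \lbrace E'_i\rbrace$, $(C,A) \leftrightarrow \lbrace E''_i\rbrace$, $(A,B) \leftrightarrow \lbrace E_i\rbrace$), so that the primed indices on the right of each formula correspond to the intended decomposition on the left.
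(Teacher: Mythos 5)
Your proof is correct and is exactly the argument the paper intends; the paper's own proof is just the one-line citation "By Definition \ref{def:XYZsl2Bip} and the meaning of the idempotent data," which you have simply unpacked. The matching of decompositions to idempotent sequences is also correct.
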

\begin{proof}
By Definition
\ref{def:XYZsl2Bip}
and the meaning of the idempotent
data.
\end{proof}

\begin{lemma}
\label{lem:XYZsl2ABip}
The elements $X,Y,Z$ from
 Definition
\ref{def:XYZsl2Bip} satisfy
\begin{eqnarray*}
&&
X = \Bigl(2 \rho_0' B C +  \frac{d}{2}I \Bigr)J
+ \Bigl(   \frac{2}{\rho_0'} B C - \frac{d+2}{2} I \Bigr) (I-J),
\\
&&
Y = \Bigl(2 \rho_0'' C A +  \frac{d}{2}I\Bigr)J
+ \Bigl( \frac{2}{\rho_0''} C A - \frac{d+2}{2} I \Bigr) (I-J),
\\
&&
Z = \Bigl(2 \rho_0 A B +  \frac{d}{2}I \Bigr)J
+ \Bigl(   \frac{2}{\rho_0} A B- \frac{d+2}{2} I \Bigr) (I-J).
\end{eqnarray*}
\end{lemma}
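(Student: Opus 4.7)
The plan is to verify each of the three displayed equations by representing all the relevant operators as matrices with respect to a suitable basis of $V$, analogously to the proofs of Lemma~\ref{lem:XYZsl2A} in the ${\rm NBG}_d(\mathbb F;1)$ case and Lemma~\ref{eq:XYZformBq} in the bipartite quantum case. Specifically, for the first equation I would fix a $(B,C)$-basis of $V$ and represent $A,B,C$ using the row labelled $(B,C)$ in the table of Proposition~\ref{prop:matrixRep}, represent $J$ using Lemma~\ref{lem:JMat} as ${\rm diag}(1,0,1,0,\ldots,0,1)$, and observe that by Definition~\ref{def:XYZsl2Bip} the element $X$ is represented by the diagonal matrix $D$ with $(i,i)$-entry $i-d/2$ for $0 \leq i \leq d$. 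The other two equations will follow the same pattern (choosing instead a $(C,A)$-basis or an $(A,B)$-basis), and strictly speaking they can also be deduced by applying the first identity to the p-relatives $B,C,A$ and $C,A,B$ of the LR triple.

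The core of the computation is to verify that the matrix representing the right-hand side of the first equation in the $(B,C)$-basis coincides with $D$. From the $(B,C)$ row of Proposition~\ref{prop:matrixRep}, the product $BC$ is diagonal in this basis with $(i,i)$-entry $\varphi'_{i+1}$ for $0 \leq i \leq d-1$ and $(d,d)$-entry $0$; here I set $\varphi'_{d+1}=0$ as usual. The matrices $J$ and $I-J$ are diagonal indicator matrices for the even-indexed and odd-indexed coordinates respectively. Therefore the right-hand side is diagonal, and its $(i,i)$-entry is
\begin{eqnarray*}
\bigl(2\rho'_0 \varphi'_{i+1} + d/2\bigr)\quad\text{($i$ even)},
\qquad
\bigl((2/\rho'_0)\varphi'_{i+1} - (d+2)/2\bigr)\quad\text{($i$ odd)}.
\end{eqnarray*}

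Next I would substitute the explicit values of $\varphi'_{i+1}$ given by Example~\ref{ex:BipONE}: when $i$ is even, $i+1$ is odd and $\varphi'_{i+1} = (i-d)/(2\rho'_0)$, yielding $(i-d)+d/2 = i-d/2$; when $i$ is odd, $i+1$ is even and $\varphi'_{i+1} = (i+1)\rho'_0/2$, yielding $(i+1)-(d+2)/2 = i-d/2$. In both parities the $(i,i)$-entry equals $i-d/2$, which matches $D$. This completes the verification of the first equation; the second and third are handled identically by cyclic relabelling.

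There is no real obstacle here, merely bookkeeping: one must keep track of the parity conventions in the formulas for $\varphi'_i$, the fact that $\rho_0 \rho'_0 \rho''_0 = -1$ plays no role in the computation of a single equation (each identity is ``local'' in one of the three LR pairs), and the role of the characteristic restriction ${\rm Char}(\mathbb F)$ being $0$ or greater than $d/2$ only enters to ensure that the scalars $i-d/2$ and $\varphi'_i$ are well-defined and nonzero where needed. The argument is entirely routine once the matrix representatives are in hand.
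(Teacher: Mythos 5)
Your proposal is correct and follows the same route as the paper: represent $B,C,J,X$ by matrices with respect to a $(B,C)$-basis (using Proposition~\ref{prop:matrixRep} and Lemma~\ref{lem:JMat}), note that $X$ and $BC$ are both diagonal there, and check the diagonal entries against the formulas for $\varphi'_i$ in Example~\ref{ex:BipONE}, with the other two equations handled by the same computation in the other bases. Your parity bookkeeping ($\varphi'_{i+1}=(i-d)/(2\rho'_0)$ for $i$ even, $(i+1)\rho'_0/2$ for $i$ odd, both yielding $i-d/2$) is exactly the verification the paper leaves implicit.
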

\begin{proof}
To verify the first equation,
work with the matrices in
${\rm Mat}_{d+1}(\mathbb F)$
that represent
$B,C,J,X$ with respect to a
$(B,C)$-basis for $V$.
For $B,C$ these matrices are given in Proposition
\ref{prop:matrixRep}. 
For $J$ this matrix is given in
Lemma
\ref{lem:JMat}.
For $X$ this matrix is diagonal,
with $(i,i)$-entry $i-d/2$ for $0 \leq i \leq d$.
The other two equations are similarly verified.
\end{proof}

\begin{lemma}
\label{lem:sl2moduleExistsBip}
The elements $X,Y,Z$ from
 Definition
\ref{def:XYZsl2Bip} satisfy
\begin{eqnarray*}
\lbrack{ X,Y}\rbrack = 2 {X}+2{Y}, \qquad \quad  
\lbrack{ Y,Z}\rbrack = 2{ Y}+2 {Z}, \qquad  \quad
\lbrack{ Z,X}\rbrack = 2{ Z}+2{ X}.
\end{eqnarray*}
\end{lemma}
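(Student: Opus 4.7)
The plan is to verify the three bracket identities by reducing them to already-established relations among $A$, $B$, $C$, $J$ in the case ${\rm B}_d(\mathbb F;1,\rho_0,\rho'_0,\rho''_0)$. The approach closely parallels that of Lemma \ref{lem:sl2moduleExists}, with the added bookkeeping imposed by the projector $J$ and the outer/inner splitting $V=V_{\rm out}+V_{\rm in}$.

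First, I would substitute the expressions for $X,Y,Z$ from Lemma \ref{lem:XYZsl2ABip} into each bracket. Because $J^2=J$ (Lemma \ref{lem:EEEJ}) and $J$ commutes with $A^2,B^2,C^2,AB,BA,BC,CB,CA,AC$ (this follows from Lemma \ref{lem:EEEJ} together with the action of $A,B,C$ in Lemma \ref{lem:bipABCact}, which forces products of two such maps to preserve $V_{\rm out}$ and $V_{\rm in}$), the cross terms $JX_{\rm out}X_{\rm in}$ vanish and the bracket $[X,Y]$ splits cleanly as a sum of a piece acting on $V_{\rm out}$ and a piece acting on $V_{\rm in}$. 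Explicitly, on $V_{\rm out}$ one has $X=2\rho'_0 BC+\tfrac{d}{2}I$ and $Y=2\rho''_0 CA+\tfrac{d}{2}I$, so $[X,Y]$ acting on $V_{\rm out}$ equals $4\rho'_0\rho''_0[BC,CA]$, and similarly on $V_{\rm in}$.

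Next I would evaluate these restricted commutators using the relations for ${\rm B}_d(\mathbb F;1,\rho_0,\rho'_0,\rho''_0)$ in Proposition \ref{prop:ABCRelBIP}. The key inputs are the product identities such as $(\rho'_0 BC+\rho''_0\rho_0 CB+(d/2)I)J=0$ and $(\rho''_0\rho_0 BC+\rho'_0 CB+\tfrac{d+2}{2}I)(I-J)=0$, which let me rewrite $CB$ in terms of $BC$ and $I$ on each of $V_{\rm out}$ and $V_{\rm in}$, together with the cubic-type identities $A^2B-BA^2=(\text{scalar})A$ and the linear identities $(A-B\rho_0-C/\rho''_0)J=0$, $J(A-B/\rho_0-C\rho''_0)=0$ (and their cyclic analogues). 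Combined with $\rho_0\rho'_0\rho''_0=-1$, these should collapse $4\rho'_0\rho''_0[BC,CA]$ on $V_{\rm out}$ to precisely $2X+2Y$ restricted to $V_{\rm out}$, and likewise on $V_{\rm in}$; summing the two pieces yields $[X,Y]=2X+2Y$. The identities for $[Y,Z]$ and $[Z,X]$ then follow from exactly the same argument by cyclic permutation of the roles of $A,B,C$ and of $\rho_0,\rho'_0,\rho''_0$.

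The main obstacle will be the algebraic bookkeeping in reducing $[BC,CA]$ (and its cyclic analogues) on each of $V_{\rm out}$, $V_{\rm in}$: the two projector-adjusted forms of each quadratic relation must be applied on the correct side, and the constant $d/2$ versus $(d+2)/2$ shift must produce the terms $\tfrac{d}{2}I$ and $-\tfrac{d+2}{2}I$ that appear in $X+Y$. A cleaner alternative, which would avoid most of this calculation, is to simply represent all maps as matrices with respect to a $(B,C)$-basis: by Proposition \ref{prop:matrixRep} and Lemma \ref{lem:JMat} the matrices of $A,B,C,J$ are explicit, so $X,Y,Z$ become explicit, and the commutator identities reduce to a direct computation with tridiagonal and diagonal matrices. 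I would present the matrix verification as the formal proof, noting that it is entirely analogous to Lemma \ref{lem:sl2moduleExists}.
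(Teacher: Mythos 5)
Your proposal is correct and follows essentially the same route as the paper: eliminate $X,Y,Z$ via Lemma \ref{lem:XYZsl2ABip} and evaluate the resulting brackets using the relations for ${\rm B}_d(\mathbb F;1,\rho_0,\rho'_0,\rho''_0)$ in Proposition \ref{prop:ABCRelBIP} (the matrix verification you offer as a fallback is also the method the paper uses for the neighboring lemmas). The extra bookkeeping you describe — splitting along $V_{\rm out}+V_{\rm in}$ using that $J$ commutes with all products of two of $A,B,C$ — is a valid way to organize that evaluation.
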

\begin{proof}
To verify these equations, eliminate
$X,Y,Z$ using
Lemma \ref{lem:XYZsl2ABip},
and evaluate the
result using
the relations for
${\rm B}_d(\mathbb F;1,\rho_0,\rho'_0,\rho''_0)$ given in
Proposition
\ref{prop:ABCRelBIP}.
\end{proof}

\begin{proposition}
\label{prop:mainRESsl2Bip}
Let $A,B,C$ denote an LR triple contained in
${\rm B}_d(\mathbb F;1,\rho_0,\rho'_0,\rho''_0)$.
Then there exists a unique
$\mathfrak{sl}_2$-module structure on 
the underlying vector space $V$, such that
for $0 \leq i \leq d$,
$x-(i-d/2)1$
(resp. $y-(i-d/2)1$)
(resp. $z-(i-d/2)1$)
vanishes on component $i$ of the 
$(B,C)$-decomposition 
(resp. $(C,A)$-decomposition) 
(resp. $(A,B)$-decomposition) of $V$.
For the 
$\mathfrak{sl}_2$-module  $V$ the subspaces
$V_{\rm out}$ and
$V_{\rm in}$ are irreducible
$\mathfrak{sl}_2$-submodules.
On the 
$\mathfrak{sl}_2$-module  $V$,
\begin{eqnarray}
\label{eq:ABCnuxyzsl2Bip}
A^2 = (y+z)/2, \qquad \qquad
B^2 = (z+x)/2, \qquad \qquad
C^2 = (x+y)/2.
\end{eqnarray}
\end{proposition}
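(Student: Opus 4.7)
My plan is to follow the template set by Propositions \ref{prop:NBGUq}, \ref{prop:mainRESsl2}, and especially the bipartite quantum analog Proposition \ref{prop:Bq}. Indeed, the groundwork has essentially all been laid: Definition \ref{def:XYZsl2Bip} introduces the candidate operators $X,Y,Z$, Lemma \ref{lem:XYZsl2ABip} expresses them in terms of $A,B,C,J$, and Lemma \ref{lem:sl2moduleExistsBip} verifies that they satisfy the defining relations of $\mathfrak{sl}_2$. Thus by Definition \ref{def:sl2A} there is an $\mathfrak{sl}_2$-module structure on $V$ with $x=X$, $y=Y$, $z=Z$, and this is precisely the module structure described in the proposition statement. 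Uniqueness is immediate from the construction, since the vanishing conditions on $x-(i-d/2)1$, $y-(i-d/2)1$, $z-(i-d/2)1$ force $x=X$, $y=Y$, $z=Z$ respectively.

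Next I would show that $V_{\rm out}$ and $V_{\rm in}$ are $\mathfrak{sl}_2$-submodules. By Lemma \ref{lem:V0V1}, the subspace $V_{\rm out}$ equals each of $\sum_{j} E_{2j}V$, $\sum_{j} E'_{2j}V$, $\sum_{j} E''_{2j}V$; hence $V_{\rm out}$ is invariant under each of $X,Y,Z$ (and similarly $V_{\rm in}$), since $X$ is diagonal in any basis compatible with the $(B,C)$-decomposition, and likewise for $Y,Z$. Alternatively, this follows directly from Lemma \ref{lem:XYZsl2ABip} together with Lemma \ref{lem:bipABCact}, which shows that $AB$, $BC$, $CA$ preserve both $V_{\rm out}$ and $V_{\rm in}$, and the commutation of $J$ with these products.

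For the identification $A^2=(y+z)/2$ and its cyclic companions, I would add the expressions for $Y$ and $Z$ from Lemma \ref{lem:XYZsl2ABip} and simplify separately on the two summands in $V=V_{\rm out}+V_{\rm in}$. On $V_{\rm out}$ the sum becomes $(\rho_0'' CA+\rho_0 AB+(d/2)I)J$, which equals $A^2 J$ exactly by two of the relations displayed in the ${\rm B}_d(\mathbb F;1,\rho_0,\rho'_0,\rho''_0)$ case of Proposition \ref{prop:ABCRelBIP} — namely the identities $(\rho_0 AB+\rho_0'\rho_0'' BA+(d/2)I)J=0$ and $A(JA+AJ)=A^2$ combined with $\rho_0\rho_0'\rho_0''=-1$. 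On $V_{\rm in}$ the analogous computation using the companion relation for $(I-J)$ in Proposition \ref{prop:ABCRelBIP} yields $A^2(I-J)$. Adding gives $A^2$, and the cyclic rotations yield the other two equations of (\ref{eq:ABCnuxyzsl2Bip}).

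Finally, for the irreducibility of $V_{\rm out}$ and $V_{\rm in}$ as $\mathfrak{sl}_2$-submodules: by Lemma \ref{lem:A2B2C2Out} (resp.\ Lemma \ref{lem:A2B2C2In}) the triple $A^2,B^2,C^2$ acts on $V_{\rm out}$ (resp.\ $V_{\rm in}$) as an LR triple, so by Lemma \ref{lem:ABCGEN} the actions of $A^2,B^2,C^2$ generate the full endomorphism algebra of $V_{\rm out}$ (resp.\ $V_{\rm in}$). But by (\ref{eq:ABCnuxyzsl2Bip}) these operators lie in the image of the $\mathfrak{sl}_2$-action, so any nonzero $\mathfrak{sl}_2$-submodule of $V_{\rm out}$ is preserved by the entire endomorphism algebra and hence equals $V_{\rm out}$; similarly for $V_{\rm in}$. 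The most delicate step will be the bookkeeping in the verification of (\ref{eq:ABCnuxyzsl2Bip}), where one must track the $\rho_0,\rho_0',\rho_0''$ factors and invoke the constraint $\rho_0\rho_0'\rho_0''=-1$ to reconcile the ``outer'' and ``inner'' expressions; but since all the relations needed have already been recorded in Proposition \ref{prop:ABCRelBIP}, this is purely a matter of patient algebraic manipulation.
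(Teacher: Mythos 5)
Your proposal is correct and follows essentially the same route as the paper, whose proof simply defers to the argument for Proposition \ref{prop:Bq}: existence and uniqueness from Lemma \ref{lem:sl2moduleExistsBip} and Definition \ref{def:XYZsl2Bip}, the identification (\ref{eq:ABCnuxyzsl2Bip}) by eliminating $x,y,z$ via Lemma \ref{lem:XYZsl2ABip} and the relations of Proposition \ref{prop:ABCRelBIP}, and irreducibility of $V_{\rm out}$, $V_{\rm in}$ from Lemmas \ref{lem:A2B2C2Out}, \ref{lem:A2B2C2In} and \ref{lem:ABCGEN}. The only minor quibble is that your verification of $A^2J=(\rho''_0CA+\rho_0AB+(d/2)I)J$ also needs one of the linear relations such as $J(C-A/\rho''_0-B\rho'_0)=0$ to trade $CA$ for $BA$ and $A^2$, but this is among the relations you cite as available.
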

\begin{proof}
Similar to the proof of
Proposition
\ref{prop:Bq}.
\end{proof}

\noindent We mention some additional relations that
hold on the 
$\mathfrak{sl}_2$-module $V$ from Proposition
\ref{prop:mainRESsl2Bip}.
These relations may be of independent interest.

\begin{lemma}
\label{lem:BipIndep}
For the 
$\mathfrak{sl}_2$-module $V$ from Proposition
\ref{prop:mainRESsl2Bip}, we have
\begin{eqnarray*}
\lbrack A,z\rbrack = A, 
\qquad \qquad 
\lbrack B,x\rbrack = B, 
\qquad \qquad 
\lbrack C,y\rbrack = C
\end{eqnarray*}
and also
\begin{eqnarray*}
\lbrack J,x\rbrack=0, \qquad \qquad
\lbrack J,y\rbrack=0, \qquad \qquad
\lbrack J,z\rbrack=0.
\end{eqnarray*}
\noindent We have
\begin{eqnarray*}
&&
\Bigl(
AB - \frac{2 z - d  }{4\rho_0} \Bigr)
J=0,
\qquad \qquad
\Bigl(
B A -  \frac{\rho_0(2 z + d  )}{4}\Bigr)J=0,
\\
&&
\Bigl(
BC - \frac{2 x - d  }{4\rho'_0}\Bigr)
J=0,
\qquad \qquad
\Bigl(
C B - \frac{\rho'_0(2 x + d )}{4} \Bigr)J=0,
\\
&&
\Bigl(
CA - \frac{2 y - d }{4\rho''_0} \Bigr)
J=0,
\qquad \qquad
\Bigl(
A C - \frac{\rho''_0(2 y + d )}{4} \Bigr)J=0
\end{eqnarray*} 
and also
\begin{eqnarray*}
&&
\Bigl(A B - \frac{\rho_0(2 z + d+2)}{4} \Bigr)(I-J)=0,
\qquad \qquad
\Bigl( B A - \frac{2 z - d-2}{4 \rho_0} \Bigr)(I-J)=0,
\\
&&
\Bigl(B C - \frac{\rho'_0(2 x + d+2)}{4} \Bigr)(I-J)=0,
\qquad \qquad
\Bigl( C B -  \frac{2 x - d-2}{4 \rho'_0} \Bigr)(I-J)=0,
\\
&&
\Bigl(C A - \frac{\rho''_0(2 y + d+2)}{4} \Bigr)(I-J)=0,
\qquad \qquad
\Bigl( A C -  \frac{2 y - d-2}{4 \rho''_0} \Bigr)(I-J)=0.
\end{eqnarray*}
\end{lemma}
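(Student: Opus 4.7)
The approach is essentially the same as the proofs of Lemma~\ref{lem:MoreBp} and Lemma~\ref{lem:XYZsl2ABip}: translate every relation into a statement about matrices or about how operators act componentwise on the $(A,B)$-decomposition $\{E_iV\}_{i=0}^d$. The key input is that Proposition~\ref{prop:mainRESsl2Bip} identifies $x,y,z$ on $V$ with the operators $X,Y,Z$ of Definition~\ref{def:XYZsl2Bip}, so in particular $z$ acts on $E_iV$ as the scalar $i-d/2$. Combined with Lemma~\ref{lem:paPre}(iv), this reduces almost every identity in the lemma to matching two scalars on each component of a decomposition.

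First I would handle the three commutator relations and the three centralizing relations. For $[A,z]=A$, pick any $(A,B)$-basis $\{v_i\}_{i=0}^d$; then by Definition~\ref{def:XYZsl2Bip} the operator $z$ is diagonal with eigenvalue $i-d/2$ on $v_i$, while $Av_i=v_{i-1}$, so $(Az-zA)v_i=(i-d/2)v_{i-1}-(i-1-d/2)v_{i-1}=v_{i-1}=Av_i$. The identities $[B,x]=B$ and $[C,y]=C$ follow by applying the same argument to the $(B,C)$- and $(C,A)$-bases, using that $x,y$ act as $i-d/2$ on the corresponding decompositions. The three commutation relations $[J,x]=[J,y]=[J,z]=0$ are immediate from Proposition~\ref{prop:mainRESsl2Bip}: since $V_{\rm out}$ and $V_{\rm in}$ are $\mathfrak{sl}_2$-submodules, they are stable under $x,y,z$, and $J$ is by definition the projector onto $V_{\rm out}$ along $V_{\rm in}$.

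For the six product identities of the form $(AB-\tfrac{2z-d}{4\rho_0})J=0$, I would compare eigenvalues on each $E_{2j}V$ (the even components span $V_{\rm out}$). On $V_{2j}=E_{2j}V$, Lemma~\ref{lem:paPre}(iv) gives $AB$ the eigenvalue $\varphi_{2j+1}$, which from Example~\ref{ex:BipONE} equals $(2j-d)/(2\rho_0)$, while $(2z-d)/(4\rho_0)$ acts as $(2(2j-d/2)-d)/(4\rho_0)=(2j-d)/(2\rho_0)$, matching exactly. On $V_{2j+1}\subseteq V_{\rm in}$ both sides kill $J$ and so vanish. The companion identity $(BA-\rho_0(2z+d)/4)J=0$ follows the same way with $\varphi_{2j}=j\rho_0$. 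The remaining four equations on $V_{\rm out}$ and the six analogous equations multiplied by $I-J$ (where one uses instead $\varphi_{2j+1},\varphi_{2j+2}$ on $V_{2j+1}$) are verified identically by cycling through the p-relatives, since the decompositions and parameter arrays rotate accordingly. The main obstacle is purely bookkeeping: there are eighteen relations and one must be careful about which component of which decomposition each $J$ or $I-J$ extracts, and which of the six values $\rho_0,\rho'_0,\rho''_0$ or their reciprocals shows up; no genuinely new idea is required beyond comparing the two sides on each $E_iV$, $E'_iV$, $E''_iV$.
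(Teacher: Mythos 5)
Your proof is correct. The paper's own proof is the two-step substitution used for Lemma \ref{lem:sl2moduleExistsBip}: eliminate $x,y,z$ via the operator identities of Lemma \ref{lem:XYZsl2ABip} and then invoke the relations of Proposition \ref{prop:ABCRelBIP}; you instead verify each identity directly by comparing eigenvalues on the components $E_iV$, $E'_iV$, $E''_iV$, using Lemma \ref{lem:paPre}(iv) (equivalently Lemma \ref{lem:AiBione}) for the action of $AB$, $BA$, etc., and the explicit $\varphi_i$ from Example \ref{ex:BipONE}. The two routes are equivalent — the paper's intermediate lemmas are themselves proved by exactly this kind of componentwise matrix check — but yours is more self-contained, bypassing Proposition \ref{prop:ABCRelBIP} entirely at the cost of redoing eighteen small scalar computations; the spot checks (e.g.\ $\varphi_{2j+1}=(2j-d)/(2\rho_0)$ versus the action of $(2z-d)/(4\rho_0)$ on $E_{2j}V$, and $\varphi_{2j+2}=(j+1)\rho_0$ versus $\rho_0(2z+d+2)/4$ on $E_{2j+1}V$) all come out right.
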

\begin{proof} Similar to the proof of
Lemma 
\ref{lem:sl2moduleExistsBip}.
\end{proof}

\section{Three characterizations of an LR triple}

\noindent Throughout this section
let $V$ denote a vector space over $\mathbb F$
with dimension $d+1$. We characterize the LR triples on $V$
in three ways.
\medskip

\noindent A matrix $M \in 
{\rm Mat}_{d+1}(\mathbb F)$ will be called
{\it antidiagonal} whenever the entry
$M_{i,j}=0$ if $i+j \not=d$ $(0 \leq i,j\leq d)$.
Note that the following are equivalent:
(i) $M$ is antidiagonal; (ii) ${\bf Z}M$ is diagonal;
(iii) $M{\bf Z}$ is diagonal.

\begin{theorem} 
\label{thm:6bases}
Suppose we are given six bases for
$V$, denoted
\begin{eqnarray}
\label{eq:6Bases}
\mbox{\rm 
basis 1, $\quad$
basis 2,$\quad$
basis 3, $\quad$
basis 4, $\quad$
basis 5, $\quad$
basis 6}.
\end{eqnarray}
\noindent Then the following are equivalent:
\begin{enumerate}
\item[\rm (i)] The transition matrix 

     \bigskip

\centerline{
\begin{tabular}[t]{c|c|c}
 {\rm from basis} &
{\rm to basis} 
& {\rm is }
 \\
 \hline
 {\rm  1} &  {\rm  2} &
{\rm upper triangular Toeplitz}
   \\
 {\rm  2} &  {\rm  3} &
{\rm antidiagonal}
   \\
 {\rm  3} &  {\rm 4} &
{\rm upper triangular Toeplitz}
   \\
 {\rm  4} &  {\rm 5} &
{\rm antidiagonal}
   \\
 {\rm  5} &  {\rm  6} &
{\rm upper triangular Toeplitz}
   \\
 {\rm  6} &  {\rm 1} &
{\rm antidiagonal}
   \\
   \end{tabular}}
     \bigskip
\item[\rm (ii)] There exists an LR triple $A,B,C$ on
$V$ for which
     \bigskip

\centerline{
\begin{tabular}[t]{c|c}
 {\rm basis} & {\rm has type}
 \\
 \hline
{\rm 1}
&
$(A,C)$
\\
{\rm 2}
&
$(A,B)$
\\
{\rm 3}
&
$(B,A)$
\\
{\rm 4}
&
$(B,C)$
\\
{\rm 5}
&
$(C,B)$
\\
{\rm 6}
&
$(C,A)$
\\
     \end{tabular}}
     \bigskip
\end{enumerate}
\noindent Suppose {\rm (i), (ii)} hold. Then
the LR triple $A,B,C$ is uniquely determined by the
sequence
{\rm (\ref{eq:6Bases})}.
\end{theorem}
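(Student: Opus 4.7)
The strategy is to treat the two implications separately. The direction (ii) $\Rightarrow$ (i) should be routine, assembling lemmas already established for LR triples: given the LR triple $A,B,C$ with the six bases of the stated types, the 1-to-2, 3-to-4, and 5-to-6 transitions are upper triangular Toeplitz by Lemma \ref{lem:xyz} (applied in turn to $A,B,C$), while the 2-to-3, 4-to-5, and 6-to-1 transitions can be factored through an ``inverted'' basis of the target type so that Lemma \ref{lem:Dmeaning} (applied to $A,B$, $B,C$, and $C,A$) gives a factor $\zeta D$ followed by $\mathbf{Z}$. Since $D$ is diagonal, $D\mathbf{Z}$ is antidiagonal, proving (i).

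The harder direction is (i) $\Rightarrow$ (ii). The plan is to define $A,B,C \in \mathrm{End}(V)$ directly from bases $2,4,6$: let $A$ be the unique map satisfying $Av_i^{(2)} = v_{i-1}^{(2)}$ for $i\geq 1$ and $Av_0^{(2)}=0$, and define $B,C$ analogously from bases $4,6$. By Lemma \ref{lem:reform1}, each of $A,B,C$ is Nil. The 1-to-2, 3-to-4, 5-to-6 transitions are upper triangular Toeplitz, so Proposition \ref{prop:Toe} yields immediately that $A$ also lowers basis 1, $B$ lowers basis 3, $C$ lowers basis 5, each in the exact sense $Xv_i = v_{i-1}$. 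Writing $\mathcal D_k$ for the decomposition induced by basis $k$, one sees from the antidiagonal 2-to-3 transition that $v_i^{(3)}$ is a nonzero scalar multiple of $v_{d-i}^{(2)}$; combining this with ``$B$ lowers basis 3'' gives $B v_j^{(2)} \in V_{j+1}^{(2)}\setminus\{0\}$ for $j\leq d-1$ and $Bv_d^{(2)}=0$, so $B$ raises $\mathcal D_2$. Hence $A,B$ is an LR pair with $(A,B)$-decomposition $\mathcal D_2$ and basis $2$ an $(A,B)$-basis. Cycling this argument through the other two antidiagonal transitions shows that $B,C$ and $C,A$ are LR pairs with $\mathcal D_4$ and $\mathcal D_6$ as their distinguished decompositions, so $A,B,C$ is an LR triple and bases $2,4,6$ have the required types.

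It remains to verify that bases $1,3,5$ are of types $(A,C),(B,A),(C,B)$. For basis 1: the 6-to-1 antidiagonal transition shows $\mathcal D_1$ is the reversal of $\mathcal D_6=(C,A)$-decomposition, so $\mathcal D_1$ is the $(A,C)$-decomposition; combined with the already-established fact that $A$ lowers basis 1 exactly, Definition \ref{def:ABbasis} applied to the pair $A,C$ identifies basis 1 as an $(A,C)$-basis. Bases 3 and 5 are handled symmetrically. Uniqueness of the LR triple is immediate, since the defining relations $Av_i^{(2)}=v_{i-1}^{(2)}$, $Bv_i^{(4)}=v_{i-1}^{(4)}$, $Cv_i^{(6)}=v_{i-1}^{(6)}$ force $A,B,C$. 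The only delicate point — which I expect to be the main obstacle — is checking in the step ``$B$ raises $\mathcal D_2$'' that the scalars connecting $v_i^{(3)}$ to $v_{d-i}^{(2)}$ are genuinely nonzero, and similarly that the anti-diagonal entries of the 2-to-3 transition matrix are all nonzero; but this follows simply from invertibility of the transition matrix combined with its antidiagonal shape.
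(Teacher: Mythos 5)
Your proposal is correct and follows essentially the same route as the paper's own proof: direction (ii)$\Rightarrow$(i) via Lemma \ref{lem:xyz} and Lemma \ref{lem:Dmeaning} composed with $\mathbf{Z}$, and direction (i)$\Rightarrow$(ii) via Proposition \ref{prop:Toe} to produce $A,B,C$ lowering the relevant decompositions, with the antidiagonal transitions forcing the needed inversion relations so that each pair raises/lowers a common decomposition. The delicate point you flag (nonvanishing of the antidiagonal entries) is handled exactly as you say, by invertibility of the transition matrix.
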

\begin{proof}
${\rm (i)}\Rightarrow {\rm (ii)}$ 
For $1 \leq k \leq 6$ let
$\mathcal D_k$
denote the decomposition of $V$
induced by basis $k$.
By assumption,
the transition matrix from
basis 1 to basis 2
is upper triangular and Toeplitz. 
For notational convenience  denote basis 1 by
$\lbrace u_i \rbrace_{i=0}^d$ and
basis 2 by $\lbrace v_i \rbrace_{i=0}^d$.
By Proposition
\ref{prop:Toe}
there exists $A \in
{\rm End}(V)$ such that
\begin{eqnarray}
\nonumber
&&Au_i = u_{i-1} \qquad (1 \leq i \leq d),
\qquad 
Au_0 =0,
\\
\label{eq:whatNeed2}
&&
Av_i = v_{i-1} \qquad (1 \leq i \leq d),
\qquad
Av_0 =0.
\end{eqnarray}
Consequently $A$ lowers 
$\mathcal D_1$ and $\mathcal D_2$.
Similarly there exists
 $B \in
{\rm End}(V)$ that lowers
$\mathcal D_3$ and $\mathcal D_4$. Also
there exists
 $C \in
{\rm End}(V)$ that lowers
$\mathcal D_5$ and $\mathcal D_6$.
By our assumption concerning
the three antidiagonal transition
matrices, the decompositions
 $\mathcal D_2$, $\mathcal D_4$, $\mathcal D_6$ 
are the inversions of 
$\mathcal D_3$, $\mathcal D_5$, $\mathcal D_1$, respectively.
By these comments $A$, $B$, $C$ raise
$\mathcal D_6$, $\mathcal D_2$, $\mathcal D_4$ respectively.
Observe that
$\mathcal D_2$
is lowered by $A$ and raised by $B$;
therefore $A,B$ form an LR pair on $V$.
Similarly $\mathcal D_4$
is lowered by $B$ and raised by $C$;
therefore $B,C$ form an LR pair on $V$.
Also $\mathcal D_6$
is lowered by $C$ and raised by $A$;
therefore $C,A$ form an LR pair on $V$.
By these comments $A,B,C$ form an LR triple on $V$.
We now show that basis 2 is an $(A,B)$-basis of $V$.
We just mentioned that
$\mathcal D_2$ is lowered by $A$ and raised by $B$.
Therefore 
 $\mathcal D_2$ is the $(A,B)$-decomposition
of $V$.
Now using (\ref{eq:whatNeed2}) and
Definition
\ref{def:ABbasis},
we see that basis 2 is an $(A,B)$-basis of $V$.
We have shown that basis 2 meets the requirements
of the table in (ii).
One similarly shows that bases 1, 3, 4, 5, 6
meet these requirements.
\\
\noindent
${\rm (ii)}\Rightarrow {\rm (i)}$ 
By assumption basis 1
is an $(A,C)$-basis of $V$, and
basis 2
is an $(A,B)$-basis of $V$.
By Lemma
\ref{lem:xyz},
the transition matrix from basis 1 to basis 2
is upper triangular and Toeplitz.
By assumption basis 3
is a $(B,A)$-basis of $V$. So the inversion 
of basis 3 
is an
inverted $(B,A)$-basis of $V$. 
By Lemma
\ref{lem:Dmeaning},
the transition
matrix from  
basis 2 to the inversion of basis 3
is diagonal.
Recall that $\bf Z$ is the transition
matrix between basis 3 and its inversion.
By these comments the transition matrix
from
basis 2 to basis 3 is
 antidiagonal.
The remaining assertions of part (i) are similarly
obtained. 
\\
\noindent Assume that (i), (ii) hold. From the
construction of $A$ in the proof of 
${\rm (i)}\Rightarrow {\rm (ii)}$ above,
we find that $A$ is uniquely determined by the sequence
(\ref{eq:6Bases}).
Similarly $B$ and $C$ are uniquely determined by the
the sequence (\ref{eq:6Bases}).
\end{proof}


\begin{theorem}
Suppose we are given six invertible matrices in
${\rm Mat}_{d+1}(\mathbb F)$:
\begin{eqnarray}
&&D_1,\quad D_2, \quad D_3 \qquad \qquad \mbox{\rm (diagonal}),
\label{eq:D123}
\\
&&T_1,\quad T_2, \quad T_3 \qquad \qquad \mbox{\rm (upper triangular
Toeplitz).}
\label{eq:T123}
\end{eqnarray}
Then the following {\rm (i), (ii)} are equivalent.
\begin{enumerate}
\item[\rm (i)] $T_1 D_1 {\bf Z} T_2 D_2 {\bf Z} T_3 D_3 {\bf Z} 
\in \mathbb F I$.
\item[\rm (ii)] There exists an LR triple $A,B,C$ over $\mathbb F$
for which the matrices
{\rm (\ref{eq:D123}), (\ref{eq:T123})} are transition
matrices of the following kind:

     \bigskip

\centerline{
\begin{tabular}[t]{c|c|c}
 {\rm transition matrix} & {\rm from a basis of type}  & {\rm to a basis of type}
 \\
 \hline
  $T_1$ & $(A,C)$ & $(A,B)$
\\
  $D_1$ & $(A,B)$ & {\rm inv. $(B,A)$}
\\
  $T_2$ & $(B,A)$ & $(B,C)$
\\
  $D_2$ & $(B,C)$& {\rm inv. $(C,B)$}
\\
  $T_3$ & $(C,B)$ & $(C,A)$
\\
  $D_3$ & $(C,A)$ & {\rm inv. $(A,C)$}
   \\
   \end{tabular}}
     \bigskip

\end{enumerate}
\noindent Suppose {\rm (i), (ii)} hold.
Then the LR triple $A,B,C$ is uniquely determined up to isomorphism
by the sequence
$D_1,D_2,D_3,
T_1,T_2,T_3$.
\end{theorem}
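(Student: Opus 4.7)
The plan is to derive $(\mathrm{ii})\Rightarrow(\mathrm{i})$ as an immediate consequence of Proposition~\ref{prop:12cycle}, and to establish $(\mathrm{i})\Rightarrow(\mathrm{ii})$ by constructing a cycle of six bases of $V$ and invoking Theorem~\ref{thm:6bases}.

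For $(\mathrm{ii})\Rightarrow(\mathrm{i})$: if the given matrices serve as transitions for some LR triple, then each of them differs from the canonical matrix associated with the triple (in the sense of Definitions~\ref{def:TTT} and~\ref{def:DDD}) by a nonzero scalar. Proposition~\ref{prop:12cycle} applied to the canonical matrices gives $T' D {\bf Z} T'' D' {\bf Z} T D'' {\bf Z} \in \mathbb F I$, and since scalars commute past all factors this yields $T_1 D_1 {\bf Z} T_2 D_2 {\bf Z} T_3 D_3 {\bf Z} \in \mathbb F I$ as required.

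For $(\mathrm{i})\Rightarrow(\mathrm{ii})$: I would fix an arbitrary basis $\beta_1$ of $V$ and define $\beta_2,\beta_3,\beta_4,\beta_5,\beta_6$ in succession by insisting that the transition matrix from $\beta_{k-1}$ to $\beta_k$ be $T_1$, $D_1 {\bf Z}$, $T_2$, $D_2 {\bf Z}$, $T_3$ respectively. These five transitions alternate between upper triangular Toeplitz and antidiagonal. The remaining transition, from $\beta_6$ back to $\beta_1$, equals the inverse of the composite $T_1 D_1 {\bf Z} T_2 D_2 {\bf Z} T_3$; hypothesis $(\mathrm{i})$ asserts that this composite right-multiplied by $D_3 {\bf Z}$ is $\lambda I$ for some nonzero $\lambda \in \mathbb F$, so the transition from $\beta_6$ to $\beta_1$ equals $\lambda^{-1} D_3 {\bf Z}$, which is antidiagonal. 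Theorem~\ref{thm:6bases} then produces an LR triple $A,B,C$ on $V$ for which the bases $\beta_1,\ldots,\beta_6$ carry the respective types $(A,C),(A,B),(B,A),(B,C),(C,B),(C,A)$. By construction $T_1,T_2,T_3$ serve as transitions of the declared kinds, as do $D_1,D_2$ (each obtained by composing the corresponding antidiagonal transition with ${\bf Z}$, thereby reaching an inverted basis). For $D_3$: the transition from $\beta_6$ to the inversion of $\beta_1$ equals $\lambda^{-1}D_3$, and rescaling the inverted $(A,C)$-basis by $\lambda$ (still an inverted $(A,C)$-basis of $V$) converts this transition into $D_3$ itself.

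For the uniqueness clause, I would observe that the entries of $D_1,D_2,D_3$ encode the parameter array of the LR triple by Definition~\ref{def:DDD}, and the Toeplitz parameters of $T_1,T_2,T_3$ encode its Toeplitz data by Definition~\ref{def:TTT1}, modulo an overall scalar rescaling that is absorbed within the isomorphism class. By Proposition~\ref{prop:extra}, the parameter array together with any one of the six $\alpha$- or $\beta$-type numbers already determines the LR triple up to isomorphism, so any two LR triples admitting the six given matrices as transitions are isomorphic. The hardest part of the argument will be the careful bookkeeping around the scalar $\lambda$ from condition $(\mathrm{i})$: the condition permits the nine-matrix product to be any nonzero scalar multiple of $I$, and one must verify that this discrepancy is exactly absorbable by the single rescaling freedom exercised when identifying $D_3$ as a transition between suitably chosen bases.
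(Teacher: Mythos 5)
Your proposal is correct and follows essentially the same route as the paper: Proposition~\ref{prop:12cycle} together with Lemmas~\ref{lem:BAbasisU} and~\ref{lem:Dmeaning} for (ii)$\Rightarrow$(i), the construction of a six-basis cycle feeding into Theorem~\ref{thm:6bases} for (i)$\Rightarrow$(ii), and Proposition~\ref{prop:extra} after normalizing the given matrices to the canonical $D,D',D'',T,T',T''$ for uniqueness. The only cosmetic difference is that you absorb the scalar $\lambda$ by rescaling the inverted $(A,C)$-basis at the $D_3$ step, whereas the paper rescales $D_1$ at the outset; both are legitimate since rescaling a basis by a nonzero scalar preserves its type.
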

\begin{proof} 
${\rm (i)}\Rightarrow {\rm (ii)}$
We invoke Theorem
\ref{thm:6bases}. Multiplying $D_1$ by a nonzero
scalar in $\mathbb F$ if necessary,
we may assume without loss 
that 
$T_1 D_1 {\bf Z} T_2 D_2 {\bf Z} T_3 D_3 {\bf Z} =I$.
By linear algebra there exist six bases
of $V$ as in (\ref{eq:6Bases}) 
 such that the transition matrix

     \bigskip

\centerline{
\begin{tabular}[t]{c|c|c}
 {\rm from basis} &
{\rm to basis} 
& {\rm is }
 \\
 \hline
 {\rm  1} &  {\rm  2} &
$T_1$
   \\
 {\rm  2} &  {\rm  3} &
$D_1{\bf Z}$
   \\
 {\rm  3} &  {\rm 4} &
$T_2$
   \\
 {\rm  4} &  {\rm 5} &
$D_2{\bf Z}$
   \\
 {\rm  5} &  {\rm  6} &
$T_3$
   \\
 {\rm  6} &  {\rm 1} &
$D_3{\bf Z}$
   \\
   \end{tabular}}
     \bigskip

\noindent By construction, these six bases satisfy
Theorem \ref{thm:6bases}(i). 
Therefore they satisfy Theorem \ref{thm:6bases}(ii). The
LR triple $A,B,C$ mentioned in
 Theorem \ref{thm:6bases}(ii) 
satisfies
condition (ii) of the present theorem.
\\
\noindent 
${\rm (ii)}\Rightarrow {\rm (i)}$
Associated with the LR triple $A,B,C$ are
the upper triangular Toeplitz matrices
$T$, $T'$, $T''$ from Definition
\ref{def:TTT}, 
and the diagonal matrices $D$, $D'$, $D''$ from
Definition
\ref{def:DDD}.
Using
Lemma
\ref{lem:BAbasisU}
and
Definition
\ref{def:TTT}(ii) we find 
$T_1 \in \mathbb F T'$. 
Similarly 
 $T_2 \in \mathbb F T''$  and
 $T_3 \in \mathbb F T$.
Using 
Lemma \ref{lem:Dmeaning} we find
$D_1 \in \mathbb F D$. 
Similarly $D_2 \in \mathbb F D'$ 
and $D_3 \in \mathbb F D''$. 
By these comments and
Proposition
\ref{prop:12cycle} we obtain
$T_1 D_1 {\bf Z} T_2D_2{\bf Z}T_3D_3{\bf Z}\in \mathbb F I$.
\\
\noindent Assume that (i), (ii) hold.
Consider the matrices
 $D,D',D''$ and $T,T',T''$ from the proof of
${\rm (ii)}\Rightarrow {\rm (i)}$ above.
By Proposition
\ref{prop:extra}
the LR triple $A,B,C$ is uniquely determined
up to isomorphism by
the sequence $D,D',D'',T,T',T''$.
The matrix $D$ is obtained from $D_1$
by dividing $D_1$ by its $(0,0)$-entry.
So $D$ is determined by $D_1$.
The matrices 
$D',D'',T,T',T''$ are similarly
determined by $D_2, D_3,
T_3,T_1,T_2$, respectively.
By these comments
the LR triple $A,B,C$ is uniquely determined
up to isomorphism by the sequence
$D_1,D_2,D_3,
T_1,T_2,T_3$.
\end{proof}

\begin{theorem}
Let $A,B,C
\in {\rm End}(V)$. Then $A,B,C$ form an LR triple on $V$ if
and only if the following {\rm (i)--(iv)} hold:
\begin{enumerate}
\item[\rm (i)] each of $A$, $B$, $C$ is Nil;
\item[\rm (ii)] the flag
$\lbrace A^{d-i}V\rbrace_{i=0}^d$ is raised by $B,C$;
\item[\rm (iii)] the flag
$\lbrace B^{d-i}V\rbrace_{i=0}^d$ is raised by $C,A$;
\item[\rm (iv)] the flag
$\lbrace C^{d-i}V\rbrace_{i=0}^d$ is raised by $A,B$.
\end{enumerate}
\end{theorem}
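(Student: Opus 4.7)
\medskip

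\noindent\textbf{Proof plan.} The plan is to reduce the characterization of LR triples to the characterization of LR pairs already established in Proposition \ref{prop:LRchar}, and then simply bookkeep which of the six raising conditions in the hypothesis corresponds to which LR pair.

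For the forward implication, suppose $A,B,C$ is an LR triple on $V$. Each of the three pairs $A,B$ and $B,C$ and $C,A$ is then an LR pair on $V$, so by Lemma \ref{lem:ABdecIndNil} each of $A,B,C$ is Nil, giving (i). To obtain (ii)--(iv), I would invoke Lemma \ref{lem:LRFlag} applied to each of the three LR pairs. From the pair $A,B$ we learn that $\lbrace A^{d-i}V\rbrace_{i=0}^d$ is raised by $B$; from the pair $C,A$ (that is, read Lemma \ref{lem:LRFlag}(ii) with the roles adjusted, or equivalently apply Lemma \ref{lem:LRFlag}(i) to the LR pair $A,C$ obtained from Lemma \ref{lem:ABCvar}) we learn that $\lbrace A^{d-i}V\rbrace_{i=0}^d$ is raised by $C$. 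This gives (ii); conditions (iii) and (iv) follow by the same argument applied to the pairs $B,C$ and $B,A$, respectively $C,A$ and $C,B$.

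For the reverse implication, assume (i)--(iv). I would check one pair at a time that the hypotheses of Proposition \ref{prop:LRchar} are satisfied. For the pair $A,B$: both maps are Nil by (i); the flag $\lbrace A^{d-i}V\rbrace_{i=0}^d$ is raised by $B$ by (ii); and the flag $\lbrace B^{d-i}V\rbrace_{i=0}^d$ is raised by $A$ by (iii). Hence Proposition \ref{prop:LRchar} yields that $A,B$ is an LR pair on $V$. For the pair $B,C$: Nil by (i); the flag $\lbrace B^{d-i}V\rbrace_{i=0}^d$ is raised by $C$ by (iii); the flag $\lbrace C^{d-i}V\rbrace_{i=0}^d$ is raised by $B$ by (iv). For the pair $C,A$: Nil by (i); the flag $\lbrace C^{d-i}V\rbrace_{i=0}^d$ is raised by $A$ by (iv); the flag $\lbrace A^{d-i}V\rbrace_{i=0}^d$ is raised by $C$ by (ii). In each case Proposition \ref{prop:LRchar} applies and delivers an LR pair. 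Since any two of $A,B,C$ now form an LR pair on $V$, Definition \ref{def:LRT} tells us that $A,B,C$ is an LR triple on $V$.

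There is no serious obstacle: the statement is essentially a packaging of Proposition \ref{prop:LRchar} applied three times, with the six raising conditions in (ii)--(iv) precisely matching, in pairs, the two raising conditions required by Proposition \ref{prop:LRchar} for each of the three LR pairs $A,B$, $B,C$, $C,A$. The only item requiring minor care is the symmetry between an LR pair $X,Y$ and the LR pair $Y,X$ (Lemma \ref{lem:BAvAB} and the fact that $A,B$ LR is the same data as $B,A$ LR up to inversion of the decomposition), which is what lets the single raising condition ``$\lbrace A^{d-i}V\rbrace$ is raised by $C$'' in (ii) feed simultaneously into the verification for the pair $C,A$.
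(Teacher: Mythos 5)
Your proposal is correct and matches the paper's own (very terse) proof, which simply cites Proposition \ref{prop:LRchar} together with Definition \ref{def:LRT}; you have just spelled out the three applications of that proposition and the bookkeeping of which raising condition feeds into which pair. No gaps.
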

\begin{proof} By Proposition
\ref{prop:LRchar} and
Definition
	     \ref{def:LRT}.
\end{proof}

\section{Appendix I: The nonbipartite LR triples in matrix form}

\noindent 
In this section we display the nonbipartite equitable LR triples
in matrix form.

\medskip
\noindent Let $V$ denote a vector space over $\mathbb F$
with dimension $d+1$. Let $A,B,C$ denote a nonbipartite 
equitable
LR triple on
$V$, with
 parameter array
(\ref{eq:paLRT}),
trace data
(\ref{eq:tracedata}), and Toeplitz data
(\ref{eq:ToeplitzData}).
By Definition
\ref{def:equitNorm}
we have $\alpha_i = 
\alpha'_i = 
\alpha''_i $ for
$0 \leq i \leq d$,
and by 
Lemma
\ref{lem:equitBasicBeta}
we have
$\beta_i = 
\beta'_i = 
\beta''_i $ for $0 \leq i \leq d$.
By 
(\ref{eq:list3}) we have
$\beta_1=-\alpha_1$.
By Lemma
\ref{lem:equitBasic} we have
$\varphi_i = \varphi'_i = \varphi''_i$
for $1 \leq i \leq d$, and
$a_i = a'_i = a''_i = \alpha_1 (\varphi_{d-i+1}-\varphi_{d-i})$
for $0 \leq i \leq d$.
Recall from Definition 
\ref{def:NBNorm} 
that $A,B,C$ is normalized if and only if $\alpha_1=1$.
By Proposition
\ref{prop:matrixRep}
we have the following.
\medskip

\noindent 
With respect to an $(A,B)$-basis for $V$
the matrices representing $A,B,C$ are
\begin{eqnarray*}
&&A:\; 
\left(
\begin{array}{ c c cc c c }
 0 & 1   &   &&   & \bf 0  \\
  & 0  &  1  &&  &      \\ 
   &   &  0   & \cdot &&  \\
     &   &  & \cdot  & \cdot & \\
       &  & &  & \cdot & 1 \\
        {\bf 0}  &&  & &   &  0  \\
	\end{array}
	\right),
	\qquad \qquad
	B:\;
	\left(
	\begin{array}{ c c cc c c }
	0 &   &   &&   & \bf 0  \\
	\varphi_1 & 0  &   &&  &      \\
	 &  \varphi_2 & 0   & &&  \\
	   &   & \cdot & \cdot  & & \\
	     &  & & \cdot & \cdot & \\
	      {\bf 0}  &&  & &  \varphi_d & 0  \\
	      \end{array}
	      \right),
\\
&&C:\; 
\left(
\begin{array}{ c c c c c c }
 a_0 & \varphi_d/\varphi_1   &   &&   & \bf 0  \\
 \varphi_d & a_1 &  \varphi_{d-1}/\varphi_2  &&  &      \\ 
   & \varphi_{d-1}  & a_2  
   & \cdot &&  \\
     &   & \cdot & \cdot  & \cdot & \\
       &  & & \cdot & \cdot & \varphi_1/\varphi_d \\
        {\bf 0}  &&  & &  \varphi_1 &  a_d \\
	\end{array}
	\right).
\end{eqnarray*}
With respect to an inverted $(A,B)$ basis for $V$ the matrices representing
$A,B,C$ are
\begin{eqnarray*}
 &&A:\;
	\left(
	\begin{array}{ c c cc c c }
	0 &   &   &&   & \bf 0  \\
     1 & 0  &   &&  &      \\
	 &  1& 0   & &&  \\
	   &   & \cdot & \cdot  & & \\
	     &  & & \cdot & \cdot & \\
	      {\bf 0}  &&  & &  1 & 0  \\
	      \end{array}
	      \right),
\qquad \qquad 
B:\; 
\left(
\begin{array}{ c c cc c c }
 0 & \varphi_d   &   &&   & \bf 0  \\
  & 0  &  \varphi_{d-1}  &&  &      \\ 
   &   &  0   & \cdot &&  \\
     &   &  & \cdot  & \cdot & \\
       &  & &  & \cdot & \varphi_1 \\
        {\bf 0}  &&  & &   &  0  \\
	\end{array}
	\right),
\\
&&C:\; 
\left(
\begin{array}{ c c c c c c }
a_d & \varphi_1   &   &&   & \bf 0  \\
 \varphi_1/\varphi_d & a_{d-1}  &  \varphi_{2} &&  &      \\ 
   & \varphi_{2}/\varphi_{d-1}  & a_{d-2}  
   & \cdot &&  \\
     &   & \cdot & \cdot  & \cdot & \\
       &  & & \cdot & \cdot & \varphi_d \\
        {\bf 0}  &&  & &  \varphi_d/\varphi_1 &  a_0  \\
	\end{array}
	\right).
\end{eqnarray*}
With respect to a $(B,A)$ basis for $V$ the matrices representing
$A,B,C$ are
\begin{eqnarray*}
   &&     A:\;
	\left(
	\begin{array}{ c c cc c c }
	0 &   &   &&   & \bf 0  \\
	\varphi_d & 0  &   &&  &      \\
	 &  \varphi_{d-1} & 0   & &&  \\
	   &   & \cdot & \cdot  & & \\
	     &  & & \cdot & \cdot & \\
	      {\bf 0}  &&  & &  \varphi_1 & 0  \\
	      \end{array}
	      \right),
\qquad \qquad 
B:\; 
\left(
\begin{array}{ c c cc c c }
 0 & 1   &   &&   & \bf 0  \\
  & 0  &  1  &&  &      \\ 
   &   &  0   & \cdot &&  \\
     &   &  & \cdot  & \cdot & \\
       &  & &  & \cdot & 1 \\
        {\bf 0}  &&  & &   &  0  \\
	\end{array}
	\right),
\\
&&C:\; 
\left(
\begin{array}{ c c c c c c }
a_d  & \varphi_1/\varphi_d   &   &&   & \bf 0  \\
 \varphi_1 & a_{d-1}  &  \varphi_{2}/\varphi_{d-1}  &&  &      \\ 
   & \varphi_{2}  & a_{d-2}  
   & \cdot &&  \\
     &   & \cdot & \cdot  & \cdot & \\
       &  & & \cdot & \cdot & \varphi_d/\varphi_1 \\
        {\bf 0}  &&  & &  \varphi_d &  a_0  \\
	\end{array}
	\right).
\end{eqnarray*}
With respect to an inverted $(B,A)$ basis for $V$ the matrices representing
$A,B,C$ are
\begin{eqnarray*}
&&A:\; 
\left(
\begin{array}{ c c cc c c }
 0 & \varphi_1   &   &&   & \bf 0  \\
  & 0  &  \varphi_2  &&  &      \\ 
   &   &  0   & \cdot &&  \\
     &   &  & \cdot  & \cdot & \\
       &  & &  & \cdot & \varphi_d \\
        {\bf 0}  &&  & &   &  0  \\
	\end{array}
	\right),
	\qquad \qquad
	B:\;
	\left(
	\begin{array}{ c c cc c c }
	0 &   &   &&   & \bf 0  \\
	1 & 0  &   &&  &      \\
	 &  1 & 0   & &&  \\
	   &   & \cdot & \cdot  & & \\
	     &  & & \cdot & \cdot & \\
	      {\bf 0}  &&  & &   1& 0  \\
	      \end{array}
	      \right),
\\
&&C:\; 
\left(
\begin{array}{ c c c c c c }
a_0 & \varphi_d   &   &&   & \bf 0  \\
 \varphi_d/\varphi_1 & a_1  &  \varphi_{d-1}  &&  &      \\ 
   & \varphi_{d-1}/\varphi_2  & a_2  
   & \cdot &&  \\
     &   & \cdot & \cdot  & \cdot & \\
       &  & & \cdot & \cdot & \varphi_1\\
        {\bf 0}  &&  & &  \varphi_1/\varphi_d &  a_d  \\
	\end{array}
	\right).
\end{eqnarray*}

\section{Appendix II: The bipartite LR triples in matrix form}

\noindent 
In this section we display the bipartite LR triples
in matrix form.

\medskip
\noindent Let $V$ denote a vector space over $\mathbb F$
with dimension $d+1$. Let $A,B,C$ denote a 
bipartite LR triple on $V$,
with
 parameter array
(\ref{eq:paLRT}).
By Proposition
\ref{prop:matrixRep}
we have the following.
\medskip

\noindent
With respect to an $(A,B)$-basis for $V$
the matrices representing $A,B,C$ are
\begin{eqnarray*}
&&A:\; 
\left(
\begin{array}{ c c cc c c }
 0 & 1   &   &&   & \bf 0  \\
  & 0  &  1  &&  &      \\ 
   &   &  0   & \cdot &&  \\
     &   &  & \cdot  & \cdot & \\
       &  & &  & \cdot & 1 \\
        {\bf 0}  &&  & &   &  0  \\
	\end{array}
	\right),
	\qquad \qquad
	B:\;
	\left(
	\begin{array}{ c c cc c c }
	0 &   &   &&   & \bf 0  \\
	\varphi_1 & 0  &   &&  &      \\
	 &  \varphi_2 & 0   & &&  \\
	   &   & \cdot & \cdot  & & \\
	     &  & & \cdot & \cdot & \\
	      {\bf 0}  &&  & &  \varphi_d & 0  \\
	      \end{array}
	      \right),
\\
&&C:\; 
\left(
\begin{array}{ c c c c c c }
 0  & \varphi'_d/\varphi_1   &   &&   & \bf 0  \\
 \varphi''_d & 0 &  \varphi'_{d-1}/\varphi_2  &&  &      \\ 
   & \varphi''_{d-1}  & 0  
   & \cdot &&  \\
     &   & \cdot & \cdot  & \cdot & \\
       &  & & \cdot & \cdot & \varphi'_1/\varphi_d \\
        {\bf 0}  &&  & &  \varphi''_1 &  0  \\
	\end{array}
	\right).
\end{eqnarray*}
With respect to an inverted $(A,B)$ basis for $V$ the matrices representing
$A,B,C$ are
\begin{eqnarray*}
 &&A:\;
	\left(
	\begin{array}{ c c cc c c }
	0 &   &   &&   & \bf 0  \\
     1 & 0  &   &&  &      \\
	 &  1& 0   & &&  \\
	   &   & \cdot & \cdot  & & \\
	     &  & & \cdot & \cdot & \\
	      {\bf 0}  &&  & &  1 & 0  \\
	      \end{array}
	      \right),
\qquad \qquad 
B:\; 
\left(
\begin{array}{ c c cc c c }
 0 & \varphi_d   &   &&   & \bf 0  \\
  & 0  &  \varphi_{d-1}  &&  &      \\ 
   &   &  0   & \cdot &&  \\
     &   &  & \cdot  & \cdot & \\
       &  & &  & \cdot & \varphi_1 \\
        {\bf 0}  &&  & &   &  0  \\
	\end{array}
	\right),
\\
&&C:\; 
\left(
\begin{array}{ c c c c c c }
0  & \varphi''_1   &   &&   & \bf 0  \\
 \varphi'_1/\varphi_d & 0  &  \varphi''_{2} &&  &      \\ 
   & \varphi'_{2}/\varphi_{d-1}  & 0  
   & \cdot &&  \\
     &   & \cdot & \cdot  & \cdot & \\
       &  & & \cdot & \cdot & \varphi''_d \\
        {\bf 0}  &&  & &  \varphi'_d/\varphi_1 &  0\\
	\end{array}
	\right).
\end{eqnarray*}
With respect to a $(B,A)$ basis for $V$ the matrices representing
$A,B,C$ are
\begin{eqnarray*}
   &&     A:\;
	\left(
	\begin{array}{ c c cc c c }
	0 &   &   &&   & \bf 0  \\
	\varphi_d & 0  &   &&  &      \\
	 &  \varphi_{d-1} & 0   & &&  \\
	   &   & \cdot & \cdot  & & \\
	     &  & & \cdot & \cdot & \\
	      {\bf 0}  &&  & &  \varphi_1 & 0  \\
	      \end{array}
	      \right),
\qquad \qquad 
B:\; 
\left(
\begin{array}{ c c cc c c }
 0 & 1   &   &&   & \bf 0  \\
  & 0  &  1  &&  &      \\ 
   &   &  0   & \cdot &&  \\
     &   &  & \cdot  & \cdot & \\
       &  & &  & \cdot & 1 \\
        {\bf 0}  &&  & &   &  0  \\
	\end{array}
	\right),
\\
&&C:\; 
\left(
\begin{array}{ c c c c c c }
 0  & \varphi''_1/\varphi_d   &   &&   & \bf 0  \\
 \varphi'_1 & 0 &  \varphi''_{2}/\varphi_{d-1}  &&  &      \\ 
   & \varphi'_{2}  & 0  
   & \cdot &&  \\
     &   & \cdot & \cdot  & \cdot & \\
       &  & & \cdot & \cdot & \varphi''_d/\varphi_1 \\
        {\bf 0}  &&  & &  \varphi'_d &  0 \\
	\end{array}
	\right).
\end{eqnarray*}
With respect to an inverted $(B,A)$ basis for $V$ the matrices representing
$A,B,C$ are
\begin{eqnarray*}
&&A:\; 
\left(
\begin{array}{ c c cc c c }
 0 & \varphi_1   &   &&   & \bf 0  \\
  & 0  &  \varphi_2  &&  &      \\ 
   &   &  0   & \cdot &&  \\
     &   &  & \cdot  & \cdot & \\
       &  & &  & \cdot & \varphi_d \\
        {\bf 0}  &&  & &   &  0  \\
	\end{array}
	\right),
	\qquad \qquad
	B:\;
	\left(
	\begin{array}{ c c cc c c }
	0 &   &   &&   & \bf 0  \\
	1 & 0  &   &&  &      \\
	 &  1 & 0   & &&  \\
	   &   & \cdot & \cdot  & & \\
	     &  & & \cdot & \cdot & \\
	      {\bf 0}  &&  & &   1& 0  \\
	      \end{array}
	      \right),
\\
&&C:\; 
\left(
\begin{array}{ c c c c c c }
 0 & \varphi'_d   &   &&   & \bf 0  \\
 \varphi''_d/\varphi_1 & 0  &  \varphi'_{d-1}  &&  &      \\ 
   & \varphi''_{d-1}/\varphi_2  & 0  
   & \cdot &&  \\
     &   & \cdot & \cdot  & \cdot & \\
       &  & & \cdot & \cdot & \varphi'_1\\
        {\bf 0}  &&  & &  \varphi''_1/\varphi_d &  0 \\
	\end{array}
	\right).
\end{eqnarray*}


\section{Acknowledgments}
The author thanks 
Kazumasa Nomura 
for giving this paper a close reading and offering  valuable
suggestions.
Also, part of this paper was written during the author's
visit to the Graduate School of Information Sciences, at Tohoku U. in Japan.
The visit was from December 20, 2014 to January 15, 2015.
The author thanks his hosts Hajime Tanaka and Jae-ho Lee for
their kind hospitality and insightful conversations.

\noindent Paul Terwilliger \hfil\break
\noindent Department of Mathematics \hfil\break
\noindent University of Wisconsin \hfil\break
\noindent 480 Lincoln Drive \hfil\break
\noindent Madison, WI 53706 USA \hfil\break
\noindent email: {\tt terwilli@math.wisc.edu }\hfil\break

\end{document}